\numberwithin{equation}{section}
\tikzset{c/.style={every coordinate/.try}}
\tikzstyle{block} = [rectangle, draw,
\tikzstyle{line} = [draw, -latex']
\newtheorem{thm}{Theorem}[section]
\newtheorem{lem}[thm]{Lemma}
\newtheorem{cor}[thm]{Corollary}
\newtheorem{ppn}[thm]{Proposition}
\newtheorem{maintheorem}{Theorem}
\theoremstyle{definition}
\newtheorem{dfn}[thm]{Definition}
\newtheorem{rmk}[thm]{Remark}
\newtheorem*{rmk*}{Remark}
\DeclareMathOperator*{\argmax}{\textup{arg\,max}}
\DeclareMathOperator{\supp}{\textup{supp}}
\DeclareMathOperator{\RelInt}{\textup{ri}}
\DeclareMathOperator{\Th}{\textup{th}}
\DeclareMathOperator{\girth}{\textup{girth}}
\DeclareMathOperator{\length}{\textup{len}}
\DeclareMathOperator{\rk}{\textup{rk}}
\DeclareMathOperator{\PROJ}{\textup{\textsf{proj}}}
\DeclareMathOperator{\COARSEN}{\textup{\textsf{coarsen}}}
\DeclareMathOperator{\CUBE}{\textup{\textsf{cube}}}
\DeclareMathOperator{\CO}{\textup{\textsf{co}}}
\DeclareMathOperator{\diam}{\textup{diam}}
\DeclareMathOperator{\Var}{Var}
\DeclareMathOperator{\Cov}{Cov}
\DeclareMathOperator{\ch}{ch}
\DeclareMathOperator{\proc}{\textup{\textsf{pr}}}
\DeclareMathOperator{\switch}{\textup{\textsf{sw}}}
\DeclareMathOperator{\rad}{\textup{rad}}
\DeclareMathOperator{\disc}{\textup{\textsf{disc}}}
\DeclareMathOperator{\corr}{\textup{\textsf{corr}}}
\newcommand{\bemph}[1]{\textbf{\textup{#1}}}
\newcommand{\DS}{\displaystyle}
\newcommand{\beq}{\begin{equation}}
\newcommand{\eeq}{\end{equation}}
\newcommand{\f}{\frac}
\renewcommand{\emptyset}{\varnothing}
\renewcommand{\vec}[1]{\underline{\smash{#1}}}
\newcommand{\set}[1]{\{#1\}}
\newcommand{\Ind}[1]{\mathbf{1}\{#1\}}
\newcommand{\pd}{\partial}
\renewcommand{\P}{\mathbb{P}}
\newcommand{\E}{\mathbb{E}}
\newcommand{\BARC}{\hspace{-3pt}\begin{array}{c}}
\newcommand{\EARC}{\end{array}\hspace{-3pt}}
\newcommand{\st}{\textup{\textsf{t}}} 
\renewcommand{\log}{\ln}
\newcommand{\Ent}{\mathcal{H}} 
\newcommand{\DKL}{\mathcal{D}_\textup{KL}} 
\newcommand{\albd}{\alpha_\textup{lbd}}
\newcommand{\aubd}{\alpha_\textup{ubd}}
\newcommand{\arsb}{\alpha_\star}
\newcommand{\adrsb}{\alpha_\textup{clust}}
\newcommand{\acond}{\alpha_\textup{cond}}
\newcommand{\asat}{\alpha_\textup{sat}}
\newcommand{\BX}{x}
\newcommand{\uBX}{\vec{\BX}}
\newcommand{\LABEL}{\bm{l}}
\newcommand{\qav}[1]{\langle#1\rangle} 
\newcommand{\gm}{\gamma}
\newcommand{\Gm}{\Gamma}
\newcommand{\ep}{\epsilon}
\newcommand{\lm}{\lambda}
\newcommand{\Lm}{\Lambda}
\newcommand{\bh}{\nu}
\newcommand{\bLambda}{\bm{\Lm}}
\newcommand{\SPIN}[1]{\textup{\texttt{\footnotesize #1}}}
\newcommand{\SSPIN}[1]{\textup{\texttt{\footnotesize #1}}}
\newcommand{\plus}{\SPIN{+}}
\newcommand{\minus}{\SPIN{-}}
\newcommand{\PM}{\SPIN{\textpm}}
\newcommand{\free}{\SPIN{f}}
\newcommand{\red}{\SSPIN{r}}
\newcommand{\yel}{\SSPIN{y}}
\newcommand{\blu}{\SSPIN{b}}
\newcommand{\grn}{\SSPIN{g}}
\newcommand{\cya}{\SSPIN{c}}
\newcommand{\whi}{\SSPIN{w}}
\newcommand{\pur}{\SSPIN{p}}
\newcommand{\notr}{\SSPIN{u}}
\newcommand{\Yel}{\SPIN{Y}}
\newcommand{\Red}{\SPIN{R}}
\newcommand{\Whi}{\SPIN{W}}
\newcommand{\Cya}{\SPIN{C}}
\newcommand{\msg}{\SPIN{w}}
\newcommand{\dmp}{\dot{\msg}}
\newcommand{\hmp}{\hat{\msg}}
\newcommand{\RYGB}{\red,\yel,\grn,\blu}
\newcommand{\RYC}{\red,\yel,\cya}
\newcommand{\lit}{\SPIN{L}}
\newcommand{\mred}{\SPIN{v}}
\newcommand{\ZZ}{\bm{Z}}
\newcommand{\sepZZ}{\bm{Z}_\textup{sep}}
\newcommand{\extZZ}{\bm{Z}_\textup{ext}}
\newcommand{\ZNAE}{Z_\textsc{nae}}
\newcommand{\bde}{\bm{\delta}}
\newcommand{\dq}{\dot{q}}
\renewcommand{\dh}{\dot{h}}
\newcommand{\dbde}{\bm{\dot{\delta}}}
\newcommand{\ddbde}{\bm{\ddot{\delta}}}
\newcommand{\dbh}{\dot{\bh}}
\newcommand{\hq}{\hat{q}}
\newcommand{\hh}{\hat{h}}
\newcommand{\hcP}{\smash{\hat{\mathcal{P}}}}
\newcommand{\bhmu}{\bm{\hat{\mu}}}
\newcommand{\hPGW}{\widehat{\PGW}}
\newcommand{\hbh}{\hat{\bh}}
\newcommand{\usi}{\smash{\vec{\sigma}}}
\newcommand{\uL}{\smash{\vec{\bm{L}}}}
\newcommand{\uX}{\smash{\vec{X}}}
\newcommand{\uta}{\smash{\vec{\tau}}}
\newcommand{\ueta}{\smash{\vec{\eta}}}
\newcommand{\ud}{\smash{\vec{d}}}
\newcommand{\ux}{\smash{\vec{x}}}
\newcommand{\uy}{\smash{\vec{y}}}
\newcommand{\tq}{\tilde{q}}
\newcommand{\tW}{W}
\newcommand{\rwhq}{H}
\newcommand{\dbz}{\bm{\dot{z}}}
\newcommand{\hbz}{\bm{\hat{z}}}
\newcommand{\bbz}{\bm{\bar{z}}}
\newcommand{\onersb}{\textup{1-\textsc{rsb}}}
\newcommand{\trsb}{\textup{$t$-\textsc{rsb}}}
\newcommand{\fullrsb}{\textup{full-\textsc{rsb}}}
\newcommand{\infrsb}{\textup{$\infty$-\textsc{rsb}}}
\newcommand{\ksat}{\textup{$k$-\textsc{sat}}}
\newcommand{\knae}{\textup{$k$-\textsc{nae-sat}}}
\newcommand{\nae}{\textup{\textsc{nae-sat}}}
\newcommand{\err}{\textup{\textsf{err}}}
\newcommand{\ev}{\textup{\textsf{eval}}}
\newcommand{\PGW}{{\textup{\textsf{\footnotesize PGW}}}}
\newcommand{\uPGW}{{\textup{\textbf{PGW}}}}
\newcommand{\SOL}{\textup{\textsf{\footnotesize SOL}}}
\newcommand{\NAE}{\textup{\textsf{\footnotesize NAE}}}
\newcommand{\CLUSTERS}{\textup{\textsf{\footnotesize CL}}}
\newcommand{\LIN}{\textup{\textsf{LIN}}} 
\newcommand{\ff}{\textup{\textsf{f}}}
\newcommand{\embed}{\textup{\textsf{emb}}}
\newcommand{\ConfigModel}{\textup{\textbf{\textsf{CM}}}}
\newcommand{\MAT}{\textup{\textsf{M}}} 
\newcommand{\AUGMENT}{\textup{\textsf{aug}}} 
\newcommand{\hWP}{\widehat{\textup{\footnotesize\textsf{WP}}}}
\newcommand{\dWP}{\dot{\textup{\footnotesize\textsf{WP}}}}
\newcommand{\BP}{\textup{\footnotesize\textsf{BP}}}
\newcommand{\Qstar}{{}_\star\hspace{-1pt}Q}
\newcommand{\dqstar}{{}_\star\hspace{-1pt}\dq}
\newcommand{\qstar}{{}_\star\hspace{-1pt}q}
\newcommand{\hqstar}{{}_\star\hspace{-1pt}\hq}
\newcommand{\starpi}{{}_\star\hspace{-1pt}\pi}
\newcommand{\omstar}{{}_\star\hspace{-1pt}\omega}
\newcommand{\bhstar}{{}_\star\hspace{-1pt}\bh}
\newcommand{\dbhstar}{{}_\star\hspace{-1pt}\dbh}
\newcommand{\hbhstar}{{}_\star\hspace{-1pt}\hbh}
\newcommand{\Gmstar}{{}_\star\hspace{-1pt}\Gm}
\newcommand{\psistar}{{}_\star\hspace{-1pt}\psi}
\newcommand{\Lmstar}{{}_\star\hspace{-1pt}\Lm}
\newcommand{\Testar}{{}_\star\hspace{-1pt}\Theta}
\newcommand{\lmstar}{{}_\star\hspace{-1pt}\lm}
\newcommand{\etastar}{{}_\star\hspace{-1pt}\bmeta}
\newcommand{\stareta}{{}_\star\hspace{-1pt}\eta}
\newcommand{\bhustar}{{}_\star\hspace{-1pt}\bhu}
\newcommand{\nustar}{{}_\star\hspace{-1pt}\nu}
\newcommand{\starzeta}{{}_\star\hspace{-1pt}\zeta}
\newcommand{\bstar}{{}_\star b}
\newcommand{\xstar}{{}_\star x}
\newcommand{\prodom}{{}_{*}\omega}
\newcommand{\prody}{{}_{*}y}
\newcommand{\prodnu}{{}_{*}\nu}
\newcommand{\prodmu}{{}_{*}\mu}
\newcommand{\prodhq}{{}_{*}\hq}
\newcommand{\prodhQ}{{}_{*}\hat{Q}}
\newcommand{\prodLm}{{}_{*}\Lambda}
\newcommand{\prodTe}{{}_{*}\Theta}
\newcommand{\proddq}{{}_{*}\dq}
\newcommand{\prodq}{{}_{*}q}
\newcommand{\prodQ}{{}_{*}Q}
\newcommand{\qbul}{{}_\bullet\hspace{-1pt}q}
\newcommand{\dqbul}{{}_\bullet\hspace{-1pt}\dq}
\newcommand{\hqbul}{{}_\bullet\hspace{-1pt}\hq}
\newcommand{\hqstarstar}{{}_{\star\star}\hspace{-1pt}\hq}
\newcommand{\TSQ}{{}_{\scalebox{0.4}{$\square$}}\hspace{-1pt}}
\newcommand{\SQpi}{\TSQ\pi}
\newcommand{\SQdq}{\TSQ \dq}
\newcommand{\SQhq}{\TSQ \hq}
\newcommand{\SQeta}{\TSQ \bmeta}
\newcommand{\SQbhu}{\TSQ \bm{\hat{u}}}
\newcommand{\SQT}{\TSQ T}
\newcommand{\bL}{\bm{L}} 
\newcommand{\bT}{\bm{T}} 
\newcommand{\bt}{\bm{t}} 
\newcommand{\poisP}{\mathbb{P}}
\newcommand{\wP}{\mathbb{Q}}
\newcommand{\bsp}{\textup{\textsf{BSP}}}
\newcommand{\ACT}{\textup{\textsf{Act}}}
\newcommand{\vrelerr}{\dddot{\textup{\textsf{\footnotesize ERR}}}}
\newcommand{\crelerr}{\widehat{\textup{\textsf{\footnotesize ERR}}}}
\newcommand{\CRELERR}{\widehat{\textup{\textbf{\textsf{ERR}}}}}
\newcommand{\VRELERR}{\dddot{\textup{\textbf{\textsf{ERR}}}}}
\newcommand{\mdel}{{}^\textup{m}\hspace{-2pt}\delta}
\newcommand{\mdelred}{{}^\textup{m}\hspace{-2pt}\dot{\delta}}
\newcommand{\mbde}{{}^\textup{m}\hspace{-2pt}\bm{\delta}}
\newcommand{\mbdered}{{}^\textup{m}\hspace{-2pt}\bm{\dot{\delta}}}
\newcommand{\adel}{\chi}
\newcommand{\cdel}{\kappa}
\newcommand{\cdelred}{\dot{\cdel}}
\newcommand{\fdel}{\rho}
\newcommand{\avhq}{\bar{q}}
\newcommand{\avhz}{\bar{z}}
\newcommand{\optnu}{\nu^\textup{op}}
\newcommand{\optdbh}{\dbh^\textup{op}}
\newcommand{\opthbh}{\hbh^\textup{op}}
\newcommand{\vrt}{v_\textup{\textsc{rt}}}
\newcommand{\crt}{a_\textup{\textsc{rt}}}
\newcommand{\ert}{e_\textup{\textsc{rt}}}
\newcommand{\vth}{\vartheta_*}
\newcommand{\cluster}{\mathscr{C}}
\newcommand{\DD}{\mathscr{D}}
\newcommand{\GG}{\mathscr{G}}
\newcommand{\HH}{\mathscr{H}}
\newcommand{\RR}{\mathscr{R}}
\newcommand{\LL}{\mathscr{L}}
\newcommand{\albet}{\mathcal{X}}
\newcommand{\cD}{\mathcal{D}}
\newcommand{\cX}{\mathcal{X}}
\newcommand{\cY}{\mathcal{Y}}
\newcommand{\cZ}{\mathcal{Z}}
\newcommand{\cP}{\mathcal{P}}
\newcommand{\II}{\mathfrak{i}}
\newcommand{\CC}{\mathfrak{c}}
\newcommand{\BTW}{\mathfrak{b}}
\newcommand{\FF}{\mathbf{F}}
\newcommand{\rprime}{R'}
\newcommand{\NotOrd}{\cancel{O}} 
\newcommand{\NotSC}{\cancel{S}} 
\newcommand{\REC}{\mathcal{R}}
\newcommand{\hREC}{\smash{\hat{\mathcal{R}}}}
\newcommand{\dREC}{\smash{\dot{\mathcal{R}}}}
\newcommand{\Rec}{\bm{R}}
\newcommand{\bmu}{\bm{\mu}}
\newcommand{\bmeta}{\bm{\eta}}
\newcommand{\bhu}{\bm{\hat{u}}}
\newcommand{\Pois}{\textup{\textsf{Pois}}}
\newcommand{\POIS}{\textup{\textsf{pois}}}
\newcommand{\POpm}{\textup{\textsf{po}}_{\PM}}
\newcommand{\clausedeg}{\textup{\textsf{cl}}}
\newcommand{\PINT}{\mathscr{P}}
\newcommand{\tprime}{T'}
\newcommand{\tree}{\mathscr{T}}
\newcommand{\Tout}{\tree_\textup{out}}
\newcommand{\Nout}{\mathscr{N}_\textup{out}}
\newcommand{\Yout}{Y_\textup{out}}
\newcommand{\Zout}{Z_\textup{out}}
\newcommand{\Tin}{\tree_\textup{in}}
\newcommand{\UU}{\mathscr{U}}
\newcommand{\NN}{\mathscr{N}}
\newcommand{\tL}{\tree(\bL)}
\newcommand{\cGG}{\GG'}
\newcommand{\GGR}{\GG''}
\newcommand{\DDtyp}{\DD_\star}
\newcommand{\Leaves}{\mathcal{L}}
\newcommand{\RN}{\mathfrak{X}} 
\newcommand{\ptree}{\mathscr{Q}}
\newcommand{\TIME}{\mathfrak{t}} 
\newcommand{\cM}{\mathcal{M}} 
\newcommand{\COHER}{\textup{\textsf{coher}}}
\newcommand{\PROB}{\textup{\textsf{\footnotesize PR}}}
\newcommand{\MSR}{\textup{\textsf{\footnotesize MSR}}}
\newcommand{\MARG}{\textup{\textsf{\footnotesize MARG}}}
\newcommand{\CCOLS}{\textup{\textsf{\footnotesize COLS}}}
\newcommand{\OPT}{\textup{\textsf{\footnotesize OPT}}}
\newcommand{\unifpi}{\pi^\textup{unif}}
\newcommand{\Simplex}{\bm{\Delta}}
\newcommand{\Judicious}{\bm{J}}
\newcommand{\BPq}{q_\textup{\textsc{bp}}} 
\newcommand{\BIG}{\textup{\textsf{\footnotesize BIG}}}
\newcommand{\DEG}{\textup{\textsf{\footnotesize DEG}}}
\newcommand{\blocks}{\mathfrak{B}} 
\newcommand{\ablock}{\mathcal{A}}
\newcommand{\MARK}{\textup{\textsf{mark}}}
\newcommand{\emax}{e_\star}
\newcommand{\DEFECTIVE}{\textup{\textsf{DEF}}}
\newcommand{\DELTACONST}{\delta_*}
\newcommand{\EPSCONST}{\ep_*}
\newcommand{\EPS}{\epsilon}
\newcommand{\EPSONE}{\epsilon_1}
\newcommand{\EPSTWO}{\epsilon_2}
\newcommand{\EPSTHREE}{\epsilon_3}
\newcommand{\EPSLIGHT}{\epsilon_\circ}
\newcommand{\EPSDIV}{\epsilon_4}
\newcommand{\EPSINT}{\epsilon}
\newcommand{\EPSP}{\epsilon_\infty}
\newcommand{\bP}{\mathbf{P}}
\newcommand{\hit}{\hat{\iota}}
\newcommand{\bzeta}{\bm{\zeta}}
\newcommand{\vsi}{\varsigma}
\newcommand{\uvsi}{\smash{\vec{\varsigma}}}
\newcommand{\barpi}{\bar{\pi}}
\newcommand{\col}{\Sigma}
\newcommand{\ucol}{\smash{\vec{\Sigma}}}
\newcommand{\ups}{\upsilon}
\newcommand{\vups}{\smash{\vec{\upsilon}}}
\newcommand{\bom}{\bm{\varpi}}
\newcommand{\notDiverse}{\cancel{\mathbb{D}}}
\newcommand{\notLight}{\cancel{\mathbb{L}}}
\newcommand{\COLS}{\mathscr{A}} 
\newcommand{\bPa}{\bP^a}
\newcommand{\cXa}{\cX^a}
\newcommand{\bPb}{\bP^b}
\newcommand{\cXb}{\cX^b}
\newcommand{\bPc}{\bP^c}
\newcommand{\cXc}{\cX^c}
\newcommand{\bPd}{\bP^d}
\newcommand{\cXd}{\cX^d}
\newcommand{\bPe}{\bP^f}
\newcommand{\cXe}{\cX^f}
\title{Proof of the satisfiability conjecture for large $k$}
\author[J. Ding]{Jian Ding$^*$}
\author[A. Sly]{Allan Sly$^\dagger$}
\author[N. Sun]{Nike Sun}
\date{\today}
\thanks{Research supported in part by $^*$NSF DMS-1313596; $^\dagger$DMS-1208338, DMS-1352013 and Sloan Fellowship}
\newcommand{\KAPPA}{\kappa_*}
\newcommand{\ZETA}{\zeta_*}
\begin{document}

\maketitle

\vspace{-23pt}
\begin{center}
\textit{\footnotesize University of Chicago;}\vspace{-3pt}\\
\textit{\footnotesize University of California--Berkeley and Australian National University;}\vspace{-3pt}\\
\textit{\footnotesize Microsoft Research and Massachusetts Institute of Technology}
\end{center}

\begin{abstract} We establish the satisfiability threshold for random $\ksat$ for all $k\ge k_0$, with $k_0$ an absolute constant. That is, there exists a limiting density $\asat(k)$ such that a random $\ksat$ formula of clause density~$\alpha$ is with high probability satisfiable for $\alpha<\asat$, and unsatisfiable for $\alpha>\asat$. We show that the threshold $\asat(k)$ is given explicitly by the one-step replica symmetry breaking prediction from statistical physics. The proof develops a new analytic method for moment calculations on random graphs, mapping a high-dimensional optimization problem to a more tractable problem of analyzing tree recursions. We believe that our method may apply to a range of random \textsc{csp}s in the $\onersb$ universality class. \end{abstract}

\section{Introduction}\label{s:intro} A \bemph{constraint satisfaction problem} (\textsc{csp}) consists of variables $\BX_1,\ldots,\BX_n$ subject to constraints $a_1,\ldots,a_m$. This general framework encompasses several fundamental problems in computer science, the most classic example being \bemph{boolean satisfiability} (\textsc{sat}). Other examples include various natural problems in graph combinatorics; such as (proper) coloring, independent set, and cut or bisection problems. In each of these \textsc{csp}s, the variables $\BX_i$ take values in some fixed alphabet $\albet$, and it is of interest to understand properties of the subset $\SOL\subseteq\albet^n$ of valid assignments: its total size, say, or the maximum value of some objective function.

In many cases, even deciding if $\SOL$ is nonempty --- which requires, \textit{a~priori}, exhaustive search over $\albet^n$ --- is \textsc{np}-complete~\cite{MR0378476}, and thus believed to require super-polynomial time in worst-case instances. This worst-case intractability of \textsc{csp}s was one of the early motivations to develop some ``average-case'' theory for \textsc{csp}s. For instance, one approach \cite{MR822205} is to study the typical runtime of algorithms in a \bemph{random}~\textsc{csp}: formally, a sequence $(\P^n)_{n\ge1}$ where each $\P^n$ is a probability measure on \textsc{csp}s of $n$ variables; the interest is in asymptotic behaviors as $n\to\infty$.

Since their introduction into the computer science literature, random \textsc{csp}s have become a subject of interest among physicists and mathematicians as well, inspired in part by early numerical experiments \cite{cheeseman1991proceedings,mitchell1992hard} suggesting phase transition phenomena. On the basis of heuristic analytic methods, physicists predict that they exhibit a rich array of phenomena. However, the rigorous analysis of random~\textsc{csp}s poses substantial mathematical difficulties, and many of the physics predictions remain challenging open problems. This paper considers one of these predictions which has been especially well-studied, the satisfiability threshold conjecture.

\subsection{Main result} The \bemph{random $\ksat$ model} is as follows: variables $\BX_1,\ldots,\BX_n$ take values $\textsc{true}\equiv\plus$ or $\textsc{false}\equiv\minus$. They are subject to constraints $a_1,\ldots,a_M$ where $M$ is a $\Pois(n\alpha)$ random variable. Conditioned on $M$, the constraints are independent. Each constraint is a random disjunctive clause: it is the boolean \textsc{or} of $k$ independent literals, with each literal sampled uniformly at random from $\set{\plus \BX_1,\minus \BX_1,\ldots,\plus \BX_n,\minus \BX_n}$. The clause is satisfied if at least one of its $k$ literals evaluates to $\plus$. The entire instance is satisfied if every clause is satisfied. This defines a probability measure $\poisP^{n,\alpha}$ over $\ksat$ problem instances;\footnote{Although $\poisP^{n,\alpha}$ depends on $k$ as well as $\alpha$, one typically considers the problem for fixed $k$, so we suppress this dependence from the notation.} and the sequence $(\poisP^{n,\alpha})_{n\ge1}$ is what is commonly termed the \bemph{random $\ksat$ model at density~$\alpha$}.

A $\ksat$ problem instance is naturally encoded by a bipartite graph $\GG=(V,F,E)$ where $V$ is the set of variables, $F$ is the set of clauses, and $E$ is the set of edges. The presence of an edge $e=(av)\in E$ indicates that variable $v$ participates in clause $a$. The edge always comes with a sign $\lit_e$ which is $\plus$ or $\minus$ depending on whether $\plus \BX_v$ or $\minus \BX_v$ appears in clause $a$. Thus $\poisP^{n,\alpha}$ can be regarded as the law of a random bipartite graph with signed edges. This is a bipartite analogue of the standard Erd\H{o}s--R\'enyi random graph; and for this reason we sometimes refer to this model as ``random Erd\H{o}s--R\'enyi $\ksat$.''

It has been notoriously challenging to characterize a most basic property of this model: what fraction of randomly sampled instances are satisfiable? Based on numerical simulations and non-rigorous arguments, it is proposed that for each fixed $k\ge2$, there is a critical value $\asat$ --- depending on $k$ but not on $n$ --- such that for all $\ep>0$,
	\[\lim_{n\to\infty}
	\P^{n,\asat-\ep}(\textup{satisfiable})
	=1=\lim_{n\to\infty}
	\P^{n,\asat+\ep}(\textup{unsatisfiable})\,.\]
In words, the model has a sharp transition from satisfiable to unsatisfiable, with high probability.\footnote{An event occurs \bemph{with high probability} if its probability tends to one in the limit $n\to\infty$.} This is known as the \bemph{satisfiability threshold conjecture}. For $k=2$, it is known to be true with threshold $\asat=1$ \cite{2677892sat,MR1255142}. It has been a long-standing open problem to establish a satisfiability threshold for any $k\ge3$. Our main result resolves this conjecture for large $k$:

\begin{maintheorem}[\bemph{main theorem}]\label{t:main} For $k\ge k_0$, random $\ksat$ has a sharp satisfiability threshold $\asat$, with explicit characterization $\asat=\arsb$ given by Proposition~\ref{p:phi} below.\end{maintheorem}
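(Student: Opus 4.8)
The proof splits into a first-moment upper bound $\asat\le\arsb$ and a second-moment lower bound $\asat\ge\arsb$, with $\arsb$ --- the quantity pinned down in Propn.~\ref{p:phi} --- being exactly the density at which the relevant first-moment exponent changes sign. The first point to appreciate is why one cannot simply apply the moment method to the number of satisfying assignments $Z$: near the threshold one is in the condensation regime $\acond<\alpha<\asat$, where $\E Z$ is dominated by a vanishing fraction of atypical formulas, so $\E Z$ overshoots the truth and even $\E[Z^2]/(\E Z)^2$ is exponentially large. The fix, following the physics picture, is to count \emph{clusters} of solutions instead: a cluster is encoded combinatorially by a coloring of the factor graph $\GG$ into spins $\set{\plus,\minus,\free}$ (a variable is $\free$ if it is unfrozen within its cluster), subject to the local warning/whitening consistency rules, and $\ZZ$ is the number of such colorings, suitably truncated so that the second moment below does not blow up. Two structural observations anchor the whole argument: a satisfiable formula has at least one cluster, hence $\{\text{sat}\}=\{\ZZ_{\text{all}}\ge1\}$ with $\ZZ_{\text{all}}$ the untruncated count; and a valid coloring exhibits an actual solution, so $\{\ZZ\ge1\}\subseteq\{\text{sat}\}$.

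\textbf{Upper bound.} First I would carry out the first-moment computation: expanding $\E\ZZ_{\text{all}}$ over colorings and using the $\mathrm{Pois}(n\alpha)$ clause structure, Laplace's method identifies $\f1n\log\E\ZZ_{\text{all}}$ as the supremum of a Bethe free-energy functional $\Phi_1$ over pairs of half-edge message laws, call them $(\dq,\hq)$. The stationary points of $\Phi_1$ are Belief-Propagation fixed points; the content is to show that for $k\ge k_0$ the supremum is attained at the one-step replica-symmetry-breaking fixed point of Propn.~\ref{p:phi}, yielding $\f1n\log\E\ZZ_{\text{all}}\to\Phi(\alpha)=\sup\Phi_1$ with $\Phi(\arsb)=0$ and $\Phi$ strictly decreasing through $\arsb$. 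Because $k$ is large the competing fixed points can be excluded and $\Phi$ estimated by a perturbative analysis in which all clause-satisfaction probabilities are $O(2^{-k})$. Then for $\alpha>\arsb$ Markov's inequality gives $\P(\text{sat})=\P(\ZZ_{\text{all}}\ge1)\le\E\ZZ_{\text{all}}\to0$.

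\textbf{Lower bound --- the crux.} For $\alpha<\arsb$ the target is a second-moment estimate $\E[\ZZ^2]\le C(\alpha)\,(\E\ZZ)^2$, up to a multiplicative correction coming from short-cycle counts. Now $\f1n\log\E[\ZZ^2]$ is the supremum of a \emph{pair} Bethe functional $\Phi_2$ over joint laws of two coupled message systems, and what must be shown is $\sup\Phi_2=2\sup\Phi_1$: the optimal pair law is the product of two independent copies of the 1-RSB fixed point, with no correlated competitor doing strictly better. This is the high-dimensional optimization that the paper's new analytic method is built to defeat. Rather than attack it directly, I would recast the stationarity conditions for $\Phi_2$ as a coupled tree recursion --- two BP systems running on a Galton--Watson tree, linked through an overlap field --- and prove that this recursion contracts to the product fixed point, that the product point is a strict local maximum, and that no rival critical point attains the same value. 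The hard part, where essentially all the difficulty sits, is the genuinely correlated regime in which the two copies agree on a positive fraction of variables: one must rule out a second local maximum there. For large $k$ this is a delicate but ultimately perturbative task, since the overlap of two random clusters concentrates and the departure from independence is exponentially suppressed in $k$; controlling the recursion uniformly over the correlated region, together with the attendant entropy-versus-energy bookkeeping, is the main obstacle of the whole proof.

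\textbf{Conclusion.} Granting $\E[\ZZ^2]\le C(\alpha)(\E\ZZ)^2$ up to cycle corrections, small-subgraph conditioning (Robinson--Wormald: the numbers of cycles of each fixed length are asymptotically independent Poissons and account for the excess second moment) upgrades the Paley--Zygmund bound to $\liminf_n\P(\ZZ\ge1)>0$, hence $\liminf_n\P(\text{sat})>0$ for every $\alpha<\arsb$. Since satisfiability is a monotone property of the clause multiset, Friedgut's sharp-threshold theorem for $k$-SAT then forces $\P(\text{sat})\to1$ throughout $\alpha<\arsb$: if the threshold location $\alpha_k(n)$ did not converge to $\arsb$, monotonicity together with the lower bound just established and the first-moment upper bound of the previous step would be contradicted. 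Combining the two halves yields $\asat=\arsb$ as claimed in Theorem~\ref{t:main}. The first, second and fourth steps use by-now-standard technology; the novel heart of the argument --- and the step I expect to be the genuine obstacle --- is the reduction of the pair optimization to a tree recursion and the proof that its maximum lies on the diagonal.
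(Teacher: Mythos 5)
There is a genuine gap, and it sits in both halves of your outline. For the upper bound you propose Markov's inequality on the annealed number of clusters, identified through the frozen/coloring encoding so that $\{\textup{sat}\}=\{\ZZ_{\textup{all}}\ge1\}$. This does not work as stated. First, the combinatorial proxy is wrong: the all-$\free$ vector is a valid frozen configuration on \emph{every} formula (no variable is forced, no clause violated), so the countable object is always $\ge1$ and Markov can never certify unsatisfiability; the genuine cluster count $\bm{\Omega}$ is not a locally-defined quantity whose first moment you can compute by Laplace/Bethe methods. Second, and more fundamentally, even for the true cluster count the annealed exponent $\tfrac1n\log\E\bm{\Omega}$ is \emph{strictly larger} than $\Phi(\alpha)$ in the relevant range, because the expectation is dominated by exponentially rare graphs --- this is exactly the lack of ``local homogeneity'' of \S\ref{ss:moments}: $\E[\bm{\Omega}\,|\,\DD_{R+1}]$ fails to be stationary at the typical depth-$(R+1)$ profile for every $R$, due to the $\plus/\minus$ asymmetry. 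Hence the zero of the annealed exponent lies strictly above $\arsb$ and Markov yields only a non-sharp bound. The paper's upper bound (Propn.~\ref{p:ubd}) does not go through moments at all: it invokes the interpolation bound \eqref{e:fl.pt.ubd} of Franz--Leone/Panchenko--Talagrand for the positive-temperature model, evaluated at a $\onersb$ trial measure built from the fixed point $\mu$ of Propn.~\ref{p:fp}, and sends $\beta\to\infty$ to force $\Phi_\beta(\zeta)\to-\infty$ when $\alpha>\arsb$.

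The same local-homogeneity failure undermines the lower bound as you describe it: even if the pair Bethe functional were maximized at the product point, $\E[\ZZ^2]/(\E\ZZ)^2$ diverges exponentially for the untruncated cluster count, because both moments are inflated by neighborhood fluctuations --- this is precisely why the vanilla second moment fails at \emph{all} positive $\alpha$, independently of any overlap analysis. Your phrase ``suitably truncated'' is where essentially the entire content of the paper lives: one must condition on the empirical profile of depth-$R$ neighborhood types, preprocess the graph so that pathological neighborhoods are buffered (\S\ref{ss:introprep}), restrict to judicious colorings whose edge marginals are pinned to the canonical tree values (Defn.~\ref{d:judicious}), show that the resulting constrained pair optimization --- whose dimension diverges with $R$ --- is maximized at ${}_\otimes\omega$ via block updates with explicitly constructed Lagrange-multiplier weights (\S\ref{s:contract}--\ref{s:merge}), and only then take $R\to\infty$, with Friedgut's theorem absorbing the $\ep_R$ loss. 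Note also that the difficult overlap regime is the near-\emph{uncorrelated} window $I_0$ around agreement $1/2$ (Propn.~\ref{p:second.moment.judicious}); near-identical pairs are handled by the separability count (Propn.~\ref{p:sep}) and intermediate overlaps by the elementary estimate of Lem.~\ref{l:standard.second.mmt}, so ``ruling out a correlated maximizer'' is not where the novelty or the main obstacle lies. Your coupled tree-recursion/contraction picture is in the right spirit for the $I_0$ analysis, but without the neighborhood-profile conditioning and preprocessing it does not produce $\E[\ZZ^2]\le C(\E\ZZ)^2$, and without replacing your moment-based upper bound by the interpolation argument the identity $\asat=\arsb$ is not reached.
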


The study of the random $\ksat$ model has seen important contributions by researchers from several different communities --- probability theory, combinatorics, computer science, and statistical physics. In particular, the explicit characterization $\asat=\arsb$ emerged from the physics literature \cite{MPZ_Science,MR2213115}, via the so-called ``one-step replica symmetry breaking'' ($\onersb$) framework. Subsequent works \cite{MR2317690, montanari2008clusters} detailed the implications of $\onersb$ for the geometry of the solution space $\SOL$, and the resultant obstacles to locating the threshold. At the same time, a quite separate challenge posed by this model concerns the fluctuating local geometry of the underlying random (bipartite Erd\H{o}s--R\'enyi) $\ksat$ graph. This issue has been most notably considered within the probability and computer science communities \cite{1182003, MR2121043, MR2961553, MR3436404}.

The current paper is heavily guided by insights from the aforementioned works. We describe these connections in the remainder of this introductory section, which is organized as follows. In~\S\ref{ss:intro.rigorous} we survey the prior rigorous literature on random $\ksat$. We then turn to the statistical physics work on this problem (\S\ref{ss:intro.physics}), and describe the general notion of replica symmetry breaking (\textsc{rsb}) (\S\ref{ss:intro.rsb}). In the specific context of random $\ksat$, we explain (\S\ref{ss:intro.onersb}) how this manifests as \bemph{one-step} \textsc{rsb}, leading to an explicit threshold prediction (\S\ref{ss:intro.onersb.threshold}). Lastly we explain how the underlying graph geometry poses further challenges, and outline our proof strategy to deal with these issues (\S\ref{ss:intro.proof.overview}). 

\subsection{Prior rigorous results}\label{ss:intro.rigorous} Exact satisfiability thresholds have been rigorously shown in only a few models, including $k$-\textsc{xor-sat}~\cite{MR1972120, MR3455676} and random 1-in-$\ksat$~\cite{ACIM:01}. Also, as we remarked above, it has been proven for random \mbox{2-\textsc{sat}} \cite{2677892sat,MR1255142}, along with even finer results characterizing the scaling window \cite{MR1824274}. Compared with all these, however, random $\ksat$ for $k\ge3$ is believed to undergo a very different type of transition, as we explain below (\S\ref{ss:intro.rsb}).

For random $\ksat$, even the existence of $\asat$ was not known for any $k\ge3$. To date, the strongest result that applies for \emph{every} $k\ge2$ is Friedgut's theorem \cite{Friedgut:99}. It states that for each fixed $k\ge2$, there is a sharp threshold \emph{sequence} $\asat(n)$
such that for all $\ep>0$,
	\beq\label{e:friedgut}
	\lim_{n\to\infty}\P^{n,\asat(n)-\ep}(\textup{satisfiable})
	=1=\lim_{n\to\infty}
	\P^{n,\asat(n)+\ep}(\textup{unsatisfiable}).
	\eeq
The theorem does not imply that $\asat(n)$ converges to a unique limit, as the conjecture requires. It also gives no quantitative information on $\asat(n)$.

Complementing Friedgut's theorem, there have been many results giving quantitative bounds on $\asat(n)$, usually in the limit of large $k$. An easy calculation of the first moment of assignments gives a fairly accurate upper bound~\cite{FraPul:83}. Truncating the first moment to ``locally maximal'' solutions gives an even more precise bound 
	\[\limsup_{n\to\infty}\asat(n)
	\le 2^k\log 2-\f12(1+ \log 2) + \ep_k
	\quad\textup{\cite{KKKY:98}}\,.\]
In the above and throughout what follows, $\ep_k$ denotes any error term that tends to zero as $k\to\infty$. This upper bound is already correct in the second-order term. In contrast, all early \emph{lower} bounds for the $\ksat$ threshold, which were generally algorithmic in nature, missed the true threshold by a large multiplicative factor --- the current best algorithmic result \cite{MR2544853} gives a lower bound of order $2^k(\log k)/k$ while the threshold is of order $2^k$.

More recent advances in lower bounding $\asat$ have all taken a non-algorithmic route --- via the second moment method, in combination with Friedgut's theorem. This route, initiated by \cite{1182003}, faces two major challenges in the random $\ksat$ model. In brief, the first concerns the geometry of the solution space $\SOL\subseteq\set{\plus,\minus}^n$, while the second concerns the geometry of the underlying bipartite graph --- we explain these further below. Important advances on the second issue led to a series of improvements in the lower bound:
	\beq\label{e:intro.lbd.rs}
	\liminf_{n\to\infty}\asat(n)\ge
	\left\{\begin{array}{ll}
	2^{k-1}\log2 - O(1)
	&\textup{\cite{1182003}}\,;\\
	2^k\log 2-O(k)
	&\textup{\cite{MR2121043}}\,;\\
	2^k \log 2 -\tfrac32 \log 2 + \ep_k	&\textup{\cite{MR2961553}}\,.
	\end{array}\right.
	\eeq
These works did not address the first issue (the solution space geometry), which was first discussed in the physics literature. Coja-Oghlan and Panagiotou were the first to address both issues simultaneously: they prove
	\beq\label{e:intro.best.prev}
	\liminf_{n\to\infty}\asat(n)\ge
	2^k\log 2-\f12(1+ \log 2) - \ep_k
	\quad\textup{\cite{MR3436404}}\,,\eeq
matching the upper bound of \cite{KKKY:98} up to $\ep_k$. This gives the best estimate of the $\ksat$ threshold prior to the current work, which closes the $\ep_k$ gap for large $k$.

To explain the difficulties in pinning down an exact threshold, we turn next to a survey of the statistical physics heuristics for this model, leading to the explicit characterization of~$\asat$. Having done this, we can then give a more detailed account of the earlier advances in rigorous lower bounds, as well as the obstacles that remain. With this context, we give an overview of our proof approach at the conclusion of this section.

\subsection{Statistical physics}\label{ss:intro.physics} Statistical physicists became interested in random~\textsc{csp}s as examples of \bemph{spin glasses}, which are models of disordered systems (see e.g.\ \cite{mezard1985replicas,MPV1987}). Perhaps the most extensively studied such model is the \bemph{Sherrington--Kirkpatrick} (SK) spin glass \cite{sk1975}: let $(g_{ij})_{i,j\ge1}$ be an array of i.i.d.\ gaussian random variables with mean zero and variance $2/n$. The SK spin glass is defined as the probability measure on $\uBX\in\set{\plus,\minus}^n$ given by
	\[\mu(\uBX) = \f{1}{Z}
	\prod_{1\le i<j\le n}
	\exp\{ \beta g_{ij}
	\BX_i\BX_j\}\,,\]
where $Z$ is the partition function (normalizing constant). The measure $\mu$ of course depends on the $g_{ij}$, so it is a random measure supported on $\set{\plus,\minus}^n$. Parisi conjectured in a series of seminal papers \cite{parisi1979infinite, parisi1980order, parisi1980sequence, parisi1983order} that the SK measure has intricate asymptotics, characterized by an infinitely nested hierarchy. Key aspects of this prediction have been rigorously proved in celebrated works \cite{MR1957729, MR2195134, MR2999044}.

The analogue of the SK measure in random $\ksat$ is the uniform measure $\nu$ over the solution space $\SOL$, which can be regarded as a random measure on $\set{\plus,\minus}^n$:
	\beq\label{e:unif.sol}\nu(\uBX) =\f1Z
		\prod_{1\le j\le M}
		\mathbf{1}\bigg\{\hspace{-4pt}
		\begin{array}{c}
		\uBX\text{ satisfies}\\
		\text{clause $a_j$}
		\end{array}
		\hspace{-4pt}\bigg\}
	=\f{\Ind{\uBX\in\SOL}}Z 
	\eeq
where in this context $Z=|\SOL|$. In contrast with SK, however, it turns out that $\ksat$ exhibits the most interesting behavior when the number of constraints scales proportionally to the number of variables --- in other words, when the graph of interactions $\GG$ is \bemph{sparse}. This is a central distinction from the SK model, where the graph of interactions is the complete graph on $n$ vertices.

An extensive statistical physics literature demonstrates how heuristics for the SK model (and a larger family of \bemph{$p$-spin models}) can be adapted to the analysis of sparse random \textsc{csp}s such as random $\ksat$. In one sense, the sparsity of interactions makes these models more challenging to analyze. In the SK model, because each vertex has a large number of neighbors, there is a self-averaging effect which is crucial to the analysis. The effect does not occur on sparse graphs, and this turns out to pose major difficulties in the mathematical study of random $\ksat$ and other sparse models. For example, sparse versions of the SK and $p$-spin models have been studied \cite{MR3098679, MR3180967, MR3334277}, but remain not nearly as well understood as the complete graph versions (a very incomplete list of references includes, e.g., \cite{MR3052333,MR3320318,MR3555353,MR4078711,MR3487243,subag2021following,montanari2021optimization,MR4207445}).

In spite of this, random $\ksat$ is expected to exhibit behaviors which are very similar to those of SK, and in certain aspects significantly simpler. In particular, while the SK model is described by an infinitely nested hierarchy ($\infrsb$ or $\fullrsb$), many random \textsc{csp}s --- $\ksat$ included --- are conjecture to be described by a depth-one hierarchy ($\onersb$). This is central to our understanding of this problem, and we describe this next.

\subsection{Replica symmetry and cavity methods}
\label{ss:intro.rsb} Let $\nu$ be a random measure on $\set{\plus,\minus}^n$
(such as in \eqref{e:unif.sol}). For any function $f:(\set{\plus,\minus}^n)^\ell\to\mathbb{R}$, we let $\qav{f}_\nu$ denote the expected value of $f$ if its $\ell$ arguments are independent samples of $\nu$: explicitly,
	\[\qav{f}_\nu
	\equiv
	\sum_{\uBX^1,\ldots,\uBX^\ell}
	f(\uBX^1,\ldots,\uBX^\ell)
	\prod_{j\le\ell} \nu(\uBX^j)\,.\]
In the physics terminology, the $\uBX^j$ are \emph{replicas} of system $\nu$. Let $R$ be the \emph{overlap} (normalized inner product) between two replicas, $R=n^{-1}(\uBX^1\cdot\uBX^2)$. 
The measure $\nu$ is termed \bemph{replica symmetric} (\textsc{rs}) if this overlap is well-concentrated, in the sense of
	\beq\label{e:rs.overlap}
	\bm{v}_n=\E\bigg\{ \qav{R^2}_\nu
	-(\qav{R}_\nu)^2\bigg\}=o_n(1)\,.
	\eeq
Otherwise $\nu$ is said to be
\bemph{replica symmetry breaking} (\textsc{rsb}).
Note that one can rewrite $\bm{v}_n$ as the average, over all pairs $i,j\in[n]$, of the expected correlation between $x^1_i x^2_i$ and $x^1_j x^2_j$,
	\[\corr_{i,j} \equiv \E
	\bigg\{\qav{x^1_i x^1_j}_\nu
	\qav{x^2_ix^2_j}_\nu
	-\qav{x^1_i}_\nu\qav{x^1_j}_\nu
	\qav{x^2_i}_\nu\qav{x^2_j}_\nu
	\bigg\}\,.\]
The \textsc{rs} condition \eqref{e:rs.overlap}
says that the average correlation $\corr_{i,j}$ is small. Non-concentration of the overlap (\textsc{rsb}) indicates
the presence of \bemph{long-range correlations}.

If $[n]\equiv\set{1,\ldots,n}$ is the vertex set of a \bemph{sparse} graph, then most of the contribution to \eqref{e:rs.overlap} comes from vertices $i,j$ which are far apart in the graph. Thus, in the sparse setting, \textsc{rs} is regarded by physicists as being equivalent to \bemph{correlation decay}: if $\uBX$ is a sample from $\nu$, then $\BX_i,\BX_j$ are roughly independent if $i,j$ are far apart in the graph. In other words, \emph{in an \textsc{rs} model, the behavior around a vertex $i\in[n]$ depends only on its local neighborhood.} The commonly studied sparse random graph models are locally tree-like --- for example, the random $\ksat$ graph converges locally in law to a certain (multi-type) Galton--Watson tree. It is expected that sparse models in the \textsc{rs} regime can be accurately analyzed by a certain set of tree approximations, which generally go under the name of \bemph{belief propagation} (\textsc{bp}), or \bemph{replica symmetric cavity methods}. In this viewpoint, roughly speaking, the stochastic process on the finite graph is approximated by a stochastic process on the limiting tree.

We shall not go into many more details on the \textsc{rs} cavity method, pointing instead to the literature (\cite[Ch.~14]{MR2518205} and refs.\ therein) for details. We only note here that a key step in the method is to compare graphs $\GG,\GG'$ where $\GG'$ is $\GG$ with a random clause $a\in F$ removed. Let $\pd a$ denote the variables incident to $a$ in graph $\GG$: these variables are most likely well-separated in $\GG'$. Thus, in the \textsc{rs} (correlation decay) regime, one can treat these variables as \emph{(approximately) independent, with laws depending only on their local neighborhoods in $\GG'$.} This is a key simplification, leading to explicit tree recursions which can be analyzed. In \textsc{rs} models this is a powerful analytic tool. It leads further to an explicit prediction for the free energy of the model, expressed in terms of a fixed point of the tree recursions --- the \bemph{\textsc{rs} free energy} or \bemph{Bethe free energy} (see \cite{MR2246363}). In \textsc{rsb} models, however, it is expected that this method yields false predictions, as the lack of correlation decay invalidates the independence assumption. We discuss this next in the context of \textsc{sat}.

\subsection{Condensation and one-step replica symmetry breaking}\label{ss:intro.onersb} In a broad class of models, it is believed that \textsc{rsb} arises due to the formation of \bemph{clusters}, which are loosely defined as dense regions of the measure. Specifically, for $t\ge1$ integer, \bemph{$t$-step replica symmetry breaking} ($\trsb$) is the special case of \textsc{rsb} in which the overlap $R$ concentrates on exactly $t+1$ values. The way this can occur is that in the cube $\set{\plus,\minus}^n$, there is a hierarchy of scales $\delta_0\gg \delta_1\gg\ldots\gg \delta_t$ such that there are many clusters of mass~$\delta_i$ nested within each cluster of mass~$\delta_{i-1}$. The maximal scale $\delta_0$ refers to the entire space $\set{\plus,\minus}^n$. This scenario is often summarized by a depth-$t$ tree, where the vertices at depth $j$ correspond to the clusters at scale $\delta_j$. It was proposed by Parisi (refs.\ cited above) that the $t\to\infty$ limit describes symmetry breaking in the SK model.

By contrast, later works (\cite{MPZ_Science,MR2317690} and refs.\ therein) indicated that random $\ksat$ and several other sparse \textsc{csp}s of interest exhibit $\onersb$, corresponding to a single-depth hierarchy $\delta_0\gg\delta_1$. In fact, random $\ksat$ is believed to have a rich phase diagram (Figure~\ref{f:phase}) which includes both \textsc{rs} and $\onersb$ regimes. As we next describe, physicists predict a \bemph{condensation threshold} $\acond \in (0,\asat)$, which marks the onset of $\onersb$.

\begin{figure}[h!]\includegraphics[height=1.4in]{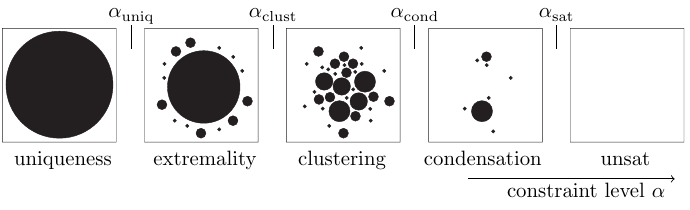}\caption{\textit{Figure adapted from \cite{MR2317690}.} Conjectural phase diagram of random $\ksat$: each panel depicts the typical geometry of the solution space $\SOL\subseteq\set{\plus,\minus}^n$ in a different regime of the constraint level $\alpha$. The \bemph{satisfiability threshold} $\asat$ is the point beyond which $\SOL$ is empty with high probability. The \bemph{condensation threshold} marks the onset of symmetry breaking: the model is \textsc{rs} for $\alpha<\acond$, and is $\onersb$ for $\acond<\alpha<\asat$.}\label{f:phase}\end{figure}

This conjectural phase diagram was derived \cite{zdeborova2007phase,montanari2008clusters,MR2317690} in the following manner. It starts from the hypothesis that the model is at most $\onersb$. This means that there is at most one hierarchy of clustering, or equivalently that \emph{clusters are replica symmetric}. This means that one can successfully apply \textsc{rs} inference methods, but at the level of clusters rather than individual solutions. Let $\bm{\Omega}$ count the total number of clusters in $\SOL$. For $0\le s\le \log2$, let $\bm{\Omega}_s$ count only those of size approximately $\exp\{ns\}$. By the \textsc{rs} cavity method applied at the level of clusters, it is possible to calculate an explicit function $\Sigma(s)$ such that $\bm{\Omega}_s$ concentrates around $\exp\{n\Sigma(s)\}$. Note the implicit dependence on $\alpha$; we write also $\Sigma(s)\equiv\Sigma(s;\alpha)$.

This calculation of $\Sigma(s)$, combined with other insights from the literature, lead physicists to suggest the phase diagram shown in Figure~\ref{f:phase} \cite{MR2317690}. As soon as $\alpha$ crosses a \bemph{clustering threshold} $\adrsb$, the curve $\Sigma(s)$ becomes positive for some interval of $s$-values. There is some rigorous evidence for the clustering and related phenomena \cite{4691011,Gamarnik:2014:LLA:2554797.2554831}. Note that if $\Sigma(s)\ge0$, the clusters of size $\exp\{ns\}$ contribute roughly $\exp\{n[s+\Sigma(s)]\}$ to the total number of solutions. If $\Sigma(s)<0$, clusters of size $\exp\{ns\}$ typically do not occur. One characterization of the condensation threshold is
	\[\acond
	=\inf\set{\alpha : s_1\ne s_\star }\]
where $s_1$ and $s_\star$ are both functions of $\alpha$, defined by
	{\setlength{\jot}{0pt}\begin{align*}
	s_1 &= \argmax_s
		\set{ s +\Sigma(s) : 0\le s\le \log2}\,,\\
	s_\star	&= \argmax_s
		\set{ s +\Sigma(s) :
		\textup{$0\le s\le \log2$ and
		$\Sigma(s)\ge0$}
		}\,.
	\end{align*}}%
For $\alpha<\acond$, the solution space $\SOL$ is dominated by clusters of size $\exp\{ns_1\}$, of which there are exponentially many ($\exp\{n\Sigma(s_1)\}$). Since each cluster carries a negligible fraction of the total mass, the asymptotic ($n\to\infty$) measure is understood as having no clusters --- this regime is therefore considered replica symmetric. By contrast, for $\alpha>\acond$, the solution space $\SOL$ is dominated by clusters of size $\exp\{n s_\star\}$, of which there are only a \emph{bounded} number because $\Sigma(s_\star)=0$. Thus there are clusters carrying a non-vanishing fraction of the total mass, so the asymptotic measure has non-trivial clusters and is considered to be genuinely replica symmetry breaking. This scenario persists up to
	\beq\label{e:arsb.abstract}
	\arsb= \sup\bigg\{\alpha : \max_s\Sigma(s)\ge0
		\bigg\}\,,\eeq
which is the $\onersb$ prediction for the satisfiability threshold.
 
\begin{figure}[h!]
\centering
\includegraphics{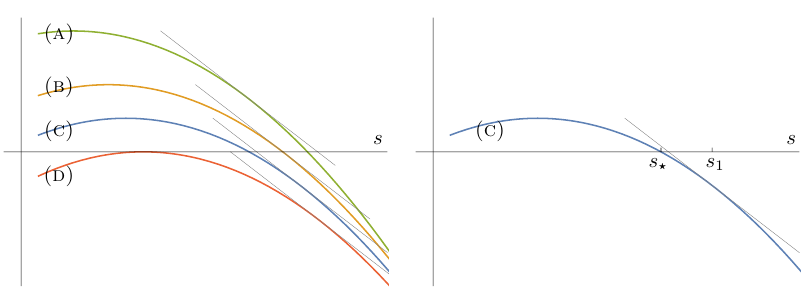}
\caption{The number of clusters of size roughly $\exp\{ns\}$ concentrates around its mean value $\exp\{n\Sigma(s)\}$. The left panel shows $\Sigma(s)\equiv\Sigma(s;\alpha)$ as a function of $s$ for four different values of $\alpha$, together with the tangent lines of slope $-1$. In increasing order of $\alpha$, the curves indicate
\textsc{(a)} $\adrsb<\alpha<\acond$,
\textsc{(b)} $\alpha=\acond$,
\textsc{(c)} $\acond<\alpha<\asat$, and
\textsc{(d)} $\alpha=\asat$.
The right panel shows curve \textsc{(c)} only and indicates the locations of $s_\star$ and $s_1$.}
\label{f:complexity}\end{figure}

We emphasize that the above derivation is highly non-rigorous, relying on unjustified assumptions regarding the measure $\nu$. Nevertheless we have included the above discussion in order to highlight some of the physics intuition. As we discuss in the next section, several of these ideas have had an important role in recent progress on the rigorous study of random \textsc{csp}s. We note also that key aspects of the condensation phenomenon have been rigorously verified in random graph coloring \cite{condensation} and random regular $\nae$ \cite{MR3566764,sly2016number}.

\subsection{Explicit threshold, and sharp upper bound}\label{ss:intro.onersb.threshold}

In the physics perspective, since $\Sigma$ is an explicit function, $\arsb$ is already explicitly characterized by \eqref{e:arsb.abstract}. We now spell this out by making an explicit definition of a function $\Phi(\alpha)$ which corresponds to the physics prediction for $\max_s\Sigma(s;\alpha)$. Throughout what follows, we always assume that $k$ exceeds a large enough absolute constant $k_0$. Further, in view of known bounds (\S\ref{ss:intro.rigorous}) on $\asat$, we restrict attention to the regime
	\beq\label{e:alpha.regime}
	2^k \log 2-2 \equiv
	\albd\le\alpha\le\aubd \equiv 2^k\log2\,.
	\eeq
These assumptions will be made throughout the paper even when not explicitly stated.

Let $\PINT$ denote the space of probability measures on the half-open interval $[0,1)$ --- this means, in particular, that any $\mu\in\PINT$ gives zero measure to the event $\set{\eta=1}$. The interpretation of the measure $\mu$ can be explained roughly as follows (it will be formalized in \S\ref{ss:wp.recursions} below). For an edge $e=(av)$ with literal $\lit_{av}$, we can consider the law of $x_v$ ``in absence of $a$,'' i.e., ignoring the constraint imposed by clause $a$. A random variable $\eta$ sampled from the law $\mu$ (hereafter denoted ``$\eta\sim\mu$'') represents the probability 
``in absence of $a$'' that we have 
$x_v=\minus\lit_{av}$. The randomness of $\eta$ results from the randomness in the neighborhood structure of $v$.

The above interpretation leads naturally to recursive equations for the random variables $\eta$, which are termed ``survey propagation'' equations in the literature (see \cite{MR2351840} and references therein). We express this as a mapping $\Rec\equiv\Rec^\alpha:\PINT\to\PINT$ as follows. Given $\mu\in\PINT$, generate an array of i.i.d.\ samples from $\mu$,
	\beq\label{e:defn.array.ueta}
	\ueta \equiv \bigg(
	(\eta^\plus_{ij},\eta^\minus_{ij})_{i,j\ge1}
	\bigg)\,.\eeq
Let $d^\plus,d^\minus$ be $\Pois(\alpha k/2)$ random variables, independent of $\ueta$ and of one another: $d^\plus$ and $d^\plus$ represent the cardinalities of $\pd v(\plus a)$ and $\pd v(\minus a)$ respectively, where
	{\setlength{\jot}{0pt}\begin{align*}
	\pd v(\plus a)
	&\equiv \set{b\in\pd v\setminus a :
		\lit_{bv}=\lit_{av}}\,,\\
	\pd v(\minus a)
	&\equiv\set{b\in\pd v\setminus a :
		\lit_{bv}=\minus\lit_{av}}\,.
	\end{align*}}%
If $b$ is the $i$-th clause in $\pd v(\plus a)$, the chance ``in absence of $\pd v\setminus b$'' that $x_v$ is forced to equal $\lit_{bv}$ is represented by
	\[
	\hat{u}^\plus_i
	\equiv \prod_{j=1}^{k-1} \eta^\plus_{ij}
	\]
--- this corresponds to the chance ``in absence of $b$'' that $x_u=\minus\lit_{bu}$ for every $u\in\pd b\setminus v$. We define analogously $\hat{u}^\minus_i$ to refer to the $i$-th clause in $\pd v(\minus a)$. The chance ``in absence of $a$'' that none of the clauses in $\pd v(\plus a)$ are forcing to $x_v$ (meaning that the value $x_v=\minus\lit_{av}$ is permitted) is 
	\beq\label{e:Pi.PM}
	\Pi^\plus\equiv \prod_{i=1}^{d^\plus}
	\Big(1-\hat{u}^\plus_i\Big) 
	=\prod_{i=1}^{d^\plus}
	\bigg(1 - \prod_{j=1}^{k-1} \eta^\plus_{ij}
	\bigg)\,.
	\eeq
We can define analogously $\Pi^\minus$ which corresponds to the chance ``in absence of $a$'' that none of the clauses in $\pd v(\minus a)$ are forcing to $x_v$, meaning that the value $x_v=\lit_{av}$ is permitted. We sometimes write $\Pi^\PM\equiv\Pi^\PM(\ud,\ueta)$ 
to emphasize the dependence of $\Pi^\PM$
on the random variables $\ud\equiv(d^\plus,d^\minus)$
and $\ueta$ (from \eqref{e:defn.array.ueta}). Notice that if all the $\eta$'s belong to $[0,1)$, then $\Pi^\PM\in(0,1]$. The chance ``in absence of $a$'' that $x_v$ is not forced by $\pd v(\plus a)$, but is forced by some clause in $\pd v(\minus a)$, is
given by $\Pi^\plus(1-\Pi^\minus)$. However, if $x_v$ is simultaneously forced by both $\pd v(\plus a)$ and $\pd v(\minus a)$, this would invalidate the configuration, since it means that not all the clauses in $\pd v$ can be simultaneously satisfied. The chance ``in absence of $a$'' that $x_v$ is not simultaneously forced by both $\pd v(\plus a)$ and $\pd v(\minus a)$ is given by $\Pi^\plus+\Pi^\minus-\Pi^\plus\Pi^\minus$. We condition on being in a valid configuration simply by taking the ratio of the last two expressions, resulting in 
	\beq\label{e:intro.dist.recurs}
	R(\ud,\ueta)
	\equiv
	\f{\Pi^\plus(1-\Pi^\minus)}
	{ \Pi^\plus + \Pi^\minus - \Pi^\plus \Pi^\minus }\,.
	\eeq
We define $\Rec\mu$ to be the law of $R(\ud,\ueta)$. Since $\Pi^\PM\in(0,1]$, it follows that $R(\ud,\ueta)\in[0,1)$, so $\Rec\mu$ is indeed also an element of $\PINT$. 

\begin{ppn}[proved in \S\ref{ss:pgw.stability}: solution of $\onersb$ distributional recursion]\label{p:fp} Fix $k,\alpha$ and let $\Rec\equiv\Rec^\alpha$ as above. Let $\mu^\ell \equiv \mu^\ell(\alpha) \in\PINT$ ($\ell\ge0$) be the sequence of probability measures defined by $\mu^0=\delta_{1/2}$, and $\mu^\ell=\Rec \mu^{\ell-1}$ for all $\ell\ge1$. For $k\ge k_0$ and $\albd\le\alpha\le\aubd$, this sequence converges weakly as $\ell\to\infty$ to a limit $\mu=\mu^{\infty,\alpha}\in\PINT$, satisfying $\Rec\mu=\mu$.\end{ppn}

The following is the formal characterization of the $\onersb$ prediction $\arsb$ for the $\ksat$ threshold:

\begin{ppn}[proved in \S\ref{s:monotonicity}:
	characterization of $\onersb$ threshold prediction]
\label{p:phi} Given $k,\alpha$, let $\mu=\mu^{\infty,\alpha}$ be the fixed point of Proposition~\ref{p:fp}. Let $\ud$ and $\ueta$ be as above, and let $\ueta'\equiv(\eta_j)_{j\ge1}$ another sequence of i.i.d.\ samples from $\mu$ (independent of $\ud,\ueta$). Let
	\beq\label{e:phi.alpha}
	\Phi(\alpha)
	=\E\bigg[
	\log\f{ \Pi^\plus+\Pi^\minus-
		\Pi^\plus\Pi^\minus }
		{ (1- \prod_{j=1}^k \eta_j
			)^{\alpha(k-1)} }
	\bigg]\,,
	\eeq
where $\E$ indicates the expectation over $(\ud,\ueta,\ueta')$. For $k\ge k_0$, the function $\Phi$ is well-defined and strictly decreasing on the interval $\albd\le\alpha\le\aubd$, with a unique zero $\arsb\equiv\arsb(k)$.
\end{ppn}

Recall that a cluster means, generally, a dense region of the measure, where for us the measure of interest is the uniform measure \eqref{e:unif.sol} over the $\ksat$ solution space $\SOL\subseteq\set{\plus,\minus}^n$. In the regime that we study, with $k\ge k_0$ and $\albd\le\alpha\le\aubd$, it turns out that the clusters are well-separated, so that we can simply \emph{define} a cluster to be a connected component of $\SOL$. (Two assignments $\uBX,\uBX'\in\SOL$ are connected if they differ by a single bit.) With this definition, the above propositions describe the replica symmetric calculation for the \bemph{uniform measure on clusters}. Recall from \S\ref{ss:intro.rsb} that a key step of this calculation is to compute the distribution of a variable incident to a removed clause. In the above, $\eta\in[0,1)$ represents the probability for such a variable to be frozen to the $\minus$ value. The randomness in $\eta$, as described by $\mu$, reflects the random structure of the local neighborhood of this variable.

The normalization for the uniform measure on clusters is the total number $\bm{\Omega}$ of clusters, and $\Phi(\alpha)$ is the associated Bethe free energy. As discussed in \S\ref{ss:intro.rsb}, the measure $\mu$ should satisfy a tree recursion (the map $\Rec$ of Proposition~\ref{p:fp}), and $\Phi$ is expressed in terms of a fixed point for this recursion. The $\onersb$ conjecture for random $\ksat$ says that clusters are \textsc{rs} and so $\Phi(\alpha)$ correctly predicts the free energy, which would mean that $\bm{\Omega}$ concentrates about
	\[\exp\Big\{n\Phi(\alpha)\Big\}
	=\exp\bigg\{n\max_s\Sigma(s;\alpha)\bigg\}\]
(with high probability). This explains why $\arsb$ is defined as the root of $\Phi(\alpha)$.

We emphasize again that the above characterization of $\arsb$ already appears in the physics literature \cite{MR2213115}. To the best of our knowledge, however, it has not been formally proved to be well-defined. Indeed, the proofs of Propositions~\ref{p:fp} and \ref{p:phi} are based on a detailed recursive analysis, which we could not extend to all $k\ge3$. Nevertheless, these propositions do show that $\arsb$ is at least well-defined for $k$ large enough. Having verified this, it is relatively straightforward to deduce the sharp satisfiability upper bound:

\begin{ppn}[proved in \S\ref{ss:threshold.ubd}: $\onersb$ upper bound]
\label{p:ubd} For $k\ge k_0$, random $\ksat$ at $\alpha>\arsb(k)$ is with high probability unsatisfiable. \end{ppn}

Proposition~\ref{p:ubd}
is proved via previously known bounds
\cite{FrLe:03,MR2095932}
for the positive-temperature
version of the $\ksat$ model
 --- that is to say, the measure
	\[
	\nu(\uBX)
	= \f{\exp\{-\beta H(\uBX)\}}{Z(\beta)}
	\]
where $H(\uBX)$ is the number of clauses
violated by $\uBX$. For any $\beta\in[0,\infty)$, it is proved \cite{MR2095932} that 
	\beq\label{e:fl.pt.ubd}
	\f{\E[ \log Z(\beta) ]}{n}
	\le \inf_{\zeta,m}
	\Phi_{\onersb}(\beta,\zeta,m)
	\eeq
where $\zeta$ runs over the space of probability measures over $\mathbb{R}$, $m$ is the $\onersb$ ``Parisi parameter''
which goes over $[0,1]$, and the functional $\Phi_{\onersb}$ is given explicitly in the statement of Theorem~\ref{t:free.energy} below. The proof of the bound \eqref{e:fl.pt.ubd} in \cite{MR2095932} is based on a certain Hamiltonian interpolation scheme, inspired by related results for the SK spin glass \cite{MR1930572,MR1957729,FrLe:03}.

It remains for us to choose $\zeta$ and $m$ to obtain a good upper bound on \eqref{e:fl.pt.ubd}. The $\onersb$ heuristic suggests to choose $\zeta$ in a particular way, such that it is approximately a reparametrization of the measure $\mu$ (from Proposition~\ref{p:fp}). With this choice, we show that as soon as $\alpha$ exceeds $\arsb$, we have (see \eqref{e:neg.free.energy} below)
	\[
	\lim_{\beta\to\infty}\Phi_{\onersb}
		\bigg(\beta,\zeta,\f1{\beta^{1/2}}
		\bigg) <0
	\,.
	\]
This will imply there are no satisfying assignments with high probability, yielding Proposition~\ref{p:ubd}.

\subsection{Sharp lower bound} 
\label{ss:intro.proof.overview}

The main content of this paper is to prove the matching lower bound to Proposition~\ref{p:ubd}. As noted in \S\ref{ss:intro.rigorous}, all recent satisfiability lower bounds, including our current result, are proved by the 
second moment method together with Friedgut's theorem \cite{Friedgut:99}. We now briefly describe the main obstacles to this method, and how they are overcome in our analysis. A more extensive discussion is given in \S\ref{ss:moments}. 

As before, let $Z$ be the total number of $\ksat$ solutions, and let $\E$ denote expectation with respect to $\P=\poisP^{n,\alpha}$. The most basic version of the second moment method would be to prove 
	\[\limsup_{n\to\infty}
	\f{(\E Z)^2}{\E[Z^2]} <\infty\,.\]
The Cauchy--Schwarz inequality then gives 
	\[\liminf_{n\to\infty}\P(Z>0)>0\]
at this
value of $\alpha$; and Friedgut's theorem immediately
implies satisfiability with high
probability at any $\alpha'<\alpha$.

In fact, this basic version of the second moment method fails on random $\ksat$ at \bemph{any} positive clause density --- the ratio $\E[Z^2]/(\E Z)^2$ diverges with $n$ for any positive $\alpha$, including throughout the \textsc{rs} regime. The problem does not go away if we condition on the number of clauses, or make other minor modifications, as we discuss in more detail in \S\ref{ss:moments} below. At a high level, the reason is roughly as follows. Recall (\S\ref{ss:intro.rsb}) that in the \textsc{rs} regime, variables far apart in the graph $\GG$ are nearly independent, and the behavior of each variable depends only on its local neighborhood. In some ``locally homogeneous'' models, either all variables have the same local neighborhood, or there is a variety of local neighborhoods but they all give rise to the same variable behavior.\footnote{See the discussion of ``symmetric'' models in
 \cite[Appx.~A]{Coja-Oghlan:2013:GAK:2488608.2488698}. We use the phrase ``locally homogeneous'' rather than ``symmetric'' to avoid confusion with the (separate) issue of replica symmetry.}
This homogeneity does \bemph{not} hold for random $\ksat$ --- e.g., some variables are incident to more positive literals, and so are more likely to be \textsc{true}. In the \textsc{rs} regime, there is correlation decay
\bemph{conditional} on the graph structure --- but the moment calculation averages over the graph structure, and as a result non-negligible correlations arise. These ``local neighborhood correlations'' cause the second moment method to fail, and are a central difficulty of random $\ksat$.

In spite of this, the second moment method has been successfully applied to lower bound the number of $\ksat$ solutions in the \textsc{rs} regime. In all such results (\cite{1182003,MR2121043,Coja-Oghlan:2013:GAK:2488608.2488698}, see \eqref{e:intro.lbd.rs}), a key step is to make some truncation, or reweighting, such that the resulting model becomes locally homogeneous. The result of \cite{Coja-Oghlan:2013:GAK:2488608.2488698} is notable in that it also conditions on the degree profile of the $\ksat$ instance, an idea which had previously been applied in a simpler model \cite{MR2961553}. This decreases the effect of local neighborhood correlations and gives an improved lower bound.

The best lower bound prior to this work is due to Coja-Oghlan and Panagiotou (\cite{MR3436404}, see~\eqref{e:intro.best.prev}). This advance was especially significant in moving the lower bound past the conjectural condensation threshold of random $\ksat$. Inspired by the $\onersb$ heuristic, the proof of \cite{MR3436404} applies second moment method to the number of solution \bemph{clusters}, rather than the number of individual solutions. This strategy had previously been applied to improve the lower bound for random $\nae$ \cite{MR2961553}, and to obtain sharp satisfiability thresholds in some locally homogeneous models \cite{dss-naesat,MR3440193,MR3689942}. The result of \cite{MR3436404} further incorporates techniques developed in \cite{MR2961553,Coja-Oghlan:2013:GAK:2488608.2488698} for conditioning on the degree profile.

The result of \cite{MR3436404} demonstrates that applying the second moment method to the number of solution clusters, and conditioning on the degree profile, can give very good lower bounds. It became clear, however, that in order to achieve a sharp lower bound, it would be necessary to condition not only on the degree profile, but on the profile of local neighborhood structures to \bemph{arbitrarily large (constant) depth $R$}. The main work of this paper is to carry out this approach: we establish a satisfiability lower bound $\albd(R)$ for each $R$; and show that $\albd(R)\to\arsb$ in the limit $R\to\infty$.

Let us briefly indicate the main difficulties in implementing this strategy. The second moment computation reduces to an optimization problem over a vector $\omega$ of empirical marginals, broken down according to the $R$-neighborhood type --- the dimension of this problem diverges with $R$. The proof of \cite{MR3436404} solves a version of this problem for marginals $\omega$ broken down according to the variable degree. Their analysis relies on an important preprocessing step --- for $k$ large, removing $n\ep_k$ variables with atypical degree leaves behind an nearly regular graph. This allows for very explicit analysis of the second moment, but costs $\ep_k$ in the satisfiability lower bound.

To achieve a sharp lower bound, we can only afford to remove $n\ep_{k,R}$ variables with $\ep_{k,R}\to0$ in the limit $R\to\infty$. Thus we cannot hope to avoid including increasingly pathological vertices as $R$ grows. Instead, we devise a slightly elaborate preprocessing scheme which ensures that bad vertices are surrounded by large buffers of nice vertices. One portion of the paper is occupied with proving that this scheme indeed removes a vanishing fraction $\ep_{k,R}$ of variables.

It remains to solve the second moment optimization problem, where as input we have only rather rough \textit{a~priori} estimates on $\omega$ that are guaranteed by the preprocessing step. The central new idea in this paper is to update $\omega$ in \bemph{blocks} corresponding to trees inside the graph. By keeping the rest of $\omega$ fixed, we can reduce a non-convex optimization problem on large finite graphs to a convex optimization problem on finite trees of bounded (though diverging with $R$) depth, with some fixed boundary conditions. For the tree optimization we make a system of weights that act as Lagrange multipliers for the boundary conditions. The weights are set by an inductive construction, where the preprocessing step was specifically designed to ensure that the weights contract in the desired way. Once these weights are set, it becomes relatively easy to read off the desired second moment bound. This analysis is the main technical contribution of this paper, and may be appliable in the analysis of other models which are not locally homogeneous. We refer to Section~\ref{s:preprocess.defns} for a more detailed proof outline.

\medskip\noindent\textbf{Acknowledgements.} We thank Amir Dembo, Ahmed El Alaoui, Elchanan Mossel, Andrea Montanari, and Lenka Zdeborov\'a for many helpful conversations. We also wish to acknowledge the hospitality of the Theory Group at Microsoft Research Redmond, where part of this work was done. Ahmed El Alaoui and Andrea Montanari reviewed with us a draft of this paper and made innumerable valuable comments, and we especially thank them for their generosity. Finally, we are extremely grateful to the anonymous referee for their careful reading and detailed comments on the paper.

\section{Moment method, cluster encodings, and tree recursions}
\label{s:prelim}

In this section we introduce some of the preliminary formalisms that will be required for the proof. The section is organized as follows: 
\begin{enumerate}[--]
\item In \S\ref{ss:moments} we review the standard first and second moment calculations for random $\ksat$, and show that the second moment method fails in this model.

\item In \S\ref{ss:local.inhomogeneity.and.rsb}
we elaborate on two reasons for the failure of the second moment method:
the local inhomogeneity of the random $\ksat$ graph,
 and the phenomenon of large clusters (replica symmetry breaking) in the solution space.

\item In \S\ref{ss:frozen.model.and.coarsening} we introduce a combinatorial encoding of clusters, the so-called ``frozen model,'' which will be the basis of our modified moment method approach.

\item In \S\ref{ss:wp.and.color.models} we introduce two more combinatorial encodings, the warning propagation model and the color model. They are equivalent to the frozen model, but each model has its own drawbacks and advantages.

\item In \S\ref{ss:wp.recursions} we introduce tree recursions for the warning propagation model.
\item In \S\ref{ss:bp} we introduce weighted versions of the color model, and review the belief propagation (\textsc{bp}) equations in this context. We show that the tree recursions of \S\ref{ss:wp.and.color.models} can be recovered as a special case.

\end{enumerate}
Before proceeding further, we first review the formal definition of the model.

\begin{dfn}[bipartite factor graph]
\label{d:vfe}
A \bemph{bipartite factor graph} is a graph $\GG=(V,F,E)$ whose vertex set $V\cup F$ is partitioned into \bemph{variables} $V$ and \bemph{clauses} $F$,
with undirected edges $E$ joining variables to clauses.
We generically denote variables $u,v,w$, clauses $a,b,c$, and edges $e$. For each edge $e$, we write $a(e)$ for the incident clause and $v(e)$ for the incident variable. Each $e$ comes with a \bemph{sign} $\lit_e$, indicating whether the inclusion of variable $v(e)$ in clause $a(e)$ is positive ($\lit_e=\plus$) or negative ($\lit_e=\minus$). We allow for multi-edges, so $\GG$ might have two edges $e,e'$ joining $a$ to $v$ (possibly with $\lit_e\ne\lit_{e'})$). If there is a unique edge $e$ joining clause $a$ to variable $v$, we will sometimes denote it as $e=(av)=(va)$, and write $\lit_e\equiv\lit_{av}$. For any vertex $x\in V\cup F$, we write $\pd x$ for its neighboring vertices. Similarly we write $\delta x$ for the edges incident to $x$. For each clause $a\in F$ we regard $\pd a$ and $\delta a$ as \bemph{ordered tuples}. For each edge $e\in E$ we write $j(e)$ to indicate the position of $e$ in $\delta a(e)$, so $j(e)\in[k]$. For each variable $v\in V$ we partition its neighbors and incident edges according to edge sign:
	{\setlength{\jot}{0pt}
	\begin{alignat}{2}
	\nonumber
	\delta v(\plus)
		&\equiv\set{e\in\delta v
			: \lit_e=\plus}\,,\quad
	& \pd v(\plus)
		&\equiv\set{a(e) : e\in\delta v(\plus)}\,,\\
	\delta v(\minus)
		&\equiv\set{e\in\delta v
			: \lit_e=\minus}\,,\quad
	&\pd v(\minus) 
		&\equiv\set{a(e) : e\in\delta v(\minus)}\,.
	\label{e:var.signed.half.edges}
	\end{alignat}}%
(In scenarios with multi-edges, the sets
$\pd v(\plus)$ and $\pd v(\minus)$ can intersect, and are not in one-to-one correspondence with the sets $\delta v(\PM)$. For this reason, we always work with $\delta v(\PM)$ to avoid ambiguity, except in cases where multi-edges are expressly prohibited.) 
We will also refer to $\GG$ as a \bemph{\textsc{sat} problem instance}. Furthermore we call $\GG$ a \bemph{$\ksat$ problem instance} if each clause $a\in F$ has width $|\delta a|=k$.
\end{dfn}

\begin{dfn}[satisfying assignments]\label{d:formal.dfn.sat}
If $\GG=(V,F,E)$ is a bipartite factor graph as in Definition~\ref{d:vfe}, it defines a mapping
$\GG : \set{\plus,\minus}^V\to \set{0,1}$
as follows:
for any \bemph{variable assignment}
$\uBX\in\set{\plus,\minus}^V$,
	\[
	\GG(\uBX) 
	\equiv\prod_{a\in F}
		\bigg\{1 - \prod_{e\in\delta a}
			\f{1-\lit_e \BX_{v(e)}}{2}\bigg\}\,.
	\]
The instance $\GG$ is called \bemph{satisfiable} if and only if its set
	\[\SOL(\GG)
	\equiv\GG^{-1}(1)
	\equiv\bigg\{\ux\in\set{\plus,\minus}^V
		:\GG(\ux)=1\bigg\}
	\subseteq\set{\plus,\minus}^V\]
of \bemph{satisfying assignments} is nonempty.
\end{dfn}

\begin{dfn}[random $\ksat$]\label{d:formal.random.ksat} To generate an instance of the \bemph{random $\ksat$ model at clause density $\alpha$}, start with $n$ labelled variables $V = [n] \equiv \set{1,\ldots,n}$, as well as $M$ labelled clauses $F = [M]\equiv\set{1,\ldots,M}$, where $M$ is sampled from the $\Pois(n\alpha)$ distribution. Independently for each clause $a\in F$, sample its $k$-tuple of variables $\pd a$ uniformly at random from $V^k=[n]^k$, then sample the $k$-tuple of signs $(\lit_e)_{e\in\delta a}$ uniformly at random from $\set{\plus,\minus}^k$. This defines a family of probability measures $\poisP\equiv\poisP^{n,\alpha}$ over $\ksat$ instances, indexed by $n$ and parametrized by the expected clause density $\alpha$. Writing $\P_{n,m}$ for the measure $\poisP$ conditioned on $M=m$, we can decompose
	\beq\label{e:poisson.erdos.renyi}
	\poisP \equiv \poisP^{n,\alpha}
	= \sum_{m\ge0} \POIS_{n\alpha}(m)\P_{n,m}\,,\quad
	\POIS_\lm(m) \equiv \f{e^{-\lm}\lm^m}{m!}\,.\eeq
Note that the conditional measure $\P_{n,m}$ does not depend on $\alpha$.
\end{dfn}

\begin{rmk}\label{r:distance} A \textsc{sat} problem instance can be equivalently viewed as a hypergraph, with vertices and hyperedges corresponding to variables and clauses respectively. Each hyperedge should be viewed as an ordered tuple $(v_1,\ldots,v_k)$ of variables, with corresponding signs $(\lit_1,\ldots,\lit_k)$. A $\ksat$ instance thus corresponds to a $k$-uniform hypergraph. If $\GG=(V,F,E)$ is the bipartite factor graph representation of a \textsc{sat} instance, we assign length $1/2$ to all its edges, so that graph distances in $\GG$ will be consistent with the standard graph distances in the hypergraph representation. For any vertex $x$ in $\GG$ we will write $\pd_s x$ for the set of vertices at distance $s$ from $x$. We will write $N(x)\equiv \pd_1(x)$. If $v$ is a variable then $N(v)$ is the set of variables sharing a clause with $v$, and we often refer to these as the ``neighboring variables of $v$.''
\end{rmk}

\subsection{Moments of satisfying assignments}
\label{ss:moments}

We now review the standard first and second moment calculations for random $\ksat$. The purpose of this discussion is to illustrate the main obstructions to proving sharp bounds on the satisfiability threshold. These issues were known prior to our work, and we refer to further detailed discussions in the existing literature \cite{1182003,MR2121043}.

For comparison, we will present the moment calculations for random $\ksat$ as well as a closely related model, 
\bemph{random $\knae$}, which has also been extensively studied (notably by \cite{1182003,MR2961553}). On a bipartite factor graph $\GG=(V,F,E)$, a variable assignment $\ux\in\set{\plus,\minus}^V$ is called an \bemph{not-all-equal-\textsc{sat} ($\nae$) assignment} if both $\ux$ and $\minus\ux$ are valid \textsc{sat} assignments. Thus, while the \textsc{sat} assignments of $\GG$ are given by $\SOL(\GG)\equiv\GG^{-1}(1)$, the $\nae$ assignments are given by $\NAE(\GG)\equiv \SOL(\GG)\cap[-\SOL(\GG)]$. We define the corresponding \bemph{partition functions}
	\begin{align*}
	Z\equiv Z(\GG)
	&\equiv \Big|\SOL(\GG)\Big|
	\equiv\bigg|\bigg\{
	\textup{\textsc{sat} assignments of $\GG$}
	\bigg\}\bigg|\,,\\
	\ZNAE\equiv\ZNAE(\GG)
	&\equiv
		\Big|\NAE(\GG)\Big|
	\equiv
	\bigg|\bigg\{
	\textup{\textsc{nae-sat} assignments of $\GG$}
	\bigg\}\bigg|\,.
	\end{align*}
Clearly, $\ZNAE(\GG) \le Z(\GG)$ for any instance $\GG$.

For $M\sim\Pois(n\alpha)$, we have
 $|M-n\alpha|\le n^{1/2}\log n$ with high probability. We can then see from \eqref{e:poisson.erdos.renyi} that in order to show $\poisP^{n,\alpha}(E_n)=o_n(1)$ for some event $E_n$, it is sufficient to show $\P_{n,m}(E_n)=o_n(1)$ uniformly over all $m$ satisfying $|m-n\alpha| \le n^{1/2}\log n$. We will compute first and second moments of $Z(\GG)$ and $\ZNAE(\GG)$ for $\GG$ distributed according to the conditional measure $\P_{n,m}$. The reason to work with $\P_{n,m}$ rather than $\poisP^{n,\alpha}$ is that the fluctuations in $M$ alone are already enough to make the second moment method fail under $\poisP^{n,\alpha}$.
Fixing the number of clauses is an easy way to remove some variance from the second moment calculation, and will allow us to see the more difficult sources of variance that remain after $m$ is fixed. The second moment also fails under $\P_{n,m}$ for reasons that are more subtle and that determine the proof strategy.

For ease of exposition, for the current discussion
we will assume that assume $n\alpha$ is an integer, and consider $\P_{n,m}$ only for $m=n\alpha$.\footnote{Strictly speaking, we should fix $\alpha$ and consider $m=n\alpha'$ for all $|n(\alpha'-\alpha)|\le n^{1/2}\log n$. However the calculations that follow are not very sensitive to slight perturbations in $\alpha$: it the moment method gives satisfiability (or unsatisfiability) with high probability under $\P_{n,n\alpha}$, then it also gives the same result under $\P_{n,n\alpha'}$ for $|n(\alpha'-\alpha)|\le n^{1/2}\log n$, unless $\alpha$ is exactly at a threshold. For this reason we prefer to keep the notation simple in this introductory discussion, and consider only $\alpha=\alpha'$.} Let $\E_{n,n\alpha}$ denote expectation with respect to $\P_{n,n\alpha}$. We then have
	\begin{align}
	\label{e:first.mmt.sat}
	\E_{n,n\alpha} Z
	&=2^n \bigg(1-\f1{2^k}\bigg)^m
	= \exp\bigg\{n\bigg[
	\log2+\alpha\log\bigg(1-\f1{2^k}\bigg)
	\bigg]
	\bigg\}
	\equiv\exp\{n\ff_\textsc{sat}(\alpha)\}
	\,,\\
	\E_{n,n\alpha} \ZNAE
	&=2^n \bigg(1-\f2{2^k}\bigg)^m
	= \exp\bigg\{n
	\bigg[
	\log2+\alpha\log\bigg(1-\f2{2^k}\bigg)
	\bigg]
	\bigg\}
	\equiv\exp\{n\ff_\textsc{nae}(\alpha)\}\,.
	\nonumber
	\end{align}
Write $\alpha_1$ for the solution of $\ff_\textsc{sat}(\alpha)=0$; this is the \bemph{first moment threshold} for random $\ksat$. For $\alpha>\alpha_1$ the expected value of $Z$ is exponentially small in $n$, so (by Markov's inequality) it holds with high probability that $Z$ is zero, meaning that the instance is unsatisfiable. 
Thus the satisfiability transition for random $\ksat$
is upper bounded by the first moment threshold,
which occurs just below $2^k\log2$.
The analogous statement holds for random $\knae$: the satisfiability transition is upper bounded by the solution of $\ff_\textsc{nae}(\alpha)=0$, which occurs just before $2^{k-1}\log2$.


To lower bound the satisfiability threshold, one approach is to apply the \bemph{second moment method}, based on the following consequence of the Cauchy--Schwarz inequality:
	\beq\label{e:paley.weiner}
	\f{(\E Z)^2}{\E[Z^2]}
	=\f{(\E[ Z\Ind{Z>0} ])^2}{\E[Z^2]}
	\le \P(Z>0)\,.\eeq
In the most naive application, one could take $\E$ to be expectation with respect to the overall measure $\poisP^{n,\alpha}$, and try to show a bound of the form
	\beq\label{e:most.naive.second.moment}
	\limsup_{n\to\infty}
	\f{\E (Z^2)}{(\E Z)^2}
	\le C(k,\alpha)<\infty\,,
	\eeq
for any positive $\alpha$. In fact, as we already alluded to, the bound \eqref{e:most.naive.second.moment} is false for all positive $\alpha$: indeed, one can use the above calculation \eqref{e:first.mmt.sat} to see that
for all $|n(\alpha'-\alpha)|\le n^{1/2}\log n$, we have
	\beq\label{e:comparing.second.mmt.if.cond.on.m}
	\E_{n,n\alpha'}Z
	= \exp\{n \ff_\textsc{sat}(\alpha') \}
	\ll
	\sum_{m\ge0}
	\POIS_{n\alpha}(m)\E_{n,m} Z
	=\E Z\,,
	\eeq
with $\POIS_{n\alpha}$ as defined by \eqref{e:poisson.erdos.renyi}. 
It follows by Markov's inequality that $Z \ll \E Z$ with high probability. This implies that \eqref{e:most.naive.second.moment} must be false, otherwise
we would have a contradiction to \eqref{e:paley.weiner}.
By a similar calculation, the bound \eqref{e:most.naive.second.moment}
also fails with $\ZNAE$ in place of $Z$.

A more promising approach is to try to establish a conditional second moment bound, of the form
	\beq\label{e:second.mmt.given.m}
	\limsup_{n\to\infty}\bigg[
	\sup\bigg\{\f{\E_{n,m}(Z^2)}{(\E_{n,m} Z)^2} 
	: |m-n\alpha| \le n^{1/2}\log n
	\bigg\} \bigg]
	\le C(k,\alpha) <\infty\,.\eeq
If \eqref{e:second.mmt.given.m} were to hold
at some positive clause density $\alpha$, then substituting it into \eqref{e:paley.weiner} would show that satisfiability occurs with asymptotically positive probability under $\poisP^{n,\alpha}$:
	\[\liminf_{n\to\infty}
	\poisP^{n,\alpha}(\textsc{sat})
	\ge
	\liminf_{n\to\infty}\bigg[
	\inf\bigg\{
	\P_{n,m}(\textsc{sat})
	: |m-n\alpha| \le n^{1/2}\log n
	\bigg\}\bigg]
	\ge
	\f1{C(k,\alpha)}>0\,.
	\]
Then Friedgut's theorem~\eqref{e:friedgut}
immediately gives satisfiability with high probability at any $\alpha-\ep<\alpha$, which would imply a satisfiability lower bound, $\asat \ge \alpha$. 

In fact, we will see that the bound \eqref{e:second.mmt.given.m} also fails for random $\ksat$ at all positive $\alpha$, although it gives a non-trivial lower bound for the random $\knae$ model (\cite{1182003}, and reviewed below). To see this, we decompose
	\beq\label{e:pairs.sat.assignments.overlap}
	Z^2
	= \sum_z Z^2[z]\,,\quad
	Z^2[z]
	\equiv \Bigg| \Bigg\{ \hspace{-3pt}
	\begin{array}{c}
	 \text{pairs }
	(\uBX^1,\uBX^2) \in
		\SOL(\GG)\times\SOL(\GG)
		\\
	\text{with $|\set{v\in V :x^1_v=x^2_v }|=nz$}
		\end{array}
	\hspace{-3pt} \Bigg\} \Bigg|,
	\eeq
where the sum is over $z=j/n$ for integer $0\le j\le n$. For example,
\begin{enumerate}[(i)]
\item The value $z=1$ corresponds to identical configurations $\ux^1=\ux^2$, and $Z^2[1]=Z$;

\item The value $z=0$ corresponds to antipodal configurations $\ux^1=\minus\ux^2$, and $Z^2[0]=\ZNAE$;

\item The value $z=1/2$ corresponds to configurations $\ux^1,\ux^2$ that ``look independent'' in the sense that if $v$ is a uniformly random variable, knowing $x^1_v$ does not give any information about $x^2_v$.

\end{enumerate}
For each $z$ we calculate the corresponding second moment contribution to be
	\[
	\E_{n,n\alpha}[Z^2[z]]
	= 2^n \binom{n}{nz}
	\bigg( 1 - \f{2}{2^k} + \bigg(\f{z}{2}\bigg)^k \bigg)^{n\alpha}
	= \f{\exp\{ n
	\ff_{\textsc{sat},2}(z;\alpha) \}}
	{ n^{O(1)}}\,,\]
where the exponent $\ff_{\textsc{sat},2}(z;\alpha)$
can be derived using Stirling's formula,
and does not depend on $n$:
	\beq\label{e:basic.second.moment.z}
	\ff_{\textsc{sat},2}(z;\alpha)\equiv
	\underbrace{
	\bigg[\log2 + \Ent(z)
	\bigg]
	}_{\text{entropy term}}
	+
	\underbrace{\alpha\log\bigg( 1 - \f{2}{2^k}
		+ \bigg(\f{z}{2}\bigg)^k 
		 \bigg)}
		_{\text{probability term}}
	\eeq
where $\Ent(z)=-z\log z-(1-z)\log(1-z)$ denotes the standard entropy function. For comparison, 
	\[\ff_{\textsc{nae},2}(z;\alpha)\equiv
		\underbrace{\bigg[
		\log2+\Ent(z)\bigg]}_{\text{entropy term}}
		+ 
		\underbrace{\alpha\log\bigg(
		1 - \f{4}{2^k}
			+ \f{(z^k + (1-z)^k) 2}{2^k}
		\bigg)}_{\text{probability term}}\]
gives the corresponding exponent for 
$\E[(\ZNAE)^2[z]]$.

By comparing \eqref{e:first.mmt.sat} with \eqref{e:basic.second.moment.z}, we find that
$\ff_\textsc{sat}(1/2;\alpha)-2\ff_\textsc{sat}(\alpha)$ is exactly zero. However, one can notice in \eqref{e:basic.second.moment.z}
that the entropy term is maximized at $z=1/2$, but for any positive $\alpha$ the probability term is strictly increasing with $z$. This will mean that 
$\ff_{\textsc{sat},2}(z;\alpha)$
has strictly positive derivative
with respect to $z$ at $z=1/2$, i.e., $z=1/2$
is not the maximizer for $\ff_{\textsc{sat},2}(z;\alpha)$. This will imply that the ratio
	\[
	\f{\E_{n,n\alpha}[Z^2]}
	{(\E_{n,n\alpha} Z)^2}
	= n^{O(1)}
	\exp \bigg\{ n \sup\bigg\{
	\ff_{\textsc{sat},2}(z;\alpha)
	-2\ff_\textsc{sat}(\alpha)
	: 0\le z\le 1
	\bigg\}\bigg\}
	\]
grows exponentially with $n$, which is in contradiction to \eqref{e:second.mmt.given.m}. By contrast, the function $\ff_{\textsc{nae},2}$ is \emph{stationary} at $z=1/2$ at any fixed $\alpha$, in fact attains its global maximum at $z=1/2$ for a non-trivial range of $\alpha$; see Figure~\ref{f:overlap}. This observation was used in previous work (\cite{1182003}, see \eqref{e:intro.lbd.rs}) to lower bound the satisfiability threshold for random $\knae$. 

\begin{figure}[h!]
\centering
\begin{subfigure}[b]{0.47\textwidth}
\centering
\includegraphics[width=\textwidth]{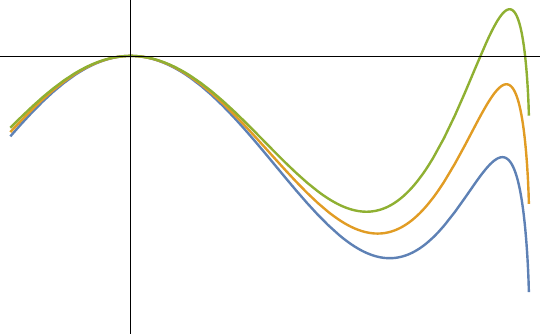}
\caption{$z\mapsto\ff_{\textsc{nae},2}(z;\alpha)
	-2\ff_\textsc{nae}(\alpha)$)}
\label{f:overlap.nae}
\end{subfigure}
\quad
\begin{subfigure}[b]{0.47\textwidth}
\centering
\includegraphics[width=\textwidth]{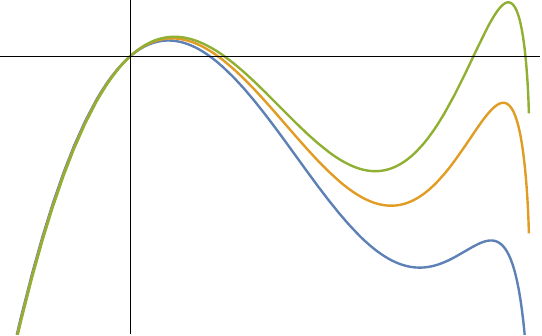}
\caption{$z\mapsto\ff_{\textsc{sat},2}(z;\alpha)
-2\ff_\textsc{sat}(\alpha)$}
\label{f:overlap.sat}
\end{subfigure}
\caption{Comparison of second moment
	with first moment squared for
	random $\nae$
	and \textsc{sat} \\
	for $k=6$
	(with same qualitative phenomena occuring
	for all $k$). \\
	In each figure,
	the horizontal axis is at zero
	while the vertical axis
	is at $z=1/2$.\\
	Each curve corresponds to a different value of $\alpha$, with the uppermost curve in the\\
	right panel corresponding
	to the numerically computed value of $\arsb(k)$
	\cite{MR2213115}.}
\label{f:overlap}
\end{figure}

\subsection{Local inhomogeneity and replica symmetry breaking}
\label{ss:local.inhomogeneity.and.rsb}

The calculation of \S\ref{ss:moments} demonstrates two distinct (though entangled) issues, which we already mentioned in Section~\ref{s:intro} --- \eqref{item:homog}~lack of ``local homogeneity'' and \eqref{item:clust}~large solution clusters and ``replica symmetry breaking.'' These issues manifest themselves in the above calculation roughly as follows:
\begin{enumerate}[(I)]
\item \label{item:homog} For all positive $\alpha$, the point $z=1/2$ is not a local maximizer of $\ff_{\textsc{sat},2}(z;\alpha)$;
\item \label{item:clust} For all $\alpha\in(\alpha_2,\alpha_1)$ (where $\alpha_2<\asat<\alpha_1$), the function $\ff_{\textsc{sat},2}(z;\alpha)$ has another local maximizer $z'$ slightly below $1$, such that $\ff_{\textsc{sat},2}(z';\alpha)>2\ff_\textsc{sat}(\alpha)$.
\end{enumerate}
Of course, the desired second moment bound \eqref{e:second.mmt.given.m} cannot succeed in the presence of either \eqref{item:homog} or \eqref{item:clust}. Point~\eqref{item:clust} reflects the fact that the 
second moment can be dominated by an exponentially rare event where there is an unusually large number of pairs of nearby solutions ($z$ near one), i.e., there is an \bemph{atypically large cluster} of solutions. For both random $\ksat$ and random $\knae$, the $\alpha_2$ of point~\eqref{item:clust} occurs strictly below the satisfiability threshold; in fact, it occurs just below the condensation threshold $\acond$ that marks the onset of \bemph{replica symmetry breaking} (see Figure~\ref{f:phase}). For an extensive discussion of this issue, we refer the reader to two works \cite{dss-naesat, sly2016number} on random \bemph{regular} $\knae$: this is a simplified model where the first issue~\eqref{item:homog} does not arise at all, and as a result there is a very precise correspondence between replica symmetry breaking and problems in the moment method. 

We next turn our attention to point~\eqref{item:homog}. The reflects that the moment calculation favors \bemph{graphs} that are slightly rare, for which the solution set is unusually large. The phenomenon results from the inherent asymmetry between $\plus$ and $\minus$ in the \textsc{sat} predicate, which is absent from \textsc{nae-sat}. For instance, it is reasonable to expect that to have more \textsc{sat} assignments, it is favorable to have atypically many edges all take the same sign. To be more explicit, let $\DD$ be the empirical degree profile of $\GG$, 
	\beq\label{e:ordinary.deg.profile}
	\DD(d^\plus,d^\minus)
	= \f1n
	\bigg|\bigg\{v\in V : 
	|\delta v(\plus)|=d^\plus \textup{ and }
	|\delta v(\minus)|=d^\minus
	\bigg\}\bigg|\,.\eeq
Under $\P_{n,n\alpha}$, we expect that the random profile $\DD$ is concentrated around the typical profile $\DDtyp=(\POIS_{\alpha k/2})^{\otimes 2}$
 (using the notation of \eqref{e:poisson.erdos.renyi}), with gaussian fluctuations: that is to say, we expect
	\beq\label{e:intro.probab.quadratic}
	\f{\P_{n,n\alpha}(\DD)}
		{\P_{n,n\alpha}(\DDtyp)}
	\le
	\exp\bigg\{ -n 
	\Big(\DD-\DDtyp\Big)^\st
	\bm{K}
	\Big(\DD-\DDtyp\Big)
	\bigg\}
	\eeq
for some fixed positive semi-definite $\bm{K}$. On the other hand, we also expect
	\beq\label{e:intro.expect.linear}
	\f{\E_{n,n\alpha}(Z\,|\,\DD)}
		{\E_{n,n\alpha}(Z\,|\,\DDtyp)}
	\ge
	\exp\bigg\{
	n \Big(\bm{u},\DD-\DDtyp\Big)
	\bigg\}
	\eeq
for some fixed vector $\bm{u}\ne0$.
In particular, $(\bm{u},\DD)$ might count
the fraction of $\plus$ edges in $\DD$:
the typical fraction is $(\bm{u},\DDtyp)=1/2$,
but we would expect that
$\E_{n,n\alpha}(Z\,|\,\DD)$ could be made larger by taking
$(\bm{u},\DD)=1/2+\delta$ for small positive $\delta$.
This is to say that
$\DD=\DDtyp$ is optimal for \eqref{e:intro.probab.quadratic}
but not for \eqref{e:intro.expect.linear},
so it is not optimal for the product
of \eqref{e:intro.probab.quadratic}~and~\eqref{e:intro.expect.linear}. It follows that
for all $\DD'$ sufficiently close to $\DDtyp$ we have
	\beq\label{e:compare.second.mmt.around.DDtyp}
	\E_{n,n\alpha}(Z\,|\,\DD')\ll
	\sum_{\DD}
		\P_{n,n\alpha}(\DD)
		\,\E_{n,n\alpha}(Z\,|\,\DD)
	=\E_{n,n\alpha} Z\,.
	\eeq
This can be viewed as a (slightly more complicated) analogue of \eqref{e:comparing.second.mmt.if.cond.on.m}.
It implies $Z\ll \E_{n,n\alpha} Z$ with high probability.
As a result \eqref{e:second.mmt.given.m} must fail, otherwise \eqref{e:paley.weiner} would be contradicted.
On the other hand, for random $\knae$, \eqref{item:homog} does not occur, and the second moment method succeeds 
for some range of positive $\alpha$ (until \eqref{item:clust} arises). From this we can conclude that $\DDtyp$ is optimal for $\E_{n,n\alpha}(\ZNAE\,|\,\DDtyp)$.

Another point of view (which is really another side of the same coin) is that \eqref{item:homog} reflects the ``local inhomogeneity'' of \textsc{sat} solutions. Conditional on any instance $\GG$, let $\mu_{\GG}$ be the uniform measure over the set of satisfying assignments $\SOL(\GG)$ (assuming that it is nonempty). We emphasize that $\mu_{\GG}$ is a \bemph{random measure}, since $\SOL(\GG)$ is a random set. We use $\langle \cdot \rangle_{\GG}$ to denote averaging with respect to $\mu_{\GG}$:
	\[\langle f \rangle_{\GG}
	\equiv \sum_{\ux\in\set{\plus,\minus}^n}
	\mu_{\GG}(\ux)f(\ux)
	= \f1{|\SOL(\GG)|}
	\sum_{\ux\in\SOL(\GG)}f(\ux)\,.\]
We will write $[\cdot]_{\GG}$ for the average with respect to the uniform measure over the $\nae$ solutions $\NAE(\GG)$. We say that \textsc{sat} solutions are \bemph{locally inhomogeneous} because on general instances $\GG$, the variable mean $\langle x_v \rangle_{\GG}$ is not constant over $v\in V$, since we expect it to depend for instance on $|\delta v(\plus)|-|\delta v(\minus)|$. This is in contrast with the $\nae$ model, where the symmetry $\NAE(\GG)=\minus\NAE(\GG)$ implies $[x_v]_{\GG}=0$ for all $v\in V$, so that the model enjoys \bemph{local homogeneity}. This property affects the moment calculation in the following way. Given $\GG$, form a new graph $\GG'$ as follows: add a new clause $a$, connect $a$ to a uniformly random $k$-tuple of existing variables in $\GG$, and sample uniformly random signs $(\lit_e)_{e\in\delta a}$. Then $\SOL(\GG')$ is a subset of $\SOL(\GG)$, and
	\[
	\f{Z(\GG')}{Z(\GG)}
	= 
	\sum_{\ux\in\SOL(\GG)}
	\f{\Ind{\ux
		\textup{ satisfies } a
		}}{|\SOL(\GG)|}
	=1-\mu_{\GG}\bigg(
	\lit_e x_{v(e)}=\minus
	\textup{ for all }e\in\delta a
	\bigg)\,.
	\]
The variables $v(e)$ ($e\in\delta a$) are typically far apart from one another in the original graph $\GG$. We will now make the simplifying assumption that they are \bemph{approximately independent} under $\mu_{\GG}$. This assumption is not rigorous, but for $\alpha\le\acond$ it is not unreasonable, since it matches
the physics prediction that $\mu_{\GG}$ exhibits some form of \bemph{correlation decay}. The assumption allows us to simplify the above as
	\beq\label{e:as2}
	\f{Z(\GG')}{Z(\GG)}
	\doteq 1-\prod_{e\in\delta a} \mu_{\GG}(
		\lit_e x_{v(e)}=\minus)
	=1-\prod_{e\in\delta a}
	\f{1-\lit_e\langle x_{v(e)} \rangle_{\GG} }{2}\,.
	\eeq
The key point is that, conditional on $\GG$,
the ratio \eqref{e:as2} is a \bemph{nondegenerate random variable}, due to the randomness in the literals $\lit_e$
and in the choice of the variables $v(e)$ ($e\in\delta a$). By contrast, under the same assumptions,
	\[\f{\ZNAE(\GG')}{\ZNAE(\GG)}
	=1
	-\prod_{e\in\delta a}\f{1-\lit_e[x_{v(e)}]_{\GG} }{2}
	-\prod_{e\in\delta a}\f{1+\lit_e[x_{v(e)}]_{\GG} }{2}
	= 1-\f{2}{2^k}\,,
	\]
a \bemph{deterministic constant}, simply because $[x_v]_{\GG}$ is constant. However, one can imagine building the entire graph $\GG=\GG_n\sim\P_{n,n\alpha}$ by a sequence $(\GG_0,\ldots,\GG_n)$ where each $\GG_i$ is roughly distributed according to $\P_{i,i\alpha}$, and is formed from $\GG_{i-1}$ by a small number of random local changes as above: adding a variable, and adding or deleting a small number of clauses. We can then represent $Z(\GG)$ as a telescoping product
	\[
	Z(\GG)
	\doteq
	Z(\GG_0)
	\prod_{i=1}^n
	\f{Z(\GG_i)}{Z(\GG_{i-1})}\,,
	\]
where we expect the $n$ terms in the product to be roughly independent from one another, and we think of each term
as being analogous to \eqref{e:as2}. This would suggest that, under $\P_{n,\alpha}$, the variance of $\log Z(\GG)$ is of order $n$, while the variance of $\log \ZNAE(\GG)$ is small. This is consistent with the fact that $Z \ll \E_{n,n\alpha} Z$ with high probability for all positive $\alpha$, while $\ZNAE$ concentrates around $\E_{n,n\alpha} \ZNAE$ if $\alpha$ is not too large. 

To prove an exact satisfiability lower bound by the second moment method, it is necessary to address both issues of
local inhomogeneity~\eqref{item:homog} and replica symmetry breaking~\eqref{item:clust}. For~\eqref{item:clust}, the idea is to count solution clusters instead of individual solutions. On a given instance $\GG$, recall that $\bm{\Omega}\equiv\bm{\Omega}(\GG)$ denotes the number of \textsc{sat} solution clusters (connected components of $\SOL(\GG)$), and let $\bm{\Omega}_\textsc{nae}\equiv\bm{\Omega}_\textsc{nae}(\GG)$ denote the number of $\nae$ solution clusters (connected components of $\NAE(\GG)$). Since the random variables $\bm{\Omega}$ 
and $\bm{\Omega}_\textsc{nae}$ give unit weight to each cluster regardless of cluster size, their moments are not affected by atypically large clusters. This approach is implemented in several works on models that are locally homogeneous \cite{dss-naesat,MR3689942,sly2016number}. An important technical ingredient is a combinatorial representation of clusters as elements
$\ux\in\set{\plus,\minus,\free}^n$, which will be explained in the remainder of this section.
We let $\CLUSTERS(\GG)$ be the set of clusters of $\GG$,
regarded as a subset of $\set{\plus,\minus,\free}^n$.

We now define $\nu_{\GG}$ to be the uniform measure on
$\CLUSTERS(\GG)$. Thus $\nu_{\GG}$
is a random measure over 
$\set{\plus,\minus,\free}^n$.
 Problem~\eqref{item:homog} in this context is that \textsc{sat} clusters are not locally homogeneous, in that $\nu_v$ (the marginal law of $x_v$ under $\nu_{\GG}$) is not constant over $v\in V$. In fact, although we saw that $\nae$ solutions are locally homogeneous, it turns out that $\nae$ clusters are not: in the $\set{\plus,\minus,\free}^n$ representation, a variable is more likely to be $\free$ if it has low degree. (This problem goes away in the random regular $\knae$ model where all variables have the same degree.) To address this problem, it is natural to consider conditioning on a degree profile $\DD$ that is close to the typical one $\DDtyp$. This takes care of \eqref{e:compare.second.mmt.around.DDtyp}. It also makes the model more locally homogeneous in the sense that conditioning on $\DD$ partitions the variables $v$ into different classes according to their $\PM$ degrees $(|\delta v(\plus)|,|\delta v(\minus)|)$; and the fluctuations of $\nu_v$ \bemph{within each class} are smaller than the fluctuations of $\nu_v$ over all $v\in V$.

However, as we already suggested in \S\ref{ss:intro.proof.overview}, conditioning on $\DD$ alone does not resolve the problem, because $\nu_v$ depends on much more than the $\PM$ degree of $v$: we expect it to depend on the entire \bemph{local neighborhood structure} of $\GG$ near $v$. This motivates the following definitions Let $B_R(v)$ be the $R$-neighborhood of $v$, which we regard as a graph rooted at $v$. Let $\DD_R\equiv\DD_R(\GG)$ be the probability measure on rooted graphs defined by
	\[\DD_R(T)= \f{|\set{v\in V:B_R(v) \cong T}| }{ |V|}\,,\]
where $\cong$ denotes rooted graph isomorphism. We regard this $\DD_R$ as a \bemph{generalized degree profile}, and note that the ordinary degree profile $\DD$ of \eqref{e:ordinary.deg.profile} coincides with $\DD_R$ for $R=1/2$ (recalling Remark~\ref{r:distance}). With high probability under $\P=\P_{n,n\alpha}$, the measure $\DD_R$ lies within $o_n(1)$ total variation distance of a measure $\DD_{\star,R}$, which is the law of the first $R$ levels of a certain Poisson Galton--Watson tree (\S\ref{ss:dist.rec}). Conditioning on $\DD_R$ partitions the variables $v$ into different classes according to the structures of their local neighborhoods $B_R(v)$. The expectation is that $\nu_v$ becomes constant within each class in the limit $R\to\infty$. However, conditional on $\DD_R$ for large fixed $R$, typically the \textsc{sat} solution clusters will still be locally inhomogeneous, because $\nu_v$ will fluctuate slightly within each $B_R(v)$ class. It will still be the case that 
$\bm{\Omega}\ll\E(\bm{\Omega}\,|\,\DD_R)$ with high probability.

In prior works that have used the second moment method to lower bound the satisfiability threshold in random $\ksat$, the proof strategies follow the same basic conceptual outline: start from $X=\SOL(\GG)$ or $X=\CLUSTERS(\GG)$, and fix a radius $R$. Then devise some $X_R\subseteq X$ which is locally homogeneous, and perform the second moment method on $|X_R|$ given $\DD_R$ (near $\DD_{\star,R}$). The choices of $R$ and $X$ have been, essentially, as follows:
	\[
	\begin{array}{c|cccc}
	& R & X & X_R & \textup{resulting lower bound on $\asat$}\\
	\hline
	\textup{\cite{1182003}}
		& 0 & \SOL(\GG) & 
			\textup{$\nae$ assignments}
			& 2^{k-1}\log2 - O(1) \\
	\textup{\cite{MR2121043}}
		& 0 & \SOL(\GG)
			& \textup{balanced \textsc{sat} assignments}
			& 2^k\log2-O(k)\\
	\textup{\cite{Coja-Oghlan:2013:GAK:2488608.2488698}}
		& 1/2 & \SOL(\GG)
			& \textup{``judicious'' \textsc{sat} assignments}
			&2^k \log 2 -\tfrac32 \log 2 + \ep_k
			\\
	\textup{\cite{MR3436404}}
		& 1/2 & \CLUSTERS(\GG)
			& \textup{``judicious'' \textsc{sat} clusters}
			&2^k\log 2-\f12(1+ \log 2) - \ep_k.
	\end{array}\]
We use a similar notion of ``judicious'' clusters, and defer the exact definition to Section~\ref{s:preprocess.defns} (Definition~\ref{d:judicious});
the sole purpose of the condition is to enforce local homogeneity.

Of course, the above approach can only succeed when $X_R\ne\emptyset$ (with high probability). However, the set $X$ is in reality \bemph{not} locally homogeneous given $\DD_R$ for any fixed $R$, so we expect
	\[
	\f{|X|}{|X_R|} \le \f1{\exp(n\ep_R)}
	\]
for $\ep_R$ positive but vanishing in the limit $R\to\infty$. This suggests a regime $(\alpha-\delta_R,\asat)$ in which $X_R=\emptyset$ with high probability, where $\delta_R$ is positive but vanishing as $R\to\infty$. Thus, to achieve the exact satisfiability threshold under this scheme, it is necessary to take $R\to\infty$. This is precisely the strategy of this paper: we take $X=\CLUSTERS(\GG)$, and perform the second moment computation on a subset of ``judicious'' configurations
$X_R\subseteq X$, conditional on $\DD_R$. In this way we prove $\asat\ge \arsb-o_R(1)$ where $\arsb$ is the predicted threshold of
Proposition~\ref{p:phi}, and the result follows by taking $R\to\infty$.

\subsection{Combinatorial encoding of clusters}
\label{ss:frozen.model.and.coarsening}

The remainder of the current section is dedicated to the combinatorial representation of $X=\CLUSTERS(\GG)$, where we follow \cite{MR3436404}. For further background, we refer to \cite{parisi_whitening,MR2351840,MR2518205} and the references therein.

Recall that a variable $v$ has incident edges $\delta v$, which connect to its neighboring clauses $\pd v$, where we regard both $\delta v$ and $\pd v$ as unordered multisets.
A $\ksat$ solution is given by a configuration
$\uBX\in\set{\plus,\minus}^V$ such that every clause $a\in F$ is satisfied, meaning that the $k$-tuple $(\lit_{av}\BX_v)_{v\in\pd a}$ is not identically $\minus$. We now introduce a new spin $\free\equiv\SSPIN{free}$, and use it to define the combinatorial model of $\ksat$ solution clusters.

\begin{dfn}[frozen model] \label{d:frozen.model}
Throughout this paper we take the convention that if $\lit\in\set{\minus,\plus}$ and $x=\free$, then 
$\lit x\equiv\free$.
On a $\ksat$ instance $\GG=(V,F,E)$, a \bemph{frozen configuration} is a vector $\ux\in\set{\plus,\minus,\free}^V$ such that
\begin{enumerate}[(i)]
\item \label{i:frozen.sat.clause} Each clause $a\in F$ is \bemph{satisfied}, meaning
that for at least one $e\in\delta a$ we have $\lit_e x_{v(e)} \in \set{\plus,\free}$;
\item \label{i:frozen.var.forced}
A variable $v\in V$ takes value $x_v\ne\free$
if and only if it is \bemph{forced} to do so,
meaning that for at least one $e\in\delta v$ we have
$\lit_{e'}x_{v(e')} = \minus \lit_e x_{v(e)}$ for all $e'\in\delta a(e)\setminus e$.
\end{enumerate}
(The definition makes sense even in the presence of multi-edges.)
\end{dfn}

\begin{rmk}\label{r:coarsen}
Place a graph structure on the set 
$\SOL(\GG)\equiv\GG^{-1}(1)$ of satisfying assignments by putting an edge between any pair of assignments at Hamming distance one, and define a \bemph{cluster} of solutions to be a (maximal) connected component of the graph $\GG^{-1}(1)$. As has been explained in the literature (see e.g.\ \cite{parisi_whitening,MR2351840,MR2518205}), frozen configurations encode clusters in the sense that there is a natural mapping
	\beq\label{e:coarsen}
	\COARSEN:
	\GG^{-1}(1)
	\longrightarrow
	\bigg\{
	\textup{frozen configurations on $\GG$}
	\bigg\}\,.
	\eeq
If $\GG$ has no multi-edges, then each cluster $\cluster\subseteq \GG^{-1}(1)$ maps to a single frozen configuration $\COARSEN(\cluster)$. To define the map, given a configuration $\ux\in\set{\plus,\minus,\free}^V$
that does not violate any clauses
(in the sense of Definition~\ref{d:frozen.model}\eqref{i:frozen.sat.clause}),
 let us say that a variable $v\in V$ is \bemph{blocked} with respect to $\ux$ if there is some $e\in\delta v$ such that $\lit_{e'}x_{v(e')} = \minus \lit_e x_{v(e)}$ for all $e'\in\delta a(e)\setminus e$ (cf.\ Definition~\ref{d:frozen.model}\eqref{i:frozen.var.forced}). Define $\CO(\ux) = \vec{y}$ where
 	\[
	y_v=\left\{\hspace{-3pt}
	\begin{array}{cl}
	x_v & \textup{if $v$ is blocked with respect to $\ux$;}\\
	\free & \textup{otherwise.}
	\end{array}
	\right.
	\]
The map of \eqref{e:coarsen} is defined by iterating $\CO$ until the configuration stabilizes, i.e., $\COARSEN\equiv\CO^\infty$. Note that termination happens in finite time, since if $\CO(\ux)\ne\ux$ then $\CO(\ux)$ has strictly more $\free$ variables than $\ux$. If $\GG$ has no multi-edges, then the value of $\CO(\ux)$ at any variable $v$ is a function of the neighboring values $x_u$, $u\in N(v)$. Consequently, if $\ux$ and $\ux'$ differ in a single coordinate, then $\CO(\ux)=\CO(\ux')$. Iterating this gives $\COARSEN(\ux)=\COARSEN(\ux')$ as long as $\ux,\ux'$ lie in the same cluster. This shows that the map \eqref{e:coarsen} takes each cluster $\cluster$ to a single frozen configuration.
\end{rmk}

\begin{rmk}\label{r:cube}
Another mapping one might consider is
$\CUBE: \GG^{-1}(1) \to \set{\plus,\minus,\free}^V$,
where $\CUBE(\ux)=\vec{y}$ is defined by
	\[y_v=\left\{\hspace{-3pt}
	\begin{array}{cl}
	x_v & \textup{if $v$ only takes value $x_v$
		in the cluster containing $\ux$;}\\
	\free & \textup{otherwise.}
	\end{array}\right.\]
This also has the property that each cluster is mapped to a single point in $\set{\plus,\minus,\free}^V$, which in this case simply encodes the minimal Hamming subcube containing that cluster. In many situations the maps $\COARSEN$ and $\CUBE$ are identical, but one can construct cases where they differ; see Figure~\ref{f:coarsening.example}. The frozen model is preferable precisely because it is defined only by \bemph{local} constraints.
\end{rmk}

\begin{figure}[h!]
\centering
\includegraphics{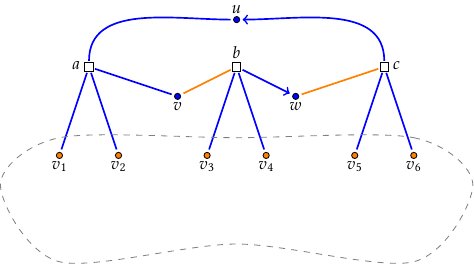}
\caption{An example where the maps $\COARSEN$ and $\CUBE$ differ (Remarks~\ref{r:coarsen}~and~\ref{r:cube}). First ignore the portion of the figure above the variables $v_i$ ($1\le i\le 6$): the dashed gray curve represents a $\ksat$ instance $\GG$, such that $\SOL(\GG)$ has a cluster $\gamma$ in which the variables $v_i$ ($1\le i\le 6$) always take value $x_{v_i}=\minus$. Suppose that in $\GG$ we have
$\COARSEN(\ux)=\CUBE(\ux)=\vec{z}$. Given $\GG$, form a new instance $\GG'\supset\GG$ by adding the rest of the figure, so $\GG'\setminus\GG$ contains the variables $u,v,w$ as well as the clauses $a,b,c$. Each edge $e\in\GG'\setminus\GG$ is colored blue if $\lit_e=\plus$, orange if $\lit_e=\minus$. For $\ux\in\gamma$, we consider how to extend $\ux$ to a satisfying assignment $\uy$ of $\GG'$. If $y_u=\minus$, then clause $a$ forces $y_v=\plus$, and then clause $b$ forces $y_w=\plus$, and then clause $c$ is violated. Consequently we must have $y_u=\plus$, which ensures that clauses $a$ and $c$ are satisfied. For clause $b$ to be satisfied, we must have $(y_v,y_w)\ne(\plus,\minus)$. Thus a valid extension of $\ux\in\gamma$ is given by $\uy=(\ux,\plus,\plus,\plus)$ where $(\plus,\plus,\plus)$ indicates the values on $(u,v,w)$. From the above discussion, in $\GG'$ we have
$\CUBE(\uy)=(\vec{z},\plus,\free,\free)$. On the other hand, the arrows in the figure indicate that in the initial configuration $\uy=(\ux,\plus,\plus,\plus)$, variable $w$ is blocked by cluase $b$, and variable $u$ is blocked by clause $c$. Thus variable $v$ is unblocked so the $\COARSEN$ map first changes $y_v$ to $\free$. This makes $w$ unblocked, so next $y_w$ is changed to $\free$, which makes $u$ unblocked. As a result $\COARSEN(\uy)=(\vec{z},\free,\free,\free)\ne\CUBE(\uy)$. We emphasize that this example required the introduction of a short cycle in $\GG'\setminus\GG$, and so we expect such occurrences to be rare in the random $\ksat$ model.}
\label{f:coarsening.example}
\end{figure}

\subsection{Warning propagation and color model}
\label{ss:wp.and.color.models}

We now introduce two more combinatorial models, which are both equivalent to the frozen model, but will be important for analytical purposes. The first is the ``warning propagation'' (\textsc{wp}) model which has appeared widely in the physics literature (see \cite{MPZ_Science, MR2155706, MR2213115, MR2351840, MR2518205}). In this model, a solution cluster is represented by a ``warning configuration'' 
$\vec{\msg}\equiv(\msg_e)_{e\in E}$,
where each
$\msg_e\equiv(\dmp_e,\hmp_e)$ 
represents a pair of ``warnings'' sent across $e$ in either direction:
	\[\begin{array}{l}
	\dmp_e \text{ represents the
		warning across $e$ from $v(e)$ to $a(e)$;}\\
	\hmp_e \text{ represents the
		warning across $e$ from $a(e)$ to $v(e)$.}
	\end{array}
	\]
Each warning concerns the evaluation of $\lit_e x_{v(e)}$ within the cluster; the warning from each endpoint of $e$ represents the state of $\lit_e x_{v(e)}$ ``in absence of'' the opposite endpoint. The possible warnings are $\plus,\minus,\free$, where $\PM$ indicates a warning that $\lit_e x_{v(e)}$ must be $\PM$ (within the cluster), while $\free$ indicates no warning. In the warning propagation model, the clause $a(e)$ can only force the variable $v(e)$ to agree with $\lit_e$, so all clause-to-variable warnings $\hmp_e$ will be either $\plus$ or $\free$. The formal definition is as follows:

\begin{dfn}[warning propagation model]
\label{d:wp} For integers $K\ge1$ and $D\ge0$, define the mappings
	\begin{align*}
	\hat{W}(\dmp_1,\ldots,\dmp_K)
	&= \left\{\begin{array}{rl}
	\plus &\textup{if }
	 \dmp_i=\minus
		\textup{ for all }1\le i\le K\,,\\
	\free & \textup{otherwise;}
	\end{array}\right. \\
	\dot{W}(\hmp_1,\ldots,\hmp_D)
	&=\left\{\begin{array}{rl}
	\plus & \textup{if }
		\plus \in \set{\hmp_1,\ldots,\hmp_D}
			\subseteq \set{\plus,\free}\,,\\
	\minus & \textup{if }
		\plus \in \set{\hmp_1,\ldots,\hmp_D}
			\subseteq \set{\minus,\free}\,,\\
	\free &\textup{if }
		\set{\hmp_1,\ldots,\hmp_D}
		\subseteq\set{\free}\,,\\
	\emptyset & \textup{otherwise}.
	\end{array}\right.
	\end{align*}
In the case $D=0$, the input to the function $\dot{W}$ is empty, and the output is $\free$. On a $\ksat$ instance $\GG\equiv(V,F,E)$, a \bemph{warning configuration} is a tuple $\vec{\msg}$ of spins $\msg_e \equiv(\dmp_e,\hmp_e) \in\set{\plus,\minus,\free}\times\set{\plus,\free}$, indexed by edges $e\in E$, such that the \bemph{warning propagation equations} are satisfied:
	\begin{align}\nonumber
	\hmp_e
	=\hWP_e[\dmp]
	&\equiv\hat{W}_{k-1}
	\bigg(
	\dmp_{e'} : e' \in \delta a(e)\setminus e
	\bigg)\,,\\
	\dmp_e
	=\dWP_e[\hmp]
	&\equiv \dot{W}
	\bigg( \lit_{e'} \lit_e\hmp_{e'} 
	: e' \in \delta v(e)\setminus e \bigg)\,.
	\label{e:dot.WP}
	\end{align}
for all edges $e\in E$. (Note that for $\vec{\msg}$ to be a valid warning configuration, we require each $\dmp_e$ to be an element of $\set{\plus,\minus,\free}$, which means $\dWP_e$ must not output $\emptyset$ for any $e\in E$. We remark also that taking $\dmp_e=\hmp_e=\free$ for all $e\in E$
gives rise always to a valid warning configuration.) \end{dfn}

\begin{rmk}\label{r:bij.wp.frozen} Warning configurations are in bijective correspondence with frozen configurations of $\GG$. If $\vec{\msg}$ is a valid warning configuration of $\GG$, we can obtain a valid frozen configuration $\ux$ of $\GG$ simply by taking
	\[
	x_v = \dot{W}\bigg( \lit_e \hmp_e : e\in\delta v\bigg)
	\]
for all $v\in V$. Conversely, if $\ux$ is a valid frozen configuration of $\GG$, we can obtain a valid warning configuration $\vec{\msg}$ of $\GG$ by to the following procedure:
\begin{enumerate}[(i)]
\item If $e\in\delta v$ with
$x_v=\free$, then we must have $\msg_e=(\free,\free)$.

\item If $e\in\delta v$ with $x_v\in\set{\plus,\minus}$ 
and $\lit_e x_v=\minus$, then we must have $\msg_e=(\minus,\free)$.

\item If $e\in\delta v$ with $x_v\in\set{\plus,\minus}$ 
and $\lit_e x_v=\plus$, then
	\[
	\hmp_e
	=\left\{\begin{array}{rl}
	\plus & \textup{if 
		$\msg_{e'}=(\minus,\free)$
		for all $e'\in\delta a(e)\setminus e$}\\
	\free & \textup{otherwise.}
	\end{array}\right.
	\]

\item The above determines of $\hmp_e$ for all $e$. We can then determine $\dmp_e$ for all $e$ by applying the \textsc{wp} rules \eqref{e:dot.WP}.
\end{enumerate}
We leave the reader to verify that these mappings are inverses of one another.
\end{rmk}

In fact, we will rarely use the full warning propagation model, and instead will work primarily with the following especially concise simplification of \textsc{wp}, introduced by \cite{MR3436404}. 
Note that, under the rules of Definition~\ref{d:wp}, the pair $\msg_e\equiv(\dmp_e,\hmp_e)$ can take any value in $\set{\plus,\minus,\free}\times\set{\plus,\free}$ except for $(\minus,\plus)$, which represents a pair of conflicting warnings that will invalidate the entire configuration (since in this case $\dWP_{e'}$ will output $\emptyset$ for some $e'\in\delta v(e)\setminus e$.

\begin{dfn}[color model] \label{d:color.model}
Let $\PROJ$ be the mapping which sends warnings $\msg\equiv(\dmp,\hmp)$ to \bemph{colors} $\sigma\in\set{\red,\yel,\grn,\blu}$, according to the following rules:
	\[
	\PROJ(\dmp,\hmp)
	=\left\{
	\begin{array}{rll}
	\red\equiv\SPIN{red}
	& \textup{if $\hmp=\plus$
	and $\dmp\in\set{\plus,\minus}$}
	& \textup{(clause forces this edge to be satisfied),}\\
	\yel\equiv\SSPIN{yellow}
	& \textup{if 
	$(\dmp,\hmp)=(\minus,\free)$}
	&\textup{(variable forced to negate edge;
		clause not forcing on edge),}\\
	\grn\equiv\SSPIN{green}
	& \textup{if 
	$(\dmp,\hmp)=(\free,\free)$}
	&\textup{(no forcing warnings across this edge)},\\
	\blu\equiv\SSPIN{blue}
	&\textup{if 
	$(\dmp,\hmp)=(\free,\plus)$}
	&\textup{(variable forced to affirm edge;
		clause not forcing on edge).}
	\end{array}
	\right.
	\]
A valid \bemph{coloring} of $\GG=(V,F,E)$ is any
configuration $\usi\in\set{\RYGB}^E$ that can be obtained by taking a valid warning configuration $\vec{\msg}$
and applying $\PROJ$ on each edge.
\end{dfn}

\begin{rmk}\label{r:bij.color.frozen}
Colorings are in bijective correspondence 
with frozen configurations of $\GG$. To see this,
let $\usi_{\delta v}$ denote the colors
on the incident edges of variable $v$, 
recall the notation \eqref{e:var.signed.half.edges}, and define
	\beq\label{e:EVAL.of.color}
	x_v = \ev_v(\usi_{\delta v})
	\equiv\left\{\hspace{-5pt}\begin{array}{rll}
	\plus & \text{if
		$\usi_{\delta v(\minus)}$
		is all $\yel$, and
		$\usi_{\delta v(\plus)}$
		is all $\set{\red,\blu}$
		with at least one $\red$}
		\\
	\minus & \text{if $\usi_{\delta v(\plus)}$
		is all $\yel$, and
		$\usi_{\delta v(\minus)}$
		is all $\set{\red,\blu}$
		with at least one $\red$}
		\\
	\free & \text{if 
		$\usi_{\delta v}$ is all $\grn$}
		\\
	\emptyset & \text{otherwise.}
	\end{array}\right.\eeq
If $\usi$ is a valid coloring of $\GG$, then taking $x_v=\ev_v(\usi_{\delta v})$ for all $v\in V$ defines a valid frozen configuration $\ux$ of $\GG$. By Remark~\ref{r:bij.wp.frozen}, $\ux$ corresponds to a unique warning configuration $\vec{\msg}$. We leave the reader to verify that $\PROJ$ maps $\vec{\msg}$ back to the starting $\usi$, which completes the correspondence.
\end{rmk}

It is essential to us that all the combinatorial models 
that we consider (Definitions~\ref{d:frozen.model}, \ref{d:wp}, and \ref{d:color.model}) are defined only by \bemph{local} constraints. This implies that they are all \bemph{factor models} --- here, it simply means that the counting measure on valid configurations can be expressed as a product of \bemph{local factors}, where each factor is an indicator function that checks one of the local constraints. For factor models in much greater generality, there is a rich physics formalism (see e.g. \cite[Ch.~9]{MR2518205}), as well as a natural way to turn the moment calculation into an analytic optimization problem
problem (see \S\ref{ss:first.moment.preliminaries}).

Let $\GG=(V,F,E)$ be a random $\ksat$ instance. We will give the explicit factors for the \textsc{wp} and color models. The counting measure on warning configurations
of $\GG$ is given simply by
	\[
	\mathbf{1}\Bigg\{\hspace{-3pt}
	\begin{array}{c}
	\textup{$\vec{\msg}$
	is a valid warning}\\
	\textup{configuration on $\GG$}
	\end{array}\hspace{-3pt}
	\Bigg\}
	= \prod_{v\in V}
		\varphi_v(\vec{\msg}_{\delta v})
	\prod_{a\in F}\hat{\varphi}_a(\vec{\msg}_{\delta a})
	\]
where each factor $\varphi_x$ simply checks that the warnings leaving $x$ are indeed obtained by applying the appropriate warning propagation maps on the incoming warnings:
	\begin{align*}
	\varphi_v(\vec{\msg}_{\delta v})
	&\equiv
	\prod_{e\in\delta v}
	\Ind{\dmp_e = \dWP_e[\hmp] }\,,\\
	\hat{\varphi}_a(\vec{\msg}_{\delta a})
	&\equiv\prod_{e\in\delta a}
	\Ind{\hmp_e = \hWP_e[\dmp]}\,.
	\end{align*}
We will abuse notation slightly and also use $\varphi,\hat{\varphi}$ for the factors of the coloring model. The counting measure on valid colorings is given by
	\beq\label{e:color.factor.model}
	\mathbf{1}\Bigg\{\hspace{-3pt}
	\begin{array}{c}
	\textup{$\usi$
	is a valid}\\
	\textup{coloring of $\GG$}
	\end{array}\hspace{-3pt}\Bigg\}
	=
	\prod_{v\in V}\varphi_v(\usi_{\delta v})
	\prod_{a\in F}
		\hat{\varphi}_a(\usi_{\delta a})\,,
	\eeq
where the variable factor $\varphi_v(\usi_{\delta v})$
is simply the indicator that
$\usi_{\delta v}$
can be obtained by applying $\PROJ$
to a configuration
$\vec{\msg}_{\delta v}$ for which $\varphi_v(\vec{\msg}_{\delta v})=1$; and the clause factor $\hat{\varphi}_a$
is analogously defined. The color model factors can be described much more explicitly, as follows:
the clause factor is
	\beq\label{e:indicator.of.valid.clause.coloring}
	\hat{\varphi}_a(\usi_{\delta a})
	=
	\left\{\hspace{-5pt}
	\begin{array}{rll}
	1 & \text{if $\usi_{\delta a}$ has exactly one spin $\red$, all other spins $\yel$}
		& \text{(forcing clause);}\\
	1 & \text{if $\usi_{\delta a}$ has
	no spins $\red$ and at least two spins $\set{\grn,\blu}$}
		& \text{(non-forcing clause);}\\
	0 & \text{otherwise.}
	\end{array}\right.
	\eeq
(As far as clauses are concerned, the colors $\blu$ and $\grn$ are interchangeable.)
The variable factor is given by
	\beq\label{e:color.model.variable.factor}
	\varphi_v(\usi_{\delta v})
	= \mathbf{1}\bigg\{
	\ev_v(\usi_{\delta v}) \ne\emptyset 
	\bigg\}\,,\eeq
for $\ev_v$ defined by \eqref{e:EVAL.of.color}.
Note that the clause factor does not depend on the incident edge signs, but the variable factor does, since the edge signs enter into the definition of $\ev_v$.

\begin{rmk}\label{r:single.copy.pair}
We will sometimes refer to the color model above as the \bemph{single-copy} color model, to distinguish it from the \bemph{pair} color model which
appears in the second moment: the latter is supported on pairs $(\usi^1,\usi^2)$ with each $\usi^j$ a valid coloring of the same graph $\GG$. If we extend $\varphi_v,\hat{\varphi}_a$ to pair inputs by setting
	\begin{align*}
	\varphi_v(\usi^1_{\delta v},
		\usi^2_{\delta v})
	&\equiv
	\prod_{j=1,2}\varphi_v(\usi^j_{\delta v})\,,\\
	\hat{\varphi}_a(\usi^1_{\delta a},
		\usi^2_{\delta a})
	&\equiv
	\prod_{j=1,2}\hat{\varphi}_a(\usi^j_{\delta a})\,,
	\end{align*}
then the counting measure for the pair model is also expressed by \eqref{e:color.factor.model}, provided that for each vertex $x\in V \cup F$ we interpret $\usi_{\delta x}$ as $\equiv(\usi^1_{\delta x},\usi^2_{\delta x})$.
\end{rmk}

To summarize what we have discussed 
in \S\ref{ss:frozen.model.and.coarsening}--\ref{ss:wp.and.color.models}:
on a given $\ksat$ problem instance $\GG=(V,F,E)$,
assuming there are no multi-edges,
there is a map $\COARSEN$ which sends solution clusters to frozen configurations (Remark~\ref{r:coarsen}). We then have bijections
(Remarks~\ref{r:bij.wp.frozen}~and~\ref{r:bij.color.frozen})
	\beq\label{e:aux.model.bijections}
	\left\{\hspace{-4pt}
	\begin{array}{c}
	\textup{frozen configurations}\\
	\textup{$\ux \in\set{\plus,\minus,\free}^V$}
	\end{array}
	\hspace{-4pt}\right\}
	\longleftrightarrow
	\left\{\hspace{-4pt}
	\begin{array}{c}
	\textup{warning configurations}\\
	\textup{$\vec{\msg}\in
		\set{
		\plus\plus,
		\plus\free,
		\minus\free,
		\free\plus,
		\free\free
		}^E$}
	\end{array}
	\hspace{-4pt}\right\}
	\longleftrightarrow
	\left\{\hspace{-4pt}
	\begin{array}{c}
	\textup{valid colorings}\\
	\textup{$\usi\in\set{\RYGB}^E$}
	\end{array}
	\hspace{-4pt}\right\}.
	\eeq
The models are all defined by local constraints, and so can be written as factor models. For the color model the factors can be written in an especially explicit way, given above by \eqref{e:EVAL.of.color}, \eqref{e:indicator.of.valid.clause.coloring}, and \eqref{e:color.model.variable.factor}. In the remainder of this section we introduce \bemph{tree recursions} for these models.

\subsection{Tree recursions for warnings}
\label{ss:wp.recursions}

To motivate what comes next, we note that the central analytical challenge of this paper is to prove a lower bound for colorings (Definition~\ref{d:color.model}) on the random $\ksat$ graph $\GG$ (Definition~\ref{d:formal.random.ksat}). In view of the bijection \eqref{e:aux.model.bijections}, this translates to a lower bound on frozen configurations (Definition~\ref{d:frozen.model}), which we then show can be ``completed'' to satisfying assignments of $\GG$. This short synopsis hides many technicalities which will appear later. For now, however, we will focus on the fundamental issue of local homogeneity, and consider a basic question: if $\gamma$ is a uniformly random solution cluster and $\ux$ is the corresponding frozen configuration, how should the marginal law of $x_v$ depend on the local neighborhood structure around $v$? We do not directly answer this question, but in the rest of this section we describe the physics prediction in mathematical terms. In later sections we use the prediction to obtain the rigorous result.

Let $\GG=(V,F,E)$ be any \textsc{sat} instance. Recall the notation \eqref{e:var.signed.half.edges}.
If $v$ does not participate in any multi-edges,
then for each clause $a\in\pd v$ we shall denote
	{\setlength{\jot}{0pt}
	\begin{alignat}{2}\nonumber
	\pd v(\plus a)
	&\equiv\set{
		b\in\pd v\setminus a
		: \lit_{bv} = \plus \lit_{av} }\,,\quad
	&\delta v(\plus a)
		&\equiv\set{(bv) : b\in \pd v(\plus a)}\,,\\
	\pd v(\minus a)
	&\equiv\set{ b\in\pd v\setminus a
		: \lit_{bv} = \minus \lit_{av} }\,,
	&\delta v(\minus a)
		&\equiv\set{(bv) : b\in \pd v(\minus a)}\,.
	\label{e:incident.edges.of.same.sign}
	\end{alignat}}%
For any vertex $x\in V\cup F$ we can consider its depth-$\ell$ neighborhood $B_\ell(x)$, which we regard as being rooted at $x$. If $x\in V$ we assume that $\ell$ is a nonnegative integer; if $x\in F$ we assume that $\ell-1/2$ is a nonnegative integer. If $\ell$ is any constant and $\GG$ is an instance of \bemph{random} $\ksat$, then $B_\ell(x)$ will be acyclic for most vertices of $\GG$, with high probability. Since this is the predominant case, we will assume it for the remainder of the section. If $B_\ell(x)$ is acylic then it is a \bemph{bipartite factor tree}, by which we mean a bipartite factor graph that is also a tree. We will write $B_\ell(x)\equiv T \equiv (V_T,F_T,E_T)$. 

Our next goal is to define a frozen model on $T=B_\ell(x)$ that will approximate, in some sense, what we expect to see in the frozen model in the full random graph $\GG$. To do this, we will impose certain \bemph{boundary conditions} on $T$, which we now explain. Let $\Leaves T$ denote the leaf vertices of $T$; they are all variables by the conditions on $\ell$. The set $\Leaves T$ contains (but may be strictly larger than) the set $\pd_\ell x$ of variables at distance exactly $\ell$ from $x$. We now take advantage of the correspondence \eqref{e:aux.model.bijections} and think in terms of warning configurations.
Let $\vec{\dmp}_\pd$ denote a vector of boundary input warnings $\dmp_{u a(u)}$, where $u$ runs over $\Leaves T$ and $a(u)$ denotes the unique clause neighboring $u$. Any $\vec{\dmp}_\pd$ has at most one completion to a valid warning configuration $\vec{\msg}$ on $T$. One way to see this is to apply the \textsc{wp} maps \eqref{e:dot.WP} of Definition~\ref{d:wp} recursively, started from the boundary inputs $\vec{\dmp}_\pd$: either at some point $\dWP$ outputs $\emptyset$ and there is no valid completion of $\vec{\dmp}_\pd$, or the process eventually terminates at the unique valid completion $\vec{\msg}$ of $\vec{\dmp}_\pd$. For example, if $\lit_e=\plus$ for all edges $e$ in $T$, and all entries of 
$\vec{\dmp}_\pd$ are in $\set{\plus,\free}$, then 
it is easy to see that $\vec{\dmp}_\pd$ can be completed to a valid warning configuration $\vec{\msg}$ on $T$, with $\dmp_e=\hmp_e=\free$ on every internal edge $e$ of $T$. 

\begin{dfn}\label{d:frozen.model.bdy.conditions} On the tree $T=B_\ell(x)$, let $\vec{\dmp}_\pd$ be a tuple of random boundary input warnings, where each $\dmp_{u a(u)}$ is sampled independently from $\textup{unif}(\set{\plus,\minus})$ if $u\in\pd_\ell x$, and $\dmp_{u a(u)}=\free$ for $u\in\Leaves T\setminus \pd_\ell x$. Explicitly,
	\beq\label{e:frozen.model.bdy.conditions}
	\mu_{\pd}(\vec{\dmp}_\pd)
	= \bigg\{ \prod_{u\in\pd_\ell x}
	\f{\Ind{ \dot{\msg}_{u a(u)}
		\in\set{\plus,\minus} }}{2}
		\bigg\}\bigg\{
	\prod_{w\in \Leaves T \setminus \pd_\ell x}
	\Ind{ \dot{\msg}_{w a(w)} = \free }\bigg\}\,.
	\eeq
Let $\nu_T$ be the law of the completion $\vec{\msg}$ of $\vec{\dmp}_\pd$, conditioned on the event that the completion exists:
	\[
	\nu_T(\vec{\msg})
	\cong
	\sum_{\vec{\dmp}_\pd}
	\mathbf{1}\bigg\{
	\textup{$\vec{\msg}$ is the completion of
		$\vec{\dmp}_\pd$ on $T$}
	\bigg\}
	\mu_{\pd}(\vec{\dmp}_\pd)
	\]
where $\cong$ indicates proportionality up to the normalization that makes $\nu_T$ a probability measure. The measure $\nu_T$ is well-defined. Indeed, in the special case that $\lit_e=\plus$ for all edges $e$ of $T$, we noted above that $\vec{\dmp}_\pd$ has a valid completion as long as all its entries are in $\set{\plus,\free}$, which guarantees that $\nu_T$ is well-defined. Similarly, for general $T$, there is always at least one choice of $\vec{\dmp}_\pd$ 
that has a valid completion on $T$
and has $\mu_\pd(\vec{\dmp}_\pd)>0$, so $\nu_T$ is well-defined in general. By passing through the bijection \eqref{e:aux.model.bijections}, $\nu_T$ induces a probability measure on valid frozen configurations of $T$, which we term the \bemph{frozen model on $T$ with rigid boundary}.\footnote{We chose the ``rigid boundary'' terminology because it is reasonably succinct, but note that it is not fully descriptive since the variables in $\Leaves T \setminus \pd_r v$ are free rather than rigid.}
\end{dfn}

This tree frozen model is characterized by a set of recursions, as follows. For any variable-clause edge $(yz)$ in $T$ (where either $y$ or $z$ is the variable), let $T_{yz}$ be the component of $T\setminus z$ containing $y$ --- including the edge $(yz)$, but not including $z$ itself. If $y$ is a variable then we call $T_{yz}$ a \bemph{variable-to-clause tree}.
If $y$ is a clause then we call $T_{yz}$ a \bemph{clause-to-variable tree}. In either case, we write $\Leaves T_{yz}$ as shorthand for
$T_{yz}\cap \Leaves T$. Given inputs $\dmp_{va}$ for all $v\in \Leaves T_{yz}$, we can apply the \textsc{wp} maps (\eqref{e:dot.WP} from Definition~\ref{d:wp}) recursively started from $\Leaves T_{yz}$. As long as $\dWP$ never outputs $\emptyset$, this produces all the warnings on $T_{yz}$ in the direction of $z$, which we term a \bemph{completion on $T_{yz}$}, and denote by $\vec{\msg}_{T,yz}$. Let $\mu_{T,yz}$ denote the marginal of $\mu_{\Leaves T}$ (as defined by \eqref{e:frozen.model.bdy.conditions}) on $\Leaves T_{yz}$, and let
	\beq\label{e:def.FF.of.tree}
	\nu_{T,yz}(\vec{\msg}_{T,yz})
	\cong
	\sum_{ \vec{\dmp}_{\Leaves T_{yz}} }
	\mathbf{1}\bigg\{
	\textup{$\vec{\msg}_{T,yz}$
	is the completion of $\vec{\dmp}_{\Leaves T_{yz}}$
	 on $T_{yz}$
	}
	\bigg\}
	\mu_{T,yz}(\vec{\dmp}_{\Leaves T_{yz}})\,,\eeq
where again $\cong$ refers to the normalization. Let $\FF_{T,yz}$ be the marginal law of the $y\to z$ warning	under $\nu_{T,yz}$. We will also designate some special notation for marginal laws of warnings around the root of the tree, as follows. For $T=B_\ell(x)$ and $y\in\pd x$:
\begin{enumerate}[--]
\item The law $\FF_{T,xy}$ 
 depends only on the subtree $T_{xy}$, so we write $\FF_{T,xy} \equiv \FF_\ell(T_{xy})$, where the subscript $\ell$ indicates that the vertices at distance exactly $\ell$ from $(xy)$ have a special role in \eqref{e:frozen.model.bdy.conditions}.
\item The law $\FF_{T,yx}$ 
depends only on the subtree $T_{yx}$,
so we write $\FF_{T,yx} \equiv \FF_{\ell-1/2}(T_{yx})$, 
where the subscript $\ell-1/2$ indicates that the vertices at distance exactly $\ell-1/2$ from $(xy)$ have a special role in \eqref{e:frozen.model.bdy.conditions}.
\end{enumerate} 
Finally, if $U_{xy}$ is any tree that agrees with $T_{xy}$ up to distance $\ell$ around $(xy)$, then we define $\FF_\ell(U_{xy})\equiv\FF_\ell(T_{xy})$.
This notation will appear again in the following sections.

For $T=B_r(v)$, for each edge $(au)\in E_T$ where $a$ is the clause and $u$ is the variable, we will abbreviate $\bhu_{au} \equiv \FF_{T,au}$ and $\bmeta_{ua}\equiv\FF_{T,ua}$ for the marginal laws of $\hmp_{au}$ and $\dmp_{ua}$ induced by \eqref{e:def.FF.of.tree}. These are both probability measures on $\set{\plus,\minus,\free}$. We also denote the scalar values
$\eta_{ua}\equiv\bmeta_{ua}(\minus)$
and $\hat{u}_{au}\equiv\bhu_{au}(\plus)$.
If a variable $u$ lies at depth $r$ in $T$ (i.e., $u\in\pd_r v$), then it neighbors exactly one clause $a=a(u)\in T$, and the definition gives
$\bmeta_{ua}=\textup{unif}(\set{\plus,\minus})$
and consequently $\eta_{ua}=1/2$.
If instead the variable $u$ is a leaf at distance less than $r$ from $v$ (i.e., $u\in\Leaves T\setminus\pd_r v$),
then the definition makes
$\bmeta_{ua}$ fully supported on $\set{\free}$,
and consequently $\eta_{ua}=0$.
This defines $\eta$ on every leaf edge of $T$. 
The value of $\eta$ on every other edge can be computed recursively:
\begin{enumerate}[--]
\item For an edge $(aw)\in E_T$ where $\eta_{ua}$ was already determined for all $u\in\pd a\setminus w$,
	\beq\label{e:first.defn.of.bhu}
	\bhu\equiv
	\bigg(\bhu_{aw}(\plus)\,,\bhu_{aw}(\free)\bigg)
	=
	\bigg(
	\prod_{u\in\pd a\setminus w} \eta_{ua}\,,
	1-\prod_{u\in\pd a\setminus w} \eta_{ua}
	\bigg)\,.
	\eeq
In particular this determines the value of $\hat{u}_{aw}\equiv \bhu_{aw}(\plus)$.

\item For an edge $(aw)\in E_T$ where $\hat{u}_{bw}$
was already determined for all $b\in\pd w\setminus a$, let
	\[
	\Pi_{wa}^\plus
	= \prod_{b\in\pd w(\plus a)} 
		\bigg(1- \hat{u}_{bw}\bigg)\,,\quad
	\Pi_{wa}^\minus
	= \prod_{b\in\pd w(\minus a)} 
		\bigg(
		1- \hat{u}_{bw}\bigg)\,,\]
where $\pd w(\plus a)$ and $\pd w(\minus a)$
are defined by \eqref{e:incident.edges.of.same.sign}. Then
	\beq\label{e:intro.defn.bmeta}
	\bigg(\bmeta_{wa}(\plus)\,,\bmeta_{wa}(\minus)\,,
	\bmeta_{wa}(\free)\bigg)
	=\f{ (\Pi^\minus_{wa}(1-\Pi^\plus_{wa})\,,
		\Pi^\plus_{wa}(1-\Pi^\minus_{wa})\,,
		\Pi^\plus_{wa}\Pi^\minus_{wa}
		)}{ \Pi_{wa}^\plus+\Pi_{wa}^\minus
			-\Pi_{wa}^\plus\Pi_{wa}^\minus}\,.
	\eeq
In particular this determines the value of
$\eta_{wa}\equiv \bmeta_{wa}(\minus)$.
\end{enumerate}
For any non-leaf variable $w\in T\setminus \Leaves T$, combining the steps above
shows that $\eta_{wa}$ can be expressed in terms of the $\eta_{ub}$ (for $b\in\pd w\setminus a$ and $u \in \pd b\setminus w$) by the recursive relation
	\beq\label{e:second.introduction.of.Rec}
	\eta_{wa} = R_{wa}(\minus)
	= \f{ \Pi^\plus_{wa}(1-\Pi^\minus_{wa}) }
	{ \Pi_{wa}^\plus+\Pi_{wa}^\minus
			-\Pi_{wa}^\plus\Pi_{wa}^\minus }\,.
	\eeq
(Note the clear similarity between
\eqref{e:second.introduction.of.Rec}
 and the ``survey propagation'' equations introduced in \eqref{e:intro.dist.recurs}.) The probability measures $\bmeta_{wa}$ and $\bm{\hat{u}}_{aw}$ can also be computed from this recursion, as indicated by \eqref{e:first.defn.of.bhu}~and~\eqref{e:intro.defn.bmeta}. 
Finally, note that since we start from boundary input $\eta_{ua(u)}\in[0,1)$ for all $u\in\Leaves T$, we have for all other $(aw)\in E_T$ that $\Pi^\PM_{wa}\in(0,1]$, and so $\eta_{wa}\in[0,1)$ for all $(aw)\in E_T$. The empty product is understood to be one, so if $w\notin\Leaves T$ with $\pd w(\minus a)=\emptyset$ then $\Pi^\minus_{wa}=1$, implying $\eta_{wa}=0$.

\subsection{Weighted models and belief propagation}
\label{ss:bp} 

We will show at the end of this section
that the frozen model recursions of \S\ref{ss:wp.recursions} can be retrieved as a special case of the belief propagation (\textsc{bp}) equations for the color model. In preparation, we will briefly review \textsc{bp} in the slightly generalized setting of \bemph{weighted} color models, which will be used throughout our proof, and which we now introduce. We limit our discussion here to the single-copy color model; the definitions and notations generalize to the pair model (Remark~\ref{r:single.copy.pair}) in the obvious manner.

A weighted color model on $\GG=(V,F,E)$ is defined by multiplying \eqref{e:color.factor.model} with edge weights $\gm_e:\set{\RYGB}\to(0,\infty)$ for all $e\in E$. For our purposes it will often be convenient to consider the weight $\gm_e$ on edge $e=(av)$ as ``belonging'' to the incident clause $a$: that is to say, we replace $\hat{\varphi}_a$ in \eqref{e:color.factor.model} by the weighted clause factor
	\[
	\hat{\varphi}_a(\usi_{\delta a};\Gm_a)
	\equiv
	\hat{\varphi}_a(\usi_{\delta a})
	\prod_{e\in\delta a} \gm_e(\sigma_e)
	\quad\text{where 
	$\Gm_a$
	denotes the tuple 
	$(\gm_e)_{e\in\delta a}$}.
	\]
At other times it is more convenient to consider the edge weight as ``belonging'' to the incident variable $v$. If $v$ is an internal variable of $\GG$ ($|\delta v|\ge2$), it will be natural to parametrize the weights in a slightly different way: namely, we set a weight
$\lm_v$ for the variable's frozen model spin
$x_v=\ev_v(\usi_{\delta v})\in\set{\plus,\minus,\free}$, then set weights $\lm_e$ for the incident edge colors $\sigma_e$, so that
 $\varphi_v$ is replaced by the weighted variable factor
	\beq\label{e:weighted.var.factor}
	\varphi_v(\usi_{\delta v};\Lm_v)
	=
	\varphi_v(\usi_{\delta v})
		\lm_v(x_v)
		\prod_{e\in\delta v}
		\lm_e(\sigma_e)\quad
	\text{where $\Lm_v$ denotes
		$(\lm_v,(\lm_e)_{e\in\delta v})$.}
	\eeq
If $v$ is a leaf variable of $\GG$ ($|\delta v|=1$), we will denote $\Lm_v\equiv\lm_v\equiv\lm_e$ for the weight on the unique edge $e$ incident to $v$.
In this case we have simply $\varphi_v(\usi_{\delta v};\Lm_v)=\lm_e(\sigma_e)$. Inserting the weighted factors into \eqref{e:color.factor.model} defines a weighted measure on valid colorings of $\GG$, which we refer to as the \bemph{weighted color model}:
	\beq\label{e:color.factor.model.WEIGHTED}
	\prod_{v\in V}\varphi_v(\usi_{\delta v};\Lm_v)
	\prod_{a\in F}
		\hat{\varphi}_a(\usi_{\delta a};\Gm_a)\,.
	\eeq
Clearly, scaling any $\Gm_a$ or $\Lm_v$ by a positive constant has no effect other than to scale the entire measure, so we will always anchor the clause weights by fixing the convention $\gm_e(\yel)=1$.
Likewise we anchor the variable weights by fixing
$\lm_v(\plus)=1$ and $\lm_e(\sigma)=1$ for all $\sigma\in\set{\yel,\grn,\blu}$. We will often denote $\lm_e\equiv\lm_e(\red)$.

Let us remark that clause weights can be re-interpreted as variable weights, and vice versa, simply by shifting the factors: for example, a single-copy model consisting of $\Gm$-weighted clauses
surrounded by unweighted variables
can be transformed into a model
with $\Lm$-weighted variables and unweighted clauses by setting
	\begin{align} \nonumber
	\lm_v(\minus) 
	&=
	\prod_{e\in\delta v(\plus)}
		\f{\gm_e(\yel)}{\gm_e(\blu)}
	\prod_{e\in\delta v(\minus)}
		\f{\gm_e(\blu)}{\gm_e(\yel)}\,,\\ \nonumber
	\lm_v(\free) &=
		\prod_{e\in\delta v(\plus)}
		\f{\gm_e(\grn)}{\gm_e(\blu)}
		\prod_{e\in\delta v(\minus)}
		\f{\gm_e(\grn)}{\gm_e(\yel)}\,, \\
	\lm_e(\red)
		&= \f{\gm_e(\red)}{\gm_e(\blu)}
		\textup{ for all $e\in\delta v$}\,,
	\label{e:redistribute.weights}
	\end{align}
recalling that all the other weights 
are fixed at one. 

%

We now briefly review the belief propagation (\textsc{bp}) method in the context of the weighted color model. We shall apply \textsc{bp} only when the underlying bipartite factor graph is a finite tree --- it is well known that \textsc{bp} is exact for this setting, though it is only a heuristic on more general graphs. The reader is referred to \cite{MR2518205} for a detailed introduction to \textsc{bp} for a far broader class of models.

Let $T=(V,F,E)$ be a finite bipartite factor tree. For simplicity we assume that the leaves of $T$ are all variables. Consider a weighted color model \eqref{e:color.factor.model.WEIGHTED} on $T$. Provided this measure has positive mass, we can normalize it to be a probability distribution $\nu$ supported on valid colorings $\usi\equiv\usi_T$ of the tree $T$. The measure $\nu$ is an example of what is more generally termed a \bemph{Gibbs measure}. Belief propagation is a way of computing local marginals of a Gibbs measure $\nu$ on a finite tree $T$: for any subgraph $U\subseteq T$, write $\nu_U$ for the marginal of $\nu$ on $U$. The local marginals $\nu_U$ are efficiently computed in terms of the solution to a system of equations, known as the \textsc{bp} recursions or \textsc{bp} fixed-point equations, as we now describe.

Let $\mathscr{M}$ denote the space of probability measures over $\set{\RYGB}$. The \textsc{bp} recursions are equations defined in terms of the \textsc{bp} \bemph{messages} $\dq,\hq\in\mathscr{M}$ indexed by edges $e=(av)$:
	{\setlength{\jot}{0pt}\begin{align*}
	\dq_e\equiv \dq_{va}
	\equiv \dq_{v\to a}
	&\equiv \textup{variable-to-clause message
		from $v$ to $a$}\\
	&=\textup{marginal law of $\sigma_{va}$
		``in absence of $a$'',}\\
	\hq_e\equiv \hq_{av}
	\equiv \hq_{a\to v}
	&\equiv \textup{clause-to-variable message
		from $a$ to $v$}\\
	&=\textup{marginal law of $\sigma_{va}$
		``in absence of $v$''}.
	\end{align*}}%
(The messages $\dq,\hq$ represent distributions over warnings $\dmp,\hmp$; see \cite[Ch.~19]{MR2518205}.) It is well understood how to relate these messages by \textsc{bp} equations, which express the message outgoing from a variable (clause) across an edge as a function of the messages incoming to the variable (clause) across the other incident edges. These mappings are parametrized by the relevant weights:
	\begin{align}\nonumber
	\dq_{va}(\tau)
	&=
	\BP_{va}[ \hq;\Lm_v ]
		(\tau)
	\equiv \f1{\dot{z}_{va}}
		\sum_{\usi_{\delta v} : \sigma_{av}=\tau}
		\varphi_v(\usi_{\delta v};\Lm_v)
		\prod_{b\in\pd v\setminus a}
			\hq_{bv}(\sigma_{bv})\\
	\hq_{av}(\tau)
	&=
	\BP_{av}[ \dq;\Gm_a]
		(\tau)
	\equiv \f1{\hat{z}_{av}}
		\sum_{\usi_{\delta a} : \sigma_{av}=\tau}
		\hat{\varphi}_a(\usi_{\delta a};\Gm_a)
		\prod_{u\in\pd a\setminus v}
			\dq_{ua}(\sigma_{ua})
	\label{e:bp}
	\end{align}
where $\dot{z}_{va},\hat{z}_{av}$
are the normalizing constants
making the output of $\BP$ a probability measure.
We drop $\Lm,\Gm$ from the notation to indicate the unweighted recursions.
Although we generally do not write it explicitly,
the normalizing constants 
$\dot{z}_{va},\hat{z}_{av}$
also depend on the choice of weights.

For a leaf variable $v$ incident to a single edge $e=(av)$, recall from above that $\Lm_v$ simply denotes the edge weight on $e$. In this case, the variable-to-clause \textsc{bp} equation 
\eqref{e:bp}
simplifies to 
	\beq\label{e:leaf.bp}
	\dq_{va}(\tau)
	=\BP_{va}[\hq;\Lm_v](\tau)
	=\f{\lm_e(\tau)}{\sum_{\tau'} \lm_e(\tau')}\,,
	\eeq 
i.e., the message from $v$ into the graph is simply the weight on $v$. It follows, by recursing inwards from the leaves, that for any Gibbs measure $\nu$ on a finite tree, there is a unique solution $(\dq,\hq)$ of the \textsc{bp} equations. Local marginals of $\nu$ can be expressed in terms of this solution, for example, the marginal on the edges incident to a single vertex can be expressed as
	\begin{align}
	\label{e:var.tuple.measure.weighted}
	\text{(variable $v$)}\quad
	&\nu_{\delta v}(\usi_{\delta v})
	= \nu_{\delta v}
		[\Lm_v;\hq_{\pd v\to v}](\usi_{\delta v})
	\equiv \f1{\dbz_v}
	\varphi_v(\usi_{\delta v};\Lm_v)
	\prod_{a\in\pd v} \hq_{av}(\sigma_{av})\,,\\
	\text{(clause $a$)}\quad
	&\nu_{\delta a}(\usi_{\delta a})
	=\nu_{\delta a}[\Gm_a;
		\dq_{\pd a\to a}](\usi_{\delta a})
	\equiv
	\f1{\bm{\hat{z}}_a}
	\hat{\varphi}_a(\usi_{\delta a};\Gm_a)
	\prod_{v\in\pd a} \dq_{va}(\sigma_{av})\,.
	\label{e:clause.tuple.measure.weighted}
	\end{align}
Taking the edge marginal from either of these gives
	\beq\label{e:edge.marginal.q.times.q}
	\nu_{av}(\sigma_{av})
	= \nu_{av}[\dq_{va},\hq_{av}](\sigma_{av})
	\equiv
	\f{\dq_{va}(\sigma_{av}) \hq_{av}(\sigma_{va})}
	{\bar{z}_{av}}\,,
	\eeq
where the normalizing constant $\bar{z}_{av}$ satisfies the relations
	\beq\label{e:z.identity}
	\bar{z}_{av}
	= \f{\dbz_v }{ \dot{z}_{va}}
	= \f{ \bm{\hat{z}}_a }{ \hat{z}_{av}}\,.
	\eeq
The expressions \eqref{e:var.tuple.measure.weighted}, \eqref{e:clause.tuple.measure.weighted}, and \eqref{e:edge.marginal.q.times.q} will be used many times throughout the paper. In particular, for an edge $e=(av)$ we will often write ``$\nu_e\cong\dq_e\hq_e$'' to remind the reader of \eqref{e:edge.marginal.q.times.q}.

To conclude the section, we now explain how the tree recursions
for the frozen model (\S\ref{ss:wp.recursions}) can be retrieved as a special case of the color model \textsc{bp} recursions. This is the only place where we will make use of the warning propagation model, as an intermediary between the frozen model and color model (cf.\ \eqref{e:aux.model.bijections}). We shall keep the discussion here brief, and refer the reader to our previous works \cite{dss-naesat,MR3689942} where we covered analogous correspondences in substantial detail.

In \S\ref{ss:wp.recursions} we considered a finite tree $T$ with variables at the leaves, and defined the ``frozen model on $T$ with rigid boundary'' (Definition~\ref{d:frozen.model.bdy.conditions}). On each edge $(av)$ we defined a probability measure
$\bmeta_{va}$ on $\set{\plus,\minus,\free}$,
as well as a probability measure $\bm{\hat{u}}_{av}$ on $\set{\plus,\free}$.
We now demonstrate that this corresponds to the color model on $T$ with all vertices unweighted, except for the leaf variables where we put weights $\lm_v(\free)=0$.
As before, we use $q=(\dq,\hq)$ to denote the messages in this weighted color model. We now write $h=(\dh,\hh)$ for the analogous \textsc{bp} messages in the warning propagation model --- thus both $\dh_{va}$ and $\hh_{av}$
are probability measures over message pairs
$\msg_{av}\equiv(\dmp_{va},\hmp_{av})$.
Given $\bmeta$, we can define $h$ by setting $\dh_{va}(\dmp_{va},\hmp_{av})\cong\bmeta_{va}(\dmp_{va})$
and $\hh_{av}(\dmp_{va},\hmp_{av})\cong\bm{\hat{u}}_{av}(\hmp_{av})$. Working out the proper normalization gives
	\begin{align}\nonumber
	\dh_{va}(\dmp_{va},\hmp_{av})
	&\cong \f{\bmeta_{va}(\dmp_{va})}
		{ 2-\bmeta_{va}(\minus) }\,,\\
	\hh_{av}(\dmp_{va},\hmp_{av})
	&= \f{ \bm{\hat{u}}_{av}(\hmp_{av}) }
		{ 3-\bm{\hat{u}}_{av}(\plus) }
	\label{e:bp.sym}
	\end{align}
Projecting down to the color model, we can define
	\begin{align}\nonumber
	\bigg(
	\dq_{va}(\red),
	\dq_{va}(\yel),
	\dq_{va}(\grn),
	\dq_{va}(\blu)\bigg)
	&=\Bigg(
	\f{\bmeta_{va}(\plus)+\bmeta_{va}(\free)}
		{2-\bmeta_{va}(\minus)},
	\f{\bmeta_{va}(\minus)}{2-\bmeta_{va}(\minus)},
	\f{\bmeta_{va}(\free)}{2-\bmeta_{va}(\minus)},
	\f{\bmeta_{va}(\plus)}{2-\bmeta_{va}(\minus)}
	\Bigg)\,,\\
	\bigg(
	\hq_{av}(\red),
	\hq_{av}(\yel),
	\hq_{av}(\grn),
	\hq_{av}(\blu)\bigg)
	&=\Bigg(
	\f{\bm{\hat{u}}_{av}(\plus)}
		{3-2\bm{\hat{u}}_{av}(\plus)},
	\f{\bm{\hat{u}}_{av}(\free)}
		{3-2\bm{\hat{u}}_{av}(\plus)},
	\f{\bm{\hat{u}}_{av}(\free)}
		{3-2\bm{\hat{u}}_{av}(\plus)},
	\f{\bm{\hat{u}}_{av}(\free)}
		{3-2\bm{\hat{u}}_{av}(\plus)}
	\Bigg)\,.\label{e:color.recursions.eta}
	\end{align}
In the above, $2-\bmeta_{va}(\minus)$ and $3-\bm{\hat{u}}_{av}(\plus)$ are the normalizing constants. It is straightforward to verify that if $\bmeta$ and $\bm{\hat{u}}$ are as defined in \S\ref{ss:wp.recursions}, then $\dq$ and $\hq$ as defined by \eqref{e:color.recursions.eta} solve the \textsc{bp} equations \eqref{e:bp} for the color model on $T$ with weights $\lm_v(\free)=0$ at leaf variables $v$.\footnote{For the purposes of this paper, it suffices merely to note that \eqref{e:bp.sym} defines a valid \textsc{bp} solution for the warning propagation model, and projects to a \textsc{bp} solution for the color model. We omit the derivation of \eqref{e:bp.sym} since it is not central here, and similar correspondences were already explained in detail in \cite{dss-naesat,MR3689942}.} In particular, it follows from \eqref{e:leaf.bp} that the message $\dq_{va}$ from a leaf variable $v$ to its neighboring clause $a$ will be uniform over $\set{\red,\yel,\blu}$. This corresponds via \eqref{e:color.recursions.eta} precisely to the boundary conditions $\bmeta_{ua(u)}(\plus)=1/2=\bmeta_{ua(u)}(\minus)$, as specified in \S\ref{ss:wp.recursions}.

\section{Variable types, preprocessing, and proof outline}
\label{s:preprocess.defns}

In this section we formally describe the preprocessing algorithm and give a more detailed outline of the proof of the main result Theorem~\ref{t:main}. This section is organized as follows: 
\begin{enumerate}[--]
\item In \S\ref{ss:canonical} we define the notion of \bemph{simple types}, based on $R$-neighborhoods of variables in the original $\ksat$ graph. We also define \bemph{``canonical''} edge marginals based on $r$-neighborhoods of variables (for $r=R/10^4$), and define a \bemph{coherence} condition for edge marginals around a clause to be mutually compatible.

\item In \S\ref{ss:classification.of.simple.types}
we define a series of properties of simple types.

\item In \S\ref{ss:introprep} we define a procedure which takes a $\ksat$ instance $\GG'$, and produces a processed graph $\GG=\proc\GG'$ in which all variables satisfy certain desirable properties. The most important property of the processed graph is that it can be covered by regions (``enclosures'') where a certain estimate \eqref{e:every.variable.is.enclosed} holds.

\item In \S\ref{ss:contraction.overview} we give the basic outline of the proof of the main result
Theorem~\ref{t:main}. In this subsection we will state the first and second moment results that are needed for the proof.

\item In \S\ref{ss:first.moment.preliminaries} we present preliminary results towards the first moment calculation.

\item In \S\ref{ss:second.moment.overview} 
we present preliminary results towards the second moment calculation. This subsection also presents some of the conceptual ideas behind the second moment calculation.

\item In \S\ref{ss:coherence.weights} we give the technical details of the coherence condition from \S\ref{ss:canonical}.

\end{enumerate}

\subsection{Simple types and coherence}
\label{ss:canonical}

As mentioned above, a key step in~\cite{MR3436404} is to condition on the degree profile of~$\GG$. In this work we condition on the empirical distribution of depth-$R$ neighborhood types, which we regard as a ``generalized degree profile'' in the manner of~\cite{MR3405616}. We establish a satisfiability lower bound $\albd(R)$ for each fixed $R$, and show that $\albd(R)\to\arsb$ in the limit $R\to\infty$. To be precise, the order of limits taken throughout this paper is the following:
for each $k\ge k_0$ (where $k_0$ is a large absolute constant), take $n\to\infty$ with $R$ fixed, 
then take $R\to\infty$. To this end, we fix $r$ a (large) positive integer, and define $\rprime,R$ so that
	\beq\label{e:radii}
	\f{R}{10^4} = \f{\rprime}{10^2} = r\,.
	\eeq
Recall from Definition~\ref{d:formal.random.ksat} that we work with the measure $\poisP\equiv\poisP^{n,\alpha}$ on bipartite factor graphs $\GG=(V,F,E)$ (with edge signs) where $V = [n]\equiv\set{1,\ldots,n}$. We now further assign a \textbf{marking} of the variables, which is simply a uniformly random mapping
	\beq\label{e:random.R.marking}
	\LABEL \equiv \LABEL_R : V \to \bigg\{1,2,\ldots,
		\Big\lceil \exp(4^k R)\Big\rceil \bigg\}\,.\eeq
From now on, the graph $\GG$ is always understood to come equipped with the random marking $\LABEL_R$. (In the random graph $\GG\sim\P$, a typical variable has an $R$-neighborhood of volume at most $(k^{O(1)}2^k)^R$. The choice of $\lceil \exp(4^k R)\rceil$ for the range of the random markings ensures that in any given $R$-neighborhood, all variables receive distinct markings with chance $1-o_R(1)$.)

In the limit $n\to\infty$, the random graph $\GG\sim\poisP^{n,\alpha}$ converges locally in distribution (in the sense of \cite{MR1873300,MR2354165}, and reviewed in Remark~\ref{r:unimodular} below) to a Poisson Galton--Watson tree $\tree=(V_{\tree},F_{\tree},E_{\tree})$ which can be generated as follows: start with a single root variable $\vrt$, then generate offspring according to the rule that each variable independently generates $\Pois(\alpha k)$ child clauses, and each clause generates $k-1$ child variables. Each edge is labelled with a literal $\lit$ which takes values $\plus$ or $\minus$ with equal probability, independently over all the edges. Finally, for each variable $v\in V_{\tree}$ assign an independent uniformly random mark
	\[
	\LABEL(v) \equiv \LABEL_R(v) 
	\in
	\bigg\{1,2,\ldots,
		\Big\lceil \exp(4^k R)\Big\rceil \bigg\}\,.
	\]
We write $\uPGW^\alpha$ for the resulting probability measure on rooted trees.\footnote{The measure $\uPGW^\alpha$ differs from the most standard definition of the Poisson Galton--Watson law only in minor details: the alternation between variables and clauses, and the presence of random edge signs and variable marks.}

To generalize the notion of degree distribution, we begin with a preliminary definition of neighborhood type, as follows. This definition will be augmented over the course of this section.

\begin{dfn}[simple type]\label{d:simple.types} In a graph $\GG$, the \bemph{simple type} $t_e$ of a clause-variable edge $e\equiv(av)\equiv(va)\in E$ is the isomorphism class of $(B_R(v),e)$, the $R$-neighborhood around $v$ rooted at edge $e$.\footnote{The edge-rooted graphs $(T_i,e_i)$, $i=1,2$, are isomorphic if there is a bijective graph homomorphism $\iota : T_1 \to T_2$ which maps $e_1$ to $e_2$, preserves all edge labels $\lit_{av}\in\set{\plus,\minus}$ and indices $j(v;a)\in[k]$, and preserves all variable marks $\LABEL(v)
\in\set{1,\ldots, \exp(\lceil 4^k R\rceil) }$.} We write $j(t_e)\equiv j(v;a)$ to indicate the position of the edge in the clause. The \bemph{simple type} of a vertex $x \in V \cup F$ is the multi-set of simple types of all incident edges, $\set{t_e : e\in\delta x}$. Note that this has a different meaning depending on whether $x$ is a clause or a variable:
\begin{enumerate}[(i)]
\item If $x\in F$ is a clause, then its simple type --- which we hereafter denote $L_x$ --- is a multi-set with no repeated elements, since each edge $e\in\delta x$ has a distinct index $j(t_e)\in[k]$. Thus $L_x$ is equivalently represented as the ordered $k$-tuple $(L_x(1),\ldots,L_x(k))$ where $L_x(j)$ is the type of the $j$-th edge in $\delta x$.

\item If $x\in V$ is a variable, then
its simple type --- which we hereafter denote $T_x$ --- may have repeated elements. It is equivalently represented as the isomorphism class of $B_R(v)$ regarded as a graph rooted at $v$.
\end{enumerate}
We say that an edge $e$ is \bemph{acyclic} if its simple type $t_e$ is acyclic. We say that $e$ is \bemph{proper} if it is acyclic, and moreover no two variables $u\ne w$ in $t_e$ receive the same mark $\LABEL_R(u)=\LABEL_R(w)$. A vertex will be termed \bemph{acyclic} (resp.\ \bemph{proper}) if all its incident edges are acyclic (resp.\ proper). If $v\in V$ is a proper variable, then its simple type $T_v$ has no repeated elements.
\end{dfn}

\begin{rmk}\label{r:acyclic.proper.multiset}
In the graph $\GG$ sampled according to $\poisP\equiv\poisP^{n,\alpha}$, the fraction of cyclic variables will typically be around
	\[
	\f{2^{O(kR)}}{n}
	= o_n(1)\,
	\]
while the fraction of improper variables will typically be around
	\[
	\f{2^{O(kR)}}{\exp\{ 4^k R\}}
	= o_R(1)\,.\]
During processing we will remove all improper variables from the graph, ensuring that the final graph will have girth greater than $2R$, since all variables remaining will be proper (hence acyclic). This further ensures that for any surviving variable $v$, its simple type in the initial graph is a multi-set with no repeated elements.
\end{rmk}

\begin{dfn}[directed trees] \label{d:directed.trees}
If $T$ is any bipartite factor tree and $e=(av)$ is any edge in $T$:
\begin{enumerate}[--]
\item We let $T_{va}$ be the connected component of $T\setminus a$ that contains $v$. We regard $T_{va}$ as being rooted at $v$, where $v$ has parent edge $e$
that points to the deleted clause $a$. We call $T_{va}$ a \bemph{variable-to-clause tree}.
\item Similarly we let $T_{av}$ be the connected component of $T\setminus v$ that contains $a$. 
We regard $T_{av}$ as being rooted at $a$, where $a$ has parent edge $e$
that points to the deleted clause $v$. We call $T_{av}$ a \bemph{clause-to-variable tree}.
\end{enumerate}
We will refer to both $T_{va}$ and $T_{av}$ as \bemph{directed trees}. Note that trees of this kind have already appeared previously, in the discussions of \S\ref{ss:wp.recursions}.
\end{dfn}

\begin{dfn}[canonical messages and marginals]
\label{d:canonical} Recall the definition of $\FF$ from the discussion following \eqref{e:def.FF.of.tree}. For an acyclic edge $(av)\in E$, let $T=B_r(v)$, and let 
$\SQT=B_{r-1/2}(a)$ (equivalently, $\SQT$ is the union of $B_{r-1}(u)$ over $u\in\pd a$), so that $\SQT\subseteq T$ (and both are trees). Let 
	{\setlength{\jot}{0pt}
	\begin{align}\nonumber
	\etastar_{va}
	\equiv \FF_{T,va}
	= \FF_r(T_{va})
	&\longleftrightarrow \dqstar_{va}\,,\\
	\nonumber
	\bhustar_{av}
	\equiv\FF_{T,av}
	= \FF_{r-1/2}(T_{av})
	&\longleftrightarrow \hqstar_{av}\,,\\
	\nonumber
	\SQeta_{va}
	\equiv\FF_{\SQT,va}
	= \FF_{r-1}(\SQT_{va})
	= \FF_{r-1}(T_{va})
	&\longleftrightarrow\SQdq_{va}\,,\\
	\SQbhu_{av}
	\equiv\FF_{\SQT,va}
	= \FF_{r-1/2}(\SQT_{av})
	= \FF_{r-1/2}(T_{av})
	&\longleftrightarrow
	\SQhq_{av}\,,
	\label{e:defn.canonical.eta}
	\end{align}}
 where $\longleftrightarrow$ indicates the correspondence
\eqref{e:color.recursions.eta}. Thus
$\etastar_{va}$ and $\SQeta_{va}$ are probability measures over $\set{\plus,\minus,\free}$; and $\bhustar_{av}$ and $\SQbhu_{av}$ are probability measures over $\set{\plus,\free}$. Meanwhile $\dqstar_{va}$, $\hqstar_{av}$, $\SQdq_{va}$, and $\SQhq_{av}$ are all probability measures over $\set{\RYGB}$. Note that $\bhustar_{av}=\SQbhu_{av}$, so $\hqstar_{av}=\SQhq_{av}$. Recalling \eqref{e:edge.marginal.q.times.q}, we define
	\beq\label{e:defn.canonical.edge.marginal}
	\starpi_{av}(\sigma)
	\equiv\f{ \dqstar_{va}(\sigma)\,
		\hqstar_{av}(\sigma)}
	{ \sum_{\sigma'}
		 \dqstar_{va}(\sigma')\,
		\hqstar_{av}(\sigma')}\,,\quad
	\SQpi_{av}(\sigma)
	\equiv
	\f{ \SQdq_{va}(\sigma)\,
		\SQhq_{av}(\sigma)}
	{ \sum_{\sigma'}
		 \SQdq_{va}(\sigma')\,
		\SQhq_{av}(\sigma')}\,,
	\eeq
where in each case the sum in the denominator goes over $\sigma'\in\set{\RYGB}$. For an acyclic edge $e=(av)$, we call $\dqstar_{va}$ and $\hqstar_{av}$ the \bemph{canonical messages}, and $\starpi_{av}$ the \bemph{canonical marginal}, all based on $T=B_r(v)$. We call 
$\SQdq_{va}$ and $\SQhq_{av}$ the \bemph{clause-based messages}, and $\SQpi_{av}$ the \bemph{clause-based marginal}, all based on $\SQT=B_{r-1/2}(a)\subseteq T$.
\end{dfn}

\begin{rmk}\label{r:first.rmk.coherence}
Let $e=(av)$ be an acyclic edge, so $T=B_r(v)$
and $\SQT=B_{r-1/2}(a)\subseteq T$ are both trees. Note that the correspondence \eqref{e:color.recursions.eta}
implies the relations
	\begin{align*}
	\hqstar_{av}(\blu)
		=\hqstar_{av}(\grn)
		=\hqstar_{av}(\yel)
		&= \f{\bm{\hat{u}}_{av}(\free)}
		{3-2\bm{\hat{u}}_{av}(\plus)}
		\,,\\
	\dqstar_{va}(\red)
		=\dqstar_{va}(\blu)
		+\dqstar_{va}(\grn)
		&=
		\f{
		\bmeta_{va}(\plus)+\bmeta_{va}(\free)
		}{2-	\bmeta_{va}(\minus)}
		\,,
	\end{align*}
and similarly for $\SQhq$ and $\SQdq$. The canonical messages $\dqstar_{va}$ and $\hqstar_{av}$ (for $a\in\pd v$) are all based on $T$, which means they satisfy the \bemph{variable} \textsc{bp} relation $\dqstar_{va} = \BP_{va}[\hqstar]$. By contrast, they need \bemph{not} satisfy clause \textsc{bp} relations: even if all edges incident to clause $a$ are acyclic, it is \bemph{not} necessarily the case that $\hqstar_{av} = \BP_{av}[\dqstar]$, because for each $u\in\pd a\setminus v$ the message $\dqstar_{ua}$ is based on a different neighborhood $B_r(u)$. On the other hand, the clause-based messages $\SQdq_{ua}$ and $\SQhq_{au}$ (for $u\in\pd a$) are all based on $\SQT$, which means they do satisfy the \bemph{clause} \textsc{bp} relation $\SQhq_{av} = \BP_{av}[\SQdq]$. A related observation is that if we define (cf.\ \eqref{e:var.tuple.measure.weighted})
	\[
	\nu_{\delta v}(\usi_{\delta v})
	=\f1{\dbz_v}
	\varphi_v(\usi_{\delta v})
	\prod_{a\in\pd v}
	\hqstar_{av}(\sigma_{av})
	\]
where $\dbz_v$ denotes the normalization that makes 
$\nu_{\delta v}$ a probability measure, then $\nu_{\delta v}$ has edge marginals $\starpi_e$ for all $e\in\delta v$.
On the other hand, if we define
	\[\nu_{\delta a}(\usi_{\delta a})
	=\f1{\bm{\hat{z}}_a}
	\hat{\varphi}_a(\usi_{\delta a})
	\prod_{u\in\pd a}
	\SQhq_{ua}(\sigma_{au})\,,
	\]
then $\nu_{\delta a}$ has has edge marginals $\SQpi_e$ for all $e\in\delta a$. The measure $\nu_{\delta v}$ is consistent with the frozen model with rigid boundary conditions (Definition~\ref{d:frozen.model.bdy.conditions}) on $T$, while the measure $\nu_{\delta a}$ is consistent with the frozen model with rigid boundary conditions on $\SQT$.\end{rmk}

In the limit $r\to\infty$ we expect the difference between quantities based on $T=B_r(v)$ versus $\SQT=B_{r-1/2}(a)$ to go away. However, when working at a fixed finite radius, a major technical difficulty is to handle inconsistencies between the two. In this paper we work primarily with the quantities based on $T=B_r(v)$ (hence our choice of the term ``canonical'' to describe those quantities). As discussed above, the canonical messages do not, in general, satisfy clause \textsc{bp} relations. A closely related issue is that for $u\in\pd a$, each $\starpi_{au}$ is based on a different neighborhood $B_r(u)$. An important technical result of this section (stated and proved in \S\ref{ss:coherence.weights}) shows that we can \bemph{reweight} clauses such that the \textsc{bp} equations do hold exactly --- provided the clauses are ``not excessively inconsistent,'' as we now formalize. Following \cite{MR3436404}, it is useful to 
define \bemph{composite colors}
	{\setlength{\jot}{0pt}\begin{align}\nonumber
	\cya &\equiv\SPIN{cyan}\equiv\set{\grn,\blu}\,,\\
	\pur &\equiv\SPIN{purple}
		\equiv\set{\red,\blu}\,.
	\label{e:cyan.purple}
	\end{align}}%
The clause factor \eqref{e:indicator.of.valid.clause.coloring} does not distinguish between $\grn$ and $\blu$, while the variable factor \eqref{e:EVAL.of.color} does not distinguish between $\red$ and $\blu$; introducing the composite colors helps to simplify some parts of the analysis. The following gives our formal criterion for ``consistency'' within clauses:

\begin{dfn}[coherence]\label{d:coherence}
For a clause $a\in F$, suppose $\pi=(\pi_e)_{e\in\delta a}$ where each $\pi_e$ is any probability measure over $\set{\RYGB}$. We say that $\pi$ is \bemph{weakly coherent} if it satisfies the following:
	\beq\label{e:cohere.y.for.r}
	\COHER_e(\pi) \equiv
	\pi_e(\yel)
	- \sum_{e' \in\delta a \setminus e}
	\pi_{e'}(\red) \ge 0\,,
	\eeq
for each $e\in\delta a$; and with $\pi_e(\cya) \equiv \pi_e(\blu)+\pi_e(\grn)$ we have
	\beq\label{e:cohere.enough.cyan}
	\COHER_a(\pi)
	\equiv \sum_{e\in\delta a}
	\pi_e(\cya) - 2\bigg\{
	1 - \sum_{e\in\delta a}\pi_e(\red)
	\bigg\} \ge 0\,.\eeq
We say that $\pi$ is \bemph{strictly coherent} if condition~\eqref{e:cohere.enough.cyan} holds with strict inequality, and condition~\eqref{e:cohere.y.for.r} holds with strict inequality for each $e\in\delta a$ where $\pi_e(\yel)>0$. We then say that the clause $a$ is \bemph{weakly (strictly) coherent} if it is acyclic, and its canonical edge marginals $\starpi=(\starpi_e)_{e\in\delta a}$ are weakly (strictly) coherent. 
\end{dfn}

 A full analysis of the (weak and strict) coherence conditions is deferred to \S\ref{ss:coherence.weights}. For the purposes of the upcoming discussion, the most immediately relevant result from \S\ref{ss:coherence.weights} is Lemma~\ref{l:clause.based.marginals.cohere} which says that $\SQpi$ is strictly coherent for all $r\ge2$, and so $\starpi$ will also be strictly coherent if it is ``close enough'' to $\SQpi$. The remainder of \S\ref{ss:coherence.weights} is occupied with showing that strictly coherent measures can be realized by appropriate reweighting systems (in particular, see Corollaries~\ref{c:cohere.weights}~and~\ref{c:clause.bp.weights}).

\subsection{Classification of simple types}
\label{ss:classification.of.simple.types}

In this subsection we make several classifications of (simple, acyclic) types in order to identify the vertices that must be removed during preprocessing. Recall \eqref{e:radii}. One aim will be to ensure that all clauses in the final processed graph
are strictly coherent (Definition~\ref{d:coherence}). For an acyclic edge $e=(av)$, recall 
the canonical messages and marginals
of Definition~\ref{d:canonical}
(\eqref{e:defn.canonical.eta}~and~\eqref{e:defn.canonical.edge.marginal}), all based on the neighborhood $B_r(v)$. Since all these quantities can be determined from the edge type $t=t_e$ (which encodes the structure of the larger neighborhood $B_R(v)$), we will freely interchange $e$ and $t$ in the subscripts, so for instance $\starpi_e \equiv \starpi_t$.

\begin{dfn}[stable]\label{d:stable}
Suppose the edge $e=(av)$ is acyclic (Definition~\ref{d:simple.types}), so that all the messages and marginals of Definition~\ref{d:canonical} are well-defined. As we noted above --- and will prove in Lemma~\ref{l:clause.based.marginals.cohere} below --- the measure $\SQpi$ is strictly coherent, i.e., it satisfies all the conditions of Definition~\ref{d:coherence}. We say that the edge $e=(av)$ is \bemph{marginal-stable} if all the following bounds hold:
	\begin{align}
	\label{e:stable.yellow}
	\Big|\starpi_e(\yel)-\SQpi_e(\yel)\Big| 
	&\le \f{\COHER_e(\SQpi)}{5k}\,,\\
	\label{e:stable.cyan}
	\Big|\starpi_e(\cya)-\SQpi_e(\cya)\Big|
	&\le \f{\COHER_a(\SQpi)}{5k}\,,\\
	\label{e:stable.red}
	\Big|\starpi_e(\red)-\SQpi_e(\red)\Big|
	&\le \f1{5k}
	\min\bigg\{
	\COHER_a(\SQpi),
	\min_{e'\in\delta a \setminus e} \COHER_{e'}(\SQpi)
	\bigg\}\,,\\
	\starpi_e(\sigma) 
	&\ge \f12 \SQpi_e(\sigma)
	\quad\textup{for all }
	\sigma\in\set{\red,\yel,\blu,\cya}\,.
	\label{e:stable.positivity}
	\end{align}
(We impose the last condition \eqref{e:stable.positivity} because it forces $\supp\starpi_e=\supp\starpi_e$, which is convenient for the analysis.) We say that $e$ is \bemph{message-stable} if
	\beq\label{e:message.stability}
	1-\etastar_{va}(\minus)\ge \f1{2^r}\,,\quad
	\max_{\sigma\in\set{\red,\blu,\yel\grn}} \bigg|
	\f{\hqstar_{av}(\sigma)}{\BP_{av}[\dqstar](\sigma)}
	-1\bigg| \le \f1{k^r}\,.
	\eeq
(We take the convention $0/0=1$, so in the above it is permitted to have
 both $\hqstar_{av}(\sigma)$ and $\BP_{av}[\dqstar](\sigma)$ equal to zero.) We say that the edge is \bemph{stable} if it is both marginal- and message-stable. Finally, we say that an acyclic variable $v$ is \bemph{stable} if all its incident edges $e\in\delta v$ are stable. 
\end{dfn}

\begin{rmk*} Note that if all the variables in a graph are marginal-stable, then $\starpi$ will be strictly coherent on all the clauses in the graph. Indeed, substituting \eqref{e:stable.yellow} 
and \eqref{e:stable.red} into
\eqref{e:cohere.y.for.r} gives
	\[
	\COHER_e(\starpi)
	\ge \bigg(1 - \f15 \bigg) \COHER_e(\SQpi)\,,
	\]
for all edges $e$ in the graph. Similarly, substituting \eqref{e:stable.cyan} and \eqref{e:stable.red} into \eqref{e:cohere.enough.cyan} gives
	\[\COHER_a(\starpi)
	\ge\bigg(1 - \f35 \bigg) \COHER_a(\SQpi)\]
for all clauses $a$ in the graph. Since $\SQpi$ is strictly coherent (Lemma~\ref{l:clause.based.marginals.cohere}), it follows that $\starpi$ is also.\end{rmk*}

\begin{dfn}[nice] \label{d:nice}
An acyclic variable $v$ is \bemph{nice} if it has degrees 
	\beq\label{e:nice.degree.condition}
	\bigg||\pd v(\PM)| - \f{2^k k\log 2}{2}\bigg|
	\le 2^{2k/3}\,,\eeq
and its incoming and outgoing canonical messages satisfy the bounds
	\begin{align}\label{e:nice.var.message.cond}
	\max\Bigg\{
	\bigg|\dqstar_{va}(\yel)-\f13\bigg|,
	\bigg|2^k \dqstar_{va}(\grn)-\f13\bigg|
	\Bigg\}
	&\le \f1{2^{k/10}}\,,\\
	\label{e:nice.clause.message.cond}
	\max\Bigg\{
	\bigg|\hqstar_{av}(\yel)-\f13\bigg|,
	\bigg|2^{|\partial a|-1}
	\hqstar_{av}(\red)-\f13\bigg|
	\Bigg\}
	&\le \f1{2^{k/10}}\,,
	\end{align}
for all $a\in\pd v$.\footnote{Recall that we automatically have the identities $\hqstar_{av}(\yel)=\hqstar_{av}(\grn)=\hqstar_{av}(\blu)$ and $\dqstar_{va}(\red)=\dqstar_{va}(\blu)+\dqstar_{va}(\grn)$. Thus the conditions \eqref{e:nice.var.message.cond}
and \eqref{e:nice.clause.message.cond}
constrain $\dqstar_{va}(\sigma)$
and $\hqstar_{av}(\sigma)$ for all $\sigma\in\set{\RYGB}$.} (In condition \eqref{e:nice.clause.message.cond} we write $|\pd a|$, rather than simply $k$, because we will also use this definition in the processed graph where some clauses will have degree $k-1$.) Since the canonical messages are functions of $B_r(v)$, niceness is also a property of $B_r(v)$.
\end{dfn}

\begin{dfn}[$1$-stable and $1$-nice]
\label{d:j.stable}
We say an acyclic variable $v$ is \bemph{$1$-stable} if is stable, and remains stable after the removal of any one subtree descended from a variable $u \in B_r(v)\setminus v$. The \bemph{$1$-nice} property is analogously defined. We use \bemph{$0$-nice} to mean simply \bemph{nice}. For $\II\in\set{0,1}$ we let
	\beq\label{e:not.j.nice}
	D^{*,\II}
	\equiv 
	\bigg\{v\in V:
	v \text{ is acyclic but not $\II$-nice}\bigg\}\,.
	\eeq
Note that $D^{*,0}\subseteq D^{*,1}$.
\end{dfn}

We will now identify defective regions via 
the following bootstrap percolation process. In a general bipartite factor graph $\GG=(V,F,E)$, given some subset of variables $D_0\subseteq V$, for $t\ge1$ set $D_t\supseteq D_{t-1}$ to be the union of $D_{t-1}$ together with all variables having at least two neighboring variables in $D_{t-1} \cap V$. The set
	\beq\label{eq-bootstrap-percolation}
	\bsp(D_0;\GG)
	\equiv \bigcup_{t\ge0} D_t
	\eeq
will be termed
the \bemph{bootstrap percolation} of $D_0$ in $\GG$.

\begin{dfn}[defective]\label{d:j.defective}
Let $\KAPPA$ be a large absolute constant, to be determined later.\footnote{Ultimately we will require $\KAPPA\ge(240/\zeta)^4$
(see Propositions~\ref{p:contraction.COMPOUND} and \ref{p:nondefect.join}) 
where $\zeta$ is the absolute constant from Definition~\ref{d:error.notation.edge}. Finally we will take $\zeta=\ZETA/4$ where $\ZETA$ is the absolute constant from an \textit{a~priori} estimate, Proposition~\ref{p:apriori}.} Recall \eqref{e:radii} that $\rprime= 10^2 r$. 
Recall \eqref{e:not.j.nice} that 
$D^{*,\II}$ denotes the set of variables that are acyclic but not $\II$-nice, for $\II\in\set{0,1}$.
Let $D^{\KAPPA,\II} \equiv B_{\KAPPA}(D^{*,\II})$ be the union of $B_{\KAPPA}(v)$ over all $v\in D^{*,\II}$. The set of \bemph{$\II$-defective} variables is defined as
	\[\DEFECTIVE^\II
	\equiv\DEFECTIVE^\II(\GG)
	\equiv
	\bigg\{ v :
	v\in \bsp\Big( D^{\KAPPA,\II}
		\cap B_{\rprime/2}(v); 
		B_{\rprime/2}(v)\Big)\bigg\}\,.
	\]
We use \bemph{0-defective} and \bemph{defective} interchangeably; note $\DEFECTIVE^0(\GG)\subseteq\DEFECTIVE^1(\GG)$. Whether a variable is $\II$-defective can be determined from its $\rprime$-neighborhood. We say that a clause $a$ is \bemph{$\II$-defective} if all its incident variables are $\II$-defective. A $\II$-\bemph{defect} of $\GG$ is a (maximal) connected component of $\II$-defective variables and clauses. Note that each $\II$-defect has at its boundary a buffer of nice variables of depth at least $\KAPPA$.
\end{dfn}

For the remainder of the paper, we say \bemph{path} (in the bipartite factor graph $G$) to mean a finite sequence of vertices
	\[P\equiv
	\bigg(v_1,a_2,v_2,\ldots,a_\ell,v_\ell\bigg)\]
with no repeated elements, such that each entry of the sequence is a neighbor (in $G$) of the previous entry.
The path $P$ contains $\ell$ variables, and has \bemph{length} $\ell-1$. For acylic variables $u,v\in V$ at distance $d(u,v)\le R$, and $\II\in\set{0,1}$,
let $\mathfrak{B}^\II(u,v)$ count the variables on the (unique) shortest path from $u$ to $v$ (inclusive) that are $\II$-defective. The following property is essential to our contraction argument.

\begin{dfn}[contained]\label{d:contained}
Let $\DELTACONST$ be a small absolute constant, to be determined later.\footnote{Ultimately we will require $\DELTACONST \le \min\set{\zeta/30, (\log 2)/(2(\KAPPA)^{1/2})}$ (see Proposition~\ref{p:contraction.COMPOUND})
where $\zeta$ is the absolute constant from Definition~\ref{d:error.notation.edge}. Finally we will take $\zeta=\ZETA/4$ where $\ZETA$ is the absolute constant from Proposition~\ref{p:apriori}.} For an acyclic variable $v$, for $\II\in\set{0,1}$, and for any integer $1\le t\le 2R'$, we define
	\beq\label{e:def.R.for.containment}
	\mathfrak{R}^\II(v,t)
	\equiv
	\sum_{u : t \le d(u,v) < 2\rprime}
	\f{
	\exp\{ k(\DELTACONST)^{-1} 
	\mathfrak{B}^\II(u,v) \} }
	{ \exp\{
		(k\log2)(1+\DELTACONST)
		d(u,v) 
		\}}\,.\eeq
Note that $\mathfrak{R}^\II(v,2R')$ is zero, since it is an empty sum. We then define the \bemph{$\II$-containment radius} of $v$ to be
	\beq\label{eq-def-rad}
	\rad^\II(v)
	\equiv
	\min \bigg\{ t\ge 1:
	\mathfrak{R}^\II(v,t)
	\le \f14 \bigg\}
	\le 2R' \,,
	\eeq
so that $1 \le \rad^\II(v) \le 2\rprime$. For $\II\in\set{0,1}$, we say $v$ is \bemph{$\II$-self-contained} if we have $\rad^\II(u)\le d(u,v)$ for all $u$ with $1\le d(u,v)\le\rprime$. Note that the containment radius of any variable can be determined from its $3\rprime$-neighborhood. Whether a variable is $\II$-self-contained can be determined from its $4\rprime$-neighborhood.
\end{dfn}

The central aim of preprocessing is to ensure that it is possible to carve up the graph into ``enclosures'': the formal definition is given below, but roughly speaking these will be regions of diameter at most $\rprime$ such that every variable in a given enclosure has containment radius less than or equal to its minimal distance from the enclosure boundary (in particular, all the boundary variables must be self-contained). At the same time we will require another desirable property, which will be applied in the proof of Proposition~\ref{p:sep} below: 

\begin{dfn}[orderly]\label{d:orderly}
For $\II\in\set{0,1}$, we say that an acyclic variable $v$ is $\II$-\bemph{orderly} if along any path emanating from $v$ of length at most $\rprime$, at most $(\DELTACONST)^3$ fraction of variables along the path are $\II$-defective. In particular, a path leaving $v$ of length less than $(\DELTACONST)^{-3}$ cannot contain any $\II$-defective variable: that is to say, an $\II$-\bemph{orderly} variable cannot lie within distance $(\DELTACONST)^{-3}$ of any defect. Since being $\II$-defective is a property of a variable's $\rprime$-neighborhood, being $\II$-orderly is a property of a variable's $2\rprime$-neighborhood.
\end{dfn}

The following definitions will be of use in carving up the graph:

\begin{dfn}[perfect; fair]\label{d:perfect.fair}
For $\II\in\set{0,1}$,
we say that an acyclic variable $v$ is \bemph{$\II$-perfect} if it is both $\II$-orderly and $\II$-self-contained; this is a property of the variable's $4\rprime$-neighborhood. We say $v$ is \bemph{$\II$-fair} if 
\begin{enumerate}[(i)]
\item it is $\II$-stable;
\item\label{i:fair.volume.bound} its $5\rprime$-neighborhood contains no more than $\exp\{k^2 (5\rprime)\}$ variables; and
\item\label{i:fair.path.req} every length-$\rprime$ path emanating from $v$ contains at least one $\II$-perfect variable.
\end{enumerate}
Whether a variable is $\II$-fair is a property of its $5\rprime$-neighborhood.
\end{dfn}

Lastly we wish to ensure that every type appears a linear number of times in the preprocessed graph. The following definition is towards this purpose.

\begin{dfn}[good; excellent]\label{d:good.exc}
A rooted tree $T$ of depth $10\rprime$ is \bemph{$\II$-excellent} if, with
$\cong$ denoting isomorphism of rooted graphs,
and with $\uPGW\equiv\uPGW^\alpha$ as defined above (prior to Definition~\ref{d:simple.types}), we have 
	\beq\label{e:j.excellent}
	p_{\textup{ex},\II}(T)\equiv
	\uPGW\left( 
	\left.\begin{array}{c}
	B_{20\rprime}(u)
	\text{ contains 
	any}\\
	\text{variable which is not $\II$-fair}
	\end{array}
	\, \right| \, B_{10\rprime}(u) \cong T
	\right) \le \f1{\exp(k^3 R)}\,.
	\eeq
An acylic variable $v$ is termed \bemph{$\II$-excellent} if its $10\rprime$-neighborhood $B_{10\rprime}(v)$ satisfies \eqref{e:j.excellent}. Lastly, we say that $v$ is \bemph{$\II$-good} if (i) it is $\II$-fair, and (ii) every length-$40\rprime$ path emanating from $v$ contains at least one $\II$-excellent variable. Note that $\II$-excellent implies $\II$-good which in turn implies $\II$-fair. \end{dfn}

\subsection{Preprocessing algorithm}
\label{ss:introprep}

We now formally describe our preprocessing procedure, which depends on the parameter $R$. Recall from \eqref{e:radii} that $R = 10^2 \rprime = 10^4 r$. Recall that we write $B_\ell(v)$ to indicate the $\ell$-neighborhood of variable $v$. In what follows, we will often write
$B_\ell(v;\GG)$ to emphasize that the $\ell$-neighborhood is defined with respect to the graph $\GG$.


\begin{dfn}[removal process $\bsp'$] \label{d:proc}
In a graph $\GG=(V,F,E)$, define the \textbf{activated set}
	\beq\label{e:bspprime.activated.set}
	\ACT(\GG)
	\equiv \left\{
	\begin{array}{c}
	\text{variables $v\in V$ such that
	$B_{3R/10}(v;\GG)$ contains}\\
	\text{at least two clauses of degree $k-1$, or}\\
	\text{at least one clause of degree
	$\le k-2$.}
		\end{array}\right\}\,.\eeq
Given an initial subset of variables $A\subseteq V$, let $\GG_{-1}\equiv\GG$, $A_{-1}\equiv A$, and denote
	\[\GG_0 \equiv \proc_0 \GG \equiv\GG
	\,\bigg\backslash\,
	\Bigg\{ 	\bigcup_{v\in A} B_R(v;\GG)\Bigg\}
	= \GG
	\,\bigg\backslash\, B_R(A;\GG)
	\,.\]
Then, for $t\ge0$, we use \eqref{e:bspprime.activated.set} to define inductively $A_t \equiv \ACT(\GG_t)$ and
	\[\GG_{t+1} \equiv \proc_{t+1}\GG \equiv \GG_t
	\,\bigg\backslash\,
	\Bigg\{\bigcup_{v\in A_t}B_R(v; \GG_t)\Bigg\}
	= \GG_t \,\bigg\backslash\, B_R(A_t;\GG_t)
	\,.\]
The process terminates at the first time $t$ that $\ACT(\GG_t)=\emptyset$, and we denote the terminal graph $\GG_t \equiv \GG_\infty$. We let $\bsp'(A;\GG)$ denote the set of all variables removed by this procedure,
	\[\bsp'(A;\GG)
	\equiv
	\bigcup_{t = -1}^\infty
	\Bigg\{ \bigcup_{v\in A_t}B_R(v;\GG_t) \Bigg\}
	= \bigcup_{t = -1}^\infty B_R(A_t;\GG_t)\,.
	\]
With a minor abuse of notation we will also let $\bsp'(A;\GG)$ denote the subgraph of $\GG$ induced by these variables.
We hereafter refer to $\bsp'(A;\GG)$
as the \bemph{removal process in $\GG$ with initial set $A$}.
We then define:
	\[
	\begin{array}{l} 
	\hline \vspace{-10pt} \\
	\textbf{Preprocessing algorithm 
		on $\GG=(V,F,E)$:}\\
	\text{Let $A\subseteq V$ be the set of all 
	variables in $\GG$ that are}\\
	\text{improper (Definition~\ref{d:simple.types}) or
	 not $1$-good (Definition~\ref{d:good.exc}).} \\
	\text{Delete $\bsp'(A;\GG)$
	and output the processed graph
	$\proc\GG \equiv \GG_\infty$.}
	\vspace{4pt}
	\\ \hline
	\end{array}
	\]
Throughout this paper, we use the terms ``preprocessing procedure'' or ``preprocessing algorithm'' to refer to this mapping $\GG\mapsto\proc\GG$.
\end{dfn}

The processed graph $\proc\GG$ is guaranteed to have certain desirable properties, as we now describe. First note that if a variable $v$ survives in $\proc\GG$, then its processed neighborhood $B_{3R/10}(v;\proc\GG)$ must be obtainable by deleting at most one subtree from its original neighborhood $B_{3R/10}(v;\GG)$. (Otherwise, $v$ would belong to $\ACT(\proc\GG)$, a contradiction since $\ACT(\proc\GG)$ is by definition empty.) This means that if $v$ was $1$-nice in the original graph $\GG$ and survives in the processed graph $\proc\GG$, then it must be nice (i.e., $0$-nice) with respect to $\proc\GG$. Recalling \eqref{e:not.j.nice}, this directly implies $D^{*,0}(\proc\GG)\subseteq D^{*,1}(\GG)$. Recalling Definition~\ref{d:j.defective}, taking $\KAPPA$-neighborhoods gives 
	\[D^{\KAPPA,0}(\proc\GG))
	=B_{\KAPPA}\Big( D^{*,0}(\proc\GG);\proc\GG\Big)
	\subseteq B_{\KAPPA}\Big(D^{*,1}(\GG);GG\Big)
	= D^{\KAPPA,1}(\GG))\,.\]
If $D\subseteq D'$ and $\HH\subseteq\HH'$ then
$\bsp(D;\HH)\subseteq\bsp(D';\HH')$, so it follows that
	\beq\label{e:defective.pre.and.post}
	\DEFECTIVE^0(\proc\GG)
	=\bigg\{v\in \proc\GG:
		v\in 
		\bsp\bigg(
		D^{\KAPPA,0}(\proc\GG)
		\cap B_{\rprime/2}\Big(v;\proc\GG\Big); 
		B_{\rprime/2}\Big(v;\proc\GG\Big)
		\bigg)
	\bigg\}
	\subseteq
	\DEFECTIVE^1
	(\GG)\,.\eeq
This means that if a graph is not $1$-defective in $\GG$, then it is not defective in $\proc\GG$.

The inclusion \eqref{e:defective.pre.and.post} has the following implications. First, recalling Definition~\ref{d:orderly}, it follows that if a variable $v$ was $1$-orderly in the original graph $\GG$, and survives to the processed graph $\proc\GG$, then it is orderly with respect to $\proc\GG$. Further, recalling \eqref{e:def.R.for.containment}
from Definition~\ref{d:contained}, it holds for all $t$ that
	\[\mathfrak{R}^0\Big(v,t;\proc\GG\Big)
	\le\mathfrak{R}^1\Big(v,t;\GG\Big)\,.\]
Consequently, recalling \eqref{eq-def-rad}, 
any variable $v$ in $\proc\GG$ must satisfy
	\beq\label{e:post.radius.vs.pre.radius}
	\rad^0\Big(v;\proc\GG\Big)
	\le \rad^1\Big(v;\GG\Big)\,.\eeq
In particular, this implies that if a variable $v\in\proc\GG$ was $1$-self-contained in $\GG$, then it is self-contained in $\proc\GG$. It follows that if $v\in\proc\GG$ was $1$-perfect in $\GG$, then it is perfect in $\GG$.

Since $A$ includes all improper variables (Definition~\ref{d:simple.types}), hence all acyclic variables, the processed graph $\proc\GG$ will have girth greater than $2R$. By the same reasoning as for the $1$-nice property, 
it follows from Definition~\ref{d:j.stable} that if $v\in\proc\GG$ was $1$-stable in $\GG$, then it is stable in $\proc\GG$. It then follows from Definition~\ref{d:perfect.fair} that if $v\in\proc\GG$ was $1$-fair in $\GG$, then it is fair in $\proc\GG$. Since the processing removes all variables that are not $1$-good (Definition~\ref{d:good.exc}), any $v\in\proc\GG$ must have been $1$-good in $\GG$, hence also $1$-fair. It follows that \textbf{all variables in $\proc\GG$ are fair} (with respect to $\proc\GG$). It then follows by Definition~\ref{d:perfect.fair} part~\eqref{i:fair.path.req} that any connected component in $\proc\GG$ of non-perfect variables must have diameter at most $\rprime$, so in particular it must be a tree. We can therefore carve up the graph into small regions separated by perfect variables:

\begin{dfn}[compound enclosure]\label{d:enclosure} A \bemph{compound enclosure} is a subgraph $U\subseteq\proc\GG$ induced by a subset of variables $U^\circ \cup \pd U^\circ\subseteq\proc V$
where $U^\circ$ is a nonempty, maximal connected component of variables that are not perfect in $\proc\GG$, and
	\[\pd U^\circ \equiv \pd_\circ U \equiv
	\bigg\{u\in\proc V : d(u,U^\circ)=1\bigg\}
	\]
is the external boundary of $U^\circ$ (which, by definition, must consist entirely of variables that are perfect in $\proc\GG$). By the above observations, the compound enclosure must be a tree of diameter at most $\rprime$.
\end{dfn}

By construction, all compound enclosures are pairwise edge-disjoint. It is possible for two compound enclosures to share one boundary variable; they cannot have more than one variable in common because $\proc\GG$ has girth at least $2R$. In any compound enclosure $U$,
if $u\in \pd_\circ U$ and $v\in U^\circ$ then it follows from 
\eqref{e:post.radius.vs.pre.radius} together with the self-containment condition on $u$ (Definition~\ref{d:contained}) that
	\[
	\rad^0\Big(v;\proc\GG\Big)
	\le\rad^1\Big(v;\GG\Big)
	\le d\Big(v,u ; \GG\Big)
	\le d\Big(v,u;\proc\GG\Big)\,.
	\]
Minimizing the right-hand side over all $u\in\pd_\circ U$ gives
	\beq\label{e:every.variable.is.enclosed}
	\rad^0\Big(v;\proc\GG\Big)
	\le d\Big(v, \pd_\circ U; \proc\GG\Big)
	\eeq
for all $v\in U^\circ$. The property
\eqref{e:every.variable.is.enclosed} is essential to our analysis for compound enclosures.

\begin{dfn}[simple total type]
For each edge $e=(av)$ in the processed graph $\proc\GG$, the \bemph{simple total type} of the edge records its simple type (Definition~\ref{d:simple.types}) both before and after preprocessing: that is to say, it is the ordered pair of edge-rooted trees
	\[
	\bigg( \Big(B_R(v;\GG),e\Big),
		\Big(B_R(v;\proc\GG),e\Big) \bigg)\,,
	\]
modulo (edge-rooted) isomorphism. (As discussed in a footnote to Definition~\ref{d:simple.types}, the isomorphism must also preserve edge labels $\lit_{av}$, indices $j(v,a)$, and marks $\LABEL(v)$.)
\end{dfn}

\begin{dfn}[compound type]
\label{d:cpd.type}
For any edge $e$ appearing inside a compound enclosure $U$ of the processed graph $\proc\GG$, the \bemph{compound type} of the edge records the graph structure of $U$ with the position of $e$ marked, as well as the simple total type of every edge $e'$ in $U$. In particular, different edges appearing in the same compound enclosure $U$ must have different compound types (since they take different positions in $U$), even if their simple total types match. 
\end{dfn}

\begin{dfn}[total type]\label{d:total.type} The \bemph{total type} $\bm{t} \equiv \bm{t}_e$ of an edge $e$ in $\proc\GG$ is defined to be its compound type if it belongs to a compound enclosure, and its simple total type otherwise. The \bemph{total type} of a variable or clause in $\proc\GG$ is the multi-set of incident edge total types --- since improper variables were removed during processing, the multi-sets are now simply sets. We hereafter use $\bT$ to denote variable total types, and $\bL$ to denote clause total types. For an edge $e=(av)$ of type $\bm{t}$ we write $j(\bm{t})\equiv j(v;a)$ for the position of the variable within the clause. We write $\bt\in\bL$ to indicate compatibility of types in the sense that $\bL(j(\bm{t})) = \bm{t}$.
\end{dfn}

\begin{dfn}[processed neighborhood profile]\label{d:degseq}
Let $\GG'=(V',F',E')$ be a $\ksat$ problem instance, and denote its processed version by $\GG=(V,F,E)=\proc\GG'$. Write $V=\set{v_1,\ldots,v_n}$ and $F=\set{a_1,\ldots,a_m}$. We then define the \bemph{processed neighborhood sequence of $\GG'$} (equivalently, the \bemph{neighborhood sequence of $\GG$}) as
	\[
	\cD
	\equiv
	\cD_{\GG}
	\equiv
	\begin{pmatrix}
	(\bT_{v_1},\ldots,\bT_{v_n})\\
	(\bL_{a_1},\ldots,\bL_{a_m})
	\end{pmatrix}\,.
	\]
We then define $\DD$ to be the same as $\cD$ except that we forget the ordering of $V$ and of $F$. Thus $\DD$ contains only the information of the empirical counts
	{\setlength{\jot}{0pt}\begin{align}
	n_{\bT}
	&\equiv
	\textup{number of variables in $\proc V$ of total type $\bT$;} \nonumber \\
	m_{\bL}
	&\equiv
	\textup{number of clauses in $\proc F$ of total type $\bL$;} \nonumber \\
	n_{\bm{t}}
	&\equiv
	\textup{number of edges in $\proc E$ of total type $\bm{t}$.}
	\label{e:empirical.counts.of.total.types}
	\end{align}}%
The empirical count of edge types can be determined as a marginal of either the clause or variable counts:
	\beq\label{e:edge.marginal.of.types}
	\sum_{\bT}
	n_{\bT} \Ind{\bt \in \bT}
	= n_{\bt}
	= \sum_{\bL}
	m_{\bL}
	\sum_j
	\Ind{\bL(j)=\bt}\,.
	\eeq
(Recall from Remark~\ref{r:acyclic.proper.multiset} that the simple type of any variable which survives preprocessing is a multi-set with no repeated elements --- thus, in \eqref{e:edge.marginal.of.types}, $\Ind{\bm{t}\in\bT}$ is the same as the number of occurrences of $\bm{t}$ in $\bT$.) We will sometimes abuse notation and use $\DD$ to denote the normalized empirical measures
	\[
	\dot{\DD}(\bT)
	= \f{n_{\bT}}{|V|},\quad
	\hat{\DD}(\bL)
	= \f{m_{\bL}}{|F|},\quad
	\bar{\DD}(\bm{t})
	= \f{n_{\bm{t}}}{|E|}\,.
	\]
However, we emphasize that $\DD$ encodes $|V|=n$ and $|F|=m$ in addition to the normalized empirical measures. We call $\DD\equiv\DD_{\GG}$ the \bemph{processed neighborhood profile of $\GG'$}
(equivalently, the 
\bemph{neighborhood profile of $\GG$}).
\end{dfn}

The next remark is most relevant to the precise statement of Proposition~\ref{p:unif} below:

\begin{rmk}\label{r:processed.graph.types.CM} 
The processed graph $\GG=\proc\GG'$ is a subgraph of the original graph $\GG'$, and therefore carries less information in general. For the rest of this paper, whenever we refer to the processed graph $\GG$, we assume that each vertex and each edge of $\GG$ \textbf{carries the information of its total type}. In particular, each variable in $\GG$ does carry the information concerning its simple type (Definition~\ref{d:simple.types}), even if part of its original $R$-neighborhood is deleted during processing. However, $\GG$ does \textbf{not} carry the information of the entire original graph $\GG'$, so we still do allow for the possibility that $\proc\GG'=\proc\GG''$ for $\GG'\ne\GG''$. We let $\ConfigModel(\cD)$ denote the set of all graphs $\GG$ consistent with a given neighborhood sequence $\cD$ (where $\GG$ is interpreted as we have just described). In particular,
	\beq\label{e:size.CM.D}
	|\ConfigModel(\cD)|
	= \prod_{\bm{t}} (n_{\bm{t}})!
	\,.
	\eeq
The notation $\ConfigModel$ stands for \textbf{configuration model}. A uniformly random element of $\ConfigModel(\cD)$ can be sampled by a generalization of the standard configuration model for graphs with given degree sequence --- this type of construction goes back to \cite{MR595929}, and we refer to \cite{MR1725006} for a survey. Generalized configuration models were analyzed in detail in \cite{MR3405616}, and the sampling procedure can be described as follows. Start with a collection of isolated vertices --- $n$ variables $V$ together with $m$ clauses $F$ --- labelled with total types according to $\cD$. Each vertex is then equipped with the appropriate number of ``half-edges,'' all labelled with edge total types. The total number of half-edges incident to each vertex corresponds to its degree in the processed graph $\GG$. Let $\delta V$ denote the variable-incident half-edges, and $\delta F$ the clause-incident half-edges. Then take a \textbf{uniformly random matching} between $\delta V$ and $\delta F$ that \bemph{respects edge total types}. (A full edge consists of one
variable-incident half-edge matched together with one clause-incident half-edge.) This procedure generates a uniformly random element of $\ConfigModel(\cD)$. This discussion is most relevant to the statement of Proposition~\ref{p:unif} below.\end{rmk}

The next three propositions, all proved in Section~\ref{s:processing}, give the key properties of the processed graph:

\begin{ppn}[proved in \S\ref{ss:preprocessing.probabilistic}: processing removes a small fraction of variables]\label{p:small.fraction.removed.in.processing}
Let $\GG\sim\P\equiv\P_{n,m}$ for $m$ such that $|m-n\alpha|\le n^{1/2}\log n$. Recall that the preprocessing algorithm (Definition~\ref{d:proc}) removes $\bsp'(A;\GG)$. We have
	\[
	\P\Bigg(
	\Big|B_{10R}\Big(\bsp'(A;\GG);\GG\Big)\Big|
		\ge\f{n}{\exp(2^{ ck }R)}
	\Bigg) = o_n(1)\,.
	\]
where $c$ is a positive absolute constant (depending only on the absolute constants $\KAPPA,\DELTACONST$).
 Moreover, the probability that 
any connected component of $\bsp'(A;\GG)$
contains a bicycle 
 is also $o_n(1)$.\footnote{We use ``bicycle'' to refer to any connected graph $G'=(V',F',E')$ with $|V'|+|F'|-|E'|=-1$.}
\end{ppn}

\begin{ppn}[proved in \S\ref{ss:positive.fraction.prob}: each surviving total type occurs linearly often] 
\label{p:posfrac}
Let $\GG\sim\P\equiv\P_{n,m}$ for $|m-n\alpha|\le n^{1/2}\log n$. Let $\proc\GG$ be the processed graph given by Definition~\ref{d:proc},
with neighborhood profile $\DD_{\proc\GG}$. Then
	\[
	\P\bigg(
	\min\bigg\{ 
	\hat\DD_{\proc\GG}(\bL)
	: \bL \textup{ is feasible}
	\bigg\} \ge c_1
	\,\bigg|\, \girth(\GG) > 8R
	\bigg) \ge 1-o_n(1)
	\]
where $c_1$ is a positive constant depending on $\KAPPA,\DELTACONST,k,R$, and the class of ``feasible'' $\bL$ is given by Definition~\ref{d:feasible.clause.type}.
\end{ppn}

\begin{ppn}[proved in \S\ref{ss:unif}: the processed graph is uniformly random given the neighborhood sequence]\label{p:unif}
Let $\GG\sim\P\equiv\P_{n,m}$ for any $m$. Let $\proc\GG$ be the processed graph given by Definition~\ref{d:proc}, with neighborhood sequence $\cD_{\proc\GG}$. For any $\cD$ such that
$\cD_{\proc\GG}=\cD$ with positive probability under $\P$, we have
	\[\P\Big(\proc\GG=H \,\Big|\,
		\cD_{\proc\GG}=\cD\Big)
	= \f{\Ind{H\in\ConfigModel(\cD)}}
		{\ConfigModel(\cD)}
	\]
for all $H\in\ConfigModel(\cD)$. Moreover, conditional on the neighborhood profile
$\DD_{\proc\GG}=\DD$, the law of the sequence $\cD_{\proc\GG}$ is uniformly random among all sequences $\cD$ with empirical counts $\DD$.
\end{ppn}

Write $\P_\cD$ for the uniform measure over the set $\ConfigModel(\cD)$ from Remark~\ref{r:processed.graph.types.CM}, and write $\E_\cD$ for expectation with respect to $\P_\cD$. 
Then Proposition~\ref{p:unif} tells us that 
	\[
	\P\Big(\proc\GG\in\cdot \,\Big|\,
		\cD_{\proc\GG}=\cD\Big)
	=\P_\cD(\cdot)\,.
	\]
This has the important consequence that the processed graph $\proc\GG$, conditional on $\cD$, can be sampled by the simple procedure described in Remark~\ref{r:processed.graph.types.CM}
--- this is essential to our analysis of the $\ksat$ model on the processed graph..
Next, write $\cD\sim\DD$ if $\cD$ has empirical counts given by $\DD$. Then 
Proposition~\ref{p:unif} also tells us that
	\[\P\Big(\proc\GG\in\cdot \,\Big|\,
		\DD_{\GG}=\DD\Big)
	= \P_{\DD}(\cdot)
	\equiv
	\sum_{\cD : \cD\sim\DD}
	\f{\P_\cD(\cdot)}{|\set{\cD' : \cD' \sim\DD}|}\,.
	\]
We will work sometimes with $\P_{\DD}$ and sometimes with $\P_\cD$, depending on convenience; they are equivalent modulo the ordering of the vertices.

\subsection{Proof outline for main theorem}\label{ss:contraction.overview} In this subsection we give a more detailed outline of the proof of the main result Theorem~\ref{t:main}.

\begin{dfn}[empirical measures of colors conditional on types]
\label{d:empirical.measures.of.colors}
 Take any $\ksat$ instance $\GG'$,
and let $\GG\equiv\proc\GG'$ be its processed version (Definition~\ref{d:proc}),
with total types as in Definition~\ref{d:total.type}.
Given a valid 
$\set{\RYGB}$-coloring $\usi$ of 
$\GG$ (Definition~\ref{d:color.model}), let $\pi$ be the \bemph{empirical measure of colors conditioned on edge type}:
for each $\sigma\in\set{\RYGB}$
and each edge total type $\bm{t}$,
	\beq\label{e:edge.msr.given.edge.type}
	\pi_{\bm{t}}(\sigma)
	\equiv
	\f{|\set{e\in E:\bm{t}_e=\bm{t}\text{ and }
	\sigma_e=\sigma}|}
	{|\set{e\in E:\bm{t}_e=\bm{t}}|}
	= \f{|\set{e\in E:\bm{t}_e=\bm{t}\text{ and }
	\sigma_e=\sigma}|}
	{n_{\bm{t}}}\,,
	\eeq
with $n_{\bm{t}}$ as in \eqref{e:empirical.counts.of.total.types}.
Further, let $\omega$ be the 
\bemph{empirical measure of colors conditioned on incident clause type}: for each $\sigma\in\set{\RYGB}$, each clause total type $\bL$, and each index $1\le j\le k(\bL)$ (where $k(\bL)\in\set{k-1,k}$ is the degree in $\GG=\proc\GG'$ of a clause of type $\bL$), let
	\beq\label{e:edge.msr.given.clause.type}
	\omega_{\bL,j}(\sigma)
	\equiv \f{|\set{(av)\in E:
		\bL_a=\bL,j(v;a)=j,
		\sigma_{av}=\sigma}|}{ m_{\bL}}
		\,,\eeq
with $m_{\bL}$ as in \eqref{e:empirical.counts.of.total.types}. Note that $\pi$ can be obtained as a marginal of $\omega$:
	\beq\label{e:pi.as.mgl.of.omega}
	n_{\bm{t}}
	\pi_{\bm{t}}(\sigma)
	= \sum_{j\in[k]}
	\sum_{\bL} 
	m_{\bL}
	\Ind{\bL(j)=\bm{t}}\omega_{\bL,j}(\sigma)
	\eeq
for all $\sigma\in\set{\RYGB}$, since both sides count the total number of edges of type $\bm{t}$ with color $\sigma$ in $\GG=\proc\GG'$.
\end{dfn}

The next two definitions are of essential importance, and are adapted from~\cite{MR3436404}: 

\begin{dfn}[judicious; adapted from {\cite{MR3436404}}] \label{d:judicious}
Let $\GG'$ be a $\ksat$ instance, $\GG\equiv\proc\GG'$ the processed graph, and $\usi$ any valid coloring of $\GG$ (Definition~\ref{d:color.model}).
We say that $\usi$ is \bemph{judicious}
if holds for all $\bL$ and all $j$ that
	\[
	\omega_{\bL,j} = \starpi_{\bL(j)} 
	\]
--- i.e., the empirical distribution of the edge color conditional on $(\bL,j)$ (the clause type and edge index) \bemph{depends only on $\bL(j)$} (the edge type itself, which carries less information), and moreover agrees (up to rounding) with the canonical edge marginal $\starpi$ of Definition~\ref{e:defn.canonical.edge.marginal}. For this definition, $\starpi_{av}$ should be based on the $r$-neighborhood of $v$ with respect to the processed graph $\GG$.
\end{dfn}

\begin{dfn}[separable; adapted from {\cite{MR3436404}}]
\label{d:separable}
Let $\GG'$ be a $\ksat$ instance, $\GG\equiv(V,F,E)
\equiv\proc\GG'$ the processed graph, and $\usi\in\set{\RYGB}^E$ any judicious coloring of $\GG$ (Definition~\ref{d:judicious}).
Let $\ux\equiv\ux(\usi)\in\set{\minus,\plus,\free}^V$ denote the frozen configuration on $\GG$ that corresponds to $\usi$ via \eqref{e:aux.model.bijections}. We say that $\usi$ is \bemph{separable} if there are not too many other judicious configurations $\usi'$ that are significantly correlated with $\usi$, that is, if
	\beq\label{e:middle.interval}
	\bigg|\bigg\{
	\textup{judicious }\usi' :
	\f{|v\in V: x(\usi)_v=x(\usi')_v|}{|V|}
	\notin \bigg[
	\f12\bigg(1-\f{k^4}{2^{k/2}}\bigg),
	\f12\bigg(1+\f{k^4}{2^{k/2}}\bigg)
	\bigg]\equiv I_0
	\bigg\}\bigg| \le 
	\exp\Big\{ (\log n)^5 \Big\}\,.
	\eeq
It is simpler and more convenient for our purposes that the correlation between $\usi$ and $\usi'$ is measured through the corresponding frozen configurations $\ux(\usi)$ and $\ux(\usi')$, rather than the colorings themselves.
\end{dfn}

\begin{dfn}[extendible; adapted from~\cite{MR3436404}]\label{d:extendible} Let $\GG=(V,F,E)$ be a (processed) $\ksat$ instance, and $\usi$ any valid coloring of $\GG$. Let $\ux\equiv\ux(\usi)$ be the frozen configuration corresponding to $\usi$ via \eqref{e:aux.model.bijections}. We say that $\usi$ is \bemph{extendible} if there exists an ``extension'' of $\ux\in\set{\minus,\plus,\free}^V$ to a satisfying assignment $\vec{\acute{x}}\in\set{\minus,\plus}^V$ of $\GG$, meaning $\acute{x}_v=x_v$ for all $v\in V$ where $x_v\in\set{\minus,\plus}$, and $\vec{\acute{x}}$ satisfies Definition~\ref{d:formal.dfn.sat}.
\end{dfn}

For the remainder of this section, $\GG'\sim\P\equiv\P^{n,\alpha}$ is a random $\ksat$ instance, and $\GG\equiv\proc\GG'$ is its processed version from Definition~\ref{d:proc}. We define the following quantities based on $\GG$:
	{\setlength{\jot}{0pt}\begin{align}\nonumber
	\ZZ
	\equiv \ZZ(\GG)
	& \equiv \textup{number of 
		judicious colorings $\usi$
		of $\GG$;}\\ \nonumber
	\sepZZ
	\equiv \sepZZ(\GG)
	&\equiv \textup{number of 
		judicious separable
		colorings $\usi$ of $\GG$;}\\
	\extZZ
	\equiv \extZZ(\GG)
	&\equiv \textup{number of 
		judicious extendible
		colorings $\usi$ of $\GG$.}
\label{e:def.ZZ}
\end{align}}%
Clearly, $\extZZ\le\ZZ$ and $\sepZZ\le\ZZ$. We emphasize that $\extZZ$ counts judicious extendible colorings of the processed graph $\GG$: by Definition~\ref{d:extendible}, 
such colorings extend to satisfying assignments of $\GG$. We are ultimately interested in whether they extend to satisfying assignments of the original instance $\GG'$; this discrepancy will be addressed below in the proof of Theorem~\ref{t:main} (at the end of this subsection).

Let $\GG'\sim\P$, and let $\DD=\DD_{\proc\GG'}$ be its processed neighborhood profile from
Definition~\ref{d:degseq}. We let $\LL(\cdot)\equiv\P(\DD_{\proc\GG'}\in \cdot)$ denote the law of $\DD$ itself, and let
	\beq\label{e:law.DD.girth}
	\LL_\textup{girth}(\cdot)
	\equiv\P\bigg(\DD_{\proc\GG'}\in \cdot
		\,\bigg|\, \girth(\GG') >8R \bigg)
	\eeq
denote the law conditional on $\girth(\GG') >8R$
Throughout this paper, when we say that an event holds ``with high probability over the random neighborhood profile $\DD$,'' we mean that the $\LL$-measure of the event is $1-o_n(1)$. We add the caveat ``conditional on $\girth(\GG') >8R$'' to mean that the 
$\LL_\textup{girth}$-measure of the event is $1-o_n(1)$. We have the following propositions
(assuming always that $k\ge k_0$
and $\alpha$ satisfies \eqref{e:alpha.regime}): 

\begin{ppn}[proved in \S\ref{ss:judicious.first.mmt}:
	first moment of judicious colorings matches $\onersb$ formula]
\label{p:first.moment.exponent}
Let $\Phi$ be the $\onersb$ free energy
from Proposition~\ref{p:phi}.
There exists $\ep_R$ which tends to zero as $R\to\infty$, such that
	\[\E_{\DD} \ZZ \ge
	\exp\bigg\{
	n\Big[\Phi(\alpha)-\ep_R\Big]\bigg\}
	\]
holds with high probability over the random $R$-neighborhood profile $\DD$.
\end{ppn}

\begin{ppn}[proved in \S\ref{ss:extendibility}:
	first moment of judicious colorings
	dominated by extendible colorings]
\label{p:ext}
For the random variables $\extZZ\le\ZZ$ as in \eqref{e:def.ZZ}, we have
	\[\E_{\DD}(\extZZ)
	=\bigg\{ 1-o_n(1)\bigg\} \E_{\DD}\ZZ\]
with high probability over the random $R$-neighborhood profile $\DD$.
\end{ppn}

\begin{ppn}[proved in \S\ref{ss:separability}:
	first moment of judicious colorings
	dominated by separable colorings]
\label{p:sep}
For the random variables $\sepZZ\le\ZZ$ as in \eqref{e:def.ZZ}, we have
	\[\E_{\DD}(\sepZZ)
	=\bigg\{ 1-o_n(1)\bigg\} \E_{\DD}\ZZ\]
with high probability over the random $R$-neighborhood profile $\DD$.
\end{ppn}

The most difficult result of this paper is a second moment estimate. To state it, let us decompose
	\beq\label{e:second.mmt.decompose.by.z}
	\ZZ^2
	\equiv \sum_z \ZZ^2[z]\eeq
where $\ZZ^2[z]$ denotes the contribution
from pairs $(\usi^1,\usi^2)$
whose corresponding frozen configurations
$\ux(\usi^i)$ agree on exactly $z$ fraction of the variables in $\GG$. For any subset $I\subseteq[0,1]$, let $\ZZ^2[I]$ denote the sum of $\ZZ^2[z]$ over $z\in I$. The central part of the paper is concerned with the following estimate:

\begin{ppn}[proved in \S\ref{ss:hess}: \bemph{main technical result}]\label{p:second.moment.judicious}
Recall from \eqref{e:middle.interval}
the definition of the interval $I_0\subset[0,1]$.
There is a constant $C\equiv C(k,R)$ such that
	\[\E_{\DD}\Big( \ZZ^2[I_0]\Big)
	\le C \Big(\E_{\DD}\ZZ\Big)^2\]
with high probability over the random
$R$-neighborhood profile $\DD$, conditional on $\girth(\GG')>8R$.
\end{ppn}

In \S\ref{ss:first.moment.preliminaries}
and \S\ref{ss:second.moment.overview} we give further discussion on ideas of the proofs of Propositions~\ref{p:first.moment.exponent}~and~\ref{p:second.moment.judicious}.
For now we turn to explaining how the above propositions imply the main result. We rely on the following well-known (and elementary) bound: if $Y$ is any non-negative random variable with finite second moment, then for any $0<\delta<1$,
	\[
	\E\bigg[Y ; Y\ge\delta\,\E Y\bigg]
	=\E Y - \E\bigg[Y ; Y<\delta\,\E Y\bigg]
	\ge (1-\delta)\E Y.
	\]
On the other hand, by the Cauchy--Schwarz inequality,
	\[
	\E\bigg[Y ; Y\ge\delta\,\E Y\bigg]
	\le\bigg\{
	 \E(Y^2) \P\Big(Y\ge\delta\,\E Y\Big)
	 \bigg\}^{1/2}\,.
	\]
Combining the bounds and rearranging gives
	\beq\label{e:second.mmt.method.with.const}
	\P\Big(Y\ge\delta\,\E Y\Big)
	\ge (1-\delta)^2 \f{(\E Y)^2 }{ \E(Y^2)}\,.
	\eeq
Thus, any estimate of the form $\E(Y^2)\le O((\E Y)^2)$ gives a lower bound $\P(Y\ge \delta\,\E Y) \ge \Omega(1)$.

\begin{proof}[Proof of Theorem~\ref{t:main}]
Propositions~\ref{p:fp}~and~\ref{p:phi} together show that for random $\ksat$ with $k\ge k_0$, the $\onersb$ free energy $\Phi(\alpha)$ is well-defined, with a unique root $\arsb$ in the interval \eqref{e:alpha.regime}. It follows from Proposition~\ref{p:ubd} that $\arsb$ upper bounds the satisfiability regime, so it remains to show the lower bound. To this end, let $\alpha<\arsb$
(still within the regime \eqref{e:alpha.regime}),
so that $\Phi(\alpha)>0$. We divide the rest of the argument into two parts:\smallskip

\noindent\bemph{Step 1. Lower bound on separable colorings.}
Analogously to \eqref{e:second.mmt.decompose.by.z}, 
decompose 
	\[(\sepZZ)^2=\sum_z (\sepZZ)^2[z]\,.\]
Let $(\sepZZ)^2[I]$
denote the sum of $(\sepZZ)^2[z]$ over $z\in I\subseteq[0,1]$.
By the separability condition (Definition~\ref{d:separable}),
	\[(\sepZZ)^2\Big[[0,1]\setminus
	 I_0\Big]
	 \le e^{o(n)} \sepZZ
	 \le e^{o(n)} \ZZ\]
almost surely. Recall the definition of $\LL_\textup{girth}$ from \eqref{e:law.DD.girth}.
Proposition~\ref{p:second.moment.judicious}
gives the bound
	\[\LL_\textup{girth}\Bigg(
	\E_{\DD}\Big[(\sepZZ)^2[I_0]\Big]
	\le\E_{\DD}\Big[\ZZ^2[I_0]\Big]
	\le C\Big(\E_{\DD}\ZZ\Big)^2
	\Bigg) = 1-o_n(1)\]
for $C=C(k,R)$. Combining the above bounds gives
	\beq\label{e:t:second.moment}
	\LL_\textup{girth}\Bigg( \E_{\DD}\Big((\sepZZ)^2\Big)
	\le C \Big(\E_{\DD}\ZZ\Big)^2
	+ e^{o(n)}\,\E_{\DD}\ZZ
	\Bigg) = 1-o_n(1)\,.
	\eeq
On the other hand, it follows by Propositions~\ref{p:first.moment.exponent}~and~\ref{p:sep} that
	\beq\label{e:sep.compared.with.ZZ.first.mmt}
	\LL
	\Bigg( \E_{\DD}\sepZZ \ge \f{\E_{\DD}\ZZ}{2}
		\ge 
		\f12 \exp\bigg\{
	n\Big[\Phi(\alpha)-\ep_R\Big]\bigg\}
	\Bigg) = 1-o_n(1)\,.
	\eeq
Since we chose $\alpha<\arsb$, we know that $\Phi(\alpha)$ is positive, and it follows for large enough $R$ (depending on $k,\alpha$) that $\E_{\DD}\ZZ$ is exponentially large in $n$. It follows that
\eqref{e:t:second.moment} 
and \eqref{e:sep.compared.with.ZZ.first.mmt}
combine to yield
	\[
	\LL_\textup{girth}\Bigg( \E_{\DD}\Big((\sepZZ)^2\Big)
	\le 5C \Big(\E_{\DD}(\sepZZ)\Big)^2
	\Bigg) = 1-o_n(1)\,.
	\]
Applying \eqref{e:second.mmt.method.with.const},
we see that there exists a positive constant $\delta=\delta(k,R)$ such that 
	\beq\label{e:sep.second.mmt.final.lbd}
	\LL_\textup{girth}\Bigg(
	\P_{\DD}\bigg[
	\sepZZ \ge \delta \, \E_{\DD}(\sepZZ)\bigg]
	\ge \delta
	\Bigg) = 1-o_n(1)\,.
	\eeq

\noindent\bemph{Step 2. Lower bound on extendible colorings.} It follows from Proposition~\ref{p:ext} 
and Markov's inequality that,
with $\delta=\delta(k,R)$ as above, we have
	\[
	\P_{\DD} \bigg(
	\ZZ-\extZZ \ge \delta^2 \, \E_{\DD}\ZZ
	\bigg)
	\le \f{\E_{\DD}(\ZZ-\extZZ)}{\delta^2\,\E_{\DD}\ZZ}
	= o_n(1)
	\]
with high probability over the random neighborhood profile $\DD$. Combining with \eqref{e:sep.compared.with.ZZ.first.mmt}
and \eqref{e:sep.second.mmt.final.lbd} gives
	\[\LL_\textup{girth}\Bigg(
	\P_{\DD}\bigg[
	\extZZ
	\ge \ZZ - \delta^2 \, \E_{\DD}\ZZ
	\ge \sepZZ - \delta^2 \, \E_{\DD}\ZZ
	\ge \f{\delta(1-2\delta)}{2}
	\exp\bigg\{
	n\Big[\Phi(\alpha)-\ep_R\Big]\bigg\}
	>0
	\bigg] \ge \f{\delta}{2}\Bigg) = 1-o_n(1)\,.\]
Recall the definition of $\LL_\textup{girth}$ from \eqref{e:law.DD.girth}: since the graph $\GG'\sim\P$ has $\girth(\GG')>8R$ with asymptotically positive probability, we deduce from the above that
	\beq\label{e:ext.ZZ.cond.lbd}
	\LL\Bigg(
	\P_{\DD}\Big[\extZZ>0\Big]
	\ge \f{\delta}{2}\Bigg)
	\ge \delta'= \delta'(k,R)\,.
	\eeq
As long as no connected component of $\GG'\setminus\proc\GG'$ contains a bicycle,
any satisfying assignment of 
$\GG=\proc\GG'$ extends to a satisfying assignment of $\GG'$. It follows by combinding with
Definition~\ref{d:extendible} that
	\[
	\mathbf{1}\bigg\{
	\begin{array}{c}
	\textup{no connected component of}\\
	\textup{$\GG'\setminus\proc\GG'$ contains a bicycle}
	\end{array}
	\bigg\}
	\Ind{\extZZ>0}
	\le\Ind{\textup{$\GG'$ is satisfiable}}\,.
	\]
From Proposition~\ref{p:small.fraction.removed.in.processing} that, with high probability, no connected component of
$\GG'\setminus\proc\GG'$ contains a bicycle.
Combining with \eqref{e:ext.ZZ.cond.lbd}
gives
	\[
	\P^{n,\alpha}\Big(\GG'
		\textup{ is satisfiable}\Big)
	\ge
	\E\Bigg(
	\mathbf{1}\bigg\{
	\begin{array}{c}
	\textup{no connected component of}\\
	\textup{$\GG'\setminus\proc\GG'$ contains a bicycle}
	\end{array}
	\bigg\}
	\cdot
	\P_{\DD}\Big[\extZZ>0\Big]\Bigg)
	\ge \delta''
	\]
for any $\alpha<\arsb$ (where $\delta''$ depends on $k$ and $R$; and $R$ depends on $k$ and $\alpha$).
Combining with Friedgut's theorem \cite{Friedgut:99} gives $\asat\ge \alpha$. 
The result follows by taking
$\alpha\uparrow\arsb$
and $R\uparrow\infty$.
\end{proof}

\begin{rmk}\label{r:where.proc.results.used} 
Note that Propositions \ref{p:small.fraction.removed.in.processing}--\ref{p:unif} (stated in \S\ref{ss:introprep}, proved in Section~\ref{s:processing}) are not directly referenced in the above proof of Theorem~\ref{t:main}. Instead, they are used indirectly via the other propositions. In particular, for the first moment (Proposition~\ref{p:first.moment.exponent}), the uniformity result (Proposition~\ref{p:unif}) provides a simple combinatorial formula for $\E_{\DD}\ZZ$. We then need the result that only $o_R(1)$ fraction of the variables are removed (Proposition~\ref{p:small.fraction.removed.in.processing}) to relate the combinatorial formula to the $\onersb$ free energy \eqref{e:phi.alpha}. The second moment (Proposition~\ref{p:second.moment.judicious}) similarly relies on Proposition~\ref{p:unif} for the combinatorial formula, but additionally requires the result that each type occurs linearly many times
(Proposition~\ref{p:posfrac}) to estimate the combinatorial formula up to a \emph{constant} multiplicative error.
\end{rmk}

This concludes our overview of the proof of Theorem~\ref{t:main}. The remainder of this section is organized as follows:
\begin{enumerate}[--]
\item In \S\ref{ss:first.moment.preliminaries} we describe the basic ideas in the proof of of Proposition~\ref{p:first.moment.exponent}. 
\item In \S\ref{ss:second.moment.overview} we elaborate on some of the principles in the proof of Proposition~\ref{p:second.moment.judicious}. 
\item In \S\ref{ss:coherence.weights} we prove our main claims regarding coherent clauses, thereby concluding the current section.
\end{enumerate}
At the end of this section (page \pageref{proofoverview}), an outline of the remainder of the paper is provided.

\subsection{First moment and the Bethe formula for colorings}\label{ss:first.moment.preliminaries}

In this subsection we give some basic calculations
for the proofs of 
Propositions~\ref{p:first.moment.exponent}~and~\ref{p:second.moment.judicious}.
Again, let $\GG'$ be a $\ksat$ instance, and $\GG\equiv\proc\GG'$ the processed graph.
Recall from Definition~\ref{d:empirical.measures.of.colors} that if $\usi$ is any valid coloring of $\GG$, then it defines two conditional empirical measures --- $\pi$ as in \eqref{e:edge.msr.given.edge.type}, and $\omega$ as in \eqref{e:edge.msr.given.clause.type}. We now further define:

\begin{dfn}[empirical measures of vertex-incident colorings]
\label{d:pi.omega.nu.notation}
As before, let $\GG'$ be a $\ksat$ instance, 
and denote its processed version
$\GG\equiv\proc\GG'$. 
Given a valid coloring $\usi$ of $\GG$, we define $\dbh$ to be the \bemph{empirical measure of variable colorings conditional on type},
	\beq\label{e:variable.color.cond.on.type}
	\dbh_{\bT}(\usi_{\delta v})
	\equiv
		\f1{n_{\bT}}
		\Bigg|\bigg\{w\in V: \textup{$\bT_w=\bT$
			and $\usi_{\delta w}
				=\usi_{\delta v}$}
				\bigg\}\Bigg|\,,
	\eeq
with $n_{\bT}$ as in \eqref{e:empirical.counts.of.total.types}.
We define $\hbh$ \bemph{empirical measure of clause colorings conditional on type},
	\beq\label{e:clause.color.cond.on.type}
	\hbh_{\bL}(\usi_{\delta a})
	\equiv 
	\f1{m_{\bL}}
	\Bigg|
	\bigg\{b\in F:
		\textup{$\bL_b=\bL$
		and $\usi_{\delta b}=\usi_{\delta a}$}
		\bigg\}
	\Bigg|\,,\eeq
with $m_{\bL}$ as in \eqref{e:empirical.counts.of.total.types}.
As a shorthand, we call $\nu\equiv(\dbh,\hbh)$ the \bemph{empirical measure (of vertex colorings)} associated to $\usi$. Recall from \eqref{e:pi.as.mgl.of.omega} that $\pi$ can be obtained as a marginal of $\omega$. Now note that $\pi$ can also be obtained as a marginal of $\dbh$. Similarly, $\omega$ can be obtained as a marginal of $\hbh$. We say that $\nu$ is \bemph{judicious} if its marginal $\omega$ is judicious in the sense of Definition~\ref{d:judicious}. For $\ZZ$ as in \eqref{e:def.ZZ} and any judicious empirical measure $\bh\equiv(\dbh,\hbh)$, we let $\ZZ[\bh]$ denote the contribution to $\ZZ$
from colorings with empirical measure $\bh$.
\end{dfn}

\begin{rmk}\label{r:conflate}
We sometimes abuse terminology slightly by conflating edges with edge types, and vertices with vertex types. For example, when we say that we fix a type-$\bL$ clause $a$ and consider the distribution of colorings $\usi_{\delta a}$, we are referring to the empirical measure of these colorings among all type-$\bL$ clauses in the graph. We will use the notations $\hat{\nu}_{\bL}$ and $\hat{\nu}_a$ interchangeably for this measure.
\end{rmk}

Let $\GG'\sim\P_{n,m}$ be the random $\ksat$ instance, and let $\GG\equiv(V,F,E)\equiv\proc\GG'$ be the processed graph, with neighborhood profile $\DD$ as in Definition~\ref{d:degseq}. For $\bh=(\dbh,\hbh)$ as in Definition~\ref{d:pi.omega.nu.notation}, define
	\beq\label{e:rate.function.given.gen.degseq}
	\bm{\Phi}_{\DD}(\bh)
	=\f1n \Bigg\{
	|V| \, \E_{\dot{\DD}}[ \Ent(\dbh_{\bT}) ]
	+|F| \, \E_{\hat{\DD}}[ \Ent(\hbh_{\bL}) ]
	-|E| \, \E_{\bar{\DD}}
		[ \Ent(\pi_{\bm{t}}) ]
		\Bigg\}
	\eeq
(where $\Ent$ denotes the usual entropy function, $\Ent(p)=-\sum_x p_x \log p_x$). In the above expression, the expectations on the right-hand side refer to sampling total types according to $\DD$: for instance,
	\beq\label{e:n.bt.as.related.to.DD}
	|E| \, \E_{\bar{\DD}}
		[ \Ent(\pi_{\bm{t}}) ]
	\equiv
	|E|
	\sum_{\bt} \bar{\DD}(\bt)\Ent(\pi_{\bm{t}})
	= - \sum_{\bt} n_{\bt}
		\sum_{\sigma}
		\pi_{\bm{t}}(\sigma)\log\pi_{\bm{t}}(\sigma)
	\,,
	\eeq
with $n_{\bt}$ as in \eqref{e:empirical.counts.of.total.types}. For given $R$ there are only finitely many total types possible,
so $\nu$ lies in a simplex of bounded dimension. Let $s_{\bt}=|\supp\starpi_{\bt}|$.
Then let $s_{\bT}$ denote the number of colorings $\usi_{\delta v}$ that can appear on a variable $v$ of type $\bT$.
Define likewise $s_{\bL}$ for clause types $\bL$, and let
	\beq\label{e:stirling.correction.number}
	\bm{s} \equiv \bm{s}(\DD)
	\equiv \sum_{\bT} (s_{\bT}-1)
		+ \sum_{\bL} (s_{\bL}-1)
		- \sum_{\bt} (s_{\bt}-1)\,.
	\eeq
Next, recalling the statement of Proposition~\ref{p:posfrac}, we will say that \bemph{``$\DD$ is bounded away from zero''} to mean that
	\beq\label{e:pos.frac}
	\min\bigg\{ 
	\hat\DD(\bL)
	: \bL \textup{ is feasible}
	\bigg\} \ge c_1\,,\eeq
where ``feasible'' is specified by Definition~\ref{d:feasible.clause.type}. We then have the following:

\begin{lem}\label{l:combinatorial.mmt.simplification}
Let $\GG'\sim\P_{n,m}$ be the random $\ksat$ instance,
and let $\GG=\proc\GG'$ be its processed version,
with neighborhood profile $\DD$ as in Definition~\ref{d:degseq}.
Let $\P_{\DD}$ be the uniform measure over $\ConfigModel(\DD)$ (Remark~\ref{r:processed.graph.types.CM}), and $\E_{\DD}$ the expectation with respect to $\P_{\DD}$. For any $\DD$ and any judicious $\nu$, we have,
with $\bm{\Phi}_{\DD}(\nu)$ as in \eqref{e:rate.function.given.gen.degseq},
	\beq\label{e:stirling.with.poly.error}
	\E_{\DD}\ZZ[\bh]
	= \f1{n^{O(1)} }
	\exp\bigg\{ n\bm{\Phi}_{\DD}(\bh)
		\bigg\}\,.
	\eeq
If $\DD$ is bounded away from zero 
and $\nu$ lies in the interior of its simplex, then
	\beq\label{e:stirling.with.poly.error}
	\E_{\DD} \ZZ[\bh]
	\asymp_R
	\f1{ n^{\bm{s}/2}}
	\exp\bigg\{ n\bm{\Phi}_{\DD}(\nu)\bigg\}
	\eeq
for $\bm{s}\equiv\bm{s}(\DD)$ as in \eqref{e:stirling.correction.number}.

\begin{proof}
It follows from the description of the configuration model (Remark~\ref{r:processed.graph.types.CM}) that
	\beq\label{e:config.model.first.moment.given.omega}
	\E_{\DD}\ZZ[\bh]
	=\underbrace{
	\Bigg\{
	\prod_{\bT} 
	\binom{n_{\bT}}{n_{\bT}\dbh_{\bT}}
	\prod_{\bL}
	\binom{m_{\bL}}{m_{\bL}\hbh_{\bL}}
	\Bigg\}
	}_{\substack{
	\text{number of colorings}\\
	\text{prior to matching}
	}}
	\underbrace{\Bigg\{
	\prod_{\bm{t}} \binom{n_{\bm{t}}}
		{ n_{\bm{t}} \pi_{\bm{t}} }
		\Bigg\}^{-1}
		}_{\substack{\text{probability of matching}\\
			\text{to respect colorings}}}\,.
	\eeq
The first estimate \eqref{e:stirling.with.poly.error}
then follows by Stirling's formula, ignoring polynomial corrections.
The second estimate \eqref{e:stirling.with.poly.error} follows by taking the polynomial corrections into account.
\end{proof}
\end{lem}

Returning to the form of $\bm{\Phi}_{\DD}(\nu)$ in \eqref{e:rate.function.given.gen.degseq}, note that if $\omega$ is fixed,
then $\pi$ is determined (from Definition~\ref{d:empirical.measures.of.colors}), and we see that
$\bm{\Phi}_{\DD}(\nu)$ is a \bemph{strictly concave} function of $\bh$ for fixed $\omega$. As a result, there is a unique maximizer: we let
	\beq\label{e:nu.opt}
	\bm{\Psi}_{\DD}(\omega)
	\equiv\bm{\Phi}_{\DD}(\optnu[\omega])\,,\quad
	\optnu[\omega]
	\equiv \argmax_\nu
	\bigg\{
	\bm{\Phi}_{\DD}(\nu):
	\textup{$\nu$ is consistent with $\omega$}
	\bigg\}\,.\eeq
In fact, as we discuss below, $\optnu[\omega]$ takes a rather explicit form \eqref{e:lagrange.clause.tuple}, which eventually allows us to relate $\bm{\Psi}_{\DD}(\omega)$ to the free energy $\Phi(\alpha)$ from \eqref{e:phi.alpha}. We defer this for the moment, and proceed with the calculation of $\E_{\DD}\ZZ$ in terms of $\bm{\Psi}_{\DD}(\omstar)$. Let $x_{\bT}\in\set{0,1,2}$ count the number of frozen spins ($\plus$ or $\minus$) that can appear on a variable $v$ of type $\bT$. (Thus $x_{\bT}=0$ indicates that variables of type $\bT$ must always be free, while $x_{\bT}=2$ indicates that variables of type $\bT$ can take any spin in $\set{\minus,\plus,\free}$.) Then define
	\beq\label{e:wp.of.R}
	\wp\equiv\wp(\DD)
	\equiv
	\sum_{\bL} \Bigg\{
	\sum_j (s_{\bL(j)}-1)\Bigg\}
	- \sum_{\bT} x_{\bT}\Big(|\delta v|-1\Big)
	\,.
	\eeq
In the first term on the right-hand side of \eqref{e:wp.of.R}, the outer sum goes over all clause total types $\bL$, while the inner sum goes over $1\le j\le k(\bL)$ where $k(\bL)$ is the degree in $\GG$ of a clause of type $\bL$.

\begin{cor}\label{c:first.moment.exponent}
Let $\GG'\sim\P_{n,m}$ be the random $\ksat$ instance, and let $\GG=\proc\GG'$ be its processed version, with neighborhood profile $\DD$ as in Definition~\ref{d:degseq}. If $\DD$ is bounded away from zero in the sense of \eqref{e:pos.frac}, then
	\beq\label{e:first.mmt.poly}
	\E_{\DD}\ZZ 
	\asymp_R
	\f{\exp\{
		n\bm{\Psi}_{\DD}(\omstar) \}
	}{ n^{\wp/2} }
	\eeq
for $\bm{\Psi}_{\DD}$ as in \eqref{e:nu.opt}, $\wp=\wp(\DD)$ as in \eqref{e:wp.of.R}, and $\omstar$ defined by $(\omstar)_{\bL,j}=\starpi_{\bL(j)}$.

\begin{proof}
Again recall from \eqref{e:def.ZZ} that $\ZZ$ counts judicious colorings (Definition~\ref{d:judicious}) $\usi$ of $\GG$. For any such $\usi$, the empirical measure $\omega$ (Definition~\ref{d:empirical.measures.of.colors}) is completely fixed by the judicious condition: it must agree (up to rounding) with the measure $\omstar$ defined by $(\omstar)_{\bL,j} = \starpi_{\bL(j)}$.\footnote{Recall from Definition~\ref{d:total.type} that $\bL(j)$ encodes the neighborhoods in both the initial graph $\GG'$ and the final graph $\GG=\proc\GG'$. However, $\starpi_{\bL(j)}$ should be given by Definition~\ref{d:canonical} applied to the $r$-neighborhood of the variable \bemph{in the final graph} $\GG$.} Let us abbreviate $\nu\sim\omstar$ if $\nu$ is consistent with $\omstar$. We will show in Lemma~\ref{l:dimension.count}
(this lemma is deferred to \S\ref{ss:coherence.weights}) that
	\beq\label{e:dim.nu.given.omega}
	d_1(\DD)
	\equiv \dim\bigg\{\nu : \nu\sim\omstar\bigg\}
	= \bm{s}(\DD)-\wp(\DD)\,,
	\eeq
for $\bm{s}(\DD)$ as in \eqref{e:stirling.correction.number} and $\wp(\DD)$ as in \eqref{e:wp.of.R}. It follows from Lemma~\ref{l:combinatorial.mmt.simplification} that
	\beq\label{e:prep.for.gaussian.sum}\E_{\DD}\ZZ
	= \sum_{\nu: \nu\sim\omstar}
	\E_{\DD}\ZZ[\bh]
	\asymp_R
	\sum_{\nu: \nu\sim\omstar}
	\f{\exp\{ n\bm{\Phi}_{\DD}(\nu) \}}
		{ n^{\bm{s}/2} }
	\eeq
Now recall the general fact that for any fixed dimension $d$, in the limit $n\to\infty$ we have
	\beq\label{e:gaussian.sum}
	\sum_{x \in (\mathbb{Z}/n)^d}
		\f1{\exp(n\|x\|^2)}
	= \sum_{x \in (\mathbb{Z}/n^{1/2})^d}
		\f1{\exp( \|x\|^2)}
	\asymp_d n^{d/2}\,.\eeq
Since we already noted above that $\bm{\Phi}_{\DD}(\nu)$ is strictly concave in $\bh$ for fixed $\omega$, the claimed result follows by
applying \eqref{e:gaussian.sum}
to \eqref{e:prep.for.gaussian.sum},
with dimension $d$ given by $\bm{s}(\DD)-\wp(\DD)$ as in \eqref{e:dim.nu.given.omega}.
\end{proof}
\end{cor}

Let us now return to the explicit optimization problem \eqref{e:nu.opt}. Suppose $\omega$ is given, so that $\pi$ is determined by the relation \eqref{e:pi.as.mgl.of.omega}. The optimal $\optdbh[\omega]$ depends on $\omega$ only through $\pi$: it is given by finding, for each variable type $\bT$, the measure $\dbh_{\bT}$ (on valid colorings of such a variable) that maximizes entropy and is consistent with marginals $\pi$. As in Remark~\ref{r:conflate}, let us abuse notation and write $\dbh_{\bT}\equiv\dbh_v$ where $v$ is a variable of type $\bT$,
and $\pi_{\bt}\equiv\pi_e$
where $e$ is an edge of type $\bt$. Then 
	\[
	(\optdbh[\omega])_v
	=\argmax_\nu\bigg\{
	\Ent(\dbh_v) : 
	\pi_e(\sigma)
	= \sum_{\usi_{\delta v}}
	\Ind{\sigma_e=\sigma} \dbh_v(\usi_{\delta v})
	\textup{ for all $e\in\delta v$, 
		$\sigma\in\set{\RYGB}$}
	\bigg\}\,.
	\]
The associated Lagrangian is given by
	\[-
	\sum_{\usi_{\delta v}}
	\dbh_v(\usi_{\delta v})
	\log\dbh_v(\usi_{\delta v})
	+ \sum_{e\in\delta v}\sum_\sigma
		\lambda_e(\sigma)
		\bigg[
		\sum_{\usi_{\delta v}} \Ind{\sigma_e=\sigma} \dbh_v(\usi_{\delta v})
		- \pi_e(\sigma)
		\bigg]\,,
	\]
where the first sum goes over $\usi_{\delta v}$
for which $\varphi_v(\usi_{\delta v})$,
as defined by 
\eqref{e:color.model.variable.factor},
equals one. It follows that
	\beq\label{e:lagrange.clause.tuple}
	(\optdbh[\omega])_v(\usi_{\delta v})
	\cong
	\varphi_v(\usi_{\delta v})
	\prod_{e\in\delta v} q_e(\sigma_e)
	\eeq
where $\cong$ indicates the overall normalization, and the $q_e$ are probability measures on $\set{\RYGB}$ such that $\optdbh[\omega]$ satisfies the constraint of having marginals consistent with $\pi$. 
Note the clear resemblance between
\eqref{e:var.tuple.measure.weighted}
and \eqref{e:lagrange.clause.tuple}.
The above is for general $\omega$; in the main special case of interest we have:

\begin{lem}\label{l:nu.star.as.optimizer.for.starpi}
For any $\omega$ with marginals $\starpi$,
$\optdbh[\omega]$ is given by \eqref{e:lagrange.clause.tuple}
with $q_e=\hqstar_e$ for all $e$, that is to say, by
	\beq\label{e:opt.variable.tuple.measure.star}
	\dbhstar_v(\usi_{\delta v})
	\equiv \f1{\dbz_v}
	\varphi_v(\usi_{\delta v})
	 \prod_{e\in\delta v} 
		\hqstar_e(\sigma_e)\eeq
where $\dbz_v$ is the normalizing constant. 

\begin{proof}
It suffices to verify that
	\begin{align}\nonumber
	\dbhstar_v(\sigma_{av}=\sigma)
	&\cong
	\hqstar_{av}(\sigma_{av})
	\sum_{\usi_{\delta v}}
	\Ind{\sigma_{av}=\sigma}
	\varphi_v(\usi_{\delta v})
	\prod_{b \in \pd v\setminus a}
		\hqstar_{bv}(\sigma_{bv})\\
	&\cong\hqstar_{av}(\sigma_{av})
	\bigg\{
	\Big(\BP_{va}[\hqstar]\Big)(\sigma_{av})
	\bigg\}
	=\hqstar_{av}(\sigma_{av})
	\dqstar_{va}(\sigma_{av})
	\cong \starpi_{av}(\sigma)\,,
	\label{e:marginal.of.nu.BP}
	\end{align}
where $\BP$ is the belief propagation mapping for the color model (the unweighted version of \eqref{e:bp}),
and the last two identities follow from the fact that $\starpi_{av}$, $\dqstar_{va}$, and $\hqstar_{av}$ are all based on the \bemph{same} tree, namely,
the $r$-neighborhood of $v$ in $\GG$
(see also Remark~\ref{r:first.rmk.coherence}).
This verifies that the marginals of $\dbhstar$ are indeed consistent with $\starpi$. Since $\dbhstar$ takes the form \eqref{e:lagrange.clause.tuple} given by the Lagrangian calculation, it follows that $\dbhstar=\optdbh[\omega]$.
\end{proof}
\end{lem}

In the optimization \eqref{e:nu.opt}, if $\omega$ is given, then the optimal $\opthbh[\omega]$ 
solves a similar problem of maximizing entropy
subject to marginals $\omega$.
 It is important to note, however, that 
the analogue of Lemma~\ref{l:nu.star.as.optimizer.for.starpi} 
does \bemph{not} hold for the clause measure:
for $(\omstar)_{\bL,j}=\starpi_{\bL(j)}$,
it is \bemph{not} necessarily the case
that $\opthbh[\omstar]$ is given by
(cf.\ \eqref{e:clause.tuple.measure.weighted})
	\beq\label{e:opt.clause.tuple.measure.star}
	\hbhstar_a(\usi_{\delta a})
	=
	\f1{\hbz_a}
	\hat{\varphi}_a(\usi_{\delta a})
		\prod_{v\in\pd a}
		\dqstar_{va}(\sigma_{av})\,.
	\eeq
This is simply because
$\hbhstar$ need not be consistent with $\omstar$:
a similar calculation as \eqref{e:marginal.of.nu.BP} gives
	\[
	\hbhstar_a(\sigma_{av}=\sigma)
	\cong
	\dqstar_{va}(\sigma_{av})\bigg\{
	\Big(\BP_{av}[\dqstar]\Big)(\sigma_{av})
	\bigg\}\,.
	\]
This generally does not match $\starpi_{av}(\sigma_{av})$ since, as noted in Remark~\ref{r:first.rmk.coherence}, $\BP_{av}[\dqstar]$ need not equal $\hqstar_{av}$.
A significant part of the proof of Proposition~\ref{p:first.moment.exponent}
is concerned with this discrepancy between
the explicit measure $\hbhstar$
of \eqref{e:opt.clause.tuple.measure.star}
and the optimizer $\opthbh[\omstar]$.
This calculation is deferred to \S\ref{ss:judicious.first.mmt}.

Given the above discussion, it is now easy to \emph{guess} what value $n^{-1}\log\E_{\DD}\ZZ$ should concentrate around, in the limit $n\to\infty$ followed by $R\to\infty$. Since the fraction of variables removed by preprocessing is $o_R(1)$ (Proposition~\ref{p:small.fraction.removed.in.processing}), the measure $\DD$ should concentrate around the Galton--Watson measure that is the local weak limit of the original $\ksat$ graph $\GG'$ (for the details see Definition~\ref{d:uPGW}). The discrepancy between $\hbhstar$ and $\opthbh[\omstar]$ should also go away as $R\to\infty$. The canonical messages $\dqstar$ and $\hqstar$ should converge, in a distributional sense, to the limiting measures on the Galton--Watson tree. We formalize this as follows:

\begin{dfn}[random messages for the color model]
\label{d:bethe.laws.of.color.messages}
Let $\mu$ be the measure given by Proposition~\ref{p:fp}, and let $\ueta$ be an array of i.i.d.\ samples from $\mu$. Let $\ud \equiv (d^\plus,d^\minus)$ be two independent samples from the $\Pois(\alpha k/2)$ distribution. Define $\Pi^\PM\equiv\Pi^\PM(\vec d,\vec\eta)$ as in \eqref{e:Pi.PM}, and define
(cf.\ \eqref{e:intro.dist.recurs})
	\[
	\bmeta(\plus)
	= \f{(1-\Pi^\plus)\Pi^\minus}
		{\Pi^\plus+\Pi^\minus-\Pi^\plus\Pi^\minus},\quad
	\bmeta(\minus)
	= \f{(1-\Pi^\minus)\Pi^\plus}
		{\Pi^\plus+\Pi^\minus-\Pi^\plus\Pi^\minus},\quad
	\bmeta(\free)
	= \f{\Pi^\plus\Pi^\minus}
		{\Pi^\plus+\Pi^\minus-\Pi^\plus\Pi^\minus}\,,
	\]
so that $\bmeta$ is a (random) probability measure over $\set{\minus,\plus,\free}$. Now let $\bmeta_j$ be i.i.d.\ copies of $\bmeta$, and define (cf.\ \eqref{e:first.defn.of.bhu})
	\beq\label{e:second.defn.of.bhu}
	\bm{\hat{u}}
	\equiv 
	\Big(\bm{\hat{u}}(\plus),\bm{\hat{u}}(\free)\Big)
		= 
		\bigg(
		\prod_{j=1}^{k-1} \bmeta_j(\minus),
		1-\prod_{j=1}^{k-1} \bmeta_j(\minus)
		\bigg)\,.\eeq
We substitute the random measures $\bmeta$ and $\bm{\hat{u}}$
into \eqref{e:color.recursions.eta}
to define random messages for the color model:
	\begin{align}
	\label{e:color.recursions.eta.again}
	\dq(\bmeta)
	&=(\dq(\red),
		\dq(\yel),
		\dq(\grn),\dq(\blu))
	=\f{( \bmeta(\plus)+\bmeta(\free),
		\bmeta(\minus),
		\bmeta(\free),
		\bmeta(\plus)
		 )}
		{ 2-\bmeta(\minus) }\,,\\
	\hq[(\bmeta_j)_{j\ge1}]
		&=(\hq(\red),
			\hq(\yel)=\hq(\grn)=\hq(\blu))
		= \f{(\bm{\hat{u}}(\plus),
		\bm{\hat{u}}(\free))}
			{3-2\bm{\hat{u}}(\plus)}\,.
	\nonumber
	\end{align}
Let $\mu^\textup{col}$ denote the law of $\dq$,
and let $\hat{\mu}^\textup{col}$
denote the law of $\hq$.
\end{dfn}

\begin{dfn}[Bethe free energy of color model]
\label{d:Phi.col}
Given a sequence of probability measures $\vec{\dq}=(\dq_1,\ldots,\dq_k)$ on $\set{\RYGB}$,
we define a probability measure on $\set{\RYGB}^k$ by
(cf. \eqref{e:clause.tuple.measure.weighted} and \eqref{e:opt.clause.tuple.measure.star})
	\[
	\Big(\hbh[\vec{\dq}]\Big)
		(\sigma_1,\ldots,\sigma_k)
	\equiv 
	\f{\hat{\varphi}_a(\sigma_1,\ldots,\sigma_k)}
	{\hbz[\vec{\dq}]}
	\prod_{i=1}^k
	\dq_i(\sigma_i)\,,
	\]
with $\hat{\varphi}_a$ given by \eqref{e:indicator.of.valid.clause.coloring}.
Similarly, given integers $\ud \equiv (d^\plus,d^\minus)$
with $d=d^\plus + d^\minus$,
and an array $\vec{\hq}
\equiv ((\hq^\plus)_i,(\hq^\minus)_i)_i$
of probability measures
on $\set{\RYGB}$,
we define a probability measure on $\set{\RYGB}^d$ by
(cf.\ \eqref{e:clause.tuple.measure.weighted} and \eqref{e:opt.clause.tuple.measure.star})
	\[\Big(
	\dbh[\ud,\vec{\hq}]\Big)(\sigma_1,\ldots,\sigma_d)
	\equiv \f{\varphi_v(\sigma_1,\ldots,\sigma_d)}
		{\dbz[\ud,\vec{\hq}]}
	\Bigg(
	\prod_{i=1}^{d^\plus}
	(\hq^\plus)_i(\sigma_i)
	\Bigg)
	\Bigg(
	\prod_{i=d^\plus+1}^d
	(\hq^\minus)_i(\sigma_i)
	\Bigg)\,,
	\]
with $\varphi_v$ given by \eqref{e:color.model.variable.factor} for a variable in which the first $d^\plus$ incident edges have the $\plus$ sign, and the remaining $d^\minus$ incident edges have the $\minus$ sign. Finally, given two measures $\dq$ and $\hq$ on $
\set{\RYGB}$, define (cf.\ \eqref{e:edge.marginal.q.times.q}
and \eqref{e:defn.canonical.edge.marginal})
	\[
	\Big(\bar{\nu}[\dq,\hq]\Big)
	(\sigma)
	\equiv \f{\dq(\sigma)\hq(\sigma)}
		{\bar{z}[\dq,\hq]}\,.
	\]
Abbreviate $\POpm$ for the law of $\ud$,
and recall 
$\mu^\textup{col}$
and $\hat{\mu}^\textup{col}$
from Definition~\ref{d:bethe.laws.of.color.messages}.
Define
	\begin{align*}
	H^{\textup{col},\textup{v}}(\alpha)
	&\equiv\int
	\Ent\Big(\dbh[\ud,\vec{\hq}]\Big)
	\,d\POpm(\ud)
	\,d(\hat{\mu}^\textup{col})^\otimes(\vec{\hq})\,,\\
	H^{\textup{col},\textup{cl}}(\alpha)
	&\equiv
	\int\Ent\Big(\hbh[\vec{\dq}]\Big)
	\,d(\mu^\textup{col})^\otimes(\vec{\dq})\,,\\
	H^{\textup{col},\textup{e}}(\alpha)
	&\equiv
	\int\int\Ent\Big(\bar{\nu}[\dq,\hq]\Big)
	\,d\mu^\textup{col}(\dq)
	\,d\hat{\mu}^\textup{col}(\hq)\,.
	\end{align*}
(The dependence on $\alpha$ is through the law of $\ud$ as well as the measures $\mu^\textup{col}$
and $\hat{\mu}^\textup{col}$.) The \bemph{Bethe free energy of the color model} is given by (cf.\ \eqref{e:rate.function.given.gen.degseq})
	\beq\label{e:phi.col.entropic.formula}
	\Phi^\textup{col}(\alpha)
	\equiv
	H^{\textup{col},\textup{v}}(\alpha)
	+ \alpha \bigg\{
	H^{\textup{col},\textup{cl}}(\alpha)
	- k\,
		H^{\textup{col},\textup{e}}(\alpha)
		\bigg\}\,.
	\eeq
An equivalent (and more commonly seen) expression is given by defining
	\begin{align}\nonumber
	\Phi^{\textup{col},\textup{v}}(\alpha)
	&\equiv\int
	\log\dbz[\ud,\vec{\hq}]
	\,d\POpm(\ud)
	\,d(\hat{\mu}^\textup{col})^\otimes(\vec{\hq})\,,\\
	\nonumber
	\Phi^{\textup{col},\textup{cl}}(\alpha)
	&\equiv
	\int \log\hbz[\vec{\dq}]
	\,d(\mu^\textup{col})^\otimes(\vec{\dq})\,,\\
	\Phi^{\textup{col},\textup{e}}(\alpha)
	&\equiv
	\int\int \log\bar{z}[\dq,\hq]
	\,d\mu^\textup{col}(\dq)
	\,d\hat{\mu}^\textup{col}(\hq)\,.
	\label{e:simplified.phi.col.terms}
	\end{align}
It is straightforward to check that
	\beq\label{e:simplified.phi.col}
	\Phi^\textup{col}(\alpha)
	= \Phi^{\textup{col},\textup{v}}(\alpha)
	+ \alpha \bigg\{
	\Phi^{\textup{col},\textup{cl}}(\alpha)
	- k\,\Phi^{\textup{col},\textup{e}}(\alpha)
		\bigg\}\,.
	\eeq
We will show in \S\ref{ss:judicious.first.mmt}
(Lemma~\ref{l:phi.equals.phi.col}) that 
$\Phi^\textup{col}(\alpha)$
is exactly the same as the $\onersb$ free energy
$\Phi(\alpha)$ from \eqref{e:phi.alpha}.
\end{dfn}

In \S\ref{ss:judicious.first.mmt} we will complete the proof of Proposition~\ref{p:first.moment.exponent} by showing that the quantity $\bm{\Psi}_{\DD}(\omega)$ (as defined by \eqref{e:nu.opt}, and appearing in the estimate of Corollary~\ref{c:first.moment.exponent}) is lower bounded by a quantity that tends to $\Phi^\textup{col}(\alpha)=\Phi(\alpha)$ in the limit $n\to\infty$ and $R\to\infty$.

\subsection{Second moment and constrained entropy maximization}
\label{ss:second.moment.overview}

In this subsection we introduce some of the core principles of the proof of Proposition~\ref{p:second.moment.judicious}.
As before, let $\GG'$ denote the original $\ksat$ instance, and $\GG\equiv\proc\GG'$ its processed version. Recall \eqref{e:def.ZZ} that $\ZZ\equiv\ZZ(\GG)$ counts all colorings of $\GG$ that are judicious (Definition~\ref{d:judicious}).
In Proposition~\ref{p:second.moment.judicious} we seek to calculate the expected value, under the measure $\P_{\DD}$, of
	\beq\label{e:second.mmt.decompose.z.IO}
	\ZZ^2[I_0]
	\equiv \sum_{z\in I_0}
	\ZZ^2[z]\,,
	\eeq
for $I_0$ as
defined by \eqref{e:middle.interval},
and $\ZZ^2[z]$ as defined by \eqref{e:second.mmt.decompose.by.z}.

Throughout this subsection, we let
$\usi$ denote a pair $(\usi^1,\usi^2)$
where each $\usi^i$ is a judicious coloring of $\GG$.
Given any such $\usi$, let $\pi$ and $\omega$ be defined analogously to \eqref{e:edge.msr.given.edge.type}
and \eqref{e:edge.msr.given.clause.type}
from Definition~\ref{d:empirical.measures.of.colors},
 except that now edge spins take values in $\set{\RYGB}^2$ rather than $\set{\RYGB}$. We hereafter refer to $\omega\equiv(\omega_{\bL,j})_{\bL,j}$ as the \bemph{pair empirical measure (on edges)}. Let $\ZZ^2(\omega)$ denote the contribution to $\ZZ^2$ from configurations $\usi\equiv(\usi^1,\usi^2)$ with empirical measure $\omega$. Given $\usi$, we can also define the vertex pair empirical measure $\nu\equiv(\dbh,\hbh)$ as in \eqref{e:variable.color.cond.on.type} and \eqref{e:clause.color.cond.on.type}, except with pairs of colors instead of single colors on each edge. Let $\ZZ^2[\nu]$ denote the contribution to $\ZZ^2$ from pairs $\usi$ that are consistent with $\nu$. In the pair coloring model, as in the single-copy model, the edge empirical measure $\omega$ can be determined as a function of the vertex empirical measure $\nu$ (in fact, $\omega$ can be determined from $\hbh$ alone).

\begin{dfn}[judicious pair empirical measures]\label{d:judicious.pair}
As $\ZZ$ is defined to count only judicious configurations, in order for the contribution $\ZZ^2(\omega)$ to be non-zero, $\omega$ must satisfy two properties. First, the single-copy marginals of $\omega$
must agree with the measure $\omstar$
defined in the statement of Corollary~\ref{c:first.moment.exponent}: that is to say, for both $i=1,2$, we must have
	\beq\label{e:margin.judicious}
	\omstar_{\bL,j}(\sigma^i)
	= (\omega_{\bL,j})^j(\sigma^i)
	\equiv
	\sum_{\tau\in\set{\RYGB}^2}
	\Ind{\tau^i=\sigma^i}
	\omega_{\bL,j}(\tau)
	\eeq
for all $\bL,j$. Secondly, $\omega$ must arise as the marginal of a valid vertex measure $\nu=(\dbh,\hbh)$. We say that $\omega$ is \bemph{judicious} if it satisfies both these properties. (The second property is important mainly for computing the dimension of the space of feasible $\omega$, as we will see in the proof of Lemma~\ref{l:dimension.count.two} below.)\end{dfn}

With the above notations, we can refine the above decomposition~\eqref{e:second.mmt.decompose.z.IO} as
	\[
	\ZZ^2[I_0]
	= \sum_{\omega\in\bm{I}_0}
	\ZZ^2(\omega)\,,
	\]
where $\bm{I}_0$ denotes the subset of measures $\omega$ that are judicious and consistent with $z\in I_0$. Throughout what follows, we will use the term ``pair coloring model'' for the two-copy version of \eqref{e:color.factor.model}, with factors
	\begin{align}\nonumber
	\varphi_{v,2}(\usi_{\delta v})
	&\equiv
	\prod_{j=1,2}
	\varphi_v( (\usi^j)_{\delta v}))\,,\\
	\varphi_{a,2}(\usi_{\delta a})
	&\equiv
	\prod_{j=1,2}
	\hat{\varphi}_a
	((\usi^j)_{\delta a})\,.
	\label{e:pair.color.model.factors}
	\end{align}
Let $\bm{\Phi}_{\DD,2}(\nu)$
and $\bm{\Psi}_{\DD,2}(\omega)$
be the analogues of \eqref{e:rate.function.given.gen.degseq}
and \eqref{e:nu.opt} for the pair coloring model.
We then have the following extension of
Lemma~\ref{l:combinatorial.mmt.simplification}: 

\begin{lem}\label{l:if.neg.def}
Denote $\prodom\equiv\omstar\otimes\omstar$ for $\omstar$ as in Corollary~\ref{c:first.moment.exponent}.
Suppose that if we restrict to $\omega\in\bm{I}_0$,
the function
$\bm{\Psi}_{\DD,2}$ is uniquely maximized at $\prodom$, with negative-definite Hessian.
Then 
	\[\E_{\DD}\Big( \ZZ^2[I_0]\Big)
	\asymp_R \Big(\E_{\DD}\ZZ\Big)^2\,,\]
i.e., the conclusion of Proposition~\ref{p:second.moment.judicious} holds.

\begin{proof} The same calculation leading to
\eqref{e:stirling.with.poly.error} gives
(in the interior of the simplex of feasible $\nu$)
a similar formula
	\[
	\E_{\DD}\ZZ^2[\nu]
	\asymp_R
	\f1{ n^{\bm{s}_2}}
	\exp\bigg\{n \bm{\Phi}_{\DD,2}(\nu)\bigg\}\,,
	\]
where $\bm{s}_2=\bm{s}_2(\DD)$ 
takes into account the polynomial corrections from the Stirling approximation,
and is the analogue of \eqref{e:stirling.correction.number} for the pair model:
	\beq\label{e:stirling.correction.two}
	\bm{s}_2=\bm{s}_2(\DD)
	=\sum_{\bT}\bigg( (s_{\bT})^2-1\bigg)
	+\sum_{\bL}\bigg( (s_{\bL})^2-1\bigg)
	-\sum_{\bt}\bigg( (s_{\bt})^2-1\bigg)\,.
	\eeq
Let us write $\nu\sim\omega$ if $\nu$ is consistent with $\omega$. In Lemma~\ref{l:dimension.count.two} (deferred to \S\ref{ss:coherence.weights}), we will calculate
	\beq\label{e:dimension.count.pair.model}
	d_2(\DD)
	\equiv\dim\bigg\{\nu : \nu\sim\omega\bigg\}\,.
	\eeq
for $\omega$ close to $\prodom$. Then, similarly to
\eqref{e:first.mmt.poly}
and \eqref{e:prep.for.gaussian.sum}, we have
	\[
	\E_{\DD}\ZZ^2(\omega)
	=
	\sum_{\nu:\nu\sim\omega}
	\E_{\DD}\ZZ^2[\nu]
	\asymp_R
	\sum_{\nu:\nu\sim\omega}
	\f1{n^{\bm{s}_2/2}}
	\exp\bigg\{n \bm{\Phi}_{\DD,2}(\nu)\bigg\}
	\asymp_R
	\f{n^{d_2/2}}
		{n^{\bm{s}_2/2}}
	\exp\bigg\{
	\bm{\Psi}_{\DD,2}(\omega)
	\bigg\}\,.
	\]
Finally, we will show in Lemma~\ref{l:dimension.of.judicious.omega} (also deferred to \S\ref{ss:coherence.weights}) that
	\beq\label{e:dim.judicious.omega}
	j_2\equiv j_2(\DD)\equiv
	\dim\bigg\{\omega
		:\textup{$\omega$ is judicious}\bigg\}
	=
	\bm{s}_2(\DD)- d_2(\DD)- 2\wp(\DD)
	\eeq
for $\wp$ as in \eqref{e:wp.of.R}. If $\bm{\Psi}_{\DD,2}$ satisfies the conditions of the lemma, then the gaussian summation estimate
\eqref{e:gaussian.sum} gives
	\[\E_{\DD}\ZZ^2[\bm{I}_0]
	\asymp_R
	\f1{n^{\wp}}
	\exp\bigg\{
	\bm{\Psi}_{\DD,2}(\prodom)
	\bigg\}
	\stackrel{\odot}{=}
	\Bigg(
	\f1{n^{\wp/2}}
	\exp\bigg\{
	\bm{\Psi}_{\DD}(\omstar)
	\bigg\}
	\Bigg)^2
	\asymp_R
	\Big(\E_{\DD}\ZZ\Big)^2\,.
	\]
(The step marked $\odot$ uses the identity
$\bm{\Psi}_{\DD,2}(\prodom)
=2\bm{\Psi}_{\DD}(\omstar)$
which is easy to verify.)
\end{proof}
\end{lem}

Thus, in order to prove Proposition~\ref{p:second.moment.judicious}, it suffices to verify the condition of Lemma~\ref{l:if.neg.def}. In the remainder of this subsection, we show that this condition can be reduced to solving a family of constrained entropy maximization problems on finite trees, which we define next. We will separately consider two cases, one for compound enclosures and one for non-compound variables. The solution of the optimization problems occupies most of Sections~\ref{s:contract}--\ref{s:burnin}.\medskip


\noindent\bemph{Entropy maximization problem for compound enclosures.} We discuss the case of compound enclosures first. Although these regions are more complicated in the sense that they contain defective variables, we have an important advantage in that the notion of compound type (Definition~\ref{d:cpd.type}) encodes the structure of the entire enclosure. This allows us to reduce the analysis of each type of compound enclosure to an optimization problem concerning colorings of a fixed tree with fixed edge types (Proposition~\ref{p:update.compound} below). By contrast, to obtain an analogous statement for non-compound variables, we will have to consider a more complicated optimization problem 
where some of the edge types on the tree can vary (Proposition~\ref{p:block.update.non.compound} below).

\begin{dfn}[judicious measures on trees]
\label{d:constrained.opt.compound.enclosure}
Let $U$ be a finite bipartite factor tree,
with all vertices and edges labelled by (mutually compatible) \bemph{compound} total types, such that all the leaves of $U$ are variables.
Let $\pd_\circ U$ be a designated nonempty subset of leaf variables. We use $\delta U$ to denote the edges incident to $\pd_\circ U$. Define the simplex of probability measures
	\[
	\Simplex(U)
	\equiv \left\{
	\hspace{-3pt}\begin{array}{c}
	\textup{probability measures $\nu$
		over pair}\\
	\textup{colorings 
		$\usi\equiv(\usi^1,\usi^2)$
		of $U$}
	\end{array}\hspace{-3pt}\right\}\,.
	\]
We say that $\nu\in\Simplex(U)$ is \bemph{judicious}
if all of its edge marginals match the canonical marginals of Definition~\ref{d:canonical}: that is, for all edges $e$ in $U$ and for both $j=1,2$, 
we have
	\[
	\starpi_e(\sigma^j)
	=(\nu_e)^j(\sigma^j)
	\equiv \nu((\sigma_e)^j=\sigma^j)
	\equiv \sum_\tau
		\Ind{\tau^j=\sigma^j}
		\nu(\tau_e=\tau)
	\]
for all $\sigma^j\in\set{\RYGB}$. Note that, since all the edges in $U$ are assumed to be of compound type, this is equivalent to saying that all the edge marginals $\nu_e$ satisfy the condition \eqref{e:margin.judicious} from Definition~\ref{d:judicious.pair}. Let
$\omega$ denote a tuple $(\omega_e)_e$ 
where 
$\omega_e$ is a judicious probability measure on $\set{\RYGB}^2$ for each edge $e$ in $U$. We then let
	\[
	\Judicious(U ; \omega_{\delta U})
	\equiv
	\left\{
	\hspace{-3pt}\begin{array}{c}
	\nu\in\Simplex(U):
	\textup{$\nu$ is judicious,}\\
	\textup{and
	$\nu_e=\omega_e$ for all $e\in\delta U$}
	\end{array}\hspace{-3pt}
	\right\}\,,
	\]
where $\nu_e$ denotes the marginal of $\nu$ on edge $e$. We also let
	\[\Simplex(U;\omega_U)
	\equiv
	\bigg\{\nu\in\Simplex(U) : \nu_e=\omega_e
		\textup{ for all }e\in E_U\bigg\}\,,
	\]
and note that
$\Simplex(U;\omega_U)
\subseteq \Judicious(U ; \omega_{\delta U})
\subseteq \Simplex(U)$
for any judicious $\omega$.
\end{dfn}

\begin{ppn}[block optimization for compound regions]
\label{p:update.compound}
In the processed graph $\GG=(V,F,E)=\proc\GG'$, let $U\equiv (V_U,F_U,E_U)$ be any compound enclosure. Recall from Definition~\ref{d:enclosure}
that $U= U^\circ \cup \pd U^\circ$ where 
$\pd U^\circ\equiv\pd_\circ U$ is the set of perfect variables in $U$. Let $\delta U$ denote the edges in $U$ that are incident to $\pd_\circ U$, and decompose
	\[
	\omega
	\equiv\begin{pmatrix}
	\omega_\textup{in}\\
	\omega_{\delta U}\\
	\omega_\textup{out}
	\end{pmatrix}
	\equiv
	\begin{pmatrix}
	(\omega_{\bL,j}
	: \textup{some edge in $E_U\setminus \delta U$
		has type $(\bL,j)$})
	\\
	(\omega_{\bL,j}
	: \textup{some edge in $\delta U$
		has type $(\bL,j)$})\\
	(\omega_{\bL,j}
	: \textup{some edge in $E \setminus E_U$
		has type $(\bL,j)$})
	\end{pmatrix}\,.
	\]
(An edge $e=(av)$ has type $(\bL,j)$ if $\bL_a=\bL$ and $e$ is the $j$-th edge in $\delta a$.)
We also abbreviate 
$\omega_U\equiv(\omega_\textup{in},\omega_{\delta U})$ and
$\omega_\textup{bd}\equiv(\omega_{\delta U},\omega_\textup{out})$. Assume the neighborhood profile $\DD$ of $\GG$ is bounded away from zero in the sense of \eqref{e:pos.frac}. Then, using the notation of Definition~\ref{d:constrained.opt.compound.enclosure}, we have
	\begin{align}
	\label{e:max.over.acute.omega.compound}
	&\max_{\acute{\omega}}
	\Bigg\{
	\bm{\Psi}_{\DD,2}(\acute{\omega})
	-\bm{\Psi}_{\DD,2}(\omega)
	: 
	\textup{$\omega$ and $\acute{\omega}$
	are judicious,
	$\acute{\omega}_\textup{bd}=\omega_\textup{bd}$}
	\Bigg\}\\
	&=c_\textup{in}
	\Bigg( \max_\nu
	\Bigg\{ \Ent(\nu)
		: 	\nu \in
		\Judicious(U,\omega_{\delta U})\Bigg\}
	- \max_\nu
	\Bigg\{ \Ent(\nu)
		: \nu \in \Simplex(U;\omega_U)\Bigg\}
	\Bigg)\,,
	\end{align}
where $c_\textup{in}$ depends only on $\DD$ and on $U$, and is lower bounded by $c_1$.

\begin{proof} Recalling Remark~\ref{d:degseq},
let $\cD$ be any neighborhood sequence that is consistent with $\DD$. Since for any $\omega$ we have 
$\E_{\DD}\ZZ^2(\omega)
=\E_\cD\ZZ^2(\omega)$,
rather than working under $\P_{\DD}$
we can instead work under $\P_\cD$,
which can be sampled by the (generalized) configuration model described in
Remark~\ref{r:processed.graph.types.CM}.
Let $\bt(U)$ denote the collection of all edge types $\bt$ appearing on edges inside $U$; recall that these are all compound types.

Let $\mathfrak{M}$ denote the random matching of $\delta V$ to $\delta F$ (which defines the graph $\GG$). We let $\mathfrak{m}$
denote the restriction of $\mathfrak{M}$ to 
half-edges with types in $\bt(U)$; this defines a subgraph $\UU \equiv(V_{\UU},F_{\UU},E_{\UU}) \subseteq\GG$ which consists of $n_U = n c_\textup{in}$ disjoint copies of $U$. Moreover, since $\UU$ 
consists of $n_U$ disjoint copies of $U$ for any
valid realization of
$\mathfrak{m}$, we can condition on 
$\mathfrak{m}$ without changing the expected value of the partition function:
	\[\f1{n^{O(1)}}
	\exp\Big\{
	n\bm{\Psi}_{\DD,2}(\omega)
	\Big\}= \E_{\DD}\ZZ^2(\omega)
	=\E_\cD\ZZ^2(\omega)
	=\E_\cD\bigg(
		\ZZ^2(\omega)
		\,\bigg|\, \mathfrak{m}
		\bigg)\,.
	\]
Let $\pd_\circ\UU$ denote the $n_U$ copies of $\pd_\circ U$ inside $\UU$, and let $\delta\UU$ denote the edges in $\UU$ incident to $\pd_\circ\UU$. Any (pair) coloring $\usi$ on $\GG$ can be decomposed as
	\[
	\usi
	\equiv 
	\begin{pmatrix}
	\usi_\textup{in} \\
	\usi_{\delta\UU} \\
	\usi_\textup{out}
	\end{pmatrix}
	\equiv
	\begin{pmatrix}
	(\sigma_e : e \in E_{\UU}\setminus \delta\UU) \\
	(\sigma_e : e \in \delta\UU) \\
	(\sigma_e : e \in E\setminus E_{\UU}) 
	\end{pmatrix}\,.
	\]
Note that this corresponds precisely to the above decomposition of $\omega$: a coloring $\usi$ has empirical measure $\omega$ (which we abbreviate $
\usi\sim\omega$) if and only if
$\usi_\textup{in}$,
$\usi_{\delta\UU}$, and
$\usi_\textup{out}$
have empirical measures
$\omega_\textup{in}$,
$\omega_{\delta U}$, and
$\omega_\textup{out}$ respectively.
We also abbreviate
$\usi_{\UU}
	\equiv ( \usi_\textup{in},
		\usi_{\delta\UU})$
	and 
	$\omega_U
	\equiv (\omega_\textup{in},
	\omega_{\delta U})$.
With this notation, we can decompose
$\ZZ^2(\omega)$ as
	\beq\label{e:partition.fn.tree.factorize}
	\ZZ^2(\omega)
	=
	\sum_{\usi_{\UU}}
		\Ind{\usi_{\UU} \sim \omega_U}
	\varphi_{\UU}(\usi_{\UU})
	\Bigg\{
	\sum_{\usi_\textup{out}}
		\Ind{\usi_\textup{out} \sim\omega_\textup{out}}
	\varphi_\textup{out}
	(\usi_{\delta\UU},\usi_\textup{out})
	\Bigg\}\,,
	\eeq
where, recalling the definition \eqref{e:pair.color.model.factors}
of the factors for the pair coloring model, we define
	\begin{align*}
	\varphi_{\UU}(\usi_{\UU})
	&\equiv
	\prod_{v\in V_{\UU}\setminus\pd_\circ U}
	\varphi_v(\usi_{\delta v})
	\prod_{a\in F_{\UU}}
	\hat{\varphi}_a(\usi_{\delta a})\,, \\
	\varphi_\textup{out}
	(\usi_{\delta\UU},\usi_\textup{out})
	&\equiv
	\prod_{v\in (V\setminus V_U) \cup \pd_\circ U}
	\varphi_v(\usi_{\delta v})
	\prod_{a\in F\setminus F_{\UU}}
	\hat{\varphi}_a(\usi_{\delta a})\,.
	\end{align*}
Taking the expectation conditional on $\mathfrak{m}$ gives
	\[
	\E_\cD\Big(\ZZ^2(\omega)\,\Big|\,\mathfrak{m}\Big)
	=
	\sum_{\usi_{\UU}}
		\Ind{\usi_{\UU} \sim \omega_U}
	\varphi_{\UU}(\usi_{\UU})\\
	\overbrace{
	\E_\cD
	\Bigg(
	\sum_{\usi_\textup{out}}
		\Ind{\usi_\textup{out} \sim\omega_\textup{out}}
	\varphi_\textup{out}
	(\usi_{\delta\UU},\usi_\textup{out})
	\,\Bigg|\,\mathfrak{m}
	\Bigg)}^{X\equiv X(\omega_{\delta U},\omega_\textup{out})}\,,\]
where we emphasize that the value of 
$X$ is constant over all $\usi_{\delta\UU}\sim\omega_{\delta U}$. Now recall that $\UU$ consists of a disjoint union of copies of $U$, which we denote $U_i$ for $1\le i\le n_U$. For any $\usi_{\UU}$ we can define its empirical measure over the copies of $U$, that is to say,
	\[
	\nu(\usi_U)
	= \f1{n_U}
	\sum_{i=1}^{n_U}
	\Ind{ \usi_{ U_i} = \usi_U}
	\]
We write $\usi_{\UU}\sim\nu$ if $\usi_{\UU}$ has empirical measure $\nu$. Then, with $\Simplex(U;\omega_U)$ as in Definition~\ref{d:constrained.opt.compound.enclosure}, we have
	\begin{align*}
	\sum_{\usi_{\UU}}
		\Ind{\usi_{\UU} \sim \omega_U}
	\varphi_{\UU}(\usi_{\UU})
	&=
	\sum_{\nu\in\Simplex(U;\omega_U)}
	\sum_{\usi_{\UU}}
		\Ind{\usi_{\UU} \sim \nu}
	\varphi_{\UU}(\usi_{\UU})
	=
	\sum_{\nu\in\Simplex(U;\omega_U)}
	\binom{n_U}{n_U\nu}\\
	&= n^{O(1)} \exp\Bigg\{ n_U
	\max_\nu \bigg\{
	\Ent(\nu) : \nu \in \Simplex(U;\omega_U)
		\bigg\}\Bigg\}\,,
	\end{align*}
where the last step is by Stirling's approximation. Altogether we conclude
	\beq\label{e:reduce.to.simplex.omega}
	\exp\bigg\{n \bm{\Psi}_{\DD,2}(\omega)\bigg\}
	= n^{O(1)}
	X\exp\Bigg\{ n_U
	\max_\nu \bigg\{
	\Ent(\nu) : \nu \in \Simplex(U;\omega_U)
		\bigg\}\Bigg\}
	\eeq
On the other hand, summing over all possibilities of $\omega_\textup{in}$ gives
	\begin{align}\nonumber
	&\f1{n^{O(1)}}
	\exp\Bigg\{
		n\max\Big\{
	\bm{\Psi}_{\DD,2}(\acute{\omega})
	: \acute{\omega}_\textup{bd}=\omega_\textup{bd}
	\Big\}
	\Bigg\}
	=
	\sum_{\acute{\omega}}
	\Ind{\acute{\omega}_\textup{bd}=\omega_\textup{bd}}
	\E_\cD\bigg(
		\ZZ^2(\acute{\omega})
		\,\bigg|\, \mathfrak{m}
		\bigg)\\
	&= X
	\sum_{\usi_{\UU}}
	\Ind{\usi_{\delta\UU} \sim \omega_{\delta U}}
	\varphi_{\UU}(\usi_{\UU})
	= n^{O(1)}
	X
	\exp\Bigg\{ n_U
	\max_\nu \bigg\{
	\Ent(\nu) : \nu \in \Judicious(U;\omega_{\delta U})
		\bigg\}\Bigg\}\,.
	\label{e:reduce.to.simplex.judicious}
	\end{align}
The claim follows by combining \eqref{e:reduce.to.simplex.omega} with \eqref{e:reduce.to.simplex.judicious}.
\end{proof}
\end{ppn}

\noindent\bemph{Entropy maximization problem for non-compound variables.} We now give the analogues of Definition~\ref{d:constrained.opt.compound.enclosure} and Proposition~\ref{p:update.compound} for the case of non-compound variables. As noted above, we now have the added difficulty that for a variable of non-compound type $\bT$, the clause types $\bL$ neighboring to the variable are not uniquely determined by $\bT$. This difficulty will be countervailed by the fact that non-compound variables are perfect, and as a result it will be sufficient to consider only the depth-one neighborhood of the variable.

\begin{dfn}[judicious measures on trees with augmented alphabet]
\label{d:judicious.augmented.alphabet}
Let $v$ be a variable of total type $\bT$, which we assume is not of compound type. Let $U\equiv U_{\bT}$ be the depth-one neighborhood of $v$, in which each edge $e\in\delta v$ is labelled with its corresponding type $\bt_e$. However, we \bemph{forget the total type labellings on the clauses and other edges of $U$,} and each boundary edge $e\in\delta U$ is labelled only with its index $j(e)\in[k]$. An \bemph{augmented (pair) coloring} on $U$ is a configuration $(\usi,\uL)$ which assigns to each edge $e\in U$ a spin $(\sigma_e,\bL_e)$ where $\sigma_e\in\set{\RYGB}^2$ and $\bL_e$ is a clause total type. Recall \eqref{e:pair.color.model.factors} where we defined the variable and clause factors $\varphi_v$ and $\hat{\varphi}_a$ for the pair coloring model. The factors for the augmented pair coloring model are 
	\begin{align}\nonumber
	\varphi_v(\usi_{\delta v},\uL_{\delta v})
	&\equiv
	\varphi_v(\usi_{\delta v})
	\prod_{e\in\delta v}
	\Ind{\bL_e\ni\bt_e}\,,\\
	\hat{\varphi}_a(\usi_{\delta a},\uL_{\delta a})
	&\equiv
	\hat{\varphi}_a(\usi_{\delta a})
	\mathbf{1}\bigg\{
	\textup{the $\bL_e$ are the same for all
		$e\in\delta a$}
	\bigg\}\,.
	\label{e:augmented.model.factors}
	\end{align}
We say that $(\usi,\uL)$ is a valid
augmented coloring on $U$ as long as
	\[
	\varphi_v(\usi_{\delta v},\uL_{\delta v})
	\prod_{a\in\pd v}
	\hat{\varphi}_a(\usi_{\delta a},\uL_{\delta a})
	=1\,.
	\]
Analogously to the 
simplex $\Simplex(U)$ from
Definition~\ref{d:constrained.opt.compound.enclosure}, 
we now let
	\[
	\Simplex_\textup{aug}(U)
	\equiv\left\{
	\hspace{-3pt}\begin{array}{c}
	\textup{probability measures $\nu$
		on valid}\\
	\textup{augmented pair colorings
		$(\usi,\uL)$ of $U$}
	\end{array}\hspace{-3pt}
	\right\}\,.
	\]
For each edge $e$ in $U$, let $p(e)$ denote the edge in $\delta v$ that is closest to $e$. (If $e\in\delta v$ then $p(e)=e$.)
We then say that a measure $\nu\in\Simplex_\textup{aug}(U)$ is \bemph{fully judicious with respect to $\DD$} if it holds for all edges $e$ in $U$ that
	\beq\label{e:pi.DD.first.appearance}
	\nu_e(\sigma,\bL)
	= \overbrace{
	\underbrace{
	\f{\DS
	\hat{\DD}(\bL)\Ind{\bL_{j(\bt_{p(e)})}
		=\bt_{p(e)}}}
		{	\DS \sum_{\bL'}
	\hat{\DD}(\bL')\Ind{(\bL')_{j(\bt_{p(e)})}
		=\bt_{p(e)}}}
	}_{\textup{denote this }
	\pi_{\DD}(\bL\,|\,\bt_{p(e)})}
	\tilde{\omega}_{\bL,j(e)}(\sigma)
	}^{\textup{denote this 
	$(\AUGMENT_{\DD}(\tilde{\omega}))_e(\sigma,\bL)$}}
	\eeq
for some $\tilde{\omega}$ that is judicious
(in the sense of Definition~\ref{d:judicious.pair}). Given any particular $\omega$, we now write
	\[
	\omega_{\delta U}
	\equiv
	\Bigg(
	\begin{array}{c}
	\omega_{\bL,j}
	: \textup{some edge $e\in\delta U$
		has $j(e)=j$,
		and can take clause}\\
	\textup{type $\bL_e=\bL$
	in the augmented coloring model}
	\end{array}
	\Bigg)\,.
	\]
With this notation, we can define (compare with $\Judicious(J;\omega_{\delta U}($ from Definition~\ref{d:constrained.opt.compound.enclosure}) 
	\[
	\Judicious_{\DD}(U;\omega_{\delta U})
	\equiv
	\bigg\{
	\hspace{-3pt}\begin{array}{c}
	\nu\in\Simplex_\textup{aug}(U)
	: \textup{$\nu$ is fully judicious
	with respect to $\DD$,}\\
	\textup{and $\nu_e= (\AUGMENT_{\DD}(\omega))_e$
	for all $e\in\delta U$}
	\end{array}\hspace{-3pt}
	\bigg\}\,,\]
where $\nu_e$ now denotes the marginal law under $\nu$ of $(\sigma_e,\bL_e)$. We also let
	\[
	\Simplex_{\DD}(U;\omega_U)
	\equiv\bigg\{
	\nu\in\Simplex_\textup{aug}(U)
	: \nu_e = (\AUGMENT_{\DD}(\omega))_e
	 \textup{ for all }e\in U
	\bigg\}\,,
	\]
and note that
$\Simplex_{\DD}(U;\omega_U)\subseteq
\Judicious_{\DD}(U;\omega_{\delta U})
\subseteq\Simplex_\textup{aug}(U)$
for any judicious $\omega$.
\end{dfn}

\begin{ppn}[block optimization for non-compound variables]
\label{p:block.update.non.compound}
Let $\bT$ be any variable total type that appears in the processed graph $\GG=(V,F,E)=\proc\GG'$. Assume that $\bT$ is not of compound type. Define
the subgraph
	\[\UU\equiv
	\bigcup_{v:\bT_v=\bT}
	B_1(v;\GG)
	\equiv(V_{\UU},F_{\UU},E_{\UU})
	\subseteq\GG\,,\]
and let $\delta\UU$ denote the leaf edges of $\UU$. Decompose
	\[
	\omega
	\equiv\begin{pmatrix}
	\omega_\textup{in}\\
	\omega_{\delta U}\\
	\omega_\textup{out}
	\end{pmatrix}
	\equiv
	\begin{pmatrix}
	(\omega_{\bL,j}
	: \textup{some edge in 
		$E_{\UU}\setminus \delta\UU$
		has type $(\bL,j)$})
	\\
	(\omega_{\bL,j}
	: \textup{some edge in $\delta\UU$
		has type $(\bL,j)$})\\
	(\omega_{\bL,j}
	: \textup{some edge in $E \setminus E_{\UU}$
		has type $(\bL,j)$})
	\end{pmatrix}\,.
	\]
We also abbreviate
$\omega_U
\equiv(\omega_\textup{in},\omega_{\delta U})$ and
$\omega_\textup{bd}\equiv(\omega_{\delta U},\omega_\textup{out})$. Assume the neighborhood profile $\DD$ of $\GG$
is bounded away from zero in the sense of \eqref{e:pos.frac}. Then, using the notation of Definition~\ref{d:judicious.augmented.alphabet}, we have
\begin{align*}
	&\max_{\acute{\omega}}
	\Bigg\{
	\bm{\Psi}_{\DD,2}(\acute{\omega})
	-\bm{\Psi}_{\DD,2}(\omega)
	: 
	\textup{$\omega$
	 and $\acute{\omega}$ are judicious,
	$\acute{\omega}_\textup{bd}=\omega_\textup{bd}$}
	\Bigg\}\\
	&=c_\textup{in}
	\Bigg( \max_\nu
	\Bigg\{ \Ent(\nu)
		: \nu \in
		\Judicious_{\DD}(U,\omega_{\delta U})\Bigg\}
	- \max_\nu
	\Bigg\{ \Ent(\nu)
		: \nu \in \Simplex_{\DD}(U;\omega_U)\Bigg\}
	\Bigg)\,,
	\end{align*}
where $c_\textup{in}$ depends only on $\DD$ and on $\bT$, and is lower bounded by $c_1$.

\begin{proof} As in the proof of Proposition~\ref{p:update.compound},
we can fix a neighborhood sequence $\cD$ that is consistent with $\DD$, and work under the measure $\P_\cD$. Let $\bt(\bT)$
denote the set of all edge types that can be incident to a variable of type $\bT$. Let $\bL(\bT)$ denote the set of all clause types that can neighbor a variable of type $\bT$:
	\[
	\bL(\bT)
	= \bigg\{ \bL
	: \bt\in\bL \textup{ for some }
		\bt\in \bt(\bT) \bigg\}\,.
	\]
Let $\mathfrak{M}$ denote the random matching of $\delta V$ to $\delta F$ (which defines the graph $\GG$). We let $\mathfrak{m}_1$ denote the restriction of $\mathfrak{M}$ to the half-edges of types belonging to $\bt(\bT)$. We then let $\mathfrak{m}_2$ denote the edges in $\mathfrak{M}\setminus\mathfrak{m}_1$ that are incident to the edges of $\mathfrak{m}_1$. Finally, we let $\mathfrak{m}_3=\mathfrak{M}\setminus(\mathfrak{m}_1\cup\mathfrak{m}_2)$.
The partial matching $(\mathfrak{m}_1,\mathfrak{m}_2)$ defines the subgraph $\UU\subseteq\GG$ consisting of the depth-one neighborhoods of all the variables of type $\bT$. Analogously to \eqref{e:partition.fn.tree.factorize}, we have the decomposition
	\[\ZZ^2(\omega)
	=
	\sum_{\usi_{\UU}}
		\Ind{\usi_{\UU} \sim \omega_U}
	\varphi_{\UU}(\usi_{\UU})
	\Bigg\{
	\sum_{\usi_\textup{out}}
		\Ind{\usi_\textup{out} \sim\omega_\textup{out}}
	\varphi_\textup{out}
	(\usi_{\delta\UU},\usi_\textup{out})
	\Bigg\}\,.\]
Note that $\varphi_{\UU}$ depends on $(\mathfrak{m}_1,\mathfrak{m}_2)$, while
$\varphi_\textup{out}$ depends on 
$(\mathfrak{m}_2,\mathfrak{m}_3)$. 
However, we have
	\[\E_\cD\Bigg(
	\sum_{\usi_\textup{out}}
		\Ind{\usi_\textup{out} \sim\omega_\textup{out}}
	\varphi_\textup{out}
	(\usi_{\delta\UU},\usi_\textup{out})
	\,\Bigg|\,\mathfrak{m}_2\Bigg)
	= X(\omega_{\delta U},\omega_\textup{out})
	\equiv X
	\]
for any $\usi_{\delta\UU}\sim\omega_{\delta U}$. 
Under $\P_\cD$, the matching $\mathfrak{m}_1$
is independent of the pair $(\mathfrak{m}_2,\mathfrak{m}_3)$. The expected value of $\ZZ^2(\omega)$ under $\P_\cD$ remains the same even after conditioning on $\mathfrak{m}_2$, so we have
	\begin{align*}
	\E_\cD\ZZ^2(\omega)
	&=\E_\cD\Big( \ZZ^2(\omega)\,\Big|\,
		\mathfrak{m}_2\Big)
	= X \cdot
	\E_\cD\Bigg( \sum_{\usi_{\UU}}
		\Ind{\usi_{\UU} \sim \omega_U}
	\varphi_{\UU}(\usi_{\UU})
	\,\Bigg|\,\mathfrak{m}_2
	\Bigg)\\
	&=
	X \cdot \sum_{\mathfrak{m}_1}
	\P_\cD(\mathfrak{m}_1)
	\underbrace{
	\sum_{\usi_{\UU}}
		\Ind{\usi_{\UU} \sim \omega_U}
	\varphi_{\UU}(\usi_{\UU})}_{\textup{depends on $(
		\mathfrak{m}_1,\mathfrak{m}_2)$}}\,.
	\end{align*}
Recall that $(\mathfrak{m}_1,\mathfrak{m}_2)$ defines the subgraph $\UU$ consisting of the depth-one neighborhoods of all the variables of type $\bT$; without loss we suppose those variables are labelled 
$\set{v_1,\ldots,v_{n_{\bT}}}$. On the other hand, let $\mathcal{U
}$ denote the graph consisting of $n_{\bT}$ disjoint copies $U_1,\ldots,U_{n_{\bT}}$ of the tree $U$ from Definition~\ref{d:judicious.augmented.alphabet}. A tuple $(\mathfrak{m}_1,\mathfrak{m}_2,\usi_{\UU})$
with $\usi_{\UU}\sim\omega_U$ can be mapped bijectively to an augmented coloring
$(\usi_\mathcal{U},\bL_\mathcal{U})$
of $\mathcal{U}$ with edge empirical measures
$\AUGMENT(\omega_U)$. (The bijection goes as follows: given $(\mathfrak{m}_1,\mathfrak{m}_2)$ there is a unique graph isomorphism $g :\UU\to \mathcal{U}$ which maps $v_i$ to the root of $U_i$ for each $i$, respects edge types $\bt_e$ for all $e\in\delta v_i$ for all $i$, and respects the edge indices $j(e)$ for all $e\in\delta\UU$. The coloring $\usi_{\UU}$ is mapped under $g$ to a coloring on $\UU$. Finally, for each edge $e=(au)$ in $U$, we set $\lit_e$ to be the clause type of $g^{-1}(a)$.) It follows that
	\[
	\sum_{\mathfrak{m}_1}
	\P_\cD(\mathfrak{m}_1)
	\sum_{\usi_{\UU}}
		\Ind{\usi_{\UU} \sim \omega_U}
	\varphi_{\UU}(\usi_{\UU})
	=
	\f1{|\set{\mathfrak{m}_1}|}
	\sum_{(\usi_\mathcal{U},\bL_\mathcal{U})}
	\Ind{
	(\usi_\mathcal{U},\bL_\mathcal{U}) \sim
	\AUGMENT(\omega_U)}
	\varphi_{\mathcal{U}}(
	\usi_\mathcal{U},\bL_\mathcal{U})\,,\]
where $\set{\mathfrak{m}_1}$ denotes the set of all matchings $\mathfrak{m}_1$ that are consistent with $\cD$. It follows analogously to 
\eqref{e:reduce.to.simplex.omega} that
	\[
	\exp\bigg\{
	n \bm{\Psi}_{\DD,2}(\omega)
	\bigg\}
	=\f{n^{O(1)} X }{|\mathfrak{m}_1|}
	\exp\Bigg\{n_{\bT}
	\max_\nu\bigg\{\Ent(\nu):
		\nu\in\Simplex_{\DD}(U;\omega_U)
	\Bigg\}\,.\]
On the other hand, summing over all possibilities of $\omega_\textup{in}$ gives, analogously to \eqref{e:reduce.to.simplex.judicious},
	\[
	\exp\Bigg\{
	n \max\bigg\{
	\bm{\Psi}_{\DD,2}(\acute{\omega})
	: \acute{\omega}_\textup{bd}=\omega_\textup{bd}
	\bigg\}\Bigg\}
	=\f{n^{O(1)} X }{|\mathfrak{m}_1|}
	\exp\Bigg\{n_{\bT}
	\max_\nu\bigg\{\Ent(\nu):
		\nu\in \Judicious_{\DD}(U;\omega_{\delta U})
	\Bigg\}\,.
	\]
The claim follows.
\end{proof}
\end{ppn}
\medskip

\noindent\bemph{Contraction estimates and coordinate descent.} We now give some informal discussion of how Propositions~\ref{p:update.compound} and \ref{p:block.update.non.compound} are used in the proof of the key second moment estimate Proposition~\ref{p:second.moment.judicious}. The details of the proof are rather complicated, and are laid out in Sections~\ref{s:contract}--\ref{s:burnin}. However, some of the high-level ideas are rather simple, and we point them out here. As above, let $U$ denote a compound enclosure, or the depth-one neighborhood of a perfect variable. For the purposes of this discussion, we will express the pair empirical measure $\omega=(\omega_{\bL,j})_{\bL,j}$ as a tuple $y=(y_1,\ldots,y_\ell)$ where each $y_i$ denotes a subset of entries of $\omega$ corresponding to edges in the interior $U^\circ$ of $U$. Thus $\ell$ is the number of distinct choices of $U$, where ``distinct'' here means that the types are distinct. Write $f(y)\equiv\bm{\Psi}_{\DD,2}(\omega)$, and then let $\prody\equiv(\prody_1,\ldots,\prody_\ell)$ denote the $y$ that corresponds to $\prodom$. As explained in 
Lemma~\ref{l:if.neg.def}, the conclusion of Proposition~\ref{p:second.moment.judicious} follows if we can show that in a neighborhood of $\prody$ (corresponding to $\omega\in\bm{I}_0$) the function $f$ is uniquely maximized at $\prody$, with negative-definite Hessian.

With the above notation, we see that Propositions~\ref{p:update.compound} and \ref{p:block.update.non.compound} explain how to optimize $f(y)$ in a single coordinate $y_i$, keeping the other coordinates $y_{-i} \equiv (y_1,\ldots,y_{i-1},y_{i+1},\ldots,y_\ell)$ fixed. Moreover, if $y_{-i}$ is fixed, then the optimization over $y_i$ is a entropy maximization problem (constrained to affine subspaces corresponding to $\Judicious(U;\omega_{\delta U})$ or $\Judicious_{\DD}(U;\omega_{\delta U})$), which means the function is strictly concave in $y_i$ if $y_{-i}$ is fixed. Given $y$, let
 	\beq\label{e:tilde.y.block.update}
	\tilde{y}_i
	\equiv \tilde{y}_i(y_{-i})
	= \argmax_{x_i}
	\bigg\{ f(x_i, y_{-i})\bigg\}\,,
	\eeq
i.e., $\tilde{y}_i$ is the result of optimizing the $i$-th coordinate keeping the others fixed. A key estimate that we will prove is that if $y$ is close enough to $\prody$, then this update brings $\tilde{y}_i$ closer to $\prody_i$:
	\beq\label{e:contraction.simplified}
	\|\tilde{y}_i-\prody_i\|
	\le \f{\|y_i-\prody_i\|}{2}\,.
	\eeq
We have not specified the norm $\|\cdot\|$ for which the above holds --- in fact we obtain contraction estimates for a ``discrepancy'' measure \eqref{e:def.discrepancy.measure.e} which is not quite a norm, but is close enough to serve our purpose. Even ignoring this issue, the bound \eqref{e:contraction.simplified} is a simplification of the precise contraction results that we obtain, which are characterized by 
Propositions~\ref{p:contraction.for.simple.var} and \ref{p:contraction.COMPOUND}.

Assuming the simplified estimate
\eqref{e:contraction.simplified}, it is straightforward to argue that in a small neighborhood of $y=\prody$, the function $f(y)$ is uniquely maximized at $\prody$ --- for any $y\ne\prody$, we can apply the update \eqref{e:tilde.y.block.update} in some coordinate $y_i\ne\prody_i$, and the value of $f$ will increase because $f$ is strictly convex in each individual coordinate $y_i$. Having shown this, we can proceed as follows: for $\tilde{y}_i$ as defined by \eqref{e:tilde.y.block.update}, we have
	\[
	\bigg( f(\prody)-f(y)\bigg)^{1/2}
	\ge \bigg(f(\tilde{y}_i,y_{-i})-f(y)\bigg)^{1/2}
	\gtrsim 
	\|\tilde{y}_i-y_i\|
	\ge \|y_i-\prody_i\|-\|\tilde{y}_i-\prody_i\|
	\stackrel{\eqref{e:contraction.simplified}}{\ge}
	\f{ \|y_i-\prody_i\|}{2}\,,
	\]
which shows that $f$ has negative-definite Hessian at $\prody$. (We remark again that this is a simplified sketch of the actual proof, which is more complicated because we do not have such a simple estimate as \eqref{e:contraction.simplified}. The detailed proof of the negative-definite Hessian condition appears in \S\ref{ss:hess}.) 

Based on the above discussion, we have the following proof strategy. First, show that if we restrict to
$\omega\in\bm{I}_0$, then the maximum of $\bm{\Psi}_{\DD,2}(\omega)$ can only be attained in a small neighborhood of $\omega=\prodom$. We call this step the ``a priori estimate,'' and it is deferred to Section~\ref{s:burnin}. Then show that in the small neighborhood of $\omega=\prodom$
(i.e., $y=\prody$), we have (some version of) 
the contraction estimate \eqref{e:contraction.simplified}. The contraction estimate occupies the majority of Sections~\ref{s:contract} and \ref{s:merge}, and is the most essential technical ingredient of the proof. Combining these steps with the above argument will show that 
if we restrict to $\omega\in\bm{I}_0$, then
$\bm{\Psi}_{\DD,2}(\omega)=f(y)$ is uniquely maximized at $\omega=\prodom$ with negative-definite Hessian. Thus the conditions of Lemma~\ref{l:if.neg.def} are satisfied, and Proposition~\ref{p:second.moment.judicious} follows.

We conclude this subsection with some discussion of the key contraction estimate \eqref{e:contraction.simplified}. Recall that $\tilde{y}_i$ is defined by \eqref{e:tilde.y.block.update}, or equivalently, by the constrained entropy maximization problems
from Propositions~\ref{p:update.compound} and \ref{p:block.update.non.compound}. To be concrete, consider the compound case from Proposition~\ref{p:update.compound}. Roughly speaking, the approach will be to find a weight $\Lambda(\usi)$ (where $\usi$ is a pair coloring of the enclosure $U$)
which is ``Lagrangian'' in the sense that $\langle\log\Lambda,\nu\rangle$ is constant over $\nu\in\Judicious(U;\omega_{\delta U})$, and which turns the constrained maximization problem into an unconstrained one:
	\[
	\nu
	=\argmax\Bigg\{
	\Ent(\nu) : \nu\in\Judicious(U;\omega_{\delta U})
	\Bigg\}
	=\argmax
	\Bigg\{
	\Ent(\nu) + \langle\log\Lambda,\nu\rangle
	: \nu\in\Simplex(U)
	\Bigg\}\,.
	\]
By calculus, the solution is given simply by $\nu(\usi)\cong\Lambda(\usi)$, the $\Lambda$-weighted Gibbs measure on $U$. Thus, if $q$ denotes the \textsc{bp} messages for the $\Lambda$-weighted model, we can easily read off edge marginals of $\nu$ from the usual formula $\nu_e\cong \dq_e\hq_e$. The basic strategy of the proof of \eqref{e:contraction.simplified} is to show that
\bemph{in the pair coloring model, the the \textsc{bp} recursion contracts towards the product message $\prodq\equiv \qbul\otimes\qbul$, provided we start close enough to $\prodq$.} As a consequence, if the boundary condition $\omega_{\delta U}$ is close to product, we will be able to construct weights $\Lambda$ that are close to $\Lmstar\otimes\Lmstar$ (for $\Lmstar$ from Corollary~\ref{c:clause.bp.weights}). Then, for $e$ in the interior of $U$, the messages $\dq_e$ and $\hq_e$ will be closer to product than the messages on $\delta U$, so the discrepandy between $\omega_e\cong\proddq_e\prodhq_e$ and $\prodom_e$ will be closer than the discrepancies on $\delta U$. This gives the rough idea of the proof of \eqref{e:contraction.simplified}, and we leave the details to Sections~\ref{s:contract}--\ref{s:merge}.

\subsection{Weights for coherent clauses}
\label{ss:coherence.weights}

In this subsection we prove a series of claims concerning weakly and strictly coherent clauses (Definition~\ref{d:coherence}). The key implications are that $\SQpi$ is always strictly coherent, so $\starpi$ will also be strictly coherent if it is ``close enough'' to $\SQpi$. When $\starpi$ is strictly coherent, we will show that clauses can be reweighted such that \textsc{bp} equations hold. First, however, we make a remark on the possibilities of $\supp\starpi_e$:

\begin{rmk} The purpose of this remark is to emphasize that the canonical marginal $\starpi_e$ can in general be supported a strict subset of $\set{\RYGB}$. In fact, although $\supp\starpi_e$ always contains the \SPIN{green} spin, $(\supp\starpi_e)\setminus\grn$ can be any subset of $\set{\red,\yel,\blu}$. 
For example, suppose in a $\ksat$ instance $\GG=(V,F,E)$ that a clause $a\in F$ has among its neighbors two leaf variables $u\ne v$. Then both $u$ and $v$ must always be \SPIN{free}, so
	\[
	\supp\starpi_{au}=\supp\starpi_{av}=\set{\grn}\,.
	\]
The clause $a$ can never be forcing, so for any other variable $w\in\pd a\setminus\set{u,v}$ we must have
$\red\notin\supp\starpi_{aw}$. However, such $w$ can be forced by other clauses $b\in\pd w\setminus a$ --- indeed, $\supp\starpi_{aw}$ contains $\set{\blu}$ if and only if $w$ may be forced by some $b\in\pd w(\plus\lit_{aw})\setminus a$; and it contains $\set{\yel}$ if and only if $w$ may be forced by some $b\in\pd w(\minus\lit_{aw})$. Therefore, in this scenario, $\supp\starpi_{aw}\setminus\grn$ can be any subset of $\set{\yel,\blu}$. Similarly, on edges $e\in E$ where forcing can occur (meaning $\red\in\supp\starpi_e$), it is easy to construct examples to see that $\supp\starpi_e\setminus\set{\grn,\red}$ can be 
 any subset of $\set{\yel,\blu}$.
\end{rmk}

We now turn to the main results of the subsection.

\begin{lem}\label{l:coher.feasibility}
For a clause $a\in F$, a tuple $\pi=(\pi_e)_{e\in\delta a}$
is weakly coherent if and only if there exists a probability measure $\nu_{\delta a}$ over valid colorings of $\delta a$ with edge marginals $\pi$: that is,
	\[\pi_e(\sigma)
	= \sum_{\usi_{\delta a}}
	\Ind{\sigma_e=\sigma}
	\nu_{\delta a}(\usi_{\delta a})\]
for all $e\in\delta a$ and all $\sigma\in\set{\red,\yel,\grn,\blu}$.

\begin{proof}
Recall that a valid coloring of $\delta a$ is a tuple $\usi\equiv \usi_{\delta a}\in\set{\red,\yel,\grn,\blu}^{\delta a}$ for which $\hat{\varphi}_a(\usi)=1$, as defined by \eqref{e:indicator.of.valid.clause.coloring} with the grouping $\cya=\set{\grn,\blu}$. If $|\delta a|=k$, then a valid coloring $\usi\in\set{\red,\yel,\grn,\blu}^k$ falls in one of two cases: (i) it has exactly one $\red$ entry with the remaining entries $\yel$; or (ii) all entries are in $\set{\cya,\yel}$ and at least two entries are $\cya$. If $\nu$ is a probability measure on valid colorings $\usi\in\set{\red,\yel,\grn,\blu}^k$, let
$(R_i,Y_i,G_i,B_i)$ be the associated marginal probabilities of $\red,\yel,\grn,\blu$ on each coordinate $i \in \set{1,\ldots,k}$, so for instance
	\[
	R_i \equiv \nu(\sigma_i=\red)
		= \sum_{\usi\in\set{\red,\yel,\grn,\blu}^k}
		\Ind{\sigma_i=\red}
		\nu(\usi)\,.
	\]
Thus, for each $i\in[k]\equiv\set{1,\ldots,k}$, the quantities $R_i,Y_i,G_i,B_i$ are nonnegative and sum to one. This lemma is purely a claim about the feasible polytope of edge marginals coming from probability measures over valid colorings: the assertion is that for any $k\ge3$, the following are equivalent:
\begin{enumerate}[(a)]
\item \label{i:polytope.linconds} The tuple $(R_i,Y_i,G_i,B_i)_{1\le i\le k}$ satisfies the following constraints: for each $i\in[k]$, the quantities $R_i,Y_i,G_i,B_i$ are nonnegative and sum to one. Moreover, with $R\equiv R_1+\ldots+R_k$, we have $Y_i\ge R-R_i$ for each $i$. Lastly, writing $C_i \equiv G_i + B_i$, and $C\equiv C_1+\ldots+C_k$, we have $C \ge 2(1-R)$.
\item \label{i:polytope.probcond} The marginals $(R_i,Y_i,G_i,B_i)_{1\le i\le k}$ can be realized by a probability measure on valid colorings $\usi\in\set{\red,\yel,\grn,\blu}^k$.
\end{enumerate} 
Let us first verify the straightforward direction, that
\eqref{i:polytope.probcond} implies \eqref{i:polytope.linconds}. If $\nu$ is a probability measure on valid colorings $\usi\in\set{\red,\yel,\grn,\blu}^k$, then for each coordinate $i$ it is clear that the marginal probabilities $R_i,Y_i,G_i,B_i$ are nonnegative and sum to one. Next, since the color $\red$ can only occur together with $k-1$ entries $\yel$, we have
$R_i = \nu(\yel^{i-1}\red\yel^{k-i})$ for each $i\in[k]$. This implies 
	\[
	Y_i
	\ge \sum_{j\in[k]\setminus i}
		\nu(\yel^{j-1}\red\yel^{k-j})
	= \sum_{j\in[k]\setminus i} R_j = R-R_i\,.
	\]
On the other hand, since any valid $\usi$ with no $\red$ entry must be in $\set{\yel,\cya}^k$ with at least two $\cya$ entries, we have
	\beq\label{e:total.mass.cyan.weak.ineq.proof}
	\sum_{i=1}^k C_i
	=
	\sum_{\usi\in \set{\yel,\cya}^k}
	\nu(\usi)
	\sum_{i=1}^k \Ind{\sigma_i=\cya}
	\ge 2
	\sum_{\usi\in \set{\yel,\cya}^k}
	\nu(\usi)
	= 2(1-R)\,.
	\eeq
This proves that \eqref{i:polytope.probcond} implies \eqref{i:polytope.linconds}.

In the converse direction, given $(R_i,Y_i,G_i,B_i)_{1\le i\le k}$ satisfying the conditions of \eqref{i:polytope.linconds}, we now describe one particular construction of a measure $\nu$ on valid colorings that realizes these marginals.\footnote{The total number of valid colorings $\usi\in\set{\RYGB}^k$ is $3^k-k-1$, so $\nu$ ranges over a $(3^k-k-2)$-dimensional simplex. On the other hand, since $R_i+Y_i+G_i+B_i=1$ for all $i$, the tuple $(R_i,Y_i,G_i,B_i)_{1\le i\le k}$ is restricted to an affine space of dimension $3k$. Thus, simply by comparing dimensions, we would expect that any generic feasible tuple $(R_i,Y_i,G_i,B_i)_{1\le i\le k}$ can be realized by an uncountable (and convex) family of measures $\nu$.} First, it is clear that we must set $\nu(\yel^{i-1}\red\yel^{k-i})=R_i$ for each $i\in[k]$. This step assigns $R$ of the probability mass of $\nu$, where the conditions in \eqref{i:polytope.linconds} ensure that $0\le R \le R_i + Y_i \le 1$. If $R=1$ then we are done, so assume otherwise, meaning there is a positive amount $1-R$ of mass left over that remains to be assigned. On each $i$ let $y_i$ denote the marginal weight of $\yel$ left over after the first step:
	\[
	y_i = Y_i - \sum_{j\in[k]\setminus i} R_j
	= Y_i + R_i - R\,,
	\]
which is nonnegative by \eqref{i:polytope.linconds}. The total mass left over on each edge $i$ is $C_i+y_i=1-R$. Let $\bm{C}_1,\ldots,\bm{C}_k$ be consecutive intervals of length $C_i$,
	\[\bm{C}_i=\bigg[ \sum_{j=1}^{i-1} C_j,
		\sum_{j=1}^i C_j \bigg)\,.\]
These intervals give a partition of $[0,C)$, which in turn is a subset of $[0,k(1-R))$. Let $J\equiv[0,1-R)$, and let $p$ be the mapping from $\mathbb{R}$ to $J$ which sends each real number to its representative modulo $(1-R)\mathbb{Z}$ in $J$. Let $J_{i,\cya}$ be the image of $\bm{C}_i$ under $p$ (i.e., the consecutive intervals get ``wrapped around''), and note that the restriction of $p$ to $\bm{C}_i$ is one-to-one since $|\bm{C}_i|=C_i\le 1-R$. Let $J_{i,\grn}$ be any subset of $J_{i,\cya}$ with Lebesgue measure $G_i$, and let $J_{i,\blu} \equiv J_{i,\cya} \setminus J_{i,\grn}$. Let $J_{i,\yel}\equiv J\setminus J_{i,\cya}$. For $t\in J$ let $\sigma_i(t)$ be the unique element $\sigma\in\set{\yel,\grn,\blu}$ such that $t\in J_{i,\sigma}$. Let $\usi(t)\equiv(\sigma_i(t))_{i\in[k]}$. For $\usi\in\set{\yel,\grn,\blu}^k$, let
	\[
	\nu(\usi)
	\equiv
	\textup{Leb}\bigg(
	\Big\{t\in J : \usi(t) = \usi \Big\}
	\bigg)
	= \textup{Leb}
	\bigg( \bigcap_{i=1}^k J_{i,\sigma_i}
	\bigg)\,,
	\]
where $\textup{Leb}$ denotes Lebesgue measure. This completes the definition of $\nu$. It is immediate from the construction that $\nu$ has marginals $(R_i,Y_i,G_i,B_i)_{1\le i\le k}$, and all configurations in its support are of form $\yel^{i-1}\red\yel^{k-i}$, or lie in $\set{\yel,\grn,\blu}^k$. It remains to check that all colorings of the latter case are valid, which is to say that $\usi(t)$ has at least two $\cya$ entries for every $t\in J$. By definition, $\sigma_i(t)=\cya$ if and only if $t\in J_{i,\cya} = p(\bm{C}_i)$, which occurs if and only if $t + \ell(1-R) \in \bm{C}_i$ for some integer $\ell$. It follows that the number of $\cya$ entries in $\usi(t)$ is
	\[
	\sum_{i=1}^k\sum_{\ell\in\mathbb{Z}}
	\mathbf{1}\bigg\{t + \ell(1-R) \in \bm{C}_i \bigg\}
	= \sum_{\ell\in\mathbb{Z}}
	\mathbf{1}\bigg\{
	t + \ell(1-R) \in [0,C) \bigg\}\ge2\,,
	\]
where the last inequality holds for all $t\in J\equiv [0,1-R)$ using the final condition $C\ge2(1-R)$ from \eqref{i:polytope.linconds}. This shows that $\nu$ is supported on valid colorings, thereby concluding our proof that \eqref{i:polytope.linconds} implies \eqref{i:polytope.probcond}.\end{proof}\end{lem}

\begin{lem}\label{l:clause.based.marginals.cohere} For an acyclic clause $a\in F$, the clause-based marginals $\SQpi=(\SQpi_e)_{e\in\delta a}$ are strictly coherent for all $r\ge2$.

\begin{proof}
Recall from Remark~\ref{r:first.rmk.coherence}
that the probability measure
	\[
	\nu_{\delta a}(\usi_{\delta a})
	=\f1{\bm{\hat{z}}_a}
	\hat{\varphi}_a(\usi_{\delta a})
	\prod_{e\in\delta a}
	\SQdq(\sigma_e)
	\]
has marginals $\SQpi$. It therefore follows immediately from Lemma~\ref{l:coher.feasibility} that $\SQpi$ is weakly coherent. Recall from \eqref{e:defn.canonical.edge.marginal} that $\SQpi_e(\sigma)$ is proportional $\SQdq_e(\sigma)\SQhq_e(\sigma)$. It follows from the correspondence \eqref{e:color.recursions.eta} that 
$\SQdq_e(\grn)$ and $\SQhq_e(\grn)$ are positive for all $r\ge2$,
so $\SQpi_e(\grn)$ is positive on any edge $e$.
Now suppose $\SQpi_e(\yel)$ is positive: this implies that 
$\SQdq_e(\yel)$ and $\SQhq_e(\yel)$ must both be positive, therefore
	\[
	\SQpi_e(\yel)
	-\sum_{e'\in\delta a\setminus e}
	\SQpi_{e'}(\red)
	\ge
	\nu_{\delta a}\bigg(
	\textup{$\sigma_e=\yel$, and 
	$\sigma_{e'}=\grn$ for all 
	$e'\in\delta a\setminus e$}
	\bigg)
	= \f1{\bm{\hat{z}}_a}
	\SQdq_e(\yel)
	\prod_{e'\in\delta a\setminus e}
	\SQdq_{e'}(\grn)>0\,,
	\]
which shows that $\SQpi$ satisfies \eqref{e:cohere.y.for.r} with strict inequality whenever $\SQpi_e(\yel)$.
Next, recalling the proof of Lemma~\ref{l:coher.feasibility},
we see that
\eqref{e:total.mass.cyan.weak.ineq.proof}
holds with equality if and only if $\nu_{\delta a}$
gives zero mass to configurations $\usi_{\delta a}$ with more than two $\cya$ entries. This does not happen in the current situation, since
	\[
	\nu_{\delta a}(\sigma_e=\grn
		\textup{ for all }e\in\delta a)
	= \f1{\bm{\hat{z}}_a}
	\prod_{e\in\delta a}
	\SQdq_e(\grn)>0\,.
	\]
This proves that $\SQpi$ always satisfies
\eqref{e:cohere.enough.cyan} with strict inequality. The claim follows.
\end{proof}
\end{lem}

\begin{lem}\label{l:clause.strict.coherence}
For a clause $a\in F$, if the tuple $\pi\equiv (\pi_e)_{e\in\delta a}$ is strictly coherent,
then there exists a measure $\bar{\nu}_{\delta a}$ with edge marginals $\pi_e$ for all $e\in\delta a$, and 
$\supp\bar{\nu}_{\delta a}=\CCOLS_{\delta a}$ where
	\[\CCOLS_{\delta a}
	\equiv\bigg\{
	\textup{valid colorings }\usi_{\delta a}
	\,\bigg|\, \sigma_e\in\supp\pi_e
	\textup{ for all }e\in\delta a
	\bigg\}\,.
	\]

\begin{proof} It follows from Lemma~\ref{l:coher.feasibility} that there is a probability measure $\nu_{\delta a}$ with edge marginals $\pi\equiv(\pi_e)_{e\in\delta a}$, which of course implies $\supp \nu_{\delta a}\subseteq\CCOLS_{\delta a}$. If $\CCOLS_{\delta a}$ is a singleton then we must have $\supp \nu_{\delta a}\subseteq\CCOLS_{\delta a}$, in which case the assertion follows immediately by taking $\bar{\nu}_{\delta a}=\nu_{\delta a}$. We thus assume for the remainder of the proof that $\CCOLS_{\delta a}$ has at least two elements.

Now let $\MARG_{\delta a}$ denote the space of \emph{all} edge marginals $\pi\equiv (\pi_e)_{e\in\delta a}$ which can arise from probability measures over $\CCOLS_{\delta a}$. It follows from Lemma~\ref{l:coher.feasibility} that $\MARG_{\delta a}$ is characterized by the conditions \eqref{e:cohere.y.for.r} and \eqref{e:cohere.enough.cyan}, together with the constraints imposed by the supports of the $\pi_e$. To describe this more explicitly, let
	\[
	R(\pi)
	\equiv\sum_{e\in\delta a}\pi_e(\red)\,,\quad
	C(\pi)
	\equiv\sum_{e\in\delta a}\pi_e(\cya)\,.
	\]
We then divide the scenarios into three cases, according to the number of edges $e\in\delta a$ with $\pi_e(\red)>0$:
\begin{enumerate}[(i)]
\item If $|\set{e\in\delta a : \pi_e(\red)>0}|\ge2$, then we must have $\pi_e(\yel)>0$ for all $e\in\delta a$, and
	\[
	\MARG_{\delta a}
	\equiv\left\{
	\pi
	\,\left|\,
	\begin{array}{l}
	\pi_e(\yel) \ge R(\pi) - \pi_e(\red)
	\textup{ for all } e\in\delta a\,, \\
	C(\pi) \ge 2[1-R(\pi)]\,,\\
	\supp\pi_e\subseteq\supp\pi_e
	\textup{ for all } e\in\delta a
	\end{array}\right.
	\right\}\,.
	\]

\item If there is a unique edge $e'\in\delta a$ with $\pi_{e'}(\red)>0$, then
	\[
	\MARG_{\delta a}
	\equiv\left\{
	\pi
	\,\left|\,
	\begin{array}{l} 
	\pi_e(\yel) \ge R(\pi)= \pi_{e'}(\red)
	\textup{ for all }
	e\in\delta a \setminus \set{e'} \,, \\
	C(\pi) \ge 2[1-R(\pi)]
		=2[1-\pi_{e'}(\red)]
		\,,\\
	\supp\pi_e\subseteq\supp\pi_e
	\textup{ for all } e\in\delta a
	\end{array}\right.
	\right\}\,.
	\]

\item If $|\set{e\in\delta a : \pi_e(\red)>0}|=0$, then
	\[
	\MARG_{\delta a}
	\equiv\left\{
	\pi
	\,\left|\,
	\begin{array}{l}
	C(\pi) \ge 2[1-R(\pi)] = 2\,,\\
	\supp\pi_e\subseteq\supp\pi_e
	\textup{ for all } e\in\delta a
	\end{array}\right.
	\right\}\,.
	\]
\end{enumerate}
In all cases, the definition of strict coherence ensures that $\pi\in\RelInt\MARG_{\delta a}$, the relative interior of $\MARG_{\delta a}$. Now let $u_{\delta a}$ be the uniform measure on $\CCOLS_{\delta a}$, and let $\unifpi$ be the marginals resulting from $u_{\delta a}$. Since $\unifpi\in\MARG_{\delta a}$ and $\pi\in\RelInt\MARG_{\delta a}$, it must hold for sufficiently small $\ep$ that
	\[
	\pi^\ep\equiv
	\f{\pi-\ep\unifpi}{1-\ep}
	\in \MARG_{\delta a}\,.
	\]
By Lemma~\ref{l:coher.feasibility} there is a probability measure $\nu^\ep$ on $\CCOLS_{\delta a}$ with edge marginals $\pi^\ep$. Then $\bar{\nu}_{\delta a}
\equiv (1-\ep)\nu^\ep + \ep u_{\delta a}$
is fully supported on $\CCOLS_{\delta a}$, with edge marginals
	\[
	(1-\ep)\pi^\ep + \ep \pi^\textup{unif}
	=(1-\ep)
	\f{\pi-\ep\unifpi}{1-\ep}
	 + \ep \pi^\textup{unif}
	= \pi\,.
	\]
This concludes the proof.
\end{proof}
\end{lem}

\begin{cor}\label{c:clause.strict.coherence.lagrange}
For a clause $a\in F$, if the tuple $\pi\equiv(\pi_e)_{e\in\delta a}$ is strictly coherent and further satisfies $\pi_e(\grn)>0$ for all $e\in\delta a$, then there exists a set of edge weights $w_e : \set{\RYGB} \to [0,\infty)$ such that the $w$-weighted measure on valid colorings of $\delta a$ has edge marginals consistent with $\pi$. Moreover, if we fix $w_e(\grn)\equiv1$ for all $e\in\delta a$ and require that $\supp w_e \subseteq \supp\pi_e$, then $w$ is unique.

\begin{proof} We will obtain the weights under the assumption that $w_e(\grn)\equiv1$, and $w_e(\sigma)=0$ if $\sigma\notin\supp\starpi_e$; this will imply the result. We can thus restrict our attention to $\PROB_{\delta a}$, the space of all probability measures over $\CCOLS_{\delta a}$. In the case that $\CCOLS_{\delta a}$ consists of a single element, that element must be the all-$\grn$ coloring. Then $\PROB_{\delta a}$ also consists of a single element, which is the probability measure fully supported on the all-$\grn$ coloring, and setting $w_e(\sigma)\equiv\Ind{\sigma=\grn}$ gives the unique weights satisfying the stated conditions.

We therefore assume for the remainder of the proof that $\CCOLS_{\delta a}$ has at least two elements. 
Recall that $\MARG_{\delta a}$ denotes the space of all edge marginals $\pi\equiv (\pi_e)_{e\in\delta a}$ which can arise from probability measures over $\CCOLS_{\delta a}$. We now claim that the space $\MARG_{\delta a}$ has dimension
	\[
	\dim\MARG_{\delta a}
	= D_a =
	\sum_{e\in\delta a}
	\bigg\{|\supp\starpi_e|-1\bigg\}
	\,.\]
Indeed, it is clear that $\dim\MARG_{\delta a} \le D_a$. Equality follows from the strict coherence assumption: let $\acute{\pi}$ be any tuple of measures $(\acute{\pi}_e)_{e\in\delta a}$, subject \emph{only} to the constraints that $\acute{\pi}_e$ and $\pi_e$ have the same support for all $e\in\delta a$, and moreover that $\|\pi-\acute{\pi}\|_\infty\le\delta$. The set of such perturbations $\acute{\pi}$ has dimension $D_a$. Since $\pi$ is strictly coherent, we have that $\acute{\pi}$ is also coherent for small enough $\delta$. It then follows from Lemma~\ref{l:coher.feasibility} that there is a probability measure $\acute{\nu}$ on $\CCOLS_{\delta a}$ with marginal $\acute{\pi}$, which means that $\acute{\pi}\in\MARG_{\delta a}$. Since $\acute{\pi}$ goes over a $D_a$-dimensional space, this proves $\dim\MARG_{\delta a}=D_a$.

Let $A$ be the linear mapping that takes $\nu\in\PROB_{\delta a}$ to its edge marginals $\pi_e(\sigma)$, for $e\in\delta a$ and $\sigma\in\supp\pi_e\setminus\set{\grn}$. We can regard $A$ as a matrix with row indices $\set{(e,\sigma') : e\in\delta a, \sigma'\in\supp\pi_e\setminus\set{\grn}}$, and column indices $\CCOLS_{\delta a}$. The entry of $A$ with row index $(e,\sigma')$ and column index $\usi_{\delta a}$ is the indicator that $\sigma_e = \sigma'$. The image of $\PROB_{\delta a}$ under $A$ is in one-to-one correspondence with $\MARG_{\delta a}$, whose dimension $D_a$ exactly equals the number of rows of $A$. This shows that $A$ is of full rank. Moreover, let $\MSR_{\delta a}$ be the space of all nonnegative \emph{measures} (not necessarily normalized to unit mass) over $\CCOLS_{\delta a}$. The image of $\MSR_{\delta a}$ under the matrix
	\beq\label{e:matrix.A.prime.clause}
	A'\equiv
	\begin{pmatrix}
	A \\ \mathbf{1}^\st
	\end{pmatrix}\eeq
is the space of all nonnegative scalar multiples of elements of $\MARG_{\delta a}$, and has dimension $D_a+1$. It follows that the matrix $A'$ is also of full rank.

Now consider the constrained entropy maximization problem
	\beq\label{e:primal}
	\OPT\equiv\max\bigg\{
	-\sum_{\usi_{\delta a}\in \CCOLS_{\delta a}}
	\nu(\usi_{\delta a})
	\log
	\nu(\usi_{\delta a})
	\,\bigg|\, 
	\nu \in [0,\infty)^{\CCOLS_{\delta a}}
	\textup{ with }
	A\nu = \pi
	\textup{ and }
	\langle \mathbf{1},\nu\rangle = 1
	\bigg\}\,,\eeq
as well as its Lagrange dual: for $(\mathbf{s},\bar{s})$
where $\mathbf{s}\equiv(s_{e,\sigma'}:e\in\delta a, \sigma'\in\supp\pi_e\setminus\set{\grn})\in\mathbb{R}^{D_a}$ and $\bar{s}\in\mathbb{R}$, 
	\beq\label{e:lagrange.dual}
	g(\mathbf{s},\bar{s})
	= \max\bigg\{
	-\sum_{\usi_{\delta a}\in \CCOLS_{\delta a}}
	\nu(\usi_{\delta a})
	\log
	\nu(\usi_{\delta a})+
	\langle \mathbf{s},
		A\nu-\pi
		\rangle
	+ \bar{s}( \langle\mathbf{1},\nu\rangle -1)
	\,\bigg|\, 
	\nu \in [0,\infty)^{\CCOLS_{\delta a}}\bigg\}\,.\eeq
By the strict coherence condition together with Lemma~\ref{l:clause.strict.coherence}, there exists $\bar{\nu}_{\delta a} \in (0,\infty)^{\CCOLS_{\delta a}}$ satisfying $A\bar{\nu}_{\delta a} = \pi$. This implies that the constraints of the (primal) optimization problem
\eqref{e:primal} are feasible, so $\OPT$ is well-defined. Since the domain $(0,\infty)^{\CCOLS_{\delta a}}$ is an open set, it follows by a classical result (see e.g.\ \cite[Thm.~28.2]{MR1451876}) that this problem enjoys strong Lagrange duality. This means that there exists $(\mathbf{S},\bar{S})$ (\emph{a~priori}, not necessarily unique) such that
	\beq\label{e:lagrange.dual.problem}
	\OPT= g(\mathbf{S},\bar{S})
	= \min_{\mathbf{s},\bar{s}}
	\bigg\{
	g(\mathbf{s},\bar{s})
	: \mathbf{s} \in\mathbb{R}^{D_a},
		\bar{s} \in \mathbb{R}
	\bigg\}\,,\eeq
that is to say, the Lagrange dual problem (the right-hand side of \eqref{e:lagrange.dual.problem}) achieves the same value as the constrained (primal) problem \eqref{e:primal}. Now, for any fixed $(\mathbf{s},\bar{s})$, the value \eqref{e:lagrange.dual} of $g(\mathbf{s},\bar{s})$ is given by optimizing a strictly concave objective over all $\nu\in[0,\infty)^{\CCOLS_{\delta a}}$, so it is attained by a unique maximizer $\nu[\mathbf{s},\bar{s}]$. Likewise, in the primal problem \eqref{e:primal}, since the entropy is strictly concave, the value of $\OPT$ must be uniquely attained by some measure $\optnu$. In the definition~\ref{e:lagrange.dual} of $g(\mathbf{s},\bar{s})$, by substituting $\optnu$ into the objective, we see that $g(\mathbf{s},\bar{s})\ge \OPT$. If equality holds, then 
$\nu[\mathbf{s},\bar{s}]$ must equal 
$\optnu$. This shows that
$\nu[\mathbf{S},\bar{S}]$ solves \eqref{e:primal}.

We next argue that the pair $(\mathbf{S},\bar{S})$ is unique. To this end, note that for any $(\mathbf{s},\bar{s})$, the maximizer $\nu[\mathbf{s},\bar{s}]$ cannot occur on the boundary of $\nu\in[0,\infty)^{\CCOLS_{\delta a}}$, because
	\[
	\lim_{\nu(\usi_{\delta a})\downarrow0}
	\f{-\nu(\usi_{\delta a})
	\log\nu (\usi_{\delta a})}
		{\nu(\usi_{\delta a})} = \infty\,.
	\]
Therefore $\nu[\mathbf{s},\bar{s}]$ is the unique stationary point of the objective in $(0,\infty)^{\CCOLS_{\delta a}}$, and we can solve for it by differentiation with respect to $\nu$. It gives, for all $\usi_{\delta a}\in\CCOLS_{\delta a}$,
	\[
	\Big(\nu[\mathbf{s},\bar{s}]\Big)(\usi_{\delta a})
	= \exp\bigg\{
	(A^\st\mathbf{s})_{\usi_{\delta a}}
	+\bar{s}-1\bigg\}
	= \f{\exp(\bar{s})}{e}
	\prod_{e\in\delta a} \exp(s_{e,\sigma_e})\,,
	\]
where for the final expression to make sense we define $s_{e,\grn}=0$. The corresponding objective value is
	\[
	g(\mathbf{s},\bar{s})
	=\sum_{\usi_{\delta a}\in\CCOLS_{\delta a}}
		\exp\bigg\{
	(A^\st\mathbf{s})_{\usi_{\delta a}}
		+\bar{s}-1\bigg\}
	 -\langle \mathbf{s},\pi\rangle-\bar{s}
	=\sum_{\usi_{\delta a}\in\CCOLS_{\delta a}}
	\exp\bigg\{
	(A')^\st \begin{pmatrix}
		\mathbf{s} \\ \bar{s}
		\end{pmatrix}-1
	\bigg\}
	-\langle \mathbf{s},\pi\rangle-\bar{s} \,,
	\]
which is clearly strictly convex as a function of 
$(A')^\st(\mathbf{s},\bar{s})$. Since $(A')^\st$
is a $|\set{\CCOLS_{\delta a}}|\times (D_a+1)$
matrix of rank $D_a+1$, it defines an injective mapping,
so we conclude that $g$ is also strictly convex as a function of the Lagrange variables $(\mathbf{s},\bar{s})$.
 This shows that the Lagrange dual problem
\eqref{e:lagrange.dual.problem} has a unique minimizer $(\mathbf{S},\bar{S})$.

In summary, we have shown that
$\nu[\mathbf{S},\bar{S}]=\optnu$, the unique solution of the primal problem \eqref{e:primal}. Moreover, for any $(\mathbf{s},\bar{s})\ne (\mathbf{S},\bar{S})$ the measure
$\nu[\mathbf{s},\bar{s}]$ does not satisfy the constraints of \eqref{e:primal}: supposing that it did, the fact that $(\mathbf{S},\bar{S})$ is the unique minimizer of $g$ would give
	\[
	g(\mathbf{S},\bar{S})
	<g(\mathbf{s},\bar{s})
	= -\sum_{\usi_{\delta a}\in \CCOLS_{\delta a}}
	\Bigg\{
	\Big(\nu[\mathbf{s},\bar{s}]\Big)(\usi_{\delta a})
	\Bigg\}
	\log\Bigg\{
	\Big(\nu[\mathbf{s},\bar{s}]\Big)(\usi_{\delta a})
	\Bigg\}
	\,,
	\]
which gives a contradiction since $\nu[\mathbf{S},\bar{S}]=\optnu$. The claimed result follows by setting $w_e(\sigma)=\exp(s_{e,\sigma})$ for all $e\in\delta a$, $\sigma\in\supp\pi_e$.
\end{proof}
\end{cor}

As a byproduct of the considerations in the above proof, we can also compute the dimensions claimed in the proofs of 
Corollary~\ref{c:first.moment.exponent} and Lemma~\ref{l:if.neg.def}:

\begin{lem}[{proof of \eqref{e:dim.nu.given.omega}}]\label{l:dimension.count}
In the setting of Corollary~\ref{c:first.moment.exponent} we have $d_1(\DD)=\bm{s}(\DD)-\wp(\DD)$. 

\begin{proof}
Recall from
\eqref{e:dim.nu.given.omega}
that $d_1(\DD)$ counts the dimension of the space $\set{\nu:\nu\sim\omstar}$.
Recall from Definition~\ref{d:pi.omega.nu.notation} that $\nu\equiv(\dbh,\hbh)$ is the empirical measure of vertex colorings.
Since $\omstar$ is fixed, we can treat each vertex type separately:
if $d_1(\bT)$ is the dimension of the space of variable empirical measures $\dbh_{\bT}$ that are consistent with $\omstar$,
and $d_1(\bL)$ is the dimension of the space of clause empirical measures $\hbh_{\bL}$ that are consistent with $\omstar$, then
	\beq\label{e:dimension.one.decompose.into.types}
	d_1(\DD)=\sum_{\bL}d_1(\bL)+\sum_{\bT} d_1(\bT)\,.\eeq
For any given clause type $\bL$, let $A'$ be the matrix from \eqref{e:matrix.A.prime.clause}, which we showed to be of full rank
in the proof of Corollary~\ref{c:clause.strict.coherence.lagrange}.
The space of $\hbh_{\bL}$ that are consistent with $\omstar$ is an affine shift of the kernel of $A'$ (intersected with the simplex of probability measures). It follows that
	\[
	d_1(\bL)
	=\dim\ker A'
	=s_{\bL} - \rk A'
	=s_{\bL} -1 - \sum_j (s_{\bL(j)}-1)\,.
	\]
To make the analogous calculation for a variable type $\bT$, we argue in a few steps:
\begin{enumerate}[a.]
\item First we need the analogue of the (strict) coherence condition for a collection of empirical measures around a variable $v$
 (rather than around a clause, as in Definition~\ref{d:coherence}). We shall say that $(\pi_e)_{e\in\delta v}$ is \bemph{weakly coherent} if there exists a probability measure $\mu$ on $\set{\minus,\plus,\free}$ such that
	\[\bigg(\pi_e(\pur),\pi_e(\yel),\pi_e(\grn)\bigg)
	= \bigg(\mu(\plus\lit_e),\mu(\minus\lit_e),\mu(\free)\bigg)\]
for all $e\in\delta v$, and for both $\lit\in\set{\minus,\plus}$ we have
	\beq\label{e:variable.coherence.red}
	\sum_{e\in\delta v(\lit)} \pi_e(\red) \ge \mu(\lit)\,.
	\eeq
We say that $(\pi_e)_{e\in\delta v}$ is \bemph{strictly coherent} if $\mu(\free)$ is strictly positive and \eqref{e:variable.coherence.red}
holds with strict inequality for both $\lit\in\set{\minus,\plus}$. By a very similar argument as in Lemma~\ref{l:clause.based.marginals.cohere}, the canonical measures $(\starpi_e)_{e\in\delta v}$ are strictly coherent. 

\item By a similar argument as in Lemma~\ref{l:clause.strict.coherence}, if $(\pi_e)_{e\in\delta v}$ is weakly coherent then there exists a measure $\dbh$ on variable colorings $\usi_{\delta v}$ with edge marginals $\pi_e$ for all $e\in\delta v$. Indeed, clearly, $\dbh$ should give weight $\mu(\free)$ to the all-\SPIN{green} coloring. For $\lit\in\set{\minus,\plus}$, the measure $\dbh$ must give weight $\mu(\lit)$ to the colorings that are \SPIN{purple} on all of $\delta v(\plus\lit)$ and \SPIN{yellow} on all of $\delta v(\minus\lit)$. The only non-trivial step is to separate \SPIN{purple} into \SPIN{red} and \SPIN{blue}, and this can be done by repeating the ``wrapping around consecutive intervals'' construction from Lemma~\ref{l:coher.feasibility}.

\item Similarly as in Corollary~\ref{c:clause.strict.coherence.lagrange}, we define a matrix (the variable analogue of \eqref{e:matrix.A.prime.clause})
	\beq\label{e:matrix.B.prime.variable}
	B' = \begin{pmatrix} B \\ \mathbf{1}^\st\end{pmatrix}
	\eeq
which encodes the edge marginal constraints. The columns of the matrix are indexed by the valid colorings $\usi_{\delta v}$ around the variable. 
For each frozen spin $\lit\in\set{\minus,\plus}$ that the variable can take, the matrix $B$ also has a row which is the indicator on the colorings that are \SPIN{purple} on $\delta v(\plus\lit)$. 
For each edge $e\in\delta v$ where both colors $\set{\red,\blu}$ can appear, the matrix $B$ has a row
with entries $\Ind{\sigma_e=\red}$. Since $(\starpi_e)_{e\in\delta v}$ is strictly coherent, we can repeat the argument of 
Corollary~\ref{c:clause.strict.coherence.lagrange} to show that $B'$ is a full rank matrix.
\end{enumerate}
For each edge type $\bt\in\bT$, let $r_{\bt}=\Ind{\set{\red,\blu}\subseteq\supp\starpi_{\bt}}$. It follows that
	\[
	d_1(\bT)
	= \dim\ker B'
	=s_{\bT}-\rk B'
	=s_{\bT}-1-x_{\bT}-\sum_{\bt\in\bT} r_{\bt}\,,
	\]
where we recall from the discussion around \eqref{e:wp.of.R} that $x_{\bT}\in\set{0,1,2}$ counts the number of frozen spins that can appear on a variable of type $\bT$. Substituting into \eqref{e:dimension.one.decompose.into.types} gives an expression for $d_1(\DD)$. Combining with our earlier calculation 
\eqref{e:stirling.correction.number} of $\bm{s}(\DD)$ gives
	\begin{align*}
	\bm{s}(\DD)
	-d_1(\DD)
	&=\bm{s}(\DD)
	\equiv\sum_{\bL} \sum_j (s_{\bL(j)}-1)
	-\sum_{\bT}\bigg(
	-x_{\bT}+\sum_{\bt\in\bT} (s_{\bt}-1-r_{\bt})
	\bigg)
	\\
	&=\sum_{\bL} \sum_j (s_{\bL(j)}-1)
	-\sum_{\bT} x_{\bT}\bigg(
		|\set{\bt : \bt \in\bT}|-1
		\bigg)\,,
	\end{align*}
where the last equality uses that for all $\bt\in\bT$ we have
$s_{\bt}=|\supp\starpi_{\bt}|=1+x_{\bT}+r_{\bt}$.
This matches the expression for $\wp(\DD)$ from \eqref{e:wp.of.R}, and concludes the proof.
\end{proof}
\end{lem}

\begin{lem}[{calculation of \eqref{e:dimension.count.pair.model}}]\label{l:dimension.count.two}
In the setting of Lemma~\ref{l:if.neg.def} we have
	\[d_2(\DD)
	=\sum_{\bL}
	\Bigg(
	(s_{\bL})^2-1-\sum_j \bigg[ (s_{\bL(j)})^2-1 \bigg]
	\Bigg)
	+\sum_{\bT}
	\Bigg(
	(s_{\bT})^2-1-\bigg[
	(x_{\bT})^2+2x_{\bT}+2x_{\bT}r_{\bT}+3r_{\bT}
	\bigg]
	\Bigg)
	\]
where $r_{\bT}$ denotes the sum of $r_{\bt}=\Ind{\set{\red,\blu}\subseteq\supp\starpi_{\bt}}$ over all edge types $\bt\in\bT$.

\begin{proof} 
Recall that $d_2(\DD)$ counts the dimension of $\set{\nu:\nu\sim\omega}$ where $\omega$ is the pair empirical measure. Since $\omega$ is fixed, similarly to \eqref{e:dimension.one.decompose.into.types} we can decompose
	\beq\label{e:dimension.two.decompose}
	d_2(\DD)
	=\sum_{\bL}d_2(\bL)+\sum_{\bT} d_2(\bT)\,.\eeq
Given a clause type $\bL$, let $A'$ be the matrix from \eqref{e:matrix.A.prime.clause}. Let $A[j]$ denote the submatrix of rows in $A'$ concerning the $j$-th edge incident to the clause, so that $A'$ consists of the submatrices $A[j]$ (for $1\le j\le k(\bL)$) together with the all-ones row. Let $A''$ be the matrix whose rows are given by the pairwise tensor products $u\otimes w$ of rows $u,w$ from $A'$, \bemph{except} if $u$ and $w$ come from distinct blocks $A[j]\ne A[k]$. Since $A'$ is of full rank (as was shown in the proof of Corollary~\ref{c:clause.strict.coherence.lagrange}), so is $A''$. It follows that
	\[
	d_2(\bL)
	=\dim\ker A''
	=(s_{\bL})^2-\rk A''
	=(s_{\bL})^2-1-\sum_j \bigg[ (s_{\bL(j)})^2-1\bigg]\,.
	\]
Similarly given a variable type $\bT$, let $B'$ be the matrix from \eqref{e:matrix.B.prime.variable}. Let $B^\red$ denote the submatrix of its first $r_{\bT}$ rows, which concern the edges around the variable that can take on both colors $\set{\red,\blu}$. Let $B''$ be the matrix whose rows are given by the pairwise tensor products $u\otimes w$ of rows $u,w$ from $B'$, \bemph{except} if $u$ and $w$ are distinct rows from $B^\red$. Since $B'$ is of full rank (as was shown in the proof of Lemma~\ref{l:dimension.count}), so is $B''$. It follows that
	\[d_2(\bT)
	=\dim\ker B''
	= (s_{\bT})^2-\rk B''
	= (s_{\bT})^2-\bigg[
	\Big(1+x_{\bT}+r_{\bT}\Big)^2-r_{\bT}(r_{\bT}-1)
	\bigg]\,.
	\]
Substituting into \eqref{e:dimension.two.decompose} proves the claim.
\end{proof}
\end{lem}

\begin{lem}[{proof of \eqref{e:dim.judicious.omega}}]\label{l:dimension.of.judicious.omega}
In the setting of Lemma~\ref{l:if.neg.def} we have
$j_2(\DD)=\bm{s}_2(\DD)- d_2(\DD)- 2\wp(\DD)$.

\begin{proof}
Recall that $j_2(\DD)$ counts the dimension of the space of judicious pair empirical measures $\omega$ around the product measure $\prodom$.
Given a pair empirical measure $\omega$ (which we assume throughout this proof to be close to $\prodom$), let $\pi_{\bt}$ be the average of the entries $\omega_{\bL,j}$ such that $\bL(j)=\bt$:
	\beq\label{e:pi.as.avg.of.omega.for.dim}
	\pi_{\bt}
	=\sum_{\bL}
	\pi_{\DD}(\bL\,|\,\bt)
	\omega_{\bL,j}
	\eeq
where $\pi_{\DD}(\bL\,|\,\bt)$ is as in \eqref{e:pi.DD.first.appearance}. If $\omega$ is judicious, then $\pi_{\bt}$ must also be judicious. In order for $\omega$ to be feasible, for each $\bT$ 
there must exist a measure $\mu_{\bT}$ on $\set{\minus,\plus,\free}^2$ which is consistent with $\pi_{\bt}$ for every $\bt\in\bT$. The measure $\mu_{\bT}$ must also be judicious, in the sense that its single-copy marginals are consistent with $\starpi_{\bt}$ for $\bt\in\bT$. Recall that $x_{\bT}\in\set{0,1,2}$ counts the number of frozen spins that can appear at a variable of type $\bT$. Let
	\[
	C' = \begin{pmatrix} C \\ \mathbf{1}^\st
	\end{pmatrix}
	\in\mathbb{R}^{(1+x_{\bT})\times(1+x_{\bT})}\,,
	\]
where for each frozen spin that can appear at $\bT$, the matrix $C$ contains a row which is the indicator of that spin. Then the matrix
	\[
	C''
	= \begin{pmatrix} 
	C\otimes \mathbf{1}^\st\\
	\mathbf{1}^\st\otimes C\\
	\mathbf{1}^\st\otimes \mathbf{1}^\st
	\end{pmatrix}
	\in\mathbb{R}^{
	(2x_{\bT}+1)
	\times (1+x_{\bT})^2}
	\]
encodes the judicious constraints on $\mu_{\bT}$ in the pair model. It follows that
	\[
	d_2(\mu_{\bT})
	\equiv\dim\bigg\{
	\mu_{\bT}
	:\textup{$\mu_{\bT}$ is judicious}\bigg\}
	=\dim\ker C''
	=(1+x_{\bT})^2
	-\bigg( 2x_{\bT}+1\bigg)
	=(x_{\bT})^2\,.
	\]
Now note that
for each $\bt\in\bT$, the choice of $\mu_{\bT}$ already fixes $\pi_{\bt}$ as a measure on $\set{\pur,\yel,\grn}^2$. It remains to separate \SPIN{purple} into \SPIN{red} and \SPIN{blue} in such a way that the resulting $\pi_{\bt}$ has single-copy marginals $\starpi_{\bt}$:
	\begin{align*}
	d_2(\pi_{\bt}\,|\,\mu_{\bT})
	&\equiv\dim\bigg\{
	\pi_{\bt}
	:\textup{$\pi_{\bt}$ is judicious
	and consistent with $\mu_{\bT}$}
	\bigg\}\\
	&=
	\dim\bigg\{
	\pi_{\bt}
	:\textup{$\pi_{\bt}$ is judicious}
	\bigg\}
	-\dim\bigg\{
	\mu_{\bT}
	:\textup{$\mu_{\bT}$ is judicious}\bigg\}
	=( s_{\bt}-1)^2
	-(x_{\bT})^2\,.
	\end{align*}
In order for $\omega$ to be consistent with $\pi_{\bt}$, it must satisfy the linear constraints 
\eqref{e:pi.as.avg.of.omega.for.dim}. Note that some of these are redundant with the judicious constraints on the individual measures $\omega_{\bL,j}$. Altogether
	\begin{align*}
	d_2( (\omega_{\bL,j})_{\bL(j)=\bt} \,|\,\pi_{\bt})
	&\equiv\dim\bigg\{
	(\omega_{\bL,j})_{\bL(j)=\bt}:
	\textup{$\omega$ is 
	judicious and consistent with $\pi_{\bt}$}\bigg\}\\
	&=\sum_{\bL,j : \bL(j)=t}
	( s_{\bL(j)}-1)^2
	- (s_{\bt}-1)^2
	\end{align*}
Combining these calculations gives
	\begin{align*}
	j_2(\DD)
	&= \sum_{\bT}\Bigg( d_2(\mu_{\bT})
	+\sum_{\bt\in\bT}
	\bigg[
	d_2(\pi_{\bt}\,|\,\mu_{\bT})
	+d_2( (\omega_{\bL,j})_{\bL(j)=\bt} \,|\,\pi_{\bt})
	\bigg]\Bigg)\\
	&=
	-\sum_{\bT} (x_{\bT})^2
	\bigg(|\set{\bt:\bt\in\bT}|-1\bigg)
	+\sum_{\bL,j} (s_{\bL(j)}-1)^2\,.
	\end{align*}
Now recall that 
$\bm{s}_2(\DD)$ is given in \eqref{e:stirling.correction.two},
and
$d_2(\DD)$ was calculated in Lemma~\ref{l:dimension.count.two}. 
Thus, if we abbreviate
$\wp_2(\DD)\equiv
\bm{s}_2(\DD)-d_2(\DD)-j_2(\DD)$, then we obtain
	\[
	\wp_2(\DD)
	=
	\sum_{\bT}\Bigg(
	(x_{\bT})^2 |\set{\bt:\bt\in\bT}|
	+ 2x_{\bT}+(2x_{\bT}+3)r_{\bT}
	-\sum_{\bt\in\bT}
	\Big[ (s_{\bt})^2-1\Big]
	\Bigg)
	+2\sum_{\bL,j} (s_{\bL(j)}-1)\,.
	\]
For $\bt\in\bT$, we can expand
$s_{\bt}=1 +x_{\bt} + r_{\bt}$, so
	\[
	\sum_{\bt\in\bT}
	\bigg( (s_{\bt})^2-1\bigg)
	=\sum_{\bt\in\bT}
	\bigg( (x_{\bT})^2 + 2x_{\bT} +2x_{\bT}r_{\bt}
		+ 3r_{\bt}\bigg)
	=\bigg((x_{\bT})^2 + 2x_{\bT}\bigg)
		|\set{\bt:\bt\in\bT}|
		+(2x_{\bT}+3)r_{\bT}\,,
	\]
where we recall that $r_{\bT}$ is the sum of $r_{\bt}$ over all $\bt\in\bT$. Substituting into the previous calculation gives
	\[
	\wp_2(\DD)
	= -2
	\sum_{\bT} x_{\bT}
	(|\set{\bt:\bt\in\bT}|-1)
	+2\sum_{\bL,j} (s_{\bL(j)}-1)\,,
	\]
which is exactly twice the value of $\wp(\DD)$ from \eqref{e:wp.of.R}. This proves the claim.
\end{proof}
\end{lem}

We conclude this section with the main applications of the result of Corollary~\ref{c:clause.strict.coherence.lagrange}:

\begin{cor}\label{c:cohere.weights} Let $U'$ be any finite bipartite tree, and suppose $U\subseteq U'$ is a tree containing at least one clause, whose leaves are all variables. For each edge $e=(av)$ in $U'$, let $\starpi_e$ be the canonical marginal on $e$ based on $B_r(v)$, the $r$-neighborhood of $v$ relative to $U'$. If all clauses in $U$ are strictly coherent, then there is a set of edge weights $\gm_e : \set{\RYGB}\to[0,\infty)$ (for $e$ in $U$, and with the convention $\gamma_e(\grn)\equiv1$), such that in the Gibbs measure on valid colorings of $U$
where the edges are weighted by $\gm$ and the boundary variables are weighted by $\dqstar$, 
all edge marginals agree with the canonical ones $\starpi$. (The Gibbs measure is explicitly given by \eqref{e:gamma.weighted.gibbs} below.)

\begin{proof} Let $\dqstar$ and $\hqstar$ be the canonical messages (based on $r$-neighborhoods) on the edges of $U$, as in Definition~\ref{d:canonical}. For each variable $v$ of $U$, let $\CCOLS_{\delta v}$ be the set of valid colorings $\usi_{\delta v}$ such that $\sigma_e\in\supp\starpi_e$ for all $e\in\delta v$. Now recall the discussion around \eqref{e:var.tuple.measure.weighted}: the probability measure
	\beq\label{e:within.proof.max.entropy.var}
	\nu_{\delta v}(\usi_{\delta v})
	= \f1{\dbz_v}
	\varphi_v(\usi_{\delta v})
	\prod_{e\in\delta v} \hqstar_e(\sigma_e)
	\eeq
is supported on valid colorings of $\delta v$, and has edge marginals on $\delta v$ that are consistent with $\starpi$. Moreover, it is clear from the above expression that if $e\in\delta v$ and $\starpi_e(\sigma)>0$, then we must have $\hqstar_e(\sigma)>0$. Consequently, for any $\usi_{\delta v}\in\CCOLS_{\delta v}$, we have $\varphi_v(\usi_{\delta v})=1$ and $\hqstar_e(\sigma_e)>0$ for all $e\in\delta a$, which means that \eqref{e:within.proof.max.entropy.var} is positive. That is to say, the measure $\nu_{\delta v}$ defined by \eqref{e:within.proof.max.entropy.var} has support exactly equal to $\CCOLS_{\delta v}$.

We next make a simple observation: for any edge $e$ of $U$ and any vertex $x$ incident to $e$, we have $\sigma\in\supp\starpi_e$ if and only if there exists
$\usi_{\delta x}\in\CCOLS_{\delta x}$ which has value $\sigma$ on edge $e$. The ``if'' direction is obvious.
For the ``only if'' direction, recall that we have constructed $\nu_{\delta x}$ with marginals $\starpi$ (this follows from Lemma~\ref{l:coher.feasibility} if $x$ is a clause, and from the above discussion if $x$ is a variable). Since $\starpi_e$ is the marginal of $\nu_{\delta x}$, if $\sigma\in\supp\starpi_e$ then the measure $\nu_{\delta x}$ must give positive weight to some $\usi_{\delta x}$ which has value $\sigma$ on edge $e$. Then $\usi_{\delta x}\in \supp \nu_{\delta x}\subseteq\CCOLS_{\delta x}$, as desired.

Now define $\CCOLS_U$ to be the set of valid colorings $\usi_U$ of $U$ such that $\sigma_e\in\supp\starpi_e$ for all edges $e$ of $U$. This is simply because $U$ is a tree, so we can construct an element of $\CCOLS_U$ as follows: start from any vertex $x$ of $U$, and choose $\usi_{\delta x}\in\CCOLS_{\delta x}$. For each vertex $y\in\pd x$, by the preceding observation, the set $\CCOLS_{\delta y}$ must contain an element $\usi_{\delta y}$ which agrees with $\usi_{\delta y}$ on the edge $e=(xy)$. Choose this $\usi_{\delta y}$, and proceed in the same way to color the edges incident to the neighbors of $y$, and so on until all of $U$ is colored. This shows $\CCOLS_U\ne\emptyset$.

Now recall from \eqref{e:defn.canonical.edge.marginal} that $\starpi_e(\sigma)$ is proportional to $\dqstar_e(\sigma)\hqstar_e(\sigma)$. In particular, if $\sigma\in\supp\starpi_e$, then $\dqstar_e(\sigma)$ must be positive, in which case we can define
	\[
	\gamma_e(\sigma)
	\equiv \f{w_e(\sigma)}{\dqstar_e(\sigma)}
	\dqstar_e(\grn)\,.
	\]
for $w_e$ as given by Corollary~\ref{c:clause.strict.coherence.lagrange}. 
 Recall that $\grn\in\supp\starpi_e$
and we took the convention
$w_e(\grn)\equiv1$, so now
$\gamma_e(\grn)\equiv1$ also. For $\sigma\notin\supp\starpi_e$ we simply define $\gamma_e(\sigma)=0$. We write $U=(V,F,E)$. Write $\Leaves U$ for all the leaves of $U$, which were assumed to be variables. For each $u\in\Leaves U$ let $a(u)$ denote the unique clause in $U$ that neighbors $u$. The Gibbs measure described in the statement of this lemma can be expressed as 
	\beq\label{e:gamma.weighted.gibbs}
	\nu(\usi_U)
	\cong
	\prod_{v\in V\setminus\Leaves U}
	\varphi_v(\usi_{\delta v})
	\prod_{a\in F}
	\hat{\varphi}_a(\usi_{\delta a})
	\prod_{e\in E}
	\gamma_e(\sigma_e)
	\prod_{u\in\Leaves U}
	\dqstar_{u a(u)}(\sigma_{u a(u)})
	\,,
	\eeq
where $\cong$ denotes the normalization constant, which is positive since $\supp\gamma_e=\supp\starpi_e$, $\supp\dqstar_e\supseteq\supp\starpi_e$, and $\CCOLS_U\ne\emptyset$. We claim that this measure $\nu$ has marginals $\starpi$ on the edges of $U$. To this end, note that the canonical messages $\dqstar,\hqstar$ solve the belief progagation equations for $\nu_U$:
\begin{enumerate}[(i)]
\item If $u\in\Leaves U$, then we regard $u$ as being weighted only by $\dqstar_{ua(u)}$, so then the \textsc{bp} message from $u$ to $a(u)$ will be $\dqstar_{ua(u)}$
simply by the standard conventions of \textsc{bp} at the leaves of trees. 
\item If $v$ is an internal variable of $U$, then we regard $v$ as being unweighted, and the equation
$\dqstar_{va}(\sigma)=\BP_{va}[\hqstar]$ is satisfied 
for all $a\in\pd v$ by definition of $\dqstar$ and $\hqstar$ (cf.\ Remark~\ref{r:first.rmk.coherence}). 
\item Finally, if $a$ is a clause of $U$, then we regard $a$ as being weighted by the product of $\gamma_e$ over $e\in\delta a$. Consider
	\beq\label{e:putative.nu.delta.a}
	\nu_{\delta a}(\usi_{\delta a})
	\cong
	\hat{\varphi}_a(\usi_{\delta a})
	\prod_{e\in \delta a}
	\bigg\{
	\gamma_e(\sigma_e)
	\dqstar_e(\sigma_e)
	\bigg\}
	\cong \hat{\varphi}_a(\usi_{\delta a})
	\prod_{e\in \delta a}
	w_e(\sigma_e)\,.\eeq
(We are not yet claiming that $\nu_{\delta a}$ is the marginal of $\nu$.) By the result of Corollary~\ref{c:clause.strict.coherence.lagrange}, the measure $\nu_{\delta a}$ has marginals $\starpi$, so
for all $e=(av)\in\delta a$ we have
	\[
	\starpi_e(\sigma) \cong
	\sum_{\usi_{\delta a}}
	\Ind{\sigma_e=\sigma}
	\hat{\varphi}_a(\usi_{\delta a})
	\prod_{e'\in \delta a}
	\bigg\{
	\gamma_{e'}(\sigma_{e'})
	\dqstar_{e'}(\sigma_{e'})
	\bigg\}
	\cong
	\dqstar_e(\sigma)
	\cdot \BP_{av}[\dqstar;\gamma](\sigma)\,.
	\]
On the other hand we know $\starpi_e(\sigma)\cong\dqstar_e(\sigma)\hqstar_e(\sigma)$, so it must be that
$\BP_{av}[\dqstar;\gamma](\sigma)=\hqstar_{av}(\sigma)$.
\end{enumerate}
This proves our claim that $(\dqstar,\hqstar)$ solves
the belief propagation equations for $\nu$. It follows from this that the marginal of $\nu$ on each $\delta a$ is indeed correctly described by \eqref{e:putative.nu.delta.a}, which in turn implies that the edge marginals are consistent with $\starpi$.
\end{proof}
\end{cor}

In \S\ref{ss:explicit.lagrange.multipliers} we will show that if the clauses are nice (Definition~\ref{d:nice}), then we can estimate the weights of Corollary~\ref{c:cohere.weights}, which will be needed in the second half of the paper. For the precise statement see 
Corollary~\ref{c:clause.bp.weights.explicit}.

\begin{cor}\label{c:clause.bp.weights}
In the setting of Corollary~\ref{c:cohere.weights}, 
there is a system $\Lmstar_U$ of positive variable weights
	\[
	\Lm_v = \begin{pmatrix}
	\lm_v \equiv (\lm_v(\plus)=1, \lm_v(\minus),\lm_v(\free))\\
	\lm_e \equiv \lm_e(\red)
	\end{pmatrix}
	\]
such that the $\Lmstar_U$-weighted Gibbs measure on valid colorings of $U$ has all edge marginals agreeing with the canonical edge marginals $\starpi$.
(However, since the weights were shifted from clauses to variables, the associated \textsc{bp} messages will no longer be $\dqstar,\hqstar$.)

\begin{proof} 
The Gibbs measure \eqref{e:gamma.weighted.gibbs}
is equivalent to the Gibbs measure in which all variables are unweighted;
all internal edges $e$ of $U$ are weighted by $\tilde{\gamma}_e=\gamma_e$; and all leaf edges $e$ of $U$ are weighted by
	\[\tilde{\gamma}_e(\sigma)
	= \f{\gamma_e(\sigma) \dqstar_e(\sigma)}
		{\sum_\tau
		\gamma_e(\tau) \dqstar_e(\tau)}\,.
	\]
To define $\Lmstar_U$, we redistribute the weights as follows. For each internal variable $v$ of $U$,
we let $x_v\in\set{\minus,\plus,\free}$ denote the frozen spin corresponding corresponding to $\usi_{\delta v}$, and put the weight
	\[
	\Lmstar_v(\usi_{\delta v})
	\equiv
	\lmstar_v(x_v)
	\prod_{e\in\delta v}
	\lmstar_e(\sigma_e)\,,
	\]
where $\lmstar_v$ and $\lmstar_e$ are defined by 
\eqref{e:redistribute.weights}. For each leaf variable $v\in\Leaves U$, the set $\delta v$ consists of a single edge $e$, and in this case we will simply take the weight
	\[
	\lmstar_v(\sigma_e)
	\equiv\lmstar_e(\sigma_e)
	\equiv\tilde{\gamma}_e(\sigma_e)\,.
	\]
For the redistributed weights,
the \textsc{bp} messages on all edges $e$ of $U$ are given by
	\beq\label{e:redistributed.bp.messages}
	\dqbul_e(\sigma_e)
	\cong \dqstar_e(\sigma_e)
	\gamma_e(\sigma_e)\,,\quad
	\hqbul_e(\sigma_e)
	\cong \f{\hqstar_e(\sigma_e)}
		{\gamma_e(\sigma_e)}\,.
	\eeq
Thus the messages for the redistributed weights
$\Lmstar_U$ also satisfy the familiar identity
$\starpi_e(\sigma_e)
\cong \dqbul_e(\sigma_e)\hqbul_e(\sigma_e)$.
Moreover, if $e$ is a leaf edge then
$\lmstar_e=\tilde{\gamma}_e=\dqbul_e$.
\end{proof}
\end{cor}

\label{proofoverview}

\noindent\bemph{Outline of remaining sections.} The remainder of this paper is organized as follows:

\begin{enumerate}[--]
\item In Section~\ref{s:onersb} we analyze the distributional $\onersb$ recursion \eqref{e:intro.dist.recurs} to prove
Propositions~\ref{p:fp}~and~\ref{p:ubd}.

\item In Section~\ref{s:processing} we analyze the random $k$-\textsc{sat} graph and the preprocessing algorithm to prove Propositions~\ref{p:small.fraction.removed.in.processing}--\ref{p:unif}.

\item In Section~\ref{s:sep} we prove Propositions~\ref{p:ext}~and~\ref{p:sep}.
\item Sections~\ref{s:contract}--\ref{s:burnin} are devoted to the proof of Proposition~\ref{p:second.moment.judicious}.

\item In Section~\ref{s:monotonicity} we prove Proposition~\ref{p:phi}.
\end{enumerate}

\section{One-step RSB threshold}\label{s:onersb}

In this section we analyze the distributional $\onersb$ recursion \eqref{e:intro.dist.recurs} to prove Propositions~\ref{p:fp}~and~\ref{p:ubd}. The section is organized as follows:
\begin{enumerate}[--]
\item In \S\ref{ss:dist.rec} we formally define the random Galton--Watson tree that arises as a local weak limit of the $\ksat$ graph. We prove a basic estimate, Lemma~\ref{l:martingale.bound.PGW}, on the volume of neighborhoods in the random tree. We also formalize the natural coupling between the $\onersb$ recursion and the random tree.

\item In \S\ref{ss:technical.bounds.on.dist.rec} 
we prove preliminary concentration bounds on the effect of the distributional recursion.

\item In \S\ref{ss:root.is.robust} we use the bounds from \S\ref{ss:technical.bounds.on.dist.rec} to show that the root of the Galton--Watson tree is very likely to be \bemph{nice} in a strong sense
(Proposition~\ref{p:uPGW.is.J.robust}). This will be used in \S\ref{ss:bootstrap.defects} to bound the occurrence of \bemph{defective} regions.

\item In \S\ref{ss:pgw.stability} we use the coupling
of the distributional recursion with the tree to show
Proposition~\ref{p:one.stable}, which says 
 that the root of the Galton--Watson tree is very likely to be \bemph{stable}. From this we will deduce the result of Proposition~\ref{p:fp}.
Proposition~\ref{p:one.stable} will furthermore be used
in the proof of Proposition~\ref{p:first.moment.exponent},
and also in \S\ref{ss:preprocessing.probabilistic} to assure that the initial set $A$ of preprocessing (Definition~\ref{d:proc}) is small.

\item In \S\ref{ss:threshold.ubd} we prove 
 Proposition~\ref{p:ubd}, the $\onersb$ upper bound on the satisfiability threshold. This is deduced from the interpolation bounds of \cite{FrLe:03,MR2095932}.

\item Lastly, in \S\ref{ss:judicious.first.mmt} we prove Proposition~\ref{p:first.moment.exponent}, showing that the first moment of judicious colorings 
asymptotically is lower bounded (in the exponent) by the $\onersb$ free energy $\Phi(\alpha)$ of \eqref{e:phi.alpha}.
\end{enumerate}
It is assumed throughout the section, even when not explicitly stated, that $k\ge k_0$ and $\alpha$ satisfies \eqref{e:alpha.regime}.

\subsection{Random trees and the distributional recursion}
\label{ss:dist.rec}

We already mentioned in \S\ref{ss:canonical} 
that the random $\ksat$ graph converges
``locally in distribution'' (also termed ``locally in law,'' or ``locally weakly'') to the Galton--Watson measure $\uPGW\equiv\uPGW^\alpha$. We begin this section by formally restating the definition of $\uPGW$, and then reviewing some basic notions of local weak convergence which will be used later.



\begin{dfn}[Galton--Watson measure, $\uPGW$]\label{d:uPGW} We define the \bemph{bipartite Poisson Galton--Watson tree} 
to be the random bipartite factor tree $\tree$ which is generated as follows: starting from a root variable $\vrt$, each variable independently generates $\Pois(\alpha k)$ child clauses, and each clause generates $k-1$ child variables. Each edge is labelled with a literal $\lit$ which takes values $\plus$ or $\minus$ with equal probability, independently over all the edges. We write $\uPGW$ for the law of this tree.
\end{dfn}

\begin{rmk}\label{r:unimodular}
The measure $\uPGW$ is the ``local weak limit'' of the random $\ksat$ graph in the following formal sense: if $\GG_n=(V_n,F_n,E_n)$ denotes an instance of random $\ksat$ at clause density $\alpha$ on $n$ variables,
and $I_n$ is a uniformly random element of $V_n\equiv[n]$,
then it holds for any finite $r$ that
	\[
	\lim_{n\to\infty} \P\bigg(B_r(I_n;\GG_n)\cong T\bigg)
	= \uPGW^\alpha\bigg( B_r(\vrt;\tree) \cong T \bigg)
	\]
for every $T$. In the above, $B_r(I_n;\GG_n)$ is the $r$-neighborhood of $I_n$ in $\GG_n$, viewed as a graph rooted at $I_n$. Likewise $B_r(\vrt;\tree)$
is the $r$-neighborhood of $\vrt$ in $\tree$,
viewed as a graph rooted at $\vrt$. Finally $T$ is any graph with a root $o$, and $\cong$ denotes isomorphism of rooted graphs. The measure $\uPGW$ is \bemph{unimodular} (cf.\ \cite{MR1336708} and \cite[Defn.~2.1]{MR2354165}):
	\beq\label{e:unimodular}
	\int\bigg[\sum_u
	f(\tree,\vrt,u)\bigg]
	d\uPGW(\tree)
	=\int\bigg[\sum_u
	f(\tree,u,\vrt)\bigg]
	d\uPGW(\tree)
	\eeq
where the sum goes over all variables $u$ in $\tree$, and $f$ is any nonnegative Borel function on the space $\mathcal{G}_{\star\star}$ of bi-rooted graphs.\footnote{A bi-rooted graph is a graph rooted at an ordered pair of vertices. For a detailed account and careful discussion of topological considerations (in particular, what it means to be a Borel function on the space $\mathcal{G}_{\star\star}$), we refer to \cite{MR2354165}.} To see why \eqref{e:unimodular} should hold, note that changing the order of summation gives
	\beq\label{e:unim.n}
	\E\sum_{u\in \pd I_n} f(\GG_n,I_n ,u)
	=\f1{|V_n|}
	\sum_{v\in V_n}\bigg[
	\sum_{u\in N(v)}
	f(\GG_n , v,u )\bigg]
	=\E\sum_{u\in \pd I_n} f(\GG_n,u,I_n )\,.
	\eeq
In the limit $n\to\infty$, the left-hand side of \eqref{e:unim.n} converges to the left-hand side of
\eqref{e:unimodular}, while the right-hand side of 
\eqref{e:unim.n} converges to the right-hand side of
\eqref{e:unimodular}.
\end{rmk}

We emphasize that under the measure $\uPGW$, the root $\vrt$ plays a special role: in the local weak limit interpretation, $\vrt$ represents a uniformly random variable, while other vertices in the tree represent random \emph{neighbors}. Thus the root degree $|\pd\vrt|$ is distributed as $\Pois(\alpha k)$, while for variables $u\ne\vrt$, the degree $|\pd u|$ is distributed as a \bemph{size-biased} $\Pois(\alpha k)$, with probability mass function
	\[\overline{\POIS}_{\alpha k}(j)
	=\f{j\POIS_{\alpha k}(j) }{ \alpha k}\,.\]
The most obvious distinguishing feature of $\vrt$ is that it has degree zero with positive probability, while any other vertex of the random tree must have positive degree since it connects to its parent. In the above description of the $\uPGW$ law, we used the fact that
	\[\overline{\POIS}_{\alpha k}(j)
	= 
	\f{e^{-\alpha k} (\alpha k)^j\cdot j}{j!
		\cdot \alpha k}
	= \f{e^{-\alpha k} (\alpha k)^{j-1}}{(j-1)!}
	= \POIS_{\alpha k}(j-1)\,,\]
meaning that a size-biased $\Pois(\alpha k)$ random variable is equidistributed as a $[1 + \Pois(\alpha k)]$ random variable --- this explains why variables $u\ne\vrt$ generate a $\Pois(\alpha k)$ number of children. To understand how (canonical) messages behave in the random $\ksat$ graph, we study how they behave in the limiting random tree $\tree\sim\uPGW$. For this purpose, we make the following definitions:


\begin{dfn}[variable-to-clause measure, $\PGW$]\label{d:vtoc.PGW}
Let $\tree\sim\uPGW$ with root variable $\vrt$. Let $\tree_{\vrt\crt}$ be the tree $\tree$ together with one additional edge $\ert\equiv(\vrt\crt)$ incident to the root, equipped with a random sign $\lit_{\ert}$. We think of $\ert$ as the parent edge of $\vrt$, pointing to a deleted clause $\crt$. Then $\tree_{\vrt\crt}$ is a random variable-to-clause tree, and we denote its law $\PGW$.\end{dfn}

\begin{dfn}[clause-to-variable measure, $\hPGW$]\label{d:ctov.PGW} Let $\tree_{\crt\vrt}$ be the random tree formed as follows: start with a root clause $\crt$, and attach $k-1$ independent samples of $\PGW$ (Definition~\ref{d:vtoc.PGW}) as subtrees to $\crt$. Then attach to $\crt$ one additional incident edge $\ert$, which we think of as the parent edge of $\crt$, pointing to a deleted variable $\vrt$. The result is a random clause-to-variable tree $\tree_{\crt\vrt}$, whose law we denote as $\hPGW$.\end{dfn}

The definitions of $\PGW$ and $\hPGW$ can be viewed in this way: let $\tree\sim\uPGW$, $a\in\pd\vrt$, and $u\in\pd a\setminus\vrt$. Recall from Definition~\ref{d:frozen.model.bdy.conditions} and the subsequent discussion that the message from $u$ to $a$ is defined in terms of $\tree_{ua}$ --- the component of $\tree\setminus a$ containing $u$, including the edge $(ua)$ but not including $a$ itself. It follows from the above discussion that if we also remove $(ua)$ from $\tree_{ua}$, then the resulting tree (rooted at $u$) is equidistributed as the original tree $\tree\sim\uPGW$. It follows that the law of $\tree_{ua}$ is precisely the measure $\PGW$. Similarly, if $\tree\sim\uPGW$ and $a\in\pd\vrt$, then the law of $\tree_{a\vrt}$ is given by $\hPGW$. In summary, in order to understand messages in the random $\ksat$ graph, we will study messages from $\vrt$ to $\crt$ under the law $\PGW$, and messages from $\crt$ to $\vrt$ under the law $\hPGW$. In fact, for technical reasons
(which will become apparent in the proofs of this section and the next one) we will study messages on slightly more general trees, defined as follows:

\begin{dfn}[Galton--Watson based on fixed tree, $\uPGW(T)$]
\label{d:PGW.based.on.T}
Let $T$ be any fixed tree rooted at a variable $\vrt$,
such that all clauses in $T$ have at most $k-1$ children.
At every variable $v$ of $T$ (including $v=\vrt$), attach an independent subtree $\tree_v\sim\uPGW$ (by identifying $v$ with the root of $\tree_v$). At every clause $a$ of $T$, if $a$ has $k-1-j$ children, attach $j$ independent samples of $\PGW$ as subtrees to $a$. Let $\uPGW(T)$ be the law of the resulting tree.

Similarly, if $T_{\vrt\crt}$ is a fixed variable-to-clause tree where every clause has at most $k-1$ children, we can perform the same procedure as above to obtain a random variable-to-clause tree, whose law we denote $\PGW(T_{\vrt\crt})$. Likewise, if $T_{\crt\vrt}$ is any fixed clause-to-variable tree where every clause has at most $k-1$ children, we use $\hPGW(T_{\crt\vrt})$ to denote the law of the random clause-to-variable tree based on $T_{\crt\vrt}$.
\end{dfn}

\begin{dfn}[Galton--Watson with random deletions,
$\uPGW_\EPS$]\label{d:PGW.with.clauses.one.less}
Let $\uPGW_\EPS$ be the law of the random tree generated as follows: starting from a root variable $\vrt$, each variable independently generates $\Pois(\alpha k)$ clauses, and each clause independently generates either $k-1$ child variables (with probability $1-\EPS$)
or $k-2$ child variables (with probability $\EPS$). When $\EPS=0$, the measure $\uPGW_\EPS$ coincides with the measure $\uPGW$ of Definition~\ref{d:uPGW}. The measure $\uPGW_\EPS$ is also unimodular, because it can also be obtained via a local weak limit: let $\hat{\EPS}$ and $\hat{\alpha}$ be defined by
	\[
	\EPS = \f{(k-1)\hat{\EPS}}{k-\hat{\EPS}}\,,\quad
	\hat{\alpha} = \f{\alpha}{1-\hat{\EPS}/k}
	\]
Start with $n$ isolated variables and $n\hat{\alpha}$ isolated clauses. For each of the first $1-\hat{\EPS}$ fraction of the clauses, put $k$ edges to randomly chosen variables. For the last $\hat{\EPS}$ fraction of the clauses, put $k-1$ edges to randomly chosen variables. The total number of edges in the resulting random graph is
	\[
	n\hat{\alpha} \bigg((1-\hat{\EPS})k
	+\hat{\EPS}(k-1)\bigg)
	= n\alpha k\,.
	\]
In the limit $n\to\infty$, this random graph converges locally weakly to the $\uPGW_\EPS$ measure, justifying our claims that $\uPGW_\EPS$ is unimodular. Define likewise $\PGW_\EPS$ and $\hPGW_\EPS$ to be the obvious generalizations of the measures $\PGW$ and $\hPGW$ from Definitions~\ref{d:vtoc.PGW}~and~\ref{d:ctov.PGW}. 
We hereafter write $\clausedeg_\EPS$ for the probability measure which puts weight $1-\EPS$ on $k$, and weight $\EPS$ on $k-1$.
\end{dfn}

The next lemma gives a simple bound on the growth of balls under the $\uPGW$ measure:

\begin{lem}\label{l:martingale.bound.PGW}
Let $\tree\sim\uPGW$, and consider the $\ell$-neighborhood of the root, $B_\ell(\vrt)\equiv B_\ell(\vrt;\tree)$. We have
	\beq\label{e:martingale.bound.PGW}
	e_\ell \equiv \int \exp\bigg\{
		\f{|B_\ell(\vrt;\tree)|}{(\alpha k^2)^\ell}
		\bigg\} \,d\uPGW(\tree) \le e
	\eeq
for all $\ell\ge0$.

\begin{proof}
Let $S_\ell$ be the number of variables in $\tree$ at distance exactly $\ell$ from the root $\vrt$, with $S_0=1$. Then
	\[e_\ell = 
	\E\Bigg[ \exp\Bigg\{ \f1{(\alpha k^2)^\ell}
		\sum_{j=0}^\ell S_j\Bigg\}\Bigg]\,.
	\]
Let $\mathscr{F}_\ell$ denote the $\sigma$-field generated by $B_{\ell-1}(\vrt)$: then
	\[
	\E\Bigg[\exp\Bigg\{ 
	\f{S_\ell }{(\alpha k^2)^\ell} 
	\Bigg\}\,\Bigg|\,
		\mathscr{F}_{\ell-1}\Bigg]
	=\exp\Bigg\{
	S_{\ell-1}\Bigg[
	\exp \bigg( \f{k-1}{(\alpha k^2)^\ell} \bigg) -1
	\Bigg] \alpha k
	\Bigg\}
	\le 
	\exp\Bigg\{ \f{S_{\ell-1}
		[1-\Theta(1/k)]}{(\alpha k^2)^{\ell-1}}
	\Bigg\}\,.
	\]
It follows by iterated expectations that
	\[
	e_\ell
	\le
	\E\Bigg[ \exp\Bigg\{ \f1{(\alpha k^2)^\ell}
		\sum_{j=0}^{\ell-2} S_j
		+ \f{S_{\ell-1}}{(\alpha k^2)^\ell}
		+ \f{S_{\ell-1}
		[1-\Theta(1/k)]}{(\alpha k^2)^{\ell-1}}
		\Bigg\}
		\Bigg]
	\le e_{\ell-1} \le e_0 = e\,.
	\]
This proves the claim.
\end{proof}
\end{lem}

We now review the distributional recursion introduced in \S\ref{ss:intro.onersb.threshold}, and which we saw in \S\ref{ss:wp.recursions} is related to the frozen model
on finite trees. (Recall, in particular, the
similarity between \eqref{e:intro.dist.recurs} and \eqref{e:second.introduction.of.Rec}.) Let $d^\plus,d^\minus$ be independent samples from the $\Pois(\alpha k/2)$ distribution, and write $\ud \equiv (d^\plus,d^\minus)$. We denote their probability mass function by
	\[\POpm
	(\vec d)
	\equiv\f{e^{-\alpha k}
	(\alpha k/2)^{d^\plus + d^\minus}}
		{(d^\plus)!(d^\minus)! }\,.\]
Note that if $\tree_{\vrt\crt}\sim\PGW$ as discussed above, then (with the notation of \eqref{e:incident.edges.of.same.sign}) the pair
	\[\bigg(\Big|\delta v(\plus\crt)\Big|,
		\Big|\delta v(\minus\crt)\Big|\bigg)\]
has law exactly $\POpm$.
We next set some notations for laws on messages:
\begin{enumerate}[--]
\item \textbf{Variable-to-clause messages.} As before, we use $\bmeta$ to denote a probability measure over $\set{\plus,\minus,\free}$, interpreted as a message from some variable $v$ to one of its neighboring clauses $a$. We will often summarize $\bmeta$ by the scalar value $\eta=\bmeta(\minus)\in[0,1)$, which is interpreted as the chance (according to the message) for $v$ not to satisfy $a$. We write $\cP$ for the space of probability measures over $\bmeta$, and use $\bmu$ to denote elements of $\cP$. We write $\PINT$ for the space of probability measures over $\eta\in[0,1)$, and use $\mu$ to denote elements of $\PINT$. The mapping from $\bmeta$ to $\eta=\bmeta(\minus)$ naturally induces a mapping from $\bmu\in\cP$ to $\mu\in\PINT$.
\item \textbf{Clause-to-variable messages.} Likewise, we use $\bhu$ to denote a probability measure over $\set{\plus,\free}$, with the interpretation of a message from some clause $a$ to one of its neighboring variables $v$. We will often summarize $\bhu$ by the scalar value $\hat{u}=\bhu(\plus)\in[0,1)$, which is interpreted as the chance (according to the message) that $a$ is forcing to $v$. Of course, since $\bhu$ is a probability measure over only two elements, the correspondence between $\bhu$ and $\hat{u}$ is a bijection. We write $\hcP$ for the space of probability measures over $\bhu$, and use $\bhmu$ to denote elements of $\hcP$. The (one-to-one) mapping from $\bhu$ to $\hat{u}=\bhu(\plus)$ naturally induces a one-to-one mapping from $\bhmu\in\hcP$ to $\hat{\mu}\in\PINT$.
\end{enumerate}
It may be useful to keep in mind that, ultimately, the measures $\bmu$ and $\bhmu$ will capture the influence of the random local geometry in the graph. To construct $\bmu$ and $\bhmu$ appropriately, we next define the mappings that will capture the distributional effect of the frozen model recursions \eqref{e:first.defn.of.bhu}~and~\eqref{e:intro.defn.bmeta}.


\begin{dfn}[distributional effect of clause recursion~\eqref{e:first.defn.of.bhu}]
\label{d:distributional.clause.recursion}
Given $\mu\in\PINT$, let $\ueta'\equiv(\eta_j)_{j\ge1}$ denote a sequence of i.i.d.\ samples from $\mu$. Independently, let $K\sim\clausedeg_\EPS$ (as in Definition~\ref{d:PGW.with.clauses.one.less}). Let $\bhu\equiv\bhu(\ueta')$ be the probability measure on $\set{\plus,\free}$ defined by
	\[
	\Big(\bhu(\plus),\bhu(\free)\Big)
	=\bigg(
	\prod_{j=1}^{K-1}\eta_j,
	1-\prod_{j=1}^{K-1}\eta_j\bigg)\,.
	\]
Let $\hREC_\EPS\mu$ denote the law of $\bhu$.
Thus $\hREC_\EPS$ defines a mapping from $\PINT$ to $\hcP$, which captures how randomness is passed through the clause update \eqref{e:first.defn.of.bhu} of the frozen model recursions. We write $\hREC\equiv\hREC_0$ for the $\EPS=0$ case.
\end{dfn}

\begin{dfn}[distributional effect of variable recursion~\eqref{e:intro.defn.bmeta}]
\label{d:distributional.var.recurs}
 Given $\bhmu\in\hcP$, let $\vec{\bhu}\equiv(\bhu^\plus_i,\bhu^\minus_i)_{i\ge1}$
be an array of i.i.d.\ samples from $\bhmu$, and 
write $\hat{u}^\PM_i\equiv\bhu^\PM_i(\plus)\in[0,1)$.
Let $\ud\sim\POpm$, and define
	\[
	\Pi^\plus\equiv
	\Pi^\plus(\ud,\vec{\hat{u}})
	\equiv
	\prod_{i=1}^{d^\plus}\bigg(
	1 - \hat{u}^\plus_i
	\bigg),\quad
	\Pi^\minus\equiv
	\Pi^\minus(\ud,\vec{\hat{u}})\equiv
	\prod_{i=1}^{d^\plus}\bigg(
	1 - \hat{u}^\minus_i
	\bigg)\,,
	\]
and note that $\Pi^\PM\in(0,1]$. Let $\bmeta\equiv\bmeta(\vec{\bhu})$ be the probability measure defined by
	\beq\label{e:eta.in.terms.of.signed.Pis}
	\bigg(
	\bmeta(\plus),\bmeta(\minus),\bmeta(\free)
	\bigg)
	=\bigg(\f{\Pi^\minus(1-\Pi^\plus)}
		{ \Pi^\plus + \Pi^\minus - \Pi^\plus \Pi^\minus },
	\f{\Pi^\plus(1-\Pi^\minus)}
		{\Pi^\plus + \Pi^\minus - \Pi^\plus \Pi^\minus },
	\f{\Pi^\plus\Pi^\minus}
		{\Pi^\plus + \Pi^\minus - \Pi^\plus \Pi^\minus }
	\bigg)\,.
	\eeq
Let $\dREC\bhu$ denote the law of this $\bmeta$. Thus $\dREC$ defines a mapping from $\hcP$ to $\cP$, which captures how randomness is passed through the variable \textsc{bp} recursion.
\end{dfn}

\begin{dfn}[full distributional recursion] \label{d:full.dist.recurs}
Let $\REC_\EPS$ denote the composition $\dREC \circ \hREC_\EPS$: this gives a mapping from $\PINT$ to $\cP$, which captures how randomness is passed through one full update (clause updates follows by variable update) of variable-to-clause messages. We can explicitly describe $\REC_\EPS$ as follows: given $\mu\in\PINT$, let $\ueta$ be
an array of i.i.d.\ samples from $\mu$, as in \eqref{e:defn.array.ueta}. Independently, let $\ud\sim\POpm$, and let $\vec{K}\equiv(K_i)_{i\ge1}$ be a sequence of i.i.d.\ samples from $\clausedeg_\EPS$ (as in Definition~\ref{d:PGW.with.clauses.one.less}). 
From this, define the random variables
	\[\Pi^\PM
	\equiv \Pi^\PM(\ud,\ueta)
	\equiv \prod_{i=1}^{d^\PM}
			\bigg(
			1 - \prod_{j=1}^{K_i-1} \eta^\PM_{ij}
			\bigg)
	=\prod_{i=1}^{d^\PM}
			\bigg(
			1 - \hat{u}^\PM_i
			\bigg)
	\,.\]
Substitute these $\Pi^\PM$ into \eqref{e:eta.in.terms.of.signed.Pis}
to define $\bmeta=\bmeta(\ud,\vec{K},\ueta)$. Then the law of $\bmeta(\ud,\vec{K},\ueta)$ is $\REC_\EPS\mu\in\cP$. As in \eqref{e:intro.dist.recurs} we let
	\[R(\ud,\vec{K},\ueta) 
	\equiv \Big[\bmeta(\ud,\vec{K},\ueta)
	\Big](\minus)\,.\]
As we noted in \S\ref{ss:intro.onersb.threshold},
since all the $\eta$'s must lie in $[0,1)$,
we must have $\Pi^\PM\in(0,1]$ and therefore $R(\ud,\vec{K},\ueta)\in[0,1)$. We finally define $\Rec_\EPS\mu\in\PINT$ to be the law of $R(\ud,\vec{K},\ueta)$, so $\Rec_\EPS$ gives a mapping from $\PINT$ to itself.\footnote{The formal characterization of $\Rec_\EPS$ is that for any measurable $B\subseteq [0,1)$,
	\[(\Rec_\EPS\mu)( \eta \in B )= 
	\sum_{\ud}
	\POpm(\vec d) 
	\int \Bigg[
	\int\Ind{R( \ud,\vec{K},\ueta )\in B}
	\, d\mu^{\otimes}(\ueta)\Bigg]
	\, d(\clausedeg_\EPS)^\otimes(\vec{K})
	\,,\]
where we abbreviate $\mu^{\otimes}$ for the law of the array $\ueta$ of i.i.d.\ samples from $\mu$, and similarly we abbreviate $(\clausedeg_\EPS)^\otimes$
for the law of the sequence $\vec{K}$ of i.i.d.\ samples from $\clausedeg_\EPS$.} We denote $\REC\equiv\REC_\EPS$ and $\Rec\equiv\Rec_\EPS$ for the $\EPS=0$ case.
\end{dfn}

\begin{dfn}[coupled sequences of messages] 
\label{d:coupled.seq.of.messages}
Recalling the statement of Proposition~\ref{p:fp}, let $\mu^0=\delta_{1/2}$. For $\ell\ge1$ let $\mu^{\ell,\EPS}\equiv(\Rec_\EPS)^\ell\mu^0$ and $\bmu^{\ell,\EPS}\equiv\REC_\EPS\mu^{\ell-1,\EPS}$. Moreover, for all $\ell\ge0$ let $\bhmu^{\ell+1/2,\EPS}\equiv\hREC_\EPS\mu^{\ell,\EPS}$. The natural mapping from $\cP$ to $\PINT$ (as discussed above) takes $\bmu^{\ell,\EPS}$ to $\mu^{\ell,\EPS}$. Recalling the discussion below \eqref{e:def.FF.of.tree}, if we take $\tree_{\vrt\crt} \sim\PGW_\EPS$ and define the random sequence
	\beq\label{e:coupling.of.all.eta.ell}
	(\bmeta^\ell)_{\ell\ge0}
	\equiv \bigg(\FF_\ell(
	\tree_{\vrt\crt} )\bigg)_{\ell\ge0}\,,
	\eeq
then the marginal law of each $\bmeta^\ell$ is precisely $\bmu^{\ell,\EPS}$. The marginal law of $\eta^\ell \equiv \bmeta^\ell(\minus) \in [0,1)$ is 
$\mu^{\ell,\EPS}$. Likewise, if we take $\hat{\tree}_{\crt\vrt}\sim\hPGW_\EPS$ and define
	\[
	(\bhu^{\ell+1/2})_{\ell\ge0}
	\equiv\bigg(
	\FF_{\ell+1/2}(\hat{\tree}_{\crt\vrt})
	\bigg)_{\ell\ge0}\,,
	\]
then the marginal law of each $\bhu^{\ell+1/2}$ is precisely $\bhmu^{\ell+1/2,\EPS}$. We drop $\EPS$ from the notation when $\EPS=0$.
\end{dfn}

\subsection{Concentration bounds for the distributional recursion}
\label{ss:technical.bounds.on.dist.rec}

In this subsection we prove two lemmas on the distributional recursion described above: Lemma~\ref{l:X.tail.bounds} studies the distributional effect of a single clause update, and Lemma~\ref{l:mu.ell.tail.bounds} gives concentration bounds on the variable-to-clause messages. The lemmas will be applied below in \S\ref{ss:root.is.robust} to show that under the $\uPGW$ measure, the root is $1$-nice with very good probability.

\begin{lem}\label{l:mu.ell.tail.bounds}
Let $\mu$ be any probability measure over $\eta\in[0,1)$ that satisfies the bounds
\begin{enumerate}[(I)]
\item \label{i:mu.ell.upper.tail.bound} 
$\mu(\eta \ge 1/2+s)
\le \mu(\log[\eta/(1-\eta)] \ge 4 s)\le \exp(-9s2^{k/4})$ for all $s\ge 2^{-k/4}$;
\item \label{i:mu.ell.left.tail.bound}
$\mu(\eta\le 1/2-s) \le \exp(-9s2^{k/4})$
 for all $2^{-k/4} \le s \le 1/2$.
\end{enumerate}
Then the measure $\mu^{\ell,\EPS}\equiv(\Rec_\EPS)^\ell\mu$ satisfies the same bounds
\eqref{i:mu.ell.upper.tail.bound}~and~\eqref{i:mu.ell.left.tail.bound} for all $\ell\ge0$. In addition, for all $\ell\ge1$, the measure $\bmu^{\ell,\EPS}=\REC_\EPS\mu^{\ell-1,\EPS}$ satisfies the bound
	\beq\label{i:mu.ell.eta.of.free.estimate}
	\bmu^{\ell,\EPS}\bigg(
	\Big|2^{k+1}\bmeta(\free)-1\Big|
	\ge \f1{2^{k/4}} \bigg)
	\le \f1{\exp(2^{k/4})}\,.
	\eeq
The estimates of this lemma hold for all $0\le\EPS\le1$.
\end{lem}


\begin{rmk*} In Lemma~\ref{l:mu.ell.tail.bounds}, we remark that the right tail bound \eqref{i:mu.ell.upper.tail.bound} tends to zero in the limit $s\to\infty$, which is consistent with $\set{\eta=1}$ having zero measure under $\mu$. On the other hand, the left tail bound \eqref{i:mu.ell.left.tail.bound} does not go below $\exp\{-2^{k/4}\}$, which means that $\mu(\eta=0)$ can be positive. Although the precise bound \eqref{i:mu.ell.left.tail.bound} is likely suboptimal, we point out that, under the conditions of Lemma~\ref{l:mu.ell.tail.bounds}, $\mu^\ell(\eta=0)$ must indeed be positive for all $\ell\ge1$. This is simply because it holds with positive probability that $d^\minus$ is zero, which implies that the empty product $\Pi^\minus$ is one, which in turn implies that $R(\ud,\ueta)=0$. The probability for $d^\minus$ to be zero is
	\[
	\P\bigg(\Pois\bigg(\f{\alpha k}{2}\bigg)=0\bigg)
	= \f1{\exp(\alpha k/2)}
	\ge \f1{\exp( k2^k)}\,,
	\]
so $\mu^\ell(\eta=0)\ge \exp(-k2^k)$ for all $\ell\ge1$.
\end{rmk*}

\begin{lem}[bounds on clause recursion] \label{l:X.tail.bounds}
Let $\mu$ be any probability measure over $\eta\in[0,1)$, and let $\bhmu=\hREC_\EPS\mu$
for $\hREC_\ep$ as given by Definition~\ref{d:distributional.clause.recursion}. Sample $\bhu\sim\bhmu$, and note that $\bhu(\plus)\in[0,1)$ almost surely. Let
	\[
	X \equiv \log \f1{1-\bhu(\plus)}
	\ge \bhu(\plus)
	\ge0\,.
	\]
Write $\P$ and $\E$ for probability and expectation over the law of $X$. If $\mu$ satisfies the conditions \eqref{i:mu.ell.upper.tail.bound} and \eqref{i:mu.ell.left.tail.bound} of Lemma~\ref{l:mu.ell.tail.bounds}, then $X$ satisfies the following estimates:
\begin{enumerate}[(a)]
\item \label{i:X.left.tail} $\P( X \le (2^{-\xi}/2)^{k-1}) \le \exp\{-\Omega(\xi 2^{k/4})\}$ for all $k/2^{k/4} \le \xi\le1$.
\item \label{i:X.right.tail.above.one}
For all $x\ge1$, $\P(X\ge x) \le \exp\{-\Omega(kx2^{k/4})\}$;
\item \label{i:X.right.tail.one.tenth}
$\P(X\ge (2^{1/10}/2)^{k-1}) \le \exp\{ -\Omega(k2^{k/4})\})$;
\item \label{i:e.X.estimate}
$\E X = [1 + O(k/2^{k/8})]/2^{k-1}$;
\item \label{i:e.X.truncation.estimate}
$\E[X\Ind{X \ge (2^{1/10}/2)^{k-1}}]\le \exp\{-\Omega(k2^{k/4})\}$.
\end{enumerate}
The estimates of this lemma hold uniformly over all $0\le\EPS\le1$.

\begin{proof}
We can sample $X$ as follows: let $(\eta_j)_{j\ge1}$ be a sequence of i.i.d.\ samples from $\mu$, let $K\sim\clausedeg_\EPS$, and let
	\[X \equiv -\log\bigg(1 - \prod_{j=1}^{K-1}\eta_j\bigg)
	\ge \prod_{j=1}^{K-1}\eta_j
	\equiv \hat{u} 
	\ge \prod_{j=1}^{k-1}\eta_j
	\ge0\,.\]
If $0\le \xi\le1$ then $2^{-\xi}/2 \le 1/2-\xi/4$. Consequently, for all 
$4/2^{k/4} \le \xi \le 1$, we find
	\[
	\P\bigg( 
	\hat{u} \le \bigg(\f{2^{-\xi}}{2}\bigg)^{k-1} \bigg)
	\le
	\P\bigg(
	\min_{j\in[k-1]}
	\eta_j \le \f{2^{-\xi}}{2}
	\bigg)
	\le k \mu\bigg(\eta \le \f{2^{-\xi}}{2}\bigg)
	\le k\mu\bigg(
	\eta \le \f12-\f{\xi}{4}\bigg)
	\le \f{k}{\exp\{ \Omega(\xi 2^{k/4}) \}}\,,\]
where the last step is by the assumed lower bound~\eqref{i:mu.ell.left.tail.bound}. 
Since $X\ge \hat{u}$, we obtain,
for all $4/2^{k/4}\le\xi\le1$, that
	\beq\label{e:X.left.tail}
	\P\bigg(X \le \bigg(\f{2^{-\xi}}{2}\bigg)^{k-1} \bigg)
	\le
	\P\bigg( 
	\hat{u} \le \bigg(\f{2^{-\xi}}{2}\bigg)^{k-1} \bigg)
	\le \f{k}{\exp\{ \Omega(\xi 2^{k/4}) \}}\,.
	\eeq
This implies \eqref{i:X.left.tail}. We next consider the right tail of $X$. We have $x + \log(1-e^{-x}) \ge \Omega(x)$ uniformly over $x\ge1$, so
	\begin{align*}
	\P(X \ge x)
	&=\P\bigg(\hat{u}=\prod_{j=1}^{K-1}\eta_j 
	\ge 1-e^{-x}\bigg)
	\le \P\bigg( 
		\max_{1\le j\le k-2} \eta_j \ge 1-e^{-x}
		\bigg) \\
	&= \mu\bigg(
		\log\f{\eta}{1-\eta}
		\ge x + \log(1-e^{-x})
		\bigg)^{k-2}
		\le \f1{\exp\{\Omega( kx 2^{k/4} ) \}}
	\end{align*}
for all $x\ge1$, where the last step follows by using the assumed upper bound \eqref{i:mu.ell.upper.tail.bound}. This proves \eqref{i:X.right.tail.above.one}; and we obtain
	\beq\label{e:X.above.one.expectation}
	\E\bigg(X \Ind{X\ge1}\bigg)
	\le \f1{\exp\{\Omega( k 2^{k/4} ) \}}
	\eeq
by integrating \eqref{i:X.right.tail.above.one} over $x\ge1$. Next, for $0<\xi<1$, we have
	\[
	\P\bigg(\hat{u}\ge \bigg(\f{2^\xi}{2}\bigg)^{k-1}
	\bigg)
	\le\P\bigg(
	\eta_j \ge \f{2^{\xi/2}}{2}
	\textup{ for at least $\f{(k-2)\xi}2$ indices
	$j\in[k-2]$}
	\bigg)
	\le k^{k\xi}
	\mu\bigg(
	\eta\ge \f{2^{\xi/2}}{2}
	\bigg)^{(k-2)\xi/2}\,,\]
where the factor $k^{k\xi}$ upper bounds the choice of indices $j$. We then note that
	\[s(\xi)
	\equiv \log\f{2^\xi/2}{1-2^\xi/2}
	\ge (2\log2)\xi\]
by calculus. Substituting into the previous bound gives,
for all $2^{-k/4} \le \xi<1$,
	\beq\label{e:hat.u.upper.tail.small}
	\P\bigg(\hat{u}\ge \bigg(\f{2^\xi}{2}\bigg)^{k-1}
	\bigg)
	\le k^{k\xi}
	\mu\bigg(
	\log \f{\eta}{1-\eta} \ge (2\log2)\xi
	\bigg)^{(k-2)\xi/2}
	\le
	\f{k^{k\xi}}{\exp\{\Omega(k\xi^2 2^{k/4})\}}
	\,,
	\eeq
having again used the assumed upper bound \eqref{i:mu.ell.upper.tail.bound}. We next note that if $X \ge (2^{2\xi}/2)^{k-1}$, then
	\[
	\hat{u}=1-e^{-X}
	\ge 
	1-\exp\bigg\{-
		\bigg(\f{2^{2\xi}}{2}\bigg)^{k-1}
	\bigg\}
	= \bigg(\f{2^{2\xi}}{2}\bigg)^{k-1}
	\exp\bigg\{
	- O\bigg( \f{2^{2\xi}}{2}\bigg)^{k-1} \bigg)
	\bigg\}
	\ge \bigg(\f{2^\xi}{2}\bigg)^{k-1}\,,
	\]
where the last inequality holds if we restrict to $\xi\le1/4$. Combining with \eqref{e:hat.u.upper.tail.small} gives
	\beq\label{e:X.upper.tail.small}
	\P\bigg(X
	\ge \bigg(\f{2^{2\xi}}{2}\bigg)^{k-1}\bigg)
	\le
	\f{k^{k\xi}}{\exp\{\Omega(k\xi^2 2^{k/4})\}}
	\le
	\f1{\exp\{\Omega(k\xi^2 2^{k/4})\}}
	\eeq
for all $k/2^{k/4} \le \xi \le 1/4$, of which a special case is the claimed bound \eqref{i:X.right.tail.one.tenth}. Combining \eqref{e:X.left.tail}, \eqref{e:X.above.one.expectation}, and \eqref{e:X.upper.tail.small} gives \eqref{i:e.X.estimate}. Finally, we can combine \eqref{i:X.right.tail.one.tenth} with \eqref{e:X.above.one.expectation} to bound
	\[0\le \E\bigg[X\mathbf{1}\bigg\{X \ge \bigg(
			\f{2^{1/10}}{2}
		\bigg)^{k-1} \bigg\}\bigg]
	\le \P\bigg( X \ge 
		\bigg(
			\f{2^{1/10}}{2}
		\bigg)^{k-1}\bigg)
		+\E\bigg( X \Ind{X\ge1} \bigg)\\
	\le \f1{\exp\{ \Omega(k2^{k/4}) \}}\,,\]
which gives \eqref{i:e.X.truncation.estimate}.
\end{proof}
\end{lem}

\begin{proof}[Proof of Lemma~\ref{l:mu.ell.tail.bounds}]
First we recall that if $D$ is a Poisson random variable with mean $\lambda$, then a standard Chernoff bound gives, with $f(u)\equiv (1+u)\log(1+u)-u$,
	\beq\label{e:poisson.chernoff.bound}
	\P(|D-\lambda|\ge y)
	\le 2\exp\bigg\{
	-\lambda
	\min\bigg\{
	f\bigg(\f{y}{\lambda}\bigg)\,,
	f\bigg(-\f{y}{\lambda}\bigg)
	\bigg\}
	\bigg\}\,.\eeq
Suppose inductively that the bounds
\eqref{i:mu.ell.upper.tail.bound}~and~\eqref{i:mu.ell.left.tail.bound} hold for $\mu^\ell$.
Sample $\ud=(d^\plus,d^\minus)$ from $\POpm$
and $\ueta$ from $(\mu^\ell)^{\otimes\infty}$, 
so that $\eta^{\ell+1}=R(\ud,\ueta)$ has law $\mu^{\ell+1}$. We then expand
	\beq\label{e:log.eta.ratio.in.terms.of.Pi}
	\log \f{\eta^{\ell+1}}{1-\eta^{\ell+1}}
	=\log \f{ \Pi^\plus(1-\Pi^\minus) }{\Pi^\minus}
	=\log(1-\Pi^\minus)
	+\log\f1{\Pi^\minus}
	-\log\f1{\Pi^\plus}\,.\eeq
Conditional on $\ud$ we can decompose $\Sigma^\PM\equiv\log(1/\Pi^\PM)$ as a sum of i.i.d.\ terms:
	\beq\label{e:Pi.as.sum.of.X}
	\Sigma^\PM
	\equiv\log \f1{\Pi^\PM}
	= \sum_{i=1}^{d^\PM}
	\bigg\{-\log \bigg(
			1 - \prod_{j=1}^{k-1} \eta^\PM_{ij}
			\bigg)\bigg\}
	\equiv \sum_{i=1}^{d^\PM} X_i^\PM
	\ge0\,,\eeq
where the $X_i^\PM$ are equidistributed as the random variable $X$ of Lemma~\ref{l:X.tail.bounds}. The remainder of the proof is divided into a few steps.\smallskip

\noindent\bemph{Step 1. Concentration bounds for $\Sigma^\PM$.} Consider $\Sigma^\plus$, and define the truncated random variables
	\[Y_i\equiv X^\plus_i
	\mathbf{1}\bigg\{
	 X^\plus_i \le \bigg(\f{2^{1/10}}{2}\bigg)^{k-1}
	 \bigg\}\,,\quad
	Y\equiv X
	\mathbf{1}\bigg\{
	X \le \bigg(\f{2^{1/10}}{2}\bigg)^{k-1}
	 \bigg\}\,.\]
Next define the events
	\[ \BIG
	\equiv\bigg\{
	\max_{i\le \alpha k}
	X^\plus_i \ge \bigg(\f{2^{1/10}}{2}\bigg)^{k-1}
	\bigg\}\,,\quad
	\DEG
	\equiv 
	\bigg\{\bigg| d^\plus-\f{\alpha k}{2}\bigg|
	\ge k^2 2^{5k/8}\bigg\}\,.
	\]
On the complement of the event $\DEG\cup\BIG$, 
it follows using Lemma~\ref{l:X.tail.bounds}\eqref{i:e.X.truncation.estimate} that
	\begin{align*}
	\bigg|\sum_{i=1}^{d^\plus} X^\plus_i
	- \f{\alpha k}{2}\E X
	\bigg|
	&=\bigg|\sum_{i=1}^{d^\plus} Y_i
	- \f{\alpha k}{2}\E X
	\bigg|
	\le
	\bigg|
	\sum_{i=1}^{\alpha k/2} Y_i
	- \f{\alpha k}{2}\E Y
	\bigg|
	+\f{k^2 2^{5k/8}}{2^{9k/10}}
	+ \alpha k \E(X-Y)
	\\
	&\le\bigg|
	\sum_{i=1}^{\alpha k/2} Y_i
	- \f{\alpha k}{2}\E Y
	\bigg|
	+\f{k^2 2^{5k/8}}{2^{9k/10}} + \f{\alpha k}{\exp\{\Omega(k2^{k/4})\}}
	\le
	\bigg|
	\sum_{i=1}^{\alpha k/2} Y_i
	- \f{\alpha k}{2}\E Y
	\bigg|
	+ \f{1}{k^5 2^{k/4}}\,.
	\end{align*}
Consequently, for $t \ge 1/(k2^{k/4})$, we obtain by a union bound that
	\[
	\P\bigg(
	\bigg|
	\sum_{i=1}^{d^\plus} X^\plus_i
	- \f{\alpha k}{2}\E X
	\bigg|\ge t\bigg)
	\le\P(\BIG\cup\DEG)+
	\P\bigg(
	\bigg|
	\sum_{i=1}^{\alpha k/2} Y_i
	- \f{\alpha k}{2}\E Y
	\bigg|\ge \f{t}{2}\bigg)\,.
	\]
By Lemma~\ref{l:X.tail.bounds}\eqref{i:X.right.tail.one.tenth}
and a trivial union bound over $1\le i\le \alpha k$, we have
	\[\P(\BIG) \le 
	\f{\alpha k }{\exp\{\Omega(k2^{k/4})\}}
	\le \f1{\exp\{\Omega(k2^{k/4})\}}\,.\]
In the Poisson Chernoff bound
\eqref{e:poisson.chernoff.bound},
for small $u$ we have $f(u)\asymp u^2$, so
$\P(\DEG)\le\exp\{-\Omega(k^3 2^{k/4})\}$. Finally, by the Azuma--Hoeffding inequality, it holds for all $t\ge0$ that
	\beq\label{e:plain.azuma.hoeffding.bound}
	\P\bigg(
	\bigg|\sum_{i=1}^{\alpha k/2} (Y_i-\E Y_i)\bigg|
	\ge t\bigg)
	\le \f{O(1)}{\exp\{\Omega(t^2 2^{4k/5}/k)\}}\,.
	\eeq
Recall from $\eqref{e:Pi.as.sum.of.X}$ that the sum of $X^\plus_i$ over $1\le i\le d^\plus$ is exactly $\Sigma^\plus$. Combining the above bounds gives 
	\beq\label{e:azuma.hoeffding.on.trunc}
	\P\bigg(
	\bigg|\Sigma^\plus
	- \f{\alpha k}{2}\E X
	\bigg|\ge t\bigg)
	\le \f1{\exp\{ \Omega(k 2^{k/4}) \}}
	+\f1{\exp\{ \Omega( t^2 2^{4k/5}/k) \}}
	\,.\eeq
The same bound holds for $\Sigma^\minus$, which is equidistributed as $\Sigma^\plus$.

\smallskip\noindent\bemph{Step 2. Proof of right tail bound in moderate deviations regime.} We now rewrite the decomposition~\eqref{e:log.eta.ratio.in.terms.of.Pi} as
	\beq\label{e:log.eta.ratio.rewrite}
	\log \f{\eta^{\ell+1}}{1-\eta^{\ell+1}}
	= \log\bigg(1-\f1{\exp \Sigma^\minus}\bigg)
	+\Sigma^\minus-\Sigma^\plus
	\le |\Sigma^\minus-\Sigma^\plus|
	\le 
	\bigg|\Sigma^\plus-\f{\alpha k}{2}\E X\bigg|
	+\bigg|\Sigma^\minus-\f{\alpha k}{2}\E X\bigg|
	\,.
	\eeq
We can then apply the preceding bound~\eqref{e:azuma.hoeffding.on.trunc} to obtain
that
for all $k/2^{11k/40} \le s \le 1$,
	\[
	\mu^{\ell+1}\bigg(
	\log \f{\eta^{\ell+1}}{1-\eta^{\ell+1}} \ge s 
	\bigg)
	\le \P\bigg(
		|\Sigma^\minus-\Sigma^\plus| \ge s
	\bigg)
	\le 2\P\bigg(
	\bigg|\Sigma^\minus-\f{\alpha k}{2}\E X\bigg|
	\ge \f{s}{2}
	\bigg)
	\le \f1{\exp\{ \Omega( s k 2^{k/4}) \}}\,.
	\]
This implies the desired upper bound \eqref{i:mu.ell.upper.tail.bound} for $2^{-k/4} \le s \le 1$.

\smallskip\noindent\bemph{Step 3. Proof of left tail bound.} 
We will show $\mu(\eta\le1/2-s)\le\exp(-\Omega(sk2^{k/4}))$ for all $2^{-k/4}\le s\le 1/2$; the bound \eqref{i:mu.ell.left.tail.bound} then follows. For $0\le s\le 1/2$, note that $\eta\le1/2-s$ if and only if
	\[
	\log\f{\eta}{1-\eta}
	\le \log\f{1-2s}{1+2s} \le -4s\,.
	\]
Recall the 
decomposition~\eqref{e:log.eta.ratio.rewrite}
for $\log[\eta^{\ell+1}/(1-\eta^{\ell+1})]$.
It follows that
	\beq\label{e:proof.of.left.tail.bound}
	\mu^{\ell+1}\bigg(
	\eta^{\ell+1}\le\f12-s
	\bigg)
	\le\P\bigg(
	\log\bigg(1-\f1{\exp \Sigma^\minus}\bigg)
	\le -2s
	\bigg)+\P\bigg(\Sigma^\minus-\Sigma^\plus \le -2s
	\bigg)\,.\eeq
On the right-hand side of
\eqref{e:proof.of.left.tail.bound},
we bound the first term by noting that
	\[\P\bigg(
	\log\bigg(1-\f1{\exp \Sigma^\minus}\bigg)
	\le -2s
	\bigg)
	=
	\P\bigg(
	\Sigma^\minus\le\log \f{1}{1-\exp(-2s)}
	\le \log \f1s
	\bigg)\,,
	\]
again for all $0\le s\le 1/2$.
Recall Lemma~\ref{l:X.tail.bounds}\eqref{i:e.X.estimate}, and let $\bar{s}$ be the solution to the equation
	\[
	\log\f1{\bar{s}}
	= \f{\alpha k}{2} \E X
	= \f{2^k k\log2}{2}
	\f{1 + O(k/2^{k/8})}{2^{k-1}}
	\ge \f{k\log2}{2}\,,
	\]
so $\bar{s} \le 2^{-k/2}$. Then for $2^{-k/4}\le s\le 1/2$ we have $s'\equiv s-\bar{s} \ge s/2$, It follows that 
the first term on the right-hand side of \eqref{e:proof.of.left.tail.bound}
is upper bounded by
	\beq\label{e:left.tail.Sigma}
	\P\bigg(\Sigma^\minus\le\log\f1s\bigg)
	=\P\bigg(\Sigma^\minus- \f{\alpha k}{2} \E X
	\le\log\f1{\bar{s}+s/2}-\log\f1{\bar{s}}
	\le -s\bigg)
	\le \f1{\exp\{ \Omega(sk2^{k/4}) \}}\,,
	\eeq
where the last step uses the earlier estimate on $\Sigma^\minus$. The same estimate implies that the second term on the right-hand side of \eqref{e:proof.of.left.tail.bound} is also at most $\exp\{-\Omega(sk2^{k/4})\}$; and this concludes the verification of the lower bound~\eqref{i:mu.ell.left.tail.bound}.

\smallskip\noindent\bemph{Step 4. Proof of right tail bound in large deviations regime.} We now verify \eqref{i:mu.ell.upper.tail.bound} for $s\ge1$. Define the event
	\[
	\DEG^s
	\equiv \bigg\{ \bigg|d^\plus-\f{\alpha k}{2}\bigg|
	\ge s2^{3k/4} \bigg\}\,.
	\]
Now note that in the Poisson Chernoff bound \eqref{e:poisson.chernoff.bound}, the function $f(u)=(1+u)\log(1+u)-u\ge0$ is strictly convex with respect to $u\in(-1,\infty)$. Consequently, if $u\ge 2 u_0 > 0$, then $f(u) \ge f(u_0) + f'(u_0) (u-u_0) \ge f'(u_0) u/2$. Likewise, if $u \le 2u_0 < 0$, then $f(u) \ge f'(u_0) u/2$. 
We note also that $f'(u)=\log(1+u)$, so if $u_0$ is small then $f'(u_0)\asymp u_0$. We can therefore conclude that for all $s\ge1$, we have
	\[
	\P(\DEG^s)
	\le \f1{\exp\{\Omega(s2^{k/2}/k)\}}\,.
	\]
Recall from above that $Y_i$ denotes the truncated version of $X^\plus_i$. On the event $\DEG^s$ we have
	\begin{align*}
	&\bigg|\sum_{i=1}^{d^\plus} Y_i
	-\f{\alpha k}{2}\E X\bigg|
	\le \bigg|\sum_{i=1}^{\alpha k/2}
		(Y_i-\E Y_i)\bigg|
	+\bigg|\sum_{i=1}^{d^\plus} Y_i
	- \sum_{i=1}^{\alpha k/2} Y_i \bigg|
	+ \f{\alpha k}{2}\E(X-Y)\\
	&\qquad\le \bigg|\sum_{i=1}^{\alpha k/2}
		(Y_i-\E Y_i)\bigg|
		+ s 2^{k/4} \bigg( \f{2^{1/10}}2 \bigg)^{k-1}
		+ \f1{\exp\{\Omega(k2^{k/4})\}}
	\le \bigg|\sum_{i=1}^{\alpha k/2}
		(Y_i-\E Y_i)\bigg|
		+ \f{s}{2^{k/2}}
	\,,
	\end{align*}
where we used Lemma~\ref{l:X.tail.bounds}\eqref{i:e.X.truncation.estimate} to bound $\E(X-Y)$,
and the last bound holds for all $s\ge1$.
Then, by combining the bound on $\P(\DEG^s)$ with the Azuma--Hoeffding bound~\eqref{e:plain.azuma.hoeffding.bound}, we find
	\beq\label{e:azuma.hoeffding.with.large.dev}
	\P\bigg(
	\bigg|\sum_{i=1}^{d^\plus} Y_i
	- \f{\alpha k}{2} \E X \bigg|
	\ge \f{s}{k}
	\bigg)
	\le \f1{\exp\{\Omega(s2^{k/2}/k)\}}
	+ \f1{\exp\{ \Omega( s^2 2^{4k/5} /k^3 )\}}
	\le \f1{\exp\{\Omega(s2^{k/2}/k)\}}\,.
	\eeq
Next we account for the difference between $Y_i$ and $X^\plus_i$. Recall Lemma~\ref{l:X.tail.bounds} parts~\eqref{i:X.right.tail.above.one}~and~\eqref{i:X.right.tail.one.tenth}. Together they imply,
 with $\preccurlyeq$ denoting stochastic domination, that
	\beq\label{e:stoch.dom}
	\sum_{i=1}^{d^\plus}
	(X^\plus_i-Y_i)
	\preccurlyeq
	\sum_{i=1}^{d^\plus}I_i(1+Z_i)\eeq
where $I_i$ are i.i.d.\ $\textup{Ber}(e^{-\theta})$ indicators, and the $Z_i$ are i.i.d.\ $\theta^{-1}\textup{Exp}$ random variables
(where $\textup{Exp}$ denotes a standard exponential random variable), with $\theta=ck2^{k/4}$ for some absolute constant $c>0$. By Poisson thinning,
	\[
	A\equiv
	\sum_{i=1}^{d^\plus} I_i
	\sim \Pois\bigg(
	\f{\alpha k}{2e^\theta}
	\equiv\lambda'\bigg)\,.
	\]
Note that $\lambda'$ is very small, and it follows from 
\eqref{e:poisson.chernoff.bound} that for all $s\ge1$,
	\[
	\P\bigg(A \ge \f{s}{k^{1/2}}\bigg)
	\le
	\exp\bigg\{
	-\Omega\bigg(
	\f{s}{k^{1/2}}\log \f{s/k^{1/2}}{\lambda'} \bigg)
	\bigg\}
	\le\f1{\exp\{\Omega(s\theta/k^{1/2})\}}
	\le \f1{\Omega(s k^{1/2} 2^{k/4})}\,.
	\]
Conditioned on $A$, the other term of \eqref{e:stoch.dom}
is distributed as a gamma random variable with shape parameter $A$,
	\[
	B\equiv
	\sum_{i=1}^{d^\plus} I_i Z_i
	\sim \f{\textup{Gamma}(A)}{\theta}\,.\]
Let $E$ be a standard exponential random variable,
independent of $A$. The moment-generating function of $B$ is
	\[
	m(t) = \E(e^{tB})
	= \E\bigg[\bigg\{\E \exp\bigg(\f{t E}{\theta}\bigg)\bigg\}^A\bigg]
	= \E \bigg[\bigg\{\f1{1-t/\theta}\bigg\}^A\bigg]
	= \exp\bigg\{
	\f{\lambda't}{\theta-t}
	\bigg\}\,,
	\]
for $t<\theta$. For $x>\E B=\lambda'/\theta$, optimizing over $t$ gives
	\[
	\P(B\ge x)
	\le
	\exp\bigg\{ - \theta x
	\bigg[ 1- \bigg(
	\f{\lm'}{\theta x}\bigg)^{1/2} \bigg]^2 \bigg
	\}\,,
	\]
which implies $\P(B \ge s/k^{1/2}) \le \exp\{-\Omega(sk^{1/2} 2^{k/4})\}$ for all $s\ge1$. Combining these estimates on $A$ and $B$ with our earlier bound~\eqref{e:azuma.hoeffding.with.large.dev} gives altogether
	\[
	\P\bigg(\bigg|
	\Sigma^\plus
	-\f{\alpha k}{2}\E X\bigg|
	\ge \f{s}{2}
	\bigg)
	\le \f1{\exp\{\Omega(sk^{1/2} 2^{k/4})\}}\,.
	\]
The same bound holds for $\Sigma^\minus$.
Substituting into \eqref{e:log.eta.ratio.rewrite},
we obtain the desired bound
\eqref{i:mu.ell.upper.tail.bound} for all $s\ge1$.

\smallskip\noindent\bemph{Step 5. Conclusion.}
Having verified \eqref{i:mu.ell.upper.tail.bound} and \eqref{i:mu.ell.left.tail.bound}, it remains to show
\eqref{i:mu.ell.eta.of.free.estimate}. Recall that we can express
	\[
	\bmeta_{\ell+1}(\free)
	=\f{\Pi^\plus\Pi^\minus}
		{\Pi^\plus+\Pi^\minus-\Pi^\plus\Pi^\minus }
	= \f1{
	\exp(\Sigma^\plus)+\exp(\Sigma^\minus)
	-1}\,.
	\]
It follows from \eqref{e:azuma.hoeffding.on.trunc}
that $\Sigma^\plus$ is concentrated near $(\alpha k/2) \E X$, which by Lemma~\ref{l:X.tail.bounds}\eqref{i:e.X.estimate} is close to $k\log2$:
	\beq\label{e:simple.bound.on.Sigma}
	\P\bigg(\Big|\Sigma^\plus- k\log2\Big|
		\ge \f{k}{2^{11k/40}}
		\bigg)
	\le \f1{\exp\{\Omega(k 2^{k/4})\}}\,,
	\eeq
and likewise for $\Sigma^\minus$. It follows that
	\[
	\bmu^{\ell+1}\bigg(
	|2^{k+1}\bmeta_{\ell+1}(\free)-1|
	\ge \f1{2^{k/4}}
	\bigg)
	\le
	2\P\bigg(\Big|\Sigma^\plus- k\log2\Big|
		\ge \f{k}{2^{11k/40}}
		\bigg)
	\le \f1{\exp(2^{k/4})}\,,
	\]
which concludes the proof of \eqref{i:mu.ell.eta.of.free.estimate}.
\end{proof}

\subsection{Niceness in the Galton--Watson tree} 
\label{ss:root.is.robust}

The main goal of this subsection is to prove that under the $\uPGW$ measure, the root variable fails to be $1$-nice with very small probability. For technical reasons (which will emerge in the proof of Lemma~\ref{lem-local-D-0}), we will prove a version of this statement which is slightly stronger in two ways. First, we generalize from $\uPGW$ to $\uPGW(T)$ (Definition~\ref{d:PGW.based.on.T}) where $T$ is very sparse, with maximum degree $O(1)$ (not growing with $k$) --- since typical degrees in $\uPGW$ diverge with $k$, it is intuitively plausible that planting the sparse subtree $T$ cannot have a large effect. Second, we replace $1$-nice with a more restrictive property which we now define:

\begin{dfn}[robustness]\label{d:J.robust}
If $U$ is any rooted tree and $x$ is any vertex in $U$, we let $U(x)$ denote the subtree of $U$ that lies below $x$. Given any tree $\tree$ rooted at a variable $\vrt$, we say that $\ptree$ is a \bemph{$\CC$-modification} of $\tree$ if $\ptree$ can be obtained from $\tree$ by
\begin{enumerate}[--]
\item deleting at most one subtree $\tree(u)$ for $u\in N(\vrt)$, and 
\item changing at most $\CC$ subtrees $\tree(u)$ for $u\in\pd_2\vrt$.
\end{enumerate}
The new subtrees $\ptree(u)$, for $u\in\pd_2\vrt$, can be arbitrary. We then say that an acyclic variable $v$ is \bemph{$\CC$-robust} if every $\CC$-modification of $B_r(v)$ is nice. Note that being $\CC$-robust is stronger than being $1$-nice.
\end{dfn}

\begin{ppn}\label{p:uPGW.is.J.robust}
Let $\CC$ be an absolute constant. Let $\P=\uPGW(T)$ where $T$ is a fixed bipartite factor tree, rooted at a variable, with maximum vertex degree at most $\CC$. For $\tree\sim\P$,
	\[
	\P\Big(\textup{$\tree$ is not $\CC$-robust}\Big)
	\le \f1{\exp(\Omega(k2^{k/4}))}\,.
	\]
(We allow $T=\emptyset$, in which case the statement is for $\P=\uPGW(\emptyset)=\uPGW$.) 
\end{ppn}

In fact, this result is a relatively straightforward consequence of the technical lemmas of the previous \S\ref{ss:technical.bounds.on.dist.rec}. 
We first consider the effect of changing a small number of subtrees. If $\tree_{\vrt\crt}$ and $\ptree_{\vrt\crt}$ are both variable-to-clause trees, we will say that they are \bemph{$\CC$-perturbations} of one another if $\ptree_{\vrt\crt}$ can be obtained from $\tree_{\vrt\crt}$ by only changing subtrees $\tree_{\vrt\crt}(u)$ for at most $\CC$ variables $u\in N(\vrt)$. The new subtrees $\ptree_{\vrt\crt}(u)$ are allowed to be arbitrary.\footnote{Note that the $\CC$-perturbation defined here is different from the $\CC$-modification of Definition~\ref{d:J.robust}.} The next lemma says, essentially, that $\CC$-perturbations have very little effect on the outgoing variable-to-clause message. The formal statement is as follows:

\begin{lem}\label{l:perturb.below.variables} Let $\P=\PGW(T)$, where $T\equiv T_{\vrt\crt}$ is any fixed variable-to-clause tree of maximum vertex degree at most $\CC$. We allow $T=\emptyset$, in which case $\P=\PGW(\emptyset)=\PGW$. If $\tree\equiv\tree_{\vrt\crt}$ is sampled from $\P$, then
	\[\P\Bigg(
	\max\Bigg\{
	\sum_{x\in\set{\plus,\minus,\free}}
	\Big| 2^{k\Ind{x=\free}}
	 \bmeta'(x)-\f12\Big|
		\,\Bigg|\,\hspace{-3pt}
	\begin{array}{c}
	\textup{$\bmeta' =\FF_\ell(\ptree_{\vrt\crt})$,
	for $\ptree_{\vrt\crt}$}
		\\
	\textup{a $\CC$-perturbation
		of $\tree_{\vrt\crt}$}
	\end{array}
	\hspace{-3pt}
	\Bigg\}	\ge \f1{2^{k/4}}
	\Bigg)
	\le \f1{\exp(\Omega(k2^{k/4}))}
	\]
for all $\ell\ge0$.

\begin{proof} Throughout this proof we will abbreviate $T\equiv T_{\vrt\crt}$, $\tree\equiv\tree_{\vrt\crt}$, and $\ptree\equiv\ptree_{\vrt\crt}$. From the definitions (see, in particular, the discussion below \eqref{e:def.FF.of.tree}), the measure $\bmeta\equiv\FF_\ell(\tree)$ only depends on the tree up to depth $\ell$ below $\vrt$. For $0\le \ell\le1$, any $\CC$-perturbation has $\bmeta'=\bmeta$, so there is nothing more to prove. We therefore assume $\ell\ge2$ for the rest of the proof.

We next describe a procedure to generate a sample of $\bmeta$. In the fixed tree $T$, we partition the first layer of clauses (at depth $1/2$ below $\vrt$) according to the signs on the edges from these clauses to $\vrt$:
	\[
	(\pd^\plus T,\pd^\minus T)
	\equiv
	\bigg( T \cap \pd\vrt(\plus\crt)\,,
	T \cap \pd\vrt(\minus\crt)\bigg)\,.
	\]
Let $d^\PM(T)\equiv|\pd^\PM T|$; these are both upper bounded by $\CC$. (If $T=\emptyset$ we define $d^\PM(T)\equiv0$.) For $1\le i\le d^\plus(T)$, the $i$-th clause in $\pd^\plus T$ has $\CC^\plus(i)$ child variables in $T$. Let $T(\plus,i,j)$ be the subtree of $T$ descended from the $j$-th child variable of the $i$-th clause in $\pd^\plus(T)$. Define similarly $\CC^\minus(i)$ and $T(\minus,i,j)$. Now define $\bmeta$ by the following steps:
\begin{enumerate}[(i)]
\item Sample an array of independent random trees
$\tree^\PM_{ij}\sim\PGW(T(\PM,i,j))$,
and define the corresponding messages
	\[
	\eta^\PM_{ij}(T)\equiv
	\Big[
	\FF_{\ell-1}(\tree^\PM_{ij})
	\Big](\minus)\,,
	\]
for $1\le i\le d^\PM(T)$ and $1\le j\le \CC^\PM(i)$.

\item Let $\ueta$ be an array (as in \eqref{e:defn.array.ueta}) of i.i.d.\ samples from $\mu^{\ell-1}=
\Rec^{\ell-1}\mu^0$, where $\mu^0=\delta_{1/2}$ as before. Let
	\[
	u^\plus_i(T)
	\equiv \prod_{j=1}^{\CC^\plus(i)} 
	\eta^\plus_{ij}(T)
	\prod_{j=\CC^\plus(i)+1}^{k-1}
	\eta^\plus_{ij}\,,\quad
	u^\plus_i
	\equiv \prod_{j=1}^{k-1} \eta^\plus_{ij}\,.
	\]
Define similarly $u^\minus_i(T)$ and $u^\minus_i$.

\item Let $\ud\equiv(d^\plus,d^\minus)$ be an independent sample from $\POpm$, and define
	\beq\label{e:Pi.with.embedded.T}
	\Pi^\plus(T)
	\equiv\prod_{i=1}^{d^\plus(T)}
	\bigg(1-u^\plus_i(T)\bigg)\,,\quad
	\Pi^\plus
	\equiv\prod_{i=d^\plus(T)+1}^{d^\plus(T)+d^\plus}
	\bigg(1-u^\plus_i\bigg)\,,\quad
	\Pi^{*\plus}
	\equiv
	\Pi^\plus(T)\Pi^\plus\,.
	\eeq
Define similarly $\Pi^\minus(T)$, $\Pi^\minus$, and $\Pi^{*\minus}=\Pi^\minus(T)\Pi^\minus$.
\item Substitute the $\Pi^{*\PM}$ into \eqref{e:eta.in.terms.of.signed.Pis} to define $\bmeta$:
	\[
	\bigg(
	\bmeta(\plus),\bmeta(\minus),\bmeta(\free)
	\bigg)
	=
	\bigg(\f{\Pi^{*\minus}(1-\Pi^{*\plus})}
		{ \Pi^{*\plus} + \Pi^{*\minus} 
			- \Pi^{*\plus} \Pi^{*\minus} },
	\f{\Pi^{*\plus}(1-\Pi^{*\minus})}
		{ \Pi^{*\plus} + \Pi^{*\minus} 
			- \Pi^{*\plus} \Pi^{*\minus} },
	\f{\Pi^{*\plus}\Pi^{*\minus}}
		{ \Pi^{*\plus} + \Pi^{*\minus} 
			- \Pi^{*\plus} \Pi^{*\minus} }
	\bigg)\,.
	\]
\end{enumerate}
The $\bmeta$ that results from this construction can be regarded as a sample of $\FF_\ell(\tree)$ for $\tree\sim\PGW(T)$, although we did not explicitly generate all of $\tree$. Moreover, if $\bmeta'=\FF_\ell(\ptree)$ where $\ptree$ is any $\CC$-perturbation of $\tree$, then $\bmeta'$ can also be obtained from the above procedure by modifying at most $\CC$ of the messages from depth one.

For convenience we will let $\vec{\theta}$ denote the messages that are actually used in the definition of $\bmeta$, so
	\[
	\theta^\plus_{ij}
	\equiv\left\{
	\begin{array}{cl}
	\eta^\plus_{ij}(T)
	&\textup{if }1\le i\le d^\plus(T)
	\textup{ and }1\le j\le \CC^\plus(i);\\
		\eta^\plus_{ij}
	&\textup{otherwise,}
	\end{array}
	\right.
	\]
and similarly $\theta^\minus_{ij}$. To bound the different between $\bmeta$ and $\bmeta'$,
we first assume a \emph{fixed} set of affected indices: without loss of generality, let $\vec{\vartheta}$ denote a new set of messages such that
	\[
	\bigg\{
	(i,j):\vartheta^\PM_{ij}\ne\theta^\PM_{ij}
	\bigg\}
	\subseteq
	\bigg\{
	(i,j):
	1\le i\le 2\CC,
	1\le j\le 2\CC
	\bigg\}\,.
	\]
Returning to \eqref{e:Pi.with.embedded.T},
let $\Sigma^\PM(T)\equiv-\log \Pi^\PM(T)$,
$\Sigma^\PM\equiv-\log \Pi^\PM$,
and $\Sigma^{*\PM}\equiv -\log\Pi^{*\PM}$.
Let $\Xi^\PM(T)$, $\Xi^\PM$, and $\Xi^{*\PM}$
be defined analogously as the $\Sigma$ quantities,
but with $\vec{\vartheta}$ in place of $\vec{\theta}$. 
It is then straightforward to check that
	\begin{align*}
	|\Xi^{*\plus}
		-\Sigma^\plus|
	&\le
	\sum_{i=1}^{d^\plus(T)}
	-\log\bigg( 1-\prod_{j=1}^{k-1} \vartheta^\plus_{ij}\bigg)
	+\sum_{i=d^\plus(T)+1}^{2\CC}
	\bigg| \log \bigg( 1-\prod_{j=1}^{k-1} \vartheta^\plus_{ij}\bigg)
		-\log\bigg( 1-\prod_{j=1}^{k-1} \theta^\plus_{ij}
		\bigg)\bigg|\\
	&\le\sum_{i=1}^{2\CC}\bigg[\underbrace{
	-\log\bigg( 1-\prod_{j=2\CC+1}^{k-1} \theta^\plus_{ij} \bigg)}_{\Delta^\plus(i)}\bigg]
	\equiv \sum_{i=1}^{2\CC} \Delta^\plus(i)\,.
	\end{align*}
By the same argument as for the estimate~\eqref{e:X.upper.tail.small} from the proof of Lemma~\ref{l:X.tail.bounds}, we have
	\[
	\P\bigg(
	|\Xi^{*\plus}-\Sigma^\plus|
	\ge\bigg(\f{2^{1/20}}{2}\bigg)^k
	\bigg)
	\le 2\CC\cdot \P\bigg(
	|\Delta^\plus(i)|
	\ge\bigg(\f{2^{1/21}}{2}\bigg)^k
	\bigg)
	\le \f1{\exp\{\Omega(k2^{k/4})\}}\,,
	\]
and the same bound applies for $|\Xi^{*\minus}-\Sigma^\minus|$. Now recall the bound~\eqref{e:simple.bound.on.Sigma} from the proof of Lemma~\ref{l:mu.ell.tail.bounds}, which says that the $\Sigma^\PM$ are well concentrated around $k\log2$. 
In particular, it holds with probability at least 
$1-\exp(-\Omega(k2^{k/4}))$
that $|\Sigma^\PM-k\log2| \le k/2^{11k/40}$
and $|\Xi^{*\PM}-\Sigma^\PM| \le (2^{1/20}/2)^k$,
in which case
	\begin{align*}
	\bmeta'(\PM)
	&= \f{\exp(\Xi^{*\PM})-1}
		{\exp(\Xi^{*\PM})+\exp(\Xi^{*\PM})-1}
	= \f{1 + O(k/2^{11k/40})}2\,,\\
	\bmeta'(\free)
	&=\f1
		{\exp(\Xi^{*\PM})+\exp(\Xi^{*\PM})-1}
	=\f{1 + O(k/2^{11k/40})}{2^{k+1}}\,.
	\end{align*}
Thus, given a \emph{fixed} choice of at most $\CC$ perturbed indices, it holds with probability at least $1-\exp(-\Omega(k2^{k/4}))$ that any resulting message $\bmeta'$ satisfies the above estimates, regardless of how those $\CC$ indices are perturbed. To conclude, we note that by \eqref{e:poisson.chernoff.bound} the event $\DEG^+\equiv\{ \{ d^\plus,d^\minus \} \ge 4^k/k \}$ has probability upper bounded by $\exp(-\Omega(4^k))$. On the complement of $\DEG^+$, the number of distinct choices for the $\CC$ perturbed indices is $\exp(O(k))$,
so we can take a union bound over all choices to obtain the result.\end{proof}\end{lem}

\begin{proof}[Proof of Proposition~\ref{p:uPGW.is.J.robust}] Let $\tree$ be a sample of $\uPGW(T)$, rooted at variable $v\equiv\vrt$. Write $\pd v(\plus)$ and $\pd v(\minus)$ for the clauses neighboring the root $v$ in $\tree$, and let
	\[
	\DEG'
	\equiv
	\max\bigg\{
	\bigg| |\pd v(\plus)| - \f{2^k k\log2}{2}
	\bigg|,
	\bigg| |\pd v(\minus)| - \f{2^k k\log2}{2}
	\bigg|
	\bigg\} \ge k2^{5k/8}
	\]
It follows from \eqref{e:poisson.chernoff.bound}
that $\P(\DEG') \le \exp(-\Omega(k2^{k/4}))$.

Now, recalling Definition~\ref{d:J.robust}, we want to show that (with very good probability) any $\CC$-modification $\ptree$ of $\tree$ is nice in the sense of Definition~\ref{d:nice}. On the complement of the event $\DEG'$, it is clear that every $\CC$-modification $\ptree$ of $\tree$ will satisfy the degree condition \eqref{e:nice.degree.condition}. It remains to determine whether the canonical messages $\dqstar(\ptree)$ and $\hqstar(\ptree)$ satisfy the bounds \eqref{e:nice.var.message.cond} and \eqref{e:nice.clause.message.cond}.

To this end, let us fix $\ptree$ momentarily,
and abbreviate $q\equiv\dqstar(\ptree)$ and $\hq\equiv\hqstar(\ptree)$. Note that $q$ and $\hq$ are based the $r$-neighborhood $\ptree_r$ of the root $v$ in $\ptree$; see Definition~\ref{d:canonical}. For any edge $(au)$ in $\ptree$, let us abbreviate
	{\setlength{\jot}{0pt}\begin{align*}
	\bmeta_{ua}
	\equiv \bmeta(\ptree)_{ua}
	&\equiv \FF_{\ptree_r,ua}\,,\\
	\bhu_{au} \equiv\bhu(\ptree)_{au}
	& \equiv\FF_{\ptree_r,au}\,.
	\end{align*}}%
Recall from \eqref{e:color.recursions.eta} the
correspondence between $(\dq,\hq)$ and $(\bmeta,\bhu)$
for edges $(av)$ incident to the root $v$:
	\begin{align}
	\label{e:vtoc.q.given.eta}
	\Big(q_{va}(\red),q_{va}(\yel),
		q_{va}(\grn),q_{va}(\blu)\Big)
	&=\bigg(
	\f{\bmeta_{va}(\plus)+\bmeta_{va}(\free)}
		{2-\bmeta_{va}(\minus)},
	\f{\bmeta_{va}(\minus)}{2-\bmeta_{va}(\minus)},
	\f{\bmeta_{va}(\free)}{2-\bmeta_{va}(\minus)},
	\f{\bmeta_{va}(\plus)}{2-\bmeta_{va}(\minus)}
	\bigg)\,,\\
	\Big(\hq_{av}(\red),\hq_{av}(\yel),
		\hq_{av}(\grn),\hq_{av}(\blu)\Big)
	&=\bigg(
	\f{\bhu_{av}(\plus)}{3-2\bhu_{av}(\plus)},
	\f{\bhu_{av}(\free)}{3-2\bhu_{av}(\plus)},
	\f{\bhu_{av}(\free)}{3-2\bhu_{av}(\plus)},
	\f{\bhu_{av}(\free)}{3-2\bhu_{av}(\plus)}
	\bigg)\,.
	\label{e:ctov.hq.given.bhu}
	\end{align}
Recall also from \S\ref{ss:wp.recursions} that $\bhu_{av}$ can be recursively computed as (cf.\ \eqref{e:first.defn.of.bhu})
	\beq\label{e:clauses.in.terms.of.depth.one.vars}
	\Big(\bhu_{av}(\plus),
	\bhu_{av}(\free)\Big)
	=\bigg(
	\prod_{u\in (\ptree \cap \pd a) \setminus v}
	\bmeta_{ua}(\minus),
	1-\prod_{u\in (\ptree \cap \pd a) \setminus v}
	\bmeta_{ua}(\minus)
	\bigg)\,.
	\eeq
In view of these relations, for $\ptree$ to be nice, it suffices to have
	\begin{align}
	\label{e:final.bound.up.from.depth.one}
	\max\bigg\{
	\bigg|
	\bmeta(\ptree)_{ua}(\minus)-\f12\bigg|
	\,\bigg| \,
	\begin{array}{c}
	a\in\ptree \cap \pd v,\\
	u\in\pd a\setminus v
	\end{array}
	\bigg\}&\le \f1{2^{k/5}}\,,\\
	\label{e:final.bound.out.from.root}
	\max
	\bigg\{
	\sum_{x\in\set{\plus,\minus,\free}}
	\bigg|2^{k\Ind{x=\free}}
	\bmeta(\ptree)_{va}(x)-\f12\bigg|
	\,\bigg|\, a\in\ptree \cap \pd v
	\bigg\}
	&\le \f1{2^{k/5}}\,.
	\end{align}
Indeed, substituting
\eqref{e:final.bound.up.from.depth.one}
into \eqref{e:clauses.in.terms.of.depth.one.vars}
and \eqref{e:ctov.hq.given.bhu}
shows that $\hq=\hqstar(\ptree)$ satisfies condition
\eqref{e:nice.clause.message.cond};
while substituting
\eqref{e:final.bound.out.from.root}
into \eqref{e:vtoc.q.given.eta}
shows that $q=\dqstar(\ptree)$ satisfies condition
\eqref{e:nice.var.message.cond}.

It remains to bound, on the event $\DEG'$, the probability for \eqref{e:final.bound.up.from.depth.one} and \eqref{e:final.bound.out.from.root} to hold for every $\CC$-modification $\ptree$ of the original random tree $\tree\sim\uPGW(T)$. For any such $\ptree$, for all $a\in\ptree\cap\pd v$ and all $u\in\pd a\setminus v$, the subtree $\ptree_{ua}$ is a $\CC$-perturbation of $\tree_{ua}$, so Lemma~\ref{l:perturb.below.variables} applies. For all $a\in\ptree\cap\pd v$, the subtree $\ptree_{va}$ is a $\CC$-perturbation of $\tree_{va}$, so Lemma~\ref{l:perturb.below.variables} again applies. It follows by a simple union bound that for $\tree\sim\uPGW(T)$, with probability lower bounded by $1-\exp(-\Omega(k2^{k/4}))$, every $\CC$-modification $\ptree$ of $\tree$ satisfies the degree condition \eqref{e:nice.degree.condition} and the message conditions \eqref{e:final.bound.up.from.depth.one} and \eqref{e:final.bound.out.from.root}, and hence is nice. This concludes the proof.
\end{proof}

\subsection{Stability in the Galton--Watson tree}
\label{ss:pgw.stability}

The main goal of this section is to prove the following:

\begin{ppn}\label{p:one.stable}
For $R$ exceeding a large absolute constant, we have
	\[\uPGW\Big(
	\textup{$\vrt$ not $1$-stable}\Big)
	\le \f1{\exp(2^{k/20} R)}\,,
	\]
where the $1$-stable property is given by Definition~\ref{d:j.stable}.
\end{ppn}

The next few results (Corollaries~\ref{c:pth.moment.of.eta}--\ref{c:exp.minus.p.times.X}, and Lemma~\ref{l:d.to.p.times.theta.to.d})
give some preliminary estimates, 
related to the concentration bounds from \S\ref{ss:technical.bounds.on.dist.rec}. They will be used in the proof of Lemma~\ref{l:stability} below.

\begin{cor}\label{c:pth.moment.of.eta}
Let $\mu$ be any probability measure over $\eta\in[0,1)$ that satisfies condition~\eqref{i:mu.ell.upper.tail.bound} from Lemma~\ref{l:mu.ell.tail.bounds}. If $p=2^{k\delta}$ for any $1/100 \le \delta \le 1/9$, then we have the $p$-th moment bound
	\[
	\int \eta^p \,d\mu(\eta)
	\le \f1{2^p}\bigg[
	1 + \f{k}{2^{k(1/4-2\delta)}}\bigg]\,.
	\]

\begin{proof} 
Write $\delta'\equiv 1/4-\delta$. We can decompose
	\[
	E(p) \equiv
	\int \eta^p \,d\mu(\eta)
	\le
	\f1{2^p}
	\bigg(
	1+\f{\log k}{2^{k\delta'}}
	\bigg)^p
	+ \bigg(\f23\bigg)^p
	\mu\bigg( \eta \ge 
	\f12 \bigg(1+\f{\log k}{2^{k\delta'}}\bigg)
	\bigg)
	+\mu\bigg( \eta \ge \f23\bigg)\,.
	\]
Note that $\delta'>\delta$, so $(p\log k)/2^{k\delta'}$ is small. Combining with \eqref{i:mu.ell.upper.tail.bound} gives
	\[
	E(p)
	\le
	\f1{2^p}
	\bigg\{ 1 +\f{O(p\log k)}{2^{k\delta'}}\bigg\}
	+\f{(2/3)^p}
		{\exp(\Omega( 2^{k/4} 2^{-k\delta'}\log k ))}
	+\f1{\exp(\Omega(2^{k/4}))}\,.
	\]
Substituting $p=2^{k\delta}$ and $\delta'=1/4-\delta$ into the above gives
	\[E(p)
	\le\f1{2^p}\bigg[
	1 + \f{O(\log k)}{2^{k(1/4-2\delta)}}
	+\f{\exp\{2^{k\delta}\log(4/3)\}}
		{\exp(\Omega( 2^{k\delta}\log k)
			)}
	+\f{\exp\{ 2^{k\delta}\log2 \}}{\exp(\Omega(2^{k/4}))}
	\bigg]
	\le \f1{2^p}\bigg[
	1 + \f{O(\log k)}{2^{k(1/4-2\delta)}}\bigg]\,,
	\]
which concludes the proof.
\end{proof}
\end{cor}

\begin{cor}\label{c:stability.base.case}
Let $\mu$ be any probability measure over $\eta\in[0,1)$ that satisfies conditions~\eqref{i:mu.ell.upper.tail.bound}~and~\eqref{i:mu.ell.left.tail.bound} from Lemma~\ref{l:mu.ell.tail.bounds}. If $p=2^{k\delta}$ for $1/100\le\delta\le 1/9$, then
	\[\int 
	\bigg|\f{\eta-1/2}{\eta+1/2}\bigg|^p\,d\mu(\eta)
	\le\f{k^{2p}}{2^{kp(1/4-\delta)}}\,.\]

\begin{proof} It follows from \eqref{i:mu.ell.upper.tail.bound}~and~\eqref{i:mu.ell.left.tail.bound} that
for any $\delta < \delta' < 1/4$ we have
	\[
	\int 
	\bigg|\f{\eta-1/2}{\eta+1/2}\bigg|^p\,d\mu(\eta)
	\le
	\f{O(1)}{2^{kp \delta'}}
	+ \f1{\exp( 2^{k/4} 2^{-k\delta'} )}\,.
	\]
The claim follows by setting $\delta' = 1/4-\delta -(\log k)/(k\log2)$.
\end{proof}
\end{cor}

\begin{cor}\label{c:exp.minus.p.times.X}
Let $\mu$ be any probability measure over $\eta\in[0,1)$
that satisfies condition~\eqref{i:mu.ell.left.tail.bound} from Lemma~\ref{l:mu.ell.tail.bounds}. Write $\mu^{\otimes}$ for the law of a sequence $\ueta'\equiv(\eta_j)_{j\ge1}$ of i.i.d.\ samples from $\mu$. If $p=2^{k\delta}$ for any $1/100\le\delta\le 1/9$, then
	\[\int\bigg(
	1-\prod_{j=1}^{k-1} \eta_j\bigg)^p
	\,d\mu^{\otimes}(\ueta')
	\le \exp\bigg\{ -\f{p}{2^{k-1}}\bigg[1-
	\f{k^3}{2^{k/4}}
	\bigg]\bigg\}\,.\]

\begin{proof}
Write $\P$ and $\E$ for probability and expectation over the law of
	\[
	X
	\equiv -\log \bigg(
	1-\prod_{j=1}^{k-1} \eta_j\bigg)\,.
	\]
The quantity of interest is then $\E(e^{-pX})$. 
Recall that Lemma~\ref{l:X.tail.bounds}\eqref{i:X.left.tail} gives
	\[
	\P\bigg(
	X \le \bigg( \f{2^{-\xi}}{2}\bigg)^{k-1}
	\bigg)
	\le \f1{\exp\{\Omega(\xi 2^{k/4})\}}
	\]
for all $k/2^{k/4} \le\xi\le1$. It follows that
	\begin{align*}
	\E(e^{-pX})
	&\le
	\exp\bigg\{
	-\f{p}{2^{k-1}}
	\bigg[1 - \f{k^2\log k}{2^{k/4}}\bigg]
	\bigg\}
	+\P\bigg(
	X \le \f1{2^{k-1}}\bigg[1 - \f{k^2\log k}{2^{k/4}}\bigg]
	\bigg)\\
	&\le \exp\bigg\{
	-\f{p}{2^{k-1}}
	\bigg[1 - \f{k^2\log k}{2^{k/4}}\bigg]
	\bigg\} + \f1{\exp\{\Omega(k\log k )\}}
	\le
	\exp\bigg\{
	-\f{p}{2^{k-1}}
	\bigg[1 - \f{O(k^2\log k)}{2^{k/4}}
		\bigg]
	\bigg\}\,,
	\end{align*}
and this implies the claim.
\end{proof}
\end{cor}

\begin{lem}\label{l:d.to.p.times.theta.to.d}
Let $D$ be a 
$\Pois(\alpha k/2)$ random variable,
and let $\E$ denote expectation over the law of $D$.
If $p=2^{k\delta}$ for $1/100\le\delta\le 1/9$, then
	\[\E\bigg\{
	D^p
	\exp\bigg\{
	-\f{Dp}{2^{k-1}}
	\bigg[
	1-\f{k^3}{2^{k/4}}
	\bigg]
	\bigg\}
	\bigg\}
	\le \bigg(\f{3k}{8}\bigg)^p\,.\]

\begin{proof}
The quantity of interest can be written as
$\E[D^p /e^{\gamma D}]$ for
	\[
	\gamma
	\equiv \f{p}{2^{k-1}}\bigg[
	1-\f{k^3}{2^{k/4}}
	\bigg]\,.
	\]
We can check by differentiation
that $d^p / e^{\gamma d}$ is
decreasing with respect to $d$ for all
	\[
	d > \f{p}{\gamma}
	= 2^{k-1} \bigg/ \bigg[
	1-\f{k^3}{2^{k/4}}
	\bigg]\,.\]
For all $\alpha$ in the regime
\eqref{e:alpha.regime}, the mean 
$\E D=\alpha k/2$ is much larger than $p/\gamma$.
As in \eqref{e:alpha.regime} let $\aubd\equiv 2^k\log2$.
Then
	\begin{align*}
	\E\bigg[ \f{D^p}{\exp(\gamma D)}
	; D \ge \f{k\aubd}{2}
	\bigg]
	&\le 
	\bigg(\f{k\aubd}{2}\bigg)^p
	\exp\bigg\{-\f{\gamma k\aubd}{2}\bigg\}\\
	&= \bigg(
	\f{k 2^k \log2}{2}
	\exp\bigg\{ -k\log2
		\bigg[1-\f{k^3}{2^{k/4}}\bigg] \bigg\}
	\bigg)^p
	\le O(1) \bigg(
		\f{k\log2}{2}\bigg)^p\,.
	\end{align*}
On the other hand, we can use the Poisson moment-generating function to bound
	\begin{align*}
	\E\bigg[ \f{D^p}{\exp(\gamma D)}
	; D \le \f{k\aubd}{2}
	\bigg]
	&\le
	\bigg(\f{k\aubd}{2}\bigg)^p
	\E\bigg[\f1{ \exp(\gamma D)}\bigg]\\
	&=\bigg(\f{k\aubd}{2}\bigg)^p
	\exp\bigg\{- \f{\alpha k}{2}
		\bigg(1-\f1{e^\gamma}\bigg)\bigg\}
	\le 
	O(1) \bigg( \f{k\log2}{2}\bigg)^p\,.
	\end{align*}
Combining the bounds gives the claim.
\end{proof}
\end{lem}

\begin{lem}\label{l:d.to.p.times.rare.event}
Let $D$ be a $\Pois(\alpha k/2)$ random variable, and let $\bm{F}_i$ be events such that
whenever $D \ge \alpha k/4$, we have
$\P(\bm{F}_i \,|\, D ) \le \exp(-2^{k/4})$
for all $1\le i\le D$. If $p=2^{k\delta}$ for $1/100\le\delta\le 1/9$, then
	\[\E\bigg[
	D^{p-1}\sum_{i=1}^{D} \mathbf{1}_{\bm{F}_i}
	\bigg] \le
	\f1{\exp(2^{k/5})}
	\,.\]

\begin{proof} It follows from the Poisson Chernoff bound
\eqref{e:poisson.chernoff.bound} that
	\[
	\E\bigg[
	D^{p-1}\sum_{i=1}^{D} \mathbf{1}_{\bm{F}_i}
	; D \le \f{\alpha k}{4}
	\bigg]
	\le \bigg( \f{\alpha k}{4}\bigg)^p
	\P\bigg(D \le \f{\alpha k}{4}\bigg)
	\le \bigg( \f{\alpha k}{4}\bigg)^p
	\f1{\exp(\Omega(k 2^k) )}
	\le \f1{\exp(\Omega(k 2^k) )}\,.
	\]
We can use integration by parts
and \eqref{e:poisson.chernoff.bound} to bound
	\begin{align*}
	\E\bigg(D^p ; D \ge3^k\bigg)
	&\le 3^{kp} \P(D\ge3^k)
	+ \int_{3^k}^\infty pt^{p-1} \P(D\ge t)\,dt
	\le \f{3^{kp}}{\exp(\Omega(k3^k))}
	+ p \int_{3^k}^\infty 
	\f{t^{p-1} }{\exp(\Omega(kt)) }
		\,dt\\
	&\le \f{3^{kp}}{\exp(\Omega(k3^k))}
	+ \f{p}{\exp(\Omega(k3^k))}
	\le \f1{\exp(\Omega(k3^k))}\,,
	\end{align*}
so we have $\E(D^p) \le O(3^{kp})$. It follows from the assumption on the $\bm{F}_i$ that
	\[
	\E\bigg[
	D^{p-1}\sum_{i=1}^{D} \mathbf{1}_{\bm{F}_i}
	; D \ge \f{\alpha k}{4}
	\bigg]
	\le \f{\E(D^p)}{\exp(2^{k/4})}
	\le \f1{\exp(\Omega(2^{k/4}))}\,,
	\]
which implies the claim.
\end{proof}
\end{lem}

\begin{lem} \label{l:wk.rademacher.bound}
Let $Z,Z_i$ be i.i.d.\ random variables
with symmetric distribution
(meaning that $Z$ is equidistributed as $-Z$).
Then, for any positive integer $d$ and positive even integer $p$,
	\[
	\E\bigg( \sum_{i=1}^d Z_i\bigg)^p
	\le O(1) \bigg( \f{dp}{e}\bigg)^{p/2}
	\E(|Z|^p)\,.
	\]

\begin{proof}
Writing $\vec{i}\equiv(i_1,\ldots,i_p)$ for elements of $[d]^p$, we expand
	\[
	\E\bigg( \sum_{i=1}^d Z_i\bigg)^p
	= \sum_{\vec{i}\in[d]^p}
	\E[ Z_{i_1} \cdots Z_{i_p}]\,.
	\]
On the right-hand side, $\E[ Z_{i_1} \cdots Z_{i_p}]$ is zero unless every index appears an even number of times. The number of choices for $\vec{i}$ for which this holds is upper bounded by $d^{p/2}(p-1)!!$, where
$(p-1)!!$ is the number of matchings on $p$ elements. By Stirling's formula,
	\[
	d^{p/2}(p-1)!!
	=\f{d^{p/2} p!}{2^{p/2} (p/2)!}
	\le O(1) \bigg( \f{dp}{e}\bigg)^{p/2}\,.
	\]
By Jensen's inequality, for any $\vec{i}$ we have
$\E[ Z_{i_1} \cdots Z_{i_p}] \le \E(|Z|^p)$. Combining these bounds gives the claim.
\end{proof}
\end{lem}

The next three lemmas record some simple (deterministic) bounds, based on elementary calculus manipulations, which will also be used in the proof of 
Lemma~\ref{l:stability} below. For $0\le x\le 1$ and $y\in\mathbb{R}$, define the function
	\beq\label{e:def.F.x.y}
	F(x,y)\equiv \f{(1-x)e^y}{1+(1-x)e^y}.
	\eeq
and note that $F$ takes values in $[0,1)$. Write $\nabla F \equiv (F_x,F_y)\equiv (\pd F/\pd x,\pd F/\pd y)$.

\begin{lem}\label{l:F.bounds.calculus}
Let $F$ be as defined by \eqref{e:def.F.x.y}. If $0\le x_i \le 1$ and $y_i\in\mathbb{R}$ with $x_i e^{y_i}\le1$, then
	\begin{align}\label{e:F.bdd.derivs}
	\bigg|F(x_1,y_1)-F(x_2,y_2)\bigg|
	&\le \Big|x_1-x_2\Big| + \Big|y_1-y_2\Big|\,,\\
	\label{e:F.second.derivs}
	\Bigg|F(x_1,y_1)-F(x_2,y_2)
	-\bigg\langle
	\nabla F(x_1,y_1),
	\begin{pmatrix}
	x_1-x_2\\
	y_1-y_2
	\end{pmatrix}
	\bigg\rangle\Bigg|
	&\le c\bigg\{
	(x_1-x_2)^2+(y_1-y_2)^2\bigg\}\,,
	\end{align}
where $c$ is an absolute constant.

\begin{proof}
Write $F_{xx},F_{xy},F_{yy}$ for the second-order partial derivatives of $F$. It is straightforward to verify that for $0\le x\le 1$ and all $y\in\mathbb{R}$ we have
$|F_y|\le1$ and $|F_{yy}|\le1$. Further, under the additional restriction that $xe^y\le1$, we have
$|F_x|\le1$, $|F_{xx}|\le2$, and $|F_{xy}|\le1$. The bounds \eqref{e:F.bdd.derivs} and \eqref{e:F.second.derivs} directly follow. 
\end{proof}
\end{lem}

\begin{lem}\label{l:F.diff.ratio.bound}
Let $F$ be as defined by \eqref{e:def.F.x.y}. For any $0\le x_i\le1$ and $y_i\in\mathbb{R}$,
	\beq\label{e:F.ratios.bd}
	\bigg|\f{F(x_1,y_1)-F(x_2,y_2)}
	{F(x_1,y_1)+F(x_2,y_2)}\bigg|
	\le \f{|x_1-x_2|}{2-x_1-x_2}
		+ \f{|y_1-y_2|}{2}\,.\eeq
(Unlike Lemma~\ref{l:F.bounds.calculus}, this does not require $x_i e^{y_i}\le1$.)

\begin{proof} Note that the function $G(z)\equiv z/(1+z)$ satisfies
	\beq\label{e:G.diff.ratio}
	\f{|G(w)-G(z)|}{G(w)+G(z)}
	=\f{|w-z|}{w+z+2wz}
	\le\f{|w-z|}{w+z}
	\eeq
for any $w,z\ge0$. It follows from \eqref{e:G.diff.ratio} that for any $0\le x_i\le1$ and any fixed $y\in\mathbb{R}$,
	\beq\label{e:F.ratio.diff.fixed.y}
	\f{|F(x_2,y)-F(x_1,y)|}
		{F(x_2,y)+F(x_1,y)}
	=
	\f{|G((1-x_2)e^y)-G((1-x_1)e^y)|}
		{G((1-x_2)e^y)+G((1-x_1)e^y)}
	\le \f{|x_2-x_1|}{2-x_1-x_2}\,.\eeq
A further consequence of 
\eqref{e:G.diff.ratio} is that for any $w,z\in\mathbb{R}$,
	\[\f{|G(e^w)-G(e^z)|}{G(e^w)+G(e^z)}
	\le
	\f{|e^w-e^z|}{e^w+e^z}
	=\Th\bigg(\f{|w-z|}{2}\bigg)
	\le \f{|w-z|}{2}\,.
	\]
It follows from this that for any fixed
$0\le x\le 1$ and any $y_i\in\mathbb{R}$,
	\beq\label{e:F.ratio.diff.fixed.x}
	\f{|F(x,y_1)-F(x,y_2)|}{F(x,y_1)+F(x,y_2)}
	= \f{|G((1-x)e^{y_2})-G((1-x)e^{y_1})|}
		{G((1-x)e^{y_2})+G((1-x)e^{y_1})}
	\le \f{|y_1-y_2|}2\,.\eeq
Combining \eqref{e:F.ratio.diff.fixed.y}~and~\eqref{e:F.ratio.diff.fixed.x} gives the claim.
\end{proof}
\end{lem}

\begin{lem}\label{l:ratio.a.b.bound}
For any positive numbers $a_j$ and $b_j$,
	\beq\label{e:ratio.a.b.bound}
	\bigg|\f{\prod_{j=1}^k a_j-\prod_{j=1}^k b_j}
	{\prod_{j=1}^k a_j+\prod_{j=1}^k b_j}
	\bigg|
	\le \sum_{\emptyset\subsetneq J\subseteq [k]}
		2^{|J|}
		\prod_{j\in J}
		\bigg|\f{a_j-b_j}{a_j + b_j}\bigg|\,.
	\eeq

\begin{proof}
Partition the indices $[k]\equiv\set{1,\ldots,k}$
into $A\equiv\set{j : a_j \ge b_j}$
and $B\equiv[k]\setminus A$. Write $\delta\equiv a-b$. Then
	\begin{align*}
	N
	&\equiv \prod_{j=1}^k a_j-\prod_{j=1}^k b_j
	= \prod_{j\in A} (b_j+\delta_j)\prod_{j\in B} a_j
	 -\prod_{j\in B} (a_j-\delta_j) \prod_{j\in A} b_j\\
	&=\sum_{\emptyset\subsetneq J\subseteq A}
		\prod_{j\in J} \delta_j
		\prod_{j\in A\setminus J} b_j
		\prod_{j\in B} a_j
	-\sum_{\emptyset\subsetneq J\subseteq B}
		\prod_{j\in J} (-\delta_j)
		\prod_{j\in B\setminus J} a_j
		\prod_{j\in A} b_j\,.
	\end{align*}
It follows using the definition of $A$ and $B$ that
	\[
	|N|
	\le
	\sum_{\substack{\emptyset\subsetneq J\subseteq [k],\\
		J\subseteq A\textup{ or }J\subseteq B
		}}
		\prod_{j\in J} |\delta_j|
		\prod_{j\in[k]\setminus J} \min\set{a_j,b_j}\,,
	\]
On the other hand, for any $J\subseteq A$, we have
	\[
	D\equiv
	\prod_{j=1}^k a_j + \prod_{j=1}^k b_j
	\ge \prod_{j=1}^k a_j
	\ge \prod_{j\in J} \max\set{a_j,b_j}
	\prod_{j\in [k]\setminus J} \min\set{a_j,b_j}
	\ge
	\prod_{j\in J} \f{a_j+b_j}{2}
	\prod_{j\in [k]\setminus J} \min\set{a_j,b_j}\,,
	\]
and the same bound holds for any $J\subseteq B$. Combining the last two bounds gives
	\[
	\f{|N|}{D}
	\le
	\sum_{\substack{\emptyset\subsetneq J\subseteq [k],\\
		J\subseteq A\textup{ or }J\subseteq B
		}} \prod_{j\in J} 
		\f{2|\delta_j|}{a_j+b_j}\,,
	\]
which implies the claim.
\end{proof}
\end{lem}

Now recall from Definition~\ref{d:PGW.with.clauses.one.less} the measure 
$\PGW_\EPS$. Let $\tree\equiv\tree_{\vrt\crt}\sim\PGW_\EPS$, 
and (as in \eqref{e:coupling.of.all.eta.ell}) let
	\beq\label{e:coupling.eta.with.eps.param}
	(\bmeta^\ell)_{\ell\ge0}
	\equiv \bigg(\FF_\ell(
	\tree_{\vrt\crt} )\bigg)_{\ell\ge0}\,.\eeq
The next lemma gives the main technical estimate which will be used to prove stability in the Galton--Watson tree.

\begin{lem} \label{l:stability}
Write $\P_\EPS$ for the law of the random sequence defined by \eqref{e:coupling.eta.with.eps.param}, and write $\E_\EPS$ for expectation with respect to $\P_\EPS$. Let $\eta^\ell\equiv\bmeta^\ell(\minus)$. For $p= 2 \lceil 2^{k/10}\rceil$ we have the bound
	\[\E_\EPS\Bigg[\bigg|
	\f{\eta^{\ell+1}-\eta^\ell}{\eta^{\ell+1}+\eta^\ell}
	\bigg|^p\Bigg]
	\le
	\f1{2^{kp/7} 2^{2kp\ell/5}}
	\le
	\f1{2^{kp(\ell+1)/7}}\]
for all $\ell\ge0$. This holds for any $0\le\EPS\le1$.

\begin{proof} We will prove the bound by induction, starting from the base case $\ell=0$:
by definition, we have $\eta^0=1/2$ (with probability one), and so
Corollary~\ref{c:stability.base.case} gives
	\[\E_\EPS\bigg[\bigg|
	\f{\eta^1-\eta^0}{\eta^1+\eta^0}
	\bigg|^p\bigg]
	=\E_\EPS\bigg[\bigg|
	\f{\eta^1-1/2}{\eta^1+1/2}
	\bigg|^p\bigg]
	\le \f1{2^{kp/7}}\,.
	\]
Now suppose inductively that for some $\ell\ge1$ we have
	\beq\label{e:stability.inductive}
	\E_\EPS\bigg[
	\bigg|\f{\eta^\ell-\eta^{\ell-1}}{2}\bigg|^p
	\bigg]
	\le\E_\EPS\bigg[\bigg|
	\f{\eta^\ell-\eta^{\ell-1}}{\eta^\ell+\eta^{\ell-1}}
	\bigg|^p\bigg]
	\le
	I(\ell)^p
	\equiv
	\f1{2^{kp/7} 2^{2kp\ell/5}}
	\,.\eeq
Let $A\equiv(H,h)$ denote a random variable with the same law as the pair $(\eta^\ell,\eta^{\ell-1})$. Let
	\[
	\vec{A}
	\equiv\Bigg( 
	A_j \equiv \begin{pmatrix}
		H_j \\ h_j
		\end{pmatrix},
	A^\plus_{ij} \equiv \begin{pmatrix}
		H^\plus_{ij} \\ h^\plus_{ij}
		\end{pmatrix},
	A^\minus_{ij}
		\equiv \begin{pmatrix}
		H^\minus_{ij} \\ h^\minus_{ij}
		\end{pmatrix}\Bigg)_{i,j\ge1}
	\]
be an array of i.i.d.\ copies of $A$. Let
$K$ denote a random variable which takes value $k$ with chance $1-\EPS$, and takes value $k-1$ with chance $\EPS$. Let $\vec{K} \equiv (K^\plus_{ij},K^\minus_{ij})_{i,j\ge1}$ be an array of i.i.d.\ copies of $K$, and let
	\[
	S^\PM_i
	\equiv 1-\prod_{j=1}^{K^\PM_i-1} H^\PM_{ij}
	\,,\quad
	s^\PM_i
	\equiv 1-\prod_{j=1}^{K^\PM_i-1} h^\PM_{ij}.
	\]
Note that by construction the $H$ and $h$ random variables lie in $[0,1)$ almost surely, so the $S$ and $s$ random variables lie in $(0,1]$ almost surely. Let $\ud\equiv(d^\plus,d^\minus)\sim\POpm$, and use this to define the random variables
	\beq\label{e:stability.notations}
	\Pi^\PM
	\equiv \prod_{i=1}^{d^\PM} S^\PM_i\,,\quad
	\pi^\PM
	\equiv \prod_{i=1}^{d^\PM} s^\PM_i\,,\quad
	\Sigma
	\equiv\log \f{\Pi^\plus}{\Pi^\minus}\,,\quad
	\sigma
	\equiv\log \f{\pi^\plus}{\pi^\minus}\,.\eeq
For the remainder of the proof we will abbreviate
$S_i\equiv S^\minus_i$,
$s_i\equiv s^\minus_i$,
$\Pi \equiv \Pi^\minus$, $\pi\equiv \pi^\minus$. 
With this notation, the pair $(\eta^{\ell+1},\eta^\ell)$ is equidistributed as
	\[\bigg(
	\f{(1-\Pi)\exp(\Sigma)}{1+(1-\Pi)\exp(\Sigma)},
	\f{(1-\pi)\exp(\sigma)}{1+(1-\pi)\exp(\sigma)}
	\bigg)
	=\bigg( F(\Pi,\Sigma),F(\pi,\sigma)\bigg)
	\,,
	\]
for $F$ as defined by \eqref{e:def.F.x.y}. To prove the result, it suffices to bound $\E(J^p)$ for
	\[
	J^p
	\equiv 
	\bigg|
	\f{ F(\Pi,\Sigma)-F(\pi,\sigma)}
		{ F(\Pi,\Sigma)+F(\pi,\sigma)}\bigg|^p
	\le\bigg(\f{2|\Pi-\pi|}{2-\Pi-\pi}\bigg)^p
	 + |\Sigma-\sigma|^p\,,\]
where the last bound follows by Lemma~\ref{l:F.diff.ratio.bound}.
Since each of $\Pi,\pi$ is a product over $d^\minus$ terms, we can decompose their difference as a telescoping product:
	\[
	\Pi-\pi
	=\prod_{i=1}^{d^\minus} S_i
	-\prod_{i=1}^{d^\minus} s_i
	=\sum_{i=1}^{d^\minus} 
		\Pi[i]
		(S_i-s_i)\,,\quad
	\Pi[i]\equiv
		\prod_{j=1}^{i-1} S_j
		\prod_{j=i+1}^{d^\minus} s_j\,.
	\]
It follows from H\"older's inequality that
for any $\mathbf{u}\in\mathbb{R}^d$,
$|(\mathbf{1},\mathbf{u})|^p
\le d^{p-1} (\|\mathbf{u}\|_p)^p$. This implies
	\beq\label{e:diff.Pi.telescope}
	|\Pi-\pi|^p
	\le
	(d^\minus)^{p-1}
	\sum_{i=1}^{d^\minus}
	\Pi[i]^p |S_i-s_i|^p\,.
	\eeq
Note that $0\le \Pi,\pi\le1$, and $\Pi\pi \le \Pi[i]$ for any $i$. Let $\bm{F}_i$ be the event that $\Pi[i]\ge 1/4$.
If $\bm{F}_i$ does not occur, then we must have
either $\Pi\le1/2$ or $\Pi\le1/2$, therefore $2-\Pi-\pi \ge 1/2$. It follows that
	\beq\label{e:stability.decomp}
	J^p\le
	\underbrace{\Bigg[
	4^p (d^\minus)^{p-1}
	\sum_{i=1}^{d^\minus}
	\Pi[i]^p |S_i-s_i|^p\Bigg] }_{J_1(p)}
	+\underbrace{\Bigg[ 2^p (d^\minus)^{p-1}
	\sum_{i=1}^{d^\minus}
	\mathbf{1}_{\bm{F}_i}
	\f{\Pi[i]^p |S_i-s_i|^p}{(2-\Pi-\pi)^p}
	\Bigg]}_{J_2(p)}
	+|\Sigma-\sigma|^p\,.\eeq
Similarly to \eqref{e:diff.Pi.telescope}, we can expand $S-s \equiv S_i-s_i$ as a telescoping sum, then use H\"older's inequality to bound
	\beq\label{e:telescope.def.U.j}
	|S-s|^p
	=\bigg|
	\prod_{j=1}^{K-1} H_j
	-\prod_{j=1}^{K-1} h_j
	\bigg|^p
	\le k^{p-1}
	\sum_{j=1}^{K-1} U[j]^p
		|H_j-h_j|^p\,,\quad
		U[j]\equiv \prod_{t=1}^{j-1} H_t
		\prod_{t=j+1}^{K-1} h_t\,.
	\eeq
It follows by considerations of conditional independence that
	\[
	\E J_1(p)
	\le (4k)^p
	\E\bigg[ (d^\minus)^p
	(\max\set{\E(S^p),\E(s^p)})^{d^\minus-1}\bigg]
	\bigg(\max\set{ \E(H^p),\E(h^p)}\bigg)^{k-3}
	\E(|H-h|^p)\,.
	\]
Both $\E(S^p)$ and $\E(s^p)$ satisfy the bound from Corollary~\ref{c:exp.minus.p.times.X}; and combining with Lemma~\ref{l:d.to.p.times.theta.to.d} gives a bound on the first expectation on the right-hand side above. Both $\E(H^p)$ and $\E(h^p)$ satisfy the bound from Corollary~\ref{c:pth.moment.of.eta}. The last factor
$\E(|H-h|^p)$ is bounded by
the inductive hypothesis \eqref{e:stability.inductive}. Altogether it gives
	\beq\label{e:stability.first.finalbound}
	\E J_1(p)
	\le O(1) (4k)^p
	\bigg(\f{3k}{8}\bigg)^p
	\f{\E (|H-h|^p)}{2^{p(k-3)}}
	\le
	\bigg( \f{k^3}{2^k}\bigg)^p
	\f1{2^{kp/7} 2^{2kp\ell/5}} 
	=\bigg( \f{k^3 I(\ell)}{2^k} \bigg)^p
	\,.\eeq
Turning to $J_2(p)$, we note that (very crudely) we have
$0\le \Pi[i]\le1$ and $2-\Pi-\pi \ge 2-S_i-s_i$
for any $1\le i\le d^\minus$. Therefore, again with considerations of conditional independence, we have
	\beq\label{e:J.2.p}
	\E J_2(p)
	\le 2^p
	\E\bigg[
	(d^\minus)^{p-1}
	\sum_{i=1}^{d^\minus} \mathbf{1}_{\bm{F}_i}
	\bigg]
	\E\bigg[ \bigg|\f{S-s}{2-S-s}\bigg|^p\bigg]
	\,.\eeq
Recall that $\bm{F}_i$ is the event that $\Pi[i]\ge1/4$, or equivalently that
	\[
	\Sigma[i]
	\equiv \log\f1{\Pi[i]}
	= \sum_{j=1}^{i-1} \log \f1{S_i}
		+\sum_{j=i+1}^{d^\minus} \log \f1{s_i}
	\le \log 4\,.
	\]
By essentially the same argument as for
\eqref{e:left.tail.Sigma}, as long as $d^\minus\ge \alpha k/4$ we have $\P(\bm{F}_i\,|\,d^\minus)\le \exp(-\Omega(k2^{k/4}))$. We can then apply Lemma~\ref{l:d.to.p.times.rare.event} to bound the first expectation in \eqref{e:J.2.p}:
	\beq\label{e:stability.rare.event.bnd}
	\E\bigg[
	(d^\minus)^{p-1}
	\sum_{i=1}^{d^\minus} \mathbf{1}_{\bm{F}_i}
	\bigg]
	\le \f1{\exp(2^{k/5})}\,.
	\eeq
For the second expectation
in \eqref{e:J.2.p}, Lemma~\ref{l:ratio.a.b.bound} gives
	\[
	\E\bigg[ \bigg|\f{S-s}{2-S-s}\bigg|^p\bigg]
	=
	\E\Bigg[ \Bigg|
	\f{\prod_{j=1}^{K-1} H_j-\prod_{j=1}^{K-1} h_j}
		{\prod_{j=1}^{K-1} H_j+\prod_{j=1}^{K-1} h_j}
	\Bigg|^p\Bigg]
	\le
	\E \Bigg[\bigg(
	\sum_{\emptyset \subsetneq J\subseteq [k-1]}
	2^{|J|}
	\prod_{j\in J} 
	\bigg|\f{H_j-h_j}{H_j+h_j}\bigg|\bigg)^p
	\Bigg]\,.
	\]
Let $\vec{J}\equiv(J_1,\ldots,J_p)$ denote any $p$-tuple of nonempty subsets of $[k-1]$. We abbreviate
$|\vec{J}| \equiv |J_1| + \ldots + |J_p| \ge p$.
For any $j\in[k-1]$, let
$n_j(\vec{J})$ denote the number of occurrences of $j$ in the sets $J_1,\ldots,J_p$:
	\[
	n_j(\vec{J})
	=\sum_{t=1}^p
	\Ind{j \in J_t}\,,\quad
	\sum_{j=1}^{k-1} n_j(\vec{J})
	= \sum_{t=1}^p |J_t| = |\vec{J}| \ge p\,.
	\]
With this notation, we can bound
	\[
	\E\bigg[ \bigg|\f{S-s}{2-S-s}\bigg|^p\bigg]
	\le\sum_{\vec{J}}
	2^{|\vec{J}|}
	\prod_{j=1}^{k-1}
	\E\Bigg[
	\bigg|\f{H_j-h_j}{H_j+h_j}\bigg|^{n_j(\vec{J})} \Bigg]
	\le
	\sum_{\vec{J}}
	2^{|\vec{J}|}
	\E\Bigg[
	\bigg|\f{H-h}{H+h}\bigg|^p \Bigg]^{|\vec{J}|/p}\,,
	\]
where the last step is by Jensen's inequality. The number of choices of $\vec{J}\equiv(J_1,\ldots,J_p)$ with $|\vec{J}|=b$ is upper bounded by $k^b$. Combining with the inductive hypothesis \eqref{e:stability.inductive} gives
	\[\E\bigg[ \bigg|\f{S-s}{2-S-s}\bigg|^p\bigg]
	\le
	\sum_{b\ge p}
	\Bigg( 2k
	\E\Bigg[
	\bigg|\f{H-h}{H+h}\bigg|^p \Bigg]^{1/p}
	\Bigg)^b
	\le
	\sum_{b\ge p}
	\Bigg( \f{2k}{2^{k/7} 2^{2k\ell/5}} \Bigg)^b
	\le \f{k^{2p}}{ 2^{kp/7} 2^{2kp\ell/5} }\,.
	\]
Substituting this and \eqref{e:stability.rare.event.bnd}
into \eqref{e:J.2.p} gives
	\beq\label{e:stability.second.finalbound}
	\E J_2(p) \le
	\f{k^{2p}}{\exp(2^{k/5})}
	\f1{ 2^{kp/7} 2^{2kp\ell/5} }
	\le
	\bigg( \f{k^2 I(\ell)}{\exp(2^{k/10})} \bigg)^p
	\,.\eeq
Finally we bound $\E(|\Sigma-\sigma|^p)$, where we recall from \eqref{e:stability.notations} that
	\[
	\Sigma
	=\log \f{\Pi^\plus}{\Pi^\minus}
	= \sum_{i=1}^{d^\plus}\log S^\plus_i
	-\sum_{i=1}^{d^\minus}\log S^\minus_i\,,
	\]
and similarly $\sigma=\log(\pi^\plus/\pi^\minus)$. Let $D\sim\Pois(\alpha k)$, and let $(\mathfrak{s}_i)_{i\ge1}$ be a sequence of i.i.d.\ symmetric random signs. Then, recalling that $p$ is an even integer, we have
	\beq\label{e:diff.Sigma.sigma.pth.mmt}
	\E(|\Sigma-\sigma|^p)
	= \E\Bigg[ \bigg(\sum_{i=1}^D
	\mathfrak{s}_i
	(\log S_i-\log s_i)
	\bigg)^p\Bigg]
	\le O(1) \bigg( \f{dp}{e} \bigg)^{p/2}
		\E\bigg[\bigg|\log \f{S}{s}\bigg|^p\bigg]\,,
	\eeq
where the last step uses Lemma~\ref{l:wk.rademacher.bound}.
We then use a telescoping sum to bound
	\beq\label{e:telescoping.sum.u.h}
	\bigg|\log \f{S}{s}\bigg|
	=\bigg|
	\log \f{1-\prod_{j=1}^{K-1} H_j}
		{1-\prod_{j=1}^{K-1} h_j}
	\bigg|
	\le \sum_{j=1}^{K-1}
	\bigg|\log
		\f{1-U[j] H_j}
		{1-U[j] h_j}\bigg|
	\le
	\sum_{j=1}^{K-1}
	\f{U[j]}{1-U[j]}
	\cdot
	|H_j-h_j|\,,
	\eeq
where the last step uses that the function $f_U(h)=\log(1-Uh)$ has $|(f_U)'(h)| \le U/(1-U)$ for all $U,h\in[0,1]$. In the last expression above, we can further replace $U[j]$ (defined by \eqref{e:telescope.def.U.j}) with
	\[U_\circ[j]
	\equiv \prod_{t=1}^{j-1} H_t
		\prod_{t=j+1}^{k-2} h_t
	\ge U[j]\,.
	\]
It follows by H\"older's inequality, combined with the Cauchy-Schwarz inequality, that
	\beq\label{e:diff.log.S.log.s}
	\E\bigg[\bigg|\log \f{S}{s}\bigg|^p\bigg]
	\le k^p
	\bigg(\max\set{\E(H^{2p}),\E(h^{2p})}\bigg)^{(k-3)/2}
	\max_{j\le k-2}
	\E\bigg[\bigg(\f{1}{1-U_\circ[j]}\bigg)^{2p}\bigg]^{1/2}
	\E(|H-h|^p)
	\eeq
For $x\ge 3$, we have
$\log x + \log(1-1/x) \ge (\log x)/2$. It follows that
	\begin{align*}
	\P\bigg(\f{1}{1-U_\circ[j]}\ge x\bigg)
	&=\P\bigg(U_\circ[j] \ge 1-\f1x\bigg)
	\le\max\bigg\{
	\P\bigg(H\ge 1-\f1x\bigg),
	\P\bigg(h\ge 1-\f1x\bigg)\bigg\}^{k-3}\\
	&\le\max\bigg\{
	\P\bigg( \log\f{H}{1-H} \ge \f{\log x}2 \bigg),
	\P\bigg( \log\f{h}{1-h} \ge \f{\log x}2 \bigg)
	\bigg\}^{k-3}
	\le \f1{\exp( k(\log x) 2^{k/4})}\,,\end{align*}
where the last bound is by \eqref{i:mu.ell.upper.tail.bound} 
from Lemma~\ref{l:mu.ell.tail.bounds}. Integrating this bound gives
	\[\E\Bigg[\bigg(\f{1}{1-U_\circ[j]}\bigg)^{2p}\Bigg]
	\le(3/2) \cdot 3^{2p}
		+ \int_3^\infty \f{pt^{p-1}}{t^{k2^{k/4}}}\,dt
	\le 2 \cdot 3^{2p}\,.
	\]
The other factors in \eqref{e:diff.log.S.log.s}
are controlled by Corollary~\ref{c:pth.moment.of.eta} and \eqref{e:stability.inductive}. Substituting into
\eqref{e:diff.Sigma.sigma.pth.mmt} gives
	\beq\label{e:diff.Sigma.sigma.finalbound}
	\E|\Sigma-\sigma|^p
	\le
	O(1) \bigg( \f{dp}{e} \bigg)^{p/2}
	\f{k^{2p}}{2^{kp}}
	\f1{2^{kp/7} 2^{2kp\ell/5}} 
	\le
	\bigg( \f{k^2 I(\ell)}{2^{9k/20}} \bigg)^p
	\,.\eeq
Substituting
\eqref{e:stability.first.finalbound}, \eqref{e:stability.second.finalbound}, and \eqref{e:diff.Sigma.sigma.finalbound}
into \eqref{e:stability.decomp} gives
	\[
	\E \Bigg[
	\bigg|\f{\eta^{\ell+1}-\eta^\ell}
		{\eta^{\ell+1}+\eta^\ell}\bigg|^p
		\Bigg]
	= \E(J^p) \le O(1) 
	\bigg( \f{k^2 I(\ell)}{2^{9k/20}} \bigg)^p\,,
	\]
which verifies the induction \eqref{e:stability.inductive} and proves the result.
\end{proof}
\end{lem}

We will apply Lemma~\ref{l:stability} below to obtain our final bound, Proposition~\ref{p:one.stable}, on $1$-stability in the $\uPGW$ tree. Before doing so, however, we note that Proposition~\ref{p:fp} is essentially an immediate consequence of Lemma~\ref{l:stability}:

\begin{proof}[Proof of Proposition~\ref{p:fp}]
Take the random sequence $(\bmeta^\ell)_{\ell\ge0}$ as in \eqref{e:coupling.of.all.eta.ell} or \eqref{e:coupling.eta.with.eps.param}, with $\EPS=0$. Then, as noted in the discussion around
\eqref{e:coupling.of.all.eta.ell}, the marginal law of each $\eta^\ell$ is precisely $\mu^\ell \equiv \Rec^\ell\mu^0$, the same as the $\mu^\ell$ appearing in the statement of this proposition. It follows from Lemma~\ref{l:stability} that for $p=2\lceil 2^{k/10} \rceil$ we have
	\[
	\sum_{\ell\ge0}
	\E\bigg[ \Big|\eta^{\ell+1}-\eta^\ell\Big|^p\bigg]^{1/p}
	\le 2\sum_{\ell\ge0} 
	\E\Bigg[
	\bigg|\f{\eta^{\ell+1}-\eta^\ell}
		{\eta^{\ell+1}+\eta^\ell}\bigg|^p
		\Bigg]^{1/p} < \infty\,.
	\]
Thus $(\eta^\ell)_{\ell\ge0}$ is a Cauchy sequence in $L^p$, hence it must converge in $L^p$ to a limiting random variable $\eta$ as $\ell\to\infty$. It follows that $\mu^\ell$ converges weakly to a limiting probability measure $\mu$ as $\ell\to\infty$. Since $\mu^\ell = \Rec\mu^{\ell-1}$ and the mapping $\Rec$ is continuous with respect to the weak topology on the space of distributions, we conclude $\mu = \Rec\mu$.
\end{proof}

The remainder of this subsection is devoted to the proof of Proposition~\ref{p:one.stable}. The following lemma records the easy observation that the canonical messages can only become ``more free'' when the neighborhood is enlarged:

\begin{lem}\label{l:support.of.pi.measures}
Let $\tree_{va}$ be any variable-to-clause tree in the sense of Definition~\ref{d:directed.trees}. Similarly as in 
\eqref{e:coupling.of.all.eta.ell} or \eqref{e:coupling.eta.with.eps.param}, let
	\[
	(\bmeta^\ell)_{\ell\ge0}
	\equiv \bigg(\FF_\ell(
	\tree_{va} )\bigg)_{\ell\ge0}\,.
	\]
It holds for any $\tree_{va}$ and any $\ell\ge0$ that if
$\bmeta^\ell(\plus)=0$ then $\bmeta^{\ell+1}(\plus)=0$ also;
likewise if $\bmeta^\ell(\minus)=0$ then $\bmeta^{\ell+1}(\minus)=0$ also.
Moreover $\bmeta^\ell(\free)\in(0,1]$ for all $\ell\ge1$. As a consequence we always have
	\beq\label{e:compare.support.of.pis}
	\set{\grn}\subseteq \supp\starpi\subseteq\supp\SQpi\,,\eeq
for all $r\ge2$,
for $\starpi$ and $\SQpi$ as in
Definition~\ref{d:canonical}.

\begin{proof}Thanks to the symmetry between $\plus$ and $\minus$, it suffices to prove the first assertion for the quantities $\eta^\ell \equiv \bmeta^\ell(\minus)$. That is to say, we shall argue that if
$\eta^\ell=0$ then $\eta^{\ell+1}=0$ also, for all $\ell\ge0$. Note that $\eta^0=1/2\ne0$ by assumption, so the statement holds trivially for $\ell=0$. Suppose inductively that it holds up to $\ell-1$; we then compare $\eta^\ell$ with $\eta^{\ell+1}$. Similarly as in Definitions~\ref{d:distributional.clause.recursion}--\ref{d:full.dist.recurs}
and the proof of Lemma~\ref{l:stability}, we can express
	\[
	\eta^{\ell+1} = \f{\Pi^\plus(1-\Pi^\minus)}
		{\Pi^\plus + \Pi^\minus - \Pi^\plus \Pi^\minus }\,,\quad
	\Pi^\PM\equiv \prod_{b\in\pd v(\PM a)} \bigg(
		1 - \prod_{u\in\pd b\setminus v} \eta^\ell(\tree_{ub})
		\bigg)
	\]
and similarly
	\[
	\eta^\ell = \f{\pi^\plus(1-\pi^\minus)}
		{\pi^\plus + \pi^\minus - \pi^\plus \pi^\minus }\,,\quad
	\pi^\PM\equiv \prod_{b\in\pd v(\PM a)} \bigg(
		1 - \prod_{u\in\pd b\setminus v} \eta^{\ell-1}(\tree_{ub})\,.
		\bigg)
	\]
Now suppose $\eta^\ell=0$. Since $\pi^\PM\in(0,1]$, it must be that $\pi^\minus=1$. This can occur in one of two possible ways:
\begin{enumerate}[--]
\item The first possibility is that $\pd v(\minus a)$. In this case $\Pi^\minus=1$ also, and so $\eta^{\ell+1}=0$. 
\item The only other possibility is that $\eta^{\ell-1}(\tree_{ub})=0$ for all $u\in\pd b\setminus v$, for all $b\in\pd v(\minus a)$. In this case, it follows from the inductive hypothesis that $\eta^\ell(\tree_{ub})=0$ for all $u\in\pd b\setminus v$, for all $b\in\pd v(\minus a)$. Therefore $\Pi^\minus=1$ also, and so again we conclude $\eta^{\ell+1}=0$. 
\end{enumerate}
This verifies the induction and thus proves the first assertion. It is easy to see from the form of the recursion \eqref{e:eta.in.terms.of.signed.Pis} that
$\bmeta^\ell(\free)\in(0,1]$ for all $\ell\ge1$, since $\Pi^\PM\in(0,1]$.
For the next assertion \eqref{e:compare.support.of.pis}, recall from the discussion around \eqref{e:defn.canonical.eta} and \eqref{e:defn.canonical.edge.marginal} that $\starpi$ is proportional to 
the product of $\hqstar_{av}$ and
$\dqstar_{va}$, while $\SQpi$ is proportional to
the product of $\hqstar_{av}$ and
$\SQdq_{va}$. The only difference is that
$\dqstar_{va}$ corresponds to $\bmeta^r$
while $\SQdq_{va}$ corresponds to $\bmeta^{r-1}$
(via \eqref{e:color.recursions.eta}). It then follows from the prior assertions of this lemma that we have
	\[
	\set{\free}\subseteq \supp\bmeta^r
	\subseteq \supp\bmeta^{r-1}
	\]
for all $r\ge2$, and \eqref{e:compare.support.of.pis} follows straightforwardly by substituting into
\eqref{e:color.recursions.eta}.
\end{proof}
\end{lem}

\begin{lem} \label{l:root.is.stable.EPSILON}
For the measure $\uPGW_\EPS$ of Definition~\ref{d:PGW.with.clauses.one.less},
we have
	\[
	\uPGW_\EPS\Big(
		\textup{$\vrt$ not stable}\Big)
	\le \f1{\exp(2^{k/12} R)}\,,
	\]
where ``stable'' means $0$-stable in the sense of Definition~\ref{d:j.stable}. This bound holds for all $0\le\EPS\le1$.

\begin{proof}
For any tree $\tree$ and any edge $(au)$ of $\tree$,
we can consider the variable-to-clause
 tree $\tree_{ua}\subseteq\tree$, and define (cf.\ \eqref{e:coupling.of.all.eta.ell}~and~\eqref{e:coupling.eta.with.eps.param})
the measures $\bmeta^\ell_{ua}\equiv \FF_\ell(\tree_{ua})$.
We define two functions on variable-to-clause trees,
	\begin{align*}
	f^\plus(\tree_{ua})
	&\equiv
	\mathbf{1}\Bigg\{
	\max\bigg\{
	\bmeta^r_{ua}(\plus),
	\bmeta^{r-1}_{ua}(\plus)
	\bigg\} \ge 
	1-\f1{k^r}\Bigg\}\,,\\
	g^\plus(\tree_{ua})
	&\equiv \mathbf{1} \Bigg\{\Bigg|
		\f{\bmeta^r_{ua}(\plus)
			-\bmeta^{r-1}_{ua}(\plus)}
		{\bmeta^r_{ua}(\plus)+
		\bmeta^{r-1}_{ua}(\plus)}\Bigg|
	\ge \f1{2^{kr/8}}
	\Bigg\}\,,
	\end{align*}
and likewise 
$f^\minus(\tree_{ua})$
and $g^\minus(\tree_{ua})$.
If $u$ and $w$ are neighboring variables on $\tree$,
we let $a(uw)\equiv a(wu)$ denote the unique clause that they share. For the rest of the proof, we let $\tree$ be a sample from $\uPGW_\EPS$, rooted at $v\equiv\vrt$.
Since $\uPGW_\EPS$ is unimodular, we can apply \eqref{e:unimodular} to evaluate
	\beq\label{e:unimodular.applied}
	\int\bigg[ \sum_{u\in N(v)}
	f^\minus(\tree_{v a(v u)})\bigg]
	\,d\uPGW_\EPS(\tree)
	=\int \bigg[\sum_{u\in N(v)}
	f^\minus(\tree_{u a(uv)})\bigg]
	\,d\uPGW_\EPS(\tree)\,.
	\eeq
Note that, given $B_1(v)$,
the $f^\minus(\tree_{v a(v u)})$ are not conditionally independent over $u\in N(v)$,
but the $f^\minus(\tree_{u a(uv)})$ are.
The above is therefore upper bounded by
	\beq\label{e:not.too.forcing.pgw.bound}
	k^2\alpha
	\int f^\minus(\tree_{va})
	\,d\PGW_\EPS(\tree_{va})
	\le
	\f{k^2\alpha}{\exp(\Omega(r 2^{k/4} ))}\,,\eeq
where the last bound is
by the upper tail bound \eqref{i:mu.ell.upper.tail.bound} from Lemma~\ref{l:mu.ell.tail.bounds}.
By symmetry, 
the bound \eqref{e:not.too.forcing.pgw.bound}
also holds with $f^\plus$
in place of $f^\minus$.
With $g^\minus$ in place of $f^\minus$, the identity \eqref{e:unimodular.applied} also holds, and conditional independence gives the upper bound
	\[
	k^2\alpha
	\int g^\minus(\tree_{va})
	\,d\PGW_\EPS(\tree_{va})
	\le
	k^2\alpha
	\,\P_\EPS\Bigg(
	 \Bigg|\f{\eta^r-\eta^{r-1}}
		{\eta^r+\eta^{r-1}}\Bigg|
	\ge \f1{2^{kr/8}}
	\Bigg)\,,
	\]
where $\P_\EPS$ is the measure from Lemma~\ref{l:stability}. From the bound of 
Lemma~\ref{l:stability}, the last expression is
	\beq\label{e:stability.pgw.bound}
	\le
	k^2\alpha(2^{kr/8})^p
	\E_\EPS\Bigg[\Bigg(
	 \Bigg|\f{\eta^r-\eta^{r-1}}
		{\eta^r+\eta^{r-1}}\Bigg|^p\Bigg]
	\le
	\f{k^2\alpha 2^{kpr/8} }{2^{kpr/7}}
	\le
	\f{4^k}{2^{kpr/56}}
	\le \f1{\exp(k r 2^{k/11} )}\,.
	\eeq
Let $A=f^\plus+f^\minus+g^\plus+g^\minus$, and define
the ($B_r(v;\tree)$-measurable) event
	\[
	\bm{E}\equiv
	\bigcap_{a\in\pd v}
	\bigg\{A(\tree_{ua})=0
	\textup{ for all }u\in\pd a
	\bigg\}\,.\]
We then find by Markov's inequality together with \eqref{e:unimodular.applied}, \eqref{e:not.too.forcing.pgw.bound}, and \eqref{e:stability.pgw.bound} that
	\[
	\uPGW_\EPS(\bm{E}^c)
	\le
	\int\bigg[\sum_{u\in\pd v}\bigg\{
	\f{A(\tree_{v a(v u)})}{k-2}
	+A(\tree_{u a(u v)})\bigg\}\bigg]
	\,d\uPGW_\EPS(\tree)
	\le \f1{\exp(r 2^{k/11})}\,.
	\]
We will prove that the root $v$ is stable on event $\bm{E}$. 

Recall from Definition~\ref{d:j.stable} that $v$ is stable if all its incident edges $e\in\delta v$ are both message-stable and marginal-stable (Definition~\ref{d:stable}). It is clear that $\bm{E}$ implies that every edge incident to $v$ satisfies the message-stability condition \eqref{e:message.stability}. (The first part of \eqref{e:message.stability} holds because $\bm{E}$ implies $f^\PM(\tree_{ua})=0$ for all $a\in \pd v$ and $u\in\pd a$. The second part of \eqref{e:message.stability} holds because $\bm{E}$ implies $g^\PM(\tree_{ua})=0$ for all $a\in \pd v$ and $u\in\pd a$.) Therefore it remains only to check the marginal-stability conditions \eqref{e:stable.yellow}--\eqref{e:stable.positivity}.

For the rest of the proof, for all $a\in\pd v$ and $u\in\pd a$, we will denote
$H_{ua}\equiv \bmeta^r_{ua}(\minus)$,
$h_{ua}\equiv \bmeta^{r-1}_{ua}(\minus)$,
$P_{ua}\equiv \bmeta^r_{ua}(\plus)$, and
$p_{ua}\equiv \bmeta^{r-1}_{ua}(\plus)$.
For $a\in\pd v$ we also let
	\[
	\hat{u}_{av}
	\equiv \prod_{u\in\pd a\setminus v} h_{ua}\,.
	\]
With this notation, and using the correspondence
\eqref{e:color.recursions.eta},
we have for all $a\in\pd v$ that
	\[
	\bigg(\starpi_{av}(\red),
	\starpi_{av}(\yel),
	\starpi_{av}(\blu),
	\starpi_{av}(\cya)
	\bigg)
	= \bigg(
		\f{(1-H_{va})\hat{u}_{av}}
		{1-\hat{u}_{av} H_{va}},
		\f{H_{va}(1-\hat{u}_{av})}
		{1-\hat{u}_{av} H_{va}},
		\f{
		P_{va}(1-\hat{u}_{av})}
		{1-\hat{u}_{av} H_{va}},
		\f{
		(1-H_{va})(1-\hat{u}_{av})
		}{1-\hat{u}_{av} H_{va}}
	\bigg)\,.
	\]
We can obtain $\SQpi_{av}$ from the same expressions, only $h_{va}$ and $p_{va}$ in place of $H_{va}$ and $P_{va}$ (keeping $\hat{u}_{av}$ the same). By Lemma~\ref{l:support.of.pi.measures},
if $h_{va}=0$ then $H_{va}=0$,
and if $p_{va}=0$ then $P_{va}=0$.
On the event $\bm{E}$, if $(H,h,\hat{u})=(H_{va},h_{va},\hat{u}_{av})$ for any $a\in\pd v$, then either 
$h=H=0$, or $h>0$ and
	\begin{align*}
	\bigg|\f{H}{h}-1\bigg|= \f{|H-h|}{h}
	\le 2\bigg|\f{H-h}{H+h}\bigg|
		\bigg/
		\Bigg( 1-\bigg|\f{H-h}{H+h}\bigg|\Bigg)
	\le \f{O(1)}{2^{kr/8}}
	&\le \f{1}{2^{kr/9} k^{10}} \,,\\
	\f{|(1-\hat{u} H)-(1-\hat{u} h)|}
		{1-\hat{u}h}
	\le
	\f{|H-h|}{1-h}
	\le k^r|H-h|
	\le 2 k^r
	\bigg|\f{H-h}{H+h}\bigg|
	\le \f{2 k^r}{2^{kr/8}}
	&\le \f{1}{2^{kr/9} k^{10}} \,.
	\end{align*}
The analogous bounds hold with $(P,p)=(P_{va},p_{va})$ in place of $(H,h)$. It follows that for all $\sigma\in\set{\red,\yel,\blu}$, we have either $\SQpi_{av}(\sigma)=\starpi_{av}(\sigma)=0$,
or $\SQpi_{av}(\sigma)>0$ and
	\beq\label{e:yrc.diff.div.y}
	\bigg|\f{\starpi_{av}(\sigma)-\SQpi_{av}(\sigma)}
		{\SQpi_{av}(\sigma)}\bigg|
	\le \f1{2^{kr/9}}
	\eeq
For $\sigma=\cya$,
we know that 
$\SQpi_{av}(\cya)\ge \SQpi_{av}(\grn)$ is always positive, and the estimate \eqref{e:yrc.diff.div.y} holds on all of $\bm{E}$. This implies the last stability condition~\eqref{e:stable.positivity}.
It remains to verify the other three conditions
\eqref{e:stable.yellow},
\eqref{e:stable.cyan}, and \eqref{e:stable.red}.
This essentially amounts to a quantitative version of the argument of Lemma~\ref{l:clause.based.marginals.cohere}.
Using \eqref{e:clause.tuple.measure.weighted}
and the correspondence \eqref{e:color.recursions.eta}, we have
	\[
	\COHER_{au}(\SQpi)
	= \f{h_{ua}}{\hat{z}_a}
	\bigg[1-\prod_{w\in\pd a\setminus u} h_{wa}
	-\sum_{w\in\pd a\setminus u} (1-h_{wa})
		\prod_{z\in\pd a \setminus \set{u,w}} h_{za}
	\bigg]\,,
	\]
where $\hat{z}_a$ is the clause normalization, given explicitly by
	\[
	\hat{z}_a
	= 1-\prod_{w\in\pd a} h_{wa} \le 1\,.
	\]
If $w,z$ are any two distinct variables in $\pd a\setminus u$, on the event $\bm{E}$ we have the crude lower bound
	\[
	\COHER_{au}(\SQpi)
	\ge \f{h_{ua}}{\hat{z}_a} 
		(1-h_{wa})(1-h_{za})
	\ge \f{h_{ua}}{\hat{z}_a k^{2r}}\,.
	\]
Similarly, if $u,w,z$ are any three distinct variables in $\pd a$, then 
 on the event $\bm{E}$ we have 
	\[
	\COHER_a(\SQpi)
	\ge\f1{\hat{z}_a}
	(1-\eta_{ua})
	(1-\eta_{wa})
	(1-\eta_{za})
	\ge \f1{\hat{z}_a k^{3r}}
	\ge \f1{k^{3r}}\,.
	\]
On the other hand, we have
	\beq\label{e:yellow.vs.coher.edge}
	\SQpi_{au}(\yel)
	=\f{h_{ua}}{\hat{z}_a}
	\bigg[1-\prod_{w\in\pd a\setminus u} h_{wa}\bigg]
	\le\f{h_{ua}}{\hat{z}_a}
	\le k^{2r} \COHER_{au}(\SQpi)\,.
	\eeq
Similarly, for all $w\in\pd a\setminus u$ we have
	\beq\label{e:red.vs.coher.edge}
	\SQpi_{aw}(\red)
	=\f{(1-h_{wa})}{\hat{z}_a}
	\prod_{z\in\pd a\setminus w} h_{za}
	\le \f{h_{ua}}{\hat{z}_a}
	\le k^{2r} \COHER_{au}(\SQpi)\,.
	\eeq
For all $u\in\pd a$, we have trivially
	\beq\label{e:coher.a.is.large}
	\bigg\{
	\SQpi_{au}(\cya)+\SQpi_{au}(\red)
	\bigg\}
	\le 1 \le k^{3r} \COHER_a(\SQpi)\,.
	\eeq
The bounds \eqref{e:stable.yellow},
\eqref{e:stable.cyan}, and \eqref{e:stable.red}
follow by combining
 \eqref{e:yrc.diff.div.y} with
\eqref{e:yellow.vs.coher.edge}, \eqref{e:red.vs.coher.edge}, \eqref{e:coher.a.is.large}.
\end{proof}
\end{lem}

We now apply Lemma~\ref{l:root.is.stable.EPSILON} to conclude the proof of the main result of this subsection:

\begin{proof}[Proof of Proposition~\ref{p:one.stable}]
We can sample from $\uPGW_\EPS$ in the following way: first sample $\tree\sim\uPGW$. Declare each clause of $\tree$ to be ``open'' with probability $\EPS$, independently over all clauses. For each open clause $a$, choose one of its child variables $u\in\pd a$ uniformly at random, and declare $u$ to be ``open'' as well. Let $V'$ be the (random) set of all open variables in $B_R(\vrt;\tree)$, and let $\tree'$ be the connected component of $\tree\setminus V'$ that contains $\vrt$ --- as a shorthand we will write $\tree'\equiv \tree'(V')$. Then $\tree'$ is a sample from $\uPGW_\EPS$. Let $\gamma_\EPS(\cdot\,|\,\tree)$ denote the law of $V'$ given $\tree$. 
Let $\Omega'$ denote the subspace of $\tree$ for which 
$|B_R(\vrt;\tree)| \le M'$, where $M'$ will be chosen below. Then
	\begin{align*}
	P(\EPS)&\equiv \uPGW_\EPS\Big(\textup{$\vrt$
		is not stable}\Big)
	\ge
	\int_{\Omega'} \int
	\mathbf{1}\Big\{
	\textup{$\vrt$ not stable in $\tree'(V')$}
	\Big\}
	\,d\gamma_\EPS(V'\,|\,\tree)
	\,d\uPGW(\tree)\\
	&\ge\int_{\Omega'}
	\sum_{u \in B_R(\vrt;\tree) \setminus \vrt}
	\mathbf{1}\Big\{
	\textup{$\vrt$ not stable in $\tree'(\set{u})$}
	\Big\}
	\int
	\mathbf{1}\Big\{V' = \set{u}
	\Big\}\,d\gamma_\EPS(V'\,|\,\tree)
	\,d\uPGW(\tree)\\
	&\ge 
	\f{\EPS(1-\EPS)^{M'}}{M'}
	\int_{\Omega'}
	\sum_{u \in B_R(\vrt;\tree) \setminus \vrt}
	\mathbf{1}\Big\{
	\textup{$\vrt$ not stable in $\tree'(\set{u})$}
	\Big\}\,d\uPGW(\tree)\,,
	\end{align*}
where, for any $\tree\in\Omega'$ and any $u\in B_R(\vrt;\tree)\setminus\set{\vrt}$, the quantity $\EPS(1-\EPS)^{M'}/M'$ a crude lower bound on the chance that $V'=\set{u}$. If we rearrange the above and consider the contribution outside $\Omega'$, we obtain
	\begin{align*}
	P' &\equiv \int
	\sum_{u \in B_R(\vrt;\tree) \setminus \vrt}
	\mathbf{1}\Big\{
	\textup{$\vrt$ not stable in $\tree'(\set{u})$}
	\Big\}\,d\uPGW(\tree)\\
	&\le\f{P(\EPS) M'}{\EPS(1-\EPS)^{M'}}
	+ \E\bigg[\Big|B_R(\vrt;\tree)\Big| 
		; \Big|B_R(\vrt;\tree)\Big| \ge M'\bigg]
	\le
	\f1{\exp(\Omega(2^{k/12}R))}
	\,,
	\end{align*}
where the last bound follows by 
taking $M'=1/\EPS = \exp(k^2R)$, and applying
Lemma~\ref{l:martingale.bound.PGW} with Lemma~\ref{l:root.is.stable.EPSILON}.
Combining the above with the $\EPS=0$ case of 
Lemma~\ref{l:root.is.stable.EPSILON} gives
	\[
	\uPGW\Big(\textup{$\vrt$ not $1$-stable}\Big)
	\le
	\uPGW\Big(\textup{$\vrt$ not stable}\Big)
	+P'
	\le \f{O(1)}{\exp(\Omega(2^{k/12}R))}\,,
	\]
and the claimed bound follows.
\end{proof}

\subsection{Threshold upper bound}\label{ss:threshold.ubd}

In this subsection we complete the proof of Proposition~\ref{p:ubd}. Recall that in \S\ref{ss:pgw.stability} we proved Proposition~\ref{p:fp}, saying that the sequence of measures $\mu^\ell$ converges weakly to a 
limit $\mu\equiv\mu^{\infty,\alpha}$ which satisfies the distributional fixed point equation
$\Rec\mu=\mu$. Given this result, the $\onersb$ free energy can be written as
 	\beq\label{e:phi.onersb}
	\Phi(\alpha)
	\equiv \sum_{\vec d} \POpm(\vec d)
	\int
	\log\f{\Pi^\plus(\vec d,\vec\eta)
		+\Pi^\minus(\vec d,\vec\eta)-
		\Pi^\plus(\vec d,\vec\eta)\Pi^\minus(\vec d,\vec\eta) }
		{(1- \prod_{j=1}^k \eta_j)^{(k-1)\alpha} }
	\, d\mu^{\otimes}\Big(
	(\eta_j)_{j\ge1},\ueta
	\Big)\,.
	\eeq
(This is the same as \eqref{e:phi.alpha}.) We defer to Section~\ref{s:monotonicity} the proof of Proposition~\ref{p:phi}, which guarantees that $\Phi$ is decreasing in $\alpha$ so that the conjectured threshold $\arsb$ is well-defined. In the current subsection, we give the proof of Proposition~\ref{p:ubd} assuming that Proposition~\ref{p:phi} holds. The proof is an application of interpolation bounds \cite{FrLe:03,MR2095932} on the free energy of positive-temperature dilute spin glasses. We believe this argument was generally known, especially among the physics community; we include the proof of Proposition~\ref{p:ubd} only for the sake of completeness.

The \bemph{positive-temperature $\ksat$ model} can be formally defined as follows.
First let $(\lit_{aj})_{a,j\ge0}$ be an array of i.i.d.\ symmetric random signs, $\lit_{aj}\in\set{\PM}$ with $\P(\lit_{aj}=\plus)
	= \P(\lit_{aj}=\minus)=1/2$.
We then use these to define
a vector $\vec\theta\equiv(\theta_a)_{a\ge0}$ of
 i.i.d.\ random functions
	\[
	\theta_a(x_1,\ldots,x_k)
	\equiv \mathbf{1}\Big\{\lit_{aj}x_j =\minus
		\text{ for all }
		1\le j\le k\Big\}\,.\]
Let $M$ be a Poisson random variable with mean $n\alpha$. Define the \textbf{random $\ksat$ Hamiltonian}
$H_n:\set{\PM}^n\to [0,M]$,
	\[
	H_n(\ux)
	= \sum_{a=1}^M \theta_a( \ux_{\pd a} )\,,\]
where $\pd a$ are chosen independently and uniformly at random from $[n]^k$. The \bemph{random $\ksat$ free energy} at inverse temperature $\beta$ is then given by
	\[
	F_n(\beta)
	= \f1n \E_n \log
		\sum_{ \ux \in\set{\PM}^n }
		\exp\{ -\beta H_n(\ux) \}
	\]
with $\E_n$ denoting expectation over the random Hamiltonian $H_n$. The next bound is from \cite[Theorem~3]{MR2095932}, edited only slightly to fit our notation. (In the remainder of this subsection, we will no longer refer to the random scalar $\eta$ from \S\ref{ss:dist.rec}, but will instead use $\eta$ to denote a certain random measure which is closely related.)

\begin{thm}[{\cite[Theorem~3]{MR2095932}\footnote{Although the theorem in \cite{MR2095932} is stated for even $k$, the same proof applies equally to odd $k$, as noted for example in \cite[Ch.~6]{MR2731561}.}}]\label{t:free.energy} Let $\mathscr{M}_1$ denote the space of probability measures on $\mathbb{R}$, and $\mathscr{M}_2$ the space of probability measures on $\mathscr{M}_1$. For $\zeta\in\mathscr{M}_2$, let $\ueta\equiv(\eta_{a,j})_{a,j\ge0}$ be an array of i.i.d.\ samples from $\zeta$. Conditioned on $\ueta$, let $\vec\rho\equiv(\rho_{a,j})_{a,j\ge0}$ where each $\rho_{a,j}$ is a conditionally independent sample from $\eta_{a,j}$. For $x\in\set{\PM}$, define
	\[\bm{u}_a(x)
	= \sum_{\ux \in\set{\PM}^k}
		\Ind{x_k=x}\exp\{ -\beta\theta_a(\ux)\}
		\prod_{j=1}^{k-1} 
	\f{ \exp\{\rho_{a,j} x_j\}}
		{ 2 \ch \rho_{a,j} }\,,\]
with $\ch$ the hyperbolic cosine. Define also
	\[\bm{u}_a
	= \sum_{\ux \in\set{\PM}^k}
		\exp\{ -\beta\theta_a(\ux)\}
		\prod_{j=1}^k \f{\exp\{\rho_{a,j} x_j\}}
			{2\ch \rho_{a,j}}\,.\]
Note $\bm{u}_a(x)$ and $\bm{u}_a$ are random variables. Then, for any $0<m<1$ and for any $\zeta\in\mathscr{M}_2$,
	\[
	F_n(\beta)
	\le
	\Phi_{\onersb}(\beta,\zeta,m)
	\equiv \f1m \E\log
		\E'\bigg[
		\Big(
		\sum_{x\in\set{\PM}}
		\prod_{a=1}^d \bm{u}_a(x)
		\Big)^m
		\bigg]
	- \f{(k-1)\alpha}{m}
		\E\log\E'[(\bm{u}_0)^m]
	\]
where $\E'$ is expectation over $\vec\rho$ conditioned on $d,\vec\theta,\vec\eta$; and $\E$ is the overall expectation over $d,\vec\theta,\vec\eta,\vec\rho$.
\end{thm}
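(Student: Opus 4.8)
Since this bound is quoted from \cite{MR2095932} we only indicate the argument there; the plan is the diluted interpolation scheme of Franz--Leone \cite{FrLe:03}, in the one-step form carried out by Panchenko--Talagrand. First I would read the data of the theorem hierarchically: a sample $\ueta$ from $\zeta^{\otimes}$ together with the conditionally independent $\vec\rho$ plays the role of a two-level Ruelle cascade of cavity fields with Parisi parameter $m$, so that alongside the fields one carries a Poisson--Dirichlet family of weights and, at each node of the cascade and each field slot, a real field distributed as the corresponding $\eta_{a,j}$. Then for $t\in[0,1]$ I would introduce an interpolating model on $\set{\PM}^n$ carrying $\mathrm{Pois}(tn\alpha)$ genuine clause terms $\theta_a$ and, independently, single-spin cavity terms, $\mathrm{Pois}((1-t)k\alpha)$ of them per site, together with the usual clause-shaped normalizing cavity terms (at rate proportional to $1-t$). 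Writing $\phi(t)$ for $(nm)^{-1}$ times the expectation of the logarithm of the $m$-th power of the resulting cascade-averaged partition function, one has $\phi(1)=F_n(\beta)$, since no cavity terms survive at $t=1$.

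The second step identifies the other endpoint, $\phi(0)=\Phi_1(\zeta,m)$. At $t=0$ there are no genuine clauses, so the partition function factorizes over the $n$ sites; averaging one site factor --- which carries $\mathrm{Pois}(k\alpha)$ independent cavity attachments --- over the cascade produces exactly the first term $m^{-1}\E\log\E'\bigg[ \Big( \sum_{x\in\set{\PM}} \prod_{a=1}^{d} \bm{u}_a(x) \Big)^{m} \bigg]$ of $\Phi_1$ with $d\sim\mathrm{Pois}(\alpha k)$, while the clause-shaped normalizing terms, after the bookkeeping of the scheme, contribute precisely the subtracted term $-(k-1)\alpha\,m^{-1}\E\log\E'[(\bm{u}_0)^{m}]$. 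This step is routine but somewhat intricate Poisson accounting.

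The crucial step is the monotonicity $\phi'(t)\le 0$ on $[0,1]$, which immediately gives $F_n(\beta)=\phi(1)\le\phi(0)=\Phi_1(\zeta,m)$. Differentiating and using the Poisson identity $\tfrac{d}{dt}\E f(\mathrm{Pois}(t\lambda))=\lambda\,\E[f(\mathrm{Pois}(t\lambda)+1)-f(\mathrm{Pois}(t\lambda))]$ one writes $\phi'(t)$ as a clause-insertion term minus a cavity-insertion term. Using the \ksat\ identity $e^{-\beta\theta_a}=1-(1-e^{-\beta})\prod_{j=1}^{k}\tfrac{1-\lit_{aj}x_j}{2}$ and expanding (the indicator being $\{0,1\}$-valued, only its first power occurs), the clause-insertion term becomes a nonnegative combination of expectations, in the replicated interpolated Gibbs measure, of products over the $k$ legs of the inserted clause of single-site replica overlaps, and the cavity-insertion term supplies precisely the ``independent replicas'' part of that same combination. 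I expect this comparison to be the main obstacle. When $m=1$ and $\zeta$ is a double Dirac mass the desired inequality reduces to an elementary Jensen argument (this is the Franz--Leone bound); for general $0<m<1$ one must invoke the Poisson--Dirichlet / Ghirlanda--Guerra structure of the cascade to show that its hierarchical average of the overlap products is dominated by the factorized value, which forces $\phi'(t)\le 0$ --- this is exactly the cascade computation of \cite{MR2095932} (see also \cite{MR2731561}). Granting $\phi'\le 0$, integrating from $t=1$ down to $t=0$ completes the proof; and since the above expansion never used the parity of $k$, the bound holds for odd $k$ as well.
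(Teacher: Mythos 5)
You should note that the paper does not actually prove Theorem~\ref{t:free.energy}: it is imported verbatim from \cite{MR2095932} (with the footnote handling odd $k$), and the surrounding text only remarks that the bound rests on an interpolation scheme in the spirit of \cite{FrLe:03,MR1930572,MR1957729}. Your sketch reproduces exactly that route --- Poissonian clause/cavity interpolation, endpoint identifications $\phi(1)=F_n(\beta)$ and $\phi(0)=\Phi_1(\zeta,m)$, and monotonicity $\phi'(t)\le0$ via the one-level Ruelle cascade --- and defers the decisive monotonicity computation to the same cited source, which is consistent with how the paper itself treats the statement; the only mild imprecision is that the nonpositivity of $\phi'(t)$ comes from a term-by-term series expansion of the logarithm (all powers $r\ge1$ of the overlap terms appear, each with a sign that is controlled for \ksat{} regardless of parity), rather than from stopping at the first power of the indicator or from a ``domination by the factorized value,'' but you correctly attribute this step to the cascade computation of \cite{MR2095932}.
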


The threshold upper bound is a straightforward consequence:

\begin{proof}[Proof of Proposition~\ref{p:ubd}]
We will deduce the bound from Theorem~\ref{t:free.energy} by taking a particular choice of $\zeta,m$ which is suggested by the survey propagation heuristic. Let $\mu\in\PINT$ be the fixed point given by Proposition~\ref{p:fp}. Let $\bmu=\REC\mu$ for $\REC\equiv\REC_0$ as given by Definition~\ref{d:full.dist.recurs}. Now fix $\beta>0$, and let
	\beq\label{e:interpolation.rho}
	\Big(\rho_\plus,\rho_\minus,\rho_\free\Big)
		\equiv
		\Big(\beta,-\beta,0\Big)\,.
	\eeq
Let $\eta$ be defined as the law of $\rho_y$ where $y\in\set{\plus,\minus,\free}$ is distributed according to $\bmeta$, and $\bmeta$ is distributed according to $\bmu$: formally, for any Borel set $B\subseteq\mathbb{R}$ (where it suffices to consider $B\subseteq\set{\beta,-\beta,0}$),
	\[
	\eta(B)
	= \int \bmeta(y) \mathbf{1}\Big\{\rho_y\in B\Big\}
	 \,d\bmu(\bmeta)\,.
	\]
Thus $\eta\in\mathscr{M}_1$ is an $\bmeta$-measurable random measure supported on $\set{\rho_\plus,\rho_\minus,\rho_\free}=\set{\beta,-\beta,0}\subseteq\mathbb{R}$. Let $\zeta$ denote the law of $\eta$, so $\zeta\in\mathscr{M}_2$ and the randomness in $\zeta$ is the randomness of $\bmeta$. For $x\in\set{\minus,\plus}$ and $\uy_a\in\set{\plus,\minus,\free}^{k-1}$, let
	\[
	\bm{u}_a(x|\uy_a)
	\equiv \sum_{\ux \in\set{\PM}^k}
		\f{\Ind{x_k=x}}
			{\exp(\beta\theta_a(\ux))}
		\prod_{j=1}^{k-1} 
		r(x_j | y_{a,j})\,,\quad
		r(x|y) \equiv 
		\f{\exp\{\rho_y x\}}{2\ch\rho_y}\,.\]
Note it follows from the definition 
\eqref{e:interpolation.rho} that 
	\[
	r(\plus|\plus)
	=r(\minus|\minus)
	= \f{e^\beta}{2\ch\beta}
	= 1-r(\minus|\plus)
	= 1-r(\plus|\minus)\,.
	\]
It follows by combining with the definition of $\theta_a$ that
	\begin{align*}
	\bm{u}_a\Big( \minus\lit_{a,k} \,\Big|\,
		(\minus\lit_{a,j})_{1\le j\le k-1}
		\Big)
	&= \f1{e^\beta}
	\prod_{j=1}^{k-1}
	r( \minus\lit_{a,j}|\minus\lit_{a,j} )
	+ \bigg\{
	1-	\prod_{j=1}^{k-1}
	r( \minus\lit_{a,j}|\minus\lit_{a,j} )
	\bigg\} \\
	&=\f1{e^\beta} \bigg(
		\f{e^\beta}{2\ch\beta}\bigg)^{k-1}
	+\bigg\{ 1- \bigg(
		\f{e^\beta}{2\ch\beta}\bigg)^{k-1}
		\bigg\}
	\le \f1{e^{\beta/2}}\,,
	\end{align*}
where the last bound holds assuming $k$ is large enough and $\beta\ge k$. Thus, with $\P'$ denoting the law of $\vec\rho$
conditioned on $d,\vec\theta,\vec\eta$, for $x\in\set{\PM}$ we have
	\[
	\P'\bigg(
	\underbrace{\prod_{a=1}^d \bm{u}_a(x)
		> \f1{e^{\beta/2}}}_{\textup{event }F_x}\bigg)
	\le \Pi^x
	\equiv\prod_{a : \lit_{a,k}=x}
		\bigg( 1-\prod_{j=1}^{k-1}
		\bmeta_{a,j}
			( \minus\lit_{a,j} )
		\bigg)\,.
	\]
Since $F_x$
can be decided by looking only at the clauses $\pd v(x)$, we have
	$\P'( F_\plus \cup F_\minus )
	\le 1 - (1-\Pi^\plus)(1-\Pi^\minus)$, so
	\[\E'\bigg[
		\Big(
		\sum_{x\in\set{\PM}}
		\prod_{a=1}^d \bm{u}_a(x)
		\Big)^m
		\bigg]
	\le 2^m \P'\Big(F_\plus \cup F_\minus\Big)
		+ \bigg( \f{2}{e^{\beta/2}}\bigg)^m
	\le 2^m \bigg( \Pi^\plus+\Pi^\minus
		-\Pi^\plus\Pi^\minus 
		+ \f1{e^{m\beta/2}}
		\bigg)\,.\]
Similar considerations give 
	\[
	\E'\Big[ (\bm{u}_a)^m \Big]
	\ge
	\f1{2^m}
	\P'\bigg(\bm{u}_a\ge\f12\bigg)
	\ge \f1{2^m}\bigg(
	1 - \prod_{j=1}^k \eta_{a,j}
		(\minus\lit_{a,j})\bigg)\,.
	\]
Combining the above bounds gives
	\[
	m \Phi_{\onersb}(\beta,\zeta,m)
	\le m 4^k + 
	\E\log \f{\Pi^\plus+\Pi^\minus
		-\Pi^\plus\Pi^\minus 
		+ e^{-m\beta/2}}
		{ [1-\prod_{j=1}^{k-1} \eta_j]^{(k-1)\alpha} }.
	\]
Taking $m=1/\beta^{1/2}$, the right-hand side converges to the function $\Phi(\alpha)$ from \eqref{e:phi.onersb} in the limit $\beta\to\infty$. Now assume $\alpha>\arsb$, so that $\Phi(\alpha)<0$ (of course, this uses Proposition~\ref{p:phi}). Then, for sufficiently large $\beta$, we will have
	\beq\label{e:neg.free.energy}
	F_n(\beta)
	\le
	\Phi_{\onersb}\bigg(\beta,\zeta,\f1{\beta^{1/2}}\bigg)
	\le 4^k + 
	\f{\beta^{1/2} \Phi(\alpha)}{2}
	\le \f{\beta^{1/2} \Phi(\alpha)}{4} 
	< 0.
	\eeq
To conclude, recall that the log-partition function $\log Z_n(\beta)$ is well concentrated about its expected value $nF_n(\beta)$ (take the Doob martingale of $\log Z_n(\beta)$ with respect to the $\ksat$ clause-revealing filtration, and apply the Azuma--Hoeffding inequality). Thus \eqref{e:neg.free.energy} implies that with high probability $\log Z_n(\beta)$ is negative, i.e., the \textsc{sat} instance is unsatisfiable.
\end{proof}

\subsection{First moment of judicious colorings}
\label{ss:judicious.first.mmt}

In this subsection we complete the proof of Proposition~\ref{p:first.moment.exponent} --- recall the statement of the proposition is that
	\[
	\E_{\DD}\ZZ
	\ge
	\exp\bigg\{ n\Big[
		\Phi(\alpha)-o_R(1)\Big]\bigg\}
	\]
with high probability over $\DD$,
where $\Phi(\alpha)$ is the
$\onersb$ free energy
\eqref{e:phi.alpha}. We pick up from the discussion of \S\ref{ss:first.moment.preliminaries} where several preliminary calculations were done. In Corollary~\ref{c:first.moment.exponent} we saw that
	\[
	\E_{\DD}\ZZ
	= \exp\bigg\{ 
	n\bm{\Psi}_{\DD}(\omstar) - o(n)
	\bigg\}
	\]
where $\bm{\Psi}_{\DD}(\omstar)$ is defined by the constrained optimization problem
\eqref{e:nu.opt}. Below
Corollary~\ref{c:first.moment.exponent},
we discussed that the natural guess for the limiting value of $\bm{\Psi}_{\DD}(\omstar)$
(as $n\to\infty$ and $R\to\infty$)
is $\Phi^\textup{col}(\alpha)$,
the Bethe free energy of the coloring model
(Definition~\ref{d:Phi.col}). We first verify that 
this coincides precisely with the $\onersb$
free energy $\Phi(\alpha)$ of the original model:

\begin{lem}\label{l:phi.equals.phi.col}
For $\alpha$ in the regime
\eqref{e:alpha.regime},
the $\onersb$ free energy of the $\ksat$ model
($\Phi(\alpha)$ from \eqref{e:phi.alpha})
coincides with the replica symmetric
(Bethe) free energy of the coloring model
($\Phi^\textup{col}(\alpha)$
from Definition~\ref{d:Phi.col}).

\begin{proof}We will show that the form of $\Phi^\textup{col}(\alpha)$ given by \eqref{e:simplified.phi.col.terms}~and~\eqref{e:simplified.phi.col} is equivalent to $\Phi(\alpha)$. As in Definition~\ref{d:bethe.laws.of.color.messages}, let $\bmeta_j$ be i.i.d.\ copies of $\bmeta$, and let $\bhu$ as in \eqref{e:second.defn.of.bhu}. Let $\dq_j\equiv \dq(\bmeta_j)$ via the correspondence \eqref{e:color.recursions.eta.again}. Then the $\bar{z}$ in \eqref{e:simplified.phi.col.terms} can be re-written as
	\[
	\bar{z}
	= \sum_\sigma 
		\dq_k(\sigma)
		\hq(\sigma)
	= \dq_k(\red) \hq(\red)
		+ [1-\dq_k(\red)]\hq(\yel)
	=
	 \f{1}
		{ [3-2\bm{\hat{u}} (\plus)]
			[2-\bmeta_k(\minus)] }
			\Bigg(1 -
			\prod_{j=1}^k \bmeta_j(\minus) \Bigg)\,.\]
Similarly, the 
$\bm{\hat{z}}$ in \eqref{e:simplified.phi.col.terms}
can be re-written as
	\begin{align*}
	\bm{\hat{z}}
	&=\sum_{\usi} \hat{\varphi}(\usi)
		\prod_{j=1}^k \dq_j(\sigma_j)
	= \prod_{j=1}^k [1-\dq_j(\red)]
	-\prod_{j=1}^k \dq_j(\yel)
	+ \sum_{j=1}^k [\dq_j(\red)-\dq_j(\cya)]
		\prod_{l\ne j} \dq_l(\yel)\\
	&=\prod_{j=1}^k [1-\dq_j(\red)]
		-\prod_{j=1}^k \dq_j(\yel)
	= \Bigg(1- \prod_{j=1}^k\bmeta_j(\minus) \bigg)
		\Bigg/
		\Bigg(
		\prod_{j=1}^k [2-\bmeta_j(\minus)]
		\Bigg)\,.
	\end{align*}
For the $\dbz$ in \eqref{e:simplified.phi.col.terms}, 
given $\ud=(d^\plus,d^\minus)$ let us abbreviate
	\[
	\usi =
	\bigg(
	\Big(\sigma^\plus_i\Big)_{1\le i\le d^\plus},
	\Big(\sigma^\minus_i\Big)_{1\le i\le d^\minus}
	\bigg)\,,
	\]
and let $\varphi(\usi;\vec d)$ be the indicator that
$\usi$ gives a valid coloring of a variable where the first $d^\plus$ incident edges are of $\plus$ sign
while the remaining $d^\minus$ 
incident edges are of $\minus$ sign. Then we can rewrite
	\begin{align*}
	\dbz
	&= \sum_{\usi}
		\varphi(\usi;\vec d)
		\prod_{i=1}^{d^\plus} \hq^\plus_i(\sigma^\plus_i)
		\prod_{i=1}^{d^\minus}
		\hq^\minus_i(\sigma^\minus_i)
	=\prod_{i=1}^{d^\plus}
		\hq^\plus_i(\grn)
		\prod_{i=1}^{d^\minus}
		\hq^\minus_i(\grn)\\ \nonumber
	&\qquad + \Bigg\{
		\prod_{i=1}^{d^\plus}
		\hq^\plus_i(\pur)
		-\prod_{i=1}^{d^\plus}
		\hq^\plus_i(\blu)\Bigg\}
		\prod_{i=1}^{d^\minus}
		\hq^\minus_i(\yel)
	+ \Bigg\{
		\prod_{i=1}^{d^\plus}
		\hq^\plus_i(\pur)
		-\prod_{i=1}^{d^\plus}
		\hq^\plus_i(\blu)\Bigg\}
		\prod_{i=1}^{d^\minus}
		\hq^\minus_i(\yel)\\
	&= \bigg(\Pi^\plus+\Pi^\minus
		-\Pi^\plus\Pi^\minus\bigg)
		\bigg/
		\Bigg\{
		\prod_{i=1}^{d^\plus}
		[3-2\bm{\hat{u}}^\plus_i(\plus)]
		\prod_{i=1}^{d^\minus}
		[3-2\bm{\hat{u}}^\minus_i(\plus)]
		\Bigg\}\,,\quad
		\bm{\hat{u}}^\PM_i(\plus)
		\equiv \prod_{j=1}^{k-1}
			\bmeta^\PM_{ij}(\minus).
	\end{align*}
Let us write simply $\E$ for expectation with respect to the law of the $\bmeta_i$. Substituting the above into \eqref{e:simplified.phi.col.terms} gives
	\begin{align*}
	\Phi^{\textup{col},\textup{v}}(\alpha)
	&=\E \Bigg[\log \bigg(\Pi^\plus+\Pi^\minus
		-\Pi^\plus\Pi^\minus\bigg)\Bigg]
	-k\alpha\,\E
		\Bigg[\log\bigg(3-2\bhu(\plus)\bigg)\Bigg]\,,\\
	\alpha\,\Phi^{\textup{col},\textup{cl}}(\alpha)
	&=
	\alpha\,\E\Bigg[
	\log\bigg(1-\prod_{j=1}^k
	\bmeta_j(\minus)\bigg)\Bigg]
	-k\alpha\,\E
	\bigg[\log\bigg(2-\bmeta_1(\minus)\bigg)\bigg]
	\,,\\
	-k\alpha\,\Phi^{\textup{col},\textup{e}}(\alpha)
	&\equiv
	-k\alpha\, \E\Bigg[
	\log\bigg(1-\prod_{j=1}^k
	\bmeta_j(\minus)\bigg)\Bigg]
	+k\alpha\,\E\Bigg[\log
		\bigg(3-2\bhu(\plus)\bigg)\Bigg]
	+k\alpha\,\E
	\Bigg[\log\bigg(2-\bmeta_1(\minus)\bigg)\Bigg]\,.
	\end{align*}
Substituting these into \eqref{e:simplified.phi.col}
we see that $\Phi^\textup{col}(\alpha)$ agrees with
$\Phi(\alpha)$ from \eqref{e:phi.alpha}.
\end{proof}
\end{lem}

To conclude the proof of Proposition~\ref{p:first.moment.exponent},
in light of Corollary~\ref{c:first.moment.exponent}
it remains to relate 
$\bm{\Psi}_{\DD}(\omstar)$ to $\Phi^\textup{col}(\alpha)$. The value
$\bm{\Psi}_{\DD}(\omstar)$
is defined by the constrained optimization 
\eqref{e:nu.opt}.
As we already noted in the discussion following
Corollary~\ref{c:first.moment.exponent},
the optimal $\optdbh[\omstar]$ takes a simple form,
given explicitly by the $\dbhstar$ in Lemma~\ref{l:nu.star.as.optimizer.for.starpi}.
However, we also noted that the analogous
$\hbhstar$ (see \eqref{e:opt.clause.tuple.measure.star}) is \bemph{not} the same as $\opthbh[\omstar]$, simply because $\hbhstar$ is not consistent with the given marginals $\omstar$. 
However, we do have the following:

\begin{lem}\label{l:clause.tuple.measure.corrected} Let $\omega$ be as specified by Definition~\ref{d:judicious}. If $\bL$ is a stable clause type
(Definition~\ref{d:stable}), then there exists a measure $\hbh_{\bL}$
which is consistent with $\omega$ and satisfies,
for all $r\ge (\log k)^2$,
	\[\Ent(\hbh_{\bL}) \ge \Ent(\hbhstar_{\bL}) - 
\f1{k^{r/2}}\]

\begin{proof} Throughout this proof we fix a clause $a$ of type $\bL$, and suppress these from the notation when possible. The idea is to start from the explicit measure $\hbh_\star\equiv\hbhstar_{\bL}$ of \eqref{e:opt.clause.tuple.measure.star} --- which 
has incorrect edge marginals as commented above ---
and make small iterative adjustments to achieve the required edge marginals $\starpi$. Indeed, for all $e\in\delta a$, by the message-stability condition \eqref{e:message.stability} on $e$, we have
	\beq\label{e:conseq.of.message.stab}
	\f{\hbh_\star(\sigma_e=\sigma)}{\starpi_e(\sigma)}
	=\Bigg(\f{\tilde{q}_e(\sigma)}
		{\displaystyle \sum_\tau
		\dqstar_e(\tau) \tilde{q}_e(\tau)}\Bigg)
	\Bigg/\Bigg(
	\f{\hqstar_e(\sigma)}{\displaystyle \sum_\tau
		\dqstar_e(\tau) \hqstar_e(\tau)}\Bigg)
	= 1 + \f{O(1)}{k^r}
	\eeq
for all $\sigma\in\set{\RYGB}$. This shows that the edge marginals of $\hbh_\star$ are only slightly off from the canonical ones $\starpi$.\smallskip

\noindent\bemph{Step 1. Correct marginal proportions of $\SPIN{red}$ spins.} We first reweight $\hbh_\star$ to produce a measure $\hbh_0$ which has the correct marginal proportions of $\SPIN{red}$ spins. To this end, let $\SPIN{CY}$ denote the event that $\usi_{\delta a}$ has no $\SPIN{red}$ spins. It follows by using \eqref{e:indicator.of.valid.clause.coloring} and \eqref{e:opt.clause.tuple.measure.star} that for any $e'\ne e''$ in $\delta a$,
	\[
	\hbh_\star(\SPIN{CY})
	= 1-\sum_{e\in\delta a}\hbh_\star(\sigma_e=\red)
	\ge
	\hbh_\star\Big(\sigma_{e'}=\cya=\sigma_{e'}\Big)
	= \f{\dqstar_{e'}(\cya)\dqstar_{e''}(\cya)}
		{\bm{\hat{z}}_\star}
	\prod_{f\in\delta a \setminus\set{e',e''}}
	\dqstar_{f}(\cya,\yel)\,,
	\]
where $\bm{\hat{z}}_\star$ is the normalizing constant in \eqref{e:opt.clause.tuple.measure.star}. It is given by
	\beq\label{e:normalizing.constant.cancel}
	\bm{\hat{z}}_\star
	=\prod_{e\in\delta a} \dqstar_e(\cya,\yel)
	-\prod_{e\in\delta a} \dqstar_e(\yel)
	+ \overbrace{\bigg[ \sum_{e\in\delta a} 
		\Big(\dqstar_e(\red)-\dqstar_e(\cya)\Big)
		\prod_{e'\in\delta a\setminus e}
	\dqstar_{e'}(\yel)\bigg]}^\textup{zero}
	\le\prod_{e\in\delta a} \dqstar_e(\cya,\yel)
	\,,
	\eeq
where the term in square brackets vanishes by
the correspondence \eqref{e:color.recursions.eta}.
It also follows from \eqref{e:color.recursions.eta},
in combination with the message-stability condition
\eqref{e:message.stability},
that 
	\[
	\f{\dqstar_e(\cya)}{\dqstar_e(\cya,\yel)}
	=1- \stareta_e(\minus) \ge \f1{2^r}\,.
	\]
Combining these bounds gives, assuming $r\ge (\log k)^2$,
	\beq\label{e:stability.lbd.no.red.spins}
	\hbh_\star(\SPIN{CY})
	\ge
	\f{\dqstar_{e'}(\cya)}{\dqstar_{e'}(\cya,\yel)}
	\cdot
	\f{\dqstar_{e''}(\cya)}{\dqstar_{e''}(\cya,\yel)}
	\ge \f1{4^r}\,.
	\eeq
On the other hand, for any measure $\hbh$ that is consistent with $\starpi_e$ for all $e\in\delta a$, we must have
	\beq\label{e:stability.error.on.CY}
	\f{\hbh(\SPIN{CY})}{\hbh_\star(\SPIN{CY})}-1
	= \f1{\hbh_\star(\SPIN{CY})}
	\bigg(\sum_{e\in\delta a}
	\hbh_\star(\sigma_e=\red)
	 - \sum_{e\in\delta a}\starpi(\red)\bigg)
	= O\bigg(\f{4^r}{k^r}\bigg)\,,
	\eeq
where the last inequality follows by combining \eqref{e:conseq.of.message.stab} with \eqref{e:stability.lbd.no.red.spins}.
It follows by using \eqref{e:conseq.of.message.stab} and \eqref{e:stability.error.on.CY} that
	\beq\label{e:red.reweight.gamma.estimate}
	\gm_e
	\equiv\bigg(
	\f{\starpi_e(\red)}{\hbh(\SPIN{CY})}\bigg)
	\bigg/\bigg(\f{\hbh_\star(\sigma_e=\red)}
		{\hbh_\star(\SPIN{CY})}\bigg)
	= 1 + O\bigg(\f{4^r}{k^r}\bigg)\,,
	\eeq
for all $e\in\delta a$, where $\hbh$ again stands for any measure that is consistent with $\starpi$. Then define
	\[\hbh_0(\usi_{\delta a})
	\equiv \f1{\bm{\hat{z}}_\circ}
	\hbh_\star(\usi_{\delta a})
	\prod_{e\in\pd a}
	(\gm_e)^{\Ind{\sigma_e=\red}}\,,\]
where $\bm{\hat{z}}_\circ$ is the normalization constant that makes $\hbh_0$ a probability measure. We then calculate
	\[\bm{\hat{z}}_\circ
	=\hbh_\star(\SPIN{CY})
	+\sum_{e\in\delta a}
		\hbh_\star(\sigma_e=\red)\gamma_e
	= \f{\hbh_\star(\SPIN{CY})}{\hbh(\SPIN{CY})}\]
(using the definition of $\gamma$ with some simple algebra). It follows that
	\[
	\hbh_0(\sigma_e=\red)
	= \f{\hbh_\star(\sigma_e=\red) \gamma_e}
		{\bm{\hat{z}}_\circ}
	= \starpi_e(\red)\,,
	\]
for all $e\in\delta a$,
so $\hbh_0$ has the correct marginal proportions of $\SPIN{red}$ spins, as desired.\smallskip

\noindent\bemph{Step 2. Estimates for measure conditional on no $\SPIN{red}$ spins.} Note that the above reweighting does not change the measure conditional on $\SPIN{CY}$, that is to say, $\hat{\mu}_0(\cdot)\equiv\hbh_0( \cdot \,|\, \SPIN{CY})$ is the same as $\hbh_\star( \cdot \,|\, \SPIN{CY})$. It remains to correct the edge marginals conditional on $\SPIN{CY}$.
If $\hbh$ is any measure that is consistent with $\starpi$, we must have for $\sigma\in\set{\cya,\yel}$ that
	\[\hat{\mu}\Big(\sigma_e=\sigma\Big)\equiv
	\hbh\Big(\sigma_e=\sigma\,\Big|\,\SPIN{CY}\Big)
	=\f1{\hbh(\SPIN{CY})}
	\bigg(\starpi_e(\sigma)
	-\Ind{\sigma=\yel} 
	\sum_{e'\in\delta a\setminus e}\starpi_{e'}(\red)
	\bigg)
	\le \f{\starpi_e(\sigma)}{\hbh(\SPIN{CY})}
	\]
It follows using \eqref{e:conseq.of.message.stab},
\eqref{e:stability.lbd.no.red.spins}, and \eqref{e:stability.error.on.CY} that, for $\sigma\in\set{\cya,\yel}$,
	\beq\label{e:error.cond.on.CY}
	\hat{\mu}_0\Big(\sigma_e=\sigma\Big)
		-\hat{\mu}\Big(\sigma_e=\sigma\Big)
	=
	\hbh_\star\Big(
		\sigma_e=\sigma\,\Big|\,\SPIN{CY}\Big)
	-\hbh\Big(\sigma_e=\sigma\,\Big|\,\SPIN{CY}\Big)
	= O\bigg( \f{4^r}{k^r}\cdot \f{\starpi_e(\sigma)}
		{\hbh_\star(\SPIN{CY})}
	\bigg)
	= O\bigg( \f{16^r}{k^r}
	\bigg)\,.
	\eeq
Now, with respect to a given coloring $\usi_{\delta a}$, let us say that an edge $e\in\delta a$ is $\whi\equiv\SPIN{white}$ if at least two of the other edges in the clause are $\SPIN{cyan}$: that is, if
	\[\bigg|
	\Big\{
	e'\in\delta a\setminus e:\sigma_{e'}
	\in \set{\grn,\blu}\Big\}\bigg|\ge2\,.\]	
Again recalling \eqref{e:indicator.of.valid.clause.coloring}, the interpretation of $e$ being $\SPIN{white}$ is that $\sigma_e$ can be freely flipped between $\cya$ and $\yel$ without invalidating the coloring around $a$.
For each $e\in\delta a$, we can choose any $e'\ne e''$ in $\delta a\setminus e$ and lower bound
	\[
	W_e(\sigma)
	\equiv
	\f{\mu_0( \sigma_e=\sigma;\textup{$e$
	is $\SPIN{white}$} )}
	{\starpi_e(\sigma) / 
	\hbh_\star(\SPIN{CY})
	} \ge
	\f{\dqstar_e(\sigma)
	\dqstar_{e'}(\cya)\dqstar_{e''}(\cya)}
	{\starpi_e(\sigma) \bm{\hat{z}}_\star }
	\prod_{f\in\delta a \setminus\set{e,e',e''}} \dqstar_f(\cya,\yel)\,.\]
By a similar calculation as in \eqref{e:normalizing.constant.cancel}, we have
	\[
	\starpi_e(\sigma) \bm{\hat{z}}_\star
	\le
	\dqstar_e(\sigma) 
	\prod_{f\in\delta a \setminus e} \dqstar_f(\cya,\yel)\,.
	\]
It follows that, for $\sigma\in\set{\cya,\yel}$
and any choice of three distinct edges $e,e',e''\in\delta a$, we have
	\beq\label{e:enough.white.to.shift}
	W_e(\sigma)
	\ge 
	\f{\dqstar_{e'}(\cya)}{\dqstar_{e'}(\cya,\yel)}
	\cdot\f{\dqstar_{e''}(\cya)}{\dqstar_{e''}(\cya,\yel)}
	\ge\f1{4^r}\,.\eeq
Recalling the definition of $W_e(\sigma)$ above,
\eqref{e:enough.white.to.shift}
says that 
$\mu_0( \sigma_e=\sigma;\textup{$e$
	is $\SPIN{white}$})$ is large relative to \eqref{e:error.cond.on.CY}.\smallskip
	
\noindent\bemph{Step 3. Correct marginals given no $\SPIN{red}$ spins.} We can now define a sequence of measures $\hat{\mu}_0,\ldots,\hat{\mu}_{|\delta a|}$ where $\hat{\mu}_0$ is as defined above, and $\hat{\mu}_j$ will be consistent with $\starpi$ on the first $j$ edges in $\delta a$.
Given $\hat{\mu}_0$, we look at the first edge and define $\hat{\mu}_1$ to satisfying the following constraints:
\begin{enumerate}[(a)]
\item \label{i.not.white.no.shift} For any $\usi_{-1}$ that contains only one $\cya$ spin, $\hat{\mu}_1(\cya,\usi_{-1})
=\hat{\mu}_0(\cya,\usi_{-1})$;
\item \label{i.white.keep.cyan.plus.yellow.const}
For any $\usi_{-1}$ that contains at least two $\cya$ spins, $\hat{\mu}_1(\cya,\usi_{-1})
	+\hat{\mu}_1(\yel,\usi_{-1})
	=\hat{\mu}_0(\cya,\usi_{-1})
	+\hat{\mu}_0(\yel,\usi_{-1})$; 
\item \label{i.white.shift.cyan.and.yellow}
The appropriate amount of mass is shifted between $\set{\sigma_1=\cya}$ and $\set{\sigma_1=\yel}$:
	\[
	\hat{\mu}_1\Big(\sigma_{e_1}=\sigma; \textup{$e_1$
		is $\SPIN{white}$}\Big)
	=\hat{\mu}_0\Big(\sigma_{e_1}=\sigma; \textup{$e_1$
		is $\SPIN{white}$}\Big)
		+\bigg\{\hat{\mu}\Big(\sigma_e=\sigma\Big)
		-\hat{\mu}_0\Big(\sigma_e=\sigma\Big)\bigg\}\,.
	\]
\end{enumerate}
Constraints \eqref{i.not.white.no.shift} and \eqref{i.white.keep.cyan.plus.yellow.const} ensure that the marginals on the other edges are not affected. Finally, constraint~\eqref{i.white.shift.cyan.and.yellow} ensures that $\hat{\mu}_1$ will have marginals consistent with $\starpi$ on the first edge in $\delta a$, and it is feasible thanks to the estimates \eqref{e:error.cond.on.CY}~and~\eqref{e:enough.white.to.shift} from above. 
We can repeat the procedure with the remaining edges in $\delta a$ to arrive at $\hat{\mu}_{|\delta a|}\equiv\hat{\mu}$. From the construction and from \eqref{e:error.cond.on.CY} we have the very crude bound
	\beq\label{e:tvdistance.on.CY}
	\Big|\hat{\mu}(\usi_{\delta a})
	-\hat{\mu}_0(\usi_{\delta a})\Big|
	\le
	\sum_{e\in\delta a}
	\bigg|\hat{\mu}_0\Big(\sigma_e=\sigma\Big)
		-\hat{\mu}\Big(\sigma_e=\sigma\Big)\bigg|
	\le O\bigg( \f{16^rk}{k^r}\bigg)\,,
	\eeq
for all $\usi_{\delta a}\in\SPIN{CY}$.
 We then finally define a measure on all valid colorings of $\delta a$ by
	\[
	\hbh(\usi_{\delta a})
	\equiv \hbh_0\Big(\usi_{\delta a} ;
		\usi_{\delta a}\notin \SPIN{CY}\Big)
	+\hbh_0(\SPIN{CY}) \hat{\mu}(\usi_{\delta a})\,.\]
By construction, $\hbh$ is consistent with the marginals $\starpi$, and satisfies
	\[\Big|
	\hbh(\usi_{\delta a})
	-\hbh_\star(\usi_{\delta a})\Big|
	\le
	O\bigg( \f{16^rk}{k^r}\bigg)
	\]
for all $\usi_{\delta a}$, by combining \eqref{e:tvdistance.on.CY} with our earlier estimate \eqref{e:red.reweight.gamma.estimate} on the weights $\gamma_e$. This gives
	\[
	\Ent(\hbh) \ge \Ent(\hbh_\star) - \f1{k^{r/2}}\,,
	\]
concluding the proof of the lemma.
\end{proof}
\end{lem}

\begin{proof}[Proof of Proposition~\ref{p:first.moment.exponent}]
We have from Corollary~\ref{c:first.moment.exponent} that
	\[\E_{\DD}\ZZ = \exp\bigg\{
		 n\bm{\Psi}_{\DD}(\omstar) + o(n)\bigg\}\,,\]
where $\bm{\Psi}_{\DD}$ is defined by \eqref{e:nu.opt}. Let $\bh\equiv(\dbhstar,\hbh)$ for $\dbhstar$ 
as in Lemma~\ref{l:nu.star.as.optimizer.for.starpi},
and $\hbh$ as given by Lemma~\ref{l:clause.tuple.measure.corrected}. Then, by construction, $\nu$ is consistent with marginals $\omstar$, so $\ZZ\ge\ZZ[\bh]$. It follows by Lemma~\ref{l:combinatorial.mmt.simplification} that
	\[
	\E_{\DD}\ZZ
	\ge\E_{\DD}\ZZ[\bh]
	= \exp\bigg\{
		 n\bm{\Phi}_{\DD}(\bh) + o(n)\bigg\}\,,
	\]
for $\bm{\Phi}_{\DD}(\bh)$ as defined by \eqref{e:rate.function.given.gen.degseq}.
Now let $\bhstar\equiv(\dbhstar,\hbhstar)$
for $\hbhstar$ as defined by \eqref{e:opt.clause.tuple.measure.star}.
Then Lemma~\ref{l:clause.tuple.measure.corrected} gives
	\[
	\bm{\Phi}_{\DD}(\bh)
	\ge
	\bm{\Phi}_{\DD}(\bhstar)
	- \f1{k^{r/3}}\,.
	\]
In the limit $n\to\infty$ followed by $R\to\infty$, the $R$-neighborhood empirical measure $\DD$
concentrates around the limiting Galton--Watson measure, using the fact that an $o_R(1)$ fraction of variables are removed by preprocessing (Proposition~\ref{p:small.fraction.removed.in.processing}). It then follows by the stability results
(Lemma~\ref{l:stability}) that $\bhstar$ converges in the distributional sense to the measures on the limiting random tree. It follows that
	\[
	\lim_{R\to\infty}\bigg[
	\lim_{n\to\infty}
	\bm{\Phi}_{\DD}(\bhstar)\bigg]
	=\Phi^\textup{col}(\alpha)
	\]
in probability, and the result follows since
$\Phi^\textup{col}(\alpha)=\Phi(\alpha)$
by Lemma~\ref{l:phi.equals.phi.col}.
\end{proof}

\section{Analysis of preprocessing}\label{s:processing}

\noindent In this section we prove Propositions~\ref{p:small.fraction.removed.in.processing}--\ref{p:unif}. The section is organized as follows:
\begin{enumerate}[--]
\item In \S\ref{ss:bootstrap.defects} we bound the occurrence of \bemph{defective} variables (Definition~\ref{d:j.defective}) in the $\uPGW$ random tree (Proposition~\ref{p:sparse.subtree.with.many.defects}). We already have Proposition~\ref{p:uPGW.is.J.robust} from Section~\ref{s:onersb} which bounds the probability that a variable fails to be \bemph{nice}. The main task of \S\ref{ss:bootstrap.defects} is to control the bootstrap percolation of non-nice variables that creates defective variables.

\item In \S\ref{ss:orderly.contained} we continue to analyze local properties of variables in the $\uPGW$ random tree. To this end, recall that the \bemph{self-contained} (Definition~\ref{d:contained}) and \bemph{orderly} (Definition~\ref{d:orderly}) properties both refer to being near relatively few defects (although each property captures a different notion of ``relatively few''). In \S\ref{ss:orderly.contained} we apply the estimates of \S\ref{ss:bootstrap.defects} to bound the occurrence of variables that fail to be self-contained or orderly (Propositions~\ref{p:path.intersect.not.selfcont}~and~\ref{p:path.intersect.not.orderly}). An immediate consequence of these two propositions is that most variables are \bemph{perfect}, i.e., both self-contained and orderly. Now recall from Definition~\ref{d:perfect.fair} that a variable is \bemph{fair} if it is stable, it satisfies a loose bound on the volume of its local neighborhood, and it does not lie near many imperfect variables --- thus we can deduce from the combination of Propositions~\ref{p:one.stable}, \ref{p:path.intersect.not.selfcont}~and~\ref{p:path.intersect.not.orderly} that most variables are fair (Corollary~\ref{c:fair}). In fact we can further deduce (Corollary~\ref{c:excellent}) that most variables are \bemph{excellent} --- from Definition~\ref{d:good.exc}, excellent roughly means fair in a strong sense. Since \bemph{good} is weaker than excellent, it immediately implies that most variables are good. 

\item In \S\ref{ss:preprocessing.structural} and \S\ref{ss:preprocessing.probabilistic} we prove Proposition~\ref{p:small.fraction.removed.in.processing}, which controls the set of variables $\bsp'(A;\GG)$ that is removed by the preprocessing algorithm (Definition~\ref{d:proc}). Recall $A$ is the set of variables in $\GG$ that are improper (Definition~\ref{d:simple.types}) or not $1$-good (Definition~\ref{d:good.exc}) --- it is easy to bound the occurrence of improper variables, and the bound on variables that are not $1$-good comes directly from Corollary~\ref{c:excellent} in \S\ref{ss:orderly.contained}. Thus we already know from \S\ref{ss:orderly.contained} that variables are unlikely to be in $A$, and the task of \S\ref{ss:preprocessing.structural} and \S\ref{ss:preprocessing.probabilistic} is to deduce a bound on $\bsp'(A;\GG)$. To this end, in \S\ref{ss:preprocessing.structural} we prove a deterministic result (Proposition~\ref{p:corrupted.tree.in.graph}), saying that any large connected component of $\bsp'(A;\GG)$ must have certain characteristics. In \S\ref{ss:preprocessing.probabilistic} we show that those characteristics rarely occur in the random graph, thereby concluding the proof of Proposition~\ref{p:small.fraction.removed.in.processing}.

\item In \S\ref{ss:positive.fraction.combinatorial} ~and~\S\ref{ss:positive.fraction.prob} we prove Proposition~\ref{p:posfrac}, which says that, with high probability over the random graph $\GG$, each surviving total type occurs linearly many times in the processed graph $\proc\GG$.
To this end, in \S\ref{ss:positive.fraction.combinatorial} we prove a deterministic result (Corollary~\ref{c:event.implies.desired.type}) which says that if the local neighborhood of a clause $a$ in $\GG$ satisfies certain properties, then $a$ will have a particular total type $\bL$ in the processed graph $\proc\GG$. Since the conditions to have any given total type $\bL$ are essentially local, in \S\ref{ss:positive.fraction.prob} we are able to argue that the total number of clauses satisfying the local conditions for type $\bL$ concentrates around its mean, which grows linearly in $n$.

\item Lastly, in \S\ref{ss:unif} we prove Proposition~\ref{p:unif} which says that the processed graph $\proc\GG$ is uniform given its processed neighborhood profile $\DD$, in the sense discussed in Remark~\ref{r:processed.graph.types.CM} .
\end{enumerate}

\subsection{Bootstrap percolation of defects}
\label{ss:bootstrap.defects}

Recall from Definition~\ref{d:uPGW} that $\tree$ denotes
a $\uPGW$ tree, rooted at a variable $\vrt$. For $\II\in\set{0,1}$ we can use Definitions~\ref{d:j.stable}~and~\ref{d:j.defective}
to define the following subsets of $\tree$:
	{\setlength{\jot}{0pt}\begin{align*}
	D^{*,\II}
	\equiv D^{*,\II}(\tree)
	&\equiv \set{\textup{variables in $\tree$ that 
	are not $\II$-nice}} \\
	\subseteq
	D^{\KAPPA,\II}
	\equiv D^{\KAPPA,\II}(\tree)
	&\equiv \set{\textup{variables in $\tree$ within
		distance $\KAPPA$ of $D^{*,\II}$}}\\
	\subseteq
	\DEFECTIVE^\II
	\equiv
	\DEFECTIVE^\II(\tree)
	&\equiv \set{\textup{$\II$-defective variables
	in $\tree$}}.
	\end{align*}}%
The main goal of this subsection is to bound the occurrence of the $\II$-defective set $\DEFECTIVE^\II$ in $\tree$. More precisely, we will bound the intersection of $\DEFECTIVE^\II$ with \bemph{sparse subtrees} of $\tree$. Let
	\beq\label{e:bold.Lambda.abstract.defn}
	\bLambda_{\CC,\ell}
	\equiv
	\left\{ \begin{array}{c}
	\textup{bipartite factor trees 
	$T\equiv(V_T,F_T,E_T)$,
	rooted at a variable $\vrt$,}\\
	\textup{with maximum degree at most $\CC$,
		and number of variables $|V_T|=\ell$}
	\end{array}
	\right\}\,.\eeq
For any variable-rooted tree $\tree$ we let
	\beq\label{e:bold.Lambda.within.PGW.defn}
	\bLambda_{\CC,\ell}(\tree)
	\equiv
	\left\{ \begin{array}{c}
	\text{bipartite factor trees 
	$T\equiv(V_T,F_T,E_T)$
	with $\vrt \in T \subseteq \tree$,}\\
	\textup{maximum degree at most $\CC$,
	and number of variables $|V_T| = \ell$}
	\end{array}
	\right\}\,.\eeq
Without loss of generality we always assume $\CC\ge2$.
When considering $T\subseteq\tree$, for any vertex $x\in\tree$ we will always use $\pd x$ to denote the immediate neighbors of $x$ in $\tree$,
and $N(x)\equiv \pd_1 x$ the set of vertices at unit distance from $x$. We then denote
	{\setlength{\jot}{0pt}\begin{align*}
	\pd_T x &\equiv T \cap \pd x\,,\\
	N_T(x) &\equiv T\cap N(x)\,.
	\end{align*}}%
If $T\in\bLambda_{\CC,\ell}(\tree)$, then by definition we have $|\pd_T x| \le \CC$ for all $x\in V_T\cup F_T$,
so $N_T(x) \le \CC(\CC-1) \le \CC^2$. The main result of this subsection is the following:

\begin{ppn}\label{p:sparse.subtree.with.many.defects}
Let $\II\in\set{0,1}$, and suppose $\CC\ge2$ is upper bounded by an absolute constant. Then, as long as $k$ exceeds an absolute constant $k_0$ (depending only on the bound on $\CC$), we have the bound
	\[
	\uPGW\bigg( |\DEFECTIVE^\II \cap V_T| 
	\ge \ell\ep
	\text{ for some }
	T\in\bLambda_{\CC,\ell}\bigg)
	\le \f{\exp(k^2\ell)}{ \exp(
		2^{k/4}\ell\ep /k^2 ) }
	\]
for all $\ep\ge0$ and all integer $\ell\ge0$.
\end{ppn}

\begin{lem}\label{lem-local-D-0}
Let $\II\in\set{0,1}$, and suppose $\CC\ge2$ is upper bounded by an absolute constant. Then, as long as $k$ exceeds an absolute constant $k_0$ (depending only on the bound on $\CC$), we have the bound
	\[
	\uPGW\bigg( |D^{\KAPPA,\II} \cap V_T| 
	\ge \ell\ep
	\text{ for some }
	T\in\bLambda_{\CC,\ell}\bigg)
	\le \f{ \exp( k^2\ell)}
	{ \exp( 2^{k/4} \ell\ep) }
	\]
for all $\ep>0$ and all integer-valued $\ell\ge0$.

\begin{proof}
We divide the proof into three steps below. In the first step, we show that if some sparse subtree $T\subseteq\tree$ has a nontrivial intersection with $D^{\KAPPA,\II}$, then there must exist $T\subseteq \tprime \subseteq\tree$ such that $\tprime$ is also sparse, and has a nontrivial intersection with $D^{*,\II}$ --- this step is deterministic.
In the second step, we explain a general identity concerning the expected number of subtrees of $\tree\sim\uPGW$ satisfying a given property.
In the third step we bound this quantity to show that $\tprime$ is unlikely to occur. If $\ell\ep=0$ the bound is vacuous, so we assume without loss that $\ell\ep\ge1$. \smallskip

\noindent\bemph{Step 1. Construction of $\tprime$.}
Take any $T \in \bLambda_{\CC,\ell}(\tree)$ such that $A=D^{\KAPPA,\II}\cap V_T$ has size $|A|\ge\ell\ep$.
For each variable $u\in A$, the number of variables 
in the $(4\KAPPA-1)$-neighborhood of $u$ in $T$ is at most
	\[
	|B_{4\KAPPA-1}(u) \cap V_T|
	=\sum_{\ell=0}^{4\KAPPA-1} \CC^{2\ell}
	= \f{\CC^{8\KAPPA}}{\CC^2-1}
	\le \CC^{8\KAPPA}\,.
	\]
Therefore we can extract $A'\subseteq A$ of size $|A'| = \lceil \ell\ep/\CC^{8\KAPPA}\rceil$ such that variables in $A'$ lie at pairwise distance at least $4\KAPPA$. By definition, since $A'\subseteq A \subseteq D^{\KAPPA,\II}$, every variable in $A'$ lies within distance $\KAPPA$ of $D^{*,\II}$.
For each $u\in A'$, let $g(u)$ be any of the variables in $D^{*,\II}$ which lies closest to $u$, and let $\gamma(u)$ be the path between $u$ and $g(u)$. The paths $\gamma(u)$, $u\in A'$, are of length at most $\KAPPA$, and are mutually disjoint. Let $\tprime$ be the union of $T$ with the paths $\gamma(u)$. Then we have $\tprime\in\bLambda_{\ell',\CC+1}(\tree)$ for 
	\[
	\ell \le \ell' \le 
	\ell + \bigg\lceil \f{\ell \ep}{\CC^{8\KAPPA}}
		 \bigg\rceil (\KAPPA-1)
		\le\ell\KAPPA\,.
	\]
Let $B'\equiv g(A')\subseteq D^{*,\II}$, and note that $g:A'\to B'$ is a one-to-one mapping. We have thus shown that if there exists $T\in\bLambda_{\CC,\ell}(\tree)$ with at least $\ell\ep$ variables in $D^{\KAPPA,\II}$,
then there must exist $\tprime\in\bLambda_{\CC+1,\ell'}(\tree)$, for $\ell\le \ell'\le \ell\KAPPA$,
and a subset $B'\subseteq D^{*,\II}\cap V_{\tprime}$
such that $B'=\lceil \ell\ep/\CC^{8\KAPPA} \rceil$,
and variables in $B'$ lie at pairwise distance at least $2\KAPPA$.\smallskip

\noindent\bemph{Step 2. Expected number of embedded subtrees.} 
Now fix any tree $\tprime\in\bLambda_{\CC+1,\ell'}$
for $\ell\le\ell'\le\ell\KAPPA$, and a subset
$B'$ of variables in $\tprime$ such that $|B'|= \lceil \ell\ep/\CC^{8\KAPPA} \rceil$, and variables in $B'$ lie at pairwise distance at least $2\KAPPA$. For this fixed pair $(\tprime,B')$ we will bound the probability
	\begin{align}\nonumber
	P(T',B') 
	&\equiv
	\uPGW\bigg(
	\textup{there exists an embedding
	$\zeta:T'\hookrightarrow\tree$ such that
	$\zeta(B')
	\subseteq D^{*,\II}(\tree)$}
	\bigg)
	\\
	&\le \int
	\bigg|\bigg\{
	\textup{embeddings }
	\zeta : \tprime\hookrightarrow\tree
	\textup{ such that }
	\zeta(B')
	\subseteq D^{*,\II}(\tree)
	\bigg\}
	\bigg|
	\,d\uPGW(\tree)\,.
	\label{e:expected.number.of.embeddings}
	\end{align}
(Later we will enumerate over $(\tprime,B')$.) To calculate \eqref{e:expected.number.of.embeddings}, it is useful to take the following (equivalent) view of the random tree $\tree\sim\uPGW$: to each variable $v$ we attach $D(v)$ which is an independently sampled Poisson point process of unit rate on the interval $[0,\alpha k]$. The atoms of $D(v)$ correspond to the child clauses of $v$, and we write them in ascending order as
	\[
	D(v)= \bigg(
	a_i(v) : 1\le i\le |D(v)|
	\bigg)\,,\quad
	0\le a_1(v) \le \ldots \le a_{|D(v)|}(v) \le \alpha k
	\,.
	\]
Meanwhile, to each clause $a$ we attach $D(a)=\set{1,\ldots,k-1}$ to indicate the $k-1$ child variables of $a$. In this view, 
an embedding $\zeta:\tprime\hookrightarrow\tree$ can be specified by giving $D_{\tprime}(v)\subseteq D(v)$ and $D_{\tprime}(a)\subseteq D(a)$ for the variables and clauses of $\tprime$.
Moreover, if $D$ is a Poisson point process of unit rate on $[0,\alpha k]$, and $D_\circ$ is a finite subset of $[0,\alpha k]$, then (by standard properties of Poisson point processes) the law of $D$ conditioned to contain $D_\circ$ is the same as the law of $D\cup D_\circ$. The volume (under Lebesgue measure) of all possible $D_{\tprime}(v)$ is given by
	\[\Bigg|\Bigg\{
	\bigg(a_i : 1\le i\le |D_{\tprime}(v)|\bigg) :
	0\le a_1 \le \ldots \le a_{|D_{\tprime}(v)|}
	\le \alpha k\Bigg\}\Bigg|
	= \f{(\alpha k)^{|D_{\tprime}(v)|}}{|D_{\tprime}(v)|!}\,.
	\]
The volume (under counting measure) of all possible $D_{\tprime}(a)$ is given by
	\[\Bigg|
	\bigg(v_j : 1\le j\le D_{\tprime}(a)\bigg)
	: v_j \textup{ integer-valued},
	1\le v_1 \le \ldots \le v_{|D_{\tprime}(a)|} \le k
	\Bigg|
	= \binom{k}{|D_{\tprime}(a)|}\,.
	\]
It follows that the expected volume of all embeddings
$\zeta:\tprime\to\tree$ is
	\begin{align}\nonumber
	\embed(\tprime) &\equiv \int
	\bigg|\bigg\{
	\textup{embeddings }
	\zeta : \tprime\hookrightarrow\tree
	\bigg\}
	\bigg|
	\,d\uPGW(\tree)\\
	&=\prod_{v\in V_{\tprime}} 
	\f{(\alpha k)^{|D_{\tprime}(v)|}}{|D_{\tprime}(v)|!}
	\prod_{a\in F_{\tprime}}
	\binom{k}{|D_{\tprime}(a)|}
	\le (\alpha k)^{|F_{\tprime}|}
	k^{|V_{\tprime}|-1} 
	\le (\alpha k^2)^{\ell'\CC}\,.
	\label{e:lebesgue.vol.of.embeddings}
	\end{align}
The law of $\tree$ conditioned on any such embedding is equivalent to its law under the measure $\uPGW(\tprime)$ described in Definition~\ref{d:PGW.based.on.T}. The expectation 
\eqref{e:expected.number.of.embeddings} is then equal to
	\beq\label{e:embeddings.times.embedded.probab}
	\embed(\tprime) \cdot
	\Big[\uPGW(\tprime)\Big]\bigg(
	B' \subseteq D^{*,\II}
	\bigg)
	\eeq
where $\embed(\tprime)\le (\alpha k^2)^{\ell'\CC}\le (\alpha k^2)^{\ell\KAPPA\CC}$.\smallskip

\noindent\bemph{Step 3. Probability bound for $\tprime$.} We now bound the last term in \eqref{e:embeddings.times.embedded.probab},
recalling that $\tprime\in\bLambda_{\CC+1,\ell'}$
and $B'\subseteq V_{\tprime}$
with $|B'|=\lceil \ell\ep/\CC^{8\KAPPA}\rceil$. The main difficulty is that the neighborhoods $B_R(u)\subseteq\tree$, for $u\in B'$, need not be disjoint, since we only ensure that variables in $B'$ have pairwise distance at least $2\KAPPA$ for $\KAPPA$ an absolute constant. This can be addressed by a simple modification: for $u\in B'$, consider $B_R(u)$ as a tree rooted as $u$. In this $u$-rooted tree, take each $w\in \tprime\cap \pd_2 u$ and delete the subtree descended from it (without deleting $w$ itself). Denote the result $B_R(u)^-\subseteq B_R(u)$. The modified neighborhoods $B_R(u)^-$, for $u\in B'$, are mutually disjoint. Define the depth-two subtree $T_u \equiv \tprime \cap B_2(u)$, and note that $B_R(u)^-$ has the same law as the random tree $\tree_R(u)^-$ defined as follows: first sample $\tree'\sim\uPGW(T_u)$ (rooted at $u$), then form $\tree''$ by deleting from $\tree'$ the subtrees of depth-two variables in $T_u$, then form $\tree_R(u)^-$ by taking the $R$-neighborhood of the root of $\tree''$. If the root of $\tree'$ is $(\CC+1)^4$-robust, then $\tree_R(u)^-$ must be $1$-nice. It follows that
	\beq\label{e:product.over.disjoint.nbhds.around.sparse.tree}
	\Big[\uPGW(\tprime)\Big]\bigg(
	B' \subseteq D^{*,\II}
	\bigg)
	\le
	\prod_{u\in B'}
	\Big[\uPGW(T_u)\Big]
	\bigg(\textup{root of $\tree'$
	is not $(\CC+1)^4$-robust}\bigg)
	\le 
	\f1{\exp(\Omega(|B'| k2^{k/4}))}\eeq
--- in the intermediate expression we use $\tree'$ to denote a random variable with law $\uPGW(T_u)$, and the last bound is by Proposition~\ref{p:uPGW.is.J.robust}. We can substitute this last bound into 
\eqref{e:embeddings.times.embedded.probab},
which we recall is equal to \eqref{e:expected.number.of.embeddings}. Enumerating over all pairs $(\tprime,B')$ 
(such that $\tprime\in\bLambda_{\CC+1,\ell}$ for $\ell\le \ell'\le \ell\KAPPA$, and 
$B'\subseteq V_{\tprime}$
with $|B'|=\lceil \ell\ep/\CC^{8\KAPPA}\rceil$)
gives
	\begin{align*}
	&\sum_{\tprime,B'}
	\int
	\bigg|\bigg\{
	\textup{embeddings }
	\zeta : \tprime\hookrightarrow\tree
	\textup{ such that }
	\zeta(B')
	\subseteq D^{*,\II}(\tree)
	\bigg\}
	\bigg|
	\,d\uPGW(\tree)\\
	&\le 
	\sum_{\ell\le \ell'\le\ell\KAPPA}
	\f{ (\CC+2)^{\ell'(\CC+2)}
		2^{\ell'}
		(\alpha k^2)^{\ell'\CC}
	}{\exp(
	\Omega(
	\lceil\ell\ep/\CC^{8\KAPPA}\rceil
	\cdot
	k2^{k/4}))}
	\le \f{ \exp( \ell \KAPPA \CC k)}
	{ \exp(
	2^{k/4}\ell\ep 
	 ) }\,,
	\end{align*}
for $k$ exceeding an absolute constant. The claimed bound follows.
\end{proof}
\end{lem}

\begin{lem}\label{l:BSP.in.tree.bounded.expansion}
Let $\tree$ be any bipartite factor tree in which all clauses have degree at most $k$. Then
	\[
	|A| 
	\ge \f{|\bsp(A;\tree)|}{2k}
	\]
for any finite subset $A$ of variables in $\tree$.

\begin{proof} Let $A_0\equiv A$, and for $t\ge1$ we let $A_t$ be the union of $A_{t-1}$ with all variables $u\in\tree$ with $|N(u)\cap A_{t-1}|\ge2$. Thus $A=A_0\subseteq A_1\subseteq \ldots$, and
(as in \eqref{eq-bootstrap-percolation}) we define
	\[
	\bsp(A;\tree)
	=\bigcup_{t\ge0} A_t\,.
	\]
For all $t\ge0$, let $G_t\equiv (A_t,F_t,E_t)$ be the subgraph of $\tree$ induced by $A_t$ --- that is, $F_t$ is the set of all clauses in $\tree$ having at least two incident variables in $A_t$, and $E_t$ is the set of all edges in $\tree$ between $A_t$ and $F_t$. As long as $t$ is finite, $G_t$ is a finite subgraph of $\tree$.
If we denote the (maximal) connected components of $G_t$
by $G_{t,i}\equiv (A_{t,i},F_{t,i},E_{t,i})$
for $1\le i\le i(t)$, and let $A_i\equiv A \cap G_{t,i}$, then we must have $\bsp(A_i; G_{t,i})=A_{t,i}$. If we can show for all $i$ that $|A_{t,i}|\le 2k|A_i|$, then summing over $i$ gives
 	\[
	|A_t|
	=\sum_{i=1}^{i(t)} |A_{t,i}|
	\le 2k\sum_{i=1}^{i(t)} |A_i|
	= 2k|A|\,.
	\]
If the bound holds for all $t$, then the conclusion of the lemma follows. Since each $G_{t,i}$ is a finite tree, we conclude that it suffices to show the following special case of the lemma: for any finite bipartite factor tree $\tree=(V,F,E)$ in which all clauses have width at most $k$, if $A\subseteq V$ such that $\bsp(A;\tree)=V$, then we must have $|V| \le 2k|A|$.

To show the last assertion, for all $v\in V$ let $\tau(v)\equiv \min\set{j\ge0: v\in A_j}$. For each edge $e=(av)\in E$, let
	\[
	m(av)
	\equiv\min\bigg\{ 2,
	\Big|\Big\{ u\in\pd a:
		\tau(u)<\tau(v) \Big\}\Big|
	\bigg\}\in\set{0,1,2}\,.
	\]
Since $\bsp(A;\tree)=V$, the definition of $\bsp$ implies that each variable $v\in V$ must have
	\[\sum_{a\in\pd v} m(av)\ge2\,.\]
On the other hand, for each clause $a\in F$, if we reorder the variables in $\pd a$ as $(v_1,\ldots,v_{|\pd a|})$ such that $\tau(v_i)$ is nondecreasing in $i$, then we must have 
$m(av_1)=0$ and $m(av_2)\in\set{0,1}$, so
	\[
	\sum_{v\in\pd a}m(av) 
	\le 0 + 1 + (|\pd a|-2)2 = 2|\pd a|-3\,.
	\]
Combining these inequalities gives
	\[
	2|V\setminus A|
	\le
	\sum_{v\in V} \sum_{a\in\pd v} m(av)
	= \sum_{a\in F} \sum_{v\in\pd a} m(av)
	\le \sum_{a\in F}\Big(2|\pd a|-3\Big)
	=2|E|-3|F|\,.
	\]
Since $\tree=(V,F,E)$ is by assumption a finite tree where all clauses have degree at most $k$, we must have
	\[
	|V|-1 = |E|-|F|
	=\sum_{a\in F}\bigg(|\pd a|-1\bigg)
	\le (k-1) |F|\,.
	\]
Substituting into the previous gives
	\[
	2|V\setminus A|
	\le 2\Big(|V|-|1|\Big)-|F|
	\le \bigg(2-\f1{k-1}\bigg)
		\Big(|V|-1\Big)\,.
	\]
Rearranging gives
	\[
	|A| \ge 1 + \f{|V|-1}{2(k-1)}
	\ge \f{|V|}{2k}\,,
	\]
which proves the claim. The lemma follows as discussed above.
\end{proof}
\end{lem}

\begin{lem}\label{l:larger.tree.intersecting.bsp.init.set}
Let $\tree$ be any bipartite factor tree in which all clauses have degree at most $k$.
For any $\CC\ge2$ and $\ell\ge1$,
and any $T\in\bLambda_{\CC,\ell}(\tree)$,
there is a tree $T'\in\bLambda_{\CC',\ell'}(\tree)$, with $\CC'=\CC+2$ and $\ell\le \ell'\le \ell(\CC')^{R'}$, such that
	\[
	|D^{\KAPPA,\II}\cap V_{T'}|
	\ge
	\f1{2k}
	\bigg(
	|\DEFECTIVE^\II \cap V_T|
	+|V_{T'}\setminus V_T|
	\bigg)
	\]
for $\II\in\set{0,1}$.

\begin{proof}
Recall Definition~\ref{d:j.defective} that $v \in \DEFECTIVE^\II$ if and only if $v\in\bsp( D^{\KAPPA,\II}\cap B_{R'/2}(v);B_{R'/2}(v))$. Let $\tree'\equiv B_{R'/2}(T)$, that is, the union of the $(R'/2)$-neighborhoods of all variables in $T$. Let $A_0\equiv D^{\KAPPA,\II} \cap \tree'$,
and for $t\ge1$ let $A_t$ be the union of $A_{t-1}$ with all variables in $\tree'$ that have at least two neighbors in $A_{t-1}$. Then let
	\[
	A_\infty
	\equiv \bigcup_{t\ge0} A_t
	= \bsp( D^{\KAPPA,\II} \cap \tree'; \tree').
	\]
Let $G_t$ be the subgraph of $\tree'$ induced by $A_t$. We will construct a sequence 
of trees $T_s\equiv(V_s,E_s,F_s)\subseteq G_\infty$ (terminating at $T'$), as follows.
Let $T_0\equiv T$.
For all variables $u\in A_\infty$, let $\tau(u)\equiv \min\set{t\ge0: u\in A_t}$. For $s\ge0$, let
	\[
	U(T_s)
	\equiv
	\bigg\{
	u\in V_s \,\bigg|
	\begin{array}{c} 
	1\le \tau(u)<\infty, 
	\textup{ and $u$ does not have at least}\\
	\textup{two neighboring variables 
	in $T_s$ with smaller $\tau$}
	\end{array}
	\bigg\}\,.
	\]
If $U(T_s)=\emptyset$, we terminate the process and set $T_s=T_\infty$. Otherwise, take any $u\in U(T_s)$: it must have at least one neighboring variable $u'$ that does not lie in $T_s$ and has $\tau(u')<\tau(u)$. Among all such $u'$, choose one with minimal $\tau$. Let $T_{s+1}$ be the graph induced by $V_s \cup \set{u'}$. In the resulting $T_\infty\equiv T'$, any vertex that did not belong to $T$ will have degree at most $3$, while any vertex that did belong to $T$ will have its degree in $T'$ at most two larger than its degree in $T$. It follows that 
$T'\in\bLambda_{\CC',\ell'}(\tree)$, with $\CC'=\CC+2$ and $\ell\le \ell'\le \ell(\CC')^{R'}$. It is straightforward to check (e.g.\ by induction) that the above construction implies
	\[(\DEFECTIVE^\II \cap V_T)
	\cup (V_{T'} \setminus V_T)
	\subseteq
	\bsp(D^{\KAPPA,\II}\cap V_{T'};T')\,.
	\]
Combining with Lemma~\ref{l:BSP.in.tree.bounded.expansion} gives
	\[
	\f1{2k}
	\bigg(
	|\DEFECTIVE^\II \cap V_T|
	+|V_{T'}\setminus V_T|
	\bigg)
	\le 
	\f{\bsp(D^{\KAPPA,\II}\cap V_{T'}; T')}{2k}
	\le|D^{\KAPPA,\II}\cap V_{T'}|\,,
	\]
as claimed.\end{proof}\end{lem}

\begin{proof}[Proof of Proposition~\ref{p:sparse.subtree.with.many.defects}]
It follows from Lemma~\ref{l:larger.tree.intersecting.bsp.init.set} that
	\begin{align*}
	&\uPGW\bigg( |\DEFECTIVE^\II \cap V_T| 
	\ge \ell\ep
	\text{ for some }
	T\in\bLambda_{\CC,\ell}\bigg)\\
	&\le
	\sum_{\ell'=\ell}^{\lceil \ell(\CC')^{R'}\rceil}
	\uPGW\bigg( |D^{\KAPPA,\II} \cap V_{T'}| 
	\ge \f{\ell\ep +(\ell'-\ell)}{2k}
	\text{ for some }
	T'\in\bLambda_{\CC',\ell'}\bigg)\,.
	\end{align*}
It follows from Lemma~\ref{lem-local-D-0} that the last expression is upper bounded by
	\[
	\sum_{\ell'=\ell}^{\lceil \ell(\CC')^{R'}\rceil}
	\f{\exp( k^2[\ell + (\ell'-\ell)] )}
	{\exp(2^{k/4} [ \ell\ep +(\ell'-\ell) ] /(2k))}
	\le \f{O(1) \exp(k^2\ell)}
	{\exp (\ell \ep 2^{k/4} /(2k))}
	\le \f{\exp(k^2\ell)}{ \exp(\ell\ep
		2^{k/4} /k^2 ) }\,,
	\]
concluding the proof.
\end{proof}

Up to this point we have proved bounds
for the $\uPGW(T)$ measures (for instance, Proposition~\ref{p:uPGW.is.J.robust}) by direct analysis --- the basic intuition being that, when $T$ is a sparse tree, the $\uPGW(T)$ measure is not so different from the $\uPGW$ measure. We now make this more precise by proving a general bound which allows us to more easily transfer bounds from $\uPGW$ to $\uPGW(T)$:

\begin{lem}\label{l:radon.derivative.pgwT}
Let $T$ be any fixed variable-rooted tree of maximum degree $\CC$,
where $\CC$ is an absolute constant.
Then the Radon--Nikodyn derivative between the measures $\uPGW(T)$ and $\uPGW$ satisfies the second moment bound 
	\[
	\E\bigg[\bigg( \f{d\uPGW(T)}{d\uPGW}
		\bigg)^2\bigg] \le 2\,,
	\]
where $\E$ denotes expectation under $\uPGW$.
(The bound holds for $k\ge k_0$ where $k_0$ depends only on $\CC$.)

\begin{proof}
According to the original definitions, in a random tree $\tree\sim\uPGW$
or $\tree\sim\uPGW(T)$, only the root variable is distinguished, while all other variables are unlabelled. For the purposes of this proof, however, we now instead consider both $\tree\sim\uPGW$ and $\tree\sim\uPGW(T)$ as rooted \emph{labelled} trees, where the children of each vertex are ordered uniformly at random. Separately, we also assume that $T$ comes with a fixed labelling of its vertices, so that we may speak of embeddings
$T\hookrightarrow\tree$.\smallskip

\noindent\bemph{Step 1. Martingale of Radon--Nikodym derivatives.} If $\uPGW(T)\ll\uPGW$, then the labellings discussed above do not affect the value of the Radon--Nikodym derivative, which we hereafter write as
	\[\RN 
	\equiv \f{d\uPGW(T)}{d\uPGW}(\tree)
	\equiv \RN(\tree;T) \,.\]
To show that $\uPGW(T)\ll\uPGW$ (so that $\RN$ is well-defined), and to obtain the bound on $\RN$ claimed in the statement of the lemma, we shall take the limit of depth $\ell\to\infty$. Let $T_\ell \equiv B_\ell(o;T)$; and let $\tree_\ell \equiv B_\ell(\vrt;\tree)$ where $\vrt$ is the root variable of $\tree$. Let $\mathscr{F}_\ell$ be the $\sigma$-field generated by $\tree_\ell$. Define the restricted measures
	\[
	\uPGW\equiv\uPGW\Big|_{\mathscr{F}_\ell}\,,\quad
	\uPGW_\ell(T)
	\equiv \uPGW(T)\Big|_{\mathscr{F}_\ell}
	= \uPGW_\ell(T_\ell)\,.
	\]
For finite $\ell$ it is clear that $\uPGW_\ell(T)\ll \uPGW_\ell$, so we can define the Radon--Nikodym derivative
	\[
	\RN_\ell \equiv
	\f{d\uPGW_\ell(T_\ell)}{d\uPGW_\ell}
	\equiv \RN_\ell(\tree_\ell ; T_\ell)\,,
	\]
which is a nonnegative martingale. We will show 
inductively that for all $\ell\ge0$ we have
	\beq\label{e:rn.induction.hypothesis}
	M(\ell)
	\equiv \max_{T_\ell} \E\bigg[
		\Big(\RN_\ell(\tree_\ell;T_\ell) \Big)^2
		\bigg]\le2\,,
	\eeq
where the maximum is taken over all depth-$\ell$ trees $T_\ell$ of maximum degree $\CC$. This will imply that the martingale $\RN_\ell$ is bounded in $L^2$,
so that (by the $L^2$ martingale convergence theorem)
it converges almost surely to a finite limit $\RN\equiv\RN_\infty$ satisfying the same $L^2$ bound.\smallskip

\noindent\bemph{Step 2. $L^2$ bound on martingale.}
We now prove \eqref{e:rn.induction.hypothesis}. The base case $\ell=0$ holds trivially since $\RN_0\equiv1$, so let us suppose inductively that the bound holds up to depth $\ell$. Suppose $T$ has root degree $t$, meaning that the variable $o$ has child clauses $a_1,\ldots,a_t$.
Suppose $a_i$ has $t(i)$ children in $T$, which we denote
$v_{i,1},\ldots,v_{i,t(i)}$. Let $\embed(T,\tree)$ denote the set of embeddings $T\hookrightarrow\tree$ that map root to root, and note that at depth one we have
	\[|\embed(T_1,\tree_1)|
	= (d)_t \prod_{i=1}^t (k-1)_{t(i)}
	\]
where $d$ is the root degree of $\tree$.
Let $\mathbf{E}$ denote expectation over $\tree$ and over
a uniformly random element $\zeta$ from $\embed(T_1,\tree_1)$.
 We then have the recursion
	\[\RN_{\ell+1}(\tree_{\ell_1};T_{\ell+1})
	= \f{\POIS_{\alpha k}(d-t)}{\POIS_{\alpha k}(d)}
	\mathbf{E}
	\bigg\{
	\prod_{ \substack{(i,j): 1\le i\le t,\\
		1\le j\le t(i) }}
	\RN_\ell\Big(\tree_{\ell}(\zeta(v_{ij})) ;T_{\ell+1}(v_{ij} ) \Big)
	\bigg\} \,,\]
for a rooted tree $U$ we write $U(x)$ for the subtree of $U$ descended from vertex $x$, and $U_\ell(x) \equiv B_{\ell-1}(x;U)$ (we view $U(x)$ and $U_\ell(x)$ as being rooted at $x$). We can then express the second moment as
	\[
	\E\Big[(\RN_{\ell+1})^2\Big]
	= \E\bigg[
	\bigg( \f{(d)_t}{(\alpha k)^t} \bigg)^2
	\mathbf{E}\bigg\{
	\prod_{ \substack{(i,j): 1\le i\le t,\\
		1\le j\le t(i) }}
	\RN_\ell\Big(\tree_{\ell}(\zeta(v_{ij})) ;T_{\ell+1}(v_{ij} ) \Big)
	\RN_\ell\Big(\tree_{\ell}(\xi(v_{ij})) ;T_{\ell+1}(v_{ij} ) \Big)
	\,\bigg|\, \mathscr{F}_1
	\bigg\} \bigg]
	\]
where $\mathbf{E}$ now refers to expectation over $\tree$ and over a uniformly random pair of elements $\zeta,\xi$ from $\embed(T_1,\tree_1)$. 
Let $\bm{j}(\zeta,\xi)$ count the number of pairs $(i,j)$ for which
$\zeta(v_{ij}) = \xi(v_{ij})$, and note that
	\[\mathbf{E}\bigg\{
	\prod_{ \substack{(i,j): 1\le i\le t,\\
		1\le j\le t(i) }}
	\RN_\ell\Big(\tree_{\ell}(\zeta(v_{ij})) ;T_{\ell+1}(v_{ij} ) \Big)
	\RN_\ell\Big(\tree_{\ell}(\xi(v_{ij})) ;T_{\ell+1}(v_{ij} ) \Big)
	\,\bigg|\, \mathscr{F}_1
	\bigg\}
	\le
	\mathbf{E}\bigg[ 
	M(\ell)^{\bm{j}(\zeta,\xi)}\bigg]\,,
	\]
for $M(\ell)$ as in \eqref{e:rn.induction.hypothesis}. Since $T$ has maximum degree $\CC$, we can rather crudely bound $\bm{i}(\zeta,\xi) \le \CC \bm{a}(\zeta,\xi)$ where $\bm{a}(\zeta,\xi)$ counts the number of indices $i$ for which
$\zeta(a_i) = \xi(a_i)$. For any fixed $\zeta$,
	\[
	\f{|\set{\xi : \bm{a}(\zeta,\xi) = s}|}
		{|\embed(T_1,\tree_1)|}
	=\f{ (d-t)_{t-s} (t)_s}{ (d)_t}\,.
	\]
It follows from this that
	\[
	\E\Big[(\RN_{\ell+1})^2\Big]
	\le
	\E\bigg\{
	\bigg( \f{(d)_t}{(\alpha k)^t} \bigg)^2
	\sum_{s=0}^t
	\f{ (d-t)_{t-s} (t)_s}{ (d)_t}
	\E\Big[(\RN_\ell)^2\Big]^{\CC s}
	\bigg\}
	=\f1{(\alpha k)^{2t}}\sum_{s=0}^t
	(t)_s\E[ (d)_{2t-s}]
	M(\ell)^{\CC s}\,.\]
Combining with the inductive hypothesis
\eqref{e:rn.induction.hypothesis} gives
	\[\E\Big[(\RN_{\ell+1})^2\Big]=\sum_{s=0}^t
	\f{(t)_s}{(\alpha k)^s}
	M(\ell)^{\CC s}
	\le \sum_{s\ge0}
	\bigg( \f{2\CC }{\alpha k} \bigg)^s \le 2\,,\]
where the last bound holds for $k\ge k_0$, and verifies the induction. As noted above, it follows from the $L^2$ martingale convergence theorem that $\uPGW(T)\ll \uPGW$, with Radon--Nikodym derivative $\RN$ satisfying the claimed bound.
\end{proof}
\end{lem}

\subsection{Orderliness and containment}\label{ss:orderly.contained}

As above, let $\tree$ denote a sample from the measure $\uPGW$. This subsection is primarily occupied with the proofs of the following two propositions:

\begin{ppn}\label{p:path.intersect.not.selfcont}
Let $\II\in\set{0,1}$, and let $\NotSC^\II\equiv\NotSC^\II(\tree)$ denote the set of all variables in $\tree$ that are not $\II$-self-contained. There is an absolute constant $k_0$ (depending only on 
the absolute constant $\DELTACONST$ which appears in Definition~\ref{d:contained}) such that for all $k\ge k_0$ we have
	\[
	\uPGW\left(
	\begin{array}{c}
	\text{$\tree$ contains a path $P$ of $\ell$ variables,}\\
	\text{emanating from $\vrt$,
	with $|\NotSC^\II\cap V_P|\ge\ell\ep$}
	\end{array}
	\right)
	\le 
	\f{\exp(k^2\ell)}
		{ \exp\{
		2^{k\DELTACONST/3} \ell\ep \} }\,,
	\]
for all $\ep\ge0$ and integer $\ell\ge0$.
\end{ppn}

\begin{ppn}\label{p:path.intersect.not.orderly}
Let $\II\in\set{0,1}$, and let $\NotOrd^\II\equiv\NotOrd^\II(\tree)$ denote the set of all variables in $\tree$ that are not $\II$-orderly. There is an absolute constant $k_0$ (depending only on the absolute constant $\DELTACONST$ which appears in Definition~\ref{d:orderly}) such that for all $k\ge k_0$ we have
	\[
	\uPGW\left(
	\begin{array}{c}
	\text{$\tree$ contains a path $P$ of $\ell$ variables,}\\
	\text{emanating from $\vrt$,
	with $|\NotOrd^\II\cap V_P|\ge\ell\ep$}
	\end{array}
	\right)
	\le 
	\f{\exp(k^2\ell)}
	{ \exp( 
	2^{k/4} \ell\ep/k^3 ) }\,,
	\]
for all $\ep\ge0$ and integer $\ell\ge0$.
\end{ppn}

From these propositions, it will be fairly straightforward to deduce the next two corollaries, which are the main consequences from the analysis in this subsection:

\begin{cor}\label{c:fair} For $\II\in\set{0,1}$, recall what it means for a variable to be $\II$-fair (Definition~\ref{d:perfect.fair}). For $\DELTACONST$ a sufficiently small positive absolute constant, we have for all $k\ge k_0$ (where $k_0$ is an absolute constant depending only on $\DELTACONST$) that
	\[
	\uPGW\Big(\text{$\vrt$ not $\II$-fair}\Big)
	\le \f1{ \exp( 2^{k\DELTACONST/3}R/k) }
	\]
for all $R$ as in \eqref{e:radii}, with $r\ge1$.
\end{cor}

\begin{cor}\label{c:excellent}
Recall what it means for a variable to be proper or improper (Definition~\ref{d:simple.types}). Further, for $\II\in\set{0,1}$, recall what it means for a variable to be $\II$-excellent (Definition~\ref{d:good.exc}). We have
	\[
	\uPGW\bigg(\text{$\vrt$ is improper or not $\II$-excellent}\bigg)
	\le \f1{ \exp( 2^{k\DELTACONST/4}R) }
	\]
for all $k\ge k_0$, where $k_0$ is an absolute constant (depending only on the absolute constant $\DELTACONST$).
\end{cor}

Proposition~\ref{p:path.intersect.not.orderly} is a straightforward consequence of the definitions together with Proposition~\ref{p:sparse.subtree.with.many.defects}, and we give its proof next. We then give the proof of Proposition~\ref{p:path.intersect.not.selfcont}, which is slightly more involved. Finally, at the end of this subsection we give the proofs for Corollaries~\ref{c:fair}~and~\ref{c:excellent}.

\begin{proof}[Proof of Proposition~\ref{p:path.intersect.not.orderly}]
If $\ell\ep=0$ the bound is vacuous, so we assume without loss that $\ell\ep\ge1$.
Suppose $\tree$ contains a path $P$ of $\ell$ variables emanating from $\vrt$, such that $S_0\equiv \NotOrd^\II\cap V_P$ has size $|S_0|\ge\ell\ep$. For any $u\in S_0$
there is a path $\gamma(u)\subseteq\tree$ that emanates from $u$, along which more than $(\DELTACONST)^3$ fraction of the variables are $\II$-defective. We will define a sequence $S_0\supseteq S_1 \supseteq \ldots$ as follows:
as long as $S_{s-1}\ne\emptyset$, take
	\[
	u_s
	\in\argmax\bigg\{ |\gamma(u)| : u\in S_{s-1}\bigg\}\,,
	\quad
	S_s
	\equiv S_{s-1} \,\bigg\backslash \,
		B_{2|\gamma(u_s)|+1}(u_s)\,.
	\]
Eventually this terminates at $S_{s_{\max}}=\emptyset$, and the resulting sequence of paths $\gamma(u_s)$ will be mutually disjoint. Since $S_0$ was a subset of the path $P$, we have
	\[
	\Big|S_{s-1}\setminus S_s\Big|
	\le \Big|V_P \cap B_{2|\gamma(u_s)|+1}(u_s)\Big|
	\le 4|V_{\gamma(u_s)}|\,.
	\]
Let $U$ be the union of the paths $\gamma(u_s)$: then
	\[
	|V_U|
	=\sum_{s=1}^{s_{\max}} |V_{\gamma(u_s)}|
	\ge
	\f14\sum_{s=1}^{s_{\max}} 
	\bigg( |S_{s-1}|-|S_s|\bigg)
	= \f{|S_0|}{4}
	\ge \f{\ell\ep}{4}\,.
	\]
Let $T = P \cup U$. Then $T\in\bLambda_{3,\ell'}(\tree)$ for some $\ell'\ge\ell$, and $| \DEFECTIVE^\II\cap V_T|\ge | \DEFECTIVE^\II\cap V_U|\ge |V_U|(\DELTACONST)^3$. Note that since $|V_U|\ge\ell\ep/4$
and also $|V_T|\le |V_P|+|V_U|$, we have
	\[
	|V_U|
	\ge \f{\ell\ep}{8}
		+\f{|V_T|-|V_P|}{2}
	= \f{\ell\ep}{8}
		+\f{|V_T|-|V_P|}{2}
	= \f{\ell\ep}{8}
		+\f{\ell'-\ell}{2}\,.
	\]
It follows that
	\begin{align*}
	&\uPGW\left(
	\hspace{-4pt}
	\begin{array}{c}
	\text{$\tree$ contains a path 
	$P$ of $\ell$ variables,}\\
	\text{emanating from $\vrt$,
	with $|\NotOrd^\II\cap V_P|\ge\ell\ep$}
	\end{array}
	\hspace{-4pt}
	\right)\\
	&\le
	\sum_{\ell'\ge\ell}
	\uPGW\Bigg(
	\textup{some $T\in\bLambda_{3,\ell'}(\tree)$
	has
	$|\DEFECTIVE^\II\cap V_T|\ge
	\bigg[
	 \f{\ell\ep}{8}
		+\f{\ell'-\ell}{2}
	\bigg](\DELTACONST)^3 $}
	\Bigg)\,.
	\end{align*}
It follows from Proposition~\ref{p:sparse.subtree.with.many.defects} that the last expression is upper bounded by
	\[
	\sum_{\ell'\ge\ell}
	\f{\exp(k^2[\ell + (\ell'-\ell)] )}
	{ \exp( 
		(\DELTACONST)^32^{k/4}
		[\ell\ep/8
		+ (\ell'-\ell)/2] /k^2 ) }
	\le 
	\f{O(1) \exp(k^2\ell)}
	{ \exp( (\DELTACONST)^3
	2^{k/4} \ell\ep /(8k^2) ) }\,,
	\]
and the result follows.
\end{proof}

We now turn to the proof of Proposition~\ref{p:path.intersect.not.selfcont}. For the reader's convenience, we repeat \eqref{e:def.R.for.containment} here:
	\[
	\mathfrak{R}^\II(v,t)
	\equiv	
	\sum_{u : t \le d(u,v) < 2\rprime}
	\f{
	\exp\{ k(\DELTACONST)^{-1} 
	\mathfrak{B}^\II(u,v) \} }
	{ \exp\{
		(k\log2)(1+\DELTACONST) d(u,v)
		\}}\,,
	\]
for $1\le t\le 2\rprime$. For any variable $v$, let
	\beq\label{e:orderly.Gamma}
	\Gm^\II(v)
	\equiv
	\left\{ \hspace{-3pt}
	\begin{array}{c} \text{paths $P$ emanating from $v$
		of length}\\\text{$|P|\le 2\rprime-1$
		with $|\DEFECTIVE^\II\cap V_P|
		\ge |V_P| (\DELTACONST)^3$}
		\end{array}\hspace{-3pt}\right\}\,.
	\eeq
(Recall that for a path $P$ of length $|P|=\ell-1$, the number of variables is $|V_P|=\ell$.) Let
	\[
	\gamma^\II(v)
	\equiv 1 + \max\bigg\{ |P| :
			P \in \Gm^\II(v) \bigg\}\,.
	\]
By definition, every path in $\Gm^\II(v)$ has length at most $2\rprime-1$. If $\Gm^\II(v)$ does not contain any (non-null) path, then we define $\gamma^\II(v)\equiv 1$. Therefore we always have $1\le \gamma^\II(v) \le 2\rprime$. If $\gamma^\II(v) \le d(u,v) \le 2\rprime-1$, then the shortest path between $u$ and $v$ must have less than $(\DELTACONST)^3$ fraction of $\II$-defective variables. Define also
	\beq\label{e:gamma.bar}
	\bar{\gamma}(v)
	\equiv\min\Bigg\{
	t\ge 1:
		|B_\ell(v)| \le 
		\exp\bigg\{ (k\log2)
			\bigg(1+\f{\DELTACONST}{2}\bigg)\ell \bigg\}
		\text{ for all }t \le \ell < 2\rprime
	\Bigg\}\,,\eeq
and note that $1 \le \bar{\gamma}(v) \le 2\rprime$. 
Recall the definition 
\eqref{eq-def-rad} of $\rad^\II(v)$.
We claim that
$\rad^\II(v)$ is upper bounded by
	\beq\label{e:def.chi.JJ}
	\chi^\II(v)
	\equiv\max\bigg\{
	\gamma^\II(v),
	\bar{\gamma}(v)
	\bigg\}\,,\eeq
where $1\le \chi^\II(v) \le 2\rprime$. 
Indeed, if $t\ge \chi^\II(v)$, then (by taking $k\ge k_0$ with $k_0$ an absolute constant) we have 
	\begin{align*}
	\mathfrak{R}^\II(v,t)
	&\le
	\sum_{\ell=t}^{2\rprime-1}
	\f{|B_\ell(v)|
	\exp\{
	k(\DELTACONST)^{-1} (\DELTACONST)^3
		(1+\ell)\}}{ \exp\{ (k\log 2) (1+\DELTACONST) \ell\} }\\
	&\le
	\sum_{\ell=t}^{2\rprime-1}
	\f{\exp\{k(\DELTACONST)^2(1+\ell)\}}
	{\exp\{(k\log 2) (\DELTACONST/2) \ell\} }
	\le
	\f{ O(1) \exp\{ 2k(\DELTACONST)^2 \}}
		{ \exp\{ (k\log2) (\DELTACONST/2) \} }
	\le
	\f1{\exp(k\DELTACONST/3)}
	\le\f14\,.
	\end{align*}
This shows that $\rad^\II(v) \le \chi^\II(v)$, and we now turn to controlling $\chi^\II(v)$. The sketch for the proof of Proposition~\ref{p:path.intersect.not.selfcont} is as follows: in Lemma~\ref{l:thick.sparse.subtree} we show that if $\tree\sim\uPGW$,
then it is very unlikely to have a 
sparse subtree $T\subseteq\tree$ where many variables have a large value of $\bar{\gamma}(u)$. In Lemma~\ref{l:non-self-contained} we show that if $\tree$ has a path $P$ emanating from $\vrt$ with many variables that are not $\II$-self-contained, then there is a sparse subtree $P\subseteq T\subseteq\tree$ such that either 
(i) $T$ has many variables with a large value of $\bar{\gamma}(u)$, or
(ii) $T$ has many $\II$-defective variables. The probability of case~(i) is bounded by Lemma~\ref{l:thick.sparse.subtree} while that of case~(ii) is bounded by Proposition~\ref{p:sparse.subtree.with.many.defects}, and the result of Proposition~\ref{p:path.intersect.not.selfcont} follows. 
The remainder of this subsection gives the details of this argument.

\begin{dfn}\label{d:thick}
For $T\in \bLambda_{\CC,\ell}(\tree)$ and $S\subseteq V_T$,
we say that the pair $(T,S)$ is \bemph{$\ep$-thick}
(in $\tree$) if the neighborhoods $B_{\bar{\gm}(u)}(u)$ for $u\in S$ are mutually disjoint with
	\[	\sum_{u\in S}
	\Big(\bar{\gm}(u) -1\Big) 
	\ge \ell\ep\,.
	\]
(Note that variables $u$ with $\bar{\gamma}(u)=1$ give no contribution to the above sum.) We then say that $T$ is \bemph{$\ep$-thick} if $(T,S)$ is \bemph{$\ep$-thick} for some $S\subseteq V_T$.
\end{dfn}

\begin{lem}\label{l:thick.sparse.subtree}
For $k\ge k_0$ (an absolute constant depending only on the absolute constant $\DELTACONST$), we have
		\[
	\uPGW\bigg(
	\text{some $T\in\bLambda_{3,\ell}(\tree)$
	is $\ep$-thick in $\tree$}\bigg)
	\le \f{\exp(k^2\ell) }
		{ \exp\{
		2^{ k\DELTACONST/2}
	\ell\ep/k^5
		\} }
	\]
for all $\ep\ge0$ and integer $\ell\ge0$.

\begin{proof}
We shall bound the expected number of $\ep$-thick subtrees of $\tree$. To this end, first let us fix a sparse tree $T\in\bLambda_{3,\ell}$ and a subset of variables $S\subseteq V_T$. Similarly as in the proof of Proposition~\ref{p:sparse.subtree.with.many.defects}
(cf.\ \eqref{e:expected.number.of.embeddings}~and~\eqref{e:embeddings.times.embedded.probab}), we 
use Markov's inequality to bound
	\begin{align}\nonumber
	&\uPGW\bigg(
	\textup{there exists an
	 embedding $\zeta:T\hookrightarrow\tree$
	such that $(\zeta(T),\zeta(S))$
	 is $\ep$-thick in $\tree$}
	\bigg) \\ \nonumber
	&\le
	\int\bigg|\bigg\{
	\textup{embeddings
	$\zeta:T\hookrightarrow\tree$
	such that $(\zeta(T),\zeta(S))$
	 is $\ep$-thick in $\tree$}
	\bigg\}\bigg| \,d\uPGW(\tree)\\
	&\le (\alpha k^2)^{3\ell}
	\cdot \Big[\uPGW(T) \Big]\bigg(
		\textup{$(T,S)$ is $\ep$-thick
		in $\tree$}\bigg)\,,
	\label{e:expected.number.of.thick.subtrees}
	\end{align}
where in the last line $(\alpha k^2)^{3\ell}$ is an upper bound on $\embed(T)$ (similar to \eqref{e:lebesgue.vol.of.embeddings}). Now let $\lambda$ denote any tuple of integers $(\lambda(u):u\in S)$ such that $\lambda(u)\ge 1$ for all $u\in S$, and 
	\[|\lambda|
	\equiv \sum_{u\in S}\lambda(u)
	\ge \ell\ep\,.
	\]
Recalling the definition \eqref{e:gamma.bar} of $\bar{\gamma}$, we see that if $\bar{\gamma}(u) -1 \ge\lambda(u)\ge1$ then the ball $|B_{\lambda(u)}(u)|\subseteq\tree$ must be large. Therefore, writing $T_u$ for the tree $T \cap B_{\lambda(u)}(u)$ 
rerooted at $u$, we have
	\begin{align*}
	&\Big[\uPGW(T) \Big]\Bigg(
		\textup{$(T,S)$ is $\ep$-thick
		in $\tree$}\Bigg)\\
	&\le
	\sum_\lambda
	\Big[\uPGW(T) \Big]
	\Bigg(
	\hspace{-3pt}\begin{array}{c}
	\textup{$|B_{\lambda(u)}(u)|
		\ge \exp\{(k\log2) (1+\DELTACONST/2) \lambda(u) \}$}\\
	\textup{for all $u\in S$, and the $B_{\lambda(u)}(u)$
	 are mutually disjoint}
	\end{array}\hspace{-3pt}
	\Bigg)\\
	&\le
	\sum_\lambda
	\prod_{u\in S}
		\Big[ \uPGW(T_u)\Big]
	\Bigg( 
	\Big|B_{\lambda(u)}(\vrt)\Big|
	\ge \exp\bigg\{
		(k\log2) \bigg(1+\f{\DELTACONST}2\bigg)
		\lambda(u)\bigg\}
	 \Bigg)\\
	&\le
	\sum_\lambda
	\prod_{u\in S}\Bigg\{ 
	2 \bigg( \f{e}
		{ \exp\{ (2^{k\DELTACONST/2}/k^3)^{\lambda(u)}
			 \} }\bigg)^{1/2}
			 \Bigg\}\,,
	\end{align*}
where the last inequality follows by
Lemma~\ref{l:martingale.bound.PGW}, Lemma~\ref{l:radon.derivative.pgwT},
and the Cauchy--Schwarz inequality. Note that for all $A\ge e$ and $x\ge1$ we have $\log x \le x-1 \le (\log A)(x-1)$,
and rearranging gives $A^x\ge Ax$. It follows that
	\[\Big[\uPGW(T) \Big]\bigg(
		\textup{$(T,S)$ is $\ep$-thick
		in $\tree$}\bigg)
	\le\sum_\lambda
	\f{\exp\{O(1)|S|\}}{ \exp\{
	(2^{k\DELTACONST/2}/k^3) |\lambda| / 2 
	\} }\]
(where, as always, $O(1)$ refers to an absolute constant). Note that $|S|\le\ell$,
and the number of distinct tuples $\lambda$
with $|\lambda|=L$ is crudely upper bounded by
	\[\binom{L-1}{|S|-1} \le 2^L\,.\]
Substituting these bounds into the preceding calculation gives
	\[\Big[\uPGW(T) \Big]\bigg(
		\textup{$(T,S)$ is $\ep$-thick
		in $\tree$}\bigg)
		\le 
	\sum_{L\ge \ell\ep}
	\f{2^L \exp( O(\ell))}{
	\exp\{ (2^{k\DELTACONST/2}/k^3) L/2 \}
	}
	\le \f{\exp(O(\ell))}
		{\exp\{2^{k\DELTACONST/2}\ell\ep /k^4
		 \} }\,.
	\]
The result follows by substituting this into \eqref{e:expected.number.of.thick.subtrees},
and then summing over all $(T,S)$. (The size of $\bLambda_{3,\ell}$ is bounded by $\exp(O(\ell))$;
and for any given $T\in\bLambda_{3,\ell}$,
the number of subsets $S\subseteq V_T$ is clearly at most $2^\ell$.)
\end{proof}
\end{lem}

\begin{lem}\label{l:non-self-contained}
Let $\tree$ be any bipartite factor tree rooted at variable $\vrt$. Let $\NotSC^\II\equiv \NotSC^\II(\tree)$ denote the subset of variables in $\tree$ that are not $\II$-self-contained. For $\ep>0$ and $\ell\ge1$,
if $\tree$ contains a path $P$ emanating from $\vrt$ with $|V_P|=\ell$ and $|\NotSC^\II\cap V_P|\ge\ep\ell$, then at least one of the following must occur:
\begin{enumerate}[(i)]
\item \label{i:case.thick}
There is a tree $T\in\bLambda_{3,\ell'}(\tree)$
(with $\ell'\ge\ell$) which is $\ep'$-thick for $\ep'$ defined by
	\[
	\ell'\ep'
	=\f1{4} \bigg[
	(\ell'-\ell)
	+ \f{\ell\ep}{9}\bigg]\,.
	\]
\item \label{i:case.defect}
There is a tree $T\in\bLambda_{4,\ell'}(\tree)$
(with $\ell'\ge\ell$) with
	\[|\DEFECTIVE^\II \cap V_T| \ge 
	\f{(\DELTACONST)^3}{4}
	\bigg[\f{\ell'-\ell}{2}
	+\f{\ell\ep}{9}
	\bigg]
	\,.\]
\end{enumerate}

\begin{proof}
Let $S_0\equiv \NotSC^\II \cap V_P$. For each $u\in S_0$, by definition there exists $g(u)$ such that 
	{\setlength{\jot}{0pt}\begin{align}
	\nonumber
	&1\le d(u,g(u)) \le \rprime\,,\\
	&1\le d(u,g(u)) < \rad^\II(g(u))
		\le \chi^\II(g(u)) \le 2\rprime\,.
	\label{e:chi.at.least.two}
	\end{align}}%
We will first build a tree $T'$,
with $P\subseteq T' \subseteq\tree$,
by combining $P$ with paths between $u$ and $g(u)$ for a subset of $u$ from $S_0$, as follows.
We will keep track of 
	{\setlength{\jot}{0pt}\begin{align*}
	\NotSC^\II \cap V_P
	&=S_0 \supseteq S_1 
	\supseteq\ldots \supseteq S_{s_{\max}}=\emptyset\,,\\
	P
	&=T_0\subseteq T_1 \subseteq \ldots \subseteq
	T_{s_{\max}} = T'\,.
	\end{align*}}%
As long as $S_{s-1}\ne\emptyset$, we can choose $u_s$ from $S_{s-1}$ such that $g(u_s)$ achieves
	\[
	\chi^\II(g(u_s))
	= \chi_s
	\equiv\max\bigg\{
	\chi^\II(g(u))
	: u\in S_{s-1}
	\bigg\}\,.
	\]
Let $P_s$ be the path joining $u_s$ to $g(u_s)$; it has length $|P_s|= d(u_s,g(u_s)) \le \rad^\II(g(u_s))-1 \le \chi_s-1$. Let
	{\setlength{\jot}{0pt}\begin{align*}
	T_s &\equiv T_{s-1} \cup P_s\,,\\
	S_s &\equiv S_{s-1} \setminus 
	B_{4(\chi_s-1)}(g(u_s))\,.
	\end{align*}}%
Then the paths $P_s$ will be mutually disjoint: indeed, for any $s<t$, by definition the variable $u_t$ must lie outside the ball $B_{4(\chi_s-1)}(g(u_s))$,
so the distance between $g(u_s)$ and $u_t$ 
must be at least $4(\chi_s-1)+1$.
Since $g(u_s)\in P_s$
and $u_t\in P_t$, we conclude that the minimum distance between $P_s$ and $P_t$ must satisfy
	\[
	d(P_s,P_t)
	\ge d(g(u_s),u_t)
		-|P_s|-|P_t|
	\ge
	\bigg(4(\chi_s-1)+1\bigg)
	- (\chi_s-1)-(\chi_t-1)
	\ge 2\chi_s-1\,,
	\]
where the last step uses that $\chi_s\ge\chi_t$. We then continue the procedure until we reach $S_{s_{\max}}=\emptyset$, at which point we define $T'\equiv T_{s_{\max}}$. Note
	\beq\label{e:candidate.thick.tree}
	|V_{T'}|
	\le \ell + \sum_{s=1}^{s_{\max}} d(u_s,g(u_s))
	\le \ell + \sum_{s=1}^{s_{\max}}
		(\chi_s-1)\,.
	\eeq
The intersection between $B_{4(\chi_s-1)}(g(u_s))$ with (any) path $P$ can contain at most $8(\chi_s-1)+1 \le 9(\chi_s-1)$ variables, where the last inequality holds since $\chi_s\ge2$ from \eqref{e:chi.at.least.two}. Therefore $|S_s| \ge |S_{s-1}|-9(\chi_s-1)$, which implies
	\beq\label{e:large.sum.chi}
	\sum_{s=1}^{s_{\max}} (\chi_s -1)
	\ge 
	\sum_{s=1}^{s_{\max}} \f{|S_{s-1}|-|S_s|}{9}
	= \f{|S_0|}{9} \ge \f{\ell\ep}{9}\,.
	\eeq
Recall \eqref{e:def.chi.JJ} that $\chi^\II(v)=\max\set{ \gamma^\II(v),\bar{\gamma}(v)}$, so one of the following must hold:
	\begin{align}
	\label{e:i.case.tree.with.many.defects}
	\sum_{s=1}^{s_{\max}} 
	\Big(\gamma^\II( g(u_s)) -1\Big)
	&\ge \f12 \sum_{s=1}^{s_{\max}}
	\Big(\chi^\II( g(u_s)) -1\Big)\,,\\
	\label{e:ii.case.thick.tree}
	\sum_{s=1}^{s_{\max}} 
	\Big(\bar{\gamma}( g(u_s)) -1\Big)
	&\ge \f12 \sum_{s=1}^{s_{\max}}
	\Big(\chi^\II( g(u_s)) -1\Big)
	\end{align}
We consider separately the two cases:
\begin{enumerate}[(i)]
\item[\eqref{i:case.thick}] If \eqref{e:ii.case.thick.tree} occurs, then we let $T=T'$. Combining with
\eqref{e:candidate.thick.tree}
and \eqref{e:large.sum.chi} gives
	\[
	\sum_{s=1}^{s_{\max}} \Big(\bar{\gamma}(g(u_s)) -1 \Big)
	\ge
	\f12\sum_{s=1}^{s_{\max}} (\chi_s-1)
	\ge
	\f14\bigg[
	 (\ell'-\ell)
	+ \f{\ell\ep}{9}
	\bigg]
	\,.
	\]
Therefore the tree $T=T'$ is $\ep'$-thick
for $\ep'$ as defined in the statement of the lemma.

\item[\eqref{i:case.defect}] If \eqref{e:i.case.tree.with.many.defects} occurs,
then for each $s$ we can find a path $Q_s$ emanating from $g(u_s)$ with $\gamma^\II(g(u_s))$ variables,
of which at least $(\DELTACONST)^3$ fraction are $\II$-defective. Let $T$ be the union of $T'$ with all the paths $Q_s$, for $1\le s\le s_{\max}$. The $Q_s$ are mutually disjoint. Recalling \eqref{e:candidate.thick.tree}, we have
	\beq\label{e:size.bound.replacing.thick.tree.eqn}
	\ell'=|V_T|\le
	\ell + \sum_{s=1}^{s_{\max}} (\chi_s-1)
	+ \sum_{s=1}^{s_{\max}} \Big(\gamma^\II(g(u_s)) -1\Big)
	\le 
	\ell +2 \sum_{s=1}^{s_{\max}} (\chi_s-1)\,.\eeq
Combining 
\eqref{e:large.sum.chi},
\eqref{e:i.case.tree.with.many.defects},
and \eqref{e:size.bound.replacing.thick.tree.eqn}
 gives
	\begin{align*}
	|\DEFECTIVE^\II \cap V_T|
	&\ge \sum_{s=1}^{s_{\max}}
	|\DEFECTIVE^\II \cap Q_s|
	\ge (\DELTACONST)^3
	\sum_{s=1}^{s_{\max}} \gamma^\II(g(u_s))
	\ge \f{(\DELTACONST)^3}{2}
	\sum_{s=1}^{s_{\max}} (\chi_s-1)\\
	&\ge
	\f{(\DELTACONST)^3}{4}
	\bigg[\f{\ell'-\ell}{2}
	+\f{\ell\ep}{9}
	\bigg]\,.
	\end{align*}
\end{enumerate}
Combining the two cases gives the claim.
\end{proof}
\end{lem}

\begin{proof}[Proof of Proposition~\ref{p:path.intersect.not.selfcont}]
It follows from Lemma~\ref{l:non-self-contained} that
	\[
	\uPGW\left(
	\begin{array}{c}
	\text{$\tree$ contains a path 
	$P$ of $\ell$ variables,}\\
	\text{emanating from $\vrt$,
	with $|\NotSC^\II\cap V_P|\ge\ell\ep$}
	\end{array}
	\right)
	\le P_\textup{(i)} + P_{(ii)}
	\]
where $P_{\eqref{i:case.thick}}$ and $P_{\eqref{i:case.defect}}$ refer to the two cases of
Lemma~\ref{l:non-self-contained}. 
For case~\eqref{i:case.thick}, we use Lemma~\ref{l:thick.sparse.subtree} to bound
	\[P_{\eqref{i:case.thick}}
	\le
	\sum_{\ell'\ge\ell}
	\f{\exp\{k^2[\ell + (\ell'-\ell)]\}}
	{
	\exp\{ 2^{k\DELTACONST/2}
	[\ell\ep/36
		+ (\ell'-\ell)/4]
		/k^5\}
	}
	\le \f{\exp(k^2\ell)}
		{ \exp\{
		2^{k\DELTACONST/2} \ell\ep / k^6\} }\,.
	\]
For case~\eqref{i:case.defect}, we use
Proposition~\ref{p:sparse.subtree.with.many.defects} to bound
	\[
	P_{\eqref{i:case.defect}}
	\le
	\sum_{\ell'\ge\ell}
	\f{\exp\{k^2[\ell + (\ell'-\ell)]\}}
	{\exp\{(\DELTACONST)^32^{k/4}
	[\ell\ep/36 + (\ell'-\ell)/8]
	/k^2\}}
	\le 
	\f{\exp(k^2\ell)}
		{ \exp\{
		2^{k/4} \ell\ep/k^3 \} }\,.
	\]
Combining gives the result.
\end{proof}

We conclude this subsection with the proofs of Corollaries~\ref{c:fair}~and~\ref{c:excellent}, which are easy consequences of what was proved above.

\begin{proof}[Proof of Corollary~\ref{c:fair}]
Recall from Definition~\ref{d:perfect.fair} that an acyclic variable is termed $\II$-fair if (i) it is $\II$-stable; (ii) its $5\rprime$-neighborhood contains no more than $\exp\{k^2(5\rprime)\}$ variables; and (iii) every length-$\rprime$ path emanating from it contains at least one $\II$-perfect variable, where $\II$-perfect means both $\II$-orderly and $\II$-self-contained. For condition (i), it follows from Proposition~\ref{p:one.stable} that
	\[
	\uPGW\Big(\text{$\vrt$ not $\II$-stable}\Big)
	\le \f1{\exp(2^{k/20}R)}\,.
	\]
For condition (ii), it follows from Lemma~\ref{l:martingale.bound.PGW}
and Markov's inequality that
	\[
	\uPGW\bigg(
	|B_{5\rprime}(\vrt)| \ge \exp\{k^2(5\rprime)\}
	\bigg)
	\le\f{ e }
	{	\exp \{ \exp\{k^2(5\rprime) \}
	 / (\alpha k^2)^{5\rprime}\}}
	\le \f1{\exp(\exp(kR))}\,.
	\]
For condition (iii), it follows by combining Propositions~\ref{p:path.intersect.not.selfcont}~and~\ref{p:path.intersect.not.orderly} that
	\[
	\uPGW\left(
	\begin{array}{c}
	\text{$\tree$ contains a path $P$ 
	of $\rprime$ variables,}\\
	\text{emanating from $\vrt$,
	with $V_P\subseteq\NotSC^\II\cup\NotOrd^\II$}
	\end{array}
	\right)
	\le 
	\f{O(1) \exp(k^2 \rprime)}
	{ \exp( \Omega( 2^{k\DELTACONST/3} \rprime)) }\,.
	\]
Combining these bounds gives the result.
\end{proof}

\begin{proof}[Proof of Corollary~\ref{c:excellent}]
Recall from Definition~\ref{d:good.exc} that an acyclic variable $v$ is termed $\II$-excellent if its neighborhood $T=B_{10\rprime}(v)$ satisfies condition~\eqref{e:j.excellent}, which we repeat here for convenience:
	\[
	p_{\textup{ex},\II}(T)\equiv
	\uPGW\left( 
	\left.\begin{array}{c}
	B_{20\rprime}(u)
	\text{ contains 
	any}\\
	\text{variable which is not $\II$-fair}
	\end{array}
	\, \right| \, B_{10\rprime}(u) \cong T
	\right) \le \f1{\exp\{k^3 \rprime \}}\,.
	\]
By Markov's inequality and iterated expectations,
	\begin{align*}
	p
	&\equiv
	\uPGW\Big(\text{$\vrt$ not $\II$-excellent}\Big)
	\le
	\int
	\f{p_{\textup{ex},\II}(
		B_{10\rprime}(\vrt;\tree) )}
		{1/\exp\{k^3 \rprime \}}
	\,d\uPGW(\tree)\\
	&=\exp\{k^3 \rprime \}
	\,\uPGW\bigg(
	\textup{$B_{20\rprime}(\vrt;\tree)$ contains any variable which is not $\II$-fair}
	\bigg)\,.
	\end{align*}
By another application of Markov's inequality,
together with the unimodularity property
\eqref{e:unimodular}, we find
	\begin{align*}
	p &\le\exp\{k^3 \rprime \}
	\int
	\sum_{u\in B_{20\rprime}(\vrt;\tree)}
	\mathbf{1}\bigg\{\textup{$u$ is not $\II$-fair}\bigg\}
	\,d\uPGW(\tree)\\
	&=\exp\{k^3 \rprime \}
	\int
	|B_{20\rprime}(\vrt;\tree)|
	\mathbf{1}
	\Big\{\textup{$\vrt$ not $\II$-fair}\Big\}
	\,d\uPGW(\tree)
	\le
	p_\textup{small} + p_\textup{large}
	\,,
	\end{align*}
where $p_\textup{small}$ is the contribution from
the event
$|B_{20\rprime}(\vrt;\tree)|
	\le \exp\{k^2R\}$,
	and $p_\textup{large}$
is the contribution from
the complementary event. It follows from Corollary~\ref{c:fair} that
	\[
	p_\textup{small}
	=\exp\{ k^3 \rprime+ k^2 R\}
	\uPGW\Big(\textup{$\vrt$ not $\II$-fair}\Big)
	\le \f1{\exp( 2^{k\DELTACONST/3}R/k^2) }\,.
	\]
The contribution from the complementary event is
	\begin{align*}
	\f{p_\textup{large}}{\exp\{k^3 \rprime\}}
	&=
	\int 
	|B_{20\rprime}(\vrt;\tree)|
	\mathbf{1}
	\bigg\{
	|B_{20\rprime}(\vrt;\tree)|
	\ge \exp\{k^2R\}
	\bigg\}\,d\uPGW(\tree)\\
	&\le
	\exp\{k^2R\}
	\uPGW\bigg(|B_{20\rprime}(\vrt;\tree)|
	\ge \exp\{k^2R\}\bigg)
	+\int_{\exp\{k^2R\}}^\infty
		\uPGW\bigg(|B_{20\rprime}(\vrt;\tree)|
	\ge s\bigg)\,ds\,.
	\end{align*}
It follows from Lemma~\ref{l:martingale.bound.PGW} that this is very small: $p_\textup{large} \le 1/\exp(\exp(kR))$, which is negligible compared with the bound on $p_\textup{small}$. Combining these gives the claimed bound (it is easy to show, using Markov's inequality, that the probability for $\vrt$ to be improper is negligible).
\end{proof}

\subsection{Combinatorial analysis of preprocessing}
\label{ss:preprocessing.structural}

In this section we analyze the preprocessing algorithm described by Definition~\ref{d:proc}, which maps the original $\ksat$ instance $\GG$ to its pruned version $\proc\GG$. Recall that the procedure starts from an initial set $A\subseteq V$, which is the set of all variables in $\GG$ that are improper (Definition~\ref{d:simple.types}) or not $1$-good (Definition~\ref{d:good.exc}). It then iteratively produces a sequence $\GG\supseteq{}_0\GG_A \supseteq{}_1\GG_A\supseteq\ldots$, terminating in $\proc\GG=\GG\setminus\bsp'(A;\GG)$. It is a straightforward consequence of Corollary~\ref{c:excellent} that the fraction of variables in the initial set $A$ is $o_R(1)$, so the main challenge is to bound the effect of the $\bsp'$ procedure. We do this in two parts:
\begin{enumerate}[a.]
\item The current subsection (\S\ref{ss:preprocessing.structural}) is devoted to the proof of a structural result, Proposition~\ref{p:corrupted.tree.in.graph},
which says roughly that if $\bsp'(A;\GG)$ has a large connected component, then $\GG$ must contain a certain kind of subgraph
(either a ``bicycle'' or a ``sparse corrupted subtree'') of comparable size. This statement holds deterministically.
\item In the next subsection (\S\ref{ss:preprocessing.probabilistic}) we bound the probability for such subgraphs to occur
(Lemma~\ref{l:corrupt.tree.in.erdos.renyi}). This allows us to control the typical size of $\bsp'(A;\GG)$, and we conclude
\S\ref{ss:preprocessing.probabilistic} with the proof of Proposition~\ref{p:small.fraction.removed.in.processing}.
\end{enumerate}
Turning to the task of this subsection, we note that $\bsp'$ is highly analogous to the $\bsp$ procedure \eqref{eq-bootstrap-percolation}, which determined the $\II$-defective variables based on the initial set $D^{\KAPPA,\II}$ (see Definition~\ref{d:j.defective}). Recall from \S\ref{ss:bootstrap.defects} the result Lemma~\ref{l:larger.tree.intersecting.bsp.init.set}, which roughly says that if $\tree$ has a sparse subtree $T$ that has a large intersection with the $\II$-defective set, then it must also have a sparse subtree $T'$ ($T\subseteq T'\subseteq\tree$) that has a large intersection with the initial set $D^{\KAPPA,\II}$. We now prove an analogous result for $\bsp'$, given by Proposition~\ref{p:corrupted.tree.in.graph} below. To state the result, we introduce the following definition:

\begin{dfn}\label{d:corrupt} Let $\GG\equiv(V,F,E)$ be a bipartite factor graph, and fix any $A\subseteq V$. We say that a subtree $T\subseteq\GG$ is \bemph{$\ep$-corrupted with respect to $(A,\GG)$} if there is a subset of vertices $B\subseteq V_T\cap A$ such that $|B|\ge\ep|V_T|$, the minimum pairwise distance between vertices in $B$ exceeds $2R$, and the graph \[T \cup B_R(B;\GG)\] is acyclic. (We may say simply ``$\ep$-corrupted'' if $(A,\GG)$ is unambiguous.)\end{dfn}

\begin{ppn}\label{p:corrupted.tree.in.graph} Let $\GG=(V,F,E)$ be a finite bipartite factor graph in which all clauses have degree $k$, and let $A$ be any subset of $V$. If $\bsp'(A;\GG)$ contains a connected component $\mathscr{B}$ of diameter at least $5L$ where $L\ge 400R$, then either (i) there is a subgraph $B'\subseteq\mathscr{B}$ with $\diam B' \le 11L$ that contains at least two cycles; or (ii) there is an $1/(80R)$-corrupted subtree $T\subseteq\mathscr{B}$ of maximum degree at most four with $|V_T|\ge L$ and $L \le \diam T \le 11L$.
\end{ppn}

The remainder of this subsection is devoted to the proof of Proposition~\ref{p:corrupted.tree.in.graph}.

\begin{lem}\label{l:marked.blocks.corrupt.tree}Let $\tree=(V,F,E)$ be a finite bipartite factor tree in which all clauses have degree $k$. Assume that $\tree$ has diameter $L\ge 8R$. If $A$ is a subset of $V$ such that $\bsp'(A;\tree)=\tree$, then there is a $1/(80R)$-corrupted subtree $T\subseteq\tree$ of diameter $L$ and maximum degree at most four.

\begin{proof} Since $\diam\tree = L$, it must contain some path $P$ of length $L$. Let $\vrt$ denote one of the endpoints of this path. From now on we regard $\tree$ as being rooted at $\vrt$.

\smallskip

\noindent\bemph{Step 1. Simplified bootstrap percolation of marked blocks.} We now define a much simpler bootstrap percolation process, which we will show (in subsequent steps) to dominate $\bsp'(A;\tree)$ in an appropriate sense. For integers $j\ge0$, let $\tree[j]$ denote the subgraph of $\tree$ induced by variables whose distance to the root $\vrt$ lies between $4Rj$ and $4R(j+1)$; we then consider each connected component of $\tree[j]$ as a ``block.'' Note that the blocks themselves have a tree-like (i.e., hierarchical) structure: if $U$ is a block rooted at depth $4R(j+1)$, then its root is a leaf of a block $U'$ rooted at depth $4Rj$, and we say that $U$ is a ``child block'' of $U'$. Let $\blocks$ be the tree structure of blocks. Let $\ablock_0$ be the set of all blocks that intersect $A$. For $t\ge1$ we will say that a block belongs to $\ablock_t$ if either it belongs to $\ablock_{t-1}$, or has at least two child blocks in $\ablock_{t-1}$. Iterate this to define the set of all ``marked'' blocks,
	\[\MARK(\ablock_0;\mathfrak{B})
	\equiv\bigcup_{t\ge0}\ablock_t\,.\]
Note that this ``marking'' is also a bootstrap-percolation-type process, but is much simpler than $\bsp'$.\smallskip

\noindent\bemph{Step 2. A general property of $\bsp'(A;\tree)$.} We next state and prove a useful property of the $\bsp'(A;\tree)$ process: if $S$ is any subset of $V$ which does not intersect $A$, then in the first round of $\bsp'(A;\tree)$ in which \emph{any} variable is deleted from $S$, it must be the case that some variable on its internal boundary
	\[\partial^\textup{int} S
	\equiv\bigg\{
	u\in S : \textup{$u\in B_1(w;\tree)$
		for some $w\in V\setminus S$}
	\bigg\}\]
is also deleted. To see that this property holds, say the first deletion from $S$ occurs at round $t+1$. This means that after round $t$, the subgraph remaining in the $\bsp'(A;\tree)$ process is $\tree_t=(V_t,F_t,E_t)\subseteq\tree$ with $V_t\supseteq S$ (i.e., no variable has yet been deleted from $S$). The subgraph remaining after the next round $t+1$ is $\tree_{t+1}=(V_{t+1},F_{t+1},E_{t+1})\subseteq\tree_t$ where $V_t\setminus V_{t+1}$ intersects $S$. 
By the definition of $\bsp'$,
	\[\tree_{t+1} = \tree_t
	\,\bigg\backslash \, 
	B_R(A_t;\tree_t)\]
where $A_{-1}=A$ and $A_t=\ACT(\tree_t)$ for $t\ge1$. Now suppose for contradiction that $V_t\setminus V_{t+1}$ does not intersect $\partial^\textup{int} S$. For any variable $v$, its neighborhood $B_R(v;\tree_t)$ is a connected subgraph of $\tree_t$ (hence also of $\tree$). As a result it must be that for all $v\in A_t$, the neighborhood $B_R(v;\tree_t)$ is either disjoint from $S$, or contained in $S\setminus \partial^\textup{int}S$. Since $V_t\setminus V_{t+1}$ intersects $S$, there must be at least one $v\in A_t$ with $B_R(v;\tree_t)\subseteq S\setminus \partial^\textup{int} S$. Since $S$ does not intersect $A$, it must be that $t\ge0$ and $v\in A_t=\ACT(\tree_t)$. By definition of $\ACT(\tree_t)$,
this means that some clause $a$ in $B_R(v;\tree_t)$ must have degree less than $k$. Since all clauses in $\tree$ were assumed to be of degree $k$, this means that in $\tree$ there was a variable $u\in \pd a$ which was deleted by the end of round $t$. However $u$ must also belong to $S$, contradicting the hypothesis that no deletion occurred from $S$ by the end of round $t$. This proves the claim.\medskip

\noindent\bemph{Step 3. Comparison of marking and $\bsp'$.} We now argue that the marking process ``dominates'' $\bsp'(A;\tree)$ in the following sense. Let us say that a block $U$ is a ``tall block'' if it has depth at least $2R$; otherwise we call $U$ a ``shallow block.'' We claim that any tall block must be marked. Suppose for contradiction that this is not the case, then let $U$ be any block of maximal depth among all the unmarked tall blocks. This means that all the child blocks of $U$ must be marked, or be shallow blocks. Since $U$ itself is unmarked, the definition of the marking process implies that at most one child block of $U$ can be marked. If such a block exists we will denote it $U'$. We let $W$ denote any child block of $U$ which is unmarked; this means that $W$ must be a shallow block.

We now argue that throughout the $\bsp'(A;\tree)$ process, $W$ has no influence on $U$. To see this, apply the claim from Step 2 with $S=W$. Since $W$ is a shallow block, it follows that at the first round during $\bsp'(A;\tree)$ in which any variable is removed from the upper half of $W$, the root of $W$ must also be removed. After this, any variables remaining in $W$ will be disconnected from $U$. Thus, throughout $\bsp'(A;\tree)$, there is no time at which any variable $u\in U$ has in its $(3R/10)$-neighborhood a clause $a$ of degree less than $k$ that lies in $W$. This shows that $W$ has no influence on the evolution of $U$ under $\bsp'(A;\tree)$.

Let $\rho$ denote the root of $U$, and let $\rho'$ denote the root of $U'$ (if it exists). Let $\tau$ be the first time during $\bsp'(A;\tree)$ that any variable is deleted from $U$; note that $\tau$ must be finite by the assumption that $U$ is a tall block. Let $\tau_\rho$ be the time that $\rho$ is deleted, and let $\tau_{\rho'}$ be the time that $\rho'$ is deleted. It follows from the preceding discussion that $\tau=\min\set{\tau_\rho,\tau_{\rho'}}$. Let us suppose first $\tau=\tau_\rho<\tau_{\rho'}$. At time $\tau$, the $\bsp'(A;\tree)$ process deletes a connected component $U_\rho$ containing $\rho$. The definition of $\bsp'$ implies that variables in $U_\rho$ can lie at depth at most $13R/10$ below $\rho$, and hence at distance at least $27R/10$ from $\rho'$. As a result, no other variables will be removed from $U$ until time $\tau_{\rho'}$, when the $\bsp'(A;\tree)$ process deletes a connected component $U_{\rho'}$ containing $\rho'$. Variables in $U_{\rho'}$ can lie at distance at most $13R/10$ from $ \rho'$, hence at distance at least $7R/10$ from $U_\rho$. No other variables will be removed from $U$ after time $\tau_{\rho'}$. In particular, this contradicts the assumption that $U$ is a tall block and $\bsp'(A;\tree)=\tree$. Very similar arguments give the desired contradiction in the cases $\tau=\tau_{\rho'}<\tau_\rho$ and $\tau=\tau_{\rho'}=\tau_\rho$. This proves our claim that any tall block must also be marked.\smallskip

\noindent\bemph{Step 4. Extraction of sparse subtree.} Now recall that the tree $\tree$ is rooted at a variable $\vrt$, from which there emanates a path $P$
of length $L\ge 8R$. 
Let $\ell\equiv \ell/(8R)$ and note that $\lfloor L/(4R)\rfloor \ge L/(4R)-1 \ge \ell$. It follows that there is a path of blocks $\mathfrak{P}=(U_1,\ldots,U_\ell)$, where $U_1$ is rooted at $\vrt$ and $U_i$ is a child block of $U_{i-1}$ for each $2\le i\le \ell$. By the claim proved in the previous step, each $U_i$ must be a marked block: that is, each $U_i$ either intersects $A$ or has at least two marked child blocks. It follows from the definition of the marking process that the path $\mathfrak{P}$ can be covered by a disjoint union
	\[\mathfrak{T}=\bigsqcup_{j=1}^s\mathfrak{T}_j\]
where each $\mathfrak{T}_j$ is a nonempty full binary tree of marked blocks, rooted at a block $U_{i(j)}\in\mathfrak{P}$, such that the leaves $\mathfrak{L}_j$ of $\mathfrak{T}_j$ are all blocks intersecting $A$.\footnote{We will explain the $\mathfrak{T}_j$ by an example. The root of $\mathfrak{T}_1$ is always $U_1$, so $i(1)=1$. Suppose that $U_1$ does not intersect $A$, in which case it must have two marked child blocks, say $U_2$ and $\tilde{U}_2$. Suppose that these do intersect $A$. Then $\mathfrak{T}_1$ consists of $U_1$, $U_2$, and $\tilde{U}_2$. We then take $\mathfrak{T}_2$ to be rooted at $U_3$, so in this case $i(2)=3$. Suppose $U_3$ also intersects $A$; then $\mathfrak{T}_2$ consists only of $U_3$. We take $\mathfrak{T}_3$ to be rooted at $U_4$, and so on.}
Let $\mathfrak{T}$ be the tree of blocks given by the union of $\mathfrak{T}_1,\ldots,\mathfrak{T}_s$; it has maximum degree at most four. We then construct a subtree $T\subseteq\tree$ as follows:
\begin{enumerate}[a.]
\item For $1\le j\le s$ let $B_j\subseteq A$ be defined by taking one variable from $U\cap A$ for each $U\in\mathfrak{L}_j$.

\item For $1\le j\le s$ let $T_j\subseteq\tree$ be defined by taking the union of all paths between $B_j$ and the root of $U_{i(j)}$.

\item For $1\le j\le s-1$, let $Q_j$ be the union of $T_j$ with a path joining the roots of $U_{i(j+1)-1}$ and $U_{i(j+1)}$. Let $Q_s$ be the union of $T_s$ with a path joining the root of $U_\ell$ with a variable of maximal depth in $U_\ell$. (The precise definition of $Q_s$ is not so important; we choose this one as it will guarantee $\diam T = L$.)

\item Let $T\subseteq\tree$ be the union of $Q_1,\ldots,Q_s$. Let $B'$ be the (disjoint) union of $B_1,\ldots,B_s$.
\end{enumerate}
Thus $T$ is a subtree of $\tree$ that includes the root $\vrt$ and has maximum degree at most four. We now argue that $T$ is $1/(80R)$-corrupted. The intersection of $T_j$ with any block $U$ is given by a union of at most two paths, so contains at most $8R$ variables. This implies
	\[
	|V_{T_j}|
	\le \sum_{U\in\mathfrak{T}_j}
		|V_{T_j}\cap U|
	\le 8R |\mathfrak{T}_j|
	\]
where $|\mathfrak{T}_j|$ denotes the number of blocks in $\mathfrak{T}_j$. We then have 
$|V_{Q_j}|\le |V_{T_j}|+4R
\le 8R (|\mathfrak{T}_j|+1)$, so
	\beq\label{e:corrupt.paths.ubd}
	|V_T|
	\le \sum_{j=1}^s |V_{Q_j}|
	\le 8R \sum_{j=1}^s \Big(|\mathfrak{T}_j|+1\Big)\,.
	\eeq
On the other hand, in any nonempty full binary tree, the number of leaf nodes is exactly one plus the number of internal nodes, so
	\beq\label{e:corrupt.leaves.lbd}
	|V_T\cap A|
	\ge |B'|
	= \sum_{j=1}^s|B_j|
	\ge\sum_{j=1}^s|\mathfrak{L}_j|
	= \sum_{j=1}^s \f{|\mathfrak{T}_j|+1}2\,.
	\eeq
Since $\mathfrak{T}$ has maximum degree at most four,
we can extract $B\subseteq B'$ with $|B| \ge |B'|/5 $ such that variables in $B$ lie at pairwise distance greater than $2R$. Combining with 
\eqref{e:corrupt.paths.ubd}~and~\eqref{e:corrupt.leaves.lbd} gives
	\[
	\f{|B|}{|V_T|}
	\ge
	\f{|B'|}{5|V_T|}
	\ge \f1{80R}\,,
	\]
so $T$ is $1/(80R)$-corrupted as claimed. (Note in this case that the condition that $T\cup B_R(B;\tree)$ be acyclic is trivially satisfied, since $\tree$ is a tree.)
\end{proof}
\end{lem}

In fact, by an essentially identical proof, we have the following slight generalization of Lemma~\ref{l:marked.blocks.corrupt.tree}, which will be used in the analysis that follows.

\begin{cor}\label{c:marked.blocks.corrupt.tree} Suppose $\GG=(V,F,E)$ is a finite bipartite factor graph in which all clauses have degree $k$, and which can be expressed as \[\GG=\tree\cup\GG'\] where $\GG'$ is an arbitrary graph, and $\tree$ is a tree that intersects $\GG'$ at a single variable $v$, such that $\tree$ has depth $L$ when rooted at $v$. If $A$ is a subset of $V$ such that $\bsp'(A;\GG)=\GG$, then there is a $1/(80R)$-corrupted subtree $T\subseteq\tree$ of maximum degree at most four, with $L \le \diam T \le \diam \tree \le 2L$.

\begin{proof}
The proof of Lemma~\ref{l:marked.blocks.corrupt.tree} applies to $\tree$; the only difference is that we fix $v$ to be the root of $\tree$. In the final step of extracting $B$ from $B'$, it is easy to arrange that none of the variables in $B$ lie in the topmost block of $\tree$. By construction, the tree $T$ has depth exactly $L$, hence diameter between $L$ and $2L$. Then the requirement that $T\cup B_R(B;\GG)$ be acyclic is satisfied, since this will be a subgraph of the tree $\tree$.
\end{proof}
\end{cor}

For the proof of Proposition~\ref{p:corrupted.tree.in.graph}, the main challenge remaining is to reduce to the case of Corollary~\ref{c:marked.blocks.corrupt.tree}
(or to its special case Lemma~\ref{l:marked.blocks.corrupt.tree}). To this end,
it is useful to consider a slight variant of $\bsp'$
(Definition~\ref{d:slowed.bspprime})
and prove a self-consistency property thereof
(Lemma~\ref{l:bspprime.within} below).

\begin{dfn}[slowed removal process $\bsp''$]
\label{d:slowed.bspprime}
Let $\GG=(V,F,E)$ be any bipartite factor graph (finite or infinite), and let $A$ be a finite subset of $V$.
Recall from Definition~\ref{d:proc} that the removal process $\bsp'(A;\GG)$ goes from $\GG_t$ to $\GG_{t+1}$ (for $t\ge-1$) by removing the $R$-neighborhoods of all the variables in $A_t$, where $A_{-1}$ is the initial set $A$, and $A_t$ is defined inductively using \eqref{e:bspprime.activated.set} as $\ACT(\GG_t)$. We now define the process $\bsp''(A;\GG)$ which is equivalent to $\bsp'(A;\GG)$ except that it removes one $R$-neighborhood at a time. That is to say, for each $t\ge-1$, we arbitrarily order the variables in $A_t$ as
	\[
	\bigg(v_{t,i} : 1\le i \le |A_t|\bigg)\,,
	\]
and then remove first 
$B_R(v_{t,1};\GG_t)$,
then $B_R(v_{t,2};\GG_t)$,
and so on. We say that $v_{t,i}$ is \textbf{visited} at the $i$-th step of this process (even if it may have been deleted at an earlier step). Thus $\bsp''$ reaches the graph $\GG_{t+1}$ in
	\[\sum_{s=-1}^t|A_s|\]
steps. The point of $\bsp''$ is that each step cannot increase the maximum component diameter (of the removed subgraph) by too much: if the maximum component diameter before a $\bsp''$ step is $\ell$, the maximum component diameter after the $\bsp''$ step is at most $2(\ell+R+1)$. We let $\bsp^s$ denote the $\bsp''$ process stopped after $s$ steps.
\end{dfn}

\begin{lem}\label{l:bspprime.within}
Let $\GG=(V,F,E)$ be any bipartite graph (finite or infinite) in which all clauses have degree $k$. Let $A$ be a finite subset of $V$. If for some finite $s$ we have $\bsp^s(A;\GG)=\HH$,
then $\bsp'(A_{\HH};\HH)=\HH$ where $A_{\HH}$ denotes the restriction of $A$ to $\HH$.

\begin{proof} First we note that $A_{\HH}$ is a strict subset of $A$ if and only if $\bsp^s$ stops before the initial round of $\bsp'(A;\GG)$ is finished, i.e., if and only if $s<|A_{-1}|$. In this case the result is straightforward: let $U_{-1}\subseteq A_{-1}$ denote the first $s$ variables in $A_{-1}$, so $\HH=\bsp^s(A;\GG)= B_R(U_{-1};\GG)$. This implies $U_{-1}\subseteq A_{\HH}$
and $B_R(U_{-1};\GG)=B_R(U_{-1};\HH)$. From this we obtain the chain of relations
	\[
	\HH\supseteq
	\bsp'(A_{\HH};\HH)
	\supseteq B_R(A_{\HH};\HH)
	\supseteq B_R(U_{-1};\HH)
	= B_R(U_{-1};\GG) = \HH\,,
	\]
and so we have $\bsp'(A_{\HH};\HH)=\HH$ as desired. We therefore assume from now on that $s\ge|A_{-1}|$. This means that $\bsp^s(A;\GG)$ completes the initial round of $\bsp'(A;\GG)$, producing $\GG_0 = \GG\setminus B_R(A;\GG)$.
It also means that $A_{\HH}=A$, and $\bsp'(A;\HH)$ completes its initial round to produce $\HH_0 = \HH\setminus B_R(A;\HH) = \HH\setminus B_R(A;\GG)$. We hereafter index this initial round as the ``zeroth round.''

We now suppose inductively, for $i\ge0$, the following hypothesis: provided that $\bsp^s(A;\GG)$ fully completes
the first $i$ rounds
 of $\bsp'(A;\GG)$ (again, $i$ starts from zero), the following hold:
\begin{enumerate}[(i)]
\item $A_t=\ACT(\GG_t)=\ACT(\HH_t)$ for all $t\le i-1$; and
\item The subgraph $\RR_i$ removed from $\GG$ within the first $i$ rounds of $\bsp'(A;\GG)$
agrees with the subgraph removed from $\HH$
 within the first $i$ rounds of $\bsp'(A;\HH)$, i.e.,
 	\beq\label{e:same.R.i}
	\RR_i
	\equiv\bigcup_{t=-1}^{i-1}
		B_R(A_t;\GG_t)
	=\bigcup_{t=-1}^{i-1}B_R(A_t;\HH_t)\,.
	\eeq
\end{enumerate} 
The base case $i=0$ follows from the preceding argument.
Let $\GG_i$ be the graph remaining after the first $i$ rounds of $\bsp'(A;\GG)$, and let $\HH_i$ be the graph remaining after the first $i$ rounds of $\bsp'(A;\HH)$. By induction, $\GG_i=\GG\setminus\RR_i$
and $\HH_i=\HH\setminus\RR_i = \GG_i \cap \HH$.
We will say that a subgraph is ``lacking'' if it contains at least two clauses of degree $k-1$, or at least one clause of degree $\le k-2$. Then, recalling \eqref{e:bspprime.activated.set}, 
for the next round we must consider
	\begin{align*}
	\ACT(\GG_i)
	&=\bigg\{\textup{variables $v$ in $\GG_i$
	such that $B_{3R/10}(v;\GG_i)$ is lacking}\bigg\}\,,\\
	\ACT(\HH_i)
	&=\bigg\{\textup{variables $v$ in $\HH_i$
	such that $B_{3R/10}(v;\HH_i)$ is lacking}
	\bigg\}\,.
	\end{align*}
The inductive hypothesis implies $\HH_i\subseteq\GG_i$,
so $B_R(v;\HH_i)\subseteq B_R(v;\GG_i)$ for any variable $v$. On the other hand, let $U_i$ be the vertices in $\ACT(\GG_i)$
that are visited by $\bsp^s(A;\GG)$
(for ``visited'' in the sense of Definition~\ref{d:slowed.bspprime}). If $v\in U_i$ then
the $\bsp^s(A;\GG)$ process removes
 $B_R(v;\GG_i)$, which means it lies inside $\HH$. This implies
$B_R(v;\GG_i)\subseteq
\GG_i \cap \HH=\HH_i$, where the last equality uses the inductive hypothesis. This proves that
	$B_R(v;\GG_i)=B_R(v;\HH_i)$
for all $v\in U_i$,
hence also $B_{3R/10}(v;\GG_i)=B_{3R/10}(v;\HH_i)$
for all $v\in U_i$. Moreover, all variables in $U_i$ are removed, so $U_i$ must be contained in $\GG_i\cap\HH=\HH_i$
(using the inductive hypothesis again). It follows from this that
	\[
	U_i
	= \bigg\{v\in U_i : 
	\textup{$B_{3R/10}(v;\GG_i)$ is lacking}\bigg\}
	\subseteq \bigg\{v\in \HH_i : 
	\textup{$B_{3R/10}(v;\HH_i)$ is lacking}\bigg\}
	=\ACT(\HH_i)\,.\]
Conversely, suppose $v\in\ACT(\HH_i)$, so $B_{3R/10}(v;\HH_i)$ is lacking: either there are two clauses $a_1,a_2 \in B_{3R/10}(v;\HH_i)$ of degree $k-1$, or there is a single clause $a\in B_{3R/10}(v;\HH_i)$ of degree $\le k-2$. We have by induction $\HH_i\subseteq\GG_i$, therefore $B_{3R/10}(v;\HH_i)\subseteq B_{3R/10}(v;\GG_i)$ which means these clauses are also present in $B_{3R/10}(v;\GG_i)$. All clauses have degree $k$ in $\GG$ (and hence also in $\HH$), so if a clause $a\in B_{3R/10}(v;\HH_i)$
has degree $k-j$ relative to $\HH_i$, there must be $j$ edges joining that clause to variables in $\RR_i$. It then follows from the inductive hypothesis \eqref{e:same.R.i}
that the clause has degree $\le k-j$ in $\GG_i$. This implies that $B_{3R/10}(v;\GG_i)$ must also be lacking,
so $\ACT(\HH_i)\subseteq\ACT(\GG_i)$. If the $(i+1)$-st round is completed, then we obtain $A_i=\ACT(\GG_i) = U_i = \ACT(\HH_i)$. Since we proved above that $B_R(v;\GG_i)=B_R(v;\HH_i)$ for all $v\in U_i = A_i$, we conclude
$\RR_{i+1}= \RR_i \cup B_R(A_i;\GG_i)=B_R(A_i;\HH_i)$.
This verifies the inductive hypothesis.

If $\bsp^s(A;\GG)$ completes exactly the first $i$ rounds of $\bsp'(A;\GG)$ (and no more), then the above induction implies $\HH=\RR_i=\bsp'(A;\HH)$, as desired. It remains finally to consider the case that $\bsp^s(A;\GG)$ completes the first $i$ rounds, and does not complete the $(i+1)$-st round. In this case, $\HH=\bsp^s(A;\GG) = \RR_i \cup B_R(U_i;\GG_i)$. The above argument also gives
$U_i\subseteq \ACT(\HH_i)$. Altogether we obtain the chain of relations
	\[
	\HH\supseteq \bsp'(A;\HH)
	\supseteq\RR_i \cup B_R(\ACT(\HH_i);\HH_i)
	\supseteq\RR_i \cup B_R( U_i ;\HH_i)
	=\HH\,.
	\]
This gives $\bsp'(A;\HH)=\HH$ in the case where $\bsp^s(A;\GG)$ stops partway through some round of $\bsp'(A;\GG)$,
and concludes the proof of the lemma.
\end{proof}
\end{lem}

We are now finally prepared to prove the main result of this subsection:

\begin{proof}[Proof of Proposition~\ref{p:corrupted.tree.in.graph}] Instead of $\bsp'(A;\GG)$, we follow the slowed removal process $\bsp''(A;\GG)$ (Definition~\ref{d:slowed.bspprime}) where we remove one $R$-neighborhood at a time. Recall that if the maximum component diameter before a $\bsp''$ step is $\ell$, then the maximum component diameter after the $\bsp''$ step is at most $2(\ell+R+1)$. Let $\TIME_\star$ be the first time that $\bsp''(A;\GG)$ creates any component $\HH\subseteq\mathscr{B}$ of diameter at least $5L$. The maximum component diameter at time $\TIME_\star-1$ is at most $5L-1$, so $\HH$ can have diameter at most $2[(5L-1)+R+1]\le 11L$. In particular, if $\HH$ contains more than one cycle, we are in scenario (i) and the conclusion follows.

If $\HH$ is a tree, then Lemma~\ref{l:bspprime.within} gives $\bsp'(A_{\HH};\HH)=\HH$ where $A_{\HH}$ is the restriction of $A$ to $\HH$. It follows from Lemma~\ref{l:marked.blocks.corrupt.tree} (with $\tree=\HH$) that there is a subtree $T\subseteq\HH$ with
 maximum degree at most four and
	\[\diam T = \diam \HH \in[5L,11L]\,,\]
which is $1/(80R)$-corrupted with respect to $(A_{\HH},\HH)$ --- in particular, this gives that $T\cup B_R(A_{\HH};\HH)$ is acyclic. Note however that the definition of the removal process implies $B_R(A_{\HH};\GG)\subseteq\HH$, and so $T\cup B_R(A_{\HH};\HH)$ is in fact the same as $T\cup B_R(A_{\HH};\GG)$. It follows that $T$ is also $1/(80R)$-corrupted with respect to $(A_{\HH},\GG)$, and hence also with respect to $(A,\GG)$. This proves that scenario (ii) holds if $\HH$ is a tree.

Next we consider the case that $\HH$ contains a single cycle $C$ with $\diam C \ge 5L/2$. Let $\TIME'$ be the first time that the $\bsp''(A;\GG)$ process created any component $\HH'\subseteq\HH$ of diameter at least $L$. Then $\TIME'<\TIME$, and $\HH'$ has diameter at most $2[(L-1)+R+1] < 5L/2$. It follows that $\HH'$ must be a tree. Lemma~\ref{l:bspprime.within} gives
$\bsp'(A_{\HH'};\HH')=\HH'$. Arguing as for the previous case, we apply Lemma~\ref{l:marked.blocks.corrupt.tree} (now $\tree=\HH'$) to see that there is a subtree $T\subseteq\HH'$ with
 maximum degree at most four and
 	\[\diam T = \diam \HH' \in \bigg[L,\f{5L}{2}\bigg]\,,\]
which is $1/(80R)$-corrupted with respect to $(A_{\HH'},\HH')$, and hence also with respect to $(A,\GG)$. This proves that scenario (ii) also holds in this case.

Finally, in the case that $\HH$ contains a single cycle $C$ with $\diam C < 5L/2$, we must be able to decompose
	\[\HH = \tree' \cup \HH''\]
where $C\subseteq \HH''$, and $\tree'$ is a tree that intersects $\HH''$ at a single variable $v$, such that $\tree'$ has depth exactly $L$. The conditions of Corollary~\ref{c:marked.blocks.corrupt.tree} are then satisfied, so we conclude that there is a $1/(80R)$-corrupted subtree $T\subseteq\tree'$ of maximum degree at most four, with $L \le \diam T \le \diam \tree' \le 2L$. Thus scenario (ii) again holds.
\end{proof}

\subsection{Probabilistic analysis of preprocessing} 
\label{ss:preprocessing.probabilistic}

First we use Lemma~\ref{l:radon.derivative.pgwT}
to transfer the 
result of Corollary~\ref{c:excellent}
(from \S\ref{ss:orderly.contained}) from the $\uPGW$ measure to the $\uPGW(T)$ measure of Definition~\ref{d:PGW.based.on.T}, where $T$ is any fixed sparse tree: 

\begin{cor}\label{c:excellent.under.uPGW.T}
Let $T$ be any fixed variable-rooted tree of maximum degree $\CC$,
where $\CC$ is an absolute constant.
Then
	\[
	\Big[\uPGW(T)\Big]\bigg(
	\textup{$\vrt$ is improper or not $\II$-excellent}
	\bigg)
	\le \f1{\exp( 2^{k\DELTACONST/4} R / k )}\]
for $\II\in\set{0,1}$ and $k\ge k_0$, where $k_0$ is an absolute constant depending only on $\DELTACONST$ and $\CC$.

\begin{proof}
As in the proof of Lemma~\ref{l:radon.derivative.pgwT},
let $\RN$ denote the Radon--Nikodym derivative 
of $\uPGW(T)$ with respect to $\uPGW$.
Let $\E$ denote expectation with respect to $\uPGW$.
For any event $E$ we can trivially bound
	\[
	\E\bigg[ \mathbf{1}_E \RN ; \RN \le \f1{\uPGW(E)^{2/3}}\bigg]
	\le \uPGW(E)^{1/3}\,.
	\]
On the other hand we can use Lemma~\ref{l:radon.derivative.pgwT}
together with Markov's inequality to bound
	\[
	\E\bigg[ \RN ; \RN > \f1{\uPGW(E)^{2/3}}\bigg]^2
	\le \E\Big[\RN^2\Big]
			\uPGW\bigg(\RN \ge \f1{\uPGW(E)^{2/3}}\bigg)
	\le 2 (\E\RN) \, \uPGW(E)^{2/3}
	= 2 \, \uPGW(E)^{2/3}\,.
	\]
Combining the bounds gives
	\[\Big[\uPGW(T)\Big](E)
	=\E\Big[\mathbf{1}_E\RN\Big]
	\le \E\bigg[ \mathbf{1}_E \RN ; \RN \le \f1{\uPGW(E)^{2/3}}\bigg]
		+\E\bigg[ \RN ; \RN > \f1{\uPGW(E)^{2/3}}\bigg]
	\le 3 \, \uPGW(E)^{1/3}\,.
	\]
The result then follows by combining with Corollary~\ref{c:excellent}.
\end{proof}
\end{cor}

We can use the above corollary to bound the probability, under the $\uPGW$ measure, that the root lies in a sparse tree that is corrupt (see Definition~\ref{d:corrupt}) with respect to the set of non-excellent vertices. Recall \eqref{e:bold.Lambda.abstract.defn} that $\bLambda_{\CC,s}$ refers to $\CC$-sparse rooted trees having $s$ variables. Also recall \eqref{e:bold.Lambda.within.PGW.defn} that $\bLambda_{\CC,s}(\tree)$ refers to $\CC$-sparse subtrees $\vrt\in T\subseteq\tree$ having $s$ variables.

\begin{lem}\label{l:PGW.bound.on.corrupt.tree}
Let $\CC$ be an absolute constant. For $\tree\sim\uPGW$ let $A(\tree)$ denote the set of all variables in $\tree$ that are improper or not $\II$-excellent. It holds for all $s\ge 100R$ that
	\[
	\uPGW\bigg(
	\begin{array}{c}
	\textup{some $T\in\bLambda_{\CC,s}$ is $1/(80R)$-corrupt}\\
	\textup{with respect to $(\tree,A(\tree))$}
	\end{array}
	\bigg)
	\le \f1{\exp(2^{k\DELTACONST/4} s/k^2)}\]
provided $k\ge k_0$, where $k_0$ is an absolute constant
depending only on $\DELTACONST$ and $\CC$.

\begin{proof}
This argument is similar to (but simpler than) part of the proof of Lemma~\ref{lem-local-D-0}.
First we fix a tree $\tprime\in\bLambda_{\CC,s}$, as well as a subset $B'\subseteq V_T$
with $|B'| \ge |V_T|/(80R)$ such that all variables in $B'$ lie at pairwise distance greater than $2R$.
For this fixed pair $(\tprime,B')$ we bound (cf.\ \eqref{e:expected.number.of.embeddings}, \eqref{e:embeddings.times.embedded.probab}, and \eqref{e:product.over.disjoint.nbhds.around.sparse.tree})
	\begin{align*}
	P(T',B')
	&\equiv\uPGW\bigg(
	\textup{there exists an embedding $\zeta:\tprime\hookrightarrow\tree$
	such that $\zeta(B')\subseteq A(\tree)$}\bigg) \\
	&\le\int\bigg|\bigg\{
	\textup{embeddings $\zeta:\tprime\hookrightarrow\tree$
		such that $\zeta(B')\subseteq A(\tree)$}
	\bigg\}\bigg|\,d\uPGW(\tree)\\
	&= \embed(T')
	\cdot \Big[\uPGW(T')\Big]\bigg( B'\subseteq A(\tree) \bigg)
	\le (\alpha k^2)^{\CC s}\cdot
	\prod_{u\in B'}\bigg\{
	\Big[\uPGW(T_u)\Big]\bigg( \textup{root of $\tree'$ 
		is in $A(\tree')$} \bigg)\bigg\}\,,
	\end{align*}
where $T_u$ now refers to the tree $\tprime$ rerooted at $u$, $\tree'$ refers to a sample from the measure $\uPGW(T_u)$, and the probability factorizes over $u\in B'$ since ``proper'' and ``$\II$-excellent''
are properties of the $R$-neighborhood, and we have assumed that variables in $B'$ lie at pairwise distance greater than $2R$. Applying Corollary~\ref{c:excellent.under.uPGW.T} (for the measures $\uPGW(T_u)$) and summing over all possibilities of $(T',B')$ gives
	\[\sum_{T',B'}P(T',B')
	\le \f{e^{O(\CC k s)}}{ \exp(2^{k\DELTACONST/4} s / (80k) ) }
	\le \f1{ \exp(2^{k\DELTACONST/4} s/k^2) }\,.\]
This implies the assertion of the lemma.
\end{proof}
\end{lem}

We next transfer the bound of Lemma~\ref{l:PGW.bound.on.corrupt.tree}
from the tree measure $\uPGW\equiv\uPGW^\alpha$ to the random $\ksat$ measure $\poisP\equiv\poisP^{n,\alpha}$ (as given in Definition~\ref{d:formal.random.ksat}, and including the random marking
$\LABEL_R$ of \eqref{e:random.R.marking}). For $\GG=(V,F,E)\sim\P$ 
and any $v\in V$, let $\bLambda_{\CC,s}(\GG,v)$ denote the set of subtrees $v\in T\subseteq\GG$ having $s$ variables and maximum degree bounded by $\CC$. 

\begin{lem} \label{l:corrupt.tree.in.erdos.renyi}
Let $\CC$ be an absolute constant. For $\GG=(V,F,E)\sim\poisP\equiv\poisP^{n,\alpha}$ let $A(\GG)$ denote the set of all variables in $\GG$ that are improper or not $\II$-excellent, for $\II\in\set{0,1}$. For any $v\in V$, and for all $100R \le s \le n^{1/10}$, 
	\[\P\bigg(\begin{array}{c}
	\textup{some $T\in\bLambda_{\CC,s}(\GG, v)$
	is $1/(80R)$-corrupt}\\
	\textup{with respect to $(\GG,A(\GG))$}
	\end{array}
	\bigg) \le
	\f{ e^{O(\CC k s)}}{ \exp(2^{k\DELTACONST/4} s / (80k) ) }\,.
	\]

\begin{proof} Fix a tree $\tprime\in\bLambda_{\CC,s}$, as well as a subset $B'\subseteq V_{\tprime}$ with $|B'| \ge |V_{\tprime}|/(80R)$ such that all variables in $B'$ lie at pairwise distance greater than $2R$. We consider the event
	\[
	E_n(\tprime,B')
	\equiv\bigg\{\hspace{-3pt}\begin{array}{c}
	\textup{there exists an embedding
	$\zeta:\tprime\hookrightarrow\GG$ such that}\\
	\textup{$\zeta(B')\subseteq A(\GG)$ and
		$\zeta(\tprime) \cup B_R( \zeta(B');\GG)$ is acyclic}
	\end{array}\hspace{-3pt}
	\bigg\}\,.
	\]
Write $P_n(\tprime,B') \equiv \poisP(E_n(\tprime,B'))$
and $P_{n,m}(\tprime,B') \equiv \P_{n,m}(E_n(\tprime,B'))$. Recalling \eqref{e:poisson.erdos.renyi}, we can bound
	\[P_n(\tprime,B')
	\le\sum_{m\ge1} \mathbf{1}\bigg\{
		\f{|m-n\alpha|}{n^{1/2}\log n} \le1 \bigg\}
		P_{n,m}(\tprime,B')
	+ \f1{\exp\{(\log n)^{3/2}\}}\]
--- the last term accounts for the probability under $\poisP^{n,\alpha}$ that the total number of clauses in $\GG$ (a $\Pois(n\alpha)$ random variable) deviates from $n\alpha$ by more than $n^{1/2}\log n$.
We now fix any $m$ satisfying
	\[\Big|m-n\alpha\Big|\le n^{1/2}\log n\,,\]
and consider $\GG$ sampled from $\P_{n,m}$.
Let $\zeta'$ denote any mapping that sends $V_{\tprime} \hookrightarrow [n]$, with $\zeta'(\vrt)=v$, and $F_{\tprime} \hookrightarrow [m]$; and let $\bm{j}$ denote a mapping $E_{\tprime} \to [k]$. We then write $\set{\zeta' : \tprime \hookrightarrow_{\bm{j}} \GG}$ for the event that the pair $(\zeta',\bm{j})$ is consistent with an actual embedding of $\tprime$ into $\GG$: this means that for every edge $e=(av)\in E_{\tprime}$, the edge $\zeta'(e)\equiv (\zeta'(a)\zeta'(v))$ is present in $E_{\GG}$, and the index of this edge in the clause $\zeta'(a)$ is given by $\bm{j}(e) \in[k]$. With this notation we can bound
	\beq\label{e:condition.on.number.of.clauses}
	P_{n,m}(\tprime,B')
	\le\sum_{\zeta',\bm{j}}
	\P_{n,m}\bigg(\zeta' : \tprime \hookrightarrow_{\bm{j}} \GG\bigg)
	\P_{n,m}\bigg(\hspace{-3pt}\begin{array}{c}
	\textup{$\zeta'(B') \subseteq A(\GG)$ and}\\
	\textup{$\zeta'(\tprime) \cup B_R( \zeta'(B');\GG)$ is acyclic}
	\end{array}\hspace{-3pt}
	\,\bigg|\,\zeta' : \tprime \hookrightarrow_{\bm{j}} \GG\bigg)\,.
	\eeq
In words, we interpret the above decomposition as follows. To sample $\GG\sim\P_{n,m}$, we start with $n$ variables and $m$ clauses where each clause is equipped with $k$ outgoing edges (indexed $j=1,\ldots,k$). Each outgoing edge matches to a uniformly random variable, independently of all other edges --- this means that we can sample $\GG$ in a \emph{sequential} way, revealing one edge at a time. In particular, when we fix a pair $(\zeta',\bm{j})$ and condition on the event $\set{\zeta' : \tprime \hookrightarrow_{\bm{j}} \GG}$, it is equivalent to say that for all $a\in F_{\tprime}$ and all $e=(av)\in\delta a$, we reveal that the $\bm{j}(e)$-th edge incident to clause $\zeta'(a)$ matches to the variable $\zeta'(v)$. Thus
	\[\P_{n,m}\bigg(\zeta' : \tprime \hookrightarrow_{\bm{j}} \GG\bigg)
	= \f1{n^{|E_{\tprime}|}}\,,\]
and conditioning on the event $\set{\zeta' : \tprime \hookrightarrow_{\bm{j}} \GG}$ reveals nothing about the remaining $mk-|E_{\tprime}|$ edges in the graph $\GG$. 
Note also that the total number of pairs $(\zeta',\bm{j}')$ is upper bounded by $\smash{n^{|V_{\tprime}|-1}m^{|F_{\tprime}|}k^{|E_{\tprime}|}}$.

Now fix $(\zeta',\bm{j})$ and condition on the event $\set{\zeta' : \tprime \hookrightarrow_{\bm{j}} \GG}$. We next want to explore the $R$-neighborhoods of the variables in $\zeta'(B')$. Note that this can also be done sequentially, in a ``breadth-first search'' manner:
first take any $u\in \zeta'(B')$, and reveal all the edges incident to it.
The degree of $u$ is then
	\[
	|\delta u|
	\sim
	|\delta_{\zeta'(\tprime)} u|
	+ \textup{Bin}\bigg( mk-|E_{\tprime}|, \f1n\bigg)\,,\]
where $|\delta_{\zeta'(\tprime)} u|$ is the degree of $u$ in the subgraph $\zeta'(\tprime)$. We then proceed to reveal the edges incident to the neighboring clauses of $u$, and so on until we have explored the entire subgraph
	\[\HH \equiv\zeta'(\tprime) \cup B_R(\zeta'(B'))\,. \]
If at any point in the exploration we reveal a cycle inside $\HH$, we can simply stop because this means the event of interest does not occur.
It is extremely unlikely to reveal any variable degree larger than $(\log n)^2$, so if this occurs we also simply halt the exploration.
Restricted to the event that no cycle is formed and all revealed variable degrees are at most $(\log n)^2$,
the law of $\HH$ can be bounded in terms of the $\uPGW^\alpha$ law: more precisely, we claim that
	\begin{align}\nonumber
	&\P_{n,m}\left(
	\Big\{\zeta'(B')\subseteq A(\GG)\Big\}
	 \cap \left\{\hspace{-3pt}
	\begin{array}{c}
	\textup{exploration of $\HH$ completes}\\
	\textup{without revealing any cycle or}\\
	\textup{variable of degree larger than $(\log n)^2$}
	\end{array}\hspace{-3pt}
	\right\}
	\right)\\
	&\qquad\le 2 
	\prod_{u\in B'}\bigg\{ \Big[\uPGW(T_u) \Big]\bigg(
	\textup{root of $\tree'$
	is in $A(\tree')$} \bigg)\bigg\}\,,
	\label{e:binom.stoch.dom.by.poisson}
	\end{align}
where $T_u$ is the tree $\tprime$ rerooted at $u$.
The bound \eqref{e:binom.stoch.dom.by.poisson} can be justified by noting that for all integers $m$ and $j$ with $|m-n\alpha| \le n^{1/2}\log n$ and all $0\le j \le (\log n)^2$, we have
	\[
	\f{\P( \textup{Bin}(mk,1/n) =j)}
	{\P( \Pois(\alpha k) =j)}
	= \binom{mk}{j} \f1{n^j} \bigg(1-\f1n\bigg)^{mk-j}
	\f{e^{\alpha k} j!}{(\alpha k)^j}
	\le \bigg(\f{n}{n-1}\bigg)^j
	=\exp\bigg\{ O\bigg( \f{(\log n)^2}{n}\bigg)\bigg\}\,.
	\]
If the exploration of $\HH$ completes without revealing any variable of degree larger than $(\log n)^2$, then the total number of variables explored is upper bounded by $n^{1/9}$ (for $n$ large enough), so the joint distribution of degrees will be close to i.i.d.\ $\Pois(\alpha k)$ random variables, and this gives \eqref{e:binom.stoch.dom.by.poisson}.
Applying Corollary~\ref{c:excellent.under.uPGW.T}
and summing over $(\zeta',\bm{j})$ gives
	\[
	P_{n,m}(\tprime,B')
	\le\f{n^{|V_{\tprime}|-1}m^{|F_{\tprime}|}k^{|E_{\tprime}|}}
		{ n^{|E_{\tprime}|} \exp(2^{k\DELTACONST/4} s / (80k) ) }
	=\f{\alpha^{|F_{\tprime}|} k^{|E_{\tprime}|}}
		{\exp(2^{k\DELTACONST/4} s / (80k) ) }
	\le \f{e^{O(\CC k s)}}{\exp(2^{k\DELTACONST/4} s / (80k) ) }\,.
	\]
Finally, summing over all $(T',B')$ proves the claim.
\end{proof}
\end{lem}

\begin{lem}\label{l:nbhd.growth.in.erdos.renyi} If $1\le L \le (\log n)/k^2$ and $\gamma \le \exp(-k^2 L)/k$, then
	\[
	\P\bigg( 
	\hspace{-3pt}\begin{array}{c}
	\textup{there exists a subset $S\subseteq V$ with}\\
	\textup{$|S|=n\gamma$ and $|B_L(S)|\ge n\gamma \exp(k^2 L)$}
	\end{array}\hspace{-3pt}
	\bigg)
	\le \f1{\exp(n e^{kL} \gamma)}\,.
	\]

\begin{proof}
As in the proof of Lemma~\ref{l:corrupt.tree.in.erdos.renyi}, we fix $m$ satisfying \eqref{e:condition.on.number.of.clauses} and consider $\GG\sim\P\equiv\P_{n,m}$. We then fix a subset $S\subseteq V$ of size $|S|=n\gamma$, and explore its neighborhood by breadth-first search. Let $Z_\ell(S)$ denote the number of variables at distance exactly $\ell$ from $S$. Note that
	\beq\label{e:bfs.in.graph.bound}
	\E\bigg( \exp \bigg\{\f{ Z_\ell(S) }{(\alpha k^2)^\ell}\bigg\}
		\,\bigg|\, B_{\ell-1}(S) \bigg)
	\le
	\E\bigg( \exp \bigg\{\f{
	\textup{Bin}\Big(mk, \f{Z_{\ell-1}(S)}{n-|B_{\ell-1}(S)|} \Big) }{(\alpha k^2)^\ell}\bigg\}
		\,\bigg|\, B_{\ell-1}(S) \bigg)\,.\eeq
Let $F_\ell$ be the event that $|Z_\ell(S)| \le n\gamma \exp(k^{3/2}{L})$, and let $E_\ell \equiv F_1 \cap \cdots \cap F_\ell$. On the event $E_{\ell-1}$ the neighborhood $B_{\ell-1}(S)$ cannot be too large (crudely, by the assumption on $\gamma$, it contains less than $1/k$ fraction of $V$), and it follows from \eqref{e:bfs.in.graph.bound} that
	\[
	\E\bigg( \exp \bigg\{\f{ Z_\ell(S) }{(\alpha k^2)^\ell}\bigg\}
		; E_{\ell-1} \bigg)
	\le 
	\E\bigg(
	\exp \bigg\{ \f{ Z_{\ell-1}(S) O(1/k) }{(\alpha k^2)^{\ell-1}}
		\bigg\}
		; E_{\ell-1} \bigg)
	\le
	\E\bigg(
	\exp \bigg\{\f{ Z_{\ell-1}(S) }{(\alpha k^2)^{\ell-1}}\bigg\}
		; E_{\ell-1} \bigg) \le e\,,
	\]
where the last bound holds by induction on $\ell$. It then follows by Markov's inequality that
	\[
	\P\Big( E_{\ell-1} \Big\backslash F_\ell \Big)
	\le \P\bigg( 
	\exp \bigg\{\f{ Z_\ell(S) }{(\alpha k^2)^\ell}\bigg\}
	\ge \exp \bigg\{ \f{n\gamma \exp(k^{3/2}L)}{(\alpha k^2)^\ell}\bigg\}
	 ; E_{\ell-1}\bigg)
	\le \f{e}{\exp(n\gamma \exp(k^{3/2}L/2))}\,.
	\]
Summing the last bound over $1\le\ell\le L$ gives
	\[
	\P\Big((E_L)^c\Big)
	= \sum_{\ell=1}^L \P\Big( E_{\ell-1} \Big\backslash F_\ell\Big)
	\le 
		 \f{L e}{\exp(n\gamma \exp(k^{3/2}L/2))}
	\le \f1{\exp(n\gamma \exp(k^{3/2} L/3) )}\,.
	\]
The claim follows by enumerating over at most $2^{n\gamma}$ choices for the subset $S\subseteq V$.
\end{proof}
\end{lem}

\begin{proof}[Proof of Proposition~\ref{p:small.fraction.removed.in.processing}] Fix $m$ satisfying \eqref{e:condition.on.number.of.clauses} and consider $\GG\sim\P\equiv\P_{n,m}$. For $v\in V$ we let $Y_v$ be the indicator that $v$ lies within distance $10R$ of a variable that is removed during processing, i.e.,
	\[
	Y_v=\mathbf{1}\bigg\{
		v\in B_{10R}\Big(\bsp'(A;\GG);\GG\Big)
		\bigg\}
	\]
where $A$ is the set of all variables that are improper or not $1$-good in $\GG$ (Definition~\ref{d:proc}). Let
	\[
	\bar{Y}_v
	=\mathbf{1}\bigg\{
	\begin{array}{c}
	\textup{$v$ lies within distance $10R$ of 
	a connected component of}\\
	\textup{$\bsp'(A;\GG)$ having diameter at most 
	$L_{\bsp'}\equiv (\log n)/2^{k\DELTACONST/5}$}
	\end{array}
	\bigg\}
	\le Y_v\,.
	\]
We first argue that, with high probability, $Y_v=\bar{Y}_v$ for all $v\in V$.
Indeed, Proposition~\ref{p:corrupted.tree.in.graph} implies that if 
$\bsp'(A;\GG)$ has a connected component of diameter more than $L_{\bsp'}$, then either (i) there is a connected subgraph $B'\subseteq\GG$ with $\diam B' \le 11 L_{\bsp'}/5$ that contains more than one cycle, or (ii) there is a $1/(80R)$-corrupted subtree $T'\subseteq\GG$ that has maximum degree at most four and $\diam T \ge L_{\bsp'}/5$. Thus
	\begin{align*}
	\P\bigg(
	\hspace{-3pt}\begin{array}{c}
	Y_v\ne \bar{Y}_v\\ \textup{ for any }v\in V
	\end{array}\hspace{-3pt}\bigg)
	&\le
	\sum_{v\in V} \bigg\{
	\P\bigg(\hspace{-3pt}\begin{array}{c}
	\textup{$B_{11L_{\bsp'}/5}(v)$ contains}\\
	\textup{more than one cycle}
	\end{array}\hspace{-3pt}\bigg)
	+\P\bigg(\hspace{-3pt}\begin{array}{c}
	\textup{some $T\in\bLambda_{4, L_{\bsp'}/5 }(\GG, v)$}\\
	\textup{is $1/(80R)$-corrupt}
	\end{array}\hspace{-3pt}\bigg)
	\bigg\}\\
	&\stackrel{\odot}{\le}
	n \bigg\{
	\bigg( \f{\exp(5 L_{\bsp'} k^2)}{n}\bigg)^2
	+
	\f1{\exp\{ 2^{k\DELTACONST/4} L_{\bsp'}/ k^2 \}}
	\bigg\}
	=o_n(1)\,,
	\end{align*}
where the bound marked $\odot$ follows from Lemma~\ref{l:corrupt.tree.in.erdos.renyi} together with very crude bounds on the chance to see more than one cycle in a neighborhood of $v$ of radius $5L_{\bsp'}$. This proves that $Y=\bar{Y}$ with high probability where
	\[
	Y\equiv\sum_{v\in V} Y_v\,,\quad
	\bar{Y}\equiv \sum_{v\in V} \bar{Y}_v\,.
	\]
We now turn to estimating $\bar{Y}$. Let $B'(\ell)$ be the union of all connected components of $\bsp'(A;\GG)$ of diameter $\ell$ that contain more than one cycle; and let $T'(\ell)$ be the union of all subtrees of $\bsp'(A;\GG)$ of diameter at least $\ell$ that are $1/(80R)$-corrupted. Let $A'(\ell)\equiv B'(\ell) \cup T'(\ell/5)$. Proposition~\ref{p:corrupted.tree.in.graph} implies that if $\bar{Y}_v=1$, then either $v$ lies within distance $2010R$ from $A$, or $v$ lies within distance $\ell+10R$ from $A'(\ell)$ for some $2000R\le\ell \le L_{\bsp'}$. Therefore
	\beq\label{e:expected.size.of.removed.component}
	\E\bar{Y}
	\le
	\E \Big|B_{2010R}(A)\Big|
	+\sum_{\ell= 2000R}^{L_{\bsp'}} \E \Big|B_{\ell+10R}(A'(\ell))\Big|\,.
	\eeq
Let $\gamma=\exp(-2^{k\DELTACONST/4} R / (2k))$, and note that 
Corollary~\ref{c:excellent.under.uPGW.T} together with Markov's inequality gives
	\[
	\P\Big(|A|\ge n\gamma\Big)
	\le
	\f{\E A}{n\gamma}
	= \f{\P(v\in A)}{\gamma}
	\le 
	\f{\exp(2^{k\DELTACONST/4} R / (2k))}
	{\exp(2^{k\DELTACONST/4} R / k)}
	= \gamma\,.
	\]
Combining with Lemma~\ref{l:nbhd.growth.in.erdos.renyi}
gives, for $L=2010R$ and $\gamma=\exp(-2^{k\DELTACONST/4} R / (2k))$,
	\begin{align}\nonumber
	\E \Big|B_{2010R}(A)\Big|
	&\le 
	n\gamma \exp(k^2 L)
	+n\P\Big( |A| \ge n \gamma\Big)
	+n
	\P\bigg( 
	\hspace{-3pt}\begin{array}{c}
	\textup{there exists $S\subseteq V$ with $|S|=n\gamma$}\\
	\textup{and $|B_L(S)|\ge n\gamma \exp(k^2 L)$}
	\end{array}\hspace{-3pt}
	\bigg) \\
	&\le
	\f{n \exp(2010k^2 R)}
		{\exp(2^{k\DELTACONST/4} R / (2k)) }
	+ \f{n}{ \exp(2^{k\DELTACONST/4} R / (2k)) }
	+ \f{n}{\exp(n e^{kL} \gamma)}
	\le \f{n}{ \exp(2^{k\DELTACONST/4} R / (3k)) }\,.
	\label{e:bound.nbd.of.A}
	\end{align}
Similarly, if we take $\gamma_\ell=\exp(-2^{k\DELTACONST/4}\ell/ k^2)$, then Lemma~\ref{l:corrupt.tree.in.erdos.renyi} together with Markov's inequality gives
	\[
	\P(|A'(\ell)| \ge n\gamma_\ell)
	\le \f{\E|A'(\ell)|}{n\gamma_\ell}
	\le \f{\P(v\in A'(\ell))}{\gamma_\ell}
	\le o_n(1) + \f{\exp( 2^{k\DELTACONST/4}\ell/ k^2 )}
				{\exp( 2^{k\DELTACONST/4} \Omega(\ell/ k ))}
	\le \gamma_\ell\,,
	\]
where the $o_n(1)$ error term accounts for the probability that $B_\ell(v)$ contains more than one cycle. Combining with Lemma~\ref{l:nbhd.growth.in.erdos.renyi} gives
	\begin{align}\nonumber
	\E\Big| B_{\ell+10R}(A'(\ell))\Big|
	&\le n\gamma_\ell \exp(k^2(\ell+10R))
	+n\P\Big(|A'(\ell)| \ge n\gamma_\ell\Big)
	+n\P\bigg( 
	\hspace{-3pt}\begin{array}{c}
	\textup{there exists $S\subseteq V$ with
		$|S|=n\gamma_\ell$ and}\\
	\textup{$|B_{\ell+10R}(S)|\ge n\gamma_\ell
		\exp(k^2(\ell+10R))$}
	\end{array}\hspace{-3pt}
	\bigg) \\
	&\le
	\f{n \exp(k^2(\ell+10R))}{\exp(2^{k\DELTACONST/4}\ell/ k^2)}
	+ \f{n}{\exp(2^{k\DELTACONST/4}\ell/ k^2)}
	+ \f{n}{\exp(n e^{k(\ell+10R)} \gamma_\ell)}
	\le \f{n}{\exp(2^{k\DELTACONST/4}\ell/ k^3)}\,,
	\label{e:bound.nbd.of.A.ell}
	\end{align}
for all $\ell\ge2000R$. Substituting \eqref{e:bound.nbd.of.A} and \eqref{e:bound.nbd.of.A.ell} into \eqref{e:expected.size.of.removed.component} gives
	\[
	\E \bar{Y}
	\le \f{n}{\exp(2^{k\DELTACONST/4} R / k^4)}\,.
	\]
Next note that under $\P_{n,m}$, for any $u\ne v$ the radius-$(L_{\bsp'}+10R)$ neighborhoods of $u$ and $v$ intersect with chance $o_n(1)$, and so are nearly independent of one another. If follows that $\Cov(\bar{Y}_u,\bar{Y}_v) = o_n(1)$. Therefore
	\[
	\Var\bar{Y} \le n
	+ \sum_{u\ne v} \Cov(\bar{Y}_u,\bar{Y}_v) = o_n(n^2)\,.
	\]
We can then use Chebychev's inequality to conclude that
	\[
	\P\bigg(\bar{Y} \ge \f{n}{\exp(2^{k\DELTACONST/5} R)} \bigg)
	\le
	\P\Big(\bar{Y} \ge 2 \E\bar{Y} \Big)
	\le \f{\Var \bar{Y}}{ (\E\bar{Y})^2} = o_n(1)\,.
	\]
Since we already argued above that $\P(Y\ne\bar{Y})=o_n(1)$, the result follows.\end{proof}

\subsection{Combinatorial analysis for positive type fractions}\label{ss:positive.fraction.combinatorial}

\noindent We now turn to the proof of Proposition~\ref{p:posfrac}. In this subsection we prove a deterministic result, Corollary~\ref{c:event.implies.desired.type}, which says essentially that if the local neighborhood of a clause $a$ in $\GG$ satisfies certain properties (to be detailed below), then we can guarantee that $a$ has a certain total type in the processed graph $\proc\GG$.

In preparation for this result, we recall some definitions and notations.
As in Definition~\ref{d:total.type}, we denote a clause total type as $\bL\equiv(L_0,L_\infty)$ where $L_0$ is the initial clause type and $L_\infty$ is the final clause type. The initial type $L_0$ corresponds to the $(R+1/2)$-neighborhood of the clause. If the clause does not lie in a compound enclosure (Definition~\ref{d:enclosure}), then the final type $L_\infty$ corresponds to the $(R+1/2)$-neighborhood of the clause in the processed graph. If the clause does lie in a compound enclosure $U$, then $L_\infty$ instead encodes the structure of $B_R(U)$. Recall also from Definition~\ref{d:enclosure} that any compound enclosure has diameter at most $R/100$, so in any case $L_\infty$ has depth at most $R(1+1/100)$. Of course, we restrict our attention to types $\bL$ that can actually occur, as formalized by the following:

\begin{dfn}[feasible types] \label{d:feasible.clause.type}A clause total type $\bL\equiv(L_0,L_\infty)$ is termed \bemph{feasible} if there exists some bipartite factor graph $\HH$, with girth larger than $8R$, such that $\P_{n',n'\alpha'}(\HH)>0$ for some $n',\alpha'$, and some clause $a_\star\in\proc\HH$ has total type $\bL$. Thus $L_0$ and $L_\infty$ are both trees, which we regard as being rooted at $a_\star$.\end{dfn}

Given a feasible type $\bL$, we now construct the tree $\tL$ (see Figure~\ref{f:T.construction} for a schematic depiction):

\begin{dfn}[tree based on feasible type]
\label{d:tree.based.on.feasible.type}
Given a feasible clause total type $\bL$, we now construct a corresponding tree $\tL$ as follows. Among all pairs $(\HH,a_\star)$ for which the conditions of Definition~\ref{d:feasible.clause.type} hold, fix one such that $\HH$ has minimal size. Let $\tree_\star$ be the $(4R)$-neighborhood of $a_\star$ in $\HH$, which we regard as a tree rooted at $a_\star$. We regard $L_0,L_\infty$ as subtrees of $\tree_\star$, and make the following definitions:
\begin{enumerate}[a.]
\item Let $V_1$ denote the set of variables of $L_0\setminus L_\infty$ which neighbor some clause in $L_\infty$. For each $v\in V_1$, let $\tree_\star(v,\ell)$ be the subtree of $\tree_\star$ induced by all the descendants of $v$ that lie within distance $\ell$ of $v$. Let $\Yout(v)$ be the variables in $\tree_\star(v)$ that lie at depth exactly $R$ below $v$ --- we will argue in the proof of Corollary~\ref{c:event.implies.desired.type} below that $\Yout(v)$ is nonempty for all $v\in V_1$. We take a union over $V_1$ to define
	\beq\label{e:defn.Tout}
	\tree_{\star,\textup{out}}
		\equiv\bigcup_{v\in V_1} \tree_\star(v,2R)\,,\quad
	\Nout\equiv\bigcup_{v\in V_1} \tree_\star(v,R-1)\,,\quad
	\Yout\equiv\bigcup_{v\in V_1} \Yout(v)\,.
	\eeq
For each $y\in\Yout$, let $\Zout(y)$ be the variables that lie at depth exactly $R$ below $y$. If $\Zout(y)$ is nonempty, then we fix an arbitrary $w\in\Zout(v)$, and modify $w$ by redefining $\LABEL(w)$ to be the same as $\LABEL(v)$ where $v$ is the ancestor of $y$ in $V_1$. We let $\Tout$ denote the resulting modification of $\tree_{\star,\textup{out}}$ (the two graphs can differ only in the markings $\LABEL$ at the last level).

\item Now let $\tree_{\star\star}$ be the component of $\tree_\star\setminus\Tout$ that contains the root clause $a_\star$. Let $W$ denote the variables at the boundary of $L_\infty$. From the definition of the processing algorithm, each variable left in $\proc\HH$ is $1$-good, which means (Definition~\ref{d:good.exc}) that any length-$(2R/5)$ path emanating from the variable must contain at least one variable that is $1$-excellent. This in turn means we can find a set $X$ of $1$-excellent variables that lie below $W$, at distance at most $2R/5$ from $W$, such that $X$ forms a cutset in $\tree_{\star\star}$ that encloses $L_\infty$. Let $Y$ be the variables in $\tree_{\star\star}$ that lie at depth exactly $R/10$ below $X$. Let $\Tin$ be the subtree of $\tree_{\star\star}$ enclosed by $Y$. 
\end{enumerate}
Let $\tL$ be the subtree of $\tree_\star$ induced by the union of $\Tin$ and $\Tout$.
\end{dfn}

\begin{figure}[h]
\includegraphics[scale=1]{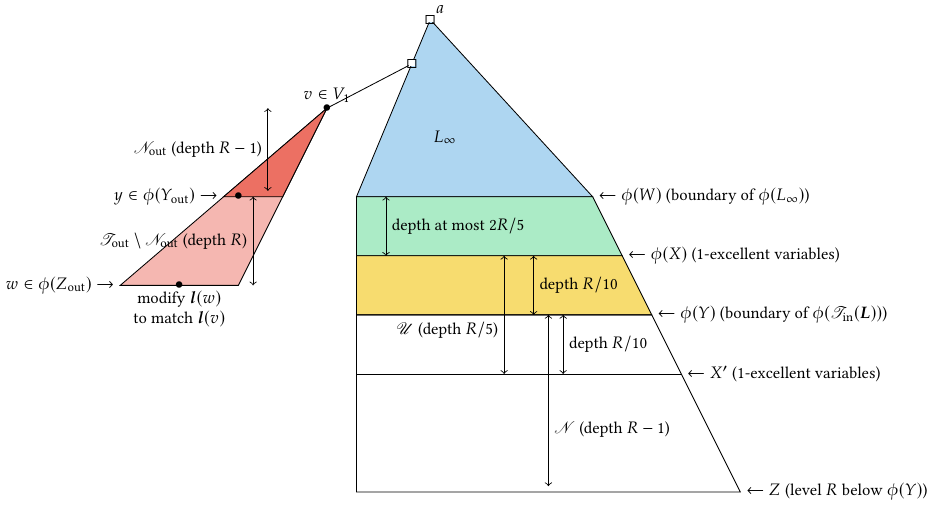}
\caption{Depiction of the events $\bm{E}_a(\bL)$ and $\bm{G}_a(\bL)$
(Definition~\ref{d:pos.frac.events}). The shaded region shows $\phi(\tree(\bL))$
where $\bL$ is a feasible type (Definition~\ref{d:feasible.clause.type}),
 $\tree(\bL)$ is the tree based on $\bL$ (Definition~\ref{d:tree.based.on.feasible.type}), and $\phi$ embeds $\tree(\bL)$ into a neighborhood of $a$ on the event $\bm{E}_a(\bL)$. The unshaded region points out the main features of the event $\bm{G}_a(\bL)$.}
\label{f:T.construction}
\end{figure}

We next define an event $\bm{E}_a$ which says, roughly, that the neighborhood of $a$ in $\GG$ looks like $\tL$ and does not contain cycles of length $\le 8R$. We then define events $\bm{G}_a$ and $\bm{K}_a$ which capture additional desirable properties.

\begin{dfn}[events $\bm{E}_a$, $\bm{G}_a$, $\bm{K}_a$]\label{d:pos.frac.events}
Fix a feasible type $\bL$, and let $\tL$ be as given by Definition~\ref{d:tree.based.on.feasible.type}, rooted at clause $a_\star$. Let $\GG=(V,F,E)$ be sampled from $\P_{n,m}$. For any clause $a\in F$, we let $\bm{E}_a\equiv\bm{E}_a(\bL)$ be the event that $\tree'\equiv B_{4R}(a;\GG)$ is a tree, and there is an embedding $\phi:\tL\hookrightarrow\tree'$ that maps $a_\star\mapsto a$, and satisfies the following:
\begin{enumerate}[(i)]
\item \label{Ea.i} For any variable $v\in \phi(\tL)\subseteq\tree'$, either all or none of its child variables (with respect to $\tree'$) lie in $\phi(\tL)$.

\item \label{Ea.ii} For any variable $v\in V_1$, the subtree of $v$ in $\tL$ agrees with the depth-$2R$ subtree of $\phi(v)$ in $\tree'$.
\end{enumerate}
If $\bm{E}_a$ occurs, let $X'$ be the cutset of variables in $\tree'$ lying at distance exactly $R/10$ below $\phi(Y)$. Let $Z$ be the cutset of
variables in $\tree'$ lying at distance exactly $R$ below $\phi(Y)$. Let $\UU$ be the subgraph of $\tree'$ that lies sandwiched between $\phi(X)$ and $X'$ (inclusive). Let $\NN$ be the subgraph of $\tree'$ that lies sandwiched between $\phi(Y)$ and $Z$, including $\phi(Y)$ but not including $Z$. Let $\bm{G}_a\equiv\bm{G}_a(\bL)$ denote the event that $\GG\in\bm{E}_a$, and moreover satisfies the following:
\begin{enumerate}[(I)]
\item \label{i:Tin.and.P.are.proper} All variables in $\phi(\Tin)\cup\NN$ are proper;
\item \label{i:Tin.Tout.diff.labels} $\LABEL(u)\ne\LABEL(w)$
for any $u\in\phi(\Tout)$ and $w\in\phi(\Tin)\cup\NN$; 
\item \label{i:U.is.fair} All variables in $\UU$ are $1$-fair;
\item \label{i:Xprime.is.excellent} All variables in $X'$ are $1$-excellent.
\end{enumerate}
If $\bm{E}_a$ occurs, then we also define the subgraph
	\beq\label{e:Gprime.cavity}
	\cGG\equiv \GG \,\bigg\backslash\,
		\phi\Big( \Tin \setminus Y\Big)\,.\eeq
We then let $\bm{K}_a\equiv\bm{K}_a(\bL)$ be the event that $\GG\in\bm{E}_a$, and every variable in $\NN\cup Z$ survives in $\proc\cGG$.
\end{dfn}

\begin{lem}\label{l:event.implies.A.Aprime.comparison} Suppose that $\bL$ is a feasible clause type in the sense of Definition~\ref{d:feasible.clause.type}. Suppose $\GG$ has girth greater than $8R$ and belongs to the event $\bm{G}_a \cap \bm{K}_a$, and let $\cGG$ be as defined by \eqref{e:Gprime.cavity}. Let $A,A'$ be the variables that are improper or not $1$-good with respect to $\GG,\cGG$ respectively. Then $A\subseteq A' \subseteq \cGG\setminus\NN$
and $A'\setminus A= \phi(\Nout) \cap A'$.

\begin{proof}Throughout the proof we fix $\HH$ as in Definition~\ref{d:feasible.clause.type}, and let $\tL$ be the tree given by Definition~\ref{d:tree.based.on.feasible.type}. Let $A_{\HH}$ denote the set of all variables that are improper or not $1$-good with respect to $\HH$. Then, with $\Nout$ as defined by \eqref{e:defn.Tout}, we first observe that
	\beq\label{e:obs.A.HH}
	A_{\HH} \cap \Big( \Tin \cup \Nout \Big) = \emptyset
	\eeq
--- this is because the $R$-neighborhood of each variable in $A_{\HH}$ is removed in the first step of processing on $\HH$, but $\Tin$ is assumed to survive in $\proc\HH$.

We now turn to the comparison of $A$ and $A'$. To begin, recall that one way for a variable to be improper is that its $R$-neighborhood contains a cycle --- however, by the girth assumption, this never happens in $\GG$ or $\cGG$. It remains to consider all the ways for an acyclic variable to belong to $A$ or $A'$:
\begin{enumerate}[$\bullet$]
\item Let $A(R)$ denote the variables $v\in A$ for which $B_R(v;\GG)$ contains a repeat marking --- i.e., two variables $u\ne w$ with the same marking $\LABEL(u)=\LABEL(w)$. Define similarly $A'(R)\subseteq A'$, and note $A'(R)\subseteq A(R)$ since the $R$-neighborhood of any variable relative to $\cGG$ is a subgraph of its $R$-neighborhood relative to $\GG$. Let 
	\[\GGR\equiv \cGG\,\Big\backslash\,
		\Big( \NN \cup\phi(\Nout)\Big)\,,\]
that is to say, $\GGR$ is the subgraph of $\GG$ induced by variables lying at distance at least $R$ from $\phi(\Tin)$. If variable $v$ lies in $\GGR$, then $B_R(v;\GG)=B_R(v;\cGG)$, and so
	\[
	A(R) \cap \GGR = A'(R) \cap \GGR\,.
	\]
If $v$ lies in $\GG\setminus\GGR$, we distinguish two cases:
\begin{enumerate}[$\circ$]
\item If $v\in\phi(\Tin)\cup\NN$,
property~\eqref{i:Tin.and.P.are.proper} implies that $v$ must be proper in $\GG$, so $v\notin A(R)$. 

\item If $v\in\phi(\Nout)$, it follows from \eqref{e:obs.A.HH} that
$v'\equiv\phi^{-1}(v)$ must be proper with respect to $\HH$, so $B_R(v';\HH)$ has no repeat marking. We claim that the same holds for $B_R(v;\GG)$. Indeed, suppose for contradiction that
in $B_R(v;\GG)$ there are two variables $u\ne w$ with $\LABEL(u)=\LABEL(w)$.
\begin{enumerate}[--]
\item If both $u,w$ lie in $\phi(\Tout)$, then we have two variables
$\phi^{-1}(u)\ne\phi^{-1}(w)$ inside $B_R(v';\HH)$ with the same marking, contradicting the above observation.
\item If both $u,w$ lie in $\phi(\Tin)\cup\NN$,
then it must be possible to join them by a path
\emph{inside} $\phi(\Tin)\cup\NN$ of length at most $2R-1$.
It follows that $u,w\in B_R(\bar{v};\GG)$ for $\bar{v}\in\phi(\Tin)\cup\NN$,
which contradicts property~\eqref{i:Tin.and.P.are.proper}.
\item If $u\in\phi(\Tout)$ while $w\in\phi(\Tin)\cup\NN$, we must have $\LABEL(u)\ne\LABEL(w)$ by property~\eqref{i:Tin.Tout.diff.labels}. 
\end{enumerate}
\end{enumerate}
The above shows that $A(R)$ cannot intersect $\GG\setminus\GGR$.
Since $A'(R)\subseteq A(R)$, we obtain that $A'(R)$
also cannot intersect $\GG\setminus\GGR$. 
Combining with the earlier observation gives
	\beq\label{e:matching.A.R.sets}
	A(R)=A'(R)\subseteq \GGR\,.
	\eeq

\item Let $A(1),A'(1)$ denote the variables that fail to be $1$-good with respect to $\GG,\cGG$ respectively. Recall that any variable in $\GG$ or $\cGG$ is acyclic, so whether it is $1$-good depends only on its 
 $(2R/5)$-neighborhood. If $v$ lies in $\GG$ at distance at least $2R/5$ from $\phi(\Tin)$, then it has the same $(2R/5)$-neighborhood in both $\GG$ and $\cGG$, so $v\in A(1)$ if and only if $v\in A'(1)$. For $v$ lying at distance less than $2R/5$ from $\phi(\Tin)$, we distinguish three cases:
\begin{enumerate}[$\circ$]
\item First suppose that $v$ lies either in $\phi(\Nout)$, or in the part of $\phi(\Tin)$ above $\phi(X)$. Then it follows from \eqref{e:obs.A.HH} that $v'\equiv\phi^{-1}(v)$ must be $1$-good (hence $1$-fair) in $\HH$. Since being $1$-fair is a property of the $(R/10)$-neighborhood, and $B_{R/10}(v;\GG)\cong B_{R/10}(v';\HH)$, we see that $v$ is $1$-fair in $\GG$. Now consider a path $\gamma$ of length $2R/5$ emanating from $v$:
\begin{enumerate}[--]
\item Suppose $\gamma$ never intersects $\phi(X)$. Since $v'=\phi^{-1}(v)$ is $1$-good in $\HH$, the path $\phi^{-1}(\gamma)$ must contain a variable $u'=\phi^{-1}(u)$ which is $1$-excellent in $\HH$. Since being $1$-excellent is a property of the $(R/10)$-neighborhood, and $B_{R/10}(u;\GG)\cong B_{R/10}(u';\HH)$, we see that $u\in\gamma$ is $1$-excellent in $\GG$.

\item Otherwise, $\gamma$ contains a variable $u\in\phi(X)$.
From the construction (Definition~\ref{d:tree.based.on.feasible.type}),
the variable $u'=\phi^{-1}(u)$ is $1$-excellent in $\HH$. By the same reasoning as in the last case, $u$ must then be $1$-excellent in $\GG$.
\end{enumerate}
This proves that $v$ is $1$-good in $\GG$, i.e., $v\notin A(1)$.

\item Next suppose $v$ lies in $\UU$ (i.e., between $\phi(X)$ and $X'$, inclusive). Then $v$ is $1$-fair in $\GG$ simply by property~\eqref{i:U.is.fair}. A path of length $2R/5$ emanating from $v$ must contain a variable $u$ from either $\phi(X)$ or $X'$.
If $u\in\phi(X)$, then it is $1$-excellent in $\GG$ as argued above. If $u\in X'$, then it is $1$-excellent in $\GG$ by property~\eqref{i:Xprime.is.excellent}. Thus, $v\notin A(1)$.

\item It remains to consider the case that $v$ lies in $\NN\setminus\UU$.
For any variable in $\GG$ at distance at least $R/10$ from $\phi(\Tin)$,
the $(R/10)$-neighborhoods in $\GG,\cGG$ are the same,
so that variable is $1$-fair in $\GG$ if and only if it is $1$-fair in $\cGG$;
the same applies to the $1$-excellent property.
In particular, this tells us that $v\in\NN\setminus\UU$ is $1$-fair in $\GG$ if and only if it is $1$-fair in $\cGG$. If a path of length $2R/5$ emanates from $v$ and never intersects $X'$, then it must stay at least distance $R/10$ away from $\phi(\Tin)$, so the path contains a $1$-excellent variable of $\GG$ if and only if it contains a $1$-excellent variable of $\cGG$. Otherwise, the path contains a variable from $X'$ which is $1$-excellent in both $\GG$ and $\cGG$ by property~\eqref{i:Xprime.is.excellent}. This proves that
	$A(1) \cap (\NN\setminus\UU) = A'(1) \cap (\NN\setminus\UU)$.
\end{enumerate}
It follows from the above that $A(1)\subseteq A'(1)$ and
	\beq\label{e:compare.A.one.sets}
	A(1) \cap \phi(\Nout) = \emptyset\,.
	\eeq
\end{enumerate}
Now note that $A'$ is by definition a subset of $\cGG$, and on the event $\bm{K}_a$ it must not intersect $\NN$. Thus, on the event $\bm{K}_a$, we have $A(1)\subseteq A'(1)\subseteq\cGG\setminus\NN$. We also noted that variables at distance at least $2R/5$ from $\phi(\Tin)$ belong in $A(1)$ if and only if they belong in $A'(1)$, so in particular $A(1)\cap\GGR=A'(1)\cap\GGR$. It follows that
	\[A'(1)\setminus A(1)
	\subseteq(\cGG\setminus\NN)
		\Big\backslash \GGR
	=\phi(\Nout)\,.\]
We already saw in \eqref{e:matching.A.R.sets} (without appealing to $\bm{K}_a$) that $A(R)=A'(R)\subseteq\GGR$. Combining with \eqref{e:compare.A.one.sets} finishes the proof of the lemma, since $A=A(R)\cup A(1)$ and $A'=A'(R)\cup A'(1)$.
\end{proof}
\end{lem}

\begin{cor}\label{c:event.implies.desired.type} Suppose that $\bL\equiv(L_0,L_\infty)$ is a feasible clause type in the sense of Definition~\ref{d:feasible.clause.type}. Let $\bm{G}_a,\bm{K}_a$ be the events from Definition~\ref{d:pos.frac.events}. If $\GG$ has girth greater than $8R$ and belongs to the event $\bm{G}_a \cap \bm{K}_a$, then the clause $a$ has total type $\bL_a=\bL$.


\begin{proof} As before, we fix $\HH$ as in Definition~\ref{d:feasible.clause.type}, and let $\tL$ be the tree given by Definition~\ref{d:tree.based.on.feasible.type}. We first argue that (using the same notation as in Definition~\ref{d:tree.based.on.feasible.type}) we have
	\beq\label{e:Yout.intersect.A.nonempty}
	\Yout(v) \cap A \ne\emptyset\quad\textup{for all }v\in V_1\,.
	\eeq
We separate this into two cases:
\begin{enumerate}[$\bullet$]
\item If $\Zout(y)\ne\emptyset$ for some $y\in\Yout(v)$, then recall from Definition~\ref{d:tree.based.on.feasible.type} that we choose some $w\in\Zout(y)$ and set $\LABEL(w)=\LABEL(y)$, which ensures $y\in A(R)\subseteq A$.

\item Now suppose that we do not have $\Zout(y)\ne\emptyset$ for any $y\in\Yout(v)$. Let $a'$ be the parent clause of $v$ in $\tL$, so $a'$ lies in $\Tin$. Since the preprocessing algorithm on $\HH$ removes $v$ but leaves $a'$ behind, it must be that at some stage of the algorithm a removal is triggered by a vertex $u$ which lies at depth exactly $R$ below $v$. (In particular, $u\in\Yout(v)$ which proves that $\Yout(v)$ must be nonempty.) We now argue that in fact there must be some $u\in\Yout(v)\cap A$. If not, then $u$ triggers a removal \emph{after} the initial stage of $\bsp'(A;\GG)$ --- that is to say, at some stage, the $R$-neighborhood of $u$ must contain either one clause of degree $\le k-2$, or two clauses of degree $k-1$. Let $u'$ be the parent variable of $u$: by the assumption that $\Zout(v)=\emptyset$, the $R$-neighborhood of $u'$ contains the $R$-neighborhood of $u$, which means that $u'$ should trigger a removal at the same stage of $\bsp'(A;\GG)$. This is a contradiction, since removing the $R$-neighborhood of $u'$ would remove the parent clause $a'$ of $v$.
\end{enumerate}
This concludes the proof of \eqref{e:Yout.intersect.A.nonempty}, which immediately implies that $V_1\subseteq B_R(A';\cGG)$. For the remainder of the proof, we partition $\Yout=Y_+\sqcup Y_\emptyset$ where
	\[Y_+\equiv\bigg\{ y\in \Yout : \Zout(y)\ne\emptyset\bigg\}\,.
	\]
Note that, by the construction from Definition~\ref{d:tree.based.on.feasible.type}, $Y_+\subseteq A(R)$.
Let $\tree_\emptyset$ denote the subgraph of $\Tout$ induced by all descendants of variables in $Y_\emptyset$.

Now assume that $\GG$ has girth greater than $8R$ and belongs to the event $\bm{G}_a \cap \bm{K}_a$. This means that the clause $a\in\GG$ has initial type $L_0$. We next argue that
	\beq\label{e:same.initial.removal.outside.Nout}
	B_R(A';\cGG) \Big\backslash \phi(\Nout)
	= \bigg\{
	B_R(A;\GG) \Big\backslash \phi(\Nout)
	\bigg\} \sqcup K_\emptyset
	\eeq
for some $K_\emptyset \subseteq \tree_\emptyset$. Indeed, recall that if $v\in\GGR$ then $v$ has the same $R$-neighborhood in $\GG$ as in $\cGG$, and 
Lemma~\ref{l:event.implies.A.Aprime.comparison}
 implies that $A\subseteq \GGR$, $A\cap\GGR=A'\cap\GGR$,
and $A'\setminus A =\phi(\Nout) \cap A'$. As a result we can express
	\begin{align*}
	B_R(A';\cGG) \Big\backslash \phi(\Nout)
	&=\bigg\{ \bigcup_{v\in A} \Big(B_R(v;\GG) \Big\backslash \phi(\Nout)\Big)\bigg\}
	\cup\bigg\{ \bigcup_{v\in A'\setminus A}
		\Big( B_R(v;\cGG) \Big\backslash 
	\phi(\Nout)\Big)\bigg\}\\
	&=\bigg\{B_R(A;\GG) \Big\backslash \phi(\Nout)\bigg\}
	\cup\bigg\{
	\bigcup_{v\in \phi(\Nout)\cap A'}\Big(B_R(v;\cGG) \Big\backslash 
	\phi(\Nout)\Big)\bigg\}\,.
	\end{align*}
Now consider $v\in \phi(\Nout)\cap A'$ 
and $u\in B_R(v;\cGG)\setminus\phi(\Nout)$.
The path between $u$ and $v$ must intersect exactly one variable $y\in\Yout$. If this $y$ belongs to $Y_+$, then (since $Y_+\subseteq A$) we have
	\[
	u \in B_R(y;\GG)\Big\backslash\phi(\Nout)
	\subseteq B_R(A;\GG)\Big\backslash\phi(\Nout)\,.
	\]
If instead $y$ belongs to $Y_\emptyset$, then $u\in K_\emptyset$. Combining these observations proves \eqref{e:same.initial.removal.outside.Nout}.

Next, we also observe that $B_R(\Yout;\GG) \cap \phi(\Nout)$ is the same as $B_R(A;\GG) \cap \phi(\Nout)$, and this must be a subset of $B_R(A';\cGG) \cap \phi(\Nout)$. 
Denote
	\[
	K \equiv \phi(\Nout)\Big\backslash B_R(A;\GG)
		= \phi(\Nout)\Big\backslash B_R(\Yout;\GG)
		\,,\quad
	K'\equiv \phi(\Nout)\Big\backslash B_R(A';\cGG)
		\subseteq K\,.
	\]
It follows using \eqref{e:same.initial.removal.outside.Nout} that
	\begin{align}\nonumber
	\cGG\Big\backslash B_R(A;\GG)
	&=\Big(\GGR \cup \NN \cup \phi(\Nout)\Big)
	\Big\backslash B_R(A;\GG)
	=\bigg\{\Big(\GGR \cup \NN\Big)
	\Big\backslash B_R(A';\cGG)\bigg\}
	\cup (K \cup K_\emptyset) \,,\\ \label{e:Gprime.with.Kprime}
	\cGG(0) \equiv \cGG\Big\backslash B_R(A';\cGG)
	&=\Big(\GGR \cup \NN \cup \phi(\Nout)\Big)
	\Big\backslash B_R(A';\cGG)
	=\bigg\{\Big(\GGR \cup \NN\Big)
	\Big\backslash B_R(A';\cGG)\bigg\}
	\cup K'\,,\end{align}
where $\cGG(0)$ is what is left of $\cGG$ after the initial stage of $\bsp'(A';\cGG)$. For comparison, if $\GG(0)$ denotes what is left of $\GG$ after the initial stage of $\bsp'(A;\GG)$, then
	\beq\label{e:G.with.K}
	\GG(0) \equiv \GG \Big\backslash B_R(A;\GG)
	=\bigg\{\Big(\GGR \cup \NN\Big)
	\Big\backslash B_R(A';\cGG)\bigg\} \cup \phi(\Tin\setminus Y)
	\cup (K \cup K_\emptyset)\,.
	\eeq
Recall that \eqref{e:Yout.intersect.A.nonempty} implies that $V_1\subseteq B_R(A';\cGG)$. Consequently, in \eqref{e:Gprime.with.Kprime} the subgraph induced by $K'$ is disconnected from the rest, and likewise in \eqref{e:G.with.K} the subgraph induced by $K\cup K_\emptyset$ is disconnected from the rest. It follows by comparing \eqref{e:Gprime.with.Kprime} with \eqref{e:G.with.K} that 
	\beq\label{e:coupling.G.Gprime.zero}
	\GG(0) \Big\backslash (K\cup K_\emptyset)
	= \Big( \cGG(0) \cup \phi(\Tin\setminus Y)\Big)
		\Big\backslash K'\,.
	\eeq
Let $\GG(t)$ and $\cGG(t)$ be the graphs $\GG,\cGG$ after the $t$-th stages of $\bsp'(A;\GG)$ and $\bsp'(A';\cGG)$ respectively. We will argue by induction (with \eqref{e:coupling.G.Gprime.zero} being the base case) that
	\beq\label{e:coupling.G.Gprime}
	\GG(t) \Big\backslash (K\cup K_\emptyset)
	= \Big( \cGG(t) \cup \phi(\Tin\setminus Y)\Big)
		\Big\backslash K'\,.
	\eeq
Indeed, suppose inductively that \eqref{e:coupling.G.Gprime} holds up to stage $t\ge0$. Recall the notation \eqref{e:bspprime.activated.set}. It is clear that
	\[A'(t)\equiv\ACT\Big(\cGG(t) \setminus K'\Big)
	\subseteq
	\ACT\Big(\GG(t) \setminus K\Big) \equiv A(t)\,,
	\]
where we have discarded the disconnected components induced by
 $K\cup K_\emptyset$ and by $K'$
 in defining $A(t),A'(t)$. In order for $v$ to belong to $A(t)$, there are two possibilities:
\begin{enumerate}[$\bullet$]
\item In $\GG(t)$ there is a path $\gamma$ of length at most $3R/10$ that joins $v$ to a clause $a'$ of degree $\le k-2$. If $\gamma$ is contained in $\cGG(t)$, then clearly $v$ must belong to $A'(t)$ as well. Otherwise, if $\gamma$ is not contained in $\cGG(t)$, then using the inductive hypothesis \eqref{e:coupling.G.Gprime} it must be that $\gamma$ intersects $\phi(\Tin\setminus Y)$. On the other hand, the clause $a'$ cannot be in $\phi(\Tin\setminus Y)$ (which cannot contain any clauses of degree $\le k-2$, since it was constructed from a processed graph $\proc\HH$). Moreover, on the event $\bm{K}_a$, the clause $a'$ cannot be in $\NN$. This is a contradiction, since there is no path of length $3R/10$ that intersects both $\phi(\Tin\setminus Y)$ and $\GG(t)\setminus[\phi(\Tin\setminus Y) \cup\NN]$.

\item In $\GG(t)$ there are two paths $\gamma_1,\gamma_2$ of length at most $3R/10$ that join $v$ to clauses $a_1\ne a_2$ of degree $k-1$.
If the $\gamma_i$ are both contained in $\cGG(t)$, then $v$ must belong to $A'(t)$ as well. Otherwise, by the same argument as above, at least one of the paths must intersect $\phi(\Tin\setminus Y)$. On the event $\bm{K}_a$, the clauses $a_i$ cannot be in $\NN$, so they must both be $\phi(\Tin\setminus Y)$. This contradicts the construction of $\phi(\Tin\setminus Y)$ which was based on the processed graph $\proc\HH$.
\end{enumerate}
The above shows that $A(t)=A'(t)\subseteq\cGG(t)\setminus\NN$,
from which it follows that
	\[B_R\Big(A(t);\GG(t)\Big)
	=B_R\Big(A'(t);\cGG(t)\Big)\subseteq \cGG(t)\,.\]
This implies that \eqref{e:coupling.G.Gprime} holds at the next stage $t+1$. It follows that
	\[
	\proc\GG\Big\backslash (K\cup K_\emptyset)
	= \Big(\proc\cGG \cup \phi(\Tin\setminus Y)\Big)
		\Big\backslash K'\,,
	\]
so that the clause $a$ in graph $\GG$ has final type $L_\infty$.
It follows that clause $a$ has total type $\bL_a=(L_0,L_\infty)$, as required.
\end{proof}
\end{cor}

\subsection{Probabilistic analysis for positive type fractions}\label{ss:positive.fraction.prob}

We now conclude the proof of Proposition~\ref{p:posfrac}. In view of Corollary~\ref{c:event.implies.desired.type} from the preceding subsection, it suffices to show that for any feasible $\bL$, conditional on $\GG$ having girth greater than $8R$, with high probability the events $\bm{G}_a(\bL)$ and $\bm{K}_a(\bL)$ (Definition~\ref{d:pos.frac.events}) will occur for a positive fraction of clauses $a\in F$. We argue this in a few steps, below. The general idea is that all these events are fairly local in nature, so they should occur a linear number of times in the random graph $\GG$.

\begin{lem}\label{l:lbd.Ea}
Let $\P=\P_{n,m}$ for $|m-n\alpha| \le n^{1/2}\log n$, and let $\GG$ denote a sample from $\P$. There is a positive constant $c_0(k,R)$ such that for all $n$ large enough, we have
	\[
	\P\Big( \GG\in \bm{E}_a(\bL)\Big) \ge c_0(k,R)\,.
	\]
for all $a\in[m]$ and all clause total types $\bL$ that are feasible in the sense of Definition~\ref{d:feasible.clause.type}.

\begin{proof} We first argue that there is a finite constant $C(k,R)$ such that
	\beq\label{e:frak.L.bounded}
	\bigg|\bigg\{
	\hspace{-3pt}\begin{array}{c}
	\textup{feasible clause total} \\ 
	\textup{types $\bL\equiv(L_0,L_\infty)$}
	\end{array}\hspace{-3pt}
	\bigg\}\bigg| \le C_0(k,R)\,.
	\eeq
To this end, let $\bL\equiv(L_0,L_\infty)$ be a feasible type, with $\HH,a_\star$ as in Definition~\ref{d:feasible.clause.type}. Recall that the initial type $L_0$ encodes the $(R+1/2)$-neighborhood of $a_\star$ in $\HH$. If any variable in the $(R-1/2)$-neighborhood of $a_\star$ in $\HH$ has degree more than $\exp\{k^2 (5\rprime)\}$, then one of the variables $u\in a_\star$ will fail to be $1$-fair in $\HH$ (since it will violate property \eqref{i:fair.volume.bound} in Definition~\ref{d:perfect.fair}). But then the $R$-neighborhood of $u$ will be removed in the initial stage of processing on $\HH$, contradicting the assumption that $a_\star\in\proc\HH$. This proves that all variables in $L_0$ must have degree at most
$\exp\{k^2 (5\rprime)\}$. All variables in the final type $L_\infty$ must be $1$-good, hence also $1$-fair, so they must also have degree at most
$\exp\{k^2 (5\rprime)\}$. Finally $L_0$ is a tree of depth at most $R$, while $L_\infty$ is a tree of depth at most $R(1+1/100)$. This proves \eqref{e:frak.L.bounded}. Since we fix a mapping from $\bL$ to $\tL$ (see Definition~\ref{d:tree.based.on.feasible.type}), it immediately follows from \eqref{e:frak.L.bounded} that the number of distinct $\tL$ is also upper bounded by $C_0(k,R)$.

Now take $\P=\P_{n,m}$ as in the statement of the lemma, fix $a\in[m]$, and let $\GG\sim\P$. By revealing the neighborhood of of $a$ in $\GG$ in breadth-first fashion, it is easy to see that $\P(\bm{E}_a(\bL) )$ is lower bounded by a constant which depends only on $\tL$. It then follows from the above that in fact there is a constant $c_0(k,R)$ such that (for $n$ large enough) we have $\P(\bm{E}_a(\bL) ) \ge c_0(k,R)$ for all feasible $\bL$. This proves the lemma.
\end{proof}
\end{lem}

\begin{lem}\label{l:prob.Ga.given.Ea}
Let $\P=\P_{n,m}$ for $|m-n\alpha| \le n^{1/2}\log n$, and $\GG\sim\P$. Then, for all $n$ large enough, we have
	\[
	\P\Big(\GG\in\bm{G}_a(\bL) \,\Big|\, 
	\GG\in\bm{E}_a(\bL)\Big) \ge \f78\,.
	\]
for all $a\in[m]$ and all clause total types $\bL$ that are feasible in the sense of Definition~\ref{d:feasible.clause.type}.

\begin{proof}
Throughout the proof, $\bL$ is fixed and often suppressed from the notation. Let $\bm{P}_a$ be the event that $\bm{E}_a$ holds, and that $\GG$ satisfies properties \eqref{i:Tin.and.P.are.proper} and \eqref{i:Tin.Tout.diff.labels} of Definition~\ref{d:pos.frac.events}. Then $\bm{E}_a\subseteq\bm{P}_a\subseteq\bm{K}_a$, and it is clear that
	\beq\label{e:Pa.bound}
	\P\Big(\GG\in\bm{P}_a \,\Big|\,\GG\in \bm{E}_a
	\Big) \ge \f{24}{25}\,.
	\eeq
This is simply because the markings $\LABEL$ are chosen uniformly at random from a very large set (see \eqref{e:random.R.marking}), so if we consider all the variables in $\GG\setminus\phi(\tL)$ within distance $2R$ of $\phi(\tL)$, it holds with probability at least $9/10$ that all their markings are distinct from one another and from the markings on $\phi(\tL)$.

Next we let $\bm{F}_a$ be the event that $\bm{E}_a$ holds, and that 
$\GG$ satisfies property \eqref{i:U.is.fair} of Definition~\ref{d:pos.frac.events}.
For $x\in\phi(X)$, let $\UU(x)$ denote the subtree of $\UU$ descended from $x$. Let $\bm{F}_a(x)$ be the event that $\bm{E}_a$ holds, and that every $u\in\UU(x)$ is $1$-fair (with respect to $\GG$). Then
	\[
	\bm{F}_a = \bigcap_{x\in\phi(X)} \bm{F}_a(x)\,.
	\]
For $x\in\phi(X)$, let $\UU_+(x)$ denote the subtree of $\UU\cup\NN$ induced by descendants of $x$ that lie within distance $3R/10$ of $x$, so that $\UU(x)\subseteq\UU_+(x)\subseteq\UU\cup\NN$. Since $\bm{F}_a\subseteq\bm{E}_a$ where $\bm{E}_a$ imposes that $B_{4R}(a;\GG)$ is a tree, for any variable in $B_{(4-1/10)R}(a;\GG)$ we can determine whether it is $1$-fair based on its $(R/10)$-neighborhood only. It follows that the event $\bm{F}_a(x)$ can be determined from
	\[
	B_{R/10}(x;\GG) \cup \UU_+(x)\,.
	\]
An important point is that the above does not see any part of $\phi(\tL)$ at distance more than $R/10$ from $x$, because
	\beq\label{e:conditioning.trick}
	\phi(\tL) \cap \bigg\{ B_{R/10}(x;\GG) \cup \UU_+(x)\bigg\}
		=\phi(\tL) \cap B_{R/10}(x;\GG)\,.\eeq
Indeed, when we condition on $\bm{E}_a$, we in fact reveal information about the neighborhood of $x$ in $\GG$ beyond depth $R/10$ (simply because $\phi(\tL)$ contains vertices at distance more than $R/10$ from $x$), but \eqref{e:conditioning.trick} allows us to disregard the additional information. It follows from the $1$-excellence condition \eqref{e:j.excellent} that
	\beq\label{e:Fa.x.lbd}
	\P\Big(\GG\in\bm{F}_a(x)\,\Big|\, \GG\in\bm{E}_a\Big)
		\ge 1- \f1{\exp(k^3 R)} + o_n(1)\,,\eeq
where the $o_n(1)$ comes from the discrepancy between the Galton--Watson law and the breadth-first exploration in $\GG$. Since all variables in $\Tin$ must be $1$-fair, it follows from property \eqref{i:fair.volume.bound} in Definition~\ref{d:perfect.fair} that $\Tin$ has cardinality at most $\exp(O(k^2 R))$. Therefore we conclude
	\beq\label{e:Fa.combined.bound}
	\P\Big(\GG\in\bm{F}_a\,\Big|\, \GG\in\bm{E}_a\Big) \ge \f{24}{25}\eeq
by taking a union bound of \eqref{e:Fa.x.lbd} over $x\in\phi(X)$.

Finally, let $\bm{D}_a$ be the event that $\bm{E}_a$ holds, and that $\GG$ satisfies property \eqref{i:Xprime.is.excellent} of Definition~\ref{d:pos.frac.events}. For $y\in\phi(Y)$, let $X'(y)$ denote the subset of variables in $X'$ that lie at depth $R/10$ below $y$. 
Let $X''(y)$ denote the variables in $X'(y)$ that fail to be $1$-excellent in $\GG$. Again, on the event $\bm{E}_a$ which imposes that $B_{4R}(a;\GG)$ is acyclic, for any variable in $B_{(4-1/10)R}(a;\GG)$ we can determine whether it is $1$-excellent based on its $(R/10)$-neighborhood only. Consequently, by similar considerations as for \eqref{e:Fa.x.lbd}, we have
	\begin{align}\nonumber
	\E \Big( X'(y)\,\Big|\, \GG\in\bm{E}_a\Big)
	&= o_n(1)
	+ \int 
	\bigg|\bigg\{\hspace{-3pt}\begin{array}{c}x\in\tree :
	\textup{$d(\vrt,x)=R/10$ }\\
	\textup{and $x$ is not $1$-excellent}
	\end{array}\hspace{-3pt}
	\bigg\}\bigg|
	\,d\uPGW(\tree)\\
	&= o_n(1)
	+ \int \mathbf{1}
	\bigg\{\hspace{-3pt}\begin{array}{c}
	\textup{the root $\vrt$ of $\tree$ is}\\
	\textup{not $1$-excellent}
	\end{array}\hspace{-3pt}
	\bigg\} Z_{R/10}(\tree)
	\,d\uPGW(\tree)\,,\label{e:before.cs.bound}
	\end{align}
where $Z_{R/10}(\tree)$ denotes the number of variables at depth $R/10$ in $\tree$, and the last equality is by the unimodularity property 
\eqref{e:unimodular}. Now recall from Corollary~\ref{c:excellent}
that
	\[\uPGW\Big(\textup{$\vrt$
	 not $1$-excellent}\Big)
	\le\f1{ \exp( 2^{k\DELTACONST/4}R) }\,.
	\]
On the other hand, by a similar argument as in the proof of 
Lemma~\ref{l:radon.derivative.pgwT}, we can bound
	\[\int \Big(Z_{R/10}(\tree)\Big)^2
	\,d\uPGW(\tree)
	\le \exp(O(kR))\,.
	\]
Combining these and applying Cauchy--Schwarz in \eqref{e:before.cs.bound} gives
	\[\E \Big( X'(y)\,\Big|\, \GG\in\bm{E}_a\Big)
	\le \f{ \exp(O(kR))}{ \exp( 2^{k\DELTACONST/4}R) }\,.
	\]
Since we noted above that $\Tin$ has cardinality at most $\exp(O(k^2 R))$, it follows by Markov's inequality and a union bound over $y\in\phi(Y)$ that
	\beq\label{e:Da.combined.bound}
	\P\Big((\bm{D}_a)^c \,\Big|\, \GG\in\bm{E}_a\Big)
	\le\sum_{y\in\phi(Y)}\E\Big( X'(y)\, \,\Big|\, \GG\in\bm{E}_a\Big)
	\le \f1{25}\,.\eeq
The lemma follows by combining \eqref{e:Pa.bound}, \eqref{e:Fa.combined.bound}, and \eqref{e:Da.combined.bound},
since $\bm{G}_a= \bm{P}_a \cap \bm{F}_a \cap \bm{D}_a$.
\end{proof}
\end{lem}

\begin{lem}\label{l:prob.Ka.given.Ea}
Let $\P=\P_{n,m}$ for $|m-n\alpha| \le n^{1/2}\log n$, and $\GG\sim\P$. Then, for all $n$ large enough, we have
	\[
	\P\Big(\bm{K}_a(\bL) \,\Big|\,\GG\in\bm{E}_a(\bL)\Big) 
	\ge \f{7}{8}\,.
	\]
for all $a\in[m]$ and all clause total types $\bL$ that are feasible in the sense of Definition~\ref{d:feasible.clause.type}.

\begin{proof} Again, throughout the proof $\bL$ is fixed and often suppressed from the notation. Let $\GG\sim\P=\P_{n,m}$. Recalling Definition~\ref{d:pos.frac.events}, let $\bm{C}_a$ be the event that there is an embedding $\phi:\tL\hookrightarrow\GG$ satisfying properties \eqref{Ea.i}~and~\eqref{Ea.ii} of Definition~\ref{d:pos.frac.events}. Then $\bm{E}_a$ is $\bm{C}_a$ with the additional restriction that $B_{4R}(a;\GG)$ is a tree. Provided that $\GG\in\bm{C}_a$, we define, as in \eqref{e:Gprime.cavity},
	\[\cGG\equiv \GG \,\bigg\backslash\,
		\phi\Big( \Tin \setminus Y\Big)\,.\]
Recalling Definition~\ref{d:pos.frac.events}, we can express $\bm{K}_a$ as the intersection of $\bm{E}_a$ with the event that no variable in $\phi(Y)$ lies within distance $R$ of any variable removed in the processing of $\cGG$, equivalently,
	\beq\label{e:equiv.defn.Ka}
	\bm{K}_a
	= \bm{E}_a\cap \bigg\{
	\textup{$\phi(Y)$ does not intersect
		$S(\cGG)\equiv B_R\Big(\bsp'(A';\cGG);\cGG\Big)$}
	\bigg\}\,.
	\eeq
Now consider the graph $\cGG$ where only $\phi(\Tout)$ is labelled (in particular, we ignore $\phi(Y)$ for the moment). Let $n'$ denote $n$ minus the number of internal variables in $\Tin\setminus Y$. Likewise, let $m'$ denote $m$ minus the number of clauses in $\Tin\setminus Y$. Under the measure $\P_{n,m}(\cdot\,|\,\GG\in\bm{C}_a)$, the induced law of $\cGG$ is equivalent (up to graph isomorphism) to the law $\P_{n',m'}(\cdot\,|\,\cGG\in\bm{C}')$ where $\bm{C}'$ is the event that there is an (arbitrary, fixed) embedding $\phi_\textup{out}:\Tout\hookrightarrow\cGG$. This is only a minor modification of the original measure $\P_{n,m}$, and a trivial extension of Proposition~\ref{p:small.fraction.removed.in.processing} gives
	\beq\label{e:small.removal.without.girth.cond}
	\P_{n',m'}\bigg(|S(\cGG)| \ge \f{n'}{\exp(2^{ck}R)}
	\,\bigg|\,\cGG\in \bm{C}'\bigg) = o_n(1)\,.\eeq
Since we can choose a fixed embedding $\phi_\textup{out}$, we will suppose that it maps the variables in $\Tout$ to the last variables in $\cGG$, so that the variables left in $\cGG\setminus\phi_\textup{out}(\Tout)$ can be written as $[n'']=\set{1,\ldots,n''}$. We now return to $Y'\equiv\phi(Y)$, which we regard as an element of
\[
	[n'']_y
	=\Big\{(v_1,\ldots,v_y)\in[n'']^y
	: \textup{$v_i \ne v_j$ for all $i\ne j$}\Big\}\,,
	\]
where $y\equiv|Y|$. Given $(\cGG,Y')$ where $\cGG\in\bm{C}'$ and $Y'\in[n'']_y$, we can uniquely recover the original graph $\GG$ (modulo isomorphism) by gluing back the tree $\phi(\Tin)$, so we write $\GG=\GG(\cGG,Y')$ (in this graph, $\phi(\tL)$ is labelled). We then denote
	\beq\label{e:with.local.girth.cond}
	\bm{J}
	\equiv\bigg\{(\cGG,Y') \in \bm{C}' \times [n'']_y
	: \textup{in $\GG=\GG(\cGG,Y')$,
		the neighborhood $B_{4R}(a;\GG)$ is a tree}
	\bigg\}\,.
	\eeq
Recall from above that under $\P_{n,m}(\cdot\,|\,\GG\in\bm{C}_a)$, the induced law of $\cGG$ is equivalent (modulo graph isomorphism) to $\P_{n',m'}(\cdot\,|\,\cGG\in\bm{C}')$. Recall also that $\bm{E}_a$ is the same as $\bm{C}_a$ with the added restriction that $B_{4R}(a;\GG)$ is a tree. It follows that if $\bm{K}$ is any event that is invariant under graph isomorphism, then we have
	\beq\label{e:sample.cavity.graph.up.to.isom}
	\P\Big( (\cGG,Y')\in\bm{K} \,\Big|\, \GG\in \bm{E}_a
	\Big)
	=\P_{n',m'}\Big( (\cGG,Y')\in\bm{K} \,\Big|\,\bm{J} \Big)\,.\eeq
Conditional on $\bm{C}'$, it is clear that $Y'$ is a uniformly random element of $[n'']_y$, which is independent of the structure of $\cGG$, and hence independent of $S(\cGG)$: for any $Y_0\in[n'']_y$ and any subset $S_0\subseteq[n']$,
	\[
	\P_{n',m'}\Big( Y'=Y_0 \,\Big|\,\GG'\in\bm{C}', S(\cGG)=S_0\Big)
	= \f1{(n'')_y}\,.
	\]
We now argue that this does not change much if we condition further on $B_{4R}(a;\GG)$ being a tree. Indeed, since all the random graphs we consider are locally tree-like, it is clear that the event $\bm{J}$ of \eqref{e:with.local.girth.cond} occupies $1-o_n(1)$ fraction of $\bm{C}' \times [n'']_y$. This implies that, for any $Y_0\in[n'']_y$ and any $S_0\subseteq[n']$, we have
	\begin{align}\nonumber
	p(Y_0|S_0)
	&\equiv \P_{n',m'}\Big( Y'=Y_0 \,\Big|\, \bm{J}, S(\cGG)=S_0\Big)
	=\f{\P_{n',m'}( Y'=Y_0,\bm{J} \,|\, \GG'\in\bm{C}',
		 S(\cGG)=S_0)}
		 {\P_{n',m'}( \bm{J} \,|\, \GG'\in\bm{C}',
		 S(\cGG)=S_0)}\\
	&\le \f{\P_{n',m'}( Y'=Y_0\,|\, \GG'\in\bm{C}',
		 S(\cGG)=S_0)}
		 {1-o_n(1)}
	= \f{1-o_n(1)}{(n'')_y}\,.\label{e:exchangeability.Y}
	\end{align}
By a very similar argument, we can extend \eqref{e:small.removal.without.girth.cond} to
	\beq\label{e:small.removal.with.girth.cond}
	\P_{n',m'}\bigg(|S(\cGG)| \ge \f{n'}{\exp(2^{ck}R)}
	\,\bigg|\, \bm{J} \bigg) = o_n(1)\,.\eeq
Recall that $|Y| \le \exp(O(k^2 R))$. It follows by combining \eqref{e:small.removal.with.girth.cond}~and~\eqref{e:exchangeability.Y} that 
	\begin{align*}
	\P_{n',m'}\Big(Y' \cap S(\GG') \ne\emptyset
	\,\Big|\, \bm{J} \Big)
	&\le o_n(1)
		+ \P_{n',m'}\bigg(Y' \cap S(\GG') \ne\emptyset
	\,\bigg|\, \bm{J},|S(\GG')| < \f{n'}{\exp(2^{ck}R)} \bigg)
	\\
	&\le o_n(1)+ \f{\exp(O(k^2 R))}{\exp(2^{ck}R)} \le \f18\,.
	\end{align*}
The lemma follows by the equivalence \eqref{e:sample.cavity.graph.up.to.isom}.
\end{proof}
\end{lem}

\begin{proof}[Proof of Proposition~\ref{p:posfrac}] 
Let $\GG=(V,F,E)\sim\P=\P_{n,m}$. Let $\bL$ be any feasible type, in the sense of Definition~\ref{d:feasible.clause.type}. For any fixed $\bL$ we consider the events $\bm{E}_a(\bL)$, $\bm{G}_a(\bL)$, $\bm{F}_a(\bL)$ as in Definition~\ref{d:pos.frac.events}. Let
	\[
	J(\bL)\equiv
	\sum_{a\in F} J_a(\bL)
	\equiv
	\sum_{a\in F}\mathbf{1}\Big\{ \bm{E}_a(\bL)
		\cap \bm{G}_a(\bL) \cap \bm{K}_a(\bL)\Big\}\,.
	\]
It follows by combining Lemmas~\ref{l:lbd.Ea}--\ref{l:prob.Ka.given.Ea} that for all feasible $\bL$ we have
	\[
	\E \bm{J}(\bL)
	\ge \f{3nc_0(k,R)}{4}\,.
	\]
We will argue that each $\bm{J}(\bL)$ is sufficiently concentrated around its mean, such that 
	\beq\label{e:pos.frac.sufficient}
	\P\bigg( J(\bL) \ge \f{n c_0(k,R)}{2}\bigg) = o_n(1)\,.
	\eeq
Note that \eqref{e:pos.frac.sufficient} implies the result: as discussed in the proof of Lemma~\ref{l:lbd.Ea}, the total number of feasible types $\bL$ is upper bounded by some $C_0(k,R)$, so \eqref{e:pos.frac.sufficient}
(together with the fact that $\GG$ has girth greater than $8R$ with probability at least $c_0(k,R)$) implies
	\[\P\bigg( J(\bL) \ge \f{n c_0(k,R)}{2}
		\textup{ for all feasible }\bL
		\,\bigg|\, \girth(\GG)>8R
		\bigg) = o_n(1)\,.
	\]
In light of Corollary~\ref{c:event.implies.desired.type}, this directly implies the result. It therefore remains only to prove the concentration result \eqref{e:pos.frac.sufficient}. Let $\tL$ be the tree based on $\bL$ that is given by Definition~\ref{d:tree.based.on.feasible.type}, and recall from Definition~\ref{d:pos.frac.events} that $\bm{E}_a(\bL)$ is the event that there is a certain embedding $\phi_a:\tL\hookrightarrow\GG$ mapping $a_\star\mapsto a$, together with some local girth condition. The events $\bm{E}_a(\bL)$ and $\bm{G}_a(\bL)$ are clearly local --- they do not depend on more than $B_{4R}(a;\GG)$. By contrast, the event $\bm{K}_a(\bL)$ concerns preprocessing on the graph
	\[\cGG(a)
	\equiv\GG\Big\backslash \phi_a\Big( \Tin\setminus Y\Big)\,,\]
which (\textit{a~priori}) cannot be locally determined. This can be addressed with a similar argument as for Proposition~\ref{p:small.fraction.removed.in.processing}: as we saw in that proof, preprocessing is in fact fairly localized with high probability. In particular, we can define
	\[
	\GG^\circ(a)
	\equiv \Big\{ B_{(\log n)/2^{k\DELTACONST/6}}
		(a;\GG)\Big\}
	\Big\backslash \phi_a\Big( \Tin\setminus Y\Big)
	\subseteq
	\cGG(a)\,.
	\]
Let $A^\circ(a)$ be the variables $u\in\GG^\circ(a)$ with
$B_R(u;\cGG(a))\subseteq\GG^\circ(a)$, that fail to proper or $1$-good with respect to $\GG^\circ(a)$, and let
(cf. \eqref{e:equiv.defn.Ka})
	\[
	\overline{\bm{K}}_a(\bL)
	=\bm{E}_a(\bL)\cap \bigg\{\phi_a(Y)
	\textup{ does not intersect }
	B_R\bigg( 
	\bsp'\Big( A^\circ(a) ;\GG^\circ(a)\Big)
	; \GG^\circ(a)\bigg)
	\bigg\}
	\]
Let $\bar{J}(\bL)$ be defined as $J(\bL)$, but with $\overline{\bm{K}}_a(\bL)$ in place of $\bm{K}_a(\bL)$. It follows from the proof of Proposition~\ref{p:small.fraction.removed.in.processing} that $J(\bL)$ and $\bar{J}(\bL)$ agree with high probability, and so
	\[\E\bar{J}(\bL)
	=\E J (\bL) - o(n)
	\ge \f{3n c_0(k,R)}{4} - o(n)\,.
	\]
For any pair of distinct clauses $a\ne b$, their $(\log n)/2^{k\DELTACONST/6}$-neighborhoods do not intersect with high probability, and so are roughly independent. It follows that $\bar{J}(\bL)$ has variance $o(n^2)$, and so by Chebychev's inequality
	\[
	\P\bigg(\bar{J}(\bL) < \f{n c_0(k,R)}{2}\bigg)
	\le \P\bigg( \Big |\bar{J}(\bL)-\E\bar{J}(\bL)\Big| \ge 
		 \f{n c_0(k,R)}{5}\bigg)
	\le o_n(1)\,.
	\]
Since $\P(J(\bL)\ne\bar{J}(\bL))=o_n(1)$, this proves \eqref{e:pos.frac.sufficient}, and the result follows as argued above.
\end{proof}

\subsection{Uniformity of processed graph}\label{ss:unif}

We conclude this section with the proof of Proposition~\ref{p:unif}.

\begin{ppn}\label{p:switch.process.commutation}
Fix any $\ksat$ instance $\GG=(V,F,E)$ that can arise under the measure $\P=\P^{n,\alpha}$. In the processed graph $\proc\GG$, choose any two edges $e_i=(a_iu_i)$ ($i=1,2$) having the same total type (Definition~\ref{d:cpd.type}). Let $\switch$ denote the switching operation in which we cut the edges $e_i$, and reconnect the resulting half-edges as $f_1=(a_1u_2)$ and $f_2=(a_2u_1)$. Then the switching and processing operations commute:
	\[\proc\Big(\switch\GG\Big)
	=\switch\Big(\proc\GG\Big)\,.\]
In particular, this implies that $f_i$ survive in $\proc(\switch\GG)$.
\end{ppn}

We begin with a preliminary lemma:

\begin{lem}\label{l:init.set.agree.after.switch}
 In the setting of Proposition~\ref{p:switch.process.commutation},
the initial set (of variables that are improper or not $1$-good)
in $\GG$ is the same as in $\switch\GG$.

\begin{proof} Denote the initial set (of variables that are improper or not $1$-good) as $A_{-1}$ in $\GG$, and $S_{-1}$ in $\switch\GG$. Membership of a variable in $A_{-1}$ (resp.\ $S_{-1}$) is a property of its $R$-neighborhood relative to $\GG$ (resp.\ $\switch\GG$). If $B_R(v;\GG)$ is the same as $B_R(v;\switch\GG)$, then $v$ belongs either to both sets $A_{-1}$ and $S_{-1}$, or to neither. 

If $B_R(v;\GG)$ is not the same as $B_R(v;\switch\GG)$, then it must be that $B_R(v;\GG)$ contains at least one of the $e_i$, and so $B_R(v;\switch\GG)$ contains at least one of the $f_i$. In this situation, we have two observations:
\begin{enumerate}[(I)] 
\item We must have $v\notin A_{-1}$ by the assumption that the $e_i$ survive in $\proc\GG$ --- in particular, this implies that $B_R(v;\GG)$ must be proper (acyclic, with no repeated $\LABEL$ markings). It follows that $B_R(v;\GG)$ must contain exactly one of the $e_i$ while being disjoint from the other --- otherwise, it would contain two variables with the same marking $\LABEL$, making $v$ improper.
\item Similarly, $B_R(v;\switch\GG)$ must contain exactly one of the $f_i$ while being disjoint from the other --- otherwise, there is a path $\gamma_\star\subseteq\GG\cap(\switch\GG)$ of length at most $2R-1$
that joins the $f_i$ in $\switch\GG$. In the graph $\GG$, either $\gamma_\star$ forms a cycle with one of the $e_i$, or it joins $e_1$ to $e_2$. In both cases $\gamma_\star$ will contain an improper variable that will cause one of the $e_i$ to be removed during processing on $\GG$.
\end{enumerate}
Since we only switch two edges $e_i=(a_iu_i)$ of the same type, if $B_R(v;\switch\GG)$ is also acyclic then it must be isomorphic to $B_R(v;\GG)$, so in this case $v\notin S_{-1}$. The last possibility is that $B_R(v;\switch\GG)$ contains a cycle $C$. By observation (II), we can suppose without loss that $B_R(v;\switch\GG)$ contains $f_1$ but not $f_2$. If $f_1$ is disjoint from $C$, then $C$ will also appear in
$B_R(u_i;\GG)$ for one of the $u_i$. If $f_1$ lies on $C$, then $C\setminus f_1$ is a path in $\GG$ joining $e_1$ to $e_2$. In both cases $C$ will contain an improper variable that will cause one of the $e_i$ to be removed during processing on $\GG$. Altogether this proves that $A_{-1}=S_{-1}$, as claimed.
\end{proof}
\end{lem}

\begin{proof}[Proof of Proposition~\ref{p:switch.process.commutation}]
Denote the initial set (of variables that are improper or not $1$-good) as $A_{-1}$ in $\GG$, and $S_{-1}$ in $\switch\GG$. We showed in Lemma~\ref{l:init.set.agree.after.switch} that $A_{-1}=S_{-1}$. Note also that $B_R(A_{-1};\GG)$ cannot include either of the $e_i$, again by the assumption that the $e_i$ survive in $\proc\GG$. It follows that $B_R(A_{-1};\GG)=B_R(A_{-1};\switch\GG)$, and consequently
	\beq\label{e:init.step.commutes}
	\proc_0\Big(\switch\GG\Big)
	=\Big(\switch\GG\Big) \Big\backslash
		B_R\Big(A_{-1};\switch\GG\Big)
	=\switch \bigg( \GG \Big\backslash
		B_R\Big(A_{-1};\GG\Big)\bigg)
	= \switch\Big(\proc_0\GG\Big)\,,
	\eeq
that is to say, switching commutes with the initial preprocessing step.
For $t\ge0$ let us abbreviate
	{\setlength{\jot}{0pt}\begin{align*}
	A_t &\equiv \ACT(\proc_t\GG)\,,\\
	S_t &\equiv \ACT(\proc_t\switch\GG)\,.
	\end{align*}}%
If $A_\ell=S_\ell$ for all $0\le \ell<t$, then the same logic that led to \eqref{e:init.step.commutes} gives
	\beq\label{e:commute.up.to.t}
	\proc_t\Big(\switch\GG\Big)
	=\switch\Big(\proc_t\GG\Big)\,.\eeq
Thus, let $t$ be the first time that $A_t\ne S_t$; we will argue that this leads to a contradiction. Abbreviate $\HH\equiv\proc_t\GG$. \smallskip

\noindent\textbf{Case 1. $A_t\setminus S_t\ne\emptyset$.} 
Recall from the definition \eqref{e:bspprime.activated.set} that
membership of a variable in $A_t$ or $S_t$ is a property of its $(3R/10)$-neighborhood. Consequently, if there is any variable $v\in A_t\setminus S_t$, then $B_{3R/10}(v;\HH)$ must contain at least one of the $e_i$. But the very next step of processing on $\GG$ will remove $B_R(v;\HH)$, contradicting the assumption that both edges $e_i$ survive in $\proc\GG$. This proves that $A_t\subseteq S_t$.\smallskip

\noindent\textbf{Case 2. $S_t\setminus A_t\ne\emptyset$.}
Suppose $v\in S_t\setminus A_t$. From the above definitions and the relation \eqref{e:commute.up.to.t}, we have $A_t=\ACT(\HH)$ and $S_t=\ACT(\switch\HH)$. Thus, since $v\in S_t$, it must be that $B_{3R/10}(v;\switch\HH)$ contains at least one clause of degree $\le k-2$, or two clauses of degree $k-1$. On the other hand, since $v\notin A_t$, the same statement must not hold for $B_{3R/10}(v;\HH)$. It follows that $B_{3R/10}(v;\switch\HH)$ must contain at least one of the switched edges $f_i$. In fact, by observation~(II) in the proof of Lemma~\ref{l:init.set.agree.after.switch}, it must contain exactly one of the $f_i$, say $f_1$.

\begin{enumerate}[(a)]
\item If $B_{3R/10}(v;\switch\HH)$ contains any clause $b_\star$ of degree $\le k-2$, then the path joining $v$ to $b_\star$ must pass through at least one of the $f_i$, so the distance between $b_\star$ and $\set{f_1,f_2}$ in $\switch\HH$ is less than $3R/10$. It follows that the distance between $b_\star$ and $\set{e_1,e_2}$ in $\HH$ is also less than $3R/10$, which again yields a contradiction since it means that at least one of the $e_i$ will be deleted in the next preprocessing step.

\item It remains to consider the case that $B_{3R/10}(v;\switch\HH)$ contains two clauses $b_1\ne b_2$, each of degree $k-1$. If $\pi_i$ is the path joining $v$ to $b_i$ in $B_{3R/10}(v;\switch\HH)$, then at least one of the $\pi_i$, say $\pi_1$, must pass through a switched edge. It follows that the distance between $b_1$ and $\set{f_1,f_2}$ in $\switch\HH$ is less than $3R/10$; and so the distance between $b_1$ and $\set{e_1,e_2}$ in $\HH$ is also less than $3R/10$. We now consider the path $\gamma$ in $B_{3R/10}(v;\switch\HH)$ that joins $b_1$ to $b_2$, and distinguish two cases:

\begin{enumerate}[(i)]
\item If $\gamma$ does not pass through either $f_i$, then it is also a path in $\HH$. It has length at most $6R/10$ and joins $b_1$ to $b_2$, and we noted above that the distance between $b_1$ and $\set{e_1,e_2}$ in $\HH$ is less than $3R/10$. It follows that the next preprocessing step in $\GG$ will remove the $R$-neighborhood of some variable on $\gamma$, and thereby also remove one of the $e_i$. This gives a contradiction.

\item Now consider the case that $\gamma$ passes through $f_1$. This situation is shown in Figure~\ref{f:switch.two.edges.in.path}. We label $f_1=(xz)$ and $f_2=(wy)$ where $\set{x,w}$ is the same as either $\set{a_1,a_2}$ or $\set{u_1,u_2}$. Then $e_1=(xy)$ and $e_2=(wz)$.
Let $\rho_1$ denote the path between $b_1$ and $x$, and $\rho_2$ the path between $b_2$ and $z$: these paths are the same in $\HH$ as in $\switch\HH$, and must satisfy
	\[
	\length(\rho_1) + \length(\rho_2) + \f12
	= \length(\gamma) \le \f{6R}{10}
	\]
where we use $\length$ to denote path length. Recall from Remark~\ref{r:distance} that an edge $e=(av)$ has length $1/2$, so a path joining two neighboring variables has length one. On the left-hand side above, the $1/2$ term accounts for the length of the edge $f_1=(x z)$. In the graph $\HH$, the path $\rho_1$ joins $b_1$ to $x$ without passing through $y$. Since we assume that $e_1=(xy)$ survives in $\proc\GG$, any further processing on $\GG$ cannot remove $b_1$ without leaving behind a clause on $\rho_1$ of degree less than $k$ that lies even closer to $x$. It follows that the final graph $\proc\GG$ contains a path $\bar{\rho}_1\subseteq\rho_1$ joining $x$ to a clause $\bar{b}_1$ of degree less than $k$. Likewise, $\proc\GG$ must contain a path $\bar{\rho}_2\subseteq\rho_2$ joining $z$ to a clause $\bar{b}_2$ of degree less than $k$. Now note that $x$ and $z$ lie on opposite sides of the bipartite graph (one is a clause while the other is a variable). On the other hand, by the assumption that the $e_i$ have the same total type, there must be an isomorphism
	\[
	\phi : B_R\Big(x;\proc\GG\Big) \to B_R\Big(w;\proc\GG\Big),\quad
	\phi(x)=w,\quad\phi(y)=z\,.
	\]
It follows that $B_R(w;\proc\GG)$ must contain a path $\phi(\bar{\rho}_1)$ that joins $w$ to a clause $\phi(\bar{b}_1)$ of degree less than $k$. Then in the graph $\proc\GG$ we have
	\[
	d\Big(\bar{b}_2,\phi(\bar{\rho}_1);\proc\GG\Big)
	= \length(\phi(\bar{\rho}_1)) + \length(\bar{\rho_2}) + \f12
	\le \length(\rho_1) + \length(\rho_2) + \f12 \le \f{6R}{10}\,.
	\]
This contradicts the fact that, by definition, $\ACT(\proc\GG)$ must be empty.
\end{enumerate}
\end{enumerate}\smallskip

\noindent \textbf{Conclusion.}
The above argument shows that in fact $A_t=S_t$ for all $t$, so that the same logic leading to \eqref{e:init.step.commutes} and \eqref{e:commute.up.to.t} gives
the desired conclusion, that $\proc(\switch\GG)=\switch(\proc\GG)$.
\end{proof}

\begin{figure}[h!]
\includegraphics[page=3]{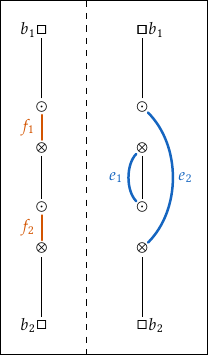}
\caption{Case (ii) in the proof of Proposition~\ref{p:switch.process.commutation}.
We let $t$ be the first time that $A_t\ne S_t$ and
$\HH=\proc_t\GG$, so that $\proc_t(\switch\GG)=\switch(\proc_t\GG)=\switch\HH$ by \eqref{e:commute.up.to.t}. In each of the panels \textsc{(a)}--\textsc{(c)}, the left side of the dashed line depicts part of $\switch\HH$ (including the switched edges $f_i$) while the right side depicts part of $\HH$ (including the original edges $e_i$). We use the symbols $\odot$ and $\otimes$ to indicate vertices from the two sides of the bipartite graph --- either $\odot$ indicates clause and $\otimes$ indicates variable, or the other way around (it does not matter which).}
\label{f:switch.two.edges.in.path}
\end{figure}

\begin{proof}[Proof of Proposition~\ref{p:unif}] 
Let $\GG\sim\P\equiv \P_{n,m}$. As in Proposition~\ref{p:switch.process.commutation}, let $e_1,e_2$ be two edges in the processed graph $\proc\GG$ having the same total type, and define the switching operation $\switch$. For any $H\in\ConfigModel(\cD)$,
	\begin{align*}
	\P(\proc\GG=H)
	&=\sum_G \P(\GG=G) \Ind{\proc G=H}
	\stackrel{\textup{(a)}}{=}
	\sum_G \P(\GG= G) \Ind{\proc(\switch G)=H}\\
	&\stackrel{\textup{(b)}}{=} 
	\sum_G \P(\GG= G) \Ind{\switch(\proc\GG)=H}
	= \P(\proc\GG=\switch H).
	\end{align*}
In the above, the step marked (a) follows from the fact the law of the original graph $\GG$ is invariant under the switching operation: $\P \circ \switch^{-1} = \P$. Equality (b) holds by Proposition~\ref{p:switch.process.commutation}, and equality (c) holds since the switching operation is involutive. Returning to the definition of $\ConfigModel(\cD)$ (Remark~\ref{r:processed.graph.types.CM}), the above proves $\P(\proc\GG=H)=\P(\proc\GG=H')$ for any $H,H'\in\ConfigModel(\cD)$. Finally, it is clear that the law of $\proc\GG$ is invariant under any permutation of the ordering among the variables or among the clauses, so if we condition on $\DD_{\proc\GG}=\DD$ then the law of $\cD_{\proc\GG}$ is uniformly random among all $\cD\sim\DD$.
This concludes the proof.
\end{proof}

\section{Extendibility and separability}\label{s:sep}

\noindent
In this section we prove Proposition~\ref{p:ext}
and Proposition~\ref{p:sep}. The section is organized as follows:
\begin{enumerate}[--]
\item In \S\ref{ss:explicit.lagrange.multipliers} 
we return to the issue that if a clause $a$ receives incoming messages $\dqstar_e$ ($e\in\delta a$), the resulting marginals on its incident edges does not generally agree with the canonical measures $\starpi_e$ from Definition~\ref{d:canonical}. We previously saw in \S\ref{ss:coherence.weights}
(specifically, Corollary~\ref{c:cohere.weights}) that if the clause is \bemph{coherent}, then it can be reweighted to achieve marginals $\starpi_e$. 
In \S\ref{ss:explicit.lagrange.multipliers} we show that if the clause is \bemph{nice}, then we can explicitly construct and estimate these weights.
This result will be used in the remainder of the current section, as well as in later sections.

\item In \S\ref{ss:planted.measure} we introduce the \bemph{planted measure} $\wP_{\DD}$. This is a well-known idea, which in the context of our problem gives a way to view the quantitites $\E_{\DD}\ZZ$,
$\E_{\DD}\extZZ$, and
$\E_{\DD}\sepZZ$
in terms of a certain modified configuration model
(Remark~\ref{r:sample.planted.msr}) that is tractable to analyze.

\item In \S\ref{ss:extendibility}
we prove Proposition~\ref{p:ext}, which can be reinterpreted as saying that most colorings are extendible under the planted measure. The idea of the proof is to show that, under the planted measure,
the subgraph of dependent free variables is with high probability a disjoint union of trees and unicyclic components, which can be completed to produce a valid satisfying assignment. The analysis of the free subgraph relies on the containment property
\eqref{e:every.variable.is.enclosed} of compound enclosures.

\item In \S\ref{ss:separability}
we prove Proposition~\ref{p:sep}.
The idea of the proof is to note that
if a (processed) $\ksat$ instance $\GG=(V,F,E)$ admits two judicious colorings that disagree on subset of variables $V'$ with $1 \ll |V'| \ll |V|$,
then $\GG$ must admit a certain combinatorial structure (Lemma~\ref{l:output.s.with.connections}) which we then show is unlikely to occur. The extraction of the compound structure relies on the fact that compound enclosures are bounded by variables that are perfect, hence orderly (Definition~\ref{d:orderly}).

\end{enumerate}

\subsection{Explicit Lagrange multipliers for nice clauses}
\label{ss:explicit.lagrange.multipliers}

Recall from \S\ref{ss:coherence.weights} (in particular, see Corollary~\ref{c:cohere.weights}) that whenever a clause $a$ is strictly coherent (Definition~\ref{d:coherence}), there exists a set of weights such that the associated Gibbs measure on colorings of $a$ has edge marginals $\starpi$ (Definition~\ref{d:canonical}). In this subsection, under the assumption that the clause is sufficiently nice, we give a direct construction and error estimate for these weights. This result will be used in the proofs that appear later in the current section. Moreover, the analysis in this subsection is a simplified version of the analysis of Section~\ref{s:contract}.

Through this subsection we are concerned with the reweighting of the edges around a single clause. Recall from \eqref{e:cyan.purple} that we introduced the composite color $\cya\equiv\SPIN{cyan}\equiv\set{\SPIN{green},\SPIN{blue}}$, for the reason that the clause factor \eqref{e:color.model.variable.factor} does not distinguish between $\SPIN{green}$ and $\SPIN{blue}$. Therefore in this subsection we can work on the reduced alphabet $\set{\red,\yel,\cya}$. Given a variable-to-clause message $\dq$ which is a probability measure over $\set{\RYGB}$, we now abuse notation and write $\dq$ for the measure on $\set{\red,\yel,\cya}$ where $\cya$ takes the combined weight of $\set{\grn,\blu}$. On the other hand, given a clause-to-variable message $\hq$ which is a probability measure over $\set{\RYGB}$
such that $\hq(\blu)=\hq(\grn)$, we define a probability measure $\tq$ on $\set{\red,\yel,\cya}$
with weights
	\beq\label{e:cyan.c.to.v}
	\Big(\tq(\red),\tq(\yel),\tq(\cya)\Big)
	= \bigg(
	\f{\hq(\red)}{1-\hq(\blu)},
	\f{\hq(\yel)}{1-\hq(\blu)},
	\f{\hq(\blu)}{1-\hq(\blu)}
	\bigg)\,.\eeq
For the rest of the subsection we work with the measures $\dq,\tq$ over $\set{\red,\yel,\cya}$.

As we will soon see, the bounds that we can obtain for the \textsc{bp} recursion are different for $\SPIN{red}$ versus the other colors. For this reason, given a variable-to-clause message $\dq\equiv\dq_{va}$, it will be useful to define a reweighted version
	\beq\label{e:reweighted.messages.single}
	Q_{va}(\sigma) 
	= \f{\dq(\sigma)}
		{(2^{|\pd a|-1})^{\Ind{\sigma=\red}}}
	\Bigg/
	\Bigg\{
	\f{\dq(\red)}{2^{|\pd a|-1}}
	+ \dq(\cya) + \dq(\yel)
	\Bigg\}\,.\eeq
With this notation, we can now state and prove the main technical result of this subsection:

\begin{lem}
\label{l:clause.bp.weights}
Let $\delta\in(0,1]$ be a fixed constant.
Suppose the clause $a$ receives incoming
variable-to-clause messages $\dq_e$ ($e\in\delta a$)
 whose reweightings $Q_e$ (as defined by \eqref{e:reweighted.messages.single}) satisfy the bounds
	\beq\label{e:bds.on.red.yellow}
	\max
	\bigg\{ Q_e(y)-\f12,
	Q_e(\red)
	\bigg\} \le \f1{2^{k\delta}}
	\eeq
for all $e\in\delta a$. Suppose also we have outgoing messages $\tq_e$ such that, for all $e\in\delta a$, we have
	\beq\label{e:assumed.bp.error.eps}
	\bigg|\f{[\BP_e[\dq]](\sigma)}
		{\tq_e(\sigma)}-1\bigg|
	\le \begin{cases}
	\ep_e 
		&\textup{for }\sigma\in\set{\yel,\cya}\,,\\
	\dot{\ep}_e
		&\textup{for }\sigma=\red\,,
	\end{cases}
	\eeq
where all the errors $\ep_e,\dot{\ep_e}$ are at most $1/k^4$. Then there exist edge 
weights $\Gamma\equiv(\gamma_e:e\in\delta a)$
with $\gamma_e(\yel)=1$ such that
	\beq\label{e:resulting.gamma.error.bound}
	\max_{e\in\delta a}
	\Bigg\{\sum_{\sigma\in\set{\red,\cya}}
	|\gamma_e(\sigma)-1|\Bigg\}
	\le k\sum_{e\in\delta a}
		\Big(\ep_e+\dot{\ep}_e\Big)\,,
	\eeq
and such that the $\Gamma$-weighted \textup{\textsc{bp}} recursion at clause $a$ maps $\dq$ to $\tq$: that is, such that
$\tq_e=\BP_e[\dq;\Gamma]$ for all $e\in\delta a$.

\begin{proof}
We will iteratively define a sequence of weights $\Gamma^t \equiv (\gamma_{e,t}: e\in\delta a)$, started from $\Gamma^0\equiv1$ and converging in the limit $t\to\infty$ to the desired weights $\Gamma$. We will maintain for all $t$ that $\gamma_{e,t}(\yel)=1$. Denote the output of the $\Gamma^t$-weighted recursion by
	\beq\label{e:weighted.bp}
	\tq_{e,t}(\sigma)
	\equiv \Big(\BP_e[\dq,\Gamma^t]\Big)(\sigma)
	\cong \gamma_{e,t}(\sigma)
		\bigg\{
		\Big(\BP_e[\gamma^t\dq]\Big)(\sigma)
		\bigg\}
		\,.\eeq

\noindent
\bemph{Step 1. Definition of weights and errors between weights.}
In this step, we fix an edge $e$ and abbreviate
$\gamma\equiv\gamma_e$, 
$\tq\equiv\tq_e$, and so on.
Given $\gamma^t$ and $\tq^t$,
we define the next weight $\gamma^{t+1}$ by setting
	\[\gamma^{t+1}(\sigma)
	\equiv
	\gamma^t(\sigma)
	\cdot
	\f{\tq(\sigma)/\tq(\yel)}
	{\tq^t(\sigma)/\tq^t(\yel)}
	\]
for each $\sigma\in\set{\red,\yel,\cya}$. Note this choice ensures $\gamma^{t+1}(\yel)=1$. It remains to estimate the error between $\gamma^t$ and $\gamma^{t+1}$ on the other two colors $\set{\red,\cya}$. To this end, let us define the error quantities
	\beq\label{e:def.eps.t.sigma}
	\ep^t(\sigma)
	\equiv
	\Bigg|
	\f{\gamma^{t+1}(\sigma)}{\gamma^t(\sigma)}-1
	\Bigg|
	=\Bigg|
	\f{\tq(\sigma)/\tq(\yel)}
	{\tq^t(\sigma)/\tq^t\yel)}
	-1\Bigg|\,.
	\eeq
for $\sigma\in\set{\red,\cya}$. 
The error at the next iteration is then given 
(after a short algebraic manipulation) by
	\beq\label{e:single.iterate.error}
	\ep^{t+1}(\sigma)
	=\Bigg|
	\underbrace{\bigg\{
	\f{\gamma^t(\sigma)}{\gamma^{t+1}(\sigma)}
	\f{\tq(\sigma)/\tq(\yel)}
	{\tq^t(\sigma)/\tq^t(\yel)}
	\bigg\}}_{\textup{equals one}}
	\f{\tq^t(\sigma)/(\gamma^t(\sigma)\tq^t(\yel))}
		{\tq^{t+1}(\sigma)/
			(\gamma^{t+1}(\sigma)
			\tq^{t+1}(\yel))}
	-1\Bigg]\,.\eeq
For the rest of this proof we use the shorthand
$\ep^t\equiv \ep^t(\cya)$ and $\dot{\ep}^t\equiv \ep^t(\red)$.\smallskip

\noindent \bemph{Step 2. Errors for variable-to-clause quantities.} In this step we continue to consider a single edge $e\in\delta a$, which we suppress from the notation. As in \eqref{e:reweighted.messages.single},
let $Q^t$ be the reweighted version of $\gamma^t\dq$ defined by
	\beq\label{e:def.Q.t}
	Q^t(\sigma)
	\equiv
	\f{\gamma^t(\sigma) \dq(\sigma)}
		{(2^{|\pd a|-1})^{\Ind{\sigma=\red}}}
	\Bigg/\Bigg\{
	\f{\gamma^t(\red)\dq(\red)}{2^{|\pd a|-1}}
	+\dq(\yel)
	+ \gamma^t(\cya)\dq(\cya)
	\Bigg\}\,.
	\eeq
Recall the assumption \eqref{e:bds.on.red.yellow} that
$Q(\red) = Q_0(\red) \le 2^{-k\delta}$. It will follow from the inductive analysis below that all the weights $\gamma^t$ are of constant order, so we will also have $Q^t(\red)\le O(2^{-k\delta})$. It follows that
	\begin{align*}
	\f{\gamma^{t+1}(\red)\dq(\red)}
		{2^{|\pd a|-1}}
	+\dq(\yel)
	+ \gamma^{t+1}(\cya)\dq(\cya)
	&=
	\f{\gamma^t(\red)\dq(\red)}{2^{|\pd a|-1}}
	\Big( 1 + O(\dot{\ep}^t)\Big)
	+\dq(\yel)
	+ \gamma^t(\cya)\dq(\cya)
	\Big( 1 + O(\ep^t)\Big)\\
	&=
	\Bigg\{
	\f{\gamma^t(\red)\dq(\red)}
		{2^{|\pd a|-1}}
	+\dq(\yel)
	+ \gamma^t(\cya)\dq(\cya)\Bigg\}
	\Bigg\{
	1 + O\bigg(\ep^t+
		\f{\dot{\ep}^t}{2^{k\delta}}\bigg)
	\Bigg\}\,,
	\end{align*}
for $\dot{\ep}^t$ and $\ep^t$
as in \eqref{e:def.eps.t.sigma}.
Combining this with \eqref{e:def.Q.t} gives
	\beq\label{e:Q.error.t}
	\Bigg|
	\f{Q^{t+1}(\sigma)}{Q^t(\sigma)}-1\Bigg|
	\le O(1)
	\begin{cases}
	\ep^t+
		\dot{\ep}^t/2^{k\delta}
	&\textup{for }\sigma\in\set{\yel,\cya}\,,\\
	\ep^t+\dot{\ep}^t
	&\textup{for }\sigma=\red\,.
	\end{cases}
	\eeq

\noindent\bemph{Step 3. Errors output by clause recursion.} Recall from \eqref{e:weighted.bp}
that $\tq_{e,t}/\gamma_{e,t}\cong\BP_e[\gamma^t\dq]$,
and recall from \eqref{e:def.Q.t} that $Q^t$ is a reweighted version of $\gamma^t\dq$. We therefore let $z_{e,t}$ denote the normalizing constant such that
	\[
	\f{\tq_{e,t}(\red)}{\gamma_{e,t}(\red)}
	= \f1{z_{e,t}}
	\prod_{e'\in\delta a \setminus e}
	Q_{e',t}(\yel)\,.\]
It the follows from the first bound in \eqref{e:Q.error.t} that for $\sigma=\SPIN{red}$ we have 
	\[\f{z_{e,t+1}\tq_{e,t+1}(\red)}
		{\gamma_{e,t+1}(\red)}
	=\prod_{e'\in\delta a \setminus e}
	Q^{t+1}_{e'}(\yel)
	=\f{z_{e,t}\tq_{e,t+1}(\red)}
		{\gamma_{e,t}(\red)}
	\Bigg\{ 1 + O\Bigg(
	\sum_{e'\in\delta a}
	\bigg(\ep_{e',t}
		+\f{\dot{\ep}_{e',t}}{2^{k\delta}}\bigg)
		\Bigg)\Bigg\}\,.
	\]
Next, by \eqref{e:Q.error.t} together with the assumption \eqref{e:bds.on.red.yellow}, we have for $\sigma=\SPIN{cyan}$ that
	\begin{align*}
	\f{z_{e,t+1}\tq_{e,t+1}(\cya)}
		{\gamma_{e,t+1}(\cya)}
	&=
	\prod_{e'\in\delta a \setminus e}
	\Big( 1-Q^{t+1}_{e'}(\red)\Big)
	-	\prod_{e'\in\delta a \setminus e}
	Q^{t+1}_{e'}(\yel)\\
	&=\prod_{e'\in\delta a \setminus e}
	\Bigg( 1-Q_{e',t}(\red)
		\bigg\{1+O\Big(
		\ep_{e',t}+
		\dot{\ep}_{e',t}
		\Big)\bigg\}
	\Bigg)-\prod_{e'\in\delta a \setminus e}\Bigg(
	Q_{e',t}(\yel)
	\bigg\{
	1 + O\bigg( \ep_{e',t}+
		\f{\dot{\ep}_{e',t}}{2^{k\delta}}
		 \bigg)
	\bigg\}\Bigg)\\
	&=\f{z_{e,t}\tq_{e,t}(\cya)}
		{\gamma_{e,t}(\cya)}
	\Bigg\{
	1 + O\Bigg(\sum_{e'\in\delta a}\bigg(
		\f{\ep_{e',t}+\dot{\ep}_{e',t}}{2^{k\delta}}
		 \bigg)\Bigg)
	\Bigg\}\,.
	\end{align*}
Lastly, for $\sigma=\SPIN{yellow}$, 
it follows from \eqref{e:bds.on.red.yellow} and 
\eqref{e:Q.error.t} that
	\begin{align*}
	z_{e,t+1}\tq_{e,t+1}(\yel)
	&=\prod_{e'\in\delta a \setminus e}
	\Big( 1-Q^{t+1}_{e'}(\red)\Big)
	-	\prod_{e'\in\delta a \setminus e}
	Q^{t+1}_{e'}(\yel)
		+
	\sum_{e'\in\delta a\setminus e}\Big(
	2^{|\pd a|-1}
	Q^{t+1}_{e'}(\red)
	- Q^{t+1}_{e'}(\cya)\Big)
	\prod_{e''\in\delta a \setminus \set{e,e'}}
	Q^{t+1}_{e''}(\yel)\\
	&=z_{e,t}\tq_{e,t}(\yel)
	\Bigg\{
	1 + O\Bigg(k\sum_{e'\in\delta a}\bigg(
		\f{\ep_{e',t}+\dot{\ep}_{e',t}}{2^{k\delta}}
		 \bigg)\Bigg)
	\Bigg\}\,.
	\end{align*}
Now substitute these estimates into the quantity appearing on the right-hand side of \eqref{e:single.iterate.error}: it gives
	\begin{align*}
	\ep_{e,t+1}(\cya)
	&=\Bigg|
	\f{\tq_{e,t}(\cya)/(\gamma_{e,t}(\cya)\tq_{e,t}(\yel))}
		{\tq_{e,t+1}(\cya)/
			(\gamma_{e,t+1}(\cya)
			\tq_{e,t+1}(\yel))}-1\Bigg|
	= O\Bigg(k\sum_{e'\in\delta a}\bigg(
		\f{\ep_{e',t}+\dot{\ep}_{e',t}}{2^{k\delta}}
		 \bigg)\Bigg)\,,\\
	\ep_{e,t+1}(\red)
	&=
	\Bigg|\f{\tq_{e,t}(\red)/(\gamma_{e,t}(\red)\tq_{e,t}(\yel))}
		{\tq_{e,t+1}(\red)/
			(\gamma_{e,t+1}(\red)
			\tq_{e,t+1}(\yel))}-1\Bigg|
	= O\Bigg(k\sum_{e'\in\delta a}\bigg(
		\ep_{e',t}+
		\f{\dot{\ep}_{e',t}}{2^{k\delta}}
		 \bigg)\Bigg)\,.
	\end{align*}
Consequently, if we aggregate all the error terms at time $t$ as
	\[
	E(t)
	\equiv \sum_{e\in\delta a} \bigg(
		\ep_{e,t}(\cya)
		+\f{\ep_{e,t}(\red)}{2^{k\delta/2}}\bigg)\,,
	\]
then we shall obtain at the next step
	\[
	E(t+1)
	\le 
	O(k^2)
	\sum_{e'\in\delta a}\bigg(
		\f{\ep_{e',t}+\dot{\ep}_{e',t}}{2^{k\delta}}\bigg)
	+\f{O(k^2)}{2^{k\delta/2}}
	\sum_{e'\in\delta a}\bigg(
		\ep_{e',t}+
		\f{\dot{\ep}_{e',t}}{2^{k\delta}}\bigg)
	\le \f{O(k^2)}{2^{k\delta/2}} E(t)\,.
	\]
It follows that $E(t)$ decays exponentially in $t$, so the sequence $\Gamma^t$ converges to a limit $\Gamma$.
Summing over $t$ gives the claimed error bound on the weights $\gamma_e$.
\end{proof}
\end{lem}

An application of Lemma~\ref{l:clause.bp.weights} is a direct construction, with an error estimate, for the weights of Corollary~\ref{c:cohere.weights} and Corollary~\ref{c:clause.bp.weights} in the case that the edges are nice:

\begin{cor}\label{c:clause.bp.weights.explicit}
In the setting of Corollaries~\ref{c:cohere.weights}~and~\ref{c:clause.bp.weights}, suppose all the edges in $U$ are stable and nice. Then, for every clause $a$ of $U$, the canonical messages
$\dqstar_e$ and $\hqstar_e$ (for $e\in\delta a$)
satisfy the conditions of Lemma~\ref{l:clause.bp.weights},
with $\delta=1/11$ in \eqref{e:bds.on.red.yellow}
and $\ep_e,\dot{\ep}_e \le O(1/k^r)$ in \eqref{e:assumed.bp.error.eps}. Consequently,
Lemma~\ref{l:clause.bp.weights} guarantees, for each $a$ in $U$, the existence of edge weights
$\Gmstar_a=(\gamma_e:e\in\delta a)$ 
satisfying
(cf.\ \eqref{e:resulting.gamma.error.bound}) $|\gamma_e(\sigma)-1|\le 1/k^{r/2}$ for all $\sigma$, and $\hqstar_e=\BP_e(\dqstar,\Gmstar_a)$ for all $e\in\delta a$. As a consequence, the weighted Gibbs measure \eqref{e:gamma.weighted.gibbs} has all edge marginals and messages agreeing with the canonical ones $\starpi,\dqstar,\hqstar$.
Redistributing the weights as in 
\eqref{e:redistribute.weights}
(and as in 
Corollary~\ref{c:clause.bp.weights})
produces a system
$\Lmstar$ of variable weights such that the $\Lmstar$-weighted Gibbs measure again has edge marginals $\starpi$. (However, since the weights were shifted from clauses to variables, the \textsc{bp} messages will be given by $\dqbul,\hqbul$
from \eqref{e:redistributed.bp.messages}
rather than by $\dqstar,\hqstar$). 

\begin{proof}
We need only to check that the conditions of Lemma~\ref{l:clause.bp.weights} are satisfied.
Indeed, for each edge $e\in U$,
if we take $\dqstar_e$ and 
use \eqref{e:reweighted.messages.single}
to define 
its reweighted version
$\Qstar_e$,
then $\Qstar_e$ will satisfy the error bound
\eqref{e:bds.on.red.yellow} with $\delta=1/11$,
because the edges in $U$ are assumed to be nice
(Definition~\ref{d:nice}).
Moreover, we will have the error bounds
\eqref{e:assumed.bp.error.eps}
with $\ep_e,\dot{\ep}_e \le O(1/k^r)$
because the edges in $U$ are assumed to be stable
(Definition~\ref{d:stable}).
The claimed result then follows directly from
Lemma~\ref{l:clause.bp.weights}. 
\end{proof}
\end{cor}


\subsection{The planted measure for judicious colorings} 
\label{ss:planted.measure}

In preparation for the proofs of
Propositions~\ref{p:ext}~and~\ref{p:sep},
we introduce the planted measure in this subsection. Given a processed neighborhood sequence $\cD$ (as in Definition~\ref{d:degseq}), let $\wP_\cD$ denote the uniform measure over all pairs $(\GG,\usi)$ such that $\GG\equiv(V,F,E)$ is consistent with the sequence $\cD$, and $\usi$ is a valid judicious coloring of $\GG$. Then, recalling Definition~\ref{d:extendible}
and \eqref{e:def.ZZ}, we have
	\beq\label{e:planted.measure.relation}
	\wP_\cD\bigg(\bigg\{
	(\GG,\usi) :
	\textup{$\usi$ is extendible}
	\bigg\}\bigg)
	= \f{\E_\cD\extZZ}{\E_\cD\ZZ}
	= \f{\E_{\DD}\extZZ}{\E_{\DD}\ZZ}
	\,,
	\eeq
where $\DD$ is the unordered version of $\cD$ (see Definition~\ref{d:degseq}). Thus, to prove Proposition~\ref{p:ext}, it suffices to show that $\usi$ is extendible with high probability under $\wP_\cD$. Similarly, to prove Proposition~\ref{p:sep},
it suffices to show 
that $\usi$ is separable with high probability under $\wP_\cD$. Following standard convention, we call $\wP_\cD$ the \bemph{planted measure}.

\begin{rmk}[sampling from the planted measure]\label{r:sample.planted.msr}
Recall from Proposition~\ref{p:unif}
that $\P_\cD$ coincides with the uniform measure over
$\ConfigModel(\cD)$, and can be sampled as a configuration model, as discussed in Remark~\ref{r:processed.graph.types.CM}.
It is well known that one can also sample from $\wP_\cD$ by a configuration-model-type procedure, as we now describe.
As in Remark~\ref{r:processed.graph.types.CM},
we fix a set of variables $V$ and a set of clauses $F$, equipped with incident half-edges $\delta V$ and $\delta F$, all labelled with types according to $\cD$. Then a graph $\GG$ corresponds to a matching $\mathfrak{M}$ of $\delta V$ to $\delta F$ that respects the edge types. The number of all such matchings is given by \eqref{e:size.CM.D}. Let $\bm{\sigma}$ be any coloring of $\delta V\sqcup\delta F$ that gives a valid coloring of $\delta x$ for every $x\in V\sqcup F$. We say that a valid coloring $\bm{\sigma}$ is \bemph{judicious},
abbreviated $\bm{\sigma}\in\mathcal{J}$, 
if the empirical measure of $\bm{\sigma}:\delta V\to\set{\RYGB}$ agrees with the canonical marginal $\starpi$, and the empirical measure of $\bm{\sigma}:\delta F\to\set{\RYGB}$ agrees with $\omstar$
(up to rounding, cf.\ Definition~\ref{d:judicious}). We write $\mathfrak{M}\sim\bm{\sigma}$ if the matching $\mathfrak{M}$ also respects the edge colors specified by $\bm{\sigma}$. Then a pair $(\GG,\usi)$
is equivalent to a pair $(\mathfrak{M},\bm{\sigma})$
such that $\mathfrak{M}\sim\bm{\sigma}$. Thus we can regard $\wP_\cD$ as the measure over pairs
$(\mathfrak{M},\bm{\sigma})$ given by
	\[
	\wP_\cD(\mathfrak{M},\bm{\sigma})
	=\f{\Ind{
	\bm{\sigma}\in\mathcal{J},
	 \mathfrak{M}\sim\bm{\sigma} }}
		{\mathcal{Z}}\,,
	\]
where $\mathcal{Z}$ is the normalizing constant. 
The marginal probability of $\bm{\sigma}$
under $\wP_\cD$ is given by
	\[
	\wP_\cD(\bm{\sigma})
	=\f{\Ind{\bm{\sigma}\in\mathcal{J}}}
	{\mathcal{Z}}\cdot
	\bigg|\Big\{ \mathfrak{M}
	:\mathfrak{M}\sim\bm{\sigma}
		\Big\}\bigg|
	=\f{\Ind{\bm{\sigma}\in\mathcal{J}}}{\mathcal{Z}}
	\cdot
	\prod_{\bt,\sigma}
	(n_{\bt} \cdot \starpi_{\bt}(\sigma))!\,,
	\]
which we emphasize is constant over all judicious
$\bm{\sigma}$. This implies
	\[\wP_\cD(\bm{\sigma})
	= \f{\Ind{\bm{\sigma}\in\mathcal{J}}}
		{|\mathcal{J}|}\,,
	\]
i.e., the marginal law of $\bm{\sigma}$ under $\wP_\cD$ is simply uniform over $\mathcal{J}$. It further implies that for any $\bm{\sigma}\in\mathcal{J}$,
	\[
	\wP_\cD(\mathfrak{M}\,|\,\bm{\sigma})
	= \f{\Ind{\mathfrak{M}\sim\bm{\sigma}}}
		{\mathcal{Z}/|\mathcal{J}|}
	= \f{\Ind{\mathfrak{M}\sim\bm{\sigma}}}
	{ | \set{\mathfrak{M}
		: \mathfrak{M} \sim\bm{\sigma} } | }\,,
	\]
i.e., under $\wP_\cD$, the law of $\mathfrak{M}$
conditional on $\bm{\sigma}$ is uniform among the matchings compatible with $\bm{\sigma}$. In conclusion, to generate a sample from $\wP_\cD$, 
we can first sample a uniformly random coloring $\bm{\sigma}\in\mathcal{J}$, then sample a uniformly matching $\mathfrak{M}$ that satisfies $\mathfrak{M}\sim\bm{\sigma}$.
\end{rmk}

\begin{rmk}[empirical measure of colorings
	under the planted measure]
\label{r:planted.measure.empir.msr}
Given a coloring $\bm{\sigma}\in\mathcal{J}$,
we 
associate the variable empirical measure
$\dbh$ as in \eqref{e:variable.color.cond.on.type},
and the clause empirical measure
$\hbh$ as in \eqref{e:clause.color.cond.on.type}.
Abbreviate $\bh\equiv(\dbh,\hbh)\equiv\bh(\bm{\sigma})$. Then, for any $
\bh$ that is consistent with $\starpi$ and $\omstar$, we have
	\[
	\bigg|\Big\{\bm{\sigma}\in\mathcal{J}
		: \bh(\bm{\sigma}) \Big\}\bigg|
	=\Bigg\{ \prod_{\bT}\binom{n_{\bT}}{n_{\bT}
		\dbh_{\bT}}\Bigg\}
	\Bigg\{ \prod_{\bL}
		\binom{m_{\bL}}{m_{\bL} \hbh_{\bL}}
	\Bigg\}\,.
	\]
By comparing with \eqref{e:config.model.first.moment.given.omega} and \eqref{e:nu.opt}, we see that if $\bm{\sigma}$ is sampled uniformly at random from $\mathcal{J}$, then its empirical measure $\bh(\bm{\sigma})$ is very close to $\optnu[\omstar]$ with high probability: more precisely, as long as
$\DD$ is bounded away from zero in the sense of \eqref{e:pos.frac}
(as guaranteed by Proposition~\ref{p:posfrac} with high probability), then
	\[\wP_\cD
	\bigg(\Big\|\nu- \optnu[\omstar]
		\Big\|_\infty
		\ge \f{\log n}{n^{1/2}}\bigg)
	\le \exp\bigg\{ - C(k,R) (\log n)^2\bigg\}\,.\]
We recall that
$\optnu[\omstar]
=(\optdbh[\omstar],\opthbh[\omstar])$ 
where $\dbh=\optdbh[\omstar]$
is given by Lemma~\ref{l:nu.star.as.optimizer.for.starpi},
while $\hbh=\opthbh[\omstar]$
is given by a reweighted measure:
if $a$ denotes a clause of type $\bL$, then
	\[
	\hbh_{\bL}(\usi_{\delta a})
	=
	\hat{\varphi}_a(\usi_{\delta a})
	\prod_{e\in\delta a}
	\bigg\{
	\dqstar_e(\sigma_e)\gamma_e(\sigma_e)
	\bigg\}
	\]
where the weights $\gamma_e$
are given by Corollary~\ref{c:cohere.weights},
since all clauses in the processed graph must be coherent. If all the edges in the clause are nice,
then the weights $\gamma_e$ are estimated by
Lemma~\ref{c:clause.bp.weights.explicit}.
We will use this observation in the proofs that follow.
\end{rmk}

We conclude this subsection with a simple lemma regarding the matching of edges near defects under the planted measure. It will be used in the analysis of \S\ref{ss:separability} below. We will say simply ``defect'' to refer to a $0$-defect in the graph $\GG$.

\begin{rmk}\label{r:defective.clauses} Recall from Definition~\ref{d:j.defective} that if a variable is non-defective, then it has at most one defective variable in its depth-one neighborhood. The clauses $a\in F$ therefore can be divided into three categories:
\begin{enumerate}[(i)]
\item The clause $a$ neighbors only defective variables, in which case we say that $a$ is a \bemph{defective clause}.
\item The clause $a$ has only one defective variable among its neighbors.
\item The clause $a$ has no defective variables among its neighbors, in which case we say that $a$ is a \bemph{strongly non-defective clause}.
(The strongly non-defective property will be used in the proof of Proposition~\ref{p:noNonDiverse}.)
\end{enumerate}
In both cases (ii) and (iii) we say that $a$ is a \bemph{non-defective clause}. If $v$ is a non-defective variable, then $\pd v$ contains only non-defective clauses; moreover, at most one clause in $\pd v$ can fail to be strongly non-defective. 
We say that $v$ is \bemph{strongly non-defective} if every clause in $\pd v$ is strongly non-defective.
We say that an edge $e$ is \bemph{strongly non-defective} if and only if $e\in\delta v$ for a strongly non-defective variable $v$. Lastly, we say that an edge $e$ is \bemph{internal} to a defect if it is incident to some internal clause of the defect.
\end{rmk}

\begin{lem}\label{l:matching.of.edges.near.defects}
As in Remark~\ref{r:sample.planted.msr}, 
fix $V,F,\delta V, \delta F$ labelled with total types according to $\cD$. From the notion of compound type (Definition~\ref{d:cpd.type}),
for each half-edge $e\in \delta V\sqcup \delta F$, we can deduce from its type $\bt_e$ whether, in the final graph $\GG$, the half-edge $e$ will participate in an edge that is internal to a defect. For any matching
$\mathfrak{m}$ of these half-edges, we have $\wP_\cD(\mathfrak{m}\subseteq\GG)=\P_\cD(\mathfrak{m}\subseteq\GG)$.

\begin{proof} As in the statement of the lemma, fix $V,F,\delta V, \delta F$ labelled with total types according to $\cD$. It is clear from Definition~\ref{d:cpd.type}
that for a half-edge $e$, the total type $\bt_e$ encodes whether $e$ will participate in an edge that is internal to a defect. That is to say, there is a set of edge types $\mathfrak{T}$ such that $\bt_e\in\mathfrak{T}$ if and only if $e$ participates in the internal edge of a defect. 

Let $\GG,\GG'$ be any two graphs with neighborhood sequence $\cD$. Let $\mathscr{H},\mathscr{H}'$ be the corresponding subgraphs induced by their defects; these are encoded by matchings $\mathfrak{m},\mathfrak{m}'$ as described in the statement of the lemma. The matchings involve precisely the half-edges with total types in $\mathfrak{T}$. 
Since defects are contained in enclosures which are encoded by compound types, it follows that $\mathscr{H}$ and $\mathscr{H}'$ are isomorphic
--- equivalently, that there is an isomorphism $\iota$ which takes $\mathfrak{m}\mapsto\mathfrak{m}'$.
Let $\GG$ be any graph in which $\mathfrak{m}$ appears. We can extend $\iota$ to $\GG$ by applying the identity map to edges not in $\mathfrak{m}$; then
 $\mathfrak{m}'$ appears in $\iota(\GG)$. If $\usi$ is a valid coloring of $\GG$, then a valid coloring of $\iota(\GG)$ is given by
 $(\usi\circ \iota^{-1})_e
\equiv \sigma_{\iota^{-1}(e)}$. If $\E_\cD$ denotes expectation under $\P_\cD$, then
	\begin{align*}
	\E_\cD\Big(\ZZ\,\Big|\,
		\mathfrak{m}\subseteq\GG\Big)
	&=\f1{\P_\cD(\mathfrak{m}\subseteq\GG)}
	\sum_{G : \mathfrak{m}\subseteq G}
		\P_\cD(G)
		\sum_{\usi}
		\Ind{\textup{$\usi$
			is a judicious coloring of $G$}}\\
	&=\f1{\P_\cD(\mathfrak{m}'\subseteq\GG)}
	\sum_{G : \mathfrak{m}\subseteq G}
		\P_\cD(\iota(G))
		\sum_{\usi}
		\Ind{\textup{$\usi\circ\iota^{-1}$
			is a judicious coloring of $\iota(G)$}}
	\\
	&=\f1{\P_\cD(\mathfrak{m}'\subseteq\GG)}
	\sum_{G' : \mathfrak{m}'\subseteq G'}
	\P_\cD(G')
	\sum_{\usi'}
	\Ind{\textup{$\usi'$
			is a judicious coloring of $G'$}}
	= \E_\cD\Big(\ZZ\,\Big|\,\mathfrak{m}'
		\subseteq\GG\Big)\,.
	\end{align*}
It follows from the definition of $\wP_\cD$ that
	\[
	\wP_\cD(\mathfrak{m}\subseteq\GG)
	= \f1{\E_\cD\ZZ}
	\sum_{G}
	\Ind{\mathfrak{m}\subseteq G}
	\P_\cD(G)
	\ZZ(\GG)
	=\P_\cD(\mathfrak{m}\subseteq\GG)
	\f{\E_\cD(\ZZ\,|\,\mathfrak{m}\subseteq\GG)}
	{\E_\cD\ZZ}
	=\P_\cD(\mathfrak{m}\subseteq\GG)\,,
	\]
as claimed.
\end{proof}
\end{lem}

\subsection{Extendibility for judicious colorings} 
\label{ss:extendibility}

In this subsection we prove Proposition~\ref{p:ext},
which says that the first moment $\E_\cD\ZZ$ of judicious colorings is dominated by extendible colorings (Definition~\ref{d:extendible}), with high probability over $\cD$.

\begin{dfn}[free subgraph]
Given a valid coloring $\usi$ of $\GG=(V,F,E)$, we define a subgraph $\mathfrak{F}
\subseteq\GG$ as follows. Let $V_{\mathfrak{F}}$ 
be the variables in $V$ that are incident to only 
$\SPIN{green}$ edges under $\usi$.
Let $F_{\mathfrak{F}}$ be the clauses in $F$
that are incident to only 
$\SPIN{green}$ or $\SPIN{yellow}$ edges under $\usi$.
Note that, by the rules of the coloring model, each $a\in F_{\mathfrak{F}}$ must be incident to at least two variables $u,v\in V_{\mathfrak{F}}$. Let $E_{\mathfrak{F}}$ be the edges between 
$V_{\mathfrak{F}}$ and $F_{\mathfrak{F}}$.
We shall call
$\mathfrak{F}
\equiv(V_{\mathfrak{F}}, F_{\mathfrak{F}},E_{\mathfrak{F}})$
the \bemph{free subgraph of $\GG$ induced by $\usi$.}
\end{dfn}

\begin{lem}\label{l:free.subgraph.no.bicycle}
In the setting of Proposition~\ref{p:ext} we have
	\[
	\wP_\cD\bigg(
	\Big\{
	(\GG,\usi) :
	\textup{any connected component of
	$\mathfrak{F}$
	contains a bicycle}
	\Big\}
	\bigg)
	= o_n(1)\,,
	\]
with high probability over $\cD$.

\begin{proof}
We assume throughout the proof that $\cD$ is fixed, such that its unordered version $\DD$ is bounded away from zero in the sense of
\eqref{e:pos.frac}
(as guaranteed by
Proposition~\ref{p:posfrac} with high probability). Then, as in Remark~\ref{r:sample.planted.msr},
 we first sample a uniformly random coloring
$\bm{\sigma}\in\mathcal{J}$, followed by a uniformly random matching $\mathfrak{M}$
such that $\mathfrak{M}\sim\bm{\sigma}$.
The resulting pair $(\mathfrak{M},\bm{\sigma})$
is equivalent to a sample
$(\GG,\usi)\sim\wP_\cD$. Let $\bh\equiv(\dbh,\hbh)$ be the empirical measure of $\usi$. We can assume that
	\beq\label{e:appx.nu.by.nustar}
	\Big\|\nu- \optnu[\omstar]
		\Big\|_\infty
		\le \f{\log n}{n^{1/2}}\,,
	\eeq
since this event holds with high probability under $\wP_\cD$ by Remark~\ref{r:planted.measure.empir.msr}.
Moreover, given $\bm{\sigma}$, the random matching $\mathfrak{M}$ can be explored in breadth-first manner, as we analyze next.

Fix an initial variable $v\in V$ of type $\bT$, and suppose under $\bm{\sigma}$ that it is incident to only $\SPIN{green}$ edges. 
Let $\mathfrak{F}(v)$ be the connected component of $\mathfrak{F}$ containing $v$.
Under the randomness of the matching $\mathfrak{M}$, the expected number of variables $w\in\mathfrak{F}(v)$
at unit distance from $v$ is
	\beq\label{e:free.branching}
	f(\bT) \equiv 
	\sum_{e\in\delta v}
	\sum_{\bL}
	\pi(\bL \,|\, \bt_e)
	\underbrace{
	\sum_{\usi_{\delta a}
		\in \set{\grn,\yel}^{\delta a}}
	\hbh_{\bL}
		(\usi_{\delta a} \,|\, \sigma_e=\grn)
	\sum_{e'\in\delta a\setminus e}
		\Ind{\sigma_{e'}=\grn}
		}_{\textup{denote this $g(\bt,\bL)$}}\,.
	\eeq
In the above $\pi(\bL\,|\,\bt)$ is the chance that an edge $e\in\delta v$ of type $\bt$ is matched with an edge $e'\in\delta a$ such that $\bL_a=\bL$. 
Recall from Definition~\ref{d:vfe} that the type $\bt$ includes the position $j(\bt)$ that the edge takes in the clause, so
	\beq\label{e:conditional.law.of.clause.type}
	\pi(\bL\,|\,\bt)
	= \hat{\DD}(\bL)
	\Bigg/\Bigg(
	\sum_{\bL'}
	\Ind{(\bL')_{j(\bt)} = \bt}
	\hat{\DD}(\bL')
	\Bigg)\,.
	\eeq
Consider the quantity $g(\bt,\bL)$ defined by \eqref{e:free.branching}: it is given explicitly by
	\beq\label{e:nice.free.branching}
	g(\bt,\bL)
	= 
	\f{\displaystyle
	\sum_{e'\in\delta a \setminus e}
	\dqstar_{e'}(\grn)\gamma_{e'}(\grn)
	\prod_{e'' \in \delta a \setminus
		\set{e,e'}}
		\bigg(
		\sum_{\sigma\in\set{\yel,\grn}}
	\dqstar_{e''}(\sigma)\gamma_{e''}(\sigma)
	\bigg)}{\displaystyle
	\prod_{e'' \in \delta a \setminus e}
	\bigg(
	\sum_{\sigma\in\set{\yel,\grn,\blu}}
	\dqstar_{e''}(\sigma)\gamma_{e''}(\sigma)
	\bigg)
	-\prod_{e'' \in \delta a \setminus e}
	\bigg(
	\dqstar_{e''}(\yel)\gamma_{e''}(\yel)
	\bigg)
	}
	\Bigg[ 1 + O\bigg( \f{\log n}{n^{1/2}}\bigg)
	\Bigg]\,,
	\eeq
where the error estimate comes from \eqref{e:appx.nu.by.nustar}.
If $\bL$ is a nice clause type, then we can use 
Definition~\ref{d:nice} together with
Corollary~\ref{c:clause.bp.weights.explicit} to bound $g(\bt,\bL) \le O(k/4^k)$.

If the variable $v$ is of a non-defective type $\bT$,
(Definition~\ref{d:j.defective}), then
it can only neighbor nice clause types $\bL$, and must have degree $|\delta v| = O(k2^k)$.
Thus we see from
\eqref{e:free.branching}
that $f(\bT) \le O(k^2/2^k)$. If $\bT$ is a
defective type,
then we recall from \eqref{e:every.variable.is.enclosed}
that $v$ must lie in a compound enclosure $U$ of diameter at most $\rprime$, such that the containment radius $s=\rad(v)$ is upper bounded by the distance between $v$ and the boundary of $U$. Moreover, by Definition~\ref{d:cpd.type}, the total type $\bT$ encodes the isomorphism class of $U$, the position of $v$ within $U$, and the simple total type of every edge in $U$. It follows that $\bT$ encodes the isomorphism class of $B_s(v)$, and the simple total type of every clause in $B_s(v)$. 
As before, suppose that $v$ is incident under $\bm{\sigma}$ to only $\SPIN{green}$ edges. Then,
under the randomness of the matching $\mathfrak{M}$,
the expected number of variables
in $\mathfrak{F}(v) \cap B_s(v)$ is
	\[
	f(\bT)
	\equiv
	\sum_{\ell=1}^s
	\sum_{v_0 a_1 v_1 \cdots a_\ell v_\ell}
	\prod_{j=1}^\ell
	\underbrace{\Bigg(
	\sum_{\usi_{\delta a_j} }
	\hbh_{a_j}
	\Big(\usi_{\delta a_j} \,\Big|\,
		\sigma_{v_{j-1} a_j}=\grn \Big)
	\mathbf{1}\Big\{
	\usi_{\delta a_j} 
	\in\set{\yel,\grn}^{\delta a_j},
	\sigma_{a_j v_j}=\grn
	\Big\}
	\Bigg)}_{g(v_{j-1},a_j,v_j)}\,,
	\]
where the second summation is over all paths
$v=v_0 a_1 v_1 \cdots a_\ell v_\ell$ emanating from $v$. If $a_j$ is a nice clause, then we have 
$g(v_{j-1},a_j,v_j) \le O(1/4^k)$
by a similar calculation as for \eqref{e:nice.free.branching}.
In any case we always have
$g(v_{j-1},a_j,v_j)\le1$. 
Recall from Definition~\ref{d:contained}
that $\mathfrak{B}(v,w)$ counts the number of defective variables on the shortest path between $v$ and $w$ (including the endpoints). 
By Definition~\ref{d:j.defective},
any clause neighboring to a non-defective variable must be nice, 
so the number of nice clauses
on the shortest path between $v$ and $w$
must be at least $d(v,w) - \mathfrak{B}(v,w)$.
It follows that
	\[
	f(\bT)
	\le
	\sum_{w\in B_s(v)}
	\bigg( \f{O(1)}{4^k}\bigg)^{d(v,w)-\mathfrak{B}(v,w) }
	\le \f14\,,
	\]
where the last inequality holds 
by the definition 
\eqref{eq-def-rad} of the containment radius,
since we set $s=\rad(v)$.

Now consider exploration of $\mathfrak{F}(v)$ by the following modified breadth-first search procedure.
We maintain a queue (a first-in first-out list) of variables, starting from $Q_0=(v)$. Then, at each time step $\ell\ge1$, we remove the first element $w_\ell$ of $Q_{\ell-1}$ to produce $(Q_{\ell-1})'$. We then explore the neighborhood of $w_\ell$ to
depth $s_\ell$ where $s_\ell=1$ if $w_\ell$ is non-defective, and $s_\ell=\rad(w_\ell)$ otherwise. Let $\mathfrak{F}(v)_\ell$ be the variables of $\mathfrak{F}(v)$ that are \emph{newly} discovered in this exploration. We then take the ones at the boundary of $B_{s_\ell}(w_\ell)$ and append them to the queue:
	\[
	Q_\ell 
	=\bigg( (Q_{\ell-1})'
	,\mathfrak{F}(v)_\ell \cap 
	\partial_\circ B_{s_\ell}(w_\ell)\bigg)\,.
	\]
The exploration continues until the first time $\tau(v)$ that $Q_{\tau(v)}=\emptyset$. Let $\zeta_\ell \equiv |Q_\ell|-|Q_{\ell-1}|$.
 Under the randomness of the matching $\mathfrak{M}$, 
 as long as $\ell/n = o_n(1)$
 and $Q_{\ell-1}\ne\emptyset$, 
 it follows from the preceding bounds
 on $f(\bT)$ that
	\[
	\E\zeta_\ell
	\le -1 + \f14 \bigg( 1 + o_n(1)\bigg)\,.
	\]
Once some vertices have already been explored,
\eqref{e:conditional.law.of.clause.type} is no longer an exact expression for the conditional 
law of subsequent clause types in the exploration
--- however, since we assume that $\ell/n=o_n(1)$ 
and that $\DD$ is bounded away from zero,
there remains a linear number of unexplored vertices of each type, so 
\eqref{e:conditional.law.of.clause.type}
is correct up to a multiplicative factor
$1 + o_n(1)$.
Thus, as long as 
$\ell/n=o_n(1)$
 and $Q_{\ell-1}\ne\emptyset$, 
we have $\E\zeta_\ell\le -1/2$.
Since all variables in $\GG$ must be fair 
(Definition~\ref{d:perfect.fair}),
we have $0 \le 1+\zeta_\ell \le \exp(k^2 R)$
with probability one. It follows by the Azuma--Hoeffding inequality that for a large enough constant $C=C(k,R)$,
	\[
	\P\Big(\tau(v) \ge C\log n\Big)
	\le
	\P\bigg( |Q_{C\log n}|
		-\E |Q_{C\log n}| \ge \f{C\log n}{2} \bigg)
	\ll \f1n\,.
	\]
On the other hand, again using that
$\DD$ is bounded away from zero,
we see that the chance for the exploration to close more than two cycles is at most $(\log n)^{O(1)}/n^2 \ll 1/n$. Taking a union bound over all $v\in V$ gives
	\[
	\wP_\cD\bigg(
	\Big\{
	(\GG,\usi) :
	\textup{any connected component of
	$\mathfrak{F}$
	contains a bicycle}
	\Big\}
	\bigg)
	= o_n(1)\,,
	\]
as long as 
$\DD$ is bounded away from zero
in the sense of \eqref{e:pos.frac}. The result follows by appealing to Proposition~\ref{p:posfrac}.
\end{proof}
\end{lem}

\begin{proof}[Proof of Proposition~\ref{p:ext}]
Let $\GG'\sim\P\equiv\P_{n,m}$ for $|m-n\alpha|\le n^{1/2}\log n$. Let $\GG\equiv\proc\GG'$ be the processed graph given by Definition~\ref{d:proc}.
Let $\usi$ be any valid coloring of $\GG$, and let $\ux$ be its corresponding frozen configuration: as long as the free subgraph of $\GG$ induced by $\usi$ does not contain a bicycle,
$\ux$ can be extended to a satisfying assignment of $\GG$, that is to say, $\usi$ is extendible
(Definition~\ref{d:extendible}).
Recalling \eqref{e:planted.measure.relation}, it follows that
	\begin{align*}
	\f{\E_\cD\extZZ}{\E_\cD\ZZ}
	&=\wP_\cD\bigg(\bigg\{
	(\GG,\usi) :
	\textup{$\usi$ is extendible}
	\bigg\}\bigg)\\
	&\ge
	\wP_\cD\bigg(\bigg\{
	\hspace{-3pt}
	\begin{array}{c}(\GG,\usi) :
	\textup{$\usi$ is a judicious coloring of $\GG$,
		and no connected}\\
	\textup{component of its free subgraph
		contains a bicycle}
	\end{array}\hspace{-3pt}
	\bigg\}\bigg)
	=1-o_n(1)
	\end{align*}
with high probability, where the last inequality is by 
Lemma~\ref{l:free.subgraph.no.bicycle}. This concludes the proof.
\end{proof}

\subsection{Separability for judicious colorings}
\label{ss:separability}

In this subsection we prove Proposition~\ref{p:sep}, which says that the first moment $\E_\cD\ZZ$ of judicious colorings is dominated by separable colorings (Definition~\ref{d:separable}), with high probability over $\cD$. We again let $\GG=(V,F,E)$ be the processed graph (with neighborhood sequence $\cD$), so that $\GG=\proc\GG'$ where $\GG'$ is the original $\ksat$ instance. We first show by a direct second moment calculation that the original instance $\GG'$ is very unlikely to have pairs of satisfying assignments of ``intermediate'' overlap:

\begin{lem}\label{l:standard.second.mmt}
Let $\GG'\sim\P_{n',n'\alpha}$ for $\alpha$ in the regime \eqref{e:alpha.regime}.
Recall \eqref{e:pairs.sat.assignments.overlap} that $Z^2[z]$ counts the number of pairs
$(\ux^1,\ux^2)$ of satisfying assignments of $\GG'$ with overlap $z$. Then, for any positive absolute constant $c$,
it holds for all 
	\[
	z \in 
	\bigg[0, \f12\bigg(1 - \f{ck}{2^{k/2}}
		\bigg)\bigg]
	\cup
	\bigg[\f12\bigg(1 - \f{ck}{2^{k/2}}\bigg),
	1-\f{k^2}{2^k}\bigg]\,,
	\]
we have
$\E_{n',n'\alpha} Z^2[z] 
	\le \exp\{ -\Omega(n' k^2/2^k) \}$.

\begin{proof}
Let $\E$ denote expectation under $\E_{n',n'\alpha}$.
Recall from 
\eqref{e:basic.second.moment.z}
that
	\[
	\E Z^2[z]
	= \f{\exp\{
		n' \ff_{\textsc{sat},2}(z)
	\}}{(n')^{O(1)}}
	= \f1{(n')^{O(1)}}
		\exp\Bigg\{ n'\bigg[
		\log2 + \Ent(z)
			+ \alpha\log\bigg( 1 - \f{2-z^k}{2^k}
		 \bigg)
	\bigg]
	\Bigg\}\,.
	\]
We use the inequality $\log(1-x) \le -x$ to bound
	\[\ff_{\textsc{sat},2}(z)
	\le \varphi(z) \equiv \log2 + 
	\Ent(z) - \alpha \bigg(
		\f{2-z^k}{2^k}\bigg)\,.
	\]
It suffices to prove $\varphi(z) \le - \Omega(k^2/2^k)$ for all $z$ in the claimed interval. By the restriction
\eqref{e:alpha.regime} on $\alpha$, we have
	\[
	\varphi(z)
	\le \Ent(z) - \log 2 + z^k \log2 
		+ O\bigg(\f1{2^k}\bigg)\,.
	\]
Next recall that $\Ent''(z) \le -4$ for all $z\in[0,1]$, so expanding around $z=1/2$ gives
	\[
	\varphi(z)
	\le - 2\bigg( z-\f12\bigg)^2 + z^k\log2\,.
	\]
This readily implies
$\varphi(z) \le -\Omega(k^2/2^k)$
for
	\[z \in 
		\bigg[0, \f12\bigg(1-\f{k}{2^{k/2}}\bigg)
		\bigg]
		\cup\bigg[
		\f12\bigg(1+\f{k}{2^{k/2}}\bigg)
		,\f12\bigg(1+\f{\log k}{k}\bigg)
		\bigg]\,.
	\]
It also implies $\varphi(z) \le -\Omega((\log k)^2/k^2)$ for
	\[
	z\in
	\bigg[
	\f12\bigg(1+\f{\log k}{k}\bigg),
	1-\f{2\log k}{k}
	\bigg]\,,
	\]
as well as $\varphi(z) \le -\Omega(1/k^{1/2})$ for
	\[z\in
	\bigg[1-\f{2\log k}{k},
	, 1-\f1{k^{3/2}}\bigg]\,.
	\]
Finally, a straightforward Taylor expansion near $z=1$ gives $\varphi(z) \le -\Omega( k^2(\log k)/2^k)$ for
	\[
	\ep \in\bigg[ 1-\f1{k^{3/2}},
	1- \f{k^2}{2^k} \bigg]\,,
	\]
concluding the proof.\end{proof}
\end{lem}

Lemma~\ref{l:standard.second.mmt} controls pairs of satisfying assignments of the original instance $\GG'$. The next result transfers this to a bound on pairs of judicious colorings of the processed instance $\GG=\proc\GG'$.

\begin{cor}\label{c:intermediate.overlap}
Recall from \eqref{e:second.mmt.decompose.by.z}
that $\ZZ^2[z]$ counts the number of pairs 
$(\usi^1,\usi^2)$ of judicious colorings of $\GG$, such that their corresponding frozen configurations
$\ux(\usi^i)$ have overlap $z$.
Let
	\[I_1\equiv\bigg[0,
		\f12\bigg(1-\f{k}{2^{k/2}}\bigg)
		\bigg]\cup 
		\bigg[ \f12\bigg(
			1+\f{k}{2^{k/2}}\bigg),
		 1-\f{k^2}{2^k}
		\bigg]\,,
	\]
and let $\ZZ^2[I_1]$ be the sum of $\ZZ^2[z]$ over $z\in I_1$. Then $\E_{\DD}[\ZZ^2[I_1]]
	\le \exp\{ -\Omega(nk^2/2^k) \}$ 
	with high probability over $\DD$.

\begin{proof} Let $\GG'=(V',F',E')$ with $|V'|=n'$, and $\GG=(V,F,E)=\proc\GG'$ with $|V|=n$. It follows from Proposition~\ref{p:small.fraction.removed.in.processing} that $n = n'(1-o_R(1))$ with high probability. Moreover, with high probability over $\cD$, in any judicious coloring the fraction of variables set to $\SPIN{free}$ is at most $4/2^k$. Given any frozen configuration $\ux$ of $\GG$, we can extend it to $\acute{\ux}\in\set{\minus,\plus,\free}^{V'}$ by setting $\acute{x}_v=\free$ for all $v\in V'\setminus V$. The resulting $\acute{\ux}$
is an ``almost-\textsc{sat} assignment'' in the sense that any clause in $\GG$ that does not neighbor a 
$\SPIN{free}$ variable must be satisfied. 
As long as $n = n'(1-o_R(1))$,
the number of $\SPIN{free}$ variables under $\acute{\ux}$ must be (crudely) at most $n'(5/2^k)$.

Suppose $(\usi^1,\usi^2)$ is a pair of judicious colorings of $\GG$, such that their corresponding frozen configurations $(\ux^1,\ux^2)$ agree on $nz$ variables. Define the extended configurations
$\acute{\ux}^i\in\set{\minus,\plus,\free}^{V'}$, and note that
	\[
	V'(\free)
	\equiv\bigg\{v\in V'
	: (\acute{x}^1)_v=\free 
	\textup{ or }
	(\acute{x}^2)_v=\free \bigg\}
	\]
has size $n'\pi \equiv |V'(\free)| \le n'(10/2^k)$.
Moreover, by definition we have
$V'(\free)\supseteq V'\setminus V$. Let
	\[
	V_=
	\equiv 
	\bigg\{v\in V'\setminus V'(\free)
	: (\acute{x}^1)_v=(\acute{x}^2)_v\bigg\}\,,
	\]
and note that
$n'(1-\pi)y\equiv|V_=|
\in[ nz-n'\pi, nz]$. Thus, with high probability over $\cD$, we have
	\beq\label{e:Zsq.decomposed.as.almost.sat}
	\ZZ^2[z]
	\le \sum_{\pi \le 10/2^k}
	\sum_{y: |y -z | \le 15/2^k }
	\bm{A}^2[\pi,y]
	\eeq
where $\bm{A}^2[\pi,y]$
counts the number of pairs $(\vec{\ux}^1,\vec{\ux}^2)$
 of almost-\textsc{sat} assignments with $|V'(\free)|=n'\pi$
and $|V_=| = n'y$. We then calculate
	\begin{align*}
	\E_{n',n'\alpha} \bm{A}^2[\pi,y]
	&\le 
	\binom{n'}{n'\pi}
	\binom{n'(1-\pi)}{n'(1-\pi)y}
	5^{n'\pi}
	2^{n'(1-\pi)}
	\Bigg(
	1- (1-\pi)^k
	\bigg( \f{2-y^k}{2^k} \bigg)
	\Bigg)^{n'\alpha}\\
	&
	\le\exp\Bigg\{
	 n'\bigg[ \ff_{\textsc{sat},2}(y) 
	 	+ O\bigg(\f{k}{2^k}\bigg) 
	 	\bigg]
	 \Bigg\}
	\le \exp\Bigg\{ -\Omega
		\bigg( \f{n'k^2}{2^k}\bigg)\Bigg\}\,,
	\end{align*}
where the last inequality follows by Lemma~\ref{l:standard.second.mmt}
for all $\pi,y$
in the range specified by \eqref{e:Zsq.decomposed.as.almost.sat}, when $z\in I_1$. It follows that
$\E_{n',n'\alpha}\ZZ^2[z] \le \exp\{-\Omega(n'k^2/2^k)\}$, and then applying 
Markov's inequality gives $\E_{\DD}[\bm{Z}^2[I_1]] \le \exp\{ -\Omega(nk^2/2^k) \}$ with high probability over $\cD$, as claimed.
\end{proof}
\end{cor}

To prove Proposition~\ref{p:sep} it remains to address pairs $(\ux^1,\ux^2)$ of frozen configurations with overlap in $[1-k^2/2^k,1]$, since the rest of $[0,1]$ is covered by $I_1$ from Corollary~\eqref{c:intermediate.overlap}, or by $I_0$ from \eqref{e:middle.interval}. We can also ignore overlaps very close to one, since for any $\ux^1$, the total number of configurations $\ux^2$ within Hamming distance $(\log n)^4$ is much smaller than $\exp\{(\log n)^5\}$. We thus restrict our attention to pairs $(\ux^1,\ux^2)$ of frozen configurations on $\GG$ with overlap in 
	\beq\label{e:separable.near.one.interval}
	I_2\equiv
	\bigg[
	1-\f{k^2}{2^k}, 1-\f{(\log n)^4}{n}
	\bigg]\,.\eeq
We begin with a combinatorial lemma which says that if such a pair exists, then the graph $\GG$ must (deterministically) contain a particular structure. In the remainder of this section we show that such structures are very unlikely to exist, from which Proposition~\ref{p:sep} will follow.

\begin{lem}\label{l:output.s.with.connections}
Let $\GG=(V,F,E)=\proc\GG'$ be a processed $\ksat$ instance, with $|V|=n$. Suppose that $(\ux^1,\ux^2)$ is a pair of frozen configurations on $\GG$, each corresponding to a judicious coloring of $\GG$, with
$\set{v\in V: (x^1)_v=(x^2)_v}=nz$ for some $z\in I_2$, as defined by \eqref{e:separable.near.one.interval}.
Then there exists a subset $S\subseteq V$,
	\beq\label{e:sep.bounds.on.size.of.S} 
	\f{(\log n)^3}{n}
	\le \f{|S|}{|V|} \le \f{k^2}{2^k}\,,
	\eeq
and for each $v\in S$ a set
$C(v)$ of directed paths
$u\to a\to v$ inside $\GG$, such that the following hold:

\begin{enumerate}[a.]
\item Let $\EPSCONST\equiv(\DELTACONST)^3$ 
(cf.\ Definition~\ref{d:orderly}).
Let $D$ be the defective variables in $S$,
and $B\equiv S\setminus D$. Then
$|D| \le 2 \EPSCONST|S|$.

\item For each $v\in S$, every element of $C(v)$
is of the form $u\to a\to v$
where $a$ is forcing to $v$ under $\ux^1$,
and $u\in S\setminus\set{v}$.

\item For each $v\in D$, the set $C(v)$ contains exactly one element $u\to a\to v$.

\item For each $v\in B$,
every clause $a$ that forces to $v$ under $\ux^1$ appears in exactly one element of $C(v)$.
\end{enumerate}
Let $C_S$ denote the set of all paths appearing in the sets $C(v)$ for $v\in S$.

\begin{proof}
We divide the proof into a few steps:\smallskip

\noindent\bemph{Step 1. Use $(\ux^1,\ux^2)$ 
to define an ``internally forced'' subset $X\subseteq V$.}
Since both the $\ux^i$
correspond to judicious colorings $\usi^i$,
they must each contain the same number of free variables,
from which it follows that
	\[
	\bigg|
	\bigg\{v\in V :
	(x^1)_v\ne\free,
	(x^2)_v=\free\bigg\}\bigg|
	=\bigg|
	\bigg\{v\in V :
	(x^1)_v=\free,
	(x^2)_v\ne\free\bigg\}\bigg|\,.
	\]
Consequently, if $d_H(\ux^1,\ux^2)
= |\set{v\in V : (x^1)_v \ne (x^2)_v}|$, the set
	\[
	X\equiv \bigg\{v\in V:
		(x^1)_v\ne (x^2)_v,
		(x^1)_v\ne\free\bigg\}
	\]
must have cardinality $|X| \ge d_H(\ux^1,\ux^2)/2$. The assumed overlap $z\in I_2$ then implies
	\beq\label{e:sep.bounds.on.X}
	\f{(\log n)^4}{2n}
	\le \f{|X|}{|V|}
	\le \f{k^2}{2^k}\,.
	\eeq
Since the $\ux^i$ differ on $X$
and $\ux^1$ is not $\SPIN{free}$ on $X$,
it must be that $X$ is \bemph{internally forced} with respect to $\ux^1$ --- that is to say, each $v\in X$ is forced, but only by clauses involving at least one other variable from $X$. This implies that for any $v\in X$ we can find a path
	\beq\label{e:forcing.path} 
	v=u_0 \leftarrow a_1 \leftarrow u_1
	\leftarrow a_2 \leftarrow u_2
	\leftarrow \ldots\eeq
where $a_i$ is forcing to $u_{i-1}$, and 
$u_i \in X$ with $u_i \ne u_{i-1}$. Since the graph is finite, any such path must eventually close on itself to form a directed cycle within $X$. It follows that every maximal connected component of $X$ must contain at least one cycle.\smallskip

\noindent
\bemph{Step 2. Extract $S\subseteq X$ 
which has a small fraction of defects, and 
is internally forced.} If $X$ does not intersect 
any compound enclosure
(Definition~\ref{d:enclosure})
in the processed graph $\GG$,
then $X$ does not contain any defective variable, and we simply take $S=X$.

If $U = U^\circ \sqcup \pd_\circ U$ is a compound enclosure and
 $X \cap U\ne\emptyset$, then we must have $X \cap \pd_\circ U\ne\emptyset$ --- indeed, if it were not the case, then $X\cap U = X\cap U^\circ$ would be a collection of maximal connected components of $X$, and each of these components would be a tree (since $U$ itself must be a tree). This contradicts our earlier observation that every maximal connected component of $X$ must contain at least one cycle.

We shall define a subset $S_U\subseteq X\cap U$ as follows. As we just saw, each connected component of $X\cap U$ induces a tree $\tree\subseteq\GG$ that intersects $\pd_\circ U$. Since $X$ is internally forced, the tree $\tree$ must be covered by forcing paths \eqref{e:forcing.path}; this makes $\tree$ into a \textsc{dag} (directed acyclic graph). Consider the variables of the \textsc{dag} with out-degree zero (meaning that they do not participate in any clause that forces another variable): if these all lie in $\pd_\circ U$, then we simply put
 all the variables of $\tree_U$ into $S_U$. 
Otherwise, suppose the \textsc{dag} contains a variable $v\notin \pd_\circ U$ with out-degree zero:
then we can replace $X$ by $X\setminus \set{v}$, which is also internally forced. We continue pruning variables in this way until we arrive at $X'\subseteq X$ such that in the \textsc{dag} corresponding to $X'\cap U$, all variables of out-degree zero lie in $\pd_\circ U$. We then set $S_U = X' \cap U$: by construction, $S_U$ contains $X \cap \pd_\circ U$. Moreover, if we take $S$ to be the union of 
all the perfect variables in $X$ together with $S_U$
for all compound enclosures $U$, then $S$ is internally forced.

Each variable in $S_U \cap U^\circ$ has positive out-degree (by the above construction) and positive in-degree (since $S$ is internally forced). This means that if we view $S_U$ as an undirected subgraph of $\GG$, each of its connected components must be a tree all of whose leaves lie in $\pd_\circ U$. It follows that $S_U$ can be covered by a disjoint union of (undirected) paths $\gamma$ where each $\gamma$ has a variable in $\pd_\circ U$ at one (or both) of its endpoints. 
Each $\gamma$ has length at most $\rprime$, the maximum diameter of $U$. By the construction of compound enclosures (Definition~\ref{d:enclosure}), each variable in $v\in \pd_\circ U$ is perfect, hence orderly (Definition~\ref{d:orderly}), so each $\gamma$ has at most $\EPSCONST$ fraction defective variables.
This implies that $S_U$ has at most $\EPSCONST$ fraction of defective variables. Moreover, if $\gamma$ contains any defective variable, then it must contain at least $1/\EPSCONST$ variables. It follows that the fraction of defective variables in $\gamma\setminus\pd_\circ U$ is at most
	\[
	\f{\EPSCONST \cdot (\textup{number of variables in $\gamma$})}{
	\textup{number of variables in 
	$(\gamma\setminus \pd_\circ U)$}
	}
	\le \f{\EPSCONST}{1-2\EPSCONST}
	\le 2\EPSCONST\,.
	\]
It follows that $S_U \setminus \pd_\circ U$ has at most $2\EPSCONST$ fraction of defective variables. Since every defective variable in $S$ must be contained in some $S_U \setminus \pd_\circ U$,
and the sets $S_U \setminus \pd_\circ U$ are pairwise disjoint, we conclude that $S$ has 
at most $2\EPSCONST$ fraction of defective variables. 
\smallskip

\noindent\bemph{Step 3. Choose paths $C(v)$ for $v\in S$.} Recall that $S=B\sqcup D$ where $D$ is the subset of defective variables in $S$. For $v\in D$, choose a single path $u\to a\to v$ such that $a$ is forcing to $v$ and $u\in S$, and let $C(v)$ consist of this path alone. For $v\in B$, for every clause $a$ that is forcing to $v$, choose a path $u\to a\to v$ with $u\in S$, and add this path to $C(v)$. Thus, for $v\in B$, every clause forcing to $v$ is covered by exactly one path in $C(v)$. This finishes the construction. Finally, since $S\subseteq X$ and $X$ satisfies \eqref{e:sep.bounds.on.X}, it is clear that $S$ satisfies the upper bound 
in \eqref{e:sep.bounds.on.size.of.S}, so it remains only to verify the lower bound. To this end, partition $X=X'\sqcup X''$ where $X'$ is the subset of all perfect variables in $X$. We can define a mapping $f: X''\to X'$ where we map $u\in X''$
to $v\in X'$ such that $v$ is on the boundary of the compound enclosure containing $u$. Recall that the maximum size of a compound enclosure is bounded by a constant depending only on $k$ and $R$. Moreover, a perfect variable must be nice, so its degree is $O(k2^k)$. This implies that for any $v\in X'$, its preimage $f^{-1}(v)\subseteq X''$ has size at most $C(k,R)$. It follows that $|X''| \le C(k,R) |X|$. Since $S\supseteq X'$ by construction, we conclude using the lower bound in \eqref{e:sep.bounds.on.X} that (for large $n$) we have
	\[
	\f{|S|}{|V|}
	\ge\f{|X'|}{|V|}
	\ge \f{(\log n)^4}{C(k,R) \cdot 2n}
	\ge \f{(\log n)^3}{n}\,,
	\]
as claimed. This concludes the proof.
\end{proof}
\end{lem}

In the remainder of this subsection, we show that the structure described by Lemma~\ref{l:output.s.with.connections} is unlikely to occur under the planted measure.
We make the following definitions based on the lemma:

\begin{dfn}[permissible tuples]\label{d:sep.permissible}
As in Remarks~\ref{r:processed.graph.types.CM}~and~\ref{r:sample.planted.msr}, we fix a set of variables $V$ and a set of clauses $F$, equipped with incident half-edges $\delta V$ and $\delta F$, all labelled with types according to $\cD$. We do not, as yet, take any matching of $\delta V$ to $\delta F$.
We say that the tuple $(u,j_u,a,j_v,v)$ is \bemph{permissible} if $u$ and $v$ are distinct variables, $a$ is a clause, 
and $j_u\ne j_v$ are indices such that
$\bL_a(j_u)\in\bT_u$
and $\bL_a(j_v)\in\bT_v$. There is a unique half-edge $e_u\in\delta u$ of type $\bt_{e_u}=\bL_a(j_u)$; we call this the \bemph{initial half-edge} of the tuple.
Likewise there is a unique half-edge
$e_v\in\delta v$ of type $\bt_{e_v}=\bL_a(j_v)$;
we call this the \bemph{final half-edge} of the tuple.

Let $\bm{\Pi}(\cD)$ denote the collection of all pairs $(S,P_S)$ such that $S$ is a subset of $V$ satisfying \eqref{e:sep.bounds.on.size.of.S},
and $P_S$ is a collection of permissible tuples
$(u,j_u,a,j_v,v)$ such that $u,v\in S$; every variable in $S$ appears as the final (i.e., fifth) entry of at least one tuple in $P_S$; and no clause appears in more than one tuple in $P_S$. Moreover we require that 
for every $v\in S$, all half-edges in $P(v)$ have the same sign $\lit$, where $P(v)$ denotes the set of all half-edges $e_v\in\delta v$
that arise as the final half-edge of a tuple in $P_S$.


Note that, given $\cD$, we can partition $S=B\sqcup D$ where $D$ denotes the defective variables in $S$.
Let $\bm{\Pi}_*(\cD)$ denote the subset of elements $(S,P_S)\in\bm{\Pi}(\cD)$ such that $|D| \le 2\EPSCONST|S|$;
 and each $v\in D$ appears as the final element of exactly one tuple in $P_S$.
\end{dfn}

\begin{dfn}[events based on permissible tuples]
\label{d:sep.events}
Again fix $V,F,\delta V,\delta F$, labelled with types according to $\cD$.
Take $(S,P_S)\in\bm{\Pi}(\cD)$ and partition $S=B\sqcup D$ as above. Let $F_S$ denote all the clauses appearing in $P_S$, and note that given $\cD$ we can also partition $F_S= F_B\sqcup F_D$ where $F_D$ denotes all the clauses in $F_S$ that are internal to defects, and $F_B\equiv F_S\setminus F_B$.
As in Remark~\ref{r:sample.planted.msr},
a pair $(\mathfrak{M},\bm{\sigma})$ 
with $\mathfrak{M}\sim\bm{\sigma}$ is equivalent to a pair $(\GG,\usi)$ where $\GG$ is a graph with neighborhood sequence $\cD$, and $\usi$ is a judicious coloring on $\GG$. We define the following events on the space of all pairs $(\mathfrak{M},\bm{\sigma})$.
 First recall that each clause $a\in F_S$ appears in a unique tuple
$(u,j_u,a,j_v,v)\in P_S$. Let
	\[
	\bm{Y}_a
	=\bigg\{(\mathfrak{M},\bm{\sigma})
	: \bm{\sigma}_{aj_v}=\red
	\textup{ and }
	\bm{\sigma}_{a \ell }=\yel
	\textup{ for all }
	\ell \ne j_v \bigg\}\,,\quad
	\bm{Y}_B
	\equiv \bigcap_{a\in F_B} \bm{Y}_a\,.
	\]
Membership of 
$(\mathfrak{M},\bm{\sigma})$ in $\bm{Y}_a$
depends only on $\bm{\sigma}_{\delta a}$. Let
	\[
	\bm{K}_a
	= \bigg\{
	\begin{array}{c}
	(\mathfrak{M},\bm{\sigma}):
	\textup{$\mathfrak{M}$
	contains a path}\\
	\textup{passing through
	$(u,a,v)$}
	\end{array}
	\bigg\}\,,\quad
	\bm{K}_B
	\equiv \bigcap_{a\in F_B}\bm{K}_a\,,\quad
	\bm{K}_D
	\equiv \bigcap_{a\in F_D}\bm{K}_a\,.
	\]
Let $\bm{K}_S\equiv\bm{K}_B \cap \bm{K}_D$. Membership in $\bm{K}_S$ depends only on the matching $\mathfrak{M}$, or equivalently the graph $\GG$, so we sometimes abuse notation and write simply
$\GG\in\bm{K}_S$. Next recall that for each $v\in S$, all the half-edges $P(v)\subseteq\delta v$ have the same sign $\lit$. Let
	\[
	\bm{X}_v
	\equiv \left\{
	(\mathfrak{M},\bm{\sigma}):
	\left\{ \begin{array}{c}
	\textup{$\bm{\sigma}_{\delta v}$
	is \SPIN{red} on all half-edges in $P(v)$,}\\
	\textup{$\SPIN{blue}$ on all edges in $\delta v(\plus\lit)\setminus P(v)$,}\\
	\textup{$\SPIN{yellow}$
		on all edges in $\delta v(\minus\lit)$}
	\end{array}
	\right\}\right\}\,,\quad
	\bm{X}_B
	\equiv \bigcap_{v\in B} \bm{X}_v\,.
	\]
Membership of 
$(\mathfrak{M},\bm{\sigma})$ in $\bm{X}_v$
depends only on $\bm{\sigma}_{\delta v}$.
\end{dfn}

It follows immediately from Lemma~\ref{l:matching.of.edges.near.defects} that
$\wP_\cD(\bm{K}_D)=\P_\cD(\bm{K}_D)$.

\begin{lem}\label{l:sep.colors.given.defect}
Fix a pair $(S,P_S)\in\bm{\Pi}_*(\cD)$
as in Definition~\ref{d:sep.permissible}.
For the events of Definition~\ref{d:sep.events}, we have
	\[
	\wP_\cD\Big(\bm{X}_B,\bm{Y}_B \,\Big|\,\bm{K}_D
	\Big)
	=\wP_\cD(\bm{X}_B)
	\wP_\cD(\bm{Y}_B)
	\le
	\bigg(\f{O(1)}{2^k}\bigg)^{|B| + 2|F_B|}\,.
	\]

\begin{proof}
Let $V,F,\delta V,\delta F$ be fixed as in Remark~\ref{r:sample.planted.msr}. The
event $\bm{K}_D$ depends on the matching of half-edges internal to defects.
The event $\bm{X}_B$
concerns the coloring $\bm{\sigma}$
on variable-incident half-edges that will not be internal to defects.
The event $\bm{Y}_B$
concerns the coloring $\bm{\sigma}$
on clause-incident half-edges
that will not be internal to defects.
It follows that the three events are mutually independent, therefore
	\beq\label{e:sep.independence.across.types}
	\wP_\cD\Big(\bm{X}_B,\bm{Y}_B \,\Big|\,\bm{K}_D
	\Big)
	=\wP_\cD(\bm{X}_B)\wP_\cD(\bm{Y}_B)\,.
	\eeq
It follows from Remark~\ref{r:sample.planted.msr} that
$\wP_\cD(\bm{X}_B)$
is the probability that a uniformly random element $\bm{\sigma}$ from $
\mathcal{J}$ satisfies the conditions of $\bm{X}_B$.
Suppose we instead sample $\bm{\sigma}:\delta V \sqcup \delta F\to\set{\RYGB}$ according to the measure
	\[\check{\wP}_\cD(\bm{\sigma})
	=\Bigg\{\prod_{v\in V} 
	\dbh_v(\bm{\sigma}_{\delta v})
	\Bigg\}\Bigg\{\prod_{a\in F}
	\hbh_a(\bm{\sigma}_{\delta a})
	\Bigg\}\,,\]
where $(\dbh,\hbh)\equiv\bh\equiv\optnu[\omstar]$.
It follows from 
Lemma~\ref{l:nu.star.as.optimizer.for.starpi}
and Corollary~\ref{c:cohere.weights} that
	\[\check{\wP}_\cD(\bm{\sigma})
	=\Bigg\{
	\prod_{v\in V} \Bigg(
	\f{ \varphi_v(\bm{\sigma}_{\delta v}) }{\dbz_v}
	\prod_{e\in\delta v}
	\hqstar_e(\bm{\sigma}_e)
	\Bigg)
	\Bigg\}\Bigg\{
	\prod_{a\in F}
	\Bigg(
	\f{\hat{\varphi}_a(\bm{\sigma}_{\delta a}}
	{ \hbz_a }
	\prod_{f \in \delta a}\bigg(
	\gamma_f(\bm{\sigma}_f)
	\dqstar_f(\bm{\sigma}_f)\bigg)
	\Bigg)\Bigg\}\,.\]
Moreover, if $a$ is a clause of type $\bL$
and $e$ is the $j$-th edge in $\delta a$,
then $\gamma_e$ depends only on $(\bL,j)$.
Thus each $\bm{\sigma}$ in $\mathcal{J}$ receives precisely the same weight under the measure $\check{\wP}_\cD$.
Moreover, if we sample $\bm{\sigma}\sim\check{\wP}_\cD$, then the \emph{expected} empirical measure of $\bm{\sigma}$ is $\bh=\optnu$.
Since the total number of types is at most $C(k,R)$, it follows by the local central limit theorem that
$\check{\wP}_\cD(\bm{\sigma}\in\mathcal{J}) = 1/n^{O(1)}$, so
	\[
	\wP_\cD(\bm{X}_B)
	\le n^{O(1)}
	\check{\wP}_\cD(\bm{X}_B)
	= n^{O(1)}
	\prod_{v\in B}
	\dbh_v( \bm{X}_v)\,.
	\]
A variable $v\in B$ must be non-defective, hence nice
(Definition~\ref{d:nice}). It follows that
	\begin{align*}
	\dbh_v( \bm{X}_v)
	&=\f1{\dbz_v}
	\prod_{e\in \delta v \cap P(v)}
		\hqstar_e(\red)
	\prod_{e\in \delta v(\lit) \setminus P(v)}
		\hqstar_e(\blu)
	\prod_{e\in \delta v(\minus\lit)}
		\hqstar_e(\yel)\\
	&\le O(1)
	\prod_{e\in \delta v \cap P(v)}
		\f{\hqstar_e(\red)}{\hqstar_e(\blu)}
	\prod_{e\in \delta v(\lit)}
		\f{\hqstar_e(\blu)}{\hqstar_e(\red,\blu)}
	\le \f{O(1)}
	{(2^k)^{1+|P(v)|}}\,.
	\end{align*}
Substituting into the previous calculation gives
	\beq\label{e:sep.colors.on.vars}
	\wP_\cD(\bm{X}_B)
	\le 
	n^{O(1)}
	\bigg(\f{O(1)}{2^k}\bigg)^{|B|+|F_B|}\,.
	\eeq
Similarly, a clause $a\in F_B$ can neighbor at most one defective variable, so it follows from 
Definition~\ref{d:j.defective}
that all the variables in the clause must be nice.
It then follows using 
Corollary~\ref{c:clause.bp.weights.explicit} that
	\[
	\hbh_a(\bm{Y}_a)
	=\f1{\hbz_a}
	\gamma_j(\red) \dqstar_j(\red)
	\prod_{\ell\ne j}\bigg\{
	\gamma_\ell(\yel)
	\dqstar_\ell(\yel)\bigg\}
	\le
	O(1)
	\f{\dqstar_j(\red)}
		{\dqstar_j(\set{\yel,\grn,\blu})}
	\prod_{\ell\ne j}
	\f{\dqstar_\ell(\yel)}
		{\dqstar_\ell(\set{\yel,\grn,\blu})}
	\le \f{O(1)}{2^k}\,.
	\]
It follows by a similar argument as for $\bm{X}_B$ that
	\beq\label{e:sep.colors.on.clauses}
	\wP_\cD(\bm{Y}_B)
	\le n^{O(1)}
	\check{\wP}_\cD(\bm{Y}_B)
	\le n^{O(1)}
	\bigg(\f{O(1)}{2^k}\bigg)^{|B|+|F_B|}\,.
	\eeq
The result follows by combining
\eqref{e:sep.independence.across.types},
\eqref{e:sep.colors.on.vars}, and \eqref{e:sep.colors.on.clauses}.
\end{proof}
\end{lem}

\begin{lem}\label{l:sep.B.edges.match.bias}
Fix a pair $(S,P_S)\in\bm{\Pi}_*(\cD)$
as in Definition~\ref{d:sep.permissible}.
For the events of Definition~\ref{d:sep.events}, we have
	\[
	\f{\wP_\cD(\bm{K}_B \,|\, 
	\bm{X}_B,\bm{Y}_B,\bm{K}_D)}
		{\P_\cD(\bm{K}_B \,|\, \bm{K}_D)}
	=\f{\wP_\cD(\bm{K}_B \,|\, \bm{X}_B,\bm{Y}_B)}
		{\P_\cD (\bm{K}_B)}
	\le \Big(O(1) 2^k\Big)^{|F_B|}
	\,.
	\]

\begin{proof}
Recall from Definition~\ref{d:sep.permissible} that each tuple $(u,j_u,a,j_v,v)\in P_S$
distinguishes an initial half-edge $e_u\in\delta u$
and a final half-edge $e_v\in\delta v$.
For each edge type $\bt$,
let $a_{\bt}(\yel)$
count the number of tuples in $P_S$
where the initial half-edge has type $\bt$.
Let $a_{\bt}(\red)$
count the number of tuples in $P_S$
where the final half-edge has type $\bt$.
 Let $a_{\bt}\equiv a_{\bt}(\red)+a_{\bt}(\yel)$.
 Then note that
	\[
	\sum_{\bt}
	a_{\bt}(\yel)
	=\sum_{\bt}
	a_{\bt}(\red)
	= \f12\sum_{\bt}a_{\bt}
	= |F_B|\,.
	\]
Since we restricted to clauses in $F_B$, any type $\bt$ with $a_{\bt}>0$ must be nice. The events $\bm{K}_B$ and $\bm{K}_D$ involve edges of distinct types, so they are independent under $\P_\cD$. It follows that
	\[
	\P_\cD(\bm{K}_B\,|\,\bm{K}_D)
	=\P_\cD(\bm{K}_B)
	=\prod_{\bt}
	\Bigg\{
	\prod_{i=0}^{a_{\bt}-1}
	\f1{n_{\bt}-i}
	\Bigg\}
	=\prod_{\bt}
	\f1{(n_{\bt})_{a_{\bt}}}
	\]
(using the standard notation for the falling factorial). On the other hand, under the measure $\wP_\cD$, if we condition on the events 
$\bm{X}_B$ and $\bm{Y}_B$, then the event $\bm{K}_B$ is more likely to occur because we have conditioned the edges involved to have compatible colorings:
	\[\wP_\cD\Big(\bm{K}_B\,\Big|\,
		\bm{X}_B,\bm{Y}_B,\bm{K}_D\Big)
	=\wP_\cD\Big(\bm{K}_B\,\Big|\,
		\bm{X}_B,\bm{Y}_B\Big)
	= \prod_{\bt}
	\f1{
	(n_{\bt}\cdot\starpi_{\bt}(\yel))_{a_{\bt}(\yel)}
	(n_{\bt}\cdot\starpi_{\bt}(\red))_{a_{\bt}(\red)}
	}\,,
	\]
which is clearly larger than the quantity
$\P_\cD(\bm{K}_B\,|\,\bm{K}_D)$ calculated just above. For any integers $0<a<b$ we have using Stirling's formula that
	\[
	b^a \ge (b)_a
	= \f{b!}{(b-a)!}
	\asymp
	\f{b^a}{e^a}
	\bigg(\f{b}{b-a}\bigg)^{1/2+b-a}
	\ge 
	\f{b^a}{e^a}\,.
	\]
It follows by combining the above that
	\[
		\f{\wP_\cD(\bm{K}_B \,|\, 
	\bm{X}_B,\bm{Y}_B,\bm{K}_D)}
		{\P_\cD(\bm{K}_B \,|\, \bm{K}_D)}
	=\f{\wP_\cD(\bm{K}_B \,|\, \bm{X}_B,\bm{Y}_B)}
		{\P_\cD (\bm{K}_B)}
	\le
	\prod_{\bt}
	\f{e^{a_{\bt}}}{
	(\starpi_{\bt}(\yel))^{a_{\bt}(\yel)}
	(\starpi_{\bt}(\red))^{a_{\bt}(\red)}
	}
	\le \Big( O(1) 2^k\Big)^{|F_B|}\,,
	\]
as claimed.
\end{proof}
\end{lem}

\begin{cor}\label{c:sep.main.bound}
Fix a pair $(S,P_S)\in\bm{\Pi}_*(\cD)$
as in Definition~\ref{d:sep.permissible}.
For the events of Definition~\ref{d:sep.events}, we have
	\[\f{\wP_\cD(
	\bm{X}_B,\bm{Y}_B,\bm{K}_S)}
		{\P_\cD(\bm{K}_S)}
	\le
	\bigg( \f{O(1)}{2^k}\bigg)^{|B|+|F_B|}
	\le
	\bigg( \f{O(1)}{2^k}\bigg)^{(1-2\EPSCONST)
		(|S|+|P_S|)}
	\]
\begin{proof}
The first inequality follows directly by combining
Lemmas~\ref{l:matching.of.edges.near.defects},
\ref{l:sep.colors.given.defect},
and \ref{l:sep.B.edges.match.bias}. Next, recall 
the assumption from Definition~\ref{d:sep.permissible}
 that
$|B| \ge (1-2\EPSCONST)|S|$. It follows that
	\[
	\f{|F_B|}{|F_S|}
	\ge
	\f{\displaystyle\sum_{v\in B} |P(v)|}{|F_S|}
	=\f{\displaystyle\sum_{v\in B} |P(v)|}{
	\displaystyle\sum_{v\in B} |P(v)| + |D|}
	\ge \f{|B|}{|B|+|D|}
	= \f{|B|}{|S|} \ge 1-2\EPSCONST\,,
	\]
which implies the second inequality
since $|F_S|=|P_S|$.
\end{proof}
\end{cor}

\begin{proof}[Proof of Proposition~\ref{p:sep}]
Take $I_1$ as in Corollary~\ref{c:intermediate.overlap},
and $I_2$ as in
\eqref{e:separable.near.one.interval}. 
Then,
recalling Definition~\ref{d:separable}
and \eqref{e:def.ZZ},
	\[\sepZZ
	\le
	\ZZ^2[I_1]
	+ \underbrace{ \sum_{\usi}
	\mathbf{1}\Bigg\{
	\bigg|
	\bigg\{
	\textup{$\usi'$}
	:\f{|v\in V: x(\usi)_v=x(\usi')_v|}{|V|}
	\in I_2
	\bigg\}
	\bigg| \ge \f{\exp\{(\log n)^5\}}{2}
	\Bigg\}
	}_{\textup{denote this $(\sepZZ)'$}}
	\,.
	\]
We have a bound on
$\E_{\DD}\ZZ^2[I_1]=\E_\cD\ZZ^2[I_1]$
from Corollary~\ref{c:intermediate.overlap}.
It follows from the definition of $\wP_\cD$ that
	\[
	r'(\cD)
	\equiv\f{\E_\cD(\sepZZ)'}{\E_\cD\sepZZ}
	=\wP_\cD\Bigg(
	\Bigg\{
	(\GG,\usi):
	\bigg|
	\bigg\{
	\textup{$\usi'$}
	:\f{|v\in V: x(\usi)_v=x(\usi')_v|}{|V|}
	\in I_2
	\bigg\}
	\bigg| \ge \f{\exp\{(\log n)^5\}}{2}
	\Bigg\}\Bigg)\,.\]
Recall that the events of Definition~\ref{d:sep.events} all depend on the choice of $(S,P_S)$. To make this explicit, we now write $\bm{K}[S,P_S]\equiv \bm{K}_S$
and $\bm{T}[S,P_S] \equiv \bm{X}_B \cap \bm{Y}_B \cap \bm{K}_S$. It then follows from Lemma~\ref{l:output.s.with.connections} 
and Corollary~\ref{c:sep.main.bound} that
	\[r'(\cD)
	\le\sum_{(S,P_S)\in\bm{\Pi}_*(\cD)}
		\wP_\cD(\bm{T}[S,P_S] )
	\le\sum_{(S,P_S)\in\bm{\Pi}_*(\cD)}
		\f{\P_\cD(\bm{K}[S,P_S] )}
		{2^{k(1-3\EPSCONST) (|S|+|F_S|)} }
		\,.
	\]
We now wish to take expectation over the law $\P=\P_{n',m'}$ of the original graph $\GG'=(V',F',E')$, of which $\GG=\proc\GG'$ is the processed version. To this end, let $\bm{\Pi}'$ denote the set of all pairs $(S,P_S)$ where
$S\subseteq V$ satisfies \eqref{e:sep.bounds.on.size.of.S}, and
 $P_S$ is a collection of tuples
$(u,j_u,a,j_v,v)$ where $u\ne v$ in $S$,
$a\in F'$, every $v\in S$ appears as the last entry of at least one element of $P_S$, and no clause appears more than once in $P_S$.
Note that if $(S,P_S)\in\bm{\Pi}'$ and
$\P_\cD(\bm{K}[S,P_S] )$ is positive, then in fact
$(S,P_S)\in\bm{\Pi}(\cD)$. Thus, writing $\E$ for expectation under $\P$, we have (by the tower property of conditional expectation)
	\begin{align*}
	\E r'(\cD)
	&\le
	\sum_{(S,P_S)\in\bm{\Pi}'}
	\E\bigg(
	\Ind{(S,P_S)\in\bm{\Pi}(\cD)}
	\f{\P_\cD(\GG \in \bm{K}[S,P_S] )}
		{2^{k(1-3\EPSCONST) (|S|+|F_S|)} }
	\bigg)
	= \sum_{(S,P_S)\in\bm{\Pi}'}
	\E\bigg(
	\f{\P_\cD(\GG\in\bm{K}[S,P_S] )}
		{2^{k(1-3\EPSCONST) (|S|+|F_S|)} }
	\bigg)\\
	&= \sum_{(S,P_S)\in\bm{\Pi}'}
	\f{ \P( \GG\in \bm{K}[S,P_S])}
		{2^{k(1-3\EPSCONST) (|S|+|F_S|)} }
	\le \sum_{(S,P_S)\in\bm{\Pi}'}
	\f{ \P( \GG'\in \bm{K}[S,P_S])}
		{2^{k(1-3\EPSCONST) (|S|+|F_S|)} }
	\,,
	\end{align*}
where the last step uses that if $\bm{K}[S,P_S]$ occurs for the processed graph $\GG$,
then it also occurs for the original graph $\GG'$.
Taking into account the number of choices for $S$ and $P_S$, as well as the probability for the edges to be present under $\P$, we can bound the above as
	\[
	\E r'(\cD)
	\le
	o_n(1)
	+
	\sum_s
	\mathbf{1}\Bigg\{
	\f{(\log n')^2}{n'}
	\le 
	s \le \f{k^2}{2^k}
	\Bigg\}
	\binom{n'}{n's}
	\f1{2^{k(1-4\EPSCONST)n's}}
	\Bigg(
	\sum_{\ell\ge1}
	\bigg(
	\f{n' s \cdot k^2 \cdot m' / (n')^2}
			{2^{k(1-4\EPSCONST)}}
	\bigg)^\ell
	\Bigg)^{n's}\,.
	\]
In the above, the $o_n(1)$ error accounts for the probability that more than $o_R(1)$ fraction of variables are removed during preprocessing, which is controlled by Proposition~\ref{p:small.fraction.removed.in.processing}. On the complementary event, the number of variables $n$ in $\GG$ is very close to the original number of variables $n'$, so the restriction on $s=|S|/n'$ follows from \eqref{e:sep.bounds.on.size.of.S}.
The inner sum over $\ell\ge1$ is for the possible sizes of the sets $P(v)$. Simplifying the above gives
	\begin{align*}
	\E r'(\cD)
	&\le
	\sum_s
	\mathbf{1}\Bigg\{
	\f{(\log n')^2}{n'}
	\le 
	s \le \f{k^2}{2^k}
	\Bigg\}
	\binom{n'}{n's}
	\f1{2^{k(1-3\EPSCONST)n's}}
	\Bigg(
	\sum_{\ell\ge1}
	(2^{4\EPSCONST k} s)^\ell
	\Bigg)^{n's}\\
	&\le
	\sum_s\mathbf{1}\Bigg\{
	\f{(\log n')^2}{n'}
	\le 
	s \le \f{k^2}{2^k}
	\Bigg\}
	\binom{n}{ns}
	\f{s^{ns}}{ 2^{k(1-8\EPSCONST)ns} }
	\le
	\f1{\exp\{ \Omega( (\log n)^2) \}}\,.
	\end{align*} 
It follows that $r'(\cD)=o_n(1)$ with high probability over $\cD$, and the result follows.
\end{proof}

\section{Contraction estimates}\label{s:contract}

\noindent In this section we state and prove three key technical results:
\begin{enumerate}[--]
\item Proposition~\ref{p:nice.tree.lagrange}
considers a \bemph{nice} (Definition~\ref{d:nice}) subtree $T$ of a compound region $U$,
and analyzes the maximal-entropy judicious measure on $T$ subject to edge marginals $\omega_{\delta T}$ on the boundary edges $\delta T$. 

\item Proposition~\ref{p:contraction.for.simple.var}
considers the depth-one neighborhood of a non-compound variable $U$, and analyzes the maximal-entropy judicious measure on $U$ 
subject to edge marginals $\omega_{\delta U}$ on the boundary edges $\delta U$. 

\item Proposition~\ref{p:pair.clause.weights} shows how to reweight a clause to achieve a desired set of outgoing \textsc{bp} messages.
\end{enumerate}
The significance of these results in the proof outline is as follows:
\begin{enumerate}[--]
\item Proposition~\ref{p:contraction.for.simple.var}
solves the optimization problem for non-compound variables that was derived in Proposition~\ref{p:block.update.non.compound}. The main technical difficulty of this result is that the clause types around a non-compound variable are not fixed. 

\item On the other hand, in Section~\ref{s:merge} we will use
Propositions~\ref{p:nice.tree.lagrange}
and \ref{p:pair.clause.weights}
to prove Proposition~\ref{p:contraction.COMPOUND},
which analyzes
the maximal-entropy judicious measure on all of $U$ subject to boundary conditions $\omega_{\delta U}$.
This completes the solution of the optimization for compound regions that was derived in Proposition~\ref{p:update.compound}. The main technical difficulty of this result is to deal with \bemph{non-nice} regions of the compound enclosure, which are not covered by Proposition~\ref{p:nice.tree.lagrange}. At the same time, we are helped in the analysis by the notion of compound type (Definition~\ref{d:cpd.type}), which ensures that in the interior of a compound enclosure, the clause types around a variable are fixed.

\end{enumerate}
A more detailed outline of this section is given below, before the start of \S\ref{ss:lagrange.iteration}. 
We begin by specifying the form of the subtrees $T$ that we will consider for Proposition~\ref{p:nice.tree.lagrange}. 

\begin{dfn}[entropy maximization on rooted trees] \label{d:nu.judicious} Let $T$ be a finite tree rooted at a clause $\crt$ with exactly one child, which we refer to as the root variable $\vrt$. We use $\Leaves T$ to denote the set of all leaf vertices of $T$ other than $\crt$, and we assume $\Leaves T$ consists of variables only. We use $\delta T$ to denote the edges incident to $\Leaves T$. From now on we will refer to $\Leaves T$ and $\delta T$ respectively as the ``boundary variables'' and ``boundary edges'' of $T$. 
A small example appears in Figure~\ref{f:nice.subtree.of.compound.enclosure}. 
In the applications of this definition
in the analysis of Section~\ref{s:merge}, we will take $T$
to be a nice (Definition~\ref{d:nice}) subtree of a compound enclosure $U$. As a result, the notations that follow are purposefully similar to those of Definition~\ref{d:constrained.opt.compound.enclosure}. 
Let 
 	\[
	\Simplex(T)
	\equiv \left\{
	\hspace{-3pt}\begin{array}{c}
	\textup{probability measures $\nu$ over}\\
	\textup{pair colorings $(\usi^1,\usi^2)$
		of $T$}
	\end{array}\hspace{-3pt}\right\}\,.
	\]
We say that $\nu\in\Simplex(T)$ is \bemph{judicious}
if all of its edge marginals match the canonical marginals of Definition~\ref{d:canonical}: that is, for all edges $e$ in $T$ and for both $j=1,2$, 
we have $\nu((\sigma_e)^j=\sigma)=\starpi_e(\sigma)$
for all $\sigma\in\set{\RYGB}$. Let $\omega_{\delta T}$ denote a tuple $(\omega_e)_{e\in\delta T}$ where each $\omega_e$ is a judicious probability measure on $\set{\RYGB}^2$. We then let
	\[
	\Judicious(T ; \omega_{\delta T})
	\equiv
	\left\{
	\hspace{-3pt}\begin{array}{c}
	\nu\in\Simplex(T):
	\textup{$\nu$ is judicious,}\\
	\textup{and
	$\nu_e=\omega_e$ for all $e\in\delta T$}
	\end{array}\hspace{-3pt}
	\right\}\,,
	\]
where $\nu_e$ denotes the marginal of $\nu$ on edge $e$. Define the constrained optimizer
	\beq\label{e:constrained.opt}
	\optnu(T;\omega_{\delta T})
	\equiv \argmax_\nu
		\bigg\{ \Ent(\nu)
		: \nu \in
		\Judicious(T,\omega_{\delta T})\bigg\}\,.
	\eeq
Given weights $\Lm$ on $T$, we also define the unconstrained optimizer
	\beq\label{e:weighted.unconstrained.opt}
	\nu[T;\Lm]
	\equiv \argmax_\nu
		\bigg\{ \Ent(\nu)
			+ \langle \log\Lm,\nu\rangle
		: \nu \in \Simplex(T)\bigg\}\,,
	\eeq
where $\langle\log\Lm,\nu\rangle$ denotes the 
expected value of $\log\Lm(\usi)$ with $\usi$ distributed according to $\nu$. Note that by elementary calculus, the solution
$\nu=\nu[T;\Lm]$ of
\eqref{e:weighted.unconstrained.opt} is given explicitly by
	\beq\label{e:Lm.weighted.msr}
	\nu(\usi)
	= \f{\Ind{\textup{$\usi$
		is a valid coloring of $T$}
		} \Lm(\usi)}{\bm{z}[T;\Lm]}
	\eeq
where $\bm{z}[T;\Lm]$ is the normalizing constant. Finally, we say that $\Lm$ are \bemph{Lagrangian weights} (for the constrained optimization problem $\optnu(T;\omega_{\delta T}$ in \eqref{e:constrained.opt}) if $\langle\log\Lm,\nu\rangle$ is constant over $\nu\in\Judicious(T;\omega_{\delta T})$. We will parametrize Lagrangian weights in a particular way, described in Definition~\ref{d:param.weights.Lambda} below.
\end{dfn}

\begin{dfn}[error notation for edge distributions]
\label{d:error.notation.edge}
On any edge $e$, denote $\prodom_e\equiv\starpi_e\otimes\starpi_e$ where $\starpi_e$ is the canonical marginal on edge $e$ from Definition~\ref{d:canonical}. For any other probability measure $\omega_e$ on $\set{\RYGB}^2$, we define the \bemph{discrepancy} of $\omega_e$ (relative to $\prodom_e$) by
	\beq\label{e:def.discrepancy.measure.e}
	\disc(\omega_e)
	\equiv \disc_e(\omega)
	\equiv\sum_{\sigma\in\set{\RYGB}^2}
		(\vth)^{\Ind{ \red \in\set{\sigma^1,\sigma^2} }}
		\Bigg| \f{\omega_e(\sigma)}{\prodom_e(\sigma)}
			-1
		\Bigg|\,.\eeq
where $\vth\equiv 2^{-k\zeta/6}$
for an absolute constant $0<\zeta\le1/20$.
We shall ultimately take $\zeta=\ZETA/4$ where $\ZETA$ is the absolute constant in Proposition~\ref{p:apriori} below.
\end{dfn}

\begin{figure}[h]
\includegraphics{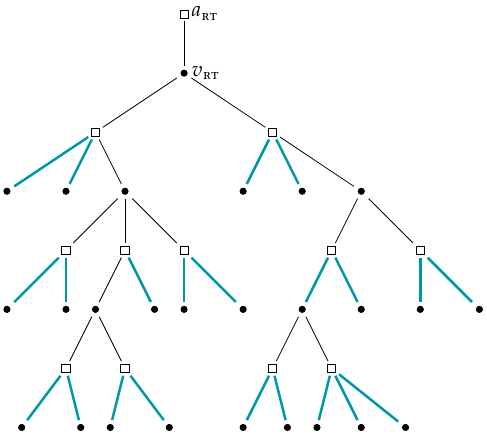}
\caption{A small example of a tree $T$ as in Definition~\ref{d:nu.judicious}. The edges of $\delta T$ are shown as thick colored lines. In the applications of this definition in Section~\ref{s:merge}, $T$ will be a nice subtree of a compound enclosure $U$.}
\label{f:nice.subtree.of.compound.enclosure}
\end{figure}

\begin{ppn}[contraction result inside compound enclosures]
\label{p:nice.tree.lagrange}
Assume that $R\ge k$, where $R$ is the neighborhood radius in \eqref{e:radii}. Let $U$ be a compound enclosure (Definition~\ref{d:enclosure}). Take $T\subseteq U$ of the form described in Definition~\ref{d:nu.judicious}, and suppose $T$ contains no defective variables. Consider the constrained optimization problem 
$\optnu(T;\omega_{\delta T})$ from
\eqref{e:constrained.opt},
where we assume that $\omega_{\delta T}$ satisfies,
for an absolute constant $0<\zeta\le1/20$,
the bounds
	\beq\label{e:apriori.assump.omega}
	\max_{\sigma\in\set{\RYGB}^2}
	\Bigg\{
	\f1{(2^k)^{\Ind{\sigma=\red\red}}}
	\bigg|
		\f{\omega_e(\sigma)}
		{\prodom_e(\sigma)}
		- 1 \bigg|
		\Bigg\} \le
		\f1{2^{2k\zeta}}
	\eeq
for all $e\in\delta T$. Then the following hold:
\begin{enumerate}[a.]
\item We can explicitly construct Lagrangian weights $\Lm\equiv \Lm(T;\omega_{\delta T})$ such that
the constrained optimizer $\nu=\optnu(T;\omega_{\delta T})$ of \eqref{e:constrained.opt} coincides with the unconstrained optimizer $\nu=\nu[T;\Lm]$ of \eqref{e:weighted.unconstrained.opt} (where we recall that the latter is simply the $\Lm$-weighted Gibbs measure on $T$).

\item Let $q$ be the solution of the $\Lm$-weighted \textsc{bp} recursions on $T$, where we fix the message ``from the root'' to be
	\[
		\Big(\hqstar_{\crt\vrt}
		\otimes
		\hqstar_{\crt\vrt}\Big)(\sigma)
		\equiv
		\prod_{j=1,2}
		\hqstar_{\crt\vrt}(\sigma^j)
		\,.
		\]
Then $\nu_e \cong \dq_e \hq_e$ for each edge $e$ in $T$.

\item For any edge $(av)$ in $T$, the
 discrepancy of $\nu_{av}$
(defined by \eqref{e:def.discrepancy.measure.e})
satisfies the bound
	\beq\label{e:clause.xi.prelim.defn}
	\disc_{av}(\nu)
	\le k^{O(1)}
	\underbrace{
	\sum_{e\in\delta T}
		\bigg(
		\f{(\vth)^{1/4}}{2^k}
		\bigg)^{\BTW_T(e,a)}
		\disc_e(\omega)
		}_{\xi_a(T;\omega_{\delta T})}\,,
	\eeq
where $\BTW_T(e,a)$ denotes the number of variables
between $e$ and $a$ on the unique path in $T$ that joins $e$ to $a$.
\end{enumerate}
The explicit parametrization of $\Lm$ is given in Definition~\ref{d:param.weights.Lambda} below, and the construction is given in 
Definition~\ref{d:nice.lagrange.defn.weights}.
\end{ppn}

\begin{ppn}[contraction result for non-compound variables]
\label{p:contraction.for.simple.var}
Assume that $R\ge k$, where $R$ is the neighborhood radius in \eqref{e:radii}.
In the same setting as Proposition~\ref{p:block.update.non.compound}, 
consider the constrained optimization problem
	\[
	\optnu(U;\omega_{\delta U})
	=\argmax_\nu
	\Bigg\{ \Ent(\nu)
		: \nu \in
		\Judicious_{\DD}(U,\omega_{\delta U})\Bigg\}\,,
	\]
where $U$ is the depth-one tree from
Definition~\ref{d:judicious.augmented.alphabet},
and we assume that (cf.\ \eqref{e:apriori.assump.omega})
	\beq\label{e:apriori.assump.omega.Augmented}
	\adjustlimits
	\max_{\bL}
	\max_{\sigma\in\set{\RYGB}^2}
	\Bigg\{
	\f1{(2^k)^{\Ind{\sigma=\red\red}}}
	\bigg|
		\f{\omega_{\bL,j(\bt_e)}(\sigma)}
		{\prodom_{\bL,j(\bt_e)}(\sigma)}
		- 1 \bigg|
		\Bigg\} \le
		\f1{2^{2k\zeta}}
	\eeq
for all $e\in\delta U$.
Then the following hold:
\begin{enumerate}[a.]
\item We can explicitly construct Lagrangian weights $\Psi\equiv\Psi_{\DD}(U;\omega_{\delta U})$ such that the solution $\nu=\optnu(U;\omega_{\delta U})$ of the above coincides with $\nu=\nu[U;\Psi]$, i.e., the $\Psi$-weighted Gibbs measure on $U$ for the augmented pair coloring model.

\item Let $q$ be the solution of the $\Psi$-weighted \textsc{bp} recursions on $U$. Then
$\nu_e\cong\dq_e\hq_e$ for each $e$ in $U$,
where $\nu_e$, $\dq_e$, and $\hq_e$ are probability measures on pairs $(\sigma,\bL)$ with $\sigma\in\set{\RYGB}^2$.

\item For the solution $\nu=\optnu(U;\omega_{\delta U})$
of the above, denote (cf.\ \eqref{e:def.discrepancy.measure.e})
	\beq\label{e:disc.augmented}
	\disc_e(\nu)\equiv
	\max_{\bL}
	\Bigg\{\sum_{\sigma\in\set{\RYGB}^2}
		(\vth)^{\Ind{ \red \in\set{\sigma^1,\sigma^2} }}
		\bigg| \f{\nu_e(\sigma\,|\,\bL)}
			{\prodom_e(\sigma)}
			-1
		\bigg|
		\Bigg\}
	\eeq
Then, for each edge $(av)$ incident to the root variable $v$ of $U$, we have
(cf.\ \eqref{e:clause.xi.prelim.defn})
	\beq\label{e:depth.one.update.final}
	\disc_{av}(\nu)
	\le k^{O(1)} 
	\underbrace{
	\sum_{e\in\delta U}
	\f{ (\vth)^{1/4}}{2^k}
	\disc_e(\omega_e)
	}_{\xi_a(U;\omega_{\delta U})}\,.
	\eeq
\end{enumerate}
The explicit parametrization of $\Psi$ is given in Definition~\ref{d:param.weights.Augmented} below, and the construction is given in Definition~\ref{d:lagrange.defn.Augmented}.
\end{ppn}

\begin{dfn}[error notation for clause-to-variable messages]
\label{d:clause.rel.error}
In the pair model, given any two functions $g,h:\set{\RYC}^2\to(0,\infty)$, we write
	\[
	\crelerr(g,h)
	=\begin{pmatrix}
	\displaystyle
	\max
	\bigg\{ \bigg|\f{h(\sigma)}{g(\sigma)}-1\bigg|
		: \red[\sigma]=0\bigg\} \smallskip\\
	\displaystyle
	\max
	\bigg\{ \bigg|\f{h(\sigma)}{g(\sigma)}-1\bigg|
		: \red[\sigma]=1\bigg\} \smallskip \\
	\displaystyle
	\max
	\bigg\{ \bigg|\f{h(\sigma)}{g(\sigma)}-1\bigg|
		: \red[\sigma]=2\bigg\}
	\end{pmatrix}
	\in\mathbb{R}^3.
	\]
We write $\crelerr(g,h) \le \vec{s}$
to mean that $\crelerr(g,h)$ is coordinatewise upper bounded by $\vec{s}$ in $\mathbb{R}^3$.
In most cases we use this notation when both $g$ and $h$ are clause-to-variable messages in the pair model.
\end{dfn}

Recall that in the single-copy model,
 given a variable-to-clause message $\dq_{va}$ (a measure on colors $\sigma\in\set{\RYGB}$), we defined a reweighted version $Q_{va}$ by \eqref{e:reweighted.messages.single}. We make the analogous definition in the pair model: given a variable-to-clause message $\dq\equiv\dq_{va}$ (a measure on pairs of colors $(\sigma^1,\sigma^2)\in\set{\RYGB}^2$), define its reweighted version
	\beq\label{e:reweighted.messages.v.to.c}
	Q_{va}(\sigma)
	= \f{\dq(\sigma)}
		{(2^{|\pd a|-1})^{\red[\sigma]}}
	\bigg/
	\bigg\{
	\dq(\set{\yel,\cya}^2)
	+\f{\dq(\set{\red}\times\set{\yel,\cya})
	+\dq(\set{\yel,\cya}\times\set{\red})}
		{2^{|\pd a|-1}}
	+\f{\dq(\red\red)}{4^{|\pd a|-1}}
	\bigg\}
	\,,\eeq
where we denote $\red[\sigma]\equiv\Ind{\sigma^1=\red}+\Ind{\sigma^2=\red}$ for $\sigma\in\set{\RYGB}^2$.

\begin{ppn}[pair version of Lemma~\ref{l:clause.bp.weights}]
\label{p:pair.clause.weights}
Suppose in the pair model that the clause $a$ receives incoming messages $\dq_e$ ($e\in\delta a$) whose reweighted versions $Q_e$ (defined by \eqref{e:reweighted.messages.v.to.c}) satisfy, for some absolute constant $0<\zeta\le1/20$,
	\beq\label{e:first.update.at.boundary}
	\max_{\sigma\in\set{\yel,\cya}^2}
	\bigg\{
	\bigg| Q_e(\sigma)-\f14\bigg| 
	\bigg\}\le \f{O(1)}{2^{k\zeta}}\,,\quad
	Q_e(\sigma)
	=\f{1 + O(2^{-k\zeta})}{4 \cdot 
		2^{|\pd a|-1}}
	\textup{ if }\red[\sigma]=1\,,\quad
	Q_e(\red\red) \le \f{O(1)}{2^{k(1+\zeta)}}\,,
	\eeq
Let $h\equiv\BP[\dq]$.
Meanwhile, let $\hq_{e,\infty}$ ($e\in\delta a$) 
denote a set of desired outgoing messages, and suppose that
	\[\crelerr( \hq_{e,\infty},h_e)
	\le \begin{pmatrix}
	\ep_e \\ \dot{\ep}_e \\ \ddot{\ep}_e
	\end{pmatrix}
	\le
	 \f1{k^4} \begin{pmatrix}
		1 \\ 1 \\1 \end{pmatrix}
	\]
for all $e\in\delta a$
(in the notation of Definition~\ref{d:clause.rel.error}). Then there exist clause weights $\Gm = (\gamma_e)_{e\in\delta a}$ such that the weighted \textsc{bp} recursion outputs
$\hq_{av,\infty}=\BP_{av}[\dq;\Gamma]$
for all $v\in\pd a$, and
these weights satisfy the bounds
	\[
	\crelerr(1,\gamma_e)
	\le
	O(1)
	\begin{pmatrix}
	\ep_e \\ 
	\ep_e+\dot{\ep}_e \\ 
	\ep_e+\ddot{\ep}_e
	\end{pmatrix}
	+
	O(k^6)
	\begin{pmatrix}
	2^{-k} & 2^{-k} & 2^{-k(1+\zeta)}\\
	1 & 2^{-k} & 2^{-k(1+\zeta)}\\
	1 & 2^{-k(1-\zeta)} & 2^{-k}
	\end{pmatrix}
		\sum_{e'\in\delta a}
	\begin{pmatrix}
	\ep_{e'}\\
	\dot{\ep}_{e'}\\
	\ddot{\ep}_{e'}
	\end{pmatrix}\]
for all $e\in\delta a$.\end{ppn}

\noindent
\textbf{Organization of the remainder of this section:}
\begin{enumerate}[--]
\item In \S\ref{ss:lagrange.iteration}
we give the precise construction
(see Definition~\ref{d:nice.lagrange.defn.weights}) of the weights $\Lm\equiv\Lm(T;\omega_{\delta T})$ of Proposition~\ref{p:nice.tree.lagrange}.
The construction iterates between a series of clause and variable updates, which we then proceed to analyze in the subsequent subsections, as follows:

\item In \S\ref{ss:clause.update}
we analyze the clause updates
in Definition~\ref{d:nice.lagrange.defn.weights}.
As a byproduct of the analysis we prove
Proposition~\ref{p:pair.clause.weights}.

\item In \S\ref{ss:var.update} 
we analyze the variable updates
in Definition~\ref{d:nice.lagrange.defn.weights}.
\item In \S\ref{ss:contraction.nondefect}
we combine the analysis
of \S\ref{ss:clause.update}
and \S\ref{ss:var.update} 
to prove Proposition~\ref{p:nice.tree.lagrange}.
	
\item In \S\ref{ss:ctypes.contract}
we give the analysis in the non-compound case
to prove Proposition~\ref{p:contraction.for.simple.var}.
\end{enumerate}

\subsection{Lagrange multipliers for subtrees of compound enclosures}
\label{ss:lagrange.iteration}

Fix $T$ as in Definition~\ref{d:nu.judicious}.
For the constrained optimization problem
$\optnu(T;\omega_{\delta T})$ of \eqref{e:constrained.opt},
we will always set boundary conditions $\omega_{\delta T}$ such that there is a unique solution
in the interior of the feasible domain of measures $\nu$. It then follows by general theory that 
there is a set of weights 
$\Lm \equiv \Lm(T;\omega_{\delta T})$ that are Lagrangian (recalling Definition~\ref{d:nu.judicious}, this means that $\langle\log\Lm,\nu\rangle$ is constant over $\nu\in\Judicious(T;\omega_{\delta T})$), and such that the solutions of \eqref{e:constrained.opt}
and \eqref{e:weighted.unconstrained.opt} coincide
--- i.e., such that
	\beq\label{e:lambda.lagrange}
	\argmax_\nu
		\bigg\{ \Ent(\nu)
		: \nu \in
		\Judicious(T,\omega_{\delta T})\bigg\}
	= \argmax_\nu
		\bigg\{ \Ent(\nu)
			+ \langle \log\Lm,\nu\rangle
		: \nu \in \Simplex(T)\bigg\}\,.
	\eeq
In this subsection, we describe the explicit construction of these weights in the setting of Proposition~\ref{p:nice.tree.lagrange}. We begin by remarking on a single-copy analogue which was obtained in previous sections.

\begin{rmk}\label{r:adjusted.bp.stability}Recall that in the single-copy model, if $U$ is a tree whose leaves are all variables and whose clauses are all strictly coherent, then Corollary~\ref{c:clause.bp.weights} guarantees the existence of weights $\Lmstar_U$ such that
the $\Lmstar_U$-weighted Gibbs measure on $U$ has edge marginals $\starpi$. If $U$ is moreover nice, then the weights are constructed explicitly by 
Corollary~\ref{c:clause.bp.weights.explicit}. Recall that the corresponding
 \textsc{bp} solution 
 is not given by $\qstar$, but rather by $\qbul$
 as defined by \eqref{e:redistributed.bp.messages}.
It follows from Corollary~\ref{c:clause.bp.weights.explicit}
that 
	\[
	\bigg\|
	\f{\dqbul_e}{\dqstar_e}-1\bigg\|_\infty 
	+\bigg\|
	\f{\hqbul_e}{\hqstar_e}-1\bigg\|_\infty 
	\le \f{O(1)}{k^{r/2}}\]
for all edges $e$ of $T$. If $R\ge k$ (as assumed in 
Proposition~\ref{p:nice.tree.lagrange}), then this error is very small. In particular, 
on nice edges it implies that $\qbul$ will satisfy the same estimates as $\qstar$ from Definition~\ref{d:nice}. For this reason, we assume $R\ge k$ in the rest of the section, even when not explicitly stated. From now on we denote
$\prodq\equiv\qbul\otimes\qbul$. In the setting of Proposition~\ref{p:nice.tree.lagrange},
we have a tree $T$ as in Definition~\ref{d:nu.judicious} 
or Figure~\ref{f:nice.subtree.of.compound.enclosure}. Let $T\setminus\crt$ denote the tree $T$ with $\crt$ removed: then all the leaves of $T\setminus\crt$ are variables, so $\Lmstar_{T\setminus\crt}$ is defined by
Corollaries~\ref{c:clause.bp.weights} and \ref{c:clause.bp.weights.explicit}. Then, for a single-copy coloring $\uta=\uta_T$ on $T$, we define
	\beq\label{e:Lmstar.T.short.defn}
	\Lmstar_T(\uta)
	\equiv
	\hqbul_{\crt\vrt}(\tau_{\crt\vrt})
	\cdot
	\Lmstar_{T\setminus\crt}(\uta_{T\setminus\crt})\,,
	\eeq
that is, we put an additional weight $\hqbul_{\crt\vrt}$ on the root clause $\crt$.
As a consequence, in the $\Lmstar_T$-weighted model on $T$, the \textsc{bp} message from $\crt$ to $\vrt$ will be precisely $\hqbul_{\crt\vrt}$.
\end{rmk}

\begin{dfn}[parametrization of Lagrangian weights $\Lm$
on a subtree of a compound enclosure]
\label{d:param.weights.Lambda}
Let $T$ be as in Definition~\ref{d:nu.judicious}.
We parametrize the Lagrangian weights $\Lm$ on $T$ as follows. 
Given a pair $\usi\equiv(\usi^1,\usi^2)$ of valid colorings of $T$, let $\ux\equiv(\ux^1,\ux^2)\equiv\ux(\usi)$ be the corresponding pair of frozen configurations on the variables of $T$. Recall from Remark~\ref{r:adjusted.bp.stability} that $\prodq=\qbul\otimes\qbul$. We then take the weights to be of form
	\[
	\Lm(\usi)
	= \prodhq_{\crt\vrt}(\sigma_{\crt\vrt})\Bigg\{
	 \prod_{v\in V_T\setminus \Leaves T}
	 \Lm_v(\usi_{\delta v})
	 \Bigg\}
	\Bigg\{\prod_{e\in\delta T}
		\lm_e(\sigma_e) \Bigg\}\,,
	\]
where for each internal variable $v\in V_T\setminus\Leaves T$ the weight factorizes as
	\beq\label{e:factorize.internal.weights.in.cpd}
	\Lm_v(\usi_{\delta v})
	\equiv
	\overbrace{\Bigg(\prod_{j=1,2}
	(\lm_v)^j((x_v)^j)\Bigg)}^{\lm_v(x_v)}
	\prod_{e\in\delta v}
	\overbrace{\Bigg(
	\prod_{j=1,2}
	(\lm_e)^j((\sigma_e)^j)
	\Bigg)}^{\lm_e(\sigma_e)}\,.
	\eeq
We assume that $(\lm_v)^j(\plus)=1$,
and $(\lm_e)^j((\sigma_e)^j)=1$
for all $(\sigma_e)^j\ne\red$;
we will often abbreviate
$(\lm_e)^j\equiv(\lm_e)^j(\red)$. If $\Lm$ is any weight on $T$ of the functional form just described, then $\Lm$ is a Lagrangian weight in the sense of Definition~\ref{d:nu.judicious}, meaning that $\langle\log\Lm,\nu\rangle$ is constant over $\nu\in\Judicious(T;\omega_{\delta T})$. Conversely, it is easy to see that any Lagrangian weight can be expressed in this form. Next, recalling the discussion of \S\ref{ss:bp}, there is a unique solution $q=\BPq(T;\Lm)$ for the $\Lm$-weighted \textsc{bp} recursions on $T$, obtained by recursing inwards from the leaves. In particular, for $e\in\delta T$ we have simply $\dq_e=\lm_e$, and at the root we have $\hq_{\crt\vrt}=\prodhq_{\crt\vrt}$. Analogously~to~\eqref{e:var.tuple.measure.weighted}, the marginals of $\nu=\nu[T;\Lm]$ (the $\Lm$-weighted measure, see \eqref{e:weighted.unconstrained.opt} and \eqref{e:Lm.weighted.msr}) can be expressed in terms of $q=\BPq(T;\Lm)$: for instance,
the marginal law of $\usi_{\delta v}$ under $\nu$ is given by 
	\beq\label{e:defn.nu.Lambda.q}
	\Big(
	\nu_{\delta v}[\Lm_v;\hq]
	\Big)(\usi_{\delta v}) 
	\equiv \f1{\dbz_v}
	\varphi_v(\usi_{\delta v})
	\Lm_v(\usi_{\delta v}) 
	\prod_{e\in\delta v}\hq_e(\sigma_e)
	\,,\eeq
where $\dbz_v$ is the normalization. (This holds even if $v\in \Leaves T$, provided we define $\Lm_v(\usi_{\delta v})\equiv\lm_e(\sigma_e)$ where $e$ is the one edge in $\delta v$.) Likewise, the marginal law on any edge $e$ of $T$ is given by $\nu_e(\sigma) \cong \dq_e(\sigma) \hq_e(\sigma)$ for $\sigma\in\set{\RYGB}^2$.
\end{dfn}

\begin{dfn}[iterative construction of Lagrangian weights on a subtree of a compound enclosure]
\label{d:nice.lagrange.defn.weights}
Continuing in the setting of Proposition~\ref{p:nice.tree.lagrange}, we now define a sequence of weights $\Lm_t$ (parametrized as in Definition~\ref{d:param.weights.Lambda} for each $t\ge0$), which will be proved in this section to converge as $t\to\infty$ to the Lagrangian weights $\Lm_\infty\equiv\Lm(T;\omega_{\delta T})$ of Proposition~\ref{p:nice.tree.lagrange}. At the same time we will define messages $q_t$ which will converge as $t\to\infty$ to the \textsc{bp} solution $q\equiv\BPq(T;\Lm_\infty)$ for the
$\Lm_\infty$-weighted model on $T$, as discussed above. We initialize our construction
at $t=0$ with weights
	\[
	\Lm_0(\usi)
	\equiv 
	\Big(\Lmstar_T\otimes\Lmstar_T\Big)(\usi)
	\equiv \prod_{j=1,2} \Lmstar_T(\usi^j)
	\]
where $\Lmstar_T$ are the single-copy weights defined by
\eqref{e:Lmstar.T.short.defn} in Remark~\ref{r:adjusted.bp.stability}. More explicitly, if $\uta$ denotes a single-copy coloring of $T$, then its weight under $\Lmstar_T$ is given by
	\beq\label{e:explicit.Lmstar.T}
	\Lmstar_T(\uta)
	=\hqbul_{\crt\vrt}(\sigma_{\crt\vrt}) 
	\Bigg\{
	\prod_{v\in V_T\setminus \Leaves T}
	\overbrace{
	\Bigg(
	\lmstar_v(x_v)
	\prod_{e\in\delta v}
	\lmstar_e(\tau_e)
	\Bigg)}^{\Lmstar_v(\uta_{\delta v})}
	\Bigg\}
	\Bigg\{
	\prod_{e\in\delta T}
	\lmstar_e(\tau_e)
	\Bigg\}\,,\eeq
where $x_v\in\set{\minus,\plus,\free}$ denotes the frozen spin corresponding to $\uta_{\delta v}$,
and we recall from 
 Corollary~\ref{c:clause.bp.weights}
 that for $e\in\delta T$ we take
 $\lmstar_e = \dqbul_e$ as defined by \eqref{e:redistributed.bp.messages}. Note that $\Lm_0$ is indeed of the functional form specified in Definition~\ref{d:param.weights.Lambda}. 
Let $q_0\equiv \BPq(T;\Lm_0)$;
explicitly, we have
$q_0=\prodq\equiv \qbul\otimes\qbul$
 (see Remark~\ref{r:adjusted.bp.stability}). 
For $t\ge1$, given $\Lm_{t-1}$ we define updated weights $\Lm_t$ (also of the functional form 
from Definition~\ref{d:param.weights.Lambda}) by making the following series of updates, started from the boundary $\delta T$ and working up to the root, then working back down to the boundary:
\begin{enumerate}[I.]
\item \bemph{Boundary updates.}
Recall from Definition~\ref{d:param.weights.Lambda} that for each boundary edge $e=(av)\in\delta T$, the weight $\lm_e$ is equivalent to the variable-to-clause message $\dq_e$. We update these weights by setting
	\[
	\lm_{av,t}(\sigma)
	\equiv \dq_{va,t}(\sigma)
	\equiv 
	\f{\omega_{av}(\sigma)}
		{\hq_{av,t-1}(\sigma)}
	\Bigg/\Bigg(
	\sum_\tau
	\f{\omega_{av}(\tau)}{\hq_{av,t-1}(\tau)}
	\Bigg)\,.
	\]

\item \bemph{Upward pass (from boundary to root).}
Having updated the boundary messages,
go up the tree,
alternating steps (a) and (b),
starting with step (a) at the clauses incident to $\delta T$:
\begin{enumerate}
\item At a clause $a\ne\crt$, suppose the upward messages $\dq_{ua}$ from the child variables $u$ have just been updated to their $t$-versions, while the downward message $\hq_{va}$ from the parent variable $v$ is still at its $(t-1)$-version. 
Then apply \textsc{bp} to update the upward message from the clause,
	\[
	\hq_{av,t}
	=\BP_{av}[\dq_t].
	\]
(Note that we have not yet defined $\dq_t$ on all edges of $T$, but the right-hand side is well-defined since it depends only on the $\dq_t$ messages coming from the child variables $u$ of $a$.)

\item At an internal variable $v$,
suppose the upward messages
$\hq_{bv}$ from the child clauses
$b$ have just been updated to their $t$-versions; while the downward message
$\hq_{av}$ from the parent clause $a$,
as well as the variable weight $\Lm_v$,
are still at their $(t-1)$-versions.
Let $\Lm_{v,t}$ be the unique choice of weights,
 of the functional form
\eqref{e:factorize.internal.weights.in.cpd} from
Definition~\ref{d:param.weights.Lambda}, such that the measure 
	\beq\label{e:var.update.II.b}
	\nu_{\delta v, t}(\usi_{\delta v}) 
	\equiv
	\f{1}{\dbz_{v,t}}
	\varphi_v(\usi_{\delta v})
	\Lm_{v,t}(\usi_{\delta v})
	\Bigg\{
	\hq_{av,t-1}(\sigma_{av})
	\prod_{b \in \pd v\setminus a}
	\hq_{bv,t}(\sigma_{bv})
	\Bigg\}
	\eeq
is judicious (in the sense of Definition~\ref{d:nu.judicious}).
For now we assume that the weights $\Lm_{v,t}$ exist;
we will explicitly construct them in Proposition~\ref{p:var.update} below. After obtaining $\Lm_{v,t}$, we apply \textsc{bp} to update the message upward from variable $v$, that is to say, we let
	\[
	\dq_{va,t}
	=\BP_{va}[\hq_t;\Lm_{v,t}]\,.
	\]
(Note again that we have not yet defined $\hq_t$ on all edges of $T$, but the right-hand side is well-defined since it depends only on the $\hq_t$ messages coming from the child clauses $b$ of $v$.)
\end{enumerate}
The upward pass is completed once we have applied step (2b) at the root $\vrt$.

\item \bemph{Downward pass (from root to boundary).} Recall from Definition~\ref{d:param.weights.Lambda} that we always put weight $\prodhq_{\crt\vrt}$ on the root clause $\crt$, and this will also be the \textsc{bp} message from $\crt$ to $\vrt$. We therefore start the downward pass with the trivial update
	\beq\label{e:message.from.top.always.product}
	\hq_{\crt\vrt,t}
	=\hq_{\crt\vrt,t-1}
	=\hq_{\crt\vrt,0}
	= \prodhq_{\crt\vrt}\,.
	\eeq
Then continue down the tree,
alternating steps (a) and (b),
starting from step (a) at $\vrt$:
\begin{enumerate}
\item At an internal variable $v$,
suppose that the downward message $\hq_{bv}$
from the parent clause $b$ has just been updated to its $t$-version. This means the upward messages
$\hq_{av}$ from the child clauses $a$,
as well as the variable weight $\Lm_v$,
were already updated to their $t$-versions during the preceding upward pass. Now apply \textsc{bp} to update the downward messages from the variable,
	\[
	\dq_{va,t}
	=\BP_{va}\bigg[
		\Big( \hq_{bv,t-1};
		(\hq_{cv,t}
		:c\in\pd v\setminus\set{a,b})
		\Big) ;\Lm_{v,t}
		\bigg]
	\]
where we have chosen to use the
$(t-1)$-version of the downward message $\hq_{bv}$
rather than the $t$-version.
This is merely convenient for our analysis, but will have no effect on the limit since the messages all converge.

\item At a clause $a\ne\crt$,
suppose the downward message $\dq_{ua}$
from the parent variable $u$
has just been updated to its $t$-version.
This means the upward messages
$\dq_{va}$ from the child variables $v$
were already updated to their $t$-versions
during the preceding upward pass. 
Apply \textsc{bp} to update the downward messages from the clause, again using
for convenience the $(t-1)$-version
of the message from above: 
	\[
	\hq_{av,t}
	=\BP_{av}\bigg[
		\Big(\dq_{ua,t-1},
		(\dq_{wa,t}:
		w\in\pd a\setminus\set{u,v})
		\Big)
		\bigg].
	\]
\end{enumerate}
The downward pass is completed once we have applied step (3b) at each of the clauses incident to the boundary edges $\delta T$. This completes the definition of $\Lm_t$ and $q_t$.
\end{enumerate}
In summary, by iterating the above steps, we obtain
$(\Lm_t,q_t)$ for all $t\ge0$ on $T$.
 The next few subsections (\S\ref{ss:clause.update}--\ref{ss:contraction.nondefect}) are devoted to analyzing this iteration under the assumptions of Proposition~\ref{p:nice.tree.lagrange}.
\end{dfn}

\subsection{Analysis of clause \textsc{bp}
	recursion in pair model}
\label{ss:clause.update}

In this subsection we analyze the clause \textsc{bp} recursion in the pair model. The main result of the subsection is Proposition~\ref{p:clause.update},
which will be applied to give estimates for Steps II(a) and III(b) in Definition~\ref{d:nice.lagrange.defn.weights}.
At the end of the subsection we also give the proof of Proposition~\ref{p:pair.clause.weights}.
Recall that clauses do not distinguish between the $\SPIN{green}$ and $\SPIN{blue}$ colors, so in this subsection we shall work with the reduced alphabet $\set{\RYC}$ with $\cya$ as in \eqref{e:cyan.purple}.

\begin{dfn}[error notation for variable-to-clause messages]\label{d:var.rel.error}
In the pair model, given two variable-to-clause messages $p$ and $q$ (both probability measures on $\set{\RYC}^2$), let $P$ and $Q$ be their reweightings as defined by \eqref{e:reweighted.messages.v.to.c}.
For $j=1,2$ let $P^j$ and $Q^j$ be the marginals of $P$ and $Q$ on the $j$-th coordinate. We then write
	\[\vrelerr(p,q)
	=\begin{pmatrix}
	\displaystyle
	\max
	\bigg\{ \bigg|\f{Q(\sigma)}{P(\sigma)}-1\bigg|
		: \sigma\in\set{\RYC}^2,
			\red[\sigma]=0\bigg\} \smallskip\\
	\displaystyle
	\max
	\bigg\{ \bigg|\f{Q(\sigma)}{P(\sigma)}-1\bigg|
		: \sigma\in\set{\RYC}^2,
			\red[\sigma]=1\bigg\} \smallskip\\
	\displaystyle
	\max
	\bigg\{ \bigg|\f{Q(\sigma)}{P(\sigma)}-1\bigg|
		: \sigma\in\set{\RYC}^2,
			\red[\sigma]=2\bigg\} \smallskip\\
	\displaystyle
	\max\bigg\{
	\sum_{j=1,2}\bigg|\f{Q^j(\sigma)}
		{P^j(\sigma)}-1\bigg|
		: \sigma \in \set{\yel,\cya}
	\bigg\}\smallskip\\
	\displaystyle
	\sum_{j=1,2}\bigg|\f{Q^j(\red)}
		{P^j(\red)}-1\bigg|
	\end{pmatrix}
	\in\mathbb{R}^5\,.
	\]
We write $\vrelerr(p,q) \le \vec{s}$
to mean that $\vrelerr(p,q)$ is coordinatewise upper bounded by $\vec{s}\in\mathbb{R}^5$.
\end{dfn}

\begin{dfn}[subsets of clause colorings]
\label{d:subsets.of.clause.colorings}
Given a clause $a\in F$, we now fix some notation for various subsets of colorings of $\delta a$. Abbreviate $K = |\delta a| \in \set{k,k-1}$. Let
$\Whi \equiv \set{\yel,\cya}^{K-1}$
and $\Yel\equiv \set{\yel}^{K-1}$. Then let
	\begin{align*}
	\Red
		&\equiv
		\Bigg\{
		\usi\in\set{\red,\yel}^{K-1}:
		\Big|\Big\{j\in [K]:\sigma_j=\red\Big\}\Big|=1
		\Bigg\}\,,\\
	\Cya
		&\equiv
		\Bigg\{
		\usi\in\set{\yel,\cya}^{K-1}:
		\Big|\Big\{j\in [K]:\sigma_j=\cya\Big\}\Big|=1
		\Bigg\}\,,\\
	\Cya_{\ge j}
		&\equiv
		\Bigg\{
		\usi\in\set{\yel,\cya}^{K-1}:
		\Big|\Big\{j\in [K]:\sigma_j=\cya\Big\}\Big|
			\ge j
		\Bigg\}\,.
	\end{align*}
For any two subsets
$\SPIN{A}$ and $\SPIN{B}$ of $\set{\RYC}^{K-1}$, we 
will abbreviate $\SPIN{AB}\equiv \SPIN{A}\times\SPIN{B}$.
\end{dfn}

\begin{ppn}\label{p:clause.update}
Suppose the clause $a\in F$ receives two sets of incoming messages $p$ and $q$, with each message a probability measure over $\set{\RYC}^2$ satisfying the estimates~\eqref{e:first.update.at.boundary}, 
such that, in the notation of Definition~\ref{d:var.rel.error}, we have
	\[
	\vrelerr(p_e,q_e)
	\le\begin{pmatrix}
		\delta_e \\
		\dot{\delta}_e \\
		\ddot{\delta}_e \\
		\mdel_e \\
		\mdelred_e
		\end{pmatrix}
	\]
for each $e\in\delta a$.
Then the outgoing clause-to-variable messages $g\equiv\BP[p]$ and $h\equiv\BP[q]$ satisfy, for all $e'\in\delta a$,
	\beq\label{e:clause.output.rel.error.matrix}
	\crelerr(g_{e'},h_{e'})
	\le
	O(k^2)
	\begin{pmatrix}2^{-k(1+\zeta)}
	& 4^{-k} & 2^{-k(1+\zeta)} & 2^{-k} & 2^{-k}\\
	2^{-k} & 2^{-k} & 2^{-k(1+\zeta)}
		& 1 & 2^{-k} \\
	1 & 4^{-k} & 2^{-k(1+\zeta)}
		& 2^{-k} & 2^{-k}
	\end{pmatrix}
	\sum_{e\in\delta a \setminus e'}
	\begin{pmatrix}
		\delta_e\\
		\dot{\delta}_e\\
		\min\set{\ddot{\delta}_e,1}\\
		\mdel_e\\
		\mdelred_e
	\end{pmatrix}\,.
	\eeq

\begin{proof} 
Throughout the proof, we fix an edge $e'=(av)$ on which to calculate the outgoing \textsc{bp} messages,
and suppress $e'$ from the notation when possible. We abbreviate
	\[
	\begin{pmatrix}
	\bde \\
	\dbde \\ 
	\ddbde \\ 
	\mbde \\
	\mbdered
	\end{pmatrix}
	\equiv
	\sum_{e\in\delta a \setminus e'}
	\begin{pmatrix}
		\delta_e \\
		\dot{\delta}_e \\
		\min\set{\ddot{\delta}_e,1}	 \\
		\mdel_e \\
		\mdelred_e
		\end{pmatrix}\,.\smallskip
	\]
\bemph{Step 1. Rewriting of \textsc{bp} equations.}
For $e\in\delta a$ let $P_e,Q_e$ be the reweighted versions of $p_e,q_e$ defined by \eqref{e:reweighted.messages.v.to.c}. Let $\cX$ and $\cY$ be the (non-normalized) measures over configurations $\usi_{\delta a \setminus e'}$ defined by
	\begin{align*}
	\cX(\usi_{\delta a \setminus e'})
	&=\prod_{e\in\delta a \setminus e'}
		\bigg\{
		(2^{|\pd a|-1})^{\red[\sigma_e]}
		P_e(\sigma_e)\bigg\}\,,\\
	\cY(\usi_{\delta a \setminus e'})
	&=\prod_{e\in\delta a \setminus e'}
		\bigg\{
		(2^{|\pd a|-1})^{\red[\sigma_e]}
		Q_e(\sigma_e)
		\bigg\}\,.
	\end{align*}
Then the equations $g\equiv g_{av} \equiv \BP_{av}[p]$ and $h\equiv h_{av} \equiv \BP_{av}[q]$ can be rewritten as
	\begin{align*}
	g(\sigma)
	&= \f1{Z_g} \sum_{\usi_{\delta a}:\sigma_{e'}=\sigma}
		\hat{\varphi}_a(\usi_{\delta a})
		\cX(\usi_{\delta a\setminus e'})\,,\\
	h(\sigma)
	&=\f1{Z_h} \sum_{\usi_{\delta a}:\sigma_{e'}=\sigma}
		\hat{\varphi}_a(\usi_{\delta a})
		\cY(\usi_{\delta a\setminus e'})\,,
	\end{align*}
where $Z_g$ and $Z_h$ are the normalizing constants,
and $\hat{\varphi}_a$ denotes the factor 
for the pair coloring model, as in \eqref{e:pair.color.model.factors}.
In the remainder of the proof, we first obtain error bounds between $\cX$ and $\cY$, then use this to deduce error bounds between $g$ and $h$.\smallskip

\noindent\bemph{Step 2. Error bounds between $\cX$ and $\cY$.} Using the notation of Definition~\ref{d:subsets.of.clause.colorings}, we have
	\begin{align*}
	\cX(\Yel\Yel)
	&= \prod_{e\in\delta a \setminus e'}
		P_e(\yel\yel)
	= \prod_{e\in\delta a \setminus e'}
		\Bigg\{
		Q_e(\yel\yel)
		+ O(\delta_e)\Bigg\}
	=\cY(\Yel\Yel)
		+ O\Bigg(\f{\bde}{4^k}\Bigg)
	=\Theta\Bigg(\f1{4^k}\Bigg)\,,\\
	\cX(\Red\Yel)
	&=\sum_{e\in\delta a\setminus e'} \Bigg\{ 
		2^{|\pd a|-1}
		 Q_e(\red\yel)
			+ O(\dot{\delta}_e) \Bigg\}
		\prod_{e''\in\delta a
			\setminus\set{e,e'}} \Bigg\{
		Q_{e''}(\yel\yel)
		+O(\delta_{e''})\Bigg\}
	= \cY(\Red\Yel)
		+O\Bigg( \f{\dbde+k\bde}{4^k} \Bigg)
		=\Theta\Bigg(\f{k}{4^k}\Bigg)\,,\\
	\cX(\Cya\Yel)
	&=\sum_{e\in\delta a\setminus e'}
	\Bigg\{Q_e(\cya\yel) + O(\delta_e)\Bigg\}
	\prod_{e''\in\delta a\setminus\set{e,e'}} \Bigg\{
		Q_{e''}(\yel\yel)
		+O(\delta_{e''})\Bigg\}
	=\cY(\Cya\Yel)
	+O\Bigg( \f{k\bde}{4^k} \Bigg)
	=\Theta\Bigg(\f{k}{4^k}\Bigg),.
	\end{align*}
Recall for $j=1,2$ we use $(P_e)^j$ to denote the marginal of $P_e$ on the $j$-th copy. Then
	\begin{align*}
	\cX(\Yel\Whi)
	&=\prod_{e\in\delta a \setminus e'}
		\Bigg\{ (P_e)^1(\yel)
		- P_e(\yel\red)\Bigg\}
	= \prod_{e\in\delta a \setminus e'}
		\Bigg\{ (Q_e)^1(\yel)
			\Big( 1 + O(\mdel_e)\Big)
		- Q_e(\yel\red)
			\Big( 1 + O(\dot{\delta}_e)\Big)
		\Bigg\}
		\\
	&=\prod_{e\in\delta a \setminus e'}
		\Bigg\{ 
			(Q_e)^1(\yel)- Q_e(\yel\red)
			+ O\bigg(
			\mdel_e + \f{\dot{\delta}_e}{2^k}
			\bigg)
		\Bigg\}
	=\cY(\Yel\Whi)
	+O\Bigg(
		\f{\mbde}{2^k} + \f{\bm{\dot{\delta}}}{4^k}
	\Bigg)
	=\Theta\Bigg(\f1{2^k}\Bigg)\,,
	\end{align*}
where the transition to the second line uses
the estimates \eqref{e:first.update.at.boundary}. 
By similar calculations, we have
	\begin{align*}
	\cX(\Cya\Whi)
	&=
	\sum_{e\in\delta a \setminus e'}
	\Bigg\{
	(P_e)^1 (\cya) - P_e(\cya\red) \Bigg\}
	\prod_{e''\in\delta a \setminus \set{e,e'}} 
	\Bigg\{
	(P_e)^1(\yel)-P_e(\yel\red)
	\Bigg\}\\
	&=\cY(\Cya\Whi)
	+ O\Bigg(
	\f{k\mbde}{2^k} + \f{k\dbde}{4^k}
	\Bigg)
	 = \Theta\Bigg(\f{k}{2^k}\Bigg) \\
	\nonumber
	\cX(\Red\Whi)
	&=2^{|\pd a|-1}
	\sum_{e\in\delta a \setminus e'}
	\Bigg\{
	(P_e)^1 (\red) - P_e(\red\red) \Bigg\}
	\prod_{e''\in\delta a \setminus \set{e,e'}} 
	\Bigg\{
	(P_e)^1(\yel)-P_e(\yel\red)
	\Bigg\}
	\\
	&=
	\cY(\Red\Whi)
	+ O\Bigg(
	\f{k\mbde+\mbdered}{2^k}
	+\f{\ddbde}{2^{k(1+\zeta)}}
	+\f{k\dbde}{4^k}
	\Bigg)
	 = \Theta\Bigg(\f{k}{2^k}\Bigg)\,, \\
	\cX(\Whi\Whi)
	&=\prod_{e\in\delta a \setminus e'}
	\Bigg\{ 1
		-(Q_e)^1(\red)-(Q_e)^2(\red)
		+Q_e(\red\red)
		+O\bigg(
		\f{\mdelred_e}{2^k} + 
		\f{ \min\set{\ddot{\delta}_e,1}}
			{2^{k(1+\zeta)}}
		\bigg)
		\Bigg\}\\
	&= \cY(\Whi\Whi)
		+ O\Bigg( \f{\mbdered}{2^k}
	+ \f{\bm{\ddot{\delta}}}
		{ 2^{k(1+\zeta)}}
		\Bigg) = \Theta(1)\,.
	\end{align*}
In the last calculation, the term involving 
$\min\set{\ddot{\delta}_e,1}$ arises because on the one hand the definition of $\ddot{\delta}_e$ implies
	\[
	Q_e(\red\red)
	=P_e(\red\red)\bigg\{ 1 + O(\ddot{\delta}_e)\bigg\}\,,
	\]
and on the other hand we assume that both $p$ and $q$ satisfy \eqref{e:first.update.at.boundary} which implies that
	\[
	\max\bigg\{P_e(\red\red),Q_e(\red\red)\bigg\}
	\le \f{O(1)}{2^{k(1+\zeta)}}\,.
	\]
We also comment that we have kept track of explicit powers of $k$, but this is not an important point; one could also write the proof with factors $k^{O(1)}$ where $O(1)$ stays bounded throughout. Lastly we have
	\begin{align*}
	\cX(\Cya\Cya)
	&=\sum_{e\in\delta a \setminus e'}
		P_e(\cya\cya)
		\prod_{e''\in\delta a \setminus\set{e,e'}}
		P_{e''}(\yel\yel)
	+\sum_{e\in\delta a \setminus e'}
		\sum_{e''\in\delta a \setminus\set{e,e'}}
		P_e(\cya\yel)
		P_{e''}(\yel\cya)
		\prod_{e'''\in\delta a \setminus\set{e,e',e''}}
		P_{e'''}(\yel\yel)\\
	&=\cY(\Cya\Cya)
		+O\Bigg( \f{k^2\bde}{4^k} \Bigg)
	= \Theta\Bigg( \f{k^2}{4^k}\Bigg)
	\,,\\
	\cX(\Red\Cya)
	&=2^{|\pd a|-1}
		\Bigg\{
		\sum_{e\in\delta a \setminus e'}
		P_e(\red\cya)
		\prod_{e''\in\delta a \setminus\set{e,e'}}
		P_{e''}(\yel\yel)
	+
	\sum_{e\in\delta a \setminus e'}
		\sum_{e''\in\delta a \setminus\set{e,e'}}
		P_e(\red\yel)
		P_{e''}(\yel\cya)
		\prod_{e'''\in\delta a \setminus\set{e,e',e''}}
		P_{e'''}(\yel\yel)\Bigg\}\\
	&=\cY(\Red\Cya)
	+O\Bigg(
	\f{k^2\bde +k \dbde}{4^k}
	\Bigg)= \Theta\Bigg( \f{k^2}{4^k}\Bigg)\,,\\
	\cX(\Red\Red)
	&= 4^{|\pd a|-1}
	\Bigg\{
	\sum_{e\in\delta a\setminus e'}
		P_e(\red\red)
		\prod_{e''\in\delta a \setminus\set{e,e'}}
			P_{e''}(\yel\yel)
		+\sum_{e\in\delta a\setminus e'}
		\sum_{e''\in\delta a \setminus \set{e,e'}}
			P_e(\red\yel)
			P_{e''}(\yel\red)
			\prod_{e'''\in\delta a 
				\setminus\set{e,e',e''}}
			P_{e'''} (\yel\yel)\Bigg\}\\
	&= \cY(\Red\Red)
	+O\Bigg(
	\f{k\bde+\ddbde}{2^{k(1+\zeta)}}
	+ \f{k \dbde+ k^2\bde}{4^k}
	\Bigg)
	= \cY(\Red\Red)
	+O\Bigg(
	\f{k\bde + \ddbde}{2^{k(1+\zeta)}}
	+ \f{k \dbde}{4^k}
	\Bigg)
	= O\Bigg( \f{k}{2^{k(1+\zeta)}}\Bigg)\,.
	\smallskip
	\end{align*}

\noindent\bemph{Step 3. Error bounds between $g$ and $h$.} Recall that in Step 1 we expressed $g$ and $h$ in terms of $\cX$ and $\cY$. We now use the bounds from Step 2 on the error between $\cX$ and $\cY$ to deduce bounds on the error between $g$ and $h$. 
The two easiest bounds are
	\begin{align*}
	Z_g g(\red\red)
	&= \cX(\Yel\Yel)
	=\cY(\Yel\Yel)
		\Bigg\{ 1 + O(\bde)
		\Bigg\}
	= Z_h h(\red\red)
		\Bigg\{ 1 + O(\bde)
		\Bigg\}\,,\\
	Z_g g(\red\cya)
	&=\cX(\Yel\Cya_{\ge1})
	=\cX(\Yel\Whi)-\cX(\Yel\Yel)
	=Z_h h(\red\cya)
	\Bigg\{1
	+O\bigg(
		\mbde
		+\f{\bm{\dot{\delta}} + \bde}{2^k} \bigg)
	\Bigg\}\,.
	\end{align*}
The remaining estimates are similar, only slightly more complicated. We next have
	\begin{align*}
	Z_g g(\red\yel)
	&= \cX\Big(\Yel\times(\Red\cup\Cya_{\ge2}) \Big)
	= \cX(\Yel\Red)
	+ \cX(\Yel\Whi)
	-\cX(\Yel\Yel)
	-\cX(\Yel\Cya) 
	=Z_h h(\red\yel)
	\Bigg\{
	1 +O\bigg( \mbde + \f{ \dbde + k\bde}{2^k} \bigg)
	\Bigg\}
	\end{align*}
The calculations for the spins in $\set{\yel,\cya}^2$ are all similar:
	\begin{align*}
	Z_g g(\cya\cya)
	&= \cX\Big( (\Cya_{\ge1})^2\Big)
	= \cX(\Whi\Whi)
	- \cX(\Whi\Yel)- \cX(\Yel\Whi)+\cX(\Yel\Yel)
	= Z_h h(\cya\cya)
	\Bigg\{1 + O\bigg( 
		\f{ \mbde+\mbdered }{2^k}
			+
			\f{\ddbde}
				{2^{k(1+\zeta)}}
			+ \f{\bde + \dbde}{4^k }
			\bigg)\Bigg\}\,,\\
	Z_g g(\cya\yel)
	&= \cX\Big(
		\Cya_{\ge1} \times (\Red\cup\Cya_{\ge2})
		\Big)
	= \Bigg( \cX(\Whi\Red)-\cX(\Yel\Red)\Bigg)
		+\Bigg(
		\cX\Big((\Cya_{\ge1})^2\Big)
		-\cX(\Whi\Cya)+\cX(\Yel\Cya)
		\Bigg)\\
	&= Z_h h(\cya\yel)
		\Bigg\{ 1 + O\bigg(
		\f{k\mbde+\mbdered}{2^k}
		+\f{\ddbde}{2^{k(1+\zeta)}}
		+\f{k^2\bde+k\dbde}{4^k}
		\bigg)
			\Bigg\}\,,\\
	Z_g g(\yel\yel)
	&= \cX\Big((\Red\cup\Cya_{\ge2})^2\Big)
	= \cX(\Red\Red)
	+\bigg(
	\cX(\Red\Whi)
	-\cX(\Red\Yel)-\cX(\Red\Cya)
	\bigg)
	+\bigg(
	\cX(\Whi\Red)
	-\cX(\Yel\Red)-\cX(\Cya\Red)
	\bigg)\\
	&\qquad\qquad\qquad\qquad\qquad
	+
	\Bigg(
	\cX(\Cya_{\ge1}\Cya_{\ge1})
	-(\cX(\Cya\Whi)-\cX(\Cya\Yel))
	-(\cX(\Whi\Cya)-\cX(\Yel\Cya))
	+\cX(\Cya\Cya)
	\Bigg)\\
	&=Z_h h(\yel\yel)
	\Bigg\{1+O\bigg(
	\f{k\mbde+\mbdered}{2^k}
	+ \f{k\bde+\ddbde}{2^{k(1+\zeta)}}
	+ \f{k\dbde}{4^k}
	\bigg)\Bigg\}\,.
	\end{align*}
Combining these estimates gives
	\[
	Z_g=Z_h\Bigg\{1+O\bigg(
	\f{k\mbde+\mbdered}{2^k}
	+ \f{k\bde+\ddbde}{2^{k(1+\zeta)}}
	+ \f{k\dbde}{4^k}
	\bigg)\Bigg\}\,.
	\]
The claimed bounds directly follow.
\end{proof}
\end{ppn}

Let us briefly summarize where we are, with respect to the proof of Proposition~\ref{p:nice.tree.lagrange}. In \S\ref{ss:lagrange.iteration} we specified the construction of the weights $\Lm$ (Definition~\ref{d:nice.lagrange.defn.weights}). In the current subsection (\S\ref{ss:clause.update}) we made a general analysis of a \bemph{clause} update
(Proposition~\ref{p:clause.update}) which can be applied to estimate the effects of both Step II(a) and Step III(b). In the next subsection \S\ref{ss:var.update} we will 
make a general analysis of a \bemph{variable} update, which will be applied to estimate the effects of 
both Step II(b) and Step III(a). 
We will combine these estimates in \S\ref{ss:contraction.nondefect} to complete the proof of Proposition~\ref{p:nice.tree.lagrange}.

To conclude the current subsection, we momentarily digress from the plan just described, in order to complete the proof of Proposition~\ref{p:pair.clause.weights}, which 
which makes use of the preceding result Proposition~\ref{p:clause.update}.

\begin{proof}[Proof of Proposition~\ref{p:pair.clause.weights}]
We first define a sequence of weights $\Gm_t\equiv (\gm_{e,t} : e\in\delta a)$, then show that they converge in the limit $t\to\infty$ to the desired weights $\Gm_\infty=\Gm$. We emphasize that this index $t$ is \bemph{purely local to the proof of this proposition}, and is not the same as the $t$ that indexes the up-and-down passes in
Definition~\ref{d:nice.lagrange.defn.weights}. \smallskip

\noindent\bemph{Step 1. Iterative definition of weights.} Recall that the goal is to find weights $\Gm$ such that $\BP_{av}[\dq;\Gamma]=\hq_{av,\infty}$
for all $v\in\pd a$. We initialize $\Gm_0\equiv1$. 
For all $t\ge0$ let $h_t\equiv \BP[\dq;\Gamma_t]$
and $g_t\equiv \BP(\gm_t\dq)$, so that $h_t$ is proportional to $\gamma_t g_t$ --- in this notation, the goal is to have in the limit
$\gamma_\infty g_\infty\cong\hq_\infty$. We then recursively set
	\beq\label{e:clause.update.gamma.against.yy}
	\gamma_{t+1}(\sigma)
	=\gamma_t(\sigma) \cdot
	\f{\hq_\infty(\sigma)}
	{h_t(\sigma)}
	\bigg/\bigg(
	\f{\hq_\infty(\yel\yel)}
	{h_t(\yel\yel)}\bigg)
	\eeq
on all the edges of $\delta a$.
The rationale for this choice is that it keeps
$\gamma_t(\yel\yel)=1$ for all $t\ge0$, and gives us
	\[
	\gamma_{t+1} g_t
	=
	\f{\gamma_{t+1}}{\gamma_t} \gamma_t g_t
	\cong
	\f{\hq_\infty}{h_t} \gamma_t g_t
	\cong
	\f{\hq_\infty}{h_t} h_t
	\cong \hq_\infty\,,
	\]
(which is, clearly, compatible with our eventual goal of $\gamma_\infty g_\infty\cong \hq_\infty$).\smallskip

\noindent\bemph{Step 2. Analysis of weights.}
Suppose, in the notation of Definition~\ref{d:clause.rel.error}, that
for all $e\in\delta a$ we have
	\beq\label{e:clause.def.eps.t}
	\crelerr(\gamma_{e,t},\gamma_{e,t+1})
	= \begin{pmatrix}
		\ep_e(t)\\ \dot{\ep}_e(t) \\ \ddot{\ep}_e(t)
		\end{pmatrix}\,.\eeq
Recall that $\gamma_0\equiv1$
and $h_0\equiv \BP[\dq]\equiv h$, so at $t=0$ 
the assumptions give
	\beq\label{e:g.to.gamma.matrix}
	\crelerr(\gamma_{e,0},\gamma_{e,1})
	=
	\crelerr\bigg(1,
	\f{\hq_{e,\infty}}{h_e}
	\cdot
	\f{h_e(\yel\yel)}{\hq_{e,\infty}(\yel\yel)}
	\bigg)
	\le O(1)\begin{pmatrix}
	1 & 0 & 0 \\
	1 & 1 & 0\\
	1 & 0 & 1
	\end{pmatrix}
	\crelerr(\hq_{e,\infty},h_e)
	= O(1)
	\begin{pmatrix}
	\ep_e \\ 
	\ep_e+\dot{\ep}_e \\ 
	\ep_e+\ddot{\ep}_e
	\end{pmatrix}\,.
	\eeq
For all $t\ge0$, it follows from 
\eqref{e:clause.update.gamma.against.yy} and
 the relation $h_t\cong\gamma_t g_t$ that
	\begin{align}\nonumber
	\f{\gm_{t+2}(\sigma)}{\gm_{t+1}(\sigma)}
	&=\f{\hq_\infty(\sigma)/\hq_\infty(\yel\yel)}
	{h_{t+1}(\sigma)/h_{t+1}(\yel\yel)}
	=\overbrace{\Bigg\{
	\f{\gamma_t(\sigma)\hq_\infty(\sigma)/\hq_\infty(\yel\yel)}
	{\gamma_{t+1}(\sigma)
	h_t(\sigma)/h_t(\yel\yel)}
	\Bigg\}}^\textup{equals one
		by \eqref{e:clause.update.gamma.against.yy}}
	\f{\gamma_{t+1}(\sigma) h_t(\sigma)/h_t(\yel\yel)}{
		\gamma_t(\sigma)
		h_{t+1}(\sigma)/h_{t+1}(\yel\yel)}\\
	&=
	\f{[h_t(\sigma)/\gamma_t(\sigma)]
	/[h_t(\yel\yel)/\gamma_t(\yel\yel)]}
	{[h_{t+1}(\sigma)/\gamma_{t+1}(\sigma)]
	/[h_{t+1}(\yel\yel)/\gamma_{t+1}(\yel\yel)]}
	=	\f{ g_t(\sigma)
		/g_t(\yel\yel) }
		{ g_{t+1}(\sigma)
		/g_{t+1}(\yel\yel) }\,.
	\label{e:relate.err.gamma.to.err.g}
	\end{align}
(the second-to-last step uses that $\gamma_t(\yel\yel)\equiv1$). We can bound the error between $g_t=\BP[\gamma_t\dq]$ and $g_{t+1}=\BP[\gamma_{t+1}\dq]$ by applying Proposition~\ref{p:clause.update}, as follows: let $p_t$ denote the normalized measure corresponding to $\gamma_t\dq$, so that, in the notation of Definition~\ref{d:var.rel.error},
	\[
	\vrelerr(p_{e,t},p_{e,t+1})
	\le O(1) \begin{pmatrix}
	\ep_e(t) \\
	\dot{\ep}_e(t)\\
	\ddot{\ep}_e(t)\\
	\ep_e(t) + \dot{\ep}_e(t)/2^k\\
	\dot{\ep_e}(t) + \ddot{\ep}_e(t) /2^{k\zeta}
	\end{pmatrix}
	+O\Bigg( \ep_e(t) +
		\f{\dot{\ep}_e(t)}{2^k}
		 + \f{\ddot{\ep}_e(t)}{2^{k(1+\zeta)}}
		 \Bigg)
	\mathbf{1}_5\,
	\]
--- on the right-hand side, the first term comes from the error between the non-normalized measures $\gamma_t q$ and $\gamma_{t+1}q$, while the second term comes from the normalization. In matrix notation,
	\[\vrelerr(p_{e,t},p_{e,t+1})
	\le O(1) 
	\begin{pmatrix}
	1 & 2^{-k} & 2^{-k(1+\zeta)}\\
	1 & 1 & 2^{-k(1+\zeta)}\\
	1 & 2^{-k} & 1 \\
	1 & 2^{-k} & 2^{-k(1+\zeta)}\\
	1 & 1 & 2^{-k\zeta}
	\end{pmatrix}
	\begin{pmatrix}
	\ep_e(t) \\ \dot{\ep}_e(t) \\ \ddot{\ep}_e(t)
	\end{pmatrix}\,.
	\]
The message $p_0=\dq$ satisfies the bounds \eqref{e:first.update.at.boundary} by assumption.
It follows from 
\eqref{e:g.to.gamma.matrix}
that the message $p_1$ (again, the normalization of $\gamma_1\dq$) also satisfies
\eqref{e:first.update.at.boundary}, simply because the error between $\gamma_0$ and $\gamma_1$ is negligible. We will argue by induction that $p_t$ satisfies 
\eqref{e:first.update.at.boundary} 
for all $t\ge0$. First we note that by the definition~\eqref{e:clause.def.eps.t}
and by \eqref{e:relate.err.gamma.to.err.g}, we have
	\[\begin{pmatrix}
		\ep_e(t+1)\\ \dot{\ep}_e(t+1)
		\\ \ddot{\ep}_e(t+1)
		\end{pmatrix}
	=\crelerr(\gamma_{e,t+2},\gamma_{e,t+1})
	\le O(1)
	\begin{pmatrix}
	1 & 0 & 0 \\
	1 & 1 & 0\\
	1 & 0 & 1
	\end{pmatrix}
	\crelerr(g_{t+1},g_t)\,,\]
where the $3\times3$ matrix in the last expression is the same as the one in \eqref{e:g.to.gamma.matrix}.
Combining with Proposition~\ref{p:clause.update}
(more precisely, taking the product of the $3\times3$ matrix in the last display,
the $3\times5$ matrix in \eqref{e:clause.output.rel.error.matrix},
and the $5\times3$ matrix above), we find
	\beq\begin{pmatrix}
		\ep_e(t+1)\\ \dot{\ep}_e(t+1)
		\\ \ddot{\ep}_e(t+1)
		\end{pmatrix}
	\le 
	O(k^2)
	\begin{pmatrix}
	2^{-k} & 2^{-k} & 2^{-k(1+\zeta)}\\
	1 & 2^{-k} & 2^{-k(1+\zeta)}\\
	1 & 2^{-k} & 2^{-k(1+\zeta)}
	\end{pmatrix}
	\sum_{e'\in\delta a \setminus e}
	\begin{pmatrix}
		\ep_{e'}(t)\\ \dot{\ep}_{e'}(t) 
		\\ \ddot{\ep}_{e'}(t)
		\end{pmatrix}\,.
	\label{e:eps.t.to.next.step.matrix}
	\eeq
It is clear from this that
$p_t$ satisfies
\eqref{e:first.update.at.boundary} for all $t\ge0$, and moreover that the iteration defined in Step 1 converges as $t\to\infty$, with limiting weights $\gamma_\infty\equiv\gamma$. Quantitatively, it follows from \eqref{e:eps.t.to.next.step.matrix} that
	\[
	E(t+1)
	\equiv
	\begin{pmatrix}
	1 & 2^{-k/2}
	 & 2^{-k(1/2+\zeta)}
	 \end{pmatrix}
	\sum_{e\in\delta a}
	 \begin{pmatrix}
	 \ep_e(t+1)\\ \dot{\ep}_e(t+1)
	 	\\ \ddot{\ep}_e(t+1)
	 \end{pmatrix}
	\le
	\f{O(k^3) E(t)}{2^{k/2}}\,,
	\]
(the $3\times1$ row vector in the definition of $E(t)$ is chosen because it is an approximate left eigenvector of the $3\times3$ matrix in the last bound of \eqref{e:eps.t.to.next.step.matrix}). We shall bound
	\[
	\crelerr(1,\gamma_e)
	\le O(1)
	\sum_{t=0}^\infty
	\crelerr(\gamma_{e,t},
		\gamma_{e,t+1})
	\le O(1)\sum_{t=0}^1
	\begin{pmatrix}
	\ep_e(t)\\
	\dot{\ep}_e(t)\\
	\ddot{\ep}_e(t)
	\end{pmatrix}
	+O(1) \sum_{t=2}^\infty E(t)
	\begin{pmatrix}
	1 \\ 2^{k/2} \\ 2^{k(1/2+\zeta)}
	\end{pmatrix}\,.\]
On the right-hand side, the $t=0$ is bounded by 
\eqref{e:g.to.gamma.matrix},
and the $t=1$ term can be bounded by \eqref{e:eps.t.to.next.step.matrix}.
Thanks to the exponential decay of $E(t)$ in $t$, 
the sum over $t\ge2$ can be bounded by
	\begin{align*}
	\sum_{t=2}^\infty E(t)
	\begin{pmatrix}
	1 \\ 2^{k/2} \\ 2^{k(1/2+\zeta)}
	\end{pmatrix}
	&\le 
	\f{O(k^6)}{2^k}
	E(0)
	\begin{pmatrix}
	1 \\ 2^{k/2} \\ 2^{k(1/2+\zeta)}
	\end{pmatrix}
	\le
	\f{O(k^6)}{2^k}
	\begin{pmatrix}
	1 \\ 2^{k/2} \\ 2^{k(1/2+\zeta)}
	\end{pmatrix}
	\begin{pmatrix}
	1 & 2^{-k/2} & 2^{-k(1/2+\zeta)}
	\end{pmatrix}
	\sum_{e'\in\delta a}
	\begin{pmatrix}
	\ep_{e'}\\
	\ep_{e'}+\dot{\ep}_{e'}\\
	\ep_{e'}+\ddot{\ep}_{e'}
	\end{pmatrix}\\
	&=
	O(k^6)
	\begin{pmatrix}
	2^{-k} & 2^{-3k/2} & 2^{-k(3/2+\zeta)}\\
	2^{-k/2} & 2^{-k} & 2^{-k(1+\zeta)}\\
	2^{-k(1/2-\zeta)} & 2^{-k(1-\zeta)} & 2^{-k}\\
	\end{pmatrix}
	\sum_{e'\in\delta a}
	\begin{pmatrix}
	\ep_{e'}\\
	\ep_{e'}+\dot{\ep}_{e'}\\
	\ep_{e'}+\ddot{\ep}_{e'}
	\end{pmatrix}
	\end{align*}
Combining these bounds gives altogether
	\[
	\crelerr(1,\gamma_e)
	\le
	O(1)
	\begin{pmatrix}
	\ep_e \\ 
	\ep_e+\dot{\ep}_e \\ 
	\ep_e+\ddot{\ep}_e
	\end{pmatrix}
	+
	O(k^6)
	\begin{pmatrix}
	2^{-k} & 2^{-k} & 2^{-k(1+\zeta)}\\
	1 & 2^{-k} & 2^{-k(1+\zeta)}\\
	1 & 2^{-k(1-\zeta)} & 2^{-k}
	\end{pmatrix}
		\sum_{e'\in\delta a}
	\begin{pmatrix}
	\ep_{e'}\\
	\ep_{e'}+\dot{\ep}_{e'}\\
	\ep_{e'}+\ddot{\ep}_{e'}
	\end{pmatrix}\,,\]
from which the claimed result follows.
\end{proof}

\subsection{Analysis of variable \textsc{bp}
recursion in pair model}
\label{ss:var.update} 

In this subsection we analyze the variable \textsc{bp} recursion in the pair model. The analysis will be applied to give estimates for Steps II(b) and III(a) in Definition~\ref{d:nice.lagrange.defn.weights}. The main results of the current subsection are Proposition~\ref{p:var.update} and Corollary~\ref{c:var.update}, which are stated next.
The remaining results of the subsection are technical lemmas used in the proofs of Proposition~\ref{p:var.update} and Corollary~\ref{c:var.update}, which we briefly outline here:
\begin{enumerate}[--]
\item Lemma~\ref{l:variable.close.to.product}
shows that, under the conditions of Proposition~\ref{p:var.update} on the messages in coming to variable $v$,
the resulting law of the frozen spin $x_v\in\set{\minus,\plus,\free}^2$ is close to a product measure.
\item Lemma~\ref{l:transfer.error}
shows (using the result of
Lemma~\ref{l:variable.close.to.product}) that
that reweighting the frozen spin $(x_v)^1$
does not greatly affect the law of the
other spin $(x_v)^2$, and vice versa.

\item Lemma~\ref{l:perturb.red.weights}
shows that when we reweight all the edges $e\in\delta v$, the marginal law of $\sigma_e\in\set{\RYGB}^2$ is mainly affected by the weight on edge $e$ alone,
with a much smaller effect from the edges
$\delta v\setminus e$.

\item Lemma~\ref{l:marginal.error} gives a bound on the marginal error in variable-to-clause messages under certain conditions.
\end{enumerate}
The proof of Proposition~\ref{p:var.update} uses Lemmas~\ref{l:transfer.error}~and~\ref{l:perturb.red.weights}. Corollary~\ref{c:var.update} follows from Proposition~\ref{p:var.update} together with Lemma~\ref{l:marginal.error}. From this point on, rather than keeping track of explicit powers of $k$, we will simply write $k^{O(1)}$ to indicate $k$ raised to powers bounded by an absolute constant.

\begin{ppn}\label{p:var.update} Let $v$ be a nice variable with incoming messages $\hat{p}$ (in the pair coloring model). Recalling the notation of Definition~\ref{d:clause.rel.error}, assume that
for all $e\in\delta v$ we have 
	\beq\label{e:condition.on.hat.messages}
	\crelerr(\prodhq_e,\hat{p}_e)
	\le
	\f{k^{O(1)}}{2^{k\zeta}}
	\begin{pmatrix} 
	2^{-k}
	\\ 1 \\ 2^k
	\end{pmatrix}\,.\eeq
Suppose $v$ has weight $\Theta\equiv\Theta_v$, of the functional form \eqref{e:factorize.internal.weights.in.cpd} from Definition~\ref{d:param.weights.Lambda} (with $\theta$ in place of $\lm$). Assume that the measure $\mu\equiv\nu_{\delta v}[\Theta;\hat{p}]$ (defined using the notation of \eqref{e:defn.nu.Lambda.q}) is judicious in the sense of 
Definition~\ref{d:nu.judicious},
and that
	\beq\label{e:assume.variable.weights.close.to.one}
	\adjustlimits
	\sum_{j=1,2}\sum_{x\in\set{\minus,\plus,\free}}
	\Big|\theta^j(x)-1\Big| \le
	\f{k^{O(1)}}{2^{k\zeta}}\,,\quad
	\max_{e\in\delta v}\Bigg\{
	\sum_{j=1,2}
	\Big|(\theta_e)^j-1\Big|
	 \Bigg\}
	\le \f{ k^{O(1)}}{2^{k\zeta}}\,.
	\eeq
Let $\hq$ be another set of incoming messages,
also satisfying \eqref{e:condition.on.hat.messages}, such that for all $e\in\delta v$ we have
	\[
	\crelerr(\hat{p}_e,\hq_e)
	\le \begin{pmatrix} 
		\ep_e \\ \dot{\ep}_e
		\\ \ddot{\ep}_e\end{pmatrix}
	\le
	\f{k^{O(1)}}{2^{k\zeta}}
	\begin{pmatrix} 
	2^{-k}
	\\ 1 \\ 2^k
	\end{pmatrix}\,.
	\]
Then there exist weights $\Lm\equiv\Lm_v$
(also parametrized as 
\eqref{e:factorize.internal.weights.in.cpd} from
Definition~\ref{d:param.weights.Lambda}) 
such that
	\begin{align}
	\label{e:defn.of.err.v.notation}
	\sum_{j=1,2}
	\sum_{x\in\set{\minus,\free}}
	\Bigg|\f{\lm^j(x)}{\theta^j(x)}-1\Bigg|
	&\le k^{O(1)} \Bigg(
		\overbrace{
		\sum_{e'\in\delta v}\bigg\{
		\ep_{e'}
		+\f{\dot{\ep}_{e'}}{2^k}
		+\f{\min\set{\ddot{\ep}_{e'},1}}
		{2^{k(1+\zeta)}}
		\bigg\}
		}^{\textup{denote this }\err_v}
		\Bigg)\,,\\
	\sum_{j=1,2}
	\Bigg|
	\f{(\lm_e)^j}{(\theta_e)^j}-1
	\Bigg|
	&\le k^{O(1)}\Bigg(
	\dot{\ep}_e
	+\f{\min\set{\ddot{\ep}_e,1}}{2^{k\zeta}}
	+ \err_v
	\Bigg) \nonumber
	\end{align}
and $\nu_{\delta v}[\Lm;\hq]$ is again judicious
in the sense of Definition~\ref{d:nu.judicious}.
\end{ppn}

\begin{cor}\label{c:var.update}
In the setting of Proposition~\ref{p:var.update}, 
for $\dot{p}=\BP[\hat{p};\Theta]$
and $\dq=\BP[\hq;\Lm]$ we have 
	\[
	\vrelerr(\dot{p}_e,
		\dq_e)
	\le
	k^{O(1)}
	\begin{pmatrix}
	1\\1\\1\\ 
	2^{-k\zeta} \\
	2^{-k\zeta}
	\end{pmatrix} \err_v
	+k^{O(1)}
	\begin{pmatrix}
	0&0&0\\
	0&1&1\\
	0&1&1\\
	1&2^{-k}& 2^{-k}\\
	1&1& 1\\
	\end{pmatrix}
	\begin{pmatrix}
	\ep_e\\
	\dot{\ep}_e\\
	\min\set{\ddot{\ep}_e,1} / 2^{k\zeta}
	\end{pmatrix}\,,\]
for all $e\in\delta v$
(using the notation of Definition~\ref{d:var.rel.error}),
and with $\err_v$
as defined by \eqref{e:defn.of.err.v.notation}.
\end{cor}

\begin{lem}\label{l:variable.close.to.product}
Let $v$ be a nice variable with incoming messages $\hq$ (in the pair model)
satisfying condition \eqref{e:condition.on.hat.messages}.
Let $\nu$ be the resulting 
probability measure on the frozen spin $x_v\in\set{\minus,\plus,\free}^2$. 
Then $\nu$ is close to a product measure:
	\beq\label{e:frozen.product.approx}
	\nu(x)
	=\Bigg\{ 1 + O\bigg(\f{1}{2^{k\zeta}}\bigg)
		\Bigg\} \cdot
	\begin{cases}
	1/4 & \textup{for } x\in \set{\minus,\plus}^2\,,\\
	1/(4 \cdot 2^k)
		&\textup{for }x\in\set{\minus\free,\plus\free,
			\free\minus,\free\plus}\,, \\ 
	1/(4 \cdot 4^k)
		&\textup{for }
			x=\free\free
	\end{cases}\eeq
for an absolute constant $0<\zeta\le1/20$.

\begin{proof}
This is a fairly direct calculation. 
Recall that $v$ is a nice variable,
so by Remark~\ref{r:adjusted.bp.stability}, condition~\eqref{e:condition.on.hat.messages} also holds with $\hqstarstar_e\equiv \hqstar_e\otimes\hqstar_e$ in place of $\prodhq_e \equiv \hqbul_e\otimes\hqbul_e$. The degrees $|\delta v(\PM)|$ are constrained by Definition~\ref{d:nice}, as are the canonical messages $\hqstar$; recall moreover that $\hqstar_e(\yel)=\hqstar_e(\grn)=\hqstar_e(\blu)$. We will use these facts repeatedly in what follows.
Recall moreover from \eqref{e:cyan.purple} that we defined $\pur\equiv\SPIN{purple}\equiv\set{\red,\blu}$. \smallskip

\noindent\bemph{Step 1. Estimates in single-copy model.} Consider the variable $v$ with incoming messages $\hqstar$, and let $\pi$ denote the resulting law of the 
frozen spin $x_v\in\set{\minus,\plus,\free}$. Then, for a normalizing constant $z_\star$, we have
	\[ \pi(\free)
	=\f1{z_\star}
	\Bigg\{\prod_{e\in\delta v(\plus)}
	\hqstar_e(\grn)\Bigg\}\Bigg\{
	\prod_{e\in\delta v(\minus)}
	\hqstar_e(\grn)\Bigg\}
	=\f1{z_\star}
	\Bigg\{\prod_{e\in\delta v(\plus)}
	\hqstar_e(\blu)\Bigg\}\Bigg\{
	\prod_{e\in\delta v(\minus)}
	\hqstar_e(\blu)\Bigg\}\]
Then, using the conditions from Definition~\ref{d:nice}, we have
	\begin{align*}
	\f{\pi(\plus)}{\pi(\free)}
	&=\Bigg\{1 - O\bigg(\f1{2^k}\bigg)\Bigg\}
	\Bigg\{
	\prod_{e\in\delta v(\plus)}
	\f{\hqstar_e(\pur)}{\hqstar_e(\grn)}
	\Bigg\}\Bigg\{\prod_{e\in\delta v(\minus)}
	\f{\hqstar_e(\yel)}{\hqstar_e(\grn)}
	\Bigg\} \\
	&=\Bigg\{1 - O\bigg(\f1{2^k}\bigg)\Bigg\}
	\prod_{e\in\delta v(\plus)}
	\bigg( 1 + \f{1 + O(2^{-k/10})}
	{2^{|\pd a(e)|-1}} \bigg)
	=2^k
	\Bigg\{ 1 + O\bigg( \f{k}{2^{k/10}}\bigg)\Bigg\}\,,
	\end{align*}
where $a(e)$ refers to the clause incident to edge $e$. The same estimate holds for $\pi(\minus)$. It follows that
	\[
	\pi(x)
	= \Bigg\{ 1 + O\bigg( \f{k}{2^{k/10}}\bigg)\Bigg\}
	\times
	\begin{cases}
	1/2 &\textup{for }x\in\set{\minus,\plus}\,,\\
	1/2^{k+1}
		&\textup{for }x=\free\,.
	\end{cases}\smallskip
	\]

\noindent\bemph{Step 2. Estimates in pair model.}
Let $\nu_\star\equiv\pi\otimes\pi$. Let $z,z_\star$ be the normalizing constant such that
	\[
	\nu(\free\free)
	= \f1{z}
	\prod_{e\in\delta v}
	\hq_e(\grn\grn)\,,\quad
	\nu_\star(\free\free)
	=\f1{z_\star}
	\prod_{e\in\delta v}
	\hqstarstar_e(\grn\grn)\,.\]
Taking the ratio between the two and using the assumption \eqref{e:condition.on.hat.messages} gives
	\[
	\f{z\nu(\free\free)}{z_\star\nu_\star(\free\free)}
	=\prod_{e\in \delta v} \Bigg(
	1 + \f{k^{O(1)}}{2^{k(1+\zeta)}}\Bigg)
	=1 + O\bigg( \f{k^{O(1)}}{2^{k\zeta}}\bigg)\,.
	\]
Similarly, we use \eqref{e:condition.on.hat.messages} again to calculate that
	\begin{align*}
	z\nu(\plus\free)
	&= \Bigg\{
	\prod_{e\in\delta v(\plus)} \hq_e(\pur\grn)
	\Bigg\}\Bigg\{
	\prod_{e\in\delta v(\minus)} \hq_e(\yel\grn)
	\Bigg\}
	\Bigg\{
	1-\prod_{e\in\delta v(\plus)}
	\bigg(
	1-\f{\hq_e(\red\grn)}{\hq_e(\pur\grn)}
	\bigg)
	\Bigg\} \\
	&= \Bigg\{
	\prod_{e\in\delta v(\plus)} \hq_e(\pur\grn)
	\Bigg\}\Bigg\{
	\prod_{e\in\delta v(\minus)} \hq_e(\yel\grn)
	\Bigg\}
	\Bigg\{
	1-\prod_{e\in\delta v(\plus)}
	\bigg(
	1-\f{ 1 + O(k^{O(1)}/2^{k\zeta})
		}{2^{|\pd a(e)|-1}}
	\bigg)
	\Bigg\} \\
	&=
	\Bigg\{
	\prod_{e\in\delta v(\plus)} \hq_e(\pur\grn)
	\Bigg\}\Bigg\{
	\prod_{e\in\delta v(\minus)} \hq_e(\yel\grn)
	\Bigg\}
	\Bigg\{
	1 + O\bigg(\f1{2^k}\bigg)\Bigg\}
	=z_\star\nu_\star(\plus\free)
	\Bigg\{ 1 + O\bigg(\f{k^{O(1)}}{2^{k\zeta}}
	\bigg)\Bigg\}
	\,.
	\end{align*}
The same estimate holds for the spins in
$\set{\free\plus,\minus\free,\free\minus}$. A similar calculation gives
	\begin{align*}
	z\nu(\plus\plus)
	&= \Bigg\{
	\prod_{e\in\delta v(\plus)} \hq_e(\pur\pur)
	\Bigg\}\Bigg\{
	\prod_{e\in\delta v(\minus)} \hq_e(\yel\yel)
	\Bigg\}
	\Bigg\{
	1
	-\prod_{e\in\delta v(\plus)}
	\bigg(1-
	\f{\hq_e(\red\pur)}{\hq_e(\pur\pur)}
	\bigg)
	-\prod_{e\in\delta v(\plus)}
	\bigg(
	1-\f{\hq_e(\pur\red)}{\hq_e(\pur\pur)}
	\bigg)
	+\prod_{e\in\delta v(\plus)}
	\f{\hq_e(\blu\blu)}{\hq_e(\pur\pur)}
	\Bigg\} \\
	&=
	\Bigg\{
	\prod_{e\in\delta v(\plus)} \hq_e(\pur\pur)
	\Bigg\}\Bigg\{
	\prod_{e\in\delta v(\minus)} \hq_e(\yel\yel)
	\Bigg\}
	\Bigg\{ 1 + O\bigg(\f{k^{O(1)}}{2^{k\zeta}}
	\bigg)\Bigg\}
	= z_\star\nu_\star(\plus\plus)
	\Bigg\{ 1 + O\bigg(\f{k^{O(1)}}{2^{k\zeta}}
	\bigg)\Bigg\}\,,
	\end{align*}
and the same estimate holds for the other spins in $\set{\plus\minus,\minus\plus,\minus\minus}$. Therefore
	\beq\label{e:frozen.product.approx.pi}
	\nu(x)
	= \pi(x^1)\pi(x^2)
\Bigg\{ 1 + O\bigg(\f{k^{O(1)}}{2^{k\zeta}}\bigg)
		\Bigg\} \,,
	\eeq
and the claim follows.
\end{proof}
\end{lem}

\begin{lem}\label{l:transfer.error}
Let $v$ be a nice variable with incoming messages $\hq$ (in the pair model) satisfying \eqref{e:condition.on.hat.messages}.
Consider two weight functions 
$\lm=\lm^1\otimes\lm^2$ and $\theta=\theta^1\otimes\theta^2$ 
where $\lm^j,\theta^j:\set{\minus,\plus,\free}\to(0,\infty)$ with
	\beq\label{e:bound.on.frozen.weights}
	\adjustlimits
	\sum_{j=1,2}
	\sum_{x\in\set{\minus,\plus,\free}} 
	\Bigg( \Big|\lm^j(x)-1\Big|
	+\Big|\theta^j(x)-1\Big|\Bigg)
	\le \f{k^{O(1)}}{2^{k\zeta}}\,.\eeq
Let $\nu$ be the law of the frozen spin $x_v\in\set{\minus,\plus,\free}^2$ that results from the messages $\hq$ and the weights $\lm$,
and let $\mu$ be the law of $x_v$
that results from $\hq$ and $\theta$. Then for the marginal on the first copy we have
	\[
	\nu^1(x^1)
	=	
	\Bigg\{1+
	O\bigg(
		\f{k^{O(1)}\|\delta^2\|_\infty}{2^{k\zeta}}
		\bigg)\Bigg\}
	\bigg(\f{\lm^1\mu^1}{\theta^1}\bigg) (x^1)
	 \Bigg/
	 \Bigg(
	\sum_{y^1} \bigg(\f{\lm^1\mu^1}{\theta^1} \bigg)
			(y^1)\Bigg)
	\,,\quad
	\delta^j\equiv \f{\lm^j}{\theta^j}-1\,.
	\]
That is to say, $\nu^1$ is approximately equal to the reweighting of $\mu^1$ by $\lm^1/\theta^1$, 
and does not depend much on $\lm^2/\theta^2$.

\begin{proof}
Let $\barpi$ be the law of the frozen spin $x=x_v\in\set{\minus,\plus,\free}^2$ that results from
messages $\hq$ and weights $\lm^1\otimes\theta^2$. Then
	\[
	\barpi(x)
	\cong \f{\lm^1(x^1)}{\theta^1(x^1)}
		\mu(x)\,,
	\]
from which it follows that the marginal of $\barpi$ on the first copy is exactly
	\[
	\barpi^1(x^1)
	= \bigg(\f{\lm^1\mu^1}{\theta^1}\bigg) (x^1)
	 \Bigg/
	 \Bigg(
	\sum_{y^1} \bigg(\f{\lm^1\mu^1}{\theta^1} \bigg)
			(y^1)\Bigg)\,.
	\]
It remains to compare $\nu^1$ with $\barpi^1$. To this end, note it follows from 
Lemma~\ref{l:variable.close.to.product}
together with the assumption \eqref{e:bound.on.frozen.weights} that $\barpi$ also satisfies the estimates
\eqref{e:frozen.product.approx}
or equivalently
\eqref{e:frozen.product.approx.pi}. We thus have $e:\set{\minus,\plus,\free}^2\to\mathbb{R}$ such that
	\beq\label{e:frozen.product.approx.err}
	\barpi(x)
	=\pi(x^1)\pi(x^2)
	\bigg( 1 + e(x)\bigg)\,,\quad
	|e(x)| \le 
		 \f{k^{O(1)}}{2^{k\zeta}}\,.
	\eeq
Recall that we defined $\delta^j\equiv \lm^j/\theta^j-1$. Then, with $z$ a normalizing constant, we have
	\[
	\nu(x)
	=\f1z \f{\lm^2(x^2)}{\theta^2(x^2)}
	\barpi(x)
	=\f1z
	\Big(1 + \delta^2(x^2) \Big) \barpi(x)\,.
	\]
We can use \eqref{e:frozen.product.approx.err} to estimate the normalizing constant as
	\begin{align*}
	z
	&= \sum_x
	\Big(1 + \delta^2(x^2) \Big)
	\barpi(x)
	= 1 + \sum_x\delta^2(x^2) 
	\barpi(x)
	= 1 + \sum_{x^1}
		\pi(x^1)
		\sum_{x^2} \pi(x^2) \delta^2(x^2) 
		\Big(1+ e(x)\Big)\\
	&= \Bigg\{1 + \sum_{x^2} \pi(x^2) \delta^2(x^2) \Bigg\}
	\Bigg\{1
	+ O\bigg(
		\f{k^{O(1)}\|\delta^2\|_\infty}{2^{k\zeta}}
		\bigg)\Bigg\}\,.
	\end{align*}
Similarly, we can also use \eqref{e:frozen.product.approx.err} to estimate
	\begin{align*}
	\sum_{x^2}
	\Big(1 + \delta^2(x^2) \Big) \barpi(x)
	&=\barpi^1(x^1)
	+ \sum_{x^2}\delta^2(x^2) \barpi(x) 
	=\barpi^1(x^1)
	+ \pi(x^1)\sum_{x^2}\delta^2(x^2)
		\pi(x^2)\Big(1+e(x)\Big)\\
	&=\barpi^1(x^1)
	+ \pi(x^1) \sum_{x^2} \pi(x^2) \delta^2(x^2) 
	+ O\bigg(
		\f{k^{O(1)}\|\delta^2\|_\infty}{2^{k\zeta}}
		\bigg)\\
	&= \barpi^1(x^1)
	\Bigg\{
	1+\sum_{x^2} \pi(x^2) \delta^2(x^2) 
	\Bigg\}
	\Bigg\{1+
	O\bigg(
		\f{k^{O(1)}\|\delta^2\|_\infty}{2^{k\zeta}}
		\bigg)\Bigg\}\,.
	\end{align*}
Taking the ratio between the last two quantities gives
	\[
	\nu^1(x^1)
	=\barpi^1(x^1)
	\Bigg\{1+
	O\bigg(
		\f{k^{O(1)}\|\delta^2\|_\infty}{2^{k\zeta}}
		\bigg)\Bigg\}\,.
	\]
This proves the claim.
\end{proof}
\end{lem}

\begin{lem}\label{l:perturb.red.weights}
Let $v$ be a nice variable. Suppose we are given two sets of weights for $v$, $\Theta$ and $\Lm$, both 
 of the functional form \eqref{e:factorize.internal.weights.in.cpd} from
Definition~\ref{d:param.weights.Lambda}, satisfying the bound
\eqref{e:bound.on.frozen.weights} from Lemma~\ref{l:transfer.error}, and additionally such that
	\beq\label{e:cond.on.msg.for.edge.weight.perturb}
	\max_{e\in\delta v}\Bigg\{
	\sum_{j=1,2}
	\bigg(
	\Big|(\theta_e)^j-1\Big|
	+\Big|(\lm_e)^j-1\Big|
	\bigg)
	\Bigg\}
	\le \f{k^{O(1)}}{2^{k\zeta}}\,.
	\eeq
Assume that the weights on the frozen spin agree, i.e., $\lm^j(x)=\theta^j(x)$ for $j=1,2$
and all $x\in\set{\minus,\plus,\free}$, so that only the edge weights can differ betweeen $\Theta$ and $\Lm$. 
Denote the error in the edge weights by
	\[
	(\rho_e)^j
	\equiv
	\f{(\lm_e)^j}{(\theta_e)^j}-1
	\equiv
	\f{(\lm_e)^j(\red)}{(\theta_e)^j(\red)}-1\,.
	\]
Let $v$ have incoming messages $\hq$ (in the pair model) satisfying \eqref{e:condition.on.hat.messages}.
Let $\mu=\nu_{\delta v}[\Theta;\hq]$
and $\nu=\nu_{\delta v}[\Lm;\hq]$,
with edge marginals $\mu_e$ and $\nu_e$
for $e\in\delta v$. Then
it holds for all $e\in\delta v$ that
	\[\f{(\nu_e)^1(\red)}{(\nu_e)^1(\blu)}
	=
	\f{(\lm_e)^1}{(\theta_e)^1}
	\cdot
	\f{(\mu_e)^1(\red)}{(\mu_e)^1(\blu)}
	\Bigg\{
	1 + O\Bigg(
	k^{O(1)}\bigg[
	{\f{(\rho_e)^2}{2^{k\zeta}} }+
	\sum_{j=1,2}
		\sum_{e'\in\delta v \setminus e}
		\f{|(\rho_{e'})^j|}{2^{k(1+\zeta)}}
	\bigg]\Bigg)
	\Bigg\}\,.\]
This says that the marginal $\SPIN{red}$-to-$\SPIN{blue}$ ratio in the first copy is changed by a factor of approximately $(\lm_e)^1/(\theta_e)^1$
--- i.e., for the first-copy marginal on $e$, changing the edge weights in both copies on all of $\delta v$ has approximately the same effect as changing the edge weight in the first copy only, on $e$ alone.
The analogous bound holds exchanging the copy indices $j=1,2$.

\begin{proof} For $e\in\delta v$ 
let $g_e,h_e$ be the non-normalized measures
on $\sigma\in\set{\RYGB}^2$ defined by
	\begin{align*}
	g_e(\sigma)
	&\equiv \hq_e(\sigma)\theta_e(\sigma)
	=\hq_e(\sigma)
	\Bigg\{
	\prod_{j=1,2}
	( (\theta_e)^j)^{\Ind{\sigma^1=\red}}
	\Bigg\}\,,\\
	h_e(\sigma)
	&\equiv \hq_e(\sigma)\lm_e(\sigma)
	= g_e(\sigma)
	\Bigg\{
	\prod_{j=1,2}
	\bigg(
	 1 + ((\rho_e)^j)^{\Ind{\sigma^j=\red}}
	 \bigg)
	\Bigg\}\,.
	\end{align*}
Similarly as in \eqref{e:incident.edges.of.same.sign}, 
let $\delta v(\PM e)
\equiv \set{e'\in\delta v\setminus e : \lit_e=\PM \lit_{e'}}$. Then, for some normalizing constants $z_e,\bar{z}_e$ we have
	\begin{align*}
			\f{\bar{z}_e\mu_e(\red\red)}
		{\theta^1(\lit_e)\theta^2(\lit_e)}
	&=
	(\theta_e)^1(\theta_e)^2
	\hq_e(\red\red)
	\Bigg\{
	\prod_{e'\in\delta v(\plus e)}
	g_{e'}(\pur\pur)
	\Bigg\}\Bigg\{
	\prod_{e'\in\delta v(\minus e)}
	g_{e'}(\yel\yel)\Bigg\}
	\,,\\
	\f{z_e\nu_e(\red\red)}
		{\theta^1(\lit_e)\theta^2(\lit_e)}
	&=
	(\lm_e)^1 (\lm_e)^2 
	\hq_e(\red\red)
	\Bigg\{
	\prod_{e'\in\delta v(\plus e)}
	h_{e'}(\pur\pur)
	\Bigg\}\Bigg\{
	\prod_{e'\in\delta v(\minus e)}
	h_{e'}(\yel\yel)\Bigg\}
	\,.\end{align*}
Note that $g_{e'}(\yel\yel)=\hq_{e'}(\yel\yel)=h_{e'}(\yel\yel)$ for all $e'\in\delta v$. On the other hand, we can estimate
	\begin{align*}
	&\prod_{e'\in\delta v(\plus e)}
	\f{h_{e'}(\pur\pur)}
	{g_{e'}(\pur\pur)}
	=\prod_{e'\in\delta v(\plus e)}
	\Bigg\{
	1+
	\f{
	(\rho_{e'})^1 g_{e'}(\red\pur)
	+(\rho_{e'})^2 g_{e'}(\pur\red)
	+(\rho_{e'})^1(\rho_{e'})^2 g_{e'}(\red\red)
	}
	{g_{e'}(\pur\pur)}\Bigg\} \\
	&\quad
	\stackrel{\eqref{e:cond.on.msg.for.edge.weight.perturb}}{=}
	\prod_{e'\in\delta v(\plus e)}
	\Bigg\{
	1+
	\bigg(1 + \f{k^{O(1)}}{2^{k\zeta}}\bigg)
	\f{
	(\rho_{e'})^1 \hq_{e'}(\red\pur)
	+(\rho_{e'})^2 \hq_{e'}(\pur\red)
	}
	{ \hq_{e'}(\pur\pur)}
	+O\bigg(\f{(\rho_{e'})^1(\rho_{e'})^2}
		{ 2^{k(1+\zeta)} }
		\bigg)
	\Bigg\}\\
	&\quad
	\stackrel{\eqref{e:condition.on.hat.messages}}{=}
	1 +
	\sum_{j=1,2}\Bigg(
	\underbrace{\sum_{e'\in\delta v(\plus e)}
			\f{(\rho_{e'})^j \hqstar_{e'}(\red)}
				{\hqstar_{e'}(\blu)}
				}_{\textup{denote this }A^j(\plus)}
				\Bigg)
	+ O\Bigg(\underbrace{
	\sum_{j=1,2}\sum_{e'\in\delta v\setminus e}
	\f{k^{O(1)} |(\rho_{e'})^j|}{2^{k(1+\zeta)}}
	}_{\textup{denote this }R_0}
	\Bigg)
	\end{align*}
(having implicitly used Remark~\ref{r:adjusted.bp.stability} together with
\eqref{e:condition.on.hat.messages}). It follows that
	\[
	\f{z_e\nu_e(\red\red)}
		{\bar{z}_e\mu_e(\red\red)}
	\Bigg\{\prod_{j=1,2}
	\f{(\theta_e)^j}{(\lm_e)^j}
	\Bigg\}
	=
	1 + A^1(\plus)+ A^2(\plus) + O(R_0)\,.\]
By very similar calculations we obtain,
for all $\sigma\in\set{\red,\blu}^2$,
	\[
	\f{z_e\nu_e(\sigma)}
		{\bar{z}_e\mu_e(\sigma)}
	\Bigg\{
	\prod_{j=1,2}
	\bigg(
	\f{(\theta_e)^j}{(\lm_e)^j}
	\bigg)^{\Ind{\sigma^j=\red}}
	\Bigg\}
	=
	1 + A^1(\plus)+ A^2(\plus) + O(R_0)\,.\]
Let $A^j(\minus)$ be define analogously to
$A^j(\plus)$, except that we sum over $e'\in\delta v(\minus e)$ rather than 
$e'\in\delta v(\plus e)$. Then similar calculations as above give the estimates
	\begin{align*}
	\f{z_e\nu_e(\red\yel)}
		{\bar{z}_e\mu_e(\red\yel)}
	\f{(\theta_e)^1}{(\lm_e)^1}
	&= 
	1 + A^1(\plus)+ A^2(\minus) + O(R_0)
	=\f{z_e\nu_e(\blu\yel)}
		{\bar{z}_e\mu_e(\blu\yel)}\,,\\
	\f{z_e\nu_e(\red\grn)}
		{\bar{z}_e\mu_e(\red\grn)}
	\f{(\theta_e)^1}{(\lm_e)^1}
	&=
	1 + A^1(\plus) + O(R_0)
	=\f{z_e\nu_e(\blu\grn)}
		{\bar{z}_e\mu_e(\blu\grn)}\,.
	\end{align*}
Combining these estimates gives 
	\begin{align*}
	\f{(\nu_e)^1(\red)}
	{(\nu_e)^1(\blu)}
	&=\f{(\lm_e)^1}{(\theta_e)^1}
	\f{(\mu_e)^1(\red)[1+A^1(\plus)]
	+\mu_e(\red\pur)A^2(\plus)
	+\mu_e(\red\yel)A^2(\minus)
	+O(\mu_e(\red\red) (\rho_e)^2)}
	{(\mu_e)^1(\blu)[1+A^1(\plus)]
	+\mu_e(\blu\pur)A^2(\plus)
	+\mu_e(\blu\yel)A^2(\minus)
	+O(\mu_e(\blu\red)(\rho_e)^2)
	}\Bigg\{1+O(R_0)\Bigg\}\\
	&=
	\f{(\lm_e)^1}{(\theta_e)^1}
	\f{(\mu_e)^1(\red)}{(\mu_e)^1(\blu)}
	\f{\DS 1
	+ \f{\mu_e(\red\pur)A^2(\plus)}
		{(\mu_e)^1(\red)[1+A^1(\plus)]}
	+\f{\mu_e(\red\yel)A^2(\minus)}
		{(\mu_e)^1(\red)[1+A^1(\plus)]}
	+ O\bigg( \f{k^{O(1)}(\rho_e)^2}{2^{k\zeta}}\bigg)
	}{\DS 1
	+ \f{\mu_e(\blu\pur)A^2(\plus)}
		{(\mu_e)^1(\blu)[1+A^1(\plus)]}
	+\f{\mu_e(\blu\yel)A^2(\minus)}
		{(\mu_e)^1(\blu)[1+A^1(\plus)]}
	+ O\bigg( \f{(\rho_e)^2}{2^k}\bigg)
	}\Bigg\{1+O(R_0)\Bigg\}\\
	&=\f{(\lm_e)^1}{(\theta_e)^1}
	\f{(\mu_e)^1(\red)}{(\mu_e)^1(\blu)}
	\Bigg\{1+O\bigg(
	\f{k^{O(1)}(\rho_e)^2}{2^{k\zeta}}+R_0 
		\bigg)\Bigg\}\,.
	\end{align*}
In the last step we used the assumptions
 \eqref{e:condition.on.hat.messages},
 \eqref{e:bound.on.frozen.weights}, and \eqref{e:cond.on.msg.for.edge.weight.perturb}, which together guarantee that
	\begin{align*}
	\f{\mu_e(\red\pur)}
	{(\mu_e)^1(\red)}
	&=\starpi_e(\pur)\Bigg\{ 1 + O\bigg(
		\f{k^{O(1)}}{2^{k\zeta}}
		\bigg)\Bigg\}
	=\f{\mu_e(\blu\pur)}
	{(\mu_e)^1(\blu)}\,,\\
	\f{\mu_e(\red\yel)}
	{(\mu_e)^1(\red)}
	&=\starpi_e(\yel)\Bigg\{ 1 + O\bigg(
		\f{k^{O(1)}}{2^{k\zeta}}
		\bigg)\Bigg\}
	=\f{\mu_e(\blu\yel)}
	{(\mu_e)^1(\blu)}\,.
	\end{align*}
The claim follows.
\end{proof}
\end{lem}

\begin{proof}[Proof of Proposition~\ref{p:var.update}]
We first define a sequence of weights $\Lm_t$, then analyze the construction to show that they converge to the desired weights $\Lm_\infty=\Lm$. We emphasize that this index $t$ is \bemph{purely local to the proof of this proposition}, and is not the same as the $t$ that indexes the up-and-down passes in
Definition~\ref{d:nice.lagrange.defn.weights}. The proof below is divided into a few numbered parts.
\smallskip

\noindent\bemph{Part~1. Iterative definition of weights.} Initialize $\Lm_0=\Theta$. For all $t\ge0$, given the weights $\Lm_t$, let $\nu_t\equiv\nu_{\delta v}[\Lm_t;\hq]$. Then, for all $e\in\delta v$, update the edge weights by setting, for both $j=1,2$,
\beq\label{e:var.update.edgeupdate}
	(\lm_{e,t+1})^j
	=(\lm_{e,t})^j \cdot \f{
	 \starpi_e(\red) / \starpi_e(\blu)}
	{ (\nu_{e,t})^j (\red) / (\nu_{e,t})^j(\blu)}\,.
	\eeq
Let $\nu_{t+1/2}$
be the measure 
that results from
incoming messages $\hq$,
frozen spin weights $\lm_t$,
and edge weights $\lm_{e,t+1}$
for all $e\in\delta v$.
Let $\pi_{t+1/2}$ be the
induced measure on the frozen spin
$x\in\set{\minus,\plus,\free}^2$.
Let $\pi$ be the measure on
the single spin $x^j\in\set{\minus,\plus,\free}$ that
is induced by $\starpi$.
Then update the frozen spin weights by setting, for both $j=1,2$,
	\beq\label{e:var.update.frozenupdate}
	(\lm_{t+1})^j(x^j)
	=(\lm_t)^j(x^j)
	\cdot
	\f{\pi(x^j) / \pi(\plus)}
	{(\pi_{t+1/2})^j(x^j) / (\pi_{t+1/2})^j(\plus)}\,.
	\eeq
This concludes the definition of the weights $\Lm_{t+1}$.\smallskip

\noindent\bemph{Part~2. Definition of error quantities.} Define the error quantities
	\[
	(\delta_t)^j
	\equiv
	\sum_{x\in\set{\minus,\free}}
	\Bigg|
	\f{\pi(x^j)/\pi(\plus)}
		{
		(\pi_t)^j(x^j)/(\pi_t)^j(\plus)
		}-1
	\Bigg|\,,\quad
	(\rho_{e,t})^j
	\equiv
	\bigg|
	\f{
	 \starpi_e(\red) / \starpi_e(\blu)}
	{ (\nu_{e,t})^j (\red) / (\nu_{e,t})^j(\blu)}
	-1
	\bigg|\,.
	\]
We will keep track of the aggregate errors
	\[
	\bm{\delta}_t
	\equiv\sum_{j=1,2} (\delta_t)^j\,,\quad
	\rho_{e,t}
	\equiv
	\sum_{j=1,2}
	(\rho_{e,t})^j\,,\quad
	\bm{\rho}_t
	\equiv \sum_{e\in\delta v}
		\rho_{e,t}\,.
	\]
For the initial measure $\nu_0=\nu_{\delta v}[\Lm_0;\hq]=\nu_{\delta v}[\Theta;\hq]$,
it follows by straightforward calculations that
	\begin{align*}
	(\delta_0)^j
	&\le O\Bigg(\sum_{e'\in\delta v}\bigg\{
		\ep_{e'}
		+\f{\dot{\ep}_{e'}}{2^k}
		+\f{\min\set{\ddot{\ep}_{e'},1}
		}{2^{k(1+\zeta)}}
		\bigg\}
		\Bigg) = O(\err_v)\,,\\
	(\rho_{e,0})^j
	&\le O\Bigg(
	\dot{\ep}_e
	+\f{\min\set{\ddot{\ep}_e,1}}{2^{k\zeta}}
	+\err_v
	\Bigg)
	\end{align*}
for both $j=1,2$ ---
this makes use of the assumptions~\eqref{e:condition.on.hat.messages} and \eqref{e:assume.variable.weights.close.to.one}.\smallskip

\noindent\bemph{Part~3. Effect of update on edge weights (from $t$ to $t+1/2$).} Recall that for each integer $t\ge0$, the procedure described in Part~1 goes from $t$ to $t+1/2$ by the update
\eqref{e:var.update.edgeupdate} on all the edge weights, leaving the frozen spin weights unchanged. For this update, Lemma~\ref{l:perturb.red.weights} gives
	\beq\label{e:redblue.update.on.updated.edge}
	(\rho_{e,t+1/2})^1
	\le
	\f{k^{O(1)}}{2^{k\zeta}}
	\Bigg\{(\rho_{e,t})^2
	+ \f{\bm{\rho}_t}{2^k}
	\Bigg\}\,,\eeq
with the analogous bound if we exchange $j=1,2$. 
Summing 
\eqref{e:redblue.update.on.updated.edge}
 over $j=1,2$ and over all $e\in\delta v$ gives
	\beq\label{e:redblue.update.on.updated.edge.simp}
	\bm{\rho}_{t+1/2}
	\le \f{k^{O(1)}}{2^{k\zeta}}
	\bm{\rho}_t\,.
	\eeq
Meanwhile, the error in the frozen spin marginals can change by at most
	\beq\label{e:redblue.update.on.frozen}
	\bm{\delta}_{t+1/2}
	\le \bm{\delta}_t 
	+\f{k^{O(1)} }{2^{k(1+\zeta)}}
	\bm{\rho}_t
	\,,\eeq
by a very similar calculation.\smallskip

\noindent\bemph{Part~4. Effect of update on frozen spin weights (from $t+1/2$ to $t+1$.} Next recall that for each integer $t\ge0$, the procedure of Part~1 goes from $t+1/2$ to $t+1$ by the update 
\eqref{e:var.update.frozenupdate}
on the frozen spin weights,
leaving the edge weights unchanged.
As a result the $\SPIN{red}$-to-$\SPIN{blue}$ ratios
are unaffected, so $(\rho_{e,t+1})^j=(\rho_{e,t})^j$.
As for the frozen spin marginals, the result of 
Lemma~\ref{l:transfer.error} gives
	\beq\label{e:frozen.update.frozen.result}
	\bm{\delta}_{t+1} 
	\le \f{k^{O(1)}\bm{\delta}_{t+1/2}}{2^{k\zeta}}
	\le \f{k^{O(1)}}{2^{k\zeta}}
	\Bigg\{
		\bm{\delta}_t
		+
		\f{\bm{\rho}_t}{2^{k(1+\zeta)}}
		\Bigg\}
	\eeq
where the last step is by \eqref{e:redblue.update.on.frozen}.\smallskip

\noindent\bemph{Part~5. Conclusion.}
For each integer $t\ge0$, when we go from $t$ to $t+1$ in the procedure of Part~1,
it follows by combining
\eqref{e:redblue.update.on.updated.edge.simp} and \eqref{e:frozen.update.frozen.result} that
	\[
	\begin{pmatrix}
	\bm{\delta}_{t+1}\\
	\bm{\rho}_{t+1}
	\end{pmatrix}
	\le
	\f{k^{O(1)}}{2^{k\zeta}}
	\begin{pmatrix}
	1 & 2^{-k(1+\zeta)} \\
	0 & 1
	\end{pmatrix}
	\begin{pmatrix}
	\bm{\delta}_t \\ \bm{\rho}_t
	\end{pmatrix}
	\]
(having used also that $\bm{\rho}_{t+1/2}=\bm{\rho}_t$). This implies
	\[
	\Upsilon_{t+1}
	\equiv
	\bm{\delta}_{t+1}
	+\f{\bm{\rho}_{t+1}}{2^k}
	\le
	\f{k^{O(1)}}{2^{k\zeta}}
	\Bigg\{
	\bm{\delta}_t 
	+ \f{\bm{\rho}_t}{2^k}
	\Bigg\}
	= \f{k^{O(1)}}{2^{k\zeta}} \Upsilon_t\,.
	\]
It follows that the iteration defined in Part~1 converges. The error on the frozen spin weights can be bounded as
	\begin{align*}
	\sum_{j=1,2}
	\sum_{x\in\set{\minus,\free}}
	\Bigg|\f{\lm^j(x)}{\theta^j(x)}-1\Bigg|
	&\le O\Bigg(
	\sum_{t=0}^\infty
	\sum_{j=1,2}
	\sum_{x\in\set{\minus,\free}}
	\bigg|\f{(\lm_{t+1})^j(x)}{(\lm_t)^j(x)}-1\bigg|
	\Bigg)\\
	&\le O\Bigg(
		\sum_{t=0}^\infty \bm{\delta}_t\Bigg)
	\le O\Bigg(
		\sum_{t=0}^\infty\Upsilon_t\Bigg)
	\le O(\Upsilon_0) \le O(\err_v)\,.
	\end{align*}
To bound the error on the edge weights, let us first note that for any $e\in\delta v$, 
\eqref{e:redblue.update.on.updated.edge} implies
	\[
	\sum_{t\ge1} \rho_{e,t}
	\le
	\f{k^{O(1)}}{2^{k\zeta}}
	\sum_{t\ge0}\Bigg\{
	\rho_{e,t} + \f{\bm{\rho}_t}{2^k}
	\Bigg\}
	\le 
	\f{k^{O(1)}}{2^{k\zeta}}
	\Bigg\{
	\sum_{t\ge1} \rho_{e,t}
	+\bigg(
	\rho_{e,0}
	+\sum_{t\ge0} \Upsilon_t\bigg)\Bigg\}\,,
	\]
and rearranging the inequality gives
	\[
	\sum_{t\ge1} \rho_{e,t}
	\le
	\f{k^{O(1)}}{2^{k\zeta}}
	\Bigg\{
	\rho_{e,0} + \sum_{t\ge0} \Upsilon_t\Bigg\}
	\le	\f{k^{O(1)}}{2^{k\zeta}}
	\Bigg\{
	\rho_{e,0} + \err_v\Bigg\}\,.
	\]
It follows from this that the total error on the edge weights can be bounded as
	\begin{align*}
	\sum_{j=1,2}
	\Bigg|\f{(\lm_e)^j}
		{(\theta_e)^j}-1\Bigg|
	&\le O\Bigg(\sum_{t=0}^\infty
	\rho_{e,t}\Bigg)
	\le
	 O\Bigg( \rho_{e,0}
	 	+ \f{k^{O(1)}}{2^{k\zeta}}
		\err_v
		\Bigg)
	\le
	O\Bigg(
	\dot{\ep}_e
	+\f{\min\set{\ddot{\ep}_e,1}}{2^{k\zeta}}
	+\err_v
	\Bigg)\,.
	\end{align*}
This concludes the proof.
\end{proof}

Next, recall
that for a variable-to-clause message $\dq_{va}$ 
on $\set{\RYGB}^2$,
we defined a reweighted version $Q_{va}$ by
\eqref{e:reweighted.messages.v.to.c}.
Now, for a clause-to-variable message $h\equiv h_{av}$ on $\set{\RYGB}^2$ we define the compensatory reweighting
	\beq\label{e:reweighted.messages.c.to.v}
	\rwhq(\sigma) =\f{ \tW(\sigma)}{Z_h}
		= \f{h(\sigma)
		(2^{|\pd a|-1})^{\red[\sigma]}}{Z_h}\,,
	\eeq
where $Z_h$ is the normalizing constant that makes $\rwhq$ a probability measure over $\set{\RYGB}^2$,
and we write $\tW$ for the non-normalized version of $\rwhq$. The following lemma records an elementary bound which will be used in the proof of Corollary~\ref{c:var.update} at the end of this subsection:

\begin{lem}\label{l:marginal.error}
Suppose at the edge $e=(av)$
we have variable-to-clause messages
$p$ and $q$
(both going from $v$ to $a$), and a clause-to-variable message $h$ (going from $a$ to $v$), all probability measures on $\set{\RYGB}^2$. Let $P,Q$ be the reweightings of $p,q$ defined by \eqref{e:reweighted.messages.v.to.c}. Let $\rwhq$ be the reweighting of $h$, with non-normalized version $\tW$, as defined by \eqref{e:reweighted.messages.c.to.v}. Assume
	\beq\label{e:beta.one.eighth}
	\bigg\|W -\f19
	\bigg\|_\infty \le \f1{18}\,.
	\eeq
Define the corresponding edge marginals
(probability measures on $\set{\RYGB}^2$)
	\[
	\pi_p(\sigma)
	\equiv
	\f{P(\sigma) W(\sigma)}
	{z_p}\,,\quad
	\pi_q(\sigma)
	\equiv
	\f{Q(\sigma) W(\sigma)}
	{z_q}\,,
	\]
where $z_p,z_q$ are the normalizing constants. If $\pi_p$ and $\pi_q$ have the same marginals on the first copy, $(\pi_p)^1=(\pi_q)^1$, then the first-copy marginals of $P$ and $Q$ satisfy
	\[\Bigg|\f{Q^1(\sigma^1)}{P^1(\sigma^1)}
		-1\Bigg|
	\le
	\overbrace{
	O(1)
	\bigg\|W -\f19\bigg\|_\infty
	\sum_\tau
	P(\tau)\Bigg(1 + 
	\f{\Ind{\tau^1=\sigma^1}}{P^1(\sigma^1)}
	\Bigg)
	\Bigg|\f{Q(\sigma)}{P(\sigma)}-1\Bigg|
	}^{\textup{this tighter bound is
	used in the proof of 
	Lemma~\ref{l:first.update.at.boundary}}}
	\le O(1)
	\bigg\|W -\f19\bigg\|_\infty
	\bigg\|\f{P}{Q} -1\bigg\|_\infty\,.
	\]
The analogous statement holds if we instead have 
$(\pi_p)^2=(\pi_q)^2$.

\begin{proof}
For the purposes of the proof, for $\sigma\in\set{\RYC}^2$ denote
	\[
	\delta(\sigma)
	\equiv \f{Q(\sigma)}{P(\sigma)}-1\,,\quad
	\beta(\sigma)
	\equiv W(\sigma)-\f19\,.
	\]
Note that, since $P$ is a probability measure over $\set{\RYC}^2$, we have
	\[
	z_p
	= \sum_\sigma P(\sigma) \bigg(W(\sigma)-\f19\bigg)
		+ \f19 \sum_\sigma P(\sigma)
	= \sum_\sigma P(\sigma) \beta(\sigma) +\f19
	\in \bigg[\f1{18},\f3{18}\bigg]\,,
	\]
where the last step uses the assumption 
\eqref{e:beta.one.eighth}. The same bound holds for $z_q$. Next we have
	\[
	\f{Q^1(\sigma^1)}{P^1(\sigma^1)}
	-1
	=
	\f{z_q-z_p}{z_p}
	+\f{z_q}{P^1(\sigma^1)}
	\bigg(
		\f{Q^1(\sigma^1)}{z_q}
	-\f{P^1(\sigma^1)}{z_p}\bigg)\,;
	\]
we will bound separately the two terms on the right-hand side. For the first term we have
	\beq\label{e:marginal.error.z.error}
	|z_q-z_p|
	=\Bigg| \sum_\sigma P(\sigma)
		\bigg(
		\f{Q(\sigma)}{P(\sigma)}-1
		\bigg)
		\bigg(
		\tW(\sigma)-\f19
		\bigg)\Bigg|
	=\Bigg| \sum_\sigma P(\sigma)
		\delta(\sigma)\beta(\sigma)\Bigg|
	\le
	\|\beta\|_\infty
	\sum_\sigma P(\sigma) |\delta(\sigma)|\,,
	\eeq
which implies $|z_q-z_p| \le \|\beta\|_\infty\|\delta\|_\infty$. For the second term,
direct algebraic manipulations give
	\begin{align*}
	\f{1}{9}\Bigg(
	\f{Q^1(\sigma^1)}{z_q}
	-\f{P^1(\sigma^1)}{z_p}\Bigg)
	&=
	\sum_{\sigma \in \set{\sigma^1}\times\set{\RYC}}
	\Bigg\{
	\f1{z_q}
	Q(\sigma)
	\bigg(W(\sigma) - \beta(\sigma)\bigg)
	-\f1{z_p}
	P(\sigma)
	\bigg(W(\sigma) - \beta(\sigma)\bigg)
	\Bigg\}\\
	&= 
	\sum_{\sigma \in \set{\sigma^1}\times\set{\RYC}}
	\beta(\sigma)
	\Bigg\{
	\f{P(\sigma)}{z_p}
	- \f{Q(\sigma)}{z_q}
	\Bigg\}
	+ \underbrace{
		 (\pi_q)^1(\sigma^1)- (\pi_q)^1(\sigma^1)
		 }_{\textup{zero}}\\
	&=\sum_{\sigma \in \set{\sigma^1}\times\set{\RYC}}
	\beta(\sigma)
	P(\sigma)\Bigg\{
	\f{z_q-z_p}{z_pz_q}
	-
	\f{1}{z_q}
	\bigg(
	\f{Q(\sigma)}{P(\sigma)}-1
	\bigg)
	\Bigg\}\,,
	\end{align*}
from which it follows (dividing through by $P^1(\sigma^1)$) that
	\[
	\f{1}{P^1(\sigma^1)}\Bigg|
	\f{Q^1(\sigma^1)}{z_q}
	-\f{P^1(\sigma^1)}{z_p}\Bigg|
	\le \|\beta\|_\infty \cdot O
	\Bigg( |z_q-z_p|
	+ 
	\sum_\tau
	\f{\Ind{\tau^1=\sigma^1}}{P^1(\sigma^1)}
	P(\tau)\delta(\tau)
	\Bigg)\,,
	\]
having used that $z_p,z_q \in[1/18,3/18]$.
Combining with \eqref{e:marginal.error.z.error} proves the lemma.
\end{proof}
\end{lem}

\begin{proof}[Proof of Corollary~\ref{c:var.update}]
Let $P,Q$ be the reweightings of $\dot{p},\dq$ defined by \eqref{e:reweighted.messages.v.to.c}.\smallskip

\noindent\bemph{Step 1. Non-marginal errors between $P$ and $Q$.} We claim that
Proposition~\ref{p:var.update} implies,
for each $e\in\delta v$,
	\beq\label{e:reweighted.var.msg.prelim.bd}
	\Bigg|\f{Q_e(\sigma)}
		{P_e(\sigma)}-1\Bigg|
	\le k^{O(1)}
		\Bigg( \err_v
	+ \Ind{\red[\sigma]\ge1}
	\bigg\{ \dot{\ep}_e
	+ \f{\min\set{\ddot{\ep}_e,1}}
	{2^{k\zeta}}
	\bigg\}\Bigg)
	\eeq
We will only briefly sketch the proof of
\eqref{e:reweighted.var.msg.prelim.bd}:
let
$g_e(\sigma)=\theta_e(\sigma)\hq_e(\sigma)$
and $h_e(\sigma)=\lm_e(\sigma)\hat{p}_e(\sigma)$,
and let $Z_{Q,e}$
and $Z_{P,e}$ be the normalizing constants such that
	\[
	\f{Z_{Q,e}Q_e(\red\red)}{Z_{P,e}P_e(\red\red)}
	=\Bigg\{
	\prod_{j=1,2}
	\f{\lm^j(\lit_e) \cdot (\lm_e)^j}
		{\theta^j(\lit_e) \cdot (\theta_e)^j}
	\Bigg\}
	\prod_{e'\in\delta v(\plus e)}
	\f{h_{e'}(\pur\pur)}{g_{e'}(\pur\pur)}
	\prod_{e'\in\delta v(\minus e)}
	\f{h_{e'}(\yel\yel)}{g_{e'}(\yel\yel)}\,.
	\]
By the assumptions of
Proposition~\ref{p:var.update} together with the resulting bounds on the error between $\theta_e$ and $\lm_e$, we have
	\[\Bigg|\f{h_e(\pur\pur)}{g_e(\pur\pur)}-1\Bigg|
	\le
	k^{O(1)}
	\Bigg(\ep_e 
	+ \f1{2^k}
	\bigg( \dot{\ep}_e+
	\f{\min\set{\ddot{\ep}_e,1}}
		{2^{k\zeta}}
	+\err_v\bigg) \Bigg)
	\]
for all $e\in\delta v$.
Substituting into the previous expression, and using 
Proposition~\ref{p:var.update} again, we conclude
	\begin{align*}
	\Bigg|
	\f{Z_{Q,e}Q_e(\red\red)}{Z_{P,e}P_e(\red\red)}-1
	\Bigg|
	&\le k^{O(1)}
	\Bigg(
	\err_v
	+\bigg\{ \dot{\ep}_e
	+\f{\min\set{\ddot{\ep}_e,1}}
		{2^{k\zeta}}
	+\err_v\bigg\}
	+\sum_{e'\in\delta v}
	\bigg\{ \ep_{e'}
	+ \f1{2^k}
	\bigg( \dot{\ep}_{e'}+
	\f{\min\set{\ddot{\ep}_{e'},1}}
	{(2^{k\zeta})_{e'}}
	+ \err_v\bigg) \bigg\}
	\Bigg) \\
	&\le k^{O(1)}
	\Bigg( \err_v
	+ \dot{\ep}_e
	+\f{\min\set{\ddot{\ep}_e,1}}
		{2^{k\zeta}}
	\Bigg)\,.
	\end{align*}
A similar calculation can be made for the other spins in $\set{\RYGB}^2$, and \eqref{e:reweighted.var.msg.prelim.bd} straightforwardly follows.\smallskip

\noindent\bemph{\hypertarget{c:var.update.BETTER.MGL.BOUND}{Step 2}. Marginal errors between $P$ and $Q$.} Fix an edge $e\in\delta v$. Let $\hat{s}_e=\hat{p}_e$, and $\hat{s}_{e'}=\hq_{e'}$ for $e'\in\delta v\setminus e$. We first apply Proposition~\ref{p:var.update} with $\hat{s}$ in place of $\hq$: let $\Gm$ be the resulting weights such that 
$\nu_{\delta v}[\Gm;\hat{s}]$ is judicious, and let $\dot{s}\equiv\BP[\Gm;\hat{s}]$ denote the corresponding outgoing messages from the variable, with reweighted versions $S$ as defined by \eqref{e:reweighted.messages.v.to.c}. Then we are exactly in the situation of Lemma~\ref{l:marginal.error}. The reweighting $W$ of $\hat{r}_e=\hat{p}_e$ satisfies
	\[
	\bigg\| W-\f19\bigg\|_\infty \le 
	\f{k^{O(1)}}{2^{k\zeta}}
	\]
by assumption~\eqref{e:condition.on.hat.messages}. We also note that $P(\sigma) \le O(1)/2^{k\red[\sigma]}$,
and likewise for $S(\sigma)$. It follows from
\eqref{e:reweighted.var.msg.prelim.bd} that
	\[
	\bigg\| \f{S}{P}-1\bigg\|_\infty
	\le k^{O(1)} \err_v\,.
	\]
Combining this with the result of Lemma~\ref{l:marginal.error} gives
the marginal error bound between $S$ and $P$,
	\beq\label{e:marginal.error.P.to.S}
	\bigg\| \f{S^j}{P^j}-1\bigg\|_\infty
	\le 
	O(1)
	\bigg\| W-\f19\bigg\|_\infty
	\bigg\| \f{S}{P}-1\bigg\|_\infty
	\le
	\f{k^{O(1)} \err_v}
	{2^{k\zeta}}
	\eeq
for both $j=1,2$. It also follows from 
\eqref{e:reweighted.var.msg.prelim.bd} that
	\beq\label{e:S.Q.error}
	\bigg|\f{Q(\sigma)}{S(\sigma)}-1\bigg|
	\le k^{O(1)} \Bigg( 
	\ep_e 
	+\f1{(2^k)^{\Ind{\red[\sigma]=0}}}
	\bigg\{ \dot{\ep}_e
	+ \f{\min\set{\ddot{\ep}_e,1}}{2^{k\zeta}}
	\bigg\}\Bigg)\,,
	\eeq
from which we obtain the marginal error bound
	\beq\label{e:marginal.error.S.to.Q}
	\bigg|\f{Q^j(\tau)}{S^j(\tau)}-1\bigg|
	\le k^{O(1)}\Bigg( 
	\ep_e 
	+\f1{(2^k)^{\Ind{\tau\ne\red}}}
	\bigg\{ \dot{\ep}_e
	+ \f{\min\set{\ddot{\ep}_e,1}}{2^{k\zeta}}
	\bigg\}\Bigg)\,.
	\eeq
The result follows by combining
\eqref{e:reweighted.var.msg.prelim.bd},
\eqref{e:marginal.error.P.to.S}, and \eqref{e:marginal.error.S.to.Q}.
\end{proof}

\subsection{Contraction in non-defective trees}
\label{ss:contraction.nondefect}

In this subsection we complete the proof of Proposition~\ref{p:nice.tree.lagrange}.
As in the statement of the proposition, let $U$ be a compound enclosure, and take
a subtree $T\subseteq U$ of the form described in Definition~\ref{d:nu.judicious}, such that $T$ contains no defective variables. Recall from \eqref{e:clause.xi.prelim.defn} that for any clause $a$ in $T$, and for any given set of boundary marginals $\omega_{\delta T}$, we defined the quantity
	\beq\label{e:clause.xi}
	\xi_a(T;\omega_{\delta T})
	\equiv
	\sum_{e\in\delta T}
	\bigg(\f{(\vth)^{1/4}}{2^k}\bigg)^{\BTW_T(e,a)}
	\disc_e(\omega)\,,
	\eeq
for $\disc_e(\omega)$ as given by \eqref{e:def.discrepancy.measure.e}.
The main technical result of this subsection is the following:

\begin{ppn}\label{p:induct}
In the setting of Proposition~\ref{p:nice.tree.lagrange},
for the iterative construction of Definition~\ref{d:nice.lagrange.defn.weights}, 
on any edge $(av)$ in $T$ we have
(using the notations from 
Definitions~\ref{d:clause.rel.error}~and~\ref{d:var.rel.error})
	\begin{alignat}{2}
	\label{e:induction.variable.error}
	\vrelerr
	( \dq_{va,t-1},\dq_{va,t} )
	\equiv
	\bm{\delta}_{va}(t)
	\equiv
	\begin{pmatrix}
	\delta_{va,t} \\
	\dot{\delta}_{va,t} \\
	\ddot{\delta}_{va,t} \\
	\mdel_{va,t}\\
	\mdelred_{va,t}
	\end{pmatrix}
	&\le
	k^{O(1)} \xi_a
	(k^{O(1)}(\vth)^{1/3})^{t-1}
	\vec{\delta}\,, &\quad
	\vec{\delta}
	\equiv
		\begin{pmatrix}
		\delta \\ 
		\dot{\delta} \\
		\ddot{\delta} \\
		\mdel \\ 
		\mdelred
		\end{pmatrix}
	&\equiv
		\begin{pmatrix}
		1 \\ 
		(\vth)^{-1} \\
		(\vth)^{-1} \\
		(\vth)^2 \\ 
		\vth
		\end{pmatrix}\,, \\
	\crelerr(
	\hq_{av,t-1},
	\hq_{av,t}
	)
	\equiv
	\bm{\ep}_{av}(t)
	\equiv
	\begin{pmatrix}
	\ep_{av,t}\\
	\dot{\ep}_{av,t}\\
	\ddot{\ep}_{av,t}
	\end{pmatrix}
	&\le k^{O(1)} \xi_a (k^{O(1)}(\vth)^{1/3})^{t+1} 
	\vec{\ep}\,, &\quad
	\vec{\ep}
	\equiv
		\begin{pmatrix}
		\ep \\ 
		\dot{\ep} \\
		\ddot{\ep} \end{pmatrix}
	&\equiv 
		\begin{pmatrix}
		2^{-k} \\ 
		\vth \\
		(\vth)^{-1} \end{pmatrix}\,,
	\label{e:induction.clause.error}
	\end{alignat}
where $\xi_a\equiv\xi_a(T;\omega_{\delta T})$
is defined by \eqref{e:clause.xi}.
\end{ppn}

We first provide some lemmas towards the proof of Proposition~\ref{p:induct}. The following is an elementary bound:

\begin{lem}\label{l:sum.around.clause}
In the setting of Propositions~\ref{p:nice.tree.lagrange}~and~\ref{p:induct}, if $N(a)$ denotes the clauses in $T$ at unit distance from a clause $a$ in $T$, then
we have the bound
	\[
	\sum_{b\in N(a)}
	\xi_b(T;\omega_{\delta T})
	\le O\Bigg( \f{k^{O(1)} 2^k
		\xi_a(T;\omega_{\delta T})}{(\vth)^{1/4}}
	\Bigg)\,.
	\]

\begin{proof}
All the variables in $T$ are assumed to be non-defective, hence nice. Let $d_{\max} = O(k 2^k)$ denote the maximum degree of a nice variable
(Definition~\ref{d:nice}). Then, for any edge $e\in\delta T$ and any clause $a$ in $T$, we have
	\begin{align*}
	\bigg|\Big\{
		b\in N(a) : \BTW_T(e,b) = \BTW_T(e,a)-1
		\Big\}\bigg|
		&\le1\,,\\
	\bigg|\Big\{
	b\in N(a) : \BTW_T(e,b) = \BTW_T(e,a)
	\Big\}\bigg|
		&\le d_{\max}\,,\\
	\bigg|\Big\{
	b\in N(a) : \BTW_T(e,b) = \BTW_T(e,a)+1
		\Big\}\bigg|
		&\le k d_{\max}\,.
	\end{align*}
Combining these bounds with the definition \eqref{e:clause.xi} gives
	\[
	\sum_{b\in N(a)}
	\xi_b(T;\omega_{\delta T})
	\le
	\sum_{e\in\delta T}
	\disc_e(\omega)
	\bigg(\f{(\vth)^{1/4}}{2^k}\bigg)^{\BTW_T(e,a)}
	\bigg\{
	\f{2^k}{(\vth)^{1/4}}
	+ d_{\max}
	+ kd_{\max}
	\cdot \f{(\vth)^{1/4}}{2^k}
	\bigg\}
	\le
	O\Bigg(
	\f{2^k \xi_a(T;\omega_{\delta T})}{(\vth)^{1/4}}
	\Bigg)\,,
	\]
as claimed.
\end{proof}
\end{lem}

Next, we give a bound on the errors after the first round of boundary updates:

\begin{lem}\label{l:first.update.at.boundary}
Recall the notations of Definitions~\ref{d:clause.rel.error}~and~\ref{d:var.rel.error}.
In the setting of
Propositions~\ref{p:nice.tree.lagrange}~and~\ref{p:induct}, 
for any boundary edge $e=(av)\in\delta T$,
suppose that 
we have,
for an absolute constant $0<\zeta\le1/20$, the bounds
	\beq\label{e:assume.omega.error.gamma}
	\crelerr(\prodom_e,\omega_e)
	\le \begin{pmatrix} \gamma_e \\ \dot{\gamma}_e\\
		\ddot{\gamma}_e
		\end{pmatrix}
	\le
	k^{O(1)}
	\min\left\{
	\disc_e(\omega)
	\begin{pmatrix}
	1 \\ (\vth)^{-1} \\ (\vth)^{-1}
	\end{pmatrix}
	,
	 \f{1}{2^{2k\zeta}}
	\begin{pmatrix} 1 \\ 1 \\ 2^k \end{pmatrix}
	\right\}
	\,,
	\eeq
where the last bound follows from the definition
\eqref{e:def.discrepancy.measure.e}
together with the assumption
\eqref{e:apriori.assump.omega}
(taking the entrywise minimum of the two vectors). Then, after the first round of boundary updates
(i.e., after applying
Definition~\ref{d:nice.lagrange.defn.weights}
Step~I), we have
	\[
	\vrelerr(\dq_{e,0},\dq_{e,1})
	\le k^{O(1)} \begin{pmatrix}
	1 & 2^{-k} & 4^{-k} \\
	1 & 1 & 4^{-k} \\
	1 & 2^{-k} & 1 \\
	2^{-k/10} & 2^{-k/10} 2^{-k} & 2^{-k/10}4^{-k} \\
	2^{-k/10} & 2^{-k/10} & 2^{-k/10} 2^{-k}
	\end{pmatrix}
	\begin{pmatrix} \gamma_e \\ \dot{\gamma}_e\\
		\ddot{\gamma}_e
		\end{pmatrix}
	\le k^{O(1)} \min\left\{
	\disc_e(\omega)
	\begin{pmatrix}
	1 \\ (\vth)^{-1} \\ (\vth)^{-1}\\
	2^{-k/10}(\vth)^{-1}\\
	2^{-k/10}(\vth)^{-1}
	\end{pmatrix},
	\f1{2^{k\zeta}}
	\begin{pmatrix}
	1\\1\\ 2^k \\ 1\\ 1
	\end{pmatrix}
	\right\}\,.
	\]
Moreover, if $Q_{e,1}$ is the reweighted version of
$\dq_{e,1}$ defined by
 \eqref{e:reweighted.messages.v.to.c},
 then it satisfies the bounds 
\eqref{e:first.update.at.boundary}.

\begin{proof}
Recall that $\dq_{e,0}=\proddq_e=\dqbul_e\otimes\dqbul_e$.
From Definition~\ref{d:nice.lagrange.defn.weights},
the first updates (Step~I) yield boundary messages
	\[
	\dq_{e,1}(\sigma)
	\cong \f{\omega_e(\sigma)}{\prodhq_e(\sigma)}
	= \f{\omega_e(\sigma)}
		{\hqbul_e(\sigma^1)
		\hqbul_e(\sigma^2)}
	\,.
	\]
We use \eqref{e:reweighted.messages.v.to.c} to define the reweighted versions
	\[
	Q_{e,1}(\sigma)
	\cong
	\f1{(2^{|\pd a|-1})^{\red[\sigma]}}
	\f{\omega_e(\sigma)}
		{\prodhq_e(\sigma)}
	\cong
	\f{\omega_e(\sigma)}{\prodom_e(\sigma)}
	\f{\proddq_e(\sigma)}
		{(2^{|\pd a|-1})^{\red[\sigma]}}
	\cong
	\f{\omega_e(\sigma)}{\prodom_e(\sigma)}
	\prodQ_e(\sigma)\,,
	\]
where the second-to-last step uses that 
$\prodom_e(\sigma)\cong\proddq_e(\sigma)\prodhq_e(\sigma)$, and $\prodQ_e = Q_{e,0}$ is the reweighting of $\proddq$ by \eqref{e:reweighted.messages.v.to.c}. Therefore, there exists a normalizing constant $Z_e$ such that
	\[
	Z_e Q_{e,1}(\sigma)
	= \f{\omega_e(\sigma)}{\prodom_e(\sigma)}
	\prodQ_e(\sigma)\,.
	\]
Summing the equation over $\sigma\in\set{\RYGB}^2$, and recalling the assumption
\eqref{e:assume.omega.error.gamma}, we find
	\begin{align*}
	Z_e
	&= \sum_{\sigma:\red[\sigma]=0}
	\prodQ_e(\sigma)
	\Big(1 + O(\gamma_e)\Big)
	+ \sum_{\sigma:\red[\sigma]=1}
	\prodQ_e(\sigma)
	\Big(1 + O(\dot{\gamma}_e)\Big)
	+\prodQ_e(\red\red)
	\Big(1 + O(\ddot{\gamma}_e)\Big)\\
	&=
	1 + O\bigg(
	\gamma_e + \f{\dot{\gamma}_e}{2^k}
	+ \f{\ddot{\gamma}_e}{4^k}\bigg)\,.
	\end{align*}
Recalling that $Q_{e,0}=\prodQ_e$, it follows by combining the above estimates that
	\[\Bigg|
	\f{Q_{e,1}(\sigma)}{Q_{e,0}(\sigma)}
	-1\Bigg|
	\le 
	k^{O(1)}
	\Bigg(
	\gamma_e + \f{\dot{\gamma}_e}{2^k}
	+ \f{\ddot{\gamma}_e}{4^k}
	+ \Ind{\red[\sigma]=1} \dot{\gamma_e}
	+ \Ind{\red[\sigma]=2} \ddot{\gamma_e}
	\Bigg)\,.
	\]
Next recall that from the initialization of Definition~\ref{d:nice.lagrange.defn.weights} we have $\prodom_e\cong\prodhq_e\proddq_e= \hq_{e,0}\dq_{e,0}$; and
after the first round of boundary updates we have
$\omega_e\cong
\hq_{e,0}\dq_{e,1}$. Both $\prodom_e$ and $\omega_e$ have single-copy marginals $\starpi_e$, so we are precisely in 
the situation of Lemma~\ref{l:marginal.error}
with $h=\prodhq_e$. Applying Lemma~\ref{l:marginal.error} gives, for both $j=1,2$,
	\[
	\Bigg|
	\f{(Q_{e,1})^j(\sigma)}
	{(Q_{e,0})^j(\sigma)}
	-1\Bigg|
	\le
	\f{k^{O(1)}}{2^{k/10}}
	\begin{cases}
	\gamma_e + \dot{\gamma}_e/2^k
		+\ddot{\gamma}_e/4^k
	&\textup{for }\sigma\ne\red\,,\\
	\gamma_e+\dot{\gamma}_e + \ddot{\gamma}_e/2^k
	&\textup{for }\sigma=\red\,.\\
	\end{cases}
	\]
This proves the first claimed bound on
$\vrelerr(\dq_{e,0},\dq_{e,1})$.
The remaining bounds follow using 
\eqref{e:assume.omega.error.gamma}.
\end{proof}
\end{lem}

\begin{cor}\label{c:first.clause.update.round}
In the setting of Propositions~\ref{p:nice.tree.lagrange}~and~\ref{p:induct},
after the first update of the clauses just above the boundary variables
(i.e., after applying
Definition~\ref{d:nice.lagrange.defn.weights}
Step~I at each boundary edge,
followed by Step II(a) at each clause incident to a boundary edge), we have
	\[
	\crelerr( \hq_{e',1},\hq_{e',0})
	\le k^{O(1)}
	\begin{pmatrix} 
		2^{-k(1+\zeta)}\\ 
		2^{-k\zeta}\\ 
		2^{-k\zeta}
		\end{pmatrix}
	\]
for every edge $e'\in\delta a \setminus\delta T$ for any clause $a$ incident to $\delta T$.

\begin{proof} 
For all $e\in\delta T$,
the error
$\vrelerr(\dq_{e,1},\dq_{e,0})$
is bounded by Lemma~\ref{l:first.update.at.boundary}.
Applying Proposition~\ref{p:clause.update} gives, after applying Step II(a) at the clauses above the boundary variables,
	\[
	\crelerr( \hq_{e',1},\hq_{e',0})
	\le
	k^{O(1)}
	\begin{pmatrix}
	2^{-k(1+\zeta)}
	& 4^{-k} & 2^{-k(1+\zeta)} & 2^{-k} & 2^{-k}\\
	2^{-k} & 2^{-k} & 2^{-k(1+\zeta)}
		& 1 & 2^{-k} \\
	1 & 4^{-k} & 2^{-k(1+\zeta)}
		& 2^{-k} & 2^{-k}
	\end{pmatrix}
	\begin{pmatrix}
	2^{-k\zeta}\\
	2^{-k\zeta}\\
	\min\set{2^{k(1-\zeta)},1} \\
	2^{-k\zeta}\\
	2^{-k\zeta}
	\end{pmatrix}
	\le k^{O(1)}
	\begin{pmatrix}
	2^{-k(1+\zeta)} \\
	2^{-k\zeta}\\2^{-k\zeta}
	\end{pmatrix}
	\]
for the edges $e'$ just above $\delta T$.
This proves the claim.
\end{proof}
\end{cor}

\begin{proof}[\hypertarget{p:induct.proof}{Proof of Proposition~\ref{p:induct}}]
We will prove the result by induction. We divide the proof into a few steps, indicated by Arabic numerals below. We use Roman numerals to refer to the steps of Definition~\ref{d:nice.lagrange.defn.weights}.\smallskip

\noindent\bemph{Step 1. Verification of base case.}
By Lemma~\ref{l:first.update.at.boundary},
for any boundary edge $e=(av)\in\delta T$, we have
	\beq\label{e:initialization.rel.to.eigenvec}
	\bm{\delta}_{va}(1)
	\equiv\begin{pmatrix}
	\delta_{va,1}\\
	\dot{\delta}_{va,1}\\
	\ddot{\delta}_{va,1}\\
	\mdel_{va,1}\\
	\mdelred_{va,1}\\
	\end{pmatrix}
	\le k^{O(1)}
	\disc_e(\omega)
	\begin{pmatrix}
	1 \\ (\vth)^{-1} \\ (\vth)^{-1}\\
	2^{-k/10}(\vth)^{-1}\\
	2^{-k/10}(\vth)^{-1}
	\end{pmatrix}
	\le k^{O(1)} \xi_a
	\begin{pmatrix}
	1 \\ (\vth)^{-1} \\ (\vth)^{-1}\\
	(\vth)^2\\ \vth
	\end{pmatrix}
	= k^{O(1)} \xi_a \vec{\delta}\,,
	\eeq
which verifies the bound 
\eqref{e:induction.variable.error} for $t=1$.\smallskip

\noindent\bemph{Step 2. Bounds for clause updates.}
For any clause $a\ne\crt$ in the tree $T$, for both steps II(a) and III(b), our earlier result Proposition~\ref{p:clause.update} implies the bound
	\[\bm{\ep}_{av}(t)\equiv
	\begin{pmatrix}
	\ep_{av,t}\\
	\dot{\ep}_{av,t}\\
	\ddot{\ep}_{av,t}
	\end{pmatrix}
	\le k^{O(1)}
	\begin{pmatrix}
	2^{-k(1+\zeta)}
	& 4^{-k} & 2^{-k(1+\zeta)} & 2^{-k} & 2^{-k}\\
	2^{-k} & 2^{-k} & 2^{-k(1+\zeta)}
		& 1 & 2^{-k} \\
	1 & 4^{-k} & 2^{-k(1+\zeta)}
		& 2^{-k} & 2^{-k}
	\end{pmatrix}
	\sum_{u\in\pd a \setminus v}
	\begin{pmatrix}
		\delta_{ua,t-1}\\
		\dot{\delta}_{ua,t-1}\\
		\min\set{\ddot{\delta}_{ua,t-1},1}\\
		\mdel_{ua,t-1}\\
		\mdelred_{ua,t-1}
	\end{pmatrix}\,.\]
(On the right-hand side of 
the above, 
for Step~III(b) one can omit the contribution from the parent variable of $a$; but we will not use this fact.) Substituting the inductive hypothesis
\eqref{e:induction.variable.error} gives the simplified bound
	\beq\label{e:simplified.clause.update.bound}
	\bm{\ep}_{av}(t)
	\le k^{O(1)}
	\xi_a (k^{O(1)}(\vth)^{1/3})^{t-2}
	\underbrace{
	\begin{pmatrix}
	2^{-k(1+\zeta)}
	& 4^{-k} & 2^{-k(1+\zeta)} & 2^{-k} & 2^{-k}\\
	2^{-k} & 2^{-k} & 2^{-k(1+\zeta)}
		& 1 & 2^{-k} \\
	1 & 4^{-k} & 2^{-k(1+\zeta)}
		& 2^{-k} & 2^{-k}
	\end{pmatrix}
	}_{\textup{denote this }\widehat{\MAT}}
	\vec{\delta}\,.\eeq
At the root clause $\crt$ we make the trivial update
\eqref{e:message.from.top.always.product}, so
$\crelerr(\hq_{\crt\vrt,t-1},\hq_{\crt\vrt,t})
=(0,0,0)\in\mathbb{R}^3$.\smallskip

\noindent\bemph{Step 3. Bounds for internal variable updates.}
For any internal variable $v$ in $T$, for both steps II(b) and III(a), applying Corollary~\ref{c:var.update} gives the bound 
	\begin{align*}
	\bm{\delta}_{va}(t)
	\equiv
	\begin{pmatrix}
	\delta_{va,t} \\
	\dot{\delta}_{va,t} \\
	\ddot{\delta}_{va,t} \\
	\mdel_{va,t}\\
	\mdelred_{va,t}
	\end{pmatrix}
	&\le k^{O(1)}
	\sum_{c\in\pd v\setminus a}
	\begin{pmatrix}
	1\\1\\1\\ 2^{-k\zeta} \\ 2^{-k\zeta}
	\end{pmatrix} 
	\begin{pmatrix}
	1 & 2^{-k} & 2^{-k(1+\zeta)}
	\end{pmatrix}
	\begin{pmatrix}
	\ep_{cv,t-1}\\
	\dot{\ep}_{cv,t-1}\\
	\min\set{\ddot{\ep}_{cv,t-1},1}
	\end{pmatrix}\\
	&\qquad+k^{O(1)}
	\begin{pmatrix}
	0&0&0\\
	0&1&2^{-k\zeta}\\
	0&1&2^{-k\zeta}\\
	1&2^{-k}&2^{-k(1+\zeta)}\\
	1&1&2^{-k\zeta}\\
	\end{pmatrix}
	\begin{pmatrix}
	\ep_{av,t-1}\\
	\dot{\ep}_{av,t-1}\\
	\min\set{\ddot{\ep}_{av,t-1},1} 
	\end{pmatrix}
	\,.\end{align*}
(On the right-hand side of the above,
for Step~II(b) one only needs the first term that sums over $c\in\pd v\setminus a$. For Step~III(a) one can omit the contribution to the sum from the parent clause of $v$. We will not use this fact.) We will bound the above sum over $c\in\pd v\setminus a$ by the sum over all $c\in N(a)$. Then, substituting the inductive hypothesis
\eqref{e:induction.clause.error}
and combining with Lemma~\ref{l:sum.around.clause} gives the simplified bound
	\begin{align}\nonumber
	\bm{\delta}_{va}(t)
	&\le k^{O(1)}\xi_a
	(k^{O(1)}(\vth)^{1/3})^t
	\left\{
	\f{2^k}{(\vth)^{1/4}}
	\begin{pmatrix}
	1\\1\\1\\ 2^{-k\zeta} \\2^{-k\zeta}
	\end{pmatrix} 
	\begin{pmatrix}
	1 & 2^{-k} & 2^{-k(1+\zeta)}
	\end{pmatrix}
	+\begin{pmatrix}
	0&0&0\\
	0&1&2^{-k\zeta}\\
	0&1&2^{-k\zeta}\\
	1&2^{-k}&2^{-k(1+\zeta)}\\
	1&1&2^{-k\zeta}\\
	\end{pmatrix}
	\right\} \vec{\ep}\\
	&\le k^{O(1)} \xi_a
	(k^{O(1)}(\vth)^{1/3})^t
	\underbrace{
	\f1{(\vth)^{1/4}}
	\begin{pmatrix}
	2^k & 1 & 2^{-k\zeta} \\
	2^k & 1 & 2^{-k\zeta} \\
	2^k & 1 & 2^{-k\zeta} \\
	2^{k(1-\zeta)}&2^{-k\zeta}&2^{-2k\zeta} \\
	2^{k(1-\zeta)}& (\vth)^{1/4} & (\vth)^{1/4} 2^{-k\zeta}
	\end{pmatrix}
	}_{\textup{denote this $\dot{M}_\textup{int}$}}
	\vec{\ep}\,.
	\label{e:simplified.var.update.bound}
	\end{align}\smallskip

\noindent\bemph{Step 4. Bounds for boundary updates.} For each boundary variable $v$ in $T$ with parent clause $a$, in Step~I we have
	\beq\label{e:simplified.boundary.update.bound}
	\bm{\delta}_{va}(t)
	\le k^{O(1)}
	\begin{pmatrix}
	1 & 2^{-k} & 2^{-k(1+\zeta)}\\
	1 & 1 & 2^{-k(1+\zeta)}\\
	1 & 2^{-k} & 1\\
	1 & 2^{-k} & 2^{-k(1+\zeta)}\\
	1 & 1 & 2^{-k\zeta}
	\end{pmatrix}
	\begin{pmatrix}
	\ep_{av,t-1}\\
	\dot{\ep}_{av,t-1}\\
	\ddot{\ep}_{av,t-1}
	\end{pmatrix}
	\le k^{O(1)}\xi_a
	(k^{O(1)}(\vth)^{1/3})^t
	\underbrace{
	\begin{pmatrix}
	1 & 2^{-k} & 2^{-k(1+\zeta)}\\
	1 & 1 & 2^{-k(1+\zeta)}\\
	1 & 2^{-k} & 1\\
	1 & 2^{-k} & 2^{-k(1+\zeta)}\\
	1 & 1 & 2^{-k\zeta}
	\end{pmatrix}
	}_{\textup{denote this }\dot{\MAT}_\textup{bd}}
	\vec{\ep}\,,
	\eeq
where the last bound again uses the inductive hypothesis \eqref{e:induction.clause.error}.\smallskip

\noindent\bemph{Step 5. Verification of induction.}
For the matrices defined in \eqref{e:simplified.clause.update.bound},
\eqref{e:simplified.var.update.bound}, and \eqref{e:simplified.boundary.update.bound}, we have
	\beq\label{e:approx.eigenvector.bounds}
	\widehat{\MAT}
	\vec{\delta}
	\le k^{O(1)} \vth\vec{\ep}\,,\quad
	\Big(\dot{\MAT}_\textup{int}
	+\dot{\MAT}_\textup{bd}\Big)
	\vec{\ep}
	\le
	\f{k^{O(1)}}{(\vth)^{1/4}}
	\begin{pmatrix}
	1\\1\\1\\k^c/2^{k\zeta}\\(\vth)^{5/4}
	\end{pmatrix}
	+ k^{O(1)}
	\begin{pmatrix}
	2^{-k}\\
	\vth\\
	(\vth)^{-1}\\
	2^{-k}\\
	\vth
	\end{pmatrix}
	\le \f{k^{O(1)}}{(\vth)^{1/4}}
	\vec{\delta}\,.
	\eeq
Using these inequalities, the bounds
\eqref{e:simplified.clause.update.bound},
\eqref{e:simplified.var.update.bound}, and \eqref{e:simplified.boundary.update.bound}
simplify to 
	\begin{align*}
	\bm{\ep}_{av}(t)
	&\le k^{O(1)}\xi_a
	(k^{O(1)}(\vth)^{1/3})^{t-2}
	\vth\vec{\ep}
	\le k^{O(1)}\xi_a
	(k^{O(1)}(\vth)^{1/3})^{t+1}
	\vec{\ep}
	\,, \\
	\bm{\delta}_{va}(t)
	&\le k^{O(1)}\xi_a 
	(k^{O(1)}(\vth)^{1/3})^t
	(\vth)^{-1/4} \vec{\delta}
	\le k^{O(1)}\xi_a 
	(k^{O(1)}(\vth)^{1/3})^{t-1}
	\vec{\delta}\,.
	\end{align*}
This verifies the inductive hypotheses
\eqref{e:induction.variable.error}~and~\eqref{e:induction.clause.error}.\smallskip

\noindent\bemph{Step 6. Conclusion.} We conclude the proof by briefly addressing some points which we neglected in the above. In Step~2, in order to apply Proposition~\ref{p:clause.update}
we needed to check that the
messages incoming to the clauses
satisfy the estimates
\eqref{e:first.update.at.boundary}.
This can be seen by induction, with the base case given directly by Lemma~\ref{l:first.update.at.boundary}.
Similarly, in Step~3, in order to apply Corollary~\ref{c:var.update}
we needed to check that the messages incoming to the internal variables satisfy the estimates
\eqref{e:condition.on.hat.messages}. This can also be seen by induction, with the base case given directly by Corollary~\ref{c:first.clause.update.round}. This finishes the proof of the proposition.
\end{proof}

\begin{rmk}[explanation of choices in \eqref{e:induction.variable.error}~and~\eqref{e:induction.clause.error}]
In this remark we give some explanation for the choice of the vectors
$\vec{\delta}$ and $\vec{\ep}$
in Proposition~\ref{p:induct}.
If we take the product of the $3\times 5$ matrix in \eqref{e:simplified.clause.update.bound}
with the $5\times3$ matrix in
\eqref{e:simplified.var.update.bound}, the result can be (entrywise) upper bounded as
	\[
	\hat{\MAT}
	\dot{\MAT}_\textup{int}
	\le
	k^{O(1)}
	\begin{pmatrix}
	2^{-k\zeta} & (\vth)^{1/4} 2^{-k} &
		(\vth)^{1/4} 2^{-k(1+\zeta)}\\
	2^{k(1-\zeta)} & 2^{-k\zeta} & 2^{-k\zeta}\\
	2^k & 1 & 2^{-k\zeta}
	\end{pmatrix}
	\le k^{O(1)}
	\begin{pmatrix}
	2^{-k} \\
	\vth \\
	(\vth)^{-1}
	\end{pmatrix}
	\begin{pmatrix}
	2^k\vth & (\vth)^{1/4} & (\vth)^5
	\end{pmatrix}\,.
	\]
For the last bound on the right-hand side,
the vector $\vec{\ep}$
from \eqref{e:induction.clause.error} from Proposition~\ref{p:induct} is a right eigenvector with eigenvalue $O(\vth)$. This explains why
$\vec{\ep}$ is a good choice for our purposes (although it is certainly not the unique choice that would give a sufficiently good bound). On the other hand, given $\vec{\ep}$, we chose the vector $\vec{\delta}$ to satisfy the bounds
\eqref{e:approx.eigenvector.bounds}
and
\eqref{e:initialization.rel.to.eigenvec}.
(Again, it is certainly not the unique one that suffices for our purposes.)
\end{rmk}

\begin{proof}[\hypertarget{p:nice.tree.lagrange.proof}{Proof of Proposition~\ref{p:nice.tree.lagrange}}]
It is clear from
Proposition~\ref{p:induct}
that the construction of
Definition~\ref{d:nice.lagrange.defn.weights} converges to the desired Lagrangian weights
$\Lm\equiv\Lm(T;\omega_{\delta T})$.
The corresponding Gibbs measure $\nu=\nu[T;\Lm]$
has then edge marginals $\nu_{av} \cong \dq_{va} \hq_{av}$ where $\dq_{va}\equiv\dq_{va,\infty}$
and $\hq_{av}\equiv\hq_{av,\infty}$
are the limiting \textsc{bp} messages. 
It remains only to verify the estimate
\eqref{e:clause.xi.prelim.defn}. By Proposition~\ref{p:induct}, on any edge $(av)$ we have, abbreviating
$\xi_a\equiv\xi_a(T;\omega_{\delta T})$,
	\begin{align*}
	\vrelerr(\proddq_{va},\dq_{va})
	\le
	\sum_{t\ge1}
	\vrelerr(\dq_{va,t-1},\dq_{va,t})
	&\le O(1)
	\xi_a \vec{\delta}\,,\\
	\crelerr(\prodhq_{av},\hq_{av})
	\le
	\sum_{t\ge1}
	\crelerr(\hq_{av,t-1},\hq_{av,t})
	&\le O(1)
	\xi_a (k^{2c}(\vth)^{1/3})^2 \vec{\ep}\,.
	\end{align*}
This implies, for 
all edges $(av)$ in $T$ and all $\sigma\in\set{\RYGB}^2$, the bound
	\[\Bigg|\f{\dq_{va}(\sigma)}{\proddq_{va}(\sigma)}
	-1\Bigg| 
	+\Bigg|\f{\hq_{av}(\sigma)}{\prodhq_{av}(\sigma)}
	-1\Bigg| 
	\le O\Bigg(
	\f{\xi_a}{(\vth)^{\Ind{\red\in\set{\sigma^1,\sigma^2}}}}
	\Bigg)\,.
	\]
Since $\nu_{av} \cong \dq_{va} \hq_{av}$, it follows straightforwardly that
	\[\Bigg|
	\f{\nu_{av}(\sigma)}
	{\prodom_{av}(\sigma)}-1\Bigg|
	\le
	O\Bigg(
	\f{\xi_a}{(\vth)^{\Ind{\red\in\set{\sigma^1,\sigma^2}}}}
	\Bigg)\,.
	\]
Recalling
\eqref{e:def.discrepancy.measure.e}
gives
$\disc_{av}(\nu_{av})\le O(\xi_a)$, as claimed.
\end{proof}

\subsection{Contraction with multiple clause types}
\label{ss:ctypes.contract}

In this subsection we prove Proposition~\ref{p:contraction.for.simple.var}, which concerns the entropy maximization problem from Proposition~\ref{p:block.update.non.compound}.
The following is an analogue of
Definition~\ref{d:param.weights.Lambda}:

\begin{dfn}[parametrization of Lagrangian weights $\Psi$
on a depth-one neighborhood of a non-compound variable]
\label{d:param.weights.Augmented}
Let $U$ be the depth-one tree from Definition~\ref{d:judicious.augmented.alphabet}, rooted at a variable $v$ of (non-compound) total type $\bT$. For an augmented pair coloring $(\usi,\uL)$ of $U$, we will define weight functions $\Psi\equiv\Psi_U$ parametrized as
	\[
	\Psi(\usi,\uL)
	=
	\Psi_v(\usi_{\delta v},\uL_{\delta v})
	\cdot
	\Bigg\{
	\prod_{e\in\delta U}
	\psi_e(\sigma_e,\bL_e)
	\Bigg\}\,,\]
where the root variable weight $\Psi_v$ takes the form
	\beq\label{e:non.compound.weight.functional.form}
	\Psi_v(\usi_{\delta v},\uL_{\delta v})
	\equiv
	\underbrace{
	\varphi_v(\usi_{\delta v})
	\Bigg\{ \prod_{j=1,2}
	\psi^j(x^j)\Bigg\}
	\Bigg\{
	\prod_{e\in\delta v}
	\prod_{j=1,2}
	(\psi_e)^j((\sigma_e)^j)
	\Bigg\}
	}_{\textup{denote this }
		\psi_v(\usi_{\delta v})}
	\Bigg\{
	\prod_{e\in\delta v}
	\underbrace{\Bigg(
	\psi_e(\bL_e)
	\prod_{j=1,2}
	(\psi_e)^j((\sigma_e)^j \,|\,\bL_e)
	\Bigg)}_{
	\textup{denote this }
	\psi_e(\sigma_e,\bL_e)}
	\Bigg\}\,,\eeq
--- in the above, $\varphi_v(\usi_{\delta v})$ is the indicator of a valid pair coloring $\usi_{\delta v}$ (i.e., the pair version of \eqref{e:color.model.variable.factor}),
and $x\in\set{\minus,\plus,\free}^2$ denotes 
the pair frozen spin corresponding to
$\usi_{\delta v}$. If $\Psi$ is any weight on $U$ of the functional form just described, then $\Psi$ is a Lagrangian weight
for the optimization problem from Proposition~\ref{p:block.update.non.compound}, meaning that $\langle \log\Lambda,\nu\rangle$ is constant over $\nu\in\Judicious_{\DD}(U;\omega_{\delta U})$. Conversely, it is easy to see that any Lagrangian weight can be expressed in this form. Moreover, for our convenience we have chosen weights that are somewhat over-parametrized, since we have 
edge weights $(\psi_e)^j((\sigma_e)^j)$
as well as 
$(\psi_e)^j((\sigma_e)^j\,|\,\bL_e)$.
\end{dfn}

We next give the analogue of Definition~\ref{d:nice.lagrange.defn.weights}:

\begin{dfn}[iterative construction of weights
for a non-compound variable]
\label{d:lagrange.defn.Augmented}
In the setting of Proposition~\ref{p:contraction.for.simple.var},
we again let $U$ be the depth-one tree from Definition~\ref{d:judicious.augmented.alphabet}.
We now define a sequence of weights $\Psi_t$ on $U$
(parametrized as in Definition~\ref{d:param.weights.Augmented} for each $t\ge0$), which will be shown to converge as $t\to\infty$ to the Lagrangian weights $\Psi_\infty\equiv \Psi_{\DD}(U;\omega_{\delta U})$ of Proposition~\ref{p:contraction.for.simple.var}. At the same time we will define messages 
$q_t$ which will converge
as $t\to\infty$ to the \textsc{bp} solution
$q\equiv \BPq(U;\Psi_\infty)$ for the $\Psi_\infty$-weighted
model on $U$. (On each edge $e$ of $U$, the \textsc{bp} messages $\dq_{e,t}$ and $\hq_{e,t}$
will now be probability measures on tuple $(\sigma,\bL)$ where 
$\sigma\in\set{\RYGB}^2$ while $\bL$ ranges over all possible types for the clause incident to $e$.)

To start the construction, let $\Lmstar_U$ denote the single-copy weights on $U$ given by Corollary~\ref{c:clause.bp.weights} (and explicitly constructed in Corollary~\ref{c:clause.bp.weights.explicit}) ---
 similarly to \eqref{e:explicit.Lmstar.T}, if $\uta\equiv\uta_U$ is a single-copy coloring of $U$, then
	\[
	\Lmstar_U(\uta)
	=\overbrace{
	\Bigg\{\lmstar_v(x_v)
	\prod_{e\in\delta v} \lmstar_e(\tau_e)
	\Bigg\}
	}^{\Lmstar_v(\uta_{\delta v})}
	\Bigg\{
	\prod_{e\in\delta U} \lmstar_e(\tau_e)
	\Bigg\}
	\]
where $x_v\in\set{\minus,\plus,\free}$ denotes the frozen spin corresponding to $\uta_{\delta v}$,
and for $e\in\delta U$ we take
$\lmstar_e=\dqbul_e$ as defined by \eqref{e:redistributed.bp.messages}. Next, recalling \eqref{e:pi.DD.first.appearance}, we define $\psistar_e(\bL)\equiv \pi_{\DD}(\bL\,|\,\bt_e)$.
We initialize our construction at $t=0$ with 
	\[
	\Psi_0(\usi,\uL)
	\equiv
	\Bigg\{ \prod_{j=1,2}
	\Lmstar_U(\usi^j)\Bigg\}
	\Bigg\{ \prod_{e\in\delta v}
	\psistar_e(\bL_e)\Bigg\}\,,\]
where $(\usi,\uL)\equiv(\usi_U,\uL_U)$
denotes an augmented pair coloring of $U$.
This fits the functional form prescribed
in Definition~\ref{d:param.weights.Augmented} above, since we can rewrite
	\begin{align*}
	\Psi_0(\usi,\uL)
	&=
	\Psi_{v,0}(\usi_{\delta v},\bL_{\delta v})
	\cdot\Bigg\{
	\prod_{e\in\delta U} 
	\overbrace{\Bigg(
	\prod_{j=1,2}
	\lmstar_e((\sigma_e)^j)
	\Bigg)}^{\psi_{e,0}(\sigma_e,\bL_e)}
	\Bigg\}\,,\\
	\Psi_{v,0}(\usi_{\delta v},\bL_{\delta v})
	&=\underbrace{
	\Bigg\{\prod_{j=1,2}
	\Lmstar_v((\usi_{\delta v})^j)
	\Bigg\}}_{\psi_{v,0}(\usi_{\delta v})}
	\Bigg\{
	\prod_{e\in\delta v}
	\psistar_e(\bL_e)
	\Bigg\}\,,\end{align*}
which is consistent with
\eqref{e:non.compound.weight.functional.form}
if for all $e\in\delta v$ we take
$\psi_{e,0}(\sigma,\bL)
	\equiv\psistar_e(\sigma,\bL)$, where
	\beq\label{e:psistar.joint.defn}
	\psistar_e(\sigma,\bL)
	\equiv\psistar_e(\bL)
	\Bigg\{\prod_{j=1,2}
		(\psistar_e)^j(\sigma^j\,|\,\bL)
		\Bigg\}
	= \psistar_e(\bL)
	= \pi(\bL\,|\,\bt_e)\,.\eeq
Having defined $\Psi_0$, we let $q_0\equiv\BPq(U;\Psi_0)$. 
Recall that $\omega_{\delta U}$ is given, and abbreviate $\Omega\equiv\AUGMENT(\omega)$,
as defined by
\eqref{e:pi.DD.first.appearance}. For $t\ge1$, given $\Psi_{t-1}$ we define updated weights $\Psi_t$ by making the following series of updates, started from the boundary $\delta U$ and working up to the root $v$, then working back down to $\delta U$:
\begin{enumerate}[I.]
\item \bemph{Boundary updates.} For each leaf edge $e=(au)\in \delta U$, we update its weight by setting
	\[
	\psi_{e,t}(\sigma,\bL)
	\equiv
	\dq_{e,t}(\sigma,\bL)
	\equiv
	\f{\Omega_e(\sigma,\bL)}{\hq_{e,t}(\sigma,\bL)}
	\Bigg/
	\Bigg(\sum_{\sigma',\bL'}
	\f{\Omega_e(\sigma',\bL')}
		{\hq_{e,t}(\sigma',\bL')}
	\Bigg)\,.
	\]

\item \bemph{Upward pass through clauses.}
For each clause $a$ in $U$,
update $\hq_{av,t}\equiv\BP_{av}[\dq_t]$.

\item \bemph{Root variable update.}
Recall \eqref{e:augmented.model.factors}
that $\varphi_v(\usi_{\delta v},\uL_{\delta v})$
denotes the indicator of a valid augmented pair coloring of $\delta v$. Suppose inductively that the measure 
	\beq\label{e:var.tuple.measure.augmented}
	\Big(\nu_{\delta v}[\Psi_{v,t-1};\hq_{t-1}]
	\Big)
	(\usi_{\delta v},\uL_{\delta v})
	\equiv
	\f1{\dbz_{v,t-1}}
	\varphi_v(\usi_{\delta v};\uL_{\delta v})
	\Psi_{v,t-1}
	(\usi_{\delta v},\uL_{\delta v})
	\prod_{e\in\delta v}
	\hq_{e,t-1}(\sigma_e,\bL_e)
	\eeq
is fully judicious on $\delta v$. We will show below, in Proposition~\ref{p:ctypes.varupdate}, how to define updated weights $\Psi_{v,t}$ such that
$\nu_{\delta v}[\Psi_{v,t};\hq_t]$
is also fully judicious on $\delta v$.
We then use this to update
$\dq_{va,t}\equiv\BP_{va}[\hq_t;\Psi_{v,t}]$
for all $a\in\pd v$.

\item \bemph{Downward pass through clauses.}
For each clause $a$ in $U$, update
	\[\hq_{au,t}\equiv
	\BP_{au} \bigg[\Big( \dq_{va,t-1},
	(\dq_{wa,t})_{w\in\pd a\setminus \set{u,v}}
	\Big)
	\bigg]
	\]
for each child variable $u\in\pd a\setminus v$. 
This completes the definition of $\Psi_t$ and $q_t$.
\end{enumerate}
In summary, by iterating the above steps, we obtain
$(\Psi_t,q_t)$ for all $t\ge0$ on $T$.
The remainder of this subsection
is devoted to analyzing this iteration under the assumptions of Proposition~\ref{p:contraction.for.simple.var}.
\end{dfn}

\begin{dfn}\label{d:judicious.with.types}
In the setting of Definition~\ref{d:judicious.augmented.alphabet}, let $\nu$ be a probability measure on augmented colorings $(\usi,\uL)$ of $U$. For any edge $e$ of $U$ we let $\nu_e$ be the marginal law of $(\sigma_e,\bL_e)$ under $\nu$. We say $\nu_e$ is \bemph{judicious on average} if
	\[
	\bar{\nu}_e(\sigma_e)
	\equiv \sum_{\bL_e}
	\bar{\nu}_e(\sigma_e,\bL_e)
	\]
is a judicious measure on $\set{\RYGB}^2$, in the sense of \eqref{e:margin.judicious}.
We say that $\nu_e$ is \bemph{conditionally judicious} if for all clause types $\bL_e$ that can appear on edge $e$, the conditional measure $\nu_e(\sigma_e\,|\,\bL_e)$ is a judicious measure on $\set{\RYGB}^2$, 
again in the sense of \eqref{e:margin.judicious}.
Note that \bemph{fully judicious}
(Definition~\ref{d:judicious.augmented.alphabet}) implies conditionally judicious which implies judicious on average.
\end{dfn}

The following extends the notation from Definitions~\ref{d:clause.rel.error}~and~\ref{d:var.rel.error}:

\begin{dfn}\label{d:rel.error.notation.w.types}
Let $\hat{p}$ and $\hq$ be two nonnegative measures on pairs $(\sigma,\bL)$ where $\sigma\in\set{\RYGB}^2$ and $\bL$ ranges over clause types. In what follows we shall write
	\[
	\CRELERR(\hat{p},\hq)
	\le \vec{\ep} = \begin{pmatrix}
	\ep \\ \dot{\ep} \\ \ddot{\ep}
	\end{pmatrix}
	\]
to indicate that
$\crelerr(\hat{p}(\cdot\,|\,\bL),
	\hq(\cdot\,|\,\bL)) \le\vec{\ep}$
for all $\bL$ in the sense of Definition~\ref{d:clause.rel.error}, and that
	\[
	\max_{\bL}\Bigg\{
	\bigg|\f{\hq(\bL)}{\hat{p}(\bL)}-1
	\bigg|\Bigg\}\le \ep\,.
	\]
If $\dot{p}$ and $\dq$
are two probability measures on pairs $(\sigma,\bL)$,
we define $\VRELERR(\dot{p},\dq)$ 
in an analogous fashion, extending the notation from Definition~\ref{d:var.rel.error}.
\end{dfn}

In the following we use
$\prodhQ_e$ to be the probability measure on pairs $(\sigma,\bL)$ such that
	\beq\label{e:prod.hQ.with.L}
	\prodhQ_e(\sigma,\bL)
	=\f{\prodhq_e(\sigma)}{\hat{Z}_e}\eeq
where $\hat{Z}_e$ is the normalization.
The following result will be applied to the analysis of Step~III from Definition~\ref{d:lagrange.defn.Augmented}.
It builds on the analysis of Proposition~\ref{p:var.update} and Corollary~\ref{c:var.update}.

\begin{ppn}\label{p:ctypes.varupdate} Let $v$ be a nice variable with incoming messages $\hat{p}$ (in the augmented pair coloring model). Recalling the notation of Definition~\ref{d:rel.error.notation.w.types},
assume that for all $e\in\delta v$ we have (cf.\ \eqref{e:condition.on.hat.messages})
	\beq\label{e:condition.on.hat.msg.with.L}
	\CRELERR(
		\prodhQ_e
		,\hat{p}_e)
	\le	\f{k^{O(1)}}{2^{k\zeta}}
	\begin{pmatrix}
	2^{-k} \\ 1 \\ 2^k
	\end{pmatrix}\,.\eeq
Suppose $v$ has weight $\Phi_v$, of the functional form \eqref{e:non.compound.weight.functional.form},
such that the measure $\mu\equiv \nu_{\delta v}[\Phi_v;\hat{p}]$
(defined using the notation of
\eqref{e:var.tuple.measure.augmented})
is fully judicious. For the variable weights that do not depend on the clause types, assume that
(cf.\ \eqref{e:assume.variable.weights.close.to.one})
	\[
	\adjustlimits
	\sum_{j=1,2}
	\sum_{x\in\set{\minus,\plus,\free}}
	\Big|\phi^j(x)-1\Big|
	\le \f{k^{O(1)}}{2^{k\zeta}}\,,\quad
	\max_{e\in\delta v}\Bigg\{
	\sum_{j=1,2}
	\Big|(\phi_e)^j(\red)-1\Big|
	\Bigg\}
	\le \f{ k^{O(1)}}{2^{k\zeta}}
	\,.\]
For the variable weights that do depend on the clause types, assume that 
	\beq\label{e:bound.edge.weights.cond.on.L}
	\CRELERR(\psistar_e,\phi_e)
	\le \f{k^{O(1)}}{2^{k\zeta}}
	\begin{pmatrix} 2^{-k} \\ 1 \\ 2^k
	\end{pmatrix}\eeq
for $\psistar_e$ as defined by
\eqref{e:psistar.joint.defn}. Now suppose we have a new set of messages $\hq_e$,
such that
 \eqref{e:condition.on.hat.msg.with.L}
 also holds with $\hq$ in place of $\hat{p}$, such that
	\beq\label{e:crelerr.cond.on.L}
	\CRELERR(\hat{p}_e,\hq_e)
	\le \begin{pmatrix}
	\ep_e\\
	\dot{\ep}_e\\
	\ddot{\ep}_e
	\end{pmatrix}
	\le
	\f{k^{O(1)}}{2^{k\zeta}}
	\begin{pmatrix}
	2^{-k} \\ 1 \\ 2^k
	\end{pmatrix}
	\eeq
for each $e\in\delta v$.
Then there exists a new set of weights
$\Psi_v$, also of the functional form \eqref{e:non.compound.weight.functional.form}, such that
$\nu_{\delta v}[\Psi_v;\hq]$ is fully judicious.
In addition, the clause-independent weights in $\Psi_v$ satisfy the bounds
\eqref{e:defn.of.err.v.notation}
from Proposition~\ref{p:var.update}.
The clause-dependent weights in $\Psi_v$ satisfy
	\begin{align}\nonumber
	\max_{\bL}
	\Bigg\{
	\Bigg|\f{\psi_e(\bL)}{\phi_e(\bL)}-1\Bigg|
	+
	\adjustlimits\max_{j=1,2}
	\max_{\tau\ne\red}
	\Bigg|\f{(\psi_e)^j(\tau\,|\,\bL)}
		{(\phi_e)^j(\tau\,|\,\bL)}
		-1\Bigg|
	\Bigg\}
	&\le
	\ep_e 
	+\f{\dot{\ep}_e}{2^k}
	+\f{\min\set{\ddot{\ep}_e,1}}{2^{k(1+\zeta)}}
	+\f{\err_v}{2^{k(1+\zeta)}}\,,\\
	\adjustlimits
	\max_{\bL}
	\max_{j=1,2}
	\Bigg|\f{(\psi_e)^j(\red\,|\,\bL)}
		{(\phi_e)^j(\red\,|\,\bL)}
		-1\Bigg|
	&\le
	\ep_e 
	+\dot{\ep}_e
	+\f{\min\set{\ddot{\ep}_e,1}}{2^{k\zeta}}
	+\f{\err_v}{2^{k\zeta}}
	\label{e:ctypes.varupdate.MAINBOUND}
	\end{align}
If $\dot{p}_e$ are the original outgoing messages
and $\dq_e$ are the new ones, then
(cf.\ Corollary~\ref{c:var.update})
	\beq\label{e:ctypes.varupdate.OUTMSG}
	\VRELERR(\dot{p}_e,\dot{q}_e)
	\le
	k^{O(1)}
	\begin{pmatrix}
	1\\1\\1\\ 
	2^{-k\zeta} \\
	2^{-k\zeta}
	\end{pmatrix} \err_v
	+k^{O(1)}
	\begin{pmatrix}
	0&0&0\\
	0&1&1\\
	0&1&1\\
	1&2^{-k}&2^{-k}\\
	1&1&1\\
	\end{pmatrix}
	\begin{pmatrix}
	\ep_e\\
	\dot{\ep}_e\\
	\min\set{\ddot{\ep}_e,1}/2^{k\zeta}
	\end{pmatrix}\,,
	\eeq
again using the notation from Definition~\ref{d:rel.error.notation.w.types}.

\begin{proof}
Since the proof is somewhat involved, we divide it into a few numbered parts. We again recall from
 \eqref{e:augmented.model.factors}
that $\varphi_v(\usi_{\delta v},\uL_{\delta v})$
denotes the indicator of a valid augmented pair coloring of $\delta v$. It can be expressed as
	\[
	\varphi_v(\usi_{\delta v},\uL_{\delta v})
	= \varphi_v(\usi_{\delta v})
	\prod_{e\in\delta v}
	\Ind{\bL_e\ni \bt_e}\,,
	\]
where $\varphi_v(\usi_{\delta v})$ is the indicator of a valid pair coloring on $\delta v$ (not augmented with clause types) as defined in \eqref{e:color.model.variable.factor};
and we write $\bL\ni\bt$ to indicate that $\bL_j=\bt$ for $j=j(\bt)$.\smallskip


\noindent
\bemph{\hypertarget{p:ctypes.varupdate.ITERATIVE.CONSTRUCTION}{Part~1}. Iterative construction of $\Psi_v$.}
We first give an iterative definition for a sequence of weights $\Psi_{v,t}$ ($t\ge0$),
all of the functional form \eqref{e:non.compound.weight.functional.form}.
We emphasize that this index $t$ is \bemph{purely local to the proof of this proposition}, and is not the same as the $t$ that indexes the up-and-down passes in
Definition~\ref{d:lagrange.defn.Augmented}. 
We will show in the remainder of the proof that this sequence converges as $t\to\infty$ to the desired weights $\Psi_v=\Psi_{v,\infty}$. Initialize $\Psi_{v,0}\equiv\Phi_v$. For each $t\ge0$, we will update from $\Psi_{v,t}$ to $\Psi_{v,t+1}$ in three stages, summarized by the following table:
	\beq\label{e:update.table}
	\begin{array}{c|ccc}
	& \psi_v(\usi_{\delta v}) 
	& (\psi_e)^j(\sigma^j\,|\,\bL)
	& \psi_e(\bL) \\
	\hline
	\Psi_{v,t} & t & t & t \\
	\Psi_{v,t+1/3}	& t+1 & t & t \\
	\Psi_{v,t+2/3}	& t+1 & t+1 & t 
	\end{array}
	\eeq
--- e.g., the last row of the table indicates that the weight $\Psi_{v,t+2/3}$ 
is defined by
\eqref{e:non.compound.weight.functional.form}
with the $(t+1)$-versions of 
$\psi_v(\usi_{\delta v})$
and $(\psi_e)^j(\sigma^j\,|\,\bL)$
(for both $j=1,2$ and all $e\in\delta v$), but with the $t$-version of $\psi_e(\bL)$ (again for all $e\in\delta v$). For notational convenience,
we also define $\Psi_t\equiv \Phi$ for all $t<0$, and
	\beq\label{e:hq.of.t.notation}
	\hq_{e,t}(\sigma,\bL)
	\equiv\begin{cases}
	\hat{p}_e(\sigma,\bL)
	&\textup{if }t<0\,,\\
	\hq_e(\sigma,\bL)
	&\textup{if }t\ge0\,.\\
	\end{cases}
	\eeq
We then abbreviate $\nu_t\equiv\nu_{\delta v}[\Psi_{v,t};\hq_t]$, using the notation of \eqref{e:var.tuple.measure.augmented}; in particular this means $\nu_t=\nu_{\delta v}[\Phi;\hat{p}]$ for all $t<0$, while $\nu_0=\nu_{\delta v}[\Phi;\hq]$.
For $t>0$, the weights
of \eqref{e:update.table} are defined by the following procedure (with $t$ denoting an integer time from now on):
\begin{enumerate}[A.]
\item From $t$ to $t+1/3$: 
we will suppose inductively that the measure
$\nu_{t-2/3}$ is judicious on average (in the terminology of Definition~\ref{d:judicious.with.types}).
(For $t=0$, recall that we defined
$\nu_{-2/3}\equiv\nu_{\delta v}[\Phi,\hat{p}]$,
which by hypothesis is fully judicious and therefore also judicious on average.)
 At time $t$ (for $t\ge0$) we have
	\[
	\nu_t(\usi_{\delta v},\uL_{\delta v})
	= \f1{\dbz_{v,t}}
	\varphi_v(\usi_{\delta v})
	\psi_{v,t}(\usi_{\delta v})
	\prod_{e\in\delta v}
	\Bigg\{
	\Ind{ \bL_e\ni\bt_e }
	\psi_{e,t}(\sigma_e,\bL_e)
	\hq_e(\sigma_e,\bL_e)
	\Bigg\}\,.
	\]
The marginal law of $\usi_{\delta v}$ under $\nu_t$ can then be expressed as
	\beq\label{e:t.avg.no.longer.judicious.on.avg}
	\bar{\nu}_t(\usi_{\delta v})
	\cong
	\varphi_v(\usi_{\delta v})
	\psi_{v,t}(\usi_{\delta v})
	\prod_{e\in\delta v}
	\avhq_{e,t}(\sigma_e)\,,
	\eeq
where $\avhq_{e,t}$ denotes the message at time $t$ averaged over $\bL$:
	\beq\label{e:defn.avhq.t}
	\avhq_{e,t}(\sigma)
	\equiv
	\f1{\avhz_{e,t}}
	\sum_{\bL}
	\Ind{\bL\ni\bt_e}
	\psi_{e,t}(\sigma,\bL)
	\hq_{e,t}(\sigma,\bL)\,,
	\eeq
where $\hq_{e,t}$ is defined by \eqref{e:hq.of.t.notation}. Since we also defined $\psi_{e,t}\equiv\phi_e$ for negative $t$, this means that $\avhq_{e,t}(\sigma_e)$ is also well-defined for negative $t$. For instance, the marginal law of $\usi_{\delta v}$ at time 
$t-2/3$ is
	\beq\label{e:t.minus.two.thirds.avg}
	\bar{\nu}_{t-2/3}(\usi_{\delta v})
	\cong
	\varphi_v(\usi_{\delta v})
	\psi_{v,t}(\usi_{\delta v})
	\prod_{e\in\delta v}
	\avhq_{e,t-1}(\sigma_e)\,,
	\eeq
including in the case $t=0$. Now, from the inductive assumption, $\nu_{t-2/3}$ is judicious on average, which means that the measure
$\bar{\nu}_{t-2/3}$ in
\eqref{e:t.minus.two.thirds.avg}
is judicious in the sense of Definition~\ref{d:constrained.opt.compound.enclosure}.
Comparing \eqref{e:t.avg.no.longer.judicious.on.avg} with \eqref{e:t.minus.two.thirds.avg},
we see that we can
apply Proposition~\ref{p:var.update}
--- with the variable weight function $\psi_{v,t}(\usi_{\delta v})$
and the two sets of incoming messages
$\avhq_{e,t-1}$ and $\avhq_{e,t}$
--- to define a new variable weight function $\psi_{v,t+1}(\usi_{\delta v})$ such that the measure
	\[
	\bar{\nu}_{t+1/3}(\usi_{\delta v})
	\cong
	\varphi_v(\usi_{\delta v})
	\psi_{v,t+1}(\usi_{\delta v})
	\prod_{e\in\delta v}
	\avhq_{e,t}(\sigma_e)
	\]
is judicious in the sense of Definition~\ref{d:constrained.opt.compound.enclosure},
i.e., such that $\nu_{t+1/3}$ is judicious on average.
Note that, since $t+1/3=(t+1)-2/3$,
this verifies the inductive assumption that
$\nu_{t-2/3}$ is judicious on average.

\item From $t+1/3$ to $t+2/3$:
for all $e\in\delta v$ and both $j=1,2$, update
	\beq\label{e:cond.judicious.update}
	\f{(\psi_{e,t+1})^j
		(\tau\,|\,\bL)}
		{(\psi_{e,t})^j(
		\tau\,|\,\bL)}
	=
	\f{\starpi_e(\tau)}
		{ (\nu_{e,t+1/3})^j(\tau\,|\,\bL) }
	\eeq
for all $\tau\in\set{\RYGB}$.

\item From $t+2/3$ to $t+1$: for all $e\in\delta v$ and both $j=1,2$, update 
	\beq\label{e:clause.frac.update}
	\f{\psi_{e,t+1}(\bL)}{\psi_{e,t}(\bL)}
	= \f{\pi_{\DD}(\bL\,|\,\bt_e)}
		{ \nu_{e,t+2/3}
		(\bL_e) }\,,
	\eeq
using the notation from \eqref{e:pi.DD.first.appearance}.
\end{enumerate}
The rest of the proof is devoted to the analysis of this iterative procedure. For the calculations that follow, it is useful to note that if $\bL\not\ni\bt_e$, then we must have $\psi_{e,t}(\bL)=0$ for all $t$. 
\smallskip

\noindent\bemph{\hypertarget{p:ctypes.varupdate.ERROR.NOTATIONS}{Part~2}. Error notations.}
In the proof we will track the following error quantities for all times $t$. Let $\cdel$ measure the deviation from being conditionally judicious:
for $e\in\delta v$, $j=1,2$, 
and $\tau\in\set{\RYGB}$, let
	\beq\label{e:defn.cdel.kappa}
	(\cdel_{e,t})^j(\tau\,|\,\bL)
	\equiv
	\f{(\nu_{e,t})^j
	( \tau\,|\,\bL) }{\starpi_e(\tau)}
	-1\,.\eeq
Taking the marginal over $\bL$ gives
	\begin{align*}
	(\nu_{e,t})^j(\tau)
	&= \sum_{\bL}
	\nu_{e,t}(\bL)
	\cdot
	(\nu_{e,t})^j
		( \tau\,|\,\bL)
	= \sum_{\bL}
	\nu_{e,t}(\bL)
	\starpi_e(\tau)
	\Bigg\{ 1 + 
	(\cdel_{e,t})^j
		(\tau\,|\,\bL)
	\Bigg\}\\
	&=
	\starpi_e(\tau)
	\Bigg\{
	1 + 
	\underbrace{\bigg[\sum_{\bL}
	\nu_{e,t}(\bL)
	\cdot (\cdel_{e,t})^j(\tau\,|\,\bL)
	\bigg]}_{\textup{denote this }
	(\adel_{e,t})^j(\tau)}
	\Bigg\}\,.
	\end{align*}
Notice also that since $\starpi_e$
and $(\nu_{e,t})^j(\cdot\,|\,\bL)$
are both probability measures over $\set{\RYGB}$, we must have
	\beq\label{e:cdel.avg.to.zero}
	\sum_\tau
	\starpi_e(\tau)
	(\cdel_{e,t})^j(\tau\,|\,\bL)
	=0
	=\sum_\tau
	\starpi_e(\tau)
	(\adel_{e,t})^j(\tau)\,.
	\eeq
Next let $\fdel$ measure the deviations in the clause proportions: for $e\in\delta v$,
	\beq\label{e:fdel.defn}
	\fdel_{e,t}(\bL)
	\equiv
	\f{\pi_{\DD}(\bL\,|\,\bt_e)}
	{\nu_{e,t}(\bL)}-1\,.\eeq
Next, recall the definition \eqref{e:prod.hQ.with.L} of $\prodhQ_e(\sigma,\bL)$, and let
	\beq\label{e:alpha.e.t.defn}
	\alpha_{e,t}(\sigma,\bL)
	\equiv
	\f{\psi_{e,t}(\sigma,\bL)\hq_e(\sigma,\bL)}
	{ \psi_{e,0}(\sigma,\bL) \prodhQ_e(\sigma,\bL) }
	-1
	=\f{\psi_{e,t}(\sigma,\bL)\hq_e(\sigma,\bL)}
	{ \psistar_e(\bL) \prodhq_e(\sigma) /\hat{Z}_e}
	-1\,.\eeq
Lastly, let $\xi$ measure the deviation from being a product measure: for $e\in\delta v$,
	\[
	\xi_{e,t}(\sigma\,|\,\bL)
	\equiv
	\f{\nu_{e,t}(\sigma\,|\,\bL)}
		{\prodom_e(\sigma)}-1
	\]
We will also make some shorthand notation for the maximum absolute values of the above quantities: let
	\beq\label{e:cdel.max.notation}
	\cdel_{e,t}(\tau)
	\equiv
	\max_{\bL,j}\Bigg\{
	\Big|(\cdel_{e,t})^j(\tau\,|\,\bL)\Big|
	\Bigg\}
	\,,\quad
	\begin{pmatrix}
	\cdel_{e,t}\\
	\dot{\cdel}_{e,t}
	\end{pmatrix}
	\equiv\begin{pmatrix}
	\max\set{\cdel_{e,t}(\tau):\tau\ne\red}\\
	\cdel_{e,t}(\red)
	\end{pmatrix}\,,\eeq
and let $\fdel_{e,t}
\equiv \max_{\bL}\set{|\fdel_{e,t}(\bL)|}$.
Finally let
	\beq\label{e:he.max.notation}
	\alpha_{e,t}(\sigma)
	\equiv
	\max_{\bL}\Bigg\{
		\Big|\alpha_{e,t}(\sigma\,|\,\bL)\Big|
			\Bigg\}
	\,,\quad
	\begin{pmatrix}
	\alpha_{e,t}\\
	\dot{\alpha}_{e,t}\\
	\ddot{\alpha}_{e,t}
	\end{pmatrix}
	=\begin{pmatrix}
	\max\set{\alpha_{e,t}(\sigma) : \red[\sigma]=0}\\
	\max\set{\alpha_{e,t}(\sigma) : \red[\sigma]=1}\\
	\max\set{\alpha_{e,t}(\sigma) : \red[\sigma]=2}
	\end{pmatrix}\,,
	\eeq
and make the analogous notation with $\xi$ in place of $\alpha$.
We will show by induction that for all $e\in\delta v$,
	\beq\label{e:rough.inductive.bounds.forall.t}
	\sum_{t\in\mathbb{Z}/3,t\ge1/3}
	\begin{pmatrix}
	\fdel_{e,t}+\cdel_{e,t} + \alpha_{e,t}
		\\
	\dot{\cdel}_{e,t}+\dot{\alpha}_{e,t}
		+\xi_{e,t}
		+ \dot{\xi}_{e,t}\\
	\ddot{\alpha}_{e,t}
		+\ddot{\xi}_{e,t}
	\end{pmatrix}
	\le 
	\f{k^{O(1)}}{2^{k\zeta}}
	\begin{pmatrix} 2^{-k} \\ 1 
		\\ 2^k \end{pmatrix}\eeq
for all $t\ge 1/3$. We also record a calculation that we will use repeatedly in what follows: for any integer $t$, at time $t+1/3$, according to the table \eqref{e:update.table} we have
	\[
	\nu_{t+1/3}(\usi_{\delta v},\uL_{\delta v})
	\cong
	\varphi_v(\usi_{\delta v},\uL_{\delta v})
	\psi_{v,t+1}(\usi_{\delta v})
	\prod_{e\in\delta v}\Bigg\{
	\psi_{e,t}(\sigma_e,\bL_e)
	\hq_{e,t}(\sigma_e,\bL_e)\Bigg\}\,,
	\]
where we recall that $\hq_{e,t}$ is defined by \eqref{e:hq.of.t.notation} for all $t$.
From this it is straightforward to verify that the marginal on an edge $e\in\delta v$ is given by
\beq\label{e:first.defn.of.u}
	\nu_{e,t+1/3}(\sigma,\bL)
	=\f1{\bbz_{e,t+1/3}}
	\psi_{e,t}(\sigma,\bL)
	\hq_{e,t}(\sigma,\bL)
	\overbrace{\Bigg[
	\f1{\dot{z}_{e,t+1/3}}
	\sum_{\usi_{\delta v\setminus e}}
	\varphi_v(\usi_{\delta v})
	\psi_{v,t+1}(\usi_{\delta v})
	\prod_{e'}
	\avhq_{e,t}(\sigma_{e'})
	\Bigg]}^{\textup{denote this }u_{e,t+1/3}(\sigma)}
	\eeq
where $\dot{z}_{e,t+1/3}$ is the normalizing constant that makes $u_{e,t+1/3}$ a probability measure over $\set{\RYGB}^2$, and $\bbz_{e,t+1/3}$ is the normalizing constant that makes $\nu_{e,t+1/3}$ a probability measure over pairs $(\sigma,\bL)$. For integers $t$ we will compare the following measures:
	\begin{align}\nonumber
	\bbz_{e,t-2/3}\nu_{e,t-2/3}(\sigma,\bL)
	&=\hq_{e,t-1}(\sigma,\bL)
		\psi_{e,t-1}(\bL)
		\psi_{e,t-1}(\sigma\,|\,\bL)
		u_{e,t-2/3}(\sigma,\bL)\,,\\
	\nonumber
	\bbz_{e,t-1/3}\nu_{e,t-1/3}(\sigma,\bL)
	&=\hq_{e,t-1}(\sigma,\bL)
		\psi_{e,t-1}(\bL)
		\psi_{e,t}(\sigma\,|\,\bL)
		u_{e,t-1/3}(\sigma,\bL)\,,\\
	\nonumber
	\bbz_{e\bullet,t}
		\nu_{e\bullet,t}(\sigma,\bL)
	&=\hq_{e,t}(\sigma,\bL)
		\psi_{e,t}(\bL)
		\psi_{e,t}(\sigma\,|\,\bL)
		u_{e,t-2/3}(\sigma,\bL)\,,\\
	\nonumber
	\bbz_{e,t}\nu_{e,t}(\sigma,\bL)
	&=\hq_{e,t}(\sigma,\bL)
		\psi_{e,t}(\bL)
		\psi_{e,t}(\sigma\,|\,\bL)
		u_{e,t}(\sigma,\bL)\,,\\
	\bbz_{e,t+1/3}\nu_{e,t+1/3}(\sigma,\bL)
	&=\hq_{e,t}(\sigma,\bL)
		\psi_{e,t}(\bL)
		\psi_{e,t}(\sigma\,|\,\bL)
		u_{e,t+1/3}(\sigma,\bL)\,.
	\label{e:table.of.all.measures}
	\end{align}
The remaining parts of the proof are organized as follows:
\begin{enumerate}[--]
\item
\hyperlink{p:ctypes.varupdate.PRELIM.BOUNDS}{Part~3a}
proves some preliminary bounds.

\item 
\hyperlink{p:ctypes.varupdate.UPDATE.A.GENERAL}{Part~3b} controls $\nu_{e,t+1/3}$ 
in terms of $\nu_{e\bullet,t}$ 
for integers $t\ge0$, in the bounds \eqref{e:fdel.bounds.from.update.A}~and~\eqref{e:cdel.bounds.from.update.A}.
The bounds also depend on the error $\vec{\ep}(t)$ (see \eqref{e:update.A.def.epsilon})
between
$\avhq_{e,t-2/3}$ and $\avhq_{e,t}$.

\item
\hyperlink{p:ctypes.varupdate.UPDATE.B.MSG}{Part~4a} 
controls 
 $\avhq_{e,t-1/3}$ in terms of
 $\nu_{e,t-2/3}$ and $\avhq_{e,t-2/3}$,
in the bound \eqref{e:ctypes.one.two.crelerr}.

\item
\hyperlink{p:ctypes.varupdate.UPDATE.C.MSG}{Part~4b} controls
$\avhq_{e,t}$ in terms of
$\nu_{e,t-1/3}$ and $\avhq_{e,t-1/3}$,
in the bound \eqref{e:error.in.avhq.from.update.C}.
\item \hyperlink{p:ctypes.varupdate.UPDATE.B.MGL}{Part~5a} controls
	$\nu_{e,t-1/3}$
	in terms of $\nu_{e,t-2/3}$
	 for integers $t\ge1$
	 (in the bounds \eqref{e:one.to.two.thirds.clause}
and \eqref{e:ctypes.one.two.cond}).
\item \hyperlink{p:ctypes.varupdate.UPDATE.C.MGL}{Part~5b} compares
	$\nu_{e,t-1/3}$
	with $\nu_{e\bullet,t}$
	and $\nu_{e,t}$ for integers $t\ge1$
	(in the bounds \eqref{e:update.C.single.edge}
and \eqref{e:update.C.single.edge.back.to.bullet}).

\item 
\hyperlink{p:ctypes.varupdate.CONCLUSION}{Part~6} 
combines the results of the preceding parts to show that the iteration of Part~1 converges to the desired weights $\Psi_v\equiv\Psi_{v,\infty}$, and moreover that these weights satisfy the claimed bounds
\eqref{e:defn.of.err.v.notation}~and~\eqref{e:ctypes.varupdate.MAINBOUND}.

\item \hyperlink{p:ctypes.varupdate.OUTMSG}{Part~7}
shows that the messages outgoing from the variable
satisfy the claimed bound
\eqref{e:ctypes.varupdate.OUTMSG}, thereby concluding the proof.

\end{enumerate}\smallskip

\noindent\bemph{\hypertarget{p:ctypes.varupdate.PRELIM.BOUNDS}{Part~3a}. Preliminary bounds.} Recall the definition \eqref{e:alpha.e.t.defn}
of $\alpha_{e,t}$, and observe that
$\alpha_{e,1/3}=\alpha_{e,0}$. It follows from the assumptions 
\eqref{e:condition.on.hat.msg.with.L} and
\eqref{e:bound.edge.weights.cond.on.L} that
	\[
	\Big|\alpha_{e,1/3}(\sigma,\bL)\Big|
	\le
	\f{k^{O(1)}}{2^{k\zeta}}
	\begin{cases}
	2^{-k} & \textup{if $\red[\sigma]=0$,}\\
	1 & \textup{if $\red[\sigma]=1$,}\\
	2^k & \textup{if $\red[\sigma]=2$,}\\
	\end{cases}\]
which verifies that 
$\alpha_{e,1/3}(\sigma)$
 satisfies the bound from \eqref{e:rough.inductive.bounds.forall.t}.
The marginal on $\bL$ at time $t=1/3$ satisfies
	\begin{align}\nonumber
	\nu_{e,1/3}(\bL)
	&\stackrel{\eqref{e:first.defn.of.u}}{=}
	\f1{\bbz_{e,1/3}} \sum_\sigma
		\phi_e(\sigma,\bL)
		\hq_e(\sigma,\bL)
		u_{e,1/3}(\sigma)
	\\
	&\stackrel{\eqref{e:alpha.e.t.defn}}{=}
	\f{\psistar_e(\bL)}{\bbz_{e,1/3}
		\hat{Z}_e}
	\sum_\sigma
	\prodhq_e(\sigma)
	\Bigg(1+\alpha_e(\sigma,\bL)\Bigg)
	u_{e,1/3}(\sigma)
	= \psistar_e(\bL)
	\Bigg\{ 1 + \f{k^{O(1)}}{2^{k(1+\zeta)}}
	\Bigg\}\,,
	\label{e:prod.of.zs.is.close.to.one}
	\end{align}
which proves that $\fdel_{e,1/3}$
also satisfies the bound
from \eqref{e:rough.inductive.bounds.forall.t}.
Next, the marginal on $\sigma$ at time $t=1/3$ is given by
	\beq\label{e:one.third.on.sigma}
	\nu_{e,1/3}(\sigma)
	= \f{\avhz_{e,1/3}}{\bbz_{e,1/3}}
	\overbrace{\Bigg[
	\f1{\avhz_{e,1/3}}
	\sum_{\bL}
	\Ind{\bL\ni\bt_e}
	\phi_e(\sigma,\bL)\hq_e(\sigma,\bL)
	\Bigg]}^{\avhq_{e,0}(\sigma)
		=\avhq_{e,1/3}(\sigma)}
	u_{e,1/3}(\sigma)\,,
	\eeq
where we recall that $\avhq_{e,t}$ is defined by
 \eqref{e:defn.avhq.t}. It follows from 
the assumptions \eqref{e:condition.on.hat.msg.with.L} and \eqref{e:bound.edge.weights.cond.on.L}
that
	\beq\label{e:avhq.rel.err.rough.bound.BASECASE}
	\crelerr(\prodhq_e,\avhq_{e,1/3})
	\le \f{k^{O(1)}}{2^{k\zeta}}
	\begin{pmatrix} 2^{-k} \\ 1 \\ 2^k
		\end{pmatrix}\,.
	\eeq
Substituting
\eqref{e:avhq.rel.err.rough.bound.BASECASE}
into \eqref{e:one.third.on.sigma}
shows that $\nu_{e,1/3}(\sigma)$
is very close to the 
normalization of the 
measure
$\prodhq_e(\sigma) u_{e,1/3}(\sigma)$.
On the other hand, the same must be true for the conditional measure $\nu_{e,1/3}(\sigma\,|\,\bL)$, since
	\begin{align*}
	\nu_{e,1/3}(\sigma\,|\,\bL)
	&\stackrel{\eqref{e:first.defn.of.u}}{=}
	\f{
	\phi_e(\sigma,\bL)\hq_e(\sigma,\bL) u_{e,1/3}(\sigma) }
	{ \bbz_{e,1/3}\nu_{e,1/3}(\bL)}
	\stackrel{\eqref{e:alpha.e.t.defn}}{=}
	\f{\psistar_e(\bL)
	\prodhq_e(\sigma)
	u_{e,1/3}(\sigma)}{ \bbz_{e,1/3} \hat{Z}_e
	\nu_{e,1/3}(\bL)}
	\Bigg\{ 1 + \alpha_e(\sigma,\bL)\Bigg\}\\
	&\stackrel{\eqref{e:prod.of.zs.is.close.to.one}}
		{=}
	\psistar_e(\bL)
	\prodhq_e(\sigma)
	u_{e,1/3}(\sigma)
	\Bigg\{ 1 + \alpha_e(\sigma,\bL)\Bigg\}
	\Bigg\{ 1 + \f{k^{O(1)}}{2^{k(1+\zeta)}}
	\Bigg\}\,.
	\end{align*}
Thus, by comparing both $\nu_{e,1/3}(\sigma)$
and $\nu_{e,1/3}(\sigma\,|\,\bL)$
with the normalization of
$\prodhq_e(\sigma)u_{e,1/3}(\sigma)$, we deduce that
	\beq\label{e:conditional.vs.avg.error.BASECASE}
	\Bigg|
	\f{\nu_{e,1/3}(\sigma\,|\,\bL)}
		{\nu_{e,1/3}(\sigma)}-1\Bigg|
	\le \f{k^{O(1)}}{2^{k\zeta}}
	\begin{cases}
	2^{-k} & \textup{if $\red[\sigma]=0$,}\\
	1 & \textup{if $\red[\sigma]=1$,}\\
	2^k & \textup{if $\red[\sigma]=2$.}\\
	\end{cases}\eeq
Since the measure
$\nu_{e,1/3}$ is judicious by construction, 
this implies that $\cdel_{e,1/3}(\tau)$
satisfies the bounds from \eqref{e:rough.inductive.bounds.forall.t}. 
Finally, we note that the variable \textsc{bp} recursion
together with \eqref{e:avhq.rel.err.rough.bound.BASECASE} implies
	\[\Bigg|
	\f{u_{e,1/3}(\sigma)}
		{\proddq_e(\sigma)}-1\Bigg|
	\le \f{k^{O(1)}}{2^{k\zeta}}
	\]
for all $\sigma\in\set{\RYGB}^2$. Combining this with \eqref{e:condition.on.hat.msg.with.L}
and \eqref{e:bound.edge.weights.cond.on.L} gives
	\[
	\Bigg|
	\f{\nu_{e,1/3}(\sigma\,|\,\bL)}
		{\prodom_e(\sigma)
		}-1\Bigg|\le
	\f{k^{O(1)}}{2^{k\zeta}}
	\times
	\begin{cases}
	1& \textup{if $\red[\sigma]\le1$,}\\
	2^k & \textup{if $\red[\sigma]=2$.}\\
	\end{cases}
	\]
This implies that $\xi_{e,1/3}(\sigma)$ satisfy the bounds from \eqref{e:rough.inductive.bounds.forall.t}.
Thus we have proved \eqref{e:rough.inductive.bounds.forall.t}
for the case $t=1/3$. We note for later use that
the inductive hypothesis \eqref{e:rough.inductive.bounds.forall.t} 
implies that \eqref{e:avhq.rel.err.rough.bound.BASECASE} and \eqref{e:conditional.vs.avg.error.BASECASE}
hold more generally:
	\beq\label{e:avhq.rel.err.rough.bound}
	\crelerr(\prodhq_e,\avhq_{e,t})
	\le \f{k^{O(1)}}{2^{k\zeta}}
	\begin{pmatrix} 2^{-k} \\ 1 \\ 2^k
		\end{pmatrix}
	\eeq
for all $t\ge1/3$, and likewise
	\beq\label{e:conditional.vs.avg.error}
	\Bigg|
	\f{\nu_{e,t}(\sigma\,|\,\bL)}
		{\nu_{e,t}(\sigma)}-1\Bigg|
	\le \f{k^{O(1)}}{2^{k\zeta}}
	\begin{cases}
	2^{-k} & \textup{if $\red[\sigma]=0$,}\\
	1 & \textup{if $\red[\sigma]=1$,}\\
	2^k & \textup{if $\red[\sigma]=2$}\\
	\end{cases}\eeq
for all $t\ge1/3$.\smallskip 

\noindent\bemph{\hypertarget{p:ctypes.varupdate.UPDATE.A.GENERAL}{Part~3b}. Analysis of general applications of update A.} We next analyze the update from $t-2/3$ to $t+1/3$ for integer times $t\ge0$. 
At times $t-2/3$ and $t+1/3$ we have
(cf.\ \eqref{e:first.defn.of.u},
and using the notation from \eqref{e:hq.of.t.notation})
	\begin{align}\nonumber
	\bbz_{e,t-2/3}
	\nu_{e,t-2/3}(\sigma,\bL)
	&= 
	\psi_{e,t-1}(\sigma,\bL)
	\hq_{e,t-1}(\sigma,\bL)
	\overbrace{\Bigg[
	\f1{\dot{z}_{e,t-2/3}}
	\sum_{\usi_{\delta v\setminus e}}
	\varphi_v(\usi_{\delta v})
	\psi_{v,t}(\usi_{\delta v})
	\prod_{e'\in\delta v\setminus e}
	\avhq_{e',t-1}(\sigma_{e'})
	\Bigg]}^{u_{e,t-2/3}(\sigma) }\,,\\
	\bbz_{e,t+1/3}\nu_{e,t+1/3}(\sigma,\bL)
	&= 
	\psi_{e,t}(\sigma,\bL)
	\hq_{e,t}(\sigma,\bL)
	\underbrace{\Bigg[
	\f1{\dot{z}_{e,t+1/3}}
	\sum_{\usi_{\delta v\setminus e}}
	\varphi_v(\usi_{\delta v})
	\psi_{v,t+1}(\usi_{\delta v})
	\prod_{e'\in\delta v\setminus e}
	\avhq_{e',t}(\sigma_{e'})
	\Bigg]}_{u_{e,t+1/3}(\sigma) }\,.
	\label{e:compare.tplusthird.to.tminustwothirds}
	\end{align}
We then estimate the error between $u_{e,t-2/3}$ and $u_{e,t+1/3}$. Recall the notation of Definition~\ref{d:clause.rel.error}, and let 
	\beq\label{e:update.A.def.epsilon}
	\vec{\ep}(t)
	\equiv\begin{pmatrix}
	\ep_e(t)\\
	\dot{\ep}_e(t)\\
	\ddot{\ep}_e(t)
	\end{pmatrix}
	\equiv
	\crelerr(\avhq_{t-2/3},
	\avhq_{t-1/3})
	+\crelerr(\avhq_{t-1/3},
	\avhq_{t})\,.\eeq
(Note that $\avhq_t=\avhq_{t+1/3}$.)
Recall also the notation of Definition~\ref{d:var.rel.error}, and let
	\beq\label{e:delta.vec.of.t}
	\vec{\delta}(t+1/3)
	\equiv
	\begin{pmatrix}
	\delta_e(t)\\
	\dot{\delta}_e(t+1/3)\\
	\ddot{\delta}_e(t+1/3)\\
	\mdel_e(t+1/3)\\
	\mdelred_e(t+1/3)
	\end{pmatrix}
	\equiv\vrelerr(
	u_{e,t-2/3},
	u_{e,t+1/3})\,.
	\eeq
The measure $\nu_{e,t-2/3}$ is judicious on average for all $t\ge0$ (and fully judicious for $t=0$).
It then follows by Corollary~\ref{c:var.update} (whose conditions are satisfied, in view of 
\eqref{e:avhq.rel.err.rough.bound}) that
	\beq\label{e:vec.delta.bound.with.L}
	\vec{\delta}(t+1/3)
	\le
	k^{O(1)}
	\begin{pmatrix}
	1\\1\\1\\ 2^{-k\zeta} \\2^{-k\zeta}
	\end{pmatrix} \err_v(t)
	+ k^{O(1)}
	\begin{pmatrix}
	0&0&0\\
	0&1&1\\
	0&1&1\\
	1&2^{-k}&2^{-k}\\
	1&1&1\\
	\end{pmatrix}
	\begin{pmatrix}
	\ep_e(t)\\
	\dot{\ep}_e(t)\\
	\min\set{\ddot{\ep}_e(t),1}/2^{k\zeta}
	\end{pmatrix}\,,
	\eeq
with $\err_v(t)$ defined analogously to \eqref{e:defn.of.err.v.notation}. Now,
recalling \eqref{e:compare.tplusthird.to.tminustwothirds}, we will consider also an intermediate measure $\nu_{e\bullet,t}$, which we define by
	\beq\label{e:intermediate.bullet.measure}
	\nu_{e\bullet,t}(\sigma,\bL)
	=\f1{\bbz_{e\bullet,t}}
		\psi_{e,t}(\sigma,\bL)
	\hq_{e,t}(\sigma,\bL)
	 u_{e,t-2/3}(\sigma)\,.\eeq
Let $\adel_{e\bullet,t}$,
$\cdel_{e\bullet,t}$, etc.\
be defined analogously to the quantities in 
\hyperlink{p:ctypes.varupdate.ERROR.NOTATIONS}{Part~2}
with $\nu_{e\bullet,t}$
in place of $\nu_{e,t}$. 
In this part of the proof we shall abbreviate
(recalling \eqref{e:cdel.max.notation} and \eqref{e:he.max.notation})
	\beq\label{e:max.time.t.notation}
	\begin{pmatrix}
	\cdel_{e\bullet}(\tau)\\
	\alpha_{e\bullet}(\sigma)
	\end{pmatrix}
	\equiv\begin{pmatrix}
	\max_{\bL,j}\set{|(
		\cdel_{e\bullet,t}
		)^j(\tau\,|\,\bL)|}\\
	\max_{\bL}\set{|
		\alpha_{e\bullet,t}
		(\sigma\,|\,\bL)|}
	\end{pmatrix}\,,
	\eeq
Comparing with $\nu_{e,t+1/3}$ gives
	\beq\label{e:Upsilon.t.in.terms.of.u}
	\gamma_{e,t}(\sigma)
	\equiv
	\f{\nu_{e,t+1/3}(\sigma,\bL)}
	{\nu_{e\bullet,t}(\sigma,\bL)} -1
	= 
	\f{ \bbz_{e\bullet,t}}{\bbz_{e,t+1/3}}
	\cdot
	\f{ u_{e,t+1/3}(\sigma)}
	{ u_{e,t-2/3}(\sigma)} - 1\,,
	\eeq
where we emphasize that this error does not depend on $\bL$. As a result, marginally on $\sigma$ we also have
	\beq\label{e:marginal.relation.for.gamma}
	\gamma_{e,t}(\sigma)
	=
	\f{\nu_{e,t+1/3}(\sigma)}
	{\nu_{e\bullet,t}(\sigma)} -1\,.\eeq
Substituting \eqref{e:vec.delta.bound.with.L} into the expression \eqref{e:Upsilon.t.in.terms.of.u} for $\gamma_{e,t}$ gives
	\beq\label{e:bound.Upsilon.in.terms.of.eps}
	\Big|\gamma_{e,t}(\sigma)\Big|
	\le 
	k^{O(1)}\Bigg\{
	\err_v(t)
	+ \Ind{\red[\sigma]\ge1}\Bigg(
	\dot{\ep}_e(t)
	+\f{\min\set{\ddot{\ep}_e(t),1}}
		{2^{k\zeta}}
	\Bigg)
	\Bigg\}\,.
	\eeq
We then compare the marginal laws of $(\sigma^1,\bL)$ 
under $\nu_{e\bullet,t}$ and $\nu_{e,t+1/3}$:
by the definition of $\gamma_{e,t}$, we have
	\begin{align}\nonumber
	&\nu_{e,t+1/3}(\sigma^1,\bL)
	-\nu_{e\bullet,t}(\sigma^1,\bL)
	=
	\sum_{\sigma^2}
	\nu_{e\bullet,t}(\sigma,\bL)
	\gamma_{e,t}(\sigma)\\
	&=
	\nu_{e\bullet,t}(\bL)
	\starpi_e(\sigma^1)
	\Bigg\{
	\underbrace{
	\Bigg[
	\sum_{\sigma^2}
	\f{\nu_{e\bullet,t}(\sigma)}
		{\starpi_e(\sigma^1)}
	\gamma_{e,t}(\sigma)
	\Bigg]
	}_{\textup{denote this }
	(X_{e,t})^1(\sigma^1)}
	+
	\underbrace{\Bigg[
	\sum_{\sigma^2}
	\f{\nu_{e\bullet,t}(\sigma\,|\,\bL)
	-\nu_{e\bullet,t}(\sigma)}{\starpi_e(\sigma^1)}
	\gamma_{e,t}(\sigma)
	\Bigg]}_{\textup{denote this }
		(Y_{e,t})^1
		(\sigma^1\,|\,\bL )}
	\Bigg\}\,.
	\label{e:zero.to.one.third.errors}
	\end{align}
We note that the $X$ term
in \eqref{e:zero.to.one.third.errors} can be simplified
using \eqref{e:marginal.relation.for.gamma}
and the fact that $\nu_{e,t+1/3}$ is judicious on average:
	\[
	(X_{e,t})^1(\sigma^1)
	=\sum_{\sigma^2}	
	\f{
	\nu_{e,t+1/3}(\sigma)
	-\nu_{e\bullet,t}(\sigma)
	}{\starpi_e(\sigma^1)}
	= 1 -
	\f{(\nu_{e\bullet,t})^1(\sigma^1)
	}{\starpi_e(\sigma^1)}
	=- (\adel_{e\bullet,t})^1(\sigma^1)\,.
	\]
It follows from the definition of $\adel$ that for all $\tau\in\set{\RYGB}$ we have
	\[
	\Big|
	(X_{e,t})^1(\tau)\Big|
	= \Big|
	(\adel_{e\bullet,t})^1(\tau)
	\Big|
	\le
	\max_{\bL}
	\Bigg\{\Big|
	(\cdel_{e\bullet,t})^1(\tau\,|\,\bL)
	\Big|\Bigg\}
	=\cdel_{e\bullet}(\tau)\,.
	\]
Summing \eqref{e:zero.to.one.third.errors} over $\sigma^1=\tau\in\set{\RYGB}$
and recalling \eqref{e:cdel.avg.to.zero} gives
	\beq\label{e:zero.to.one.third.errors.mgl.on.L}
	\nu_{e,t+1/3}(\bL)
	-\nu_{e\bullet,t}(\bL)
	= \nu_{e\bullet,t}(\bL)
	\Bigg\{\sum_{\tau}
	\starpi_e(\tau)
	(Y_{e,t})^1(\tau\,|\,\bL )
	\Bigg\}\,.
	\eeq
Now, the $Y$ term in \eqref{e:zero.to.one.third.errors} can be bounded as 
	\[\Big|(Y_{e,t})^1
		(\sigma^1\,|\,\bL)\Big|
	\le \sum_{\sigma^2}
	\f{\nu_{e\bullet,t}(\sigma)}{\starpi_e(\sigma^1)}
	\Bigg|
	\f{\nu_{e\bullet,t}(\sigma\,|\,\bL)}
	{\nu_{e\bullet,t}(\sigma)}-1
	\Bigg|
	\gamma_{e,t}(\sigma)\,.\]
It follows from \eqref{e:conditional.vs.avg.error} and \eqref{e:bound.Upsilon.in.terms.of.eps} that 
if $\sigma^1\ne\red$,
	\[
	\Big|(Y_{e,t})^1
		(\sigma^1\,|\,\bL)\Big|
	\le
	\f{k^{O(1)}}{2^{k(1+\zeta)}}
	\bigg(\gamma_{e,t}+\dot{\gamma}_{e,t}
		\bigg)
	\le
	\f{k^{O(1)}}{2^{k(1+\zeta)}}
	\Bigg\{
	\err_v(t) + \dot{\ep}_e(t)
		+ \f{\min\set{\dot{\ep}_e(t),1}}
			{2^{k\zeta}}
	\Bigg\}\,.
	\]
If $\sigma^1=\red$ then we recall from
\eqref{e:table.of.all.measures}
 that $\nu_{e\bullet,t}\cong
 \hq_{e,t} \psi_{e,t} u_{e\bullet,t}$,
 from which it follows that
 $\nu_{e\bullet,t}(\red\red\,|\,\bL) \le 
 k^{O(1)}/2^{-(1+\zeta)}$. 
It then follows that
	\begin{align*}
	\Big|(Y_{e,t})^1
		(\sigma^1\,|\,\bL)\Big|
	&\le
	\f{k^{O(1)}}{2^{k\zeta}}
	\dot{\gamma}_{e,t}
	+O\Bigg(
	\f{\max_{\bL}\set{
	\nu_{e\bullet,t}(\red\red\,|\,\bL)}}
	{\starpi_e(\red)}
	\ddot{\gamma}_{e,t}
	\Bigg)
	\le 
	\f{k^{O(1)}}{2^{k\zeta}}
	\bigg(
	\dot{\gamma}_{e,t}
	+\ddot{\gamma}_{e,t}\bigg)
	\\
	&\le
	\f{k^{O(1)}}{2^{k\zeta}}
	\Bigg\{
	\err_v(t) + \dot{\ep}_e(t)
		+ \f{\min\set{\dot{\ep}_e(t),1}}
			{2^{k\zeta}}
	\Bigg\}
	\end{align*}
Substituting these bounds for $Y$ back into
\eqref{e:zero.to.one.third.errors.mgl.on.L} gives, for $t\ge1$, 
	\beq\label{e:fdel.bounds.from.update.A}
	\fdel_{e,t+1/3}
	= \max_{\bL}\Bigg\{ \Big|
	\fdel_{e,t+1/3}(\bL)\Big|\Bigg\}
	\le 
	\fdel_{e\bullet,t}
	+\f{k^{O(1)}}{2^{k(1+\zeta)}}
	\Bigg\{
	\err_v(t) + \dot{\ep}_e(t)
		+ \f{\min\set{\dot{\ep}_e(t),1}}
			{2^{k\zeta}}
	\Bigg\}\,.
	\eeq
Substituting the bounds for $X$ and $Y$ back into \eqref{e:zero.to.one.third.errors},
and combining with \eqref{e:zero.to.one.third.errors.mgl.on.L}, gives
	\begin{align}
	\nonumber
	\cdel_{e,t+1/3}
	&\le 
	\fdel_{e\bullet,t}
	+\cdel_{e\bullet,t}
	+\f{k^{O(1)}}{2^{k(1+\zeta)}}
	\Bigg\{
	 \err_v(t)+\dot{\ep}_e(t)
	+\f{\min\set{\ddot{\ep}_e(t),1}}
		{2^{k\zeta}}
	\Bigg\}\,,\\
	\dot{\cdel}_{e,t+1/3}
	&\le
	\fdel_{e\bullet,t}
	+\cdelred_{e\bullet,t}+
	\f{k^{O(1)}}{2^{k\zeta}}
	\Bigg\{
	 \err_v(t)
	+\dot{\ep}_e(t)
	+\f{\min\set{\ddot{\ep}_e(t),1}}
		{2^{k\zeta}}
	\Bigg\}\,.
	\label{e:cdel.bounds.from.update.A}
	\end{align}
for all $t\ge0$.
We next turn to the question of bounding $\vec{\ep}(t)$ (as defined by \eqref{e:update.A.def.epsilon}) for $t\ge1$.\smallskip

\noindent\bemph{\hypertarget{p:ctypes.varupdate.UPDATE.B.MSG}{Part~4a}. Errors in averaged messages incurred by update B.} We now estimate the error between the averaged messages 
$\avhq_{e,t+1/3}$ and $\avhq_{e,t+2/3}$ for $t\ge0$. Between times $t+1/3$ and $t+2/3$ we only make the update
\eqref{e:cond.judicious.update}. In this part of the proof we will fix $e\in\delta v$ and abbreviate
(recalling
\eqref{e:cdel.max.notation} and \eqref{e:he.max.notation})
	\[
	\begin{pmatrix}
	\cdel_e(\tau) \\ \alpha_e(\sigma)
	\end{pmatrix}
	\equiv
	\begin{pmatrix}
	\cdel_{e,t+1/3}(\tau) \\ 
	\alpha_{e,t+1/3}(\sigma)
	\end{pmatrix}
	=\begin{pmatrix}
	\cdel_{e,t+1/3}(\tau) \\ 
	\alpha_{e,t}(\sigma)
	\end{pmatrix}
	\,.
	\]
By the definition
\eqref{e:defn.cdel.kappa} of $\cdel$, it holds for all $\tau\in\set{\RYGB}$ that
	\beq\label{e:orth}
	0=
	\sum_{\bL}\nu_{e,t+1/3}(\bL)
	\Bigg\{
	(\nu_{e,t+1/3})^j(\tau\,|\,\bL)
	-\starpi_e(\tau)\Bigg\}
	=\sum_{\bL}\nu_{e,t+1/3}(\bL)
	\starpi_e(\tau)
	(\cdel_{e,t+1/3})^j(\tau\,|\,\bL)\,,
	\eeq
where the first equality holds because $\nu_{e,t+1/3}$ is judicious on average. We now turn to comparing the messages. At time $t+1/3$ we have
$\avhq_{e,t+1/3}=\avhq_{e,t}$, defined by
 \eqref{e:defn.avhq.t}. More explicitly, it follows using
\eqref{e:first.defn.of.u} that
	\beq\label{e:relation.of.normalizing.constants}
	\avhz_{e,t}\avhq_{e,t}(\sigma)
	=\sum_{\bL}
	\psi_{e,t}(\sigma,\bL)
	\hq_e(\sigma,\bL)
	= \f{\bbz_{e,t+1/3}
		\nu_{e,t+1/3}(\sigma)
		}{u_{e,t+1/3}(\sigma)}
	\,,\eeq
On the other hand, at time $t+2/3$ we have (by Taylor expansion)
	\begin{align}\nonumber
	&\avhz_{e,t+2/3}\avhq_{e,t+2/3}(\sigma)
	=\sum_{\bL}
	\psi_{e,t}(\sigma,\bL)
	\hq_e(\sigma,\bL)
	\prod_{j=1,2}
	\f1{1 +( \cdel_{e,t+1/3})^j(\sigma^j\,|\,\bL)
	}\\
	&\qquad
	=\avhz_{e,t}
	\Bigg\{
	\avhq_{e,t}(\sigma)
	\bigg\{
	1 + O \bigg(
		\sum_{j=1,2} \cdel_e(\sigma^j)^2
			\bigg)
	\bigg\}
	-\sum_{j=1,2}
	\underbrace{\Bigg[
	\sum_{\bL}
	\f{\psi_{e,t}(\sigma,\bL)
	\hq_e(\sigma,\bL)}{\avhz_{e,t}}
	( \cdel_{e,t+1/3})^j(\sigma^j\,|\,\bL)
	\Bigg]}_{\textup{denote this }K^j(\sigma)}
	\Bigg\}
	\label{e:K.j.defn}
	\end{align} 
From the definition of $K^j(\sigma)$ we can easily deduce the bound 
	\beq\label{e:first.bound.K.j}
	\Big|K^j(\sigma)\Big|
	\le 
	O\Bigg(
	\f{\cdel_e(\sigma^j)}{\avhz_{e,t}\hat{Z}_e}
	\avhq_{e,t}(\sigma)
	\sum_{\bL} \psistar_e(\bL)
	\Bigg)
	= O\bigg(
	\avhq_{e,t}(\sigma)
	\cdot \cdel_e(\sigma^j) \bigg)\,,\eeq
where the last step uses that
$\avhz_{e,t}\hat{Z}_e=\Theta(1)$ by a similar calculation as in \eqref{e:prod.of.zs.is.close.to.one}.
On the other hand, we also have
	\begin{align}\nonumber
	\Big|K^j(\sigma)\Big|
	&=\Bigg|
	\f{\prodhq_e(\sigma)}{\hat{Z}_e}
	\sum_{\bL}\psistar_e(\bL)
	\Bigg(1 + 
	\alpha_{e,t}(\sigma,\bL)
		\Bigg)
	( \cdel_{e,t+1/3})^j(\sigma^j\,|\,\bL)
	\Bigg|\\
	&\stackrel{\eqref{e:orth}}{\le}
	O\bigg( \prodhq_e(\sigma) \cdot \cdel_e(\sigma^j) \cdot \alpha_e(\sigma) \bigg)\,.
	\label{e:second.bound.K.j}
	\end{align}
Combining \eqref{e:first.bound.K.j} with \eqref{e:second.bound.K.j} gives 
	\[
	K^j(\sigma)
	= O\Bigg(
	\avhq_{e,t}(\sigma)\cdot
	\cdel_e(\sigma^j)\cdot
	\min\bigg\{
	\f{\alpha_e(\sigma) \prodhq_e(\sigma)}
		{\avhq_{e,t}(\sigma)},
	1\bigg\}
	\Bigg)\,.
	\]
Substituting this bound into \eqref{e:K.j.defn} gives 
	\begin{align*}
	\avhz_{e,t+2/3}\avhq_{e,t+2/3}(\sigma)
	&= \avhz_{e,t}\avhq_{e,t}(\sigma)
	\Bigg\{
	1+\sum_{j=1,2}\cdel_e(\sigma^j)
	\cdot \min\bigg\{
		\f{\alpha_e(\sigma) \prodhq_e(\sigma)}
		{\avhq_{e,t}(\sigma)},1\bigg\}
	\Bigg\}\,.
	\end{align*}
It then follows using the hypothesis 
\eqref{e:rough.inductive.bounds.forall.t} 
 that
	\[\Bigg|
	\f{\avhz_{e,t+2/3}\avhq_{e,t+2/3}(\sigma)}
	{\avhz_{e,t}\avhq_{e,t}(\sigma)}-1\Bigg|
	\le
	k^{O(1)}
	\begin{cases} \DS
	\f{\cdel}{2^{k(1+\zeta)}}
	&\textup{if }\red[\sigma]=0\,,\medskip\\
	\DS
	\f{\cdel+\cdelred}
		{2^{k\zeta}}
	&\textup{if }\red[\sigma]=1\,,\medskip\\
	\DS \cdelred
	&\textup{if }\red[\sigma]=2\,.
	\end{cases}
	\]
Altogether, the error in averaged messages
between times $t+1/3$
and $t+2/3$ satisfies
	\beq\label{e:ctypes.one.two.crelerr}
	\crelerr
	(\avhq_{e,t+1/3},
	\avhq_{e,t+2/3})
	=\crelerr
	(\avhq_{e,t},
	\avhq_{e,t+2/3})
	\le 
	\f{k^{O(1)}}{2^{k\zeta}}
	\begin{pmatrix}
	2^{-k} & 2^{-k}\\
	1 & 1 \\
	2^{-k}&2^{k\zeta}
	\end{pmatrix}
	\begin{pmatrix}
	\cdel_{e,t+1/3}\\
	\cdelred_{e,t+1/3}
	\end{pmatrix}\,.
	\eeq
Substituting \eqref{e:ctypes.one.two.crelerr}
into the definition of $u$ from \eqref{e:first.defn.of.u} gives
	\beq\label{e:ctypes.one.two.vrelerr}
	\Bigg|
	\f{u_{e,t+2/3}(\sigma)}
		{u_{e,t+1/3}(\sigma)}
	-1\Bigg|
	\le
	\f{k^{O(1)}}{2^{k(1+\zeta)}}
	\Bigg\{
	\underbrace{\sum_{e\in\delta v}
	\cdel_{e,t+1/3}
	}_{\textup{denote this }\bm{\cdel}_{t+1/3}}
	+\underbrace{
	\sum_{e\in\delta v}
	\cdelred_{e,t+1/3}
	}_{\textup{denote this }\bm{\cdelred}_{t+1/3}}
	\Bigg\}
	\equiv
	\f{k^{O(1)}
		(\bm{\cdel}_{t+1/3}+\bm{\cdelred}_{t+1/3})
	}{2^{k(1+\zeta)}}
	\eeq
for all $\sigma\in\set{\RYGB}^2$.\smallskip

\noindent\bemph{\hypertarget{p:ctypes.varupdate.UPDATE.C.MSG}{Part~4b}. Errors in averaged messages incurred by update C.} By a similar (but somewhat simpler) calculation, we can also estimate the error between the averaged messages $\avhq_{e,t+2/3}$ and $\avhq_{e,t+1}$ for $t\ge0$. Between times $t+2/3$ and $t+1$ we only make the update \eqref{e:clause.frac.update}. 
In this part of the proof we will fix $e\in\delta v$ and abbreviate
	\[
	\begin{pmatrix}
	\cdel_e(\tau) \\ \alpha_e(\sigma) \\ \fdel_e
	\end{pmatrix}
	\equiv
	\begin{pmatrix}
	\cdel_{e,t+2/3}(\tau) \\ 
	\alpha_{e,t+2/3}(\sigma) \\ 
	\fdel_{e,t+2/3}
	\end{pmatrix}\,.
	\]
By the definition \eqref{e:fdel.defn} of $\fdel$, we have
	\beq\label{e:fdel.orth}	0=
	\sum_{\bL}\Bigg\{
	\pi_{\DD}(\bL\,|\,\bt_e)
	-\nu_{e,t+2/3}(\bL)
	\Bigg\}
	=\sum_{\bL}
	\pi_{\DD}(\bL\,|\,\bt_e)
	\Bigg\{
	\fdel_{e,t+2/3}(\bL)
	+O\Big((\fdel_e)^2\Big)\Bigg\}\,.\eeq
We now turn to the comparison of the messages:
by the update rule \eqref{e:clause.frac.update},
	\begin{align*}
	\avhz_{e,t+1}\avhq_{e,t+1}(\sigma)
	&= \sum_{\bL}
	\psi_{e,t+2/3}(\sigma,\bL)
	\hq_e(\sigma,\bL)
	\bigg\{1 + \fdel_{e,t+2/3}(\bL) \bigg\} \\
	&=\avhz_{e,t+2/3}
	\Bigg\{\avhq_{e,t+2/3}(\sigma)
	+\underbrace{
	\Bigg[\sum_{\bL}
	\f{\psi_{e,t+2/3}(\sigma,\bL)
	\hq_e(\sigma,\bL)}{\avhz_{e,t+2/3}}
	\fdel_{e,t+2/3}(\bL) 
	\Bigg]}_{\textup{denote this }R(\sigma)}\Bigg\}\,.
	\end{align*}
From the definition of $R(\sigma)$ we can easily deduce the bound
	\[
	\Big|R(\sigma)\Big|
	\le
	O\bigg( \avhq_{e,t+2/3}(\sigma)
		\cdot \fdel_e\bigg)\,.
	\]
On the other hand we also have
	\begin{align*}
	R(\sigma)
	&=
	\f{\prodhq_e(\sigma)
	}{\avhz_{e,t+2/3} \hat{Z}_e}
	\sum_{\bL}
	\psistar_e(\bL)
	\bigg(1 + \alpha_{e,t+1/3}(\sigma,\bL)\bigg)
	\fdel_{e,t+2/3}(\bL) \\
	&=
	O\Bigg(
	\prodhq_e(\sigma)\cdot
	\fdel_e 
	\cdot\Big(\fdel_e+\alpha_e(\sigma)\Big)\Bigg)\,,
	\end{align*}
where the last step uses \eqref{e:fdel.orth}.
Substituting back into the preceding calculation gives
	\[
	\avhz_{e,t+1}\avhq_{e,t+1}(\sigma)
	=\avhz_{e,t+2/3}\avhq_{e,t+2/3}(\sigma)
	\Bigg\{
	1 + O\bigg(
	\fdel_e \cdot
	\min\bigg\{
	\f{(\fdel_e+\alpha_e(\sigma))\prodhq_e(\sigma)}
		{\avhq_{e,t+2/3}(\sigma)} ,1
	\bigg\}\bigg)
	\Bigg\}\,.\]
It follows using the inductive bounds \eqref{e:rough.inductive.bounds.forall.t} that
	\beq\label{e:error.in.avhq.from.update.C}
	\crelerr(\avhq_{e,t+2/3},\avhq_{e,t+1})
	\le 
	\f{k^{O(1)}}{2^{k\zeta}}
	\fdel_{e,t+2/3}
	\begin{pmatrix} 2^{-k} \\ 1 \\ 2^{k\zeta}
	\end{pmatrix}\,.
	\eeq
Substituting this again into the definition \eqref{e:first.defn.of.u} of $u$ gives
	\beq\label{e:ctypes.two.three.vrelerr}
	\Bigg|
	\f{u_{e,t+1}(\sigma)}
	{u_{e,t+2/3}(\sigma)}-1
	\Bigg|
	\le
	\f{k^{O(1)}}{2^{k(1+\zeta)}}
	\sum_{e\in\delta v}
	\fdel_{e,t+2/3}
	\equiv
	\f{k^{O(1)} }{2^{k(1+\zeta)}}
		\bm{\fdel}_{t+2/3}
	\eeq
for all $\sigma\in\set{\RYGB}^2$.\smallskip
	
\noindent\bemph{\hypertarget{p:ctypes.varupdate.UPDATE.B.MGL}{Part 5a}. Errors in marginals incurred by update B.} We now estimate the change in the edge marginals between times $t+1/3$ and $t+2/3$, resulting from update \eqref{e:cond.judicious.update}. To this end we introduce an intermediate measure $\nu_{e,t+1/2}$, in which we use the new weight $\psi_{e,t+1}(\sigma\,|\,\bL)$ but the old message $u_{e,t+1/3}$. Thus, recalling \eqref{e:first.defn.of.u}, we have
	\begin{align*}
	\bbz_{e,t+1/3}\nu_{e,t+1/3}(\sigma,\bL)
	&= 
	\hq_e(\sigma,\bL)\psi_{e,t}(\bL)
	\psi_{e,t}(\sigma\,|\,\bL)
	u_{e,t+1/3}(\sigma)\,,\\
	\bbz_{e,t+1/2}\nu_{e,t+1/2}(\sigma,\bL)
	&= 
	\hq_e(\sigma,\bL)\psi_{e,t}(\bL)
	\psi_{e,t+1}(\sigma\,|\,\bL)
	u_{e,t+1/3}(\sigma)\,,\\
	\bbz_{e,t+2/3}\nu_{e,t+2/3}(\sigma,\bL)
	&=\hq_e(\sigma,\bL) \psi_{e,t}(\bL)
	\psi_{e,t+1}(\sigma\,|\,\bL)
	u_{e,t+2/3}(\sigma)\,.
	\end{align*}
We first estimate the error between times $t+1/3$ and $t+2/3$. For this purpose, note that \eqref{e:defn.cdel.kappa} implies
(similarly to the relation \eqref{e:cdel.avg.to.zero} that was used earlier) that for all $\bL$,
	\begin{align}\nonumber
	0&=\sum_\tau
	\Bigg\{ (\nu_{e,t+1/3})^j(\tau\,|\,\bL)
	-\starpi_e(\tau)\Bigg\}
	=\sum_\tau\Bigg\{
	(\nu_{e,t+1/3})^j(\tau\,|\,\bL)
	-\f{ (\nu_{e,t+1/3})^j(\tau\,|\,\bL)}
		{1 + (\cdel_{e,t+1/3})^j(\tau\,|\,\bL)}
	\Bigg\} \\
	&=\sum_\tau
	(\nu_{e,t+1/3})^j(\tau\,|\,\bL)
	(\cdel_{e,t+1/3})^j(\tau\,|\,\bL)
	+ O\Bigg(\sum_\tau
		\starpi_e(\tau) \cdel_{e,t+1/3}(\tau)^2\Bigg)\,.
		\label{e:orth.sigma}
	\end{align}
It follows using \eqref{e:orth.sigma} that the marginal error on $\bL$ between times $t+1/3$ and $t+1/2$ is given by
	\begin{align}\nonumber
	\bbz_{e,t+1/2}\nu_{e,t+1/2}(\bL)
	&=\sum_\sigma
	\bbz_{e,t+1/3}\nu_{e,t+1/3}(\sigma,\bL)
	\prod_{j=1,2}
	\f1{1+(\cdel_{e,t+1/3})^j(\sigma^j\,|\,\bL)} \\
	&=\bbz_{e,t+1/3}\nu_{e,t+1/3}(\bL)
	\Bigg\{1 +
	O\Bigg(\sum_\tau
		\starpi_e(\tau) \cdel_{e,t+1/3}
		(\tau)^2\Bigg)\Bigg\}\,.
	\label{e:one.third.to.one.half.rho.comparison}
	\end{align}
We note the error term can be simplified
using \eqref{e:rough.inductive.bounds.forall.t} as
	\[
	\sum_\tau
		\starpi_e(\tau) \cdel_{e,t+1/3}
		(\tau)^2
	\le k^{O(1)}\Bigg(
	(\cdel_{e,t+1/3})^2
	+\f{(\cdelred_{e,t+1/3})^2}{2^k}
	\Bigg)
	\le
	\f{k^{O(1)}
		(\cdel_{e,t+1/3}+\cdelred_{e,t+1/3})}
		{ 2^{k(1+\zeta)}}\,.
	\]
The error between $u_{e,t+1/3}$ and $u_{e,t+2/3}$
is bounded by \eqref{e:ctypes.one.two.vrelerr}, and this gives the marginal error on $\bL$ between times $t+1/2$ and $t+2/3$. Combining these bounds gives altogether
	\beq\label{e:one.to.two.thirds.clause}
	\fdel_{e,t+2/3}
	-\fdel_{e,t+1/3}
	\le
	\Bigg|\f{\nu_{e,t+2/3}(\bL)}
	{\nu_{e,t+1/3}(\bL)}-1\Bigg|
	\le \f{k^{O(1)}
		(\bm{\cdel}_{t+1/3}+\bm{\cdelred}_{t+1/3})
	}{2^{k(1+\zeta)}}\,.
	\eeq
Next we argue that the update \eqref{e:cond.judicious.update} in fact brings the measures closer to being conditionally judicious. Indeed, focusing on the first copy $j=1$,
we can express
	\[
	\f{\bbz_{e,t+1/2}
	\nu_{e,t+1/2}(\sigma,\bL)}
	{\bbz_{e,t+1/3}\nu_{e,t+1/3}(\bL)}
	=
	\f{
	\starpi_e(\sigma^1)}
	{ (\nu_{e,t+1/3})^1(\sigma^1\,|\,\bL) }
	\f{\nu_{e,t+1/3}(\sigma\,|\,\bL)}
	{1+(\cdel_{e,t+1/3})^2(\sigma^2\,|\,\bL)}\,,
	\]
and summing over $\sigma^2$ gives
	\begin{align*}
	&\f{\bbz_{e,t+1/2}
	\nu_{e,t+1/2}(\sigma^1,\bL)}
	{\bbz_{e,t+1/3}\nu_{e,t+1/3}(\bL)}
	=\f{\starpi_e(\sigma^1)}
	{ (\nu_{e,t+1/3})^1(\sigma^1\,|\,\bL) }
	\sum_{\sigma^2}
	\nu_{e,t+1/3}(\sigma\,|\,\bL)
	\Bigg\{
	1 + O(\cdel_{e,t+1/3}(\sigma^2)^2) - 
	(\cdel_{e,t+1/3})^2(\sigma^2\,|\,\bL)
	\Bigg\}\\
	&\qquad=
	\starpi_e(\sigma^1)
	\Bigg\{
	1+ 
	O\Bigg(\sum_\tau
		\starpi_e(\tau) \cdel_{e,t+1/3}
		(\tau)^2\Bigg)
	-\sum_{\sigma^2}
	\f{
	\nu_{e,t+1/3}(\sigma\,|\,\bL)}
	{ (\nu_{e,t+1/3})^1(\sigma^1\,|\,\bL) }
	(\cdel_{e,t+1/3})^2(\sigma^2\,|\,\bL)
	\Bigg\}\\
	&\qquad=
	\starpi_e(\sigma^1)
	\Bigg\{
	1+ 
	O\Bigg(\sum_\tau
		\starpi_e(\tau) \cdel_{e,t+1/3}
		(\tau)^2\Bigg)
	-\underbrace{\sum_{\sigma^2}
	\f{
	\nu_{e,t+1/3}(\sigma\,|\,\bL)}
	{\starpi_e(\sigma^1)}
	(\cdel_{e,t+1/3})^2(\sigma^2\,|\,\bL)
	}_{\textup{denote this }S^1(\sigma^1\,|\,\bL)}
	\Bigg\}\,.
	\end{align*}
We then use 
\eqref{e:rough.inductive.bounds.forall.t}
and \eqref{e:orth} to estimate (with $\cdel\equiv \cdel_{e,t+1/3}$
and $\xi\equiv \xi_{e,t+1/3}$)
	\begin{align*}
	S(\sigma^1\,|\,\bL)
	&=
	\sum_{\sigma^2}
	\bigg\{
	1+\xi_{e,t+1/3}(\sigma\,|\,\bL)\bigg\}
	\starpi_e(\sigma^2)
	(\cdel_{e,t+1/3})^2(\sigma^2\,|\,\bL)\\
	&\le
	\begin{cases}\DS
	k^{O(1)}\bigg(
	\cdel\xi + \f{\cdelred\dot{\xi}}{ 2^k}\bigg)
	\le \f{k^{O(1)}}{2^{k\zeta}}
	\bigg(
	\cdel
	+	\f{\cdelred}{2^k}
		\bigg)
	& \textup{if }\sigma^1\ne\red\,,\medskip\\
	\DS
	k^{O(1)}\bigg(
	\cdel\dot{\xi} + \f{\cdelred \ddot{\xi}}{2^k}
		\bigg)
	\le \f{k^{O(1)}}{2^{k\zeta}}
		(\cdel + \cdelred)
	& \textup{if }\sigma^1=\red\,.
	\end{cases}
	\end{align*}
By substituting this into the preceding calculation
 and combining with
\eqref{e:one.third.to.one.half.rho.comparison}, 
we conclude that
	\[
	\begin{pmatrix}
	\cdel_{e,t+1/2}\\
	\cdelred_{e,t+1/2}
	\end{pmatrix}
	\le 
	\f{k^{O(1)}}{2^{k\zeta}}
	\begin{pmatrix}
	1 & 2^{-k} \\
	1 & 1
	\end{pmatrix}
	\begin{pmatrix}
	\cdel_{e,t+1/3} \\
	\cdelred_{e,t+1/3}
	\end{pmatrix}\,.
	\]
We combine this with the error incurred by going from time $t+1/2$ to $t+2/3$ --- this results from the change in the message $u$, which again is bounded by \eqref{e:ctypes.one.two.vrelerr}. Altogether we conclude
	\beq\label{e:ctypes.one.two.cond}
	\begin{pmatrix}
	\cdel_{e,t+2/3}\\
	\cdelred_{e,t+2/3}
	\end{pmatrix}
	\le
	\f{k^{O(1)}}{2^{k\zeta}}
	\begin{pmatrix}
	1& 2^{-k} \\
	1 &1
	\end{pmatrix}
	\begin{pmatrix}
	\cdel_{e,t+1/3} \\
	\cdelred_{e,t+1/3}
	\end{pmatrix}
	+	\f{k^{O(1)}
		(\bm{\cdel}_{t+1/3}+\bm{\cdelred}_{t+1/3})
	}{2^{k(1+\zeta)}}
	\begin{pmatrix}1\\1\end{pmatrix}
	\eeq
for all $e\in\delta v$.\smallskip

\noindent\bemph{\hypertarget{p:ctypes.varupdate.UPDATE.C.MGL}{Part~5b}. Errors in marginals incurred by update C.} We now estimate the change in edge marginals between times $t+2/3$ and $t+1$, resulting from update \eqref{e:clause.frac.update}. To this end, we can express
	\begin{align*}
	\bbz_{e,t+1}\nu_{e,t+1}(\sigma,\bL)
	&= \bbz_{e,t+2/3}\nu_{e,t+2/3}(\sigma,\bL)
	\f{\pi_{\DD}(\bL\,|\,\bt_e)}{\nu_{e,t+2/3}(\bL)}
	\f{u_{e,t+1}(\sigma)}{u_{e,t+2/3}(\sigma)}\\
	&=\bbz_{e,t+2/3}\nu_{e,t+2/3}(\sigma,\bL)
	\f{\pi_{\DD}(\bL\,|\,\bt_e)}{\nu_{e,t+2/3}(\bL)}
	\Bigg\{ 1 + O\bigg( 
	\f{k^{O(1)} \bm{\fdel}_{t+2/3}}{2^{k(1+\zeta)}}
		\bigg)\Bigg\}\,,
	\end{align*}
where the last estimate comes from \eqref{e:ctypes.two.three.vrelerr}. Summing over $\sigma$ gives
	\beq\label{e:ctypes.two.three.clause}
	\nu_{e,t+1}(\bL)
	= \f{\bbz_{e,t+2/3}
		\cdot \pi_{\DD}(\bL\,|\,\bt_e)}
		{\bbz_{e,t+1}}
	\Bigg\{ 1 + O\bigg( 
	\f{k^{O(1)} \bm{\fdel}_{t+2/3}}{2^{k(1+\zeta)}}
		\bigg)\Bigg\}
	=\pi_{\DD}(\bL\,|\,\bt_e)
	\Bigg\{ 1 + O\bigg( 
	\f{k^{O(1)} \bm{\fdel}_{t+2/3}}{2^{k(1+\zeta)}}
		\bigg)\Bigg\}\,,
	\eeq
where the last equality is because both 
$\pi_{\DD}(\bL\,|\,\bt_e)$
and $\nu_{e,t+1}(\bL)$ are probability measures over $\bL$. On the other hand, the conditional measures given $\bL$ change very little as a result of update \eqref{e:clause.frac.update}: it follows from the above that
	\beq\label{e:ctypes.two.three.cond}
	\nu_{e,t+1}(\sigma\,|\,\bL)
	=\nu_{e,t+2/3}(\sigma\,|\,\bL)
	\Bigg\{ 1 + O\bigg( 
	\f{k^{O(1)} \bm{\fdel}_{t+2/3}}{2^{k(1+\zeta)}}
		\bigg)\Bigg\}
	\eeq
for all $\sigma\in\set{\RYGB}^2$. 
Combining \eqref{e:ctypes.two.three.clause} and \eqref{e:ctypes.two.three.cond} gives
	\beq\label{e:update.C.single.edge}
	\begin{pmatrix}
	\fdel_{e,t+1}\\
	\cdel_{e,t+1}\\
	\cdelred_{e,t+1}\end{pmatrix}
	\le
	\begin{pmatrix}
	0\\
	\cdel_{e,t+2/3}\\
	\cdelred_{e,t+2/3}\end{pmatrix}
	+ \f{k^{O(1)}}{2^{k(1+\zeta)}} \bm{\fdel}_{t+2/3}
		\begin{pmatrix}1\\1\\1\end{pmatrix}\,.
	\eeq
Finally,
recalling
\eqref{e:intermediate.bullet.measure}, we compare the measures
	\begin{align*}
	\bbz_{e,t+1}\nu_{e,t+1}(\sigma,\bL)
	&= \psi_{e,t+1}(\sigma,\bL)
	\hq_e(\sigma,\bL)
	u_{e,t+1}(\sigma)\,,\\
	\bbz_{e\bullet,t+1}
		\nu_{e\bullet,t+1}(\sigma,\bL)
	&= \psi_{e,t+1}(\sigma,\bL)
	\hq_e(\sigma,\bL)
	u_{e,t+1/3}(\sigma)\,.
	\end{align*}
It follows by combining
\eqref{e:ctypes.one.two.vrelerr} and \eqref{e:ctypes.two.three.vrelerr} that
	\beq\label{e:update.C.single.edge.back.to.bullet}
	\begin{pmatrix}
	\fdel_{e\bullet,t+1}\\
	\cdel_{e\bullet,t+1}\\
	\cdelred_{e\bullet,t+1}
	\end{pmatrix}
	\le\begin{pmatrix}
	\fdel_{e,t+1}\\
	\cdel_{e,t+1}\\
	\cdelred_{e,t+1}\end{pmatrix}
	+\f{k^{O(1)}
	}{2^{k(1+\zeta)}}
	\bigg(\bm{\cdel}_{t+1/3}+\bm{\cdelred}_{t+1/3}
	+\bm{\fdel}_{t+2/3}\bigg)
	\begin{pmatrix}1\\1\\1\end{pmatrix}\,.
	\eeq
This concludes our analysis of update C.\smallskip

\noindent\bemph{\hypertarget{p:ctypes.varupdate.CONCLUSION}{Part~6}. Convergence of iterative procedure.} We now collect the bounds obtained above to prove that the iterative procedure described in \hyperlink{p:ctypes.varupdate.ITERATIVE.CONSTRUCTION}{Part~1} converges. Recall from \eqref{e:update.A.def.epsilon} the definition of $\vec{\ep}(t)$. For $t\ge1$, it follows by combining \eqref{e:ctypes.one.two.crelerr} (from \hyperlink{p:ctypes.varupdate.UPDATE.B.MSG}{Part~4a}) and \eqref{e:error.in.avhq.from.update.C} (from \hyperlink{p:ctypes.varupdate.UPDATE.C.MSG}{Part~4b}) that we have
	\beq\label{e:vec.eps.t.bound}
	\vec{\ep}(t)
	\le 
	\f{k^{O(1)}}{2^{k\zeta}}
	\begin{pmatrix}
	2^{-k} & 2^{-k} & 2^{-k}\\
	1 & 1 & 1\\
	2^{k\zeta}
	&2^{-k}&2^{k\zeta} 
	\end{pmatrix}
	\begin{pmatrix}
	\fdel_{e,t-1/3}\\
	\cdel_{e,t-2/3}\\
	\cdelred_{e,t-2/3}
	\end{pmatrix}\,.
	\eeq
It will be useful to abbreviate $\bm{K}\equiv\bm{\cdel}+\bm{\cdelred}$.
Recall that $\err_v(t)$ is defined analogously to
\eqref{e:defn.of.err.v.notation}, so
	\[\err_v(t)\le
	\sum_{e\in\delta v}
	\f{k^{O(1)}}{2^{k\zeta}}
	\begin{pmatrix}
	1 & 2^{-k} & 2^{-k(1+\zeta)}
	\end{pmatrix}
	\begin{pmatrix}
	2^{-k} & 2^{-k} & 2^{-k}\\
	1 & 1 & 1\\
	2^{k\zeta} & 2^{-k} &2^{k\zeta}
	\end{pmatrix}
	\begin{pmatrix}
	\fdel_{e,t-1/3}\\
	\cdel_{e,t-2/3}\\
	\cdelred_{e,t-2/3}
	\end{pmatrix}
	\le \f{k^{O(1)}
		(\bm{K}_{t-2/3}
		+\bm{\fdel}_{t-1/3}
	)}{2^{k(1+\zeta)}}
	\,.\]
We then substitute these bounds into the analysis of update A, from \hyperlink{p:ctypes.varupdate.UPDATE.A.GENERAL}{Part~3b}: from \eqref{e:fdel.bounds.from.update.A} 
and \eqref{e:cdel.bounds.from.update.A} we obtain
	\beq\label{e:update.A.single.edge}
	\begin{pmatrix}
	\fdel_{e,t+1/3}\\
	\cdel_{e,t+1/3}\\
	\cdelred_{e,t+1/3}
	\end{pmatrix}
	\le
	\begin{pmatrix}
	1&0&0\\
	1&1&0\\
	1&0&1
	\end{pmatrix}
	\begin{pmatrix}
	\fdel_{e\bullet,t}\\
	\cdel_{e\bullet,t}\\
	\cdelred_{e\bullet,t}
	\end{pmatrix}
	+\f{k^{O(1)}}{2^{k(1+\zeta)}}
	\Bigg\{
	\f{\bm{\fdel}_{t-1/3}+\bm{K}_{t-2/3}}
		{2^{k(1+\zeta)}} 
	+ \f{\cdel_{e,t-2/3}
	+\cdelred_{e,t-2/3}
	+\fdel_{e,t-1/3}}{2^{k\zeta}}
	\Bigg\}
	\begin{pmatrix}
	1\\1\\2^k
	\end{pmatrix}\,.
	\eeq
Next, from the analysis of the marginal errors resulting from update B (\hyperlink{p:ctypes.varupdate.UPDATE.B.MGL}{Part~5a}), we have
the bounds
 \eqref{e:one.to.two.thirds.clause} and
\eqref{e:ctypes.one.two.cond}, which we recall give
	\beq\label{e:update.B.single.edge}
	\begin{pmatrix}
	\fdel_{e,t+2/3}\\
	\cdel_{e,t+2/3}\\
	\cdelred_{e,t+2/3}
	\end{pmatrix}
	\le
	\f{k^{O(1)}}{2^{k\zeta}}
	\begin{pmatrix}
	2^{k\zeta} &0&0\\
	0&1& 2^{-k} \\
	0&1 &1
	\end{pmatrix}
	\begin{pmatrix}
	\fdel_{e,t+1/3} \\
	\cdel_{e,t+1/3} \\
	\cdelred_{e,t+1/3}
	\end{pmatrix}
	+	\f{k^{O(1)} \bm{K}_{t+1/3}
	}{2^{k(1+\zeta)}}
	\begin{pmatrix}1\\1\\1\end{pmatrix}\,.
	\eeq
From the analysis of marginal errors resulting from update C (\hyperlink{p:ctypes.varupdate.UPDATE.C.MGL}{Part~5b}), we have the bounds 
\eqref{e:update.C.single.edge}
and \eqref{e:update.C.single.edge.back.to.bullet},
which combine to give
	\beq\label{e:update.C.single.edge.combined.final}
	\begin{pmatrix}
	\fdel_{e\bullet,t+1}\\
	\cdel_{e\bullet,t+1}\\
	\cdelred_{e\bullet,t+1}
	\end{pmatrix}
	\le\begin{pmatrix}
	0\\
	\cdel_{e,t+2/3}\\
	\cdelred_{e,t+2/3}\end{pmatrix}
	+\f{k^{O(1)}
	}{2^{k(1+\zeta)}}
	\bigg(
	\bm{\fdel}_{t+2/3}
	+\bm{K}_{t+1/3}
	\bigg)
	\begin{pmatrix}1\\1\\1\end{pmatrix}\,.
	\eeq
Combining the above bounds gives, for all $t\ge1$,
	\begin{align*}
	\begin{pmatrix}
	\fdel_{e\bullet,t}\\
	\cdel_{e\bullet,t}\\
	\cdelred_{e\bullet,t}
	\end{pmatrix}
	&\stackrel{
	\eqref{e:update.C.single.edge.combined.final}
	}{\le}
	\begin{pmatrix}
	0\\
	\cdel_{e,t-1/3}\\
	\cdelred_{e,t-1/3}\end{pmatrix}
	+\f{k^{O(1)}
	}{2^{k(1+\zeta)}}
	\bigg(
	\bm{\fdel}_{t-1/3}
	+\bm{K}_{t-2/3}\bigg)
	\begin{pmatrix}1\\1\\1\end{pmatrix}\\
	&\stackrel{\eqref{e:update.B.single.edge}}{\le}
	\f{k^{O(1)}}{2^{k\zeta}}
	\begin{pmatrix}
	0 &0&0\\
	0&1& 2^{-k} \\
	0&1 &1
	\end{pmatrix}
	\begin{pmatrix}
	\fdel_{e,t-2/3} \\
	\cdel_{e,t-2/3} \\
	\cdelred_{e,t-2/3}
	\end{pmatrix}
	+\f{k^{O(1)}
	}{2^{k(1+\zeta)}}
	\bigg(
	\bm{\fdel}_{t-1/3}
	+\bm{K}_{t-2/3}\bigg)
	\begin{pmatrix}1\\1\\1\end{pmatrix}\,.
	\end{align*}
Substituting these bounds 
(along with \eqref{e:update.B.single.edge})
into \eqref{e:update.A.single.edge} gives
	\beq\label{e:fdel.cdel.single.edge.bound}
	\begin{pmatrix}
	\fdel_{e,t+1/3}\\
	\cdel_{e,t+1/3}\\
	\cdelred_{e,t+1/3}
	\end{pmatrix}
	\le
	\f{k^{O(1)}}{2^{k\zeta}}
	\left\{
	\begin{pmatrix} 
	0&0&0\\
	0&1&0\\
	0&1&1
	\end{pmatrix}
	\begin{pmatrix}
	\fdel_{e,t-2/3}\\
	\cdel_{e,t-2/3}\\
	\cdelred_{e,t-2/3}
	\end{pmatrix}
	+\f{\bm{\Upsilon}_{t-2/3}}{2^k}
	\begin{pmatrix}1\\1\\1
	\end{pmatrix}
	\right\}\,,
	\eeq
where $\bm{\Upsilon}\equiv\bm{\fdel}+\bm{K}$.
(In the $3\times3$ matrix on the right-hand side of \eqref{e:fdel.cdel.single.edge.bound}, some entries are zero because we absorbed the errors into the $\bm{\Upsilon}_{t-2/3}$ term.)
Aggregating the last bound over $e\in\delta v$ gives
	\[
	\bm{\Upsilon}_{t+1/3}
	\equiv
	\bm{\fdel}_{t+1/3}+\bm{K}_{t+1/3}
	\le \f{k^{O(1)}}{2^{k\zeta}}
	\Bigg\{\bm{\fdel}_{t+1/3}+\bm{K}_{t+1/3}\Bigg\}
	= \f{k^{O(1)}}{2^{k\zeta}}
	\bm{\Upsilon}_{t-2/3}\,,
	\]
for all $t\ge1$. This implies that the iterative procedure of \hyperlink{p:ctypes.varupdate.ITERATIVE.CONSTRUCTION}{Part~1} converges to the desired weights
$\Psi_v\equiv\Psi_{v,\infty}$. We now turn to proving
that the weights satisfy the bounds \eqref{e:defn.of.err.v.notation} and \eqref{e:ctypes.varupdate.MAINBOUND}. 
Summing \eqref{e:fdel.cdel.single.edge.bound} over $t\ge1$ gives
	\begin{align*}
	S_{\ge1} &\equiv
	\begin{pmatrix}
	1&0\\1&1
	\end{pmatrix}
	\sum_{t\ge1}
	\begin{pmatrix}
	\cdel_{e,t+1/3}\\
	\cdelred_{e,t+1/3}
	\end{pmatrix}
	\le
	\begin{pmatrix}
	1&0\\1&1
	\end{pmatrix}
	\sum_{t\ge1}
	\f{k^{O(1)}}{2^{k\zeta}}
	\Bigg\{
	\begin{pmatrix} 
	1&0\\
	1&1
	\end{pmatrix}
	\begin{pmatrix}
	\cdel_{e,t-2/3}\\
	\cdelred_{e,t-2/3}
	\end{pmatrix}
	+\f{\bm{\Upsilon}_{t-2/3}}{2^k}
	\begin{pmatrix}1\\1\end{pmatrix}
	\Bigg\}\\
	&\le
	\f{k^{O(1)}}{2^{k\zeta}}
	\Bigg\{
	\begin{pmatrix}
	1&0\\1&1
	\end{pmatrix}
	\sum_{t\ge1}
	\begin{pmatrix}
	\cdel_{e,1/3}\\
	\cdelred_{e,1/3}
	\end{pmatrix}
	+ \f{\bm{\Upsilon}_{1/3}}{2^k}
	\begin{pmatrix}1\\1\end{pmatrix}
	+ S_{\ge1}
	\Bigg\}\,,
	\end{align*}
and rearranging this inequality gives an upper bound on $S_{\ge1}$. It follows using
\eqref{e:fdel.cdel.single.edge.bound} again that
	\begin{align*}
	\sum_{t\ge1}\begin{pmatrix}
	\cdel_{e,t+1/3}\\
	\cdelred_{e,t+1/3}
	\end{pmatrix}
	&\le
	\begin{pmatrix}
	\cdel_{e,1/3}\\
	\cdelred_{e,1/3}
	\end{pmatrix}
	+\f{k^{O(1)}}{2^{k\zeta}}
	\Bigg\{
	\begin{pmatrix} 
	1&0\\
	1&1
	\end{pmatrix}
	\begin{pmatrix}
	\cdel_{e,1/3}\\
	\cdelred_{e,1/3}
	\end{pmatrix}
	+\f{\bm{\Upsilon}_{1/3}}{2^k}
	\begin{pmatrix}1\\1\end{pmatrix}
	+S_{\ge1}
	\Bigg\}\\
	&\le
	k^{O(1)}\Bigg\{
	\begin{pmatrix}
	1 & 0 \\ 2^{-k\zeta} & 1
	\end{pmatrix}
	\begin{pmatrix}
	\cdel_{e,1/3}\\
	\cdelred_{e,1/3}
	\end{pmatrix}
	+\f{\bm{\Upsilon}_{1/3}}{2^{k(1+\zeta)}}
		\begin{pmatrix}1\\1\end{pmatrix}
	\Bigg\}\,,
	\end{align*}
where the last step makes use of the upper bound on $S_{\ge1}$. We can also use 
\eqref{e:update.B.single.edge}
and \eqref{e:fdel.cdel.single.edge.bound}
to bound the sum of $\fdel_{e,t+2/3}$ over all $t\ge1$, so altogether we have
	\beq\label{e:sum.over.t.positive}
	\sum_{t\ge1}\begin{pmatrix}
	\fdel_{e,t+2/3}\\
	\cdel_{e,t+1/3}\\
	\cdelred_{e,t+1/3}
	\end{pmatrix}
	\le
	k^{O(1)}\Bigg\{
	\begin{pmatrix}
	0&0&0\\
	0&1&0\\
	0&2^{-k\zeta} & 1
	\end{pmatrix}
	\begin{pmatrix}
	\fdel_{e,1/3}\\
	\cdel_{e,1/3}\\
	\cdelred_{e,1/3}
	\end{pmatrix}
	+\f{\bm{\Upsilon}_{1/3}}{2^{k(1+\zeta)}}
		\begin{pmatrix} 1\\ 1\\1\end{pmatrix}
	\Bigg\}
	\eeq
We can then apply 
\eqref{e:fdel.bounds.from.update.A} and \eqref{e:cdel.bounds.from.update.A} with $t=0$ to obtain
	\[
	\begin{pmatrix}
	\fdel_{e,1/3}\\
	\cdel_{e,1/3}\\
	\cdelred_{e,1/3}
	\end{pmatrix}
	\le
	\begin{pmatrix}
	1&0&0\\
	1&1&0\\
	1&0&1
	\end{pmatrix}
	\begin{pmatrix}
	\fdel_{e\bullet,0}\\
	\cdel_{e\bullet,0}\\
	\cdelred_{e\bullet,0}
	\end{pmatrix}
	+\f{k^{O(1)}}{2^{k(1+\zeta)}}
	\Bigg\{
	 \err_v
	+\dot{\ep}_e
	+\f{\min\set{\ddot{\ep}_e,1}}
		{2^{k(1+\zeta)}}
	\Bigg\}
	\begin{pmatrix}
	1 \\ 1\\ 2^k
	\end{pmatrix}\,.
	\]
Finally, since we see from \eqref{e:table.of.all.measures}
that $\nu_{e,-2/3}
\cong \hat{p}_e \phi_e u_{e,-2/3}$
while
$\nu_{e\bullet,0}
\cong \hat{q}_e \phi_e u_{e,-2/3}$, we conclude
that 
	\[\begin{pmatrix}
	\fdel_{e\bullet,0}\\
	\cdel_{e\bullet,0}\\
	\cdelred_{e\bullet,0}
	\end{pmatrix}
	\le
	\underbrace{\begin{pmatrix}
	\fdel_{e,-2/3}\\
	\cdel_{e,-2/3}\\
	\cdelred_{e,-2/3}
	\end{pmatrix}
	}_{\textup{zero}}
	+
	\begin{pmatrix}
	1 & 2^{-k} & 2^{-k}\\
	1 & 2^{-k} & 2^{-k}\\
	1 & 1 & 1
	\end{pmatrix}
	\begin{pmatrix}
	\ep_e \\
	\dot{\ep}_e\\
	\min\set{\ddot{\ep}_e,1}/2^{k\zeta}
	\end{pmatrix}\,,
	\]
where the first term on the right-hand side vanishes because $\nu_{e,-2/3}$ is fully judicious by the assumption. Combining the last two bounds gives
	\[\begin{pmatrix}
	\fdel_{e,1/3}\\
	\cdel_{e,1/3}\\
	\cdelred_{e,1/3}
	\end{pmatrix}
	\le\begin{pmatrix}
	1 & 2^{-k} & 2^{-k}\\
	1 & 2^{-k} & 2^{-k}\\
	1 & 1 & 1
	\end{pmatrix}
	\begin{pmatrix}
	\ep_e \\
	\dot{\ep}_e\\
	\min\set{\ddot{\ep}_e,1}/2^{k\zeta}
	\end{pmatrix}
	+\f{k^{O(1)} \err_v}
		{2^{k(1+\zeta)}}
	\begin{pmatrix}
	1 \\ 1\\ 2^k
	\end{pmatrix}\,.
	\]
This implies $\bm{\Upsilon}_{1/3} \le O(\err_v)$. It follows by combining with
\eqref{e:sum.over.t.positive} that
	\beq\label{e:final.sum.bound}
	\sum_{t\in\mathbb{Z},t\ge 0}
	\begin{pmatrix}
	\fdel_{e,t+2/3}\\
	\cdel_{e,t+1/3}\\
	\cdelred_{e,t+1/3}
	\end{pmatrix}
	\le
	\begin{pmatrix}
	1 & 2^{-k} & 2^{-k}\\
	1 & 2^{-k} & 2^{-k}\\
	1 & 1 & 1
	\end{pmatrix}
		\begin{pmatrix}
	\ep_e \\
	\dot{\ep}_e\\
	\min\set{\ddot{\ep}_e,1}/2^{k\zeta}
	\end{pmatrix}
	+\f{k^{O(1)}\err_v}{2^{k(1+\zeta)}}
	\begin{pmatrix}
	1 \\ 1\\ 2^k
	\end{pmatrix}\,.
	\eeq
We remark that 
\eqref{e:final.sum.bound}
implies that the quantities $\fdel,\cdel,\cdelred$ satisfy the claimed bound 
\eqref{e:rough.inductive.bounds.forall.t}.
The quantities $\alpha,\xi$ can be bounded in terms 
of $\fdel,\cdel,\cdelred$, so it is straightforward to deduce that 
\eqref{e:rough.inductive.bounds.forall.t} indeed holds.
To conclude, it follows by recalling \eqref{e:cond.judicious.update}
and \eqref{e:defn.cdel.kappa}
that
	\[\Bigg|\f{(\psi_{e,\infty})^j
		(\tau\,|\,\bL)}
		{(\phi_e)^j(
		\tau\,|\,\bL)}-1\Bigg|
	\le
	O\Bigg(
	\sum_{t\in\mathbb{Z},t\ge 0}
	\Bigg|
	\f{(\psi_{e,t+1})^j
		(\tau\,|\,\bL)}
		{(\psi_{e,t})^j(
		\tau\,|\,\bL)}-1\Bigg|
		\Bigg)
	\le O\Bigg(
	\sum_{t\ge0}
	\cdel_{e,t+1/3}(\tau)
	\Bigg)\,.\]
It follows by recalling
\eqref{e:clause.frac.update}
and \eqref{e:fdel.defn}
that
	\[\sum_{t\in\mathbb{Z},t\ge 0}
	\Bigg|
	\f{\psi_{e,t+1}(\bL)}{\psi_{e,t}(\bL)}-1\Bigg|
	\le O(1)
	\sum_{t\ge0}
	\fdel_{e,t+2/3}\,.
	\]
These quantities are bounded by \eqref{e:final.sum.bound}, and 
this implies the claimed bound 
\eqref{e:ctypes.varupdate.MAINBOUND} for the clause-dependent weights. 
The errors in the clause-independent weights $\psi_v(\usi_{\delta v})$ can be bounded by applying
Proposition~\ref{p:var.update} with input errors $\vec{\ep}(t)$ for integers $t\ge0$, where
(recalling \eqref{e:update.A.def.epsilon}) we have
	\begin{align}\nonumber
	\vec{E}
	&\equiv \sum_{t\in\mathbb{Z},t\ge0} \vec{\ep}(t)
	=\sum_{t\in\mathbb{Z},t\ge0}
	\Bigg\{\crelerr(\avhq_{t-2/3},
	\avhq_{t-1/3})
	+\crelerr(\avhq_{t-1/3},
	\avhq_{t})
	\Bigg\}\\
	&\stackrel{\eqref{e:vec.eps.t.bound}}{\le}
	\vec{\ep}(0)
	+\f{k^{O(1)}}{2^{k\zeta}}
	\begin{pmatrix}
	2^{-k} & 2^{-k} & 2^{-k}\\
	1 & 1 & 1\\
	2^{k\zeta} & 2^{-k} & 2^{k\zeta} 
	\end{pmatrix}
	\sum_{t\in\mathbb{Z},t\ge0} 
	\begin{pmatrix}
	\fdel_{e,t+2/3}\\
	\cdel_{e,t+1/3}\\
	\cdelred_{e,t+1/3}
	\end{pmatrix} \nonumber \\
	&\stackrel{\eqref{e:final.sum.bound}}{\le}
	k^{O(1)}
	\begin{pmatrix}
	1 & 2^{-k} & 2^{-k} \\
	1 & 1 & 1 \\
	1 & 1 & 1 
	\end{pmatrix}
	\begin{pmatrix}
	\ep_e\\
	\dot{\ep}_e\\
	\min\set{\ddot{\ep}_e,1}/2^{k\zeta}
	\end{pmatrix}
	+\f{k^{O(1)}\err_v}{2^{k\zeta}}
	\begin{pmatrix}
	2^{-k}\\ 1 \\ 2^{k\zeta}
	\end{pmatrix}\,.
	\label{e:final.bound.sum.eps.over.t}
	\end{align}
It follows from this that 
	\begin{align*}
	E_e + \f{\dot{E}_e}{2^k}
	+ \f{\min\set{\ddot{E}_e,1}}{2^{k(1+\zeta)}}
	&\le k^{O(1)}
	\Bigg\{
	\ep_e + \f{\dot{\ep}_e}{2^k}
	+ \f{\min\set{\ddot{\ep}_e,1}}{2^{k(1+\zeta)}}
	 + \f{\err_v}{2^{k(1+\zeta)}}
	\Bigg\}\,,\\
	\dot{E}_e
	+\f{\min\set{\ddot{E}_e,1}}{2^{k\zeta}}
	&\le
	k^{O(1)}
	\Bigg\{
	\ep_e + \dot{\ep}_e
	+ \min\set{\ddot{\ep}_e,1}
	 + \f{\err_v}{2^{k\zeta}}
	\Bigg\}\,.
	\end{align*}
Thus, if we apply the bound Proposition~\ref{p:var.update}
with input errors $\vec{\ep}(t)$ 
summed over all $t\ge0$, the final bound
will still be of the form \eqref{e:defn.of.err.v.notation}, as claimed.\smallskip

\noindent\bemph{\hypertarget{p:ctypes.varupdate.OUTMSG}{Part~7}. Error bound for outgoing messages.}
It remains to verify \eqref{e:ctypes.varupdate.OUTMSG}. This follows by a similar argument as in \hyperlink{c:var.update.BETTER.MGL.BOUND}{Step~2 of the proof of Corollary~\ref{c:var.update}}. 
Fix an edge $e\in\delta v$, and define $\hat{s}_e = \hat{p}_e$ and $\hat{s}_e=\hq_{e'}$ for $e'\in\delta v\setminus e$. We can apply the above result to obtain weights $\Theta_v$
such that $\nu_{\delta v}[\Theta_v;\hat{s}]$
is fully judicious. On the same edge $e$, the outgoing \textsc{bp} messages for the measures
$\nu_{\delta v}[\Phi_v;\hat{p}]$,
$\nu_{\delta v}[\Theta_v;\hat{s}]$, and 
$\nu_{\delta v}[\Psi_v;\hq]$ are given by
	\begin{align*}
	\dot{p}_e(\sigma,\bL)
	&\cong \phi_e(\sigma,\bL) u_p(\sigma)\\
	\dot{s}_e(\sigma,\bL)
	&\cong \theta_e(\sigma,\bL) u_s(\sigma)\\
	\dq_e(\sigma,\bL)
	&\cong \psi_e(\sigma,\bL) u_q(\sigma)
	\end{align*}
where $u_p \equiv u_{e,-2/3}$, $u_q \equiv u_{e,\infty}$, and analogously
	\[
	u_s(\sigma_e)
	\cong \sum_{\usi_{\delta a \setminus e}}
	\varphi_v(\usi_{\delta v})
	\theta_v(\usi_{\delta v})
	\prod_{e'\in\delta a \setminus e}\Bigg\{
	\sum_{\bL}
	\theta_{e'}(\sigma_{e'},\bL_{e'})
	\hat{s}_{e'}(\sigma_{e'},\bL_{e'})
	\Bigg\}\,.
	\]
Conditional on $\bL$, we use \eqref{e:reweighted.messages.v.to.c} to define the reweighted messages 
$P_e(\sigma\,|\,\bL)$,
$S_e(\sigma\,|\,\bL)$,
$Q_e(\sigma\,|\,\bL)$.
Recall \eqref{e:delta.vec.of.t}, and note that
	\[
	\vec{D} 
	\equiv
	\vrelerr(u_p,u_q)
	\le O\Bigg(
	\sum_{t\in\mathbb{Z},t\ge0}
	\vrelerr
	(u_{e,t-2/3},u_{e,t+1/3})
	\Bigg)
	=O\Bigg( 
	\sum_{t\in\mathbb{Z},t\ge0}
	\vec{\delta}(t+1/3)\Bigg)\,.\]
Let $\vec{D}'$ denote the first three entries of $\vec{D}$. Then 
	\begin{align*}
	\vec{D}'
	&
	\stackrel{\eqref{e:vec.delta.bound.with.L}}{\le}
	k^{O(1)}
	\sum_{t\in\mathbb{Z},t\ge0}
	\left\{
	\begin{pmatrix}
	1\\1\\1
	\end{pmatrix} \err_v(t)
	+ 
	\begin{pmatrix}
	0&0&0\\
	0&1&1\\
	0&1&1
	\end{pmatrix}
	\begin{pmatrix}
	\ep_e(t)\\
	\dot{\ep}_e(t)\\
	\min\set{\ddot{\ep}_e(t),1}/2^{k\zeta}
	\end{pmatrix}
	\right\} \\
	&\stackrel{\eqref{e:final.bound.sum.eps.over.t}}{\le} k^{O(1)}\left\{
	\begin{pmatrix}
	1\\1\\1
	\end{pmatrix} \err_v
	+ 
	\begin{pmatrix}
	0&0&0\\
	0&1&1\\
	0&1&1
	\end{pmatrix}
	\begin{pmatrix}
	\ep_e\\
	\dot{\ep}_e\\
	\min\set{\ddot{\ep}_e,1}/2^{k\zeta}
	\end{pmatrix}\right\}
	\end{align*}
Meanwhile, the errors between the edge weights
$\phi_e(\sigma,\bL)$,
$\theta_e(\sigma,\bL)$, and $\psi_e(\sigma,\bL)$ are bounded by \eqref{e:ctypes.varupdate.MAINBOUND}.
From this it is easy to see that
	\[
	\max_{\bL}\Bigg\{
	\Bigg|
	\f{\dot{s}_e(\bL)}{\dot{p}_e(\bL)}-1
	\Bigg|+ 
	\Bigg|
	\f{\dq_e(\bL)}{\dot{p}_e(\bL)}-1
	\Bigg|
	+\Bigg\|
	\f{S_e(\cdot\,|\,\bL)}{P_e(\cdot\,|\,\bL)}-1
	\Bigg\|_\infty
	\Bigg\}
	\le
	k^{O(1)}
	\err_v\,.
	\]
Combining with the result of 
Lemma~\ref{l:marginal.error} gives
(cf.\ \eqref{e:marginal.error.P.to.S})
	\[
	\max_{\bL}\Bigg\{\Bigg|
	\f{(S_e)^j(\tau\,|\,\bL)}
		{(P_e)^j(\tau\,|\,\bL)}-1
	\Bigg|\Bigg\}
	\le \f{k^{O(1)}\err_v}{2^{k\zeta}}\,.
	\]
The error between $S_e$ and $Q_e$ is bounded by
(cf.\ \eqref{e:S.Q.error} and \eqref{e:marginal.error.S.to.Q})
	\begin{align*}
	\max_{\bL}\Bigg\{
	\Bigg|
	\f{Q_e(\sigma\,|\,\bL)}{S_e(\sigma\,|\,\bL)}-1
	\Bigg|\Bigg\}
	&\le 
	k^{O(1)} \Bigg(
	\err_v
	+\Ind{\red[\sigma]\ge1}
	\bigg(
	\dot{\ep}_e+
	\f{\min\set{\ddot{\ep}_e,1}}{2^{k\zeta}}
	\bigg)\Bigg\} \\
	\max_{\bL}\Bigg\{\Bigg|
	\f{(Q_e)^j(\tau\,|\,\bL)}
		{(S_e)^j(\tau\,|\,\bL)}-1
	\Bigg|\Bigg\}
	&\le k^{O(1)}\Bigg( 
	\ep_e 
	+\f1{(2^k)^{\Ind{\tau\ne\red}}}
	\bigg\{ \dot{\ep}_e
	+ \f{\min\set{\ddot{\ep}_e,1}}{2^{k\zeta}}
	\bigg\}\Bigg)\,.
	\end{align*}
Combining these bounds gives the claim \eqref{e:ctypes.varupdate.OUTMSG}.
\end{proof}
\end{ppn}

\begin{proof}[Proof of Proposition~\ref{p:contraction.for.simple.var}]
We must argue that the iterative construction of Definition~\ref{d:lagrange.defn.Augmented} converges to the desired weights $\Psi\equiv\Psi_{\DD}(U;\omega_{\delta U})$ asserted by the proposition. The argument closely follows the outline of the proofs of
Propositions~\ref{p:nice.tree.lagrange} and \ref{p:induct}. For each leaf edge $e=(au)\in\delta U$, it is easy to check that at the initial stage $t=0$ we have
	\[
	\hq_{e,0}(\sigma,\bL)
	\cong \pi_{\DD}(\bL\,|\,\bt_{p(e)})
	\prodhq_e(\sigma)\,,
	\]
where $p(e)=(av)$ (as in \eqref{e:pi.DD.first.appearance}). On the other hand, for each $e\in\delta U$ we are given the edge marginal
	\[
	\Omega_e(\sigma,\bL)
	=\Omega_e(\bL)
	\Omega_e(\sigma\,|\,\bL)
	= \pi_{\DD}(\bL\,|\,\bt_{p(e)})
	\omega_{\bL,j}(\sigma)\,,
	\]
with $j=j(\bt_e)$. Applying Step~I in Definition~\ref{d:lagrange.defn.Augmented} gives
	\[
	\dq_{e,1}(\sigma,\bL)
	\cong \f{\Omega_e(\sigma,\bL)}
		{\hq_{e,0}(\sigma,\bL)}
	\cong \f{\omega_{\bL,j}(\sigma)}
		{\prodhq_e(\sigma)}\,.
	\]
It follows that $\VRELERR(\dq_{e,0},\dq_{e,1})$
(in the notation of Definition~\ref{d:rel.error.notation.w.types})
can be bounded in the same way as
$\vrelerr(\dq_{e,0},\dq_{e,1})$
is bounded by Lemma~\ref{l:first.update.at.boundary}, using assumption~\eqref{e:apriori.assump.omega.Augmented} in place of assumption~\eqref{e:apriori.assump.omega}. We can then repeat the argument from the 
\hyperlink{p:induct.proof}{proof of Proposition~\ref{p:induct}}, with the following modifications:
\begin{enumerate}[--]
\item The base case is the bound on
$\VRELERR(\dq_{e,0},\dq_{e,1})$,
(as opposed to
$\vrelerr(\dq_{e,0},\dq_{e,1})$ in the original argument);
\item We apply 
Proposition~\ref{p:clause.update} conditional on the clause type $\bL$; and
\item We apply Proposition~\ref{p:ctypes.varupdate}
in place of Proposition~\ref{p:var.update}
and Corollary~\ref{c:var.update}.
\end{enumerate}
This modification gives that for any edge $e$ in $U$, 
the variable-to-clause message error 
$\VRELERR(\dq_{e,t-1},\dq_{e,t})$ satisfies the bound \eqref{e:induction.variable.error},
while the clause-to-variable message error 
$\CRELERR(\hq_{e,t-1},\hq_{e,t})$
 satisfies the bound \eqref{e:induction.clause.error}. Substituting these bounds into the 
\hyperlink{p:nice.tree.lagrange.proof}{proof of Proposition~\ref{p:nice.tree.lagrange}}
gives the result.
\end{proof}

\section{Solution of second moment optimization}
\label{s:merge}

In this section we complete the proof of 
the key second moment estimate Proposition~\ref{p:second.moment.judicious}, assuming an \textit{a~priori} estimate (Proposition~\ref{p:apriori} below) which will be proved in Section~\ref{s:burnin}. For an edge $e=(av)$ in the processed graph $\GG$ with $\bL_a=\bL$ and $j(e)=j\in[k]$, we will write $\omega_e\equiv \omega_{\bL,j}$. Recall that in Proposition~\ref{p:contraction.for.simple.var}, we let $U$ represent the depth-one neighborhood of a non-compound variable $v$ (see Definition~\ref{d:judicious.augmented.alphabet}), and considered the optimization problem
$\nu=\optnu(U;\omega_{\delta U})$
for $\omega$ satisfying \eqref{e:apriori.assump.omega.Augmented}.
The main result \eqref{e:depth.one.update.final}
from Proposition~\ref{p:contraction.for.simple.var}
shows that for the optimizer $\nu$, the discrepancy $\disc_{av}(\nu)$ for $a\in\pd v$ is small relative to the maximum discrepancy $\disc_e(\nu)$ over all edges $e\in\delta U$: indeed we can summarize
\eqref{e:depth.one.update.final} more simply as
	\beq\label{e:depth.one.update.final.maxversion}
	\disc_{av}(\nu)
	\le k^{O(1)}(\vth)^{1/4}
	\Bigg\{
	\max_{e\in\delta U}
	\disc_e(\omega)\Bigg\}\,.
	\eeq
(Recall from Definition~\ref{d:error.notation.edge} that $\vth\equiv 2^{-k\zeta/6}$ for an absolute constant $0<\zeta\le1/20$. Recall also that the discrepancy measure $\disc$ is defined by \eqref{e:def.discrepancy.measure.e} and \eqref{e:disc.augmented} for the compound and non-compound settings, respectively.) The main technical result of this section is the analogue of Proposition~\ref{p:contraction.for.simple.var} for compound regions:

\begin{ppn}[contraction result for compound regions]
\label{p:contraction.COMPOUND}
Assume that $R\ge k$, where $R$ is the neighborhood radius in \eqref{e:radii}.
In the same setting as 
Proposition~\ref{p:update.compound},
consider the constrained optimization problem
	\[
	\optnu(U;\omega_{\delta U})
	= \argmax_\nu
	\Bigg\{ \Ent(\nu)
		: 	\nu \in
		\Judicious(U,\omega_{\delta U})\Bigg\}\,,\]
where $U$ is a compound enclosure (Definition~\ref{d:enclosure}), and we assume that
(cf.\ \eqref{e:apriori.assump.omega.Augmented})
	\beq\label{e:apriori.assump.omega.COMPOUND}
	\max_{\sigma\in\set{\RYGB}^2}
	\Bigg\{
	\f1{(2^k)^{\Ind{\sigma=\red\red}}}
	\bigg|
		\f{\omega_e(\sigma)}
		{\prodom_e(\sigma)}
		- 1 \bigg|
		\Bigg\} \le
		\f1{2^{2k\zeta}}
	\eeq
for all $e\in\delta U$. For $\nu\equiv\optnu(U;\omega_{\delta U})$, let $\disc_e(\nu)$ be as in \eqref{e:def.discrepancy.measure.e}. For any edge $e$, define
	\beq\label{e:choice.of.C}
	C_e \equiv 
	\begin{cases}
	1 &\textup{if $e$ is nice,} \\
	\DS C(k,R) \equiv \f1{\min_{\bt}
	\min\set{
		\starpi_{\bt}(\sigma)^6
		:\sigma\in\supp\starpi_{\bt} }}
	 &\textup{if $e$ is not nice.}
	\end{cases}
	\eeq
(The choice of $C\equiv C(k,R)$ is very crude; but we emphasize that it is a finite constant independent of $n$.) Then for every edge $e$ in $U\setminus\delta U$ we have
	\beq\label{e:block.update.final}
	\f{\disc_e(\nu)}{C_e} \le \f14\Bigg\{
	\max_{e\in\delta U} \disc_e(\omega)
	\Bigg\}\,,
	\eeq
provided that the constants $\KAPPA$ and $\DELTACONST$ 
from Definitions~\ref{d:j.defective}~and~\ref{d:contained}
satisfy
	\[
	\KAPPA\ge \bigg(\f{240}{\zeta}\bigg)^4\,,\quad
	\DELTACONST \le \min\bigg\{
		\f{\zeta}{30},
		\f{\log 2}{2(\KAPPA)^{1/2}}
	\bigg\}\,.
	\]
We let $\Psi\equiv\Psi(U;\omega_{\delta U})$ denote the Lagrangian weights such that
the solution $\nu=\optnu(U;\omega_{\delta U})$ of the above coincides with $\nu[U;\Psi]$, i.e., the $\Psi$-weighted Gibbs measure on $U$ for the pair coloring model. If $\omega$ is judicious and close to $\prodom$, then $\omega$ defines a nonempty subspace $\Judicious(U;\omega_{\delta U})$ by the considerations of Lemma~\ref{l:dimension.of.judicious.omega}, so $\Psi$ exists by the general theory of Lagrange multipliers.
\end{ppn}

The proof of Proposition~\ref{p:contraction.COMPOUND} occupies most of this section, and uses the result of Proposition~\ref{p:nice.tree.lagrange} as the main inductive building block. Before turning to this, we first explain in \S\ref{ss:hess} how the results of Propositions~\ref{p:contraction.for.simple.var} and \ref{p:contraction.COMPOUND} can be combined with the \textit{a~priori} estimate (Proposition~\ref{p:apriori}) to prove Proposition~\ref{p:second.moment.judicious}. 

\subsection{From contraction to optimization}
\label{ss:hess}

The \textit{a~priori} estimate requires some conditions on the processed neighborhood profile $\DD$
(Definition~\ref{d:degseq}), which we now formally introduce.

\begin{dfn}[expansion condition]\label{d:expansion}
In the processed graph $\GG=(V,F,E)$, 
for any subset of variables $S\subseteq V$, let $ F_\bullet(S)$ denote the subset of clauses
$a\in F$ with $|S\cap \pd a|\ge 9k/10$.
We say that a subset of variables $S\subseteq V$, is a \bemph{type-subset}
if membership in $S$ can be determined by the variable type alone --- i.e., for all $v\in S$ and $w\notin S$,
we have $\bT_v \ne \bT_w$.
We say that $\GG$ \bemph{expands on type-subsets} if every type-subset $S\subseteq V$ satisfies the bound
	\beq\label{e:expansion.bd}
	|F_\bullet(S)|\le \begin{cases}
		|V| 2^{29k/30} & 
		\textup{if $|S|/|V| \le 4/5$,}\\
		|S| &\textup{if $|S|/|V| \le 1/16$}
		\end{cases}\eeq
Whether $\GG$ expands on type-subsets
can be determined from the neighborhood profile $\DD$, so we say equivalently that $\DD$ \bemph{expands on type-subsets}.
\end{dfn}

\begin{lem}[expansion result; proved in Section~\ref{s:burnin}]\label{l:expand.on.types}
Let $\GG'$ be the random $\ksat$ instance, and $\GG=\proc\GG'$ the processed graph, with neighborhood profile $\DD$. It holds with high probability that $\DD$ expands on type-subsets in the sense of Definition~\ref{d:expansion}.
\end{lem}

For the next proposition, we recall that $\bm{I}_0$ appears in Lemma~\ref{l:if.neg.def}, and denotes the subset of edge marginals $\omega$ that are judicious and consistent with $z\in I_0$, where $I_0$ is defined by \eqref{e:middle.interval}.

\begin{ppn}[\textit{a~priori} estimate;
proved in Section~\ref{s:burnin}]
\label{p:apriori}
Let $\GG'$ be the random $\ksat$ instance, and $\GG=\proc\GG'$ the processed graph, with neighborhood profile $\DD$. Recall Lemma~\ref{l:if.neg.def}, and let
	\beq\label{e:opt.omega.of.DD}
	\omega(\DD)
	\in
	\argmax_\omega
	\Bigg\{
	\bm{\Psi}_{\DD,2}(\omega)
	:\omega\in\bm{I}_0
	\Bigg\}\,.
	\eeq
(We do not yet argue that $\omega(\DD)$ is unique;
that will be shown in the \hyperlink{p:second.moment.judicious.proof}{proof of 
Proposition~\ref{p:second.moment.judicious}}.) There exists an absolute constant $\ZETA>0$ such that if $\DD$ expands on type-subsets
in the sense of Definition~\ref{d:expansion}, then
any maximizer $\omega(\DD)$ must satisfy the \textup{``a~priori''} bounds
	\beq\label{e:apriori}
	\Bigg|
		\f{\omega_{\bL,j}(\sigma)}
		{\prodom_{\bL,j}(\sigma)}
		- 1 \Bigg| \le
		\begin{cases}
		2^{-k\ZETA}& \textup{if $\red[\sigma]\le1$,}\\
		2^{k(1-\ZETA)}&\textup{if $\red[\sigma]=2$.}
		\end{cases}
	\eeq
whenever $\bL(j)$ is a strongly non-defective edge type in the sense of Remark~\ref{r:defective.clauses}. 
\end{ppn}

We conclude this subsection by explaining how Propositions~\ref{p:contraction.for.simple.var}, \ref{p:contraction.COMPOUND}, and \ref{p:apriori} combine to give the key second moment estimate, 
Proposition~\ref{p:second.moment.judicious}. We begin with an elementary lemma:

\begin{lem}[relative entropy identity]\label{l:relative.entropy}
Let $\mathcal{A}$ be any finite set, and suppose
 $\nu(a)=\Lm(a)/z$ is a probability measure over $a\in\mathcal{A}$. We then have
	\[
	\Ent(\mu\,|\,\nu)
	=\Bigg\{\Ent(\nu)- \Ent(\mu)\Bigg\}
	+\Big\langle \nu-\mu,\log\Lm\Big\rangle
	\]
for any probability measure $\mu$ over $\mathcal{A}$.

\begin{proof}
The relative entropy of $\mu$ with respect to $\nu$ is
	\[
	\Ent(\mu\,|\,\nu)
	= \bigg\langle \mu, \log\f{\mu}{\nu}\bigg\rangle
	= -\Ent(\mu) - \Big\langle\mu,\log\nu\Big\rangle
	= -\Ent(\mu) - \bigg\langle\mu,\log
		\f{\Lm}{z}\bigg\rangle
	= -\Ent(\mu) - \Big\langle\mu,\log\Lm\Big\rangle
		 + \log z\,.
	\]
Applying this identity with $\mu=\nu$ gives
	\[
	0 = \Ent(\nu\,|\,\nu)
	= -\Ent(\nu) - \Big\langle\nu,\log\Lm\Big\rangle
		 + \log z\,.
	\]
Subtracting the two expressions gives the conclusion.
\end{proof}
\end{lem}

\begin{rmk}[applications of Lemma~\ref{l:relative.entropy}]
\label{r:relative.entropy}
We now informally describe the two main ways that we will apply the identity obtained in Lemma~\ref{l:relative.entropy}.
Let $\LIN$ be an affine subspace of the simplex of probability measures over $\mathcal{A}$.

\begin{enumerate}[A.]
\item Suppose that $\nu(a)=\Lm(a)/z$ where $\log\Lm$ are the Lagrange multipliers for the subspace $\LIN$, such that $\nu$ is the solution of the optimization problem 
	\beq\label{e:constrained.opt.abstract}
	\argmax_{\nu'}
	\Bigg\{ \Ent(\nu')
	: \nu' \in \LIN\Bigg\}\,.\eeq
If $\mu$ is any element of $\LIN$, then Lemma~\ref{l:relative.entropy} gives
	\beq\label{e:relative.entropy.with.Lagrangian}
	\Ent(\mu\,|\,\nu)
	=\Bigg\{
	\underbrace{\Ent(\nu)- \Ent(\mu)}
		_{\textup{nonnegative}}\Bigg\}
	+\underbrace{
		\Big\langle \nu-\mu,\log\Lm\Big\rangle
		}_{\textup{zero}}
	=\Ent(\nu)- \Ent(\mu)\,,
	\eeq
where the last equality uses that 
$\langle\mu,\log\Lm\rangle$ must be constant over $\mu\in\LIN$, by the nature of Lagrange multipliers.
In the \hyperlink{p:second.moment.judicious.proof}{proof of Proposition~\ref{p:second.moment.judicious}}, we will lower bound $\Ent(\mu\,|\,\nu)$, and use the above identity to deduce a lower bound on $\Ent(\nu)-\Ent(\mu)$.

\item Now suppose instead that $\mu$
solves \eqref{e:constrained.opt.abstract},
while 
$\nu(a)=\Lm(a)/z$
also belongs to $\LIN$,
but no longer with the assumption that $\log\Lm$ are the Lagrange multipliers for $\LIN$.
Then Lemma~\ref{l:relative.entropy} gives
	\beq\label{e:rel.entropy.bound.by.weights}
	\Ent(\mu\,|\,\nu)
	=\Bigg\{
	\underbrace{\Ent(\nu)- \Ent(\mu)
	}_{\textup{nonpositive}}
	\Bigg\}
	+\Big\langle \nu-\mu,\log\Lm\Big\rangle
	\le
	\Big\langle \nu-\mu,\log\Lm\Big\rangle\,,
	\eeq
where the inequality holds since $\Ent(\mu)\ge\Ent(\nu)$ by assumption. 
If $\Lm$ are the Lagrange multipliers for $\LIN$, then 
$\langle\mu,\log\Lm\rangle$ must be constant over $\mu\in\LIN$, so we would have
$\langle\nu-\mu,\log\Lm\rangle=0$ and thus
$\Ent(\mu\,|\,\nu)=0$, as already noted.
However, in some of the applications that follow, we will only have $\Lm$ ``close to'' the Lagrange multipliers for $\LIN$, such that 
$\langle\mu,\log\Lm\rangle$ is
roughly constant over $\mu\in\LIN$.
This will imply that
$\langle\nu-\mu,\log\Lm\rangle$
is small, so that $\mu$ must be close to $\nu$. 
For the formal details, see the proof of
Proposition~\ref{p:defect.join}. 
\end{enumerate}
We emphasize that although Lemma~\ref{l:relative.entropy} is a trivial identity, it plays an essential part in the proof of the key results of this paper.
\end{rmk}

\begin{proof}[\hypertarget{p:second.moment.judicious.proof}{Proof of Proposition~\ref{p:second.moment.judicious}}] By the result of Lemma~\ref{l:if.neg.def}, it suffices to show that when we restrict to $\omega\in\bm{I}_0$, the function $\bm{\Psi}=\bm{\Psi}_{\DD,2}$ is uniquely maximized at $\omega(\DD)=\prodom$ (in the notation of \eqref{e:opt.omega.of.DD}), with negative-definite Hessian at the maximizer. For the proof, we assume that $\DD$ is bounded away from zero in the sense of \eqref{e:pos.frac}; this holds with high probability by Proposition~\ref{p:posfrac}. We also assume that the profile $\DD$ expands on type-subsets, which by Lemma~\ref{l:expand.on.types} occurs with high probability. Now take any maximizer $\omega=\omega(\DD)$ from \eqref{e:opt.omega.of.DD} --- we have not yet argued that it is unique. By the expansion condition and the \textit{a~priori} estimate Proposition~\ref{p:apriori} (and recalling from Definition~\ref{d:error.notation.edge} that $\zeta=\ZETA/4$), we know that $\omega$ satisfies the estimate \eqref{e:apriori} on all $(\bL,j)$ such that $\bL(j)$ is a strongly non-defective edge type in the sense of Remark~\ref{r:defective.clauses}. \smallskip

\noindent\bemph{Proof that $\omega(\DD)=\prodom$
	uniquely maximizes
	$\bm{\Psi}=\bm{\Psi}_{\DD,2}$ on $\bm{I}_0$.}
We start by letting $\omega=\omega(\DD)$ be any maximizer from
\eqref{e:opt.omega.of.DD}. Consequently, in Proposition~\ref{p:update.compound},
for any choice of compound enclosure $U$,
on the left-hand side of
\eqref{e:max.over.acute.omega.compound}
the maximum over $\acute{\omega}$ must be
$\bm{\Psi}(\acute{\omega})
-\bm{\Psi}(\omega)=0$. 
Therefore the right-hand side of
\eqref{e:max.over.acute.omega.compound} also vanishes:
	\[
	\max_{\nu'}
	\Bigg\{ \Ent(\nu')
		: 	\nu' \in
		\Judicious(U,\omega_{\delta U})
		\Bigg\}
	=\max_{\mu'}
	\Bigg\{ \Ent(\mu')
		: 	\mu' \in
		\Simplex(U;\omega_U)\Bigg\}
	\]
Since the entropy function $\Ent$ is strictly concave on 
$\Judicious(U,\omega_{\delta U})\supseteq
\Simplex(U;\omega_U)$, this implies $\mu=\nu$ where 
	\begin{align}\nonumber
	\nu
	&=\optnu(U;\omega_{\delta U})
	= \argmax_{\nu'}
	\Bigg\{ \Ent(\nu')
		: 	\nu' \in
		\Judicious(U,\omega_{\delta U})\Bigg\}\,,\\
	\mu
	&= \mu(U;\omega)
	=\argmax_{\mu'}
	\Bigg\{ \Ent(\mu')
		: 	\mu' \in
		\Simplex(U;\omega_U)\Bigg\}\,.
	\label{e:mu.nu}
	\end{align}
(Note that $\mu,\nu$ are uniquely defined by the strict concavity of $\Ent$.) 
Likewise, by Proposition~\ref{p:block.update.non.compound}, if $U$ is the depth-one neighborhood of any non-compound variable, then we must have
$\mu=\nu$ where
$\nu=\optnu(U;\omega_{\delta U})$
and $\mu=\mu(U;\omega_U)$
(the only difference being that these are now
 in the augmented pair coloring model).
We now argue that $\omega_{\bL,j}=\prodom_{\bL,j}$ in all cases where $\bL$ is nice. Indeed, let
	\beq\label{e:max.disc.over.nondefective}
	D(\omega) = \max_e
	\Bigg\{
	\disc_e(\omega)
	: \textup{$e$ is nice}\Bigg\}\,,
	\eeq
and let $\emax$ be any nice edge that attains the maximum. We have two cases:
\begin{enumerate}[1.]
\item Suppose $\emax$ is an interior edge in a compound enclosure $U$. Recall from
Definition~\ref{d:enclosure} that
the boundary of $U$ consists of variables which are perfect (Definition~\ref{d:perfect.fair}), hence also orderly. We saw in Definition~\ref{d:orderly} that an orderly variable cannot lie within distance $(\DELTACONST)^3$ of any defect, so every $e\in\delta U$ is strongly non-defective in the sense of Remark~\ref{r:defective.clauses}. Thus Proposition~\ref{p:apriori} can be applied, and guarantees that the condition
\eqref{e:apriori.assump.omega.COMPOUND}
of Proposition~\ref{p:contraction.COMPOUND} is satisfied by every $e\in\delta U$
(again, recall from Definition~\ref{d:error.notation.edge} that we take $\zeta=\ZETA/4$). Then the result
\eqref{e:block.update.final} of Proposition~\ref{p:contraction.COMPOUND} 
(with $C_{\emax}=1$, since $\emax$ is nice)
gives
that $\nu=\optnu(U;\omega_{\delta U})$ must satisfy
	\[
	\disc_{\emax}(\nu)
	\le
	\f14\Bigg\{
	\max_{e\in\delta U} \disc_e(\omega)
	\Bigg\}
	\le \f{D(\omega)}{4}\,,
	\]
where the last bound uses that all edges of $\delta U$ must be nice. On the other hand, as we noted
in \eqref{e:mu.nu} above,
$\nu$ must coincide with $\mu=\mu(U;\omega)$
by Proposition~\ref{p:update.compound}, so in fact we must have
	\[
	D(\omega) =\disc_{\emax}(\omega)=\disc_{\emax}(\nu) 
	\le \f{D(\omega)}{4}\,.
	\]
This proves $D(\omega)=0$ for the case that $e$ lies in the interior of a compound enclosure.

\item Suppose instead that $\emax=(av)$ where $v$ is a non-compound variable. It follows from Definition~\ref{d:enclosure} that $v$ must be perfect, so by the same reasoning as above it cannot lie within distance $(\DELTACONST)^3$ of any defect. 
Thus, if $U$ is the depth-one neighborhood of $v$,
every $e\in\delta U$ must be strongly non-defective
in the sense of Remark~\ref{r:defective.clauses}. It follows by Proposition~\ref{p:apriori} that the condition~\eqref{e:apriori.assump.omega.Augmented}
of Proposition~\ref{e:apriori.assump.omega.Augmented} is satisfied for every $e\in\delta U$
(again using that $\zeta=\ZETA/4$). Then, the result
\eqref{e:depth.one.update.final} of Proposition~\ref{p:contraction.for.simple.var}
(or its consequence \eqref{e:depth.one.update.final.maxversion}) implies that $\nu=\optnu(U;\omega_{\delta U})$ must satisfy
	\[
	\disc_{\emax}(\nu)
	\le k^{O(1)} (\vth)^{1/4}
	\Bigg\{\max_{e\in\delta U} \disc_e(\omega)
	\Bigg\}
	\le k^{O(1)} (\vth)^{1/4}
	D(\omega) \,,
	\]
where the last bound again uses that all edges of $\delta U$ must be nice. On the other hand, we again have that $\nu$ must coincide with $\mu=\mu(U;\omega)$ by Proposition~\ref{p:block.update.non.compound}, so in fact we obtain the bound
	\[
	D(\omega)
	=\disc_{\emax}(\omega)
	=\disc_{\emax}(\nu)
	\le \f{D(\omega)}{4}\,.
	\]
This proves $D(\omega)=0$ for the case that $e$
does not lie in the interior of a compound enclosure.
\end{enumerate}
The above proves $D(\omega)=0$, i.e., the optimizer $\omega=\omega(\DD)$ coincides with the canonical product measure $\prodom$ on all nice edges. If $e$ is a non-nice edge, then it must lie in the interior of a compound enclosure $U$. In this case, the result \eqref{e:block.update.final} of Proposition~\ref{p:contraction.COMPOUND} gives,
with $C$ as in \eqref{e:choice.of.C},
	\beq\label{e:defective.nonquant.bound}
	\disc_e(\omega)
	\le \f{C}{4}\Bigg\{
	\max_{e\in\delta U} \disc_e(\omega)
	\Bigg\}
	\le \f{C D(\omega)}{4} = 0\,,
	\eeq
again using that all edges of $\delta U$ must be nice. This proves that $\omega=\omega(\DD)$ coincides with $\prodom$ on all edges, so that
$\prodom$ is the unique maximizer of $\bm{\Psi}$ on $\bm{I}_0$ as claimed. 
\smallskip

\noindent\bemph{Proof that $\bm{\Psi}=\bm{\Psi}_{\DD,2}$ has negative-definite Hessian at $\prodom$.}
Now suppose that $\omega\in\bm{I}_0$ is close in euclidean norm to the optimizer $\omega(\DD)=\prodom$. 
 As in \eqref{e:max.disc.over.nondefective}, 
let $D(\omega)$ denote the maximal discrepancy under $\omega$ over all nice edges, and let $\emax$
be any nice edge with $\disc_{\emax}(\omega)=D(\omega)$. If $\emax$ lies in the interior of a compound enclosure then let $U$ be that enclosure. If instead $\emax=(av)$ for a non-compound variable $v$, then let $U$ be the depth-one neighborhood of $v$. In either case we let
$\nu=\optnu(U;\omega_{\delta U})$
and $\mu=\mu(U;\omega_U)$, as in \eqref{e:mu.nu}. Note that since $\Ent$ is strictly concave,
 if $\omega$ is very close to $\prodom$ then $\nu=\optnu(U;\omega_{\delta U})$ must be very close to $\prodnu\equiv\optnu(U;\prodom_{\delta U})$, and likewise $\mu=\mu(U;\omega_U)$ must be very close to $\prodmu\equiv\mu(U;\prodom_U)$. But since we proved above that $\prodom$ is the unique optimizer for $\bm{\Psi}$ on $\bm{I}_0$, we must have 
$\prodnu=\prodmu$, and therefore $\mu$ must be very close to $\nu$. Next, by Lemma~\ref{l:relative.entropy} and the first calculation \eqref{e:relative.entropy.with.Lagrangian} in Remark~\ref{r:relative.entropy}, we have
	\beq\label{e:relent.first.app}
	\Ent(\nu)-\Ent(\mu)
	=\Bigg\{\Ent(\nu)-\Ent(\mu)\Bigg\}
	+\underbrace{
	\Big\langle\log\Lm, \nu-\mu\Big\rangle
	}_{\textup{zero}}
	= \Ent(\mu\,|\,\nu)
	\ge
	\max_e
	\Ent(\mu_e\,|\,\nu_e)
	\,,\eeq
where $\Lm=\Psi(U;\omega_{\delta U})$ from Proposition~\ref{p:contraction.COMPOUND}
in the compound case,
and $\Lm=\Psi_{\DD}(U;\omega_{\delta U})$
from Proposition~\ref{p:contraction.for.simple.var}
in the non-compound case. In the compound case, for $\mu$ sufficiently close to $\nu$, we have
	\[\Ent(\mu_{\emax}\,|\,\nu_{\emax})
	=\sum_\sigma
	\mu_{\emax}(\sigma)\log\f{\mu_{\emax}(\sigma)}
		{\nu_{\emax}(\sigma)}
	\ge \f13
	\sum_{\sigma}
	\f{(\mu_{\emax}(\sigma)-\nu_{\emax}(\sigma))^2}
		{\nu_{\emax}(\sigma)}
	\ge \f13 \Big(\|\mu_{\emax}-\nu_{\emax}\|_\infty\Big)^2\,.
	\]
In the non-compound case we can instead bound
	\beq\label{e:relent.lbd.worst.edge}
	\Ent(\mu_{\emax}\,|\,\nu_{\emax})
	=\sum_{\sigma,\bL}
	\mu_{\emax}(\sigma,\bL)\log\f{\mu_{\emax}(\sigma,\bL)}
		{\nu_{\emax}(\sigma,\bL)}
	\ge \f13
	\Big(\|\mu_{\emax}-\nu_{\emax}\|_\infty\Big)^2\,.
	\eeq
We remark that $\mu_e=\omega_e$ for all edges $e$ in $U$, since $\mu=\mu(U;\omega)$. Next, in the compound case we have from \eqref{e:def.discrepancy.measure.e} 
and the triangle inequality that
	\begin{align*}
	\disc_{\emax}(\omega)
	&=\sum_\sigma
	(\vth)^{\Ind{\red[\sigma]\ge1}}
	\Bigg| \f{\omega_{\emax}(\sigma)}{\prodom_{\emax}(\sigma)}
			-1
		\Bigg|
	=\sum_\sigma
	(\vth)^{\Ind{\red[\sigma]\ge1}}
	\Bigg| \f{\mu_{\emax}(\sigma)}{\prodom_{\emax}(\sigma)}
			-1
		\Bigg|\\
	&\le
	\underbrace{\sum_\sigma
	(\vth)^{\Ind{\red[\sigma]\ge1}}
	\Bigg| \f{\nu_{\emax}(\sigma)}{\prodom_{\emax}(\sigma)}
			-1
		\Bigg|}_{\disc_{\emax}(\nu)}
	+\underbrace{\sum_\sigma
	(\vth)^{\Ind{\red[\sigma]\ge1}}
	\Bigg|
	\f{\mu_{\emax}(\sigma)}{\prodom_{\emax}(\sigma)}-
	 \f{\nu_{\emax}(\sigma)}{\prodom_{\emax}(\sigma)}
		\Bigg|}_{\le
			{C_1(k)}
			\|\mu_{\emax}-\nu_{\emax}\|_\infty}\,,
	\end{align*}
where $C_1(k)$ is a constant depending only on $k$ (this uses the $\emax$ is a nice edge). Rearranging gives
	\[
	\f1{C_1(k)}
	\Bigg\{\disc_{\emax}(\omega)
	-\disc_{\emax}(\nu)
	\Bigg\}
	\le \|\mu_{\emax}-\nu_{\emax}\|_\infty\,.\]
Similarly, in the non-compound case we have from
\eqref{e:disc.augmented} and the triangle inequality that
	\beq\label{e:disc.triangle.ineq}
	\f1{C_1}
	\Bigg\{\disc_{\emax}(\omega)
	-\disc_{\emax}(\nu)
	\Bigg\}
	\le \|\mu_{\emax}-\nu_{\emax}\|_\infty
	\eeq
where $C_1$ depends only on $k$ and on the constant $c_1$ from Proposition~\ref{p:posfrac}.
By adjusting $C_1$ we can assume it is larger than the $C_1(k)$ from the preceding bound, so that 
\eqref{e:disc.triangle.ineq} holds for $\emax$ in both the compound and non-compound cases. Next, by the contraction results from Propositions~\ref{p:contraction.for.simple.var}
and \ref{p:contraction.COMPOUND}, we have
	\beq\label{e:apply.contraction}
	\disc_{\emax}(\nu)
	\le
	\f14
	\max_e\Bigg\{
	 \disc_e(\omega)
	 : \textup{$e$ is nice}\Bigg\}
	= \f14 \disc_{\emax}(\omega)\,,
	\eeq
where the last equality holds because $\emax$ was chosen to maximize $\disc_e(\omega)$ over all nice edges. (Observe that \eqref{e:apply.contraction} crucially uses the restriction to nice edges;
we recall from \eqref{e:block.update.final} that we have an extra factor $C$ (from \eqref{e:choice.of.C})
for non-nice edges.) It follows that
	\beq\label{e:main.result.of.contraction}
	\|\mu_{\emax}-\nu_{\emax}\|_\infty
	\stackrel{\eqref{e:disc.triangle.ineq}}{\ge} 
	\f1{C_1}
	\Bigg\{\disc_{\emax}(\omega)
	-\disc_{\emax}(\nu)
	\Bigg\}
	\stackrel{\eqref{e:apply.contraction}}{\ge} 
	\f3{4 C_1} \disc_{\emax}(\omega)
	=\f{3D(\omega)}{4 C_1} 
	\,.
	\eeq
Next, let $e_{\max}$ be the edge $e$ with maximal discrepancy $\disc_e(\omega)=D_{\max}(\omega)$ (this is over all edges, both nice and non-nice). If $e_{\max}$ is nice, then $D_{\max}(\omega)=D(\omega)$. 
More generally, if $D_{\max}(\omega) \le C^3 D(\omega)$ for $C$ as in \eqref{e:choice.of.C}, then the preceding bound \eqref{e:main.result.of.contraction} readily implies
	\beq\label{e:main.result.of.contraction.nice}
	\|\mu_{\emax}-\nu_{\emax}\|_\infty
	\ge \f{3D(\omega)}{4 C_1} 
	\ge \f{3D_{\max}(\omega)}{4 C_1 C^3}\,. 
	\eeq
Now suppose instead that $D_{\max}(\omega) \ge C^3 D(\omega)$. Then
	\[
	\|\nu_{e_{\max}}-\prodom_{e_{\max}}\|_\infty
	\le \f{\disc_{e_{\max}}(\nu)}{(\vth)^2}
	\stackrel{\eqref{e:block.update.final}}
		{\le}
	\f{C D(\omega)}{4(\vth)^2}
	\stackrel{\odot}{\le}
	\f{\disc_{e_{\max}}(\omega)}{4(\vth)^2 C^2}
	\le \f{\|\omega_{e_{\max}}-\prodom_{e_{\max}}
		\|_\infty}{4(\vth)^2 C}\,.
	\]
(In the above, the bound marked $\odot$ uses 
 the assumed lower bound on $D_{\max}(\omega)$.
 The last bound uses the definition \eqref{e:choice.of.C} of $C$.)
Since $(\vth)^2 C$ is a large constant, we can conclude for this case that
	\beq\label{e:main.result.of.contraction.bad}
	\|\mu_{e_{\max}}-\nu_{e_{\max}}\|_\infty
	\ge
	\|\omega_{e_{\max}}-\prodom_{e_{\max}}\|_\infty
	-\|\nu_{e_{\max}}-\prodom_{e_{\max}}\|_\infty
	\ge
	\f{\|\omega_{e_{\max}}-\prodom_{e_{\max}}\|_\infty}
		{2}
	\ge \f{D_{\max}(\omega)}{C}\,.
	\eeq
Combining \eqref{e:main.result.of.contraction.nice} and \eqref{e:main.result.of.contraction.bad} gives,
for some constant $C_2=C_2(k,R)$, the lower bound
	\beq\label{e:main.result.of.contraction.final}
	\max_e\|\mu_e-\nu_e\|_\infty
	\ge \f{D_{\max}(\omega)}{C_2}\,.
	\eeq
Since we assumed that each clause type $\bL$ occurs at least $c_1$ fraction of times, the total number of indices $(\bL,j)$ is at most $k/c_1$, so the euclidean distance between $\omega$ and $\prodom$ can be upper bounded as 
	\beq\label{e:euclidean.to.max}
	\Big(\|\omega-\prodom\|_2\Big)^2
	= \sum_{\bL,j}
		\Big(\|\omega_{\bL,j}-\prodom_{\bL,j}\|_2\Big)^2
	\le
	\f{C_3 k}{c_1}
	\Bigg( \max_e \disc_e(\omega)\Bigg)^2
	\le\f{C_3 k}{c_1} D_{\max}(\omega)^2
	\,.\eeq
Combining the bounds obtained so far gives 
	\begin{align*}
	\Ent(\nu)-\Ent(\mu)
	&\stackrel{\eqref{e:relent.first.app}}{\ge}
	\max_e \Ent(\mu_e\,|\,\nu_e)
	\stackrel{\eqref{e:relent.lbd.worst.edge}}{\ge}
	\max_e\f{(\|\mu_e-\nu_e\|_\infty)^2}{3}
	\stackrel{\eqref{e:main.result.of.contraction.final}}{\ge}
	\f13\bigg(
		\f{D_{\max}(\omega)}{C_3}
		\bigg)^2
	\\
	&\stackrel{\eqref{e:euclidean.to.max}}{\ge}
	\f13 \f{c_1(\|\omega-\prodom\|_2)^2}
	{C_3 k (C_2)^2 }
	\equiv \f1{C_4}
	\Big(\|\omega-\prodom\|_2\Big)^2\,.
	\end{align*}
To conclude, let $\omega_\nu\in\bm{I}_0$ be the edge empirical marginals of $\nu=\optnu(U;\omega_{\delta U})$. Since we already showed that $\prodom$ optimizes $\bm{\Psi}$ over $\bm{I}_0$, we must have
$\bm{\Psi}(\prodom) \ge \bm{\Psi}(\omega_\nu)$.
Combining with the results of Propositions~\ref{p:update.compound} and \ref{p:block.update.non.compound} gives
	\[
	\bm{\Psi}(\prodom)-\bm{\Psi}(\omega)
	\ge \bm{\Psi}(\omega_\nu)-\bm{\Psi}(\omega)
	\ge c_1
	\Bigg\{ \Ent(\nu)-\Ent(\mu)
	\Bigg\}
	\ge \f{c_1}{C_4}
	\Big(\|\omega-\prodom\|_2\Big)^2
	\,.
	\]
This proves that $\bm{\Psi}$ has negative-definite Hessian at $\prodom$, as claimed. As noted above, the result follows by applying Lemma~\ref{l:if.neg.def}.
\end{proof}

\subsection{Decomposition of compound enclosures}
\label{ss:breakup.of.compound.enclosures}
We now turn to the proof of Proposition~\ref{p:contraction.COMPOUND}.

\begin{dfn}[decomposition of a compound enclosure]
\label{d:joins}
Let $U$ be a compound enclosure: recall that this means $U=U^\circ\cup\pd_\circ U$ where $\pd_\circ U$ is the boundary of perfect variables. Suppose that an internal variable $v_\star\in U^\circ$ is designated as the ``root.'' Let $\KAPPA$ be the defect buffer depth from Definition~\ref{d:j.defective}, and let $\cM \equiv (\KAPPA)^{1/4}$. If a variable $u\in U^\circ$ lies at distance $\ell\cM^2$ from $v_\star$ for some positive integer $\ell$, and moreover does not lie within distance $5\cM^2$ of any non-nice variable, then we call $u$ a \bemph{terminal}. Let $\cX$ denote the set of all terminals in $U$. Let $\mathcal{A}(\cX)$ denote the set of all clauses $a$ in $U$ such that $a$ is the parent of a terminal variable (again, regarding $U$ as being rooted at $v_\star$). 
\begin{enumerate}[--]
\item If $J=J^\circ\cup\pd J^\circ$ where $J^\circ$ is the maximal connected component of $U\setminus\mathcal{A}(\cX)$ containing $v_\star$, and $\pd J^\circ\equiv\pd_\circ J$ is the set of variables in $U\setminus J^\circ$ at unit distance from $J^\circ$, then we call $J$ 
the \bemph{root join}.
\item If instead $J=J^\circ \cup(av) \cup (\pd(J^\circ\setminus v))$ where $J^\circ$ is a maximal connected component of $U\setminus\mathcal{A}(\cX)$ not containing $v_\star$, $v$ is the terminal variable in $J^\circ$ that lies closest to $v_\star$, and $a\in\mathcal{A}(\cX)$ is the parent clause of $v$, then we call $J$ a \bemph{non-root join}. We call $(av)$ the root edge of $J$.
\end{enumerate}
With a mild abuse of notation, we use $J$ to denote both the subset of variables and the induced subgraph of $U$. The join $J$ is termed \bemph{non-defective} if it does not intersect any defect. Otherwise we say that the join $J$ is \bemph{defective}. For any choice of $v_\star\in U^\circ$, this definition decomposes $U$ into joins. The joins naturally have a tree structure, where the root of the tree is the root join.
\end{dfn}

\begin{dfn}[recursive merging of joins]
\label{d:merge.joins} To prove Proposition~\ref{p:contraction.COMPOUND}, we will merge the joins recursively upwards. At each merge step, we fix an \bemph{uppermost join} $J$ which will connect the subtrees below. The root of $J$ is either 
$\vrt=v_\star$, or it is an edge $(\crt\vrt)$ where $\crt\in\mathcal{A}(\cX)$ and $\vrt\in\cX$. A leaf variable $u\in \Leaves J$ can lie arbitrarily close to $\vrt$, but the set $\cX\cap \Leaves J$ of leaves that are terminal variables lie at distance at least $\cM^2$ from $\vrt$. We let $\delta'J$ denote the edges in $J$ that are incident to $\cX\cap \Leaves J$. For each $e\in\delta'J$, we let $T_e$ be the subtree of $U$ descended from $e$. We then let $T$ be the merged tree formed from the union of $J$ with all the $T_e$ ($e\in\delta'J$). In the merge step,
 our goal will be to bound edge discrepancies (defined by \eqref{e:def.discrepancy.measure.e})
for the measure $\mu=\optnu(T;\omega_{\delta T})$
in terms of edge discrepancies for the measures $\nu(e)=\optnu(T_e;\omega_{\delta T_e})$.
\end{dfn}

\begin{rmk}\label{r:boundaries.of.joins}
Let $\Leaves U$ denote the set of all leaf variables of the compound enclosure $U$. The subset $\pd_\circ U\subseteq \Leaves U$ is the boundary of perfect variables, so these variables are not leaves in the full graph $\GG$. The variables in the complement $\Leaves U\setminus U$ are leaves in the full graph $\GG$ --- in particular, this means they are not nice (Definition~\ref{d:nice}), hence they are defective. 
We thus have two cases:
\begin{enumerate}[--]
\item If $J$ is a non-defective join, then all its leaf variables must belong either to $\pd_\circ U$ or to the terminal set $\cX$. In this case we let $\delta J$ denote the set of edges incident to all leaf variables of $J$.
\item If $J$ is a defective join, then it cannot intersect $\pd_\circ U$: this is because any defect has at its boundary a buffer of nice variables of depth at least $\KAPPA$ (Definition~\ref{d:j.defective}), so the $\KAPPA$-neighborhood of any variable in $\pd_\circ U$ is nice.
In this case we let $\delta J\equiv \delta' J$.
\end{enumerate}
These two cases will be treated separately in the analysis below.
\end{rmk}

\begin{ppn}[merge through a defective join]
\label{p:defect.join}
In the setting of Proposition~\ref{p:contraction.COMPOUND} and Definition~\ref{d:merge.joins}, consider a merge step where the uppermost join $J$ is defective. As in Definition~\ref{d:merge.joins},
let $T_e$ denote the subtrees descended from $e\in\delta J$, and $\nu(e)\equiv\optnu(T_e;\omega_{\delta T_e})$. Let $T$ be the merged tree, and
$\mu\equiv\optnu(T;\omega_{\delta T})$. Then for every edge $e'$ in $J\setminus\delta J$ we have
	\[
	\disc_{e'}(\mu)
	\le
	C_{e'} \cdot 2^{5k}
	\sum_{e\in\delta J}
	\disc_e(\nu(e))
	\]
for $C_{e'}$ as defined by \eqref{e:choice.of.C}.
\end{ppn}

\begin{ppn}[merge through a non-defective join]
\label{p:nondefect.join}
In the setting of Proposition~\ref{p:contraction.COMPOUND} and Definition~\ref{d:merge.joins}, consider a merge step where the uppermost join $J$ is non-defective.
 As in Definition~\ref{d:merge.joins},
let $T_e$ denote the subtrees descended from $e\in\delta'J$, and $\nu(e)\equiv\optnu(T_e;\omega_{\delta T_e})$. Let $T$ be the merged tree, and
$\mu\equiv\optnu(T;\omega_{\delta T})$. Then
	\[
	\sum_{e\in\delta'J}
		\disc_e(\mu)
	\le 4^{k\cM}
	\sum_{e\in\delta'J}
		\disc_e(\nu(e))\,,
	\]
where we recall that $\cM=(\KAPPA)^{1/4}$,
and the result holds provided
$\KAPPA \ge (240/\zeta)^4$ for $\zeta$ as in Definition~\ref{d:error.notation.edge}.\footnote{In fact we will see from the proof that
for Proposition~\ref{p:nondefect.join} it suffices here to have $\KAPPA > (108/\zeta)^4$, see \eqref{e:weaker.lbd.KAPPA}. We put the stronger condition
$\KAPPA \ge (240/\zeta)^4$ since this was already required for Proposition~\ref{p:contraction.COMPOUND}.}
\end{ppn}

Proposition~\ref{p:defect.join} is proved in \S\ref{ss:merge.defective},
while Proposition~\ref{p:nondefect.join} is proved in \S\ref{ss:merge.non.defect}.
We now explain how they combine to give 
Proposition~\ref{p:contraction.COMPOUND}:

\begin{proof}[\hypertarget{p:contraction.COMPOUND.proof}{{Proof of Proposition~\ref{p:contraction.COMPOUND}}}] Fix any edge $e_\star=(av)$ in $U\setminus\delta U$, and decompose $U$ as in Definition~\ref{d:joins} around $v_\star=v$. As in Proposition~\ref{p:nondefect.join}, consider a merge step where the uppermost join $J$ is non-defective. Let $T$ be the merged tree, and $\mu=\optnu(T;\omega_{\delta T})$. If $J$ contains $v_\star$, then $T=U$ and $\mu=\optnu(U;\omega_{\delta U})$. If $J$ does not contain $v_\star$, then $T$ is rooted at an edge $\ert=(\crt\vrt)$, and indeed 
	\[
	\mu
	=\optnu(T;\omega_{\delta T})
	=\optnu(T_{\ert};\omega_{\delta T_{\ert}})
	=\nu(\ert)\,,
	\]
where $\nu(\ert)$ is defined analogously to the measure $\nu(e)$ from Definition~\ref{d:merge.joins}. 
In either case, the marginal of $\mu$ on $J$ must be given by $\mu=\optnu(J;\mu_{\delta J})$. It follows by 
Proposition~\ref{p:nice.tree.lagrange} that
for any edge $(au)$ in $J$ we have
	\beq\label{e:contraction.in.good.J}
	\disc_{au}(\mu)
	\le
	k^{O(1)}
	\sum_{e\in\delta J}
		\bigg(
		\f{(\vth)^{1/4}}{2^k}
		\bigg)^{\BTW_J(e,a)}
		\disc_e(\mu)\,.
	\eeq
(To be pedantic, \eqref{e:contraction.in.good.J} follows directly from Proposition~\ref{p:nice.tree.lagrange} in the case that $J$ is rooted at an edge $\ert$. In the final merge step where $J$ is rooted at the variable $v_\star$, \eqref{e:contraction.in.good.J} follows from the variable-rooted analogue of Proposition~\ref{p:nice.tree.lagrange} --- this can be obtained by an extremely similar proof, so we will not elaborate on it here.) Recall that $\delta J$ denotes the set of edges in $J$ incident to all the leaf variables of $J$, while $\delta'J$ is the subset that are incident to terminal variables. If $e\in\delta J\setminus\delta'J$, then $\mu_e=\omega_e$. On the other hand, the contribution to the above sum from $e\in\delta'J$ is bounded by
Proposition~\ref{p:nondefect.join}. It follows that
	\begin{align}\nonumber
	\disc_{au}(\mu)
	&\le k^{O(1)}
	\sum_{e\in\delta J\setminus\delta'J}
		\bigg(
		\f{(\vth)^{1/4}}{2^k}
		\bigg)^{\BTW_J(e,a)}
		\disc_e(\omega)
	+k^{O(1)} 4^{k\cM}
		\bigg(
		\f{(\vth)^{1/4}}{2^k}
		\bigg)^{\cM^2}
	\sum_{e\in\delta'J}
		\disc_e(\nu(e))\\
	&\le
	k^{O(1)}
	\sum_{e\in\delta J\setminus\delta'J}
		\bigg(
		\f{(\vth)^{1/5}}{2^k}
		\bigg)^{\BTW_J(e,a)}
		\disc_e(\omega)
	+k^{O(1)} 
		\bigg(
		\f{(\vth)^{1/5}}{2^k}
		\bigg)^{\cM^2}
	\sum_{e\in\delta'J}
		\disc_e(\nu(e))\,,
	\label{e:recursive.disc.bound.good.case}
	\end{align}
where the last bound holds for $\cM$ large enough: to be precise, we require
	\[
	4^{k\cM}
	\le\bigg( \f{(\vth)^{1/5}}{(\vth)^{1/4}}\bigg)^{\cM^2}
	=\bigg( \f1{(\vth)^{1/20}}\bigg)^{\cM^2}
	= 2^{k\zeta \cM^2/120}\,,
	\]
where the last equality uses that $\vth\equiv 2^{-k\zeta/6}$ from Definition~\ref{d:error.notation.edge}. Thus it suffices here to have 
	\[\KAPPA
	\equiv \cM^4
	\ge \bigg( \f{240}{\zeta}\bigg)^4\,.
	\]
Next note for $e\in\delta J$, the discrepancy $\disc_e(\nu(e))$ can be bounded either by
a recursive application of \eqref{e:recursive.disc.bound.good.case} if $e$ is the root of another non-defective join, or by Proposition~\ref{p:defect.join} if $e$ is the root of a defective join. Altogether it gives that for
the edge $e_\star=(av)$ where $v=v_\star$ (and with $C_{e_\star}$ as in \eqref{e:choice.of.C}), 
the maximal-entropy measure
$\mu=\optnu(U;\omega_{\delta U})$
for the full enclosure satisfies 
	\[
	\f{\disc_{e_\star}(\mu)}{C_{e_\star}}
	\le
	\sum_{e\in\delta U}
		\bigg(
		\f{(\vth)^{1/5}}{2^k}
		\bigg)^{\BTW_U(e,a)}
		\Bigg\{
		2^{5k} \bigg(
		\f{2^k}{(\vth)^{1/5}}
		\bigg)^{\cM^2}
		\Bigg\}^{\mathfrak{B}(v_\star,e)}
		\disc_e(\omega)\,,
	\]
where $\mathfrak{B}(v_\star,e)$ is the number of defective variables between $v_\star$ and $e$, and is a crude upper bound on the number of defective joins intersecting the path between $v_\star$ and $e$. The above can be upper bounded as
	\begin{align}\nonumber
	\f{\disc_{e_\star}(\mu)}{C_{e_\star}}
	&\le
	\Bigg(\max_{e\in\delta U} \disc_e(\omega)
	\Bigg)
	\sum_{u\in\pd_\circ U}
		\f{
		(2^{2k\cM^2})^{\mathfrak{B}(v_\star,u)}
		}
		{(2^k (\vth)^{-1/5})^{d(v_\star,u)}}\\
	&\le 
	\Bigg(\max_{e\in\delta U} \disc_e(\omega)
	\Bigg)
	\sum_{u\in\pd_\circ U}
	\f{ \exp\{ k(\DELTACONST)^{-1}
		\mathfrak{B}(v_\star,v) \} }
	{ \exp\{ (k\log2)
		(1+\DELTACONST) d(v_\star,v) \}}\,,
	\label{e:constrain.kappa.delta}
	\end{align}
where for the last inequality
in \eqref{e:constrain.kappa.delta} to hold we make the following choices. First, in the numerator 
we can ensure that 
$2^{2\cM^2}$ is upper bounded by
$\exp(1/\DELTACONST)$ by requiring
	\[
	\f{\log2}{2} \ge \cM^2 \DELTACONST 
	= (\KAPPA)^{1/2} \DELTACONST \,.\]
In the denominator we can ensure that 
$(\vth)^{-1/5} = 2^{k\zeta/30}$ upper bounds $2^{k\DELTACONST}$ by requiring
	\[
	\DELTACONST \le \f{\zeta}{30}\,.
	\]
These choices together guarantee the last bound in \eqref{e:constrain.kappa.delta}. It follows by combining with the 
definition of a compound enclosure (cf.~\eqref{eq-def-rad}) that
	\[\f{\disc_{e_\star}(\mu)}{C_{e_\star}}
	\le \f14
	\Bigg(\max_{e\in\delta U}
		\disc_e(\omega)
	\Bigg)\,.\]
This proves
the result.\end{proof}

\subsection{Merging through defective joins}
\label{ss:merge.defective}


In this subsection
we prove Proposition~\ref{p:defect.join}.
The idea of the proof is to
reweight the boundary of a defective join
in such a way as to circumvent difficulty of analyzing tree recursions within a defect.


\begin{proof}[Proof of Proposition~\ref{p:defect.join}]
Recall from Definition~\ref{d:merge.joins}
the root of $J$ is either $\vrt=v_\star$, 
or it is an edge $(\crt\vrt)$.
In the first case we let $\Lmstar_J$ be as given by
Corollary~\ref{c:clause.bp.weights}. In the second case we let $\Lmstar_J$ be given by
\eqref{e:Lmstar.T.short.defn} from Remark~\ref{r:adjusted.bp.stability},
i.e., we give weight $\hqstar_{\crt\vrt}$ to the root clause $\crt$. Note that $\Lmstar_J$ includes a weight $\dqbul_e$ for each $e\in\delta J$. Therefore, for a single-copy coloring $\uta_{J\setminus\delta J}$ on $J\setminus\delta J$, we can let
	\[
	\Testar_{J\setminus\delta J}
		(\uta_{J\setminus\delta J})
	\equiv \Lmstar_J(\uta_J)
		\prod_{e\in\delta J}
	\f1{\dqbul_e(\tau_e)}\,,
	\]
where this is well-defined because the right-hand side does not depend on $\uta_{\delta J}$. We then let 
$\prodTe\equiv\Testar\otimes\Testar$.

Next, recall from Remark~\ref{r:boundaries.of.joins} that $\delta J=\delta'J$ is the set of boundary edges of $J$. Let $A$ be the subset of clauses in $J$ that are incident to $\delta J$, so
$A$ is a subset of the set $\mathcal{A}(\cX)$
from Definition~\ref{d:joins}. Let $e=(au)\in\delta J$, so $a\in A$ and $u$ is a terminal variable. Recall that $T_e$ is the subtree of $U$ descended from $e$. Let $\Lm_{T_e}=\Lm(T_e;\omega_{\delta T_e})$ be the Lagrangian weights on $T_e$ such that
$\nu(e)=\optnu(T_e,\omega_{\delta T_e})$
agrees with $\nu[T_e;\Lm_{T_e}]$. We assume moreover that they are parametrized as in Definition~\ref{d:param.weights.Lambda}, meaning that the root clause $a(e)$ of $T_e$ receives weight $\prodhq_e$ under $\Lm_{T_e}$. Write $q\equiv\BPq(T_e;\Lm_{T_e})$ for the associated \textsc{bp} messages, so in particular the downward message on $e$ is
 $\hq_e=\prodhq_e$. It will follow by induction that
 the upward message
 $\dq_e$ satisfies the bounds \eqref{e:first.update.at.boundary}
for each edge $e\in \delta J$.\smallskip

\noindent
\textbf{Step 1. Weights on merged tree.}
We now apply Proposition~\ref{p:pair.clause.weights}
to each clause $a\in A$ where we take incoming messages
$\dq_{ua}$ from below as just discussed,
but the canonical product message $\dq_{va}=\proddq_{va}$ from the parent variable $v$.
Let $h$ denote the resulting outgoing messages from the unweighted clause $a$, that is,
 $h=\BP_a[\dq]$.
Proposition~\ref{p:pair.clause.weights}
gives a clause weighting
$\Gamma_a\equiv (\gamma_e)_{e\in\delta a}$ such that for these input messages to $a$, the $\Gamma_a$-weighted clause \textsc{bp} recursion \bemph{outputs the canonical product message} $\prodhq$ on every edge in $\delta a$. For a pair configuration $\usi\equiv\usi_T$ on the merged tree $T$, we define the weight
	\[
	\Theta(\usi)
	\equiv
	\prodTe_{J\setminus\delta J}
		(\usi_{J\setminus\delta J})
	\Bigg\{
	\prod_{a\in A} \Gamma_a(\usi_{\delta a})
	\Bigg\}
	\Bigg\{
	\prod_{e\in \delta J}
	\f{\Lm_{T_e}(\usi_{T_e})}{\prodhq_e(\sigma_e) }
	\Bigg\}\,.
	\]
Recall from above that the clause $a(e)$ receives weight $\prodhq_e$ under $\Lm_{T_e}$, and in the above expression we divided by $\hq_e(\sigma_e)$ to remove this weight. Likewise, $\prodLm_J$ includes a weight $\proddq_e$ for each $e\in\delta J$, and we also divided by $\proddq_e(\sigma_e)$ to remove this weight. Let $\nu\equiv\nu[T;\Theta]$.
Let $e(a)$ denote the parent edge of $a$, i.e., the unique element of $\delta a\setminus\delta J$.
 The marginal of $\nu$ on $J\setminus\delta J$ is
	\begin{align}\nonumber
	\nu_{J\setminus\delta J}(\usi_{J\setminus\delta J})
	&\cong
	\prodTe_{J\setminus\delta J}
		(\usi_{J\setminus\delta J})
	\Bigg\{
	\sum_{\usi_{\delta J}}
	\prod_{a\in A}\Bigg(
	\Gamma_a(\usi_{\delta a})
	\prod_{e\in\delta a\setminus e(a)}
	\bigg[
	\sum_{\usi_{T_e\setminus J}}
	\f{\Lm_{T_e}(\usi_{T_e})}{\prodhq_e(\sigma_e)}
		\bigg]
	\Bigg)\Bigg\}\\ \nonumber
	&\stackrel{\textit{(a)}}{\cong}
	\prodTe_{J\setminus\delta J}
		(\usi_{J\setminus\delta J})
	\Bigg\{
	\prod_{a\in A}\Bigg(
	\sum_{\usi_{\delta a \setminus e(a)}}
	\Gamma_a(\usi_{\delta a})
	\prod_{e\in\delta a\setminus e(a)} \dq_e(\sigma_e)
	\Bigg)\Bigg\}\\
	&\stackrel{\textit{(b)}}{\cong}
	\prodTe_{J\setminus\delta J}
		(\usi_{J\setminus\delta J})
	\Bigg\{ \prod_{e'\in\delta A\setminus\delta J}
	\prodhq_{e'}(\sigma_{e'})
	\Bigg\}
	\cong
	\prodnu_{J\setminus\delta J}
	(\usi_{J\setminus\delta J})\,,
	\label{e:Theta.marginal.A}
	\end{align}
where $\prodnu=\nustar\otimes\nustar$ is the canonical product measure. (In the above calculation, the step marked \textit{(a)} uses that under the $\Lm_{T_e}$-weighted measure on $T_e$, the marginal on edge $e$ is proportional to $\prodhq_e\dq_e$. The step marked \textit{(b)} uses that the $\Gm_a$-weighted \textsc{bp} recursion outputs the canonical product message $\prodhq$ on the edge $e(a)$.) Similarly, for any $e=(bu)\in\delta J$,
 the marginal of $\nu$ on $T_e$ is given by
	\begin{align}\nonumber
	\nu_{T_e}(\usi_{T_e})
	&\cong
	\f{\Lm_{T_e}(\usi_{T_e})}{\prodhq_e(\sigma_e) }
	\sum_{\usi_{J\setminus\delta J}}
	\prodTe_{J\setminus\delta J}
		(\usi_{J\setminus\delta J})
	\Bigg\{
	\sum_{\usi_{\delta b	\setminus\set{e,e(b)}}}
	\Gm_b(\usi_{\delta b})
	\prod_{e'\in\delta b\setminus\set{e,e(b)}}
	\bigg[
	\sum_{\usi_{T_{e'} \setminus J}}
	\f{\Lm_{T_{e'}}(\usi_{T_{e'}})}{\prodhq_{e'}(\sigma_{e'}) }
	\bigg]
	\Bigg\}\\ \nonumber
	&\qquad\times\Bigg\{
	\prod_{a\in A\setminus b}
	\Bigg(
	\sum_{\usi_{\delta a\setminus e(a)}}
		\Gamma_a(\usi_{\delta a})
		\prod_{e''\in\delta a\setminus e(a)}
		\bigg[
		\sum_{\usi_{T_{e''} \setminus J}}
		\f{\Lm_{T_{e''}}(\usi_{T_{e''}})}
			{\prodhq_{e''}(\sigma_{e''}) }
		\bigg]
	\Bigg)\Bigg\}\\ \nonumber
	&\cong
	\f{\Lm_{T_e}(\usi_{T_e})}{\prodhq_e(\sigma_e) }
	\sum_{\usi_{J\setminus\delta J}}
	\prodTe_{J\setminus\delta J}
		(\usi_{J\setminus\delta J})
	\Bigg\{
	\sum_{\usi_{\delta b	\setminus \set{e,e(b)}}}
	\Gm_b(\usi_{\delta b})
	\prod_{e'\in\delta b
		\setminus \set{e,e(b)}}
	\dq_{e'}(\sigma_{e'})
	\Bigg\}
	\prod_{e''\in\delta A\setminus(\delta J\cup\delta b)}
	\prodhq_{e''}(\sigma_{e''})\\
	&\cong
	\f{\Lm_{T_e}(\usi_{T_e})}{\prodhq_e(\sigma_e) }
	\sum_{\usi_{\delta b	\setminus e}}
	\Gm_b(\usi_{\delta b})
	\Bigg\{
	\proddq_{e(b)}(\sigma_{e'})
	\prod_{e'\in\delta b
		\setminus \set{e,e(b)}}
	\dq_{e'}(\sigma_{e'})
	\Bigg\}
	\cong \Lm_{T_e}(\usi_{T_e})\,,
	\label{e:Theta.marginal.B}
	\end{align}
where the last step uses that the $\Gm_a$-weighted \textsc{bp} recursion outputs the canonical
product message $\prodhq$ on the edge $e$. This shows that the marginal of $\nu$ on $T_e$ is simply $\nu(e)=\optnu(T_e;\omega_{\delta T_e})$.
It follows that
$\nu\in\Judicious(T;\omega_{\delta T})$.
Consequently, $\mu=\optnu(T;\omega_{\delta T})$ is the entropy maximizer over
$\Judicious(T;\omega_{\delta T})$, then
$\Ent(\mu)\ge\Ent(\nu)$.
It follows by recalling Lemma~\ref{l:relative.entropy}
and Remark~\ref{r:relative.entropy} 
(in particular, the calculation
\eqref{e:rel.entropy.bound.by.weights}) that
	\beq\label{e:entropy.calculation}
	\Ent(\mu\,|\,\nu)
	= \Bigg\{ \Ent(\nu)-\Ent(\mu)\Bigg\}
	+\Big\langle \nu-\mu,
		\log\Theta\Big\rangle
	\le
	\Big\langle \nu-\mu,
		\log\Theta\Big\rangle.
	\eeq
Recall that the weights in $\Theta$ are of product form, except on the boundary edges $\delta T$
and on the edges $\delta A$. Since $\mu$ and $\nu$ agree on the subtrees $T_e$, and have the same single-copy marginals (given by $\starpi$) on all edges of the merged tree $T$, the only contribution to the right-hand side of 
\eqref{e:entropy.calculation} comes from the 
edges
 $e'\in\delta A\setminus\delta J$:
	\begin{align}\nonumber
	\Ent(\mu\,|\,\nu)
	&\le\adjustlimits
	\sum_{e'\in\delta A \setminus\delta J}
	\sum_\sigma
	\Bigg|
		\bigg\{
		\nu_{e'}(\sigma)
		-\mu_{e'}(\sigma)\bigg\}
		\log \gamma_{e'}(\sigma)
		\Bigg|\\
	&\le\adjustlimits
	\sum_{e'\in\delta A \setminus\delta J}
	\|\log\gamma_{e'}\|_\infty
	\|\mu_{e'}-\prodom_{e'}\|_\infty\,,
	\label{e:entropy.ubd.by.contribution.from.bdy}
	\end{align}
where the last step uses that 
$\nu_{e'}=\prodom_{e'}$ for all $e'\in J\setminus \delta J$. \smallskip

\noindent\bemph{Step 2. Bound on clause weights.}
We first bound the $\log\gamma_{e'}$ term in \eqref{e:entropy.ubd.by.contribution.from.bdy}.
For each $a\in A$, there is a single edge $e'=e(a)$ in $\delta a\setminus \delta J$. A crude application of Proposition~\ref{p:pair.clause.weights} gives
	\beq\label{e:log.gamma.crude.bound.eps}
	\|\log\gamma_{e(a)}\|_\infty
	\le
	k^{O(1)}
	\sum_{e''\in\delta a \setminus e(a)}
	\bigg(
	\ep_{e''}+\dot{\ep}_{e''}+\ddot{\ep}_{e''}
	\bigg)\,,
	\eeq
where $\vec{\ep}=(\ep_{e''},\dot{\ep}_{e''},\ddot{\ep}_{e''})
=\crelerr(\prodhq_{e''},h_{e''})$
 is the error between 
the unweighted messages $h=\BP_a[\dq]$
and the weighted messages $\prodhq=\BP_a[\dq;\Gamma_a]=\BP_a[\proddq]$. A crude application of Proposition~\ref{p:clause.update} 
gives, for $e''\in\delta a\setminus e(a)$,
	\beq\label{e:log.gamma.crude.bound.delta}
	\ep_{e''}+\dot{\ep}_{e''}+\ddot{\ep}_{e''}
	\le
	k^{O(1)}
	\sum_{e\in\delta a\setminus e(a)}
	\bigg(
	\delta_e
	+\dot{\delta}_e
	+\ddot{\delta}_e
	+\mdel_e
	+\mdelred_e
	\bigg)\,,
	\eeq
where $\vec{\delta}=(\delta_e,\dot{\delta}_e,\ddot{\delta}_e,\mdel_e,\mdelred_e)=\vrelerr(\proddq_e,\dq_e)$.
To bound the error between $\dq_e$ and $\proddq_e$ for $e\in\delta J$, note
	\[
	\dq_e(\sigma)
	\cong \f{\nu_e(\sigma)}{\prodhq_e(\sigma)}
	\stackrel{\eqref{e:def.discrepancy.measure.e}}{=}
	\f{\prodom_e(\sigma)}{\prodhq_e(\sigma)}
	\Bigg\{
	1 + O\bigg(\f{\disc_e(\nu)}{\vth}\bigg)
	\Bigg\}
	\cong
	\proddq_e(\sigma)
	\Bigg\{
	1 + O\bigg(\f{\disc_e(\nu)}{\vth}\bigg)
	\Bigg\}\,,
	\]
so each entry of $\vec{\delta}$ is
$O((\vth)^{-1} \disc_e(\nu))$.
Combining the last few bounds gives (again very crudely)
	\beq\label{e:apply.cor.clause.weights}
	\|\log\gamma_{e(a)}\|_\infty
	\le
	2^k 
	\sum_{e\in\delta a \setminus e(a)}
	\disc_e(\nu)
	=2^k 
	\sum_{e\in\delta a \setminus e(a)}
	\disc_e(\nu(e))
	\,,
	\eeq
for each edge $e'\in\delta A\setminus\delta J$.
\smallskip

\noindent\bemph{Step 3. Conclusion.} 
To conclude we note that for each $e\in\delta A$, we have
	\[
	\|\mu_e-\prodom_e\|_\infty
	\le
	\|\prodom_e\|_\infty
	\Bigg\|\f{\mu_e}{\prodom_e}-1\Bigg\|_\infty
	\le 
	\Bigg\|\f{\mu_e}{\prodom_e}-1\Bigg\|_\infty
	\stackrel{\eqref{e:def.discrepancy.measure.e}}{\le}
	2^k\disc_e(\mu)\,.
	\]
Combining with \eqref{e:apply.cor.clause.weights}
and substituting into \eqref{e:entropy.ubd.by.contribution.from.bdy} gives
	\begin{align}\nonumber
	\Ent(\mu\,|\,\nu)
	&\le 2^{2k} 
		\sum_{a\in A}
		\Bigg(\sum_{e\in\delta a
			\setminus e(a)}
		\disc_e(\nu(e))
		\Bigg)
		\Bigg(
		\sum_{e'\in \delta a \setminus \delta J}
			\disc_{e'}(\mu)
		\Bigg)\\
	&\le 2^{2k}
	\Bigg(\max_{e'\in J\setminus \delta J}
	\f{\disc_{e'}(\mu)}{C_{e'}}
	\Bigg)\Bigg(
	\sum_{e\in\delta J} \disc_e(\nu(e))
	\Bigg)
	\label{e:entropy.bd.defective}
	\end{align}
where $C_{e'}$ is as in the statement of the proposition (and the last bound holds simply because $C_{e'}=1$ for all $e'\in\delta A$, since those edges must be nice). On the other hand, for any $e'\in J\setminus\delta J$, we have
	\[
	\Ent(\mu\,|\,\nu)
	\ge \Ent(\mu_{e'}\,|\,\nu_{e'})
	= \Ent(\mu_{e'}\,|\,\prodom_{e'})
	\ge
	\f1{2^{3k}}
	\bigg(\f{\disc_{e'}(\mu)}{C_{e'}}\bigg)^2\,,
	\]
where we take $C_{e'}$ as in \eqref{e:choice.of.C} to account for non-nice edges where 
the minimum of $\starpi_{e'}$ on its support may be small. Combining the last two displays gives
	\[
	\max_{e'\in J\setminus \delta J}
	\f{\disc_{e'}(\mu)}{C_{e'}}
	\le 2^{5k}
	\sum_{e\in\delta J} \disc_e(\nu(e))\,,
	\]
as claimed. 
\end{proof}

\subsection{Merging
through non-defective joins}
\label{ss:merge.non.defect}

We conclude the section with the proof of
Proposition~\ref{p:nondefect.join}, showing that for a non-defective join, the total boundary discrepancy under $\mu=\optnu(T;\omega_{\delta T})$ 
(the optimizer on the merged tree)
is not too much larger than the total boundary discrepancy under the measures
$\nu(e)=\optnu(T_e;\omega_{\delta T_e})$ 
(the optimizer on the subtrees $T_e$, for $e\in\delta'J$).

\begin{proof}[Proof of Proposition~\ref{p:nondefect.join}] Recall that $\mu\equiv\optnu(T;\omega_{\delta T})$, while $\nu(e)=\optnu(T_e;\omega_{\delta T_e})$ for $e\in\delta'J$.\smallskip

\noindent\bemph{Step 1. Identification of small subtree with large discrepancy.} 
For the sake of contradiction, let us suppose the desired bound false, i.e., that we have
	\beq\label{e:nondefective.join.false.bound}
	\sum_{e\in\delta'J}
		\disc_e(\mu)
	> 4^{k\cM}
	\sum_{e\in\delta'J}
		\disc_e(\nu(e))\,.
	\eeq
Recall from Definition~\ref{d:merge.joins} that 
each variable in $\cX\cap\Leaves J$ lies at distance exactly $\cM^2$ from the root of $J$, and
$\delta'J$ denotes the edges in $J$ that are incident to $\cX\cap\Leaves J$. Similarly to \eqref{e:clause.xi}, for any clause $a$ in $J$ we define
	\[
	\xi_a(J;\mu_{\delta'J})
	\equiv
	\sum_{e\in\delta'J}
	\bigg(\f{(\vth)^{1/4}}{2^k}
		\bigg)^{\BTW_J(e,a)}
		\disc_e(\mu)\,.
	\]
Now let $A$ be the subset of clauses in $J$ at distance exactly $\cM$ from $\delta'J$. For any $e\in\delta'J$ and any $j\ge0$, we have
	\[
	\Bigg|\bigg\{a\in A:
	\BTW_J(e,a)=\cM+j\bigg\}\Bigg|
	\le (dk)^{\lceil j/2 \rceil }\,.
	\]
It follows from this that
	\begin{align}\nonumber
	\sum_{a\in A} \xi_a(J;\mu_{\delta'J})
	&=\sum_{e\in\delta'J}\disc_e(\mu)
	\sum_{a\in A}
	\bigg(\f{(\vth)^{1/4}}{2^k}\bigg)^{\BTW_T(e,a)}
	\le
	\sum_{e\in\delta'J}\disc_e(\mu)
	\sum_{j=0}^\infty
	\bigg(\f{(\vth)^{1/4}}{2^k}\bigg)^{\cM+j}
	(dk)^{\lceil j/2\rceil}\\ \nonumber
	&\le
	\Bigg\{
	\bigg(\f{(\vth)^{1/4}}{2^k}\bigg)^{\cM}
	\sum_{e\in\delta'J}\disc_e(\mu)\Bigg\}
	\Bigg(
	1 +2 \sum_{\ell=0}^\infty
		(dk)^\ell
		\bigg(
		\f{(\vth)^{1/4}}{2^k}\bigg)^{2\ell-1}
	\Bigg)\\
	&\le \bigg(\f{k(\vth)^{1/4}}{2^k}\bigg)^{\cM}
	\sum_{e\in\delta'J}\disc_e(\mu)\,.
	\label{e:nondefective.join.xi.bound}
	\end{align}
For $a\in A$, write $J_a$ for the subtree of $J$ descended from $a$. Write $\delta J_a\equiv J_a\cap\delta'J$. Combining \eqref{e:nondefective.join.false.bound} and \eqref{e:nondefective.join.xi.bound} gives
	\[
	\sum_{a\in A}
	\Bigg\{
	2\sum_{e\in\delta J_a}\disc_e(\mu)
	- \bigg( \f{2^k}{k(\vth)^{1/4}}\bigg)^{\cM}
		\xi_a(J;\mu_{\delta'J})
	- 4^{k\cM} \sum_{e\in\delta J_a}
		\disc_e(\nu(e))
	\Bigg\}\ge0\,,
	\]
so we can find some $a\in A$ for which
the expression in braces is non-negative. 
This implies two bounds: first,
	\beq\label{e:contradiction.a1}
	\xi_a(J;\mu_{\delta'J})
	\le 2 \cdot \bigg( \f{k(\vth)^{1/4}}{2^k}
	\bigg)^{\cM}
		\sum_{e\in\delta J_a} \disc_e(\mu)
	\le (k^4 (\vth)^{1/4})^\cM
		\Bigg\{
		\max_{e\in\delta'J_a} 	
		\disc_e(\mu)\Bigg\}\,,\eeq
where the last bound uses that
$|\delta J_a| \le (k^3 2^k)^{\cM}$. Secondly, we must also have
	\beq\label{e:contradiction.a2}
	\max_{e\in\delta J_a}
		\disc_e(\nu(e))
	\le\sum_{e\in\delta J_a}
		\disc_e(\nu(e))
	\le \f{2}{4^{k\cM}}
		\sum_{e\in\delta J_a}\disc_e(\mu)
	\le \bigg( \f{k^4}{2^k}\bigg)^{\cM}
	\Bigg\{\max_{e\in\delta J_a}
		\disc_e(\mu)\Bigg\}\,.
	\eeq
In the remainder of the proof we derive a contradiction.\smallskip

\noindent
\bemph{Step 2. Weights on merged tree.}
First, let $\Lm=\Lm_T=\Lm(T;\omega_{\delta T})$
denote the Lagrangian weights such that $\nu[T;\Lm]$ coincides with $\mu=\optnu(T;\omega_{\delta T})$.
We write $q$ for the corresponding \textsc{bp} messages. For the distinguished clause $a$, apply
Proposition~\ref{p:pair.clause.weights}
to obtain a clause weighting
$\Gm_a\equiv (\gm_e)_{e\in\delta a}$
such that
	\begin{align*}
	\hq_{av}
	&=\BP_{av}\bigg[
	\Big(\proddq_{ua} : u\in\pd a\setminus v\Big)
	;\Gm_a
	\bigg]\quad
	\textup{for the parent $v$ of $a$,}\\
	\prodhq_{au}
	&=\BP_{au}\bigg[
	\Big(\dq_{av}, 
	(\proddq_{u'a} : u'
		\in\pd a\setminus\set{u,v} )
	\Big)
	;\Gm_a
	\bigg]
	\quad
	\textup{for each child $u$ of $a$.}
	\end{align*}
Next, for each $e\in\delta J$,
recall that $\Lm_{T_e}\equiv \Lm(T_e;\omega_{\delta T_e})$ denotes the Lagrangian weights such that $\nu[T_e;\Lm_{T_e}]$ coincides with $\nu(e)=\optnu(T_e;\omega_{\delta T_e})$. We write $\tilde{q}$ for the corresponding \textsc{bp} messages. Let $A_a$ denote the clauses in $J_a$ incident to $\delta J_a$. For each clause $b\in A_a$,
apply Proposition~\ref{p:pair.clause.weights} again
to obtain a clause weighting $\Gm_b=(\gm_e)_{e\in\delta b}$
such that 
	\begin{align*}
	\prodhq_{bw}
	&=\BP_{bw}\bigg[
		\Big( \tilde{q}_{ua} : u\in\pd b\setminus w
		\Big)\bigg]
	\quad
	\textup{for the parent variable $w$ of $b$,}\\
	\prodhq_{bu}
	&= \BP_{bu}\bigg[
	\Big( \proddq_{wb},
		( \tilde{q}_{u'b} : u'\in\pd b\setminus
			\set{u,w})
		 \Big)
	\bigg]
	\quad
	\textup{for each child $u$ of $b$.}
	\end{align*}
Let $T_a$ denote the subtree of $T$ descended from $a$. Recall that $\Lm_T=\Lm(T;\omega_{\delta T})$
is a product of variable factors. Let
$\Lm_{T,T_a}(\usi_{T_a})$ denote the product
of those factors over the variables in $T_a$ only, and define
	\[
	\Lm_{T,T\setminus T_a}(\usi_{T\setminus T_a})
	\equiv 
	\f{\Lm_T(\usi)}{\Lm_{T,T_a}(\usi_{T_a})}\,.
	\]
For a pair configuration $\usi\equiv\usi_T$ on the merged tree $T$, we define the weight
	\[
	\Theta(\usi)
	\equiv
	\Bigg\{
	\Lm_{T,T\setminus T_a}(\usi_{T\setminus T_a})
	\Gm_a(\usi_{\delta a})
	\f{\prodTe_{J_a\setminus\delta J_a}
		(\usi_{J_a\setminus\delta J_a})}
		{\prodhq_{av}(\sigma_{av}) }
	\Bigg\}
	\Bigg\{
	\prod_{b\in A_a}\Gm_b(\usi_{\delta b})
	\Bigg\}
	\Bigg\{\prod_{e\in\delta J_a}
	\f{\Lm_{T_e}(\usi_{T_e})}
		{\prodhq_e(\sigma_e)}\Bigg\}\,.
	\]
Let $\tilde{\mu}=\nu[T;\Theta]$. It follows by similar calculations as
\eqref{e:Theta.marginal.A}
and \eqref{e:Theta.marginal.B}
that $\tilde{\mu}$ satisfies the following:
\begin{enumerate}[(i)]
\item Its marginal on $T\setminus T_a$
	agrees with that of $\mu
			= \optnu(T;\omega_{\delta T})$;
\item Its marginal on $J_a$
	agrees with that of $\prodnu
		=\optnu(T;\prodom_{\delta T})$;
\item 
	Its marginal on $T_e$
	(for each $e\in\delta J_a$)
	agrees with that of
	$\nu(e)
		= \optnu(T_e;\omega_{\delta T_e})$.
\end{enumerate}
In particular, $\tilde{\mu}\in\Judicious(T;\omega_{\delta T})$.
Recalling Lemma~\ref{l:relative.entropy}
and Remark~\ref{r:relative.entropy}
(in particular the calculation \eqref{e:rel.entropy.bound.by.weights}), we have
	\[\Ent(\mu\,|\,\tilde{\mu})
	=\Bigg\{
	\underbrace{
	\Ent(\tilde{\mu})- \Ent(\mu)
	}_{\textup{nonpositive}}
	\Bigg\}
	+\Big\langle \tilde{\mu}-\mu,\log\Theta\Big\rangle
	\le
	\Big\langle \tilde{\mu}-\mu,
	\log\Theta\Big\rangle\,.\]
Since $\mu$ and $\tilde{\mu}$ are both judicious, and agree on $T\setminus T_a$, we have
	\beq\label{e:nondefective.merge.A.B} 
	\Ent(\mu\,|\,\tilde{\mu})
	\le \Big\langle \tilde{\mu}-\mu,
	\log\Theta\Big\rangle 
	\le\Bigg\{
	\underbrace{\sum_{e\in\delta a \cap J_a}
	\Big\langle
	\tilde{\mu}_e-\mu_e, \log\gamma_e\Big\rangle
	}_{\textup{denote this $\mathcal{A}$}}\Bigg\}
	+\Bigg\{\underbrace{
	\sum_{b\in A_a}
	\sum_{e\in \delta b}
	\Big\langle
	\tilde{\mu}_e-\mu_e, \log\gamma_e\Big\rangle
	}_{\textup{denote this $\mathcal{B}$}}
	\Bigg\}\,.
	\eeq
We now turn to bounding these quantities in terms of
$\disc(\mu)$ and $\disc(\nu)$.\smallskip

\noindent
\bemph{Step 3. Bounds on entropy and clause weights.}
Let $e$ be the edge in $J_a$ with maximal $\disc_e(\mu)$. Note
	\[\bigg( \Ent(\mu\,|\,\tilde{\mu}) \bigg)^{1/2}
	\ge
	\bigg( \Ent(\mu_e\,|\,\tilde{\mu}_e)
		\bigg)^{1/2}
	\ge
	\f{\|\mu_e-\tilde{\mu}_e\|_\infty}
		{O(2^k)}
	\ge
	\f{
	\|\mu_e-\prodom_e\|_\infty
	-\|\tilde{\mu}_e-\prodom_e\|_\infty
	}{O(2^k)}\,.
	\]
If $e$ belongs to $J_a\setminus\delta J_a$, then
$\tilde{\mu}_e=\prodom_e$. If $e\in\delta J_a$, then
$\tilde{\mu}_e=\nu(e)_e$, and it follows using \eqref{e:def.discrepancy.measure.e}
 and \eqref{e:contradiction.a2} that
	\[
	\|\tilde{\mu}_e-\prodom_e\|_\infty
	\stackrel{\eqref{e:def.discrepancy.measure.e}}
		{\le}
	\f{\disc_e(\tilde{\mu})}{\vth}
	=\f{\disc_e(\nu(e))}{\vth}
	\stackrel{\eqref{e:contradiction.a2}}{\le}
	\bigg( \f{k^4}{2^k}\bigg)^{\cM}
	\f{\disc_e(\mu)}{\vth}\,.\]
This is negligible in comparison to
$\|\mu_e-\prodom_e\|_\infty$, which is lower bounded by $ \Omega(4^{-k})\disc_e(\mu)$. Therefore
	\beq\label{e:nondefective.merge.entropy.disc.lbd}
	\Ent(\mu\,|\,\tilde{\mu})
	\ge
	\f{1}{2^{6k}}\Bigg(
	\max_{e\in J_a}\disc_e(\mu)
	\Bigg)^2\,.\eeq
We next turn to bounding the clause weights.
Recall that we use $q$ to denote the \textsc{bp} messages for the $\Lm$-weighted model.
 Similarly as in
\eqref{e:log.gamma.crude.bound.eps} and \eqref{e:log.gamma.crude.bound.delta},
a crude application of
Proposition~\ref{p:pair.clause.weights} gives
	\[
	\sum_{e\in\delta a}
	\|\log\gm_e\|_\infty
	\le k^{O(1)}
	\sum_{e\in\delta a}
	\bigg(
	\delta_e
	+\dot{\delta}_e
	+\ddot{\delta}_e
	+\mdel_e
	+\mdelred_e
	\bigg)
	\]
where $\vec{\delta}=(\delta_e,\dot{\delta}_e,\ddot{\delta}_e,\mdel_e,\mdelred_e)=\vrelerr(\proddq_e,\dq_e)$. In order to bound $\vec{\delta}$ we can argue as follows. Let 
$\hat{p}_e \cong \prodom_e/\dq_e$ for all $e\in\delta a$, and let $Z_a=(\zeta_e)_{e\in\delta a}$ be the reweighting of $a$ such that
$\hat{p}_{au} = \BP_{au}[(\dq_{u'a}:u'\in\pd a\setminus u) ; Z_a]$ for all $u\in\pd a$. It follows
by another crude application of
Proposition~\ref{p:pair.clause.weights} that,
similarly as in 
\eqref{e:log.gamma.crude.bound.eps},
	\[
	\sum_{e\in\delta a}
	\|\log\zeta_e\|_\infty
	\le k^{O(1)}
	\sum_{e\in\delta a}
	(\varepsilon_e
	+\dot{\varepsilon}_e+\ddot{\varepsilon}_e
	)
	\]
where 
$\vec{\varepsilon}=(\varepsilon_e,\dot{\varepsilon}_e,\ddot{\varepsilon}_e)
=\crelerr(\hat{p}_e,\hq_e)$. We can bound $\vec{\ep}$ by noting that 
	\[
	\f{\hat{p}_e}{\hq_e}
	\cong \f{\prodom_e/\dq_e}{\mu_e/\dq_e}
	\cong \f{\prodom_e}{\mu_e}
	= 1 + O\bigg( \f{\disc_e(\mu)}{\vth}\bigg)\,,
	\]
and substituting into the previous expression gives the bound
	\[
	\sum_{e\in\delta a}
	\|\log\zeta_e\|_\infty
	\le k^{O(1)}
	 \f{\disc_e(\mu)}{\vth}\,.
	\]
From the definition of $Z_a$, for each $u\in\pd a$ we have
	\[
	\prodom_{au}
	\cong \dq_{ua} \hat{p}_{au}
	= \dq_{ua} 
	\BP_{au}\bigg[
	\Big(\dq_{u'a}:u'\in\pd a\setminus u
		\Big) ; Z_a\bigg]
	\cong
	g_{ua} \BP_{au}\bigg[ \Big(g_{u'a}
		:u'\in\pd a\setminus u\Big)\bigg]\,,
	\]
where $g_e$ is the probability measure on $\set{\RYGB}^2$ such that $g_e \cong \zeta_e\dq_e$. It follows that the measure
	\beq\label{e:nu.g.pair.model}
	\hat{\nu}_a(\usi_{\delta a})
	\cong
	\hat{\varphi}_a(\usi_{\delta a})
	\prod_{e\in\delta a} g_e(\sigma_e)
	\eeq
has marginal $\prodom_e$ on each $e\in\delta a$. This can only occur if $g_e=\proddq_e$ for all $e\in\delta a$ --- this is because $(\log g_e)_{e\in\delta a}$ must be the Lagrange multipliers for the constrained optimization problem
	\[
	\max \Bigg\{
	\Ent(\hat{\nu}):
	\textup{$\hat{\nu}$ has marginals $\prodom_e$}
	\Bigg\}\,,
	\]
and the Lagrange multipliers are unique so we must have $g_e=\proddq_e$ for all $e\in\delta a$. It follows that 
	\[\zeta_e\cong 
	\f{g_e}{\dq_e}
	\cong \f{\proddq_e }{\dq_e}
	\,,\] so the above bound on $\|\log\zeta\|_\infty$ implies a bound on $\vec{\delta}$. Altogether we obtain 
	\[
	\sum_{e\in\delta a}
	\|\log\zeta_e\|_\infty
	\le \f{k^{O(1)}}{\vth}
	\sum_{e\in\delta a} \disc_e(\mu)\,.
	\]
Substituting into the definition of $\mathcal{A}$ from
\eqref{e:nondefective.merge.A.B} gives
	\[
	\mathcal{A}
	\le 
	\f{k^{O(1)}}{(\vth)^2}
	\bigg(\sum_{e\in\delta a} \disc_e(\mu)\bigg)^2
	\le
	\f{k^{O(1)}}{(\vth)^2}
	\xi_a(J;\mu_{\delta'J})^2\,,
	\]
where the last step follows by Proposition~\ref{p:nice.tree.lagrange}.
Combining with \eqref{e:contradiction.a1} gives
	\[
	\mathcal{A}
	\le
	\f{k^{O(1)}}{(\vth)^2}
		\xi_a(J;\mu_{\delta'J})^2
	\le
	\f{k^{O(1)}(k^4 (\vth)^{1/4})^{2\cM}}{(\vth)^2} 
	\Bigg(
		\max_{e\in\delta J_a} 	
		\disc_e(\mu)\Bigg)^2\,.
	\]
Next, very similarly to the derivation of 
\eqref{e:apply.cor.clause.weights}, for $b\in A_a$ and $e\in\delta b$ we have
	\[
	\|\log\gamma_e\|_\infty
	\le 2^k \sum_{e\in\delta b
		\cap \delta J_a} \disc_e(\nu(e))\,.
	\]
(Indeed, a clause $b\in A_a$ takes in the canonical product message $\proddq$ from above, and takes from below the messages $\dq_e$ from the $\Lm_{T_e}$-weighted measures (which are precisely the measures $\nu(e)$. Thus $\|\log\gamma_e\|_\infty$ can be bounded in terms of the errors between $\dq_e$ and $\proddq_e$ for $e\in\delta b\cap \delta J_a$. These in turn can be bounded in terms of the marginal discrepancies $\disc_e(\nu(e))$, similarly as in
\eqref{e:apply.cor.clause.weights}.) Substituting into the definition of $\mathcal{B}$ from \eqref{e:nondefective.merge.A.B} gives
	\begin{align*}
	\mathcal{B}
	&\le
	2^{2k}
	\sum_{b\in A_a}
	\Bigg(
	\sum_{e\in \delta b \cap \delta J_a}
	\disc_e(\nu(e))\Bigg)
	\Bigg(\sum_{e\in\delta b}\disc_e(\mu) 
	+
	\sum_{e\in \delta b \cap \delta J_a}
	\disc_e(\nu(e))\Bigg)\\
	&\stackrel{\eqref{e:contradiction.a2}}{\le}
	2^{2k} 2k
	\Bigg(
	\max_{e'\in J_a} \disc_{e'}(\mu)
	\Bigg)
	\sum_{e\in\delta J_a}
	\disc_e(\nu(e))
	\stackrel{\eqref{e:contradiction.a2}}{\le}
	2^{2k} 2k
	\bigg( \f{k^4}{2^k}\bigg)^{\cM}
	\Bigg(\max_{e\in J_a}
		\disc_e(\mu)\Bigg)^2
	\,.
	\end{align*}
Substituting these bounds back into \eqref{e:nondefective.merge.A.B} and combining with
\eqref{e:nondefective.merge.entropy.disc.lbd} gives
	\[
	\f1{2^{6k}}
	\bigg(\max_{e\in J_a}\disc_e(\mu)\bigg)^2
	\le
	\Ent(\mu\,|\,\tilde{\mu} )
	\le \mathcal{A}+\mathcal{B}
	\le (\vth)^{\cM/3}
	\Bigg(\max_{e\in J_a}\disc_e(\mu)\Bigg)^2\,.
	\]
Again recall from Definition~\ref{d:contained} that $\vth\equiv 2^{-k\zeta/6}$. Thus, as long as we have
	\beq\label{e:weaker.lbd.KAPPA}
	\KAPPA
	=\cM^4 > \bigg(\f{108}{\zeta}\bigg)^4\,,
	\eeq
we obtain the required contradiction.
\end{proof}

\section{A priori estimates for edge marginals}\label{s:burnin}

In this section we prove Lemma~\ref{l:expand.on.types} and Proposition~\ref{p:apriori}, which were used in the \hyperlink{p:second.moment.judicious.proof}{proof of Proposition~\ref{p:second.moment.judicious}}. Lemma~\ref{l:expand.on.types} is a fairly easy expansion result, whose proof appears at the start of \S\ref{ss:conclusion.apriori} below. Proposition~\ref{p:apriori} is an \textit{a~priori} estimate whose proof occupies the majority of this section. In \S\ref{ss:apriori.prelim} we make some preliminary definitions and give an overview of the proof of Proposition~\ref{p:apriori}.

\subsection{Preliminaries}
\label{ss:apriori.prelim}

In preparation for the proof of Proposition~\ref{p:apriori}, we introduce a richer set of colors, as follows:

\begin{dfn}[expanded alphabet of colors]\label{d:white.violet} Write $\mathscr{X}\equiv\set{\RYGB}$. Define the expanded alphabets
	\[\COLS\equiv\bigg\{\RYGB,
			\whi\equiv\SPIN{white},
			\mred\equiv\SPIN{violet}\bigg\}\,,\quad
		\mathscr{S}
			\equiv \bigg\{\RYC,\whi\bigg\}\,,\quad
		\mathscr{T}\equiv\bigg\{\RYGB,\mred\bigg\}\,.
	\]
Let $\GG=(V,F,E)$ be any (processed) $\ksat$ instance,
and let $\usi\in\set{\RYGB}^E$ denote a valid (single-copy) coloring on $\GG$. Given $\usi$, let $\uvsi\in\mathscr{S}^E$ be defined by setting
	\[\vsi_e\equiv\begin{cases}
	\whi\equiv\SPIN{white}
	&\text{if $\sigma_e\in\set{\yel,\grn,\blu$}
	and $\usi_{\delta a\setminus e}$
	contains at least two \SPIN{cyan} edges,}\\
	\yel&\textup{if $\sigma_e=\yel$,}\\
	\cya&\textup{if $\sigma_e\in\set{\grn,\blu}$.}\\
	\end{cases}
	\]
Let $\vups\in\mathscr{T}^E$ be defined by setting
	\[\ups_e
	=\begin{cases}
	\mred\equiv\SPIN{violet}
	&\textup{if $\sigma_e=\red$
	and $\usi_{\delta v\setminus e}$
	contains at least one \SPIN{red} edge,}\\
	\sigma_e\in\set{\RYGB}
	&\textup{otherwise.}
	\end{cases}
	\]
Finally, let $\vec{\col}\in\COLS^E$ be defined by setting
	\[
	\col_e \equiv
	\begin{cases}
	\whi\equiv\SPIN{white}
	&\text{if $\sigma_e\in\set{\yel,\grn,\blu$}
	and $\usi_{\delta a\setminus e}$
	contains at least two
	$\SPIN{cyan}$ edges,}\\
	\mred\equiv\SPIN{violet}
	&\text{if $\sigma_e=\red$
	and $\usi_{\delta v\setminus e}$
	contains at least one \SPIN{red} edge,}\\
	\sigma_e\in\set{\RYGB}
	&\text{otherwise.}
	\end{cases}
	\]
The new colors $\set{\whi,\mred}$ indicate edges that are ``flexible'' in some sense:
from a clause's perspective, $\SPIN{white}$ indicates an edge that can be $\SPIN{yellow}$, $\SPIN{green}$, or $\SPIN{blue}$ without violating the clause constraint. Similarly, from a variable's perspective, $\SPIN{violet}$ indicates an edge that is $\SPIN{red}$ but can also be $\SPIN{blue}$ without violating the variable constraint.
\end{dfn}

\begin{dfn}[pair empirical measure in expanded alphabet]
Throughout the remainder of this section, on a (processed) $\ksat$ instance $\GG=(V,F,E)$,
we denote pair colorings in the expanded alphabets as
	{\setlength{\jot}{0pt}\begin{align*}
	\usi &\equiv (\usi^1,\usi^2)
	\in\mathscr{X}^E\times\mathscr{X}^E\,,\\
	\uvsi
	&\equiv (\uvsi^1,
		\uvsi^2)
	\in \mathscr{S}^E\times\mathscr{S}^E\,,\\
	\vups
	&\equiv
	(\vups^1,\vups^2)
	\in \mathscr{T}^E\times\mathscr{T}^E\,,\\
	\ucol
	&\equiv (\vec{\col}^1,\vec{\col}^2)
	\in\COLS^E\times\COLS^E\,.
	\end{align*}}%
As before, we use $\omega\equiv(\omega_{\bL,j})_{\bL,j}$ to denote the pair empirical measure, which we assume to be judicious in the sense of Definition~\ref{d:judicious.pair}. We let $\bom\equiv(\bom_{\bL,j})_{\bL,j}$ denote the pair empirical measure
 of $(\usi,\ucol)$, so that each entry 
 $\bom_{\bL,j}$ is a probability measure over elements $(\sigma,\col)\in\mathscr{X}^2\times\COLS^2$. The marginal of $\bom$ on the $\mathscr{X}^2$-coordinate is given by $\omega$. Clearly, by the mappings of Definition~\ref{d:white.violet},
 each
 $\bom_{\bL,j}$ also induces a probability measure over $(\sigma,\varsigma)\in\mathscr{X}^2\times\mathscr{S}^2$
 and $(\sigma,\ups)\in\mathscr{X}^2\times\mathscr{T}^2$. We let $\zeta_{\bL,j}$ denote the marginal law of $\vsi$ under $\bom_{\bL,j}$.
\end{dfn}

Recall that Proposition~\ref{p:apriori} concerns the optimization of $\bm{\Psi}_{\DD,2}(\omega)$ over $\omega\in\bm{I}_0$. The function $\bm{\Psi}_{\DD,2}$ is defined in the discussion leading up to Lemma~\ref{l:if.neg.def}, and we review it briefly here. If $\bh=(\dbh,\hbh)$ is a (judicious) vertex empirical measure in the pair coloring model, 
then analogously to \eqref{e:config.model.first.moment.given.omega},
its contribution to the second moment is given by
	\beq\label{e:config.model.pair.version}
	\E_{\DD}\ZZ^2[\bh]
	=\underbrace{
	\Bigg\{
	\prod_{\bT} 
	\binom{n_{\bT}}{n_{\bT}\dbh_{\bT}}
	\prod_{\bL}
	\binom{m_{\bL}}{m_{\bL}\hbh_{\bL}}
	\Bigg\}
	}_{\substack{
	\text{number of colorings}\\
	\text{prior to matching}
	}}
	\underbrace{\Bigg\{
	\prod_{\bm{t}} \binom{n_{\bm{t}}}
		{ n_{\bm{t}} \pi_{\bm{t}} }
		\Bigg\}^{-1}
		}_{\substack{\text{probability of matching}\\
			\text{to respect colorings}}}
	=
	\f{\exp\{n\bm{\Phi}_{\DD,2}(\nu)\}}
		{n^{O(1)}}\,,\eeq
where $\bm{\Phi}_{\DD,2}$ is the analogue of
\eqref{e:rate.function.given.gen.degseq} for the pair model:
	\beq\label{e:rate.function.pair.model}
	\bm{\Phi}_{\DD,2}(\bh)
	=\f1n \Bigg\{
	|V| \, \E_{\dot{\DD}}[ \Ent(\dbh_{\bT}) ]
	+|F| \, \E_{\hat{\DD}}[ \Ent(\hbh_{\bL}) ]
	-|E| \, \E_{\bar{\DD}}
		[ \Ent(\pi_{\bm{t}}) ]
		\Bigg\}\,.
	\eeq
(Note that $\bh$ denotes a vertex empirical measure for the single-copy model in \eqref{e:rate.function.given.gen.degseq},
but for the pair model in \eqref{e:rate.function.pair.model}.)
Write $\nu\sim\omega$ if $\nu$ has marginals $\omega$. Then the contribution to the second moment from
any (judicious) $\omega$ is
	\[
	\E_{\DD}\ZZ^2(\omega)
	=\sum_{\nu:\nu\sim\omega}
	\E_{\DD}\ZZ^2[\nu]
	= \f{\exp\{n\bm{\Psi}_{\DD,2}(\omega)\}}
		{n^{O(1)}}\,,
	\]
where $\bm{\Psi}_{\DD,2}$ is the analogue of
\eqref{e:nu.opt} for the pair model:
	\beq\label{e:nu.opt.given.omega.pair}
	\bm{\Psi}_{\DD,2}(\omega)
	\equiv\bm{\Phi}_{\DD,2}(\optnu[\omega])\,,\quad
	\optnu[\omega]
	\equiv \argmax_\nu
	\bigg\{
	\bm{\Phi}_{\DD,2}(\nu):
	\textup{$\nu$ is consistent with $\omega$}
	\bigg\}\,.
	\eeq
We will prove Proposition~\ref{p:apriori} by analyzing
the constrained entropy maximization problems involved in the above definition of $\optnu[\omega]$. We now make two definitions which will be used throughout the section: 

\begin{dfn}\label{d:diverse} For a variable $v$ of type $\bT$,
let $\pi_v\equiv\pi_{\bT}$ denote the marginal on the (pair) frozen configuration spin $x_v$. Thus $\pi_v$ is a probability measure on $\set{\plus,\minus,\free}^2$, which can be computed from $\pi_e$ for any $e\in\delta v$: for example,
$\pi_v(\plus\plus)=\pi_e(\yel\yel)$ for any $e\in\delta v(\minus)$.
We define a \bemph{diverse variable} to be a variable $v$ of type $\bT$ such that
	\[
	\pi_v\Big(\set{\plus\minus,\minus\plus}\Big)
	\equiv
	\pi_{\bT}\Big(\set{\plus\minus,\minus\plus}\Big)
	=2\pi_{\bT}(\plus\minus)
	\ge\f14\,.
	\]
We then define a \bemph{diverse clause} to be a clause that neighbors at least $k/10$ diverse variables --- equivalently, it is a clause of type $\bL$ such that
	\[
	\sum_{j=1}^{k(\bL)}
	\mathbf{1}\bigg\{
	\pi_{\bL(j)}\Big(\set{\yel\pur,
	\pur\yel}\Big) \ge\f14
	\bigg\}
	=\sum_{j=1}^{k(\bL)}
	\mathbf{1}\bigg\{
	\pi_{\bL(j)}(\yel\pur) \ge\f18
	\bigg\}
	\ge \f{k}{10}\,.
	\]
The definition of diversity depends only on type, so we can also speak of \bemph{diverse variable types} and \bemph{diverse clause types}. Let $\mathbb{D}$ denote the collection of all diverse clause types. For a variable $v$ of type $\bT$, we write
	\beq\label{e:exp.num.nondiv.nbr.clauses}
	\notDiverse(v)
	\equiv\notDiverse(\bT)
	\equiv\sum_{\bt\in\bT}
	\sum_{\bL}
	\Ind{\bL\notin\mathbb{D}}
	\pi_{\DD}(\bL\,|\,\bt)
	\eeq
for the expected number of non-diverse clauses incident to $v$ under $\P_{\DD}$.
\end{dfn}

\begin{dfn}\label{d:heavy} Let $\EPSLIGHT>0$ be a small absolute constant. A clause type $\bL$ is termed \bemph{light} if
	\[
	\max_{1\le j\le k(\bL)}
	\omega_{\bL,j}(\red\red) \le 
	\f1{2^{k(1+\EPSLIGHT)}}\,.
	\]
Otherwise we say that $\bL$ is \bemph{heavy}.
Let $\mathbb{L}$ denote the collection of light clause types. Recall from \eqref{e:exp.num.nondiv.nbr.clauses} in Definition~\ref{d:diverse} that $\notDiverse(v)$ denotes the expected number of non-diverse clauses next to $v$. Analogously define
$\notLight(v)$ to be the expected number of non-light (i.e., heavy) clauses next to $v$.
\end{dfn}

We conclude this preliminary subsection with an overview of the proof of Proposition~\ref{p:apriori}. Recall that the goal is to show that if the neighborhood profile $\DD$ satisfies the expansion condition \eqref{e:expansion.bd} from Definition~\ref{d:expansion}, and $\omega=\omega(\DD)$ is any maximizer of $\bm{\Psi}_{\DD,2}$ over $\omega\in\bm{I}_0$ (see \eqref{e:opt.omega.of.DD}), then $\omega_{\bL,j}$ satisfies the ``\textit{a~priori}'' estimates \eqref{e:apriori} whenever $\bL(j)$ is a strongly non-defective edge type in the sense of Remark~\ref{r:defective.clauses}. In what follows, if $e=(av)$ with $\bt_e=\bt$, $j(\bt)=j$, and $\bL_a=\bL$, we will denote
$\pi_e\equiv\pi_{\bt}$,
$\omega_e\equiv\omega_{\bL,j}$,
$\bom_e\equiv\bom_{\bL,j}$,
and $\zeta_e\equiv\zeta_{\bL,j}$.

In Proposition~\ref{p:balancedeVertex} we will show that if $v$ is a non-defective variable with $\notDiverse(v)=\notLight(v)=0$, then $\omega_e$ satisfies the required estimates \eqref{e:apriori} for all $e\in\delta v$. The proof of Proposition~\ref{p:balancedeVertex}
uses a reweighting argument that builds on the result of Proposition~\ref{p:ctypes.varupdate} from Section~\ref{s:contract}. However, unlike in Section~\ref{s:contract}, the analysis of Proposition~\ref{p:balancedeVertex} uses 
some additional information concerning \SPIN{white} edges, which are only weakly dependent on the other edges sharing the same clause. In particular, in a nice clause $a$,
Lemma~\ref{l:lotsOfAs} gives that the expected number of edges $e\in\delta a$ with $\vsi_e\ne\whi\whi$ is small. If the clause is furthermore diverse and light, then
Lemma~\ref{l:lotsOfAsDiverse}
gives that the expected number of edges $e\in\delta a$
with $(\vsi_e)^i\ne\whi$ for both $i=1,2$
is very small. Both these estimates are used in the proof of Proposition~\ref{p:balancedeVertex}.

Of course, the main challenge in applying Proposition~\ref{p:balancedeVertex} 
is that it relies on the assumption
$\notDiverse(v)=\notLight(v)=0$ --- that is, \emph{all} clauses neighboring any variable of this type must be diverse and light. Thus a large part of this section is devoted to proving estimates leading to Proposition~\ref{p:noNonDiverse}, which uses an expansion argument (under the condition~\eqref{e:expansion.bd}) to show that \emph{all} strongly non-defective clauses must be diverse and light. Proposition~\ref{p:balancedeVertex} can then be applied to give the conclusion of Proposition~\ref{p:apriori}.

The proof of Proposition~\ref{p:noNonDiverse} proceeds roughly as follows. For the purposes of this overview we will ignore the presence of defective variables. For any $\omega\in\bm{I}_0$, the empirical distribution of frozen spins $x_v\in\set{\minus,\plus,\free}^2$ among all $v\in V$ must give weight roughly $1/4$ to each $x\in\set{\minus,\plus}^2$. Of course, these may not be evenly distributed among the different variable types, so this does not imply that all variable types are diverse. However, an easy application \eqref{e:diverse.Markov} of Markov's inequality shows that at least a quarter of all variables must be diverse. The expansion condition
\eqref{e:expansion.bd} then implies that most clauses are diverse, since most clauses will be incident to more than $k/10$ diverse variables.
We then show in Lemma~\ref{l:large11Case} and Corollary~\ref{c:RRFreq} that if a variable $v$
has a small value of $\notDiverse(v)$, then it will also have a small value of $\notLight(v)$.
We show in Proposition~\ref{p:DIVERSE.VAR} that if $\notDiverse(v)$ and $\notLight(v)$ are both small, then $v$ will be diverse. To summarize, write $S$ for the set of non-diverse variables, $\notDiverse$ for the set of non-diverse clauses, and $\notLight$ for the set of heavy clauses. Then
\begin{enumerate}[--]
\item $|S|$ can be bounded in terms of $|\notDiverse|+|\notLight|$ by Proposition~\ref{p:DIVERSE.VAR};
\item $|\notLight|$ can be bounded in terms
of $|\notDiverse|$ by Lemma~\ref{l:large11Case} and Corollary~\ref{c:RRFreq}; and
\item $|\notDiverse|$ can be bounded in terms of $|S|$ by the expansion condition \eqref{e:expansion.bd}.
\end{enumerate}
Combining these gives a bound for $|S|$ in terms of $|S|$ itself, which we find is satisfied only if $|S|=0$. This yields the conclusion of Proposition~\ref{p:noNonDiverse}. 

This concludes our overview for the proof of Proposition~\ref{p:apriori}, and we now turn to the details of the proof. For the reader's reference, the dependency diagram of results in this section is given in Figure~\ref{f:apriori}.

\begin{figure}[h!]
\begin{tikzpicture}[node distance = 3cm,auto]
\footnotesize 
\node[block,align=center] (lotsOfAs)
	at (6,14) {Lemma~\ref{l:lotsOfAs}:
	if $a$ is a nice clause
	then $\zeta_e(\whi\whi)$ is large for all $e\in\delta v$};
\node[block,align=center] (indifferent) at (-6,14)
	{Lemma~\ref{l:indifferent}:
	resampling argument to deduce bounds on $\omega_e$ from bounds on $\pi_e$};
\node[block,align=center] (lotsOfAsDiverse) at (-6,10)
	{Lemma~\ref{l:lotsOfAsDiverse}: 
	if a clause $a$ is diverse and light,
	then $\zeta_e(\set{\RYC}^2)$ is small for all 
	$e\in\delta a$};
\node[block] (VarUpdate) at (-6,5.5)
	{Proposition~\ref{p:ctypes.varupdate}:
	analysis of variable update used in proof of 
	Lemma~\ref{l:rrUnforced}};
\node[block] (ZETAbalancedVx) at (-1.5,3.5)
	{Lemma~\ref{l:ZETA.conditions.balanced.vertex}:
	if $v$ is a non-defective variable
	with $\zeta_e(\whi\whi)$ large and
	$\zeta_e(\set{\RYC}^2$ small for all $e\in\delta v$,
	then $\omega_e$ near product for all $e\in\delta v$};
\node[block] (ZETAsingleEdge) at (-1.5,5.5)
	{Lemma~\ref{l:single.edge.sigma.varsigma}: analysis of an edge update used in proof of
	Lemma~\ref{l:rrUnforced}};
\node[block,align=center] (rrUnforced) at (1.5,12)
	{Lemma~\ref{l:rrUnforced}:
conditions to guarantee that
most occurrences of $\sigma^i=\red$ we have $\ups^i=\mred$};
\node[block,align=center] (ryUnforced) at (-3,12) {Lemma~\ref{l:ryUnforced}: conditions to guarantee that
most occurrences of $\sigma^i=\red$ we have $\ups^i=\mred$};
\node[block,align=center] (ForcedRR) at (6,12) {Lemma~\ref{l:ForcedRR}: if a clause $a$ is diverse, then $\ups_e=\mred\mred$ occurs rarely
for all $e\in\delta a$};
\node[block,align=center] (largeRRcase) at (0,10)
	{Lemma~\ref{l:large11Case}: if $v$ is non-defective with $\notDiverse(v)$ small and $\pi_v(\plus\plus)$ bounded below, then
$\omega_e(\red\red)$ is small for all $e\in\delta v$};
\node[block,align=center] (RRFreq) at (6,10)
	{Corollary~\ref{c:RRFreq}: if $v$ is non-defective with $\notDiverse(v)$ small, then
	$\omega_e(\red\red)$ for $e\in\delta v$
	is small on average};
\node[block,align=center] (expansion) at (-6,-1)
	{Lemma~\ref{l:expand.on.types}: the processed $\ksat$ graph satisfies the
	expansion
	condition with high probability};
\node[block,align=center] (balancedeVertex) at (1,1) {Proposition~\ref{p:balancedeVertex}: 
if $v$ is a non-defective variable with $\notDiverse(v)=\notLight(v)=0$,
then $\omega_e$ is near product
for all $e\in\delta v$};
\node[block,align=center] (noNonDiverse) at (6,1)
	{Proposition~\ref{p:noNonDiverse}: 
	under expansion condition,
	all non-defective variables are diverse,
	all strongly non-defective clauses are light};
\node[block,align=center] (APRIORI) at (1,-1) {Proposition~\ref{p:apriori}: if $v$ is strongly non-defective, then $\omega_e$ is near product for all $e\in\delta v$};
\node[block,align=center] (entropyComp) at (1,7.5)
	{Proposition~\ref{p:DIVERSE.VAR}:
	if $v$ is a nondefective variable with
	$\notDiverse(v),\notLight(v)$ small,
	then $\pi_v$ is roughly uniform
	on $\set{\minus,\plus}^2$};
\path [line] (balancedeVertex) -- (APRIORI);
\path [line] (noNonDiverse) -- (APRIORI);
\path [line] (entropyComp) -- (noNonDiverse);
\path [line] (RRFreq) -- (noNonDiverse);
\path [line] (largeRRcase) -- (5,7.5) -- (noNonDiverse);
\path [line] (largeRRcase) -- (RRFreq);
\path [line] (rrUnforced) -- (RRFreq);
\path [line] (ForcedRR) -- (RRFreq);
\path [line] (ZETAbalancedVx) -- (balancedeVertex);
\path [line] (ZETAsingleEdge) -- (ZETAbalancedVx);
\path [line] (VarUpdate) -- (ZETAbalancedVx);
\path [line] (lotsOfAsDiverse) -- (entropyComp);
\path [line] (indifferent) -- (lotsOfAsDiverse);
\path [line] (indifferent)
	-- (-5.5,11.25) -- (largeRRcase);
\path [line] (ryUnforced) -- (largeRRcase);
\path [line] (lotsOfAsDiverse) -- (largeRRcase);
\path [line] (rrUnforced) -- (largeRRcase);
\path [line] (ForcedRR) -- (largeRRcase);
\path [line] (lotsOfAsDiverse)
	-- (-4,6.5) -- (1,6.5) -- (balancedeVertex);
\path [line] (indifferent)
	-- (2,14) -- (ForcedRR);
\end{tikzpicture}
\caption{Dependency diagram of results of Section~\ref{s:burnin}.
Lemma~\ref{l:lotsOfAs} is a basic estimate used in the proofs of several subsequent claims, so the arrows leaving Lemma~\ref{l:lotsOfAs} are not shown to avoid cluttering the diagram further. Lemma~\ref{l:expand.on.types} and Proposition~\ref{p:apriori}
were both used in the proof of Proposition~\ref{p:second.moment.judicious}.}
\label{f:apriori}
\end{figure}

\subsection{Entropy maximization around non-forcing clauses} \label{ss:white.diverse}

The goal of this subsection is to prove that, for clause types $\bL$ satisfying certain conditions, $\omega_{\bL,j}(\varsigma\ne\whi\whi)$ must be small, and $\omega_{\bL,j}(\varsigma^1\ne\whi, \varsigma^2\ne\whi)$ must be even smaller, for all $1\le j\le k(\bL)$. In later subsections we will prove estimates restricted to \SPIN{white} edges --- these results will transfer easily to estimates concerning all edges, since the results of this subsection show that most edges are \SPIN{white}. The current subsection is organized as follows:
\begin{enumerate}[--]
\item In Lemma~\ref{l:lotsOfAs} we show that if $\bL$ is a nice clause, then $\omega_{\bL,j}(\varsigma\ne\whi\whi)$ must be small (at most $O(k^2/2^k)$) for all $1\le j\le k(\bL)$.
\item In Lemma~\ref{l:indifferent} we show that if $\bt$ is a non-compound edge type, and $\sigma\in\set{\yel,\blu}^2$, then a lower bound on $\pi_{\bt}(\sigma)$ implies a comparable lower bound on $\omega_{\bL,j}(\sigma)$ for all $\bL(j)=\bt$.

\item In Lemma~\ref{l:lotsOfAsDiverse} we prove that
if a clause type $\bL$ is nice, diverse, and light, then
$\omega_{\bL,j}(\varsigma^1\ne\whi, \varsigma^2\ne\whi)\le 2^{-k(1+\EPSLIGHT)}$ for all $1\le j\le k(\bL)$. The proof of this result makes use of 
Lemma~\ref{l:indifferent}, which gives
a lower bound on $\omega_{\bL,j}(\set{\yel\blu,\blu\yel})$ for at least $k/10$ indices $1\le j\le k(\bL)$.
\end{enumerate}
We now turn to the precise statements and proofs.

\begin{lem}\label{l:lotsOfAs} 
Let $a$ be a nice clause of type $\bL$ (meaning all its incident edges are nice in the sense of Definition~\ref{d:nice}). Given $\omega\in\bm{I}_0$, let $\hbh_{\bL}$ be the probability measure on valid (pair) colorings $\usi_{\delta a}$ which maximizes entropy subject to edge marginals $(\omega_{\bL,j})_j$. By Definition~\ref{d:white.violet},
each $\usi_{\delta a}$ maps to a configuration
$\uvsi_{\delta a}$, so $\hbh_{\bL}$ also induces a probability measure on $(\mathscr{S}^2)^{\delta a}$. Under this measure we have
	\[
	\bom_{\bL,j}\Big(\varsigma=\whi\whi\Big)
	=\hbh_{\bL}\Big(\varsigma_j=\whi\whi\Big)
	\ge 1-O\bigg(\f{k^2}{2^k}\bigg)
	\]
for all $1\le j\le k(\bL)$.

\begin{proof} For the proof we will mostly suppress $\bL$ from the notation, and write $\hbh\equiv\hbh_{\bL}$. We claim that for this proof it suffices to take $\sigma$ in the reduced alphabet $\set{\RYC}$. Indeed, each $\omega_{\bL,j}$ is a measure over $\set{\RYGB}^2$, and naturally induces a measure $\set{\RYC}^2$, which we temporarily denote $\tilde{\omega}_{\bL,j}$. Let
$\tilde{\nu}\equiv\tilde{\nu}_{\bL}$ be the probability measure on valid (pair) colorings $\tilde{\usi}_{\delta a}\in(\set{\RYC}^{\delta a})^2$ which maximizes entropy subject to edge marginals $(\tilde{\omega}_{\bL,j})_j$.
Since the clause factor \eqref{e:indicator.of.valid.clause.coloring} does not distinguish between \SPIN{green} and \SPIN{blue}, the optimal $\hbh\equiv\hbh_{\bL}$ must take the form
	\beq\label{e:bg.given.cyan}
	\hbh_{\bL}(\usi_{\delta a})
	= \tilde{\nu}(\tilde{\usi}_{\delta a})
	\prod_{j=1}^{k(\bL)}
	\f{\omega_{\bL,j}(\sigma_j)}
	{\tilde{\omega}_{\bL,j}(\tilde{\sigma}_j)}\,,
	\eeq
where $\usi_{\delta a}\in(\set{\RYGB}^{\delta a})^2$ and $\tilde{\usi}_{\delta a}$ is its representative in $(\set{\RYC}^{\delta a})^2$. Thus it suffices to study $\tilde{\nu}$. For the remainder of the proof we will abuse notation slightly and write $\hbh,\usi_{\delta a}$ when we technically mean $\tilde{\nu},\tilde{\usi}_{\delta a}$. The remainder of the proof is divided into a few numbered steps. \smallskip 

\noindent\bemph{Step 1. Reduction to consideration of non-forcing clause colorings.}
The single-copy marginals of $\omega$ are given by the canonical measure $\starpi$. Since $\bL$ is assumed to be nice, it follows from Definition~\ref{d:nice} that
	\beq\label{e:lotsOfAs.hyp}
	\starpi_{\bL(j)}(\red)
	\le\f{O(1)}{2^k}\,,\quad
	\bigg|\starpi_{\bL(j)}(\cya)
	-\f12\bigg|
	\le O\bigg(
	\f{1}{2^{k/10}}\bigg)\,.
	\eeq
Let $\SPIN{U}$ denote the set of all \bemph{valid} pair colorings $\usi_{\delta a} \in\set{\yel,\cya}^{2K}$, where we abbreviate $K\equiv k(\bL)$. Note that if $\usi_{\delta a}$ is a valid coloring belonging to the complement $\SPIN{U}^c$, then $\usi_{\delta a}$ must have at least one edge colored $\SPIN{red}$ in at least one of the two copies. It follows by the judicious condition together with \eqref{e:lotsOfAs.hyp} that
	\beq\label{e:any.red.is.unlikely}
	\hbh(\SPIN{U}^c)
	\le
	\sum_{i=1,2}
	\sum_{j=1}^K
	(\omega_{\bL,j})^i(\red)
	= 2 \sum_{j=1}^K
	\starpi_{\bL(j)}(\red)
	\stackrel{\eqref{e:lotsOfAs.hyp}}{\le}
	O\bigg( \f{k}{2^k}\bigg)\,.
	\eeq
Define the conditional measure $\mu(\usi_{\delta a})\equiv\hbh(\usi_{\delta a}\,|\,\SPIN{U})$. Let $\mu_j$ be the marginal of $\mu$ on the $j$-th edge in $\delta a$, so $\mu_j$ is a probability measure on $\set{\yel,\cya}^2$. Since $\hbh(\SPIN{U})$ is close to one by \eqref{e:any.red.is.unlikely},
the single-copy marginals of $\mu_j$ must be close to $\starpi_{\bL(j)}$; in particular, we must have
	\beq\label{e:cond.yb.single.copy.mgls}
	(\mu_j)^i(\cya)
	=\f{ \starpi_{\bL(j)}(\cya)
		-\hbh(\SPIN{U}^c) \hbh( (\sigma_j)^i=\cya\,|\,\SPIN{U}^c )}{1 - \hbh(\SPIN{U}^c)}
	\stackrel{\eqref{e:any.red.is.unlikely}}{=} 
	\starpi_{\bL(j)}(\cya) + O\bigg(\f1{2^k}\bigg)
	\stackrel{\eqref{e:lotsOfAs.hyp}}{=}
	\f12 +O\bigg(
	\f{1}{2^{k/10}}\bigg)\,.
	\eeq
Since $\hbh=\hbh_{\bL}$ maximizes entropy subject to marginals $(\omega_{\bL,j})_j$, it must be that $\mu$ maximizes entropy subject to marginals $(\mu_j)_j$. Consequently, by the method of Lagrange multipliers, there must exist probability measures $q_j$ over $\set{\yel,\cya}^2$ (for $1\le j\le K$) such that
	\beq\label{e:cond.yb.q}
	\mu(\usi_{\delta a})
	=
	\f{\Ind{\usi_{\delta a}\in\SPIN{U}}
	Q(\usi_{\delta a})}{Q(\SPIN{U})},
	\quad
	Q(\usi_{\delta a})
	\equiv\prod_{j=1}^K q_j(\sigma_e)\,.
	\eeq
In the next step we will estimate the $q_j$ to show that under $\mu$, each edge is $\whi\whi$ with probability close to one. Since $\mu$ takes up most of the mass of $\hbh$ by \eqref{e:any.red.is.unlikely}, the result will follow.\smallskip

\noindent\bemph{Step 2. Construction of Lagrangian weights.} We will iteratively construct a sequence $q_{j,t}$ that converges to the desired $q_j$
of \eqref{e:cond.yb.q} in the limit $t\to\infty$. We initialize $q_{j,0}=\mu_j$ for all $1\le j\le K$. Analogously to \eqref{e:cond.yb.q} let
	\[\mu_t(\usi_{\delta a})
	=
	\f{\Ind{\usi_{\delta a}\in\SPIN{U}}
	Q_t(\usi_{\delta a})}{Q_t(\SPIN{U})},
	\quad
	Q_t(\usi_{\delta a})
	\equiv\prod_{j=1}^K q_{j,t}(\sigma_e)\,.
	\]
Let $\mu_{j,t}$ denote the marginal of $\mu_t$ on the $j$-th edge. Writing 
$\usi_{-j}\equiv (\sigma_\ell)_{\ell\in[K]\setminus j}$, we have
	\beq\label{e:q.times.prob.of.remainder}
	\mu_{j,t}(\sigma)
	= \f1{\bar{z}_{j,t}}
	q_{j,t}(\sigma)
	\overbrace{
	\sum_{\usi_{-j}}
	\Ind{\usi\in\SPIN{U}}
	\prod_{\ell\in[K]\setminus j}
	q_{\ell,t}(\sigma_\ell)
	}^{\textup{denote this }\xi_{j,t}(\sigma)}\,.
	\eeq
Note that $\xi_{j,t}(\sigma)$ has a simple expression for each $\sigma\in\set{\yel,\cya}^2$: for instance, for $\sigma=\cya\cya$ we have
	\[
	\xi_{j,t}(\cya\cya)
	=1
	-\sum_{i=1,2}
	\prod_{\ell\in[K]\setminus j}
	(q_{\ell,t})^i(\yel)
	+\prod_{\ell\in[K]\setminus j}
	q_{\ell,t}(\yel\yel)\,,
	\]
and we have similar expressions for $\sigma\in\set{\yel\yel,\yel\cya,\cya\yel}$. It is easily verified that
$1-k^{O(1)}/2^k\le \xi_{j,0}(\sigma)\le1$
for all $\sigma\in\set{\yel,\cya}^2$, and substituting this estimate into \eqref{e:q.times.prob.of.remainder} gives (crudely)
	\beq\label{c:lotsOfAs.base.case}
	\bigg\| \f{\mu_{j,0}}{\mu_j}-1
		\bigg\|_\infty
	\stackrel{\eqref{e:q.times.prob.of.remainder}}{=}
	\bigg\| \f{\xi_{j,0}}{ \bar{z}_{j,0}}-1
	\bigg\|_\infty \le \f1{2^{2k/3}}\,.
	\eeq
Moreover, it follows by a straightforward calculation that 
	\beq\label{e:d.xi.d.q.bound}
	\bigg|\f{\pd \xi_{j,t}(\sigma)}
	{ \pd q_{\ell,t}(\sigma') }\bigg|
	\le \f{k^{O(1)}}{2^k}
	\eeq
for all $j\ne \ell$ and all $\sigma,\sigma'\in\set{\yel,\cya}^2$. We now define
$q_{j,t+1}$ to be the probability measure such that
	\beq\label{e:q.mu.update.rule}
	q_{j,t+1}(\sigma)
	\cong \f{\mu_j(\sigma)}{\xi_{j,t}(\sigma)}
	\stackrel{\eqref{e:q.times.prob.of.remainder}}
		{\cong}
	\f{\mu_j(\sigma)}{\mu_{j,t}(\sigma)}
		q_{j,t}(\sigma)\,.
	\eeq
Let $z_{j,t+1}$ denote the normalizing constant such that
	\beq\label{e:z.j.t.plus.one}
	q_{j,t+1}(\sigma)
	=\f{\mu_j(\sigma)}{\mu_{j,t}(\sigma)}
		\f{q_{j,t}(\sigma)}{z_{j,t+1}}\,.
	\eeq
Now suppose inductively that for all $t\ge0$ we have
	\beq\label{c:lotsOfAs.induction}
	\bigg\|
	\f{\mu_{j,t} }{ \mu_j}
	-1\bigg\|_\infty
	\le \f1{(2^{2k/3})^{t+1}}\,,\eeq
where the base case $t=0$ is given by \eqref{c:lotsOfAs.base.case}. Note that substituting \eqref{c:lotsOfAs.induction} into \eqref{e:z.j.t.plus.one} implies that $q_{j,t}$ is close to $z_{j,t+1}q_{j,t+1}$; since all the $q$'s are probability measures, it further implies that $z_{j,t+1}$ is close to one. Thus, for $t\ge0$,
the inductive hypothesis \eqref{c:lotsOfAs.induction} leads to
	\beq\label{e:induction.implies.xi.err.bound}
	\Big\|\xi_{j,t+1}-\xi_{j,t}\Big\|_\infty
	\stackrel{\eqref{e:d.xi.d.q.bound}}{\le}
	\f{k^{O(1)}}{2^k}
	\Big\|q_{j,t+1}-q_{j,t}\Big\|_\infty
	\stackrel{\eqref{e:z.j.t.plus.one}}{=}
	\f{k^{O(1)}}{2^k}
	\Bigg\|q_{j,t}
	\bigg(
	\f{\mu_j}{\mu_{j,t}}
		\f{1}{z_{j,t+1}}
	-1\bigg)\Bigg\|_\infty
	\stackrel{\eqref{c:lotsOfAs.induction}}{\le}
	\f{k^{O(1)}}{2^k}
	\f1{(2^{2k/3})^{t+1}}\,.
	\eeq
Recall that $\mu_{j,t+1} = q_{j,t+1} \xi_{j,t+1} / \bar{z}_{j,t+1}$ by \eqref{e:q.times.prob.of.remainder}, while
\eqref{e:q.mu.update.rule} implies that
there exists a normalizing constant $\acute{z}_{j,t+1}$ such that $\mu_j = q_{j,t+1} \xi_{j,t}/\acute{z}_{j,t+1}$. It follows that
	\[
	\bigg\| \f{\mu_{j,t+1}}{\mu_j}-1\bigg\|_\infty
	=
	\bigg\| 
	\f{\xi_{j,t+1} / \bar{z}_{j,t+1}}
		{\xi_{j,t} / \acute{z}_{j,t+1}}-1
	\bigg\|_\infty
	\stackrel{\eqref{e:induction.implies.xi.err.bound}}
		{\le}
	\f1{(2^{2k/3})^{t+2}}\,,
	\]
which verifies the inductive hypothesis \eqref{c:lotsOfAs.induction}.\smallskip

\noindent
\bemph{Step 3. Conclusion.}
Let $q_j=\lim_{t\to\infty} q_{j,t}$, where it is clear from \eqref{c:lotsOfAs.induction} that the limit is well-defined. Moreover, summing \eqref{c:lotsOfAs.induction} over $t\ge0$ implies
	\begin{align*}
	\bigg\|\f{q_j}{\mu_j}-1\bigg\|_\infty
	&=\bigg\|\f{q_j}{q_{j,0}}-1\bigg\|_\infty
	\le
	\sum_{t\ge0}
	\bigg\|\f{q_{j,t+1}}{q_{j,t}}-1\bigg\|_\infty\\
	&\stackrel{\eqref{e:z.j.t.plus.one}}
		{\le} O\Bigg(
	\sum_{t\ge0}
	\bigg\|\f{\mu_{j,t}}{\mu_j}
		-1\bigg\|_\infty
		\Bigg)
	\stackrel{\eqref{c:lotsOfAs.induction}}{\le}
	O\Bigg(
	\sum_{t\ge0}
	\f1{(2^{2k/3})^{t+1}}\Bigg)
	\le \f1{2^{k/2}}\,.
	\end{align*}
We can then straightforwardly derive from
\eqref{e:cond.yb.single.copy.mgls}, \eqref{e:cond.yb.q}, and
the last bound that 	
	\[\mu\Big(\varsigma_j=\whi\whi\Big)
	\ge 1- O\bigg(\f{k^2}{2^k}\bigg)\]
for all $1\le j\le K$. The result follows since $\mu$ is the measure $\hbh$ conditioned on event $\SPIN{U}$,
and we know that
 $\hbh(\SPIN{U}^c)$ is negligible by \eqref{e:any.red.is.unlikely}.
\end{proof}
\end{lem}

\begin{lem}\label{l:indifferent} Suppose $\omega\in\bm{I}_0$, and let $\pi\equiv(\pi_{\bt})_{\bt}$ be the marginal of $\omega$ --- each $\pi_{\bt}$ is obtained by averaging over $\omega_{\bL,j}$ such that $\bL\ni\bt$ (see Definition~\ref{d:empirical.measures.of.colors}). Let
	\[
	\optnu[\pi]
	\equiv
	\argmax_\nu
	\bigg\{
	\bm{\Phi}_{\DD,2}(\nu):
	\textup{$\nu$ is consistent with $\pi$}
	\bigg\}\,.
	\]
This is a relaxation of
\eqref{e:nu.opt.given.omega.pair}, since if $\nu$ is consistent with $\omega$ then it is also consistent with $\pi$. The relaxation is also a strictly convex problem, so the maximizer $\optnu[\pi]$ is uniquely defined. Let $\acute{\omega}$ denote the marginals of $\optnu[\pi]$. Then, for every $\sigma\in\set{\yel,\blu}^2$ and every non-compound edge $\bt$ with $\pi_{\bt}(\sigma)\ge 2^{-k/15}$, we have
	\[\min\bigg\{
	\f{\acute{\omega}_{\bL,j}(\sigma)}{\pi_{\bt}(\sigma)}
	:\bL(j)=\bt\bigg\}
	\ge \f18\,,
	\]
where $j\equiv j(\bt)$. (If $\bt$ is a compound edge type then there is only one clause type $\bL$ with $\bL(j)=\bt$, so in this case $\omega_{\bL,j}=\pi_{\bt}$ and there is nothing to prove.)

\begin{proof} The proof follows a familiar outline: we first reduce to a simplified constrained entropy maximization problem, then estimate the Lagrangian weights solving that problem to derive the conclusion.\smallskip

\noindent
\bemph{Step 1. Simplified entropy maximization problem.}
From the expression \eqref{e:rate.function.pair.model}
for $\bm{\Phi}_{\DD,2}$, we see that to optimize
$\bm{\Phi}_{\DD,2}(\nu)$ given fixed $\pi$, we can optimize separately over the variable and clause empirical measures, $\dbh$ and $\hbh$. We can therefore consider $\hbh$ alone, since it determines $\acute{\omega}$. The optimal $\hbh$ is given by
	\beq\label{e:max.entropy.hbh.given.pi}
	\hbh=\argmax_{\hbh}
	\Bigg\{
	\E_{\hat{\DD}}[\Ent(\hbh_{\bL})]
	\equiv
	\sum_{\bL}\hat{\DD}(\bL) \Ent(\hbh_{\bL})
	: \textup{$\hbh$ is judicious and 
	consistent with $\pi$}
	\Bigg\}
	\eeq
--- this is because $\E_{\hat{\DD}}[\Ent(\hbh_{\bL})]$ is the only term of \eqref{e:rate.function.pair.model} that varies with $\hbh$ when $\pi$ is fixed. Equivalently, recalling the combinatorial calculation \eqref{e:config.model.pair.version}, 
$\hbh$ must satisfy
	\beq\label{e:max.entropy.hbh.given.pi.comb}
	\hbh=\argmax_{\hbh}
	\Bigg\{
	\prod_{\bL}
	\binom{m_{\bL}}{m_{\bL}\hbh_{\bL}}
	\Bigg\}\,,
	\eeq
that is to say, $\hbh$ is the empirical measure of clause colorings $\usi_{\delta a}$ that maximizes entropy subject to marginals $\pi$.

Now, as in the statement of the lemma, let us fix a non-compound edge type $\bt$ such that $\pi_{\bt}(\sigma)\ge 2^{-k/20}$. Denote $j=j(\bt)$, and recall that we write $\bL\ni\bt$ if and only if $\bL(j)=\bt$. 
Given $\GG=(V,F,E)$, define the subset of clauses
	\[
	F(\bt)
	= \bigg\{ a\in F : \bL_a \ni\bt\bigg\}
	\subseteq F\,.
	\]
Note for all $\bL\ni\bt$
the clause width $k(\bL)$ equals the same value $K\in\set{k-1,k}$, since $\bL$ must be compatible with $\bt$. Recall from Definition~\ref{d:white.violet} that a clause coloring
$\usi\in(\mathscr{X}^K)^2$
defines an element
$\uvsi\in(\mathscr{S}^K)^2$. Let
	\[
	\SPIN{N}
	\equiv
	\SPIN{N}_j
	\equiv\bigg\{
	\usi \in (\mathscr{X}^K)^2
	: \sigma_j \in\set{\yel,\blu}^2
	\textup{ and }
	\varsigma_j = \whi\whi
	\bigg\}\,.
	\]
Then, for any (pair) coloring $\usi$ on $\GG$, let
	\[
	F(\bt,\usi)
	\equiv
	\bigg\{ a\in F(\bt) : 
		\usi_{\delta a}\in\SPIN{N}\bigg\}
	\subseteq F(\bt)\,.
	\]
\bemph{Within $F(\bt,\usi)$ only,} because each clause has $\varsigma_j=\whi\whi$, we are free to reassign the value of $\sigma_j$ to any other color in $\set{\yel,\blu}^2$, provided we continue to respect $\pi$ and the judicious constraints. Let $\bP\equiv\bP_{\bt}$ be the probability measure
	\[
	\bP\bigg(\bL,\usi_{\delta a}=\usi\bigg)
	=\pi_{\DD}(\bL\,|\,\bt)
	\cdot \hbh_{\bL}\bigg(\usi_{\delta a}=\usi
		\bigg)\,,
	\]
so $\bP$ represents the empirical measure of clause types and colorings within $F(\bt)$. 
Note that marginal of $\bP$ on $\bL$ is $\bP(\bL)=\pi_{\DD}(\bL\,|\,\bt)$, while the marginal on $\sigma_j$ (for $j=j(\bt)$) is $\bL(\sigma_j=\sigma)=\pi_{\bt}(\sigma)$. Let $\zeta$ be the empirical measure for the $j$-th edges of the clauses in $F(\bt,\usi)$, i.e.,
	\[
	\zeta(\sigma,\bL)
	= \bP\bigg(
		\bL,\sigma_j=\sigma
		\,\bigg|\,\SPIN{N}
		\bigg)\,,
	\]
where $\SPIN{N}$ is shorthand for the event that $\usi\in\SPIN{N}$. If $\hbh$ is the maximizer as in \eqref{e:max.entropy.hbh.given.pi} or \eqref{e:max.entropy.hbh.given.pi.comb}, then $\zeta$ must satisfy
	\beq\label{e:final.reduction.for.going.from.t.to.L}
	\zeta=\argmax_\zeta\left\{
	\begin{array}{c}\Ent(\zeta):
	\zeta(\bL) = \bP(\bL\,|\,\SPIN{N}) \ \forall \bL,\\
	\zeta(\sigma) = \bP(\sigma_j=\sigma\,|\,\SPIN{N})
		\ \forall \sigma\in\set{\yel,\blu}^2,\\
	\zeta(\tau\,|\,\bL)
		=\bP((\sigma_j)^i=\tau\,|\,\SPIN{N},\bL)
		\ \forall \tau\in\set{\yel,\blu}
	\end{array}\right\}\,.
	\eeq
By the method of Lagrange multipliers, there exist (real-valued) weights
$\gm$, $\beta(\bL)$ , $(\beta_{\bL})^1$, $(\beta_{\bL})^2$ 
such that
	\beq\label{e:zeta.lagrangian}
	\zeta(\bL,\sigma)\cong\exp\Bigg\{
		\gm\Ind{\sigma^1\ne\sigma^2}
			+\beta(\bL)
		+2\sum_{i=1,2}
			(\beta_{\bL})^i
				\Ind{\sigma^i=\blu}
		\Bigg\}\,,
	\eeq
where $\gm$ can be chosen independently of $\bL$ since its purpose is to enforce the constraint on $\zeta(\sigma)$. (The weights $(\beta_{\bL})^i$ are multiplied by a factor of two for convenience in subsequent calculations.)\smallskip

\noindent\bemph{Step 2. Estimation of marginals for \eqref{e:zeta.lagrangian}.} Note that since $\bt$ was assumed to be a non-compound edge type, any clause type $\bL\ni\bt$ must be nice. It then follows from the judicious condition, Definition~\ref{d:nice}, and Lemma~\ref{l:lotsOfAs} that for all $\bL\ni\bt$ we have
	\beq\label{e:mostly.N}
	1\ge \hbh_{\bL}(\SPIN{N})
	\ge
	\hbh_{\bL}\Big(\varsigma_j=\whi\whi\Big)
	-\sum_{i=1,2}
	\overbrace{\hbh_{\bL}\Big( (\sigma_j)^i=\grn\Big)}^{\starpi_{\bt}(\grn)}
	\ge
	1-O\bigg(\f{k^2}{2^k}\bigg)\,.
	\eeq
As a result, the quantities appearing in \eqref{e:final.reduction.for.going.from.t.to.L} can be written more explicitly and estimated as follows:
	\[\zeta(\bL)=\bP(\bL\,|\,\SPIN{N})
	=\f{ \pi_{\DD}(\bL\,|\,\bt)
		\hbh_{\bL}(\SPIN{N})}
		{\sum_{\bL'} \pi_{\DD}(\bL'\,|\,\bt)
		\hbh_{\bL'}(\SPIN{N})}
	\stackrel{\eqref{e:mostly.N}}{=}
	\pi_{\DD}(\bL\,|\,\bt)
	\Bigg\{ 1- O\bigg( \f{k^2}{2^k}\bigg)\Bigg\}\,,
	\]
i.e., the clause type proportions within $F(\bt,\usi)$ are close to those within $F(\bt)$. Next, for each $\sigma\in\set{\yel,\blu}^2$, we have
	\[\zeta(\sigma)=
	\bP\Big(\sigma_j=\sigma\,\Big|\,\SPIN{N}\Big)
	= \f{\pi_{\bt}(\sigma)
		-\bP(\sigma_j=\sigma,\SPIN{N}^c)}
		{\bP(\SPIN{N})}
	\stackrel{\eqref{e:mostly.N}}{=}
	\pi_{\bt}(\sigma)
	-O\bigg( \f{k^2}{2^k}\bigg)\,,
	\]
where we note that the right-hand side
must be positive by the assumption that $\pi_{\bt}\ge 2^{-k/20}$. 
Finally, for each $\sigma\in\set{\yel,\blu}^2$ and each $\bL$, we have
	\beq\label{e:zeta.close.to.omega}
	\zeta(\sigma\,|\,\bL)=
	\bP\Big(\sigma_j=\sigma
		\,\Big|\,\SPIN{N},\bL\Big)
	= \f{\omega_{\bL,j}(\sigma) 
		-\bP(\sigma_j=\sigma,\SPIN{N}^c\,|\,\bL)
		}{\bP(\SPIN{N}\,|\,\bL)}
	\stackrel{\eqref{e:mostly.N}}{=} 
	\omega_{\bL,j}(\sigma) 
	-O\bigg( \f{k^2}{2^k}\bigg)\,,\eeq
where we have not yet shown the right-hand side to be positive. Taking the marginal on the $i$-th copy gives
	\[
	\zeta( \sigma^i=\tau\,|\,\bL)=
	\bP\Big((\sigma_j)^i=\tau
		\,\Big|\,\SPIN{N},\bL\Big)
	= \f{\starpi_{\bt}(\tau)
		-\bP((\sigma_j)^i=\tau,\SPIN{N}^c\,|\,\bL)
		}{\bP(\SPIN{N}\,|\,\bL)}
	\stackrel{\eqref{e:mostly.N}}{=} 
	\starpi_{\bt}(\tau)
	-O\bigg( \f{k^2}{2^k}\bigg)
	\]
for each $\tau\in\set{\yel,\blu}$.
This concludes our estimates for the quantities appearing in \eqref{e:final.reduction.for.going.from.t.to.L}. Next, we note that by the judicious condition, for all $\bL,j$ we have
	\[\omega_{\bL,j}(\yel\blu)-\omega_{\bL,j}(\blu\yel)
	=\bigg\{
	\omega_{\bL,j}(\yel\blu)+\omega_{\bL,j}(\yel\yel)
	\bigg\}
	-\bigg\{
	\omega_{\bL,j}(\blu\yel)+\omega_{\bL,j}(\yel\yel)
	\bigg\}
	=\starpi_{\bt}(\yel)-\starpi(\yel)
	=0\,.
	\]
By the judicious condition together with the assumption that $\bt=\bL(j)$ is nice, we also have
	\[
	\omega_{\bL,j}(\yel\yel)-\omega_{\bL,j}(\blu\blu)
	=\bigg\{\omega_{\bL,j}(\yel\yel)
	+\omega_{\bL,j}(\blu\yel)\bigg\}
	-\bigg\{\omega_{\bL,j}(\blu\blu)
		+\omega_{\bL,j}(\blu\yel)
		\bigg\}
	=\starpi(\yel)-\starpi(\blu)
	=O\bigg( \f1{2^{k/10}}\bigg)\,.
	\]
Combining with \eqref{e:zeta.close.to.omega} gives,
for all $\bL,j$ with $\bL(j)=\bt$, the bound
	\beq\label{e:zeta.additive.error.small}
	\bigg|\zeta(\yel\yel\,|\,\bL)-\zeta(\blu\blu\,|\,\bL)\bigg|
	+\bigg|\zeta(\yel\blu\,|\,\bL)-\zeta(\blu\yel\,|\,\bL)\bigg|
	\le O\bigg( \f1{2^{k/10}}\bigg)\,.\eeq
With these estimates in hand, we now turn to estimating the weights in \eqref{e:zeta.lagrangian}.\smallskip

\noindent\bemph{Step 3. Estimation of Lagrangian weights.}
If $\bL$ satisfies $\zeta(\set{\yel\blu,\blu\yel}\,|\,\bL)\ge 2^{-k/15}$, then it follows by combining with \eqref{e:zeta.additive.error.small} that
	\[
	\bigg|\f{\zeta(\yel\blu\,|\,\bL)}{\zeta(\blu\yel\,|\,\bL)}-1\bigg|
	\le
	\f{|\zeta(\yel\blu\,|\,\bL)-\zeta(\blu\yel\,|\,\bL)|
	}{
	\min\set{
	\zeta(\yel\blu\,|\,\bL),\zeta(\blu\yel\,|\,\bL)}}
	\stackrel{\eqref{e:zeta.additive.error.small}}{\le}
	O\bigg(\f{2^{-k/10}}{2^{-k/15}}\bigg)
	=O\bigg( \f1{2^{k/30}}\bigg)\,.
	\]
On the other hand, from the Lagrangian solution \eqref{e:zeta.lagrangian}, we have
	\[
	\f{\zeta(\yel\blu\,|\,\bL)}{\zeta(\blu\yel\,|\,\bL)}
	= \f{\exp\{\beta(\bL)+\gamma+2(\beta_{\bL})^2 \}}
	{\exp\{\beta(\bL)+\gamma+2(\beta_{\bL})^1 \}}
	= \exp\bigg\{ 
	2\Big[
	(\beta_{\bL})^2-(\beta_{\bL})^1\Big]\bigg\}\,.
	\]
Comparing the last two displays, we see that
	\beq\label{e:big.zeta.implies.small.beta}
	\max\bigg\{
	\Big|(\beta_{\bL})^2-(\beta_{\bL})^1\Big| 
	:\bL\ni\bt \textup{ and }\zeta(\set{\yel\blu,\blu\yel}\,|\,\bL)
		\ge \f1{2^{k/15}}
	\bigg\}
	\le
	O\bigg( \f1{2^{k/30}}\bigg)\,.\eeq
An entirely similar argument gives
	\beq\label{e:big.zeta.implies.small.beta.bb}
	\max\bigg\{
	\Big|(\beta_{\bL})^2+(\beta_{\bL})^1\Big| 
	:\bL\ni\bt \textup{ and }\zeta(\set{\yel\yel,\blu\blu}\,|\,\bL)
		\ge \f1{2^{k/15}}\bigg\}
	\le
	O\bigg( \f1{2^{k/30}}\bigg)\,.\eeq
Now suppose $\bt$ is such that $\pi_{\bt}(\set{\yel\blu,\blu\yel})\ge \chi$
for some $2^{-k/15}\le\chi\le 1/2$. It implies that for \bemph{some} clause type $\bL\ni\bt$, we have
$\zeta(\set{\yel\blu,\blu\yel}\,|\,\bL) \ge \chi$. For this particular $\bL$, 
writing $\ch$ for the hyperbolic cosine function, we have
	\begin{align}\nonumber
	\chi &\le \f{\chi}{1-\chi}
	\le
	\f{\zeta(\set{\yel\blu,\blu\yel}\,|\,\bL)}
	{\zeta(\set{\yel\yel,\blu\blu}\,|\,\bL)}
	= e^\gamma
	\f{\exp\{2 (\beta_{\bL})^2\}+\exp\{ 2(\beta_{\bL})^1\}}
	{1 + \exp\{2 (\beta_{\bL})^1+2(\beta_{\bL})^2\}}
	= e^\gamma
	\f{\ch((\beta_{\bL})^1-(\beta_{\bL})^2) }
	{\ch((\beta_{\bL})^1+(\beta_{\bL})^2) }\\
	&\stackrel{\eqref{e:big.zeta.implies.small.beta}}{\le}
	\f{ e^\gamma
	[1 +O(2^{-k/30})]
	}{\ch((\beta_{\bL})^1+(\beta_{\bL})^2) }
	\le e^\gamma
	\bigg( 1 + O\bigg(\f1{2^{k/30}}\bigg)\bigg)
	\,.\label{e:gamma.chi.lbd}
	\end{align}
Next, for \bemph{every} clause type $\bL$ such that $\bL\ni\bt$,
since we assumed $\chi\le1/2$,
at least one of the two quantities
$\zeta(\set{\yel\blu,\blu\yel}\,|\,\bL)$ and
$\zeta(\set{\yel\yel,\blu\blu}\,|\,\bL)$ must be $\ge\chi$.
If $\zeta(\set{\yel\yel,\blu\blu}\,|\,\bL)\ge\chi$, then
	\begin{align*}
	\f{\zeta(\set{\yel\blu,\blu\yel}\,|\,\bL)}
		{\zeta(\set{\yel\yel,\blu\blu}\,|\,\bL)}
	&=e^\gamma
	\f{\ch((\beta_{\bL})^1-(\beta_{\bL})^2) }
	{\ch((\beta_{\bL})^1+(\beta_{\bL})^2) }
	\ge 
	\f{e^\gamma}{\ch((\beta_{\bL})^1+(\beta_{\bL})^2) }\\
	&\stackrel{\eqref{e:big.zeta.implies.small.beta.bb}}{\ge}
	\f{e^\gamma}{1+O(2^{-k/30})}
	\stackrel{\eqref{e:gamma.chi.lbd}}{\ge}
	\chi\bigg( 1 - O\bigg(\f1{2^{k/30}}\bigg)\bigg)
	\,.
	\end{align*}
Combining the two cases
$\zeta(\set{\yel\blu,\blu\yel}\,|\,\bL)\ge\chi$ and
$\zeta(\set{\yel\yel,\blu\blu}\,|\,\bL)\ge\chi$ gives
	\[
	\zeta(\set{\yel\blu,\blu\yel}\,|\,\bL)
	\ge
	\min\bigg\{
	\chi, \f{\chi[1 -O(2^{-k/30})]}{1
		+\chi[1 -O(2^{-k/30})]}
	\bigg\}
	\ge \f{2\chi}{3}
	\bigg( 1- O\bigg(\f1{2^{k/30}}\bigg)\bigg)\,.
	\]
Recalling \eqref{e:zeta.close.to.omega} again, this proves that 
if $\pi_{\bt}(\sigma)\ge 2^{-k/15}$ for $\sigma\in\set{\yel\blu,\blu\yel}$,
then
	\[
	\omega_{\bL,j}(\sigma)
	\ge \f{\chi}{4}
	\ge \f{\pi_{\bt}(\sigma)}{8}\,,
	\]
where the last bound follows by taking
$\chi=\min\set{\pi_{\bt}(\sigma),1/2}$.
The analogous result for
$\sigma\in\set{\yel\yel,\blu\blu}$ by a symmetric argument, and this concludes the proof.
\end{proof}
\end{lem}

\begin{lem}\label{l:lotsOfAsDiverse}
Let $a$ be a clause of type $\bL$ which is both nice
(meaning $\bt$ is nice for all $\bt\in\bL$) and diverse (Definition~\ref{d:diverse}). Given $\omega\in\bm{I}_0$, let $\hbh_{\bL}$ be the probability measure on valid (pair) colorings $\usi_{\delta a}$ which maximizes entropy subject to edge marginals $(\omega_{\bL,j})_j$. Recall from Definition~\ref{d:white.violet} that
each $\usi_{\delta a}$ maps to a configuration
$\uvsi_{\delta a}\in(\mathscr{S}^2)^{\delta a}$. Let
	{\setlength{\jot}{0pt}\begin{align*}
	\SPIN{A}
	&\equiv \{\usi_{\delta a}
	: \sigma_e\ne\red\red \textup{ for all }e\in\delta a\}\,,\\
	\SPIN{W}^i
	&\equiv \{\usi_{\delta a}
	: (\varsigma_e)^i
	=\whi \textup{ for all }e\in\delta a\}\,.
	\end{align*}}%
There exists an absolute constant $\EPSONE>0$ such that
	\beq\label{e:lotsOfAsDiverse.conditional}
	1-\hbh_{\bL}
	\Big(\SPIN{W}^1 \cup \SPIN{W}^2
	\,\Big|\, \SPIN{A}\Big)
	\le \f1{2^{k(1+\EPSONE)}}\,.
	\eeq
If in addition $\bL$ is light (Definition~\ref{d:heavy}), then it follows immediately that
	\beq\label{e:lotsOfAsDiverse.uncond}
	1-\hbh_{\bL}\Big(\SPIN{W}^1 \cup \SPIN{W}^2\Big)
	\le \f1{2^{k(1+\EPSONE)}}
	+ \f{k}{2^{k(1+\EPSLIGHT)}}
	\le O\bigg(\f{k}{2^{k(1+\EPSLIGHT)}}\bigg)\,,
	\eeq
where we can assume that $\EPSLIGHT\le\EPSONE$.

\begin{proof} 
It is an immediate consequence of Lemma~\ref{l:indifferent}
that if $\bL$ is both nice and diverse, then
	\beq\label{e:diverse.clause}
	\sum_{j=1}^{k(\bL)}
	\mathbf{1}\bigg\{
		\omega_{\bL,j}\Big(
		\set{\blu\yel,\yel\blu}\Big)
	\ge \f1{33}
	\bigg\}
	\ge \sum_{j=1}^{k(\bL)}
	\mathbf{1}\bigg\{\omega_{\bL,j}(\yel\blu)
		\ge \f1{65}
	\bigg\}
	\ge \f{k}{10}\,.
	\eeq
In this proof, by the same reasoning as in the proof of Lemma~\ref{l:lotsOfAs} (see \eqref{e:bg.given.cyan}), it suffices to take $\sigma$ in the reduced alphabet $\set{\RYC}$. We also abbreviate $\notr\equiv\set{\yel,\cya}$. Throughout the following, $\EPSONE$ denotes a small positive number, whose value may change from one occurrence to the next, but ultimately is taken as an absolute constant. We denote the clause width by $K\equiv k(\bL)\in\set{k-1,k}$.
\smallskip

\noindent\bemph{Step 1. Reduction to entropy maximization for a conditional measure.} As in the statement of the lemma, let $\hbh_{\bL}$ be the optimizer given marginals $\omega$. Note that since $\omega$ is judicious and $\bL$ is nice,
	\[
	\hbh_{\bL}(\SPIN{A}^c)
	\le\sum_{j=1}^K
	\omega_{\bL,j}(\red\red)
	\le\sum_{i=1,2}
	\sum_{j=1}^K
	(\omega_{\bL,j})^i(\red)
	= 2\sum_{j=1}^K \starpi_{\bL(j)}(\red)
	\le O\bigg( \f{k}{2^k}\bigg)\,.
	\]
Let $\mu(\usi)\equiv \hbh_{\bL}(\usi\,|\, \SPIN{A})$
for $\usi\in\set{\RYC}^{2K}$. For each edge $1\le j\le K$, let $\mu_j$ be the marginal of $\mu$ on the $j$-th edge in $\delta a$. Thus $\mu_j$ is a probability measure over $\set{\RYC}^2\setminus\set{\red\red}$, and the preceding estimate implies
	\beq\label{e:cond.on.A.mgl.effect}
	\mu_j(\sigma)
	=\f{\hbh_{\bL}(\sigma_j=\sigma)
		-\hbh_{\bL}(\sigma_j=\sigma;\SPIN{A}^c)}
		{1-\hbh_{\bL}(\SPIN{A}^c)}
	= \begin{cases}
	\omega_{\bL,j}(\sigma) +O(k/2^k)
		&\textup{if $\red[\sigma]=0$,} \\
	\omega_{\bL,j}(\sigma) +O(1/2^k)
		&\textup{if $\red[\sigma]=1$,}
	\end{cases}
	\eeq
where the last bound uses that $\hbh_{\bL}(\sigma_j=\sigma)\le O(1/2^k)$ if $\red[\sigma]=1$. The measure $\mu$ maximizes entropy subject to the edge marginals $\mu_j$. Similarly as in the preceding proofs of this section, we shall construct a sequence of Lagrangian weights $b_{j,t}$ such that the measure
	\beq\label{e:mu.t.as.fn.of.b.t}
	 \mu_t(\usi)
	= \f{ \Ind{\usi\in\SPIN{A}}}{Z_t}
	\prod_{j=1}^K b_{j,t}(\sigma_j)\eeq
converges to the desired $\mu$ as $t\to\infty$, where $Z_t$ is the normalizing constant. Let $\mu_{j,t}$ denote the marginals of the measure $\mu_t$.\smallskip

\noindent\bemph{Step 2. Initalization of Lagrangian weights.} We initialize the construction with
	\beq\label{e:burnin.reweight.power.of.two}
	b_{j,0}(\sigma)
	\equiv
	\f{\mu_j(\sigma)}{1/ (2^{K-1})^{\red[\sigma]}}\,,\quad
	\sigma\in\set{\RYC}^2\setminus\set{\red\red}\,.
	\eeq
We first estimate the marginals $\mu_{j,0}$ of the resulting measure $\mu_0$. Recall that we denote $\notr\equiv\set{\yel,\cya}$. Note that \eqref{e:cond.on.A.mgl.effect}, together with the judicious condition and the assumption that $\bL$ is nice, gives
	\beq\label{e:zeta.u.u.estimates}
	\mu_j(\notr\notr)=1-O\bigg(\f{k}{2^k}\bigg)\,,\quad
	\sum_{\tau\in\set{\yel,\cya}}
	\Bigg\{
	\bigg|\mu_j(\tau\notr)-\f12\bigg|
	+\bigg|\mu_j(\notr\tau)-\f12\bigg|\Bigg\}
	 \le \f1{2^{k\EPSONE}}\,.
	\eeq
Moreover, it follows from the diverse clause condition \eqref{e:burnin.reweight.power.of.two} together with \eqref{e:cond.on.A.mgl.effect} that
	\beq\label{e:diverse.clause.zeta}
	\sum_{j=1}^{k(\bL)}
	\mathbf{1}\bigg\{
	\min\Big\{	 \mu_j(\yel\blu),\mu_j(\blu\yel)\Big\}
	\ge \f1{65}
	\bigg\}
	\ge \f{k}{10}\,.
	\eeq
Without loss of generality we now focus on the marginal on the edge indexed $j=1$: at time $t=0$, 
	\[
	Z_0\mu_{1,0}(\cya\cya)
	= \mu_1(\cya\cya)
	\Bigg\{
	\prod_{\ell=2}^K\mu_\ell(\notr\notr)
	-\prod_{\ell=2}^K\mu_\ell(\notr\yel)
	-\prod_{\ell=2}^K\mu_\ell(\yel\notr)
	+\prod_{\ell=2}^K\mu_\ell(\yel\yel)
	\Bigg\}
	\stackrel{\eqref{e:zeta.u.u.estimates}}{=}
	\mu_1(\cya\cya)\Bigg\{ 1 +O\bigg(\f{k^2}{2^k}\bigg)\Bigg\}\,,
	\]
Similar calculations, again using \eqref{e:zeta.u.u.estimates}, give the analogous estimate for $\sigma\in\set{\yel,\cya}^2$.
Next, for $\sigma=\red\cya$, it follows from \eqref{e:zeta.u.u.estimates}
together with the diversity bound \eqref{e:diverse.clause.zeta} gives
	\begin{align*}
	Z_0\mu_{1,0}(\red\cya)
	&= \Big( \mu_1(\red\cya) 2^{K-1}\Big)
	\Bigg\{\prod_{\ell=2}^K
	\mu_\ell(\yel\notr)
	-\prod_{\ell=2}^K\mu_\ell(\yel\yel)\Bigg\}\\
	&=\Big( \mu_1(\red\cya) 2^{K-1}\Big)
	\f{1+O(k/2^{k\EPSONE})}{2^{K-1}}\Bigg\{1
	-\bigg(1-\f{2}{65}\bigg)^{k/10}
	\Bigg\}
	= \mu_1(\red\cya) 
	\Bigg\{ 1 +O\bigg(\f1{2^{k\EPSONE}}\bigg)\Bigg\}
	\end{align*}
(recalling our convention that $\EPSONE$ can change from one expression to the next, but remains bounded below by a positive absolute constant). A similar calculation (again using
\eqref{e:zeta.u.u.estimates} and \eqref{e:diverse.clause.zeta}) gives
an analogous estimate for $\sigma=\red\yel$. Altogether we conclude
	\beq\label{e:mu.mgl.zeta.base.case}
	\mu_{1,0}(\sigma)
	=\f{\mu_1(\sigma)}{Z_0}
		\Bigg\{ 1 +O\bigg(\f1{2^{k\EPSONE}}\bigg)\Bigg\}
	= \mu_1(\sigma)
		\Bigg\{ 1 +O\bigg(\f1{2^{k\EPSONE}}\bigg)\Bigg\}\,.
	\eeq
for all $\sigma\in\set{\RYC}^2\setminus\set{\red\red}$.\smallskip

\noindent\bemph{Step 3. Iterative analysis of Lagrangian weights.} Suppose at time $t$ that we have weights $b_{j,t}$, which define a measure $\mu_t$ as in \eqref{e:mu.t.as.fn.of.b.t} with edge marginals $\mu_{j,t}$. We define the updated weights at time $t+1$ by
	\beq\label{e:lotsOfAsDiverse.beta.update.rule}
	\f{b_{j,t+1}(\sigma)}{b_{j,t}(\sigma)}
	= \f{\mu_j(\sigma)}{\mu_{j,t}(\sigma)}\,.
	\eeq
We will prove by induction that for all $t\ge0$, all $1\le j\le k(\bL)$, and all $\sigma\in\set{\RYC}^2\setminus\set{\red\red}$,
	\beq\label{e:lotsOfAsDiverse.induct}
	\Bigg|
	\f{b_{j,t+1}(\sigma)}
		{b_{j,t}(\sigma)}-1 \Bigg|
	=\Bigg|
	\f{\mu_j(\sigma)}{\mu_{j,t}(\sigma)}-1
	\Bigg|
	\le \f{ y_{\red[\sigma]}}{2^{k(t+1)\EPSONE/3}}\,,\quad
	\begin{pmatrix}y_0 \\ y_1\end{pmatrix}
	= \f1{2^{3k\EPSONE/2}}
	\begin{pmatrix}1 \\ 2^{k\EPSONE/2} \end{pmatrix}\,,
	\eeq
where the base case $t=0$ is implied by \eqref{e:mu.mgl.zeta.base.case} (adjusting $\EPSONE$ appropriately). Suppose then that \eqref{e:lotsOfAsDiverse.induct} holds up to time $t-1\ge0$, and note it implies that for all $j,\sigma$ we have
	\beq\label{e:lotsOfAsDiverse.induction}
	\Bigg|\log \f{b_{j,t}(\sigma)}
		{\mu_j(\sigma) (2^{K-1})^{\red[\sigma]}}
		\Bigg|
	=\Bigg|\log \f{b_{j,t}(\sigma)}{b_{j,0}(\sigma)}
		\Bigg|
	\le \sum_{s=0}^{t-1}
	\Bigg|\log \f{b_{j,s+1}(\sigma)}{b_{j,s}(\sigma)}
		\Bigg|
	\stackrel{\eqref{e:lotsOfAsDiverse.induct}}{\le}
		O\bigg( \f1{2^{k\EPSONE}}\bigg)\,.
	\eeq
To analyze the update \eqref{e:lotsOfAsDiverse.beta.update.rule}, let us focus on the first two edges in the clause, and note that
	\beq\label{e:derivative.is.covariance}
	\f{\pd \mu_{1,t}(\sigma)}
		{\pd \log b_{2,t}(\sigma')}
	=\Cov_{\mu_t}\bigg(
	\Ind{\sigma_1=\sigma},
	\Ind{\sigma_2=\sigma'}
	\bigg)
	\eeq
for all $\sigma,\sigma'\in\set{\RYC}^2\setminus\set{\red\red}$ (by direct calculation). We therefore define
	\[
	C_t(\sigma,\sigma')
	\equiv
	\f{Z_t \mu_t(\sigma_1=\sigma,\sigma_2=\sigma')}
		{\mu_{j=1}(\sigma)
		\mu_{j'=2}(\sigma')}\,,
	\]
and proceed to estimate this quantity. In the simplest case $\sigma=\sigma'=\cya\cya$, we have
	\[
	C_t(\cya\cya,\cya\cya)
	=\f{b_{1,t}(\cya\cya)
	b_{2,t}(\cya\cya)}
	{\mu_1(\cya\cya)
	\mu_2(\cya\cya)}
	\prod_{j=3}^K b_{j,t}(\notr\notr)
	\stackrel{\eqref{e:lotsOfAsDiverse.induction}}{=}
	\Bigg\{
	\prod_{j=3}^K \mu_j(\notr\notr)
	\Bigg\}
	\Bigg\{1 + O\bigg(\f1{2^{k\EPSONE}}\bigg)\Bigg\}
	\stackrel{\eqref{e:zeta.u.u.estimates}}{=}
	1 + O\bigg(\f1{2^{k\EPSONE}}\bigg)\,.\]
A similar estimate holds for all cases where
$\red[\sigma]+\red[\sigma']=0$. Next,
	\[
	C_t(\red\cya,\yel\cya)
	=\f{ b_{1,t}(\red\cya)b_{2,t}(\yel\cya)}
		{\mu_1(\red\cya)\mu_2(\yel\cya)}
	\prod_{j=3}^K b_{j,t}(\yel\notr)
	\stackrel{\eqref{e:lotsOfAsDiverse.induction}}{=}
	\Bigg\{ 2^{K-1} \prod_{j=3}^K \mu_j(\yel\notr)
	\Bigg\}
	\Bigg\{1 + O\bigg(\f1{2^{k\EPSONE}}\bigg)\Bigg\}
	\stackrel{\eqref{e:zeta.u.u.estimates}}{=}
	2 \Bigg\{1 + O\bigg(\f1{2^{k\EPSONE}}\bigg)\Bigg\}\,,
	\]
and a similar estimate holds for $C_t(\sigma,\sigma')$ in all cases where $\red[\sigma]+\red[\sigma']=1$. Finally,
	\[
	C_t(\red\yel,\yel\red)
	= \f{b_{1,t}(\red\yel) b_{2,t}(\yel\red)}
		{\mu_1(\red\yel)\mu_2(\yel\red)}
	\prod_{j=3}^K b_{j,t}(\yel\yel)
	\stackrel{\eqref{e:lotsOfAsDiverse.induction}}{=}
	(2^{K-1})^2
	\prod_{j=3}^K b_{j,t}(\yel\yel)
	\Bigg\{1 + O\bigg(\f1{2^{k\EPSONE}}\bigg)\Bigg\}
	\le \f{2^k}{2^{k\EPSONE}}\,,
	\]
where the last bound uses the diversity bound \eqref{e:diverse.clause.zeta}. Combining with \eqref{e:derivative.is.covariance} gives
	\begin{align*}
	\Bigg|\f{\pd \mu_{1,t}(\sigma) /
		\pd \log b_{2,t}(\sigma')}
		{\mu_{1,t}(\sigma)
		\mu_{2,t}(\sigma')}
		\Bigg|
	&=\Bigg| \overbrace{\Bigg\{
		\f{Z_t\mu_t(\sigma_1=\sigma,\sigma_2=\sigma')}
		{\mu_1(\sigma)\mu_2(\sigma')}
		\Bigg\}}^{C_t(\sigma,\sigma')}
	\overbrace{\Bigg\{
	\f{\mu_1(\sigma)\mu_2(\sigma')}
		{Z_t \mu_{1,t}(\sigma)
		\mu_{2,t}(\sigma')}
		\Bigg\}}^{1 + O(2^{-k\EPSONE})}
		{} -1 \Bigg| \\
	&\le O(1) \begin{cases}
	2^{-k\EPSONE} & \textup{if 
		$\red[\sigma]+\red[\sigma']=0$,}\\
	1 & \textup{if $\red[\sigma]+\red[\sigma']=1$,}\\
	2^{k(1-\EPSONE)} & \textup{if 
		$\red[\sigma]=\red[\sigma']=1$.}
	\end{cases}
	\end{align*}
Rewriting the above in a more convenient form gives
	\beq\label{e:partial.deriv.covar.bound}
	\f1{\mu_1(\sigma)}
	\Bigg|\f{\pd \mu_{1,t}(\sigma)}
		{\pd\log b_{2,t}(\sigma')}
		\Bigg|
	\le O\Big( A_{\red[\sigma],\red[\sigma']}
		\Big)\,,\quad
	A = \begin{pmatrix} A_{0,0} & A_{0,1}\\
		A_{1,0} & A_{1,1} \end{pmatrix}
	= \begin{pmatrix} 2^{-k\EPSONE} & 2^{-k} \\
		1 & 2^{-k\EPSONE} \end{pmatrix}\,.\eeq
Now recall the definition \eqref{e:mu.t.as.fn.of.b.t}
of $\mu_t$, and define
	\[
	\tilde{\mu}_t(\usi)
	= \f{ \Ind{\usi\in\SPIN{A}}}{Z_t}
	b_{1,t}(\sigma_1)
	\prod_{j=2}^K b_{j,t-1}(\sigma_j)\,.
	\]
It follows from the update rule \eqref{e:lotsOfAsDiverse.beta.update.rule} that the marginal of $\tilde{\mu}_t$ on the first edge is precisely $\mu_1$. Consequently,
	\begin{align*}
	\Bigg|
	\f{b_{1,t+1}(\sigma)}
		{b_{1,t}(\sigma)}-1 \Bigg|
	&\stackrel{\eqref{e:lotsOfAsDiverse.beta.update.rule}}{=}\Bigg|
	\f{\tilde{\mu}_t(\sigma_1=\sigma)}
		{\mu_t(\sigma_1=\sigma)}-1
	\Bigg|
	\le \f{O(1)}{\mu_1(\sigma)}
	\sum_{j=2}^K\sum_{\sigma'}
	\Bigg| \f{\pd\mu_{1,t}(\sigma)}
		{\pd \log b_{j,t}(\sigma')}\Bigg|
	\cdot
	\Bigg|\log \f{b_{j,t}(\sigma')}
		{b_{j,t-1}(\sigma')}\Bigg|\\
	&\stackrel{\eqref{e:partial.deriv.covar.bound}}{\le}
	\sum_{\sigma'} A_{\red[\sigma],\red[\sigma']}
	\sum_{j=2}^K\sum_{\sigma'}
	\Bigg|\log \f{b_{j,t}(\sigma')}
		{b_{j,t-1}(\sigma')}\Bigg|
	\stackrel{\eqref{e:lotsOfAsDiverse.induct}}{\le}
	\f{k}{2^{kt\EPSONE/3}} \f1{2^{3k\EPSONE/2}}
	\sum_{\sigma'}
	A_{\red[\sigma],\red[\sigma']}
	(2^{k\EPSONE/2})^{\red[\sigma]}\\
	&\stackrel{\eqref{e:partial.deriv.covar.bound}}{\le}
	\f1{2^{k(t+1)\EPSONE/3}}
	\f{(2^{k\EPSONE/2})^{\red[\sigma]}}{2^{3k\EPSONE/2}} \,.
	\end{align*}
This verifies the induction and proves
\eqref{e:lotsOfAsDiverse.induct} for all $t\ge0$.
Taking $t\to\infty$ gives the weights $b=b_\infty$ for the optimal measure $\mu$. We can then use this to estimate
	\begin{align*}
	1-\mu\Big(\SPIN{W}^1 \cup \SPIN{W}^2\Big)
	&\le
	\mu\bigg( \sum_{j=1}^K
	\mathbf{1}\Big\{\sigma_j
		\in\set{\cya\cya,\cya\yel,\yel\cya}\Big\} \le4
	\bigg)
	\le
	\mu\bigg( \sum_{j=1}^K
	\Ind{\sigma_j=\yel\yel} \ge K-6\bigg)
	\\
	&\le \sum_{|S| \le 6} \prod_{j\in[K]\setminus S}
			 b_j(\yel\yel)
	\le \f1{2^{k(1+\EPSONE)}}\,,
	\end{align*}
where the last bound follows from \eqref{e:lotsOfAsDiverse.induct} together with the diversity bound \eqref{e:diverse.clause.zeta}. This implies the first assertion
\eqref{e:lotsOfAsDiverse.conditional} of the lemma. The second assertion 
\eqref{e:lotsOfAsDiverse.uncond}
(removing the conditioning on $\SPIN{A}$)
follows trivially, since the assumption that $\bL$ is light means (by Definition~\ref{d:heavy}) that
$\omega_{\bL,j}(\red\red)\le 2^{-k(1+\EPSLIGHT)}$
for all $j$, and so a union bound gives
	\[\hbh_{\bL}(\SPIN{A}^c )\le
	\sum_{j=1}^{k(\bL)} \omega_{\bL,j}(\red\red)
	\le \f{k}{ 2^{k(1+\EPSLIGHT)}}\,.
	\]
It follows by combining with \eqref{e:lotsOfAsDiverse.conditional} that
	\[
	\hbh_{\bL}\bigg(
		(\SPIN{W}^1 \cup \SPIN{W}^2)^c\bigg)
	\le
	\hbh_{\bL}(\SPIN{A}^c )
	+\hbh_{\bL}\bigg(
	(\SPIN{W}^1 \cup \SPIN{W}^2)^c \,\bigg|\,\SPIN{A}^c
	\bigg)
	\le \f1{2^{k(1+\EPSONE)}}
	+ \f{k}{2^{k(1+\EPSLIGHT)}}\,,
	\]
as claimed.\end{proof}
\end{lem}

\subsection{Entropy maximization around forced variables} 

In this subsection we show that under certain conditions, edges with $\sigma^i=\SPIN{red}$ are likely to have $\ups^i=\SPIN{violet}$.
The precise statements are given in Lemmas~\ref{l:rrUnforced}~and~\ref{l:ryUnforced} below. In later subsections we will prove bounds on $\ups=\SPIN{violet}$ edges, and use the results from this subsection to deduce bounds on $\sigma=\SPIN{red}$ edges. 

In \S\ref{ss:white.diverse} we worked with configurations $\uvsi\in\set{\RYC,\whi}^{2E}$ --- we noted that $\usi_{\delta a}$ determines $\uvsi_{\delta a}$ for each clause $a$, so to estimate the joint distribution of $(\sigma,\varsigma)$ under $\bom_{\bL,j}$ it suffices to consider only the clause measures $\hbh_{\bL}$. In this subsection, however, the aim is to prove estimates concerning the configurations $\vups\in\set{\RYGB,\mred}^{2E}$, which can no longer be determined from the clause measures $\hbh_{\bL}$. Instead, $\usi_{\delta v}$ determines $\vups_{\delta v}$ for each variable $v$. However, the joint distribution of $(\sigma,\ups)$ under $\bom_{\bL,j}$ cannot be inferred from only the variables measures $\dbh_{\bT}$, since those do not account for the distribution of clause types.

To resolve these issues, we again use the device of augmenting the alphabet with the clause type, similarly as in Definition~\ref{d:judicious.augmented.alphabet}. On a graph $\GG=(V,F,E)$ let $(\usi,\uL)$ denote an augmented pair coloring. Let $\Omega$ denote the vertex empirical measure for the augmented coloring:
for each variable type $\bT$ we let
$\Omega_{\bT}$ be the empirical measure of augmented colorings $(\usi,\uL)_{\delta v}$ on variables $v$ of type $\bT$.
Likewise we define $\Omega_{\bt}$ for edge types $\bt$, and $\Omega_{\bL}$ for clause types $\bL$. 
The combinatorial calculation \eqref{e:config.model.pair.version} also implies that the contribution of $\Omega$ to the second moment is
	\beq\label{e:Psi.of.Omega}
	\E_{\DD} \ZZ^2[\Omega]
	= \underbrace{
	\Bigg\{
	\prod_{\bT} 
	\binom{n_{\bT}}{n_{\bT}\Omega_{\bT}}
	\prod_{\bL}
	\binom{m_{\bL}}{m_{\bL}\Omega_{\bL}}
	\Bigg\}
	}_{\substack{
	\text{number of colorings}\\
	\text{prior to matching}
	}}
	\underbrace{\Bigg\{
	\prod_{\bm{t}} \binom{n_{\bm{t}}}
		{ n_{\bm{t}} \Omega_{\bm{t}} }
		\Bigg\}^{-1}
		}_{\substack{\text{probability of matching}\\
			\text{to respect colorings}}}
	=
	\f{\exp\{n\bm{\Phi}_{\DD,2}(\Omega)\}}
		{n^{O(1)}}\,,\eeq
We write $\Omega\sim\nu$ if $\Omega$ is consistent with $\nu =(\dbh,\hbh)$: in this case, 
	\beq\label{e:Omega.bL.trivial}
	\Omega_{\bL}(\usi,\uL)
	=\Bigg\{ \prod_{j=1}^{k(\bL)}
		\Ind{\bL_j=\bL}\Bigg\}
	\hbh_{\bL}(\usi)\,\eeq
If $\Omega\sim\nu$ and $\nu\sim\omega$, then we must have $\Omega_{\bt}=(\AUGMENT_{\DD}(\omega))_{\bt}$
in the notation of \eqref{e:pi.DD.first.appearance}. Thus we see that only the variable measures $\Omega_{\bT}$ carry more information than $\nu$.

\begin{lem}\label{l:rrUnforced} Let $\EPSTWO$ be a small absolute constant. Fix $x\in\set{\plus,\minus}$, and let $v$ be a variable of type $\bT$. Suppose we are given $\omega\in\bm{I}_0$ with marginal $\pi$ such that for both $i=1,2$ we have 
	\beq\label{e:assumption.rr.avg}
	\gamma^i\equiv
	\sum_{e\in\delta v(x)}
	\pi_e\Big(\sigma^i=\red
		\,\Big|\,\sigma\in\set{\red,\blu}^2
		\Big)
	\ge k\EPSTWO\,,\eeq
where $\pi_e\equiv\pi_{\bt}$ for an edge $e$ of type $\bt$. Let $\Omega$ be the maximizer of \eqref{e:Psi.of.Omega} that is consistent with $\omega$, and let $\bom_{\bL,j}$ be the resulting joint distribution of $(\sigma,\ups)$. Then there exists an absolute constant $\EPSTHREE$
(depending only on $\EPSTWO$) such that for every edge $e\in\delta v(x)$ we have
	\[\left.
	\begin{array}{r}
	\bom_{\bL,j}( 
		\ups=\mred\mred \,|\, \sigma=\red\red)\\
	\bom_{\bL,j}( \ups^1=\mred
		\,|\, \sigma=\red\blu )\\
	\bom_{\bL,j}(
		\ups^2=\mred \,|\,\sigma=\blu\red )\\
	\end{array} \hspace{-4pt}
	\right\}
	\ge 1-\f1{2^{k\EPSTHREE}}
	\]
for $j=j(\bt)$ and all clause types $\bL$ such that 
$\bL(j)=\bt$.

\begin{proof} Throughout the proof, $\EPSTHREE$ denotes a small positive number, whose value may change from one occurrence to the next, but ultimately is taken as an absolute constant that depends only on $\EPSTWO$.\smallskip

\noindent\bemph{Step 1. Reduction to entropy maximization for a conditional measure.}
Note from \eqref{e:Psi.of.Omega} that if we fix
$\Omega_{\bt}$, then we obtain separate entropy maximization problems over $\Omega_{\bT}$ and $\Omega_{\bL}$. Let us therefore fix $\Omega_{\bt}=(\AUGMENT_{\DD}(\omega))_{\bt}$, and consider the optimization over $\Omega_{\bT}$. Let $v$ denote a variable of type $\bT$, and write $\Omega_v\equiv\Omega_{\bT}$.
By the method of Lagrange multipliers, the optimal $\Omega_v$ must take the form
	\[
	\Omega_v\Big( (\usi,\bL)_{\delta v}\Big)
	= \f{\varphi_v(\usi_{\delta v})}{\dbz} 
		\prod_{e\in\delta v}
		\hq_e(\sigma_e,\bL_e)\,,\]
where each $\hq_e$ is a probability measure over elements $(\sigma,\bL)$, chosen such that $\Omega_v$ has marginals $\Omega_e\equiv\Omega_{\bt}$ for $e\in\delta v$ and $\bt\equiv\bt_e$. The marginal of $\Omega_v$ on $(\usi_{\delta v},\bL_e)$ is
	\[
	\Omega_v(\usi_{\delta v},\bL_e)
	= \f{\varphi_v(\usi_{\delta v})}{\dbz}
	\hq_e(\sigma_e,\bL_e)
	\prod_{e'\in\delta v\setminus e}
	\avhq_{e'}(\sigma_{e'})\,,
	\]
where $\avhq_{e'}$ denotes the marginal of $\hq_{e'}$ on $\sigma_{e'}$ alone. It follows that
	\beq\label{e:var.spins.types.cond.indep}
	\Omega_{\bT}\bigg(\usi_{\delta v\setminus e}
		\,\bigg|\, (\sigma_e,\bL_e)\bigg)
	= \dbh_{\bT}
		\Big(\usi_{\delta v\setminus e}
		\,\Big|\,\sigma_e\Big)
	\cong 
	\varphi_v(\usi_{\delta v})
	\prod_{e'\in\delta v\setminus e}
	\avhq_{e'}(\sigma_{e'}).
	\eeq
Thus, for the purposes of this lemma, it suffices to estimate only $\dbh_{\bT}(\cdot\,|\,\sigma_e)$. To this end, assume without loss $x=\plus$, and consider the event $x_v=\plus\plus$. In this case 
$\sigma_e\in\set{\red,\blu}^2$ for all $e\in\delta v(\plus)$, and
$\sigma_e=\yel\yel$ for all $e\in\delta v(\minus)$. Let
	\beq\label{e:def.mu.cond.on.plus.plus}
	\mu(\usi)
	\equiv
	\dbh_{\bT}\bigg(\usi_{\delta v(\plus)}=\usi
		\,\bigg|\,x_v=\plus\plus\bigg)\,.\eeq
Denote $D \equiv |\delta v(\plus)|$.
Then $\mu$ is a probability measure over colorings $\usi\in\set{\red,\blu}^{2D}$ with at least one $\SPIN{red}$ spin in each copy $i=1,2$. Moreover, $\mu$ must maximize entropy subject to its marginals 
$\mu_\ell$. It 
follows by the method of Lagrange multipliers that there exist probability measures $h_\ell$
over $\set{\red,\blu}^2$ such that
	\beq\label{e:rrUnforced.mu.q}
	\mu(\usi)
	\cong
	\Bigg\{
	\prod_{i=1,2}\Ind{\red[\usi^i]\ge1}\Bigg\}
	\Bigg\{
	\prod_{\ell=1}^D h_\ell(\sigma_\ell)
	\Bigg\}\,.
	\eeq
We now turn to the construction and estimation of the $h_\ell$.\smallskip

\noindent\bemph{Step 2. Construction of Lagrangian weights.} As usual, we will construct $h_{\ell,t}\to h_\ell$ in the limit $t\to\infty$. We initialize the construction with $h_{\ell,0}\equiv\mu_\ell$. Let $\mu_t$
be defined by \eqref{e:rrUnforced.mu.q} with $h_{\ell,t}$ in place of $h_\ell$, and let $\mu_{\ell,t}$ denote the marginal of $\mu_t$ on the $\ell$-th edge in $\delta v(\plus)$. Similarly
to \eqref{e:q.times.prob.of.remainder} we have
	\beq\label{e:def.xi.ell.t}
	\mu_{\ell,t}(\sigma)
	\cong
	h_{\ell,t}(\sigma) \overbrace{
	\sum_{\usi_{-\ell}}
	\Bigg\{
	\prod_{i=1,2}\Ind{\red[\usi^i]\ge1}\Bigg\}
	\prod_{\ell'\ne\ell}
	h_{\ell',t}(\sigma_{\ell'})
	}^{\textup{denote this }\xi_{\ell,t}(\sigma)}\,.
	\eeq
For the remainder of the proof, we assume without loss that $\ell=1$. Then at $t=0$ we have
	\beq
	0\le 1-\xi_{1,0}(\red\blu)
	= \prod_{\ell=2}^D \bigg\{
		1- \mu_\ell(\pur\red)
		\bigg\}
	\le \exp\Bigg\{ - \sum_{\ell=2}^D
		\mu_\ell(\pur\red)\Bigg\}
	\stackrel{\eqref{e:def.mu.cond.on.plus.plus}}{\le}
		\f{e}{\exp(\gamma^2)}
	\stackrel{\eqref{e:assumption.rr.avg}}
		{\le} \f1{2^{k\EPSTHREE}}\,.
	\label{e:mostly.const.over.rb}\eeq
By similar calculations, $0\le 1-\xi_{1,0}(\sigma) \le 2^{-k\EPSTHREE}$ for all $\sigma\in\set{\red,\blu}^2$. (In fact $\xi_{1,0}(\red\red)=1$.) Given $h_{\ell,t}$ we define $h_{\ell,t+1}$ to be the probability measure such that
	\beq\label{e:update.rule.h}
	h_{\ell,t+1}(\sigma)
	\cong \f{\mu_\ell(\sigma)}{\xi_{\ell,t}(\sigma)}
	\stackrel{\eqref{e:def.xi.ell.t}}{\cong}
		h_{\ell,t}(\sigma)
		\f{\mu_\ell(\sigma)}{\mu_{\ell,t}(\sigma)}
	\eeq
for all $\sigma\in\set{\red,\blu}^2$.\smallskip

\noindent\bemph{Step 3. Estimation of Lagrangian weights.} Suppose inductively that
	\beq\label{e:rr.induct}
	\bigg\|\f{\mu_{\ell,t}}{\mu_\ell}-1 \bigg\|_\infty
	\le \f1{2^{k\EPSTHREE(t+1)/3}}\eeq
for all $1\le\ell\le D$, where the base case $t=0$ follows from the above bounds (see \eqref{e:mostly.const.over.rb}) on $\xi_{\ell,0}$. 
If we assume \eqref{e:rr.induct} holds up to $t$, then taking a telescoping sum gives
	\begin{align}\nonumber
	&\Big|\xi_{1,t+1}(\red\blu)
	-\xi_{1,t}(\red\blu)\Big|
	= \Bigg| \prod_{j=2}^D
		\Big\{1-h_{\ell,t+1}(\pur\red)\Big\}
	-\prod_{j=2}^D
		\Big\{ 1-h_{\ell,t}(\pur\red)\Big\}
	\Bigg| \\ \nonumber
	&\qquad\le \sum_{\ell=2}^D
	h_{\ell,t}(\pur\red)
	\Bigg| \f{h_{\ell,t+1}(\pur\red)}
		{h_{\ell,t}(\pur\red)}-1\Bigg|
	\prod_{j=2}^{\ell-1}
	\bigg\{1-h_{j,t+1}(\pur\red)\bigg\}
	\prod_{j=\ell+1}^D
	\bigg\{1-h_{j,t}(\pur\red)\bigg\}\\ \nonumber
	&\qquad\le O(1)\Bigg\{
	\max_{1\le\ell\le D}
	\bigg| \f{h_{\ell,t+1}(\pur\red)}
		{h_{\ell,t}(\pur\red)}-1
	\bigg|\Bigg\}\Bigg\{
	\sum_{\ell=1}^D h_{\ell,t}(\pur\red)
	\Bigg\}\Bigg/
	\exp\Bigg\{
	\sum_{\ell=1}^D h_{\ell,t}(\pur\red)
	\Bigg\} \\
	&\qquad\stackrel{\eqref{e:rr.induct}}{\le}
	O\Bigg( \f{\gamma^2}{\exp(\gamma^2)}\Bigg)
	\Bigg\{
	\max_{1\le\ell\le D}
	\bigg| \f{h_{\ell,t+1}(\pur\red)}
		{h_{\ell,t}(\pur\red)}-1
	\bigg|\Bigg\}
	\stackrel{\eqref{e:update.rule.h}}{=}
	O\bigg( \f{\gamma^2}{\exp(\gamma^2)}\bigg)
	\max_{1\le\ell\le D}
	\bigg| \f{\mu_\ell(\pur\red)}{\mu_{\ell,t}(\pur\red)} -1
	\bigg|
	\stackrel{\eqref{e:rr.induct}}{\le}
	\f{2^{-k\EPSTHREE}}{2^{k\EPSTHREE(t+1)/3}}\,.
	\label{e:d.xi.d.h}
	\end{align}
A similar estimate holds for
$\xi_{1,t+1}(\sigma)-\xi_{1,t}(\sigma)$ for all $\sigma\in\set{\red,\blu}^2$.
Next note that if we let $\tilde{\mu}_{t+1}$ be defined by
\eqref{e:rrUnforced.mu.q} with weights 
$h_{1,t+1}$ and $h_{\ell,t}$ for $\ell\ge2$, then
the marginal on the first edge is, similarly to \eqref{e:def.xi.ell.t},
	\[
	\tilde{\mu}_{1,t+1}(\sigma)
	\cong h_{1,t+1}(\sigma)
		\xi_{1,t}(\sigma)
	\stackrel{\eqref{e:update.rule.h}}{\cong}
	\f{\mu_1(\sigma)}{\xi_{1,t}(\sigma)}
		\xi_{1,t}(\sigma)
	=\mu_1(\sigma)\,.
	\]
It follows from this that 
	\begin{align*}
	\bigg\|\f{\mu_{1,t+1}}{\mu_1}
		-1 \bigg\|_\infty
	&=\bigg\|
		\f{\mu_{1,t+1}}{\tilde{\mu}_{1,t+1}}
		-1 \bigg\|_\infty
	\stackrel{\eqref{e:update.rule.h}}{=} \bigg\|
		\f{\xi_{1,t+1}}{\xi_{1,t}}
		-1 \bigg\|_\infty
	\stackrel{\eqref{e:d.xi.d.h}}{\le}
	O\bigg(\f{2^{-k\EPSTHREE}}{2^{k\EPSTHREE(t+1)/3}}\bigg)
	\le \f1{2^{k\EPSTHREE(t+2)/3}}\,,
	\end{align*}
which verifies the induction. We then take $t\to\infty$ to obtain the limiting weights $h_\ell\equiv h_{\ell,\infty}$ that define the optimal measure $\mu$ by \eqref{e:rrUnforced.mu.q}. Suppose $e$ corresponds to the first edge in $\delta v(\plus)$. Then, for all $\sigma_e\in\set{\red,\blu}^2$,
	\[
	\dbh_{\bT}\bigg(
	\sum_{e'\in\delta v(\plus)\setminus e}
	\Ind{ (\sigma_{e'})^2=\red} =0
	\,\bigg|\,\sigma_e
	\bigg)
	\stackrel{\eqref{e:def.mu.cond.on.plus.plus}}{=}
	\mu\bigg(
	\usi_{-1}\in\set{\pur\blu}^D \,\bigg|\,\sigma_1
	\bigg)
	\stackrel{\eqref{e:rr.induct}}{\le}
	O\Bigg(
	\prod_{\ell=2}^D
		\bigg\{ 1-h_\ell(\pur\red)\bigg\}
		\Bigg) \le \f1{2^{k\EPSTHREE}}\,.
	\]
The lemma follows by recalling \eqref{e:var.spins.types.cond.indep}.
\end{proof}
\end{lem}

\begin{lem}[{used only in proof of Lemma~\ref{l:large11Case}}]
\label{l:ryUnforced}
Let $\EPSTWO$ be a small absolute constant. Fix $x\in\set{\plus,\minus}$ and $\tau\in\set{\yel,\grn}$, and let $v$ be a variable of type $\bT$. Suppose we are given $\omega\in\bm{I}_0$ with marginal $\pi$ such that
	\beq\label{e:ryUnforced.condition} 
	\sum_{e\in\delta v(x)}\pi_e\Big(
		\sigma^1=\red\,\Big|\,
	\sigma\in\set{\red\tau,\blu\tau}\Big)
	\ge k\EPSTWO\,,\eeq
where $\pi_e\equiv\pi_{\bt}$ for an edge $e$ of type $\bt$. Let $\Omega$ be the maximizer of \eqref{e:Psi.of.Omega} that is consistent with $\omega$, and let $\bom_{\bL,j}$ be the resulting joint distribution of $(\sigma,\ups)$. Then there exists a positive absolute constant $\EPSTHREE$ (depending only on $\EPSTWO$) such that for every edge $e\in\delta v(x)$ we have
	\[\bom_{\bL,j}\Big(
		\ups^1=\mred
		\,\Big|\,
		\sigma=\red\tau\Big)
		\ge 1-\f1{2^{k\EPSTHREE}}\]
for $j=j(\bt)$ and all clause types $\bL$ such that 
$\bL(j)=\bt$.
The same holds if we exchange the two copies $i=1,2$.

\begin{proof}
The proof is very similar to (but simpler than) that of Lemma~\ref{l:rrUnforced}, and we omit the details.
\end{proof}
\end{lem}

\subsection{Bounds on doubly forced edges}

In this subsection we bound the incidence of edges $e$ which are forced in both coordinates, meaning that $\sigma_e=\red\red$ and $\ups_e\in\set{\red,\mred}^2$. 

\begin{lem}\label{l:ForcedRR}
If the clause type $\bL$ is nice
and diverse (Definition~\ref{d:diverse}), then
	\[
	\bom_{\bL,j}(\ups=\mred\mred)
	\le \f1{2^{k(1+\EPSDIV)}}\]
for all $1\le j\le k(\bL)$, where $\EPSDIV$ is an absolute constant.

\begin{proof} Throughout the proof, $\EPSDIV$ denotes a small positive number, whose value may change from one occurrence to the next, but ultimately is taken as an absolute constant. Fix $\bL,j$, and let $\Xi$ denote the empirical measure of configurations $(\col_e,\usi_{(\delta v\cup\delta a)\setminus e})$ over all $e=(av)$ such that $\bL_a=\bL$ and $j(v;a)=j$. In the proof below we assume without loss of generality that the edge $e$ has label $\lit_e=\plus$.\smallskip

\noindent\bemph{Step 1. Reduction to entropy maximization for a conditional measure.} Let $\SPIN{V}$ denote the subset of all valid configurations $(\col_e,\usi_{(\delta v\cup\delta a)\setminus e})$ that have $\col_e\in\set{\whi,\mred}^2$. Define the conditional measure
	\[
	\mu\Big(\col_e,\usi_{(\delta v\cup\delta a)\setminus e}\Big)
	\equiv
	\Xi\bigg(\col_e,\usi_{(\delta v\cup\delta a)\setminus e}\,\bigg|\, \SPIN{V}\bigg)\,.
	\]
Note the marginals of $\mu$ must be close to those of the original measure $\Xi$, since Lemma~\ref{l:lotsOfAs} gives
	\beq\label{e:mu.close.to.Xi}
	\Xi(\SPIN{V})
	\ge
	\bom_{\bL,j}(\varsigma=\whi\whi)
	\ge 1-O\bigg(\f{k^2}{2^k}\bigg)\,.\eeq
The measure $\mu$ maximizes entropy subject to the marginal distributions of $\sigma_{e'}$ for $e'\ne e$,
as well as of $(\col_e)^i$ for $i=1,2$. By the method of Lagrange multipliers, we can express
	\beq\label{e:mu.two.sided.tree}
	\mu\Big(\col_e,\usi_{(\delta v\cup\delta a)\setminus e}\Big)
	\cong 
	\Bigg\{\prod_{i=1,2} (b_e)^i((\col_e)^i)\Bigg\}
	\Bigg\{
	\prod_{e'\in(\delta v \cup \delta a)\setminus e}
	b_{e'}(\sigma_{e'})\Bigg\}
	\mathbf{1}_{\SPIN{V}}\,,
	\eeq
where we fix $(b_e)^i(\whi)\equiv1$ for both $i=1,2$.
For $x\in\set{\minus,\plus,\free}^2$, let us write
$(\col_e,\usi_{\delta v\setminus e})\sim x$
if $(\col_e,\usi_{\delta v\setminus e})$ is consistent with frozen spin $x_v=x$. Note that for $\col_e\in\set{\whi,\mred}^2$, we have
	\[
	\bigg\{\usi_{\delta v\setminus e}
	: (\col_e,\usi_{\delta v\setminus e})\sim x
	\bigg\}
	= S(x) \equiv \prod_{i=1,2} S^i(x^i)\,,
	\]
where $S^i(x^i)$ \bemph{does not depend on $\col_e$}. For instance, for all $\col_e\in\set{\whi,\mred}^2$,
	\[
	\bigg\{\usi_{\delta v\setminus e}
	: (\col_e,\usi_{\delta v\setminus e})
	\sim \plus\plus
	\bigg\}
	=\prod_{i=1,2}
	\overbrace{\Bigg\{(\usi_{\delta v\setminus e})^i
	: (\usi_{\delta v(\minus)})^i
		\equiv\yel,
	(\usi_{\delta v(\plus)\setminus e})^i
		\equiv \pur,
	\sum_{e'\in \delta v(\plus)\setminus e}
	\Ind{(\sigma_{e'})^i=\red}\ge1
	\Bigg\}}^{S^i(\plus)}\,.
	\]
Consequently, the marginal of \eqref{e:mu.two.sided.tree}
on $(x_v,\col_e,\usi_{\delta a\setminus e})$ can be written as
	\beq\label{e:ForcedRR.mu.of.beta}
	\mu(x_v,\col_e,\usi_{\delta a\setminus e})
	=\f{\mathbf{1}_{\SPIN{V}}}Z
	\Bigg\{\prod_{i=1,2} (b_e)^i((\col_e)^i)\Bigg\}
	\overbrace{
	\Bigg\{\sum_{\usi_{\delta v\setminus e} \in S(x_v)}
	\prod_{e'\in\delta v\setminus e}
	b_{e'}(\sigma_{e'})
	\Bigg\}}^{\textup{denote this }b_v(x_v)}
	\Bigg\{ \prod_{e'\in \delta a\setminus e}
	b_{e'}(\sigma_{e'})\Bigg\}\,,\eeq
where $Z$ is the normalizing constant. We next turn to the construction and estimation of the weights in \eqref{e:ForcedRR.mu.of.beta}.\smallskip

\noindent\bemph{Step 2. Initialization of Lagrangian weights.} We will construct weights $b_t\to b$ in the limit $t\to\infty$. At $t=0$, on the variable $v$ and on the edges $\delta a\setminus e$, we put
	{\setlength{\jot}{0pt}\begin{alignat}{2}
	\nonumber
	b_{v,0}(x)
		&\equiv \mu_v(x)
		\equiv \mu(x_v=x)\quad
		&&\textup{for 
		$x\in\set{\minus,\plus,\free}^2$,} \\
	b_{e',0}(\sigma)
		&\equiv \mu_{e'}(\sigma)
		\equiv \mu( \sigma_{e'}=\sigma )\quad
		&&\textup{for 
		$\sigma\in\set{\yel,\cya}^2$
		and $e'\in\delta a\setminus e$.}
	\label{e:vv.lemma.initialization}
		\end{alignat}}%
Abbreviate $K\equiv k(\bL)$. On the central edge $e=(av)$, for $i=1,2$ we put
	\[(b_{e,0})^i( (\col_e)^i )
		\equiv
		\f{(\mu_e)^i( (\col_e)^i)}
			{ 1/ (2^K)^{\Ind{(\col_e)^i=\mred}} }\,.
		\]
Let $\mu_t$ be defined as $\mu$ in \eqref{e:ForcedRR.mu.of.beta}, but with $b_t$ in place of $b$, and normalizing constant $Z_t$. We begin by estimating the marginals at time $t=0$. Recalling the notation $\notr\equiv\set{\yel,\cya}$, we have
	\[
	Z_0\mu_{e,0}(\whi\whi)
	= (\mu_e)^1(\whi)(\mu_e)^2(\whi)
	\Bigg\{\prod_{e'\in\delta a\setminus e}
	\mu_{e'}(\notr\notr)\Bigg\}
	\Bigg\{ 1 - O\bigg(\f{k^2}{2^k}\bigg)\Bigg\}
	= (\mu_e)^1(\whi)(\mu_e)^2(\whi)
	\Bigg\{ 1 - O\bigg(\f{k^2}{2^k}\bigg)\Bigg\}\,,
	\]
where the estimate uses \eqref{e:mu.close.to.Xi} and the assumption that $\bL$ is nice, which implies
(together with the judicious condition) $\Xi_{e'}(\notr\notr)\ge 1-O(2^{-k})$ for all $e'\in\delta a$. (In the above calculation, the $O(k^2/2^k)$ error comes from the fact that among the configurations $\usi_{\delta a\setminus e}$ with $\sigma_{e'}\in\notr\notr$ for all $e'$, some will not be compatible with $\sigma_e=\whi\whi$ because they will have too few $\cya$ spins.) Next recall that (as noted in the proof of Lemma~\ref{l:lotsOfAsDiverse}) if $\bL$ is both nice and diverse, then Lemma~\ref{l:indifferent} implies \eqref{e:diverse.clause}, which says that $\omega_{\bL,j}(\yel\blu)\ge1/65$ for at least $k/10$ indices $1\le j\le k(\bL)$. Recall also that \eqref{e:mu.close.to.Xi} implies that the marginals of $\mu$ are close to those of the original measure $\Xi$, so \eqref{e:diverse.clause} implies
	\beq\label{e:diverse.clause.consequence}
	\min\Bigg\{
	\sum_{j=1}^{k(\bL)}
	\mathbf{1}\bigg\{
	\mu_j(\yel\blu)\ge\f1{66}
	\bigg\},
	\sum_{j=1}^{k(\bL)}
	\mathbf{1}\bigg\{
	\mu_j(\blu\yel)\ge\f1{66}
	\bigg\}
	\Bigg\} \ge \f{k}{10}\,.\eeq
It follows using \eqref{e:diverse.clause.consequence} that 
	\begin{align*}
	Z_0\mu_{e,0}(\mred\whi)
	&=
	(\mu_e)^1(\mred)(\mu_e)^2(\whi)
	\f{\mu_v(\plus) }{1/2}
	\Bigg\{\prod_{e'\in\delta a\setminus e}
	\f{\mu_{e'}(\yel\notr)}{1/2}
	\Bigg\}
	\Bigg\{ 1 - O\bigg(\f1{2^{k\EPSDIV}}\bigg)\Bigg\}\\
	&=
	(\mu_e)^1(\mred)(\mu_e)^2(\whi)
	\Bigg\{ 1 - O\bigg(\f1{2^{k\EPSDIV}}\bigg)
	\Bigg\}\,,
	\end{align*}
where the factors of $1/2$ come from the $2^K$ term in the definition of $(b_{e,0})^1(\mred)$. Lastly,
	\beq\label{e:ForcedRR.init.pair.mgl.on.edge}
	Z_0\mu_{e,0}(\mred\mred)
	=\overbrace{
	\f{(\mu_e)^1(\mred)}{1/2^K}
	\f{(\mu_e)^2(\mred)}{1/2^K}
	}^{O(1)}
	\overbrace{\mu_v(\plus\plus)
	\Bigg\{\prod_{e'\in\delta a\setminus e}
	\mu_{e'}(\yel\yel)
	\Bigg\}}^{\textup{at most $2^{-k(1+\EPSDIV)}$
		by \eqref{e:diverse.clause.consequence}}}
	\le 
	O\bigg(
	\f{\min\set{(\mu_e)^1(\mred),(\mu_e)^2(\mred)}}
		{2^{k\EPSDIV}}
	\bigg)\,.
	\eeq
The above estimates imply that the single-copy marginals of $\mu_{e,0}$ are close to those of $\mu_e$: more precisely, we have
	\beq\label{e:ForcedRR.initial.marginals.on.edge}
	\bigg|
	\f{(\mu_{e,0})^i(\col^i)}{(\mu_e)^i(\col^i)}-1
	\bigg|
	\le
	\begin{cases}
	k^{O(1)}/2^k &\textup{if $\col^i=\whi$,}\\
	O(2^{-k\EPSDIV}) &\textup{if $\col^i=\mred$,}\\
	\end{cases}
	\eeq
for both $i=1,2$. We then turn to estimating the marginals at $t=0$ on the edges of $\delta a\setminus e$. For $e'\in \delta a\setminus e$ and $\sigma\in\set{\yel,\cya}^2$, we can represent the marginal probability of $\sigma$ on $e'$ as
	\beq\label{e:conditional.rep.on.eprime}
	\mu_{e',0}(\sigma)
	=\sum_{\col\in\set{\whi,\red}^2}
	\mu_{e,0}(\col)
	\mu_0\Big(\sigma_{e'}=\sigma\,
		\Big|\,\col_e=\col\Big)\,.
	\eeq
The term $\mu_{e,0}(\col)$ was just estimated above,
while the conditional probability 
$\mu_0(\sigma_{e'}=\sigma\,|\,\col_e=\col)$ can be obtained from the representation \eqref{e:ForcedRR.mu.of.beta}. For instance, for each $e'\in\delta a\setminus e$, we have
	\[
	\mu_0\Big(\sigma_{e'}=\yel\yel\,
		\Big|\,\col_e=\mred\whi\Big)
	\stackrel{\eqref{e:ForcedRR.mu.of.beta}}{=}
	\f{\DS 
	\mu_{e'}(\yel\yel)
	\prod_{e''\in\delta a\setminus\set{e,e'}}
	\mu_{e''}(\yel\notr)}
	{\DS \mu_{e'}(\yel\notr) 
	\prod_{e''\in\delta a\setminus\set{e,e'}}
	\mu_{e''}(\yel\notr)}
	\Bigg\{1-O\bigg(\f1{2^{k\EPSDIV}}\bigg)\Bigg\}
	= \f{\mu_{e'}(\yel\yel)}{1/2}
		\Bigg\{1-O\bigg(\f1{2^{k\EPSDIV}}\bigg)\Bigg\}\,,
	\]
where the last estimate uses that $e'$ is a nice edge, so $\mu_{e''}(\yel\notr)$ is close to $1/2$. Similarly,
	\[\mu_0\Big(\sigma_{e'}=\yel\yel\,
		\Big|\,\col_e=\whi\whi\Big)
	\stackrel{\eqref{e:ForcedRR.mu.of.beta}}{=}
	\f{\mu_{e'}(\yel\yel)}{\mu_{e'}(\notr\notr)}
	\Bigg\{1-O\bigg(\f{k^2}{2^k}\bigg)\Bigg\}
	=\mu_{e'}(\yel\yel)
	\Bigg\{1-O\bigg(\f{k^2}{2^k}\bigg)\Bigg\}\,.
	\]
Lastly we have $\mu_0(\sigma_{e'}=\yel\yel\,|\,\col_e=\mred\mred)=1$. Substituting the last few estimates into \eqref{e:conditional.rep.on.eprime} gives
	\begin{align*}
	\mu_{e',0}(\yel\yel)
	&\stackrel{\eqref{e:conditional.rep.on.eprime}}{=}
	\mu_{e,0}(\mred\mred)
	+\mu_{e,0}\Big(\set{\mred\whi,\whi\mred}\Big)
	\f{\mu_{e'}(\yel\yel)}{1/2}
		\Bigg\{1-O\bigg(\f1{2^{k\EPSDIV}}\bigg)\Bigg\}
	+\mu_{e,0}(\whi\whi) 
	\mu_{e'}(\yel\yel)
	\Bigg\{1-O\bigg(\f{k^2}{2^k}\bigg)\Bigg\}\\
	&\stackrel{\eqref{e:ForcedRR.init.pair.mgl.on.edge}}{=}
	\mu_{e'}(\yel\yel)\Bigg\{
	O\bigg(\f1{2^{k(1+\EPSDIV)}}\bigg)
	+O\bigg( \f1{2^k}\bigg)
	+ 1-O\bigg(\f{k^2}{2^k}\bigg)\Bigg\}
	=\mu_{e'}(\yel\yel)\Bigg\{ 1-O\bigg(\f{k^2}{2^k}\bigg)\Bigg\}\,.
	\end{align*}
By similar (but simpler) calculations, the same estimate holds for the other elements $\sigma\in\set{\yel,\cya}^2$. Lastly, we estimate the marginal law at $t=0$ for the frozen spin $x_v$, using a similar conditioning as in \eqref{e:conditional.rep.on.eprime}. To this end, note that
	\[
	\mu_0\Big(x_v=x\,\Big|\,\col_e=\whi\whi\Big)
	\stackrel{\eqref{e:ForcedRR.mu.of.beta}}{=}
	\f{\DS b_{v,0}(x)
	\prod_{i=1,2}(b_{e,0})^i(\whi)}{
	\DS
	\sum_{x'\in\set{\minus,\plus,\free}^2}
	b_{v,0}(x')
	\prod_{i=1,2}(b_{e,0})^i(\whi)}
	\stackrel{\eqref{e:vv.lemma.initialization}}{=}
	\mu_v(x)
	\]
for any $x\in\set{\minus,\plus,\free}^2$. Likewise we have
	\[
	\mu_0\Big(x_v=x\,\Big|\,\col_e=\mred\whi\Big)
	\stackrel{\eqref{e:ForcedRR.mu.of.beta}}{=}
	\f{\Ind{x^1=\plus} b_{v,0}(x)
		(b_{e,0})^1(\mred)(b_{e,0})^2(\whi)}
	{\DS\sum_{x'\in\set{\plus}
		\times\set{\minus,\plus,\free}}
		b_{v,0}(x')
		(b_{e,0})^1(\mred)(b_{e,0})^2(\whi)}
	\stackrel{\eqref{e:vv.lemma.initialization}}{=}
	\f{\mu_v(x)}{(\mu_v)^1(\plus)}
	\Ind{x^1=\plus}\,,
	\] 
and a similar expression holds for the case $\col_e=\whi\red$. Lastly, note that
$\mu_0(x_v=x\,\Big|\,\col_e=\mred\mred)
=\Ind{x=\plus\plus}$. Substituting these into \eqref{e:conditional.rep.on.eprime} gives 
	\begin{align*}
	\mu_{v,0}(x)
	&\stackrel{\eqref{e:conditional.rep.on.eprime}}{=}
	\mu_{e,0}(\mred\mred)\Ind{x=\plus\plus}
	+ \mu_{e,0}(\mred\whi)\f{\mu_v(x)}
		{(\mu_v)^1(\plus)}
		{\Ind{x^1=\plus}}
	+ \mu_{e,0}(\whi\mred)\f{\mu_v(x)}
		{(\mu_v)^2(\plus)}
		{\Ind{x^2=\plus}}
	+\mu_{e,0}(\whi\whi) \mu_v(x)\\
	&\stackrel{\eqref{e:ForcedRR.init.pair.mgl.on.edge}}{=}
	\mu_v(x)
	\Bigg\{
	O\bigg(\f{\Ind{x=\plus\plus}}{2^{k(1+\EPSDIV)}}\bigg)
	+O\bigg(\f{\Ind{x^1=\plus}
		+ \Ind{x^2=\plus}}{2^k}\bigg)
	+1-O\bigg(\f{k^2}{2^k}\bigg)
	\Bigg\}
	=\mu_v(x)\Bigg\{
	1-O\bigg(\f{k^2}{2^k}\bigg)
	\Bigg\}\,.
	\end{align*}
To summarize the last few estimates, we have shown that at $t=0$ we have 
	\beq\label{e:ForcedRR.initial.marginals.off.edge}
	\begin{pmatrix}
	\DS\bigg\|\f{\mu_{v,0}}{\mu_v}-1\bigg\|_\infty
	+
	\sum_{i=1,2}
	\bigg|\f{(\mu_{e,0})^i(\whi)}
		{(\mu_e)^i(\whi)}-1\bigg|
	+
	\sum_{e'\in\delta a\setminus v}
	\bigg\|\f{\mu_{e',0}}{\mu_{e'}}-1\bigg\|_\infty
	\\
	\DS\sum_{i=1,2} \bigg|\f{(\mu_{e,0})^i(\mred)}
		{(\mu_e)^i(\mred)}-1\bigg|
		\end{pmatrix}
	\le k^{O(1)}
	 \begin{pmatrix}
		2^{-k}\\ 2^{-k\EPSDIV} \end{pmatrix}\,.
	\eeq
We now turn to the construction of the sequence $b_t\to b_\infty\equiv b$, where we recall that $b$ denotes the limiting weights in \eqref{e:ForcedRR.mu.of.beta} that give the solution to the constrained entropy maximization problem.\smallskip

\noindent\bemph{Step 3. Iterative construction and covariance estimates.} For each $t\ge0$, suppose we have the weights $b_t$, which define the measure $\mu_t$. We then define the weights at time $t+1$ by setting
	\beq\label{e:update.rule.on.central.edge}
	\f{b_{v,t+1}(x)}{b_{v,t}(x)}
		=\f{\mu_v(x)}{\mu_{v,t}(x)}\,,\quad
	\f{b_{e',t+1}(\sigma)}{b_{e',t}(\sigma)}
		=\f{\mu_{e'}(\sigma)}{\mu_{e',t}(\sigma)}
			\,,\quad
	\f{(b_{e,t+1})^i(\col^i)}{(b_{e,t})^i(\col^i)}
		=\f{\mu_e(\col^i)}{\mu_{e,t}(\col^i)}
	\eeq
for all $x\in\set{\minus,\plus,\free}^2$,
 $e'\in\delta a\setminus e$,
 $\sigma\in\set{\yel,\cya}^2$, 
 $i=1,2$, and $\col^i\in\set{\whi,\mred}$. We will show by induction that for all $t\ge0$ we have the bound
	\beq\label{e:tree.a.v.induct}
	\begin{pmatrix} \DS
	\bigg\|
		\f{\mu_{v,t}}{\mu_v}
		-1\bigg\|_\infty
	+\sum_{i=1,2}
	\bigg|\f{(\mu_{e,t})^i(\whi)}
		{(\mu_e)^i(\whi)}-1\bigg|
	+\sum_{e'\in\delta a\setminus e}
	\bigg\| \f{\mu_{e',t}}{\mu_{e'}}-1\bigg\|_\infty\\
	\DS \sum_{i=1,2}
		\bigg|\f{(\mu_{e,t})^i(\mred)}
		{(\mu_e)^i(\mred)}-1\bigg|
	\end{pmatrix}
	\le \f{k^{O(1)}}{(2^{k\EPSDIV})^{t/2}}
	\begin{pmatrix}2^{-k}\\2^{-k\EPSDIV}
		\end{pmatrix}\,,
	\eeq
where the base case $t=0$ follows from \eqref{e:ForcedRR.initial.marginals.off.edge}.
Note that
\eqref{e:update.rule.on.central.edge} and \eqref{e:tree.a.v.induct}
together imply
	\beq\label{e:induct.consequence.on.sum.b}
	\begin{pmatrix}
	\DS \Bigg\{
		\bigg\|\f{b_{v,t+1}}{b_{v,t}}-1\bigg\|_\infty
		+\sum_{i=1,2}
		\bigg|\f{(b_{e',t+1})^i(\whi)}
			{(b_{e',t})^i(\whi)}
			-1\bigg|
		+\sum_{e'\in\delta a\setminus e}
		\bigg\|\f{b_{e',t+1}}{b_{e',t}}-1\bigg\|_\infty
		\Bigg\}\\
	\DS\sum_{i=1,2}
		\bigg|\f{(b_{e',t+1})^i(\mred)}
			{(b_{e',t})^i(\mred)}
			-1\bigg|
	\end{pmatrix}
	\le \f{k^{O(1)}}{(2^{k\EPSDIV})^{t/2}}
	\begin{pmatrix}2^{-k}\\2^{-k\EPSDIV}
		\end{pmatrix}\,,
	\eeq
Towards the proof of \eqref{e:tree.a.v.induct},
we estimate covariances under the measure $\mu_t$. First, for all $x\in\set{\minus,\plus,\free}^2$ we have 	\begin{align*}
	\f{Z_t \mu_t(x_v=x,(\col_e)^1=\whi)}
		{\mu_v(x) (\mu_e)^1(\whi)}
	&=\f{b_{v,t}(x)
	(b_{e,t})^1(\whi)}{\mu_v(x) (\mu_e)^1(\whi)}
	\Bigg\{
	(b_{e,t})^2(\whi)
	\prod_{e'\in\delta a\setminus e}
		b_{e',t}(\notr\notr)
		\bigg(1-\f{k^{O(1)}}{2^k}\bigg)\\
	&\qquad\qquad\qquad
		+\Ind{x^2=\plus}(b_{e,t})^2(\mred)
		\prod_{e'}b_{e',t}(\notr\yel)
		\bigg(1-\f{O(1)}{2^{k\EPSDIV}}\bigg)
	\Bigg\}\\
	&=\f{b_{v,t}(x)
	(b_{e,t})^1(\whi)}{\mu_v(x) (\mu_e)^1(\whi)}
	\Bigg\{1-\f{k^{O(1)}}{2^k}\Bigg\}
	\stackrel{\eqref{e:induct.consequence.on.sum.b}}{=}
	1-\f{k^{O(1)}}{2^k}\,.
	\end{align*}
In combination with \eqref{e:tree.a.v.induct}, this implies that for all $x\in\set{\minus,\plus,\free}^2$ and $i=1,2$ we have
	\[\Bigg|
	\f{\Cov_{\mu_t}(
		\Ind{x_v=x},
		\Ind{(\col_e)^i=\whi})}
		{\mu_v(x) (\mu_e)^i(\whi)}\Bigg|
	=\Bigg| \f{\mu_t(x_v=x,(\col_e)^1=\whi)}
		{\mu_v(x) (\mu_e)^1(\whi)}
		-\f{\mu_{v,t}(x) (\mu_{e,t})^1(\whi)}
		{\mu_v(x) (\mu_e)^1(\whi)}
		\Bigg|
	\le \f{k^{O(1)}}{2^k}\,.
	\]
Next, for $e'\in\delta a\setminus e$ and
$\sigma\in\set{\yel,\cya}^2$, we calculate
	\begin{align*}
	\f{Z_t \mu_t((\col_e)^1=\whi,
		\sigma_{e'}=\sigma)}
		{(\mu_e)^1(\whi)\mu_{e'}(\sigma)}
	&=
	\f{(b_{e,t})^1(\whi)b_{e',t}(\sigma)}
		{(\mu_e)^1(\whi)\mu_{e'}(\sigma)}
	\Bigg\{
	(b_{e,t})^2(\whi)
	\prod_{e''\in\delta a\setminus\set{e,e'}}
	b_{e'',t}(\notr\notr)
	\bigg(1-\f{k^{O(1)}}{2^k}\bigg) \\
	&\qquad\qquad\qquad
	+\Ind{\sigma^2=\yel}
	(b_{e,t})^2(\mred)
	(b_{v,t})^2(\plus)
	\prod_{e''\in\delta a\setminus\set{e,e'}}
	b_{e'',t}(\notr\yel)
	\bigg(1-\f{O(1)}{2^{k\EPSDIV}}\bigg)
	\Bigg\}\\
	&=	\f{(b_{e,t})^1(\whi)b_{e',t}(\sigma)}
		{(\mu_e)^1(\whi)\mu_{e'}(\sigma)}
	\Bigg\{1- \f{k^{O(1)}}{2^k}\Bigg\}
	\stackrel{\eqref{e:induct.consequence.on.sum.b}}{=}
	1- \f{k^{O(1)}}{2^k}\,.
	\end{align*}
This implies that for all $e'\in\delta a\setminus e$ and $\sigma\in\set{\yel,\cya}^2$, we have
	\[
	\f{\Cov_{\mu_t}(
		\Ind{(\col_e)^i=\whi},
		\Ind{\sigma_{e'}=\sigma})}
		{(\mu_e)^i(\whi)
		\mu_{e'}(\sigma)
		 }
		\le\f{k^{O(1)}}{2^k}\,.
	\]
Similar calculations for the case $(\col_e)^i=\mred$ give 
	\begin{align*}
	\f{\Cov_{\mu_t}(\Ind{x_v=x},
		\Ind{(\col_e)^i=\mred})}
		{\mu_v(x)
		(\mu_e)^i(\mred)}
	&\le O(1)\,,\\
	\f{\Cov_{\mu_t}(
		\Ind{(\col_e)^i=\mred},
		\Ind{\sigma_{e'}=\sigma})}
			{(\mu_e)^i(\mred)
		\mu_{e'}(\sigma)}
	&\le O(1)\,,
	\end{align*}
for all $x\in\set{\minus,\plus,\free}^2$,
 $e'\in\delta a\setminus e$, and $\sigma\in\set{\yel,\cya}^2$. Next, on the edge $e=(av)$, we have
 	\[
	\f{Z_t\mu_{e,t}(\whi\whi)}
		{(\mu_e)^1(\whi)(\mu_e)^2(\whi)}
	= \f{(b_{e,t})^1(\whi)(b_{e,t})^2(\whi)}
		{(\mu_e)^1(\whi)(\mu_e)^2(\whi)}
	\prod_{e'\in\delta a\setminus e}
		b_{e',t}(\notr\notr)
	\Bigg\{1-\f{k^{O(1)}}{2^k}\Bigg\}
	\stackrel{\eqref{e:induct.consequence.on.sum.b}}{=}
	1- \f{k^{O(1)}}{2^k}\,.
	\]
Next, using the diversity bound
\eqref{e:diverse.clause.consequence}, we also have
	\begin{align*}
	\f{Z_t\mu_{e,t}(\mred\whi)}
		{(\mu_e)^1(\mred)(\mu_e)^2(\whi)}
	&\stackrel{\eqref{e:diverse.clause.consequence}}{=} 
	\f{(b_{e,t})^1(\mred)(b_{e,t})^2(\whi)}
		{(\mu_e)^1(\mred)(\mu_e)^2(\whi)}
	(b_{v,t})^1(\plus) 
	\prod_{e'\in\delta a\setminus e}
		b_{e',t}(\yel\notr)
	\Bigg\{1-\f{O(1)}{2^{k\EPSDIV}}\Bigg\} \\
	&
	\stackrel{\eqref{e:induct.consequence.on.sum.b}}{=}
	\Bigg\{1-\f{O(1)}{2^{k\EPSDIV}}\Bigg\}
	2^K (b_{v,t})^1(\plus) 
	\prod_{e'\in\delta a\setminus e}
		b_{e',t}(\yel\notr)
	\stackrel{\eqref{e:induct.consequence.on.sum.b}}{=}
	1-\f{O(1)}{2^{k\EPSDIV}}\,,\\
	\f{Z_t\mu_{e,t}(\mred\mred)}
		{(\mu_e)^1(\mred)(\mu_e)^2(\mred)}
	&= \f{(b_{e,t})^1(\mred)(b_{e,t})^2(\mred)}
		{(\mu_e)^1(\mred)(\mu_e)^2(\mred)}
	\mu_v(\plus\plus)
	\prod_{e'\in\delta v\setminus e}
	b_{e',t}(\yel\yel)\\
	&\stackrel{\eqref{e:induct.consequence.on.sum.b}}{=} 2^K \cdot 2^K\cdot
	\mu_v(\plus\plus)
	\prod_{e'\in\delta v\setminus e}
	b_{e',t}(\yel\yel)
	\stackrel{\eqref{e:diverse.clause.consequence}}{\le }
	O\bigg( \f{2^k}{2^{k\EPSDIV}}\bigg)
	\,.\end{align*}
The last few estimates combined imply
	\[
	\f{\Cov_{\mu_t}(\Ind{(\col_e)^1=\col^1},
	\Ind{(\col_e)^2=\col^2})}
		{(\mu_e)^1(\col^1)
		(\mu_e)^2(\col^2)}
	\le k^{O(1)} \begin{cases}
	2^{-k}&\textup{if $\col=\whi\whi$,}\\
	2^{-k\EPSDIV}&\textup{if $\col\in\set{\mred\whi,
		\whi\mred}$,}\\
	2^{k(1-\EPSDIV)}
	&\textup{if $\col=\mred\mred$.}
	\end{cases}
	\]
Lastly, it is straightforward to verify (details omitted) that
	\[\f{\Cov_{\mu_t}(
		\Ind{x_v=x},
		\Ind{\sigma_{e'}=\sigma})}
			{ \mu_v(x) \mu_{e'}(\sigma)}
	\le \f{k^{O(1)}}{2^k}
	\]
for all 
 $x\in\set{\minus,\plus,\free}^2$,
$e'\in\delta a \setminus e$, and $\sigma\in\set{\yel,\cya}^2$.\smallskip

\noindent\bemph{Step 4. Estimates on Lagrangian weights.}
Now recall from 
\eqref{e:derivative.is.covariance}
that derivatives of $\mu_t$-marginals with respect to $b_t$ can be expressed as covariance. Let $\tilde{\mu}_{t+1}$ be defined by
	\[
	\tilde{\mu}_{t+1}
	\Big(x_v,\col_e,\usi_{\delta a\setminus e}\Big)	
	=
	\f1{\tilde{Z}_{t+1}}
	b_{v,t}(x)
	(b_{e,t+1})^1((\col_e)^1)
	(b_{e,t})^2((\col_e)^2)
	\prod_{e'\in\delta a\setminus e}
		b_{e',t}(\sigma_{e'})\,,
	\]
where $\tilde{Z}_{t+1}$ is the normalizing constant. It follows from the update rule \eqref{e:update.rule.on.central.edge} that
the marginal of $\tilde{\mu}_{t+1}$ on $(\col_e)^1$
is exactly the desired marginal $(\mu_e)^1$. Consequently,
	\begin{align*}
	\delta_{t+1}(\col^1) &\equiv \bigg|
	\f{(\mu_{e,t+1})^1(\col^1)}{(\mu_e)^1(\col^1)}-1
	\bigg|
	=\bigg|
	\f{(\mu_{e,t+1})^1(\col^1)}
		{(\tilde{\mu}_{e,t+1})^1(\col^1)}-1\bigg|
	\\
	&\le
	\f1{(\mu_e)^1(\col^1)}
	O\Bigg(
	\sum_x
	\f{\pd\mu_{e,t+1}(\col^1)}
		{\pd\log b_{v,t+1}(x)}
	\bigg| \f{b_{v,t+1}(x)}{b_{v,t}(x)}-1\bigg|
	+\sum_{\col^2}
	\f{\pd\mu_{e,t+1}(\col^1)}
		{\pd\log (b_{e,t+1})^2(\col^2)}
	\bigg| \f{(b_{e,t+1})^2(\col^2)}
		{(b_{e,t})^2(\col^2)}
	-1\bigg|
	\\
	&\qquad\qquad\qquad\qquad\qquad\qquad\qquad
	+\sum_{e',\sigma}
	\f{\pd\mu_{e,t+1}(\col^1)}
		{\pd\log b_{e',t+1}(\sigma)}
		\bigg|
		\f{b_{e',t+1}(\sigma)}
			{b_{e',t}(\sigma)}-1\bigg|
	\Bigg)\,.
	\end{align*}
Substituting the preceding covariance estimates into the last bound and applying \eqref{e:induct.consequence.on.sum.b} gives
	\begin{align*}
	\delta_{t+1}(\whi)
	&\le
	k^{O(1)}\Bigg\{
	\f1{2^k}
	\cdot
	\f{1}{2^k(2^{k\EPSDIV/2})^t}
	+	\f1{2^{k\EPSDIV}} \cdot
	(\mu_e)^2(\mred)
	\cdot
	\bigg|\f{(b_{e,t+1})^2(\mred)}
		{(b_{e,t})^2(\mred)}-1 \bigg|
	\Bigg\}\\
	&\le k^{O(1)}\Bigg\{
	\f1{2^k}
	\cdot
	\f{1}{2^k(2^{k\EPSDIV/2})^t}
	+	\f1{2^{k\EPSDIV}}
	\cdot \f1{2^k}
	\cdot \f1{2^{k\EPSDIV}((2^{k\EPSDIV/2})^t}
	\Bigg\}
	\le \f{k^{O(1)}}{2^k(2^{k\EPSDIV/2})^{t+2}}\,,\\
	\delta_{t+1}(\mred)
	&\le
	k^{O(1)}\Bigg\{
	\f1{2^{k\EPSDIV}((2^{k\EPSDIV/2})^t}
	+ 2^{k(1-\EPSDIV)}
	\cdot(\mu_e)^2(\mred)
	\cdot \f1{2^{k\EPSDIV}((2^{k\EPSDIV/2})^t}
	\Bigg\}
	\le 
	\f{k^{O(1)}}{2^{k\EPSDIV}(2^{k\EPSDIV/2})^{t+2}}\,.
	\end{align*}
This verifies the inductive hypothesis \eqref{e:tree.a.v.induct} for the quantities
	\[
	\delta_{t+1}(\col^i)
	= \bigg|\f{(\mu_{e,t+1})^i(\col^i)}
		{(\mu_e)^i(\col^i)}-1\bigg|\,.
	\]
The remaining estimates in \eqref{e:tree.a.v.induct} follow by similar calculations (details omitted). It follows that as $t\to\infty$ the weights $b_t$ converge to the desired limiting weights $b_\infty\equiv b$ that define the optimal measure $\mu$ in \eqref{e:ForcedRR.mu.of.beta}. It then follows from the bound \eqref{e:induct.consequence.on.sum.b} that
	\begin{align*}
	\mu_e(\mred\mred)
	&\stackrel{\eqref{e:ForcedRR.mu.of.beta}}{=}
	O\Bigg(
	b_v(\plus\plus)
	\Bigg\{\prod_{i=1,2} (b_e)^i(\mred)\Bigg\}
	\Bigg\{ \prod_{e'\in\delta a\setminus e}
		b_{e'}(\yel\yel)\Bigg\}
		\Bigg)\\
	&\stackrel{\eqref{e:induct.consequence.on.sum.b}}{=}
	O\Bigg(
	\mu_v(\plus\plus)
	\prod_{i=1,2}\Big(
	(\mu_e)^i(\mred) 2^K\Big)
	\prod_{e'\in\delta a\setminus e}
		\mu_{e'}(\yel\yel)\Bigg)
	\stackrel{\eqref{e:diverse.clause.consequence}}{\le}
	\f1{2^{k(1+\EPSDIV)}}\,,
	\end{align*}
as claimed.
\end{proof}
\end{lem}

\begin{lem}\label{l:large11Case}
Suppose $v$ is a non-defective variable of type $\bT$, satisfying the bounds $\pi_v(\plus\plus)
	=\pi_{\bT}(\plus\plus)\ge 2^{-k/16}$ and
	\beq\label{e:notDiverse.small}
	\notDiverse(v)
	\stackrel{\eqref{e:exp.num.nondiv.nbr.clauses}}{=}
	\sum_{\bt\in\bT}\sum_{\bL}
	\Ind{\bL\notin\mathbb{D}}
	\pi_{\DD}(\bL\,|\,\bt)
	\le 2^k\,.
	\eeq
Then, for every edge $e\in\delta v(\plus)$, we have (with $\EPSDIV$ as in Lemma~\ref{l:ForcedRR}) the bound
	\[
	\max_{\bL,j}
	\bigg\{\omega_{\bL,j}(\red\red)
	: 
	\textup{$\bL\in\mathbb{D}$
	with $\bL(j)=\bt_e$}
	\bigg\}
	\le \f1{2^{k(1+\EPSDIV)}}\,.
	\]
The same statement holds if we replace $\plus$ with $\minus$ throughout.

\begin{proof} We claim that
	\beq\label{enotEnoughReds}
	\min\Bigg\{
	\sum_{ e \in\delta v(\plus)} 
		\pi_e\Big( \set{\red\red,\red\blu}\Big),
	\sum_{ e \in\delta v(\plus)} 
		\pi_e\Big( \set{\red\red,\blu\red}\Big)
	\Bigg\}
	\ge \f{k \pi_v(\plus\plus)}{300}\,.
	\eeq
Let us first note that
\eqref{enotEnoughReds} implies the result of the lemma: indeed, if \eqref{enotEnoughReds} holds,
then condition
\eqref{e:assumption.rr.avg} of Lemma~\ref{l:rrUnforced} is satisfied with $\EPSTWO=1/300$, and applying that lemma gives
	\[
	\omega_{\bL,j}(\red\red)
	\le \f{\bom_{\bL,j}(\ups=\mred\mred)}
		{1- 2^{-k\EPSTHREE}}
	\le O\Big( \bom_{\bL,j}
		(\ups=\mred\mred)\Big)
	\]
for $j=j(\bt)$ and all $\bL$ such that $\bL(j)=\bt$.
If in addition $\bL\in\mathbb{D}$, then
combining with Lemma~\ref{l:ForcedRR} gives
	\[
	\omega_{\bL,j}(\red\red)
	\le O\Big( \bom_{\bL,j}
		(\ups=\mred\mred)\Big)
	\le O\bigg( \f1{2^{k(1+\EPSDIV)}}\bigg)\,.
	\]
Thus it suffices to prove \eqref{enotEnoughReds}.\smallskip

\noindent\bemph{Step 1. Preliminary bounds.}
 For $e\in\delta v(\plus)$ we have $\pi_e(\blu\blu)= \pi_v(\plus\plus)-O(2^{-k})\ge 2^{-k/15}$, so Lemma~\ref{l:indifferent} gives
	\[
	\omega_{\bL,j}(\blu\blu)
	\ge \f{\pi_e(\blu\blu)}8
	\ge \f18
	\Bigg\{ \pi_v(\plus\plus)
		-O\bigg(\f1{2^k}\bigg)
		\Bigg\}
	\]
for all $\bL,j$ with $\bL(j)=\bt_e$. Combining with
Lemma~\ref{l:lotsOfAs} gives
	\beq\label{e:large11Case.many.bb.ww}
	\bom_{\bL,j}
		\Bigg(
		\hspace{-3pt}\begin{array}{c}\sigma=\blu\blu,\\
		\col=\whi\whi\end{array}\hspace{-3pt}
		\Bigg)
	\ge \omega_{\bL,j}(\blu\blu)
		-O\bigg(\f{k^2}{2^k}\bigg)
	\ge 
	\omega_{\bL,j}(\blu\blu)
	\Big\{ 1-o_k(1)\Big\}
	\,.\eeq
Suppose for contradiction that \eqref{enotEnoughReds} fails. Since $v$ is non-defective, it must also be nice (Definition~\ref{d:nice}), so
	\[
	\sum_{e\in\delta v(\plus)}
	\pi_e\Big(\set{\red\yel,\red\grn}\Big)
	\ge \sum_{e\in\delta v(\plus)}
	(\pi_e)^1(\red) - 
		\f{k \pi_v(\plus\plus)}{300}
	\ge \f{k 2^{k-1} \log 2}{2^k}
		\Big\{1-o_k(1)\Big\}
	- \f{ k \pi_v(\plus\plus)}{300}
	\ge \f{k}{3}\,.
	\]
Therefore, it holds for some
$\tau\in\set{\yel,\grn}$ that 
	\beq\label{e:too.many.ry}
	\sum_{e\in\delta v(\plus)}
	\pi_e(\red\tau) \ge \f{k}{6}
	\eeq
--- suppose this is the case for $\tau=\yel$. 
Then Lemma~\ref{l:ryUnforced} 
(whose condition 
\eqref{e:ryUnforced.condition} 
is satisfied, due to
\eqref{e:too.many.ry}) gives
	\[\bom_{\bL,j}
	\Bigg(\hspace{-3pt}
		\begin{array}{c}\sigma=\red\yel,\\
		\ups^1=\mred\end{array}\hspace{-3pt}
		\Bigg) 
	\ge \omega_{\bL,j}(\red\yel)
	\bigg\{ 1-\f1{2^{k\EPSTHREE}}\bigg\}
	\]
for all $e\in\delta v(\plus)$
and all $\bL(j)=\bt_e$. If $\bL$ is also diverse, then combining with Lemma~\ref{l:lotsOfAsDiverse} gives 
	\beq\label{e:lbd.movable.red.white}
	\bom_{\bL,j}
		\Bigg(
		\hspace{-3pt}
		\begin{array}{c}\sigma=\red\yel,\\
		\col=\mred\whi
		\end{array}\hspace{-3pt}
		\Bigg) 
	\ge \bom_{\bL,j}\Bigg(\hspace{-3pt}
		\begin{array}{c}\sigma=\red\yel,\\
		\ups^1=\mred\end{array}\hspace{-3pt}
		\Bigg) 
	-\bom_{\bL,j}\Bigg(
		\hspace{-3pt}
		\begin{array}{c}
		\sigma=\red\yel,\\
		\varsigma^1\ne\whi,
		\varsigma^2\ne\whi
		\end{array}\hspace{-3pt}
		\Bigg)
	\ge \bom_{\bL,j}(\red\yel)
		\bigg\{ 1-\f1{2^{k\EPSTHREE}}\bigg\}
		-\f1{2^{k(1+\EPSONE)}}\,.\eeq
Note that \eqref{e:lbd.movable.red.white} used the assumption that $v$ is non-defective, which implies that $\bL$ is nice (i.e., neighbors only nice variables) whenever $\bL(j)=\bt_e$ for $e\in\delta v$. We now define a probability measure $\bm{\pi}$ on the space of pairs $(e,\bL)$, where $e\in\delta v(\plus)$ and $\bt_e\in\bL$, such that
	\beq\label{e:defn.bm.pi}
	\bm{\pi}(e,\bL)
	\equiv
	\f{\Ind{e\in\delta v(\plus)}}
		{|\delta v(\plus)|}
		\pi_{\DD}(\bL\,|\,\bt_e)\,.
	\eeq
We can equivalently regard $\bm{\pi}$ as a probability measure on pairs $(\bt,\bL)$ where $\bt=\bt_e$. Now, on this probability space, define the random variable
	\[X\equiv X(e,\bL)
	\equiv 
	\Ind{\bL\in\mathbb{D}}
	\f{\omega_{\bL,j}(\red\yel)}
	{\starpi_e(\red)}
	\equiv X(\bt,\bL)\,,\]
and note that $0\le X\le 1$ with probability one.
The expectation of $X$ with respect to $\bm{\pi}$ is
	\begin{align*}
	\mathbf{E}_{\bm{\pi}} X
	&\stackrel{\eqref{e:defn.bm.pi}}{=}
		\f1{|\delta v(\plus)|}
	\Bigg\{ \sum_{e\in\delta v(\plus)}
	\sum_{\bL}
	\pi_{\DD}(\bL\,|\,\bt_e)
	\f{\omega_{\bL,j}(\red\yel)}
	{\starpi_e(\red)}
	-O\Bigg(
	\sum_{e\in\delta v(\plus)}
	\sum_{\bL}\Ind{\bL\notin\mathbb{D}}
	\pi_{\DD}(\bL\,|\,\bt_e)\Bigg)
	\Bigg\}\\
	&\stackrel{\eqref{e:notDiverse.small}}{\ge}
	\f1{|\delta v(\plus)|} \sum_{e\in\delta v(\plus)}
	\sum_{\bL}
	\pi_{\DD}(\bL\,|\,\bt_e)
	\f{\omega_{\bL,j}(\red\yel)}{\starpi_e(\red)}
	- \f1k
	=\f1{|\delta v(\plus)|} \sum_{e\in\delta v(\plus)}
	\f{\pi_e(\red\yel)}{\starpi_e(\red)}
	- \f1k\\
	&\stackrel{\eqref{e:too.many.ry}}{\ge} 
	\Big\{ 1-o_k(1)\Big\}
	\f{2^{k-1} \cdot k/6}{k 2^{k-1}\log2}
	\ge \f15\,.
	\end{align*}
On the other hand, since we noted above that $0\le X\le 1$, we have
	\[
	\mathbf{E}_{\bm{\pi}} X
	\le \bm{\pi}\bigg(X \ge \f1{10}\bigg)
	+ \f1{10}
	\Bigg\{1-\bm{\pi}\bigg(X \ge \f1{10}\bigg)\Bigg\}\,,
	\]
and rearranging gives
$\bm{\pi}(X \ge 1/10)\ge 1/9$. On the event $X(e,\bL)\ge1/10$, it follows from \eqref{e:lbd.movable.red.white} that
	\beq\label{e:large11Case.many.ry.moveable}
	\bom_{\bL,j}
	\Bigg(\hspace{-3pt}\begin{array}{c}
		\sigma=\red\yel,\\
		\col=\mred\whi\end{array}\hspace{-3pt}
		\Bigg)
	\ge \omega_{\bL,j}(\red\yel)
	\Big\{1-o_k(1)\Big\}\,.\eeq
In the next step we will perform a switching argument between \eqref{e:large11Case.many.bb.ww}
and \eqref{e:large11Case.many.ry.moveable}.\smallskip

\noindent\bemph{Step 2. Edge switching argument.} Let $\GG=(V,F,E)$ be any (processed) $\ksat$ graph. Fix a clause type $\bL$ and index $1\le j\le k(\bL)$, and let $E(\bL,j)$ denote the subset of all edges
$e=(av)\in E$ such that $\bL_a=\bL$ and $j(\bt_e)=j$. Suppose $\usi$ is a valid pair coloring on $\GG$, and let $\ucol$ be the corresponding configuration from Definition~\ref{d:white.violet}. Suppose we have two edges $e=(av)$ and $e'=(a'v')$ in $E(\bL,j)$, such that
	\[
	\mathbf{X}_{av} \equiv
	\begin{pmatrix} \sigma_{av}\\
		\col_{av}\end{pmatrix}
	= \begin{pmatrix} \blu\blu \\
		\whi\whi\end{pmatrix}
	\equiv A_1\,,\quad
	\mathbf{X}_{a'v'}
	\equiv\begin{pmatrix} \sigma_{a'v'}\\
		\col_{a'v'}\end{pmatrix}
	= \begin{pmatrix} \red\yel \\
		\mred\whi\end{pmatrix}
	\equiv A_2\,.
	\]
If we cut the edges $e,e'$ and form new edges 
$(a'v)$, $(av')$, then a valid configuration
$(\usi',\ucol')$ on the switched graph $\GG'$ is given by setting
	\[\mathbf{X}_{a'v}
	\equiv\begin{pmatrix} \sigma_{a'v}\\
		\col_{a'v}\end{pmatrix}
	= \begin{pmatrix}
		\red\blu\\ \mred\whi\end{pmatrix}
	\equiv B_1\,,\quad
	\mathbf{X}_{av'}
	\equiv\begin{pmatrix} \sigma_{av'}\\
		\col_{av'}\end{pmatrix}
	=\begin{pmatrix}\blu\yel \\
		\whi\whi\end{pmatrix}
	\equiv B_2\,,
	\]
keeping all other colors unchanged. Moreover, the switching preserves all single-copy marginals, 
so $\usi'$ is a judicious configuration on $\GG'$.
If $X(\bt,\bL)\ge1/10$ for $\bt=\bL(j)$, then
	\begin{align*}
	p(A_1)
	&\equiv
	\f{|\set{e \in E(\bL,j) : \mathbf{X}_{av}=A_1}|}
		{|E(\bL,j)|}
	\stackrel{\eqref{e:large11Case.many.bb.ww}}
	{\ge} \bigg\{1-o_k(1)\bigg\}
	\omega_{\bL,j}(\blu\blu)\,,\\
	p(A_2)
	&\equiv
	\f{|\set{e \in E(\bL,j) : \mathbf{X}_{av}=A_2}|}
		{|E(\bL,j)|}
	\stackrel{\eqref{e:large11Case.many.ry.moveable}}{\ge}\bigg\{1-o_k(1)\bigg\}
	\omega_{\bL,j}(\red\yel)\,.\end{align*}
Suppose $\bom$ (the empirical measure on the augmented spins $(\sigma,\col)$) gives the maximal 
second moment contribution $\E_{\DD}\ZZ^2(\bom)$, 
subject to the restriction that its projection $\omega$ lies in $I_0$. Let $\bm{\Omega}$ denote the space of all (valid) pairs $(\GG,\usi)$ that are 
consistent with this $\bom$, and consider the probability measure
	\[
	\bP(\GG,\usi)
	= \f{\P_{\DD}(\GG) \Ind{(\GG,\usi)\in\bm{\Omega}}}
		{\E_{\DD} \ZZ^2(\bom)}\,.
	\]
Let $(\GG,\usi)\in\bm{\Omega}$ be sampled according to measure $\bP$, and define the subsets of edges
	\begin{align*}
	E_A&\equiv E_A(\GG,\usi)
	\equiv
	\bigg\{e\in E(\bL,j): \bm{X}_e \in\set{A_1,A_2}\bigg\}\,,\\
	E_B&\equiv E_B(\GG,\usi)
	\equiv
	\bigg\{e\in E(\bL,j): \bm{X}_e \in\set{B_1,B_2}\bigg\}\,.
	\end{align*}
If we rematch the edges within $E_A$, and
also rematch the edges within $E_B$ (uniformly at random), the resulting $(\GG',\usi')$ will also be distributed roughly according to $\bP$. The number of $B_1$-edges in $(\GG',\usi')$
is lower bounded by the number of switched edges from $E_A(\GG,\usi)$, so we conclude
	\[
	\f{p(B_1)}{1-o_n(1)}
	\ge
	\f{|E_A(\GG,\usi)|}{|E(\bL,j)|}
	\f{p(A_1) p(A_2)}{p(A_1)+p(A_2)}
	\ge
	\Big\{1-o_n(1)\Big\}
	p(A_1) p(A_2)\,.
	\]
Recalling the definition of the $A_i,B_i$, the above can be rewritten as
	\[\omega_{\bL,j}(\red\blu)
	\ge \f{\omega_{\bL,j}(\blu\blu)
		\omega_{\bL,j}(\red\yel)}{1+o_k(1)}
	\ge \f{\pi_v(\plus\plus)
	\starpi_e(\red)}{11}\,,
	\]
where the last step uses the assumption that $X(\bt,\bL)\ge 1/10$. It follows that
	\begin{align*}
	\sum_{e\in\delta v(\plus)}\pi_e(\red\blu)
	&\ge\sum_{e\in\delta v(\plus)}
	\sum_{\bL}
	\mathbf{1}\bigg\{X(e,\bL)\ge \f1{10}\bigg\}
	\pi_{\DD}(\bL\,|\,\bt_e) 
	\omega_{\bL,j}(\red\blu)\\
	&= |\delta v(\plus)|
	\bm{\pi}\bigg(X \ge \f1{10}\bigg) 
	\f{\pi_v(\plus\plus)
	\starpi_e(\red)}{11}
	\ge \f{k2^{k-1}\log2}{9[1-o_k(1)]}
	\f{\pi_v(\plus\plus)}{11 \cdot 2^k}
	\ge \f{k \pi_v(\plus\plus)}{300}\,,
	\end{align*}
proving \eqref{enotEnoughReds}. A very similar argument proves \eqref{enotEnoughReds} in the case that \eqref{e:too.many.ry} holds for $\tau=\grn$ instead of $\tau=\yel$. As explained above, 
\eqref{enotEnoughReds} implies the lemma, so this concludes the proof.
\end{proof} 
\end{lem}

\begin{cor}[used only in proof of Proposition~\ref{p:noNonDiverse}]
\label{c:RRFreq}
Consider the setting of Lemma~\ref{l:large11Case},
but without assuming a lower bound on $\pi_v(xx)$.
Then
	\[
	\sum_{e\in\delta v} 
	\sum_{\bL\in\mathbb{D}}
	\pi_{\DD}(\bL\,|\,\bt_e)
	\omega_{\bL,j(\bt_e)} 
	(\red\red)
	\le \f{k^2}{2^{k\EPSDIV}}
	\]
for $\EPSDIV$ as in Lemmas~\ref{l:ForcedRR}~and~\ref{l:large11Case}.

\begin{proof}
In view of Lemma~\ref{l:large11Case} it suffices to consider the case $\pi_v(\plus\plus) \le 2^{-k/16}$.
Suppose the conclusion of this corollary fails; more explicitly, suppose that
	\[
	\sum_{e\in\delta v} 
	\sum_{\bL\in\mathbb{D}}
	\pi_{\DD}(\bL\,|\,\bt_e)
	\omega_{\bL,j(\bt_e)} 
	(\red\red)
	\ge \f{k^4}{2^{k/16}}\,.
	\]
This implies that the assumption
\eqref{e:assumption.rr.avg} of Lemma~\ref{l:rrUnforced} holds, since
	\[
	\sum_{e\in\delta v} 
	\pi_e(\red\red\,|\, 
	\sigma\in\set{\red,\blu}^2)
	\ge \f{k^4/2^{k/16}}{\pi_v(\plus\plus)}
	\ge k^4\,.
	\]
In this case, combining 
Lemma~\ref{l:rrUnforced} and Lemma~\ref{l:ForcedRR} gives that for all $\bL,j$ with $\bL(j)=\bt_e$ for $e\in\delta v(\plus)$,
	\[
	\omega_{\bL,j}(\red\red)
	\le
	\f{\omega_{\bL,j}(\mred\mred)}{1-o_k(1)}
	\le
	\f1{2^{k(1+\EPSDIV)}}\,,
	\]
from which the conclusion of this corollary follows.
\end{proof}
\end{cor}

\subsection{Non-defective variables neighboring only diverse light clauses}
\label{ss:proof.propn.balancedeVertex}

The main result of this subsection is the following proposition:

\begin{ppn}[used only in proof of Proposition~\ref{p:apriori}]
\label{p:balancedeVertex}
In the notation of Lemma~\ref{l:large11Case},
suppose $v$ is a non-defective variable with $\notDiverse(v)=\notLight(v)=0$. Then for every $e\in\delta v$, and every
$\bL\ni_j\bt_e$ we have 
	\beq\label{e:balancedeVertex}
	\bigg|\f{\omega_{\bL,j}(\sigma)}
	{\starpi_{\bL(j)}(\sigma^1)\starpi_{\bL(j)}(\sigma^2)}
	-1\bigg| \le \f{k^{O(1)}}{2^{k\EPSLIGHT}}\,,
	\eeq
for all $\sigma\in\set{\RYGB}^2\setminus\set{\red\red}$ and all $\bL$,
with $\EPSLIGHT$ as in Lemma~\ref{l:lotsOfAsDiverse}.
\end{ppn}

\noindent The \hyperlink{pf:p.balancedeVertex}{proof of Proposition~\ref{p:balancedeVertex}} appears at the end of this subsection. Its main ingredient is the next lemma:

\begin{lem}[used only in proof of Proposition~\ref{p:balancedeVertex}]
\label{l:ZETA.conditions.balanced.vertex}
Let $\bT$ be a non-defective variable type. 
Consider the second moment of judicious configurations under $\P_{\DD}$, restricted to the near-independent regime $\bm{I}_0$ (as has been the case throughout this section). Let $\bP$ denote the empirical measure of configurations $(\usi_{\delta v},\uvsi_{\delta v},\uL_{\delta v})$, taken over all variables $v$ of type $\bT$, that gives the maximal contribution to this restricted second moment. For $e\in\delta v$ let $\bzeta_e$ denote the marginal law of $(\varsigma_e,\bL_e)$ under $\bP$, and suppose it satisfies
	\begin{align}
	\label{e:ZETA.ww.unif.bound}
	1-\bzeta_e(\whi\whi\,|\,\bL)
	&\le \f{k^{O(1)}}{2^k}\,,\\
	\label{e:ZETA.hat.iota.unif.bound}
	\hit_{e,\bL}
	\equiv \bzeta_e\Big(\varsigma\in\set{\RYC}^2
		\,\Big|\,\bL\Big)
	+ \f{k^4}{4^k}
	&\le \f1{2^{k(1+\EPSINT)}}
	\end{align}
for all $e\in\delta v$ and all $\bL$. Then it must satisfy the estimate
	\beq\label{e:balanceMarg}
	\bigg|\f{\bP(\sigma_e=\sigma\,|\,\bL_e=\bL)}
		{\starpi_e(\sigma^1)
		\starpi_e(\sigma^2)
		}-1\bigg| \le \f{k^{O(1)}}{2^{k\EPSINT}}
	\eeq
for all $\sigma\in\set{\RYGB}^2\setminus\set{\red\red}$ and all $\bL$.

\begin{proof} As in the statement of the lemma, let us fix a variable $v$ of type $\bT$ that is non-defective. Write
	\beq\label{e:uX.notation}
	\uX_{\delta v}
	\equiv (X_e)_{e\in\delta v}
	\equiv
	\bigg(\Big(\sigma_e,\varsigma_e\Big)
	\bigg)_{e\in\delta v}\,.\eeq
Note that, by the rules of Definition~\ref{d:white.violet}, the configuration 
$\uX_{\delta v}$ also implicitly encodes $\ucol_{\delta v}$. In the augmented model where each edge $e=(av)$ is also labelled with the clause type $\bL_e\equiv\bL_a$, denote
	\beq\label{e:cal.X}
	\cX\equiv \cX_v
	\equiv (\uX_{\delta v},
		\uL_{\delta v})
	\equiv 
	\bigg(
	(X_e)_{e\in\delta v}, 
	(\bL_e)_{e\in\delta v}\bigg)
	\,.\eeq
Recall that $\bzeta_e$ denotes the marginal law of $(\bL_e,\varsigma_e)$ under $\bP$. We can write it as
	\[\bzeta_e(\bL,\varsigma)
	\equiv \bP\bigg(\bL_e=\bL, 
		\varsigma_e=\varsigma\bigg)
	= \pi_{\DD}(\bL\,|\,\bt_e)
	\bzeta_e(\varsigma\,|\,\bL)\,.\]
The measure $\bP$ must maximize entropy subject to the marginal constraints
	{\setlength{\jot}{0pt}
	\begin{alignat}{2}
	\label{e:final.D.L.clause}
	\bP(\bL_e=\bL)
		&=\pi_{\DD}(\bL\,|\,\bt_e)
		\quad&&\textup{for all $\bL$,}\\
	\label{e:final.D.L.judicious}
	\bP( (\sigma_e)^i=\sigma^i\,|\,\bL_e=\bL)
		&= \starpi_e(\sigma^i)
		\quad&&\textup{for all $\bL$
			and all $\sigma^i\in\set{\RYGB}$,}\\
	\label{e:final.D.L.varsigma}
	\bP(\varsigma_e=\varsigma\,|\,\bL_e=\bL)
		&= \bzeta_e(\varsigma\,|\,\bL)
		\quad&&\textup{for all $\bL$
		and all $\varsigma\in\set{\RYC,\whi}^2$,}
	\end{alignat}}%
for all $e\in\delta v$. By the method of Lagrange multipliers, $\bP$ must take the form
	\beq\label{e:final.D.L.Lagrangian.form}
	\bP(\usi_{\delta v},
	\uvsi_{\delta v},\uL_{\delta v})
	\cong
	\psi_v(\usi_{\delta v})
	\prod_{e\in\delta v}\Bigg\{
	\Ind{\sigma_e\sim\varsigma_e}
	\psi_e(\bL_e)
	\Bigg(
	\prod_{i=1,2}(\beta_e)^i((\sigma_e)^i\,|\,\bL_e)
	\Bigg)
	\chi_e(\varsigma_e\,|\,\bL_e)
	\Bigg\}	
	\eeq
where $\psi_v(\usi_{\delta v})$ is as in \eqref{e:non.compound.weight.functional.form} from Definition~\ref{d:param.weights.Augmented}, and we write $\sigma\sim\varsigma$ to indicate compatibility: formally, it means for both $i=1,2$ that we have $\sigma^i\sim\vsi^i$ in the sense that the following holds:
	{\setlength{\jot}{0pt}\begin{align*}
	\vsi^i=\red \quad &\textup{whenever $\sigma^i=\red$,}\\
	\vsi^i\in\set{\yel,\whi}
		\quad &\textup{whenever $\sigma^i=\yel$,}\\
	\vsi^i\in\set{\cya,\whi}
		\quad &\textup{whenever $\sigma^i\in\set{\grn,\blu}$.}
	\end{align*}}%
We now turn to the construction of the weights in \eqref{e:final.D.L.Lagrangian.form}. We divide the remainder of the argument into a few parts.\smallskip

\noindent\bemph{Part~1. Single-copy estimates.}
Let $\nu\equiv(\dbh,\hbh)$ be the vertex empirical measure that gives the maximal contribution to the first moment (of judicious configurations). The edge marginal of $\hbh_{\bL}$ is the canonical marginal $\omega_{\bL,j}=\starpi_{\bL(j)}$. Given the mapping from $\usi_{\delta a}$ to $\uvsi_{\delta a}$ within each clause, the measure $\hbh_{\bL}$ induces a measure on configurations $\uvsi_{\delta a}$. Let $\starzeta_{\bL,j}\equiv\starzeta_{\bL(j)}$ denote the marginal law of $\varsigma_j$ under this measure; we call this the canonical marginal on $\varsigma^i$ for an edge of type $\bt=\bL(j)$. If $e$ is an edge of type $\bt$ then we also write $\starzeta_e\equiv\starzeta_{\bt}$. Explicitly,
	\begin{align*}
	\starzeta_e(\red)
	&\cong \dqstar_e(\red)
		\Bigg\{
		\prod_{e'\in\delta v\setminus e}
		\dqstar_{e'}(\yel)
		\Bigg\}\,,\\
	\starzeta_e(\yel)
	&\cong 
	\dq_e(\yel)
	\Bigg\{
	\sum_{e'\in\delta v\setminus e}
		\dq_{e'}(\red)
	\prod_{e''\in\delta v\setminus \set{e,e'}}
	\dq_{e''}(\yel)\Bigg\}\,,\\
	\starzeta_e(\cya)
	&\cong 
	\dq_e(\cya)
	\Bigg\{
	\sum_{e'\in\delta v\setminus e}
		\dq_{e'}(\cya)
		\prod_{e''\in\delta v\setminus \set{e,e'}}
		\dq_{e''}(\yel)\Bigg\}\,,\\
	\starzeta_e(\whi)
	&\cong \dqstar_e(\set{\yel,\cya})
	\prod_{e'\in\delta v\setminus e}
	\dqstar_{e'}(\set{\yel,\cya})
	\Bigg( 1-\f{k^{O(1)}}{2^k}\Bigg)\,,
	\end{align*}
where the last estimate uses the assumption that $v$ is non-defective, hence nice, so that $\dqstar_e$ satisfies the estimates of Definition~\ref{d:nice}
for all $e\in\delta v$. It follows that
	\beq\label{e:ZETA.zetastar}
	\begin{pmatrix}
	\starzeta_e(\red)\\
	\starzeta_e(\yel)\\
	\starzeta_e(\cya)\\
	\starzeta_e(\whi)
	\end{pmatrix}
	= \begin{pmatrix}
	\Theta(1/2^k)\\
	\Theta(k/2^k)\\
	\Theta(k/2^k)\\
	1 - \Theta(k/2^k)
	\end{pmatrix}\,.\eeq
(We also remark that $\starzeta_e(\yel)=\starzeta_e(\cya)$, although we will not use this fact in what follows.)\smallskip

\noindent\bemph{Part~2. Single-copy weights.} We first consider the simpler problem of setting weights in the single-copy model. For this discussion, let $\sigma\in\set{\RYGB}$ and $\varsigma\in\set{\RYC,\whi}$, and again write $\sigma\sim\varsigma$ to indicate compatibility.
Given a probability measure $\starpi$ over $\set{\RYGB}$, along with a probability measure $\starzeta$ over spins $\set{\RYC,\whi}$ such that $\starzeta(\red)=\starpi(\red)$, we look for weights $\bstar$ and $\xstar$ such that
	\beq\label{e:mu.single.edge.sigma.varsigma}
	\mu(\sigma,\varsigma)
	\equiv
	\Ind{\sigma\sim\varsigma}
	\dqstar(\sigma)
	\bstar(\sigma)
	\xstar(\varsigma)
	\eeq
defines a probability measure over $(\sigma,\varsigma)$ whose marginal on $\sigma$ is $\starpi$, and whose marginal on $\varsigma$ is $\starzeta$. Explicitly, for the $\sigma$-marginal to be $\starpi$ we must have the equations
	{\setlength{\jot}{0pt}\begin{align*}
	\starpi(\red) &= \dqstar(\red) \bstar(\red)\xstar(\red)
		= \starzeta(\red)\,,\\
	\starpi(\yel) &= \dqstar(\yel)\bstar(\yel)[\xstar(\yel)
		+\xstar(\whi)]\,,\\
	\starpi(\grn) &=\dqstar(\grn)\bstar(\grn)[\xstar(\cya)
		+\xstar(\whi)]\,,\\
	\starpi(\blu) &=\dqstar(\blu)\bstar(\blu)[\xstar(\cya)
		+\xstar(\whi)]\,.\end{align*}
For the $\vsi$-marginal to be $\starzeta$ we must have the equations
	\begin{align*}\starzeta(\yel)
		&=\dqstar(\yel)\bstar(\yel)\xstar(\yel)\,,\\
	\starzeta(\cya)
		&=[\dqstar(\grn)\bstar(\grn) 
			+\dqstar(\blu)\bstar(\blu) ]
			\xstar(\cya)\,,\\
	\starzeta(\whi)
		&=[\dqstar(\yel)\bstar(\yel)
		+\dqstar(\grn)\bstar(\grn)
	+\dqstar(\blu)\bstar(\blu)
	]\xstar(\whi)\,.
	\end{align*}}%
Note also that if we multiply all the $\bstar$-weights by a scaling factor and divide all the $\xstar$-weights by the same factor, it has no effect on the right-hand side of \eqref{e:mu.single.edge.sigma.varsigma}, so without loss we can pin down the weights by requiring
$\bstar(\red)=1$ and $\xstar(\whi)=1$.
Then, combining the equations for $\starpi(\yel)$ and $\starzeta(\yel)$ gives
	\[
	\bstar(\yel) =\f{\starpi(\yel)-\starzeta(\yel)}{\dqstar(\yel)}\,,\quad
	\xstar(\yel) = \f{\starzeta(\yel)}{\starpi(\yel)-\starzeta(\yel)}\,.
	\]
Next, comparing the equations for $\starpi(\grn)$ and $\starpi(\blu)$ gives
	\[
	\f{\bstar(\grn)}{\bstar(\blu)}
	= \f{\starpi(\grn)/\dqstar(\grn)}{\starpi(\blu)/\dqstar(\blu)}
	= \f{\hqstar(\grn)}{\hqstar(\blu)}=1\,,
	\]
and we hereafter denote $\bstar(\cya)\equiv \bstar(\grn)\equiv\bstar(\blu)$. 
Combining with the equations for 
$\starpi(\grn)$, $\starpi(\blu)$, and $\starzeta(\cya)$ gives
	\[
	\bstar(\cya)=
	\f{\starpi(\grn)+\starpi(\blu)-\starzeta(\cya)}
		{\dqstar(\grn) + \dqstar(\blu)}
	= \f{\starpi(\cya)-\starzeta(\cya)}{\dqstar(\cya)}\,,\quad
	\xstar(\cya)
	=\f{\starzeta(\cya)}{\starpi(\cya)-\starzeta(\cya)}\,.
	\]
In summary, a valid solution is given by taking
	\begin{align}\label{e:ZETA.bstar}
	\begin{pmatrix}
	\bstar(\red)\\
	\bstar(\yel)\\
	\bstar(\grn)=\bstar(\blu)
	\equiv\bstar(\cya)
	\end{pmatrix}
	&= 
	\begin{pmatrix}
	1\\
	[\starpi(\yel)-\starzeta(\yel)]/\dqstar(\yel)\\
	[\starpi(\cya)-\starzeta(\cya)]/\dqstar(\cya)
	\end{pmatrix}
	\stackrel{\eqref{e:ZETA.zetastar}}{=} 
	\begin{pmatrix}
	\Theta(1)\\ \Theta(1) \\ \Theta(1)
	\end{pmatrix}\,,\\
	\label{e:ZETA.xstar}
	\begin{pmatrix}
	\xstar(\red)\\
	\xstar(\yel)\\
	\xstar(\cya)\\
	\xstar(\whi)
	\end{pmatrix}
	&=\begin{pmatrix}
	\starpi(\red)/\dqstar(\red)\\
	\starzeta(\yel)/[\starpi(\yel)-\starzeta(\yel)]\\
	\starzeta(\cya)/[\starpi(\cya)-\starzeta(\cya)]\\
	1
	\end{pmatrix}
	\stackrel{\eqref{e:ZETA.zetastar}}{=} 
	\begin{pmatrix}
	\Theta(1/2^k)\\
	\Theta(k/2^k)\\
	\Theta(k/2^k)\\ 1
	\end{pmatrix}
	\,.\end{align}
Substituting these into \eqref{e:mu.single.edge.sigma.varsigma}
gives a measure with the desired marginals $\starpi$ and $\starzeta$.\smallskip

\noindent\bemph{Part~3. Initialization in pair model.} Returning to \eqref{e:final.D.L.Lagrangian.form}, our goal is to construct a sequence of measures
	\[
	\bP_t(\usi_{\delta v},
	\uvsi_{\delta v},\uL_{\delta v})
	\cong
	\psi_{v,t}(\usi_{\delta v})
	\prod_{e\in\delta v}\Bigg\{
	\psi_{e,t}(\bL_e)
	\Bigg(
	\prod_{i=1,2}
	(\beta_{e,t})^i((\sigma_e)^i\,|\,\bL_e)
	\Bigg)
		\chi_{e,t}(\varsigma_e\,|\,\bL_e)
	\Bigg\}\]
which converges as $t\to\infty$ to the desired solution $\bP_\infty\equiv\bP$. We will further decompose the $\beta$ weights as
	\[
	(\beta_{e,t})^i( \sigma^i\,|\,\bL_e)
	\equiv
	\bigg\{(\dot{\beta}_{e,t})^i( \sigma^i\,|\,\bL_e)\bigg\}
	\cdot
	\bigg\{(\hat{\beta}_{e,t})^i( \sigma^i\,|\,\bL_e)\bigg\}
	\]
such that $(\hat{\beta}_{e,t})^i$ does not distinguish between $\grn$ and $\blu$, that is, such that
	\[
	(\hat{\beta}_{e,t})^i( \grn\,|\,\bL_e)
	=(\hat{\beta}_{e,t})^i( \blu\,|\,\bL_e)
	\equiv(\hat{\beta}_{e,t})^i( \cya\,|\,\bL_e)\,.
	\]
This is clearly an over-parametrization, so the $\dot{\beta}$ and $\hat{\beta}$ weights will not be uniquely determined; we need only find one choice of weights such that the resulting measure \eqref{e:final.D.L.Lagrangian.form} satisfies the constraints \eqref{e:final.D.L.clause}--\eqref{e:final.D.L.varsigma}. The overparametrization will be useful below because it allows for some separation between the analysis of the $\vsi$-marginals and the analysis of the $(\sigma,\bL)$-marginals.

We initialize the construction at $t=0$ as follows.
We first set $\psi_{v,0}(\usi_{\delta v})$ to be equal to $\varphi_v(\usi_{\delta v})$, which we recall from
\eqref{e:color.factor.model} is simply
the indicator of a valid pair coloring $\usi_{\delta v}$.
For an edge $e$ of type $\bt$, let $\starpi_e\equiv\starpi_{\bt}$ be the canonical marginal on $\sigma^i$, and let $\starzeta_e\equiv\starzeta_{\bt}$ be the canonical marginal on $\varsigma^i$, as discussed above.
Let $\bstar_e$ and $\xstar_e$ be the corresponding weights defined by \eqref{e:ZETA.bstar} and \eqref{e:ZETA.xstar}; in particular, we recall
from \eqref{e:ZETA.bstar} that $\bstar_e$
does not distinguish between $\grn$ and $\blu$. We then set
	{\setlength{\jot}{0pt}\begin{alignat*}{2}
	\psi_{e,0}(\bL)
		&\equiv\pi_{\DD}(\bL\,|\,\bt_e)
		\quad
		&&\textup{for all $\bL$,}\\
	(\dot{\beta}_{e,0})^i(\sigma^i\,|\,\bL)
		&\equiv 1
		&&\textup{for
		$i=1,2$,
		all $\bL$,
		and all $\sigma^i\in\set{\RYGB}$,}\\
	(\hat{\beta}_{e,0})^i(\sigma^i\,|\,\bL)
		&\equiv \bstar_e(\sigma^i)\quad
		&&\textup{for
		$i=1,2$,
		all $\bL$,
		and all $\sigma^i\in\set{\RYGB}$,}\\
	\chi_{e,0}(\varsigma\,|\,\bL)
		&\equiv 
		\xstar_e(\varsigma^1)
		\xstar_e(\varsigma^2)\quad
		&&\textup{for all 
		$\bL$ and all $\varsigma\in\set{\RYC,\whi}^2$.}
	\end{alignat*}}%
Thus at $t=0$ the measure $\bP_0$ will satisfy constraints 
\eqref{e:final.D.L.clause} and \eqref{e:final.D.L.judicious}, but not
\eqref{e:final.D.L.varsigma}, since we will have
	\[
	\bzeta_{e,0}(\varsigma\,|\,\bL)
	\equiv
	\bP_0(\varsigma_e=\varsigma\,|\,\bL_e=\bL)
	=\prod_{i=1,2} \starzeta_e(\varsigma^i)\,,
	\]
which in general is not the same as
$\bzeta_e(\varsigma\,|\,\bL)$.\smallskip

\noindent\bemph{Part~4. Update procedure.}
In this step we will make use of Lemma~\ref{l:single.edge.sigma.varsigma}, which is stated and proved below. For $t\ge0$ let $\delta_{e,t}$ and $\ddot{\delta}_{e,t}$ denote parameters such that the estimates \eqref{e:ZETA.delta.error}--\eqref{e:ZETA.delta.error.mult.rr} below hold for the error between $\bzeta_{e,t}(\cdot\,|\,\bL)$ versus $\bzeta_e(\cdot\,|\,\bL)$ --- that is to say,
for all clause types $\bL$ that can appear incident to edge $e$, we assume that
	\begin{align}
	\label{e:ZETA.delta.error.t}
	\max
	\Bigg\{
	\Big|\bzeta_e(\varsigma\,|\,\bL)
		-\bzeta_{e,t}(\varsigma\,|\,\bL)\Big|
	: \varsigma\in\set{
	\red\whi,\yel\whi,\cya\whi,
	\whi\red,\whi\yel,\whi\cya}
	\Bigg\}
	&\le \f{k^{O(1)}\delta_{e,t}}{2^k}\,,\\
	\label{e:ZETA.delta.error.hat.iota.t}
	\max
	\Bigg\{
	\Big|\bzeta_e(\varsigma\,|\,\bL)
		-\bzeta_{e,t}(\varsigma\,|\,\bL)\Big|
	: \varsigma\in\set{\RYC}^2
	\Bigg\}
	&\le \hit \delta_{e,t}\,, \\
	\label{e:ZETA.delta.error.mult.rr.t}
	\bigg|
		\f{\bzeta_e(\red\red\,|\,\bL)}
		{\bzeta_{e,t}(\red\red\,|\,\bL)}-1\bigg|
	&\le\ddot{\delta}_{e,t}\,.
	\end{align}
At the initialization $t=0$ 
it follows from \eqref{e:ZETA.ww.unif.bound} and \eqref{e:ZETA.hat.iota.unif.bound} that
$\delta_{e,t} \le O(1)$
and $\ddot{\delta}_{e,t} \le O(2^{k(1-\EPSINT)})$.
Denote
	\beq\label{e:ZETA.defn.delta.e.t}
	\bde_t
	\equiv\sum_{e\in\delta v}\delta_{e,t}\,,\quad
	\ddbde_t
	\equiv\sum_{e\in\delta v}
		\min\set{\ddot{\delta}_{e,t},1}\,.\quad
	\eeq
Let $t\ge0$ be an integer time, and suppose inductively that we have constructed the weights at time $t$. The marginal law on an edge $e\in\delta v$ is given by
	\[
	\bP_t(\sigma_e=\sigma,\varsigma_e=\varsigma,\bL_e=\bL)
	\cong
	\Ind{\sigma\sim\varsigma}
	\dq_{e,t}(\sigma)
	\psi_{e,t}(\bL)
	\Bigg(\prod_{i=1,2} (\beta_{e,t})^i(\sigma^i\,|\,\bL)\Bigg)
	\chi_{e,t}(\varsigma\,|\,\bL)\,,
	\]
where $\dq_{e,t}$ is the probability measure over $\sigma\in\set{\RYC}^2$ defined by 
	\[
	\dq_{e,t}(\sigma)
	\cong
	\sum_{\usi_{\delta v}:
		\sigma_e=\sigma}
	\psi_{v,t}(\usi)
	\prod_{e'\in\delta v\setminus e}
	\Bigg\{
	\sum_{\bL_{e'}}
	\psi_{e',t}(\bL_{e'})
	\Bigg(\prod_{i=1,2}
	\beta_{e',t}((\sigma_{e'})^i\,|\,\bL_{e'})
	\Bigg)
	\sum_{\varsigma_{e'}:
		\varsigma_{e'} \sim \sigma_{e'}}
	\chi_{e',t}(\varsigma_{e'}\,|\,\bL_{e'})
	\Bigg\}\,.
	\]
The conditional law of $(\sigma_e,\varsigma_e)$ given $\bL_e$ is then given by
	\[
	\bP_t(\sigma_e=\sigma,\varsigma_e=\varsigma\,|\,
		\bL_e=\bL)
	\cong \Ind{\sigma\sim\varsigma}
	\dq_{e,t}(\sigma)
	\Bigg(\prod_{i=1,2}
		(\beta_{e,t})^i(\sigma^i\,|\,\bL)\Bigg)
	\chi_{e,t}(\varsigma\,|\,\bL)\,.
	\]
For each $\bL$, we apply Lemma~\ref{l:single.edge.sigma.varsigma} (below) to find updated weights $(\hat{\beta}_{e,t+3/4})^i(\sigma^i\,|\,\bL)$ and $\chi_{e,t+3/4}(\varsigma\,|\,\bL)$ such that, if we define $(\beta_{e,t+3/4})^i
\equiv (\dot{\beta}_{e,t})^i \cdot (\hat{\beta}_{e,t+3/4})^i$, then 
 the probability measure
	\[
	\tilde{\mu}_{e,t+3/4}
	(\sigma,\varsigma\,|\,\bL)
	\cong
	\Ind{\sigma\sim\varsigma}
	\dq_{e,t}(\sigma)
	\Bigg(\prod_{i=1,2}
	(\beta_{e,t+3/4})^i(\sigma^i\,|\,\bL) 
	\Bigg)
	\chi_{e,t+3/4}(\varsigma\,|\,\bL)
	\]
has $\varsigma$-marginal exactly $\bzeta_e(\cdot\,|\,\bL)$. The marginal law of $(\usi_{\delta v},\uL_{\delta v})$ at time $t+3/4$ is then given by
	\[
	\bP_{t+3/4}(\usi_{\delta v},\uL_{\delta v})
	\cong
	\psi_{v,t}(\usi_{\delta v})
	\prod_{e\in\delta v}\Bigg\{
	\underbrace{
	\psi_{e,t}(\bL_e)
	\sum_{\varsigma_e:\varsigma_e
		\sim \sigma_e}
	\Bigg(\prod_{i=1,2}
		(\beta_{e,t+3/4})^i((\sigma_e)^i\,|\,\bL_e)
			\Bigg)
	\chi_{e,t+3/4}(\varsigma_e\,|\,\bL_e)
	}_{\cong \hq_{e,t+3/4}(\sigma_e,\bL_e)}
	\Bigg\}\,,
	\]
where $\hq_{e,t}$ is a probability measure over pairs $(\sigma,\bL)$. It follows from Lemma~\ref{l:single.edge.sigma.varsigma} that
	\beq\label{e:ZETA.hq.first.error}
	\CRELERR(\hq_{e,t},\hq_{e,t+3/4})
	\le
	\max_{\bL}\left\{
	k^{O(1)}
	\begin{pmatrix}
	\hit_{e,\bL}&0\\
	2^k \hit_{e,\bL}&0\\
	1&1
	\end{pmatrix}
	\begin{pmatrix}
	\delta_{e,t}\\
	\ddot{\delta}_{e,t}
	\end{pmatrix}
	\right\}
	\stackrel{\eqref{e:ZETA.hat.iota.unif.bound}}{\le}
	k^{O(1)}
	\begin{pmatrix}
	2^{-k(1+\EPSINT)}&0\\
	2^{-k\EPSINT}&0\\
	1&1
	\end{pmatrix}
	\begin{pmatrix}
	\delta_{e,t}\\
	\ddot{\delta}_{e,t}
	\end{pmatrix}
	\,.\eeq
Then apply Proposition~\ref{p:ctypes.varupdate} (where the $\hat{p}_e$ and $\hq_e$ of 
Proposition~\ref{p:ctypes.varupdate} are given by
$\hq_{e,t}$ and $\hq_{e,t+3/4}$ respectively) to find new weights $\psi_{v,t+1}(\usi)$, $\psi_{e,t+1}(\bL)$, and $(\dot{\beta}_{e,t+1})^i(\sigma^i\,|\,\bL)$ such that, if we define
$(\beta_{e,t+1})^i \equiv (\dot{\beta}_{e,t+1})^i(\hat{\beta}_{e,t+3/4})^i$,
then the probability measure
	\[
	\bP_{t+1}(\usi_{\delta v},\uvsi_{\delta v},\uL_{\delta v})
	\cong
	\psi_{v,t+1}(\usi_{\delta v})
	\prod_{e\in\delta v}\Bigg\{
	\psi_{e,t+1}(\bL_e)
	\Bigg(
	\prod_{i=1,2}
	(\beta_{e,t+1})^i((\sigma_e)^i\,|\,\bL_e)
	\Bigg)
		\chi_{e,t+1}(\varsigma_e\,|\,\bL_e)
	\Bigg\}\,,
	\]
has a marginal on $(\usi_{\delta v},\bL_{\delta v})$ that is fully judicious
(in the sense of Definition~\ref{d:judicious.augmented.alphabet}). In the notation of Proposition~\ref{p:ctypes.varupdate}, in going between $\hq_{e,t}$ and $\hq_{e,t+3/4}$ we have
	\[
	\err_v
	= \err_{v,t}
	= \sum_{e\in\delta v}
	\f{\delta_{e,t} + \min\set{\ddot{\delta}_{e,t},1}}
		{2^{k(1+\EPSINT)}}
	= \f{\bde_t + \ddbde_t}{2^{k(1+\EPSINT)}}\,,
	\]
using the notation of \eqref{e:ZETA.defn.delta.e.t}.
It then follows from the bound \eqref{e:ctypes.varupdate.MAINBOUND} of 
Proposition~\ref{p:ctypes.varupdate} that
	\begin{align}\nonumber
	\max_{\bL}\Bigg\{
	\bigg|\f{\psi_{e,t+1}(\bL)}{\psi_{e,t+3/4}(\bL)}
	-1\bigg|
	+ \max_{\tau\ne\red}
	\bigg|\f{(\beta_{e,t+1})^i(\tau\,|\,\bL)}
	{(\beta_{e,t+3/4})^i(\tau\,|\,\bL)}-1\bigg|
	\Bigg\}
	&\le \f{k^{O(1)}}{2^{k(1+\EPSINT)}}
		\bigg(\delta_{e,t} + 
			\min\set{\ddot{\delta}_{e,t},1}
			+ \err_{v,t}\bigg)
	\,,\\
	\max_{\bL}\Bigg\{
	\bigg|\f{(\beta_{e,t+1})^i(\red\,|\,\bL)}
	{(\beta_{e,t+3/4})^i(\red\,|\,\bL)}-1\bigg|
	\Bigg\}
	&\le \f{k^{O(1)}}{2^{k\EPSINT}}
		\bigg(\delta_{e,t} + 
			\min\set{\ddot{\delta}_{e,t},1}
			+ \err_{v,t}\bigg)
			\,.
	\label{e:ZETA.beta.last.error}
	\end{align}
It follows from the bound \eqref{e:ctypes.varupdate.OUTMSG} of Proposition~\ref{p:ctypes.varupdate} that
	\begin{align}\nonumber
	\VRELERR(\dq_{e,t},\dq_{e,t+1})
	&\le k^{O(1)}
	\begin{pmatrix}
	1\\1\\1\\ 
	2^{-k\EPSINT} \\
	2^{-k\EPSINT}
	\end{pmatrix}
		\err_{v,t}
	+k^{O(1)}
	\begin{pmatrix}
	0&0&0\\
	0&1&1\\
	0&1&1\\
	1&2^{-k}&2^{-k}\\
	1&1&1\\
	\end{pmatrix}
	\begin{pmatrix}
	2^{-k(1+\EPSINT)}\delta_{e,t}\\
	2^{-k\EPSINT}\delta_{e,t}\\
	2^{-k\EPSINT} \min\set{ \delta_{e,t}
		+ \ddot{\delta}_{e,t},1} 
	\end{pmatrix} \\
	&\le
	k^{O(1)}
	\begin{pmatrix}
	1\\1\\1\\ 
	2^{-k\EPSINT} \\
	2^{-k\EPSINT}
	\end{pmatrix} 
	\f{\bde_t+\ddbde_t
		}{2^{k(1+\EPSINT)}}
	+k^{O(1)}
	\begin{pmatrix}
	0\\1\\1\\ 0\\1
	\end{pmatrix}
	\f{\delta_{e,t}
		+\min\set{\ddot{\delta}_{e,t},1}}
		{2^{k\EPSINT}}\,.
	\label{e:ZETA.qdot.error}
	\end{align}
At time $t+1$ we have
	\[
	\f{\bzeta_e(\varsigma\,|\,\bL)}
		{\bzeta_{e,t+1}(\varsigma\,|\,\bL)}
	=
	\f{\DS
	z_{e,t+1}\sum_{\sigma:\sigma\sim\varsigma}
	\dq_{e,t}(\sigma)
	(\beta_{e,t+3/4})^1(\sigma^1)
	(\beta_{e,t+3/4})^2(\sigma^2)
	 \chi_{e,t+3/4}(\varsigma\,|\,\bL)}
	{\DS
	\tilde{z}_{e,t+3/4}
	\sum_{\sigma:\sigma\sim\varsigma}
	\dq_{e,t+1}(\sigma)
	(\beta_{e,t+1})^1(\sigma^1)
	(\beta_{e,t+1})^2(\sigma^2)
	 \chi_{e,t+3/4}(\varsigma\,|\,\bL)
	}
	\]
for normalizing constants $\tilde{z}_{e,t+3/4}$ and $z_{e,t+1}$. Combining with \eqref{e:ZETA.beta.last.error} and \eqref{e:ZETA.qdot.error} then gives
	\[
	\delta_{e,t+1}
	+\ddot{\delta}_{e,t+1}
	\le
	\f{k^{O(1)}}{2^{k\EPSINT}}\Bigg(
	\f{\bde_t+\ddbde_t}{2^k}
	+ \delta_{e,t} 
	+ \min\set{\ddot{\delta}_{e,t},1}
	\Bigg)\,.
	\]
This concludes our analysis of the update procedure.\smallskip

\noindent\bemph{Part~5. Conclusion.}
Summing the preceding bound over all $e\in\delta v$ gives
	\begin{align*}
	\sum_{t\ge0} (\bde_t+\ddbde_t)
	&\le \bde_0 + \ddbde_0
	+ 
	\f{k^{O(1)}}{2^{k\EPSINT}}
	\sum_{t\ge0} (\bde_t+\ddbde_t)
	\le O(\bde_0 + \ddbde_0)
	\le O(k 2^k)\,,\\
	\sum_{t\ge0} 
	(\delta_{e,t} + \min\set{\ddot{\delta}_{e,t},1})
	&\le 
	\delta_{e,0} + \ddot{\delta}_{e,0}
		+ \f{k^{O(1)}}{2^{k\EPSINT}}
		 \sum_{t\ge0}
		\Bigg( \f{\bde_t+\ddbde_t}{2^k}+
		\delta_{e,t} + \min\set{\ddot{\delta}_{e,t},1}
		\Bigg)
	\le O(1)\,.
	\end{align*}
We can use the above bounds with \eqref{e:ZETA.beta.last.error} to obtain
	\[
	\sum_{t\ge0} \CRELERR(\hq_{e,t+3/4},\hq_{e,t+1})
	\le \begin{pmatrix}
	2^{-k} \\ 1 \\ 1
	\end{pmatrix}
	\f{k^{O(1)}}{2^{k\EPSINT}}
	\sum_{t\ge0}
	\bigg(\delta_{e,t} + 
			\min\set{\ddot{\delta}_{e,t},1}
			+ \err_{v,t}\bigg)
	\le \begin{pmatrix}
	2^{-k} \\ 1 \\ 1
	\end{pmatrix}
	\f{k^{O(1)} }{2^{k\EPSINT}}\,.
	\]	
Combining with \eqref{e:ZETA.hq.first.error} gives
	\[
	\CRELERR(\hq_{e,0},\hq_{e,\infty})
	\le O(1)
	\sum_{t\ge0}\Bigg\{
	\CRELERR(\hq_{e,t},\hq_{e,t+3/4})
	+\CRELERR(\hq_{e,t+3/4},\hq_{e,t+1})
	\Bigg\}
	\le
	\f{k^{O(1)}}{2^{k\EPSINT}}
	\begin{pmatrix}
	2^{-k}\\
	1\\
	2^k 
	\end{pmatrix}\,.
	\]
(We also see that 
condition~\eqref{e:condition.on.hat.msg.with.L}
is satisfied by $\hq_{e,t}$ for all $t\ge0$, so that 
the above applications of Proposition~\ref{p:ctypes.varupdate} are justified.)
We also obtain from \eqref{e:ZETA.qdot.error} that
	\[
	\VRELERR(\dq_{e,0},\dq_{e,\infty})
	\le
	\begin{pmatrix}
	1\\1\\1\\ 
	2^{-k\EPSINT} \\ 1
	\end{pmatrix} 
	\f{k^{O(1)}}{2^{k\EPSINT}}\,.
	\]
The above estimates imply that for all $\sigma\in\set{\RYGB}^2\setminus\set{\red\red}$ we have
	\[\bP(\sigma_e=\sigma\,|\,\bL_e=\bL)
	\cong \dq_{e,\infty}(\sigma\,|\,\bL)
		\hq_{e,\infty}(\sigma\,|\,\bL)
	= \dqstar_e(\sigma) \hqstar_e(\sigma)
		\Bigg( 1 + \f{k^{O(1)}}{2^{k\EPSINT}}\Bigg)
	= \prodom_e(\sigma)
		\Bigg( 1 + \f{k^{O(1)}}{2^{k\EPSINT}}\Bigg)\,.
	\]
This implies the claimed bounds \eqref{e:balanceMarg}.
\end{proof}
\end{lem}

\begin{lem}[used only in proof of Lemma~\ref{l:ZETA.conditions.balanced.vertex}]
\label{l:single.edge.sigma.varsigma} In this lemma we take $\sigma$ in the reduced alphabet $\set{\RYC}^2$. We continue to take $\vsi\in\set{\RYC,\whi}^2$. We write $\sigma\sim\vsi$ to indicate compatibility: if $\sigma^i=\red$ then $\vsi^i=\red$; if $\sigma^i\in\set{\yel,\cya}$ then $\vsi^i\in\set{\sigma^i,\whi}$. On edge $e$ let $\mu_0$ be a probability measure on pairs $(\sigma,\varsigma)$ with $\sigma\sim\varsigma$, of the form
	\[
	\mu_0(\sigma,\varsigma)
	=\f{\Ind{\sigma\sim\varsigma}}{z_0}
	\dq(\sigma) 
	(b_0)^1(\sigma^1)(b_0)^2(\sigma^2)
	x_0(\varsigma)\,,
	\]
such that $\mu(\sigma^i=\tau)=\starpi_e(\tau)$ for both $i=1,2$ and all $\tau\in\set{\RYC}$. Assume that $\starpi_e$ is nice in the sense of Definition~\ref{d:nice}, that $\dq(\sigma)=\Theta(1)$ for all $\sigma\in\set{\RYC}^2$, and that
$b^i(\tau)=\Theta(1)$ for $i=1,2$ and all $\tau\in\set{\RYC}$. Let $\zeta_0$ denote the marginal law of $\varsigma$ under $\mu_0$. Let $\zeta$ be another probability measure over $\varsigma\in\set{\RYC,\whi}^2$ whose single-copy marginals satisfy
$\zeta^1(\red)=\starpi_e(\red)=\zeta^2(\red)$.
Assume that 
	\begin{align}\label{e:ZETA.assume.ww}
	\max\bigg\{ 1-\zeta_0(\whi\whi),
	1-\zeta(\whi\whi)\bigg\}
	&\le \f{k^{O(1)}}{2^k}\,,\\
	\hit \equiv
	\f{k^4}{4^k} + \max\bigg\{
	\zeta_0(\set{\RYC}^2),
	\zeta(\set{\RYC}^2)
	\bigg\}
	&\le \f1{2^{k(1+\EPSINT)}}\,.
	\label{e:ZETA.assume.iota}
	\end{align}
Assume moreover that we have parameters
$\delta \le O(1)$ and $\ddot{\delta}\le 2^{k(1-\EPSINT)}$ such that
	\begin{align}
	\label{e:ZETA.delta.error}
	\max
	\Bigg\{
	\Big|\zeta(\varsigma)-\zeta_0(\varsigma)\Big|
	: \varsigma\in\set{
	\red\whi,\yel\whi,\cya\whi,
	\whi\red,\whi\yel,\whi\cya}
	\Bigg\}
	&\le \f{k^{O(1)}\delta}{2^k}\,,\\
	\label{e:ZETA.delta.error.hat.iota}
	\max
	\Bigg\{
	\Big|\zeta(\varsigma)-\zeta_0(\varsigma)\Big|
	: \varsigma\in\set{\RYC}^2
	\Bigg\}
	&\le \hit \delta\,, \\
	\label{e:ZETA.delta.error.mult.rr}
	\bigg|
		\f{\zeta(\red\red)}
		{\zeta_0(\red\red)}-1\bigg|
	&\le\ddot{\delta}\,.
	\end{align}
Then there exist weights $(b_{3/4})^1$, $(b_{3/4})^2$, $x_{3/4}$ such that under the corresponding measure $\mu_{3/4}$ (see \eqref{e:ZETA.mu.t}) the spin $\varsigma$ has marginal $\zeta_{3/4}=\zeta$. Moreover, if we let $\hat{z}_t$ be the normalizing constant such that
	\[
	\hq_t(\sigma)
	= \f1{\hat{z}_t}
	(b_t)^1(\sigma^1)(b_t)^2(\sigma^2)
	\sum_{\varsigma:\varsigma\sim\sigma}
	x_t(\varsigma)
	\]
is a probability measure,
then the new weights can be chosen such that the error between $\hq_0$ and $\hq_{3/4}$ is very small:
	\[
	\crelerr(\hq_0,\hq_{3/4})
	\le k^{O(1)} \begin{pmatrix}
	\hit & 0 \\ 2^k\hit &0 \\ 1& 1
	\end{pmatrix}
	\begin{pmatrix}
	\delta \\ \ddot{\delta}
	\end{pmatrix}\,,\]
using the notation of 
Definition~\ref{d:clause.rel.error}.

\begin{proof} We begin with some easy observations.
By the assumption that $\dq=\Theta(1)$ 
and $b^i=\Theta(1)$, we have
	\[
	\mu_0(\sigma,\varsigma)
	=
	\f{\Ind{\sigma\sim\varsigma} }{z_0}
	\Theta(1)
	x(\varsigma)\,.
	\]
It then follows from \eqref{e:ZETA.assume.ww}
and \eqref{e:ZETA.assume.iota} that
$x(\whi\whi)/z_0\ge 1-k^{O(1)}/2^k$, while
	\begin{align*}
	\max\Bigg\{
	\f{x(\varsigma)}{z_0}
	: \varsigma\in\set{\RYC,\whi}^2
		\setminus\set{\whi\whi}
	\Bigg\}
	&\le
	\f{k^{O(1)}}{2^k}\,,\\
	\max\Bigg\{
	\f{x(\varsigma)}{z_0}
	: \varsigma\in\set{\RYC}^2
	\Bigg\} 
	\le O(\hit)
	&\le \f{O(1)}{2^{k(1+\EPSINT)}}\,.
	\end{align*}
This implies that $\mu_0(\sigma)=\Theta(1)$ for all $\sigma\in\set{\yel,\cya}^2$, since in this case
	\[
	\mu_0(\sigma)
	=\f1{z_0}\bigg\{ \Theta(1) 
	x(\whi\whi) + \f{k^{O(1)}}{2^k}\bigg\}
	=\Theta(1)\,.
	\]
Next, by the assumption that $\starpi_e$ is nice and $\mu_0$ is judicious, we have
	\[
	\f{\Theta(1)}{2^k}
	= \starpi_e(\red)
	= \mu_0(\sigma^1=\red)
	= \f{\Theta(1)}{z_0}
	\sum_{\vartheta\in \set{\red}\times\set{\RYC,\whi}}
	x(\vartheta)
	= \f{\Theta(1)x(\red\whi)}{z_0}
	+ \f{O(1)}{2^{k(1+\EPSINT)}}\,,
	\]
which shows that $x(\red\whi)=\Theta(2^{-k})$. It follows from this that 
	\[
	\mu_0(\sigma=\vartheta)
	= \f{\Theta(1)}{2^k}
	\quad\textup{for all }
	\vartheta\in\set{\red\yel,\red\cya,
		\yel\red,\cya\red}\,.
	\]
Lastly, we note that \eqref{e:ZETA.assume.iota} directly implies
$\mu_0(\sigma=\red\red)
=\zeta_0(\red\red) \le 2^{-k(1+\EPSINT)}$. In summary we have seen that the assumptions of the lemma imply
	\beq\label{e:ZETA.mu.zero.estimates}
	\mu_0(\sigma=\vartheta)
	= \begin{cases}
	\Theta(1) & \textup{if $\red[\vartheta]=0$,}\\
	\Theta(2^{-k}) & \textup{if $\red[\vartheta]=1$,}\\
	O(2^{-k(1+\EPSINT)})
	& \textup{if $\red[\vartheta]=2$,}
	\end{cases}\eeq
for $\vartheta\in\set{\RYC}^2$.\smallskip

\noindent\bemph{Part 1. Description of update procedure.} We will define a sequence of weights $(b_t)^i$, $x_t$ ending with the desired weights at $t= 3/4$. For each $t$ denote the corresponding measure
	\beq\label{e:ZETA.mu.t}
	\mu_t(\sigma,\varsigma)
	=\f{\Ind{\sigma\sim\varsigma}}{z_t}
	\dq(\sigma) 
	(b_t)^1(\sigma^1)(b_t)^2(\sigma^2)
	x_t(\varsigma)\,,\eeq
with $z_t$ the normalizing constant. 
Let $\zeta_t$ denote the marginal law of $\varsigma$ under $\mu_t$.
\begin{enumerate}[(a)]
\item \label{i:ZETA.update.yw}
\textit{Time $t=0$ to $t=1/4$.} First adjust the weight on $\varsigma=\yel\whi$: set
	\begin{align}
	\label{e:ZETA.update.yw.b}
	\f{(b_{1/4})^1(\yel)}{(b_0)^1(\yel)}
	&= \f{\mu_0(\sigma^1=\yel)-\zeta(\yel\whi)}
	{\mu_0(\sigma^1=\yel)-\mu_0(\varsigma=\yel\whi)}
	\,,\\
	\f{x_{1/4}(\yel\whi)}{x_0(\yel\whi)}
	&= \f{\mu_0(\sigma^1=\yel)
	-\mu_0(\varsigma=\yel\whi)}
		{\mu_0(\sigma^1=\yel)-\zeta(\yel\whi)}
	\f{\zeta(\yel\whi)}{\mu_0(\varsigma=\yel\whi)}\,.
	\label{e:ZETA.update.yw.x}
	\end{align}
We call \eqref{e:ZETA.update.yw.b} and \eqref{e:ZETA.update.yw.x} the \bemph{$\yel\whi$-update}. At the same time also make the updates
\eqref{e:ZETA.update.yw.b}
and \eqref{e:ZETA.update.yw.x}
with $\cya$ in place of $\yel$,
to define new weights
$(b_{1/4})^1(\cya)$ and $x_{1/4}(\cya\whi)$
--- we call this the \bemph{$\cya\whi$-update}.
Keep all the other weights unchanged, that is,
$(b_{1/4})^1(\red)=(b_0)^1(\red)$,
$(b_{1/4})^2(\tau)=(b_0)^1(\tau)$ for all $\tau\in\set{\RYC}$, and $x_{1/4}(\varsigma)=x_0(\varsigma)$ for all $\varsigma \notin\set{\yel\whi,\cya\whi}$.

\item \label{i:ZETA.update.wy}
Time $t=1/4$ to $t=1/2$. Perform the analogous update in the second copy: for the \bemph{$\whi\yel$-update}, set
	\begin{align*}
	\f{(b_{1/2})^2(\yel)}{(b_{1/4})^2(\yel)}
	&= \f{\mu_{1/4}(\sigma^2=\yel)-\zeta(\whi\yel)}
	{\mu_{1/4}(\sigma^2=\yel)
	-\mu_{1/4}(\varsigma=\whi\yel)}
	\,,\\
	\f{x_{1/2}(\whi\yel)}{x_{1/4}(\whi\yel)}
	&= \f{\mu_{1/4}(\sigma^2=\yel)
	-\mu_{1/4}(\varsigma=\whi\yel)}
		{\mu_{1/4}(\sigma^2=\yel)-\zeta(\whi\yel)}
	\f{\zeta(\whi\yel)}
	{\mu_{1/4}(\varsigma=\whi\yel)}\,.
	\end{align*}
Likewise make the \bemph{$\whi\cya$-update}, to define new weights $(b_{1/2})^2(\cya)$ and $x_{1/2}(\whi\cya)$. Again, keep all the other weights unchanged.

\item \label{i:ZETA.update.x.only} Time $t=1/2$ to $t=3/4$. Finally, for all $\varsigma\in\set{\RYC,\whi}^2$, update
	\[
	\f{x_{3/4}(\varsigma)}{x_{1/2}(\varsigma)}
	= \f{\zeta(\varsigma)}{\zeta_{1/2}(\varsigma)}\,.
	\]
Leave the $b$-weights unchanged, that is,
$(b_{3/4})^i(\tau)=(b_{1/2})^i(\tau)$
for $i=1,2$ and all $\tau\in\set{\RYC}$.
\end{enumerate}
We begin with a simple observation about the first update, from $t=0$ to $t=1/4$. At time $t=0$, the non-normalized weight on the event $\set{\sigma^1=\yel}$ is given by $z_0\mu_0(\sigma^1=\yel)$. At time $t=1/4$, the non-normalized weight on the same event is
	\begin{align}\nonumber
	z_{1/4}\mu_{1/4}(\sigma^1=\yel)
	&= z_0\f{(b_{1/4})^1(\yel)}{(b_0)^1(\yel)}
	\Bigg(
	\mu_0(\sigma^1=\yel,\varsigma\ne\yel\whi)
	+\mu_0(\varsigma=\yel\whi)
	\f{x_{1/4}(\yel\whi)}{x_0(\yel\whi)}
	\Bigg)
	\\
	&= 
	z_0 \Bigg\{
	\Big(\mu_0(\sigma^1=\yel)-\zeta(\yel\whi)\Big)
	+ \zeta(\yel\whi)
	 \Bigg\}
	= z_0 \mu_0(\sigma^1=\yel)\,,
	\label{e:ZETA.same.normalization}
	\end{align}
and likewise
$z_{1/4}\mu_{1/4}(\sigma^1=\cya)=
z_0 \mu_0(\sigma^1=\cya)$. The weight of the event $\set{\sigma^1=\red}$ remains unchanged, so altogether we have $z_{1/4}=z_0$. For the same reason $z_{1/2}=z_{1/4}$. We now turn to analyzing the effect of these updates on the $\sigma$-marginal.\smallskip

\noindent\bemph{Part~2. Effect of update \eqref{i:ZETA.update.yw} on $\sigma$-marginal.}
The first change in the $b$-weights can be bounded as
	\begin{align}\nonumber
	\bigg|\f{(b_{1/4})^1(\yel)}{(b_0)^1(\yel)}-1\bigg|
	&\stackrel{\eqref{e:ZETA.update.yw.b}}{=}
	\bigg|\f{\mu_0(\sigma^1=\yel)-\zeta(\yel\whi)}
	{\mu_0(\sigma^1=\yel)-\mu_0(\varsigma=\yel\whi)}-1\bigg|
	\stackrel{\eqref{e:ZETA.delta.error}}{=}
	\bigg|\f{\mu_0(\sigma^1=\yel)-\zeta(\yel\whi)}
	{\mu_0(\sigma^1=\yel)-\zeta(\yel\whi)
		- k^{O(1)} \delta/2^k}-1\bigg|\\
	&\stackrel{\eqref{e:ZETA.assume.ww}}{=}
	\bigg|\f{\mu_0(\sigma^1=\yel)
		- k^{O(1)}\delta/2^k}
	{\mu_0(\sigma^1=\yel)
		- k^{O(1)} \delta/2^k}-1\bigg|
	\le \f{k^{O(1)} \delta}{2^k}
	\label{e:ZETA.b.x.error}
	\end{align}
--- the last estimate above uses the assumption that $\starpi_e$ is nice, which implies that $\starpi_e(\yel)=\Theta(1)$.
Now, note it follows from \eqref{e:ZETA.same.normalization} that it does not make a difference if the $\yel\whi$-update and $\cya\whi$-update are done simultaneously or in sequence. Suppose for this part of the proof that we make only the $\yel\whi$-update (i.e., only \eqref{e:ZETA.update.yw.b} and \eqref{e:ZETA.update.yw.x}), without making the $\cya\whi$-update. Let $\mu_{1/8}$ denote the measure that results, and define the signed measure $\Gamma\equiv\Gamma_{1/8}\equiv\mu_{1/8}-\mu_0$; we now proceed to bound $\Gamma$. The event $\set{\varsigma=\yel\whi}$ is affected by both the $b$- and $x$-updates: 
	\begin{align}\nonumber
	\Big|\Gamma(\varsigma=\yel\whi)\Big|
	&\equiv\Big|\zeta_{1/8}(\yel\whi)
		-\zeta_0(\yel\whi)\Big|
	=\zeta_0(\yel\whi)
	\Bigg| 
	\f{(b_{1/4})^1(\yel)}{(b_0)^1(\yel)}
	\f{x_{1/4}(\yel\whi)}{x_0(\yel\whi)}-1\Bigg|
	\stackrel{\eqref{e:ZETA.update.yw.x}}{=}
	\Bigg| \zeta(\yel\whi)
	\f{(b_{1/4})^1(\yel)}{(b_0)^1(\yel)}
	-\zeta_0(\yel\whi)\Bigg|
	\\ \nonumber
	&\stackrel{\eqref{e:ZETA.update.yw.x}}{=}
	\bigg|
	\f{\mu_0(\sigma^1=\yel)
	-\mu_0(\varsigma=\yel\whi)}
		{\mu_0(\sigma^1=\yel)-\zeta(\yel\whi)}
	\zeta(\yel\whi)
	-\zeta_0(\yel\whi)
	\bigg|\stackrel{\eqref{e:ZETA.b.x.error}}{=}
	\bigg|
	\bigg(1 + \f{k^{O(1)}\delta}{2^k}
	 \bigg) \zeta(\yel\whi)-\zeta_0(\yel\whi)
	\bigg|\\
	&\stackrel{\eqref{e:ZETA.assume.ww}}{=} \bigg|
	\zeta(\yel\whi)-\zeta_0(\yel\whi)
	+ \f{k^{O(1)}\delta}{4^k}
	\bigg|
	\stackrel{\eqref{e:ZETA.delta.error}}{\le }
	\f{k^{O(1)}\delta}{2^k}\,.
	\label{e:ZETA.x.error.add}
	\end{align}
Next let us consider events that are only affected by the $b$-update: using \eqref{e:ZETA.b.x.error} gives
	\begin{align*}
	\Bigg|\Gamma\Bigg(\BARC
	\sigma=\yel\yel,\\
	\varsigma=\yel\yel
	\EARC\Bigg)\Bigg|
	&=\mu_0\Bigg(\BARC
	\sigma=\yel\yel,\\
	\varsigma=\yel\yel
	\EARC\Bigg)\cdot
	\Bigg|
	\f{(b_{1/4})^1(\yel)}{(b_0)^1(\yel)}-1 
	\Bigg|
	\stackrel{\eqref{e:ZETA.assume.iota}}{\le} 
	\f{k^{O(1)} \hit \delta}{2^k} \,,\\
	\Bigg|\Gamma
	\Bigg(\BARC
	\sigma=\yel\yel,\\
	\varsigma=\whi\yel
	\EARC\Bigg)\Bigg|
	&=\mu_0\Bigg(\BARC
	\sigma=\yel\yel,\\
	\varsigma=\whi\yel
	\EARC\Bigg)\cdot
	\Bigg|
	\f{(b_{1/4})^1(\yel)}{(b_0)^1(\yel)}-1\Bigg|
	\stackrel{\eqref{e:ZETA.assume.ww}}{\le} 
	\f{k^{O(1)} \delta}{4^k}\,.
	\end{align*}
Similarly, for any $\vartheta\in\set{\yel,\whi}\times\set{\red,\yel,\cya}$ we have,
again using \eqref{e:ZETA.b.x.error}, that
	\beq\label{e:ZETA.sigma.one.yellow.varsigma}
	\Bigg|\Gamma\Bigg(\BARC
		\sigma^1=\yel,\\
		\varsigma=\vartheta
		\EARC\Bigg)\Bigg|
	=\mu_0\Bigg(\BARC
		\sigma^1=\yel,\\
		\varsigma=\vartheta
		\EARC\Bigg)
	\cdot\Bigg|
		\f{(b_{1/4})^1(\yel)}{(b_0)^1(\yel)}-1
		\Bigg|
	\stackrel{\eqref{e:ZETA.assume.ww}}{\le} 
	\f{k^{O(1)} \delta}{4^k}\,.
	\eeq
As a consequence, for any $\vartheta\in\set{\red,\yel,\cya,\whi}^2\setminus\set{\yel\whi,\whi\whi}$ we have
	\beq\label{e:ZETA.at.most.b.update}
	\Big|\Gamma(\varsigma=\vartheta)
\Big|
	=\Bigg| \Gamma
		\Bigg(\BARC
		\sigma^1=\yel,\\
		\varsigma=\vartheta
		\EARC\Bigg)
	+\overbrace{\Gamma
		\Bigg(\BARC
		\sigma^1\ne\yel,\\
		\varsigma=\vartheta
		\EARC\Bigg)}^\textup{zero}
		\Bigg|
	\stackrel{\eqref{e:ZETA.sigma.one.yellow.varsigma}}{\le}
	\f{k^{O(1)} \delta}{4^k}\,.
	\eeq
Similarly we also have the bound
	\beq\label{e:ZETA.combined.ww.yw}
	\Bigg|
	\Gamma\Bigg(
		\BARC\sigma^1=\yel,\\
		\varsigma\in\set{\whi\whi,\yel\whi}
		\EARC\Bigg)\Bigg|
	=\Bigg| \overbrace{
	\Gamma(\sigma^1=\yel)
	}^\textup{zero by
		\eqref{e:ZETA.same.normalization}}
	{} - \Gamma\Bigg(
		\BARC\sigma^1=\yel,\\
		\varsigma
		\in\set{\yel,\whi}
		\times\set{\red,\yel,\cya}
		\EARC\Bigg)
	\Bigg|
	\stackrel{\eqref{e:ZETA.sigma.one.yellow.varsigma}}{\le}
	\f{k^{O(1)}\delta}{4^k}\,.
	\eeq
Next we note that under both $\mu_0$ and $\mu_{1/4}$, the conditional probability of $\sigma^2$ given $\sigma^1=\yel$ and $\varsigma$ is given by
	\[
	\mu_t\Bigg(\sigma^2=\yel\,\Bigg|\,
		\BARC\sigma^1=\yel,\\
		\varsigma=\vartheta\EARC\Bigg)
	=\begin{cases}
	1 & \textup{if $\vartheta\in
		\set{\yel\yel,\whi\yel}$,}\\
	\DS\f{\dq(\yel\yel)(b_0)^2(\yel)}
		{\dq(\yel\yel)(b_0)^2(\yel)
			+\dq(\yel\cya)(b_0)^2(\cya)}
	\le O(1)
	&\textup{if $\vartheta\in
		\set{\yel\whi,\whi\whi}$,}
	\end{cases}
	\]
for both $t=0$ and $t=1/4$. It follows that
	\[
	\Big|\Gamma(\sigma=\yel\yel)\Big|
	=\Bigg|
	\underbrace{\Gamma\Bigg(\BARC\sigma^1=\yel,\\
		\varsigma\in\set{\yel\yel,\whi\yel}\EARC\Bigg)
		}_{\textup{bounded by \eqref{e:ZETA.sigma.one.yellow.varsigma}}}
		\cdot1
		+
	\underbrace{
	\Gamma\Bigg(\BARC
	\sigma^1=\yel,\\
	\varsigma\in\set{\whi\whi,\yel\whi}
	\EARC\Bigg)
	}_{\textup{bounded by \eqref{e:ZETA.combined.ww.yw}}}
	\cdot O(1)
		\Bigg|
	\le \f{k^{O(1)}\delta}{4^k}\,.
	\]
Similar arguments (details omitted) can be used to bound $\Gamma(\sigma=\vartheta)$ for all $\vartheta\in\set{\yel,\cya}^2$, so altogether we have
	\beq\label{e:ZETA.change.in.sigma.cysquared}
	\max\Bigg\{
	\Big| \Gamma(\sigma=\vartheta)\Big|
	:\vartheta\in\set{\yel,\cya}^2
	\Bigg\}
	\le \f{k^{O(1)}\delta}{4^k}\,.
	\eeq
Lastly we note that $\Gamma(\sigma^1=\red)=0$, while
	\beq\label{e:ZETA.yw.update.doesnt.change.red.mgl}
	\Big|\Gamma(\sigma^2=\red)\Big|
	=\Bigg|\Gamma\Bigg(\BARC
		\sigma^1=\yel,\\
		\sigma^2=\red\EARC
		\Bigg)
	+\underbrace{
	\Gamma\Bigg(\BARC
		\sigma^1\ne\yel,\\
		\sigma^2=\red\EARC
		\Bigg)
	}_\textup{zero}
	\Bigg|
	\stackrel{\eqref{e:ZETA.at.most.b.update}}{\le}
	\f{k^{O(1)} \delta}{4^k}\,.\eeq
This concludes our analysis of the effect of 
\eqref{e:ZETA.update.yw.b} and \eqref{e:ZETA.update.yw.x} alone on the $\sigma$-marginal.\smallskip

\noindent\bemph{Part 3. Effect of updates \eqref{i:ZETA.update.yw} and \eqref{i:ZETA.update.wy} on $\sigma$-marginal.} Recall that $\mu_{1/2}$ is the measure that results after completing updates \eqref{i:ZETA.update.yw} and \eqref{i:ZETA.update.wy}, and denote the signed measure $\Delta\equiv\mu_{1/2}-\mu_0$. Recall that \eqref{e:ZETA.change.in.sigma.cysquared} bounds $|(\mu_{1/8}-\mu_0)(\sigma=\vartheta)|$ for all $\vartheta\in\set{\yel,\cya}^2$, where from time $t=0$ to time $t=1/8$ we perform only the $\yel\whi$-update.
From time $t=1/8$ to time $t=1/2$ we perform analogously the $\cya\whi$-, $\whi\yel$-, and $\whi\cya$-updates, for which the analogous estimate holds. Therefore
	\[\max\Bigg\{\Big|\Delta(\sigma=\vartheta)\Big|
	:\vartheta\in\set{\yel,\cya}^2\Bigg\}
	\le \f{k^{O(1)}\delta}{4^k}\,.\]
On the other hand, the $\set{\sigma=\red\red}$ event is completely unaffected by updates
\eqref{i:ZETA.update.yw} and \eqref{i:ZETA.update.wy},
so $\Delta(\sigma=\red\red)=0$. It remains to 
estimate $\Delta(\sigma=\vartheta)$ for $\vartheta\in\set{\RYC}^2$ with $\red[\vartheta]=1$. To this end we note that for any $\vartheta\in\set{\RYC}^2$, the event $\set{\varsigma=\vartheta}$ is affected only by the $b$-updates, so
	\beq\label{e:ZETA.bound.add.error.RYCsq}
	\Delta(\varsigma=\vartheta)
	\stackrel{\eqref{e:ZETA.b.x.error}}{=}
	\f{k^{O(1)}\delta}{2^k}
	\zeta_0(\vartheta)
	\stackrel{\eqref{e:ZETA.assume.iota}}{\le}
	\f{k^{O(1)} \hit \delta}{2^k}
	\stackrel{\eqref{e:ZETA.assume.iota}}{\le}
	\f{k^{O(1)} \delta}{2^{k(2+\EPSINT)}}
	\,.\eeq
By combining \eqref{e:ZETA.bound.add.error.RYCsq} with \eqref{e:ZETA.yw.update.doesnt.change.red.mgl} we obtain 
	\beq\label{e:ZETA.varsigma.rw.better.estimate}
	\Big| \Delta(\varsigma=\red\whi)\Big|
	=\bigg|
	\Delta(\sigma^1=\red)
	-\Delta(\varsigma\in
		\set{\red}\times\set{\red,\yel,\cya})
	\bigg|
	\le \bigg(\f1{2^k} + \hit \bigg)
	\f{k^{O(1)}\delta}{2^k} 
	\stackrel{\eqref{e:ZETA.assume.iota}}{\le} 
	\f{k^{O(1)}\delta}{4^k} 
	\,.\eeq
The probability of $\sigma=\red\yel$
conditional on $\varsigma=\red\whi$ is given by
	\beq\label{e:ZETA.ry.given.rw}
	\mu_{1/2}\Big(\sigma=\red\yel
		\,\Big|\,\varsigma=\red\whi\Big)
	=\f{\dq(\red\yel) (b_{1/2})^2(\yel)}
		{\dq(\red\yel) (b_{1/2})^2(\yel)
		+\dq(\red\cya) (b_{1/2})^2(\cya)}
	\stackrel{\eqref{e:ZETA.b.x.error}}{=}
	\mu_0\Big(\sigma=\red\yel\,
		\Big|\,\varsigma=\red\whi
		\Big)
	\Bigg\{1 + \f{k^{O(1)} \delta}{2^k}\Bigg\}\,.
	\eeq
Meanwhile $\mu_{1/2}(\sigma=\red\yel\,|\,\varsigma=\red\yel)=1$. Combining the last few estimates gives
	\begin{align*}
	\mu_{1/2}(\sigma=\red\yel)
	&=\overbrace{
		\Bigg\{ \mu_{1/2}(\varsigma=\red\whi)
		\cdot\mu_{1/2}\Big(\sigma=\red\yel
			\,\Big|\,\varsigma=\red\whi\Big)\Bigg\}
		}^{\textup{estimate by
			\eqref{e:ZETA.varsigma.rw.better.estimate}
			and \eqref{e:ZETA.ry.given.rw}}}
		+
		\overbrace{\Bigg\{
		\mu_{1/2}(\varsigma=\red\yel)\cdot1\Bigg\}
		}^{\textup{estimate by \eqref{e:ZETA.bound.add.error.RYCsq}}}\\
	&=\Bigg\{ \mu_0(\varsigma=\red\whi)
	+ \f{k^{O(1)}\delta}{4^k} 
	\Bigg\}
	\mu_0(\sigma=\red\yel\,|\,\varsigma=\red\whi)
	\Bigg\{1 + \f{k^{O(1)} \delta}{2^k}\Bigg\}
	+\Bigg\{
	\mu_0(\varsigma=\red\yel)+
		\f{k^{O(1)}\hit\delta}{2^k}\Bigg\}\\
	&\stackrel{\eqref{e:ZETA.assume.ww}}{=}
	\mu_0(\sigma=\red\yel)
	+ \bigg(\f1{2^k} + \hit \bigg)
	\f{k^{O(1)}\delta}{2^k} 
	\stackrel{\eqref{e:ZETA.assume.iota}}{=} 
	\mu_0(\sigma=\red\yel)+
	\f{k^{O(1)}\delta}{4^k}\,.
	\end{align*}
The analogous estimate holds for the event $\set{\sigma=\vartheta}$ for all $\vartheta\in\set{\RYC}^2$ with $\red[\vartheta]=1$. Altogether, if we write $\omega_t$ for the marginal law of $\sigma$ under $\mu_t$, then we have
	\beq\label{e:ZETA.crelerr.a.b}
	\crelerr(\omega_0,\omega_{1/2})
	\le k^{O(1)} \begin{pmatrix}
	4^{-k} \\ 2^{-k} \\ 0
	\end{pmatrix} \delta\,,\eeq
having made use of \eqref{e:ZETA.mu.zero.estimates}.
\smallskip

\noindent\bemph{Part 4. Effect of updates \eqref{i:ZETA.update.yw} and \eqref{i:ZETA.update.wy} on $\varsigma$-marginal.} Recall that $\zeta_t$ denotes the marginal law of $\varsigma$ under $\mu_t$. 
Update \eqref{i:ZETA.update.yw} results in 
$\zeta_{1/4}(\varsigma)=\zeta(\varsigma)$
for $\varsigma\in\set{\yel\whi,\cya\whi}$. Update \eqref{i:ZETA.update.wy} 
results in $\zeta_{1/2}(\varsigma)=\zeta(\varsigma)$
for $\varsigma\in\set{\whi\yel,\whi\cya}$, but it need not hold that
$\zeta_{1/2}(\varsigma)=\zeta(\varsigma)$
for $\varsigma\in\set{\yel\whi,\cya\whi}$.
However we claim that the discrepancy is very small. 
Recall that as a consequence of \eqref{e:ZETA.same.normalization} we have $z_0=z_{1/4}=z_{1/2}$. Therefore
	\begin{align*}
	\mu_{1/2}(\varsigma=\yel\whi)
	&\stackrel{\eqref{e:ZETA.mu.t}}{=}
	\f1{z_{1/2}}
	x_{1/2}(\yel\whi)
	(b_{1/2})^1(\yel)
	\Bigg\{ \dq(\yel\yel)(b_{1/2})^2(\yel)
	+\dq(\yel\cya)(b_{1/2})^2(\cya)
	\Bigg\} \\
	&\stackrel{\eqref{e:ZETA.b.x.error}}{=}
	\f1{z_{1/4}}
	x_{1/4}(\yel\whi)
	(b_{1/4})^1(\yel)
	\Bigg\{\dq(\yel\yel)(b_{1/4})^2(\yel)
	+\dq(\yel\cya)(b_{1/4})^2(\cya)
	\Bigg\}
	\Bigg\{1 + \f{k^{O(1)}\delta}{2^k}\Bigg\}\\
	&\stackrel{\eqref{e:ZETA.mu.t}}{=}
	\mu_{1/4}(\varsigma=\yel\whi)
	\Bigg\{1 + \f{k^{O(1)}\delta}{2^k}\Bigg\}
	\stackrel{\eqref{e:ZETA.assume.ww}}{=} 
	\zeta(\yel\whi)
	+ \f{k^{O(1)}\delta}{4^k}\,.
	\end{align*}
The analogous estimate holds for $\varsigma=\cya\whi$.
Denote the signed measure $\Upsilon\equiv\zeta_{1/2}-\zeta$; the above can be rewritten as
	\beq\label{e:ZETA.yw.error.after.four.updates}
	\max\Bigg\{
	\Big|\Upsilon(\yel\whi) \Big|,
	\Big|\Upsilon(\cya\whi) \Big|\Bigg\}
	=
	\Big|(\zeta_{1/2}-\zeta)(\yel\whi) \Big|
	\le \f{k^{O(1)}\delta}{4^k}\,.
	\eeq
Next, for all $\vartheta\in\set{\RYC}^2$ we have
	\beq\label{e:ZETA.rycsq.error.after.four.updates}
	\Big|\Upsilon(\vartheta)\Big|
	=\bigg| (\zeta_{1/2}-\zeta_0)(\vartheta)
	+(\zeta_0-\zeta)(\vartheta)\bigg|
	\stackrel{\eqref{e:ZETA.bound.add.error.RYCsq}}
		{\le}
	\f{k^{O(1)} \hit \delta}{2^k}
	+\Big| (\zeta_0-\zeta)(\vartheta)\Big|
	\stackrel{\eqref{e:ZETA.delta.error.hat.iota}}
		{\le}
		O(\hit\delta)\,.\eeq
Recall that $\mu_0(\sigma^i=\red)=\starpi_e(\red)=\zeta^i(\red)$
for $i=1,2$. Therefore, using
\eqref{e:ZETA.varsigma.rw.better.estimate} 
and \eqref{e:ZETA.rycsq.error.after.four.updates}, we have
	\beq\label{e:ZETA.varsigma.rw.zeta.error}
	\Upsilon(\red\whi)
	=\Bigg|(\mu_{1/2}-\mu_0)(\sigma^1=\red)
	-\sum_{\vartheta\in\set{\red}\times\set{\RYC}}
	\Upsilon(\vartheta)\Bigg|
	\le\bigg( \f{1}{4^k}
		+ \hit \bigg) k^{O(1)} \delta
	\stackrel{\eqref{e:ZETA.assume.iota}}{\le} 
	k^{O(1)}\hit\delta \,.\eeq
Combining
\eqref{e:ZETA.yw.error.after.four.updates},
\eqref{e:ZETA.rycsq.error.after.four.updates}, and \eqref{e:ZETA.varsigma.rw.zeta.error} gives
	\beq\label{e:ZETA.varsigma.ww.zeta.error}
	\Big|\Upsilon(\whi\whi)\Big|
	= \Bigg|-\Upsilon
		\Bigg(\BARC(\set{\whi}\times\set{\RYC})\\
		\cup(	\set{\RYC}\times\set{\whi})
		\EARC
		\Bigg)
	-\Upsilon\Big(\set{\red,\yel,\cya}^2\Big)
	\Bigg|
	\le
	\bigg(\f1{4^k} + \hit \bigg)
		k^{O(1)}\delta
	\stackrel{\eqref{e:ZETA.assume.iota}}{\le} 
	k^{O(1)}\hit\delta\,.
	\eeq
This concludes our analysis of updates \eqref{i:ZETA.update.yw} and \eqref{i:ZETA.update.wy}.\smallskip

\noindent\bemph{Part 5. Effect of update \eqref{i:ZETA.update.x.only}.} It is clear that we will have $\zeta_{3/4}=\zeta$, so it remains to understand the effect of update \eqref{i:ZETA.update.x.only} on the $\sigma$-marginal. 
Note that since the $b$-weights remain the same during update \eqref{i:ZETA.update.x.only}, the conditional probabilities of $\sigma$ given $\varsigma$ remain unchanged between times $t=1/2$ and $t=3/4$. Thus we have
	\[
	\mu_{3/4}(\sigma=\yel\yel)
	= \sum_{\vartheta\in\set{\yel,\whi}^2}
	\mu_{3/4}(\varsigma=\vartheta)
	\mu_{1/2}(\sigma=\yel\yel\,|\,\varsigma=\vartheta)
	= \mu_{1/2}(\sigma=\yel\yel)
	+ k^{O(1)} \hit\delta\,,
	\]
where the last estimate uses
\eqref{e:ZETA.yw.error.after.four.updates}, 
\eqref{e:ZETA.rycsq.error.after.four.updates},
and \eqref{e:ZETA.varsigma.ww.zeta.error}. The analogous bound holds for $\set{\sigma=\vartheta}$ for each $\vartheta\in\set{\yel,\cya}^2$. Similarly we also have
	\[\mu_{3/4}(\sigma=\red\yel)
	=\Bigg\{ \mu_{3/4}(\varsigma=\red\whi)
	\mu_{1/2}\Big(\sigma=\red\yel\,\Big|\,\varsigma=\red\whi\Big)\Bigg\}
	+ \mu_{3/4}(\varsigma=\red\yel)
	\stackrel{\eqref{e:ZETA.varsigma.rw.zeta.error}}{=}
	\mu_{3/4}(\sigma=\red\yel)
	+ k^{O(1)}\hit\delta\,,\]
and the analogous bound holds for all $\sigma\in\set{\RYC}^2$ with $\red[\sigma]=1$.
Lastly, for the case $\sigma=\red\red$, we note that
	\[
	\f{\mu_{3/4}(\sigma=\red\red)}
		{\mu_{1/2}(\sigma=\red\red)}
	= \f{\mu_{3/4}(\varsigma=\red\red)}
		{\mu_0(\varsigma=\red\red)}
	=\f{\zeta(\red\red)}
		{\zeta_0(\red\red)}
	\stackrel{\eqref{e:ZETA.delta.error.mult.rr}}{=}
	 1 + O(\ddot{\delta})\,.
	\]
Combining the above estimates with 
\eqref{e:ZETA.mu.zero.estimates} and 
\eqref{e:ZETA.crelerr.a.b} gives
	\[
	\crelerr(\omega_0,\omega_{3/4})
	\le
	k^{O(1)} \begin{pmatrix}
	\hit & 0 \\ 2^k\hit&0 \\ 1&1
	\end{pmatrix}
	\begin{pmatrix} \delta \\ \ddot{\delta}
		\end{pmatrix} \,.
	\]
This can be translated to a similar error bound for the clause-to-variable messages via the relation
	\[
	\hq_t(\sigma)
	=\f{ \omega_t(\sigma)/\dq(\sigma)}
		{\sum_{\sigma'}
		\omega_t(\sigma')/\dq(\sigma')}\,.
	\]
The claimed result follows.
\end{proof}
\end{lem}

\begin{proof}[\hypertarget{pf:p.balancedeVertex}{Proof of Proposition~\ref{p:balancedeVertex}}] We shall apply the result of Lemma~\ref{l:ZETA.conditions.balanced.vertex}. Condition~\eqref{e:ZETA.ww.unif.bound} of the lemma is satisfied due to Lemma~\ref{l:lotsOfAs}. Condition~\eqref{e:ZETA.hat.iota.unif.bound} of the lemma is satisfied due to Lemma~\ref{l:lotsOfAsDiverse}, using the assumption that $\notDiverse(v)=\notLight(v)=0$. The claimed result immediately follows from the bounds \eqref{e:balanceMarg} of Lemma~\ref{l:ZETA.conditions.balanced.vertex}.\end{proof}

\subsection{Non-defective variables neighboring mostly diverse light clauses}
\label{ss:proof.p.DIVERSE.VAR}

In \S\ref{ss:proof.propn.balancedeVertex} we proved Proposition~\ref{p:balancedeVertex}
under the assumption that $v$ is a non-defective variable with $\notDiverse(v)=\notLight(v)=0$. 
In the current subsection we state and prove Proposition~\ref{p:DIVERSE.VAR}, which holds under the weaker assumption that $\notDiverse(v)$ and $\notLight(v)$ are small but not necessarily zero. (The estimates given by Proposition~\ref{p:DIVERSE.VAR} are also weaker than those given by Proposition~\ref{p:balancedeVertex}.)

\begin{ppn}[used only in proof of Proposition~\ref{p:noNonDiverse}]\label{p:DIVERSE.VAR}
For any $\EPSP>0$ there exists $k_0<\infty$ large enough (depending only on $\EPSP$) such that the following holds for all $k\ge k_0$. Suppose $v$ is a non-defective variable such that (in the notation of Lemma~\ref{l:large11Case})
	\beq\label{e:assumption.notD.notL}
	\max\Big\{\notDiverse(v),
		\notLight(v)\Big\}
		\le \f{2^k}{2^{k\EPSP}}\,.
	\eeq
Then $v$ is diverse (Definition~\ref{d:diverse}), and furthermore $\pi_v$ satisfies the estimate
	\beq\label{eq:diverseVariableCondition}
	\max\Bigg\{\bigg|\pi_v(x)-\f14\bigg|
	: x\in\set{\minus,\plus}^2
	\Bigg\}\le\f1{100}\,.\eeq

\begin{proof} Without loss of generality we can assume that $\EPSP$ is much smaller than the other constants 
$\EPSONE,\EPSTWO,\EPSTHREE,\EPSDIV,\EPSLIGHT$ appearing in this section.

We will prove the result by contradiction, so let $v$ be a variable of type $\bT$ which satisfies the conditions of the proposition, but for which the estimate~\eqref{eq:diverseVariableCondition} fails to hold. It will be used repeatedly in the proof that since $v$ is non-defective (Definition~\ref{d:j.defective}), it must be nice, and all clause types neighboring to $v$ must also be nice (Definition~\ref{d:nice}). As in \eqref{e:uX.notation} and \eqref{e:cal.X} we let $X_e\equiv(\sigma_e,\varsigma_e)$. We also denote $\cX_e\equiv(X_e,\bL_e)$. We consider the law $\bP$ of the random variable 
	\[
	\cX \equiv \cX_v \equiv (\cX_e)_{e\in\delta v}
	\equiv (\uX_{\delta v},\uL_{\delta v})
	\equiv\bigg((X_e)_{e\in\delta v}, 
	(\bL_e)_{e\in\delta v}\bigg)\,.\]
The measure $\bP$ must maximize entropy subject to the following constraints (cf.\ \eqref{e:final.D.L.clause}--\eqref{e:final.D.L.varsigma}):
for all $e\in\delta v$, 
	{\setlength{\jot}{0pt}\begin{alignat}{2}
	\bP(\bL_e=\bL)
	&=\pi_{\DD}(\bL\,|\,\bt_e)
	\quad&&\textup{for all $\bL$,}
	\label{e:D.L.diverse.clause} \\ 
	\bP(\sigma^i_e=\sigma^i \,|\,
		\bL_e=\bL)
	&= \starpi_e(\sigma^i)
	\quad&&\textup{for all $\bL$ and
		 all $\sigma^i\in\set{\RYGB}$,}
	\label{e:D.L.diverse.judicious} \\
	\bP(\varsigma_e=\varsigma
		\,|\,\bL_e=\bL)
	&=\bzeta_e(\varsigma\,|\,\bL)
	\quad&&\textup{for all $\bL$
		and all $\varsigma\in\set{\RYC,\whi}^2$.}
	\label{e:D.L.diverse.varsigma}
	\end{alignat}}%
Via a series of transformations of the measure $\bP$ we will construct another distribution $\bPe$ satisfying the same constraints
\eqref{e:D.L.diverse.clause}--\eqref{e:D.L.diverse.varsigma} but with higher entropy, yielding the required contradiction. The remainder of the proof is outlined as follows:
\begin{enumerate}[--]
\item In \hyperlink{l:DIVERSE.product.Pa}{Part 1}
we define a measure $\bPa$ satisfying
constraints \eqref{e:D.L.diverse.clause}, \eqref{e:D.L.diverse.judicious}, and 
\eqref{eq:relaxedZetaConstraint}, where
\eqref{eq:relaxedZetaConstraint} is a relaxation of
\eqref{e:D.L.diverse.varsigma}. We show that $\bPa$ has a simple explicit form, and has substantially larger entropy than $\bP$ (see \eqref{e:rel.ent.diverse.absolute.lbd} below).

\item In \hyperlink{l:DIDERSE.Pa.bounds}{Part 2} we prove probabilistic estimates on various quantities under the measure $\bPa$. Let $\cXa$ denote a sample from $\bPa$; in the rest of the proof we transform $\cXa$ without changing its frozen spin $x=x^a\in\set{\minus,\plus,\free}^2$.

\item In \hyperlink{l:DIVERSE.Pb}{Part 3} we transform $\cXa$ into $\cXb$, and in \hyperlink{l:DIVERSE.Pc}{Part 4} we transform $\cXb$ into $\cXc$. In these transformations, $(\varsigma_e)^i$ can only change between \SPIN{red} and \SPIN{white}, so any $(\varsigma_e)^i\in\set{\yel,\cya}$ remains unchanged. The transformations combined will guarantee (see \eqref{e:DIVERSE.hat.E.nonempty} below) that in the configuration $\cXc$, if $x=x^c$ has $x^i=z\in\set{\minus,\plus}$, then some $e\in\delta v(z)$ will have $(\sigma^{c,i})_e=\red$, $(\sigma^c)_e\ne\red\red$, and $(\bL^c)_e\in\mathbb{D}\cap\mathbb{L}$. The condition \eqref{e:DIVERSE.hat.E.nonempty} will allow more flexibility to change spins from non-\SPIN{white} to \SPIN{white} in the following step. The law $\bPc$ of $\cXc$ satisfies
\eqref{e:D.L.diverse.clause}, but need not satisfy
\eqref{e:D.L.diverse.judicious} or \eqref{e:D.L.diverse.varsigma}, in particular, it can have less than the correct density of \SPIN{red} spins (see \eqref{eq:Xc.red.marginal.div}
and \eqref{eq:Xc.red.marginal.div2}).

\item In \hyperlink{l:DIVERSE.Pd}{Part 5} we transform $\cXc$ to $\cXd$ by changing $(\varsigma^{c,i})_e\ne\whi$ to $(\varsigma^{d,i})_e=\whi$ in some cases --- the condition \eqref{e:DIVERSE.hat.E.nonempty} from the previous step allows more flexibility to do this without invalidating the configuration. The goal of this step is to create enough \SPIN{white} spins which can be altered in the next step to ensure that the constraints \eqref{e:D.L.diverse.judicious} or \eqref{e:D.L.diverse.varsigma} will be satisfied.
At the same time, we cannot change too many spins to \SPIN{white}, since this could result in too much decrease in entropy.

\item In \hyperlink{l:DIVERSE.Pe}{Part 6} we transform
$\cXd$ to $\cXe$, where we use $\cXe$ rather than $\cX^e$ to avoid confusion with edge labels $e\in\delta v$. In this final step we change
 $(\varsigma^{d,i})_e=\whi$
to $(\varsigma^{f,i})_e\ne\whi$ in some cases, such that the law $\bPe$ of $\cXe$ satisfies the original constraints
\eqref{e:D.L.diverse.clause}--\eqref{e:D.L.diverse.varsigma}
(see \eqref{e:DIVERSE.final.msr.sat.varsigma} below).
Finally we show that $\bPe$ does not have much smaller entropy than $\bPa$, and as a result must have substantially larger entropy than $\bP$, giving the contradiction.
\end{enumerate}
We now turn to the details of the transformation.\smallskip

\noindent\bemph{\hypertarget{l:DIVERSE.product.Pa}{Part 1}. Product solution $\bPa$ for relaxed constraints.} Let $\bPa$ be the probability measure over configurations $\cX$ which maximizes entropy subject to the constraints \eqref{e:D.L.diverse.clause}, \eqref{e:D.L.diverse.judicious}, and (in place of \eqref{e:D.L.diverse.varsigma})
	\beq\label{eq:relaxedZetaConstraint}
	\bPa(\varsigma_e^i=\varsigma^i
			\,|\,\bL_e=\bL)
		=\bzeta_e(\varsigma^i\,|\,\bL)
		\quad\textup{for all $\bL$
			and all
			$\varsigma^i\in\set{\RYC,\whi}$.}
	\eeq
Let us note that the law $\bPa$ has a quite simple form: by the method of Lagrange multipliers, it must be expressible as
(cf.\ \eqref{e:final.D.L.Lagrangian.form})
	\beq\label{e:DIVERSE.P.a.Lagrangian.form}
	\bPa\Big(
	\usi_{\delta v},\uvsi_{\delta v},
	\uL_{\delta v} \Big)
	\cong \psi_v(\usi_{\delta v})
	\prod_{e\in\delta v}
	\Bigg\{
	\Ind{\sigma_e\sim\varsigma_e}
	\psi_e(\bL_e)
	\prod_{i=1,2}
	\bigg[
	(\beta_e)^i((\sigma_e)^i\,|\,\bL_e)
	(\chi_e)^i( (\varsigma_e)^i\,|\,\bL_e)
	\bigg]
	\Bigg\}\,,
	\eeq
where $\psi_v(\usi_{\delta v})$ is parametrized as in
\eqref{e:non.compound.weight.functional.form} from Definition~\ref{d:param.weights.Augmented}. A valid solution is given by simply setting $\psi_v\equiv1$,
$\psi_e(\bL)=\pi_{\DD}(\bL\,|\,\bt_e)$,
and (cf.\ \eqref{e:ZETA.bstar} and \eqref{e:ZETA.xstar})
	\begin{align*}
	\begin{pmatrix}
	(\beta_e)^i(\red\,|\,\bL)\\
	(\beta_e)^i(\yel\,|\,\bL)\\
	(\beta_e)^i(\grn\,|\,\bL)
	=(\beta_e)^i(\blu\,|\,\bL)\equiv(\beta_e)^i(\cya\,|\,\bL)
	\end{pmatrix}
	&= 
	\begin{pmatrix}
	1\\
	[\starpi_e(\yel)-(\bzeta_e)^i(\yel\,|\,\bL)]/\dqstar_e(\yel)\\
	[\starpi_e(\cya)-(\bzeta_e)^i(\cya\,|\,\bL)]/\dqstar_e(\cya)
	\end{pmatrix}
	\stackrel{\eqref{e:ZETA.zetastar}}{=} 
	\begin{pmatrix}
	\Theta(1)\\ \Theta(1) \\ \Theta(1)
	\end{pmatrix}\,,\\
	\begin{pmatrix}
	(\chi_e)^i(\red\,|\,\bL)\\
	(\chi_e)^i(\yel\,|\,\bL)\\
	(\chi_e)^i(\cya\,|\,\bL)\\
	(\chi_e)^i(\whi\,|\,\bL)
	\end{pmatrix}
	&=\begin{pmatrix}
	\starpi_e(\red)/\dqstar_e(\red)\\
	(\bzeta_e)^i(\yel\,|\,\bL)
		/[\starpi_e(\yel)-(\bzeta_e)^i(\yel\,|\,\bL)]\\
	(\bzeta_e)^i(\cya\,|\,\bL)
	/[\starpi_e(\cya)
		-(\bzeta_e)^i(\cya\,|\,\bL)]\\
	1
	\end{pmatrix}
	\stackrel{\eqref{e:ZETA.zetastar}}{=} 
	\begin{pmatrix}
	\Theta(1/2^k)\\
	O(k^2/2^k)\\
	O(k^2/2^k)\\
	1
	\end{pmatrix}
	\,,\end{align*}
where the estimates on
$(\bzeta_e)^i(\tau\,|\,\bL)$
for $\tau\in\set{\RYC}$ come from Lemma~\ref{l:lotsOfAs}. Note that for $e\in\delta v$, $\bL=\bL_e$, $i=1,2$, and $\sigma^i\in\set{\RYGB}$ we have
	\beq\label{e:product.zeta.msg.simplification}
	(\hq_e)^i(\sigma^i\,|\,\bL)\equiv
	(\beta_e)^i(\sigma^i\,|\,\bL)
	\sum_{\varsigma^i}
	\Ind{\varsigma^i \sim \sigma^i}
	(\chi_e)^i(\varsigma^i\,|\,\bL)
	= \f{\starpi_e(\sigma^i)}{\dqstar_e(\sigma^i)}\,,
	\eeq
where we emphasize that the right-hand side of \eqref{e:product.zeta.msg.simplification} does not depend on $\bL$. Indeed,
	\[
	(\hq_e)^i(\red\,|\,\bL)
	= (\beta_e)^i(\red\,|\,\bL)
	(\chi_e)^i(\red\,|\,\bL)
	= \f{\starpi_e(\red)}{\dqstar_e(\red)}\,,
	\]
which verifies \eqref{e:product.zeta.msg.simplification} for the case $\sigma^i=\red$. Next,
	\begin{align*}
	(\hq_e)^i(\yel\,|\,\bL)
	&= (\beta_e)^i(\yel\,|\,\bL)
	\bigg(
	(\chi_e)^i(\yel\,|\,\bL)
	+(\chi_e)^i(\whi\,|\,\bL)\bigg)\\
	&= \f{\starpi_e(\yel)-(\bzeta_e)^i(\yel\,|\,\bL)}
		{\dqstar_e(\yel)}
	\cdot\Bigg\{
	\f{(\bzeta_e)^i(\yel\,|\,\bL)}
		{\starpi_e(\yel)-(\bzeta_e)^i(\yel\,|\,\bL)}+1
	\Bigg\}
	= \f{\starpi_e(\yel)}{\dqstar_e(\yel)}\,,
	\end{align*}
which verifies \eqref{e:product.zeta.msg.simplification} for the case $\sigma^i=\yel$. A similar calculation gives
\eqref{e:product.zeta.msg.simplification} for $\sigma^i\in\set{\grn,\blu}$. It follows from \eqref{e:product.zeta.msg.simplification},
together with the identity $\starpi_e(\tau)/\dqstar_e(\tau) \cong \hqstar_e(\tau)$,
that the marginal of \eqref{e:DIVERSE.P.a.Lagrangian.form} over
$(\usi_{\delta v},\uL_{\delta v})$ is given by
	\begin{align}\nonumber
	\bPa\Big(\usi_{\delta v},\uL_{\delta v}\Big)
	&\cong
	\varphi_v(\usi_{\delta v})
	\prod_{e\in\delta v}\Bigg\{
	\pi_{\DD}(\bL_e\,|\,\bt_e)
	\prod_{i=1,2}
	\hqstar_e((\sigma_e)^i)\Bigg\} \\
	& \cong
	\dbhstar_v((\usi_{\delta v})^1)
	\dbhstar_v((\usi_{\delta v})^2)
	\prod_{e\in\delta v}
	\pi_{\DD}(\bL_e\,|\,\bt_e)
	\,,\label{eq:productForm}
	\end{align}
where $\dbhstar_v$ is as in 
\eqref{e:opt.variable.tuple.measure.star}. In particular, we see that under $\bPa$,
the random variables $(\usi_{\delta v})^1$, $(\usi_{\delta v})^2$, and $\uL_e$ (for $e\in\delta v$) are mutually independent. Moreover, conditional on $(\usi,\uL)_{\delta v}$, 
the $(\varsigma_e)^i$ are all independent from one another, with conditional laws depending only on
 $(\bL_e,(\sigma_e)^i)$. If $(\sigma_e)^i=\red$ then $(\varsigma_e)^i= \red $ also. 
If $(\sigma_e)^i=\yel$ then $(\varsigma_e)^i\in\set{\yel,\whi}$; and the conditional probability of 
$(\varsigma_e)^i=\yel$ is given by
	\beq\label{eq:ColouringProbs.yel}
	\bPa
	\bigg( (\varsigma_e)^i= \yel
	\,\bigg|\,
	\bL_e=\bL, (\sigma_e)^i=\yel
	\bigg)
	= \f{(\chi_e)^i(\yel\,|\,\bL)}
	{(\chi_e)^i(\yel\,|\,\bL)+(\chi_e)^i(\whi\,|\,\bL)}
	= \f{(\bzeta_e)^i(\yel\,|\,\bL)}
		{\starpi_e(\yel)}
	\le O\bigg(\f{k^2}{2^k}\bigg)\,,
	\eeq
where the last inequality follows by
Lemma~\ref{l:lotsOfAs}. Similarly, if $(\sigma_e)^i =\tau\in\set{\grn,\blu}$ then
$(\varsigma_e)^i\in\set{\cya,\whi}$; and the conditional probability of $(\varsigma_e)^i=\cya$ is given by
	\beq\label{eq:ColouringProbs.cya}
	\bPa
	\bigg( (\varsigma_e)^i=\cya
	\,\bigg|\,
	\bL_e=\bL, (\sigma_e)^i=\tau
	\bigg)
	= \f{(\bzeta_e)^i(\cya\,|\,\bL)}
		{\starpi_e(\blu)+\starpi_e(\grn)}
	\le O\bigg(\f{k^2}{2^k}\bigg)
	\eeq
for $\tau\in\set{\grn,\blu}$. The independence between the two copies $i=1,2$ gives furthermore
	\beq\label{e:DIVERSE.Pa.RYCsq}
	\bPa\bigg(\varsigma_e\in\set{\RYC}^2
		\,\bigg|\, \bL_e=\bL\bigg)
	= \prod_{i=1}^2 \bPa\bigg(
		(\varsigma_e)^i\in\set{\RYC}
		\,\bigg|\, \bL_e=\bL\bigg) 
	\le O\bigg( \f{k^4}{4^k}\bigg)\,.
	\eeq
Note also that if $x=x_v\in\set{\minus,\plus,\free}^2$ is the frozen spin, then $x^1$ and $x^2$ are independent under $\bPa$, and the marginal law of each $x^i$ is determined by the $\starpi_e$ for $e\in\delta v$.
Since we assumed that $v$ is non-defective, it follows that
	\beq\label{eq:diverseVariableCondition.a}
	\max\Bigg\{
	\bigg|\bPa\Big( x =z \Big)
		 -\f14\bigg|
		: z\in\set{\minus,\plus}^2
		\Bigg\} =o_k(1)\,.
	\eeq
If $\bP'$ is any other probability measure satisfying the same constraints as $\bPa$ (namely \eqref{e:D.L.diverse.clause}, \eqref{e:D.L.diverse.judicious}, and \eqref{eq:relaxedZetaConstraint}), then
it follows by the same derivation as
for \eqref{e:relative.entropy.with.Lagrangian} that
the relative entropy between $\bP'$ and $\bPa$ 
satisfies
	\beq\label{e:relent.a.b.transform}
	\Ent(\bP'\,|\,\bPa)
	= \bigg\langle \bP',\log \f{\bP'}{\bPa}
		\bigg\rangle
	=\Ent(\bPa) -\Ent(\bP')\,.
	\eeq
Now suppose that $x\in\set{\minus,\plus,\free}^2$ is the frozen spin corresponding to $\cX\sim\bP$ (meaning that its law fails condition \eqref{eq:diverseVariableCondition}), while $x^a\equiv(x^{a,1},x^{a,2})\in\set{\minus,\plus,\free}^2$ is the frozen spin corresponding to $\cXa\sim\bPa$ (so that its law satisfies condition 
\eqref{eq:diverseVariableCondition.a}). Therefore we obtain
	\beq\label{e:rel.ent.diverse.absolute.lbd}
	\Ent(\bP\,|\,\bPa) \ge
	\DKL(x\,|\,x^a)
	\ge \f1{10^6}\,.
	\eeq
It follows by combining the last few calculations that
	\begin{equation}\label{eq:a.opt.EntComp}
	\Ent(\bPa)
	\stackrel{\eqref{e:relent.a.b.transform}}{=}
	\Ent(\bP)
	+\Ent(\bP\,|\,\bPa)
	\stackrel{\eqref{e:rel.ent.diverse.absolute.lbd}}{\ge}
	\Ent(\bP)
	+\f1{10^6}\,.
	\end{equation}
In the remainder of the proof we will make a sequence of transformations, from $\bPa$ to $\bPb$ and so on, until we arrive at a measure
$\bPe$ that satisfies the original constraints \eqref{e:D.L.diverse.clause}--\eqref{e:D.L.diverse.varsigma}. We will show that
none of the transformations substantially reduce the entropy. Thus the final measure $\bPe$
will have similar entropy as $\bPa$, and hence will have greater entropy than $\bP$ by \eqref{e:rel.ent.diverse.absolute.lbd}. But we assumed $\bP$ to be the measure of maximal entropy satisfying constraints \eqref{e:D.L.diverse.clause}--\eqref{e:D.L.diverse.varsigma}, so this will give the desired contradiction. The measures $\bPa$ through $\bPe$ will all be coupled together, and the frozen spin $x$ will be left unchanged throughout the transformations.\smallskip

\noindent\bemph{\hypertarget{l:DIDERSE.Pa.bounds}{Part 2}. Bounds on $\bPa$.} Using the assumed bound \eqref{e:assumption.notD.notL} on $\notDiverse(v)$ and $\notLight(v)$, we can select a subset of four distinct edges incident to $v$,
	\beq\label{e:four.distinct.edges}
	\bm{E}\equiv
	\bigg\{e^{\plus,1},e^{\plus,2},
	e^{\minus,1},e^{\minus,2}\bigg\}
	\subset \delta v\,,\eeq
where $e^{\plus,i}\in\delta v(\plus)$,
and $e^{\minus,i}\in\delta v(\minus)$,
such that each $e\in\bm{E}$ satisfies
	\beq\label{eq:bEdefin}
	\sum_{\bL}
		\pi_{\DD}(\bL\,|\,\bt_e) 
		\Ind{\bL\in \mathbb{D}\cap\mathbb{L}}
		\ge\f12\,.\eeq
For a configuration $\cX$ as in \eqref{e:cal.X}, we define the following quantities:
	\begin{align*}
\mathfrak{R}^i&\equiv
	\mathfrak{R}^i(\cX)
	\equiv \sum_{e\in\delta v\setminus \bm{E}}
	\mathbf{1}\Big\{
	(\sigma_e)^i=\red
	\Big\}\,,\\
\mathfrak{H}
	&\equiv\mathfrak{H}(\cX)
	\equiv\sum_{e\in\delta v\setminus \bm{E}} 
	\sum_{i=1,2}
	\mathbf{1}\Big\{
	(\varsigma_e)^i\in\{\RYC\}
	\Big\}\,,\\
	\mathfrak{W}
	&\equiv \mathfrak{W}(\cX)
	\equiv\sum_{e\in\delta v\setminus \bm{E}} 
		\mathbf{1}\Big\{
		\varsigma_e=\whi\whi,
		\bL_e \in \mathbb{D}\cap\mathbb{L}
		\Big\}\,,\\
	T&\equiv T(\cX)
		\equiv\mathbf{1}
		\bigg\{\mathfrak{W}(\cX)
			\le \bigg(1-\f1{k^{10}}\bigg)
				|\delta v|\bigg\}\,.
	\end{align*}
We now prove some straightforward probabilistic bounds (under the measure $\bPa$) for the above quantities. For $e\in\delta v$ let $R_e$ denote independent Bernoulli random variables with
	\beq\label{e:DIVERSE.Re}
	\P(R_e=1) 
	= \f{\hqstar_e(\red)}{\hqstar_e(\set{\red,\blu})}
	= \Theta\bigg(\f1{2^k}\bigg)\,.\eeq
We see from \eqref{eq:productForm} that under $\bPa(\cdot\,|\,x^i=\plus)$, the total number of edges $e\in\delta v$ with $(\sigma_e)^i=\red$ is equidistributed as the sum of $R_e$ over $e\in\delta v(\plus)$, conditioned on that sum being strictly positive. Therefore
	\begin{align}\nonumber
	\bPa\Bigg(
		\sum_{i=1,2}\mathfrak{R}^i\ge
		\f{k^3}{2} \Bigg)
	&\le \bPa \Bigg(
	\sum_{i=1,2}
	\sum_{e\in\delta v} \Ind{(\sigma_e)^i=\red}
	\ge \f{k^3}{2} \Bigg)
	\le O(1)\max_{z\in\set{\minus,\plus}}
	\Bigg\{
	\f{\P( \sum_{e\in\delta v(z)} R_e \ge k^3/4)}
		{\P(\sum_{e\in\delta v(z)} R_e \ge1)}
		\Bigg\}\\
	&\le\f{\exp(-\Omega(k^3\log k))
	}{1-o_k(1)}
	\le \f1{\exp(\Omega(k^3\log k))}\,,
	\label{eq:frakR.bound}
	\end{align}
where the second-to-last inequality follows by a Chernoff bound. Next recall from \eqref{eq:ColouringProbs.yel} and 
\eqref{eq:ColouringProbs.cya} that
under $\bPa$, conditional on 
$(\sigma_e)^i\in\set{\yel,\grn,\blu}$, the chance for $(\varsigma_e)^i\in\set{\yel,\cya}$
is $O(k^2/2^k)$. Thus, under $\bPa$,
the total number of edges $e$ with $(\varsigma_e)^i\in\set{\yel,\cya}$ for either $i=1,2$ is stochastically dominated by a binomial random variable with $\Theta(k2^k)$ trials
and success probability $\Theta(1/2^k)$. It follows by another Chernoff bound, and by combining with \eqref{eq:frakR.bound}, that
	\beq\label{e:frakH.bound}
	\bPa\Big(\mathfrak{H} \ge k^3\Big)
	\le \bPa\Bigg(
		\sum_{e\in\delta v}\sum_{i=1,2}
		\mathbf{1}\Big\{
		(\varsigma_e)^i\in\set{\RYC}
		\Big\} \ge k^3
		\Bigg)
	\le\f1{\exp(\Omega(k^3\log k))}\,.\eeq
Next recall from \eqref{eq:productForm}
that the $\bL_e$ for $e\in\delta v$ are mutually independent under $\bPa$. The assumption \eqref{e:assumption.notD.notL} bounds the expected number (under $\bPa$) of edges $e\in\delta v$ with $\bL_e\notin\mathbb{D}\cap\mathbb{L}$. It follows by Azuma's inequality that
	\beq\label{eq:nonDiverseLightBound}
	\bPa
	\Bigg(\sum_{e\in\delta v}
	\Ind{\bL_e\notin\mathbb{D}\cap\mathbb{L}}
	\ge \f{k2^k}{2^{k\EPSP}} \Bigg)
	\le 
	\f1{\exp (\Omega(2^{k(1-2\EPSP)}k))}\,.\eeq
Combining with \eqref{e:frakH.bound}, 
and recalling the definitions of $\mathfrak{W}$ and $T$, we obtain
	\begin{align}\nonumber
	\bPa(T=1) &=
	\bPa\Bigg(\mathfrak{W}
		\le \bigg(1-\f1{k^{10}}\bigg)
		|\delta v|
		 \Bigg)\\
	&\le
	\bPa
	\Bigg(
	\max\bigg\{ \mathfrak{H} 
	,\sum_{e\in\delta v}
	\Ind{\bL_e\notin\mathbb{D}\cap\mathbb{L}}
	\bigg\} \ge
	\f{|\delta v|}{3k^{10}}\Bigg)
	\le \f1{\exp(\Omega(k^3\log k))}\,.
	\label{eq:T2Bound}
	\end{align}
Recall from \eqref{e:four.distinct.edges} the choice of four distinct edges $\bm{E}$ from $\delta v$. 
For each $e\in\bm{E}$ define
	\beq\label{e:DIVERSE.Gm}
	\Gm(e)\equiv 
	\begin{cases}
	1&\textup{if $T(\cXa)=1$, 
		$e=e^{\plus,i}$, and $x^{a,i}=\plus$,}\\
	1&\textup{if $T(\cXa)=1$, 
		$e=e^{\minus,i}$, and $x^{a,i}=\minus$,}\\
	0&\textup{otherwise.}
	\end{cases}\eeq
In the first transformation (described below), given $\cXa$ we will construct a modified configuration $\cXb$ in which $(\sigma_e)^i=\red$ and $\bL_e\in\mathbb{D}\cap\mathbb{L}$ whenever 
$e=e^{\PM,i}$ with $\Gamma(e)=1$. (This ensures that a frozen spin $x^i\in\set{\minus,\plus}$ has at least one incident $\SPIN{red}$ edge in $\delta v(x^i)$, and gives more flexibility to adjust the colors on the other incident edges.) For $e\in\bm{E}$ and $z\in\set{\minus,\plus,\free}^2$ define
	\beq\label{e:DIVERSE.p.z.e.ubd}
	p_z(e)
	\equiv\bPa(\Gamma(e)=1,x=z)
	\le \bPa(T=1)
	\stackrel{\eqref{eq:T2Bound}}{\le}
	\f{1}{\exp(\Omega(k^3\log k))}\,.
	\eeq
Note the definition of $\Gamma(e)$ implies
$p_z(e^{\plus,i})=0$ if $z^i\ne\plus$, and likewise
$p_z(e^{\minus,i})=0$ if $z^i\ne\minus$. For $e\in\bm{E}$ we also let
	\[r_z(e)
	\equiv
	\bPa\bigg(
	\bL_e\in\mathbb{D}\cap\mathbb{L},
	x=z,(\sigma_e)^i=\red,\sigma_e\ne\red\red,
	\mathfrak{R}^i\ge2, T=0
	\bigg)\,.
	\]
Note having $(\sigma_e)^i=\red$ necessitates $x^i=\plus$, so 
$r_z(e^{\plus,i})=0$ if $z^i\ne\plus$
and $r_z(e^{\minus,i})=0$ if $z^i\ne\minus$. In all other cases we claim that $r_z(e)\ge \Omega(1)/4^k$. Without loss of generality it suffices to consider the case $e=e^{\plus,1}$ and $z\in\set{\plus}\times\set{\minus,\plus,\free}$. In this case, recalling the bound
\eqref{eq:T2Bound} gives
	\[
	r_z(e)
	\ge
	\bPa\bigg(
	\bL_e\in\mathbb{D}\cap\mathbb{L},
	x^2=z^2,\sigma_e\in\set{\red}
		\times\set{\yel,\grn,\blu},
		\mathfrak{R}^i\ge2
	\bigg)
	-\f1{\exp(\Omega(k^3\log k))}\,.\]
Next recall from \eqref{eq:productForm} that $(\usi_{\delta v})^1$, $(\usi_{\delta v})^2$, and $\bL_e$ ($e\in\delta v$) are mutually independent under $\bPa$. It follows by straightforward calculations that for any $\bL$ we have
	\[
	\bPa\bigg(
	\mathfrak{R}^i\ge2
	\,\bigg|\,\bL_e=\bL,
	x^2=z^2,\sigma_e\in\set{\red}
		\times\set{\yel,\grn,\blu}\bigg)
	\ge 1-o_k(1)\,.
	\]
Combining with \eqref{eq:bEdefin} gives
	\beq\label{eq:rzBound}
	r_z(e)
	\ge
	\f{1-o_k(1)}{2}
	\bPa\bigg(
	(\sigma_e)^1=\red
	\bigg)
	\bPa\bigg(x^2=z^2,
	(\sigma_e)^2\ne\red
	\bigg)-\f1{\exp(\Omega(k^3\log k))}
	\ge \f{\Omega(1)}{4^k}\,,\eeq
as claimed. It follows by comparing \eqref{e:DIVERSE.p.z.e.ubd} with \eqref{eq:rzBound} that $p_z(e)/r_z(e)=o_k(1)$. For $e\in\bm{E}$,
we define Bernoulli random variables $\Xi(e)$ such that if $r_z(e)=0$ then $\Xi(e)\equiv0$, and otherwise if $e=e^{\PM,i}$ with $r_z(e)>0$ then
	\beq\label{e:DIVERSE.def.Xi}
	\bPa\bigg(
	\Xi(e) = 1 \,\bigg|\, x=z,
		\cXa=\cX\bigg)
	= \f{p_z(e)}{r_z(e)}
	\mathbf{1}\bigg\{
	\bL_e \in \mathbb{D}\cap \mathbb{L}, x=z,
	(\sigma_e)^i=\red,
	\sigma_e\ne\red\red, 
	\mathfrak{R}^i\geq 2,T=0
	\bigg\}\eeq
(for all possible values of $z,\cX$). It then follows from the definition of $r_z(e)$ that for all $z\in\set{\minus,\plus,\free}^2$ we have
	\beq\label{eq:GammaXiEquality2}
	\bPa\bigg(
	\Xi(e) = 1,x=z\bigg)
	=p_z(e) =\bPa\bigg(
	\Gamma(e) = 1,x=z\bigg)\,,\eeq
and consequently $\bPa(\Xi(e) = 1)=\bPa(\Gamma(e) = 1)$. Further, by the independence properties of $\bPa$ that we see from the expression \eqref{eq:productForm}, we have
	\beq\label{eq:cond.L.Xi}
	\bPa\Bigg(
		\bL_e=\bL
		\,\Bigg|\, \Xi(e) = 1, \Big(\sigma_e,
			\big(\sigma_{e'},\varsigma_{e'},\bL_{e'}\big)_{e'\in
				\delta v\setminus e}\Big)
				\Bigg)
	= \pi_{\DD}\Big(\bL\,
		\Big|\,\bt_e, \bL_e \in \mathbb{D}\cap \mathbb{L}
		\Big)\,.\eeq
This concludes our estimates on $\bPa$, and we now turn to the construction of $\bPb$.\smallskip

\noindent\bemph{\hypertarget{l:DIVERSE.Pb}{Part 3}. Construction of $\bPb$.} Recall \eqref{e:four.distinct.edges} that $\bm{E}$ is a subset of four distinct edges in $\delta v$. In the above we defined Bernoulli random variables $\Gm(e)$ and $\Xi(e)$ for $e\in\bm{E}$. Recall moreover that $\Gm(e)=1$ can only occur on the event $T=T(\cXa)=1$, while $\Xi(e)=1$ can only occur on the event $T=T(\cXa)=0$, so in particular we can never have $\Gm(e)=\Xi(e)=1$. In the construction we will essentially ``swap'' the events $\Gm(e)=1$ and $\Xi(e)=1$.
To make this precise, recall that we write $\cX_e\equiv(\sigma_e,\varsigma_e,\bL_e)$. Given $\cXa\sim\bPa$, we let $\cXb$ be defined as follows:
\begin{enumerate}[--]
\item If $e\in\delta v\setminus\bm{E}$ then set $(\cXb)_e=(\cXa)_e$.
\item If $e\in\bm{E}$ with $\Gm(e)=\Xi(e)=0$ then we also set $(\cXb)_e=(\cXa)_e$.
\item If $e\in\bm{E}$ with $\Gm(e)=1$ (hence $\Xi(e)=0$),
and $x_v=z$, then we let $(\cXb)_e$ be sampled from the law
	\beq\label{e:swap.Gamma.Xi.1}
	\bPb\bigg(\cX_e\in\cdot \,\bigg|\, x_v=z,\Gm(e)=1\bigg)
	=\bPa\bigg(\cX_e\in\cdot \,\bigg|\, x_v=z,\Xi(e)=1\bigg)\,.
	\eeq
\item If $e\in\bm{E}$ with $\Xi(e)=1$ (hence $\Gm(e)=0$),
and $x_v=z$, then we let 
$(\cXb)_e$ be sampled from the law
	\beq\label{e:swap.Gamma.Xi.2}
	\bPb\bigg(\cX_e\in\cdot \,\bigg|\, x_v=z,\Xi(e)=1\bigg)
	=\bPa\bigg(\cX_e\in\cdot \,\bigg|\, x_v=z,\Gm(e)=1\bigg)\,.
	\eeq
\end{enumerate}
This results in a valid configuration $\cXb$ which has the same frozen spin as $\cXa$, that is, $x^a=x^b\in\set{\minus,\plus,\free}^2$. For the sake of concreteness, we give two examples of the above construction:
\begin{enumerate}[(a)]
\item \textit{Example on event $T=1$.} Suppose we have $\cXa\sim\bPa$ with frozen spin $x^a=\plus\plus$, such that $T=T(\cXa)=1$. In this case it follows from the definition \eqref{e:DIVERSE.Gm} of $\Gm(e)$ that $\Gm(e^{\plus,1})=\Gm(e^{\plus,2})=1$. Since $x^a=\plus\plus$, we must have $(\sigma^a)_e\in\set{\red,\blu}^2$ for $e\in\set{e^{\plus,1},e^{\plus,2}}$. The resampling step \eqref{e:swap.Gamma.Xi.1} results in a modified configuration $\cXb$. It follows from the definition \eqref{e:DIVERSE.def.Xi} of $\Xi(e)$ that in the modified configuration,
	{\setlength{\jot}{0pt}\begin{align*}
	&\textup{$(\sigma^b)_e=\red\blu$ for $e = e^{\plus,1}$,}\\
	&\textup{$(\sigma^b)_e=\blu\red$ for $e = e^{\plus,2}$.}
	\end{align*}}%
Thus, for $e=e^{\PM,i}$ with $\Gm(e)=1$,
the procedure \eqref{e:swap.Gamma.Xi.1} makes
$(\sigma_e)^i$ marginally more likely to be $\SPIN{red}$,
but makes $\sigma_e$ less likely to be $\red\red$.

\item \textit{Example on event $T=0$.} 
Suppose we have $\cXa\sim\bPa$ with frozen spin $x^a=\plus\plus$, such that $T=T(\cXa)=0$. Recalling \eqref{e:DIVERSE.def.Xi}, suppose that we have $\Xi(e^{\plus,1})=1$
while $\Xi(e^{\plus,2})=0$. In particular, this means for $e=e^{\plus,1}$ we must have $(\sigma^a)_e=\red\blu$. The resampling step \eqref{e:swap.Gamma.Xi.2} results in a modified configuration $\cXb$. It follows from the definition \eqref{e:DIVERSE.Gm} of $\Gm(e)$ that in the modified configuration we must have $(\sigma^b)_e \in \set{\red,\blu}^2$
for $e=e^{\plus,1}$. Thus, for $e=e^{\PM,i}$ with $\Xi(e)=1$,
the procedure \eqref{e:swap.Gamma.Xi.2}
makes $(\sigma_e)^i$ marginally less likely to be $\SPIN{red}$,
but makes $\sigma_e$ more likely to be $\red\red$.
\end{enumerate}
We let $\P$ denote the joint law of $(\cXa,\cXb)$, and let $\bPb$ denote the marginal law of $\cXb$. Note that $\bPa$ and $\bPb$ have the same edge marginals: for $e\in\delta v\setminus \bm{E}$ it is clear that $(\cXa)_e$ and $(\cXb)_e$ have the same marginal law, since in the coupling we set $(\cXb)_e=(\cXa)_e$.
For $e\in\bm{E}$, we have
	{\setlength{\jot}{0pt}\begin{align}\nonumber
	&\bPb(\cX_e\in\cdot\,|\, x_v=z,
		\Gm(e)+\Xi(e)=1) \\ \nonumber
	&=\bPa(\Gm(e)=1\,|\,x_v=z)
	\bPb(\cX_e\in\cdot\,|\, x_v=z,\Gm(e)=1)
	+\bPa(\Xi(e)=1\,|\,x_v=z)
	\bPb(\cX_e\in\cdot\,|\, x_v=z,\Xi(e)=1)\\ \nonumber
	&=
	\bPa(\Xi(e)=1\,|\,x_v=z)
	\bPa(\cX_e\in\cdot\,|\, x_v=z,\Xi(e)=1)
	+\bPa(\Gm(e)=1\,|\,x_v=z)
	\bPa(\cX_e\in\cdot\,|\, x_v=z,\Gm(e)=1)\\
	&=\bPa(\cX_e\in\cdot\,|\, x_v=z,
		\Gm(e)+\Xi(e)=1)\,,
	\label{e:DIVERSE.ab.same.mgls}
	\end{align}}%
where the transition to the third line uses \eqref{eq:GammaXiEquality2}, \eqref{e:swap.Gamma.Xi.1}, and \eqref{e:swap.Gamma.Xi.2}. This verifies that $\bPb(\cX_e\in\cdot)=\bPa(\cX_e\in\cdot)$ for all $e\in\delta v$, so the measure $\bPb$ again satisfies the constraints \eqref{e:D.L.diverse.clause}, \eqref{e:D.L.diverse.judicious}, and \eqref{eq:relaxedZetaConstraint}.
We now compare the entropy of $\bPa$ and $\bPb$. As above, denote $T=T(\cXa)$. Write $\bm{E}_a\equiv\set{e\in\bm{E}:\Gm(e)+\Xi(e)=1}$. Denote $\cX\equiv(\cY,\cZ)$ where
	{\setlength{\jot}{0pt}\begin{align*}
	\cY&\equiv (\cX_e : e\in\bm{E}_a)\,,\\
	\cZ&\equiv (\cX_e : e\in\delta v\setminus\bm{E}_a)\,.
	\end{align*}}%
If $\bm{E}_a=\emptyset$ then $\cY$ is the null vector. Likewise denote $\uX_{\delta v}\equiv (Y,Z)$
where $Y\equiv (X_e:e\in\bm{E}_a)$. We then have
	{\setlength{\jot}{0pt}\begin{align}\nonumber
	\Ent(\cXb) 
	&\ge\Ent(T,\bm{E}_a,\cZ^b)
		+\Ent( \cY^b\,|\, T,\bm{E}_a,\cZ^b)
		-\Ent(T)-\Ent(\bm{E}_a)\\
	&=\Ent(T,\bm{E}_a,\cZ^b)
		+\Ent( (\uL^b)_{\bm{E}_a}\,|\, T,\bm{E}_a,\cZ^b)
		+\Ent( Y^b\,|\, T,\bm{E}_a,\cZ^b,(\uL^b)_{\bm{E}_a})
		-\Ent(T)-\Ent(\bm{E}_a)
	\label{eq:EntDecompB}\,,\\ \nonumber
	\Ent(\cXa) 
	&\le \Ent(T,\bm{E}_a,\cZ^a) +
		\Ent(\cY^a\,|\,T,\bm{E}_a,\cZ^a)\\
	&= \Ent(T,\bm{E}_a,\cZ^a)
		+\Ent((\uL^a)_{\bm{E}_a}\,|\,T,\bm{E}_a,\cZ^a)
		+\Ent(Y^a\,|\,T,\bm{E}_a,\cZ^a,(\uL^a)_{\bm{E}_a})
		\,.
	\label{eq:EntDecompA}
	\end{align}}%
Recall that the edges outside $\bm{E}_a$ are left unchanged by the transformation from $\cXa$ to $\cXb$, so
	\beq\label{e:DIVERSE.non.swapped.part}
	\Ent\bigg(T,\bm{E}_a,\cZ^b\bigg)
	=\Ent\bigg(T,\bm{E}_a,\cZ^a\bigg)\,.\eeq
Next, \eqref{eq:T2Bound} and \eqref{eq:GammaXiEquality2} together imply that
	\[
	\bPa(\bm{E}_a\ne\emptyset) 
	\le O\Big( \bPa(T=1)\Big)
	\le O\bigg( \f{1}{\exp(\Omega(k^3\log k))}
		\bigg)\,,\]
from which it follows that
	\beq\label{eq:sigma.T.Ent}
	\max\bigg\{\Ent(T),\Ent(\bm{E}_a),
		\Ent(Y^a\,|\,\bm{E}_a),
		\Ent(Y^b\,|\,\bm{E}_a)
		\bigg\} = o_k(1)\,.\eeq
We emphasize that in \eqref{eq:sigma.T.Ent} we must put $Y^a$ and $Y^b$ rather than $\cY^a$ and $\cY^b$:
this is because $Y$ consists of spins
$X_e$ which take only $O(1)$ possibilities, while $\cY$ consists of spins $\cX_e=(X_e,\bL_e)$ which take a large number of possibilities (indeed, growing with the neighborhood radius $R$ of \eqref{e:radii}) and therefore can have large conditional entropy given $\bm{E}_a$. To deal with the clause types $\bL_e$ we note that
	\begin{align}\nonumber
	&\Ent\bigg( 
	(\uL^a)_{\bm{E}_a}
	\,\bigg|\,
	\bm{E}_a,T, \cZ^a\bigg) \\ \nonumber
 	&\stackrel{\eqref{eq:cond.L.Xi}}{=}
	\sum_{e\in \bm{E}}
	\Bigg\{
	\bPa(\Gamma(e)=1)
		\Ent\Big(\pi_{\DD}(\cdot\,|\,\bt_e)\Big)
	+\bPa(\Xi(e)=1)
	\Ent\Big(\pi_{\DD}(\cdot
	\,|\,\bt_e, \bL_e \in \mathbb{D}\cap\mathbb{L})\Big)\\ \nonumber
	&\stackrel{\eqref{eq:GammaXiEquality2}}{=} \sum_{e\in \bm{E}}
	\Bigg\{
	\bPb(\Gamma(e)=1)
		\Ent\Big(\pi_{\DD}(\cdot\,|\,\bt_e)\Big)
	+\bPb(\Xi(e)=1)
	\Ent\Big(\pi_{\DD}(\cdot
	\,|\,\bt_e, \bL_e \in \mathbb{D}\cap \mathbb{L})\Big)
	\Bigg\}\\
	&\stackrel{\eqref{eq:cond.L.Xi}}{=}
	\Ent\bigg( 
	(\uL^b)_{\bm{E}_a}
	\,\bigg|\,
	\bm{E}_a,T,\cZ^b
	\bigg)\,. \label{eq:L.EntopyEquality}
	\end{align}
Combining \eqref{eq:sigma.T.Ent} and \eqref{eq:L.EntopyEquality} gives 
	\begin{align}\nonumber
	&\bigg| \Ent\Big( \cY^a 
		\,\Big|\, \bm{E}_a,T,\cZ^a\Big)- 
	\Ent\Big( \cY^b
		\,\Big|\, \bm{E}_a,T,\cZ^b\Big)\bigg| \\ \nonumber
	&=
	\bigg|\Ent\Big( (\uL^a)_{\bm{E}_a}\,\Big|\, \bm{E}_a,T,\cZ^a\Big)
	-\Ent\Big( (\uL^b)_{\bm{E}_a}\,\Big|\, \bm{E}_a,T,\cZ^b\Big)\\
	\nonumber
	&\qquad+\Ent\Big( Y^a \,\Big|\, \bm{E}_a,T,\cZ^a,(\uL^a)_{\bm{E}_a}\Big)
	-\Ent\Big( Y^b \,\Big|\, \bm{E}_a,T,\cZ^b,(\uL^b)_{\bm{E}_a}\Big)
	\bigg|\\ 
	&\stackrel{\eqref{eq:L.EntopyEquality}}{=}
	\bigg|\Ent\Big( Y^a \,\Big|\, \bm{E}_a,T,\cZ^a,(\uL^a)_{\bm{E}_a}\Big)
	-\Ent\Big( Y^b \,\Big|\, \bm{E}_a,T,\cZ^b,(\uL^b)_{\bm{E}_a}\Big)\bigg|
	\stackrel{\eqref{eq:sigma.T.Ent}}{=} o_k(1)\,.
	\label{eq:EprimeEntopy}
	\end{align}
Finally, combining \eqref{eq:EntDecompB}, \eqref{eq:EntDecompA}, \eqref{e:DIVERSE.non.swapped.part},
\eqref{eq:sigma.T.Ent}, and \eqref{eq:EprimeEntopy} gives
	\beq\label{eq:A.to.B.Ent.Comp}
	\Ent(\bPb)-\Ent(\bPa)
	=\Ent(\cXb) - \Ent(\cXa)
	\ge o_k(1)\,.
	\eeq
This concludes our analysis of the measure $\bPb$.\smallskip

\noindent\bemph{\hypertarget{l:DIVERSE.Pc}{Part 4}. Construction of $\bPc$.}
Given a configuration $\cX$, for 
$z\in\set{\minus,\plus}$ and
$i=1,2$ let
	\beq\label{e:DIVERSE.hat.E}
	\hat{\bm{E}}^{z,i}
	\equiv \hat{\bm{E}}^{z,i}(\cX)
	\equiv \bigg\{ e\in\delta v(z):
	(\sigma_e)^i=\red,\sigma_e\ne\red\red,
	\bL_e\in\mathbb{D}\cap\mathbb{L}\bigg\}\,.
	\eeq
Write $\hat{\bm{E}}^i\equiv
\hat{\bm{E}}^{\minus,i}\cup\hat{\bm{E}}^{\plus,i}$.
Note that $\hat{\bm{E}}^1\cap\hat{\bm{E}}^2=\emptyset$. Recall from \eqref{e:four.distinct.edges} the definition of $\bm{E}$, and for $z\in\set{\minus,\plus}$ let
	\[
	\acute{\bm{E}}^z
	\equiv \acute{\bm{E}}^z(\cX)
	\equiv\bigg\{e\in\delta v(z)\setminus\bm{E}
	: \vsi_e=\whi\whi,\bL_e\in\mathbb{D}\cap\mathbb{L}\bigg\}\,.
	\]
(The sets $\acute{\bm{E}}^\PM(\cXa)$
are disjoint from $\bm{E}$, but
the sets $\hat{\bm{E}}^i$ can intersect $\bm{E}$.) We construct $\cXc$ as follows:
\begin{enumerate}[(II-2)]
\item[\hypertarget{i:DIVERSE.I1}{(I-1)}] On the event $\set{\cXa=\cXb,T(\cXa)=0,|\hat{\bm{E}}^1(\cXa)|\ge2}$,
choose $\hat{e}^1$ uniformly at random from $\hat{\bm{E}}^1(\cXa)$. For $e=\hat{e}^1$, set 
$(\cXc)_e=(\cXb)_e$ with probability $1-2^{-k\EPSP/2}$. With the remaining probability $2^{-k\EPSP/2}$ let $(\cXc)_e$ 
for $e=\hat{e}^1$
be defined by
	{\setlength{\jot}{0pt}\begin{align*}
	(\sigma^{c,1},\varsigma^{c,1})_e
	&=(\blu,\whi)\,,\\
	(\sigma^{c,2},\varsigma^{c,2},\bL^c)_e
	&=(\sigma^{b,2},\varsigma^{b,2},\bL^b)_e
	\end{align*}}%
Let $p^{\textup{I},1}(e,\bL)$ denote the probability that edge $e$ has clause type $\bL$ and is changed by the above:
	\beq\label{e:DIVERSE.def.p.I1}
	p^{\textup{I},1}(e,\bL)
	= \f1{2^{k\EPSP/2}}
	\E
	\Bigg(
	\f{\Ind{e\in\hat{\bm{E}}^1(\cXa)}}
		{|\hat{\bm{E}}^1(\cXa)|};
	(\bL^a)_e=\bL,
	\cXa=\cXb,T(\cXa)=0,
	|\hat{\bm{E}}^1(\cXa)|\ge2
	\Bigg)\,,\eeq
where $\E$ denotes expectation with respect to the joint law $\P$ of $(\cXa,\cXb)$. 

\item[\hypertarget{i:DIVERSE.I2}{(I-2)}]
On the event $\set{\cXa=\cXb,T(\cXa)=0,|\hat{\bm{E}}^2(\cXa)|\ge2}$, perform the analogue of 
\hyperlink{i:DIVERSE.I1}{(I-1)} switching the roles of the two copies, and define the corresponding probability $p^{\textup{I},2}(e,\bL)$.

\item[\hypertarget{i:DIVERSE.II1}{(II-1)}] 
On the event $\set{x^{a,1}\in\set{\minus,\plus},
T(\cXa)=0,|\hat{\bm{E}}^1(\cXa)|\le1}$, choose $\acute{e}^1$ uniformly at random from $\acute{\bm{E}}^{x^1}(\cXa)$. (Note the condition $T(\cXa)=0$ guarantees that the sets $\acute{\bm{E}}^{\PM}(\cXa)$ are both large, so it is possible to choose $\acute{e}^1$.) Let $(\cXc)_e$ for $e=\acute{e}^1$ be defined by
	{\setlength{\jot}{0pt}\begin{align*}
	(\sigma^{c,1},\varsigma^{c,1})_e
	&= (\red,\red)\,,\\
	(\sigma^{c,2},\varsigma^{c,2},\bL^c)_e
		&=(\sigma^{b,2},\varsigma^{b,2},\bL^b)_e
		\,.
	\end{align*}}%
Let $p^{\textup{II},1}(e,\bL)$ denote the probability that edge $e$ has clause type $\bL$ and is changed by the above:
	\beq\label{e:DIVERSE.def.p.II1}
	p^{\textup{II},1}(e,\bL)
	= \sum_{z\in\set{\minus,\plus}}
	\E\Bigg(
	\f{\Ind{e\in\acute{E}^z(\cXa)}}
		{|\acute{E}^z(\cXa)|}
	; (\bL^a)_e=\bL, x^{a,1}=z,
	T(\cXa)=0,
	|\hat{\bm{E}}^1(\cXa)|\le1
	\Bigg)\,,
	\eeq
where again $\E$ denotes expectation with respect to the joint law $\P$ of $(\cXa,\cXb)$. 
\item[\hypertarget{i:DIVERSE.II2}{(II-2)}] 
Lastly, on the event
$\set{x^{a,2}\in\set{\minus,\plus},T(\cXa)=0,|\hat{\bm{E}}^2(\cXa)|\le1}$,
perform the analogue of \hyperlink{i:DIVERSE.II1}{(II-1)} switching the roles of the two copies, with the modification that if \hyperlink{i:DIVERSE.II1}{(II-1)} already occurred, then $\acute{e}^2$ is chosen uniformly at random from $\acute{\bm{E}}(\cXa)^2\setminus\set{\acute{e}^2}$. Define the corresponding probability $p^{\textup{II},2}(e,\bL)$.
\end{enumerate}
We point out that
steps \hyperlink{i:DIVERSE.I1}{(I-1)}
and \hyperlink{i:DIVERSE.I2}{(I-2)}
can only occur if $\cXa=\cXb$,
but steps 
\hyperlink{i:DIVERSE.II1}{(II-1)}
and \hyperlink{i:DIVERSE.II2}{(II-2)}
can occur even in the case
$\cXa\ne\cXb$. Note also that \hyperlink{i:DIVERSE.I1}{(I-1)} and \hyperlink{i:DIVERSE.II1}{(II-1)}
 cannot both occur, and likewise \hyperlink{i:DIVERSE.I2}{(I-2)} and 
 \hyperlink{i:DIVERSE.II2}{(II-2)}
 cannot both occur. To finish the construction, for all edges $e\in\delta v$ which were not chosen as $\hat{e}^i$ or $\acute{e}^i$ in any of the above steps, set $(\cXc)_e=(\cXb)_e$. Note that the frozen spin has not changed, $x=x^a=x^b=x^c\in\set{\minus,\plus,\free}^2$. Moreover we have $(\bL^b)_e=(\bL^c)_e$ for all $e\in\delta v$. It is straightforward to verify that the resulting $\cXc$ is a valid configuration with the same frozen spin, $x^b=x^c\in\set{\minus,\plus,\free}^2$.
Moreover, we observe that the construction so far guarantees 
	\beq\label{e:DIVERSE.hat.E.nonempty} 
	\hat{\bm{E}}^{z,i}(\cXc)\ne\emptyset
	\quad\textup{whenever}\quad
	x^{c,i}=z\in\set{\minus,\plus}\,,
	\eeq
for $\hat{\bm{E}}^{z,i}$ as defined by \eqref{e:DIVERSE.hat.E}. Indeed, if $x=x^c$
has in the first copy $x^1=z\in\set{\minus,\plus}$, we have these cases:
\begin{enumerate}[--]
\item Suppose $T(\cXa)=1$. It follows from \eqref{e:DIVERSE.Gm} that we will have $\Gm(e)=1$ for $e=e^{z,1}\in\bm{E}$. In this case the resampling step 
\eqref{e:swap.Gamma.Xi.1} and the definition 
\eqref{e:DIVERSE.def.Xi}
of $\Xi(e)$ results in
a modified configuration $\cXb$ such that
 $e$ belongs to $\hat{\bm{E}}^{z,1}(\cXb)$.
(If $x^2=y\in\set{\minus,\plus}$ then we may also have $\Gm(e^{y,2})=1$, in which case the set $\hat{\bm{E}}^{y,2}$ also increases going from $\cXa$ to $\cXb$, but this is not relevant to the current considerations.)
Then steps \hyperlink{i:DIVERSE.I1}{(I-1)}
and \hyperlink{i:DIVERSE.I2}{(I-2)} do not occur
because $\cXa\ne\cXb$,
and steps 
\hyperlink{i:DIVERSE.II1}{(II-1)}
and \hyperlink{i:DIVERSE.II2}{(II-2)}
do not occur because $T(\cXa)=1$.
It follows that 
	\[|\hat{\bm{E}}^{z,1}(\cXc)|
	=|\hat{\bm{E}}^{z,1}(\cXb)|
	\ge|\hat{\bm{E}}^{z,1}(\cXa)|+1
	\ge1\,,\]
since $e=e^{z,1}\in\hat{\bm{E}}^{z,1}(\cXb)\setminus 
\hat{\bm{E}}^{z,1}(\cXa)$.

\item Suppose $T(\cXa)=0$, $\cXa=\cXb$, and $|\hat{\bm{E}}^1(\cXa)|\ge2$. In this case, the only way for an edge $e$ to belong to $\hat{\bm{E}}^1(\cXa)=\hat{\bm{E}}^1(\cXb)$ and not to $\hat{\bm{E}}^1(\cXc)$ is if $e$ is chosen as $\hat{e}^1$ in step \hyperlink{i:DIVERSE.I1}{(I-1)}. It follows that
	\[
	|\hat{\bm{E}}^1(\cXc)|
	\ge |\hat{\bm{E}}^1(\cXb)|-1
	= |\hat{\bm{E}}^1(\cXa)|-1
	\ge1\,.
	\]

\item Suppose $T(\cXa)=0$, $\cXa\ne\cXb$, and $|\hat{\bm{E}}^1(\cXa)|\ge2$. 
In this case, we see from \eqref{e:DIVERSE.def.Xi}
that we may have $\Xi(e)=1$
for $e=e^{z,1}$. This means $e\in\hat{\bm{E}}^1(\cXa)$, and the resampling step \eqref{e:swap.Gamma.Xi.2} results in $\cXb$ such that $e\notin\hat{\bm{E}}^1(\cXb)$. (If $x^2=y\in\set{\minus,\plus}$ then we may also have $\Xi(e^{y,2})=1$, in which case the set 
$\hat{\bm{E}}^{y,2}$
also decreases going from $\cXa$ to $\cXb$, but this is not relevant to the current considerations since the $\hat{\bm{E}}^i$ are disjoint.) Then steps \hyperlink{i:DIVERSE.I1}{(I-1)}
and \hyperlink{i:DIVERSE.I2}{(I-2)} do not occur
because $\cXa\ne\cXb$, and steps
\hyperlink{i:DIVERSE.II1}{(II-1)}
and \hyperlink{i:DIVERSE.II2}{(II-2)}
do not occur because $|\hat{\bm{E}}^1(\cXa)|\ge2$.
It follows that
	\[
	|\hat{\bm{E}}^1(\cXc)|
	=|\hat{\bm{E}}^1(\cXb)|
	\ge|\hat{\bm{E}}^1(\cXa)|-1\ge1\,.
	\]

\item Suppose $T(\cXa)=0$ and $|\hat{\bm{E}}^1(\cXa)|\le1$. In this case, step 
\hyperlink{i:DIVERSE.II1}{(II-1)}
occurs and results in 
a configuration $\cXc$ such that
$\hat{\bm{E}}^1(\cXc)$ contains $\hat{e}^1$, and so is nonempty.
\end{enumerate}
This verifies the property \eqref{e:DIVERSE.hat.E.nonempty}, which will be used
in later steps of the proof. We now let $\P$ denote the joint law of
 $(\cXa,\cXb,\cXc)$, and let $\bPc$ denote the marginal law of $\cXc$. 

We first estimate the probability $p^{\textup{I},1}(e,\bL)$ from \eqref{e:DIVERSE.def.p.I1}. For an upper bound, note in order for $e\in\hat{\bm{E}}^1(\cX)$ we must have $(\sigma_e)^1=\red$, so for all clause types $\bL$ we have
	\beq\label{e:DIVERSE.p.I1.ubd.indiv}
	p^{\textup{I},1}(e,\bL)
	\le
	\f{\bPa( \bL_e=\bL, (\sigma_e)^1=\red )}
		{2^{k\EPSP/2}}
	= \f{\pi_{\DD}(\bL\,|\,\bt_e)
		\cdot \starpi_e(\red)}
		{2^{k\EPSP/2}}\,.\eeq
Next, since the total probability that \hyperlink{i:DIVERSE.I1}{(I-1)} occurs is at most $2^{-k\EPSP/2}$, we can also bound
	\beq\label{e:DIVERSE.p.I1.ubd}
	\sum_{e\in\delta v}\sum_{\bL} 
	p^{\textup{I},1}(e,\bL)
	\le \f1{2^{k\EPSP/2}}\,.\eeq
Next we note that $p^{\textup{I},1}(e,\bL)=0$ if $\bL\notin\mathbb{D}\cap\mathbb{L}$. In the case $\bL\in\mathbb{D}\cap\mathbb{L}$ we will derive a lower bound on $p^{\textup{I},1}(e,\bL)$. Recall $R_e$ from \eqref{e:DIVERSE.Re}, and let $S_e$ be an independent copy of $R_e$. By similar considerations as for \eqref{eq:frakR.bound}, we have
	\beq\label{e:DIVERSE.tildefrakR.intermediate.rr}
	\bPa\Bigg(
	\ddot{\mathfrak{r}}(\cX)
	\equiv
	\sum_{e\in\delta v} \Ind{\sigma_e=\red\red}
	\ge k^{1/2}\Bigg)
	\le O(1)
	\max_{z\in\set{\minus,\plus}}\Bigg\{
	\f{\P(\sum_{e\in\delta v(z)}
		R_e S_e \ge k^{1/2})}
	{\P(\sum_{e\in\delta v(z)} R_e\ge1)^2}\Bigg\}
	\le \f1{\exp(\Omega(k^{3/2}))}\,.\eeq
Next, for $e\in\delta v$, let $G_e$ denote independent Bernoulli random variables with
	\[\P(G_e=1)
	=\bPa(\bL_e\in\mathbb{D}\cap\mathbb{L})
	=\sum_{\bL}
		\pi_{\DD}(\bL\,|\,\bt_e)
		\Ind{\bL\in\mathbb{D}\cap\mathbb{L}}\,.\]
For $z\in\set{\minus,\plus}$ and any fixed $e\in\delta v(z)$, on the event $\set{(\sigma_e)^1=\red}$ we have
	\beq\label{e:DIVERSE.tilde.r.i.lbd}
	(\mathfrak{r}_e)^1(\cX)
	\equiv
	\sum_{e'\in\delta v(z)\setminus e}
	\mathbf{1}\bigg\{(\sigma_{e'})^1=\red,
		\bL_{e'} \in \mathbb{D}\cap\mathbb{L}
	\bigg\}
	\ge \Big|\hat{\bm{E}}^1(\cX)\setminus
		\set{e}\Big|
	\ge (\mathfrak{r}_e)^1(\cX) 
	- \ddot{\mathfrak{r}}(\cX)\,,\eeq
where $\ddot{\mathfrak{r}}(\cX)$ was defined on the left-hand side of \eqref{e:DIVERSE.tildefrakR.intermediate.rr}. By similar considerations as for \eqref{eq:frakR.bound} and \eqref{e:DIVERSE.tildefrakR.intermediate.rr}, we can express
	\[
	\bPa\Bigg(
	(\mathfrak{r}_e)^1 \le \f{k}{3}
	\,\Bigg|\, (\sigma_e)^1=\red\Bigg)
	=\P\Bigg(
	 \sum_{e'\in\delta v(z)\setminus e}
	 R_{e'} G_{e'} \le \f{k}{3}\Bigg)\,.
	\]
To bound this last probability, note that
for both $z\in\set{\minus,\plus}$ we have
	\begin{align*}
	&\Var\Bigg( \sum_{e'\in\delta v(z)\setminus e}
	R_{e'} G_{e'}\Bigg)
	\le \E\Bigg( \sum_{e'\in\delta v(z)\setminus e}
	R_{e'} G_{e'}\Bigg)
	=\sum_{e'\in\delta v(z)\setminus e}
	\E R_{e'}
	-\sum_{e'\in\delta v(z)\setminus e}
	(\E R_{e'}) \E(1-G_{e'})\\
	&\qquad=
	\bigg(1-o_k(1)\bigg) \f{k\log 2}{2}
	-\sum_{e'\in\delta v(z)\setminus e}
	(\E R_{e'}) \E(1-G_{e'}) 
	\stackrel{\eqref{e:assumption.notD.notL}}{=}
	\bigg(1-o_k(1)\bigg) \f{k\log 2}{2}\,.
	\end{align*}
We can use this last estimate together with Bernstein's inequality to bound
	\beq\label{e:DIVERSE.bernstein}
	\bPa\Bigg(
	(\mathfrak{r}_e)^1 \le \f{k}{3}
	\,\Bigg|\, (\sigma_e)^1=\red\Bigg)
	=\P\Bigg(
	 \sum_{e'\in\delta v(z)\setminus e}
	 R_{e'} G_{e'} \le \f{k}{3}\Bigg)
	\le \f1{\exp(\Omega(k))}\,.\eeq
On the other hand, by similar arguments as for \eqref{eq:frakR.bound}, we can bound
	\[
	\bPa\Bigg(
	(\mathfrak{r}_e)^1 \ge k
	\,\Bigg|\, (\sigma_e)^1=\red\Bigg)
	\le \P\Bigg( \sum_{e'\in\delta v(z)\setminus e}
		R_{e'} \ge k\Bigg)
	\le \f1{\exp(\Omega(k\log k))}\,.
	\]
Combining the last two bounds gives
	\beq\label{e:DIVERSE.tildefrakR.intermediate.red}
	\bPa\Bigg(
	(\mathfrak{r}_e)^1 \in
		\bigg[\f{k}{3},k\bigg]
	\,\Bigg|\, (\sigma_e)^1=\red\Bigg)
	\ge1- \f1{\exp(\Omega(k))}\,.
	\eeq
Returning to \eqref{e:DIVERSE.def.p.I1}, 
for $\bL\in\mathbb{D}\cap\mathbb{L}$ we can lower bound
	\begin{align*}
	p^{\textup{I},1}(e,\bL) 
	&\ge \f1{2^{k\EPSP/2} \cdot 2k}
	\bPa\Bigg(
	\bL_e=\bL,
	(\sigma_e)^1=\red,
	(\sigma_e)^2\ne\red,
	(\mathfrak{r}_e)^1 \in \bigg[\f{k}{3},k\bigg]
	\Bigg)\\
	&\qquad-\Bigg\{\bPa(T=1)
	+\P(\cXa\ne \cXb)
	+\bPa(\ddot{\mathfrak{r}}\ge k^{1/2})
		\Bigg\}\,.\end{align*}
In the above we used the assumption that $\bL\in\mathbb{D}\cap\mathbb{L}$, so if
$(\sigma_e)^1=\red$ and $(\sigma_e)^2\ne\red$
then $e\in\hat{\bm{E}}^1$.
We also used that if $(\mathfrak{r}_e)^1 \ge k/3$
and $\ddot{\mathfrak{r}}\le k^{1/2}$, then
certainly \eqref{e:DIVERSE.tilde.r.i.lbd} implies $|\hat{\bm{E}}^1|\ge2$, as is required for step (I-1) to occur. Lastly we used that if
$(\mathfrak{r}_e)^1 \le k$, then 
it follows from \eqref{e:DIVERSE.tilde.r.i.lbd} that $|\hat{\bm{E}}^1|\le k+1$, so edge $e$ has probability at least $1/(k+1) \ge 1/(2k)$ to be chosen as $\hat{e}^1$. Now recall from the previous construction that
$\P(\cXa\ne \cXb)
\le O(\bPa(T=1))$, so applying \eqref{eq:T2Bound} and \eqref{e:DIVERSE.tildefrakR.intermediate.rr} gives
	\begin{align*}
	p^{\textup{I},1}(e,\bL) 
	\ge \f1{2^{k\EPSP/2} \cdot 2k}
	\bPa\Bigg(
	\bL_e=\bL,
	(\sigma_e)^1=\red,
	(\sigma_e)^2\ne\red,
	(\mathfrak{r}_e)^1 \in \bigg[\f{k}{3},k\bigg]
	\Bigg)
	-\f1{\exp(\Omega(k^{3/2}))}\,.
	\end{align*}
It then follows (again using the independence properties of $\bPa$) that
	\begin{align}\nonumber
	p^{\textup{I},1}(e,\bL) 
	&\stackrel{\eqref{eq:productForm}}{\ge}
	\f{\pi_{\DD}(\bL\,|\,\bt_e)}
		{2^{k\EPSP/2} \cdot 2k}
	\bPa\Big((\sigma_e)^1=\red\Big)
	\bPa\Big((\sigma_e)^2\ne\red\Big)
	\bPa\Bigg(
	(\mathfrak{r}_e)^1 \in \bigg[\f{k}{3},k\bigg]
	\,\Bigg|\,
	(\sigma_e)^1=\red
	\Bigg) \\
	&\stackrel{\eqref{e:DIVERSE.tildefrakR.intermediate.red}}{\ge}
	\f{\pi_{\DD}(\bL\,|\,\bt_e)}
		{2^{k\EPSP/2} \cdot 2k}
	\cdot \f{1-o_k(1)}{2^k}\,,
	\label{e:DIVERSE.p.I.final.lbd}\end{align}
as long as $\bL\in\mathbb{D}\cap\mathbb{L}$. This concludes our analysis of $p^{\textup{I},1}(e,\bL)$. Clearly, 
\eqref{e:DIVERSE.p.I1.ubd} and \eqref{e:DIVERSE.p.I.final.lbd} also hold for 
$p^{\textup{I},2}(e,\bL)$.

We next prove an upper bound on the probability 
$p^{\textup{II},1}(e,\bL)$ from
\eqref{e:DIVERSE.def.p.II1}. Recall that the condition $T(\cXa)=0$ implies 
$|\acute{\bm{E}}^z(\cXa)| = \Theta(k2^k)$
for both $z\in\set{\minus,\plus}$.
It follows that we can upper bound \eqref{e:DIVERSE.def.p.II1} as
	\[
	p^{\textup{II},1}(e,\bL)
	\le
	O\bigg(\f1{k2^k}\bigg)
	\cdot
	\bPa\bigg(\bL_e=\bL,
	|\hat{\bm{E}}^1(\cXa)|\le1
	\bigg)
	\le
	O\bigg(\f{\pi_{\DD}(\bL\,|\,\bt_e)}{k2^k}\bigg)
	\cdot\bPa\bigg(
		|\hat{\bm{E}}^1(\cXa)
		\setminus\set{e}|\le1
		\bigg)\,.\]
Recall from
\eqref{e:DIVERSE.tilde.r.i.lbd} 
that $|\hat{\bm{E}}^1(\cXa)\setminus\set{e}| \le (\mathfrak{r}_e)^1(\cXa)$. Then, arguing similarly as for 
\eqref{e:DIVERSE.bernstein}, 
for each $z\in\set{\minus,\plus}$
we can apply Bernstein's inequality to upper bound
	\begin{align*}
	&\bPa\Bigg(
	(\mathfrak{r}_e)^1
	=
	\sum_{e'\in\delta v(z)\setminus e}
	\mathbf{1}\Big\{(\sigma_{e'})^1=\red,
	\bL_{e'}\in\mathbb{D}\cap\mathbb{L}
	\Big\} \le1
	\,\Bigg|\, x^1=z\Bigg) \\
	&\qquad= \f{\P(\sum_{e'
		\in\delta v(z)\setminus e} R_{e'} G_{e'}
			\le1)}
		{\P(\sum_{e'\in\delta v(z)} R_{e'}\ge1)}
	\le \f1{\exp(k c_0)}\,,
	\end{align*}
for a positive absolute constant $c_0$.
On the other hand, if $(\mathfrak{r}_e)^1\ge2$, in order for $|\hat{\bm{E}}^1(\cXa)\setminus\set{e}|\le1$ it must be that
$(\sigma^{a,2})_{e'}=\red$ for all but at most one of the edges $e'$ contributing to $(\mathfrak{r}_e)^1$, which has chance $O(2^{-k})$. Then
	\[
	\bPa\bigg(
		\hat{\bm{E}}^1(\cXa)
		\setminus\set{e}=\emptyset\bigg)
	\le
	\bPa \bigg((\mathfrak{r}_e)^1\le1\bigg)
	+\underbrace{\bPa\bigg(
	(\mathfrak{r}_e)^1\ge2,
	|\hat{\bm{E}}^1(\cXa)
		\setminus\set{e}| \le1
		\bigg)}_{O(1/2^k)}
	\le \f1{\exp(k c_0)}\,.\]
Substituting into the above expression for
$p^{\textup{II},1}(e,\bL)$ gives
	\beq\label{e:DIVERSE.p.II.final.ubd}
	p^{\textup{II},1}(e,\bL)
	\le \f{\pi_{\DD}(\bL\,|\,\bt_e)}
		{2^{k(1+c_0)}}\,.
	\eeq
A similar bound holds for $p^{\textup{II},2}(e,\bL)$, and this concludes our analysis of $p^{\textup{II},1}(e,\bL)$.

We now turn to estimating the entropy of $\bPc$. Let $\bm{E}_b$ be the (random) subset of edges $e\in\delta v$ for which $(\cXb)_e\ne(\cXc)_e$, or equivalently $(X^b)_e\ne(X^c)_e$. It follows by combining \eqref{e:DIVERSE.p.I1.ubd} and \eqref{e:DIVERSE.p.II.final.ubd} that
	\beq\label{e:DIVERSE.expected.bc.change}
	\E(|\bm{E}_b|)
	\le \sum_{e\in\delta v}\sum_{\bL}\sum_{i=1,2}
		\bigg(
		p^{\textup{I},i}(e,\bL)
		+p^{\textup{II},i}(e,\bL)\bigg)
	\le O\bigg( \f1{2^{k\EPSP/2}}\bigg)\,.\eeq
We write $\Ent(\bm{E}_b)$ to denote the entropy of the random variable $\bm{E}_b$, and we write $H(p)\equiv -p\log p-(1-p)\log (1-p)$ to denote the binary entropy function evaluated at $p\in[0,1]$. Then
	\beq\label{e:DIVERSE.bc.ent.Estar}
	\Ent(\bm{E}_b)
	\le \sum_{e\in\delta v} \Ent( \Ind{e\in\bm{E}_b} )
	\le |\delta v|
	H\bigg( \f{ \E(|\bm{E}_b|) }{|\delta v|}\bigg)
	\stackrel{\eqref{e:DIVERSE.expected.bc.change}}{=} 
	o_k(1)\,,
	\eeq
where the second step used Jensen's inequality and the concavity of the binary entropy function. Now decompose $\uX_{\delta v}\equiv (U,W)$ where $U\equiv(X_e:e\in\bm{E}_b)$. 
Note if $\bm{E}_b$ is given, then $U^b$ takes at most $O(1)$ distinct values, so it follows from \eqref{e:DIVERSE.bc.ent.Estar} that the conditional entropy of $U^b$ given $\bm{E}_b$ is small, that is,
$\Ent(U^b \,|\, \bm{E}_b)=o_k(1)$. Then, since $\cX^b$ can be determined as a function of $(\cX^c,\bm{E}_b,U^b)$, we have
	\beq\label{eq:cXb.cXc.entropyComp}
	\Ent(\cXb)
	\le \Ent(\cX^c) +
	\bigg( \Ent(\bm{E}_b) + \Ent(U^b \,|\, \bm{E}_b)\bigg)
	\stackrel{\eqref{e:DIVERSE.bc.ent.Estar}}{\le}
	\Ent(\cXc) + o_k(1)\,.
	\eeq
This verifies that the entropy of $\bPc$ is not much smaller than that of $\bPb$.

We finally estimate how the transformation from $\bPb$ to $\bPc$ affects the edge marginals. Recall that steps \hyperlink{i:DIVERSE.I1}{(I-1)} and 
\hyperlink{i:DIVERSE.I2}{(I-2)} can change $(\varsigma_e)^i$
from $\SPIN{red}$ to $\SPIN{white}$, and the probability of such a change is lower bounded by \eqref{e:DIVERSE.p.I.final.lbd}. Meanwhile, steps
\hyperlink{i:DIVERSE.II1}{(II-1)} and 
\hyperlink{i:DIVERSE.II2}{(II-2)} 
can change $(\varsigma_e)^i$ from $\SPIN{white}$ to $\SPIN{red}$,
and the probability of such a change is upper bounded by \eqref{e:DIVERSE.p.II.final.ubd}.
It follows that for $\bL\in\mathbb{D}\cap\mathbb{L}$,
	\begin{align}\nonumber
	&\bPc\bigg( (\sigma_e)^i=\red\,\bigg|\, \bL_e=\bL\bigg)
	\le
	\bPb\bigg( (\sigma_e)^i=\red\,\bigg|\, \bL_e=\bL\bigg)
	+\sum_{i=1,2}
	\f{p^{\textup{II},i}(e,\bL)- p^{\textup{I},i}(e,\bL)}
		{\pi_{\DD}(\bL\,|\,\bt_e)}
	\\
	\qquad&\le
	\bPb\bigg( (\sigma_e)^i=\red\,\bigg|\, \bL_e=\bL\bigg)
	+ O\bigg( \f{1}{2^{k(1+c_0)}}\bigg)
	-\Omega\bigg( \f{1}{k 2^{k(1+\EPSP/2)}}\bigg)
	\le
	\starpi_e(\red)
	-\Omega\bigg( \f{1}{k 2^{k(1+\EPSP/2)}}\bigg)\,,
	\label{eq:Xc.red.marginal.div}
	\end{align}
where the last step also used our earlier observation
\eqref{e:DIVERSE.ab.same.mgls} that $\bPb$ has the same edge marginals as $\bPa$. On the other hand, since the probability of
\hyperlink{i:DIVERSE.I1}{(I-1)} or
\hyperlink{i:DIVERSE.I2}{(I-2)} is upper bounded by \eqref{e:DIVERSE.p.I1.ubd.indiv}, we also have
	\beq\label{eq:Xc.red.marginal.div2}
	\bPc\bigg( (\sigma_e)^i=\red\,\bigg|\, \bL_e=\bL\bigg)
	\ge
	\bPb\bigg( (\sigma_e)^i=\red\,\bigg|\, \bL_e=\bL\bigg)
	-\sum_{i=1,2}
	\f{p^{\textup{I},i}(e,\bL)}{\pi_{\DD}(\bL\,|\,\bt_e)}
	\ge\starpi_e(\red)- 
	O\bigg(\f1{2^{k(1+\EPSP/2)}}\bigg)\eeq
for all $\bL\in\mathbb{D}\cap\mathbb{L}$. In the case $(\bL^b)\notin \mathbb{D}\cap\mathbb{L}$ we always set $(\cXc)_e=(\cXb)_e$,
so 
	\beq\label{eq:Xc.red.marginal.notdiv}
	\bPc\bigg( (\sigma_e)^i=\red\,\bigg|\, \bL_e=\bL\bigg)
	=
	\bPb\bigg( (\sigma_e)^i=\red\,\bigg|\, \bL_e=\bL\bigg)
	\eeq
for all $\bL\notin\mathbb{D}\cap\mathbb{L}$. Finally, since we did not yet change any spin with $(\varsigma_e)^i\in\set{\yel,\cya}$
or $\varsigma_e\in\set{\RYC}^2$, we have
	{\setlength{\jot}{0pt}\begin{align}
	\label{eq:Xc.col.marginal.zeta.mgl}
	\bPc((\varsigma_e)^i=\tau\,|\, \bL_e=\bL)
	&=(\bzeta_e)^i(\tau\,|\,\bL)
	\textup{ for all }\tau\in\set{\yel,\cya}, \\
	\bPc(\varsigma_e\in\set{\RYC}^2\,|\, \bL_e=\bL)
	&= \bPa(\varsigma_e\in\set{\RYC}^2\,|\, \bL_e=\bL)
	\le O(k^4/4^k)
	\label{eq:Xc.col.marginal}
	\end{align}}%
for all $\bL$, where the final bound uses \eqref{e:DIVERSE.Pa.RYCsq}.\smallskip

\noindent\bemph{\hypertarget{l:DIVERSE.Pd}{Part 5}. Construction of $\bPd$.} We now construct $\cXd$ from $\cXc$. We will leave the clause type unchanged, $(\bL^d)_e=(\bL^c)_e$ for all $e\in\delta v$. We change some edges from non-\SPIN{white} to \SPIN{white}, according to the following rules.
\begin{enumerate}[(A)]
\item If $\bL\not\in\mathbb{D}\cap\mathbb{L}$ and $(\varsigma^c)_e\ne\whi\whi$, then set $(\varsigma^d)_e=\whi\whi$, and
for $i=1,2$ set
	\[
	(\sigma^{d,i})_e
	=\begin{cases}
	\blu &\textup{if $(\sigma^{c,i})_e=\red$,}\\
	(\sigma^{c,i})_e &\textup{otherwise.}
	\end{cases}
	\]
\item If $\bL\in\mathbb{D}\cap\mathbb{L}$ then modify the spin on $e$ as follows:
\begin{enumerate}[(i)]
\item If $(\varsigma^c)_e=\red\red$ then set
$(\sigma^d,\varsigma^d)_e=(\blu\blu,\whi\whi)$.

\item If $(\varsigma^c)_e\in\set{\RYC}^2\setminus\set{\red\red}$ then set
$(\sigma^d)_e=(\sigma^c)_e$, and for $i=1,2$ set
	\[
	(\varsigma^{d,i})_e
	=\begin{cases}
	\red&\textup{if $(\sigma^{c,i})_e=\red$,}\\
	\whi&\textup{otherwise.}
	\end{cases}
	\]

\item If $(\varsigma^c)_e\in\set{\yel,\cya}\times\set{\whi}$ then set
$(\sigma^d)_e=(\sigma^c)_e$. For $\tau=(\varsigma^{c,1})_e\in\set{\yel,\cya}$ define the quantity
	\[\theta=
	(\theta_e)^1(\tau\,|\,\bL)
	\equiv\min\bigg\{1, \f{2^{-k(1+\EPSP/2)} }
		{(\bzeta_e)^1(\tau \,|\,\bL)}\bigg\}\,.
	\]
Set
 $(\varsigma^d)_e=\whi\whi$ with probability $\theta$. With the remaining probability $1-\theta$ leave $(\varsigma^d)_e$ unchanged.
If $(\varsigma^c)_e\in\set{\whi}\times\set{\yel,\cya}$ then perform the analogous procedure with the roles of the two copies switched.

\item Lastly, if $(\varsigma^c)_e=\whi\whi$ then set
$(X^d)_e=(\sigma^d,\varsigma^d)_e=(\sigma^c,\varsigma^c)_e=(X^c)_e$.
\end{enumerate}
\end{enumerate}
Note that property \eqref{e:DIVERSE.hat.E.nonempty}
of $\cXc$ ensures that the above transformation results in a valid configuration $\cXd$
with the same frozen spin, $x^c=x^d\in\set{\minus,\plus,\free}^2$. 
Since the transformation does not add any \SPIN{red} spins, for $\bL\in\mathbb{D}\cap\mathbb{L}$ we can bound
	\begin{align*}
	&\bPd\bigg(\varsigma_e=\red\whi\,\bigg|\,\bL_e=\bL\bigg)
	\le\bPc\bigg(\varsigma_e=\red\whi\,\bigg|\,\bL_e=\bL\bigg)
	\stackrel{\eqref{eq:Xc.red.marginal.div}}{\le} \starpi_e(\red)
	-\Omega\bigg( \f{1}{k 2^{k(1+\EPSP/2)}}\bigg)\\
	&\qquad=\zeta_e(\red\whi\,|\,\bL)
	+\zeta_e(\set{\red}\times\set{\RYC}\,|\,\bL)
	-\Omega\bigg( \f{1}{k 2^{k(1+\EPSP/2)}}\bigg)\\
	&\qquad\stackrel{\odot}{\le}
	\zeta_e(\red\whi\,|\,\bL)
	+O\bigg( \f1{2^{k(1+\EPSLIGHT)}}\bigg)
	-\Omega\bigg( \f{1}{k 2^{k(1+\EPSP/2)}}\bigg)
	\le \zeta_e(\red\whi\,|\,\bL)\,.
	\end{align*}
The inequality marked $\odot$ above follows from Lemma~\ref{l:lotsOfAsDiverse} and the assumption $\bL\in\mathbb{D}\cap\mathbb{L}$, with $\EPSLIGHT$ as in Definition~\ref{d:heavy}, and recalling that we took $\EPSP\le\EPSLIGHT$. For a lower bound, we note that for the case $\bL\in\mathbb{D}\cap\mathbb{L}$, the above transformation removes \SPIN{red} spins only in the case $(\sigma^c)_e=\red\red$. It follows that, for all $\bL\in\mathbb{D}\cap\mathbb{L}$, we have
	\begin{align*}
	&\bPd\bigg( (\varsigma_e)^1=\red
		\,\bigg|\, \bL_e=\bL\bigg)
	\ge \bPc\bigg( (\varsigma_e)^1=\red
		\,\bigg|\, \bL_e=\bL\bigg)
		-\bPc\bigg(
		\sigma_e=\red\red\,\bigg|\, \bL_e=\bL\bigg) \\
	&\stackrel{\eqref{e:DIVERSE.Pa.RYCsq}}{\ge} \bPc\bigg( (\varsigma_e)^1=\red
		\,\bigg|\, \bL_e=\bL\bigg)
		- O\bigg( \f{k^4}{4^k} \bigg)
	\stackrel{\eqref{eq:Xc.red.marginal.div2}}{\ge}
	 \starpi_e(\red) 
	 -O\bigg(\f1{2^{k(1+\EPSP/2)}}\bigg)
	 - O\bigg( \f{k^4}{4^k} \bigg) \\
	&\ge\zeta_e(\varsigma=\red\whi\,|\,\bL)
		- O\bigg(\f1{2^{k(1+\EPSP/2)}}\bigg)\,.
	\end{align*}
Similar bounds hold if we exchange the roles of the two copies, so we also have
	\[\zeta_e(\whi\red\,|\,\bL)
		- O\bigg(\f1{2^{k(1+\EPSP/2)}}\bigg)
	\le\bPd\bigg(\varsigma_e=\whi\red
		\,\bigg|\, \bL_e=\bL\bigg)
	\le \zeta_e(\whi\red\,|\,\bL)\]
for all $\bL\in\mathbb{D}\cap\mathbb{L}$. 
For $\bL\in\mathbb{D}\cap\mathbb{L}$ 
and $\tau\in\set{\yel,\cya}$ we also have
	\begin{align*}
	&\bPd\bigg(\varsigma_e=\tau\whi
		\,\bigg|\, \bL_e=\bL\bigg)
	\le
	\Big(1-(\theta_e)^1(\tau)\Big)
	\bPc\bigg((\varsigma_e)^1=\tau
		\,\bigg|\, \bL_e=\bL\bigg)
	\stackrel{\eqref{eq:Xc.col.marginal.zeta.mgl}}{=}
	\Big(1-(\theta_e)^1(\tau)\Big)
	(\bzeta_e)^i(\tau\,|\,\bL)\\
	&\qquad=\max\bigg\{0,
		(\bzeta_e)^i(\tau\,|\,\bL)-\f1{2^{k(1+\EPSP/2)}}
	\bigg\}
	=\max\bigg\{0,
		\bzeta_e(\tau\whi\,|\,\bL)
		+\bzeta_e(\tau\times\set{\RYC}\,|\,\bL)
		-\f1{2^{k(1+\EPSP/2)}}
	\bigg\}\\
	&\qquad\le \bzeta_e(\tau\whi\,|\,\bL)
		+O\bigg( \f1{2^{k(1+\EPSLIGHT)}}\bigg)
		-\f1{2^{k(1+\EPSP/2)}}
	\le \bzeta_e(\tau\whi\,|\,\bL)
		-\Theta\bigg(\f1{2^{k(1+\EPSP/2)}}\bigg)\,,
	\end{align*}	
where the transition to the last line is by another application of Lemma~\ref{l:lotsOfAsDiverse}, again using that $\bL\in\mathbb{D}\cap\mathbb{L}$. For a lower bound on the same quantity, we have
for all $\bL\in\mathbb{D}\cap\mathbb{L}$ and $\tau\in\set{\yel,\cya}$ that
	\begin{align*}
	&\bPd\bigg(\varsigma_e=\tau\whi
		\,\bigg|\, \bL_e=\bL\bigg)
	\ge\Big(1-(\theta_e)^1(\tau)\Big)
	\bigg\{
	\bPc\bigg( (\varsigma_e)^1=\tau
		\,\bigg|\, \bL_e=\bL\bigg)
	-\bPc\bigg( \varsigma_e\in\set{\tau}\times\set{\RYC}
		\,\bigg|\, \bL_e=\bL\bigg)\bigg\}\\
	&\qquad\stackrel{\eqref{eq:Xc.col.marginal}}{\ge}
		\Big(1-(\theta_e)^1(\tau)\Big)
	\bPc\bigg( (\varsigma_e)^1=\tau
		\,\bigg|\, \bL_e=\bL\bigg)
		-O\bigg(\f{k^4}{4^k}\bigg)\\
	&\qquad\stackrel{\eqref{eq:Xc.col.marginal.zeta.mgl}}{=}
	\max\bigg\{0,
		\bzeta_e(\tau\whi\,|\,\bL)
		+\bzeta_e(\tau\times\set{\RYC}\,|\,\bL)
		-\f1{2^{k(1+\EPSP/2)}}
	\bigg\}-O\bigg(\f{k^4}{4^k}\bigg)
	\ge \bzeta_e(\tau\whi\,|\,\bL)
	-\Theta\bigg(\f1{2^{k(1+\EPSP/2)}}\bigg)\,.
	\end{align*}
Finally, the above procedure never yields $(\varsigma^d)_e\in\set{\RYC}^2$, so
for all $\bL\in\mathbb{D}\cap\mathbb{L}$
and all $\varsigma\in\set{\RYC}^2$ we have
	\[
	\bzeta_e(\varsigma\,|\,\bL)
	\ge 0 = \bPd\bigg(\varsigma_e=\varsigma
		\,\bigg|\,\bL_e=\bL\bigg)
	\ge \bzeta_e(\varsigma\,|\,\bL)
		-\Theta\bigg(\f1{2^{k(1+\EPSP/2)}}\bigg)\,,
	\]
where the last inequality again uses Lemma~\ref{l:lotsOfAsDiverse}. In summary, in the above we have shown
	\beq\label{e:DIVERSE.cd.zeta.mgl}
	\bzeta_e(\varsigma\,|\,\bL)
	\ge \bPd\bigg(\varsigma_e=\varsigma
		\,\bigg|\,\bL_e=\bL\bigg)
	\ge \bzeta_e(\varsigma\,|\,\bL)
		-\Theta\bigg(\f1{2^{k(1+\EPSP/2)}}\bigg)
		\eeq
for all $\bL\in\mathbb{D}\cap\mathbb{L}$
and all $\varsigma\in\set{\RYC,\whi}^2\setminus\set{\whi\whi}$. On the other hand, for $\bL\notin\mathbb{D}\cap\mathbb{L}$ we have
	\beq\label{e:DIVERSE.cd.zeta.mgl.notDL}
	\bzeta_e(\varsigma\,|\,\bL)
	\ge0=
	\bPd\bigg(\varsigma_e=\varsigma
		\,\bigg|\,\bL_e=\bL\bigg)
	\ge \bzeta_e(\varsigma\,|\,\bL)
	- O\bigg( \f{k^2}{2^k}\bigg)
	\eeq
for all $\varsigma\in\set{\RYC,\whi}^2\setminus\set{\whi\whi}$, where the last inequality is by Lemma~\ref{l:lotsOfAs}.

We now turn to estimating the entropy of $\bPd$. To this end, let $\bm{E}_c$ denote the subset of edges $e\in\delta v$ for which $(\cXd)_e\ne(\cXc)_e$, or equivalently $(X^d)_e\ne(X^c)_e$. Note that if 
$(\varsigma^{c,i})_e\ne\whi$
and $(\varsigma^{d,i})_e\ne\whi$,
then $(\varsigma^{c,i})_e=(\varsigma^{d,i})_e$
so $e\notin\bm{E}_c$.
If $e\in\bm{E}_c$ then
the above procedure must transform
$(\varsigma^{c,i})_e\ne\whi$ into $(\varsigma^{d,i})_e=\whi$ for at least one index $i=1,2$, so we can bound
	\begin{align*}
	&\P(e\in \bm{E}_c\,\bigg|\,\bL_e=\bL)
	\le \sum_{i=1,2}
	\bigg\{ \bPc\Big( (\varsigma_e)^i\ne\whi
		\,\Big|\,\bL_e=\bL\Big)
	-\bPd\Big( (\varsigma_e)^i\ne\whi
		\,\Big|\,\bL_e=\bL\Big)\bigg\}\\
	&\qquad\le
	\sum_{i=1,2}\bigg\{
	(\bzeta_e)^i\Big(\set{\RYC}
		\,\Big|\,\bL\Big)
	-\bPd\Big( (\varsigma_e)^i\ne\whi
		\,\Big|\,\bL\Big)\bigg\}
	\le O(1)\sum_{\varsigma\ne\whi\whi}
		\bigg|
		\bzeta_e(\varsigma\,|\,\bL)
		- \bPd\Big(\varsigma_e=\varsigma
			\,\Big|\,\bL_e=\bL\Big)
		\bigg|\,,
	\end{align*}
where the transition to the second line follows by combining \eqref{eq:Xc.red.marginal.div} and \eqref{eq:Xc.col.marginal.zeta.mgl}. Applying
\eqref{e:DIVERSE.cd.zeta.mgl} and \eqref{e:DIVERSE.cd.zeta.mgl.notDL} gives
	\begin{align}\nonumber
	\E(|\bm{E}_c|)
	&\le
	O(1)\sum_{e\in\delta v}
		\sum_{\varsigma\ne\whi\whi}
		\bigg\{
		\f{\bPc(\bL_e\in\mathbb{D}\cap\mathbb{L})}
			{2^{k(1+\EPSP/2)}}
		+\f{\bPc(\bL_e\notin\mathbb{D}\cap\mathbb{L})}
			{2^k/k^2}\bigg\}\\
	&\le O\bigg(\f{k}{2^{k\EPSP/2}}
	+ \f{\notDiverse(v)+\notLight(v)}
		{2^k/k^2}\bigg)
	\stackrel{\eqref{e:assumption.notD.notL}}{\le}
	O\bigg(\f{k^2}{2^{k\EPSP/2}}\bigg)\,,
	\label{e:DIVERSE.Ec.bound}\end{align}
similarly to the estimate
\eqref{e:DIVERSE.expected.bc.change} from the previous transformation. It follows by the same argument as for \eqref{eq:cXb.cXc.entropyComp} that
	\beq\label{eq:cXc.cXd.entropyComp}
	\Ent(\bPc) 
	=\Ent(\cXc) 
	\le \Ent(\cXd) + o_k(1)
	= \Ent(\bPd) + o_k(1).
	\eeq
This concludes our analysis of the measure $\bPd$.\smallskip

\noindent\bemph{\hypertarget{l:DIVERSE.Pe}{Part 6}. Construction of $\bPe$.}
In the final step of the proof we construct $\cXe$ from $\cXd$, where we have chosen the notation $\cXe$ rather than $\cX^e$ to avoid confusion with edge labels $e\in\delta v$. For $\sigma\in\set{\yel,\blu}^2$ we define subsets $A^\sigma$ of $
\set{\RYGB}^2\times\set{\RYC,\whi}^2$ as follows:
	{\setlength{\jot}{0pt}\begin{align*}
	A^{\blu\blu}&=\set{(\red\red,\red\red),
		(\blu\red,\whi\red),(\blu\red,\cya\red),
		(\red\blu,\red\whi),(\red\blu,\red\cya),
		(\blu\blu,\whi\cya),(\blu\blu,\cya\whi),
		(\blu\blu,\cya\cya)}\,,\\
	A^{\blu\yel}&=\set{
		(\red\yel,\red\yel),
		(\blu\yel,\whi\yel),
		(\blu\yel,\cya\yel)
		}\,,\\
	A^{\yel\blu}&=\set{
		(\yel\red,\yel\red),
		(\yel\blu,\yel\whi),
		(\yel\blu,\yel\cya)
		}\,,\\
A^{\yel\yel}&=\{(\yel\yel,\yel\yel)\}\,.
\end{align*}}%
Note that for each $\sigma\in\set{\yel,\blu}^2$ and every $(\sigma',\varsigma')\in A^\sigma$ we have the compatibility relation $\sigma'\sim\varsigma'$; and the spins $\sigma,\sigma'$ correspond to the same frozen spin $x\in\set{\minus,\plus}^2$. Moreover, for each $\varsigma\in\set{\RYC,\whi}^2\setminus\set{\whi\whi}$ 
there is exactly one spin $\sigma\equiv\sigma^f(\varsigma)\in\set{\yel,\blu}^2$ such that 
$(\sigma',\varsigma)\in A^\sigma$ for some $\sigma'$.
We then define distributions $\xi^\sigma(\cdot\,|\,\bL)$ over pairs $X\equiv (\sigma,\varsigma)\in\set{\RYGB}^2\times\set{\RYC,\whi}^2$ by setting
	\beq\label{e:DIVERSE.xi.defn}
	\xi^\sigma((\sigma,\varsigma)\,|\,\bL)
	=\f{\bzeta_e(\varsigma\,|\,\bL)
	-\bPd(\varsigma_e = \varsigma \,|\, \bL_e=\bL)}{\bPd(\sigma_e = \sigma, \varsigma_e= \whi\whi
		\,|\, \bL_e=\bL )}
	\eeq
for all $(\sigma,\varsigma)\in A^\sigma$, and assigning the remaining probability to the event $\set{(\sigma,\varsigma)=(\sigma,\whi\whi)}$. To see that these distributions are well-defined, note that on the right-hand side of \eqref{e:DIVERSE.xi.defn}, the denominator is $\Theta(1)$, while \eqref{e:DIVERSE.cd.zeta.mgl} and \eqref{e:DIVERSE.cd.zeta.mgl.notDL} together imply that the numerator is nonnegative and small. It follows that
	\[
	\xi^\sigma((\sigma,\whi\whi)\,|\,\bL)
	= 1-\sum_{X'\in A^\sigma}
		\xi^\sigma(X'\,|\,\bL)
	\in[0,1]\,,
	\]
so $\xi^\sigma(\cdot\,|\,\bL)$ is a valid probability measure over $A^\sigma \cup \set{(\sigma,\whi\whi)}$. In the final transformation, given $\cXd$, for each $e\in\delta v$ we set $(\bL^f)_e=(\bL^d)_e$. If $(\sigma^d)_e\in\set{\yel,\blu}^2$ and $(\varsigma^d)_e=\whi\whi$, then we let $(X^f)_e$ be a sample from $\xi^\sigma(\cdot\,|\,(\bL^d)_e)$. In all other cases we let $(X^f)_e=(X^d)_e$. It is straightforward to verify that this results in a valid configuration $\cXe$ with the same frozen spin,
$x^f=x^d\in\set{\minus,\plus,\free}^2$. For each $\varsigma\in\set{\RYC,\whi}^2\setminus\set{\whi\whi}$,
	\begin{align}\nonumber
	&\bPe\Big(\varsigma_e=\varsigma\,\Big|\,\bL_e=\bL\Big)
	= \bPd\Big(\varsigma_e=\varsigma\,\Big|\,\bL_e=\bL\Big)
	+ \sum_{\sigma: \sigma=\sigma^f(\varsigma)}
	\bPd\Big(
	\sigma_e=\sigma,
	\varsigma_e=\whi\whi
	\,\Big|\,\bL_e=\bL\Big)
	\xi^\sigma( (\sigma,\varsigma) \,|\,\bL)\\
	&\qquad\stackrel{\eqref{e:DIVERSE.xi.defn}}{=}
	\bPd\Big(\varsigma_e=\varsigma\,\Big|\,\bL_e=\bL\Big)
	+ \bigg\{ \bzeta_e(\varsigma\,|\,\bL)
	-\bPd\Big(\varsigma_e=\varsigma\,\Big|\,\bL_e=\bL\Big)\bigg\}
	=\bzeta_e(\varsigma\,|\,\bL)
	\,,\label{e:DIVERSE.final.msr.sat.varsigma}
	\end{align}
so $\bPe$ satisfies condition
\eqref{e:D.L.diverse.varsigma}. On the other hand, since $x^a=x^f\in\set{\minus,\plus,\free}^2$, we have
	\[
	\bPe\Big( x^i=z\,\Big|\,\bL_e=\bL\Big)
	=\bPa\Big( x^i=z\,\Big|\,\bL_e=\bL\Big)
	\stackrel{\eqref{eq:productForm}}{=} \begin{cases}
	\starpi_e(\set{\red,\blu})
	&\textup{if $z\in\set{\minus,\plus}$
		and $e\in\delta v(z)$,}\\
	\starpi_e(\yel)
	&\textup{if $z\in\set{\minus,\plus}$
		and $e\in\delta v(\minus z)$,}\\
	\starpi_e(\grn) 
	&\textup{if $z=\free$.}
	\end{cases}
	\]
Moreover, since $(\varsigma_e)^i=\red$ if and only if $(\sigma_e)^i=\red$, it follows that
	\[
	\bPe\Big( (\sigma_e)^i=\red\,\Big|\,\bL_e=\bL\Big)
	\stackrel{\eqref{e:DIVERSE.final.msr.sat.varsigma}}{=}
	(\bzeta_e)^i(\red\,|\,\bL)
	= \starpi_e(\red)\,.
	\]
It follows from the last two displays combined that $\bPe$ satisfies condition~\eqref{e:D.L.diverse.judicious}.
Lastly, since $(\bL^a)_e=(\bL^f)_e$ for all $e$, 
and $\bPa$ satisfied condition \eqref{e:D.L.diverse.clause} by construction,
we can conclude that $\bPe$ also satisfies 
condition~\eqref{e:D.L.diverse.clause}.

We now estimate the entropy of $\bPe$.
Let $\bm{E}_d$ denote the subset of edges $e\in\delta v$ for which $(\cXd)_e\ne(\cXe)_d$. For any edge in $e\in \bm{E}_d$ we must have $(\varsigma^d)_e=\whi\whi$ and $(\varsigma^f)_e\ne\whi\whi$. As a result we can bound
	\begin{align*}
	\E(|\bm{E}_d|)
	&\le \sum_{e\in \delta v}
	\bigg(
	\bPe(\varsigma_e\ne\whi\whi)
	- \bPd(\varsigma_e\ne\whi\whi)
	\bigg)
	= \sum_{e\in \delta v}
	\sum_{\varsigma\ne\whi\whi}\bigg(
	 \bzeta_e(\varsigma)
	 - \bPd(\varsigma_e=\varsigma)
	 \bigg) \le 
	 O\bigg( \f{k^2}{2^{k \EPSP/2}}\bigg)\,,
\end{align*}
using the same reasoning as for \eqref{e:DIVERSE.Ec.bound}. It then follows by the argument of~\eqref{eq:cXb.cXc.entropyComp} that
	\beq\label{eq:cXd.cXe.entropyComp}
	\Ent(\bPd)\le \Ent(\bPe) + o_k(1)\,.\eeq
Combining equations~\eqref{eq:A.to.B.Ent.Comp}, \eqref{eq:cXb.cXc.entropyComp}, \eqref{eq:cXc.cXd.entropyComp},\eqref{eq:cXd.cXe.entropyComp},
and \eqref{eq:a.opt.EntComp} gives
	\[
	\Ent(\bPe)
	\ge \Ent(\bPa) - o_k(1)
	\stackrel{\eqref{eq:a.opt.EntComp}}{\ge}
	\Ent(\bP) + \f1{10^6} - o_k(1)\,.
	\]
This contradicts the assumption that $\bP$ is the maximal-entropy measure satisfying constraints
\eqref{e:D.L.diverse.clause}--\eqref{e:D.L.diverse.varsigma}, thereby concluding the proof of the proposition.
\end{proof}
\end{ppn}

\subsection{Conclusion of \textit{a~priori} estimates}\label{ss:conclusion.apriori}

In this subsection we complete the proof of Proposition~\ref{p:apriori}. We first prove the expansion result, Lemma~\ref{l:expand.on.types}, which was stated in \S\ref{ss:hess}.

\begin{proof}[Proof of Lemma~\ref{l:expand.on.types}]
We first show that in the original $\ksat$ graph
$\GG'=(V',F',E')\sim\P=\P_{n,m}$, \bemph{every} subset of variables $S\subseteq V'$ satisfies the bound
	\beq\label{e:expansion.bd.orig}
	|F_\bullet(S)|\le \begin{cases}
		n 2^{29k/30}/k & 
		\textup{if $0 \le |S|/n \le 4/5$,}\\
		|S| &\textup{if $c_1/2 \le |S|/n \le 1/4$,}
		\end{cases}\eeq
where $c_1$ is as in Proposition~\ref{p:posfrac}.
(Note that \eqref{e:expansion.bd} refers to the number $|V|$ of variables in the processed graph, while in \eqref{e:expansion.bd.orig} and throughout this proof we use $n\equiv |V'|\ge|V|$ to refer to the number of variables in the original graph. Thus in \eqref{e:expansion.bd.orig} we crudely divided the first bound by a factor $k$, which we will use below to account for the discrepancy in the number of variables in $\GG'$ versus in $\GG$.) Towards the proof of \eqref{e:expansion.bd.orig}, recall that $F_\bullet(S)$ denotes the subset of clauses $a\in F$ with $|S\cap \pd a|\ge 9k/10$. Note that for any fixed subset $S\subseteq V'$ of size $|S|=ns$, the probability that a particular clause $a\in F'$ belongs to $F_\bullet(S)$ is given by
	\[
	p_s
	= \P\bigg( \textup{Bin}(k,s) \ge \f{9k}{10}\bigg)
	\le
	\exp \Bigg\{
		-k\Ent\bigg(\f{9}{10}\,\bigg|\,s\bigg)
		\Bigg\}
	\le
	 \begin{cases}
	2^{-k/20}&\textup{for all $0\le s\le 4/5$,}\\
	s^{2k/3}&\textup{for all $0\le s\le 1/4$,}
	\end{cases}\]
where the first inequality is from the Chernoff bound and the second is by direct comparison.
If $\E$ denotes expectation over the law $\P$ of $\GG'$, then another application of the Chernoff bound gives
	\begin{align*}
	E_n(s,\gamma)
	&\equiv
	\E \Bigg| \bigg\{
	S : |S|=ns \textup{ and }
		|F_\bullet(S)| \ge n\gamma\bigg\}\Bigg|
	\le \binom{n}{ns}
	\P\bigg( \textup{Bin}(m,p_s) \ge n\gamma\bigg)\\
	&\le
	\exp\Bigg\{
	n\Bigg[
	\Ent(s) - \alpha \Ent\bigg( \f{\gamma}{\alpha}
		\,\bigg|\, p_s\bigg)\Bigg]
	\Bigg\}\,.
	\end{align*}
For $0\le s\le 4/5$, taking $\gamma= 2^{29k/30}/k$ gives
	\[
	\alpha \Ent\bigg(
		\f{\gamma}{\alpha}\,\bigg|\, p_s\bigg)
	= \gamma
	\log \f{\gamma/\alpha}{p_s}
	+ (\alpha-\gamma)
	\log \f{1-\gamma/\alpha}{1-p_s}
	\ge
	\Omega(\gamma k)
	- O(\gamma + \alpha p_s)
	\ge \Omega(\gamma k)\,,
	\]
so we see that $E_n(s,\gamma)$ is exponentially small in $n$. For $c_1/2\le s\le 1/4$, taking $\gamma=s$ gives
	\[
	\alpha \Ent\bigg(
		\f{\gamma}{\alpha}\,\bigg|\, p_s\bigg)
	\ge s \log \f{s/\alpha}{s^{2k/3}}
	- O(s + \alpha p_s)
	\ge \Omega\bigg( k s\log \f1s\bigg)
	\ge \Omega\bigg( k\Ent(s)\bigg)\,,
	\]
so again $E_n(s,\gamma)$ is exponentially small in $n$. It follows that for the original $\ksat$ instance, we have
	\beq\label{e:expo.bound.on.expansion}
	\P\bigg(
	\textup{any subset $S'\subseteq V'$
	violates \eqref{e:expansion.bd.orig}}
	\bigg)
	\le
	\exp\Bigg\{-n\Omega\bigg(
		k\Omega\bigg(\Ent\bigg(\f{c_1}{2}
		\bigg)\bigg)\Bigg\}
	\le \exp\{-nc'\}\,,\eeq
where $c'$ depends only on $k$ and $c_1$ (and $c_1$ in turn depends only on $k,\alpha,R$). If in the processed $\ksat$ instance $\GG=(V,F,E)$ we have any subset $S\subseteq V$ violating \eqref{e:expansion.bd},
then in the original instance
$\GG'=(V',F',E')$ it must be the case that either some subset
$S'\subseteq V'$ violates \eqref{e:expansion.bd.orig},
or $|V| \le |V'|/k$. It follows by combining with 
\eqref{e:expo.bound.on.expansion} and 
Proposition~\ref{p:small.fraction.removed.in.processing} that 
	\begin{align*}
	&\E\bigg[\P_{\DD}\Big(
	\textup{any subset $S\subseteq V$
	violates \eqref{e:expansion.bd}}
	\Big)\bigg] \\
	&\le 
	\P\bigg( |V| \le \f{|V'|}{k}\bigg)
	+\P\bigg(
	\textup{any subset $S'\subseteq V'$
	violates \eqref{e:expansion.bd.orig}}
	\bigg)
	\le o_n(1)\,,
	\end{align*}
and the claimed result follows by Markov's inequality.
\end{proof}

\begin{ppn}[used only in proof of Proposition~\ref{p:apriori}]
\label{p:noNonDiverse} On the event that $\DD$ expands on type-subsets (Definition~\ref{d:expansion}), all non-defective variables are diverse (Definition~\ref{d:diverse}); and all strongly 
non-defective clauses are light (Definition~\ref{d:heavy}).

\begin{proof} Since we have restricted to $\omega\in\bm{I}_0$ (as has been the case throughout this section), the total number of variables $v$ with frozen spin 
$x_v\in\set{\plus\plus,\minus\minus}$ is close to $n/2$, in any pair frozen configuration that is consistent with $\omega$. Let $S'$ denote the set of all non-diverse variables: then
	\beq\label{e:diverse.Markov}
	|S'|
	= \sum_{v\in V}
	\mathbf{1}\bigg\{
	\pi_v(\set{\plus\plus,\minus\minus}) > \f34 
	\bigg\}
	\le \f43
	\sum_{v\in V}
	\pi_v(\set{\plus\plus,\minus\minus})
	\le \f{3n}{4}
	\eeq
(where the last inequality uses the above observation on the number of variables with $x_v\in\set{\plus\plus,\minus\minus}$).

Let $\notDiverse\subseteq F$ denote the set of non-diverse clauses; it follows from Definitions~\ref{d:diverse}~and~\ref{d:expansion} that $\notDiverse\subseteq F_\bullet(S')$. Now let $S\subseteq S'$ denote the subset of non-diverse variables that are non-defective (Definition~\ref{d:j.defective}), so we see from the above that
$|S| \le |S'| \le 3n/4$. Recall from Remark~\ref{r:defective.clauses}
that a clause is termed strongly non-defective if it neighbors only non-defective variables. Let $N\subseteq\notDiverse$ denote the set of non-diverse clauses that are strongly non-defective; it follows that $N\subseteq F_\bullet(S)$. Then, on the event that $\DD$ expands on type-subsets, it follows from Definition~\ref{d:expansion} that
	\beq\label{e:diverseClauseBound}
	|N|
	\le \left\{\hspace{-6pt}\begin{array}{rl}
		n 2^{29k/30} & 
		\text{if }|S|/n \le 4/5\,.\\
		|S| &\text{if } |S|/n \le 1/16\,.
		\end{array}\right.\eeq
Next let $\notLight\subseteq F$ denote the set of all heavy clauses, and let $H\subseteq\notLight$ denote the subset of heavy clauses that are strongly non-defective. By Proposition~\ref{p:DIVERSE.VAR}, if $v$ is non-defective but not diverse, then
	\[
	\f{2^k}{2^{k\EPSP}}<
	\max\Big\{\notDiverse(v),
		\notLight(v)\Big\}
	\le \notDiverse(v)
		+ \notLight(v)\,,
	\]
where we can take $\EPSP$ to be much smaller than all the other constants $\ep$ in this section. Summing the above over all $v\in S$,
and recalling the definitions
of $\notDiverse(v)$ and $\notLight(v)$
from Definitions~\ref{d:diverse}~and~\ref{d:heavy}, we obtain
	\beq\label{e:noNonDiverse.S.leq.N.H}
	\f{2^k |S|}{2^{k\EPSP}} 
	< \sum_{v\in S}
	\sum_{e\in\delta v}
	\sum_{\bL}
	\pi_{\DD}(\bL\,|\,\bt_e)
	\bigg(
	\Ind{\bL\in\notDiverse
		\cup \notLight}\bigg)
	\le \Big(|N| + |H|\Big)k + |S|\,,
	\eeq
where the right-hand side of \eqref{e:noNonDiverse.S.leq.N.H} is obtained as follows: the contribution from the case of strongly non-defective clause types $\bL$ is bounded by the term $(|N|+|H|)k$, where the factor $k$ accounts for the fact that one clause can neighbor up to $k$ variables. As for the case where $\bL$ fails to be strongly non-defective, we recall from Remark~\ref{r:defective.clauses} that each $v\in S$ neighbors at most one such clause, so the contribution from this case is bounded by the term $|S|$. 

Now note that \eqref{e:diverseClauseBound} bounds $|N|$ in terms of $|S|$, while \eqref{e:noNonDiverse.S.leq.N.H} bounds $|S|$ in terms of $|N|$ and $|H|$. We will close the loop by bounding $|H|$ in terms of $|N|$. In fact we will bound $|E_H|$ where $E_H$ is a subset of edges defined as follows: for any edge $e=(av)$ where $\bL_a=\bL$ and $j=j(\bt_e)$, write $\omega_e\equiv\omega_{\bL,j}$. Then let
	\[
	E_H
	\equiv\bigg\{
	e=(av): \textup{$a$ is a strongly non-defective clause
		and }\omega_e(\red\red)
		\ge \f1{2^{k(1+\EPSLIGHT)}}
	\bigg\}\,.
	\]
Note that a clause belongs to $H$ if and only if it lies incident to $E_H$, so $|H|\le |E_H|$. We will bound
	\[
	|E_H|
	\le \sum_{i=0}^2 |E_H(U_i)|
	\]
where $E_H(U_i)$ denotes the subset of edges in $E_H$ that are incident to $U_i$, and $(U_0,U_1,U_2)$ gives a partition of the set of all non-defective variables: 
	{\setlength{\jot}{0pt}\begin{align*}
	U_0 &\equiv
	\set{\text{non-defective variables $v$}:
		\notDiverse(v)
		\le 2^{k(1-\EPSP)},
		\notLight(v)
		\le 2^{k(1-\EPSP)}
		}\,,\\
	U_1 &\equiv
	\set{\text{non-defective variables $v$}:
		\notDiverse(v)
		\le 2^{k(1-\EPSP)},
		\notLight(v)
		> 2^{k(1-\EPSP)}
		}\,,\\
	U_2 &\equiv
	\set{\text{non-defective variables $v$}:
		\notDiverse(v)
		> 2^{k(1-\EPSP)}	}\,.
	\end{align*}}%
It follows from Proposition~\ref{p:DIVERSE.VAR} that
if $v\in U_0$ then $v$ is diverse, and moreover that $\pi_v(x)$ is close to $1/4$ for each $x\in\set{\minus,\plus}^2$. Thus each $v\in U_0$ satisfies the conditions of Lemma~\ref{l:large11Case} --- in particular, Lemma~\ref{l:large11Case} has a condition $\pi_v(\plus\plus)\ge 2^{-k/16}$ which is certainly guaranteed if $\pi_v(\plus\plus)$ is close to $1/4$.
It follows by Lemma~\ref{l:large11Case} that an edge $e=(av)$ with $v\in U_0$ cannot belong to $E_H$ unless the clause $a$ is non-diverse, which means $a\in N$. Therefore
	\[
	|E_H(U_0)| \le \bigg|\Big\{
		(av) \in E_H : a\in N\Big\}\bigg|
	\le |N|k\,.
	\]
Next, similarly to \eqref{e:noNonDiverse.S.leq.N.H} we have the bound
	\beq\label{e:EPSAP.U.one.bound}
	\f{2^k |U_1|}{2^{k\EPSP}}
	<
	\sum_{v\in U_1} \notLight(v)
	\le |H|k + |U_1| \,.
	\eeq
For $v\in U_1$, applying Corollary~\ref{c:RRFreq} gives
	\beq\label{e:EPSAP.avg.heavy.bound}
	\sum_{e\in\delta v}
	\sum_{\bL\in\mathbb{D}}
	\pi(\bL|\bt_e)
	\mathbf{1}\Bigg\{ \omega_{\bL,j}(\red\red)
	\ge \f1{2^{k(1+\EPSLIGHT)} }\Bigg\}
	\le \f{{k^2}
		2^{k(1+\EPSLIGHT)}}{2^{k\EPSDIV}}
	\le \f{2^k}{2^{k\EPSLIGHT}}\,,\eeq
where the last bound holds because we took $\EPSLIGHT$ to be much smaller than $\EPSDIV$. Combining with the preceding upper bound on $|U_1|$ gives
	\begin{align*}
	|E_H(U_1)|
	&\le
	\bigg|\Big\{ (av) : a\in N\Big\}\bigg|
	+\bigg|\Big\{ (av)\in E_H : v\in U_1,
		a\in\mathbb{D}\Big\}\bigg|\\
	&\stackrel{\eqref{e:EPSAP.avg.heavy.bound}}{\le}
	|N|k
	+|U_1| \f{2^k}{2^{k\EPSLIGHT}}
	\stackrel{\eqref{e:EPSAP.U.one.bound}}{\le}
	|N|k
	+|H| \f{2^{k\EPSP}}{2^{k\EPSLIGHT}} k
		\bigg(1+o_k(1)\bigg)\,.
	\end{align*}
Finally, similarly to \eqref{e:noNonDiverse.S.leq.N.H} and \eqref{e:EPSAP.U.one.bound} we have
	\beq\label{e:EPSAP.U.two.bound}
	\f{2^k|U_2| }{2^{k\EPSP}}
	< \sum_{v\in U_2} \notDiverse(v)
	\le |N|k + |U_2|\,,\eeq
from which it follows that
	\[
	|E_H(U_2)|
	\le \bigg|\Big\{(av) : v\in U_2\Big\}\bigg|
	\le |U_2| 2^k k^2
	\stackrel{\eqref{e:EPSAP.U.two.bound}}{\le}
	|N| 2^{k\EPSP}k^3 
	\bigg(1+o_k(1)\bigg)\,.\]
Combining the above bounds gives
	\[|H| \le \sum_{i=0}^2 |E_H(U_i)| 
	\le |N| 2^{k\EPSP} k^4
		+ |H| \f{2^{k\EPSP}}{2^{k\EPSLIGHT}} k^2
	\le
	|N| 2^{k\EPSP}k^4
	+ \f{|H|}{2^{k\EPSLIGHT/2}}
	\,,\]
where the last bound holds since we took $\EPSP$ to be much smaller than $\EPSLIGHT$, as discussed above. Rearranging gives
$|H| \le O(1) |N| 2^{k\EPSP}k^4$, and combining
with \eqref{e:noNonDiverse.S.leq.N.H} gives
	\[
	|S|
	\stackrel{\eqref{e:noNonDiverse.S.leq.N.H}}{\le}
	O(1)
	\Big(|N|+|H|\Big) \f{2^{k\EPSP}}{2^k} k
	\le \f{|N| k^{O(1)}}{2^{k(1-2\EPSP)}}\,.
	\]
Together with the expansion bound 
\eqref{e:diverseClauseBound}
(which can be applied, since we saw earlier that $|S| \le 3n/4$), we see that the only possibility is for $|S|=|N|=0$. This implies $|H|=0$ and concludes the proof.
\end{proof}
\end{ppn}

\begin{proof}[Proof of Proposition~\ref{p:apriori}]
If $\DD$ expands on type-subsets, then Proposition~\ref{p:noNonDiverse} yields that all non-defective variables are diverse, and all strongly non-defective clauses are light. Moreover, since a non-defective clause can neighbor at most one defective variable, it follows that every non-defective clause is diverse in the sense of Definition~\ref{d:diverse}. Thus, if a variable $v$ is strongly non-defective, then all the clauses around it must be both diverse and light, meaning $\notDiverse(v)=\notLight(v)=0$. Proposition~\ref{p:balancedeVertex} then gives the desired estimate \eqref{e:balancedeVertex} (by taking $\ZETA=\EPSLIGHT$) for $\omega_{\bL,j}$ whenever $\bL(j)$ is a strongly non-defective edge type.
\end{proof}

\section{Monotonicity of the $\onersb$ free energy}
\label{s:monotonicity}

In this section we prove Proposition~\ref{p:phi}, which says that the $\onersb$ free energy is strictly decreasing with respect to $\alpha$ in the interval \eqref{e:alpha.regime}. Throughout this section we let
$\albd \le \dot{\alpha}\le\ddot{\alpha}
\le\aubd$.

\begin{dfn}[monotone coupling of trees] 
\label{d:monotone.coupling}
Recall from
Definition~\ref{d:vtoc.PGW}
that $\PGW\equiv\PGW(\alpha)$ denotes the law of the variable-to-clause tree $\tree_{\vrt\crt}$. We now define a pair of such trees as follows. First let $\ddot{\tree}_{\vrt\crt}\sim\PGW(\ddot{\alpha})$. 
Then delete each clause in
$\ddot{\tree}_{\vrt\crt}\sim\PGW(\ddot{\alpha}) \setminus \set{\crt}$ independently with probability $1-\dot{\alpha}/\ddot{\alpha}$, and let
$\dot{\tree}_{\vrt\crt}$ be the connected component containing $\vrt$, so that
$\dot{\tree}_{\vrt\crt}\sim\PGW(\dot{\alpha})$. Let $\PGW(\dot{\alpha},\ddot{\alpha})$ denote the law of the pair 
$(\dot{\tree}_{\vrt\crt},\ddot{\tree}_{\vrt\crt})$. This is the \bemph{monotone coupling} between
the measures $\PGW(\dot{\alpha})$ and $\PGW(\ddot{\alpha})$. Recalling Definition~\ref{d:coupled.seq.of.messages}, let
	\[
	(\dot{\bmeta}^\ell,
	\ddot{\bmeta}^\ell)
	\equiv \bigg(
	\FF_\ell(\dot{\tree}_{\vrt\crt} ),
	\FF_\ell(\ddot{\tree}_{\vrt\crt} )
	\bigg)
	\,,
	\]
for all $\ell\ge0$. Recall that $\dot{\bmeta}^\ell$ and $\ddot{\bmeta}^\ell$ are (random) probability measures over $\set{\minus,\plus,\free}$; similarly as before we will denote
$\dot{\eta}^\ell\equiv\dot{\bmeta}^\ell(\minus)$
and
$\ddot{\eta}^\ell\equiv\ddot{\bmeta}^\ell(\minus)$.
In the notation of Proposition~\ref{p:fp}, the marginal law of $\dot{\eta}^\ell$ is the measure $\mu^\ell(\dot{\alpha})$, while that of $\ddot{\eta}^\ell$ is $\mu^\ell(\ddot{\alpha})$.
We hereafter write $\mu^\ell(\dot{\alpha},\ddot{\alpha})$ for the law of the pair $(\dot{\eta}^\ell,\ddot{\eta}^\ell)$. 
\end{dfn}

\begin{lem}\label{lem-Q-monotone-coupling-prelim}
Let $0\le \ddot{\alpha}-\dot{\alpha} \le \exp(-2^k)$, and let $(\dot{\eta}^\ell,\ddot{\eta}^\ell)\sim \mu^\ell(\dot{\alpha},\ddot{\alpha})$
as described in Definition~\ref{d:monotone.coupling}. Then
	\[\E\Bigg(
	\E\bigg[ ( \dot{\eta}^\ell-\ddot{\eta}^\ell )^2
		\,\bigg|\,\dot{\tree}_{\vrt\crt} \bigg]^2
	\Bigg)
	\le \f{(\ddot{\alpha}-\dot{\alpha})^2}
		{ 2^{4k} / k^{O(1)} }\]
for all $\ell\ge0$. (On the left-hand side above, in the inner expectation, $\dot{\eta}^\ell$ is determined as a measurable function of $\dot{\tree}_{\vrt\crt}$. The outer expectation is with respect to the law of $\dot{\tree}_{\vrt\crt}\sim\PGW(\dot{\alpha})$.)

\begin{proof}
The bound certainly holds for $\ell=0$, where we have
$\dot{\eta}^\ell=\ddot{\eta}^\ell=1/2$ with probability one. Suppose inductively that the bound holds for $\ell-1\ge0$. Define the i.i.d.\ array
	\[\vec{t}\equiv\bigg(
		(t^\plus_{ij}, t^\minus_{ij})_{i,j\ge1}
		\bigg)
	\]
where each entry $t=(\dot{t},\ddot{t})$ is an independent sample from the monotone coupling $\PGW(\dot{\alpha},\ddot{\alpha})$. Let
	\beq\label{e:array.coupled.trees.a}
	\vec{a}
	\equiv \bigg( 
		(a^\plus_{ij},a^\minus_{ij})_{i,j\ge1} \bigg)
	\eeq
where each entry $a=(\dot{\eta},\ddot{\eta})$ is obtained by applying the map $\bm{F}_{\ell-1}$ to the corresponding entry in $\vec{t}$: that is,
	\[
	a^\plus_{ij}
	\equiv (\dot{\eta}^\plus_{ij},
	\ddot{\eta}^\plus_{ij})
	\equiv \Bigg(
		\Big[\bm{F}_{\ell-1}
		(\dot{t}^\plus_{ij})\Big](\minus),
		\Big[\bm{F}_{\ell-1}
		(\ddot{t}^\plus_{ij})\Big](\minus)
		\Bigg)
	\in[0,1)^2\,.
	\]
Next, let
$\dot{d}^\PM,\delta^\PM$ be independent random variables with
	\[
	\dot{d}^\PM
	\sim\Pois\bigg( \f{\dot{\alpha}k}{2}\bigg)\,,\quad
	\delta^\PM
	\sim\Pois\bigg( \f{
		(\ddot{\alpha}-\dot{\alpha})
		k}{2}\bigg)\,.
	\]
Let $\ddot{d}^\PM\equiv
\dot{d}^\PM+\delta^\PM$.
Let $\dot{T}\equiv\dot{\tree}_{\vrt\crt}$ be the variable-to-clause tree with $|\pd\vrt(\PM)\setminus\crt|=\dot{d}^\PM$, such that the $i$-th clause in $\pd\vrt(\PM)\setminus\crt$
has child subtrees $\dot{t}_{ij}$ for $1\le j\le k-1$.
Likewise, let $\ddot{T}\equiv\ddot{\tree}_{\vrt\crt}$ be the variable-to-clause tree with $|\pd\vrt(\PM)\setminus\crt|=\ddot{d}^\PM$, such that the $i$-th clause in $\pd\vrt(\PM)\setminus\crt$
has child subtrees $\ddot{t}_{ij}$ for $1\le j\le k-1$. Then 
	\[
	(\dot{T},\ddot{T})
	\equiv
	\Big(
	\dot{\tree}_{\vrt\crt},\ddot{\tree}_{\vrt\crt}
	\Big)
	\sim\PGW(\dot{\alpha},\ddot{\alpha})\,.\]
With $F(x,y)$ as in \eqref{e:def.F.x.y}, we can express
	\[
	\dot{\eta}^\ell
	= F(\dot{\Pi}^\minus,\dot{\Sigma})
	=\f{(1-\dot{\Pi}^\minus) \exp(\dot{\Sigma})}
	{1+(1-\dot{\Pi}^\minus) \exp(\dot{\Sigma})}
	\]
where we use similar notation as in Definition~\ref{d:distributional.var.recurs},
 \eqref{e:Pi.as.sum.of.X}, and
 \eqref{e:stability.notations}:
	\[
	\dot{\Pi}^\PM
	\equiv \prod_{i=1}^{\dot{d}^\PM}
	\dot{s}^\PM_i\,,\quad
	\dot{s}^\PM_i\equiv 
		1- \prod_{j=1}^{k-1} \dot{\eta}^\PM_{ij}\,,\quad
	\dot{\Sigma}^\PM_i
	\equiv -\log\dot{\Pi}^\PM\,,\quad
	\dot{\Sigma}\equiv
	\log \f{\dot{\Pi}^\plus}{\dot{\Pi}^\minus}\,.
	\]
We make all the analogous definitions to express $\ddot{\eta}^\ell
=F(\ddot{\Pi}^\minus,\ddot{\Sigma})$. As an intermediary between $\dot{T}$ and $\ddot{T}$, we define quantities that use the random messages $\ddot{\eta}^\PM_{ij}$
with the random degrees $\dot{d}^\PM$:
	\[
	\tilde{\Pi}^\PM
	\equiv \prod_{i=1}^{\dot{d}^\PM}
	\ddot{s}^\PM_i\,,\quad
	\ddot{s}^\PM_i\equiv 
		1- \prod_{j=1}^{k-1}
		\ddot{\eta}^\PM_{ij}\,,\quad
	\tilde{\Sigma}^\PM_i
	\equiv -\log\tilde{\Pi}^\PM
	\,,\quad
	\tilde{\Sigma}\equiv
	\log \f{\tilde{\Pi}^\plus}{\tilde{\Pi}^\minus}\,.
	\]
With this notation, we can decompose
	\beq\label{e:diff.eta.terms}
	\ddot{\eta}^\ell-\dot{\eta}^\ell
	=\overbrace{\bigg(
	F(\ddot{\Pi}^\minus,\ddot{\Sigma})
	-F(\tilde{\Pi}^\minus,\tilde{\Sigma})
	\bigg)}^{\textup{denote this }
		\mathbf{A}}
	+\overbrace{\bigg(
	F(\tilde{\Pi}^\minus,\tilde{\Sigma})
	-F(\dot{\Pi}^\minus,\dot{\Sigma})
	\bigg)}^{\textup{denote this }\mathbf{a}}
	\eeq
Let $\dot{X}^\PM_i\equiv-\log\dot{s}^\PM_i$
and $\ddot{X}^\PM_i\equiv-\log\ddot{s}^\PM_i$. Using the bound \eqref{e:F.bdd.derivs} from
Lemma~\ref{l:F.bounds.calculus}, we have
	\begin{align*}
	&\E\bigg[
	\mathbf{A}^2\,\bigg|\,\dot{T}\bigg]
	\le
	2\,\E\Bigg[
	\Big(\ddot{\Pi}^\minus-\tilde{\Pi}^\minus\Big)^2
	+\Big(\ddot{\Sigma}-\tilde{\Sigma}\Big)^2
	\,\Bigg|\,\dot{T}
	\Bigg]\\
	&\qquad=2\,\E\bigg[(\tilde{\Pi}^\minus)^2\,
			\bigg|\,\dot{T}
			\bigg]
		\E\Bigg[
		\bigg( 1 -
		\prod_{i=\dot{d}^\minus+1}
			^{\dot{d}^\minus+ \delta^\minus}
			 \ddot{s}^\minus_i
			 \bigg)^2 \,
			 \Bigg|\,\dot{T}
			 \Bigg]
	+ 2\,\E\Bigg[
		\bigg(
		\sum_{i=\dot{d}^\minus+1}
			^{\dot{d}^\minus+ \delta^\minus}
		\ddot{X}^\minus_i
		-
		\sum_{i=\dot{d}^\plus+1}
			^{\dot{d}^\plus+ \delta^\plus}
		\ddot{X}^\plus_i
		\bigg)^2
		\,\Bigg|\,
		\dot{T}\Bigg]\\
	&\qquad\le
	2\,\E\bigg[(\tilde{\Pi}^\minus)^2\,
			\bigg|\,\dot{T}
			\bigg]
		\P\Bigg( \Pois\bigg( 
		\f{k(\ddot{\alpha}-
		\dot{\alpha})}{2} \bigg)
		>0 \Bigg)
	+2\,\E\Big[ \Pois( k(\ddot{\alpha}
	-
		\dot{\alpha})) \Big]
		\E\Big[ (\ddot{X}^\minus_i)^2 \Big]\,.
	\end{align*}
Recall from Lemma~\ref{l:X.tail.bounds} that
$\ddot{X}^\minus_i$ is well concentrated around 
roughly $1/2^{k-1}$, from which it follows that
	\[\tilde{\Pi}^\minus
	=\f1{\exp(\Sigma^\minus)}
	=\exp\Bigg\{- \sum_{i=1}^{\dot{d}^\minus}
	 \ddot{X}^\minus_i
	\Bigg\}
	\]
is well concentrated around $1/2^k$. It follows that
	\[\E\Bigg(
	\E\Big[\mathbf{A}^2\,
		\Big|\,\dot{T}\Big]^2
		\Bigg)
	\le \f{(\ddot{\alpha}-\dot{\alpha})^2}
		{2^{4k}/k^{O(1)}}\,.\]
As for $\mathbf{a}$,
again using
the bound \eqref{e:F.bdd.derivs} from
Lemma~\ref{l:F.bounds.calculus}, we have
	\beq\label{eq-recursion-bound}
	\E\Big[ \mathbf{a}^2 \,\Big|\,
		\dot{T}\Big]
	\le 2\,\E\Bigg[(\dot{\Pi}^\minus-\tilde{\Pi}^\minus )^2
	+
	(\dot{\Sigma}-\tilde{\Sigma})^2
			\,\Bigg|\, \dot{T}
			\Bigg]\,.
	\eeq
For the first term on the right-hand side of \eqref{eq-recursion-bound} we can bound
	\[(\dot{\Pi}^\minus-\tilde{\Pi}^\minus)^2
	\le 
	\dot{d}^\minus
	\sum_{i=1}^{\dot{d}^\minus}
	(\Pi^\minus[i])^2
	(\dot{s}^\minus_i-\ddot{s}^\minus_i)^2\,,\quad
	\Pi^\minus[i] \equiv
		\prod_{i'=1}^{i-1}
		\dot{s}^\minus_{i'}
		\prod_{i'=i+1}^{\dot{d}^\minus}
		\ddot{s}^\minus_{i'}\,.\]
Then, using that $\dot{d}^\PM$ are measurable functions of $\dot{T}$, we have
	\[\E\Bigg(
		\E\bigg[
		(\dot{\Pi}^\minus-\tilde{\Pi}^\minus)^2
		\,\bigg|\,\dot{T}\bigg]^2
		\Bigg)
	\le \overbrace{\E\Bigg[
		(\dot{d}^\minus)^3
		\sum_{i=1}^{\dot{d}^\minus}
		(\Pi^\minus[i])^4
		\Bigg]}^{\le k^{O(1)}}
	\,\E\Bigg( 
	\E\bigg[ (\dot{s}^\minus_1-\ddot{s}^\minus_1)^2
			\,\bigg|\,
			\dot{T}\bigg]^2
			\Bigg)\,,\]
where the first factor is $\le k^{O(1)}$ because $\dot{d}^\minus$ is well concentrated around $k\dot{\alpha}/2$ while each $\Pi^\minus[i]$ is well concentrated around $1/2^k$
(using Lemma~\ref{l:X.tail.bounds} again). For the second factor, we can bound
	\[(\dot{s}^\minus_i-\ddot{s}^\minus_i)^2
	\le k\sum_{j=1}^{k-1}
		u^\minus_i[j]^2 
		(\dot{\eta}^\minus_{ij}
			-\ddot{\eta}^\minus_{ij} )^2\,,\quad
	u^\minus_i[j]
	\equiv \prod_{j'=1}^{j-1}
		\dot{\eta}^\minus_{ij'}
		\prod_{j'=j+1}^{k-1}
		\ddot{\eta}^\minus_{ij'}\,.\]
Combining with the inductive hypothesis gives
	\[\E\Bigg(\E\bigg[(\dot{s}^\minus_i-\ddot{s}^\minus_i)^2
		\,\bigg|\,\dot{T}
		\bigg]^2
		\Bigg)
	\le \f{(\ddot{\alpha}-\dot{\alpha})^2}
	{2^{4k} / k^{O(1)}}
		\Bigg\{\max_{1\le j\le k-1}
		\E[ u^\minus_i[j]^4 ]
		\Bigg\}
	\le \f{(\ddot{\alpha}-\dot{\alpha})^2}{
		2^{8k} / k^{O(1)}}\,.\]
Turning to the second term on the right-hand side of \eqref{eq-recursion-bound}, we have
	\[\E\Bigg(
	\E\bigg[ (\dot{\Sigma}^\minus
		-\tilde{\Sigma}^\minus)^2 
		\,\bigg|\, \dot{T} \bigg]^2
	\Bigg)
	=\E\Bigg(
	\E\Bigg[ 
	\Bigg(
	\sum_{i=1}^{\dot{d}^\minus} 
	(\dot{X}^\minus_i-
	\ddot{X}^\minus_i)
	\Bigg)^2 
		\,\Bigg|\, \dot{T} \Bigg]^2
	\Bigg)
	\le 
	\E\Big[ (\dot{d}^\minus)^2\Big]
	\E\Bigg(
	\E\bigg[ (\dot{X}^\minus_i-
	\ddot{X}^\minus_i)^2\,\bigg|\,\dot{T}\bigg]^2
	\Bigg)\,.\]
By a similar derivation as for \eqref{e:telescoping.sum.u.h}, we can bound
	 \[\E\Bigg(\E\bigg[
	(\dot{X}^\minus_1 -
		\ddot{X}^\minus_1)^2
	\,\bigg|\, \dot{T} \bigg]^2\Bigg)
	\le k^3
	\sum_{j=1}^{k-1}
	\E\Bigg(
	\bigg(\f{u[j]}{1-u[j]}\bigg)^4
	\Bigg)
	\,\E\Bigg(
	\E\bigg[(\eta_j-\tilde\eta_j)^2 \,\bigg|\, \dot{T}\bigg]^2
	\Bigg)
	\le \f{(\ddot{\alpha}-\dot{\alpha})^2}{
		2^{8k} / k^{O(1)}}\,,\]
where the last step again uses the inductive hypothesis together with Lemma~\ref{l:X.tail.bounds}.
Altogether we obtain
	\[
	\E\bigg(
		\E\Big[\mathbf{a}^2
		\,\Big|\,\dot{T}\Big]^2\bigg)
	\le 
	\f{(\ddot{\alpha}-\dot{\alpha})^2}{ 2^{4k}/k^{O(1)}}.\]
Combining with the bound on $\mathbf{A}$
verifies the inductive hypothesis, concluding the proof.
\end{proof}
\end{lem}

\begin{lem}\label{lem-coupling-Q-Q-prime}
Let $0\le \ddot{\alpha}-\dot{\alpha}\le\exp(-2^k)$, and $(\dot{\eta}^\ell,\ddot{\eta}^\ell)\sim \mu^\ell(\dot{\alpha},\ddot{\alpha})$. Then
	\[\E\bigg(
	\E\Big[ \ddot{\eta}^\ell-\dot{\eta}^\ell
		\,\Big|\, \dot{T} \Big]^2
	\Bigg)
	\le \f{(\ddot{\alpha}-\dot{\alpha})^2}
		{2^{3k}/k^{O(1)}}\]
for all $\ell\ge0$.

\begin{proof}
We will argue by induction, using the same notation as in the proof of Lemma~\ref{lem-Q-monotone-coupling-prelim}. The bound clearly holds for $\ell=0$, so suppose it holds for $\ell-1\ge0$. Decompose 
$\ddot{\eta}^\ell-\dot{\eta}^\ell =\mathbf{A}+\mathbf{a}$ as in \eqref{e:diff.eta.terms}. Applying
the bound \eqref{e:F.second.derivs} from Lemma~\ref{l:F.bounds.calculus} gives
	\[
	\mathbf{a} 
	=\overbrace{\bigg\{
	\Big( \tilde{\Pi}^\minus-\dot{\Pi}^\minus \Big)
	F_x(\dot{\Pi}^\minus,\dot{\Sigma})
	\bigg\}}^{\textup{denote this }\mathbf{a}_x}
	{}+{}
	\overbrace{\bigg\{
	(\tilde{\Sigma}-\dot{\Sigma})
	F_y(\dot{\Pi}^\minus,\dot{\Sigma})
	\bigg\}}^{\textup{denote this }\mathbf{a}_y}
	{}+{}
	\overbrace{O\bigg(
	(\tilde{\Pi}^\minus-\dot{\Pi}^\minus)^2
	\bigg)}^{\textup{denote this }\mathbf{a}_{xx}}
	{}+{}
	\overbrace{
	O\bigg(
	\Big(\tilde{\Sigma}-\dot{\Sigma}\Big)^2
	\bigg)
	}^{\textup{denote this }\mathbf{a}_{yy}}\,.
	\]
We deal with each term separately in what follows.\smallskip

\noindent\bemph{Bound on $\mathbf{a}_x$.} Let $\Pi^\minus[i]$ and $u^\minus_i[j]$ be as defined in the proof of Lemma~\ref{lem-Q-monotone-coupling-prelim}. Then we have
	\[\tilde{\Pi}^\minus-\dot{\Pi}^\minus
	=
	\sum_{i=1}^{\dot{d}^\minus}
	\Pi^\minus[i] (\ddot{s}^\minus_i
		-\dot{s}^\minus_i)
	=
	\sum_{i=1}^{\dot{d}^\minus}
	\Pi^\minus[i]
	\sum_{j=1}^{k-1}
	u^\minus_i[j]
	(\dot{\eta}^\minus_{ij}
	-\ddot{\eta}^\minus_{ij})\,.\]
Meanwhile, $F_x(\dot{\Pi}^\minus,\dot{\Sigma})$ is a measurable function of $\dot{T}$, 
and from Lemma~\ref{l:F.bounds.calculus} we have $|F_x(\dot{\Pi}^\minus,\dot{\Sigma})|\le1$. Combining with the inductive hypothesis, together with the independence of the random subtrees, we have
	\begin{align*}
	\E\bigg(\E\Big[\mathbf{a}_x\,\Big|\,\dot{T}
		\Big]^2\bigg)
	&\le\E\Bigg(
	\Bigg\{
	\sum_{i=1}^{\dot{d}^\minus}	\sum_{j=1}^{k-1}
	\E\bigg[
		\Pi^\minus[i]
		u^\minus_i[j]
		\,\bigg|\,\dot{T}\bigg]
		\E\bigg[
		(\dot{\eta}^\minus_{ij}
		-\ddot{\eta}^\minus_{ij})
		\,\bigg|\,\dot{T}\bigg]
	\Bigg\}^2
	\Bigg)
	\\
	&\le \f{(\ddot{\alpha}-\dot{\alpha})^2}
		{2^{3k}/k^{O(1)}}
	\E\Bigg(
	\dot{d}^\minus k
	\sum_{i=1}^{\dot{d}^\minus}	\sum_{j=1}^{k-1}
	\E\bigg[
		\Pi^\minus[i]
		u^\minus_i[j]
		\,\bigg|\,\dot{T}\bigg]^2
	\Bigg) \le \f{(\ddot{\alpha}-\dot{\alpha})^2}
		{2^{3k}/k^{O(1)}}\,,
	\end{align*}
where the last inequality uses the estimates from Lemma~\ref{l:X.tail.bounds}.\smallskip

\noindent\bemph{Bound on $\mathbf{a}_y$.} For this part of the proof we will generate the pair $(\dot{T},\ddot{T})$ in a slightly different way, as follows. We take the subtrees $\dot{t}_{ij}$ and $\ddot{t}_{ij}$ as before; the only difference is in the $\PM$-degrees of the root variable $\vrt$. Let $d\sim\Pois(k\dot{\alpha})$, and $\delta\sim\Pois(k(\ddot{\alpha}-\dot{\alpha}))$.
Let $S_i$ be i.i.d.\ symmetric random signs, and let
	\[
	\dot{d}^\PM
	\equiv
	\sum_{i=1}^d \Ind{S_i=\PM}\,,\quad
	\ddot{d}^\PM
	\equiv
	\sum_{i=1}^{d+\delta} \Ind{S_i=\PM}\,.
	\]
We then let $\vrt$ have $\PM$-degrees $\dot{d}^\PM$ in $\dot{T}$, and $\ddot{d}^\PM$ in $\ddot{T}$. Then, similarly to \eqref{e:telescoping.sum.u.h} we can decompose
	\[
	\tilde{\Sigma}
	-\dot{\Sigma}
	=\sum_{i=1}^d S_i
		(\ddot{X}_i - \dot{X}_i)
	= -\sum_{i=1}^d S_i
		\sum_{j=1}^{k-1}
		\log\f{1-u_i[j]\ddot{\eta}_{ij}}
		{1-u_i[j]\dot{\eta}_{ij}}\,.
	\]
Note in the above that $\dot{X}_i$ stands for $\dot{X}^\plus_i$ or $\dot{X}^\minus_i$ depending on the sign $S_i$, but we have dropped the superscripts for notational convenience. Applying the approximation
	\[
	f(\eta)
	=\log\f1{1-u_i[j]\eta}
	=f(\eta_0)
	+(\eta-\eta_0)f'(\eta_0)
	+O\bigg( \|f''\|_\infty
	(\eta-\eta_0)^2 \bigg)
	\]
gives the following expansion for $\mathbf{a}_y$:
	\[\mathbf{a}_y
	=
	\overbrace{\Bigg(
	-F_y(\dot{\Pi}^\minus,\dot{\Sigma})
		\sum_{i=1}^d S_i
		\sum_{j=1}^{k-1}
		\f{u_i[j] (\ddot{\eta}_{ij}-\dot{\eta}_{ij})}
		{1-u_i[j]\dot{\eta}_{ij}}
		\Bigg)}^{\textup{denote
			 this }
			\mathbf{a}_{y,1}}
	{} + {} O\overbrace{\Bigg(\sum_{i=1}^d
		\sum_{j=1}^{k-1}
		\Big(\f{u_i[j]
		(\ddot{\eta}_{ij}-\dot{\eta}_{ij})
		}{1-u_i[j]}\Big)^2
		\Bigg)}^{\textup{denote
			 this }
			\mathbf{a}_{y,2}}\,.
	\]
We bound these terms separately, beginning with the quadratic term $\mathbf{a}_{y,2}$:
	\[
	\E[\mathbf{a}_{y,2}\,|\,\dot{T}]^2
	\le dk
	\sum_{i=1}^d
	\sum_{j=1}^{k-1}
	\E\Bigg[
		\bigg(\f{u_i[j]}{1-u_i[j]}\bigg)^2
		\,\Bigg|\, \dot{T}\Bigg]^2
	\,
	\E\Big[(\ddot{\eta}_{ij}-\dot{\eta}_{ij})^2
			\,\Big|
			\,\dot{T}_{ij}\Big]^2\,.
	\]
Combining with the inductive hypothesis,
Lemma~\ref{l:X.tail.bounds}, and Lemma~\ref{lem-Q-monotone-coupling-prelim} gives
	\[\E\bigg(\E[\mathbf{a}_{y,2}\,|\,\dot{T}]^2
	\bigg)
	\le\f{(\ddot{\alpha}-\dot{\alpha})^2}{2^{4k} / k^{O(1)}}\,.\]
We next bound the contribution from the linear term $\mathbf{a}_{y,1}$, which requires some more care. Expanding $\E[\E[\mathbf{a}_{y,1}\,|\,\dot{T}]^2]$ as a double sum over indices $1\le i_1,i_2\le d$, we first bound the contribution from the cross terms $i_1\ne i_2$. Here the issue is that although $S_{i_1}S_{i_2}$ certainly has mean zero, 
 the cross term may not have mean zero because of the factor $F_y(\dot{\Pi}^\minus,\dot{\Sigma})^2$ which is correlated with $S_{i_1},S_{i_2}$. However we shall argue that this effect is negligible. To this end, we shall approximate $\dot{\Pi}^\minus$ and $\dot{\Sigma}$ by
	\[\dot{\Pi}^\minus[i_1i_2]
	\equiv \prod_{\substack{i \in
		[\dot{d}]
		\setminus \set{i_1,i_2},\\
		S_i=\minus }}
		\dot{s}_i, \quad
	\dot{\Sigma}[i_1i_2]
	\equiv
	\sum_{\substack{i \in
		[\dot{d}]
		\setminus \set{i_1,i_2} }}
		S_i X_i\,.\]
Let $H(x,y) \equiv F_y(x,y)^2$, and note that
for $0\le x\le1$ and $xe^y\le1$ we have
	\begin{align*}
	|H_x(x,y)| 
	&= 2 \cdot \Bigg| 
	\f{(1-x)e^y}{(1+(1-x)e^y)^3}
	\f{1-(1-x)e^y}{1+(1-x)e^y}
	\f{e^y}{(1-xe^y)+ e^y}
	\Bigg|
	\le2\,,\\
	|H_y(x,y)|
	&= 2 \cdot\Bigg|
	\bigg(\f{(1-x)e^y}{(1+(1-x)e^y)^2}\bigg)^2
	\f{1-(1-x)e^y}{1+(1-x)e^y}
	\Bigg|
	\le2\,.
	\end{align*}
It follows that, abbreviating 
$H[i_1i_2]
	\equiv
	H(\dot{\Pi}^\minus[i_1i_2],\dot{\Sigma}[i_1i_2])$, we have
	\[
	\Big|H(\dot{\Pi}^\minus,\dot{\Sigma})
	-H[i_1i_2]\Big|
	\le
	2\Bigg\{
	\Big|\dot{\Pi}^\minus-\dot{\Pi}^\minus[i_1i_2]\Big|
	+ 
	\Big|\dot{\Sigma}-\dot{\Sigma}[i_1i_2]\Big|
	\Bigg\}\,.
	\]
Returning to the expression for $\mathbf{a}_{y,1}$, 
let us denote
	\[\bm{T}(i)
	\equiv\sum_{j=1}^{k-1}
	\E\bigg[
	\f{u_i[j]}{1-u_i[j]
		\dot{\eta}_{ij} }
		\,\bigg|\,\dot{T} \bigg]\,
	\E\Big[
	(\ddot{\eta}_{ij}-\dot{\eta}_{ij})
	\,\Big|\, \dot{T}
	\Big]\,.\]
We then use the Cauchy--Schwarz inequality to bound
	\[
	\bm{T}(i)^2
	\le
	k \sum_{j=1}^{k-1} 
	\overbrace{\Bigg(
	\E\bigg[
	\f{u_i[j]}{1-u_i[j]}
		\,\bigg|\,\dot{T} \bigg]^2
	\,
	\E\Big[
	(\ddot{\eta}_{ij}-\dot{\eta}_{ij})
	\,\Big|\, \dot{T}
	\Big]^2
	\Bigg)}^{\textup{denote this } \bm{T}(ij)^2}\,.
	\]
Using the inductive hypothesis and Lemma~\ref{l:X.tail.bounds}, the $i_1,i_2$ cross term in $\E[\E[\mathbf{a}_{y,1}\,|\,\dot{T}]^2]$ can be bounded by
	\begin{align*}
	&\overbrace{\E\Bigg(
		H[i_1i_2]
	\prod_{l=1}^2 \Big\{ S_{i_l} \bm{T}(i_l)\Big\}
		\Bigg)}^\text{zero}
	+
	\E\Bigg[
	\Big( H(\dot{\Pi}^\minus,
	\dot{\Sigma}) - H[i_1i_2]\Big)
	\prod_{l=1}^2 \Big\{ S_{i_l} \bm{T}(i_l)\Big\}
		\Bigg] \\
	&=O(1)
	\E\Bigg[
	\bigg\{
	\Big|\dot{\Pi}^\minus-\dot{\Pi}^\minus[i_1i_2]\Big|
	+\Big|\dot{\Sigma}-\dot{\Sigma}[i_1i_2]\Big|
	\bigg\}
	\sum_{l=1}^2
	\bm{T}(i_l)^2
	\Big]\Bigg]\\
	&\le
	\f{ (\ddot{\alpha}-\dot{\alpha})^2 }
		{2^{3k} / k^{O(1)}}
	\E\Bigg[
	\bigg(
	\Big|\dot{\Pi}^\minus-\dot{\Pi}^\minus[i_1i_2]\Big|
	+\Big|\dot{\Sigma}-\dot{\Sigma}[i_1i_2]\Big|
	\bigg)
	\sum_{l=1}^2
	\sum_{j=1}^{k-1}
	\E\bigg[
	\f{u_{i_l}[j]}{1-u_{i_l}[j]}
		\,\bigg|\,\dot{T}\setminus\dot{T}_{ij}
		\bigg]^2
	\Bigg]
	\le\f{ (\ddot{\alpha}-\dot{\alpha})^2 }
		{2^{6k} / k^{O(1)}}\,.
	\end{align*}
The total contribution to
$\E[\E[\mathbf{a}_{y,1}\,|\,\dot{T}]^2]$
from cross terms $i_1\ne i_2$ is thus
$ \le k^{O(1)}(\ddot{\alpha}-\dot{\alpha})^2 / 2^{4k}$, so
	\begin{align*}
	&\E\bigg(\E[\mathbf{a}_{y,1}\,|\,\dot{T}]^2\bigg)
	\le\f{ (\ddot{\alpha}-\dot{\alpha})^2 }
		{2^{4k} / k^{O(1)}}
		+ \E\Bigg(
		F_y(\dot{\Pi}^\minus,\dot{\Sigma})^2
		\sum_{i=1}^d 
		\E\bigg[
		\sum_{j=1}^{k-1}
		\f{u_i[j] (\ddot{\eta}_{ij}-\dot{\eta}_{ij}) }
			{1-u_i[j]\dot{\eta}_{ij}} 
		\,\bigg|\,\dot{T}
		\bigg]^2
	\Bigg) \\
	&\le
	\f{(\ddot{\alpha}-\dot{\alpha})^2}
		{2^{4k}/k^{O(1)}}
	+ k\, \E\Bigg(
		\sum_{i=1}^d 
		\sum_{j=1}^{k-1}
		\E\bigg[
		\f{u_i[j] }
			{1-u_i[j]} 
		\,\bigg|\,\dot{T}
		\bigg]^2
		\E\Big[(\ddot{\eta}_{ij}-\dot{\eta}_{ij})
		\,\Big|\,\dot{T}\Big]^2
	\Bigg)
	\le \f{(\ddot{\alpha}-\dot{\alpha})^2}
		{2^{4k}/k^{O(1)}}\,,
	\end{align*}
having again used the inductive hypothesis
together with Lemma~\ref{l:X.tail.bounds}.

\smallskip\noindent\bemph{Bounds
	on $\mathbf{a}_{xx}$ and $\mathbf{a}_{yy}$.}
The quadratic terms are more straightforward to bound:
	\begin{align*}
	\E\Big[
	\E[\mathbf{a}_{xx}\,|\,\dot{T}]^2\Big]
	&\le k^{O(1)}
	\E\Bigg(
	(\dot{d}^\minus)^3
	\sum_{i=1}^{\dot{d}^\minus}
	\sum_{j=1}^{k-1}
	\E\Big[
		\Pi^\minus[i]^2
		u_i[j]^2
		\,\Big|\,
		\dot{T}\Big]^2
	\,
	\E\Big[(\ddot{\eta}^\minus_{ij}
		-\dot{\eta}^\minus_{ij})^2\,
	\Big|\,\dot{T}\Big]^2
	\bigg)\,,\\
	\E\Big[
	\E[\mathbf{a}_{yy}\,|\,\dot{T}]^2\Big]
	&\le k^{O(1)}
	\E\Bigg(
	d^3 \sum_{k=1}^d \sum_{j=1}^{k-1}
	\E\bigg[
		\Big(
		\f{u_i[j]}{1-u_i[j]} \Big)^2
		\,\bigg|\, \dot{T}\bigg]^2
	\,\E\Big[(\ddot{\eta}^\minus_{ij}
		-\dot{\eta}^\minus_{ij})^2\,
	\Big|\,\dot{T}\Big]^2
	\Bigg)\,.
	\end{align*}
Combining with
Lemma~\ref{l:X.tail.bounds} and 
 Lemma~\ref{lem-Q-monotone-coupling-prelim} gives
	\[
	\E\Big[
	\E[\mathbf{a}_{xx}\,|\,\dot{T}]^2\Big]
	+\E\Big[
	\E[\mathbf{a}_{yy}\,|\,\dot{T}]^2\Big]
	\le \f{(\ddot{\alpha}-\dot{\alpha})^2}
		{ 2^{4k} / k^{O(1)} }\,.
	\]
This concludes our bounds for $\mathbf{a}$.\smallskip

\noindent\bemph{Bound on $\mathbf{A}$.}
Recall that the tree $\dot{T}$
has root degree $\dot{d}$
while $\ddot{T}$
has root degree $\ddot{d}\equiv \dot{d} + \delta$.
Let $\tilde{T}$ be the subtree of $\ddot{T}$
formed by deleting all subtrees descended
from root neighbors
with indices larger than $\dot{d}$,
so that $\dot{T}\subseteq\tilde{T}\subseteq\ddot{T}$.
By Jensen's inequality,
	\[
	\E\bigg(
	\E[\mathbf{A}\,|\,\dot{T}]^2\bigg)
	\le \E\bigg(
	\E[\mathbf{A}\,|\,\tilde{T}]^2\bigg)\,.
	\]
If $\ddot{d}=\dot{d}$ then clearly $\mathbf{A}=0$. Since $|\mathbf{A}|\le1$ almost surely, we can bound
	\[\E\bigg(
	\E\Big[\mathbf{A}\Ind{\delta\ge2}
		\,\Big|\,\tilde{T}\Big]^2\bigg)
	\le \P(\delta\ge2)^2
	\le k^{O(1)}(\ddot{\alpha}-\dot{\alpha})^4\,.\]
On the event $\delta=1$, applying the bound
\eqref{e:F.second.derivs} from Lemma~\ref{l:F.bounds.calculus} gives
	\begin{align*}
	\E\Big[\mathbf{A}\Ind{\delta=1}
		\,\Big|\,\tilde{T}\Big]
	&=\E\Bigg[
	\overbrace{
	\bigg\{
	F_x(\tilde{\Pi}^\minus,\tilde{\Sigma})
	\Big(\ddot{\Pi}^\minus -\tilde{\Pi}^\minus\Big)
	\bigg\}
	}^{\textup{denote this }\bar{\mathbf{A}}_x}
	+\overbrace{
	\bigg\{
	F_y(\tilde{\Pi}^\minus,\tilde{\Sigma})
	\Big(\ddot{\Sigma}-
		\tilde{\Sigma}\Big)
	\bigg\}}^{\textup{denote this }
		\bar{\mathbf{A}}_y}\\
	&\qquad\qquad\qquad+
	\underbrace{
	O\bigg(
	(\ddot{\Pi}^\minus -
		\tilde{\Pi}^\minus)^2
	\bigg)}_{\textup{denote this }\bar{\mathbf{A}}_{xx}}
	+\underbrace{
	O\bigg(
	(\ddot{\Sigma}^\minus -
		\tilde{\Sigma}^\minus)^2
	\bigg)}_{\textup{denote this }\bar{\mathbf{A}}_{yy}}
	\Bigg]\,.
	\end{align*}
Since $\delta=1$
with probability $ O(k) (\ddot{\alpha}-\dot{\alpha})$,
we find
	\[\E\bigg(\E\Big[\bar{\mathbf{A}}_x
		\Ind{\delta=1}\,\Big|\,\tilde{T}\Big]^2
		\bigg)
	\le \f{(\ddot{\alpha}-\dot{\alpha})^2
		\E[ (\tilde\Pi^\minus)^2 ]}
		{ 2^{2k} / k^{O(1)} }
	\le \f{(\ddot{\alpha}-\dot{\alpha})^2}
		{ 2^{4k} / k^{O(1)} }\,,\]
and by similar considerations
	\[
	\E\bigg(
		\E\Big[\bar{\mathbf{A}}_{xx}
		\Ind{\delta=1}\,\Big|\,\tilde{T}\Big]^2
		\bigg)
	+\E\bigg(\E\Big[\bar{\mathbf{A}}_{yy}
		\Ind{\delta=1}\,\Big|\,\tilde{T}\Big]^2
		\bigg)
	\le \f{(\ddot{\alpha}-\dot{\alpha})^2}
		{2^{4k} / k^{O(1)}}\,.
	\]
Finally, by symmetry,
$\E[\bar{\mathbf{A}}_y\,|\,\tilde{T}]=0$.
Altogether this yields
	\[
	\E\Big[
	\E[\mathbf{A}\,|\,\dot{T}]^2
	\Big]
	\le 
	k^{O(1)}(\ddot{\alpha}-\dot{\alpha})^4
	+\f{(\ddot{\alpha}-\dot{\alpha})^2}
		{ 2^{4k} / k^{O(1)} }
	\le\f{(\ddot{\alpha}-\dot{\alpha})^2}
		{ 2^{4k} / k^{O(1)} }\,,
	\]
where the last step uses the assumed bound on $\ddot{\alpha}-\dot{\alpha}$.

\smallskip\noindent\textbf{Conclusion.}
Substituting the above estimates into \eqref{e:diff.eta.terms} gives
the claimed bound.
\end{proof}
\end{lem}

Recall from Proposition~\ref{p:fp} that $\mu^\ell(\alpha)$ converges weakly to $\mu(\alpha)$ as $\ell\to\infty$. Under the monotone coupling, it is clear that we also have $\mu^\ell(\dot{\alpha},\ddot{\alpha})$ converging to a well-defined limit $\mu(\dot{\alpha},\ddot{\alpha})$. If $(\dot{\eta},\ddot{\eta})$ is sampled from $\mu(\dot{\alpha},\ddot{\alpha})$, then $\dot{\eta}$ has marginal law $\mu(\dot{\alpha})$ while $\ddot{\eta}$ has marginal law $\mu(\ddot{\alpha})$. 

\begin{cor}\label{cor-X-hat-X}
Let $0\le \ddot{\alpha}-\dot{\alpha} \le \exp(-2^k)$.
Let $\vec{a}$ be as in \eqref{e:array.coupled.trees.a}, except that the entries are independent samples from $\mu(\dot{\alpha},\ddot{\alpha})$ rather than from
$\mu^{\ell-1}(\dot{\alpha},\ddot{\alpha})$. Then, with the same notation as before, we have
	\begin{align*}
	\E\bigg(
	\E[\ddot{X}_i-\dot{X}_i \,|\,\dot{T} ]^2\bigg)
	&\le \f{(\ddot{\alpha}-\dot{\alpha})^2}
		{2^{5k} / k^{O(1)}}\,,\\
	\E\bigg(
		\E\Big[ 
		(\ddot{X}_i-\dot{X}_i)^2
		\,\Big|\,\dot{T} \Big]^2
		\bigg)
	&\le \f{(\ddot{\alpha}-\dot{\alpha})^2}
		{2^{8k} / k^{O(1)}}\end{align*}
for all $1\le i\le \dot{d}=\dot{d}^\plus+\dot{d}^\minus$.

\begin{proof}
With the same notation as before, we can expand
	\[
	\ddot{X}_i-\dot{X}_i
	=\overbrace{\Bigg\{
		\sum_{j=1}^{k-1}
		\f{u_i[j](\ddot{\eta}_{ij}-\dot{\eta}_{ij})}
			{1-u_i[j]\dot{\eta}_{ij}}
			\Bigg\}}^{\textup{denote this }\mathbf{x}_1}
	{}+{}\overbrace{
	O\Bigg( \sum_{j=1}^{k-1}
		\Big(\f{u_i[j](\ddot{\eta}_{ij}
			-\dot{\eta}_{ij})}
			{1-u_i[j]}\Big)^2
		\Bigg)
	}^{\textup{denote this }\mathbf{x}_2}\,.
	\]
For the quadratic term,
applying Lemma~\ref{lem-Q-monotone-coupling-prelim} 
(in the limit $\ell\to\infty$) gives
	\[\E\Big(
		\E[\mathbf{x}_2\,|\,\dot{T}]^2\Big)
	\le \f{(\ddot{\alpha}-\dot{\alpha})^2}{
		 2^{8k}/k^{O(1)}}\,.
	\]
For the linear term, applying
 Lemma~\ref{lem-coupling-Q-Q-prime} 
 gives
	\[
	\E\Big(\E[\mathbf{x}_1\,|\,\dot{T}]^2\Big)
	\le k\,
	\E \Bigg( \sum_{j=1}^{k-1}
		\E\bigg[\f{u[j]}{1-u[j]}
			\,\bigg|\, \dot{T}\bigg]^2
		\,\E\Big[ \ddot{\eta}_j-\dot{\eta}_j
			\,\Big|\, \dot{T}\Big]^2
			\Bigg)
	\le \f{ (\ddot{\alpha}-\dot{\alpha})^2}
		{2^{5k} /k^{O(1)}}\,.
	\]
Combining these gives the first assertion.
Similar calculations give
	\[
	\E\Big(\E[ (\ddot{X}_i-\dot{X}_i)^2
		\,|\,\dot{T} ]^2\Big)
	\le
	k^{O(1)}
	\E \Bigg( \sum_{j=1}^{k-1}
		\E\bigg[ \Big(\f{u[j]}{1-u[j]}\Big)^2
			\,\bigg|\, \dot{T}\bigg]^2
		\,\E\Big[ (\ddot{\eta}_j-\dot{\eta}_j)^2
			\,\Big|\, \dot{T}\Big]^2\Bigg)
	\le\f{ (\ddot{\alpha}-\dot{\alpha})^2}
		{2^{8k} /k^{O(1)}}\,.
	\]
This gives the second assertion.
\end{proof}
\end{cor}

\begin{proof}[Proof of Proposition~\ref{p:phi}]
Let $\albd\le \dot{\alpha}\le\ddot{\alpha}\le\aubd$ with $\ddot{\alpha}-\dot{\alpha} \le \exp(-2^k)$.
Our goal is to upper bound the difference
$\Phi(\ddot{\alpha})-\Phi(\dot{\alpha})$ where $\Phi$ is the $\onersb$ free energy defined by 
\eqref{e:phi.alpha}. To this end, define the function
	\[
	G(x,y)
	\equiv \log\bigg(
	\f1{e^x}+\f1{e^y}-\f1{e^{x+y}}
	\bigg)\,,
	\]
and note that $\Phi(\alpha)=\Phi_1(\alpha)+(k-1)\Phi_2(\alpha)$ where
	\begin{align}\nonumber
	\Phi_1(\alpha)
	&\equiv
	\E \log(
	\f1{\exp(\Sigma^\plus)}
	+\f1{\exp(\Sigma^\minus)}
	-\f1{\exp(\Sigma^\plus+\Sigma^\minus)}
		)
	= \E G(\Sigma^\plus,\Sigma^\minus)\,,\\
	\label{e:def.X.prime}
	\Phi_2(\alpha)
	&\equiv -\alpha\,\E\log 
		\bigg( 1- \prod_{j=1}^k \eta_j\bigg)
	\equiv \alpha \, \E X'\,.
	\end{align}
We define $\tilde{\Sigma}^\PM$ as in the proof of Lemma~\ref{lem-Q-monotone-coupling-prelim}, and decompose (similarly to \eqref{e:diff.eta.terms})
	\[
	\Phi_1(\ddot{\alpha})
	-\Phi_1(\dot{\alpha})
	=\overbrace{\E \bigg(
		G(\ddot{\Sigma}^\plus,\ddot{\Sigma}^\minus)
		-G(\tilde{\Sigma}^\plus,\tilde{\Sigma}^\minus)
		\bigg)}^{\textup{denote this }\mathbf{G}}
	{}+{}\overbrace{\E\bigg(
	G(\tilde{\Sigma}^\plus,\tilde{\Sigma}^\minus)
	-G(\dot{\Sigma}^\plus,\dot{\Sigma}^\minus)
	\bigg)}^{\textup{denote this }\mathbf{g}}\,.
	\]
We further decompose
	\[
	\mathbf{g}
	=
	\E \Bigg( \overbrace{\bigg\{
	G(\tilde{\Sigma}^\plus,\tilde{\Sigma}^\minus)
	-G(\tilde{\Sigma}^\plus,\dot{\Sigma}^\minus)
	\bigg\}}^{\textup{denote this }\mathbf{g}^\minus}
	+\overbrace{\bigg\{
	G(\tilde{\Sigma}^\plus,\dot{\Sigma}^\minus)
	-G(\dot{\Sigma}^\plus,\dot{\Sigma}^\minus)
	\bigg\}}^{\textup{denote this }\mathbf{g}^\plus}
	\Bigg)\,.
	\]
It is easily checked that
$|G_x|\le1$ and $|G_{xx}|\le1$, so
	\[
	\mathbf{g}^\plus
	=\overbrace{\Bigg\{
	\sum_{i=1}^{\dot{d}^\plus}
		G_x\Big(\Sigma^\plus[i] + \dot{X}^\plus_i,
			\dot{\Sigma}^\minus\Big)
		\cdot
		\Big( \ddot{X}^\plus_i - \dot{X}^\plus_i \Big)
		\Bigg\} }^{\textup{denote this }\mathbf{g}_x}
	{}+{}\overbrace{
	O\Bigg( \sum_{i=1}^{\dot{d}^\plus}
		\Big(
		\ddot{X}^\plus_i - \dot{X}^\plus_i
		\Big)^2\Bigg)}
		^{\textup{denote this }\mathbf{g}_{xx}}.
	\]
Applying Corollary~\ref{cor-X-hat-X}
gives
	\[
	(\E\mathbf{g}^\plus)^2
	\le 2\,
	\E\bigg(
	\E[ \mathbf{g}_x\,|\,\dot{T}]^2
	+\E[ \mathbf{g}_{xx}\,|\,\dot{T}]^2
	\bigg)
	\le 
	 \f{(\ddot{\alpha}-\dot{\alpha})^2}
		{2^{3k}/k^{O(1)}}
	+ \f{(\ddot{\alpha}-\dot{\alpha})^2}
		{2^{6k}/k^{O(1)}}
	\le \f{(\ddot{\alpha}-\dot{\alpha})^2}
		{2^{3k}/k^{O(1)}}\,.
	\]
Similar bounds hold for $\mathbf{g}^\minus$, so we conclude
	\[|\mathbf{g}|
	=\bigg|
	\E\Big(\mathbf{g}^\minus+\mathbf{g}^\minus\Big)
	\bigg|
	\le
	\f{\ddot{\alpha}-\dot{\alpha}}
		{2^{3k/2}/k^{O(1)}}\,.\]
It remains to estimate $\mathbf{G}$. This is similar to the analysis of $\mathbf{A}$
in the proof of Lemma~\ref{lem-coupling-Q-Q-prime}: write $\delta\equiv\ddot{d}-\dot{d}=\delta^\plus+\delta^\minus$ for the difference in root degrees between $\dot{T}$ and $\ddot{T}$. When $\delta=0$ we have simply $\mathbf{G}=0$. The expected contribution from the event $\delta\ge2$ is negligible. It remains to consider the case $\delta=1$. Without loss of generality, take the case that $\delta^\plus=1$ and $\delta^\minus=0$: 
	\begin{align*}
	\mathbf{G}_{1,0}
	&\equiv\E\Bigg[
	G(\ddot{\Sigma}^\plus,\ddot{\Sigma}^\minus)
	-G(\tilde{\Sigma}^\plus,\tilde{\Sigma}^\minus)
	; (\delta^\plus,\delta^\minus)=(1,0)\Bigg]\\
	&=\E\Bigg[
	G(\tilde{\Sigma}^\plus
		+ \ddot{X}_{\ddot{d}^\plus},
		\tilde{\Sigma}^\minus)
	-G(\tilde{\Sigma}^\plus,\tilde{\Sigma}^\minus)
	; (\delta^\plus,\delta^\minus)=(1,0)\Bigg]\\
	&= -\f12 \cdot
	\f{k (\ddot{\alpha}-\dot{\alpha})}{2} \cdot
	\f{1+O(2^{-k/20})}
			{2^{k-1}}\,,
	\end{align*}
where the last estimate uses Lemma~\ref{l:X.tail.bounds}
and the fact that the partial derivative $G_x(\tilde{\Sigma}^\plus,\tilde{\Sigma}^\minus)$
 is well concentrated around $-1/2$. Altogether we conclude
 	\[\Phi_1(\ddot{\alpha})
	-\Phi_1(\dot{\alpha})
	= - \f{k(\ddot{\alpha}-\dot{\alpha}) }{2^k
		} 
		\Bigg\{ 1 + \f{O(1)}{2^{k/20}}
	\Bigg\}\,.
	\]
Next, let $\dot{X}'$ and $\ddot{X}'$ be defined analogously to $X'$ in 
\eqref{e:def.X.prime}, but using the messages $\dot{\eta}_j$ and $\ddot{\eta}_j$. Then
	\[
	\Phi_2(\ddot{\alpha})-\Phi_2(\dot{\alpha})
	=
	\overbrace{\bigg\{
	(\ddot{\alpha}-\dot{\alpha})
	\cdot \E\dot{X}'
	\bigg\}}^{\textup{denote this }\mathbf{h}_{21}}
	+
	\overbrace{\bigg\{
	\ddot{\alpha}\,\E(\ddot{X}'-\dot{X}')
	\bigg\}}^{\textup{denote this }\mathbf{h}_{22}}
	\,.
	\]
A trivial modification of Corollary~\ref{cor-X-hat-X}
gives 
	\[
	(\E\mathbf{h}_{22})^2
	\le
	\f{\ddot{\alpha}^2
	(\ddot{\alpha}-\dot{\alpha})
	}{2^{5k}/k^{O(1)}}
	\le\f{(\ddot{\alpha}-\dot{\alpha})
	}{2^{3k}/k^{O(1)}}\,.
	\]
On the other hand, a trivial modification of bounds from Lemma~\ref{l:X.tail.bounds} (with $k$ in place of $k-1$) gives
	\[\E\mathbf{h}_{21}
	= \f{\ddot{\alpha}-\dot{\alpha}}{2^k}
	\Bigg\{ 1 + \f{O(1)}{2^{k/20}}
	\Bigg\}
	\,.\]
Combining the above estimates gives finally
	\[
	\Phi(\ddot{\alpha})
		-\Phi(\dot{\alpha})
	=\Bigg\{
	 - \f{k(\ddot{\alpha}-\dot{\alpha}) }{2^k
		} 
	+\f{(k-1)(\ddot{\alpha}-\dot{\alpha})}{2^k}
	\Bigg\}
	\Bigg\{ 1 + \f{O(1)}{2^{k/20}}
	\Bigg\}
	= - \f{\ddot{\alpha}-\dot{\alpha} }{2^k
		} 
		\Bigg\{ 1 + \f{O(k)}{2^{k/20}}
	\Bigg\}\,.
	\]
This proves that $\Phi$ is strictly decreasing 
on the interval $\albd\le\alpha\le\aubd$, as desired.
\end{proof}

\pagebreak

\bibliographystyle{alphaabbr}

{
\raggedright
\bibliography{refs,refs-conf}
}

\end{document}